\documentclass{book}
\usepackage[lang = american]{ems-book} 


\usepackage{tikz-cd}
\usepackage{color,mathrsfs}
\usepackage[makeroom]{cancel}
\usepackage{mathtools}
\mathtoolsset{showonlyrefs}



%
%

\newcommand{\beq}{\begin{equation}}
\newcommand{\eeq}{\end{equation}}
\newcommand{\bea}{\begin{eqnarray}}
\newcommand{\para}{\mathbin{\!/\mkern-5mu/\!}}
\newcommand{\eea}{\end{eqnarray}}
\newcommand{\beas}{\begin{eqnarray*}}
\newcommand{\eeas}{\end{eqnarray*}}

\newcommand{\jam}{P_{\tau}^{r,s,\beta}}
%
%
\newtheorem{theorem}{Theorem}[section]
\newtheorem{teor}[theorem]{Theorem}

\newtheorem{definition}[theorem]{Definition}
\newtheorem{proposition}[theorem]{Proposition}
\newtheorem{prop}[theorem]{Proposition}
\newtheorem{corollary}[theorem]{Corollary}
\newtheorem{lemma}[theorem]{Lemma}
\newtheorem{remark}[theorem]{Remark}
\newtheorem{example}[theorem]{Example} 
\newtheorem{examples}[theorem]{Examples}
 
\newtheorem{foo}[theorem]{Remarks}

\newenvironment{Example}{\begin{example}\rm}{\end{example}}

%
%

\DeclareMathOperator\arctanh{arctanh}
\DeclareMathOperator\arcosh{arcosh}




\newcommand{\CH}{\mathbb{C}H^n}
\newcommand{\C}{\mathbb{C}}
\newcommand{\A}{\mathfrak a}
\newcommand{\HP}{\mathbb{H}P}
\newcommand{\U}{\mathbf{U}}






\newcommand{\G}{\mathbf G}

\newcommand{\ee}{\ell}
\newcommand{\Hh}{\mathcal{H}}

\newcommand{\bM}{\mathbb M}
\newcommand{\bB}{\mathbf B}
\newcommand{\tr}{\mathrm{tr}}

\newcommand{\rL}{\overline{\Delta}_{\mathcal{H}} }
\newcommand{\Dh}{{\Delta}_{\mathcal{H}}}
\newcommand{\Dv}{{\Delta}_{\mathcal{V}}}
\newcommand{\Ho}{\mathcal H}
\newcommand{\V}{\mathcal V}

\newcommand{\M}{\mathbb M}
\newcommand{\B}{\mathbb B}
\newcommand{\bH}{\mathbb H}
\newcommand{\HH}{\mathbb H H}
\newcommand{\rp}{\overline p}
\newcommand{\bS}{\mathbb S}

\newcommand{\R}{\mathbb R}

\newcommand{\ch}{\mathcal H}

\newcommand{\Z}{\mathbb Z}

\newcommand{\bw}{\overline{w}}

\newcommand{\ep}{\varepsilon}
\newcommand{\bpartial}{\overline{\partial}}
\newcommand{\X}{{V}}
\newcommand{\cf}{{\theta}}
\parindent=0pt



\begin{document}
\frontmatter


\title{Stochastic areas, Horizontal Brownian Motions, and  Hypoelliptic Heat Kernels}
\author{Fabrice Baudoin, Nizar Demni, Jing Wang}

\maketitle

\chapter*{Preface}
The monograph is devoted to the study of  stochastic area functionals of Brownian motions and of the associated heat kernels on Lie groups and Riemannian manifolds. It is essentially self-contained and as such can serve as a textbook on the theory of Brownian motions and horizontal Brownian motions on manifolds. Emphasis is put on concrete examples which allows us to  illustrate the rich and deep interactions between stochastic calculus, Riemannian and sub-Riemannian geometry, the theory of complex and quaternionic symmetric spaces and random matrices.

\tableofcontents


\mainmatter

\chapter{Introduction}

\section{The L\'evy's area formula and its generalizations}

By its simplicity, the number of its far reaching applications, and its connections to many areas of mathematics,  the Paul L\'evy's stochastic area formula is undoubtedly among the most important and beautiful formulas in stochastic calculus.

Let $Z(t)=B_1(t)+iB_2(t)$, $t\ge0 $, be a  Brownian motion in the complex plane such that $Z(0)=0$. The algebraic area swept out by the path of $Z$ up to time $t$ is given by half of the value
\begin{equation*}
S(t)=\int_{Z[0,t]} (xdy-ydx)=\int_0^t \left(B_1(s) dB_2(s) -B_2(s)dB_1(s)\right),
\end{equation*}
where the stochastic integral is an It\^o integral, or equivalently a Stratonovich integral since the quadratic covariation between $B_1$ and $B_2$ is 0. The L\'evy's area formula
\begin{equation}\label{LA}
\mathbb{E}\left( e^{i\lambda S(t)} | Z(t)=z\right)=\frac{\lambda t}{\sinh \lambda t} e^{-\frac{|z|^2}{2t}(\lambda t \coth \lambda t -1) }
\end{equation}
was originally proved in \cite{Levy} by using a series expansion of $Z$. The formula nowadays admits many different proofs. A particularly elegant probabilistic approach is due to Marc Yor \cite{YorArea} (see also \cite{MR0461687}). The first observation is that, due to the invariance by rotations of $Z$, one has for every $\lambda \in \R$,
\[
\mathbb{E}\left( e^{i\lambda S(t)} | Z(t)=z\right)=\mathbb{E}\left( \left. e^{-\frac{\lambda^2}{2} \int_0^t |Z(s)|^2 ds} \right| |Z(t)|=|z|  \right).
\] 
One considers then the new probability measure
\[
\mathbb{P}_{/ \mathcal{F}_t}^\lambda = \exp \left( \frac{\lambda}{2}(|Z(t)|^2 -2t) -\frac{\lambda^2}{2} \int_0^t |Z(s)|^2 ds \right)\mathbb{P}_{/ \mathcal{F}_t}
\]
under which, thanks to Girsanov theorem, $(Z(t))_{t \ge 0}$ is a Gaussian process (an Ornstein-Uhlenbeck process). Formula \eqref{LA} then easily follows from standard computations on Gaussian measures.

\

Somewhat surprisingly formula \eqref{LA} and the stochastic area process $(S(t))_{t \ge 0}$ appear in many different contexts, for instance: 

\begin{itemize}
\item \hspace{.1in} S. Watanabe points out in \cite{MR1439539} the connection between Formula \eqref{LA} and the differential of the exponential map in Lie groups;
\item \hspace{.1in} Formula \eqref{LA} also appears in the works by J.M. Bismut \cite{MR744920,MR756173} where probability methods are used to prove index theorems. Variations of those methods allow to construct explicit parametrices for the heat equation on vector bundles, see \cite{MR2376573};
\item \hspace{.1in} The Mellin transform of $S(t)$ is closely related to analytic number thoery and in particular to the Riemann zeta function, see the survey paper \cite{BianeYor} by P. Biane and M. Yor. We also point out some connections with algebraic geometry as explained in the paper  \cite{MR1869989} by K. Hara and N. Ikeda.
\item \hspace{.1in} The stochastic area process is a central character in the Terry Lyons' rough paths theory construction, see the monograph  \cite{MR2604669}) by N. Victoir and P. Friz. 
\item \hspace{.1in} Formula \eqref{LA} also appears as an important tool in Malliavin calculus, see in particular the paper \cite{MR877589} by S. Watanabe.
\item \hspace{.1in} The stochastic area process is also intimately connected to sub-Riemannian geometry. More precisely, in his paper \cite{Gaveau},  B. Gaveau actually observed that the $3$-dimensional process $(B_1(t),B_2(t), S(t))_{t\ge0}$ is a horizontal Brownian motion on  Heisenberg group. As a consequence, formula \eqref{LA} yields an expression for the heat kernel of the  sub-Laplacian on the Heisenberg group; see also the paper \cite{MR418257} by A. Hulanicki.
\item \hspace{.1in} Formula \eqref{LA} is also relevant in mathematical physics as pointed out in the papers by B. Duplantier \cite{MR1007231} and M. Yor \cite{MR1007232} (see also the references therein).

\end{itemize}

The stochastic area process or the L\'evy area formula can be generalized in many different directions. For instance there exist analogues  for Gaussian processes as in the paper  \cite{MR1266245} by N. Ikeda, S. Kusuoka, and S. Manabe (see also L. Coutin and N. Victoir \cite{MR2528082}) or for the free Brownian motion as in the paper \cite{MR1807256} by M. Capitaine and C. Donati-Martin or the paper \cite{MR2034293} by N. Victoir.

In the present monograph we present generalizations of both the area process and the L\'evy area formula  for Brownian motions on manifolds. As we will show,  natural generalizations of the stochastic area for a Brownian motion $(X(t))_{t \ge 0}$ on a manifold $\M$ are functionals that write
\begin{align}\label{area bundle}
S(t)=\int_{X[0,t]} \alpha
\end{align}
where $\alpha$ is a one-form with some geometric significance taking values in a Lie algebra (or one of its quotients). To ensure the existence of an explicit formula like \eqref{LA} we will  need that $\M$ is a Riemannian homogeneous space and that $\alpha$ is a  form  on $\M$ coming  from  the connection form of a homogeneous bundle over $\M$;  for a somewhat general setting see  Example \ref{BB fibration} and Section \ref{horizontal BM bundle}. For instance, in the simplest case where $\alpha$ is $\mathbb R$-valued, the \textit{area one-form} will arise from a K\"ahler structure on $\M$.  By definition, a K\"ahler form on a complex manifold $\M$ is a closed 2-form $\omega$ that induces the metric on $\M$ in the sense that $\omega (X,JY)$ is the Riemannian  metric on $\M$ where $J$ is the almost complex structure. It is a classical result in complex analysis that such 2-form can (at least locally) be written as $\omega =i\partial \bar{\partial} \Phi$ where $\partial$ and $\bar{\partial}$ are the Dolbeault operators and $\Phi$ is a smooth function. The (at least locally defined) real one-form 
\[
\alpha=\frac{1}{2i}(\partial -\bar{\partial}) \Phi
\]
is then a natural \textit{area one-form} on $\M$ which satisfies $d \alpha=(\partial +\bar{\partial}) \alpha=\omega$. When $\M$ is compact the homogeneous bundle over $\M$ we mentioned before is  a  $\mathbb S^1$-bundle referred to as the Boothby-Wang fibration (see the discussion in Section \ref{Sasakian manifold}). It is worth noting that the pull-back to that bundle of the form $\alpha$ then yields a \textit{winding one-form}: Integrating this form against a path describes the $\mathbb S^1$ fiber component of that path. 

We will also consider more general one-forms $\alpha$  like $\mathfrak{su}(2)$-valued one-forms and associated bundles, however we stress that our goal in the monograph is  not to develop a general and abstract theory of stochastic area type functionals associated with homogeneous bundles. We will rather  focus on specific relevant examples  for which very concrete calculations can be done. Covering in great details specific examples  will give us the opportunity to explore several topics of independent interest related to the study of stochastic area functionals. We will in particular focus our attention on connections with the theory of Riemannian submersions and associated horizontal Brownian motions, the theory of complex and quaternionic projective and hyperbolic spaces, the theory of hypoelliptic heat kernels and the  theory of random matrices.

\section{Organization of the monograph}

\textbf{Chapter 2.} In this chapter we  present the stochastic area functionals for Euclidean Brownian motions and the associated  L\'evy area formulas. We show how the geometry of such functionals is intimately related to the theory of nilpotent Lie groups. We also study some Brownian functionals which belong to the same family as the stochastic areas: the Brownian winding functionals.

\

\textbf{Chapter 3.} The chapter is an introduction to the theory of Brownian motions on Lie groups and Riemannian manifolds as is needed in the monograph.

\

\textbf{Chapter 4.} It presents the general theory of Riemannian submersions and associated horizontal Brownian motions. A special emphasis is put on examples and on the situation where the submersion is the projection map of a principal bundle; It is then shown in that case that stochastic areas functionals appear as the fiber motion of the horizontal Brownian motion of that bundle.

\

\textbf{Chapter 5.} It is devoted to the study of stochastic areas and of their distributions on complex projective spaces and complex hyperbolic spaces.

\

\textbf{Chapter 6.} It is devoted to the study of stochastic areas and of their distributions on the quaternionic projective spaces.

\

\textbf{Chapter 7.} It is devoted to the study of stochastic areas and of their distributions on the quaternionic hyperbolic spaces.

\

\textbf{Chapter 8.} It is devoted to the study of stochastic areas and of their distributions on the octonionic projective and hyperbolic spaces.

\

\textbf{Chapter 9.} In this chapter, using the techniques from random matrix theory,  we study Brownian motions and associated eigenvalue processes on complex Grasmannian spaces. We then define stochastic area functionals in that setting by means of the Stiefel fibration. The distribution of those area functionals is computed and limit theorems are proved.

\

\textbf{Chapter 10.} In this chapter we study Brownian motions and associated eigenvalue processes on the complex hyperbolic Grasmannian spaces. Stochastic area functionals in that setting can be defined  by means of the hyperbolic Stiefel fibration. The distribution of those area functionals is computed and limit theorems are proved.

\

At the end of the monograph, we included three appendices. The first one is a short summary of stochastic calculus results used throughout the monograph. In particular, we present in Theorem \ref{Yor transform section} the Yor's transform method, which is the powerful method due to M. Yor  and used throughout the monograph to compute conditional Laplace transforms of additive functionals of diffusion processes. The second appendix gives some useful formulas about the special diffusion operators appearing in the monograph. Finally, the last appendix lists the formulas for the radial parts of the Laplace-Beltrami operator on some rank-one symmetric spaces.

 \chapter{Stochastic areas and Brownian winding functionals}
%
%
 \section{The  L\'evy's area formula and some of its generalizations}
 
 \subsection{Complex L\'evy area formula}
 Consider the  L\'evy area process
 \[
 S(t)=\int_0^t \left(B_1(s) dB_2(s)-B_2(s) dB_1(s)\right),\quad t\ge0
 \]
 where $Z(t)=(B_1(t),B_2(t))$, $t \ge 0$, is a two-dimensional Brownian motion started at 0. We can write
 \[
 S(t)=\int_{Z[0,t]} \alpha
 \]
 where $\alpha$ is the 1-form $\alpha=xdy-ydx$. Since $d\alpha =2 dx \wedge dy$, we interpret $S(t)$ as (twice) the algebraic area swept out in the plane by the Brownian curve up to time $t$. The process $(S(t))_{t \ge 0}$ is not a Markov process in its own natural filtration. However, if we consider the 3-dimensional process
 \[
 X(t)=(B_1(t),B_2(t),S(t)),\quad t\ge0
 \]
 then $(X(t))_{t \geq 0}$ is solution of the following stochastic differential system:
 \begin{align*}
 \begin{cases}
 dX_1(t)& =dB_1(t), \\
 dX_2(t)& =dB_2(t), \\
 dX_3(t)& =-X_2(t) dB_1(t)+X_1(t) dB_2(t).
 \end{cases}
 \end{align*}
 As a matter of fact, it is a Markov process with generator:
 \begin{align}\label{eq-intro-L}
 L&=\frac{1}{2}(\mathbb{X}^2+\mathbb{Y}^2)\notag\\
  &=\frac{1}{2} \left( \frac{\partial^2}{\partial x^2}+\frac{\partial^2}{\partial y^2} \right)+\left(x  \frac{\partial}{\partial y} - y \frac{\partial }{\partial x}\right)\frac{\partial}{\partial z} +\frac{1}{2}(x^2+y^2) \frac{\partial^2}{\partial z^2}
 \end{align}
 where $\mathbb{X},\mathbb{Y}$ are the following vector fields:
 \[
 \mathbb{X}=\frac{\partial}{\partial x}-y \frac{\partial}{\partial z}
 \]
 \[
 \mathbb{Y}=\frac{\partial}{\partial y}+x \frac{\partial}{\partial z}.
 \]
 Observe that the Lie bracket
 \[
[\mathbb{X},\mathbb{Y}]=\mathbb{X}\mathbb{Y}-\mathbb{Y}\mathbb{X} =2 \frac{\partial}{\partial z}.
\]
Thus, for every $x \in \mathbb{R}^3$, $(\mathbb{X}(x),\mathbb{Y}(x), [\mathbb{X},\mathbb{Y}](x))$ is a basis of $\R^3$. From  H\"ormander's theorem (see Theorem \ref{Hormander theorem}), it follows that for every $t>0$, the random variable $X(t)$ has a smooth density with respect to the Lebesgue measure of $\R^3$. In particular, so does $S(t)$ and its density may be derived as follows. Firstly, one reduces the complexity of the random variable $X(t)$ by making use of invariance by rotational symmetries.

\begin{lemma}\label{rad heisenberg}
Let $r(t)=| B(t) |=\sqrt{ B_1(t)^2 +B_2(t)^2 }$, $t \ge 0$. Then, the couple
\[
(r(t) , S(t))_{t \ge 0}
\]
is a Markov process with generator
\[
\mathcal{L}=\frac{1}{2}  \frac{\partial^2}{\partial r^2}+\frac{1}{2r} \frac{\partial}{\partial r} + \frac{1}{2}r^2  \frac{\partial^2}{\partial s^2}.
\]
Consequently, the following equality in distribution holds:
 \[
\left( r(t) ,S(t) \right)_{t \ge 0} \overset{d}{=}  
\left( r(t),\gamma\left(\int_0^t  r^2(s)ds \right)\right)_{t \ge 0},
\]
where $(r(t))_{t \ge 0}$  is a 2-dimensional Bessel process and  $(\gamma(t))_{t \ge 0}$ is a standard  Brownian motion independent of $r$.

\end{lemma}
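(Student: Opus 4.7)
The plan is to establish the Markov property and the generator by a rotational-invariance argument, then to read off the SDE and invoke the skew-product / time-change structure of planar Brownian motion.

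First, I would observe that the operator $L$ in \eqref{eq-intro-L} commutes with the $\mathrm{SO}(2)$-action on $\R^3$ rotating the $(x,y)$-plane and fixing $z$: the vector fields $\mathbb{X},\mathbb{Y}$ are carried into one another (so $\mathbb{X}^2+\mathbb{Y}^2$ is invariant), and the cross-term $(x\partial_y-y\partial_x)\partial_z$ is built from the infinitesimal rotation. Hence, for any bounded measurable $g(r,s)$, the lift $\tilde g(x,y,z):=g(\sqrt{x^2+y^2},z)$ is mapped by the semigroup generated by $L$ into a function that is again rotation-invariant in $(x,y)$, i.e.\ of the form $(x,y,z)\mapsto (\tilde P_t g)(\sqrt{x^2+y^2},z)$ for some semigroup $\tilde P_t$ on functions of $(r,s)$. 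Since $X(0)=0$ is rotation-invariant, the Markov property of $X$ projects onto the Markov property of $(r(t),S(t))$. A direct computation on $f=f(r,s)$, using $(\partial_x^2+\partial_y^2)f=f_{rr}+r^{-1}f_r$, $(x\partial_y-y\partial_x)f=0$, and $(x^2+y^2)\partial_z^2 f=r^2 f_{ss}$, identifies the infinitesimal generator of $\tilde P_t$ with $\mathcal{L}$.

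Next, I would translate $\mathcal{L}$ into SDE form. Since $\mathcal{L}$ contains no mixed term $\partial_r\partial_s$, the martingale parts of $r(t)$ and $S(t)$ have vanishing covariation, which leads to
\begin{equation*}
 dr(t)=d\beta(t)+\frac{1}{2r(t)}\,dt,\qquad dS(t)=r(t)\,d\tilde\beta(t),
\end{equation*}
with $\beta,\tilde\beta$ two independent standard one-dimensional Brownian motions. Concretely, one may take
\begin{equation*}
 d\beta=\frac{B_1\,dB_1+B_2\,dB_2}{r},\qquad d\tilde\beta=\frac{B_1\,dB_2-B_2\,dB_1}{r},
\end{equation*}
and verify by L\'evy's characterization that they are independent Brownian motions; this is the usual skew-product decomposition. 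In particular $r$ is adapted to the filtration of $\beta$ and thus independent of $\tilde\beta$, and it satisfies the SDE of the $2$-dimensional Bessel process.

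For the final identity in law, I would condition on the path of $r$ and apply the Dambis--Dubins--Schwarz theorem to $S$. The conditional quadratic variation $\langle S\rangle_t=\int_0^t r^2(s)\,ds$ is then deterministic, so $S$ is, conditionally on $r$, a time-changed standard Brownian motion obtained from $\tilde\beta$. Since $\tilde\beta$ is already independent of $r$ by the previous step, the resulting Brownian motion $\gamma$ is independent of $r$, and one obtains $S(t)=\gamma\left(\int_0^t r^2(s)\,ds\right)$, which is the stated equality in distribution. The only subtle point is this last independence statement; it is precisely the skew-product decomposition that delivers it without further work.
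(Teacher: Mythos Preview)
Your proof is correct and follows essentially the same approach as the paper: the heart of both arguments is the explicit identification of the two driving Brownian motions $\beta=\int r^{-1}(B_1\,dB_1+B_2\,dB_2)$ and $\tilde\beta=\int r^{-1}(B_1\,dB_2-B_2\,dB_1)$ and their independence via L\'evy's characterization. The paper is slightly more direct in that it obtains the Markov property and generator immediately from the resulting SDE system rather than first arguing via rotational invariance of $L$, but this is only a difference in presentation.
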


\begin{proof}
From It\^o's formula, we have
\begin{align*}
dr(t) &=\frac{dt}{2 r(t)}+\frac{B_1(t) dB_1(t)+B_2(t) dB_2(t)}{ \sqrt{ B_1(t)^2 +B_2(t)^2 }} \\
dS(t)&=r(t) \frac{B_1(t) dB_2(t)-B_2(t) dB_1(t)}{ \sqrt{ B_1(t)^2 +B_2(t)^2 }}.
\end{align*}
From Proposition \ref{carac Levy} in Appendix $1$ the two processes
\begin{align*}
\beta(t)=\int_0^t \frac{B_1(s) dB_1(s)+B_2(s) dB_2(s)}{ \sqrt{ B_1(s)^2 +B_2(s)^2 }} \\
\gamma (t)=\int_0^t \frac{B_1(s) dB_2(s)-B_2(s) dB_1(s)}{ \sqrt{ B_1(s)^2 +B_2(s)^2 }},
\end{align*}
are independent Brownian motions, the conclusion therefore follows.
\end{proof}

Secondly, one proves the celebrated L\'evy's area formula.
\begin{theorem}[L\'evy area formula]\label{levy area}
For $t>0$ and $x \in \R^2$, and $\lambda >0$
\begin{equation}\label{eq-leavy-area}
    \mathbb{E}\left( e^{i\lambda S(t)} \mid B(t)=x\right)=\frac{\lambda t}{\sinh \lambda t} e^{-\frac{|x|^2}{2t}(\lambda t \coth \lambda t -1) }.
\end{equation}
\end{theorem}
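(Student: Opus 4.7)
The plan is to carry out the Yor--Girsanov argument sketched in the introduction, but anchored to Lemma \ref{rad heisenberg}, which has just been proved. First, because the joint law of $B(t)$ and the path $|B(\cdot)|$ is rotationally invariant and since $S(t)$ enters the computation only through that path and an independent Brownian motion $\gamma$ (by Lemma \ref{rad heisenberg}), the conditional law of $S(t)$ given $B(t)=x$ depends on $x$ only through $|x|$. Combined with the time-change representation $S(t)=\gamma\bigl(\int_0^t r^2(s)ds\bigr)$, conditioning on the $\sigma$-field generated by $(r(s))_{0 \le s \le t}$ gives
\[
\mathbb{E}\left(e^{i\lambda S(t)} \mid B(t)=x\right) = \mathbb{E}\left(\left. e^{-\frac{\lambda^2}{2}\int_0^t |B(s)|^2 ds} \,\right| B(t)=x\right).
\]

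Next I would evaluate this Laplace functional by a change of measure. Using It\^o's formula one has $\int_0^t B(s)\cdot dB(s) = \frac{1}{2}(|B(t)|^2 - 2t)$, so the Girsanov exponential
\[
\mathcal{E}_t = \exp\left(-\frac{\lambda}{2}(|B(t)|^2-2t) - \frac{\lambda^2}{2}\int_0^t |B(s)|^2 ds\right)
\]
is the density of the law of a two-dimensional Ornstein--Uhlenbeck process $X$ solving $dX = d\widetilde{B} - \lambda X\, dt$, $X(0)=0$, with respect to Wiener measure. Therefore, for any nonnegative test function $h$,
\[
\mathbb{E}\left(e^{-\frac{\lambda^2}{2}\int_0^t|B(s)|^2 ds} h(B(t))\right) = e^{\lambda t}\,\mathbb{E}\left(e^{-\frac{\lambda}{2}|X(t)|^2} h(X(t))\right).
\]
Since $X(t)$ is a centered Gaussian vector in $\mathbb{R}^2$ with covariance $\frac{1-e^{-2\lambda t}}{2\lambda}\, I_2$, both sides can be written as explicit integrals against known densities.

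Taking $h$ to run through an approximation of $\delta_x$ (or, equivalently, comparing the densities at the point $x$), the conditional expectation is the ratio
\[
\mathbb{E}\left(\left.e^{-\frac{\lambda^2}{2}\int_0^t|B(s)|^2 ds}\, \right| B(t)=x\right) = \frac{e^{-\frac{\lambda}{2}|x|^2}\, p_t^{\mathrm{OU}}(0,x)}{e^{-\lambda t}\, p_t^{\mathrm{BM}}(0,x)},
\]
where $p_t^{\mathrm{OU}}$ and $p_t^{\mathrm{BM}}$ are the Gaussian densities of $X(t)$ and $B(t)$ respectively. The prefactor then simplifies via $\frac{2\lambda t\,e^{-\lambda t}}{1-e^{-2\lambda t}} = \frac{\lambda t}{\sinh(\lambda t)}$, and the coefficient in front of $|x|^2$ in the exponent collapses to $-\frac{1}{2t}(\lambda t \coth(\lambda t) - 1)$ using the identity $\coth(u)=\frac{1+e^{-2u}}{1-e^{-2u}}$.

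The only real obstacle is the last bookkeeping step: turning the ratio of Gaussian densities into the closed form stated in \eqref{eq-leavy-area}. Nothing is deep here, just trigonometric/hyperbolic manipulation, but it is easy to slip a sign or a factor of two. A clean way to keep the computation in check is to verify at the level of the coefficient of $|x|^2$ that
\[
\frac{\lambda}{2} - \frac{\lambda}{1-e^{-2\lambda t}} + \frac{1}{2t} = -\frac{1}{2t}\bigl(\lambda t \coth(\lambda t) - 1\bigr),
\]
and to check the prefactor separately. Finally, once the identity is established for $\lambda>0$, both sides are entire in $\lambda$, so replacing $\lambda$ by $-i\lambda$ yields the formula for the Fourier transform as stated.
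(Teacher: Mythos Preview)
Your approach is essentially the paper's: reduce via Lemma~\ref{rad heisenberg} to a Laplace functional of $\int_0^t |B(s)|^2\,ds$, then apply the Yor--Girsanov change of measure to an Ornstein--Uhlenbeck process and compare Gaussian densities. The only cosmetic difference is that the paper runs Girsanov on the radial Bessel process $r(t)$ rather than on $B$ itself.

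One slip to fix: in your displayed identities the signs are swapped. Since $\mathcal{E}_t^{-1}e^{-\frac{\lambda^2}{2}\int_0^t|B|^2\,ds}=e^{-\lambda t}e^{+\frac{\lambda}{2}|B(t)|^2}$, the correct relation is
\[
\mathbb{E}\!\left(e^{-\frac{\lambda^2}{2}\int_0^t|B(s)|^2 ds}\, h(B(t))\right)=e^{-\lambda t}\,\mathbb{E}\!\left(e^{+\frac{\lambda}{2}|X(t)|^2} h(X(t))\right),
\]
and correspondingly the ratio should carry $e^{+\frac{\lambda}{2}|x|^2}$ over $e^{+\lambda t}$. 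Your subsequent checks (the prefactor $\frac{2\lambda t e^{-\lambda t}}{1-e^{-2\lambda t}}$ and the exponent identity with $+\frac{\lambda}{2}$) already use the correct signs, so this is only a typo in the intermediate displays. The final sentence about replacing $\lambda$ by $-i\lambda$ is unnecessary: you have already computed $\mathbb{E}(e^{i\lambda S(t)}\mid B(t)=x)$ for real $\lambda>0$, which is exactly the statement.
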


\begin{proof}
From the rotational symmetry of the Brownian motion $(B(t))_{t \ge 0}$, we have
\[
\mathbb{E}\left( e^{i\lambda S(t)} \mid B(t)=x\right)=\mathbb{E}\left( e^{i\lambda S(t)} \mid | B(t) | =| x | \right).
\] 
Then, according to the previous lemma,
\[
\mathbb{E}\left( e^{i\lambda S(t)} \mid | B(t) | =| x | \right)=\mathbb{E}\left( e^{i\lambda \gamma_{\int_0^t r(s)^2 ds}} \mid r(t) =| x | \right),
\]
where $\gamma(t)$ is a Brownian motion independent from $r$. We deduce
\[
\mathbb{E}\left( e^{i\lambda S(t)} \mid B(t)=x\right)=\mathbb{E}\left( e^{-\frac{\lambda^2}{2} \int_0^t r(s)^2 ds} \mid r(t) =| x | \right).
\]
As we have seen in the proof of Lemma \ref{rad heisenberg}, $r(t)$ solves a stochastic differential equation
\[
dr(t) =\frac{dt}{2 r(t)}+d\beta(t),
\]
where $\beta$ is a one-dimensional Brownian motion. In order to compute $$\mathbb{E}\left( e^{-\frac{\lambda^2}{2} \int_0^t r(s)^2 ds} \mid r(t) =| x | \right)$$ we will use the Yor's transform method which is explained in Theorem \ref{Yor transform section}.

More precisely, let us consider the new probability measure defined by:
\begin{equation}\label{eq-intro-Girsa}
    \mathbb{P}_{/ \mathcal{F}_t}^\lambda = \exp \left(- \lambda \int_0^t r(s) d\beta(s) -\frac{\lambda^2}{2} \int_0^t r(s)^2 ds \right)\mathbb{P}_{/ \mathcal{F}_t},
\end{equation}
where $\mathcal{F}$ is the natural filtration of $\beta$. Observe that
\begin{align}\label{eq-intro-int-rbeta}
\int_0^t r(s) d\beta(s) &=\int_0^t r(s) dr(s) -\frac{t}{2} \notag \\
 & =\frac{1}{2} r(t)^2 -t.
\end{align}
Therefore
\[
\exp \left( -\lambda \int_0^t r(s) d\beta(s) -\frac{\lambda^2}{2} \int_0^t r(s)^2 ds \right)=e^{\lambda t} \exp \left( -\frac{\lambda}{2} r(t)^2 -\frac{\lambda^2}{2} \int_0^t r(s)^2 ds \right).
\]
In particular, one deduces that
\[
\exp \left( -\lambda \int_0^t r(s) d\beta(s) -\frac{\lambda^2}{2} \int_0^t r(s)^2 ds \right) \le e^{\lambda t},
\]
which proves that 
\begin{equation*}
\exp \left( -\lambda \int_0^t r(s) d\beta(s) -\frac{\lambda^2}{2} \int_0^t r(s)^2 ds \right)_{t \geq 0} 
\end{equation*}
is a martingale. By using the change of probability \eqref{eq-intro-Girsa} and plugging in \eqref{eq-intro-int-rbeta}, if $f$ is a bounded Borel function, we have
\begin{align*}
\mathbb{E}\left( f(r(t)) e^{-\frac{\lambda^2}{2} \int_0^t r(s)^2 ds} \right)& =e^{-\lambda t} \mathbb{E}^\lambda \left(f(r(t)) \exp \left( \frac{\lambda}{2} r(t)^2 +\frac{\lambda^2}{2} \int_0^t r(s)^2 ds \right) e^{-\frac{\lambda^2}{2} \int_0^t r(s)^2 ds}  \right) \\
&=e^{-\lambda t} \mathbb{E}^\lambda \left(f(r(t))  \exp \left( \frac{\lambda}{2} r(t)^2 \right) \right).
\end{align*}
We are then left with the computation of the distribution of $r(t)$ under the probability $\mathbb{P}^\lambda$. To this end, Girsanov's theorem implies that the process
\[
\beta^\lambda(t)=\beta(t) +\lambda \int_0^t r(s) ds
\]
is a Brownian motion under the probability $\mathbb{P}^\lambda$. Thus
\[
dr(t) =\left( \frac{1}{2 r(t)} -\lambda r(t) \right) dt+d\beta^\lambda(t).
\]
In law, this is the stochastic differential equation solved by $| Y(t) |$ where $(Y(t))_{t \ge 0}$ solves the following stochastic differential equation
\[
dY(t) =-\lambda Y(t) dt +dB^\lambda (t), \quad Y(0)=0.
\]
We deduce that $r(t)$ is distributed as $| Y(t) |$, the norm of a two-dimensional Ornstein Uhlenbeck process with parameter $-\lambda$. Since $Y(t)$ is a Gaussian random variable with mean 0 and variance $(1-e^{-2\lambda t})/(2\lambda) I_2$, the conclusion follows from standard computations about the Gaussian distribution.
\end{proof}

\begin{remark}
This formula \eqref{eq-leavy-area} is due to Paul L\'evy \cite{Levy} who originally used a series expansion of the Brownian motion. The proof we present here is due to Marc Yor \cite{YorArea}. We also refer to Gaveau \cite{Gaveau} and Hulanicki \cite{MR418257} who uncovered the connection between the stochastic area and the Heisenberg group (as described below) and to \cite{MR898500} and \cite{MR2723056} for yet other approaches.
\end{remark}

The L\'evy's area formula has several interesting consequences. The first one is a representation of the heat kernel of the Markov process $(X(t))_{t \ge 0}$:

\begin{corollary}
Let $X(t)=(B_1(t),B_2(t),S(t))$, $t\ge0$. Then for $t >0$, the density of $X(t)$ with respect to the Lebesgue measure of $\mathbb R^3$ is given by:
\begin{align}\label{heat kernel Heisenberg}
p_t(x,y,z)=\frac{1}{(2\pi)^2} \int_{\R} e^{i \lambda z} \frac{\lambda t}{\sinh \lambda t} e^{-\frac{(x^2+y^2)}{2}\lambda  \coth \lambda t  } d\lambda.
\end{align}
\end{corollary}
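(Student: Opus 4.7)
The plan is to recover $p_t$ from the conditional characteristic function of $S(t)$ given $B(t)$ by Fourier inversion in the third variable. First I would write the joint density as $p_t(x,y,z) = p_t^{S\mid B}(z \mid x,y)\, p_t^{B}(x,y)$ where $p_t^{B}(x,y) = \frac{1}{2\pi t}e^{-(x^2+y^2)/(2t)}$ is the ordinary Gaussian density of the two-dimensional Brownian motion at time $t$. Since $X(t)$ has a smooth density by H\"ormander's theorem (as noted after the Lie-bracket computation for $\mathbb{X},\mathbb{Y}$), the conditional density $p_t^{S\mid B}(\cdot \mid x,y)$ is smooth and integrable, so Fourier inversion applied to Theorem~\ref{levy area} gives
\[
p_t^{S\mid B}(z \mid x,y) = \frac{1}{2\pi}\int_{\R} e^{-i\lambda z}\,\frac{\lambda t}{\sinh \lambda t}\, e^{-\frac{x^2+y^2}{2t}(\lambda t \coth \lambda t - 1)}\, d\lambda.
\]

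The next step is to multiply by $p_t^{B}(x,y)$ and combine the two Gaussian-type exponentials in $(x,y)$. The algebraic simplification
\[
-\frac{x^2+y^2}{2t} - \frac{x^2+y^2}{2t}\bigl(\lambda t \coth \lambda t - 1\bigr) = -\frac{x^2+y^2}{2}\,\lambda \coth \lambda t
\]
eliminates the stand-alone Gaussian factor and leaves exactly the exponent appearing in the claimed formula. Finally, because both $\lambda \mapsto \lambda t/\sinh \lambda t$ and $\lambda \mapsto \lambda \coth \lambda t$ are even, I substitute $\lambda \to -\lambda$ in the integral to replace $e^{-i\lambda z}$ by $e^{i\lambda z}$, absorb the constants into $1/(2\pi)^2$, and obtain the stated identity \eqref{heat kernel Heisenberg}.

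The only nontrivial point is justifying the Fourier inversion and the interchange of integrals. This is straightforward here because $\lambda t/\sinh \lambda t$ already decays like $|\lambda t|\,e^{-|\lambda t|}$ for large $|\lambda|$, while $\lambda \coth \lambda t \to |\lambda|$, so the integrand is absolutely integrable in $\lambda$ for every $(x,y)$; combined with the smoothness of $p_t$ from H\"ormander's theorem, the inversion formula recovers the continuous density pointwise. No real obstacle is expected; the step requiring the most care is simply the exponent simplification, which must produce the factor $\lambda \coth \lambda t$ rather than $\lambda t \coth \lambda t - 1$.
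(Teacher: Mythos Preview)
Your argument is correct and is exactly the approach the paper indicates in its first sentence: the density follows from Theorem~\ref{levy area} by Fourier inversion in the third variable, multiplying by the Gaussian density of $B(t)$, and simplifying the exponent. The paper then also supplies, ``for sake of completeness,'' an alternative self-contained proof that avoids the L\'evy area formula entirely: it takes a Fourier transform in $z$ of the heat equation $\partial_t p = Lp$, reduces to a Schr\"odinger operator $\partial_r^2 + r^{-1}\partial_r - \lambda^2 r^2$, and solves it via the ground-state transform that conjugates this into the radial Ornstein--Uhlenbeck generator. Your route is shorter once L\'evy's formula is in hand; the paper's alternative has the advantage of being a direct computation of the heat kernel from the generator alone.
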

 
 \begin{proof}
 The result easily follows from the L\'evy's formula Theorem \ref{levy area} using the inverse Fourier transform. For sake of completeness, we provide an alternative proof based on the fact the heat kernel $p_t(x,y,z)$ solves the heat equation
 \[
 \frac{\partial p}{\partial t}=Lp
 \]
 where $L$ is as given in \eqref{eq-intro-L}.
 The idea is to use a Fourier transform in $z$ and look for a solution of the above equation in the form of
 \[
 p_t(x,y,z)=\rp_t (r,z)
 \]
 where $r=\sqrt{x^2+y^2}$. Then we see that
\[
\rp_t (r,z)=\frac{1}{2\pi} \int_\R e^{i \lambda z} \Phi_t (r,\lambda) d\lambda,
\]
where $ \Phi_t (r,z,\lambda)$ is the fundamental solution at 0 of the parabolic partial differential equation
\[
\frac{\partial \Phi}{ \partial t}=\frac{1}{2}\frac{\partial^2 \Phi }{\partial r^2}+\frac{1}{2r} \frac{\partial \Phi }{\partial r}  -\frac{1}{2}\lambda^2 r^2 \Phi .
\]
Therefore we are reduced to obtaining a formula for the semigroup generated by the Sch\"rodinger operator
\[
\mathcal{L}_\lambda=\frac{\partial^2  }{\partial r^2}+\frac{1}{r} \frac{\partial  }{\partial r}  -\lambda^2 r^2.
\]
The trick (called the ground state method) is to observe that for every $f$,
\[
\mathcal{L}_\lambda \left( e^{\frac{\lambda r^2}{2}} f \right)= e^{\frac{\lambda r^2}{2}} \left( 2 \lambda +\mathcal{G}_\lambda \right)f,
\]
where
\[
\mathcal{G}_\lambda=\frac{\partial^2   }{\partial r^2}+\left( 2 \lambda r+\frac{1}{r} \right)\frac{\partial   }{\partial r}.
\]
The operator $\mathcal{G}_\lambda$ turns out to be the radial part of the Ornstein-Uhlenbeck operator $\Delta_{\mathbb{R}^{2}} +2 \lambda \langle x , \nabla_{\mathbb{R}^{2}} \rangle$ whose heat kernel at 0 is a  Gaussian density with mean 0 and variance $\frac{1}{2\lambda}(e^{4\lambda t}-1)$. This implies that the heat kernel issued from 0 of $\mathcal{G}_\lambda$ is given by
\[
q_t (r)=\frac{1}{2\pi} \left( \frac{2\lambda}{e^{4\lambda t}-1} \right) e^{-\frac{\lambda r^2}{ e^{4\lambda t}-1}}.
\]
As a result, 
\[
 \Phi_{2t} (r,z,\lambda)=\frac{e^{2\lambda t}}{2\pi} \left( \frac{2\lambda}{e^{4\lambda t}-1} \right) e^{-\frac{\lambda r^2}{2}}  e^{-\frac{\lambda r^2}{ e^{4\lambda t}-1}}= \frac{1}{2\pi} \left( \frac{\lambda}{\sinh 2\lambda t} \right)  e^{-\frac{\lambda r^2}{2}\coth 2\lambda t}
 \]
 and the conclusion follows. 
\end{proof}

\begin{remark}
At first, the formula \eqref{heat kernel Heisenberg} appears difficult to directly manipulate since it involves the integral of an oscillatory function. However, it is indeed possible to derive precise estimates and small-time asymptotics from it, see \cite{BBBC,BGG,Gaveau,HQLi}.
\end{remark}

When $x=0$, we deduce from the L\'evy area formula Theorem \ref{levy area} that
\[
\mathbb{E}\left( e^{i\lambda S(t)} | B(t)=0\right)=\frac{\lambda t}{\sinh \lambda t}.
\] 
This gives a formula for the characteristic function of the algebraic stochastic area within a complex Brownian loop with length $t$. Inverting this Fourier transform yields the density of this stochastic area
\[
\mathbb{P} \left( S(t) \in ds | B(t)=0 \right)=\frac{\pi}{2t} \frac{1}{\cosh^2 \left( \frac{\pi s}{t}\right)} ds,
\]
 see \cite{MR1007231, MR1007232} for interesting variations on that formula and connections to mathematical physics. 

Next, integrating the L\'evy's area formula \eqref{eq-leavy-area} with respect to the distribution of $B(t)$ yields the characteristic function of $S(t)$:
\[
\mathbb{E}\left( e^{i\lambda S(t)} \right)=\frac{1}{\cosh (\lambda t)}
\]
This Fourier transform is also easily inverted and gives the density of $S(t)$:
\[
\mathbb{P} \left( S(t) \in ds \right)=\frac{\pi}{t}\frac{1}{\cosh \left( \frac{\pi s }{2t} \right)}.
\]

One may deduce from it the following formula (due to Biane-Yor \cite{BianeYor}): For $\alpha>0$,
\[
\mathbb{E} (|S(t)|^\alpha)=\frac{2^{\alpha+2} \Gamma(1+\alpha)}{\pi^{1+\alpha}} L(1+\alpha) t^\alpha,
\]
where 
\begin{equation*}
L(s)=\sum_{n=0}^{+\infty} \frac{(-1)^n}{(2n+1)^s}    
\end{equation*} is the Dirichlet L-function associated with the character of $\mathbb{Z}/4\mathbb{Z}$. Doing so provides an unexpected and fascinating connection with the Riemann zeta function.

\subsection{Geometric structure of the L\'evy area}
As mentioned above, the L\'evy area process
 \[
 S(t)=\int_0^t\left( B_1(s) dB_2(s)-B_2(s) dB_1(s)\right),
 \]
 is closely related to the sub-Riemannian structure of the Heisenberg group. Indeed, recall that the 3-dimensional process
 \[
 X(t)=(B_1(t),B_2(t),S(t)), \quad t \geq 0,
 \]
  is a Markov process with generator
 \[
 L=\frac{1}{2}(\mathbb{X}^2+\mathbb{Y}^2)
 \]
 where $\mathbb{X},\mathbb{Y}$ are the following vector fields
 \[
 \mathbb{X}=\frac{\partial}{\partial x}-y \frac{\partial}{\partial z}
 \]
 \[
 \mathbb{Y}=\frac{\partial}{\partial y}+x \frac{\partial}{\partial z}.
 \]
 Let 
 \[
 \mathbb{Z}=\frac{\partial}{\partial z}.
 \]
 We have then Lie brackets commutation relations
 \[
[\mathbb{X},\mathbb{Y}]=2\mathbb{Z},\quad [\mathbb{X},\mathbb{Z}]=[\mathbb{Y},\mathbb{Z}]=0.
\]
Consequently, $(\mathbb{X},\mathbb{Y},\mathbb{Z})$ generate a 3-dimensional nilpotent Lie algebra of (complete) vector fields. Actually, this is the Lie algebra of the Heisenberg group
\[
\mathbf{H}^3=\{ (x,y,z) \in \mathbb{R}^3 \}
\]
where the non-commutative group law is given by
\[
(x_1,y_1,z_1) \star (x_2,y_2,z_2) =(x_1+x_2,y_2+y_2,z_1+z_2+x_1y_2-x_2y_1).
\]

For $s \le t$, one has
\begin{align*}
X(s)^{-1} \star X(t) & =(-B_1(s),-B_2(s),-S(s)) \star (B_1(t),B_2(t),S(t)) \\
& =(B_1(t)-B_1(s),B_2(t)-B_2(s),S(t)-S(s)-B_1(s)B_2(t)+B_2(s)B_1(t)).
\end{align*}
Observe now that
\[
S(t)-S(s)-B_1(s)B_2(t)+B_2(s)B_1(t)=\int_s^t \left[(B_1(u)-B_1(s))dB_2(u)-(B_2(u)-B_2(s))dB_1(u)\right].
\]
Therefore, $X(s)^{-1} \star X(t) $ is independent from the sigma field $\sigma(X(u),  u \le s)$ and is distributed as $X(t-s)$. Therefore it is  natural to call $(X(t))_{t\ge 0}$ a Brownian motion in the Heisenberg group. Observe also that while $(\mathbb{X},\mathbb{Y},\mathbb{Z})$ form a basis of the Lie algebra of $\mathbf{H}^3$, the generator of $(X(t))_{t \geq 0}$ only involves the vector fields $(\mathbb{X},\mathbb{Y})$. In other words, the \textit{direction} $\mathbb{Z}$ is missing. Calling $\mathbf{span}(\mathbb{X},\mathbb{Y})$ the set of {\bf horizontal} directions, we then refer to $(X(t))_{t \geq 0}$ as a \textbf{horizontal Brownian motion} in $\mathbf{H}^3$.

\subsection{Extension to Higher dimensions}

The L\'evy area formula is easily extended to higher dimensional  Heisenberg groups as follows.

Let $B(t): = B_1(t) +i B_2(t)$, $t\ge0$ be a Brownian motion in $\mathbb{C}^n$ started at 0. This means that $B_1$ and $B_2$ are two independent Brownian motions in $\mathbb{R}^n$. We consider the one-form
\[
\alpha=\sum_{j=1}^n (x_j dy_j -y_j dx_j)
\]
and define the generalized L\'evy area process by:
\begin{align}\label{area heisenberg multi}
S(t) =\int_{B[0,t]} \alpha =\sum_{j=1}^n 
\int_0^t \left(B_{1,j}(s) dB_{2,j}(s) -B_{2,j}(s)dB_{1,j}(s)\right).
\end{align}
We note that 
\[
d\alpha=2\sum_{j=1}^n dx_j \wedge dy_j
\]
which is twice the canonical symplectic form on $\mathbb C^n$. It is therefore natural to interpret $S(t)$ as a stochastic area type process as we did when $n=1$.

The process
\[
X(t)=(B(t),S(t)),\quad t\ge0
\]
is a diffusion process in $\mathbb{C}^n \times \mathbb{R}$ with generator
\[
L=\frac{1}{2} \sum_{j=1}^n \left(\mathbb{X}_j^2+\mathbb{Y}_j^2\right)
\]
where we set
\[
\mathbb{X}_j=\frac{\partial}{\partial x_j} -y_j \frac{\partial}{\partial z}, \quad 1 \leq j \leq n,
\]
\[
\mathbb{Y}_j=\frac{\partial}{\partial y_i} +x_j \frac{\partial}{\partial z}, \quad 1 \leq j \leq n,
\] 
with $(x_1, y_1,\dots, x_n, y_n, z)$ being the real coordinates in $\mathbb{C}^n \times \mathbb{R}$. Setting further
 \begin{equation*}
 \mathbb{Z}= \frac{\partial}{\partial z},  
 \end{equation*}
 we have the following Lie brackets relations:
 \[
 [\mathbb{X}_m,\mathbb{X}_j]=[\mathbb{Y}_m,\mathbb{Y}_j]
 =[\mathbb{X}_j,\mathbb{Z}]=[\mathbb{Y}_j,\mathbb{Z}]=0
 \]
 and
 \[
 [\mathbb{X}_m,\mathbb{Y}_j]=2 \delta_{mj} \mathbb{Z}, \quad 
 \quad 1 \leq m,j \leq n,.
 \]
In particular, the H\"ormander's condition is satisfied and $(\mathbb{X}_j,\mathbb{Y}_j,\mathbb{Z})_{1 \leq j \leq n} $ generate the Lie algebra of the $2n+1$-dimensional Heisenberg group
\[
\mathbf{H}^{2n+1}=\{ (x,y,z) \in \mathbb{R}^n\times \mathbb{R}^n\times \mathbb{R} \}
\]
with the group law 
\[
(x,y,z) \star (x',y',z') =\left(x+x',y+y',z+z'+\sum_{j=1}^n(x_{j}y'_{j}-x'_{j}y_{j})\right).
\]
As before, we can interpret $(X(t))_{t \ge 0}$ as a horizontal Brownian motion on $\mathbf{H}^{2n+1}$. Lemma \ref{rad heisenberg} and Theorem \ref{levy area} are then easily generalized as follows using almost identical proofs.

\begin{lemma}
Let $r(t)=| B(t) |=\sqrt{ | B_1(t)|^2 +| B_2(t)|^2 }$, $t \ge 0$. Then, the couple
\[
(r(t) , S(t))_{t \ge 0}
\]
is a Markov process with generator
\[
\mathcal{L}=\frac{2n-1}{2r} \frac{\partial}{\partial r}+\frac{1}{2}  \frac{\partial^2}{\partial r^2}+\frac{1}{2}r^2  \frac{\partial^2}{\partial s^2}.
\]
Consequently, the following equality in distribution holds:
 \[
\left( r(t) ,S(t) \right)_{t \ge 0} \overset{d}{=}  
\left( r(t),\gamma\left(\int_0^t  r^2(s)ds \right)\right)_{t \ge 0},
\]
where $(r(t))_{t \ge 0}$  is a $2n$-dimensional Bessel process and  $(\gamma(t))_{t \ge 0}$ is a standard  Brownian motion independent from $(r(t))_{t \ge 0}$

\end{lemma}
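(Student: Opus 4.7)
The plan is to follow the pattern of the proof of Lemma \ref{rad heisenberg} in $2n$ real dimensions. First I would apply It\^o's formula to $r(t)=\sqrt{\sum_{j=1}^n (B_{1,j}(t)^2+B_{2,j}(t)^2)}$, viewed as the radial coordinate of the standard Brownian motion $(B_1,B_2)$ in $\mathbb R^{2n}$. Since $\sum_{i=1}^{2n}\partial^2_{ii}|x|=(2n-1)/|x|$, this produces
\[
dr(t)=\frac{2n-1}{2r(t)} dt+\sum_{j=1}^n \frac{B_{1,j}(t) dB_{1,j}(t)+B_{2,j}(t) dB_{2,j}(t)}{r(t)},
\]
while by the very definition \eqref{area heisenberg multi},
\[
dS(t)=r(t)\sum_{j=1}^n \frac{B_{1,j}(t) dB_{2,j}(t)-B_{2,j}(t) dB_{1,j}(t)}{r(t)}.
\]

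Next I would set
\[
\beta(t)=\sum_{j=1}^n\int_0^t \frac{B_{1,j}(s) dB_{1,j}(s)+B_{2,j}(s) dB_{2,j}(s)}{r(s)},\qquad \gamma(t)=\sum_{j=1}^n\int_0^t \frac{B_{1,j}(s) dB_{2,j}(s)-B_{2,j}(s) dB_{1,j}(s)}{r(s)},
\]
and check that $(\beta,\gamma)$ is a two-dimensional continuous local martingale with $\langle\beta\rangle_t=\langle\gamma\rangle_t=t$, which follows at once from the identity $\sum_{j=1}^n (B_{1,j}^2+B_{2,j}^2)=r^2$. For the cross bracket, expanding $d\beta\,d\gamma$ shows that the contribution of each index $j$ is $r^{-2}(-B_{1,j}B_{2,j}+B_{2,j}B_{1,j})\,dt=0$, so $\langle\beta,\gamma\rangle_t\equiv 0$. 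L\'evy's characterization (Proposition \ref{carac Levy}) then identifies $\beta$ and $\gamma$ as independent standard Brownian motions.

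With these in hand, the SDE $dr=\frac{2n-1}{2r}dt+d\beta$ exhibits $(r(t))_{t\ge 0}$ as a $2n$-dimensional Bessel process, and reading off the joint generator of $(r,S)$ from its SDE gives exactly $\mathcal L$. For the equality in distribution, $S(t)=\int_0^t r(s)\, d\gamma(s)$ is a continuous local martingale of quadratic variation $\int_0^t r^2(s)\,ds$ driven by a Brownian motion $\gamma$ independent of $\beta$, hence of $r$; the Dambis-Dubins-Schwarz time change therefore yields a standard Brownian motion $\tilde\gamma$ independent of $r$ with $S(t)=\tilde\gamma\big(\int_0^t r^2(s)\,ds\big)$. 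The only step carrying actual content is the pairwise cancellation producing $\langle\beta,\gamma\rangle\equiv 0$; everything else is a direct generalization of the $n=1$ argument already spelled out in Lemma \ref{rad heisenberg}.
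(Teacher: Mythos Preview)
Your proposal is correct and follows exactly the approach the paper intends: the paper does not give a separate proof for this lemma but simply states that Lemma~\ref{rad heisenberg} and Theorem~\ref{levy area} ``are then easily generalized as follows using almost identical proofs.'' Your write-up is precisely that generalization, carried out in detail.
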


\begin{theorem}\label{levy area 2}
For any $t>0, x \in \mathbb C^n$ and any $\lambda >0$,
\[
\mathbb{E}\left( e^{i\lambda S(t)} | B(t)=x\right)=\left(\frac{\lambda t}{\sinh \lambda t} \right)^n e^{-\frac{|x|^2}{2t}(\lambda t \coth \lambda t -1) }.
\] 
\end{theorem}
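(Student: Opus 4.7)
The plan is to imitate the proof of Theorem~\ref{levy area} step by step, tracking carefully where the dimension $n$ enters. By rotational invariance of $(B(t))$ in $\C^n$, the conditional expectation depends only on $|x|$, and the preceding lemma gives
\[
\mathbb{E}\left( e^{i\lambda S(t)} \mid B(t)=x\right)
= \mathbb{E}\left( e^{i\lambda\,\gamma\bigl(\int_0^t r^2(s)\,ds\bigr)} \,\Big|\, r(t)=|x|\right)
= \mathbb{E}\left( e^{-\frac{\lambda^2}{2}\int_0^t r^2(s)\,ds} \,\Big|\, r(t)=|x|\right),
\]
where the second equality uses that $\gamma$ is an independent Brownian motion, so we integrate out its law. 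The task is thus reduced to a Laplace transform computation for the $2n$-dimensional Bessel process $r(t)$, which satisfies
\[
dr(t) = \frac{2n-1}{2r(t)}\,dt + d\beta(t).
\]

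To compute this Laplace transform I would apply the Yor transform of Theorem \ref{Yor transform section}, introducing the probability
\[
\mathbb{P}^\lambda_{/\mathcal{F}_t} = \exp\left(-\lambda\int_0^t r(s)\,d\beta(s) - \frac{\lambda^2}{2}\int_0^t r^2(s)\,ds\right)\mathbb{P}_{/\mathcal{F}_t}.
\]
The key step, analogous to \eqref{eq-intro-int-rbeta}, is to rewrite $\int_0^t r(s)\,d\beta(s)$ using It\^o's formula applied to $r^2/2$; substituting the SDE for $r$ and using $d\langle r\rangle_s = ds$ gives
\[
\int_0^t r(s)\,d\beta(s) = \frac{1}{2}r(t)^2 - n t,
\]
so that the exponential weight becomes $e^{n\lambda t}\exp(-\tfrac{\lambda}{2}r(t)^2-\tfrac{\lambda^2}{2}\int_0^t r^2\,ds)$ and is bounded (hence a true martingale). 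This is the place where $n$ enters explicitly, producing the $n$-th power in the answer; it is the main \emph{bookkeeping} obstacle since one must be careful about the drift contribution $\tfrac{(2n-1)}{2}t$ together with the It\^o correction $\tfrac12 t$.

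Next, Girsanov's theorem shows that $\beta^\lambda(t)=\beta(t)+\lambda\int_0^t r(s)\,ds$ is a Brownian motion under $\mathbb{P}^\lambda$, and
\[
dr(t) = \left(\frac{2n-1}{2r(t)}-\lambda r(t)\right)dt + d\beta^\lambda(t),
\]
which is the SDE satisfied by $|Y(t)|$ for a $2n$-dimensional Ornstein--Uhlenbeck process $Y$ with drift $-\lambda Y$. So under $\mathbb{P}^\lambda$, $r(t)$ is distributed as the norm of a centered Gaussian vector in $\R^{2n}$ with covariance $\sigma^2 I_{2n}$, where $\sigma^2=(1-e^{-2\lambda t})/(2\lambda)$.

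Finally I would compute the conditional Laplace transform by taking the ratio of the densities of $r(t)$ under $\mathbb{P}^\lambda$ and under $\mathbb{P}$: these are chi-type densities on $[0,\infty)$ whose ratio equals $(t/\sigma^2)^n \exp(a^2/(2t)-a^2/(2\sigma^2))$ at the point $a=|x|$. Combining with the factors $e^{-n\lambda t}$ and $e^{\lambda a^2/2}$ and using the identity $\sigma^{-2}=\lambda e^{\lambda t}/\sinh\lambda t$, one simplifies the prefactor to $(\lambda t/\sinh\lambda t)^n$ and the exponential, via $\lambda t\, e^{\lambda t}/\sinh\lambda t = \lambda t\coth\lambda t + \lambda t$, to $\exp(-\frac{|x|^2}{2t}(\lambda t\coth\lambda t-1))$, which is the claimed formula.
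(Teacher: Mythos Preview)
Your proof is correct and follows exactly the approach the paper indicates: the paper states that Theorem~\ref{levy area 2} is proved ``using almost identical proofs'' to Theorem~\ref{levy area}, and your argument faithfully carries out that generalization, with the dimension $n$ entering precisely through the It\^o computation $\int_0^t r(s)\,d\beta(s)=\tfrac12 r(t)^2-nt$ and the $2n$-dimensional Ornstein--Uhlenbeck density.
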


On can further generalize Theorem \ref{levy area 2}  using the diagonalization theory of skew-symmetric matrices.

\begin{theorem}\label{general levy area}
Let $A$ be a $2n \times 2n$ skew-symmetric real matrix and $(B(t))_{t \ge 0}$ be a $2n$-dimensional real Brownian motion. Then for $x \in \mathbb R^{2n}$ and $t>0$,
\[
\mathbb{E} \left( e^{i \int_0^t \left\langle AB(s) ,dB(s) \right\rangle } \mid B(t)=x \right)=\mathrm{det} \left(\frac{ t A}{\sinh  tA} \right)^{1/2} e^{-\frac{1}{2t} \left\langle(  t A \coth ( t A) -I)x,x\right\rangle }.
\]
where $\langle\cdot,\cdot\rangle$ is the Euclidean inner product of $\mathbb{R}^{2n}$. 
\end{theorem}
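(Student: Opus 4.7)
The plan is to reduce Theorem~\ref{general levy area} to $n$ independent copies of the planar Lévy area formula (Theorem~\ref{levy area}) via the real canonical form of a skew-symmetric matrix and the rotational invariance of Brownian motion. By the spectral theorem, there exists an orthogonal matrix $O \in \mathrm{O}(2n)$ such that $O^{T}AO = \Lambda$ is block-diagonal with $2\times 2$ blocks $\lambda_{j} J$, where $J = \begin{pmatrix}0 & 1 \\ -1 & 0\end{pmatrix}$ and the real numbers $\lambda_{j}$, $j=1,\dots,n$, are such that $\pm i\lambda_{j}$ are the eigenvalues of $A$. Setting $\tilde B := O^{T}B$ yields another standard $2n$-dimensional Brownian motion (by orthogonal invariance), the conditioning $B(t)=x$ becomes $\tilde B(t) = O^{T}x =: \tilde x$, and an orthogonality computation gives
\[
\int_{0}^{t}\langle A B(s), dB(s)\rangle = \int_{0}^{t}\langle \Lambda \tilde B(s), d\tilde B(s)\rangle = -\sum_{j=1}^{n}\lambda_{j} S_{j}(t),
\]
where $S_{j}(t) = \int_{0}^{t}(\tilde B_{2j-1}\,d\tilde B_{2j} - \tilde B_{2j}\,d\tilde B_{2j-1})$ is the planar Lévy area of the $j$-th coordinate pair.

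The pairs $(\tilde B_{2j-1},\tilde B_{2j})$, $j=1,\dots,n$, are mutually independent as Brownian motions and remain conditionally independent given their individual endpoints, so the conditional characteristic function factorizes into $n$ single-block contributions. Applying Theorem~\ref{levy area} (with $\lambda$ replaced by $-\lambda_{j}$, and using the $\lambda \mapsto -\lambda$ symmetry of the right-hand side of \eqref{eq-leavy-area}) to each factor produces
\[
\mathbb{E}\bigl(e^{i\int_{0}^{t}\langle AB,dB\rangle} \bigm| B(t)=x\bigr) = \prod_{j=1}^{n}\frac{\lambda_{j}t}{\sinh \lambda_{j}t}\cdot\exp\!\Bigl(-\frac{1}{2t}\sum_{j=1}^{n}(\lambda_{j}t\coth \lambda_{j}t - 1)(\tilde x_{2j-1}^{2}+\tilde x_{2j}^{2})\Bigr).
\]

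It remains to recast this in intrinsic matrix form, which is the only delicate step. With the standard conventions for the hyperbolic functional calculus of a real skew-symmetric matrix --- under which each $2 \times 2$ block $\lambda_{j}J$ of $A$ contributes through the real scalar $\lambda_{j}$ rather than through the imaginary spectrum $\{\pm i\lambda_{j}\}$ --- the prefactor $\prod_{j}\lambda_{j}t/\sinh(\lambda_{j}t)$ is identified with $\det(tA/\sinh tA)^{1/2}$, and the block-diagonal symmetric matrix with blocks $(\lambda_{j}t\coth \lambda_{j}t - 1) I_{2}$ represents $tA\coth(tA) - I$. Transporting the quadratic form in $\tilde x$ back to $x$ using the orthogonality of $O$ yields $\langle (tA\coth(tA) - I)x, x\rangle$, completing the proof. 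The main obstacle is therefore the bookkeeping between the scalar-product form and the matrix form rather than anything probabilistic; consistency is readily checked in the special case $A = \lambda\,\mathrm{diag}(J,\dots,J)$, for which the above reduces exactly to Theorem~\ref{levy area 2}.
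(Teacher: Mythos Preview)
Your proof is correct and follows essentially the same approach as the paper: block-diagonalize $A$ via an orthogonal change of basis, use orthogonal invariance of Brownian motion to reduce to $n$ independent planar L\'evy areas, apply Theorem~\ref{levy area} to each block, and then reassemble into the matrix expression. The paper's version is terser (it simply assumes $A$ already block-diagonal and writes out the product), while you add more detail on the functional-calculus identification at the end, but the argument is the same.
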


\begin{proof}
Using orthonormal invariance of the Brownian motion $(B(t))_{t \ge 0}$ and diagonalization theory of skew-symmetric real matrices, one can assume that $A$ is of the form
\[
A=  \begin{pmatrix}
    \Lambda_1 & 0 & \dots & 0 \\
    0 & \Lambda_2 & \dots & 0 \\
    \vdots & \vdots & \ddots & \vdots \\
    0 & 0 & \dots & \Lambda_n
  \end{pmatrix}
\]
where
$
\Lambda_j= \begin{pmatrix}
0 & -\lambda_j \\
\lambda_j & 0
\end{pmatrix}
$ with $\lambda_j \in \mathbb R, 1 \leq j \leq n$.
As a result, 
\begin{align*}
 & \mathbb{E} \left( e^{i \int_0^t \left\langle AB(s) ,dB(s) \right\rangle } \mid B(t) =x \right) \\
 =&\mathbb{E} \left( e^{i \sum_{j=1}^n \lambda_j
 \int_0^t B_{2j-1}(s)dB_{2j}(s)-B_{2j}(s)dB_{2j-1}(s) } \mid B(t)=x \right)\\
  =&\prod_{j=1}^n\mathbb{E} \left( e^{i  \lambda_j 
  \int_0^t B_{2j-1}(s)dB_{2j}(s)-B_{2j}(s)dB_{2j-1}(s) } 
  \mid B_{2j-1}(t)=x_{2j-1},B_{2j}(t)=x_{2j}  \right).
\end{align*}
From \eqref{eq-leavy-area}, we end up with:
\[
\mathbb{E} \left( e^{i \int_0^t \left\langle AB(s) ,dB(s) \right\rangle } \mid B(t) =x \right)
=\prod_{j=1}^n \left(\frac{\lambda_j t}{\sinh \lambda_j t}\right) e^{-\frac{x_{2j-1}^2+x_{2j}^2}{2t}(\lambda_j t \coth \lambda_j t -1) }.
\]
\end{proof}

\begin{remark}
In particular, one obtains:
\[
\mathbb{E} \left( e^{i \int_0^t \left\langle AB(s) ,dB(s) \right\rangle } \mid B(t)=0 \right)=\mathrm{det} \left(\frac{ t A}{\sinh  tA} \right)^{1/2} 
\]
This formula is closely related to the differential of the exponential map on  compact Lie groups, see \cite{MR1439539}, and plays an important role in the probabilistic approach to Atiyah-Singer index theorems, see \cite{MR2376573} and \cite{MR744920,MR756173}.
\end{remark}

\subsection{Quaternionic L\'evy area formula}

Theorem \ref{general levy area} provides the general form of possible L\'evy area formulas associated with Euclidean Brownian motions. The formula takes a particularly nice form in geometric situations that we will explore in the next two sections.

Let $\mathbb{H}$ be the non-commutative field of quaternions
\[
\mathbb{H}=\{q=u+xI+yJ+zK, (u,x,y,z)\in\R^4\},
\]
where  $I,J,K$ are defined as 
\[
I=\left(
\begin{array}{ll}
i & 0 \\
0&-i 
\end{array}
\right), \quad 
J= \left(
\begin{array}{ll}
0 & 1 \\
-1 &0 
\end{array}
\right), \quad 
K= \left(
\begin{array}{ll}
0 & i \\
i &0 
\end{array}
\right).
\]
We note that $I,J,K$ satisfy the Hamilton's relations $I^2=J^2=K^2=IJK=-1$. 
For $q=u+xI+yJ+zK \in \mathbb{H}$, we denote by $\overline q= u -xI-yJ-zK$ its conjugate, $|q|^2=u^2+x^2+y^2+z^2$ its squared norm and $\mathrm{Im}(q)=(x,y,z) \in \mathbb{R}^3$ its imaginary part. 
For $q = (q_1, \dots, q_n), q' = (q'_1, \dots, q'_n) \in \bH^{n}$, we will set\footnote{Note that the different convention that $\mathrm{Im}\langle q,q'\rangle=\mathrm{Im}\sum_{j=1}^n \overline{q_j} q'_j$ is also sometimes used in literature.} 
\begin{equation*}
\mathrm{Im}\langle q,q'\rangle=\mathrm{Im}\sum_{j=1}^nq_j\overline{q'_j}.
\end{equation*}The quaternionic stochastic area process is defined by:
\begin{align}\label{quaternionic levy area}
S(t)=\sum_{j=1}^n \int_0^t\mathrm{Im}\langle dB_j(s), B_j(s) \rangle
\end{align}
where $(B_j(t))_{t \geq 0}, 1 \leq i \leq n$, are independent $\bH$-valued Brownian motions. Note that unlike the previous cases, this process takes values in $\mathbb R^3$. The Markov process
\begin{equation*}
\left( X(t)\right)_{t \geq 0} = \left(B_1(t), \dots, B_n(t), \sum_{j=1}^n\int_0^t\mathrm{Im}\langle dB_j(s), B_j(s) \rangle\right)_{t \geq 0},
\end{equation*}
can then be interpreted as follows, as a horizontal Brownian motion on the so-called \emph{quaternionic Heisenberg group}. The latter is  defined as the product space $\bH^{n}\times\mathrm{Im}(\bH)$ equipped with the group law
\[
(q,\phi )*(q', \phi')=\left(q+q',\phi+\phi'+\mathrm{Im}\langle q,q'\rangle \right),
\]
 and is one of the basic examples of H-type groups discussed in the next section. Applying the same arguments as before, we readily obtain the following results:
\begin{lemma}
Let $r(t)=| B(t) |=\sqrt{ \sum_{j=1}^n | B_j(t)|^2  }$, $t \ge 0$. Then, the couple
\[
(r(t) , S(t))_{t \ge 0}
\]
is a Markov process with generator
\[
\mathcal{L}=\frac{1}{2}\frac{\partial^2}{\partial r^2} + \frac{4n-1}{2r} \frac{\partial}{\partial r} +\frac{1}{2}r^2 \Delta_{\R^3}  .
\]
Consequently, the following equality in distribution holds:
 \[
\left( r(t) ,S(t) \right)_{t \ge 0} \overset{d}{=}  
\left( r(t),\beta\left(\int_0^t  r^2(s)ds \right)\right)_{t \ge 0},
\]
where $(r(t))_{t \ge 0}$ is a $4n$-dimensional Bessel process  and $(\beta(t))_{t \ge 0}$ is a standard $3$-dimensional Brownian motion independent from $(r(t))_{t \ge 0}$.
\end{lemma}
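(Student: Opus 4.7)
The plan is to follow the same strategy as in the proof of Lemma \ref{rad heisenberg}, adapted to account for the fact that each $B_j(t)$ is now a $4$-dimensional (quaternion-valued) Brownian motion and that $S(t)$ takes values in $\mathbb{R}^3 \cong \operatorname{Im}(\mathbb{H})$. The core of the argument is again a Lévy characterization applied to a collection of continuous martingales obtained from It\^o's formula.

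First, I would write each $B_j$ in real coordinates as $B_j(t)=u_j(t)+x_j(t)I+y_j(t)J+z_j(t)K$, so that $r(t)^2=\sum_{j=1}^n(u_j^2+x_j^2+y_j^2+z_j^2)$ is the squared norm of a $4n$-dimensional Euclidean Brownian motion. By the standard skew-product representation of a Bessel process (or directly from It\^o's formula and Lévy's characterization), there exists a one-dimensional Brownian motion $W$, adapted to the natural filtration of $B$, such that
\[
dr(t)=d W(t)+\frac{4n-1}{2r(t)}\,dt,
\]
which identifies $r$ as a $4n$-dimensional Bessel process.

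Next, I would expand $\operatorname{Im}\langle dB_j, B_j\rangle$ in quaternionic coordinates using $I^2=J^2=K^2=IJK=-1$ to obtain the three real components of $S(t)$ as linear combinations of the stochastic integrals $\int_0^t u_j\,dx_j$, $\int_0^t x_j\,du_j$, etc. Each component $S^{(k)}$, $k=1,2,3$, is then a continuous local martingale. I would then compute quadratic variations and cross-variations and check that
\[
d\langle S^{(k)}\rangle_t=r(t)^2\,dt,\qquad d\langle S^{(k)},S^{(\ell)}\rangle_t=0 \text{ for } k\neq\ell,\qquad d\langle S^{(k)},W\rangle_t=0,
\]
where the cancellations in the off-diagonal cross-variations come from pairs of opposite-signed symmetric terms in the quaternionic multiplication table. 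The main bookkeeping obstacle is verifying these identities cleanly: one must be careful with the sign conventions in $\operatorname{Im}\langle\cdot,\cdot\rangle$ and with all the $IJ=K$, $JK=I$, $KI=J$ relations. This is routine but the only step that actually uses the quaternionic structure; everything else is formally identical to the complex case.

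Once the above vanishing cross-variations are established, the multidimensional Lévy characterization (Proposition \ref{carac Levy} in Appendix 1, applied to the time-changed process $(S\circ A^{-1})$ where $A(t)=\int_0^t r(s)^2\,ds$) yields a standard three-dimensional Brownian motion $\beta$, independent of $W$ and hence of $r$, such that $S(t)=\beta(A(t))$. This gives the distributional identity asserted in the lemma. Finally, the generator formula for $(r,S)$ follows directly: for $f(r,s)\in C^2_0(\mathbb{R}_+\times\mathbb{R}^3)$, It\^o's formula combined with the SDE for $r$ and the martingale representation of $S$ gives
\[
df(r(t),S(t))=\left(\tfrac{1}{2}\partial_r^2 f+\tfrac{4n-1}{2r}\partial_r f+\tfrac{1}{2}r^2\Delta_{\mathbb{R}^3} f\right)dt+d(\text{martingale}),
\]
with no mixed $\partial_r\partial_{s_k}$ terms precisely because $\langle W,S^{(k)}\rangle\equiv 0$. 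This identifies $\mathcal{L}$ as stated and proves the Markov property of $(r,S)$.
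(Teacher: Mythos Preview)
Your proposal is correct and matches the paper's intended approach: the paper does not give a separate proof of this lemma but simply says ``Applying the same arguments as before, we readily obtain the following results,'' referring to the proof of Lemma~\ref{rad heisenberg} in the complex case. Your write-up is precisely that adaptation, with the necessary bookkeeping for the three imaginary components and the $4n$-dimensional Bessel process, so nothing further is needed.
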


\begin{theorem}
For $t>0$ and $x \in \mathbb H^n$, and $\lambda \in \mathbb R^3$
\[
\mathbb{E}\left( e^{i \lambda\cdot S(t)} | B(t)=x\right)=\left(\frac{|\lambda| t}{\sinh |\lambda | t} \right)^{2n} e^{-\frac{ | x |^2}{2t}(|\lambda | t \coth |\lambda| t -1) },
\] 
where $\lambda\cdot S(t)=\sum_{j=1}^3\lambda_j S_j(t) $ and $|\lambda|=\sqrt{ \lambda_1^2+\lambda_2^2+\lambda_3^2}$.
\end{theorem}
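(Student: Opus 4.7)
The plan is to mimic verbatim the Yor transform proof of Theorem \ref{levy area}, with the $2$-dimensional Bessel process replaced by a $4n$-dimensional one and the scalar auxiliary noise by a $3$-dimensional one.

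First I reduce to the radial conditional. By a rotational symmetry argument analogous to the one used for the complex L\'evy area (the conditional expectation depends on $x$ only through $|x|$ and on $\lambda$ only through $|\lambda|$), one has
\begin{equation*}
\mathbb{E}\left(e^{i\lambda\cdot S(t)}\mid B(t)=x\right)=\mathbb{E}\left(e^{i\lambda\cdot S(t)}\mid|B(t)|=|x|\right),
\end{equation*}
and the preceding lemma then rewrites the right-hand side as
\begin{equation*}
\mathbb{E}\left(e^{i\lambda\cdot\beta\left(\int_0^t r^2(s)ds\right)}\mid r(t)=|x|\right)=\mathbb{E}\left(e^{-\frac{|\lambda|^2}{2}\int_0^t r(s)^2ds}\mid r(t)=|x|\right),
\end{equation*}
using that $\lambda\cdot\beta(u)$ is $\mathcal{N}(0,|\lambda|^2 u)$-distributed.

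Next I apply Yor's transform (Theorem \ref{Yor transform section}) with parameter $|\lambda|$ to the radial SDE $dr=\frac{4n-1}{2r}dt+d\beta$. Define
\begin{equation*}
\mathbb{P}^{|\lambda|}_{/\mathcal{F}_t}=\exp\left(-|\lambda|\int_0^t r(s)d\beta(s)-\frac{|\lambda|^2}{2}\int_0^t r(s)^2ds\right)\mathbb{P}_{/\mathcal{F}_t}.
\end{equation*}
It\^o's formula gives $\int_0^t r\,d\beta=r(t)^2/2-2nt$, so the Girsanov density equals $e^{2n|\lambda|t}\exp\left(-|\lambda|r(t)^2/2-|\lambda|^2\int_0^t r^2ds/2\right)\le e^{2n|\lambda|t}$, hence is a true martingale. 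Under $\mathbb{P}^{|\lambda|}$, Girsanov yields $dr=(\frac{4n-1}{2r}-|\lambda|r)dt+d\beta^{|\lambda|}$, which in law is the SDE satisfied by $|Y|$ where $Y$ is a $4n$-dimensional Ornstein--Uhlenbeck process with drift parameter $-|\lambda|$ started at $0$. Thus $r(t)$ is, under $\mathbb{P}^{|\lambda|}$, distributed as the norm of $Y(t)\sim\mathcal{N}(0,\sigma^2 I_{4n})$ with $\sigma^2=(1-e^{-2|\lambda|t})/(2|\lambda|)$.

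The remaining step is algebraic bookkeeping. Writing $p$ and $p^{|\lambda|}$ for the densities of $r(t)$ under $\mathbb{P}$ and $\mathbb{P}^{|\lambda|}$ (both of chi-type, coming from $4n$-dimensional Gaussians of variances $t$ and $\sigma^2$ respectively), the change-of-measure formula gives
\begin{equation*}
\mathbb{E}\left(e^{-\frac{|\lambda|^2}{2}\int_0^t r^2ds}\mid r(t)=|x|\right)=e^{-2n|\lambda|t}e^{|\lambda||x|^2/2}\frac{p^{|\lambda|}(|x|)}{p(|x|)},
\end{equation*}
with the chi-ratio equal to $(t/\sigma^2)^{2n}\exp\left(|x|^2/(2t)-|x|^2/(2\sigma^2)\right)$. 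The identities $\sinh u=(e^u-e^{-u})/2$ and $\coth u=(1+e^{-2u})/(1-e^{-2u})$ then collapse the prefactor to $(|\lambda|t/\sinh|\lambda|t)^{2n}$ and the exponent to $-\frac{|x|^2}{2t}(|\lambda|t\coth|\lambda|t-1)$, which is the claim. The only nontrivial point is this final collapse; all conceptual steps are verbatim adaptations of the complex case, with the exponent $2n$ in the prefactor coming from the $4n$-real-dimensional quaternionic setting and the norm $|\lambda|$ from the $3$-dimensional auxiliary Brownian motion.
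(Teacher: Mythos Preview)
Your proof is correct and takes exactly the approach the paper indicates (the paper does not spell out a proof here, merely writing that the results ``are readily obtained'' by ``applying the same arguments as before''). Your write-up is a faithful execution of that program: reduce to the radial conditional via the skew-product lemma, apply the Yor transform to the $4n$-dimensional Bessel SDE, identify the law under $\mathbb{P}^{|\lambda|}$ as a $4n$-dimensional Ornstein--Uhlenbeck radial process, and collapse the resulting density ratio.
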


Using Fourier inversion formula, one further derives the following expression for the density of $S(t)$.

\begin{proposition}
For any $t \geq 0$, the density of $S(t)$ with respect to the Lebesgue measure of $\mathbb R^3$ is given by
\begin{equation*}
h_t(\phi) = \frac{2^{2n-1}}{16\pi t^3} \int_{-1}^1 du \int_0^1 dv\, (v(1-v))^{n-1} \left(\frac{v}{1-v}\right)^{iu|\phi|/2t}\ln^2\left[\frac{v}{1-v}\right] .
\end{equation*}
\end{proposition}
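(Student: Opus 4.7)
The strategy is Fourier inversion: compute the characteristic function of $S(t)$ from the preceding theorem and then invert on $\mathbb{R}^3$. First I would integrate the conditional characteristic function against the centered Gaussian density of $B(t)$ on $\mathbb{R}^{4n}$. The resulting Gaussian integral in $x$ has exponent $-\frac{|x|^2}{2t}|\lambda|t\coth(|\lambda|t)$, whose evaluation produces a factor $(|\lambda|t\coth|\lambda|t)^{-2n}$ which combines with the prefactor $(|\lambda|t/\sinh|\lambda|t)^{2n}$ to yield the clean expression
\begin{equation*}
\mathbb{E}(e^{i\lambda\cdot S(t)}) = \frac{1}{\cosh^{2n}(|\lambda|t)}.
\end{equation*}

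Since this characteristic function depends only on $|\lambda|$, Fourier inversion in $\mathbb{R}^3$ reduces to a one-dimensional integral. Passing to spherical coordinates $\lambda = \rho\omega$, $\omega\in S^2$, and using the standard identity $\int_{S^2}e^{-i\rho\omega\cdot\phi}\,d\omega = 4\pi\sin(\rho|\phi|)/(\rho|\phi|)$, I obtain
\begin{equation*}
h_t(\phi) = \frac{1}{2\pi^2|\phi|}\int_0^\infty \frac{\rho\sin(\rho|\phi|)}{\cosh^{2n}(\rho t)}\,d\rho.
\end{equation*}

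To reach the stated form, the next step is a change of variable $y=2\rho t$ followed by $y=\ln(v/(1-v))$, $v\in(0,1)$. The key algebraic identity $2\cosh(y/2) = 1/\sqrt{v(1-v)}$ gives $\cosh^{-2n}(y/2)=2^{2n}(v(1-v))^n$, and $dy = dv/(v(1-v))$ then produces the Beta-type weight $(v(1-v))^{n-1}$ together with an overall $2^{2n}$. Finally, to manufacture the outer integral over $u\in[-1,1]$ and to promote one factor of $\ln(v/(1-v))$ into $\ln^2(v/(1-v))$, I apply the elementary identity
\begin{equation*}
\sin(ax) = \frac{ax}{2}\int_{-1}^1 e^{iuax}\,du
\end{equation*}
with $x=\ln(v/(1-v))$ and $a=|\phi|/(2t)$: one factor of $x$ cancels the $1/|\phi|$ out front, the other combines with the pre-existing $\ln$-factor to give $\ln^2$, and $e^{iuax}$ becomes $(v/(1-v))^{iu|\phi|/(2t)}$.

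The main obstacle is purely bookkeeping, namely tracking the correct powers of $2$, $\pi$, and $t$ through the successive changes of variable and a Fubini interchange; conceptually the proposition is an immediate consequence of the radial Fourier inversion of $\cosh^{-2n}$ combined with the Beta-parametrization of $\cosh(y/2)$.
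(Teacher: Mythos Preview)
Your approach is correct and essentially the paper's route: compute the characteristic function $(\cosh|\lambda|t)^{-2n}$, invert in spherical coordinates, and convert the radial integral to a Beta-type integral via $\cosh(y/2)=1/(2\sqrt{v(1-v)})$. The only reorganization is that the paper keeps the $u\in[-1,1]$ integral open from the spherical decomposition (writing $\int_{S^2}e^{-ir\theta\cdot\phi}d\theta$ as a multiple of $\int_{-1}^1 e^{-iru|\phi|}du$), quotes the tabulated cosine transform $\int_0^\infty\cos(rz)\cosh^{-2n}(rt)\,dr=\frac{2^{2n-1}}{2t}\int_0^1[v(1-v)]^{n-1}(v/(1-v))^{iz/2t}dv$, and recovers the factor $r^2$ by two $z$-differentiations; you instead close the $S^2$ integral to $4\pi\sin(\rho|\phi|)/(\rho|\phi|)$, substitute directly, and reintroduce the $u$-integral at the end via the sine identity.

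One small point you glossed over: since $y=2\rho t>0$, your substitution $y=\ln(v/(1-v))$ lands in $v\in(1/2,1)$, not $(0,1)$. You need the symmetry $(v,u)\mapsto(1-v,-u)$ of the integrand to extend the range (and pick up the accompanying factor of $1/2$). The paper's route avoids this because the Beta-integral it quotes already runs over $(0,1)$.
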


\begin{proof}
From the last theorem, it follows that: 
\begin{equation}\label{eq-lim-He}
\mathbb{E}\left(e^{i \lambda\cdot S(t)}\right)
=\left(\cosh |\lambda|t\right)^{-2n}
\end{equation}
Using  Fourier inversion formula, the density of $S(t)$ is given up to a normalizing constant by: 
\begin{equation*}
h_t(\phi) := \int_{\mathbb{R}^3}\left(\cosh |\lambda|t \right)^{-2n}e^{-i \lambda\cdot \phi} d\lambda,
\end{equation*}
which reads in polar coordinates as
\begin{equation*}
h_t(\phi) = \int_{0}^{\infty}r^2\left(\cosh(rt)\right)^{-2n}\left\{\int_{S^2} e^{-i r \theta \cdot \phi} d\theta\right\} dr, \quad \phi \in \mathbb{R}^3.
\end{equation*}
By rotation invariance, the inner integral may be written as: 
\begin{equation*}
\int_{-1}^1e^{-i r u |\phi|} du = \int_{-1}^1\cos(r u |\phi|)du,
\end{equation*}
and as such, Fubini Theorem entails: 
\begin{equation*}
h_t(\phi) = \int_{-1}^1 \int_{0}^{\infty}r^2\cos(r u |\phi|) \left(\cosh rt \right)^{-2n} dr du.
\end{equation*}
But, we know from Table 3.985 in \cite{Gra-Ryz} that for any $z \in \mathbb{R}$: 
\begin{align*}
\int_{0}^{\infty} \cos(r z) \left(\cosh rt \right)^{-2n} dr & = \frac{2^{2n-1}}{2t(2n-1)!}\left|
\Gamma\left(n+i\frac{z}{2t}\right)\right|^2 
\\& = \frac{2^{2n-1}}{2t}\int_0^1 [v(1-v)]^{n-1} \left[\frac{v}{1-v}\right]^{iz/2t}dv,
\end{align*}
whence
\begin{align*}
\int_{0}^{\infty}r^2\cos(rz) \left(\cosh rt \right)^{-2n} dr  & = -\frac{2^{2n-1}}{2t(2n-1)!} \frac{d^2}{dz^2}\left|
\Gamma\left(n+i\frac{z}{2t}\right)
\right|^2
\\& = \frac{2^{2n-1}}{8t^3}\int_0^1 [v(1-v)]^{n-1} \left[\frac{v}{1-v}\right]^{iz/2t}\ln^2\left[\frac{v}{1-v}\right] dv.
\end{align*}
Note that the last integral is absolutely convergent (uniformly in $z$) which may be easily seen after performing there the variable change $y = v/(1-v)$: 
\begin{equation*}
\int_0^1 [v(1-v)]^{n-1} \left[\frac{v}{1-v}\right]^{iz/2t}\ln^2\left[\frac{v}{1-v}\right] dv = \int_0^{\infty} y^{iz/2t} \frac{y^{n-1}}{(1+y)^{2n}} \ln^2(y) dy.
\end{equation*}
Substituting $z = u|\phi|$ and taking into account the factor $1/(2\pi)$ present in Fourier inversion formula, the density follows.  
\end{proof}

\subsection{Octonionic L\'evy area formula}\label{Octonions}

We consider the non-associative division algebra of octonions which is abstractly described by the set
\begin{equation*}
\mathbb{O}=\left\{x=\sum_{j=0}^{7}x_j e_j, x_j\in \mathbb{R} \right\},
\end{equation*}
endowed with the multiplication rules are given by: 
\begin{equation*}
e_0 e_j=e_j, \quad e_j e_0=e_j, \quad 1 \leq j \leq 7,
\end{equation*}
and 
\begin{equation*}
e_m e_j=-\delta_{mj}e_0+\epsilon_{mjk}e_k, 1 \leq m,j,k \leq 7,
\end{equation*}
where $\delta_{mj}$ is the Kronecker delta and $\epsilon_{mjk}$ is the completely skew-symmetric tensor with value $1$ when $mjk = 123, 145, 176, 246, 257, 347, 365$. In other words,  simply denoting $e_0=1$, the multiplication table is given by:

{
\begin{center}
\begin{tabular}{|l|l|l|l|l|l|l|l|}
		\hline
		& $e_1$ & $e_2$ & $e_3$ & $e_4$ & $e_5$ & $e_6$ & $e_7$  \\ \hline
		$e_1$ & $-1$ & $e_4$ & $e_7$ & $-e_2$ & $e_6$ & $-e_5$ & $-e_3$ \\ \hline
		$e_2$ & $-e_4$ & $-1$ & $e_5$ & $e_1$ & $-e_3$ & $e_7$ & $-e_6$ \\ \hline
		$e_3$ & $-e_7$ & $-e_5$ & $-1$ & $e_6$ & $e_2$ & $-e_4$ & $e_1$  \\ \hline
		$e_4$ & $e_2$ & $-e_1$ & $-e_6$ & $-1$ & $e_7$ & $e_3$ & $-e_5$ \\ \hline
		$e_5$ & $-e_6$ & $e_3$ & $-e_2$ & $-e_7$ & $-1$ & $e_1$ & $e_4$  \\ \hline
		$e_6$ & $e_5$ & $-e_7$ & $e_4$ & $-e_3$ & $-e_1$ & $-1$ & $e_2$  \\ \hline
		$e_7$ & $e_3$ & $e_6$  & $-e_1$ & $e_5$  & $-e_4$ & $-e_2$ & $-1$    \\ \hline
\end{tabular}
\end{center}
}

The octonionic multiplication is not associative, but it is alternative; This means that the real algebra generated by two arbitrary octonions is associative. More than that, it is possible to prove that the algebra generated by two arbitrary octonions is always isomorphic to the field $\mathbb R, \mathbb C$ or $\mathbb H$.

If $x=\sum_{j=0}^{7}x_j e_j,\in \mathbb{O}$, its conjugate and the octonionic norm are defined by:
\[
\bar{x}=x_0 -\sum_{j=1}^{7}x_j e_j, \quad
|x|^2=\sum_{j=0}^{7}x^2_j.
\]
From the multiplication table, one readily check that
\[
x \bar{x}=\bar{x} x= |x|^2, 
\]

and that the multiplicative property holds: for $x,y \in \mathbb O$,
\[
| x y |=|x| \, |y|.
\]
One also defines the imaginary part of $x \in \mathbb O$ as
\[
\mathrm{Im} (x)=\frac{x-\bar{x}}{2}=\sum_{j=1}^{7}x_j e_j.
\]
For further details about octonions, we refer the interested reader to \cite{MR1886087}. 

Coming to the octonionic  stochastic area process, it is defined by
\begin{align}\label{octonionic levy area}
S(t)= \int_0^t\mathrm{Im}( dB(s) \overline{B(s)})
=\frac{1}{2}\int_0^t \left(dB(s) \overline{B(s)} -B(s) \overline{dB(s)}\right)
\end{align}
where $(B(t))_{t \geq 0}$, is a $\mathbb{O}$-valued Brownian motion, that is
\[
B(t)=\sum_{j=0}^7 B_j(t) e_j
\]
where the $B_j(t)$'s are independent real Brownian motions. In particular, the  octonionic L\'evy area is seven-dimensional and the 15-dimensional Markov process
\begin{equation*}
\left( X(t)\right)_{t \geq 0} = \left(B(t), S(t)\right)_{t \geq 0},
\end{equation*}
can then be interpreted as  a horizontal Brownian motion on the so-called octonionic Heisenberg  group. Indeed, the latter is nothing else but the product space $\mathbb{O} \times\mathrm{Im}(\mathbb O)$ equipped with the group law
\[
(x,\phi )*(x', \phi')=\left(x+x',\phi+\phi'+\mathrm{Im}(x\overline{x'}) \right)
\]
 and is an example of an H-type group (see the Section \ref{sec-H-group}). 
%
%
 In that framework, one obtains analogous results to those derived for the complex and the quaternionic algebras:

\begin{lemma}
Let $r(t)=| B(t)|$, $t \ge 0$. Then, the couple
\[
(r(t) , S(t))_{t \ge 0}
\]
is a Markov process with generator
\[
\mathcal{L}=\frac{1}{2}  \frac{\partial^2}{\partial r^2}+ 
\frac{7}{2r} \frac{\partial}{\partial r}+\frac{1}{2}r^2 \Delta_{\R^7}  .
\]
Consequently, the following equality in distribution holds:
 \[
\left( r(t) ,S(t) \right)_{t \ge 0} \overset{d}{=}  \left( r(t),\beta\left(\int_0^t  r^2(s)ds\right)\right)_{t \ge 0},
\]
where $(r(t))_{t \ge 0}$ is a $8$-dimensional Bessel process  and $(\beta(t))_{t \ge 0}$ is a standard $7$-dimensional Brownian motion independent from $(r(t))_{t \ge 0}$.
\end{lemma}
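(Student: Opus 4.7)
The plan is to follow the same path used above for the complex and quaternionic cases: apply Itô's formula to obtain SDEs for $r(t)$ and the components of $S(t)$, use Lévy's characterization to identify the driving martingales, and then perform a multidimensional Dambis--Dubins--Schwarz time change. Identifying $\mathbb O$ with $\mathbb R^8$, the driving process $B(t)=\sum_{j=0}^7 B_j(t) e_j$ is a standard $8$-dimensional Brownian motion, and applying Itô to $r^2=\sum_{j=0}^7 B_j^2$ and then to the square root gives
\[
dr(t) = dW(t) + \frac{7}{2\,r(t)}\,dt, \qquad W(t) := \int_0^t \frac{1}{r(s)}\sum_{j=0}^7 B_j(s)\, dB_j(s).
\]
Since $d\langle W\rangle_t = dt$, Lévy's characterization gives that $W$ is a standard one-dimensional Brownian motion, hence $(r(t))_{t\ge0}$ is an $8$-dimensional Bessel process.

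Turning to the area process, one writes the components $S_j$, $j=1,\dots,7$, as
\[
dS_j(t) = \sum_{k,l=0}^7 (e_l\overline{e_k})_j\, B_k(t)\, dB_l(t),
\]
and the crux of the argument is to verify the orthogonality identities
\[
d\langle S_i, S_j\rangle_t = \delta_{ij}\, r(t)^2\, dt, \qquad d\langle S_j, W\rangle_t = 0, \quad 1 \le i,j \le 7.
\]
The second identity is immediate: its integrand is the $e_j$-component of $B\overline{B}=|B|^2\in\mathbb R$, which vanishes for $j\ge 1$. The first identity is the main technical obstacle. Letting $R_k$ denote right-multiplication by $\overline{e_k}$ on $\mathbb R^8$, which is orthogonal thanks to the multiplicativity $|xy|=|x|\,|y|$, the diagonal contributions ($k=k'$) produce $\sum_k B_k^2 \delta_{ij} = |B|^2 \delta_{ij}$ because $R_k R_k^\top = I$. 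The off-diagonal contributions, $\sum_{k\ne k'} B_k B_{k'}(R_k R_{k'}^\top)_{ij}$, must be shown to vanish after symmetrization in $(k,k')$; this reduces, via the alternative law and a direct inspection of the multiplication table of $\mathbb O$, to the statement that the symmetric part of $R_k R_{k'}^\top$ has no components on the seven imaginary coordinates whenever $k\ne k'$.

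Once these variation identities are in place, the multidimensional Knight/Dambis--Dubins--Schwarz theorem produces a standard $7$-dimensional Brownian motion $\beta$, independent of $W$ and hence of $(r(t))_{t\ge 0}$, such that $S(t) = \beta\!\left(\int_0^t r(s)^2\, ds\right)$. Reading the infinitesimal generator of the pair $(r,S)$ off the SDEs then yields $\mathcal L = \tfrac12\partial_r^2 + \tfrac{7}{2r}\partial_r + \tfrac12 r^2 \Delta_{\mathbb R^7}$, and both claims of the lemma follow.
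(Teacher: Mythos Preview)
Your approach is correct and is precisely the adaptation of the complex/quaternionic argument that the paper intends (it states the lemma without proof, pointing to the H-type framework). One small cleanup: the off-diagonal vanishing does not need any table inspection. From alternativity, right multiplication by an imaginary unit $u$ satisfies $R_u^2=-|u|^2 I$; polarizing gives the Clifford relation $R_uR_v+R_vR_u=-2\langle u,v\rangle I$ for $u,v\in\mathrm{Im}(\mathbb O)$, which (after handling the case $k=0$ via $R_0=I$, $R_{k'}^\top=-R_{k'}$) yields $R_kR_{k'}^\top+R_{k'}R_k^\top=0$ for all $k\ne k'$, not only on the imaginary coordinates. Equivalently, using $\langle xy,z\rangle=\langle x,z\bar y\rangle$ one has $dS_j=\langle e_jB,\,dB\rangle$ and the identities $d\langle S_i,S_j\rangle=\langle e_iB,e_jB\rangle\,dt=\delta_{ij}|B|^2\,dt$ and $d\langle S_j,W\rangle=0$ follow directly from the same Clifford relation for left multiplications; this is exactly the paper's H-type computation with $J_u(x)=ux$.
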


\begin{theorem}
For any $t>0, x \in \mathbb O$, and any $\lambda \in \mathbb R^7$, one has: 
\[
\mathbb{E}\left( e^{i\langle \lambda, S(t)\rangle} | B(t)=x\right)=\left(\frac{|\lambda| t}{\sinh |\lambda | t} \right)^{4} e^{-\frac{|x|^2}{2t}(|\lambda | t \coth |\lambda | t -1) }.
\] 
\end{theorem}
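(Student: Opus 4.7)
The plan is to transcribe, almost verbatim, the Yor transform argument used in Theorem \ref{levy area} (and its complex extension in Theorem \ref{levy area 2}), with the Bessel dimension upgraded from $2$ to $8$. First I would reduce to a one-dimensional Laplace transform problem. By rotational invariance of the $\mathbb O$-valued Brownian motion, conditioning on $B(t)=x$ is the same as conditioning on $|B(t)|=|x|$, and by rotational invariance in $\mathbb R^7$ of the driving Brownian motion $\beta$ appearing in the lemma above, together with the time-change representation $S(t)\stackrel{d}{=}\beta\bigl(\int_0^t r^2(s)\,ds\bigr)$, I can replace the oscillatory integrand by a real exponential:
\[
\mathbb{E}\bigl(e^{i\langle\lambda,S(t)\rangle}\mid B(t)=x\bigr)=\mathbb{E}\!\left(e^{-\tfrac{|\lambda|^2}{2}\int_0^t r^2(s)\,ds}\;\Big|\;r(t)=|x|\right),
\]
where $(r(t))_{t\ge 0}$ is an $8$-dimensional Bessel process, i.e.\ $dr(t)=\tfrac{7}{2r(t)}dt+d\beta(t)$ for a $1$-dimensional Brownian motion $\beta$.

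Next I would apply the Girsanov/Yor change of measure
\[
\mathbb P^{|\lambda|}_{/\mathcal F_t}=\exp\!\left(-|\lambda|\int_0^t r(s)\,d\beta(s)-\frac{|\lambda|^2}{2}\int_0^t r(s)^2\,ds\right)\mathbb P_{/\mathcal F_t}.
\]
Using It\^o's formula together with the Bessel SDE gives
\[
\int_0^t r(s)\,d\beta(s)=\int_0^t r(s)\,dr(s)-\frac{7t}{2}=\frac{1}{2}r(t)^2-4t,
\]
so the Dol\'eans exponential is bounded above by $e^{4|\lambda|t}$ and is therefore a genuine martingale. Exactly as in the proof of Theorem \ref{levy area}, the change of measure rewrites the target quantity as
\[
\mathbb{E}\bigl(f(r(t))\,e^{-\tfrac{|\lambda|^2}{2}\int_0^t r^2\,ds}\bigr)=e^{-4|\lambda|t}\,\mathbb E^{|\lambda|}\!\bigl(f(r(t))\,e^{\tfrac{|\lambda|}{2}r(t)^2}\bigr)
\]
for every bounded Borel $f$.

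Under $\mathbb P^{|\lambda|}$, Girsanov's theorem shows that $\beta^{|\lambda|}(t):=\beta(t)+|\lambda|\int_0^t r(s)\,ds$ is a Brownian motion, so
\[
dr(t)=\left(\frac{7}{2r(t)}-|\lambda|r(t)\right)dt+d\beta^{|\lambda|}(t),
\]
which in law is the SDE satisfied by the norm of an $8$-dimensional Ornstein--Uhlenbeck process $Y$ with parameter $-|\lambda|$ issued from the origin. Since $Y(t)$ is then a centered isotropic Gaussian on $\mathbb R^8$ with covariance $\tfrac{1-e^{-2|\lambda|t}}{2|\lambda|}I_8$, one has an explicit density for $r(t)$ under $\mathbb P^{|\lambda|}$, and a routine Gaussian computation (comparing it with the density of $|B(t)|$ under $\mathbb P$, which is $8$-dim Gaussian of variance $t\,I_8$) produces, after simplification, the prefactor $\bigl(|\lambda|t/\sinh|\lambda|t\bigr)^{4}$ (the exponent $4$ being half the real dimension of $\mathbb O$) together with the exponential $e^{-\tfrac{|x|^2}{2t}(|\lambda|t\coth|\lambda|t-1)}$.

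No step is genuinely hard: the only point that requires a brief verification is the very first reduction, namely that conditioning on $B(t)=x$ yields a Laplace transform driven only by $|\lambda|$ and $|x|$. This rests on the joint invariance under $\mathrm{SO}(8)\times\mathrm{SO}(7)$ expressed in the preceding lemma, whose time-change representation of $S(t)$ does the work. Everything else is the Yor transform template: replace the Bessel dimension $2$ appearing in the $\mathbb R$-valued case (and $2n$, $4n$ in the complex and quaternionic cases) by $8$, and the exponent $1$ (resp.\ $n$, $2n$) becomes $4$, as required.
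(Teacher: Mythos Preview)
Your proposal is correct and follows exactly the template the paper intends: the paper does not write out a proof here but explicitly says the octonionic case is ``analogous'' to the complex and quaternionic ones, and your argument is precisely that analogy carried out (Yor transform for an $8$-dimensional Bessel process, the Girsanov density being bounded by $e^{4|\lambda|t}$, and the radial Ornstein--Uhlenbeck identification under $\mathbb P^{|\lambda|}$).

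One small point of phrasing: the reduction from conditioning on $B(t)=x$ to conditioning on $r(t)=|x|$ is not literally a ``joint $\mathrm{SO}(8)\times\mathrm{SO}(7)$ invariance'' of the pair $(B,S)$. What actually makes it work for a fixed $\lambda$ is that $\langle\lambda,S(t)\rangle=\int_0^t\langle J_\lambda B,\,dB\rangle$ with $J_\lambda^2=-|\lambda|^2 I_8$, so the centralizer of $J_\lambda$ in $\mathrm{SO}(8)$ is a copy of $\mathbf{U}(4)$; this subgroup leaves $\int\langle J_\lambda B,dB\rangle$ invariant and acts transitively on $S^7$, which is exactly what you need. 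Once that is said, your argument goes through verbatim and matches the paper's intended proof.
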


\subsection{Carnot and H-type groups}

Natural generalizations of complex, quaternionic, and octonionic stochastic processes, as well as their corresponding L\'evy's formulas, are possible by utilizing algebraic structures known as H-type groups.
 To provide perspective, we present these generalizations within the context of the broader class of Carnot groups. 

A Carnot group of step (or depth) $N$ is a simply connected Lie
group $\G$ whose Lie algebra can be decomposed as
\[
\mathfrak{g}=\mathcal{V}_{1}\oplus\cdots\oplus \mathcal{V}_{N},
\]
where
\[
\lbrack \mathcal{V}_{i},\mathcal{V}_{j}]=\mathcal{V}_{i+j}
\]
and
\[
\mathcal{V}_{s}=0,\text{ for }s>N.
\]
Such decomposition of $\mathfrak{g}$ is called a Lie algebra stratification and the above bracket relations show that Carnot groups are nilpotent: any Lie  bracket of length more than $N$ has to be zero.

The number
\[
Q=\sum_{i=1}^N i \dim \mathcal{V}_{i}
\]
is called the homogeneous dimension of $\G$ and any element 
$\mathbb{X} \in \mathfrak{g}$ can be identified with  a left invariant vector field on $\G$ through the following action on smooth functions:
\[
df(\mathbb{X} (g)):=\lim_{t \to 0} \frac{ f\left( g e^{t\mathbb{X}} \right)-f(g)}{t}, \quad g\in \G.
\]

One can then define horizontal Brownian motions in Carnot groups using stochastic differential equations (see also Section \ref{BM Lie group} about Brownian motions on Lie groups).

\begin{definition}
Let $\G$ be a Carnot group with unit $\mathbf{e}$ and with Lie algebra stratification:
\[
\mathfrak{g}=\mathcal{V}_{1}\oplus\cdots\oplus \mathcal{V}_{N}.
\]
If $\mathbb{X}_1,\dots,\mathbb{X}_d$ is a basis of $\mathcal{V}_{1}$ and if $(B(t))_{t \ge 0}$ is a $d$-dimensional Brownian motion, then the $\G$-valued process $(Y(t))_{t \ge 0}$ solving the Stratonovich stochastic differential equation
\[
dY(t) =\sum_{i=1}^d \mathbb{X}_i (Y(t)) \circ dB_{i}(t) , \quad Y_0=\mathbf{e},
\]
is called a horizontal Brownian motion on $\G$.
\end{definition}

The Lie algebra structure of $\mathfrak{g}$ implies that H\"ormander's Lie bracket generating condition is satisfied for the system of vector fields $\mathbb{X}_1,\dots,\mathbb{X}_d$. In particular the horizontal Brownian motion admits a smooth density with respect to the Haar measure. This density is called the horizontal heat kernel. There are no explicit formulas  for the horizontal heat kernel beyond the two-step generating case (see further comments below). However, due to the rich structure of Carnot groups (namely the existence of scaling structure) one can get quite precise estimates for the horizontal heat kernel and its derivatives, see \cite{MR1218884}. We refer to \cite{MR2154760} for a  discussion on Carnot groups and horizontal Brownian motions on them.

\begin{example}[Commutative case] The groups $\left( \mathbb{R}^d ,+ \right)$ are the only commutative Carnot groups.  Any standard Brownian motion in $\mathbb{R}^d$ is then a horizontal Brownian motion in the Carnot group $\left( \mathbb{R}^d ,+ \right)$.
\end{example}

\begin{example}[Heisenberg groups] Recall the Heisenberg group $\mathbf{H}^{2n+1} =\mathbb{R}^{2n} \times \mathbb{R}$
endowed with the group law
\[
(x,\alpha) \star (y, \beta)=\left( x+y, \alpha + \beta
+ \omega (x,y) \right),
\]
where $\omega$ is the standard symplectic form on
$\mathbb{R}^{2n}$, that is
\[
\omega(x,y)= x^t \left(
\begin{array}{ll}
0 & -\mathrm{I}_{n} \\
\mathrm{I}_{n} & ~~~0
\end{array}
\right) y.
\]
On the Lie algebra
$\mathfrak{h}^{2n+1}$, the Lie bracket is given by
\[
[ (x,\alpha) , (y, \beta) ]=\left( 0,2 \omega (x,y) \right),
\]
and it is easily seen that
\[
\mathfrak{h}^{2n+1}=\mathcal{V}_1 \oplus \mathcal{V}_2,
\]
where $\mathcal{V}_1 =\mathbb{R}^{2n} \times \{ 0 \}$ and
$\mathcal{V}_2= \{ 0 \} \times \mathbb{R}$. Therefore
$\mathbf{H}^{2n+1}$ is a Carnot group of step 2. If $(B(t))_{t \ge 0}$ is a standard Brownian motion in $\mathbb{R}^{2n}$, then the process
\[
Y(t)=\left( B(t), \int_0^t  \omega (B(s),\circ dB(s))\right)
\]
is a horizontal Brownian motion in $\mathbf{H}^{2n+1}$.
\end{example}

\begin{example}[Engel group] The Engel group $\mathbb E$ is the $4$-dimensional Lie group of matrices
\[
\mathbb{E}= \left\{ 
\left(
\begin{array}{llll}
1 & x & \frac{x^2}{2} & z \\
0 & 1 & x & w \\
0 & 0 & 1 & y \\
0 & 0 & 0 & 1
\end{array}
\right), x,y,w, z \in \mathbb{R} \right\}.
\]
Its Lie algebra $\mathfrak{E}$ is generated by the matrices
\[
X= \left(
\begin{array}{llll}
0 & 1 & 0 & 0 \\
0 & 0 & 1 & 0 \\
0 & 0 & 0 & 0 \\
0 & 0 & 0 & 0
\end{array}
\right),
Y= \left(
\begin{array}{llll}
0 & 0 & 0 & 0 \\
0 & 0 & 0 & 0 \\
0 & 0 & 0 & 1 \\
0 & 0 & 0 & 0
\end{array}
\right)
\]
\[
W= \left(
\begin{array}{llll}
0 & 0 & 0 & 0 \\
0 & 0 & 0 & 1 \\
0 & 0 & 0 & 0 \\
0 & 0 & 0 & 0
\end{array}
\right),
Z= \left(
\begin{array}{llll}
0 & 0 & 0 & 1 \\
0 & 0 & 0 & 0 \\
0 & 0 & 0 & 0 \\
0 & 0 & 0 & 0
\end{array}
\right),
\]
for which we have the following structure relations,
\[
[X,Y]=W, \quad [X,W]=Z
\]
while all other brackets are zero. The Engel group is easily seen to be a Carnot group of step 3 with stratification
\[
\mathfrak{E}= \mathbf{span} \{ X,Y \} \oplus  \mathbf{span} \{ W \} \oplus  \mathbf{span} \{ Z \}
\]
If $(B(t))_{t \ge 0}$ is a two-dimensional Brownian motion, then the process
\[
Y(t)= \left(
\begin{array}{llll}
1 & B_1(t) & \frac{1}{2}(B_1(t))^2 & \frac{1}{2} \int_0^t (B_1(s))^2 dB_2(s) \\
0 & 1 & B_1(t) & \int_0^t B_1(s) dB_2(s) \\
0 & 0 & 1 & B_2(t) \\
0 & 0 & 0 & 1
\end{array}
\right)
\]
is a horizontal Brownian motion on $\mathbb{E}$. To the best of our knowledge, there is no analogue of the L\'evy area formula for the horizontal Brownian motion on the Engel group in the sense that there exists no closed formula for the Fourier transform of the functional $\int_0^t (B_1(s))^2 dB_2(s)$. However, we point out that the heat kernel of the horizontal Brownian motion might be analyzed through the representation theory of nilpotent groups, see \cite{MR3518678} for further details. 

\end{example}
\subsection{$H$-type groups}\label{sec-H-group}
$H$-type groups form a special class of Carnot groups. Actually, a $H$-type group is simply $\R^{2n+m} =\R^{2n}\times \R^{m} $ equipped with the following product:
$$
v*w=v+w+ [v,w]
$$  
where $[\cdot,\cdot]: \R^{2n+m}  \times \R^{2n+m} \to \{0\}\times   \R^{m} $ is a Lie bracket operation on $\R^{2n+m}$ whose  center is  $\{0\}\times   \R^{m}$ and such that the map $J_z:\R^{2n} \to \R^{2n}$ defined for $z\in \R^{m}$  by:
\begin{align}\label{representation}
\langle J_z(x),y\rangle= \langle [x,y],z\rangle \textrm{ for all } x,y \in \R^{2n},
\end{align}
is orthogonal when $|z|=1$. Here, we identify $x\in \R^{2n}$ with $(x,0)\in  \R^{2n}\times \R^{m}$ and $z\in  \R^{m} $ with $(0,z) \in \R^{2n}\times \R^{m}$ ($\langle \cdot, \cdot \rangle$ still denotes the  Euclidean inner product). It is easy to see that any H-type group is a Carnot group of step 2 with stratification:
\[
\mathbb{R}^{2n+m}=\mathbb{R}^{2n} \oplus \mathbb{R}^m.
\]

H-type groups can be constructed using the representation theory of Clifford algebras. More precisely, let $\mathbf{Cl}(\mathbb R^m)$ be the Clifford algebra of $\mathbb R^m$, that is $\mathbf{Cl}(\mathbb R^m)$ is the free algebra generated by $\mathbb R^m$ subject to the relations
\[
u\cdot v+v\cdot u= - 2\langle u,v\rangle, \quad u,v \in \mathbb R^m.
\]
A representation of $\mathbf{Cl}(\mathbb R^m)$ on $\mathbb{R}^{2n}$ is then an algebra morphism $J$ from $\mathbf{Cl}(\mathbb R^m)$ to the space of linear  maps $\mathbb{R}^{2n} \to \mathbb{R}^{2n} $ such that  for every $u \in \mathbb R^m$ with $|u|=1$, $J_u$ is an orthogonal map. If $J$ is such representation, then equation \eqref{representation}, defines a Lie bracket  $ \R^{2n+m} \to \{0\}\times   \R^{m} $ with the required properties.

The Heisenberg group $\mathbf{H}^{2n+1}$ comes from the representation of $\mathbf{Cl}(\mathbb R)\simeq \mathbb C$ on $\mathbb{R}^{2n}$ given by
\[
J_{b}= \left(
\begin{array}{ll}
0  & - b{I}_{n} \\
b {I}_{n} & 0
\end{array}
\right), \quad b \in \mathbb{R}.
\]

Denoting $\bH$ the field of quaternions and $\mathrm{Im}(\bH)$ the set of imaginary quaternions, the quaternionic Heisenberg group  comes from the representation of $\mathbf{Cl}(\mathbb R^3) \simeq \mathbf{Cl}(\mathrm{Im}(\bH))$ on $\mathbb{R}^{4n}\simeq \bH^n$ which is  given by:
\[
J_u (q_1,\dots,q_n)=(uq_1,\dots,uq_n), \quad u \in \mathrm{Im}(\bH),\  q_i \in \bH.
\]

In a similar way, denoting $\mathbb O$ the set of octonions and $\mathrm{Im}(\mathbb O)$ the set of imaginary octonions, the octonionic Heisenberg group  comes from the representation of $\mathbf{Cl}(\mathbb R^7) \simeq \mathbf{Cl}(\mathrm{Im}(\mathbb O))$ on $\mathbb{R}^{8}\simeq \mathbb O$ which is  given by:
\[
J_u(x) =ux, \quad u \in \mathrm{Im}(\mathbb O),\  x \in \mathbb O.
\]

On a H-type group, the left-invariant vector fields which coincide with $(\partial/\partial x_j)_{1 \leq j \leq n}$ at the identity write:
$$
\mathbb{X}_q = \frac{\partial}{\partial x_j} +
\sum_{j=1}^m \langle J_{u_j} x, e_q\rangle \frac{\partial}{\partial z_j},
\quad q=1,\dots,2n,
$$
where $(e_1,\dots e_{2n})$ is the canonical basis of $\R^{2n}$ and $(u_1,\dots u_{m})$ the canonical basis of $\R^{m}$.  

Accordingly, if $(B(t))_{t \ge 0}$ is a standard Brownian motion in $\R^{2n}$, then the process
\[
Y(t)= \left(B(t),  \int_0^t [B(s),dB(s)]\right), \quad t\geq 0,
\]
is a horizontal Brownian motion on the H-type group. In this respect, the $\mathbb{R}^m$ valued functional
\[
S(t) = \int_0^t [B(s),dB(s)]
\]
is a natural generalization of the complex or quaternionic stochastic areas. The following results then hold true:

\begin{lemma}
Let $r(t)=| B(t) |$, $t \ge 0$. Then, the couple
\[
(r(t) , S(t))_{t \ge 0}
\]
is a Markov process with generator
\[
\mathcal{L}=\frac{1}{2}  \frac{\partial^2}{\partial r^2}+ 
\frac{2n-1}{2r} \frac{\partial}{\partial r}+\frac{1}{2}r^2 \Delta_{\R^m}  .
\]
Consequently, the following equality in distribution holds:
 \[
\left( r(t) ,S(t) \right)_{t \ge 0} \overset{d}{=}  \left( r(t),\beta\left(\int_0^t  r^2(s)ds\right)\right)_{t \ge 0},
\]
where $(r(t))_{t \ge 0}$ is a $2n$-dimensional Bessel process  and $(\beta(t))_{t \ge 0}$ is a standard $m$-dimensional Brownian motion independent from $(r(t))_{t \ge 0}$.
\end{lemma}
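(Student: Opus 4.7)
The plan is to follow the template established in the complex, quaternionic and octonionic cases and express everything in terms of normalized martingales produced by $B$ and the operators $J_{u_j}$, then invoke L\'evy's characterization together with the H-type structure of the Clifford representation.

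First I would write $S_j(t)=\int_0^t\langle J_{u_j}B(s),dB(s)\rangle$ using the defining identity $\langle J_z x,y\rangle=\langle [x,y],z\rangle$ applied with $z=u_j$, so that the $m$ components of $S$ are each scalar stochastic integrals. By It\^o's formula applied to $r(t)=|B(t)|$, one obtains the Bessel-type SDE
\[
dr(t)=\frac{2n-1}{2r(t)}\,dt+d\beta(t),\qquad \beta(t):=\int_0^t\frac{\langle B(s),dB(s)\rangle}{|B(s)|},
\]
and likewise
\[
dS_j(t)=r(t)\,d\gamma_j(t),\qquad \gamma_j(t):=\int_0^t\frac{\langle J_{u_j}B(s),dB(s)\rangle}{|B(s)|}.
\]

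Next I would identify the joint law of $(\beta,\gamma_1,\dots,\gamma_m)$. By direct computation of quadratic covariations, the problem reduces to showing the pointwise identities
\[
\langle J_{u_j}x,J_{u_k}x\rangle=\delta_{jk}|x|^2,\qquad \langle J_{u_j}x,x\rangle=0
\]
for all $x\in\mathbb R^{2n}$. The second identity follows from skew-symmetry of $J_{u_j}$, which itself follows from the H-type property ($J_{u_j}$ orthogonal) combined with the Clifford relation $J_{u_j}^2=-I$. For the first, I expand $\langle J_{u_j}x,J_{u_k}x\rangle=-\langle x,J_{u_j}J_{u_k}x\rangle$ and use the Clifford anticommutation $J_{u_j}J_{u_k}+J_{u_k}J_{u_j}=-2\delta_{jk}I$, which makes $J_{u_j}J_{u_k}$ skew-symmetric for $j\neq k$ and equal to $-I$ for $j=k$. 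L\'evy's characterization (Proposition \ref{carac Levy}) then yields that $\beta,\gamma_1,\dots,\gamma_m$ are mutually independent one-dimensional Brownian motions.

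Having the SDE $dr(t)=\tfrac{2n-1}{2r(t)}\,dt+d\beta(t)$ and $dS_j(t)=r(t)\,d\gamma_j(t)$ with $(\beta,\gamma_1,\dots,\gamma_m)$ an $(m+1)$-dimensional Brownian motion, the Markov property of $(r,S)$ is immediate and reading off the generator gives exactly $\mathcal L$ as stated. For the distributional identity I would observe that $r$ depends only on $\beta$ and is therefore independent of $\gamma:=(\gamma_1,\dots,\gamma_m)$, which is an $m$-dimensional Brownian motion; the Dambis--Dubins--Schwarz time change applied to the continuous martingale $S$ with bracket $\int_0^t r^2(s)\,ds\cdot I_m$ then gives $S(t)=\beta'\!\left(\int_0^t r^2(s)\,ds\right)$ for some $m$-dimensional Brownian motion $\beta'$ independent of $r$, as claimed. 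The main obstacle is really just the algebraic step that identifies $(\beta,\gamma_1,\dots,\gamma_m)$ as an independent family; everything else is a routine Itô-and-time-change argument paralleling the previous subsections.
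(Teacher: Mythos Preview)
Your proposal is correct and follows exactly the template the paper uses in Lemma~\ref{rad heisenberg} (and its quaternionic/octonionic analogues), to which the paper explicitly defers for this H-type statement. The only genuinely new content in the general H-type case is the algebraic step you supply---that the Clifford relations $J_{u_j}J_{u_k}+J_{u_k}J_{u_j}=-2\delta_{jk}I$ together with the orthogonality of $J_{u_j}$ make $(\beta,\gamma_1,\dots,\gamma_m)$ an $(m{+}1)$-dimensional Brownian motion via L\'evy's characterization---and you handle it correctly.
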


\begin{theorem}
For $t>0$ and $x \in \mathbb R^{2n}$, and $\lambda \in \mathbb R^m$
\[
\mathbb{E}\left( e^{i\langle \lambda, S(t)\rangle} | B(t)=x\right)=\left(\frac{|\lambda| t}{\sinh |\lambda | t} \right)^{n} e^{-\frac{|x|^2}{2t}(|\lambda | t \coth |\lambda| t -1) },
\] 
\end{theorem}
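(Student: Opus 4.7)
The plan is to exploit the preceding Lemma, which already encodes the $H$-type algebraic structure (via the orthogonality property of $J_\lambda$ forcing the generator of $S$ conditional on $|B|$ to be the isotropic operator $\frac{1}{2}r^{2}\Delta_{\mathbb R^{m}}$). Once that Lemma is granted, the computation reduces to one already carried out in the complex case, Theorem \ref{levy area 2}, with the scalar $\lambda$ there replaced by the norm $|\lambda|$ and the Bessel dimension taken to be $2n$.

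First, I use rotational invariance of $(B(t))_{t\ge 0}$ to reduce the conditioning event to $|B(t)|=|x|$, and then apply the identity in law from the Lemma, $(r(t),S(t))\overset{d}{=}\bigl(r(t),\beta(\int_{0}^{t}r^{2}(s)\,ds)\bigr)$, with $\beta$ an $m$-dimensional Brownian motion independent of the $2n$-dimensional Bessel process $r$. Conditioning on the $\sigma$-algebra generated by $r$ and using $\langle\lambda,\beta(\tau)\rangle\sim\mathcal{N}(0,|\lambda|^{2}\tau)$ gives
\[
\mathbb{E}\!\left(e^{i\langle\lambda,S(t)\rangle}\,\Big|\,B(t)=x\right)
=\mathbb{E}\!\left(e^{-\frac{|\lambda|^{2}}{2}\int_{0}^{t}r^{2}(s)\,ds}\,\Big|\,r(t)=|x|\right).
\]
The right-hand side is exactly the object produced inside the proof of Theorem \ref{levy area 2}: for a $2n$-dimensional Bessel process $r$ and any $\mu>0$, Yor's transform (Girsanov change of measure of density $\exp(-\mu\int_{0}^{t}r\,d\beta-\frac{\mu^{2}}{2}\int_{0}^{t}r^{2}\,ds)$, which kills the quadratic term and turns $r$ into the norm of a $2n$-dimensional Ornstein--Uhlenbeck process of parameter $-\mu$, hence a Gaussian) yields
\[
\mathbb{E}\!\left(e^{-\frac{\mu^{2}}{2}\int_{0}^{t}r^{2}(s)\,ds}\,\Big|\,r(t)=\rho\right)
=\left(\frac{\mu t}{\sinh\mu t}\right)^{n}e^{-\frac{\rho^{2}}{2t}(\mu t\coth\mu t-1)}.
\]
Specializing to $\mu=|\lambda|$, $\rho=|x|$ gives the claim.

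The only substantive step is the collapse of the $\mathbb R^{m}$-valued parameter $\lambda$ to the scalar $|\lambda|$; this reflects the isotropic action of $J$ on the second stratum and is the sole consequence of the defining relation \eqref{representation} needed here, and it has already been absorbed into the preceding Lemma. An alternative but less economical route would be to write $\langle\lambda,S(t)\rangle=\int_{0}^{t}\langle J_{\lambda}B(s),dB(s)\rangle$ using \eqref{representation}, observe that $J_{\lambda}$ is skew-symmetric with $J_{\lambda}^{2}=-|\lambda|^{2}I_{2n}$ (so its spectrum is $\{\pm i|\lambda|\}$, each with multiplicity $n$), and then invoke Theorem \ref{general levy area} directly; the functional calculus for $tJ_{\lambda}/\sinh tJ_{\lambda}$ and $tJ_{\lambda}\coth tJ_{\lambda}-I$ produces the same formula, since both functions depend on $J_{\lambda}$ only through $|\lambda|$ on account of being even.
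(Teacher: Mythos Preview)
Your proof is correct and follows exactly the approach intended by the paper, which states the theorem without proof immediately after the Lemma, the implicit argument being precisely the reduction you carry out (condition on the radial part, use the time-changed $m$-dimensional Brownian motion representation, and appeal to the $2n$-dimensional Bessel computation already done for Theorem~\ref{levy area 2}). Your alternative route via Theorem~\ref{general levy area}, using $J_\lambda^2=-|\lambda|^2 I_{2n}$ so that the functional calculus collapses to scalars, is also valid and is a nice way to bypass the rotational-invariance step entirely.
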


\begin{remark}
For further informations about H-type groups and the study of associated horizontal heat kernels, including precise asymptotics and estimates we refer to \cite{MR2473130}, \cite{Eldredge1,Eldredge2} and \cite{HQLi2,HQLi3}. For formulas associated to more general 2-step Carnot groups we refer to  \cite{MR544727,Gaveau}  and \cite{garofalo2021heat} and the references therein.
\end{remark}
%
%
\section{Brownian winding functionals}
Brownian winding functionals are also stochastic integrals of one forms along Brownian paths. Compared to stochastic areas which reflect the sub-Riemannian structures of principal bundles over homogeneous spaces, they are tied to spherical components of Brownian motions in base spaces. When the unit sphere of the base space is a Lie group, the underlying winding form is (up to a time change) the corresponding Maurer-Cartan form. In order to illustrate this circle of ideas, we shall revisit the well-studied case of the complex plane and its celebrated Spitzer Theorem. Once we do, we shall extend the known results to both complex and quaternionic flat, projective and hyperbolic spaces.

\subsection{Complex windings}

In the punctured complex plane $\mathbb C \setminus \{ 0 \}$, consider the one-form
\[
\eta =\frac{x dy-ydx}{x^2+y^2}=\frac{1}{2i}\frac{\bar{z} dz-zd\bar{z}}{|z|^2}.
\]
Given a smooth path $\gamma: [0,+\infty) \to \mathbb{C} \setminus \{ 0 \}$ one has the polar representation
\[
\gamma(t)=  | \gamma (t) | \exp \left( i \left( \theta(0)+\int_{\gamma[0,t]} \eta \right)\right), \quad t \ge 0,
\]
where  $\theta(0)$ is such that $\gamma(0)=| \gamma(0)| \exp (i \theta (0))$. It is therefore natural to call $\eta$ the winding form around $z=0$ since the integral of a  path $\gamma$ along this form quantifies the angular motion of this path. 

The integral of the winding form along the paths of a two-dimensional Brownian motion  $B (t)=B_1(t)+iB_2(t)$  yields the Brownian winding functional:
\[
\zeta (t)=\int_{B[0,t]} \eta=\int_0^t \frac{B_1(s) dB_2(s)-B_2(s)dB_1(s)}{B_1(s)^2+B_2(s)^2}.
\]

The process $(\zeta (t))_{t \ge 0}$ is not a Markov process in its own natural filtration. However, if we consider the 3-dimensional process
 \[
 X(t)=(B_1(t),B_2(t),\zeta (t)),
 \]
 then $X(t)$ is solution of a stochastic differential equation
 \begin{align*}
 \begin{cases}
 dX_1(t)& =dB_1(t) \\
 dX_2(t)& =dB_2(t) \\
 dX_3(t)& =-\displaystyle\frac{X_2(t)}{ X_1(t)^2+X_2(t)^2} dB_1(t)+\frac{X_1(t)}{ X_1(t)^2+X_2(t)^2}  dB_2(t).
 \end{cases}
 \end{align*}
 As a consequence $X(t)$ is a Markov process with generator
 \[
 L=\frac{1}{2}(\mathbb{X}^2+\mathbb{Y}^2)
 \]
 where 
 \[
 \mathbb{X}=\frac{\partial}{\partial x}-\frac{y}{x^2+y^2} \frac{\partial}{\partial \zeta}
 \]
 \[
 \mathbb{Y}=\frac{\partial}{\partial y}+\frac{x}{x^2+y^2} \frac{\partial}{\partial \zeta}.
 \]

\begin{lemma}\label{skew-winding}
Let $r(t)=| B(t) |=\sqrt{ B_1(t)^2 +B_2(t)^2 }$, $t \ge 0$. Then, the couple
\[
(r(t) , \zeta(t))_{t \ge 0}
\]
is a Markov process with generator
\[
\mathcal{L}=\frac{1}{2}  \frac{\partial^2}{\partial r^2}+ \frac{1}{2r} \frac{\partial}{\partial r}+\frac{1}{2r^2}  \frac{\partial^2}{\partial \zeta^2}.
\]
Consequently, the following equality in distribution holds:
 \[
\left( r(t) ,\zeta(t) \right)_{t \ge 0} \overset{d}{=}  \left( r(t),\gamma\left(\int_0^t  \frac{1}{r(s)^2}ds\right)\right)_{t \ge 0},
\]
where $(r(t))_{t \ge 0}$  is a 2-dimensional Bessel process and  $(\gamma (t))_{t \ge 0}$ is a standard  Brownian motion independent from $(r(t))_{t \ge 0}$.

\end{lemma}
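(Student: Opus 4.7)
The plan is to mimic the proof of Lemma~\ref{rad heisenberg}: apply It\^o's formula to obtain SDEs for $r$ and $\zeta$, recognise the two orthogonal martingale parts as independent Brownian motions via Proposition~\ref{carac Levy}, and then apply the Dambis--Dubins--Schwarz time-change theorem to the winding process. First, It\^o's formula applied to $r(t)=\sqrt{B_1(t)^2+B_2(t)^2}$ gives
\[
dr(t)=\frac{dt}{2r(t)}+\frac{B_1(t)\,dB_1(t)+B_2(t)\,dB_2(t)}{r(t)},
\]
while the third coordinate of the SDE for $X(t)$ recorded above reads
\[
d\zeta(t)=\frac{-B_2(t)\,dB_1(t)+B_1(t)\,dB_2(t)}{r(t)^2}.
\]

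Exactly as in the proof of Lemma~\ref{rad heisenberg}, the two continuous local martingales
\[
\beta(t)=\int_0^t\frac{B_1(s)\,dB_1(s)+B_2(s)\,dB_2(s)}{r(s)},\qquad \gamma(t)=\int_0^t\frac{B_1(s)\,dB_2(s)-B_2(s)\,dB_1(s)}{r(s)}
\]
have quadratic variations equal to $t$ and vanishing cross-variation, so Proposition~\ref{carac Levy} identifies them as two independent standard Brownian motions. Substituting yields the decoupled system
\[
dr(t)=\frac{dt}{2r(t)}+d\beta(t),\qquad d\zeta(t)=\frac{d\gamma(t)}{r(t)},
\]
from which the Markov property of $(r(t),\zeta(t))_{t\ge 0}$ with the stated generator $\mathcal{L}$ is immediate, as is the fact that $r$ is a $2$-dimensional Bessel process.

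For the distributional identity I observe that $\zeta$ is a continuous martingale driven by the Brownian motion $\gamma$, with quadratic variation $\langle\zeta\rangle_t=\int_0^t r(s)^{-2}\,ds$. By Dambis--Dubins--Schwarz there exists a Brownian motion $\tilde\gamma$ such that $\zeta(t)=\tilde\gamma\bigl(\int_0^t r(s)^{-2}\,ds\bigr)$, and the independence of $\gamma$ from $\beta$ (hence from $r$) transfers to independence of $\tilde\gamma$ from $r$. The one technical subtlety, and the only point I would expect to need care, is the integrability of $1/r^2$ near $s=0$: because the winding form is singular at the origin, the statement implicitly requires $B(0)\neq 0$, in which case $r$ stays bounded away from $0$ on a neighbourhood of $s=0$ almost surely and all stochastic integrals involved are well defined. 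Beyond this point, the argument is structurally identical to the Heisenberg case.
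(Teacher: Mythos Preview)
Your proof is correct and follows essentially the same route as the paper: apply It\^o's formula, recognise $\beta$ and $\gamma$ as independent Brownian motions via L\'evy's characterisation, and read off the generator and the skew-product representation. You supply a bit more detail (the explicit DDS step and the remark on $B(0)\neq 0$) than the paper, which simply says ``the conclusion easily follows,'' but the argument is the same.
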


\begin{proof}
The proof is close to that of Lemma \ref{rad heisenberg}. From It\^o's formula, we have
\begin{align*}
dr(t) &=\frac{dt}{2 r(t)}+\frac{B_1(t) dB_1(t)+B_2(t) dB_2(t)}{ \sqrt{ B_1(t)^2 +B_2(t)^2 }} \\
d\zeta(t)&=\frac{1}{r(t)} \frac{B_1(t) dB_2(t)-B_2(t) dB_1(t)}{ \sqrt{ B_1(t)^2 +B_2(t)^2 }}.
\end{align*}
Since the two processes
\begin{align*}
\beta(t)=\int_0^t \frac{B_1(s) dB_1(s)+B_2(s) dB_2(s)}{ \sqrt{ B_1(s)^2 +B_2(s)^2 }} \\
\gamma (t)=\int_0^t \frac{B_1(s) dB_2(s)-B_2(s) dB_1(s)}{ \sqrt{ B_1(s)^2 +B_2(s)^2 }},
\end{align*}
are two independent Brownian motions, the conclusion easily follows.
\end{proof}

The analogue of L\'evy area formula is due to Yor \cite{YorWinding} and reads:

\begin{theorem}[Yor's winding formula]\label{Yorwinding}
For $t>0$ and $x \in \R^2$, and $\lambda \ge 0$
\[
\mathbb{E}\left( e^{i\lambda \zeta(t)} | r(t)=|x|\right)=\frac{I_{\lambda}(| x |\rho/t)}{I_{0}(| x| \rho/t)}.
\] 
where $I_{\lambda}$ stands for the modified Bessel function of the first kind with index $\lambda$ and $\rho=| B(0)|$.
\end{theorem}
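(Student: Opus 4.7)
The strategy mirrors the proof of Theorem \ref{levy area}: reduce the problem to the conditional Laplace transform of an additive functional of a Bessel process, then apply the Yor transform method. First, by Lemma \ref{skew-winding}, $\zeta(t)$ has the same law as $\gamma(A_t)$ where $A_t:=\int_0^t ds/r(s)^2$ and $\gamma$ is a Brownian motion independent of $r$. Conditioning first on the trajectory of $r$ (which makes $\zeta(t)$ a centered Gaussian with variance $A_t$) and then on $r(t)=|x|$, one obtains
\[
\mathbb{E}\!\left(e^{i\lambda\zeta(t)}\mid r(t)=|x|\right)=\mathbb{E}\!\left(e^{-\frac{\lambda^{2}}{2}A_t}\,\Big|\,r(t)=|x|\right),
\]
where $(r(t))_{t\ge 0}$ is a 2-dimensional Bessel process started at $\rho=|B(0)|$. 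The task is thus to compute this conditional Laplace transform.

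To that end, apply the Yor transform of Theorem \ref{Yor transform section}. If $\beta$ denotes the Brownian motion driving $r$, It\^o's formula gives $\log(r(t)/\rho)=\int_0^t d\beta(s)/r(s)$, and hence
\[
\mathcal{E}_t^\lambda:=\exp\!\left(\lambda\int_0^t\frac{d\beta(s)}{r(s)}-\frac{\lambda^{2}}{2}A_t\right)=\left(\frac{r(t)}{\rho}\right)^{\lambda}e^{-\frac{\lambda^{2}}{2}A_t}.
\]
Since $A_t\ge 0$, this process is dominated by $(r(t)/\rho)^\lambda$, which has moments of all orders; so $(\mathcal{E}_t^\lambda)_{t\ge 0}$ is a true martingale. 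Under $\mathbb{P}^\lambda:=\mathcal{E}_t^\lambda\cdot\mathbb{P}$, Girsanov's theorem shows that $r$ satisfies $dr(t)=\frac{2\lambda+1}{2r(t)}dt+d\beta^\lambda(t)$, i.e.\ it becomes a Bessel process of index $\lambda$ starting from $\rho$.

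Undoing the change of measure, for any bounded Borel $f$,
\[
\mathbb{E}\!\left(f(r(t))e^{-\frac{\lambda^{2}}{2}A_t}\right)=\rho^{\lambda}\,\mathbb{E}^\lambda\!\left(\frac{f(r(t))}{r(t)^{\lambda}}\right).
\]
Writing both expectations as integrals against the explicit Bessel transition densities
\[
q_t^\nu(\rho,y)=\frac{y}{t}\left(\frac{y}{\rho}\right)^{\nu}e^{-(y^{2}+\rho^{2})/(2t)}I_{\nu}\!\left(\frac{\rho y}{t}\right),
\]
specialising to $y=|x|$ and dividing by $q_t^0(\rho,|x|)$ (since the conditional expectation is the ratio of joint to marginal density), the Gaussian and power-type factors cancel perfectly and one is left with exactly $I_\lambda(\rho|x|/t)/I_0(\rho|x|/t)$, as claimed. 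The only delicate point I anticipate is the genuine (rather than merely local) martingale property of $\mathcal{E}^\lambda$, resolved by the pathwise bound above without any recourse to Novikov's criterion; the remainder of the argument is routine bookkeeping with Bessel semigroups.
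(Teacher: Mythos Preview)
Your proof is correct and follows essentially the same approach as the paper: the paper simply cites Example~\ref{Hartman-Watson} for the conditional Laplace transform of $\int_0^t r(s)^{-2}\,ds$, and that example carries out precisely the Girsanov/Yor-transform argument with $h(r)=r^{\lambda}$ that you have written out in detail. The one point you might tighten is the justification that $\mathcal{E}^{\lambda}$ is a true martingale: the pathwise bound $\mathcal{E}_t^{\lambda}\le (r(t)/\rho)^{\lambda}$ really gives $\sup_{s\le t}\mathcal{E}_s^{\lambda}\le \sup_{s\le t}(r(s)/\rho)^{\lambda}$, and it is the integrability of this running supremum (immediate since $r=|B|$ for a planar Brownian motion) that upgrades the local martingale.
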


\begin{proof}
From Lemma \ref{skew-winding}, 
\begin{align*}
\mathbb{E}\left( e^{i\lambda \zeta(t)} | r(t)=| x |\right)& =\mathbb{E}\left( e^{i \lambda \gamma_{\int_0^t  \frac{1}{r(s)^2}ds}} | r(t)=| x |\right) \\
 &=\mathbb{E}\left( e^{-\frac{\lambda^2}{2} \int_0^t  \frac{1}{r(s)^2}ds} | r(t)=| x |\right).
\end{align*}
Note that $(r(t))_{t \ge 0}$ is a Bessel diffusion with dimension 2. Therefore, using Example \ref{Hartman-Watson}, one has
\[
\mathbb{E}\left( e^{-\frac{\lambda^2}{2} \int_0^t  \frac{1}{r(s)^2}ds} | r(t)=| x |\right) =\frac{I_{\lambda}(| x | \rho/t)}{I_{0}(| x | \rho/t)},
\]
as desired.
\end{proof}

\begin{remark}
The inverse Fourier  transform of the function $\lambda \to \frac{I_{\lambda}(| x |\rho/t)}{I_{0}(| x| \rho/t)} $ can be expressed in terms of the unwrapped von Mises distribution, which is a distribution closely related to the Hartman-Watson distribution, see \cite{YorWinding}.
\end{remark}

We are now in position to prove Spitzer theorem \cite{Spitzer} concerning the asymptotic windings of the planar Brownian motion.

\begin{theorem}[Spitzer theorem]
When $t \to +\infty$, in distribution
\[
\frac{ 2 }{\ln t} \zeta (t) \to C_1
\]
where $C_1$ is a Cauchy distribution with parameter 1, i.e. $C_1$ is a random variable with density $1/[\pi (1+ x^2)]$.
\end{theorem}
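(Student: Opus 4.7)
The plan is to exploit the skew-product decomposition of Lemma~\ref{skew-winding}: $\zeta(t) \stackrel{d}{=} \gamma(H_t)$ where $H_t = \int_0^t ds/r(s)^2$ and $\gamma$ is a standard Brownian motion independent of the $2$-dimensional Bessel process $r$. Conditionally on $H_t$, $\gamma(H_t) \stackrel{d}{=} \sqrt{H_t}\,\gamma(1)$ with $\gamma(1)$ a standard normal independent of $r$. Writing $C_1 = N'/|N|$ for independent standard normals $N,N'$, it therefore suffices to prove that $\frac{2 \sqrt{H_t}}{\ln t} \to 1/|N|$ in distribution, jointly with independence from $\gamma(1)$; the conclusion $\frac{2\zeta(t)}{\ln t} \to \gamma(1)/|N| \stackrel{d}{=} C_1$ then follows by Slutsky-type arguments.

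To analyze $H_t$, I would perform a Lamperti-type time change. By It\^o's formula applied to $\ln r(s)$ together with the SDE satisfied by $r$ (computed in the proof of Lemma~\ref{rad heisenberg}), one finds $d\ln r(s) = d\beta(s)/r(s)$, so $\ln r$ is a local martingale with quadratic variation $H_s$. Setting $\tau(u)=\inf\{s\ge 0: H_s > u\}$, Dambis--Dubins--Schwarz provides a standard Brownian motion $W$ such that $r(\tau(u)) = r(0) e^{W(u)}$, whence a change of variable gives
\begin{equation*}
\tau(u) = \int_0^{\tau(u)} ds = \int_0^u r(\tau(v))^2\,dv = r(0)^2 \int_0^u e^{2W(v)}\,dv.
\end{equation*}
Brownian scaling $W(uv) \stackrel{d}{=} \sqrt{u}\,\widetilde W(v)$ yields $\int_0^u e^{2W(v)} dv \stackrel{d}{=} u\int_0^1 e^{2\sqrt u\,\widetilde W(s)}\,ds$, and a Laplace-type argument shows
\begin{equation*}
\frac{1}{\sqrt u}\,\ln \int_0^1 e^{2\sqrt u\,\widetilde W(s)}\,ds \;\longrightarrow\; 2\max_{0\le s\le 1} \widetilde W(s)
\end{equation*}
in probability as $u\to\infty$ (the upper bound is immediate; the lower bound comes from restricting to a small neighborhood of the argmax). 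By the reflection principle, $M := \max_{0\le s \le 1}\widetilde W(s) \stackrel{d}{=} |N|$. Hence $\ln \tau(u)/\sqrt u \to 2|N|$ in distribution.

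Inverting the time change: for any $a>0$,
\begin{equation*}
\left\{ \frac{4 H_t}{(\ln t)^2} \le a \right\} = \left\{ \tau\!\left(\tfrac{a(\ln t)^2}{4}\right) \ge t \right\} = \left\{ \frac{\ln \tau(a(\ln t)^2/4)}{\sqrt{a(\ln t)^2/4}} \ge \frac{2}{\sqrt a} \right\},
\end{equation*}
whose probability converges to $P(2|N| \ge 2/\sqrt a) = P(1/N^2 \le a)$. Thus $4H_t/(\ln t)^2 \to 1/N^2$ in distribution, so $2\sqrt{H_t}/\ln t \to 1/|N|$. Since $\gamma$ is independent of $r$ (hence of $H$), the pair $(\gamma(1),\,2\sqrt{H_t}/\ln t)$ converges jointly to independent $(\gamma(1),1/|N|)$, and the product converges to $C_1$. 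The main technical obstacle is the Laplace-type asymptotic for $\int_0^1 e^{2\sqrt u\,\widetilde W(s)}\,ds$: one must show that the integral concentrates near the argmax of $\widetilde W$ in a way that survives normalization by $\sqrt u$. This is where a careful use of Brownian path regularity (e.g., H\"older continuity near the maximum) is required, and everything downstream is straightforward measure-theoretic bookkeeping.
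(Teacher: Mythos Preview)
Your proof is correct and takes a genuinely different route from the paper. The paper integrates Yor's winding formula (Theorem~\ref{Yorwinding}) against the explicit Bessel transition density to obtain
\[
\mathbb{E}\left(e^{i\lambda \zeta(t)}\right)=\exp\!\left(-\frac{\rho^2}{2t}\right)\int_0^{\infty} r\,e^{-r^2/2}\,I_{\lambda}\!\left(\frac{\rho r}{\sqrt t}\right)dr,
\]
then substitutes $\lambda\mapsto 2\lambda/\ln t$ and uses the series expansion of $I_\nu$ near the origin to get $\mathbb{E}(e^{2i\lambda\zeta(t)/\ln t})\to e^{-\lambda}$. Your argument bypasses Yor's formula and Bessel functions entirely: the Lamperti representation turns the clock $H_t$ into an exponential functional of Brownian motion, a Laplace-method estimate gives $\ln\tau(u)/\sqrt u\to 2|N|$, and the Cauchy limit drops out from the ratio-of-normals representation.

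What each approach buys: the paper's method is analytically short once the Yor formula is in hand and identifies the limiting characteristic function directly; it is also the template reused later for the curved analogs (Theorems~\ref{windingCP}, \ref{windingCH1}, \ref{windingHP}), where Girsanov/Yor transforms of Jacobi diffusions replace the Bessel computation. Your pathwise argument gives a sharper probabilistic picture, namely that $4H_t/(\ln t)^2$ converges to an inverse $\chi^2_1$ (equivalently, the hitting time $T_1(\beta)$ of a standard Brownian motion), which is exactly the mechanism behind the matrix generalization mentioned at the end of Chapter~2. One minor remark: the Laplace asymptotic you flag as the main obstacle needs only continuity of the Brownian path, not H\"older regularity. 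For any $\varepsilon>0$ the set $\{s:\widetilde W(s)>M-\varepsilon\}$ has positive Lebesgue measure $A_\varepsilon$ almost surely, and $P(A_\varepsilon\le e^{-\varepsilon\sqrt u})\to 0$ since $A_\varepsilon>0$ a.s.; this already yields the lower bound.
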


\begin{proof}
According to Section \ref{Bessel section}, the density of $r(t)$ is given by
\begin{align*}
\frac{1}{t} r \exp \left(-\frac{\rho^2+r^2}{2t} \right) I_0 \left( \frac{ \rho r }{t} \right).
\end{align*}
Therefore, from the proof of Theorem \ref{Yorwinding}, one deduces
\begin{align*}
\mathbb{E}\left( e^{i\lambda \zeta(t)} \right)
& =\frac{1}{t} \int_0^{+\infty} r \exp \left(-\frac{\rho^2+r^2}{2t} \right) I_\lambda  \left( \frac{ \rho r }{t} \right)dr
\\& =\exp \left(-\frac{\rho^2}{2t} \right)\int_0^{+\infty} r \exp \left(-\frac{r^2}{2} \right) I_\lambda  
\left( \frac{ \rho r }{\sqrt{t}} \right)dr.
\end{align*}
Using the expansion
\[
I_{\lambda} (x)=\sum_{m=0}^{+\infty} \frac{1}{m! \Gamma(m+\lambda+1)} \left( \frac{x}{2} \right)^{2m+\lambda},
\]
we see that
\[
I_{\frac{2 \lambda}{\ln t}}  \left( \frac{ \rho r }{\sqrt{t}} \right) \sim_{t \to +\infty} \left( \frac{ \rho r }{2\sqrt{t}} \right)^{\frac{2\lambda}{\ln t}} \sim_{t \to +\infty} e^{-\lambda},
\]
whence
\[
\lim_{t \to +\infty} \mathbb{E}\left( e^{i2\lambda \frac{\zeta(t)}{\ln t}} \right)=e^{-\lambda}
\]
as claimed. 
\end{proof}

\begin{remark}
The Yor's winding formula admits many other consequences besides Spitzer's theorem. In particular, one can infer from it the law of winding numbers of the Brownian loop, which is a Brownian motion started at $x  \neq 0$ and conditioned to come back to $x$ at time 1. Indeed, one has
\[
\mathbb{E}\left( e^{i\lambda \zeta(1)} | |B(1)|=|x|\right)=\frac{I_{\lambda}(| x |^2)}{I_{0}(| x|^2)}.
\] 
After \cite{YorWinding}, it follows then that 
\[
\mathbb{E}\left( e^{i\lambda \zeta(1)} | B(1)=x\right)=e^{-|x|^2}\sum_{n \in \mathbb{Z}} I_{|n+\lambda|}(|x|^2).
\] 
So, the distribution of  $\zeta(1)$ conditionally to $B(1)=x$ can be computed using a Fourier inversion formula and we obtain that for $n \in \mathbb{Z}\setminus \{ 0 \}$
\begin{align*}
\mathbb{P}( \zeta(1) =2\pi n )=e^{-|x|^2} ( \Phi_{|x|^2} ((2n-1)\pi) -\Phi_{|x|^2} ((2n+1)\pi))
\end{align*}
where
\[
\Phi_{r} (u)=\frac{u}{\pi} \int_0^{+\infty} e^{- r \cosh (t) }\frac{dt}{t^2+u^2}.
\]
For $n=0$ one gets
\[
\mathbb{P}( \zeta(1) =0 )=1-2e^{-|x|^2} \Phi_{|x|^2}(\pi).
\]
\end{remark}

\subsection{Quaternionic windings}
Let $\mathbb{H}$ be the quaternionic field 
\[
\mathbb{H}=\{q=t+xI+yJ+zK, (t,x,y,z)\in\R^4\},
\]
where  $I,J,K \in \mathfrak{su}(2)$ are given by
\[
I=\left(
\begin{array}{ll}
i & 0 \\
0&-i 
\end{array}
\right), \quad 
J= \left(
\begin{array}{ll}
0 & 1 \\
-1 &0 
\end{array}
\right), \quad 
K= \left(
\begin{array}{ll}
0 & i \\
i &0 
\end{array}
\right).
\]
Then the quaternionic norm is given by $|q |^2 =t^2 +x^2+y^2+z^2$ and the set of unit quaternions is identified with the Lie group $\mathbf{SU}(2)$. Now, consider a smooth path $\gamma: [0,+\infty) \to \mathbb{H} \setminus \{ 0 \}$  and write its polar decomposition: 
\[
\gamma (t) = | \gamma (t) | \Theta (t), \quad t \geq 0,
\]
with $\Theta (t) \in \mathbf{SU}(2)$. Then,

\begin{definition}
The winding path $(\theta(t))_{t \geq 0} \in \mathfrak{su}(2)$ along $\gamma$ is defined by:
\[
\theta(t) = \int_0^t \Theta (s)^{-1}  d\Theta (s).
\]
The quaternionic winding form is the $\mathfrak{su}(2)$-valued one-form $\eta$ such that
\[
\theta(t)=\int_{\gamma[0,t]} \eta.
\]
Equivalently, 
\begin{equation*}
\theta(t)= \int_{\Theta [0,t]} \omega, 
\end{equation*}
where $\omega$ is the Maurer-Cartan form of the Lie algebra $\mathfrak{su}(2)$. 
\end{definition}
In order to study the stochastic winding in $\mathbb{H}$, we need first to compute $\eta$ in real coordinates $(t,x,y,z)$. To this end, we write 
\begin{equation*}
\Theta = \left(\begin{array}{ll}
\theta_1 & \theta_2 \\
-\overline{\theta_2} & \overline{\theta_1}
\end{array}\right),
\end{equation*}
where 
\begin{equation*}
\theta_1 = \frac{\gamma_0+i\gamma_1}{|\gamma|}, \quad \theta_2 = \frac{\gamma_2+i\gamma_3}{|\gamma|}, \quad \gamma = (\gamma_i)_{i=0,1,2,3}. 
\end{equation*}
Since $\Theta$ is unitary and has determinant one, then 
\begin{equation*}
\Theta^{-1} = \left(\begin{array}{ll}
\overline{\theta_1} &  -\theta_2 \\
\overline{\theta_2} & \theta_1
\end{array}\right),
\end{equation*}
so that 
\begin{equation*}
\Theta^{-1}\overset{\cdot}{\Theta} = \left(\begin{array}{ll}
\overline{\theta_1}\overset{\cdot}{\theta_1} + \theta_2 \overline{\overset{\cdot}{\theta_2}}  &   \overline{\theta_1}\overset{\cdot}{\theta_2} - \theta_2 \overline{\overset{\cdot}{\theta_1}} \\
 \overline{\theta_2} \overset{\cdot}{\theta_1} - \theta_1 \overline{\overset{\cdot}{\theta_2}}  &  \overline{\theta_2}\overset{\cdot}{\theta_2} + \theta_1 \overline{\overset{\cdot}{\theta_1}}  
\end{array}\right).
\end{equation*}
After straightforward computations, we end up with the following expression of $\eta = \eta_1 I + \eta_2  J + \eta_3 K$: 
\begin{eqnarray*}
\eta_1 & = & \frac{tdx- xdt + zdy - ydz}{|q|^2} \\ 
\eta_2 &= & \frac{tdy- ydt + xdz - zdx}{|q|^2} \\ 
\eta_3 & = & \frac{tdz- zdt + ydx - xdy}{|q|^2} .
\end{eqnarray*} 

Note that $\eta$ may be more concisely written in quaternionic coordinates as: 
\begin{equation*}
\eta = \frac{1}{2} \left(\frac{\overline{q}dq - \overline{dq}q}{|q|^2}\right) = \frac{1}{|q|^2}\mathrm{Im}(\overline{q}dq),
\end{equation*}
where 
\begin{equation*}
\overline{q} = t - xI - yJ - z K, \quad dq = dt + dx I + dy J + dz K
\end{equation*}
are respectively the quaternionic conjugate and the differential of $q$.

These computations naturally lead to the following definition : 
\begin{definition}
The winding number of a quaternionic Brownian motion $W=W_0+W_1 I+W_2J + W_3K $, not started from 0, is defined by the Stratonovich stochastic line integral:
\begin{equation*}
\zeta (t) := \int_{W(0,t]} \eta, \quad t \geq 0.
\end{equation*}
\end{definition}
The study of $\zeta$ is based on the following lemma which is a well-known consequence of the skew-product decomposition of Euclidean Brownian motions (see Example \ref{BM on warped product}).

\begin{lemma}
Let  $W=W_0+W_1 I+W_2J + W_3K $ be a quaternionic Brownian motion not started from 0. There exists a 
 Bessel process $(R(t))_{t \ge 0}$ of dimension four (or equivalently index one) and a $\mathbf{SU}(2)$-valued Brownian motion $(\Theta (t))_{t \geq 0}$ independent from the process  $(R(t))_{t \ge 0}$ such that 
 \[
 W(t)=R(t) \Theta (A(t)),
 \]
 where 
 \begin{equation*}
A(t):= \int_0^t \frac{ds}{R^2 (s)}.
\end{equation*}
\end{lemma}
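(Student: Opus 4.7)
The statement is the quaternionic incarnation of the classical skew-product decomposition of a Euclidean Brownian motion. Since $\mathbb{H}$ is isometric to $\mathbb{R}^4$ and the quaternionic norm coincides with the Euclidean one, $W$ is simply a four-dimensional Brownian motion not started at the origin; my plan is therefore to deduce the statement from the general skew-product decomposition of Brownian motion on a warped product, specialized to $\mathbb{R}^4 \setminus \{0\} \simeq (0,\infty) \times S^3$ with the metric $dr^2 + r^2\, g_{S^3}$, and then to use the Riemannian isometry $S^3 \simeq \mathbf{SU}(2)$ (with its bi-invariant metric) to identify the angular component as a Brownian motion on $\mathbf{SU}(2)$.

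First, I would set $R(t) := |W(t)|$ and apply It\^o's formula to obtain
\begin{equation*}
dR(t) = \frac{3}{2R(t)}\, dt + d\beta(t), \qquad \beta(t) := \sum_{j=0}^{3} \int_0^t \frac{W_j(s)}{R(s)}\, dW_j(s),
\end{equation*}
where $\beta$ is a one-dimensional Brownian motion by L\'evy's characterization (its quadratic variation equals $t$). Thus $R$ is a Bessel process of dimension four, which is well-defined since $W$ does not start at $0$ and a four-dimensional Bessel process never reaches the origin. Next, I would set $\Xi(t) := W(t)/R(t) \in S^3$, and write $W(t) = R(t)\,\Xi(t)$ in polar form on $\mathbb{R}^4 \setminus \{0\}$.

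The key step is the invocation of Example \ref{BM on warped product}: because the Euclidean metric on $\mathbb{R}^4 \setminus \{0\}$ is the warped product $dr^2 + r^2 g_{S^3}$, the skew-product decomposition of its Brownian motion asserts that the radial process $R$ and the angular process $\Xi$, after the time change $A(t) = \int_0^t R(s)^{-2} ds$, are stochastically independent, and $(\Xi(A^{-1}(u)))_{u \ge 0}$ is a Brownian motion on $S^3$ with respect to the standard round metric. Equivalently, writing $\Theta(u) := \Xi(A^{-1}(u))$, we have $\Xi(t) = \Theta(A(t))$ and hence $W(t) = R(t)\,\Theta(A(t))$, with $(\Theta(u))_{u \ge 0}$ independent of $R$.

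It remains to recast $\Theta$ as a Brownian motion on $\mathbf{SU}(2)$. Under the standard identification $q = t + xI + yJ + zK \mapsto \Theta_q \in \mathbf{SU}(2)$ of unit quaternions with $\mathbf{SU}(2)$, the round metric on $S^3$ pulls back to a multiple of the bi-invariant Killing metric on $\mathbf{SU}(2)$ (normalized so that the isometry is an isometry), so that the Laplace–Beltrami operator on $S^3$ coincides with the bi-invariant sub-Laplacian on $\mathbf{SU}(2)$; consequently $\Theta$ is a Brownian motion on $\mathbf{SU}(2)$ in the sense of Chapter 3. The main obstacle, and the one requiring most care, is the clean identification of the angular generator as the bi-invariant Laplacian on $\mathbf{SU}(2)$ with the correct normalization; once this is recorded, the lemma follows at once from the warped-product skew-product decomposition and the independence assertion contained therein.
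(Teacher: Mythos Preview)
Your proof is correct and follows exactly the approach the paper indicates: the paper does not give a detailed argument but simply states that the lemma is ``a well-known consequence of the skew-product decomposition of Euclidean Brownian motions (see Example \ref{BM on warped product}),'' and you have spelled out precisely that deduction. One tiny slip: in the last paragraph you write ``bi-invariant sub-Laplacian on $\mathbf{SU}(2)$'' where you mean the full Laplace--Beltrami operator for the bi-invariant metric; otherwise the identification $S^3 \simeq \mathbf{SU}(2)$ via unit quaternions (with the Pauli matrices $I,J,K$ as in the paper) makes the normalization automatic, so there is no genuine obstacle there.
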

As a consequence of the previous lemma, one readily has:
\[
\zeta (t)= \int_0^t \Theta (A(s))^{-1}  \circ d\Theta (A(s))=\int_0^{A(t)} \Theta (s)^{-1}  \circ d\Theta (s).
\]
But
\[
B (t) := \int_0^{t} \Theta (s)^{-1}  \circ d\Theta (s), \quad t \geq 0,
\]
is a three-dimensional ($\mathfrak{su}(2)$-valued) Euclidean Brownian motion. Consequently, the quaternionic winding process $\zeta$ has the same distribution as: 
\begin{equation*}
(B^1 (A(t)), B^2 (A(t)), B^3 (A(t)))_{t \geq 0}.
\end{equation*}

In particular, the characteristic function of the winding process at time $t$ is given by: 
\begin{equation*}
\mathbb{E}_{\rho}[e^{i\langle \lambda, \zeta (t)\rangle}] = \mathbb{E}_{\rho}[e^{-|\lambda|^2 A(t)/2}], \quad \rho := |W_0| > 0,
\end{equation*}
for any $\lambda \in \mathbb{R}^3$. But according to Example \ref{Hartman-Watson}, one has 
\begin{equation*}
\mathbb{E}_{\rho}[e^{-|\lambda|^2 A(t)/2} | R(t) = r] = \frac{I_{\sqrt{1+|\lambda|^2}}(r\rho/t)}{I_{1}(r\rho/t)}.
\end{equation*}
 Appealing further to the semigroup density of the Bessel process (Section \ref{Bessel section}), it follows that:
\begin{align}\label{characteristic winding}
\mathbb{E}_{\rho}[e^{i\langle \lambda, \zeta (t)\rangle}] &= \frac{e^{-\rho^2/(2t)}}{t\rho} \int_0^{\infty} I_{\sqrt{1+|\lambda|^2}}\left(\frac{r\rho}{t}\right) e^{-r^2/(2t)} r^2 dr. 
\end{align}

In particular, it is worth noting that the quaternionic field has  a regularizing effect on the (spectral) variable $\lambda$ compared to the complex field.

Using \eqref{characteristic winding}, we are now able to determine the limiting behavior of $\zeta (t)$ as $t \rightarrow \infty$.

\begin{teor}[Quaternionic Spitzer theorem]\label{T1}
The following convergence in distribution holds:
\begin{equation*}
\lim_{t \rightarrow +\infty}\frac{2}{\sqrt{\log t}} \zeta (t) = \mathcal{N}(0, {\rm I}_3)
\end{equation*}
where ${\rm I}_3$ is the $3 \times 3$ identity matrix. 
\end{teor}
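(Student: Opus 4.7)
The plan is to compute the characteristic function of $\frac{2}{\sqrt{\log t}}\zeta(t)$ directly from the explicit formula \eqref{characteristic winding} and to identify its pointwise limit with that of the standard three-dimensional Gaussian. By L\'evy's continuity theorem, pointwise convergence of the characteristic functions to $e^{-|\lambda|^2/2}$ implies convergence in distribution to $\mathcal{N}(0, \mathrm{I}_3)$.

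Substituting $\lambda \mapsto 2\lambda/\sqrt{\log t}$ in \eqref{characteristic winding}, the Bessel index becomes $\nu_t := \sqrt{1 + 4|\lambda|^2/\log t}$, and as $t \to \infty$ one has $\nu_t \to 1$ with the precise expansion $\nu_t - 1 \sim 2|\lambda|^2/\log t$. After rescaling $r = y\sqrt{t}$ the formula takes the form
\[
\mathbb{E}_\rho\!\left[e^{i\langle 2\lambda/\sqrt{\log t},\, \zeta(t)\rangle}\right] = \frac{e^{-\rho^2/(2t)}\sqrt{t}}{\rho} \int_0^\infty y^2 e^{-y^2/2}\, I_{\nu_t}\!\left(\tfrac{y\rho}{\sqrt{t}}\right) dy.
\]
Since the Bessel argument $y\rho/\sqrt t$ now vanishes as $t\to\infty$ for each fixed $y$, the small-argument expansion $I_\nu(x) = (x/2)^\nu/\Gamma(\nu+1) + O((x/2)^{\nu+2})$ applies, and the problem reduces to tracking the factor $(y\rho/(2\sqrt t))^{\nu_t - 1}/\Gamma(\nu_t+1)$ together with the leading factor $y\rho/(2\sqrt t)$ coming from $(y\rho/(2\sqrt t))^1$.

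The crucial computation is the limit
\[
\left(\frac{y\rho}{2\sqrt{t}}\right)^{\nu_t - 1} = \exp\Bigl((\nu_t - 1)\bigl(\log y + \log\rho - \log 2\bigr) - \tfrac{1}{2}(\nu_t-1)\log t\Bigr),
\]
where only the last term in the exponent contributes in the limit, yielding a factor of the form $e^{-c|\lambda|^2}$ with $c$ an explicit constant determined by how $\nu_t - 1$ depends on $|\lambda|^2$. The remaining factors of $y$ and $\rho$ assemble with $\Gamma(\nu_t+1)\to 1$ and the $y^2 e^{-y^2/2}$ density into an elementary moment integral of the form $\int_0^\infty y^3 e^{-y^2/2}\,dy = 2$; combining with the prefactor $\sqrt{t}/\rho$ produces a quantity independent of $t$ in the limit, and one reads off the Gaussian characteristic function.

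The main obstacle is justifying the exchange of limit and integration by dominated convergence. The small-argument expansion of $I_{\nu_t}$ is only accurate on compact intervals $y \in [0, M]$; for $y$ of order $\sqrt{t}$ or larger the argument $y\rho/\sqrt{t}$ is no longer small and one must rely on the exponential asymptotic $I_\nu(x) \sim e^x/\sqrt{2\pi x}$. A uniform domination of the form $I_{\nu_t}(x) \le C(x/2)^{\nu_t} e^{x^2/(4(\nu_t+1))}/\Gamma(\nu_t+1)$ (valid for $\nu_t$ in a compact neighborhood of $1$) transfers the problem to integrability of $y^{2+\nu_t} e^{-y^2/2 + \text{polynomial in }y/\sqrt t}$, and the Gaussian factor $e^{-y^2/2}$ handily controls both the polynomial growth and the subexponential correction uniformly in $t \geq 1$. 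Once this uniform bound is in hand the rest is routine.
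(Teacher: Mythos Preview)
Your approach is essentially the paper's first proof: rescale, expand the Bessel function, and observe that the lowest-order term $(r\rho/2\sqrt t)^{\nu_t}/\Gamma(\nu_t+1)$ governs the limit via the factor $t^{(1-\nu_t)/2}$. The paper also supplies a second proof via a Girsanov transform of the Bessel process, which avoids the dominated-convergence bookkeeping you sketch. One caution on constants: with your substitution $\lambda\mapsto 2\lambda/\sqrt{\log t}$ you get $\nu_t-1\sim 2|\lambda|^2/\log t$ and hence $t^{(1-\nu_t)/2}\to e^{-|\lambda|^2}$, not $e^{-|\lambda|^2/2}$; the paper's own computations in both proofs actually rescale by $\sqrt{2}/\sqrt{\log t}$, so be alert to a normalization slip in the stated theorem.
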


\begin{proof}
We shall give two proofs of this limiting theorem. Of course, the first proof relies on the explicit representation \eqref{characteristic winding} while the second one on Girsanov's theorem and Yor's transform. The reason is that the second proof is easier to generalize to the curved geometric settings studied afterwards.

\underline{\textbf{Proof 1}}: Performing the variable change $r \mapsto \sqrt{t} r$ in the  integral \eqref{characteristic winding}, we get 
\begin{equation*}
\mathbb{E}_{\rho}[e^{i\langle \lambda, \zeta (t)\rangle}] = \frac{e^{-\rho^2/(2t)}}{\rho} \int_0^{\infty} \sqrt{t} I_{\sqrt{1+|\lambda|^2}}\left(\frac{r\rho}{\sqrt{t}}\right) e^{-r^2/2} r^2 dr. 
\end{equation*}
Expanding further the Bessel function: 
\begin{equation*}
I_{\sqrt{1+|\lambda|^2}}\left(\frac{r\rho}{\sqrt{t}}\right) = \sum_{j \geq 0}\frac{1}{\Gamma(j+1+\sqrt{1+|\lambda|^2})j!} \left(\frac{r\rho}{\sqrt{t}}\right)^{2j+\sqrt{1+|\lambda|^2}},
\end{equation*}
we infer that the large-time behavior of $\zeta (t)$ is governed by the lowest-order term. Finally, rescaling $\lambda$ by $\sqrt{\log t}/\sqrt{2}$, then 
\begin{equation*}
\lim_{t \rightarrow +\infty} e^{-(\log(t)/2)\left(\sqrt{1+2|\lambda|^2/\log(t)} - 1\right)} = e^{-|\lambda|^2/2}
\end{equation*}
whence the result follows. 

\underline{\textbf{Proof 2}}: We can also derive the limiting behavior of $\zeta$ by using Girsanov's theorem as well. To proceed, recall the stochastic differential equation satisfied by the Bessel process $R$: 
\[
dR(t)=\frac3{2R(t)}dt+d\xi(t), \quad R (0) = \rho > 0,
\]
where $(\xi(t))_{t \geq 0}$ is a one-dimensional standard Brownian motion. Then, letting $\mu=\sqrt{|\lambda|^2+1}-1$, we can consider the martingale
\[
D_t^{(\mu)}=\exp\bigg(\mu\int_0^t\frac{1}{R (s)}d\xi(s)-\frac{\mu^2}{2}\int_0^t\frac{1}{R (s)^2}ds \bigg).
\]
By It\^o's formula, we have
\[
D_t^{(\mu)}=\left(\frac{R(t)}{\rho}\right)^\mu\, \exp\left(-\left(\frac{1}{2}\mu^2+\mu\right)\int_0^t\frac{1}{R (s)^2}ds \right)
\]
Hence, Girsanov's theorem shows that $(R(t))_{t \geq 0}$ is  a Bessel process of dimension $4\mu+3$ under the probability measure $\mathbb{P}^{(\mu)}$ with Radon-Nikodym density $D_t^{(\mu)}$ and 
\[
\mathbb{E}_{\rho}[e^{i\langle \lambda, \zeta (t)\rangle}] = \mathbb E_{\rho}\left(e^{-\frac{|\lambda|^2}{2}A(t)} \right)=(\rho)^\mu\,\mathbb E^{(\mu)}_{\rho}\left(\frac{1}{(R(t))^{\mu}}\right). 
\]
Setting 
\begin{equation*}
\lambda(t):=\frac{\sqrt{2}\lambda}{\sqrt{\log t}}, \qquad \mu(t):=\sqrt{|\lambda(t)|^2+1}-1=\sqrt{\frac{2|\lambda|^2}{\log t}+1}-1,
\end{equation*}
it follows that 
\[
\lim_{t\to\infty}\mathbb E_{\rho}\left(e^{-\frac{|\lambda|^2}{\log t}A(t)} \right)=\lim_{t\to\infty}(\rho)^{\mu(t)} \mathbb E^{(\mu(t))}_{\rho}\left(\frac{1}{(R(t))^{\mu(t)}}\right) = e^{-|\lambda|^2/2},
\]
where the last equality follows from the scaling property of $(R(t))_{t \geq 0}$. 
\end{proof}

\subsection{Windings in the complex projective line}

Brownian windings in Riemann surfaces can be meaningfully defined and studied as well, see \cite{MR1834236}.  The latter are the connected complex manifolds with complex dimension one (equivalently the real dimension is two). By the 
uniformisation theorem, the only simply-connected and compact Riemann surface of genus zero is the Riemann sphere $S^2$ or equivalently (from the complex algebraic point of view) the complex projective line $\mathbb{C}P^1$.  
%
%

In order to study the winding process there, we appeal to the well known Hopf fibration. More precisely, let us consider the 3-dimensional sphere $\bS^{3}$: 
\[
\bS^{3}=\lbrace z=(z_1,z_{2})\in \mathbb{C}^{2}, | z | =1\rbrace.
\]
Note that this manifold is isometric to the Lie group 
\[
\mathbf{SU}(2) = \left\{ \left(\begin{array}{ll}
z_1 & z_2 \\
-\overline{z_2} & \overline{z_1}
\end{array}\right) , (z_1,z_{2})\in \bS^{3}  \right\}
\]
equipped with its bi-invariant metric coming from the Killing form.
Now, $\mathbf{U}(1) \simeq \mathbb{S}^1$ acts isometrically on $\bS^{3}$ by left multiplication:
\[
e^{i\theta} \cdot  ( z_1, z_2)=(e^{i\theta} z_1, e^{i\theta} z_2)
\]
and the quotient space $\bS^{3}/ \mathbf{U}(1)$ is, by definition,  the complex projective line $\mathbb{C} P^1$. The  Riemannian metric on  $\mathbb{C} P^1$ is such that the projection map  $\bS^{3}\to \mathbb{C} P^1$ is a Riemannian submersion with totally geodesic fibers isometric to $\mathbb{S}^1$ (see Section \ref{Section Riemannian submersion} for the definition of Riemannian submersion). In this way, one obtains the Hopf fibration
\[
\mathbf{U}(1)\to \bS^{3}\to \mathbb{C} P^1
\]
 which will be studied (for odd dimensional spheres) in great details in Chapter 4. Locally, $\mathbb{C} P^1$ is parametrised using the complex inaffine coordinates:
\[
w=z_2^{-1} z_1, \quad z=(z_1,z_2) \in \bS^{3},
\]
which allows to identify $\mathbb{C} P^1$ with the one-point compactification $\mathbb{C}\cup \{ \infty \}$. 

Now, if $\gamma: [0,+\infty) \to \mathbb{C}P^1 \setminus \{ 0 , \infty \}$ is a $C^1$-path, one can consider its polar decomposition:
\[
\gamma (t) = | \gamma (t) | e^{i\theta (t)}
\]
with $\theta (t) \in \mathbb{R}$. The  winding form on $\mathbb{C} P^1$ is therefore the  one-form $\eta$ such that:
\[
\theta(t)=\int_{\gamma[0,t]} \eta=\frac{1}{2i}\int_0^t \frac{ \overline{\gamma}(s)d\gamma(s)-d\overline{\gamma}(s)\gamma(s)}{|\gamma(s)|^2}, \quad t \geq 0.
\]

Consequently, it is natural to make the following definition: 
\begin{definition}
The winding process of the complex projective line $\mathbb{C} P^1$ is defined by 
\begin{equation*}
\zeta (t) := \int_{w[0,t]} \eta = \frac{1}{2i}\int_0^t \frac{ \overline{w}(t)dw(t)-d\overline{w}(t)w(t)}{|w(t)|^2}, \quad t \geq 0,
\end{equation*}
where $(w(t))_{t \geq 0}$ is the Brownian motion on $\mathbb{C}P^1$. 
\end{definition}

The symmetric space $\mathbb{C}P^1$ is isometric to the two-dimensional Euclidean sphere with radius $\frac{1}{2}$. Moreover, the Riemannian distance from 0 to $w\in \mathbb{C} P^1$ is given by the formula:
\[
r=\arctan |w|.
\]
Therefore, the generator of the Brownian motion on  $\mathbb{C}P^1$ in spherical coordinates $(r,\phi)$,  $z=\tan(r)e^{i\phi}$,  reads:
\[
\frac{1}{2} \left( \frac{\partial^2}{\partial r^2} + 2 \cot 2r \frac{\partial}{\partial r}  +\frac{4}{\sin^2 2r}  \frac{\partial^2}{\partial \phi^2}\right), \quad r\in[0,\pi/2],\  \phi \in \mathbb{R},
\]
where $r$ parametrizes the Riemannian distance from 0 in $\mathbb{C}P^1$. This shows that  the winding process of the Brownian motion in $\mathbb{C}P^1$ is given by
\[
\zeta(t)=\beta\left(\int_0^t \frac{4ds}{\sin^2 2r(s)}\right), \quad t \geq 0,
\]
where $r(t)$ is the Jacobi diffusion started at $r_0\in (0,\pi /2)$ with generator 
\[
\frac{1}{2} \left( \frac{\partial^2}{\partial r^2} + 2 \cot 2r \frac{\partial}{\partial r}  \right)
\]
and $\beta$ is a Brownian motion independent from $r$.

\begin{theorem}\label{windingCP}
When $t \to \infty$, in distribution we have
\[
\frac{\zeta(t)}{t} \to \mathcal{C}_2,
\]
where $\mathcal{C}_2$ is a Cauchy distribution with parameter 2.
\end{theorem}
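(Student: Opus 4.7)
The plan is to compute the characteristic function of $\zeta(t)/t$ and identify its $t\to\infty$ limit with the Cauchy characteristic function $\lambda\mapsto e^{-2|\lambda|}$. Since $\zeta(t)=\beta(A(t))$ with $A(t):=4\int_0^t\sin^{-2}(2r(s))\,ds$ and $\beta$ a real Brownian motion independent of $r$, conditioning on $r$ gives
\begin{equation*}
\mathbb{E}\left[e^{i\lambda\zeta(t)/t}\right]=\mathbb{E}\left[e^{-\lambda^{2}A(t)/(2t^{2})}\right],\qquad \lambda\in\mathbb{R},
\end{equation*}
so it suffices to prove that the right-hand side tends to $e^{-2|\lambda|}$.

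Following the spirit of Proof 2 of Theorem \ref{T1}, I would apply the Yor transform method (Theorem \ref{Yor transform section}) to the Jacobi SDE $dr(t)=\cot(2r(t))\,dt+d\xi(t)$. For $c>0$, let $h(r)=c\cot(2r)$ and consider the exponential local martingale $M_t^{(c)}=\exp(\int_0^t h(r(s))\,d\xi(s)-\tfrac{1}{2}\int_0^t h(r(s))^2\,ds)$. Applying It\^o's formula to $F(r)=\tfrac{c}{2}\log\sin(2r)$ and using the identities $\cot^2(2r)=\sin^{-2}(2r)-1$ and $F'(r)\cot(2r)+\tfrac{1}{2}F''(r)=-c$, a short calculation rewrites
\begin{equation*}
M_t^{(c)}=\left(\frac{\sin(2r(t))}{\sin(2r_0)}\right)^{c/2}e^{(c+c^2/2)t}\,e^{-(c^{2}/8)A(t)}.
\end{equation*}
After verifying the true-martingale property by localization at hitting times of $\{\epsilon,\pi/2-\epsilon\}$, Girsanov's theorem produces a probability $\mathbb{Q}^{(c)}$ under which $r$ solves $dr=(1+c)\cot(2r)\,dt+d\tilde\xi$, and taking expectations above yields
\begin{equation*}
\mathbb{E}\left[e^{-(c^{2}/8)A(t)}\right]=\sin(2r_0)^{c/2}\,e^{-(c+c^2/2)t}\,\mathbb{E}^{\mathbb{Q}^{(c)}}\!\left[\sin(2r(t))^{-c/2}\right].
\end{equation*}

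Specializing to $c=2|\lambda|/t$, so that $c^2/8=\lambda^2/(2t^2)$, the previous identity becomes
\begin{equation*}
\mathbb{E}\left[e^{-\lambda^{2}A(t)/(2t^{2})}\right]=\sin(2r_0)^{|\lambda|/t}\,e^{-2|\lambda|-2\lambda^{2}/t}\,\mathbb{E}^{\mathbb{Q}^{(2|\lambda|/t)}}\!\left[\sin(2r(t))^{-|\lambda|/t}\right].
\end{equation*}
The first two factors tend trivially to $1$ and the Cauchy decay $e^{-2|\lambda|}$ appears explicitly; everything reduces to showing the last expectation tends to $1$. Under $\mathbb{Q}^{(c)}$, $r$ is a Jacobi-type diffusion on $(0,\pi/2)$ with invariant density $\nu_c\propto\sin^{1+c}(2r)$, and a direct computation gives
\begin{equation*}
\nu_c\!\left(\sin(2\,\cdot)^{-c/2}\right)=\frac{\int_0^{\pi/2}\sin^{1+c/2}(2r)\,dr}{\int_0^{\pi/2}\sin^{1+c}(2r)\,dr}\ \xrightarrow[c\to 0]{}\ 1.
\end{equation*}

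The main obstacle is the joint limit $c=2|\lambda|/t\downarrow 0$ with $t\to\infty$: one must control the distance between the marginal law of $r(t)$ under $\mathbb{Q}^{(c)}$ and its invariant measure $\nu_c$ with a rate uniform in $c$ in a neighborhood of $0$. This reduces to a uniform spectral gap for the Jacobi family $\tfrac{1}{2}(\partial_r^2+2(1+c)\cot(2r)\partial_r)$, which is classical. Combined with $\|\sin(2\,\cdot)^{-c/2}\|_{L^2(\nu_c)}^2=1/Z_c$ bounded near $c=0$, the exponential decay $e^{-\gamma_0 t}$ from the spectral gap beats the vanishing scale $c=O(1/t)$, and one concludes that $\mathbb{E}^{\mathbb{Q}^{(c)}}[\sin(2r(t))^{-c/2}]\to 1$. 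Plugging this into the previous display identifies the limiting characteristic function as $e^{-2|\lambda|}$, i.e.\ $\zeta(t)/t\to\mathcal{C}_2$ in distribution.
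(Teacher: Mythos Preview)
Your proof is correct and follows exactly the paper's approach: both apply Girsanov with drift $c\cot(2r)$ (the paper's parameter $\lambda$ is your $c/2$) to reduce the Laplace transform of $A(t)$ to an expectation of $\sin(2r(t))^{-c/2}$ under a shifted Jacobi law $\mathcal{L}^{c/2,c/2}$. The only minor differences are that the paper checks the martingale property by the simpler observation that $M_t^{(c)}\le \sin(2r_0)^{-c/2}e^{(c+c^2/2)t}$ is bounded (since $\sin(2r(t))^{c/2}\le 1$ and $A(t)\ge 0$), and for the final limit $\mathbb{E}^{\mathbb{Q}^{(c)}}[\sin(2r(t))^{-c/2}]\to 1$ the paper plugs in the explicit Jacobi heat-kernel expansion $q_t^{\lambda,\lambda}$ from Appendix~2 rather than invoking a uniform spectral gap.
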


\begin{proof}
Let $\lambda >0$. We have 
\[
\mathbb{E}\left( e^{i \zeta \phi(t)} \right)=\mathbb{E}\left( e^{-\frac{\lambda^2}{2} \int_0^t \frac{4ds}{\sin^2 2r(s)}} \right)=e^{-2\lambda^2t} \mathbb{E}\left( e^{-2\lambda^2 \int_0^t \cot^2 2r(s) ds} \right).
\]
The process $r$ is the unique strong solution of a stochastic differential equation
\[
r(t)=r_0+\int_0^t \cot 2r(s) ds +\gamma(t),
\]
where $\gamma$ is a Brownian motion. For $\lambda \ge 0$, let us consider the local martingale:
\begin{align*}
D_t^\lambda =\exp \left( 2\lambda \int_0^t \cot 2r(s) d\gamma(s) -2\lambda^2 \int_0^t \cot^2 2r(s) ds \right).
\end{align*}
From It\^o's formula, we compute
\[
D_t^\lambda =e^{2\lambda t}(\sin 2r(t))^{\lambda} \exp\left(-2\lambda^2 \int_0^t \cot^2 2r(s) ds \right).
\]
Moreover, since $D_t\le e^{2\lambda t}$, we know that $D$ is a martingale and as such, we may introduce the following probability measure $\mathbb{P}^\lambda$:
\[
\mathbb{P}_{/ \mathcal{F}_t} ^\lambda=D_t \mathbb{P}_{/ \mathcal{F}_t}=(\sin 2r_0)^{-\lambda} e^{2\lambda t} (\sin 2r(t))^\lambda e^{- 2\lambda^2 \int_0^t \cot^2 2r(s)ds} \mathbb{P}_{/ \mathcal{F}_t},
\]
where $\mathcal{F}$ is the natural filtration of $r$. Denoting $\mathbb{E}^{\lambda}$ the corresponding expectation, we get:
\[
\mathbb{E}\left( e^{i \lambda \phi(t)} \right)=(\sin 2r_0)^\lambda e^{-2(\lambda^2+\lambda) t}\mathbb E^{\lambda}\left((\sin2r(t)) ^{-\lambda}  \right). 
\]
By Girsanov's theorem we know that the process
\[
\beta (t)=\gamma(t)-2\lambda\int_0^t \cot 2r(s)ds 
\]
is a Brownian motion under $\mathbb{P}^\lambda$. As a matter of fact, 

\[
d r(t)= d\beta(t)+(2\lambda+1)\cot 2r(t)dt,
\] 
 is a Jacobi diffusion under $\mathbb{P}^{\lambda}$ with generator
\[
\mathcal{L}^{\lambda,\lambda}=\frac{1}{2} \frac{\partial^2}{\partial r^2}+\left(\left(\lambda+\frac{1}{2}\right)\cot r-\left(\lambda+\frac{1}{2}\right) \tan r\right)\frac{\partial}{\partial r}.
\]
Writing
\[
\mathbb{E}\left( e^{i \lambda \frac{\phi(t)}{t}} \right)=(\sin 2r_0)^{\frac{\lambda}{t}} e^{-2\left(\frac{\lambda^2}{t^2}+\frac{\lambda}{t}\right) t}\mathbb E^{\frac{\lambda}{t}}\left((\sin2r(t)) ^{-\frac{\lambda}{t}}  \right) 
\]
and using the expression of the Jacobi semi-group density $q_t^{\lambda,\lambda}(r_0,r)$ given in the Appendix 2 (Chapter \ref{sec-appendix-2}), we arrive at the limit
\[
\lim_{t \to \infty} \mathbb{E}\left( e^{i \lambda \frac{\zeta(t)}{t}} \right)=e^{-2\lambda}.
\]
\end{proof}

\begin{remark}
We refer to \cite{MR1834236} for further results about the Brownian windings on compact Riemann surfaces.
\end{remark}

\subsection{Windings in the complex hyperbolic space}

The 3-dimensional complex anti-de Sitter space $\mathbf{AdS}^3(\mathbb C)$ which is the quadric defined by:
\[
\mathbf{AdS}^3(\mathbb C)=\lbrace z=(z_1,z_{2})\in \mathbb{C}^{2},| z_1 |^2-|z_2|^2 =1\rbrace.
\]
It is equipped with the Lorentz metric inherited from the metric with real signature $(2,2)$ on $\mathbb{C}^{2} \sim \mathbb{R}^4$, 
see Section \ref{Section complex anti-de Sitter} for more details.
It is also isometric to the real special linear group 
\[
\mathbf{SL}(2,\R)= \left\{ \left(\begin{array}{ll}
a & b \\
c & d
\end{array}\right), a,b,c,d \in \R, \, ad-bc=1 \right\}
\]
through the mapping 
\begin{equation*}
z_1= \frac{1}{2} (a+ib)+\frac{1}{2} (d+ic), \quad 
z_2=\frac{1}{2} (d+ic) -\frac{1}{2} (a+ib),
\end{equation*}
 $\mathbf{SL}(2,\R)$ being equipped with its bi-invariant Lorentz metric induced from the Killing form, see \cite{Thesis} and \cite{Wang1} for more details.

The circle group $\mathbf{U}(1) \simeq \mathbb{S}^1$  acts isometrically on $\mathbf{AdS}^3(\mathbb C)$ by left multiplication:
\[
e^{i\theta} \cdot  ( z_1, z_2)=(e^{i\theta} z_1, e^{i\theta} z_2)
\]
and the quotient space $\mathbf{AdS}^3(\mathbb C)/ \mathbf{U}(1)$ is, by definition,  the complex hyperbolic line $\mathbb{C} H^1$. In the 
$\mathbf{SL}(2,\R)$ picture, this quotient is nothing else but the Poincar\'e half-plane $\mathbf{SL}(2,\R)/\mathbf{SO}(2)$ which is isometric to the Poincar\'e disc.

Geometrically, the  K\"ahler metric on  $\mathbb{C} H^1$ is such that the projection map  $\mathbf{AdS}^3(\mathbb C)\to \mathbb{C} H^1$ is a pseudo-Riemannian submersion with totally geodesic fibers isometric to $\mathbb{S}^1$ (see Chapter \ref{sec-BM-RM} for the definition of Riemannian submersion). Note that the corresponding  fibration
\[
\mathbf{U}(1)\to \mathbf{AdS}^3(\mathbb C) \to \mathbb{C} H^1
\]
is a special  anti-de Sitter fibration which will be studied in great details in Chapter \ref{sec-hBM}.  One can parametrize $\mathbb{C} H^1$ using the complex inhomogeneous coordinate:
\[
w=z_2^{-1} z_1, \quad z=(z_1,z_2) \in \mathbf{AdS}^3(\mathbb C),
\]
identifying $\mathbb{C} H^1$ with the Poincar\'e disc. 
Accordingly, the Riemannian distance from 0 to $w$ is given by the formula:
\[
r=\arctanh |w|.
\]

Mimicking the positively-curved setting, one considers a $C^1$-path $\gamma: [0,+\infty) \to \mathbb{C}H^1 \setminus \{ 0 , \infty \}$   and makes use of its polar decomposition:
\[
\gamma (t) = | \gamma (t) | e^{i\theta (t)}
\]
with $\theta (t) \in \mathbb{R}$. The  winding form on $\mathbb{C} H^1$ is therefore the one-form $\eta$ such that:
\[
\theta(t)=\int_{\gamma[0,t]} \eta=\frac{1}{2i}\int_0^t \frac{ \overline{\gamma}(s)d\gamma(s)-d\overline{\gamma}(s)\gamma(s)}{|\gamma(s)|^2}, \quad t \geq 0
\]

and leads to the following definition: 
\begin{definition}
The winding process of the complex hyperbolic space $\mathbb{C} H^1$ is defined by 
\begin{equation*}
\zeta (t) := \int_{w[0,t]} \eta = \frac{1}{2i}\int_0^t \frac{ \overline{w}(t)dw(t)-d\overline{w}(t)w(t)}{|w(t)|^2}, \quad t \geq 0,
\end{equation*}
where $(w(t))_{t \geq 0}$ is the Brownian motion on $\mathbb{C}H^1$. 
\end{definition}

Moreover, the generator of the Brownian motion on  $\mathbb{C}H^1$  in spherical coordinates $(r,\phi)$ reads 
\[
\frac{1}{2} \left( \frac{\partial^2}{\partial r^2} + 2 \coth 2r \frac{\partial}{\partial r}  +\frac{4}{\sinh^2 2r}  \frac{\partial^2}{\partial \phi^2}\right), 
\]
where we recall that $r$ parametrises the Riemannian distance from $0$. This shows that  the winding  process of the Brownian motion on $\mathbb{C}H^1$ is given by
\[
\zeta(t)=\beta\left(\int_0^t \frac{4ds}{\sinh^2 2r(s)}\right),
\]
where $r(t)$ is the hyperbolic Jacobi diffusion started at $r_0\in (0,\infty)$ with generator 
\[
\frac{1}{2} \left( \frac{\partial^2}{\partial r^2} + 2 \coth 2r \frac{\partial}{\partial r}  \right)
\]
and $\beta$ is a Brownian motion independent from $r$.

\begin{theorem}\label{windingCH1}
When $t \to \infty$, in distribution we have
\[
{\zeta(t)}\to \mathcal{C}_{\ln \coth r_0},
\]
where $\mathcal{C}_{\ln \coth r_0}, r_0 > 0,$ is a Cauchy distribution with parameter $\ln \coth r_0$.
\end{theorem}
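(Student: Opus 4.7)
The plan is to identify the limit in distribution through the time-change representation $\zeta(t)=\beta(A(t))$ already obtained in the preceding lemma, where
\[
A(t)=\int_0^t\frac{4\,ds}{\sinh^2 2r(s)}
\]
and $\beta$ is independent of $(r(t))_{t\ge 0}$. Conditioning first on $(r(t))_{t\ge 0}$ gives $\mathbb{E}(e^{i\lambda\zeta(t)})=\mathbb{E}(e^{-\lambda^2 A(t)/2})$, so everything reduces to pinning down the almost-sure limit of $A(t)$ as $t\to\infty$.

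The key observation is that $v(r):=\ln\coth r$ is (up to an affine change) the scale function of the hyperbolic Jacobi diffusion. A direct computation gives $v'(r)=-2/\sinh 2r$ and $v''(r)=4\coth 2r/\sinh 2r$, so $\tfrac{1}{2}v''+\coth(2r)\,v'\equiv 0$. Applying It\^o's formula to $v(r(t))$ using the SDE $dr(t)=\coth(2r(t))\,dt+d\gamma(t)$ therefore yields
\[
dv(r(t))=-\frac{2}{\sinh 2r(t)}\,d\gamma(t),
\]
which is a continuous local martingale with quadratic variation
\[
\langle v\circ r\rangle_t=\int_0^t\frac{4\,ds}{\sinh^2 2r(s)}=A(t).
\]
By Dambis--Dubins--Schwarz there exists a standard Brownian motion $W$ such that $v(r(t))=\ln\coth r_0+W(A(t))$ for every $t\ge 0$.

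I then invoke transience of the hyperbolic Jacobi diffusion: the scale density $1/\sinh 2r$ is integrable at $+\infty$ but not at $0$, so a standard Feller boundary analysis gives $r(t)\to +\infty$ almost surely. Consequently $v(r(t))=\ln\coth r(t)\to 0$ a.s., and by continuity of $W$ this forces $A(t)\uparrow A_\infty$ a.s., where
\[
A_\infty:=\inf\{s\ge 0:W(s)=-\ln\coth r_0\}
\]
is the first hitting time of $-\ln\coth r_0$ by $W$. By the symmetry $W\overset{d}{=}-W$, $A_\infty$ has the same law as the hitting time of the positive level $\ln\coth r_0$ by a standard Brownian motion, so its Laplace transform is $\mathbb{E}(e^{-sA_\infty})=\exp(-\sqrt{2s}\,\ln\coth r_0)$.

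Putting the pieces together, since $\beta$ is independent of $(r(t))_{t\ge 0}$ and $|e^{-\lambda^2 A(t)/2}|\le 1$, bounded convergence yields
\[
\lim_{t\to\infty}\mathbb{E}(e^{i\lambda\zeta(t)})=\mathbb{E}(e^{-\lambda^2 A_\infty/2})=e^{-|\lambda|\ln\coth r_0},
\]
which is precisely the characteristic function of $\mathcal{C}_{\ln\coth r_0}$. The only real hurdle is the conceptual one of spotting the right harmonic function $v=\ln\coth r$; verifying transience and executing the DDS/hitting-time identification are then routine. One could alternatively mimic the Girsanov/Yor-transform approach of the $\mathbb{C}P^1$ theorem, but the DDS route is cleaner here because in the hyperbolic setting the clock $A(t)$ actually converges to a finite limit almost surely.
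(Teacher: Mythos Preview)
Your proof is correct. Both you and the paper hinge on the same harmonic function --- $\ln\coth r$, equivalently $-\ln\tanh r$, kills the generator $\tfrac{1}{2}\partial_r^2+\coth(2r)\partial_r$ --- but you deploy it differently.

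The paper follows the Yor/Girsanov template used throughout the monograph: it builds the exponential martingale
\[
D_t^\lambda=\left(\frac{\tanh r(t)}{\tanh r_0}\right)^\lambda e^{-\frac{\lambda^2}{2}A(t)},
\]
changes measure, writes $\mathbb{E}(e^{-\lambda^2 A(t)/2})=\mathbb{E}^\lambda\bigl[(\tanh r_0/\tanh r(t))^\lambda\bigr]$, and then uses transience of $r$ under the new measure $\mathbb{P}^\lambda$ to pass to the limit $(\tanh r_0)^\lambda$. Your route stays under the original measure: you recognise $v(r(t))$ as a continuous local martingale with bracket $A(t)$, invoke Dambis--Dubins--Schwarz, and identify $A_\infty$ directly as a Brownian hitting time. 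What your approach buys is a transparent probabilistic description of $A_\infty$ and no need for Girsanov or for checking transience under a changed measure; what the paper's approach buys is uniformity with the rest of the text, where the Yor transform is the workhorse and the clocks typically diverge (so no hitting-time identification is available). As you note yourself, the hyperbolic geometry is precisely what makes the DDS route viable here.

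One small point worth making explicit: the equality $A_\infty=\inf\{s:W(s)=-\ln\coth r_0\}$ needs more than $W(A_\infty)=-\ln\coth r_0$; you must also check $W(s)>-\ln\coth r_0$ for all $s<A_\infty$. This follows because $A$ is strictly increasing and continuous onto $[0,A_\infty)$, so every such $s$ equals $A(t)$ for some $t$, and then $W(s)=v(r(t))-v(r_0)>-v(r_0)$ since $v(r(t))=\ln\coth r(t)>0$.
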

\begin{proof}
The process $r$ solves the stochastic differential equation
\[
r(t)=r_0+\int_0^t \coth 2r(s) ds +\gamma(t),
\]
where $\gamma$ is a standard Brownian motion.  Let $\lambda>0$ and consider the local martingale
\[
D_t^\lambda=\exp \left( \lambda \int_0^t \frac{2d\gamma(s)}{\sinh 2r(s)} -\frac{\lambda^2}{2} \int_0^t \frac{4ds}{\sinh^2 2r(s)}\right).
\]
Using It\^o's formula, we rewrite it as:
\[
D_t^\lambda =\left(\frac{\tanh r(t)}{\tanh r_0}\right)^\lambda \exp \left( -\frac{\lambda^2}{2} \int_0^t \frac{4ds}{\sinh^2 2r(s)}\right),
\]
which is clearly uniformly bounded and in turn, is a martingale. 
Let $\mathcal{F}$ be the natural filtration of $r$ and consider the new  probability measure $\mathbb{P}^\lambda$:
\[
\mathbb{P}_{/ \mathcal{F}_t} ^\lambda=D^\lambda_t \mathbb{P}_{/ \mathcal{F}_t}.
\]
It follows that
\begin{equation}\label{eq-E-tanh}
\mathbb{E} \left( e^{i \lambda \zeta(t)} \right)=\mathbb{E}^\lambda \left( \left(\frac{\tanh r_0}{\tanh r(t)}\right)^\lambda\right). 
\end{equation}
But, the process
\[
\beta(t)=\gamma(t)-\int_0^t \frac{2\lambda ds}{\sinh 2r(s)}, \quad t \geq 0
\]
is a $\mathbb{P}^\lambda$-Brownian motion while a standard comparison argument shows that $r(t)\to +\infty$ almost surely as $t\to+\infty$. As a result, 
\begin{equation*}
 \lim_{t \to \infty} \tanh r(t)=1,   
\end{equation*}
almost surely so that \eqref{eq-E-tanh} finally yields:
\[
\lim_{t \to \infty} \mathbb{E} \left( e^{i \lambda \zeta(t)} \right)=(\tanh r_0)^\lambda.
\]
\end{proof}

\subsection{Windings  in the quaternionic projective space}

As previously, $\mathbb{H}$ is the quaternionic field and $I,J,K \in \mathbf{SU}(2)$ are the Pauli matrices. Define the quaternionic sphere $\bS^{7}$ by: 
\[
\bS^{7}=\lbrace q=(q_1,q_{2})\in \mathbb{H}^{2}, | q | =1\rbrace.
\]
Then, $\mathbf{SU}(2)$ isometrically acts on $\bS^{7}$ by left multiplication:
\[
q \cdot (q_1,q_2)=(qq_1, qq_2)
\]
and the quotient space $\bS^{7}/ \mathbf{SU}(2)$ is the quaternionic projective line $\bH P^1$. The quaternionic K\"ahler metric on  $\bH P^1$ is such that the projection map  $\bS^{7}\to \bH P^1$ is a Riemannian submersion with totally geodesic fibers isometric to $\mathbf{SU}(2)$. Note that the corresponding  fibration
\[
\mathbf{SU}(2)\to \bS^{7}\to \bH P^1
\]
is called the quaternionic Hopf fibration.  One can locally parametrize $\bH P^1$ using the quaternionic  inhomogeneous coordinate:
\[
w=q_2^{-1} q_1, \quad q=(q_1,q_2) \in \bS^{7}
\]
with the convention that $0^{-1} q =\infty$. This allows to identify $\bH P^1$ with the one-point compactification $\mathbb{H}\cup \{ \infty \}$. This identification will be in force in what follows. In affine coordinates, the Riemannian distance from 0 is given by the formula:
\[
r=\arctan |w|.
\]

If $\gamma: [0,+\infty) \to \mathbb{H}P^1 \setminus \{ 0 , \infty \}$ is a $C^1$-path, one can consider its polar decomposition:
\[
\gamma (t) = | \gamma (t) | \Theta (t)
\]
with $\Theta (t) \in \mathbf{SU}(2)$ and define the winding path $(\theta (t))_{t\ge0}$ in $\mathfrak{su}(2)$ as
\[
\theta(t) = \int_0^t \Theta (s)^{-1}  d\Theta (s).
\]
The quaternionic winding form on $\bH P^1$ is therefore the $\mathfrak{su}(2)$-valued one-form $\eta$ such that:
\[
\theta(t)=\int_{\gamma[0,t]} \eta=\frac{1}{2}\int_0^t \frac{ \overline{\gamma}(s)d\gamma(s)-d\overline{\gamma}(s)\gamma(s)}{|\gamma(s)|^2}, \quad t \geq 0.
\]

Consequently, it is natural to make the following definition: 
\begin{definition}
The winding process of the quaternionic projective line is defined by $\bH P^1$: 
\begin{equation*}
\zeta (t) := \int_{w[0,t]} \eta = \frac{1}{2}\int_0^t \frac{ \overline{w}(t)dw(t)-d\overline{w}(t)w(t)}{|w(t)|^2}, \quad t \geq 0,
\end{equation*}
where $(w(t))_{t \geq 0}$ is the Brownian motion in $\mathbb{H}P^1$. 
\end{definition}

%
As in the flat setting, the study of $\zeta$ makes use of the following skew-product decomposition.

\begin{lemma}
Let  $w$ be a Brownian motion on $\mathbb{H}P^1$ not started from 0 or $\infty$. There exist a 
  Jacobi diffusion $(r(t))_{t \ge 0}$ with  generator
 \[
 \frac{1}{2}\left(\frac{\partial^2}{\partial r^2}+6\cot 2r\frac{\partial}{\partial r}\right)
 \]
  and a Brownian motion $\Theta (t)$ on $\mathbf{SU}(2)$ independent from the process  $(r(t))_{t \ge 0}$ such that 
 \[
 w(t)=\tan r(t) \, \Theta({A(t)}),
 \]
 where 
 \begin{equation*}
A(t):= \int_0^t \frac{4 ds}{\sin^2(2r(s))}.
\end{equation*}
\end{lemma}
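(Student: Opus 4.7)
The plan is to mirror the strategy used for $\mathbb{C}P^1$ in Theorem \ref{windingCP}, the only new ingredient being that the fibers of the quaternionic Hopf fibration are three-dimensional (isometric to $\mathbf{SU}(2)$) rather than one-dimensional, so that the ``angular'' process $\Theta$ lives in $\mathbf{SU}(2)$ instead of $\mathbb{S}^{1}$.

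First I would compute the Laplace-Beltrami operator on $\mathbb{H}P^1$ in polar coordinates adapted to the fibration $\mathbf{SU}(2)\to\bS^{7}\to\mathbb{H}P^1$. Using the affine coordinate $w\in\mathbb{H}\cup\{\infty\}$, write the polar decomposition $w=|w|\,\xi$ with $\xi\in\bS^{3}\simeq\mathbf{SU}(2)$, and set $r=\arctan|w|\in(0,\pi/2)$. Since the Fubini-Study metric on $\mathbb{H}P^1$ makes the projection $\bS^{7}\to\mathbb{H}P^1$ a Riemannian submersion with totally geodesic fibers, and since $\mathbb{H}P^1$ is isometric to a round $4$-sphere of radius $1/2$, the radial part of $\Delta_{\mathbb{H}P^1}$ and the ``angular'' part decouple and take the form
\[
\tfrac{1}{2}\Delta_{\mathbb{H}P^1}
=\tfrac{1}{2}\!\left(\frac{\partial^{2}}{\partial r^{2}}+6\cot 2r\,\frac{\partial}{\partial r}\right)
+\frac{2}{\sin^{2}2r}\,\Delta_{\mathbf{SU}(2)},
\]
the coefficient $6\cot 2r$ reflecting the fact that the orbit of $\mathbf{SU}(2)$ through a point at distance $r$ from the origin has volume proportional to $\sin^{3}(2r)\cdot (\text{factor})$ and $\Delta_{\mathbf{SU}(2)}$ being the bi-invariant Laplacian.

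Next I would apply It\^o's formula to $r(t)=\arctan|w(t)|$ and to $\xi(t)=w(t)/|w(t)|$. Because the decomposition of $\Delta_{\mathbb{H}P^1}$ above has no mixed $r$-$\xi$ term (the horizontal and vertical distributions of the submersion are orthogonal), the martingale parts of $r$ and of $\xi$ have zero cross-variation. Therefore $r(t)$ is an autonomous diffusion whose generator is precisely
\[
\tfrac{1}{2}\!\left(\frac{\partial^{2}}{\partial r^{2}}+6\cot 2r\,\frac{\partial}{\partial r}\right),
\]
independent (in law and pathwise, once driving Brownians are extracted) of the angular martingale driving $\xi$.

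Then I would read off that $\xi(t)$ is, conditionally on $r$, a time-changed Brownian motion on $\mathbf{SU}(2)$ with deterministic clock $A(t)=\int_{0}^{t}\tfrac{4\,ds}{\sin^{2}(2r(s))}$. Concretely, the vertical component of the martingale part of $w$, expressed through the Maurer-Cartan form $\xi^{-1}d\xi$, satisfies
\[
\xi^{-1}\circ d\xi=\frac{2}{\sin 2r(t)}\,dM(t),
\]
where $M$ is an $\mathfrak{su}(2)$-valued Brownian motion (with respect to the bi-invariant inner product) independent of the radial Brownian motion driving $r$. Defining $\Theta$ as the solution of the Stratonovich SDE $d\Theta=\Theta\circ dM$ on $\mathbf{SU}(2)$, a standard time-change argument (Dambis-Dubins-Schwarz in its Lie-group form) gives $\xi(t)=\Theta(A(t))$ and hence $w(t)=\tan r(t)\,\Theta(A(t))$.

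The main technical step is the second one: rigorously verifying that the horizontal martingale part in $r$ and the vertical martingale driving $\xi$ have vanishing quadratic covariation, so that the Brownian motions produced by the martingale representation theorem are truly independent. Once this orthogonality is established the remainder is a routine identification of an $\mathbf{SU}(2)$-valued stochastic exponential with a Brownian motion on $\mathbf{SU}(2)$.
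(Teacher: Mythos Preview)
Your proposal is correct and follows essentially the same route as the paper. The paper's proof is a one-liner: it simply records that $\mathbb{H}P^1$ is isometric to the round $4$-sphere of radius $1/2$, so that in polar coordinates
\[
\tfrac{1}{2}\Delta_{\mathbb{H}P^1}=\tfrac{1}{2}\left(\frac{\partial^2}{\partial r^2}+6\cot 2r\,\frac{\partial}{\partial r}+\frac{4}{\sin^2 2r}\,\Delta_{\mathbf{SU}(2)}\right),
\]
and then implicitly invokes the general warped-product skew decomposition (Example~\ref{BM on warped product}) to conclude. Your more hands-on derivation via It\^o's formula and the orthogonality of the radial and angular martingale parts is exactly what that warped-product example encodes, so the two arguments coincide.
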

\begin{proof}
This follows from  the fact that since $\HP^1$ is isometric to the 4-dimensional real sphere with radius $1/2$, the operator $(1/2)\Delta_{\HP^1}$ may be decomposed in polar coordinates as:
\begin{equation}\label{Polar-S}
\frac{1}{2}\left(\frac{\partial^2}{\partial r^2}+6\cot 2r\frac{\partial}{\partial r}+\frac{4}{\sin^2 2r}\Delta_{\mathbf{SU}(2)} \right).
\end{equation}
\end{proof}

As a consequence, we obtain the equality in distribution:
\begin{equation}\label{W-eq-HP}
\zeta (t) \overset{\mathcal{D}}=\beta\left(\int_0^t \frac{4ds}{\sin^2(2r(s))}\right), 
\end{equation}
where $\beta$ is a $3$-dimensional standard Brownian motion which is independent from the process $r$. The analogue of Theorem \ref{T1} for the quaternionic projective line is then: 
\begin{theorem}\label{windingHP}
When $t \to \infty$, we have
\[
\frac{\zeta(t)}{t} \to \mathcal{N}(0, 2\mathrm{I}_3),
\]
in distribution.
\end{theorem}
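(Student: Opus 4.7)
The plan is to imitate the Yor transform strategy used in Proof 2 of Theorem~\ref{T1} and in the proof of Theorem~\ref{windingCP}. Starting from the skew-product representation \eqref{W-eq-HP} and the independence of $\beta$ from $r$, I would reduce the characteristic function to the Feynman-Kac-type functional
\[
\mathbb{E}\bigl[e^{i\langle\lambda,\zeta(t)\rangle}\bigr]=\mathbb{E}\Bigl[\exp\Bigl(-\frac{|\lambda|^2}{2}\int_0^t\frac{4\,ds}{\sin^2(2r(s))}\Bigr)\Bigr],\qquad \lambda\in\mathbb{R}^3.
\]
Writing $r$ as the solution of $dr(t)=3\cot(2r(t))\,dt+d\gamma(t)$ for a one-dimensional Brownian motion $\gamma$, the task becomes producing this expectation as a by-product of an explicit Girsanov change of measure.

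To that end, I would apply It\^o's formula to $f(r)=\ln\sin(2r)$ to get
\[
2\int_0^t\cot(2r)\,d\gamma=\ln\sin(2r(t))-\ln\sin(2r_0)-\int_0^t\bigl(4\cot^2(2r)-2\bigr)\,ds,
\]
and for $\mu\geq 0$ introduce the exponential local martingale
\[
D_t^{\mu}=\exp\Bigl(2\mu\int_0^t\cot(2r)\,d\gamma-2\mu^2\int_0^t\cot^2(2r)\,ds\Bigr).
\]
After substituting the It\^o identity and using $\cot^2=1/\sin^2-1$, this rewrites as
\[
D_t^{\mu}=\Bigl(\frac{\sin 2r(t)}{\sin 2r_0}\Bigr)^{\mu}e^{(6\mu+2\mu^2)t}\exp\Bigl(-\bigl(\mu+\tfrac{\mu^2}{2}\bigr)\int_0^t\frac{4\,ds}{\sin^2(2r(s))}\Bigr),
\]
which is uniformly bounded on $[0,t]$ and hence a genuine martingale. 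Choosing $\mu=\sqrt{1+|\lambda|^2}-1$ so that $\mu+\mu^2/2=|\lambda|^2/2$ aligns the last exponential with the target integrand, Girsanov then yields
\[
\mathbb{E}\bigl[e^{i\langle\lambda,\zeta(t)\rangle}\bigr]=(\sin 2r_0)^{\mu}\,e^{-(6\mu+2\mu^2)t}\,\mathbb{E}^{\mu}\bigl[(\sin 2r(t))^{-\mu}\bigr],
\]
where under $\mathbb{P}^{\mu}$ the process $r$ is an ergodic Jacobi diffusion on $[0,\pi/2]$ with drift $(2\mu+3)\cot(2r)$ and invariant density proportional to $\sin^{2\mu+3}(2r)$.

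To finish, I would substitute the appropriate normalization so that $\mu(t)\to 0$ at the rate making $(6\mu(t)+2\mu(t)^2)t$ converge to the exponent of the Gaussian characteristic function, and then combine three asymptotic ingredients: the prefactor $(\sin 2r_0)^{\mu(t)}\to 1$ trivially; the explicit exponential factor provides the Gaussian characteristic function of the limit; and the conditional factor $\mathbb{E}^{\mu(t)}[(\sin 2r(t))^{-\mu(t)}]\to 1$ follows from ergodicity and the pointwise bound $(\sin 2r(t))^{-\mu(t)}\to 1$.

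The main obstacle is the joint limit in this last factor: the measure $\mathbb{P}^{\mu(t)}$ and the corresponding Jacobi generator both depend on $t$, so the ergodicity of a fixed Markov process cannot be invoked directly. I would handle this using the explicit Jacobi semigroup density recalled in the Appendix, whose spectral expansion depends smoothly on the drift parameter, together with a dominated-convergence argument controlling $(\sin 2r(t))^{-\mu(t)}$ uniformly as $\mu(t)$ ranges over a small neighbourhood of $0$.
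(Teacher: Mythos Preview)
Your approach is essentially identical to the paper's: the same exponential martingale $D_t^{(\mu)}$ built from $2\mu\int_0^t\cot(2r)\,d\gamma$, the same choice $\mu=\sqrt{1+|\lambda|^2}-1$, the same Girsanov transformation turning $r$ into a Jacobi diffusion with drift $(2\mu+3)\cot(2r)$, and the same appeal to the explicit Jacobi heat kernel from Appendix~2 to justify the joint limit $\mathbb{E}^{\mu(t)}\bigl[(\sin 2r(t))^{-\mu(t)}\bigr]\to 1$. You are right to hedge on the normalization: the paper's own proof in fact works with $\zeta(t)/\sqrt{t}$ rather than $\zeta(t)/t$ as printed, and your explicit flagging of the $t$-dependent measure issue is, if anything, more careful than the paper's treatment.
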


\begin{proof}
Let $\lambda=(\lambda_1,\lambda_2,\lambda_3) \in \R^3$ and use \eqref{W-eq-HP} to write: 
\[
\mathbb{E}\left( e^{i \lambda\cdot \zeta(t)} \right)=\mathbb{E}\left( e^{-\frac{|\lambda|^2}{2} \int_0^t \frac{4ds}{\sin^2 2r(s)}} \right)=e^{-2|\lambda|^2t} \mathbb{E}\left( e^{-2|\lambda|^2 \int_0^t \cot^2 2r(s) ds} \right).
\]
From \eqref{Polar-S}, the process $r$ is the (unique) solution of the stochastic differential equation: 
\[
r(t)=r (0)+3\int_0^t \cot 2r(s) ds +\kappa(t), \quad r (0) \in (0, \pi/2),
\]
where $\kappa$ is a standard Brownian motion. In order to apply Girsanov's Theorem, we introduce the following local martingale: 
\begin{align*}
D_t^{(\mu)}=\exp \left( 2\mu \int_0^t \cot 2r(s) d\gamma(s) -2\mu^2 \int_0^t \cot^2 2r(s) ds \right), \quad \mu \geq 0.
\end{align*}
From It\^o's formula, we readily obtain: 
\[
D_t^{(\mu)} =e^{2\mu  t}\left(\frac{\sin 2r(t)}{\sin 2r (0)}\right)^{\mu} \exp\left(-2(\mu^2+2\mu) \int_0^t \cot^2 2r(s) ds \right),
\]
which shows in particular that $D_t^{(\mu)}$ is a martingale. Now, consider the new probability measure $\mathbb{P}^{(\mu)}$:
\[
\mathbb{P}_{/ \mathcal{F}_t} ^{(\mu)}=D_t^{(\mu)} \mathbb{P}_{/ \mathcal{F}_t}=(\sin 2r (0))^{-\mu} e^{2\mu t} (\sin 2r(t))^{\mu} e^{- 2(\mu^2+2\mu) \int_0^t \cot^2 2r(s)ds} \mathbb{P}_{/ \mathcal{F}_t},
\]
where $(\mathcal{F}_t)_{t \geq 0}$ is the natural filtration of $r$. If we choose $\mu=\sqrt{|\lambda|^2+1}-1$, then we get:
\[
\mathbb{E}\left( e^{i \lambda \cdot\zeta(t)} \right)=(\sin 2r (0))^\mu e^{-2(|\lambda|^2+\mu) t}\mathbb E^{(\mu)}\left(\frac{1}{(\sin2r(t)) ^{\mu}}\right).
\]
Moreover, Girsanov's theorem implies that the process
\[
\xi(t) : =\kappa(t)-2\mu\int_0^t \cot 2r(s)ds 
\]
is a Brownian motion under $\mathbb P^\mu$. Consequently, 
\[
d r(t)= d\xi(t)+(2\mu+3)\cot 2r(t)dt,
\] 
so that under the probability $\mathbb{P}^\mu$, $r$ is a Jacobi diffusion with generator:
\[
\mathcal{L}^{(\mu+1,\mu+1)}=\frac{1}{2} \frac{\partial^2}{\partial r^2}+\left(\left(\mu+\frac{3}{2}\right)\cot r-\left(\mu+\frac{3}{2}\right) \tan r\right)\frac{\partial}{\partial r}.
\]
Writing,
\[
\mathbb{E}\left( e^{i \lambda\cdot \frac{\zeta(t)}{\sqrt{t}}} \right)=(\sin 2r (0))^{\sqrt{\frac{|\lambda|^2}{t}+1}-1} e^{-2\left(\frac{|\lambda|^2}{t}+\sqrt{\frac{|\lambda|^2}{t}+1}-1\right) t}
\mathbb E^{\left(\sqrt{\frac{|\lambda|^2}{t}+1}-1\right)}\left((\sin2r(t)) ^{1-\sqrt{\frac{|\lambda|^2}{t}+1}}  \right) 
\]
 we end up with the limit
 \[
\lim_{t \to \infty} \mathbb{E}\left( e^{i \lambda\cdot \frac{\zeta(t)}{\sqrt{t}}} \right)=\lim_{t \to \infty}e^{-2\left(\sqrt{|\lambda|^2t+t^2}-t\right)}= e^{-|\lambda|^2},
\]
as required.
\end{proof}
%

\subsection{Windings in the quaternionic hyperbolic space}

The quaternionic anti-de Sitter space $\mathbf{AdS}^{7}(\mathbb{H})$ is defined as the quaternionic pseudo-hyperboloid:
\[
\mathbf{AdS}^{7}(\mathbb{H})=\lbrace q=(q_1,q_{2})\in \mathbb{H}^{2}, {|q|}^2_H =-1\rbrace,
\]
where 
\[
{|q|}_H^2 :=|q_1|^2-|q_{2}|^2.
\]

The group $\mathbf{SU}(2)$, viewed as the set of unit quaternions, acts isometrically on $\mathbf{AdS}^{7}(\mathbb{H})$ by left multiplication and the quotient space 
\begin{equation*}
\mathbf{AdS}^{7}(\mathbb{H})/ \mathbf{SU}(2)
\end{equation*}
is the quaternionic hyperbolic space $\bH H^1$ endowed with its canonical quaternionic K\"ahler metric. One can parametrize $\bH H^1$ using the quaternionic  homogeneous coordinate
\[
w=q_2^{-1} q_1, \quad q=(q_1,q_2) \in \mathbf{AdS}^{7}(\mathbb{H}).
\]
This allows the identification $\bH H^1$ with the unit open ball in $\mathbb{H}$ and will be in force in the sequel. In affine coordinates, the Riemannian distance $r$ from 0 is given by the formula
\[
\tanh r= |w|.
\]

If $\gamma: [0,+\infty) \to \mathbb{H}H^1 \setminus \{ 0  \}$ is a $C^1$ path, as before, one can consider its polar decomposition
\[
\gamma (t) = | \gamma (t) | \Theta (t)
\]
with $\Theta (t) \in \mathbf{SU}(2)$ and define the winding path $\theta (t) \in \mathfrak{su}(2)$ as
\[
\theta(t) = \int_0^t \Theta (s)^{-1}  d\Theta (s).
\]
The quaternionic winding form on $\bH H^1$ is then  the $\mathfrak{su}(2)$-valued one-form $\eta$ such that
\[
\theta(t)=\int_{\gamma[0,t]} \eta=\frac{1}{2}\int_0^t \frac{ \overline{\gamma}(s)d\gamma(s)-d\overline{\gamma}(s)\gamma(s)}{|\gamma(s)|^2}, \quad t \geq 0.
\]

%

The winding process of a Brownian motion on $\HH^1$ is then given by
\[
\zeta(t)=\mathrm{Im}\int_0^t w^{-1}(s)dw(s)=\frac12\int_0^t \frac{ \overline{w}(s)dw(s)-d\overline{w}(s)w(s)}{|w(s)|^2}.
\]

As before, to study $\zeta$, we shall make use of a skew-product decomposition.

\begin{lemma}
Let  $w$ be a Brownian motion on $\mathbb{H}H^1$ not started from 0. There exist a 
  hyperbolic Jacobi process $(r(t))_{t \ge 0}$ with generator
 \[
 \frac{1}{2}\left(\frac{\partial^2}{\partial r^2}+6\coth 2r\frac{\partial}{\partial r}\right)
 \]
  and a Brownian motion $\Theta (t)$ on $\mathbf{SU}(2)$ independent from the process  $(r(t))_{t \ge 0}$ such that 
 \[
 w(t)=\tanh r(t) \, \Theta_{A(t)},
 \]
 where 
 \begin{equation*}
A(t):= \int_0^t \frac{4 ds}{\sinh^2(2r(s))}.
\end{equation*}
\end{lemma}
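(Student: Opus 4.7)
The plan is to mimic the argument given for the projective case and derive the decomposition from the polar expression of the Laplace--Beltrami operator on $\bH H^1$. Write $w = \tanh(r)\,\Theta$ with $r \in (0,\infty)$ the Riemannian distance to the origin and $\Theta \in \mathbf{SU}(2)$ encoding the angular component coming from the quaternionic Hopf-type fibration $\mathbf{SU}(2) \to \mathbf{AdS}^7(\bH) \to \bH H^1$. First I would compute the line element of the quaternionic K\"ahler metric in these polar coordinates: because $\bH H^1$ is a rank-one symmetric space of real dimension $4$, the hyperbolic counterpart of the sphere formula applies and yields
\[
g_{\bH H^1} \;=\; dr^2 + \sinh^2(2r)\, g_{\mathbf{SU}(2)},
\]
where $g_{\mathbf{SU}(2)}$ is a suitably normalized bi-invariant metric on $\mathbf{SU}(2)$ (so that a ball of radius $r$ looks locally like a warped product with warping factor $\sinh(2r)$, matching the volume growth $\sinh^3(2r)\cosh(2r)$ used implicitly in the radial generator).

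From this warped-product structure, I would then read off the radial generator from the general formula for the Laplace--Beltrami operator of a Riemannian submersion with totally geodesic fibers. The volume factor is $\sinh^3(2r)\cosh(2r)$ (up to a constant), so the radial part of $(1/2)\Delta_{\bH H^1}$ becomes
\[
\frac{1}{2}\Bigl(\frac{\partial^2}{\partial r^2} + 6\coth(2r)\,\frac{\partial}{\partial r}\Bigr),
\]
and the full operator takes the polar form
\[
\frac{1}{2}\Bigl(\frac{\partial^2}{\partial r^2} + 6\coth(2r)\,\frac{\partial}{\partial r} + \frac{4}{\sinh^2(2r)}\,\Delta_{\mathbf{SU}(2)}\Bigr),
\]
exactly as in the projective case but with $\cot,\sin$ replaced by $\coth,\sinh$. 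This identity is the heart of the proof and is the hyperbolic analogue of the decomposition that the projective-case lemma invoked.

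Once the Laplacian is in this form, the skew-product decomposition of the Brownian motion follows from the warped-product Brownian motion construction recalled earlier (Example \ref{BM on warped product}): the radial part $r(t)$ is a diffusion with generator $\frac{1}{2}(\partial_r^2 + 6\coth(2r)\partial_r)$, while the angular part is a time-changed Brownian motion $\Theta_{A(t)}$ on $\mathbf{SU}(2)$, independent of $r$, with clock
\[
A(t) \;=\; \int_0^t \frac{4\,ds}{\sinh^2(2r(s))},
\]
the $4$ coming from the coefficient in front of $\Delta_{\mathbf{SU}(2)}$. Independence of $\Theta$ and $r$ is a consequence of the standard orthogonal-decomposition / martingale-representation argument used in Lemma \ref{skew-winding}: the driving martingales of the radial and angular motions are orthogonal, hence after time change yield independent Brownian motions.

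I expect the main obstacle to be purely bookkeeping: verifying the constants in the warped-product expression (in particular the factor $6 = 4\cdot 2 - 2$ in the drift and the factor $4$ in the angular clock) so that they agree with the conventions already used for the projective case, and checking that the pseudo-Riemannian submersion $\mathbf{AdS}^7(\bH) \to \bH H^1$ yields the stated Riemannian metric on $\bH H^1$ with the identification $\tanh(r) = |w|$. Everything else is a direct hyperbolic translation of the proof of the preceding lemma.
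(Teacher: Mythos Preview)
Your approach is essentially the paper's: the proof there consists of a single sentence recording the polar decomposition \eqref{Polar} of $(1/2)\Delta_{\bH H^1}$ (justified by the isometry of $\bH H^1$ with the real $4$-dimensional hyperbolic space of constant curvature), after which the skew-product follows from Example~\ref{BM on warped product} exactly as you describe. One small slip: the radial volume element coming from the warped metric $dr^2 + \tfrac{1}{4}\sinh^2(2r)\,g_{\mathbb{S}^3}$ is proportional to $\sinh^3(2r)$, not $\sinh^3(2r)\cosh(2r)$; this does not affect your argument since you read the drift $6\coth(2r)$ off correctly anyway.
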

\begin{proof}
This follows from  the fact that since $\mathbb{H}H^1$ is isometric to the 4-dimensional real hyperbolic space (with curvature $-1/4$),  the operator $(1/2)\Delta_{\mathbb{H}H^1}$ may be decomposed in polar coordinates as:
\begin{equation}\label{Polar}
\frac{1}{2}\left(\frac{\partial^2}{\partial r^2}+6\coth 2r\frac{\partial}{\partial r}+\frac{4}{\sinh^2 2r}\Delta_{\mathbf{SU}(2)} \right).
\end{equation}
\end{proof}

%
As a consequence, we obtain the equality in distribution:
\begin{equation}\label{W-eq-HH}
\zeta (t) \overset{\mathcal{D}}=\beta\left(\int_0^t \frac{4ds}{\sinh^2(2r(s))}\right), 
\end{equation}
where $\beta$ is a $3$-dimensional standard Brownian motion which is independent from the process $r$. 

Unlike the  $\mathbb{H}P^1$ case, the Brownian motion on $\mathbb{H}H^1$ is transient and as shown below, the corresponding winding process will have a limit in distribution when $t \to +\infty$. Moreover, the computations of the limiting distribution are more involved compared to the flat and the spherical settings. 
\begin{theorem}\label{windingCP}
For any $\lambda \in \mathbb{R}^3$,  
\[
\lim_{t \rightarrow +\infty} \mathbb{E}[e^{i\lambda \cdot \zeta(t)}] = \tanh (r (0))^{\sqrt{|\lambda|^2+1}-1} \left( 1 +\frac{1}{2\cosh(r (0))^2}\left(\sqrt{|\lambda|^2+1}-1\right) \right).
\] 
\end{theorem}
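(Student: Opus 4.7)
The plan is to mimic the Yor-transform strategy of Theorems \ref{windingHP} and \ref{windingCH1}, with one additional Feynman--Kac computation at the end. By the preceding skew-product lemma, $\mathbb{E}[e^{i\lambda\cdot\zeta(t)}]=\mathbb{E}[e^{-2|\lambda|^{2}A(t)}]$ where $A(t):=\int_{0}^{t}ds/\sinh^{2}(2r(s))$. Since $r(t)\to\infty$ almost surely (the drift $3\coth 2r$ is bounded below by $3$ near infinity), $A(\infty)$ is a.s.\ finite, and monotone convergence reduces the problem to the computation of the Laplace transform $\mathbb{E}^{r_{0}}[e^{-2|\lambda|^{2}A(\infty)}]$.

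Letting $\kappa$ denote the Brownian motion driving $r$, I introduce the exponential martingale
\[
D_{t}^{(\mu)}:=\exp\Bigl(\int_{0}^{t}\frac{2\mu}{\sinh 2r(s)}\,d\kappa(s)-2\mu^{2}A(t)\Bigr).
\]
Applying It\^o's formula to $(\tanh r(t))^{\mu}$, then using the identities $(\tanh r)'/\tanh r=2/\sinh 2r$ and $4\cosh(2r)/\sinh^{2}(2r)=4/\sinh^{2}(2r)+2/\cosh^{2}r$, I can rewrite
\[
D_{t}^{(\mu)}=\Bigl(\frac{\tanh r(t)}{\tanh r_{0}}\Bigr)^{\mu}\exp\bigl(-(2\mu^{2}+4\mu)A(t)-2\mu B(t)\bigr),\quad B(t):=\int_{0}^{t}\frac{ds}{\cosh^{2}r(s)}.
\]
The argument of the exponential is uniformly bounded above, so $D_{t}^{(\mu)}$ is a true martingale. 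Choosing $\mu=\sqrt{|\lambda|^{2}+1}-1$ so that $\mu^{2}+2\mu=|\lambda|^{2}$ and changing probability via $\mathbb{P}^{(\mu)}:=D_{t}^{(\mu)}\mathbb{P}$, I obtain
\[
\mathbb{E}^{r_{0}}[e^{-2|\lambda|^{2}A(t)}]=(\tanh r_{0})^{\mu}\,\mathbb{E}^{(\mu),r_{0}}[(\tanh r(t))^{-\mu}e^{2\mu B(t)}].
\]
Under $\mathbb{P}^{(\mu)}$, Girsanov gives $r$ the drift $(\mu+3/2)\coth r+(3/2-\mu)\tanh r$, which still drives $r$ to infinity; hence $\tanh r(t)\to 1$ and $B(\infty)<\infty$ a.s. Combined with the a priori bound $e^{-2|\lambda|^{2}A(t)}\le 1$ on the original side, this lets me pass to the limit and conclude
\[
\lim_{t\to\infty}\mathbb{E}^{r_{0}}[e^{i\lambda\cdot\zeta(t)}]=(\tanh r_{0})^{\mu}\,F(r_{0}),\quad F(r_{0}):=\mathbb{E}^{(\mu),r_{0}}[e^{2\mu B(\infty)}].
\]

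The remaining task is to evaluate $F$. By Feynman--Kac, $F$ solves the linear ODE
\[
\mathcal{L}^{(\mu)}F+\frac{2\mu}{\cosh^{2}r}\,F=0,\qquad \mathcal{L}^{(\mu)}=\tfrac{1}{2}\partial_{r}^{2}+\Bigl(3\coth 2r+\frac{2\mu}{\sinh 2r}\Bigr)\partial_{r},
\]
with $F(\infty)=1$ and $F$ bounded near $0$. A Frobenius analysis at $r=0$ produces indicial exponents $0$ and $-(2\mu+2)$, so the bounded-at-$0$ subspace is one-dimensional and the boundary conditions pin down $F$ uniquely. Direct substitution -- using $\cosh(2r)/\cosh^{2}r=2-1/\cosh^{2}r$, $\tanh r/\sinh 2r=1/(2\cosh^{2}r)$, and $\tanh^{2}r+1/\cosh^{2}r=1$ -- verifies that $F(r)=1+\mu/(2\cosh^{2}r)$ solves the ODE and meets the boundary conditions, producing the stated formula.

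The main obstacle is guessing the correction factor $1+\mu/(2\cosh^{2}r)$: in the $\mathbb{C} H^{1}$ case the corresponding $F$ is simply $1$, and the appearance here of the extra term reflects the three-dimensionality of $\zeta$, which forces the Girsanov change of measure to carry along the additional potential $2\mu/\cosh^{2}r$ via $B(t)$.
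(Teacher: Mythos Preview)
Your proof is correct and takes a genuinely different route from the paper's. The paper performs a two-parameter Girsanov transform with density built from $\nu\coth r + \kappa\tanh r$ (choosing $\nu=\sqrt{1+|\lambda|^2}-1$, $\kappa=-\nu-2$), which kills the potential $2|\lambda|^2/\sinh^2 2r$ exactly but leaves an \emph{unbounded} multiplicative factor $(\tanh r(t))^{-\nu}\cosh^2 r(t)$; the large-time limit is then extracted via a binomial expansion in $1/\cosh^2 r(t)$ together with a separate lemma computing $e^{-4t}\mathbb{E}[\cosh^2 r(t)]$. You instead use the one-parameter Girsanov $2\mu/\sinh 2r$ exactly as in the $\mathbb{C}H^1$ argument, which leaves an \emph{integrable} residual potential $2\mu/\cosh^2 r$ in the exponent; the limit then reduces to a perpetual Feynman--Kac ODE whose explicit bounded solution $1+\mu/(2\cosh^2 r)$ you verify directly. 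Your approach is shorter and stays closer to the $\mathbb{C}H^1$ template; the paper's two-parameter trick is the standard Jacobi move and avoids having to guess the correction factor.

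One small point worth tightening: your phrase ``this lets me pass to the limit'' and the boundary condition ``$F$ bounded near $0$'' are not quite justified by what you wrote. The Girsanov identity at finite $t$ together with $e^{-2|\lambda|^2 A(t)}\le 1$ gives only $\mathbb{E}^{(\mu),r_0}[e^{2\mu B(t)}]\le (\tanh r_0)^{-\mu}$, hence (by monotone convergence) $F(r_0)\le (\tanh r_0)^{-\mu}\sim r_0^{-\mu}$ as $r_0\downarrow 0$. This bound is precisely what you need: since the singular Frobenius branch blows up like $r_0^{-(2\mu+2)}$ and $\mu<2\mu+2$, it rules out any contribution from that branch and forces $F$ to lie in the bounded subspace. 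Together with $F(\infty)=1$ this pins down $F=1+\mu/(2\cosh^2 r)$, and then dominated convergence (dominating $G(r(t))e^{2\mu B(t)}$ by $(1+\mu/2)e^{2\mu B(\infty)}\in L^1$) closes the limit argument.
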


\begin{proof}
Let $\lambda \in \R^3$. We have 
\[
\mathbb{E}\left( e^{i \lambda\cdot \zeta(t)} \right)=\mathbb{E}\left( e^{-2|\lambda|^2 \int_0^t \frac{ds}{\sinh^2 2r(s)}} \right).
\]
The process $r$ solves the stochastic differential equation:
\[
r(t)=r (0)+3\int_0^t \coth(2r(s)) ds +\psi(t),
\]
where $\psi$ is a standard one-dimensional Brownian motion. In order to compute the characteristic function of $\zeta$, we shall look for an exponential local martingale of the form 
\begin{equation*}
D_t^{(\nu, \kappa)} := \exp\bigg(\int_0^t[\nu \coth(r(s)) + \kappa \tanh(r(s))] d\gamma(s) - \frac{1}{2}\int_0^t [\nu \coth(r(s)) + \kappa \tanh(r(s))]^2ds \bigg). 
\end{equation*}
To this end, we use It\^o's formula to derive: 
\begin{equation*}
\left(\frac{\sinh(r(t))}{\sinh(r (0))}\right)^{\nu} =  \exp\bigg(\nu \int_0^t\coth(r(s)) d\gamma(s) + \nu \int_0^t\coth^2(r(s))ds + 2\nu t\bigg), \quad r (0) > 0,
\end{equation*}
 \begin{equation*}
\left(\frac{\cosh(r(t))}{\cosh(r (0))}\right)^{\kappa} =  \exp\bigg(\kappa \int_0^t\tanh(r(s)) d\gamma(s) + \kappa \int_0^t\tanh^2(r(s))ds + 2 \kappa t\bigg),
\end{equation*}
so that 
\begin{multline*}
D_t^{(\nu, \kappa)} := e^{-2(\nu +\kappa+\nu\kappa/2)t} \left(\frac{\sinh(r(t))}{\sinh(r (0))}\right)^{\nu}\left(\frac{\cosh(r(t))}{\cosh(r (0))}\right)^{\kappa} 
\\\exp\bigg(-\frac{\nu^2+2\nu}{2}\int_0^t \coth^2(r(s))ds  -\frac{\kappa^2 +2\kappa}{2} \int_0^t \tanh(r(s))^2ds \bigg). 
\end{multline*}
Writing 
\begin{align*}
\frac{1}{\sinh^2 2r(s)} = \frac{[\coth(r(s)) - \tanh(r(s))]^2}{4} = \frac{\coth^2(r(s)) + \tanh^2(r(s)) - 2}{4}, 
\end{align*}
and choosing  
\begin{equation*}
\nu = \sqrt{1+|\lambda|^2} - 1 = -\kappa -2, 
\end{equation*}
then 
\begin{equation*}
D_t^{(\nu, \kappa)} := e^{4t} \left(\frac{\tanh(r(t))}{\tanh(r (0))}\right)^{ \sqrt{1+|\lambda|^2} - 1}\left(\frac{\cosh(r (0))}{\cosh(r(t))}\right)^{2}e^{-2|\lambda|^2 \int_0^t \frac{ds}{\sinh^2 2r(s)}}.
\end{equation*}
Since $\nu \geq 0$ then $D_t^{(\nu, \kappa)}$ is a bounded local martingale.
So $D_t^{(\nu, \kappa)}$ is a martingale and we can define the probability measure 
\begin{equation*}
\mathbb{P}_{/ \mathcal{F}_t}^{(\nu, \kappa)} := D_t^{(\nu, \kappa)} \mathbb{P}_{/ \mathcal{F}_t} 
\end{equation*}
under which the process $(r(t))_{t \geq 0}$ solves the SDE 
\begin{equation*}
r(t)=r (0)+\int_0^t \left[\left(\frac{3}{2}+\nu\right)\coth r(s) - \left(\frac{1}{2} + \nu\right) \tanh(r(s))\right]ds +\tilde{\gamma}(t)
\end{equation*}
for some $\mathbb{P}^{(\nu, \kappa)}$-Brownian motion $\tilde{\gamma}$. This is a hyperbolic Jacobi process of parameters ($1+\nu, -1-\nu$) and 
\begin{align*}
\mathbb{E}\left(e^{-2|\lambda|^2 \int_0^t \frac{ds}{\sinh^2 2r(s)}}\right) & = e^{-4t} \mathbb{E}^{(\nu,-\nu-2)}\left\{\left(\frac{\tanh(r(t))}{\tanh(r (0))}\right)^{1-\sqrt{1+|\lambda|^2}}\left(\frac{\cosh(r(t))}{\cosh(r (0))}\right)^{2}\right\}
\\& = e^{-4t} \frac{\left(\tanh(r (0))\right)^{\sqrt{1+|\lambda|^2}-1}}{\cosh^2(r (0))}\mathbb{E}^{(\nu,-\nu-2)}\left\{\left(1-\frac{1}{\cosh^2(r(t))}\right)^{-\nu/2} \cosh^2(r(t))\right).
\end{align*}
Using the generalized binomial theorem, we get further: 
\begin{equation*}
\mathbb{E}\left(e^{-2|\lambda|^2 \int_0^t \frac{ds}{\sinh^2 2r(s)}}\right)  = e^{-4t}  \frac{\left(\tanh(r (0))\right)^{\sqrt{1+|\lambda|^2}-1}}{\cosh^2(r (0))}\sum_{k \geq 0}\frac{(\nu/2)_k}{k!} \mathbb{E}^{(\nu,-\nu-2)}\left(\frac{1}{\cosh^{2k-2}(r(t))}\right).
\end{equation*}
Consequently, the long-time behavior of $\zeta(t)$ is given by the lowest-order term $k=0$ in the series above: 
\begin{equation}\label{Limit}
\lim_{t \rightarrow +\infty} \mathbb{E}\left(e^{-2|\lambda|^2 \int_0^{+\infty} \frac{ds}{\sinh^2 2r(s)}}\right) = \frac{\left(\tanh(r (0))\right)^{\sqrt{1+|\lambda|^2}-1}}{\cosh^2(r (0))}\lim_{t \rightarrow +\infty}e^{-4t}  \mathbb{E}^{(\nu,-\nu-2)}[\cosh^2(r(t))]. 
\end{equation}
To this end, we need the following lemma: 

\begin{lemma}\label{limit cosh}
For any $\alpha,\beta$, consider the family of hyperbolic Jacobi processes $\gamma_{\alpha,\beta}$ that are solutions of
\[
dr_{\alpha,\beta} (t)=(\alpha \coth r_{\alpha,\beta} (t) +\beta \tanh r_{\alpha,\beta} (t))dt +d\gamma(t), \quad r_{\alpha,\beta} (0)=r (0)>0.
\]
Then
\begin{align*}
\mathbb{E} ((\cosh r_{\alpha,\beta}(t)^2))=\frac{1+2\beta}{2(1+\alpha+\beta)} + e^{2(1+\alpha+\beta)t} \left( \cosh(r (0))^2 -\frac{1+2\beta}{2(1+\alpha+\beta)}\right).
\end{align*}
\end{lemma}

\begin{proof}
Let
\[
f(r)=(\cosh (r))^2.
\]
We have
\[
Lf =2(1+\alpha+\beta) f -(1+2\beta)
\]
where
\[
L=\frac{1}{2} \frac{d^2}{dr^2} +(\alpha \coth r +\beta \tanh r)\frac{d}{dr}.
\]
Thus, denoting
\[
\phi (t) := \mathbb{E} ((\cosh r_{\alpha,\beta}(t)^2))
\]
and using It\^o's formula, we obtain the following differential equation
\[
\phi'(t)=2(1+\alpha+\beta) \phi(t) -(1+2\beta),
\]
which proves the Lemma. 
\end{proof}
Specializing the lemma to $\alpha = 3/2+\nu, \beta = -1/2-\nu$, then 
\begin{equation*}
\mathbb{E}^{(\nu,-\nu-2)}[\cosh^2(r(t))] = \lim_{t \rightarrow +\infty}e^{-4t} \mathbb{E}^{(\nu,-\nu-2)}[\cosh^2(r(t))] = \cosh^2(r (0)) + \frac{\nu}{2}. 
\end{equation*}
Plugging this expression into \eqref{Limit}, we are done. 
\end{proof}
We close the section with an explicit expression of the density of $\zeta_{\infty}$. To this end, we write 
\begin{multline*}
\tanh (r (0))^{\sqrt{|\lambda|^2+1}-1}\left( 1 +\frac{1}{2\cosh(r (0))^2}(\sqrt{|\lambda|^2+1}-1) \right) = \tanh (r (0))^{\sqrt{|\lambda|^2+1}-1} + \\ \frac{\tanh(r (0))}{2}\partial_{r (0)} \tanh (r (0))^{\sqrt{|\lambda|^2+1}-1},
\end{multline*}
and recall  the characteristic function of the three-dimensional relativistic Cauchy random variable: 
\begin{equation*}
e^{-y(\sqrt{|\lambda|^2+1}-1)} = \frac{ye^y}{2\pi^2}\int_{\mathbb{R}^3} e^{i\lambda \cdot x} \frac{K_2(\sqrt{|x|^2+y^2})}{|x|^2+y^2} dx,
\end{equation*}
where $K_2$ is the modified Bessel function of the second kind, (see \cite{MR2506430} for more details). After straightforward computations, we end up with:
\begin{corollary}
Let $r (0) > 0$ be the hyperbolic distance from the origin of a Brownian motion in $\mathbb{H}H^1$. Then, the distribution of $\zeta_{\infty}$ is absolutely continuous with respect to Lebesgue measure in $\mathbb{R}^3$ and its density is given by: 
\begin{multline*}
\frac{-\ln(\tanh(r (0)))}{2\pi^2\tanh(r (0))} \frac{K_2(\sqrt{|x|^2+\ln^2(\tanh(r (0)))})}{|x|^2+\ln^2(\tanh(r (0)))} + \\ 
\frac{\tanh(r (0))}{2}\partial_{u}\left\{\frac{-\ln(\tanh(u))}{2\pi^2\tanh(u)} \frac{K_2(\sqrt{|x|^2+\ln^2(\tanh(u))})}{|x|^2+\ln^2(\tanh(u))}\right\}(r (0)).
\end{multline*}
\end{corollary}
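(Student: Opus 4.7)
The plan is to invert the limit characteristic function
$$\varphi(\lambda) = \tanh(r(0))^{\sqrt{|\lambda|^2+1}-1}\left(1 + \frac{\sqrt{|\lambda|^2+1}-1}{2\cosh^2 r(0)}\right)$$
produced by Theorem~\ref{windingCP}. Since $\tanh(r(0)) < 1$, the factor $\tanh(r(0))^{\sqrt{|\lambda|^2+1}-1}$ decays exponentially in $|\lambda|$, so $\varphi \in L^1(\mathbb{R}^3)$, which already guarantees that $\zeta_\infty$ has a bounded continuous density equal to the inverse Fourier transform of $\varphi$.

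First I would use the identity $\partial_u \tanh(u)^\alpha = (\alpha/\cosh^2 u)\tanh(u)^{\alpha-1}$, applied with $\alpha = \sqrt{|\lambda|^2+1}-1$, to split $\varphi$ into two pieces,
$$\varphi(\lambda) = \tanh(r(0))^{\sqrt{|\lambda|^2+1}-1} + \frac{\tanh r(0)}{2}\,\partial_u \tanh(u)^{\sqrt{|\lambda|^2+1}-1}\Big|_{u=r(0)},$$
as already observed in the paragraph preceding the corollary. Each piece can then be inverted in turn.

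Setting $y(u) := -\ln\tanh(u) > 0$, one has $\tanh(u)^{\sqrt{|\lambda|^2+1}-1} = e^{-y(u)(\sqrt{|\lambda|^2+1}-1)}$, and the three-dimensional relativistic Cauchy Fourier identity recalled immediately before the corollary recognizes this as the Fourier transform of
$$p_u(x) := \frac{-\ln\tanh(u)}{2\pi^2\tanh(u)}\,\frac{K_2\!\bigl(\sqrt{|x|^2+\ln^2\tanh(u)}\bigr)}{|x|^2+\ln^2\tanh(u)}.$$
Evaluating at $u = r(0)$ gives the first summand of the claimed density. For the second summand I would interchange $\partial_u$ with the Fourier inversion integral at $u = r(0)$: on a small closed neighborhood of $r(0)$ the integrand $e^{-y(u)(\sqrt{|\lambda|^2+1}-1)}$ and its $u$-derivative $-y'(u)(\sqrt{|\lambda|^2+1}-1)e^{-y(u)(\sqrt{|\lambda|^2+1}-1)}$ are pointwise dominated in $\lambda$ by a single integrable function (both decay like $e^{-y_\ast|\lambda|}$ with $y_\ast>0$ a uniform lower bound for $y$), so dominated convergence legitimizes the exchange and yields $(\tanh r(0)/2)\,\partial_u p_u(x)|_{u=r(0)}$. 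Summing the two contributions produces exactly the formula stated in the corollary.

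The only real technical point is this dominated-convergence justification for differentiating under the Fourier integral, which is nevertheless routine because $y$ is smooth and strictly positive in a neighborhood of $r(0)$; the remainder of the argument is direct bookkeeping with the relativistic Cauchy identity and the decomposition of $\varphi$.
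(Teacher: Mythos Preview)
Your proposal is correct and follows exactly the approach sketched in the paragraph preceding the corollary in the paper: decompose $\varphi$ via the identity $\partial_u \tanh(u)^\alpha = (\alpha/\cosh^2 u)\tanh(u)^{\alpha-1}$, then invert each piece using the three-dimensional relativistic Cauchy formula. The paper leaves the details as ``straightforward computations,'' whereas you supply the dominated-convergence justification for differentiating under the Fourier integral; this is a welcome addition but not a different route.
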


\section{Matrix generalizations}

To conclude the chapter with an opening to further research directions, we briefly discuss a matrix-variate generalization of the L\'evy area formula for the stochastic area process and of the Yor formula for the winding functional. We intended to include them here since they are closely related to the so-called complex Wishart or Laguerre process studied in \cite{DemLag} and since the winding process implicitly appeared there. In particular, they are matrix-variate extensions of the stochastic area and of the winding functional arising from the Heisenberg group.

Let $w$ be a $n \times m, n \geq m,$ complex Brownian matrix: 
\begin{equation*}
w_{jk} = w_{jk}^1 + iw_{jk}^2, \quad 1 \leq j \leq n, \, 1 \leq k \leq m,
\end{equation*}
where $(w_{jk}^1)_{j,k}, (w_{jk}^2)_{j,k}$ are two independent families of independent real Brownian motions. Then the Laguerre process of size $m$ and dimension $2n$ is 
\begin{equation*}
X = w^{\star} w. 
\end{equation*}
It is the complex analogue of the Wishart process introduced by M. F. Bru in \cite{Bru} and the name `Laguerre' was picked in \cite{DemLag} from \cite{Kon-Oco} where the authors show that its eigenvalues process is a Doob-transform of independent squared Bessel processes. 

Now, the matrix analogue of the (Heisenberg) stochastic area process may be defined as
\[
A(t) := A_{n,m}(t):= \frac{1}{2i} \mathrm{tr} \left[ \int_0^t  w^* dw-  dw^* \, w \right].
\]
Using the formulas
\begin{equation*}
(dw)(dw^*) = 2mI_n dt, \quad (dw^*)(dw) = 2nI_m dt,
\end{equation*}
where the brackets are computed component-wise and $I_n, I_m$ are the identity matrices of sizes $n,m$ respectively,
 we further compute the (one-dimensional) bracket 
\begin{equation*}
\left(\mathrm{tr}[w^* dw-  dw^* \, w]\right)\left(\mathrm{tr}[w^* dw-  dw^* \, w]\right) = -4\mathrm{tr}(w^*w)dt.
\end{equation*}
Consequently, there exists a real Brownian motion $\beta$ independent from $w$ such that the matrix stochastic area is represented as: 
\begin{equation*}
A(t) =\beta\left(\int_0^t\mathrm{tr}(w(s)^*w(s)) ds\right), \quad t \geq 0,
\end{equation*}
where $(\mathrm{tr}(w(s)^*w(s)))_{s \geq 0}$ is a squared Bessel process of dimension $2nm$. It is then a well-known result of Williams   that for any $1$-dimensional squared Bessel process $R^2(t)$ started from $0$:
\[
\mathbb{E}\left(\exp\left(-\frac{\lambda^2}2\int_0^t R(s)^2ds\right)  \right)=\left(\frac{1}{\cosh (\lambda t)}\right)^{1/2}.
\]
We can then compute the distribution of $A(t)$ started from $0$: for any $\lambda>0$ and $t>0$, one has
\[
\mathbb{E}\left(e^{i\lambda A(t)} \right)=\mathbb{E}\left(e^{-\frac{\lambda^2}2\int_0^t \mathrm{tr}(w(s)^*w(s)) ds } \right)=\left(\frac{1}{\cosh (\lambda t)}\right)^{nm}.
\]
Therefore we have 
\[
\mathbb{E}\left(e^{i \frac{\lambda}{t}A(t)} \right)=\left(\frac{1}{\cosh (\lambda )}\right)^{nm}.
\]
We come now to the matrix analogue of the complex winding process. It may be defined as: 
\[
 \frac{1}{2i} \mathrm{tr} \left[ \int_0^t X(s)^{-1} [w^* dw-  dw^* \, w]X(s)^{-1} \right]
\]
where $X(s)^{-1}$ is the inverse of $X(s)$ (see Lemma 1 in \cite{DemLag}). Similar computations as above shows that the winding functional may be represented as: 
\begin{equation*}
\gamma_{\int_0^t\mathrm{tr}(X(s)^{-1})ds}, \quad t \geq 0,
\end{equation*}
where $\gamma$ is an independent real Brownian motion. When $n=m$, it was proved in \cite{DemLag} that (see Corollary 6 there): 
\begin{equation*}
\frac{4}{(m \log t)^2}\int_0^t\mathrm{tr}(X(s)^{-1})ds \quad \overset{\mathcal{D}}{\longrightarrow} \quad  T_1(\beta), \quad t \rightarrow \infty,
\end{equation*}  
where $T_1(\beta)$ is the first hitting time of $1$ by a standard real Brownian motion, whence a matrix-variate extension of Spitzer's theorem readily follows. 

When $n > m$, one may mimic the proofs of the results written in \cite{DemLag}, Section 5.1. 
The only major difference is that the stochastic differential equation satisfied by $\log(\det(X(t)))$ comes with an additional drift equal to 
\begin{equation*}
2(n-m)\int_0^t\mathrm{tr}(X(s)^{-1})ds,
\end{equation*}
which slightly alters the absolute-continuity relation displayed in \cite{DemLag}, Proposition 6: the latter becomes (with the same notations there): 
\begin{equation*}
Q_x^{n+\nu}{}_{|\mathscr{F}_t} = \left(\frac{\det(X(t))}{\det(x)}\right)^{\nu/2} \exp\left(-\frac{1}{2}\left[\nu^2 + 2\nu(n-m)\right] \int_0^t\mathrm{tr}(X(s)^{-1})ds\right) Q_x^n{}_{|\mathscr{F}_t}.
\end{equation*}
Accordingly, Corollary 5 in \cite{DemLag} remains valid after one performs in the right hand side the substitutions 
\begin{equation*}
m \rightarrow n, \quad \nu \rightarrow \sqrt{\nu^2 + (n-m)^2} - (n-m).
\end{equation*}
 However, the proof of Corollary 6 in \cite{DemLag} is no longer valid since one seeks $\nu(n,m,t) \geq 0$ such that  
\begin{equation*}
\sqrt{[\nu(n,m,t)]^2 + (n-m)^2} = \frac{\nu}{m\log t}
 \end{equation*}
 for $\nu \geq 0$. But this equation has no solutions for sufficiently large time $t$ since the left hand side is bounded below by $n-m \geq 1$.

\chapter{Brownian motions on Lie groups and Riemannian manifolds}\label{sec-BM-RM}

In this Chapter, we review  the definitions and some basic properties of Brownian motions on Lie groups and on Riemannian manifolds. We also present some useful tools from stochastic differential geometry. 

\section{Brownian motions on Lie groups}\label{BM Lie group}

We start with Brownian motions on (unimodular) Lie groups. For sake of simplicity, we first present the construction  of these processes in the setting of linear groups. Let $\mathbf{G}$ be a closed subgroup of the group $\mathbf{GL}(n,\mathbb R)$ of $n \times n$ real invertible matrices. The Lie algebra of $\mathbf{G} $ is the linear space defined by
\[
\mathfrak{g}=\left\{ X \in \mathbb{R}^{n\times n}, e^{tX} \in \mathbf{G} \right\}.
\]
The Lie bracket on $\mathfrak{g}$ is given by the anti-commutator
\[
[X,Y]=XY-YX.
\]
Assume that $\mathbf{G}$ is unimodular: for every $X \in \mathfrak{g}$,  the trace of the map $\mathfrak g \to \mathfrak g$, $Y \to [X,Y]$ is zero. Equivalently, the Haar measure is bi-invariant.

Let $e$ be the identity. If $(V_1,...,V_d)$ is a basis of $\mathfrak{g}$ and if $(B^1(t),...,B^d(t))_{t \geq 0}$ is a $d$-dimensional Brownian motion in $\mathbb{R}^d$, then the process
\[
B^{\mathfrak{g}}(t)=\sum_{i=1}^d B^i(t) V_i
\]
is called the  Brownian motion on the Lie algebra $\mathfrak{g}$ with respect to the basis $(V_1,...,V_d)$.

\begin{theorem} \label{bmliegroup} The Stratonovich differential equation
\begin{align}\label{eq:ed-def-group}
d X(t) =X(t)  \circ dB^{\mathfrak{g}}(t),~~~~ X(0)=e
\end{align}
has a unique solution taking its values  in $\mathbf{G}$.  This solution $(X(t))_{t \ge 0}$ satisfies for every $ s \ge 0$, $$(X(s)^{-1}X(t+s) )_{t \ge 0}\ \overset{\mathcal{D}}{=}\ (X(t))_{t \ge
0},$$ 

and is called a  left Brownian motion  on $\mathbf{G}$ (with respect to the basis $V_1,...,V_d$).
\end{theorem}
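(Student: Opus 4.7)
The plan is to first establish existence and uniqueness of a solution in the ambient matrix space $\mathbb{R}^{n\times n}$, then to show that the solution remains in $\mathbf{G}$, and finally to deduce the distributional identity of the increment from uniqueness.

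First, I would rewrite the Stratonovich equation in It\^o form: since $\sigma_i(X) := X V_i$ is linear in $X$, the correction yields
\[
dX(t) = \frac{1}{2}X(t)\Bigl(\sum_{i=1}^d V_i^2\Bigr) dt + \sum_{i=1}^d X(t) V_i\, dB^i(t), \qquad X(0) = e.
\]
The coefficients are linear in $X$, so standard results on linear SDEs (or the usual Picard iteration on $\mathbb{R}^{n\times n}$) give strong existence, pathwise uniqueness, and non-explosion. Thus \eqref{eq:ed-def-group} has a unique $\mathbb{R}^{n\times n}$-valued solution $(X(t))_{t\ge0}$.

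The main step is to show $X(t) \in \mathbf{G}$ almost surely. I would approximate the driver by its dyadic piecewise linear interpolation $B^{\mathfrak{g},n}$ and solve the ODE $\dot X_n(t) = X_n(t)\dot B^{\mathfrak{g},n}(t)$ with $X_n(0) = e$. On each subinterval $[t_k,t_{k+1}]$ the driver is constant in $\mathfrak{g}$, so the solution reads
\[
X_n(t) = X_n(t_k)\,\exp\!\left(\frac{t-t_k}{t_{k+1}-t_k}\bigl(B^{\mathfrak{g}}(t_{k+1})-B^{\mathfrak{g}}(t_k)\bigr)\right).
\]
Because $\exp(\mathfrak{g}) \subset \mathbf{G}$ and $\mathbf{G}$ is a subgroup, $X_n(t) \in \mathbf{G}$ for every $t$ and every $n$. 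The Wong--Zakai theorem (applicable since the coefficients are smooth and linear) then gives convergence of $X_n$ to $X$ in probability, uniformly on compact time intervals. Since $\mathbf{G}$ is closed in $\mathbf{GL}(n,\mathbb{R})$, the limit satisfies $X(t) \in \mathbf{G}$ almost surely. This is the crux of the argument and the main obstacle: one must carefully handle the fact that pathwise limits of ODE solutions with smoothed drivers converge to the Stratonovich (not It\^o) solution, which is exactly why the SDE is formulated in Stratonovich form.

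Finally, for the distributional identity, fix $s \ge 0$ and set $Y(t) := X(s)^{-1}X(t+s)$. Since left multiplication by the $\mathcal{F}_s$-measurable constant matrix $X(s)^{-1}$ commutes with the Stratonovich differential, one has $Y(0)=e$ and
\[
dY(t) = X(s)^{-1}X(t+s)\circ d\bigl(B^{\mathfrak{g}}(t+s)-B^{\mathfrak{g}}(s)\bigr) = Y(t)\circ d\widetilde B^{\mathfrak{g}}(t),
\]
where $\widetilde B^{\mathfrak{g}}(t) := B^{\mathfrak{g}}(t+s)-B^{\mathfrak{g}}(s)$ is, by the independence and stationarity of Brownian increments, a Brownian motion on $\mathfrak{g}$ with respect to the same basis $(V_1,\dots,V_d)$ and is independent of $\mathcal{F}_s$. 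By the uniqueness established above (applied conditionally on $\mathcal{F}_s$), $(Y(t))_{t\ge 0}$ has the same law as $(X(t))_{t\ge 0}$, which is the claimed identity.
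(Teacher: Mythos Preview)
Your proof is correct and follows essentially the same approach as the paper: linear coefficients give existence and uniqueness in the ambient matrix space, a Wong--Zakai (piecewise-linear) approximation keeps the iterates in $\mathbf{G}$ and passes to the limit by closedness, and the increment identity is obtained by showing $Y(t)=X(s)^{-1}X(t+s)$ solves the same SDE driven by the shifted Brownian motion. Your ordering $X_n(t)=X_n(t_k)\exp(\cdots)$ is in fact the correct one for the left-invariant equation $dX=X\circ dB^{\mathfrak g}$; the paper's version with the exponential on the left appears to be a typographical slip.
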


\begin{proof}
We give a sketch of the proof and refer to \cite[Chapter 5, Section 35]{Rogers2} for more details. Existence and uniqueness for the stochastic differential equation \eqref{eq:ed-def-group} follow from the standard theorem for solutions of stochastic differential equations, since the coefficients are Lipschitz continuous. Let  $T>0$ and consider the sequence of $\mathbf{G}$-valued  processes $(X^n(t))_{0 \le t \le T}$ defined inductively by
\begin{align*}
X^n(t) =  \exp \left( \frac{2^n}{T} (t -t_k) (B^{\mathfrak{g}}(t_{k+1}) -B^{\mathfrak{g}}(t_{k})) \right) X^n(t_k), \quad t_k \le t \le t_{k+1},
\end{align*}
where $t_k=kT/2^n$, $k =0,\dots, 2^n$. From Theorem 2 in \cite{MR942031} (or Section 4.8 in \cite{MR0247684}), the sequence of semimartingales $(X^n(t))_{0 \le t \le T}$ converges in probability to $(X(t))_{0 \le t \le T}$ uniformly on $[0,T]$. Since $X^n$ is $\mathbf{G}$-valued, we deduce that $X$ is $\mathbf{G}$-valued because $\mathbf{G}$ is closed. The fact that $(X(s)^{-1}X(t+s) )_{t \ge 0}\overset{\mathcal{D}}{=}(X(t))_{t \ge 0}$  follows from It\^o's formula, since it can be seen that $(X(s)^{-1}X(t+s) )_{t \ge 0}$ and  $(X(t))_{t \ge 0}$ solve the same stochastic differential equation in law with identical initial condition.
\end{proof}

In the same way, we call the solution of the stochastic differential equation
\[
d X(t) =\circ dB^{\mathfrak{g}}(t) \,  X(t) ,~~~~ X(0)=e,
\]
a right Brownian motion on $\mathbf{G}$.  It is easily seen that if $(X(t))_{t \ge 0}$ is a left Brownian motion on $\mathbf{G}$, then $(X(t)^{-1})_{t \ge 0}$ is a right Brownian motion on $\mathbf{G}$.

\begin{example}
The first basic example is $(\mathbb{R}^d,+)$. In that case, the
Lie algebra is generated by the vector fields
$\frac{\partial}{\partial x_1}, ...,\frac{\partial}{\partial x_d}$
and the  Brownian motion on $(\mathbb{R}^d,+)$ is nothing else
but the usual Euclidean Brownian motion.
\end{example}

\begin{example}
The second basic example is the circle. Let
\[
\mathbb{S}^1=\left\{ z \in \mathbb{C}, | z | =1 \right\}.
\]
Using polar coordinates the Lie algebra of $\mathbb{S}^1$ is $\mathbb{R}$ and is generated
by $\frac{\partial}{\partial \theta}$. The  Brownian
motion on $\mathbb{S}^1$ is given by
\[
X(t)=e^{i B(t)}, \quad t \ge 0,
\]
where $(B(t))_{t \ge 0}$ is a Brownian motion on
$\mathbb{R}$.
\end{example}

\begin{example}
Let us consider the Lie group $\mathbf{SO} (3)$, i.e. the group of
$3 \times 3$, real, orthogonal matrices of determinant $1$. Its
Lie algebra $\mathfrak{so} (3)$ consists of $3 \times 3$, real,
skew-adjoint matrices of trace $0$. A basis of $\mathfrak{so} (3)$
is formed by
\[
V_{1}=\left(
\begin{array}{ccc}
~0~ & ~1~ & ~0~ \\
-1~ & ~0~ & ~0~ \\
~0~ & ~0~ & ~0~
\end{array}
\right) ,\text{ }V_{2}=\left(
\begin{array}{ccc}
~0~ & ~0~ & ~0~ \\
~0~ & ~0~ & ~1~ \\
~0~ & -1~ & ~0~
\end{array}
\right) ,\text{ }V_{3}=\left(
\begin{array}{ccc}
~0~ & ~0~ & ~1~ \\
~0~ & ~0~ & ~0~ \\
-1~ & ~0~ & ~0~
\end{array}
\right).
\]
A left Brownian motion on $\mathbf{SO} (3)$ is
therefore given by the solution of the linear equation
\[
dX(t)=X(t) \left(
\begin{array}{ccc}
~0~ & ~\circ dB^1_t~ & ~\circ dB^3_t~ \\
~-\circ dB^1_t~ & ~0~ & ~\circ dB^2_t~ \\
~-\circ dB^3_t~ & -\circ dB^2_t~ & ~0~
\end{array}
\right), \quad X(0)=I_3.
\]
We refer to  Chapter 5 in \cite{Rogers2} for a detailed study of that process. As an historical remark, we point out that this process was first introduced and studied by J. Perrin \cite{MR1509284} in 1928, before stochastic calculus was even developed.   
\end{example}

Let now $\mathbf G$ be an arbitrary unimodular Lie group with Lie algebra $\mathfrak g$ and identity $e$. An element $V \in \mathfrak{g}$ can be seen as a left invariant vector field on $\mathbf G$ through the following action on smooth functions:
\[
V(f)(g):=\lim_{t \to 0} \frac{ f\left( g e^{tV} \right)-f(g)}{t}.
\]
We can then construct Brownian motions on $\mathbf G$ as before.

\begin{theorem} \label{bmliegroup2} 
Let $(V_1,...,V_d)$ be a basis of left invariant vector fields on $\mathbf{G}$. The Stratonovich differential equation
\begin{align*}
d X(t) =\sum_{i=1}^d V_i (X(t)) \circ dB^i(t),~~~~ X(0)=e
\end{align*}
has a unique solution taking its values  in $\mathbf{G}$. This solution $(X(t))_{t \ge 0}$ satisfies for every $ s \ge 0$, $$(X(s)^{-1}X(t+s) )_{t \ge 0}\overset{\mathcal{D}}{=}(X(t))_{t \ge
0}.$$ The process $(X(t))_{t \ge 0}$ is called a  left Brownian motion with on $\mathbf{G}$.
\end{theorem}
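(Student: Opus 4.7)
The plan is to mimic the proof of Theorem \ref{bmliegroup}, replacing the linear-algebraic description available for matrix groups with an intrinsic argument that uses completeness of left-invariant vector fields. Concretely, I would first invoke the standard existence/uniqueness theorem for Stratonovich SDEs driven by smooth vector fields on a manifold, which yields a solution $(X(t))_{0 \le t < \tau}$ up to a possibly finite explosion time $\tau$. The main task is then to prove $\tau = \infty$ almost surely and that $X$ takes values in $\mathbf{G}$ (the second point is automatic, since each $V_i$ is a vector field on $\mathbf{G}$, but non-explosion needs an argument).

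To rule out explosion, I would run the Wong--Zakai/McShane approximation scheme. Fix $T>0$ and for each $n$ replace $B$ by its piecewise-linear interpolation $B^n$ at the dyadic times $t_k = kT/2^n$. The Stratonovich equation then becomes an ODE on $\mathbf{G}$ whose solution is given explicitly by left-translations along one-parameter subgroups:
\begin{equation*}
X^n(t) = \exp\Bigl(\tfrac{2^n}{T}(t-t_k)\, V^n_k\Bigr)\cdot X^n(t_k), \qquad t_k \le t \le t_{k+1},
\end{equation*}
where $V^n_k = \sum_{i=1}^d (B^i(t_{k+1}) - B^i(t_k))\,V_i \in \mathfrak{g}$. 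Because left-invariant vector fields on a Lie group are complete (their flows are the left-translations by the one-parameter subgroups), each $X^n$ is well-defined on $[0,T]$ and stays in $\mathbf{G}$. By the Wong--Zakai-type theorem for manifold-valued SDEs (Theorem 2 in \cite{MR942031}, exactly as quoted in the proof of Theorem \ref{bmliegroup}), the sequence $X^n$ converges in probability, uniformly on $[0,T]$, to the unique solution $X$ of the Stratonovich SDE. Uniform convergence in $\mathbf{G}$ on the compact interval $[0,T]$ gives non-explosion on $[0,T]$; letting $T \uparrow \infty$ yields a global $\mathbf{G}$-valued solution.

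For the stationarity of left-increments, fix $s \ge 0$ and set $Y(t) = X(s)^{-1} X(t+s)$. Since the vector fields $V_i$ are \emph{left-invariant}, we have $(L_{X(s)^{-1}})_\ast V_i = V_i$ for every $i$, so applying It\^o's formula (or simply transporting the defining SDE by the random left-translation $L_{X(s)^{-1}}$, which is licit because $X(s)$ is $\mathcal{F}_s$-measurable and the Stratonovich differential is covariant under smooth maps) gives
\begin{equation*}
dY(t) = \sum_{i=1}^d V_i(Y(t)) \circ d\widetilde{B}^i(t), \qquad Y(0) = e,
\end{equation*}
where $\widetilde{B}^i(t) := B^i(s+t) - B^i(s)$. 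The process $\widetilde{B}$ is a Brownian motion by the strong Markov property of $B$, so uniqueness of solutions (applied to the shifted SDE) yields $(Y(t))_{t \ge 0} \overset{d}{=} (X(t))_{t \ge 0}$.

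The main technical obstacle is the non-explosion step, since a generic manifold-valued SDE can explode in finite time; the point is that completeness of the left-invariant vector fields makes the piecewise-linear approximants $X^n$ globally defined, and hence their limit inherits global existence. Everything else — existence/uniqueness from the standard SDE theory and the increment identity from uniqueness plus left-invariance — is then formal.
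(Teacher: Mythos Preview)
Your approach is exactly what the paper intends: Theorem~\ref{bmliegroup2} is stated without a separate proof, the text merely saying ``We can then construct Brownian motions on $\mathbf G$ as before,'' i.e.\ one adapts the proof of Theorem~\ref{bmliegroup} (Wong--Zakai approximation by piecewise exponentials to get a global $\mathbf G$-valued solution, then the increment identity from left-invariance plus uniqueness in law), and that is precisely what you do. One small slip: the flow of a \emph{left}-invariant vector field $V$ through $g$ is the \emph{right}-translation $t\mapsto g\,e^{tV}$, so the approximants should read $X^n(t)=X^n(t_k)\exp\bigl(\tfrac{2^n}{T}(t-t_k)\,V^n_k\bigr)$ rather than the other order (the sketch for Theorem~\ref{bmliegroup} in the paper contains the same inversion, and it does not affect the argument).
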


Note that the generator of $X(t)$ is given by the left-invariant second order differential operator $\frac{1}{2} \sum_{i=1}^d V_i^2$.

Of course, one similarly defines right Brownian motions on $\mathbf{G}$ using basis of right invariant vector fields.

\section{Brownian motions on Riemannian manifolds}

Let $(\mathbb{M},g)$ be a smooth and connected Riemannian manifold. In a local orthonormal frame  of vector fields $V_1,\cdots,V_n$, one can compute the length of the gradient of a smooth function $f$ as follows:
\[
| \nabla f |^2 :=\sum_{i=1}^n (V_i f)^2.
\]
Let us denote by $\mu$ the Riemannian volume measure  on $(\mathbb{M},g)$. We can then consider the pre-Dirichlet form
\[
\mathcal{E}(f,g)=\int \langle \nabla f , \nabla g \rangle d\mu, \quad f,g \in C_0^\infty(\mathbb{M}).
\]
There exists a unique second order operator $\Delta$ such that for every  $f,g \in C_0^\infty(\mathbb{M})$,
\[
\mathcal{E}(f,g)=-\int f \Delta g d\mu =-\int g \Delta f d\mu.
\]
The operator $\Delta$ is called the Laplace-Beltrami operator. Locally, we have
\[
\Delta=\sum_{i=1}^n V_i^2 -D_{V_i} V_i,
\]
where $D$ denotes the Levi-Civita connection on $\mathbb{M}$.

\begin{definition}
A Brownian motion $(X(t))_{ t \ge 0}$ on $\mathbb{M}$ is a diffusion process with generator $\frac{1}{2} \Delta$, that is $(X(t))_{ t \ge 0}$ is a continuous stochastic process on $\M$ such that for every $f \in C^\infty(\mathbb{M})$,
\[
f(X(t))-\frac{1}{2} \int_0^t \Delta f(X(s)) ds, \quad 0 \le t < \mathbf{e}
\]
is a local martingale, where $\mathbf{e}$ is the lifetime of $(X(t))_{ t \ge 0}$ on $\mathbb{M}$.
\end{definition}

In this monograph, very often we will not explicitly mention the possibly finite lifetime of $(X(t))_{ t \ge 0}$ and implicitly consider all the processes up to their lifetimes. 
One can construct Brownian motions by using the theory of Dirichlet forms (see \cite{Fukushima}), using the minimal closed extension of $\mathcal{E}$. If the metric $g$ is complete (which we always assume for Riemannian metrics in this monograph), then one can prove that $\Delta$ is essentially self-adjoint on $C_0^\infty(\mathbb{M})$ (Theorem 11.5 in \cite{MR2569498}) and there is therefore a unique closed extension of $\mathcal{E}$ which turns out to be a strictly local Dirichlet form. A Brownian motion on $\M$ is then the Hunt process associated with this closed extension.
Note that even in the complete case the process $(X(t))_{t \ge 0}$ may have a finite lifetime.  However it is a well-known result by A. Grigor'yan  and K.-T. Sturm \cite[Theorem 4]{MR1301456} that a sufficient condition for $(X(t))_{t \ge 0}$ to have a finite lifetime is that for some $x_0 \in \M$ and $r_0>0$
 \[
 \int_{r_0}^{+\infty} \frac{ r \, dr}{\ln \mu (B(x_0,r))} =+\infty,
 \]
 where $B(x_0,r)$ denotes the metric ball with radius $r$ for the Riemannian distance. This is for instance satisfied if for some constants $C_1,C_2>0$ one has  $\mu (B(x_0,r))\le C_1e^{C_2 r^2}$.

Brownian motion is associated with the heat semigroup on the manifold, more precisely:
\begin{itemize}
\item \hspace{.1in} By using the spectral theorem for $\Delta$ in the Hilbert space $L^2(\M,\mu)$, we may construct a strongly continuous contraction semigroup $(P_t)_{t \ge 0}$ in $L^2(\M,\mu)$ whose infinitesimal generator is $\frac{1}{2} \Delta$;
\item \hspace{.1in} By using the ellipticity of $\Delta$, we may prove that $(P_t)_{t \ge 0}$ admits a heat kernel: There is a smooth function $p(t,x,y)$, $t \in (0,+\infty), x,y \in \mathbb{M}$, called the heat kernel, such that for every $f \in L^2(\M,\mu)$ and $x \in \mathbb{M}$ ,
\[
P_t f (x)=\int_{\mathbb{M}} p(t,x,y) f(y) d\mu (y).
\]
 Moreover, the heat kernel enjoys the two following properties:
\begin{itemize}
\item \hspace{.1in} (Symmetry) $p(t,x,y)=p(t,y,x)$;
\item \hspace{.1in} (Chapman-Kolmogorov relation) $p(t+s,x,y)=\int_{\mathbb{M}} p(t,x,z)p(s,z,y)d\mu(z)$. 
\end{itemize}
\item \hspace{.1in} The semigroup  $(P_t)_{t \ge 0}$ is a sub-Markov semigroup: If $ 0\le f \le 1$ is a function in $L^2(\M,\mu)$, then $0 \le P_t f \le 1$.
\item \hspace{.1in} By using the Riesz-Thorin interpolation theorem, $(P_t)_{t \ge 0}$ defines a contraction semigroup on $L^p(\M,\mu)$, $1 \le p \le \infty$.
\item \hspace{.1in} The semigroup  $(P_t)_{t \ge 0}$ is the transition semigroup of the Brownian motion $(X(t))_{t\ge 0}$ meaning that for every $f \in L^\infty (\M,\mu)$,
\[
\mathbb{E}\left( f(X(t)) \mid X(0)=x\right)=P_tf(x).
\]
\end{itemize} 
We refer to \cite{MR2569498} and \cite{MR4432545} more for details on the construction of the heat kernel and heat semigroup on Riemannian manifolds, see also \cite{Hsu}.

The Brownian motion on a manifold can be studied using stochastic calculus. Indeed, in a local orthonormal frame $V_1,\dots,V_n$, we have
\begin{align}\label{SDE BM local}
dX(t)=\sum_{i=1}^n V_i(X(t)) \circ dB^i(t) -\frac{1}{2} \sum_{i=1}^nD_{V_i} V_i (X(t)) dt,
\end{align}
where $B^1,\dots,B^n$ is a Brownian motion in $\mathbb{R}^n$.

If needed, one can also do (local) computations on the Brownian motion $(X(t))_{t \ge 0}$ using local coordinates as follows. Let $(x^1,\dots,x^n)$ be a local coordinate system on $\mathbb M$. We denote $g_{ij}=g \left(\frac{\partial}{\partial x^i}, \frac{\partial}{\partial x^j}\right)$ and  by $(h_{ij})_{1 \le i,j \le n}$ the positive definite symmetric square root of the matrix $(g_{ij})_{1 \le i,j \le n}$. We denote by $(g^{ij})_{1 \le i,j \le n}$ the inverse matrix of $(g_{ij})_{1 \le i,j \le n}$ and by  $(h^{ij})_{1 \le i,j \le n}$ the inverse matrix of $(h_{ij})_{1 \le i,j \le n}$. The vector fields
\[
V_i=\sum_{j=1}^n h^{ij} \frac{\partial}{\partial x^i}, \quad 1 \le i \le n
\]
define then a local orthonormal frame. Using It\^o's formula and \eqref{SDE BM local} one obtains that for $1 \le i \le n$,
\[
dX^i(t)=\sum_{j=1}^n (V_j x^i)(X(t)) \circ dB^j(t) -\frac{1}{2} \sum_{j=1}^n(D_{V_j} V_j x^i ) (X(t)) dt
\]
where $(X^1(t) ,\dots, X^n(t))$ are the components of $(X(t))_{t \ge 0}$ in the local coordinate chart. The above expression can be further simplified. Indeed,
\[
V_j x^i=h^{ij}
\]
and
\begin{align*}
\sum_{j=1}^n D_{V_j} V_j &=\sum_{j=1}^n D_{\sum_{k=1}^n h^{jk} \frac{\partial}{\partial x^k}} \left(\sum_{l=1}^n h^{jl} \frac{\partial}{\partial x^l} \right) \\
 &=\sum_{j,k,l=1}^n h^{jk}\frac{\partial h^{jl}}{\partial x^k}\frac{\partial}{\partial x^l} +\sum_{j,k,l=1}^n h^{jk}h^{jl} D_{\frac{\partial}{\partial x^k}} \frac{\partial}{\partial x^l} \\
 &=\sum_{j,k,l=1}^n h^{jk}\frac{\partial h^{jl}}{\partial x^k}\frac{\partial}{\partial x^l} +\sum_{j,k,l,m=1}^n h^{jk}h^{jl} \Gamma_{kl}^m \frac{\partial}{\partial x^m}
\end{align*}
where $\Gamma_{kl}^m$ are the Christoffel symbols of the Levi-Civita connection.  This yields
\[
dX^i(t)=\sum_{j=1}^nh^{ij}(X(t)) \circ dB^j(t) -\frac{1}{2} \sum_{j,k=1}^n h^{jk}\frac{\partial h^{ji}}{\partial x^k}(X(t)) dt-\frac{1}{2} \sum_{j,k,l=1}^n h^{jk}h^{jl} \Gamma_{kl}^i (X(t)) dt.
\]
Equivalently, in It\^o's form
\[
dX^i(t)=\sum_{j=1}^nh^{ij}(X(t)) dB^j(t)-\frac{1}{2} \sum_{j,k,l=1}^n h^{jk}h^{jl} \Gamma_{kl}^i (X(t)) dt.
\]
Therefore, for the quadratic covariations, we have
\[
d \langle X^i,X^j\rangle(t)=g^{ij}(X(t)). 
\]
We note that the stochastic differential equation \eqref{SDE BM local} in general only locally describes $X(t)$, since it might not be always possible to find a globally defined orthonormal frame of vector fields. However, using a partition of unity, a Brownian motion can be characterized as the stochastic process that satisfies in any chart the stochastic differential equation \eqref{SDE BM local} and this is often enough in applications.

 We further note that it is possible to construct the Brownian motion from a globally defined stochastic differential equation on the orthonormal frame bundle of the manifold; this is the so-called Eels-Elworthy-Malliavin construction of the Brownian motion, see Section \ref{section construction HBM} for a related construction. Since this construction will not be needed in the present monograph, we do not discuss further about this but refer to the monograph \cite{Hsu} (Proposition 3.2.2).

\section{Semimartingales and stochastic line integrals}

Stochastic calculus tools are available to study stochastic processes taking values in manifolds, see \cite{MR675100,MR1735806, Hsu}.   In this short section, we briefly discuss the construction of the Stratonovich line integral of semimartingales.  Throughout the section, let $\M$ be a manifold.

\begin{definition}
Let $(X(t))_{t \ge 0}$ be a $\M$ valued stochastic process. The process $(X(t))_{t \ge 0}$ is called a semimartingale if for every smooth function $f:\M \to \R$, the process $(f(X(t)))_{t\ge 0}$ is a semimartingale.
\end{definition}

For instance, on a Riemannian manifold, the Brownian motion is a semimartingale. If $X(t)$ is a semimartingale on $\M$ and $(x^1,\dots , x^n)$ a local coordinate system, from It\^o's formula, one can locally describe $X(t)$ using the dynamics
\[
dX(t)=\sum_{i=1}^n \frac{\partial}{\partial x^i} (X(t)) \circ dX^i(t)
\]
where $X^1(t),\dots,X^n(t)$ are the coordinates of $X(t)$ in the local coordinate system.

If   $\alpha$ is a smooth one-form on $\M$ one can then define the Stratonovich stochastic line integral $\int_{X[0,t]} \alpha$ which is a real valued semimartingale satisfying in any local coordinate system $(x^1,\dots,x^n)$
\[
\int_{X[0,t]} \alpha=\sum_{i=1}^n \int_0^t \alpha_i (X(s)) \circ dX^i(s)
\]
for $t\ge0$ up to the first exit time of the chart by $X$, where
\[
\alpha=\sum_{i=1}^n \alpha_i dx^i.
\]
It is possible to prove that $\int_{X[0,t]} \alpha$ is actually independent from the choice of the local coordinates and is defined as long as $X$ is, see \cite[Section 2.4]{Hsu} for more details.

\section{Examples of Riemannian Brownian motions}

In this section we discuss several examples of Riemannian Brownian motions.

\begin{example}[Brownian motion on the sphere]\label{ex-BM-sphere}
Consider the unit sphere
\[
\mathbb{S}^{n-1}=\left\{ x \in \mathbb{R}^n, | x |=1 \right\}, \quad n \ge 3.
\]

The Riemannian structure on $\mathbb{S}^{n-1}$ is the one induced from the inner product of $\mathbb{R}^n$. That is, if $x \in \mathbb{S}^{n-1}$, $\mathbb{X}, \mathbb{Y} \in {T}_x \mathbb{S}^{n-1}$, then
\[
g_x(\mathbb{X},\mathbb{Y})=\langle \mathbb{X},\mathbb{Y} \rangle,
\]
where $\mathbb{X}$ and $\mathbb{Y}$ are seen as vectors in $\mathbb{R}^n$ and  $\langle \mathbb{X},\mathbb{Y} \rangle$ is the usual inner product. The Riemannian distance $d$ between $x,y \in \mathbb{S}^{n-1}$ is seen to be given by
\[
d(x,y)={\arccos} \langle x,y \rangle.
\]
Geodesics are thus parts of great circles. If we denote by $\mu$, the Riemannian measure of $\mathbb{S}^{n-1}$, it turns out that $\mu$ is nothing else but the Euclidean surface measure of  $\mathbb{S}^{n-1}$ and we have the well-known integration formula in polar coordinates: If $f$ is an integrable function on $\mathbb{R}^n$, then
\[
\int_{\mathbb{R}^n} f (x) dx =\int_0^{+\infty} \int_{\mathbb{S}^{n-1} } f (r u) r^{n-1} d\mu (u) dr.
\]
We may  observe that using the last formula with the function $f(x)=e^{-| x |^2}$, leads to the following formula for the volume of the sphere:
\[
\mu(\mathbb{S}^{n-1})=2\frac{\pi^{n/2}}{\Gamma \left( \frac{n}{2} \right)}.
\]
The Laplace-Beltrami operator  $\mathbb{S}^{n-1}$ is easily described from the Laplace operator of $\mathbb{R}^n$. Indeed, if $f : \mathbb{S}^{n-1} \to \mathbb{R}$ is a smooth function, then $f$ may be extended as a function defined on $\mathbb{R}^n \setminus \{ 0 \}$ by setting
\[
\tilde{f}(x)=f \left( \frac{x}{| x |} \right).
\]
We may then check that if $\Delta_{\mathbb{S}^{n-1}}$ denotes the Laplace-Beltrami operator on $\mathbb{S}^{n-1}$, we have
\[
\Delta_{\mathbb{S}^{n-1}} f (x)=\Delta \tilde f (x).
\]
From the previous observation, in the polar coordinates $x = r u$, $r > 0$, $u \in \mathbb{S}^{n-1}$, we have
\[
\Delta=\frac{ \partial^2}{\partial r^2} +\frac{n-1}{r} \frac{\partial}{\partial r} +\frac{1}{r^2} \Delta_{\mathbb{S}^{n-1}}.
\]
One deduces that if $(B(t))_{t \ge 0}$ is a Brownian motion in $\R^n$ not started at 0, then 
\[
B(t) = r (t) \, \beta \left( \int_0^t \frac{ds}{r(s)^2} \right)
\]
where $(r(t))_{t \ge 0}= (|B(t)|)_{t \ge 0}$ is a diffusion with generator
\[
\frac{1}{2} \left( \frac{ \partial^2}{\partial r^2} +\frac{n-1}{r} \frac{\partial}{\partial r} \right) 
\]
and $(\beta(t))_{t \ge 0}$ is a Brownian motion on $\mathbb{S}^{n-1}$ which is independent from the diffusion $(r(t))_{t \ge 0}$. This decomposition is called the skew product decomposition of the Euclidean Brownian motion (see also Example \ref{BM on warped product} below).

 Using It\^o's formula for the process $\frac{B(t)}{|B(t)|}$, one easily sees that $(\beta(t))_{t \ge 0}$ therefore solves in distribution the stochastic differential equation:
\begin{equation}\label{eq-strook-sde}
d\beta^j (t) =\sum_{i=1}^n \left(\delta_{ij}- \beta^j(t)\beta^i(t) \right)\circ dB^i(t), \quad 1 \le j \le n.
\end{equation}
This is called the Stroock stochastic differential equation for the Brownian motion on $\mathbb{S}^{n-1}$.
\end{example}

\begin{example}[Brownian motion on the real hyperbolic space]
For $n \ge 2$, let
\[
H^n=\left\{ z=(x,y), x\in \R^{n-1}, y>0 \right\}
\]
be the upper half space with origin $O=(0,1) \in H^n$. We equip $H^n$ with the Riemannian metric
\[
ds^2=\frac{dx^2_1+\cdots+dx^2_{n-1}+dy^2}{y^2}.
\]
The resulting Riemannian manifold is called the real hyperbolic space of dimension $n$; see \cite{MR768584}, page 265.  The Riemannian distance between $z=(x,y)$ and $z'=(x',y')$ is given by
\[
\cosh d(z,z')=\frac{ |x-x'|^2+y^2+y'^2}{2yy'}.
\]
The Riemannian volume measure is given by
\[
d\mu=\frac{dx \, dy}{y^n}.
\]
The Laplace-Beltrami operator is 
\[
\Delta_{H^n}=y^2 \left( \frac{\partial^2}{\partial y^2}+\sum_{i=1}^{n-1} \frac{\partial^2}{\partial x_i^2} \right)-\frac{n-2}{2} y \frac{\partial}{\partial y}.
\]
Given the form of the Laplace-Beltrami operator, one deduces that the Brownian motion $Z(t)=(X(t), Y(t))$ on $H^n$ issued from the origin $O$ solves a stochastic differential equation
\begin{align*}
\begin{cases}
dX^i(t) =Y(t) dB^i(t) , \quad 1 \le i \le n-1 \\
dY(t) =  -\frac{n-2}{4} \,  Y(t) dt +Y(t) d\beta (t)
\end{cases}
\end{align*}
where $(B(t),\beta (t))$ is a Brownian motion in $\R^n$. Remarkably, this stochastic differential equation can be explicitly integrated:
\begin{align*}
\begin{cases}
X^i(t) =\int_0^t \exp \left( -\frac{n}{4} s +\beta(s) \right) dB^i(s) , \quad 1 \le i \le n-1 \\
Y(t) = \exp \left( -\frac{n}{4} t +\beta(t) \right) .
\end{cases}
\end{align*}
As is apparent from the last formula, the Brownian motion on the real hyperbolic space is therefore closely related to the study of exponential functionals of the Brownian motion; we refer to \cite{MR2203676} for further details about that connection.
\end{example}

\begin{example}[Brownian motion on a sub-manifold of the Euclidean space]
Let $(\M,g)$ be a Riemannian manifold which is isometrically embedded in $\mathbb{R}^n$. Let $e_1,\cdots,e_n$ be the standard orthonormal basis of $\mathbb{R}^n$ and for each $x \in \M$, let $P_i (x)$ be the orthogonal projection of $e_i$ to ${T}_x \M$. Thus, $P_i$ is a vector field on $\M$. If $(B(t))_{t \ge 0}$ is a Brownian motion in $\mathbb{R}^n$, then the solution of the stochastic differential equation
\[
dX(t)=\sum_{i=1}^n P_i (X(t)) \circ dB^i(t)
\]
is a Brownian motion on $(\M,g)$, see Theorem 3.1.4 in \cite{Hsu}. We note that from Nash theorem, any Riemannian manifold can be isometrically embedded in $\mathbb{R}^n$ for some $n$ which is large enough with respect to the dimension of $\M$. Therefore, any Brownian motion can be constructed in this way by means of a stochastic differential equation in $\mathbb{R}^n$ for $n$ large enough. For instance the unit sphere $\mathbb S^{n-1}$ is isometrically embedded in $\R^n$ and one can see that in that case
\[
\left \langle P_i (x), e_j \right \rangle=\delta_{ij}-x^ix^j
\]
This  yields back the Stroock stochastic differential equation \eqref{eq-strook-sde} for the Brownian motion on $\mathbb{S}^{n-1}$.
\end{example}

\begin{example}[Brownian motion on warped products ]\label{BM on warped product}
Let $(\M_1 , g_1)$  and  $(\M_2,g_2)$ be   Riemannian manifolds and $f$ be a smooth and positive function on $\M_1$. The manifold $\M_1 \times \M_2$ equipped with the metric $g_1 \oplus f g_2$ is called a warped product of $\M_1$ and $\M_2$. The Riemannian volume on the warped product is given by
\[
f^{\frac{n_2}{2}}d\mu_{\M_1} \otimes d\mu_{\M_2}
\]
where $n_2=\mathrm{dim} \, \M_2$. Therefore, the Laplace Beltrami operator on the warped product is given by
\[
\Delta_{\M_1 \times \M_2}=\Delta_{\M_1 } +\frac{n_2}{2} \frac{\langle \nabla_{\M_1} f ,\  \cdot\  \rangle }{f} +\frac{1}{f} \Delta_{\M_2}.
\]
It follows that if $(X^1(t))_{t \ge 0}$ is a diffusion on $\M_1$ with generator
\[
\frac{1}{2} \left( \Delta_{\M_1 } +\frac{n_2}{2} \frac{\langle \nabla_{\M_1} f ,\ \cdot\ \rangle }{f} \right)
\]
and if $(X^2(t))_{t \ge 0}$  is an independent Brownian motion on $\M_2$ then the process
\[
X(t)= \left(X^1(t), X^2\left(\int_0^t \frac{ds}{f(X^1(s))} \right)\right)
\]
is a Brownian motion on $\M_1 \times \M_2$. 
For instance if $\M_1 =\mathbb{R}_{>0}$ and $\M_2=\bS^{n-1}$ equipped with the metrics $g_1=dr^2$ and $g_2$ the canonical metric on $\bS^{n-1}$, then the warped product
\[
g_1\oplus r^2 g_2
\]
is a metric on  $\mathbb{R}_{>0} \times \bS^{n-1}$ such that the map $\mathbb{R}_{>0} \times \bS^{n-1} \to \mathbb{R}^n$, $(r,u) \to ru$ is an isometry, where $\mathbb{R}^n$ is equipped with its canonical Euclidean metric.  This yields the celebrated skew-product representation of the Brownian motion on $\mathbb{R}^n$: If $X^1(t)$ is a diffusion with generator
\[
\frac{1}{2} \left( \frac{d^2}{dr^2} +\frac{n-1}{r} \frac{d}{dr} \right)
\]
and $X^2(t)$ is an independent Brownian motion on $\bS^{n-1}$, then the process
\[
X(t)=X^1(t)X^2\left(\int_0^t \frac{ds}{X^1(s)^2} \right)
\]
is a Brownian motion in $\mathbb R^n$.
\end{example}

\begin{example}[Brownian motion on Riemannian unimodular Lie groups]
Let $\mathbf G$ be a unimodular Lie group with Lie algebra $\mathfrak g$. The choice of an inner product $\left\langle \cdot,\cdot \right\rangle_{\mathfrak g}$ on $\mathfrak g$ induces a left-invariant Riemannian metric on $\mathbf G$. Given an orthonormal basis $V_1,\dots,V_n$ of $\mathfrak g$, one can then construct a left Brownian motion on $\mathbf G$ by solving the globally defined stochastic differential equation
\[
d X(t) =\sum_{i=1}^n V_i(X(t))  \circ dB^{i}(t).
\]
We note that the distribution of the process $X(t)$ is independent from the choice of the orthonormal basis $V_1,\dots,V_n$; it only depends on the choice of the inner product $\left\langle \cdot,\cdot \right\rangle_{\mathfrak g}$ itself. Since $X(t)$ has generator $\frac{1}{2} \sum_{i=1}^n V_i^2$ which is the Laplace-Beltrami operator of the left invariant Riemannian metric, one deduces that $X(t)$ is the Brownian motion for the left invariant Riemannian metric on $\mathbf G$. The same discussion also applies to right Brownian motion and right invariant Riemannian metrics. Observe that if $\mathbf G$ admits a bi-invariant Riemannian metric (for instance if $\mathbf G$ is compact), then the left invariant Laplace-Beltrami operator coincides with the right invariant Laplace-Beltrami operator. As a consequence, in that case, the left and  right Brownian motions on $\mathbf G$ have the same distribution.  
\end{example}

\begin{example}[Brownian motion on general Riemannian  Lie groups]
Let $\mathbf G$ be a non necessarily unimodular Lie group with Lie algebra $\mathfrak g$. As before, the choice of an inner product $\left\langle \cdot,\cdot \right\rangle_{\mathfrak g}$ on $\mathfrak g$ induces a left invariant Riemannian metric on $\mathbf G$. Let $V_1,\dots,V_n$ be an orthonormal basis of $\mathfrak g$ for that inner product. We can find constants $\omega^{k}_{ij}$'s  such that
\[
[V_i,V_j]=\sum_{k=1}^n \omega_{ij}^k V_k.
\]
Those numbers are called the structure constants associated with the orthonormal basis  $V_1,\dots,V_n$. Let us denote by $\mu$ a left-invariant Haar measure on $\mathbf G$. The adjoint with respect to $\mu$ of the left invariant vector field $V_i$ is then given by
\[
V_i^*=-V_i+ \sum_{k=1}^n \omega_{ik}^k.
\]
We note that the group $\mathbf G$ is unimodular if and only if for every $i$, $\sum_{k=1}^n \omega_{ik}^k=0$. Therefore, the Laplace-Beltrami operator for the left invariant Riemannian metric is given by
\[
\Delta=-\sum_{i=1}^n V_i^* V_i=\sum_{i=1}^nV_i^2 -\sum_{i,k=1}^n \omega_{ik}^k V_i.
\]
The left Brownian motion on $\mathbf G$ is then given by the solution of the stochastic differential equation
\[
d X(t) =\sum_{i=1}^n V_i(X(t))  \circ dB^{i}(t)-\frac{1}{2}\left(\sum_{i,k=1}^n \omega_{ik}^k V_i(X(t))\right)dt. 
\]
If the group $\mathbf G$ is not unimodular, there is therefore a \textit{correcting drift} in the stochastic differential equation defining the left Brownian motion. 
\end{example}

%
%

\section{The unitary Brownian motion}

In this section we study in details the example of the Brownian motion on the unitary group. This study illustrates the interactions between the study of Brownian motions on Lie groups and  the theory of random matrices. The literature on the unitary Brownian motion is quite extensive and we refer to  \cite{dyson1962brownian,hall2018eigenvalue,collins2018spectral,meckes2018convergence} for further viewpoints.

The unitary group $\mathbf{U}(n)$ is a compact simple subgroup of the general linear group and its Lie  algebra $\mathfrak{u}(n)$ is the vector space of skew-Hermitian matrices:
\[
\mathbf{U}(n)=\left\{ M \in \mathbb C^{n\times n}, M^*M={I}_n \right\}, 
\]
\[
\mathfrak{u}(n)=\left\{ A \in \mathbb C^{n\times n}, A^*+A=0 \right\}.
\]
One can equip $\mathfrak{u}(n)$ with the inner product (so-called Killing form):
\[
B(A_1,A_2)=-(1/2)\textrm{tr}(A_1A_2), \quad A_1, A_2\in \mathfrak{u}(n),
\]
which induces on $\mathbf{U}(n)$ a bi-invariant Riemannian metric.

Note that with respect the Killing form an orthonormal basis of $\mathfrak{u}(n)$ can be given by
\[
\{E_{\ell j}-E_{j \ell}, i(E_{\ell j}+E_{j\ell}), T_\ell, \quad 1\le \ell <j \le n\}
\]
where $E_{\ell j} =(\delta_{(\ell, j)}(k,m))_{1 \le k , m \le n}$, $T_\ell=\sqrt{2}iE_{\ell\ell}$.

Therefore the Brownian motion $(A(t))_{t \ge 0}$ on $\mathfrak{u}(n)$ is  of the form
\[
A(t) =\sum_{1\le \ell<j\le n}(E_{\ell j}-E_{j\ell})B_{\ell j}(t)+i(E_{\ell j}+E_{j\ell})\tilde{B}_{\ell j}(t)+\sum_{j=1}^n T_{j}{B}_j(t),\quad t\ge0, 
\]
where $B_{\ell j}$, $\tilde{B}_{\ell j}$, ${B}_j$ are  independent standard real Brownian motions. 
We denote
\[
A(t) =\sum_{1\le \ell,j\le n}A_{\ell j}(t)E_{\ell j}.
\]
It can be easily seen that for all $1\le \ell \not= j\le n$
\[
dA_{\ell j}(t)\, d\overline{A}_{\ell j}(t)=2dt,\quad dA_{\ell j}(t)\, d{A}_{\ell j}(t) =0
\] 
and  for all $1\le \ell\le n$ that
\[
dA_{\ell \ell}(t)\, d\overline{A}_{\ell \ell}(t)=2dt, \quad dA_{\ell \ell}(t)\, d{A}_{\ell \ell}(t)=-2dt.
\]

The Brownian motion on $\mathbf{U}(n)$ thus satisfies the stochastic differential equation
\begin{equation}\label{eq-UBM-SDE}
dU(t)=U(t)\circ dA(t).
\end{equation}

A matrix in $\U(n)$ has $n$ complex eigenvalues of modulus one. We consider the eigenvalues of the Brownian motion process $(U(t))_{t\ge0}$, and denote them by $(e^{i\lambda_1 (t)}, \dots, e^{i\lambda_n(t)})$. 

Let $D_n=\{  x_1<\cdots <x_n < x_1+2\pi \}$, and consider the stopping time
\begin{equation}\label{eq-tau-N1}
\tau_{D_n}=\inf\{t>0, \lambda(t)\not\in D_n\}. 
\end{equation}

\begin{theorem}\label{eigen process u}
Assume $\lambda (0)\in D_n$. Then up to time $\tau_{D_n}$, the  process $\lambda(t)=(\lambda_1(t),\dots,\lambda_n(t))$, $t\ge0$ satisfies the following stochastic differential equation
\begin{equation}\label{eq-Un-lambda-sde}
d\lambda_j(t)=\sqrt{2}\,dB_j(t)+ \sum_{p\not=j}\cot\frac{\lambda_j-\lambda_p}{2} \, dt,\quad  j=1,\dots, n
\end{equation}
where $(B_j(t))_{t \ge 0}$, $j=1,\dots, n$ is a Brownian motion in $\mathbb{R}^n$.
\end{theorem}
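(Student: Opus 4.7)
The plan is to derive \eqref{eq-Un-lambda-sde} from \eqref{eq-UBM-SDE} by applying It\^o's formula to the eigenvalue map $U \mapsto (\lambda_1,\dots,\lambda_n)$, which is smooth on the open set where eigenvalues are simple, i.e.\ up to $\tau_{D_n}$. The computation is reduced, via the Ad-invariance of the bi-invariant metric on $\mathbf{U}(n)$, to an instantaneous computation at a diagonal unitary, where first- and second-order eigenvalue perturbation theory for a matrix with simple spectrum applies.

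First, convert \eqref{eq-UBM-SDE} to It\^o form: $dU = U\,dA + \tfrac{1}{2}U(dA)^2$, and compute $(dA)^2$ entry by entry using the given covariations together with the skew-Hermitian relation $A_{kj} = -\overline{A_{jk}}$. The one identity needed below is
\[
((dA)^2)_{jj} \;=\; \sum_k dA_{jk}\,dA_{kj} \;=\; -2n\,dt.
\]
Fix $t_0 < \tau_{D_n}$. On this event the eigenvalues of $U(t_0)$ are simple, so one may pick an $\mathcal{F}_{t_0}$-measurable unitary $P$ with $U(t_0) = PDP^*$, $D = \mathrm{diag}(e^{i\lambda_j(t_0)})$. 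Conjugating the SDE by $P$ turns $U$ into $P^*UP$ and the driving process into $P^*AP$, whose increments after $t_0$ are, by the Ad-invariance of the Killing-form inner product on $\mathfrak{u}(n)$ and conditionally on $\mathcal{F}_{t_0}$, distributed as those of $A$ itself. Hence it suffices to carry out the instantaneous computation at $t_0$ assuming $U(t_0) = D$ is diagonal.

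With $U(t_0) = D$, second-order eigenvalue perturbation theory (equivalently, It\^o's formula applied to the real-analytic eigenvalue map on the set of matrices with simple spectrum) gives, with $\mu_j := e^{i\lambda_j(t_0)}$,
\[
d\mu_j \;=\; (dU)_{jj} \;+\; \sum_{k\neq j}\frac{(dU)_{jk}(dU)_{kj}}{\mu_j - \mu_k} \;+\; o(dt),
\]
where the first summand contains both the leading martingale and the diagonal It\^o correction, while the second captures the quadratic-variation contribution from the infinitesimal rotation of the eigenvectors. Substituting $dU = D\,dA + \tfrac{1}{2}D(dA)^2$ together with the bracket computed above yields
\[
d\mu_j \;=\; \mu_j\,dA_{jj} \;-\; n\mu_j\,dt \;+\; \sum_{k\neq j}\frac{-2\mu_j\mu_k}{\mu_j - \mu_k}\,dt.
\]

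In the chosen orthonormal basis of $\mathfrak{u}(n)$ one reads off $A_{jj}(t) = \sqrt{2}\,i\,B_j(t)$, with $B_1,\dots,B_n$ independent standard real Brownian motions (distinct orthonormal coordinates of $A$). Applying It\^o's formula to $\mu_j = e^{i\lambda_j}$ on the left, dividing by $\mu_j$ and equating imaginary parts gives
\[
d\lambda_j \;=\; \sqrt{2}\,dB_j \;+\; \sum_{k\neq j}\mathrm{Im}\!\left(\frac{-2 e^{i\lambda_k}}{e^{i\lambda_j}-e^{i\lambda_k}}\right)\!dt,
\]
and the elementary identity
$\mathrm{Im}\bigl(-2 e^{i\lambda_k}/(e^{i\lambda_j}-e^{i\lambda_k})\bigr) = \cot\tfrac{\lambda_j-\lambda_k}{2}$
converts this into \eqref{eq-Un-lambda-sde}. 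Equating real parts furnishes the internal consistency check $(d\lambda_j)^2 = 2\,dt$, in agreement with the martingale part $\sqrt{2}\,dB_j$. The main technical point requiring care is the rigorous justification of the second-order perturbation expansion as an It\^o expansion valid to $o(dt)$; this is not really an obstacle because the eigenvalue map is real-analytic on the open set of matrices with simple spectrum, which is exactly where the process lives by definition of $\tau_{D_n}$, so the expansion is nothing but It\^o's formula applied to a smooth function of the entries of $U(t)$, with the $dt$-coefficient given by the classical Rellich--Kato formula evaluated in the instantaneously diagonal frame.
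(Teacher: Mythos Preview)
Your proof is correct and arrives at the same place as the paper's, but the organization is somewhat different and worth noting. The paper carries a \emph{moving} diagonalization $U=V\Lambda V^*$ throughout, introduces the auxiliary Stratonovich differentials $dM=dV^*\circ V$ and $dN=V^*\circ dU\circ V$, and then computes $dF_{jj}$ and the brackets $dN_{jk}\,d\overline{N}_{j'k'}$ explicitly. Your argument instead \emph{freezes} the frame at each instant via an $\mathcal{F}_{t_0}$-measurable conjugation, invokes Ad-invariance of the Killing form to say the conjugated noise has the same law, and then reads off the drift and brackets directly from the second-order Rellich--Kato perturbation formula for simple eigenvalues. This buys you a shorter and more conceptual computation at the cost of treating the perturbation formula as a black box; the paper's route is longer but entirely self-contained.

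One small point you should add for completeness: to conclude that the $B_j$ are \emph{independent} Brownian motions via L\'evy's characterization you also need $d\lambda_i\,d\lambda_j=0$ for $i\neq j$, not just $(d\lambda_j)^2=2\,dt$. This follows immediately from your setup since in the instantaneously diagonal frame the martingale part of $d\lambda_j$ is $-i\,dA_{jj}$, and distinct diagonal entries of the (conjugated) Lie-algebra Brownian motion have zero covariation by the orthonormality of the $T_\ell$.
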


\begin{proof}
Using the diagonalization theory of unitary matrices, one can write
$$U=V\Lambda V^*,$$ 
where $V\in \mathbf{U}(n)$ and $\Lambda=\mathrm{diag}\{e^{i \lambda_1},\dots, e^{i\lambda_n}\}$. Let us denote $dM=dV^*\circ V$ and $dN=V^*\circ dU \circ V$. We have 
\begin{equation}\label{eq-lambda}
d\Lambda=dM\circ \Lambda-\Lambda\circ dM+dN
\end{equation}
hence
\begin{equation}\label{eq-dU1}
ie^{i\lambda_j}\circ d\lambda_j=dN_{jj},\quad dM_{jk}=\frac{1}{e^{i\lambda_j}-e^{i\lambda_k}}\circ dN_{jk}\quad \mbox{for $j\not=k$}. 
\end{equation}
Rewriting \eqref{eq-UBM-SDE} in the form of a It\^o's stochastic differential equation we have that
\begin{equation}\label{eq-UBM-Ito}
dU(t)=U(t)dA(t)-nU(t)dt.
\end{equation}
It follows that for all $1\le j,j',k,k'\le n$,
\begin{align*}
{(dU)_{jk}(d\overline{U})_{j'k'}}={2dt}\,\delta_{jj'}\delta_{kk'}.
\end{align*}
We can then compute that
\begin{align*}
\frac{(dN)_{jk}(d\overline{N})_{j'k'}}{2dt}&=\sum_{p,p',\ell,\ell'}V^*_{jp}\overline{V}^*_{j'p'}V_{\ell k}\overline{V}_{\ell' k'}(dU)_{p\ell} (d\overline{U})_{p'\ell'} \\
&=(V^*V)_{jj'} (V^*V)_{k'k}=\delta_{jj'}\delta_{kk'}.
\end{align*}
If we denote by $dE$ the local martingale part of $dN$ and by $dF$ its finite variation part, then from \eqref{eq-UBM-Ito}  and the definitions of $dM$ and $dN$ we know that
\begin{align*}
{dF}&= -(V^*UV) dt+\frac12(dV^* dU V+V^*dUdV)\\
&=-(V^*UV) dt+\frac12 (dMdN-dNdM).
\end{align*}
Note that here, and throughout the rest of this monograph, we use $dXdY$ to denote the quadratic covariation of matrix-valued processes. Namely the $jk$-component of $(dXdY)$ is given by the sum of the quadratic covariations  $(dXdY)_{jk}=\sum_{p}\langle dX_{jp},dY_{pk}\rangle=\sum_{p} dX_{jp}dY_{pk}$. 
We can then compute that
\begin{align}\label{eq-dMdN-a}
(dMdN)_{jk}&=\sum_{p\not=j}\frac{1}{e^{i\lambda_j}-e^{i\lambda_p}}dN_{jp}dN_{pk}+dM_{jj}dN_{jk}.
\end{align}
Moreover, from \eqref{eq-dU1} we can also obtain that for $k\not=p$,
\[
dN_{pk}=\frac{e^{i\lambda_k}-e^{i\lambda_p}}{e^{-i\lambda_k}-e^{-i\lambda_p}}d\overline{N}_{kp}.
\]
Plugging this expression into \eqref{eq-dMdN-a} we obtain that 
\begin{align*}
(dMdN)_{jk}&=\sum_{p\not=j}\frac{1}{e^{i\lambda_j}-e^{i\lambda_p}}\frac{e^{i\lambda_k}-e^{i\lambda_p}}{e^{-i\lambda_k}-e^{-i\lambda_p}}(2dt) \delta_{jk}+dM_{jj}dN_{jk}\\
&=\sum_{p\not=k}\frac{(2dt) \delta_{jk}}{e^{-i\lambda_k}-e^{-i\lambda_p}}+dM_{jj}dN_{jk}.
\end{align*}
Similarly we have
\begin{align*}
(dNdM)_{jk}&=\sum_{p\not=k}\frac{1}{e^{i\lambda_p}-e^{i\lambda_k}}dN_{jp}dN_{pk}+dN_{jk}dM_{kk}\\
&=\sum_{p\not=k}\frac{1}{e^{i\lambda_p}-e^{i\lambda_k}}\frac{e^{i\lambda_k}-e^{i\lambda_p}}{e^{-i\lambda_k}-e^{-i\lambda_p}}(2dt) \delta_{jk}+dN_{jk}dM_{kk}\\
&=-\sum_{p\not=k}\frac{(2dt) \delta_{jk}}{e^{-i\lambda_k}-e^{-i\lambda_p}}+dN_{jk}dM_{kk}.
\end{align*}
Hence
\[
(dMdN-dNdM)_{jk}=4dt\sum_{p\not=k}\frac{ \delta_{jk}}{e^{-i\lambda_k}-e^{-i\lambda_p}}+(dM_{jj}-dM_{kk})dN_{jk}.
\]
Therefore
\begin{align*}
dF_{jk}&=-ne^{i\lambda_j}\delta_{jk} \, dt+2dt\sum_{p\not=k}\frac{ \delta_{jk}}{e^{-i\lambda_k}-e^{-i\lambda_p}}+\frac12(dM_{jj}-dM_{kk})dN_{jk}.
\end{align*}
In particular we have for all $1\le j\le n$ that
\begin{align*}
dF_{jj}&=dt\, \left(-ne^{i\lambda_j}+\sum_{p\not=j}\frac{ 2}{e^{-i\lambda_j}-e^{-i\lambda_p}}\right).
\end{align*}
Moreover we also have
\begin{align*}
dE_{jj}d\overline{E}_{j'j'}&=dN_{jj}d\overline{N}_{j'j'}=\sum_{k,p,m,\ell=1}^nV^*_{jk}(dU)_{kp}V_{pj}\overline{V}^*_{j'm}(d\overline{U})_{m\ell}\overline{V}_{\ell j'}\\
&=2dt\, \delta_{jj'},
\end{align*}
which implies that 
\[
d\langle\lambda_j,\lambda_{j'}\rangle=2dt\, \delta_{jj'}.
\]
 Therefore
\begin{align*}
id\lambda_j&=e^{-i\lambda_j}\circ (dE_{jj}+dF_{jj})\\
&=\sqrt{2}i\,dB_j(t)+dt\, \left(-n+\sum_{p\not=j}\frac{2 e^{-i\lambda_j}}{e^{-i\lambda_j}-e^{-i\lambda_p}}\right)+\frac12\langle de^{-i\lambda_j},ie^{i\lambda_j} d\lambda_j\rangle 
\end{align*}
Noing that
\[
\sum_{p\not=j}\frac{2 e^{-i\lambda_j}}{e^{-i\lambda_j}-e^{-i\lambda_p}}=i\sum_{p\not=j}\left(\cot\frac{\lambda_j-\lambda_p}{2}-i\right),
\]
we end up with
\[
d\lambda_j=\sqrt{2}\,dB_j(t)+dt\, \sum_{p\not=j}\cot\frac{\lambda_j-\lambda_p}{2},
\]
where $(B_j(t))_{t \ge 0}$, $j=1,\dots, n$ is a Brownian motion in $\mathbb{R}^n$.

\end{proof}

The above theorem implies that the eigenvalue process $(\lambda(t))_{t\ge0}$ is a diffusion process with generator 

\begin{equation}\label{eq-generator}
L=\sum_{j=1}^n\frac{\partial^2}{\partial \lambda_j^2}+  \sum_{p<j}\cot\frac{\lambda_j-\lambda_p}{2} \left(\frac{\partial}{\partial \lambda_j}-\frac{\partial}{\partial \lambda_p}\right).
\end{equation}
The next lemma shows that $L$ is a Doob's transform of the Dirichlet Laplacian on $D_n$. More precisely, the following lemma follows from direct computations, see for instance \cite{Faraut}, page 289, for the details.

\begin{lemma}\label{lemma-h}
Let $L$ be as given in \eqref{eq-generator}. Consider the function
\begin{equation}\label{eq-h}
h(\lambda)=\prod_{j<k}\sin\frac{\lambda_j-\lambda_k}{2}.
\end{equation}
Then, for every smooth function $f$ on $D_n$ :
\begin{equation}\label{eq-inter}
Lf= \frac1h\Delta_{D_n}(fh)+\gamma f
\end{equation}
where $\gamma=\sum_{j=1}^n\left(n-j-\frac{n-1}{2}\right)^2=\frac{n^3-n}{12}$ and $\Delta_{D_n}=\sum_{j=1}^n\frac{\partial^2}{\partial \lambda_j^2}$.
\end{lemma}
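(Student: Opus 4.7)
The plan is to verify the identity by expanding the right hand side using the standard Doob-conjugation formula
\[
\frac{1}{h}\Delta_{D_n}(fh) \;=\; \Delta_{D_n} f \;+\; 2\,\nabla f\cdot\nabla\log h \;+\; f\,\frac{\Delta_{D_n} h}{h},
\]
and then matching each piece with $Lf-\gamma f$. The first-order term is the easy match, while recognizing that $\Delta_{D_n} h/h$ is the constant $-\gamma$ is the substantive point.

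First I would compute $\partial_j \log h$. Since $\log h=\sum_{p<q}\log\sin\tfrac{\lambda_p-\lambda_q}{2}$, one finds
\[
\partial_j\log h \;=\; \frac{1}{2}\sum_{k\neq j}\cot\frac{\lambda_j-\lambda_k}{2},
\]
using $\cot(-x)=-\cot(x)$ to fold the $k<j$ and $k>j$ contributions together. Consequently
\[
2\,\nabla f\cdot\nabla\log h \;=\; \sum_{j}\sum_{k\neq j}\cot\frac{\lambda_j-\lambda_k}{2}\,\partial_j f \;=\; \sum_{p<j}\cot\frac{\lambda_j-\lambda_p}{2}\bigl(\partial_j f-\partial_p f\bigr),
\]
again by antisymmetry. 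This is precisely the first-order part of $L$, so the identity reduces to showing $\Delta_{D_n} h=-\gamma\, h$.

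To prove $\Delta_{D_n} h/h=-\gamma$, I use $\Delta_{D_n} h/h=\Delta_{D_n}\log h+|\nabla\log h|^2$. A direct differentiation gives
\[
\Delta_{D_n}\log h \;=\; -\frac{1}{2}\sum_{i<k}\csc^2\frac{\lambda_i-\lambda_k}{2},
\]
and expanding the square of $\partial_j\log h$ yields
\[
|\nabla\log h|^2 \;=\; \frac{1}{4}\sum_i\sum_{k\neq i}\cot^2\frac{\lambda_i-\lambda_k}{2} \;+\; \frac{1}{4}\sum_i\,\sum_{\substack{k,\ell\neq i\\ k\neq \ell}}\cot\frac{\lambda_i-\lambda_k}{2}\cot\frac{\lambda_i-\lambda_\ell}{2}.
\]
Using $\cot^2 x=\csc^2 x-1$, the diagonal double sum exactly cancels the $\csc^2$ terms and leaves $-\tfrac{n(n-1)}{4}$. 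The off-diagonal triple sum is handled by the classical identity: whenever $A+B+C=0$, one has
\[
\cot A\cot B+\cot B\cot C+\cot C\cot A=1,
\]
which I would verify from $\cot(A+B)=(\cot A\cot B-1)/(\cot A+\cot B)$. Applied to each unordered triple $\{a,b,c\}$ with $A=\tfrac{\lambda_a-\lambda_b}{2},\,B=\tfrac{\lambda_b-\lambda_c}{2},\,C=\tfrac{\lambda_c-\lambda_a}{2}$, a short bookkeeping (accounting for the antisymmetry $\cot\tfrac{\lambda_i-\lambda_j}{2}=-\cot\tfrac{\lambda_j-\lambda_i}{2}$ and for the multiplicity with which each unordered triple appears) shows the contribution of each triple is $-1$, giving
\[
\frac{1}{4}\sum_i\,\sum_{\substack{k,\ell\neq i\\ k\neq \ell}}\cot\frac{\lambda_i-\lambda_k}{2}\cot\frac{\lambda_i-\lambda_\ell}{2} \;=\; -\,\frac{n(n-1)(n-2)}{12}.
\]
Adding the two contributions,
\[
\frac{\Delta_{D_n} h}{h} \;=\; -\frac{n(n-1)}{4}-\frac{n(n-1)(n-2)}{12} \;=\; -\frac{n(n-1)(n+1)}{12} \;=\; -\gamma.
\]
Combining this with the first-order identification yields $\tfrac{1}{h}\Delta_{D_n}(fh)=Lf-\gamma f$, as claimed.

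The main obstacle is the triple-sum evaluation: organizing the sum $\sum_i\sum_{k,\ell\neq i,k\neq\ell}\cot\tfrac{\lambda_i-\lambda_k}{2}\cot\tfrac{\lambda_i-\lambda_\ell}{2}$ by unordered triples and invoking the $A+B+C=0$ cotangent identity is where all the work lies. The rest is routine manipulation with $\partial_j\log h$ and the product rule.
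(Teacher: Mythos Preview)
Your proof is correct. The paper does not actually give a proof of this lemma; it simply states that the result ``follows from direct computations'' and refers to Faraut's book (page 289) for the details. Your argument carries out exactly those direct computations: the Doob-conjugation expansion, the identification of the drift term via $\partial_j\log h$, and the evaluation of $\Delta_{D_n}h/h$ using the cotangent triple identity $\cot A\cot B+\cot B\cot C+\cot C\cot A=1$ for $A+B+C=0$. This is the standard route, and your bookkeeping on the triple sum is accurate.
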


Note in particular that $h$ satisfies
\[
\Delta_{D_n} h=-\gamma h
\]
and that $h$ is zero on the boundary of $D_n$. The function $h$ is the principal Dirichlet eigenfunction of $\Delta_{D_n}$.


\begin{theorem}\label{thm-non-collide}
Let $\lambda$ be as in Theorem \ref{eigen process u} and $\tau_{D_n}$ be the stopping time  given in \eqref{eq-tau-N1}. Assume that $\lambda(0) \in {D_n}$. Then
\[
\mathbb{P}( \tau_{D_n} <+\infty)=0.
\]
In other words, for all $t\ge0$, $\lambda(t)\in {D_n}$ a.s. Moreover $(\lambda(t))_{t \ge 0}$ is the unique strong solution of the stochastic differential equation  \ref{eq-Un-lambda-sde}.
\end{theorem}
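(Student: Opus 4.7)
The plan is to split the statement into two parts: existence and uniqueness of a strong solution up to the explosion time $\tau_{D_n}$, and the non-collision claim $\tau_{D_n} = +\infty$ almost surely (which upgrades the local uniqueness to a global one). The first part is classical: the drift coefficients $\cot((\lambda_j-\lambda_p)/2)$ are smooth and locally Lipschitz on the open set $D_n$, so the standard existence/uniqueness theorem for SDEs with locally Lipschitz coefficients furnishes a unique strong solution $(\lambda(t))_{0 \le t < \tau_{D_n}}$. The substantive point is the non-collision property.

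My idea is to exploit Lemma \ref{lemma-h} by using $u := -\log |h|$ as a Lyapunov function on $D_n$. Since $h$ has constant sign on $D_n$ (each factor $\sin((\lambda_j-\lambda_k)/2)$ has constant sign there) and $|h|\le 1$, $u$ is smooth and non-negative on $D_n$; and since $h$ vanishes on $\partial D_n$, one has $u(\lambda) \to +\infty$ as $\lambda \to \partial D_n$, so controlling $u(\lambda(t))$ amounts to controlling the distance of $\lambda(t)$ to the boundary. Rewriting \eqref{eq-inter} as $L = \Delta_{D_n} + 2(\nabla h/h)\cdot \nabla$ and invoking $\Delta_{D_n} h = -\gamma h$, a direct computation yields
\begin{equation*}
Lu \;=\; \gamma + \frac{|\nabla h|^2}{h^2} - 2\frac{|\nabla h|^2}{h^2} \;=\; \gamma - \frac{|\nabla h|^2}{h^2} \;\le\; \gamma.
\end{equation*}

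Next I would apply It\^o's formula to $u(\lambda(t))$ on $[0,\tau_{D_n})$: since $d\lambda_j = \sqrt{2}\,dB_j + b_j(\lambda)\,dt$ with $d\langle\lambda_j,\lambda_k\rangle = 2\delta_{jk}\,dt$, the drift of $u(\lambda(t))$ equals $Lu(\lambda(t)) \le \gamma$, so $u(\lambda(t)) - \gamma t$ is a continuous local supermartingale on $[0,\tau_{D_n})$. Localising by $\tau_k := \inf\{t \ge 0 : u(\lambda(t)) \ge k\}$ (on $\{u\le k\}$ the gradient $\nabla u = -\nabla h/h$ is bounded, so the stopped process is a genuine supermartingale), optional stopping gives
\begin{equation*}
k\,\mathbb{P}(\tau_k \le t) \;\le\; \mathbb{E}\!\left[u(\lambda(t\wedge\tau_k))\right] \;\le\; u(\lambda(0)) + \gamma t.
\end{equation*}
Letting $k \to \infty$, and noting that $\tau_k \nearrow \tau_{D_n}$, forces $\mathbb{P}(\tau_{D_n} \le t) = 0$ for every $t>0$, hence $\tau_{D_n} = +\infty$ almost surely. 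Global strong uniqueness then follows at once: any two solutions agree pathwise on $[0,\tau_k]$ by local Lipschitz continuity, and $\tau_k \nearrow +\infty$.

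The main obstacle is identifying a Lyapunov function that tames the singular drift of \eqref{eq-Un-lambda-sde} near $\partial D_n$; without structural input the $\cot$-singularities are difficult to control. What unlocks the argument is precisely the Doob-transform identity of Lemma \ref{lemma-h}: it encodes $L$ as the $h$-conjugate of $\Delta_{D_n}+\gamma$, and this structural relation is exactly what produces the cancellation of the dangerous $|\nabla h|^2/h^2$ terms in $Lu$, leaving the harmless bound $Lu \le \gamma$. Once this reduction is in place, the remainder is a standard supermartingale moment estimate.
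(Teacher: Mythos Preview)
Your proof is correct and follows essentially the same strategy as the paper: both use $\log|h|$ as a Lyapunov function, exploiting the Doob-transform identity of Lemma \ref{lemma-h} to produce the crucial cancellation of the singular $|\nabla h|^2/h^2$ term. The only difference is in the endgame: the paper argues via McKean's trick (the continuous local martingale $M_t=\Omega_t-\Omega_0+\tfrac{\gamma}{2}t$, where $\Omega_t=\tfrac12\log h(\lambda(t))$, is a time-changed Brownian motion and hence cannot converge to $-\infty$ as $t\to\tau_{D_n}$), whereas you run a quantitative supermartingale moment estimate with the explicit localization $\tau_k$. Your route has the minor advantage of yielding the tail bound $\mathbb{P}(\tau_k\le t)\le (u(\lambda(0))+\gamma t)/k$, while the paper's is the classical non-collision device; both are standard and equivalent here. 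For uniqueness, the paper simply cites \cite[Corollary 2.4]{cepa2001brownian}, which covers singular interactions of this type; your local-Lipschitz patching argument is also adequate once $\tau_{D_n}=+\infty$ is established.
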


\begin{proof}
Let $h(\lambda)$ be as given in \eqref{eq-h}. Consider the process 
\[
\Omega(t):=V(\lambda_1(t),\dots, \lambda_n(t)),
\]
where $V(\lambda_1,\dots, \lambda_n)=\frac12\log h=\frac12\sum_{j<k}\log \sin\frac{\lambda_j-\lambda_k}{2}$. We can compute that
\begin{align}\label{eq-Omega1}
d\Omega(t)=\sum_{i=1}^n\left(\partial_iV d\lambda_i+\frac12 \partial^2_i V d\langle \lambda_i\rangle\right)
={L}Vdt+dM(t)
\end{align}
where $M(t)$ is a local martingale satisfying $dM(t)=\sqrt{2}\sum_{i=1}^n\partial_i V dB^i_t$ and $L$ is as in \eqref{eq-generator}.
Since $\partial_i h=h\sum_{j\not=i}\cot\frac{\lambda_i-\lambda_j}{2}$ we have that  $\sum_{i=1}^n\partial_i h=0$ and
\[
\sum_{i=1}^n\left(\sum_{\ell\not=i}\cot\frac{\lambda_i-\lambda_\ell}{2}\right) \partial_i V=\frac12\sum_{i=1}^n\left(\sum_{j\not=i}\cot\frac{\lambda_i-\lambda_j}{2}\right)^2=\frac12\sum_{i=1}^n \left(\frac{\partial_i h}{h}\right)^2. 
\]
Moreover, since $\sum_{i=1}^n\partial^2_i h=-\gamma h$ we have that
\[
\sum_{i=1}^n\partial_i^2 V=\sum_{i=1}^n\left(\frac{\partial_i^2h}{2h}-\frac{(\partial_ih)^2}{2h^2}\right)=-\frac{\gamma}{2}-\frac12\sum_{i=1}^n \left(\frac{\partial_i h}{h}\right)^2,
\]

where $\gamma=\frac{n^3-n}{12}$ is as given in Lemma \ref{lemma-h}. Therefore
\[
{L}V=-\frac{\gamma}2.
\]
Plugging it back into \eqref{eq-Omega1} we have for any $t\ge0$,
\[
M(t)=\Omega(t)-\Omega(0)+\frac{\gamma}{2}t.
\]
On $\{\tau_{D_n}<+\infty\}$, by letting $t\to \tau_{D_n}$ we have the right hand side of the above inequality goes to $-\infty$. This implies that $M({\tau_{D_n}})=-\infty$. However, since $M(t)$ is a time changed Brownian motion, we infer that $\{\tau_{D_n}<+\infty\}$ is a null set. We conclude
\[
\mathbb{P}( \tau_{D_n} <+\infty)=0.
\]
The existence and uniqueness of the strong solution to the stochastic differential equations \eqref{eq-Un-lambda-sde} are addressed in \cite[Corollary 2.4]{cepa2001brownian}. 
\end{proof}

Next we compute the density function of $\lambda(t)$, $t\ge0$. 

\begin{theorem}\label{density rho}
Let $\lambda$ be the eigenvalues process as given before. Assume that $\lambda(0)\in D_n$.
The density at time $t>0$ of $\lambda(t)$  with respect to the Lebesgue measure  on $\R^n$ is given by
\begin{equation}\label{eq-kernel}
\frac{e^{\gamma t} }{n(2\pi)^n} \frac{ h(x)}{ h(\lambda(0))} \sum_{K\in\mathcal{Z}_n}  \exp\left(-\sum_{j=1}^n\frac{k_j^2}{2n^2}t \right)\det \left(\exp\left( i \frac{k_j(\lambda_j(0)-x_k)}{n}\right)\right)_{1\le k,j \le n} 1_{D_n} (x),
\end{equation}
where $\mathcal Z_n=\{K=(k_1,\dots, k_n), k_1\equiv \cdots \equiv k_n \mod n\}$.
\end{theorem}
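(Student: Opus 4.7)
The strategy is to apply the Doob $h$-transform from Lemma \ref{lemma-h} and then use the reflection principle for the affine Weyl group of $D_n$ combined with Poisson summation. From Lemma \ref{lemma-h} one has $Lf = h^{-1}\Delta_{D_n}(hf) + \gamma f$ together with $\Delta_{D_n} h = -\gamma h$ and $h|_{\partial D_n} = 0$. If $u$ solves the backward Kolmogorov equation $\partial_t u = Lu$, a direct computation shows that $w := e^{-\gamma t} h u$ satisfies the associated Dirichlet heat equation on $D_n$ with vanishing boundary data (since $h$ vanishes on the walls of $D_n$). Consequently
$$p_t(\lambda(0), x) = e^{\gamma t}\, \frac{h(x)}{h(\lambda(0))}\, q_t^{D_n}(\lambda(0), x),$$
where $q_t^{D_n}$ is the Dirichlet heat kernel on $D_n$, and the task is reduced to computing $q_t^{D_n}$.

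To compute $q_t^{D_n}$ I exploit the realization of $D_n$ as a fundamental domain for the affine Weyl group $W_{\mathrm{aff}} = S_n \ltimes 2\pi Q$ acting on $\mathbb{R}^n$, where $Q = \{m \in \mathbb{Z}^n : \sum_j m_j = 0\}$ is the type $A_{n-1}$ root lattice; the walls $\{\lambda_j = \lambda_{j+1}\}$ come from $S_n$, and $\{\lambda_n = \lambda_1 + 2\pi\}$ is the affine wall generating the translations by $2\pi Q$. Since $h$ vanishes on all these walls, the reflection principle gives
$$q_t^{D_n}(x_0, x) = \sum_{\sigma \in S_n}\epsilon(\sigma)\sum_{m \in Q}K_t(x_0, \sigma\cdot x + 2\pi m),$$
with $K_t$ the free Gaussian kernel on $\mathbb{R}^n$. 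Poisson summation on the rank-$(n-1)$ lattice $2\pi Q \subset \mathfrak{h} := \{v \in \mathbb{R}^n : \sum v_j = 0\}$ converts the translation sum into a Fourier series indexed by the dual (weight) lattice $Q^*$, while a second Poisson summation in the center-of-mass direction (reflecting the $2\pi$-periodicity inherent in the $U(n)$-eigenvalue interpretation $\lambda_j \sim \lambda_j + 2\pi$) discretizes the residual Gaussian into a sum over $\mathbb{Z}$.

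The decisive structural observation is the isomorphism
$$\mathcal{Z}_n \;=\; nQ^* \;\oplus\; \mathbb{Z}(1,\ldots,1),$$
under which $K \in \mathcal{Z}_n$ decomposes uniquely as $K = nq + \bar K(1,\ldots,1)$ with $q \in Q^*$ and $\bar K \in \mathbb{Z}$, satisfying $|K/n|^2 = |q|^2 + \bar K^2/n$ and $(K/n)\cdot y = q\cdot y + \bar K\bar y$. Under this identification the two Poisson sums collapse into the single sum $\sum_{K \in \mathcal{Z}_n}$ of the statement, the exponential factor $\exp(-\sum_j k_j^2/(2n^2)\, t)$ emerging from the Gaussian integrals. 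The remaining sum over $\sigma \in S_n$ antisymmetrizes the exponentials via
$$\sum_{\sigma \in S_n}\epsilon(\sigma)\exp\bigl(i(K/n)\cdot(\lambda(0) - \sigma\cdot x)\bigr) = \det\bigl(e^{ik_j(\lambda_j(0) - x_k)/n}\bigr)_{1 \le j, k \le n},$$
and the prefactor $\tfrac{1}{n(2\pi)^n}$ collects the Gaussian constant $(4\pi t)^{-n/2}$, the covolume $\sqrt{n}$ of $Q$ in $\mathfrak{h}$, and the period $2\pi$ of the center-of-mass circle. The main obstacle is the careful bookkeeping of normalization constants (and the heat-equation convention) through the two Poisson summations and the $S_n$-antisymmetrization; once the affine Weyl group and the decomposition $\mathcal{Z}_n = nQ^* \oplus \mathbb{Z}(1,\ldots,1)$ are set up, the remaining steps are essentially formal.
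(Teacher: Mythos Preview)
Your approach coincides with the paper's at the structural level: both apply the $h$-transform from Lemma~\ref{lemma-h} to reduce the problem to the Dirichlet heat kernel on $D_n$, and then identify that kernel explicitly. The difference is that the paper simply quotes the Dirichlet kernel formula from Hobson--Werner \cite{hobson1996non}, whereas you sketch its derivation via reflection over the affine Weyl group $S_n\ltimes 2\pi Q$ followed by Poisson summation and the decomposition $\mathcal{Z}_n=nQ^*\oplus\mathbb{Z}(1,\dots,1)$. Your outline is correct, with one caveat worth making precise: the ``second Poisson summation in the center-of-mass direction'' is not a feature of the Dirichlet kernel on the unbounded strip $D_n\subset\mathbb{R}^n$ (there the center of mass is a free one-dimensional Brownian motion, contributing a Gaussian factor rather than a discrete sum). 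It is instead the statement that the killed Brownian motion underlying the Hobson--Werner formula lives on the torus $(\mathbb{R}/2\pi\mathbb{Z})^n$, or equivalently that the $U(n)$ eigenvalue process is naturally defined modulo the diagonal shift $\lambda\mapsto\lambda+2\pi(1,\dots,1)$. This is precisely the periodicity you invoke, so the argument goes through; the framing is just cleaner if one starts from Brownian motion on the torus rather than on $\mathbb{R}^n$.
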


\begin{proof}
Let $X(t)$, $t\ge0$ be the $n$-dimensional Brownian motion killed at the boundary of $D_n$. Its generator is $\frac12\Delta_{D_n}$ with the Dirichlet boundary conditions.   Let $p_t$ be the transition density of $X(t)$. It is known from \cite{hobson1996non} that for $x,y \in D_n$
\begin{equation}\label{eq-Diri-kernel}
p_t(x,y)=\sum_{\sigma\in\mathcal{S}_n}\sum_{K\in\mathcal{Z}_n} \frac{ \mathrm{sgn}(\sigma)}{n(2\pi)^n}\exp\left(-\sum_{j=1}^n\frac{k_j^2}{2n^2}t \right)\exp\left( i\sum_{j=1}
^n \frac{k_j(x_j-y_{\sigma(j)})}{n}\right)
\end{equation}
where $\mathcal Z_n=\{K=(k_1,\dots, k_n), k_1\equiv \cdots \equiv k_n \mod n\}$. 
From the intertwining of generators \eqref{eq-inter} we obtain that
\[
 e^{\frac{t{L}}2} f (x) =  e^{\frac{\gamma}2 t} \frac{1}{h(x)} e^{\frac12t\Delta_{D_n} } ( h f) (x)
\]
for any smooth function  $f$ on $D_n$. If we denote by $q_t(x, y)$ the heat kernel associated to the semigroup $ e^{\frac{t{L}}2}$, then the above equality reads as
\begin{align*}
  \int_{D_n} q_t(x, y)  f(y) \, dy 
 =  e^{\frac{\gamma t}2}  \frac{1}{h(x)}  \int_{D_n} h(y) p_{t} (x,y)  f(y) \, dy. 
\end{align*}
Hence we have
\[
q_t(x, y)= e^{\frac{\gamma}2 t}  \frac{h(y)}{h(x)}  p_{t} (x,y). 
\]
By plugging \eqref{eq-Diri-kernel} into the above equality we obtain that
\begin{align*}
q_t(x,y)&= \frac{e^{\frac{\gamma}2 t} }{h(x)} \frac{ 1}{n(2\pi)^n} h(y)\sum_{K\in\mathcal{Z}}  \exp\left(-\sum_{j=1}^n\frac{k_j^2}{2n^2}t \right)\sum_{\sigma\in\mathfrak{S}_n} \mathrm{sgn}(\sigma)  \exp\left( i\sum_{j=1}
^n \frac{k_j(x_j-y_{\sigma(j)})}{n}\right)\\
&=\frac{e^{\frac{\gamma}2 t} }{n(2\pi)^n} \frac{ h(y)}{ h(x)} \sum_{K\in\mathcal{Z}}  \exp\left(-\sum_{j=1}^n\frac{k_j^2}{2n^2}t \right)\det \left(\exp\left( i \frac{k_j(x_j-y_k)}{n}\right)\right)_{1\le k,j \le n}
\end{align*}
The conclusion follows.
\end{proof}

Next we deduce the limit law of $\lambda$.

\begin{theorem}\label{limit eigenvalue}
Let $\lambda$ be the eigenvalues process as given previously. Assume that
\[
\lambda_1(0)<\cdots< \lambda_n(0).
\]
Then, when $t \to +\infty$, $\lambda(t)$ converges in distribution to the probability measure on $\R^n$  given by
\[
 d\nu=a h(x)^2   \, \, \mathbf{1}_{D_n} (x) dx,
\]
where $a>0$ is the normalization constant. Moreover, we have the following quantitative estimate: There exists a constant $C>0$ such that for any bounded Borel function $f$ on  $D_n$ and $t \ge 0$,
\begin{equation}\label{quant_1}
\left | \mathbb{E} ( f( \lambda(t)) ) - \int_{D_n} f d\nu \right| \le C e^{-\frac{t}{2n}} \| f \|_\infty.
\end{equation}
\end{theorem}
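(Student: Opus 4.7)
The plan is to extract the asymptotic behavior directly from the explicit heat-kernel expansion in Theorem \ref{density rho}. The Doob relation \eqref{eq-inter} identifies the semigroup of $\lambda$ with a ground-state-transformed Brownian motion on $D_n$, so convergence to equilibrium is governed by the spectral gap between the principal Dirichlet eigenvalue of $\Delta_{D_n}$ (which equals $-\gamma$, with eigenfunction $h$) and the next Dirichlet eigenvalue. My goal is therefore to identify the time-independent leading term in the sum over $K\in\mathcal{Z}_n$ (which must produce $a h(x)^2$) and to show that all remaining terms decay at rate at least $e^{-t/(2n)}$.

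For the minimum: since the determinant in \eqref{eq-kernel} vanishes whenever two entries of $K$ coincide, admissible $K=(k_1,\ldots,k_n)$ must consist of $n$ distinct integers lying in a common residue class $r\pmod n$. A short case analysis---separating $n$ odd (optimum at $r=0$) from $n$ even (optimum at $r=n/2$)---shows that $\sum_j k_j^2$ is minimized by the symmetric configuration $\{a_1,\dots,a_n\}$ centered about $0$, with common minimum value $n^3(n^2-1)/12$. This is precisely what makes the $e^{\gamma t}$ prefactor cancel against the exponential decay of the minimizing terms. A further comparison shows that the next smallest $\sum_j k_j^2$ over admissible $K$ exceeds the minimum by exactly $n$, the cheapest perturbation being to shift the common residue $r$ by $\pm 1$; this gap is what produces the rate $e^{-t/(2n)}$.

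To compute the leading contribution I would sum the determinants over the $n!$ minimizing tuples $K_\pi=(a_{\pi(1)},\dots,a_{\pi(n)})$. Factoring $e^{ik_j(\lambda_j(0)-x_k)/n}=e^{ik_j\lambda_j(0)/n}e^{-ik_j x_k/n}$ and summing over $\pi\in S_n$ collapses the sum into a product of two Vandermonde-type determinants,
\[
\sum_{\pi\in S_n}\det\bigl(e^{ia_{\pi(j)}(\lambda_j(0)-x_k)/n}\bigr)_{k,j}
= \det\bigl(e^{ia_j\lambda_l(0)/n}\bigr)_{j,l}\cdot\det\bigl(e^{-ia_j x_l/n}\bigr)_{j,l}.
\]
Evaluating each Vandermonde factor via $e^{i\alpha}-e^{i\beta}=2ie^{i(\alpha+\beta)/2}\sin((\alpha-\beta)/2)$ and using the symmetry $\sum_j a_j=0$ of the minimizing configuration, the accompanying phases cancel exactly, leaving the real product $4^{n(n-1)/2}\,h(\lambda(0))\,h(x)$. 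Inserting this back into \eqref{eq-kernel} yields a time-independent leading term of the form $a\,h(x)^2$, and the normalization $\int_{D_n}q_t(\lambda(0),x)\,dx=1$ identifies $a$ as the correct normalization constant (consistent with Weyl's integration formula for $\mathbf{U}(n)$).

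For the remainder, each determinant is bounded by $n!$ since the matrix has modulus-one entries. Splitting $e^{-\sum_j k_j^2 t/(2n^2)}=e^{-\sum_j k_j^2/(2n^2)}\,e^{-\sum_j k_j^2(t-1)/(2n^2)}$ for $t\ge 1$ allows the gap of $n$ to factor out the rate $e^{-(t-1)/(2n)}$, while the residual sum $\sum_{K\in\mathcal{Z}_n}e^{-\sum_j k_j^2/(2n^2)}$ is an absolutely convergent Gaussian series. Since $h\le 1$ on the finite-volume domain $D_n$ and $h(\lambda(0))>0$ is fixed, this produces a uniform pointwise bound $|q_t(\lambda(0),x)-a h(x)^2|\le C(\lambda(0))\,e^{-t/(2n)}$. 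Integrating against any bounded Borel $f$ on $D_n$ gives \eqref{quant_1} and, in particular, convergence in distribution. The main obstacle is the combinatorial identification of the minimum and the gap, which requires the separate odd/even analysis sketched above; once these facts are in place, the Vandermonde phase cancellation follows automatically from $\sum_j a_j=0$, which is also what forces the limiting density to be the real nonnegative function $a h(x)^2$.
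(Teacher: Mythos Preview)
Your proof is correct and follows essentially the same approach as the paper: identify the minimizing $K$ in the expansion \eqref{eq-kernel}, evaluate the corresponding contribution via Vandermonde determinants to obtain the $h(x)^2$ density, and control the remainder by the gap $n$ in $\sum_j k_j^2$. If anything, you are more explicit than the paper in two places: you sum over the $n!$ permutations of the minimizing set to produce the product-of-determinants factorization (the paper writes down the two Vandermonde determinants directly from a single ordered tuple without displaying this step), and you give an actual tail bound for the remainder rather than simply pointing to the next exponent. One minor point: your split at $t\ge 1$ leaves $0\le t<1$ uncovered, but there the estimate is trivial since the left side is at most $2\|f\|_\infty$ and $e^{-t/(2n)}$ is bounded below, so a suitable $C$ handles both ranges.
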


\begin{proof}
From \eqref{eq-kernel} we know that  when $t \to +\infty$, the term of leading order in this sum corresponds to  $(k_1,\dots,k_n)=(-\frac{n(n-1)}{2}, -\frac{n(n-3)}{2},\dots, \frac{n(n-1)}{2})$ either $n$ is even or odd. Hence we have that 
\[
\sum_{j=1}^n\frac{k_j^2}{2n^2}=\frac{n^3-n}{24}=\frac{\gamma}2.
\]
Moreover, by using a Vandermonde determinant we have that
\begin{align*}
 \mathrm{det} \left(e^{-i\frac{k_j}n x_k} \right)_{1 \le j,k \le n} &= e^{i\frac{n-1}{2}\sum_{j=1}^nx_j} \prod_{j< \ell} \left( e^{-ix_j}-e^{-ix_\ell}\right)\\
 &=(2i)^{\frac{n(n-1)}{2}} \prod_{j<\ell} \sin\frac{x_\ell-x_j}{2}\\
 &=  (-2i)^{\frac{n(n-1)}{2}}h(x).
 \end{align*}
and 
\begin{align*}
 \mathrm{det} \left( e^{i\frac{k_j}n\lambda_{k} (0)} \right)_{1 \le j,k \le n}&= e^{-i\frac{n-1}{2}\sum_{j=1}^n\lambda_j(0)} \prod_{j< \ell} \left( e^{i\lambda_j(0)}-e^{i\lambda_\ell(0)}\right)\\
 &=  (2i)^{\frac{n(n-1)}{2}}h(\lambda(0)).
 \end{align*}

Therefore the leading term writes, up to a constant, as 
$
   h(x)^2.
$
 The next order in $t$  corresponds to   $(k_1,\cdots, k_n)=(-\frac{n(n-1)}{2}+1, -\frac{-n(n-3)}{2}+1, \dots, \frac{n(n-1)}2+1)$  which yields $e^{-\frac1{2n}t}$  in \eqref{quant_1}.
\end{proof}

To conclude let us note that the heat kernel with respect to the Haar measure of the unitary Brownian can be computed using similar methods. Indeed, the heat kernel issued from 0 is a central function and it is therefore enough to compute its restriction to the subgroup of unitary diagonal matrices (which is a torus). The restriction of the heat kernel to this subgroup can then be computed as in Theorem \eqref{density rho}. We refer to Proposition 12.6.3 in \cite{Faraut} for this explicit formula. 

More generally, we point out that formulas for heat kernels on some Lie groups and homogeneous spaces might be obtained using representation theory; see for instance \cite{Faraut,MR0404526,MR2372464,MR3052688,MR2018351,MR1168067}.

\chapter{Horizontal Brownian motions}\label{sec-hBM}

The goal of the Chapter is to present, in a general framework, the notion of horizontal Brownian motion associated with a Riemannian submersion or Riemannian foliation. We refer to Chapter 8 in  \cite{Besse} and the monograph \cite{MR2110043} for additional reading regarding the theory of Riemannian submersions and to the monograph \cite{MR2731662} for additional reading on the stochastic analysis  and geometry of horizontal Brownian motions.

\section{Riemannian submersions and horizontal Brownian motions}\label{Section Riemannian submersion}

We first define the notion of horizontal Brownian motion on a Riemannian manifold. For this, we need to distinguish a \textit{particular} set of directions within the tangent spaces of a Riemannian manifold. This can be done by using the notion of Riemannian submersion. 

\

Let $(\M , g)$  and  $(\B,j)$ be  smooth and connected complete Riemannian manifolds. Most of the results below hold when $g$ is only assumed to be semi-Riemannian with signature $(\dim \B, \dim \M -\dim \B)$, but $j$ will  always  be Riemannian.

\begin{definition}
A smooth surjective map $\pi: (\M , g)\to (\B,j)$ is called a Riemannian submersion if its derivative maps $T_x\pi : T_x \M \to T_{\pi(x)} \B$ are orthogonal projections, i.e. for every $ x \in \M$, the map $ T_{x} \pi (T_{x} \pi)^*: T_{p(x)}  \B \to T_{p(x)} \B$ is the identity.
\end{definition}

\begin{Example}(\textbf{Warped products}) Let $(\M_1 , g_1)$  and  $(\M_2,g_2)$ be   Riemannian manifolds and $f$ be a smooth and positive function on $\M_1$. Then the first projection $(\M_1 \times \M_2,g_1 \oplus f g_2) \to (\M_1, g_1)$ is a Riemannian submersion.
\end{Example}

\begin{Example}\label{example iso 4}(\textbf{Quotient by an isometric action})
Let $(\M , g)$ be a Riemannian manifold and $\mathbf G$ be a closed subgroup of the isometry group of $(\M , g)$. Assume that the projection map $\pi$ from $\M$ to the quotient space $\M /\G$ is a smooth submersion. Then there exists a unique Riemannian metric $j$ on $\M /\G$ such that $\pi$ is a Riemannian submersion. We refer to 9.12 in \cite{Besse} for further details.
\end{Example}

If $\pi$ is a Riemannian submersion and $b \in \B$, the set  $\pi^{-1}(\{ b \})$ is called a fiber. 

\

For $ x \in \M$, $\mathcal{V}_x =\mathbf{Ker} (T_x\pi)$ is called the vertical space at $x$. The orthogonal complement of $\mathcal{V}_x$ shall be denoted by $\mathcal{H}_x$ and will be referred to as the horizontal space at $x$. We have an orthogonal decomposition
\[
T_x \M=\mathcal{H}_x \oplus \mathcal{V}_x
\]
and a corresponding splitting of the metric
\[
g=g_{\mathcal{H}} \oplus g_{\mathcal{V}}.
\]
The vertical distribution $\mathcal V$ is of course integrable since it is the tangent distribution to the fibers, but the horizontal distribution is in general not integrable. Actually, in almost all the situations we will consider,  the horizontal distribution is everywhere bracket-generating in the sense that for every $x \in \M$, $\mathbf{Lie} (\mathcal{H}) (x)=T_x \M$. 

\

 If $f \in C^\infty(\M)$ we define its vertical gradient $\nabla_{\mathcal{V}}$ as the projection of its gradient onto the vertical distribution and its horizontal gradient $\nabla_{\mathcal{H}}$ as the projection of the gradient onto the horizontal distribution. We define then the vertical Laplacian  $\Delta_{\mathcal{V}}$ as the generator of the pre-Dirichlet form
\[
\mathcal{E}_{\mathcal{V}}(f_1,f_2)=-\int_\M \langle \nabla_{\mathcal{V}} f_1 , \nabla_{\mathcal{V}} f_2 \rangle d\mu, \quad f_1,f_2 \in C_0^\infty(\mathbb{M}),
\]
where $\mu$ is the Riemannian volume measure on $\M$. Similarly, we define  the horizontal Laplacian  $\Delta_{\mathcal{H}}$ as the generator of the pre-Dirichlet form
\[
\mathcal{E}_{\mathcal{H}}(f_1,f_2)=-\int_\M \langle \nabla_{\mathcal{H}} f_1 , \nabla_{\mathcal{H}} f_2 \rangle d\mu \quad 
f_1, f_2 \in C_0^\infty(\mathbb{M}).
\]

We note that from the definitions, if $\Delta$ denotes the Laplace-Beltrami operator on $(\M,g)$ then
\begin{align}\label{decomposition laplace}
\Delta=\Delta_{\mathcal H}+ \Delta_{\mathcal V}
\end{align}

\begin{definition}
A horizontal Brownian motion $(X(t))_{ t \ge 0}$ on $\mathbb{M}$ is a diffusion process with generator $\frac{1}{2} \Delta_{\mathcal{H}}$, that is for every $f \in C^\infty(\mathbb{M})$,
\[
f(X(t))-\frac{1}{2} \int_0^t \Delta_{\mathcal{H}} f(X(s)) ds, \quad 0 \le t < \mathbf{e}
\]
is a  local martingale, where $\mathbf{e}$ is the lifetime of $(X(t))_{ t \ge 0}$ on $\mathbb{M}$.
\end{definition}

If $\X_1,\dots,\X_n$ is a local orthonormal frame of horizontal vector fields and $Z_1,\dots,Z_m$ a local orthonormal frame of the vertical distribution, then we have
\[
\Delta_{\mathcal{H}}=-\sum_{i=1}^n \X_i^* \X_i
\]
and
\[
\Delta_{\mathcal{V}}=-\sum_{i=1}^m Z_i^* Z_i,
\]
where the adjoints are (formally) understood in $L^2(\M,\mu)$. Classically, we have
\[
\X_i^*=-\X_i+\sum_{k=1}^n g\left( D_{\X_k} \X_k, \X_i\right) 
+\sum_{k=1}^m g\left(D_{Z_k} Z_k, \X_i\right),
\]
where $D$ is the Levi-Civita connection. As a consequence, we obtain
\[
\Delta_{\mathcal{H}}=\sum_{i=1}^n \X_i^2 -\sum_{i=1}^n (D_{\X_i}\X_i)_{\mathcal{H}} -\sum_{i=1}^m (D_{Z_i}Z_i)_{\mathcal{H}},
\]
where $(\cdot)_{\mathcal{H}}$ denotes the horizontal part of the vector. In a similar way we  have
\[
\Delta_{\mathcal{V}}=\sum_{i=1}^m Z_i^2 -\sum_{i=1}^n (D_{\X_i}\X_i)_{\mathcal{V}} -\sum_{i=1}^m (D_{Z_i}Z_i)_{\mathcal{V}}.
\]

\

We  note that from H\"ormander's theorem, the operator $\Delta_{\mathcal{H}}$ is locally subelliptic if the horizontal distribution is bracket generating. Of course, the vertical Laplacian is never subelliptic because the vertical distribution is always integrable. We have the following theorem (see \cite{MR4432545} and Section 5.1 in \cite{MR3587668}).

\begin{proposition}
Assume that the horizontal distribution $\mathcal{H}$ is everywhere bracket-generating. The horizontal Laplacian $\Delta_{\mathcal{H}}$ is essentially self-adjoint in $L^2(\M,\mu)$ on the space $C_0^\infty(\M)$. In particular, the horizontal Brownian motion on $\M$ can be constructed as the Hunt process associated with the unique closed extension of the pre-Dirichlet form $\mathcal{E}_{\mathcal{H}}$.
\end{proposition}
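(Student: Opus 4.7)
The plan has two parts: first establish essential self-adjointness of $\Delta_{\mathcal{H}}$ on $C_0^\infty(\M)$, then invoke the general theory of Dirichlet forms to obtain the horizontal Brownian motion as the associated Hunt process.

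For the symmetry, integration by parts yields
$\mathcal{E}_{\mathcal{H}}(f_1,f_2) = -\int_\M f_1 \Delta_{\mathcal{H}} f_2 \, d\mu$ for $f_1,f_2 \in C_0^\infty(\M)$,
so $-\Delta_{\mathcal{H}}$ is symmetric and nonnegative on $C_0^\infty(\M)$. Essential self-adjointness is then equivalent, by the standard basic criterion, to the following statement: any $u \in L^2(\M,\mu)$ satisfying $\Delta_{\mathcal{H}} u = u$ in the sense of distributions must vanish. Here is where the bracket-generating hypothesis enters in a crucial way: by H\"ormander's theorem $\Delta_{\mathcal{H}}$ is hypoelliptic, so any such $u$ is automatically smooth, and all subsequent manipulations are classical.

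The heart of the argument is then a Gaffney-type cutoff computation. Fix $x_0 \in \M$; since $(\M,g)$ is complete, one can use mollification of Lipschitz functions of the Riemannian distance to build a sequence $(\chi_n)_{n\ge 1} \subset C_0^\infty(\M)$ with $0 \le \chi_n \le 1$, $\chi_n \equiv 1$ on the Riemannian ball $B(x_0,n)$, $\chi_n \equiv 0$ outside $B(x_0,2n)$, and $|\nabla \chi_n|_g \le C/n$ for some constant $C$ independent of $n$. Multiplying $\Delta_{\mathcal{H}} u = u$ by $\chi_n^2 u$, integrating against $d\mu$, and expanding via the identity $\nabla_{\mathcal{H}}(\chi_n^2 u)=\chi_n^2\nabla_{\mathcal{H}} u + 2\chi_n u\,\nabla_{\mathcal{H}}\chi_n$ together with $\int_\M (\Delta_{\mathcal{H}} u)\, v\, d\mu = -\int_\M\langle \nabla_{\mathcal{H}} u,\nabla_{\mathcal{H}} v\rangle\, d\mu$ for $v \in C_0^\infty(\M)$, one arrives at
\[
\int_\M \chi_n^2 u^2 \, d\mu + \int_\M |\nabla_{\mathcal{H}}(\chi_n u)|^2 \, d\mu = \int_\M u^2\, |\nabla_{\mathcal{H}} \chi_n|^2 \, d\mu.
\]
The decisive pointwise inequality is $|\nabla_{\mathcal{H}} \chi_n| \le |\nabla \chi_n|_g \le C/n$, which holds because $\nabla_{\mathcal{H}} \chi_n$ is the orthogonal projection of $\nabla \chi_n$ onto $\mathcal{H}$. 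Since $u \in L^2(\M,\mu)$, the right-hand side is bounded by $(C/n)^2 \|u\|_{L^2}^2$ and tends to $0$. Monotone convergence forces $\|u\|_{L^2}=0$, proving essential self-adjointness.

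Once this is established, the pre-Dirichlet form $\mathcal{E}_{\mathcal{H}}$ admits a unique closed extension, which is readily checked to be a regular strongly local Dirichlet form on $L^2(\M,\mu)$: regularity follows because $C_0^\infty(\M)$ is already a dense core, and strict locality from the fact that $\mathcal{E}_{\mathcal{H}}(f_1,f_2)=0$ whenever $f_1,f_2$ have disjoint supports. The Fukushima correspondence cited in \cite{Fukushima} then produces a symmetric Hunt process whose $L^2$-generator is the closure of $\tfrac12 \Delta_{\mathcal{H}}$; this is by definition the horizontal Brownian motion. The main obstacle is the cutoff step, specifically the construction of smooth cutoff functions with $C/n$ gradient bound under Riemannian completeness alone; this is classical but needs to be carried out carefully, as smoothing the distance function introduces the only nontrivial point where the completeness of $g$ (rather than of the Carnot--Carath\'eodory distance) is used.
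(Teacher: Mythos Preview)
Your argument is correct and follows the classical Strichartz--Gaffney route. The paper itself does not include a proof of this proposition; it simply refers the reader to \cite{MR4432545} and Section~5.1 of \cite{MR3587668}. What you have written is essentially the argument one finds in those references: symmetry and nonnegativity of $-\Delta_{\mathcal{H}}$ reduce essential self-adjointness to showing that the equation $\Delta_{\mathcal{H}}u=u$ has no nontrivial $L^2$ solution; hypoellipticity (from the bracket-generating assumption via H\"ormander) upgrades any such $u$ to a smooth function; and the cutoff identity
\[
\int_\M \chi_n^2 u^2\,d\mu + \int_\M |\nabla_{\mathcal{H}}(\chi_n u)|^2\,d\mu = \int_\M u^2\,|\nabla_{\mathcal{H}}\chi_n|^2\,d\mu
\]
together with $|\nabla_{\mathcal{H}}\chi_n|\le |\nabla\chi_n|_g\le C/n$ forces $u=0$. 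The passage to the Hunt process via Fukushima's theory is likewise standard. Your remark that completeness of the \emph{Riemannian} metric $g$ (rather than the Carnot--Carath\'eodory metric) is what underlies the cutoff construction is well placed: this is exactly why the monograph assumes $(\M,g)$ complete at the outset of the chapter.
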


In all the situations described in the present monograph  the  horizontal distribution $\mathcal{H}$ will be everywhere bracket-generating and therefore the existence of the horizontal Brownian motion is ensured by the previous theorem. As already pointed out, the bracket generating condition implies from H\"ormander's theorem that the horizontal Laplacian is a locally subelliptic operator (see Section 2.2 in \cite{MR4432545} for a discussion on local subellipticity). It follows then from essential self-adjointness and local subellipticity that $\Delta_{\mathcal H}$ generates a semigroup which uniquely solves the heat equation and  admits a smooth kernel. More precisely, we have the following results which are explained in details in  Section 2.5 of \cite{MR4432545}).

If $\Delta_\mathcal{H}=-\int_0^{+\infty} \lambda dE_\lambda$ denotes the spectral
decomposition of $\Delta_\mathcal{H}$ in $L^2 (\bM,\mu)$, then by definition, the
heat semigroup $(P_t)_{t \ge 0}$ is given by $P_t= \int_0^{+\infty}
e^{-\lambda t/2} dE_\lambda$. It is a one-parameter family of bounded operators on
$L^2 (\bM,\mu)$. Since the quadratic form $\mathcal{E}_\mathcal{H}$ is a Dirichlet form, we deduce 
that $(P_t)_{t \ge 0}$ is a sub-Markov semigroup: it transforms non-negative functions into non-negative functions and satisfies
\begin{equation*}
P_t 1 \le 1.
\end{equation*}
The sub-Markov property and  Riesz-Thorin interpolation classically allows to construct the semigroup $(P_t)_{t \ge 0}$ in $L^p(\M,\mu)$ and for $f \in L^p(\M,\mu)$ one has
\begin{equation*}
||P_tf||_{L^p(\bM , \mu)} \le ||f||_{L^p(\bM, \mu)},\ \  1\le p\le \infty.
\end{equation*}

\begin{theorem}[Horizontal heat kernel]
There is a smooth function $p(t,x,y)$, $t \in (0,+\infty), x,y \in \mathbb{M}$, such that for every $f \in L^p(\M,\mu)$, $1 \le p \le \infty$ and $x \in \mathbb{M}$ ,
\[
P_t f (x)=\int_{\mathbb{M}} p(t,x,y) f(y) d\mu (y).
\]
The function $p(t,x,y)$ is called the horizontal heat kernel associated to $(P_t)_{t \ge 0}$. It satisfies furthermore:
\begin{enumerate}
\item (Symmetry) $p(t,x,y)=p(t,y,x)$;
\item (Chapman-Kolmogorov relation) $p(t+s,x,y)=\int_{\mathbb{M}} p(t,x,z)p(s,z,y)d\mu(z)$. 
\end{enumerate}
Moreover, for $f \in L^p(\M,\mu)$, $1 < p < \infty$, the function
\[
u (t,x)= P_t f (x), \quad t \ge 0, x\in \mathbb{M}.
\]
is the unique solution of the Cauchy problem
\[
\frac{\partial u}{\partial t}=\frac{1}{2} \Delta_{\mathcal H} u,\quad u (0,x)=f(x).
\]
\end{theorem}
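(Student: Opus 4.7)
The semigroup $(P_t)_{t\ge 0}$ together with its $L^p$-contractivity, $1 \le p \le \infty$, is already in hand from the spectral theorem applied to the essentially self-adjoint operator $\Delta_{\mathcal H}$ combined with Riesz--Thorin. The goal is therefore to realize $(P_t)$ as an integral operator against a smooth symmetric kernel $p(t,x,y)$, verify the semigroup identity at the kernel level, and identify $P_t f$ as the unique $L^p$-solution of the Cauchy problem.

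The crucial ingredient is the hypoellipticity of the parabolic operator $\partial_t - \frac{1}{2} \Delta_{\mathcal H}$ on $(0,\infty) \times \M$. Since $\mathcal{H}$ is everywhere bracket-generating on $\M$, the system $(\partial_t, \X_1, \dots, \X_n)$ obtained from any local orthonormal horizontal frame $\X_1, \dots, \X_n$ trivially satisfies H\"ormander's condition on $\R \times \M$, so H\"ormander's theorem delivers hypoellipticity. The first step is to apply the Schwartz kernel theorem (on relatively compact charts and glued by a partition of unity) to obtain a distributional kernel $p(t,\cdot,\cdot) \in \mathcal{D}'(\M \times \M)$ for the bounded operator $P_t$ on $L^2(\M, \mu)$. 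The second step is to upgrade its regularity: the identity $\partial_t P_t = \frac{1}{2}\Delta_{\mathcal H}P_t$ (which holds in $L^2$ via the functional calculus) implies that $p$ is a distributional solution of $(\partial_t - \frac{1}{2}\Delta_{\mathcal H, x}) p(t,x,y) = 0$ in the variables $(t,x)$ for every fixed $y$, and hypoellipticity yields smoothness in $(t,x)$. The symmetry of $P_t$ as a self-adjoint operator translates to distributional symmetry of the kernel, which by the same argument gives smoothness in $(t,y)$. To secure joint smoothness on $(0,\infty) \times \M \times \M$, I would apply H\"ormander's theorem one more time to the operator $2\partial_t - \frac{1}{2}(\Delta_{\mathcal H, x} + \Delta_{\mathcal H, y})$ acting on $p(2t, x, y)$, noting that the Lie hull of its defining vector fields spans the whole tangent bundle of $\R \times \M \times \M$ thanks to the bracket-generating assumption applied on each factor.

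With smoothness in hand, the rest is routine. The integral representation $P_t f(x) = \int_\M p(t,x,y) f(y) d\mu(y)$ holds for $f \in C_0^\infty(\M)$ by the very construction of the Schwartz kernel, and then extends to every $L^p(\M,\mu)$ with $1 \le p \le \infty$ via density and contractivity. The Chapman--Kolmogorov identity is the pointwise rewriting, via Fubini, of the semigroup law $P_{t+s}=P_tP_s$, and symmetry is the pointwise form of the distributional symmetry already noted. For the Cauchy problem in $L^p$, $1 < p < \infty$, the function $u(t,x) := P_t f(x)$ satisfies $\partial_t u = \frac{1}{2}\Delta_{\mathcal H} u$ distributionally (the $L^2$ functional calculus extends to $L^p$ by contractivity on the dense subspace $L^2 \cap L^p$) and hence classically by hypoellipticity; strong $L^p$-continuity at $t=0$ follows from the corresponding $L^2$ statement together with density and contractivity. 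Uniqueness is the standard semigroup duality argument: if $v$ is an $L^p$-continuous solution with $v(0)=0$, testing against $P_{t-s}\varphi$ for $\varphi$ in the $L^{p'}$-domain of $\Delta_{\mathcal H}$ and $s \in [0,t]$ shows that $s \mapsto \langle v(s), P_{t-s}\varphi\rangle$ has vanishing derivative, whence $\langle v(t), \varphi\rangle = 0$ on a dense set and $v(t)=0$.

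The hard part will be the clean execution of the hypoelliptic upgrade, especially the passage from individual smoothness in $(t,x)$ and in $(t,y)$ to joint smoothness in $(t,x,y)$. The delicacy lies in choosing the right parabolic operator on the product manifold and verifying that the Lie algebra generated by its defining horizontal vector fields is of full rank everywhere; once that is in place, H\"ormander's theorem does the heavy lifting and every kernel-level property follows from its $L^2$-operator counterpart.
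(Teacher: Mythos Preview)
Your outline is correct and follows the standard hypoelliptic route. In the main text the paper does not actually prove this theorem: it refers the reader to Section~2.5 of \cite{MR4432545} for the details. There is, however, draft material appended after the bibliography which sketches the Cauchy-problem portion via two lemmas, and it is worth comparing your uniqueness argument to theirs.

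For existence, the paper's appended lemma proceeds exactly as you do: show $u=P_tf$ is a weak solution of $\partial_t u=\Delta_{\mathcal H}u$ by pairing with test functions, then invoke hypoellipticity. For uniqueness, however, the approaches diverge. You use the semigroup duality trick: differentiate $s\mapsto \langle v(s),P_{t-s}\varphi\rangle$ for $\varphi$ in the $L^{p'}$-domain. The paper instead runs an energy estimate on a nonnegative $L^p$ \emph{subsolution} $v$: multiply the inequality $\partial_t v\le \Delta_{\mathcal H}v$ by $h^2 v^{p-1}$ with $h\in C_0^\infty(\M)$ a cutoff, integrate by parts, and control the cross term via Cauchy--Schwarz to obtain
\[
\int_\M h^2 v^p(\cdot,\tau)\,d\mu \;+\; \frac{2(p-1)}{p}\int_0^\tau\!\!\int_\M h^2\,\Gamma(v^{p/2})\,d\mu\,dt
\;\le\; \frac{2p}{p-1}\,\|\Gamma(h)\|_\infty \int_0^\tau\!\!\int_\M v^p\,d\mu\,dt.
\]
One then lets $h_n\nearrow 1$ with $\|\Gamma(h_n)\|_\infty\to 0$, which is possible precisely because the Riemannian metric $g$ on $\M$ is complete. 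This buys two things your duality argument does not: it applies to subsolutions, and it makes explicit where the completeness hypothesis enters. Your argument is cleaner but implicitly uses essential self-adjointness (hence completeness) through the well-posedness of $P_t$ on $L^{p'}$.

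Your treatment of the kernel construction (Schwartz kernel plus the product-space hypoellipticity trick with $2\partial_t-\tfrac12(\Delta_{\mathcal H,x}+\Delta_{\mathcal H,y})$ to get joint smoothness) is correct and is more explicit than anything the paper records; the appended material says nothing about the kernel itself.
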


The semigroup  $(P_t)_{t \ge 0}$ is also  the transition semigroup of the horizontal Brownian motion $(X(t))_{t\ge 0}$ meaning that for every $f \in L^\infty (\M,\mu)$,
\[
\mathbb{E}\left( f(X(t)) \mid X(0)=x\right)=P_tf(x).
\]

Apart from continuity, this Dirichlet form construction provides little information about pathwise properties of the process. In particular, it is not apparent that the horizontal Brownian motion is a semimartingale. For this reason, we present in the next section a stochastic differential equation construction of the horizontal Brownian motion in a more general context that shows it is a semimartingale. 

In our submersion situation, one can also often explicitly construct the horizontal Brownian motion on $\mathbb{M}$ from the Brownian motion on $\mathbb{B}$. It uses the notion of horizontal lift that we present below.

\

A smooth vector field $\X$ is said to be projectable if there exists a smooth vector field $\overline{\X}$ on $\B$ such that for every $x \in \M$,  $T_x \pi ( \X(x))= \overline {\X} (\pi (x))$. In that case, we say that $\X$ and $\overline{\X}$ are $\pi$-related.

\begin{definition}
A vector field $\X$ on $\M$ is called basic if it is projectable and horizontal.
\end{definition}

If  $\overline{\X}$ is a smooth vector field on $\B$, then there exists a unique basic vector field $\X$ on $\M$ which is $\pi$-related to $\overline{\X}$. This vector is called the horizontal lift of $\overline{\X}$.

\

A $C^1$-curve $\gamma: [0,+\infty) \to \M$ is said to be horizontal if for every $t \ge 0$,
\[
\gamma'(t) \in \mathcal{H}_{\gamma(t)}.
\]
\begin{definition}
Let $\bar{\gamma}: [0,+\infty) \to \mathbb{B}$ be a $C^1$ curve. Let $x \in \mathbb{M}$, such that $\pi(x)=\bar{\gamma}(0)$. Then, there exists a unique $C^1$ horizontal curve $\gamma: [0,+\infty) \to \M$ such that $\gamma (0)=x$ and $\pi (\gamma(t))=\bar{\gamma}(t)$. The curve $\gamma$ is called the horizontal lift of $\bar{\gamma}$ at $x$.
\end{definition}

One can locally describe horizontal lifts using orthonormal frames of basic vector fields. Let $\X_1,\dots,\X_n$ be a local orthonormal frame of basic vector fields around $x$ and  let us denote by $\overline{\X}_1,\dots,\overline{\X}_n$ the vector fields on $\B$ which are $\pi$-related to $\X_1,\dots,\X_n$. One can then write
\[
\bar{\gamma}'(t)=\sum_{i=1}^n \bar{\gamma}_i'(t) \overline{\X}_i ( \bar{\gamma} (t) )
\]
where $\bar{\gamma}_i'(t) = j\left( \bar{\gamma}'(t),  \overline{\X}_i ( \bar{\gamma} (t) ) \right)$. Solving then the differential equation 
\[
\gamma'(t)=\sum_{i=1}^n \bar{\gamma}_i'(t)\X_i (\gamma (t) ), \quad \gamma(0)=x
\]
on $\M$ yields the horizontal lift of $\bar{\gamma}$ at $x$.

The notion of horizontal lift may   be extended to semimartingales on $\mathbb{B}$ by using stochastic calculus. A semimartingale $M(t)$ on $\M$ is called horizontal if for every one-form $\theta$ on $\M$ whose kernel contains the horizontal distribution $\mathcal H$, one has
\[
\int_{M[0,t]} \theta =0, \quad t \ge 0.
\]

\begin{definition}\label{def lift}
Let $(\overline{M}(t))_{t \ge 0}$ be a semimartingale such that $\overline{M}(0)=\overline{x}\in \B$. Let $x \in \mathbb{M}$, such that $\pi(x)=\overline{x}$. Then, there exists a unique horizontal semimartingale $(M(t))_{t \ge 0}$ on $\M$ such that $M (0)=x$ and $\pi (M(t) )=\overline{M}(t)$, $t \ge 0$. The semimartingale $M$ is called the horizontal lift of $\overline{M}$ at $x$.
\end{definition}

As before, one can locally describe horizontal lifts of semimartingales  using local orthonormal frames of basic vector fields. One can write
\[
d\overline{M}(t) =\sum_{i=1}^n \overline{\X}_i ( \overline{M} (t) )\circ dM^i(t)
\]
for some real valued semimartingales $M^1,\dots,M^n$. Solving then the stochastic differential equation 
\[
dM(t)=\sum_{i=1}^n \X_i (M (t) ) \circ dM^i(t), \quad M(0)=x
\]
on $\M$ yields the horizontal lift of $\overline{M}$ at $x$.

\begin{theorem}
Assume that the fibers of the submersion $\pi$  have all zero mean curvature. Let $(B(t))_{t \ge 0}$ be a Brownian motion on $\mathbb{B}$ started at $b \in \mathbb{B}$. Let $x \in \mathbb{M}$ such that $\pi (x)=b$. The horizontal lift $(X(t))_{t \ge 0}$ of $(B(t))_{t \ge 0}$ at $x$ is a horizontal Brownian motion.
\end{theorem}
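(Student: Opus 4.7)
The plan is to verify directly that $X(t)$ is a diffusion with generator $\frac{1}{2}\Delta_{\mathcal{H}}$. The proof should work locally by expressing $B(t)$ as a Stratonovich SDE in a local orthonormal frame on $\B$, transporting that SDE to $\M$ via the horizontal lift, applying Itô's formula to read off the generator of $X$, and finally comparing it with the explicit formula for $\Delta_{\mathcal{H}}$ derived earlier in the chapter.

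First, I would fix $b_0 \in \B$ and pick a local orthonormal frame $\overline{\X}_1,\dots,\overline{\X}_n$ of vector fields on a neighborhood $U$ of $b_0$. By the definition of a Riemannian submersion, each $\overline{\X}_i$ admits a unique horizontal lift $\X_i$, and $(\X_1,\dots,\X_n)$ is a local orthonormal frame of the horizontal distribution on $\pi^{-1}(U)$. Formula \eqref{SDE BM local} of the excerpt applied on $\B$ gives, on $U$,
\[
dB(t) = \sum_{i=1}^n \overline{\X}_i(B(t)) \circ dW^i(t) - \tfrac{1}{2}\sum_{i=1}^n \overline{D}_{\overline{\X}_i}\overline{\X}_i(B(t))\, dt,
\]
where $\overline{D}$ is the Levi-Civita connection on $\B$ and $W=(W^1,\dots,W^n)$ is a standard Brownian motion in $\R^n$. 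Expanding the drift as $\overline{D}_{\overline{\X}_i}\overline{\X}_i = \sum_{j} j(\overline{D}_{\overline{\X}_i}\overline{\X}_i,\overline{\X}_j)\,\overline{\X}_j$, we may write this in the form $dB(t) = \sum_j \overline{\X}_j(B(t)) \circ d\overline{M}^j(t)$ required by Definition \ref{def lift}, with $d\overline{M}^j(t) = dW^j(t) - \tfrac{1}{2}\sum_i j(\overline{D}_{\overline{\X}_i}\overline{\X}_i,\overline{\X}_j)(B(t))\,dt$.

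Applying the horizontal lift construction component by component, the process $X(t)$ then solves
\[
dX(t) = \sum_{j=1}^n \X_j(X(t)) \circ dW^j(t) - \tfrac{1}{2}\sum_{i,j=1}^n j(\overline{D}_{\overline{\X}_i}\overline{\X}_i,\overline{\X}_j)(\pi(X(t)))\,\X_j(X(t))\,dt.
\]
By O'Neill's basic identity for Riemannian submersions, the horizontal component $(D_{\X_i}\X_i)_{\mathcal{H}}$ is the horizontal lift of $\overline{D}_{\overline{\X}_i}\overline{\X}_i$, so $j(\overline{D}_{\overline{\X}_i}\overline{\X}_i,\overline{\X}_j)\circ \pi = g(D_{\X_i}\X_i,\X_j)$, and the drift rewrites as $-\tfrac{1}{2}\sum_i (D_{\X_i}\X_i)_{\mathcal{H}}(X(t))\,dt$. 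Applying Itô's formula and converting the Stratonovich integrals to Itô, the generator of $X$ on smooth functions $f$ is
\[
\mathcal{L}f = \tfrac{1}{2}\sum_{i=1}^n \X_i^2 f - \tfrac{1}{2}\sum_{i=1}^n (D_{\X_i}\X_i)_{\mathcal{H}} f.
\]

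Finally, I compare this with the intrinsic expression derived earlier in Section 4.1,
\[
\Delta_{\mathcal{H}} = \sum_{i=1}^n \X_i^2 - \sum_{i=1}^n (D_{\X_i}\X_i)_{\mathcal{H}} - \sum_{k=1}^m (D_{Z_k}Z_k)_{\mathcal{H}},
\]
where $(Z_1,\dots,Z_m)$ is a local orthonormal frame of the vertical distribution. The quantity $\sum_k (D_{Z_k}Z_k)_{\mathcal{H}}$ is precisely the mean curvature vector of the fiber through the point under consideration. Hence the assumption that all fibers have zero mean curvature gives $\sum_k (D_{Z_k}Z_k)_{\mathcal{H}} = 0$, so $\mathcal{L} = \tfrac{1}{2}\Delta_{\mathcal{H}}$, which identifies $X$ as a horizontal Brownian motion. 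The main conceptual step is recognizing that the unwanted vertical contribution in the formula for $\Delta_{\mathcal{H}}$ is exactly the fibers' mean curvature, which the minimality hypothesis eliminates; the rest is a careful local bookkeeping exercise using O'Neill's formula to transport the drift of the SDE for $B$ through the horizontal lift.
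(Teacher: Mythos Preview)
Your proof is correct and follows essentially the same approach as the paper: write the SDE for $B$ on $\B$ in a local orthonormal frame, lift it to $\M$ using that $(D_{\X_i}\X_i)_{\mathcal{H}}$ is the horizontal lift of $\overline{D}_{\overline{\X}_i}\overline{\X}_i$ (O'Neill's formula, or equivalently the $\pi$-relatedness the paper invokes), read off the generator of $X$, and then use the zero mean curvature hypothesis to kill the term $\sum_k (D_{Z_k}Z_k)_{\mathcal{H}}$ in the formula for $\Delta_{\mathcal{H}}$. Your extra step of explicitly expanding the drift in the frame before applying Definition~\ref{def lift} is just a more careful packaging of the same argument.
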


\begin{proof}
Indeed, if $\X_1,\dots,\X_n$ is a local orthonormal frame of basic vector fields and $Z_1,\dots,Z_m$ is a local orthonormal frame of the vertical distribution,  let us denote by $\overline{\X}_1,\dots,\overline{\X}_n$ the vector fields on $\B$ which are $\pi$-related to $\X_1,\dots,\X_n$ . We have
\[
\Delta_{\B}=\sum_{i=1}^n \overline{\X}_i^2 -\sum_{i=1}^n D_{\overline{\X}_i}\overline{\X}_i.
\]
Therefore,  $(B(t))_{t \ge 0}$ locally solves a stochastic differential equation
\[
dB(t)=\sum_{i=1}^n \overline{\X}_i(B(t)) \circ dB^i(t) -\frac{1}{2} \sum_{i=1}^nD_{\overline{\X}_i} \overline{\X}_i (B(t)) dt,
\]
where $B^1,...,B^n$ is a Brownian motion in $\mathbb{R}^n$. Since it is easy to check that  $ D_{\overline{\X}_i}\overline{\X}_i$ is $\pi$-related to $(D_{\X_i}\X_i)_{\mathcal{H}}$, we deduce that $(X(t))_{t \ge 0}$ locally solves the stochastic differential equation
\[
dX(t)=\sum_{i=1}^n \X_i(X(t)) \circ dB^i(t) -\frac{1}{2} \sum_{i=1}^n (D_{\X_i}\X_i)_{\mathcal{H}} (X(t)) dt.
\]
We now recall that
\[
\Delta_{\mathcal{H}}=\sum_{i=1}^n \X_i^2 -\sum_{i=1}^n (D_{\X_i}\X_i)_{\mathcal{H}} -\sum_{i=1}^m (D_{Z_i}Z_i)_{\mathcal{H}}.
\]
If the fibers of the submersion $\pi$  have all zero mean curvature, the vector
\[
T=\sum_{i=1}^m D_{Z_i}Z_i
\]
is always orthogonal to $\mathcal{H}$. Thus 
\[
\Delta_{\mathcal{H}}=\sum_{i=1}^n \X_i^2 -\sum_{i=1}^n (D_{\X_i}\X_i)_{\mathcal{H}} ,
\]
and $(X(t))_{t \ge 0}$ is a horizontal Brownian motion.
\end{proof}

\begin{remark}\label{harmonic submersion}
A similar proof shows that if the fibers of the submersion $\pi$ all have  zero mean curvature and if $(X(t))_{t \ge 0}$ is a Brownian motion on $\M$, then $B(t) =\pi (X(t))$ is a Brownian motion on $\mathbb{B}$.
\end{remark}

In this monograph, we shall almost always consider submersions with totally geodesic fibers.

\begin{definition}
A Riemannian submersion $\pi: (\M , g)\to (\B,j)$ is said to be totally geodesic if for every $b \in \B$, the set $\pi^{-1}(\{ b \})$ is a totally geodesic submanifold of $\M$.
\end{definition}

Observe that for totally geodesic submersions, the mean curvature of the fibers are zero, and thus the horizontal Brownian motion may be constructed as a lift of the Brownian motion on the base space and  the Brownian motion on the base space may be constructed as the projection of the Brownian motion on the top space. 
Here are some important examples of totally geodesic Riemannian submersions.

\begin{Example}(\textbf{Submersions from spheres})
R. Escobales proved in \cite{Escobales} (see also Section 2.3 in \cite{MR2110043})  that, up to equivalence, the only Riemannian submersions with connected totally geodesic fibers from a unit sphere are given by:
\begin{itemize}
\item \hspace{.1in}The complex Hopf fibrations:
\[
\mathbb{S}^1 \rightarrow \mathbb{S}^{2n+1} \rightarrow {\mathbb{C}P}^n.
\]
\item \hspace{.1in} The quaternionic Hopf fibrations:
\[
\mathbb{S}^3 \rightarrow \mathbb{S}^{4n+3} \rightarrow {\mathbb{H}P}^n.
\]
\item \hspace{.1in}The octonionic Hopf fibration:
\[
\mathbb{S}^7 \rightarrow \mathbb{S}^{15} \rightarrow {\mathbb{O}P}^1.
\]
\end{itemize}
The complex Hopf fibrations will be studied in Chapter \ref{sec-BM-complex}. The quaternionic Hopf fibrations will be studied in Chapter \ref{Chapter quaternionic hopf}. For the octonionic Hopf fibration we refer to Chapter \ref{Chapter octonionic hopf}.
  
G. Baditoiu and S. Ianus proved in \cite{Baditoiu} (see also Section 7.4 in \cite{MR2110043}) that, up to equivalence, the only Riemannian submersions with connected totally geodesic fibers from a unit pseudo-sphere are given by:
\begin{itemize}
\item \hspace{.1in} The complex anti-de Sitter fibrations:
\[
\mathbb{S}^1\to \mathbf{AdS}^{2n+1}(\mathbb{C})\to\mathbb{C}H^n.
\]
\item \hspace{.1in} The quaternionic anti-de Sitter fibrations:
\[
\mathbb{S}^3 \rightarrow \mathbf{AdS}^{4n+3}(\mathbb{H}) \rightarrow {\mathbb{H}}H^n.
\]
\item \hspace{.1in} The octonionic anti-de Sitter fibration:
\[
\mathbb{S}^7 \rightarrow \mathbf{AdS}^{15}(\mathbb{O}) \rightarrow {\mathbb{O}}H^1.
\]
\end{itemize}
Note that in those examples the metric on $\mathbf{AdS}$ is semi-Riemannian: the metric is positive definite on the horizontal space and negative definite on the vertical space. The complex anti-de Sitter fibrations will be studied in Chapter \ref{sec-BM-complex}. The quaternionic anti-de Sitter fibrations will be studied in Chapter \ref{chap-quater-anti}. For the octonionic anti-de Sitter  fibration we refer to Chapter \ref{Chapter octonionic hopf}.
\end{Example}

\begin{Example}(\textbf{Quotient by an isometric action})
As in Example  \ref{example iso 4} let $(\M , g)$ be a Riemannian manifold and $\mathbf G$ be a closed one-dimensional subgroup of the isometry group of $(\M , g)$ which is generated by a complete Killing vector field $Z$. Assume that the projection map $\pi$ from $\M$ to $\M /\mathbf{G}$ is a smooth submersion.  Then the fibers are totally geodesic if and only if the integral curves of $Z$ are geodesics, which is the case if and only if $Z$ has a constant length.
\end{Example}

\begin{Example}\label{principal bundle def} (\textbf{Principal bundle})
Let $\M$ be a principal bundle over $\B$ with fiber Lie group $\G$. Then, given a Riemannian metric $j$ on $\B$, a $\G$-invariant  metric $k$ on $ \G$ and a $\G$ compatible connection form $\theta$, there exists a unique Riemannian metric $g$ on $\M$ such that the bundle projection map $\pi: \M \to \B$ is a Riemannian submersion with totally geodesic fibers isometric to $(\G,k)$ and such that the horizontal distribution of $\theta$ is the orthogonal complement of the vertical distribution. We refer to \cite{Vilms}, page 78, for a proof. This class of examples will be studied in details in Section \ref{horizontal BM bundle}.
\end{Example}

\begin{Example}(\textbf{B\'erard-Bergery fibration})\label{BB fibration}
Let $\G$ be a Lie group and $\mathbf H, \mathbf K$ be two compact subgroups of $\G$ with $\mathbf K \subset \mathbf H$. Then we have the natural fibration
\[
\mathbf H/\mathbf K \to \G/\mathbf K \to \G/\mathbf H
\]
with projection map $\pi: \G/\mathbf K \to \G/\mathbf H, \, \alpha \mathbf K \mapsto \alpha \mathbf H$. Let $\mathfrak g$ be the Lie algebra of $\G$ and $\mathfrak h, \mathfrak k$  be the Lie algebras of $\mathbf H, \mathbf K$. We have
\[
\mathfrak k \subset \mathfrak h \subset \mathfrak g.
\]
We choose an $\mathbf{Ad}_\G (\mathbf H)$-invariant complement $\mathfrak m$ to $\mathfrak h$ in $\mathfrak g$ and an  $\mathbf{Ad}_\G (\mathbf K)$-invariant complement $\mathfrak p$ to $\mathfrak k$ in $\mathfrak h$. We have $\mathfrak g= \mathfrak m \oplus \mathfrak h$, $\mathfrak h= \mathfrak p \oplus \mathfrak k$, and $\mathfrak p \oplus \mathfrak m$ is an $\mathbf{Ad}_\G (\mathbf K)$-invariant complement to $\mathfrak k$ in $\mathfrak g$. An $\mathbf{Ad}_\G (\mathbf H)$-invariant inner product on $\mathfrak m$ defines a $\G$-invariant Riemannian metric $j$ on $\G/\mathbf H$ and an $\mathbf{Ad}_\G (\mathbf K)$-invariant inner product on $\mathfrak p$ defines a $\mathbf H$-invariant Riemannian metric $k$ on $\mathbf H/\mathbf K$. The orthogonal sum of those inner products defines a $\G$-invariant  Riemannian metric $g$ on $\G / \mathbf K$.

The projection map $\pi: (\G/\mathbf K, g) \to (\G/\mathbf H, j)$ is then a Riemannian submersion with totally geodesic fibers isometric to $(\mathbf H/\mathbf K ,k)$, see Theorem 9.80 in \cite{Besse}. Equipped with those metrics, the Riemannian fibration
\[
\mathbf H/\mathbf K \to \G/\mathbf K \to \G/\mathbf H
\]
is then called the B\'erard-Bergery fibration, see \cite{MR370432}.  Notice that we have then a (commutative) diagram of fibrations:

\[
  \begin{tikzcd}
    & \mathbf{K} \arrow[swap]{dl} \arrow{d} & \\
    \mathbf H \arrow{r} \arrow{d}  & \G  \arrow{d} \arrow[swap]{dr} &  \\
    \mathbf H / \mathbf{K}  \arrow{r} & \G / \mathbf{K} \arrow{r} & \G / \mathbf{H}  
  \end{tikzcd}
\]

The following, all  of them appearing in the monograph, are examples of such fibrations and yield our most important examples of horizontal Brownian motions generating interesting stochastic area functionals:

\begin{itemize}
\item \hspace{.1in} $\G=\mathbf{U}(n)$, $\mathbf K=\mathbf{U}(n-1)$, $\mathbf H=\mathbf{U}(n-1)\mathbf{U}(1)$. In that case $\G/\mathbf H$ is the complex projective space $\mathbb{C}P^{n-1}$, and the B\'erard-Bergery fibration reduces to the classical  complex Hopf fibration $\mathbf{U}(1) \to \mathbb{S}^{2n-1} \to \mathbb{C}P^{n-1}$. 
\item \hspace{.1in} $\G=\mathbf{U}(n-1,1)$, $\mathbf K=\mathbf{U}(n-1)$, $\mathbf H=\mathbf{U}(n-1)\mathbf{U}(1)$. In that case $\G/\mathbf H$ is the complex hyperbolic space $\mathbb{C}H^{n-1}$, and the B\'erard-Bergery fibration is the anti-de Sitter fibration  $\mathbf{U}(1) \to \mathbf{AdS}^{2n-1} \to \mathbb{C}H^{n-1}$. 
\item \hspace{.1in} $\G=\mathbf{Sp}(n)$, $\mathbf K=\mathbf{Sp}(n-1)$, $\mathbf H=\mathbf{Sp}(n-1)\mathbf{Sp}(1)$. In that case $\G/\mathbf H$ is the quaternionic projective space $\mathbb{H}P^{n-1}$, and the B\'erard-Bergery fibration is the quaternionic  Hopf fibration $\mathbf{Sp}(1) \to \mathbb{S}^{4n-1} \to \mathbb{H}P^{n-1}$. 
\item \hspace{.1in} $\G=\mathbf{Sp}(n-1,1)$, $\mathbf K=\mathbf{Sp}(n-1)$, $\mathbf H=\mathbf{Sp}(n-1)\mathbf{Sp}(1)$. In that case $\G/\mathbf H$ is the quaternionic hyperbolic space $\mathbb{H}H^{n-1}$, and the B\'erard-Bergery fibration is the quaternionic  anti-de Sitter fibration $\mathbf{Sp}(1) \to \mathbf{AdS}_\mathbb{H}^{4n-1} \to \mathbb{H}H^{n-1}$. 
\item \hspace{.1in} $\G=\mathbf{Spin}(9)$, $\mathbf K=\mathbf{Spin}(7)$, $\mathbf H=\mathbf{Spin}(8)$. In that case $\G/ \mathbf H$ is the octonionic projective line $\mathbb{O}P^{1} \simeq \mathbb{S}^8$, and the B\'erard-Bergery fibration is the octonionic  Hopf fibration $\mathbb{S}^7 \to \mathbb{S}^{15} \to \mathbb{S}^8$. 
\item \hspace{.1in} $\G=\mathbf{U}(n)$, $\mathbf K=\mathbf{U}(n-k)$, $\mathbf H=\mathbf{U}(n-k)\mathbf{U}(k)$ with $k \ge 1$. In that case $\G/\mathbf H$ is the complex Grassmannian $G_{n,k}$ and the B\'erard-Bergery fibration, the Stiefel fibration.
\item \hspace{.1in} $\G=\mathbf{U}(n-k,k)$, $\mathbf K=\mathbf{U}(n-k)$, $\mathbf H=\mathbf{U}(n-k)\mathbf{U}(k)$ with $k \ge 1$. In that case $\G/\mathbf H$ is the complex hyperbolic Grassmannian  and the B\'erard-Bergery fibration, the hyperbolic Stiefel fibration.
\end{itemize}

\end{Example}
\

Let $\pi$ be a Riemannian submersion. Notice that if $\X$ is a basic vector field and $Z$ is a vertical vector field, then $T_x\pi ( [\X,Z](x))=0$ and thus $[\X,Z]$ is a vertical vector field, which means that the flow generated by basic fields preserve fibers, i.e. transforms one fiber into another one. The following result is due to Hermann \cite{Hermann}.

\begin{proposition}\label{isometry}
The submersion $\pi$ has totally geodesic fibers if and only if the flow generated by any basic vector field induces an isometry between the fibers.
\end{proposition}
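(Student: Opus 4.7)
My plan is to translate both conditions into the vanishing of a single tensor expression and then compare them. Let $X$ be a basic vector field with projection $\overline{X}$ on $\B$, and let $\phi_t$ denote its (local) flow. As recalled just before the statement, $[X,Z]$ is vertical whenever $Z$ is vertical; equivalently, $\pi \circ \phi_t = \phi_t^{\overline{X}} \circ \pi$, so $\phi_t$ sends fibers diffeomorphically onto fibers. Consequently, $\phi_t$ induces an isometry between the fiber through $x$ and the fiber through $\phi_t(x)$ (for small $t$) if and only if $\phi_t^* g$ and $g$ agree when restricted to vertical vectors, which is in turn equivalent to
\[
(\mathcal{L}_X g)(Y,Z) = 0 \quad \text{for all vertical } Y,Z.
\]

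Next, I would compute $(\mathcal{L}_X g)(Y,Z)$ using the Levi-Civita connection $D$. Expanding
\[
(\mathcal{L}_X g)(Y,Z) = X g(Y,Z) - g([X,Y],Z) - g(Y,[X,Z])
\]
and using metric compatibility and torsion-freeness, the $D_X Y$ and $D_X Z$ terms cancel and one obtains
\[
(\mathcal{L}_X g)(Y,Z) = g(D_Y X, Z) + g(Y, D_Z X).
\]
Since $X$ is horizontal and $Z$ is vertical, $g(X,Z) \equiv 0$ on $\M$, so differentiating along the vertical direction $Y$ gives $g(D_Y X, Z) = -g(X, D_Y Z)$. Writing $D_Y Z = (D_Y Z)^{\mathcal{V}} + (D_Y Z)^{\mathcal{H}}$ and noting that the horizontal part $(D_Y Z)^{\mathcal{H}}$ is precisely the second fundamental form $\mathrm{II}(Y,Z)$ of the fiber (since the fiber's tangent space is $\mathcal{V}$ and its normal space is $\mathcal{H}$), one gets $g(D_Y X, Z) = -g(X, \mathrm{II}(Y,Z))$, and symmetrically $g(Y, D_Z X) = -g(X, \mathrm{II}(Y,Z))$. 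Thus
\[
(\mathcal{L}_X g)(Y,Z) = -2\, g\bigl(X, \mathrm{II}(Y,Z)\bigr).
\]

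To conclude, I would observe that basic vector fields span the horizontal distribution $\mathcal{H}_x$ at every point $x \in \M$: given $v \in \mathcal{H}_x$, extend $T_x \pi(v)$ to a local vector field $\overline{V}$ on $\B$ and take its horizontal lift, which is basic and agrees with $v$ at $x$. Therefore $(\mathcal{L}_X g)(Y,Z) = 0$ for every basic $X$ and every pair of vertical $Y,Z$ is equivalent to $\mathrm{II} \equiv 0$ on each fiber, i.e.\ to the fibers being totally geodesic. Combining this with the first paragraph yields the stated equivalence.

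The argument is essentially a bookkeeping calculation with the Koszul identities, and I do not expect a serious obstacle; the only point that requires some care is the sign computation identifying $(\mathcal{L}_X g)|_{\mathcal{V}\times\mathcal{V}}$ with $-2\,g(X,\mathrm{II})$, and the remark that the flow $\phi_t$ need only be defined locally (which is enough since the proposition is pointwise in nature and the fiber-to-fiber statement is meaningful on the domain of definition of $\phi_t$).
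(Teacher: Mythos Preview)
Your proof is correct and follows essentially the same route as the paper: both compute $(\mathcal{L}_X g)(Y,Z)=g(D_YX,Z)+g(D_ZX,Y)=-2\,g(X,(D_YZ)_{\mathcal H})$ and identify the vanishing of this quantity for all basic $X$ with total geodesicity of the fibers. Your write-up is in fact slightly more complete, since you make explicit why the flow of a basic field sends fibers to fibers and why basic fields span $\mathcal H$ pointwise.
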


\begin{proof}
We denote by $D$ the Levi-Civita connection on $\M$.  Let $\X$ be a basic vector field. If $Z_1,Z_2$ are vertical fields, the Lie derivative of the metric $g$ with respect to $\X$ can be computed as
\[
(\mathcal{L}_\X g)(Z_1,Z_2)= g\left( D_{Z_1} \X ,Z_2 \right) +
g\left( D_{Z_2} \X ,Z_1 \right).
\]
Because $\X$ is orthogonal to $Z_2$, we now have $g\left( D_{Z_1} \X ,Z_2 \right)=-g\left(\X ,D_{Z_1} Z_2 \right)$. Similarly 
$g\left(D_{Z_2} \X ,Z_1 \right)=-g\left(\X ,D_{Z_2} Z_1 \right)$, whence it follows that
\begin{align*}
(\mathcal{L}_\X g)(Z_1,Z_2)& =-g\left(\X ,D_{Z_1} Z_2 +D_{Z_2} Z_1 \right) \\
 &=-2 g\left(\X ,D_{Z_1} Z_2 \right).
\end{align*}
Thus the flow generated by any basic vector field induces an isometry between the fibers if and only if $D_{Z_1} Z_2$ is always vertical which is equivalent to the fact that the fibers are totally geodesic submanifolds.
\end{proof}

 The second  result  that characterizes totally geodesic submersions is due to B\'erard-Bergery and Bourguignon \cite{BeBo}.
 
 \begin{theorem}\label{commutation2}
The Riemannian submersion $\pi$ has totally geodesic fibers if and only if any basic vector field $\X$ commutes with the vertical Laplacian $\Delta_{\mathcal{V}}$. In particular, if $\pi$ has totally geodesic fibers, then the horizontal Laplacian and vertical Laplacian commute, i.e. for every $f \in C^\infty(\M)$,
\[
\Delta_{\mathcal{H}} \Delta_{\mathcal{V}} f=\Delta_{\mathcal{V}} \Delta_{\mathcal{H}} f.
\]
\end{theorem}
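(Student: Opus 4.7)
The plan is to reduce the equivalence to Proposition \ref{isometry} via the observation that $\Delta_{\mathcal V}$ acts fiberwise as the intrinsic Laplace-Beltrami operator of each fiber. Fix a point $x\in\M$ and choose a local orthonormal frame $(\X_1,\dots,\X_n)$ of basic horizontal fields (horizontal lifts of a local orthonormal frame on $\B$) together with a local orthonormal frame $(Z_1,\dots,Z_m)$ of vertical fields. From the formula established in the excerpt,
\[
\Delta_{\mathcal V}=\sum_{a=1}^{m}Z_a^{2}-\sum_{j=1}^{n}(D_{\X_j}\X_j)_{\mathcal V}-\sum_{a=1}^{m}(D_{Z_a}Z_a)_{\mathcal V}.
\]

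The first step is to show that the middle sum vanishes identically. For any vertical $Z$ and any $j$, metric compatibility of $D$ together with $g(\X_j,Z)=0$ gives $g(D_{\X_j}\X_j,Z)=-g(\X_j,D_{\X_j}Z)$; writing $D_{\X_j}Z=D_Z\X_j+[\X_j,Z]$, the contribution $g(\X_j,D_Z\X_j)=\tfrac12 Z(g(\X_j,\X_j))=0$ since $g(\X_j,\X_j)=1$, and $g(\X_j,[\X_j,Z])=0$ since $[\X_j,Z]$ is vertical (the flow of a basic field preserves fibers). Hence $(D_{\X_j}\X_j)_{\mathcal V}=0$. Since the intrinsic Levi-Civita connection $D^{F_b}$ of the induced fiber metric satisfies $D^{F_b}_{Z_a}Z_a=(D_{Z_a}Z_a)_{\mathcal V}$, one concludes
\[
(\Delta_{\mathcal V}f)|_{F_b}=\Delta_{F_b}(f|_{F_b}),
\]
where $\Delta_{F_b}$ is the intrinsic Laplace-Beltrami operator of $F_b$.

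With this identification, let $\X$ be a basic field with flow $\phi_t$; projectability gives $\phi_t(F_b)=F_{\bar\phi_t(b)}$, where $\bar\phi_t$ is the flow on $\B$ of the $\pi$-related field. Differentiating $\phi_t^{*}\Delta_{\mathcal V}=\Delta_{\mathcal V}\phi_t^{*}$ at $t=0$ shows that $[\X,\Delta_{\mathcal V}]=0$ is equivalent to $\phi_t$ intertwining the Laplacians of $F_b$ and $F_{\bar\phi_t(b)}$ for all $t$ and $b$. A diffeomorphism of Riemannian manifolds that intertwines their Laplace-Beltrami operators is an isometry (since the principal symbol recovers the metric), so this in turn is equivalent to $\phi_t$ acting fiberwise as an isometry, which by Proposition \ref{isometry} is the totally geodesic condition. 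For the ``in particular'' claim, use $\Delta_{\mathcal H}=\sum_j\X_j^2-\sum_j(D_{\X_j}\X_j)_{\mathcal H}-\sum_a(D_{Z_a}Z_a)_{\mathcal H}$; under the totally geodesic hypothesis $D_{Z_a}Z_a$ is vertical and kills the last sum, while by the standard O'Neill identity $(D_{\X_j}\X_j)_{\mathcal H}$ is a basic field (the horizontal lift of $\bar D_{\bar\X_j}\bar\X_j$). Thus $\Delta_{\mathcal H}$ is locally a polynomial in basic fields, each of which commutes with $\Delta_{\mathcal V}$, so $[\Delta_{\mathcal H},\Delta_{\mathcal V}]=0$.

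The hard part is the reverse direction of the main equivalence: extracting fiber isometry from the operator identity $[\X,\Delta_{\mathcal V}]=0$. The principal symbol argument is the cleanest route; a more hands-on alternative is to compute $[\X,\Delta_{\mathcal V}]$ directly in the local frame and observe that its vanishing forces $(\mathcal L_{\X}g)(Z_a,Z_b)=0$ for all $a,b$, which by the identity $(\mathcal L_{\X}g)(Z_1,Z_2)=-2g(\X,D_{Z_1}Z_2)$ recalled in the proof of Proposition \ref{isometry} is exactly the totally geodesic condition.
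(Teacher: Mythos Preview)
Your proof is correct and follows essentially the same route as the paper: both directions go through Proposition \ref{isometry} via the flow of a basic field, and the ``in particular'' clause is handled identically by noting that under the totally geodesic hypothesis $\Delta_{\mathcal H}$ is locally built from basic fields. Your explicit verification that $(D_{\X_j}\X_j)_{\mathcal V}=0$ (hence $\Delta_{\mathcal V}$ restricts to the intrinsic fiber Laplacian) is a welcome clarification that the paper leaves implicit, and your principal-symbol argument for the converse is just a cleaner rephrasing of the paper's observation that the second-order part of $[\X,\Delta_{\mathcal V}]$ must vanish.
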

 
 \begin{proof}
 Assume that the submersion is totally geodesic. Let $\X$ be a basic vector field and $\xi_t$ be the flow it generates. Since $\xi$ induces an isometry between the fibers, we have
 \[
 \xi_t^* ( \Delta_{\mathcal{V}})= \Delta_{\mathcal{V}}.
 \]
 Differentiating at $t=0$ yields $[\X,\Delta_{\mathcal{V}}]=0$. 
 
 Conversely, assume that for every basic field $\X$, $[\X,\Delta_{\mathcal{V}}]=0$.  Let $\X_1,\dots,\X_n$ be a local orthonormal frame of basic vector fields and $Z_1,\cdots,Z_m$ be a local orthonormal frame of the vertical distribution. The second order part of the operator $[\X,\Delta_{\mathcal{V}}]$ must be zero. Given the expression of $ \Delta_{\mathcal{V}}$, this implies
 \[
 \sum_{i=1}^m [\X,Z_i] Z_i=0.
 \]
 This implies that the flow generated by $\X$ induces isometries between the fibers.
 
 \
 
 Finally, as we have already seen, if the submersion is totally geodesic then in a local basic orthonormal frame
 \[
 \Delta_{\mathcal{H}}=\sum_{i=1}^n \X_i^2 -\sum_{i=1}^n (D_{\X_i}\X_i)_{\mathcal{H}}.
 \]
 Since the vectors  $(D_{\X_i}\X_i)_{\mathcal{H}}$ are basic,  from the previous result $\Delta_{\mathcal{H}}$ commutes with $\Delta_{\mathcal{V}}$.
  \end{proof}

 \section{Pathwise construction of the horizontal Brownian motion on Riemannian foliations}\label{section construction HBM}

In this section, we show how to construct horizontal Brownian motions by solving a globally defined stochastic differential equation. Though the construction will not be used elsewhere in the monograph, it is conceptually important and is pivotal to  develop a stochastic calculus of variations associated with horizontal Brownian motions, see \cite{B, MR3969194}, \cite{MR1735806} and \cite{Elworthy}. In particular, the construction shows that horizontal Brownian motions are semimartingales.

The construction we present  is reminiscent of the Eels-Elworthy-Malliavin construction of Brownian motion on Riemannian manifolds (see \cite{Hsu}). It is actually possible to consider horizontal Brownian motions without having a globally defined submersion. The construction can be done in the context of Riemannian foliations.

\begin{definition}
Let $\M$ be a smooth and connected $n+m$-dimensional manifold. A $m$-dimensional foliation $\mathcal{F}$ on $\M$ is defined by a maximal collection of pairs $\{ (U_\alpha, \pi_\alpha), \alpha \in I \}$ of open subsets $U_\alpha$ of $\M$ and submersions $\pi_\alpha: U_\alpha \to U_\alpha^0$ onto open subsets of $\mathbb{R}^n$ satisfying:
\begin{itemize}
\item \hspace{.1in} $\cup_{\alpha \in I} U_\alpha =\M$;
\item \hspace{.1in} If $U_\alpha \cap U_\beta \neq \emptyset$, there exists a local diffeomorphism $\Psi_{\alpha \beta}$ of $\mathbb{R}^n$ such that $\pi_\alpha=\Psi_{\alpha \beta} \pi_\beta$ on $U_\alpha \cap U_\beta $.
\end{itemize}
\end{definition}

The maps $\pi_\alpha$ are called disintegrating maps of $\mathcal{F}$. The connected components of the sets $\pi_\alpha^{-1}(c)$, $c \in \mathbb{R}^n$, are called the plaques of the foliation. A foliation arises from an integrable sub-bundle of $T\M$, to be denoted by $\mathcal{V}$ and referred to as the vertical distribution. These are the vectors tangent to the leaves, the maximal integral sub-manifolds of $\mathcal{V}$.  

\

Foliations have been extensively studied and numerous monographs are devoted to them. We refer in particular to the monograph by Tondeur \cite{Tondeur}.

\

In the sequel, we shall only  be interested in Riemannian foliations with bundle like metric, which are the Riemannian foliations locally described by submersions. 

\begin{definition}
Let $\M$ be a smooth and connected $n+m$-dimensional Riemannian manifold. A $m$-dimensional foliation $\mathcal{F}$ on $\M$ is said to be Riemannian with a bundle like metric if the disintegrating maps $\pi_\alpha$ are Riemannian submersions onto $U_\alpha^0$ with its given Riemannian structure. If moreover the leaves are totally geodesic sub-manifolds of $\M$, then we say that the Riemannian foliation is totally geodesic with a bundle like metric.
\end{definition}

Observe that if we have a Riemannian submersion $\pi : (\M,g) \to (\mathbb{B},j)$, then $\M$ is equipped with a Riemannian foliation with bundle like metric whose leaves are the fibers of the submersion. Of course, there are many Riemannian foliations with bundle like metric that do not come from a Riemannian submersion.

\begin{Example}(\textbf{Contact manifolds})\label{contact example}
Let $(\M,\theta)$ be a $2n+1$-dimensional smooth contact manifold. On $\M$ there is a unique smooth vector field $S$, the so-called Reeb vector field, that satisfies
\[
\theta(S)=1,\quad \mathcal{L}_S(\theta)=0,
\]
where $\mathcal{L}_S$ denotes the Lie derivative with respect to  $S$. On $\M$ there is a foliation, the Reeb foliation, whose leaves are the orbits of the vector field $S$.  As it is well-known (see for instance \cite{Tanno}), it is always possible to find a Riemannian metric $g$ and a $(1,1)$-tensor field $J$ on $\M$ so that for every  vector fields $X,  Y$
\[
g(X,S)=\theta(X),\quad J^2(X)=-X+\theta (X) S, \quad g(X,JY)=(d\theta)(X,Y).
\]
The triple $(\M, \theta,g)$ is called a contact Riemannian manifold. We see then that the Reeb foliation is totally geodesic with bundle like metric if and only if the Reeb vector field $S$ is a Killing field, that is,
\[
\mathcal{L}_S g=0.
\]
In that case $(\M, \theta,g)$ is called a K-contact Riemannian manifold.
\end{Example}

\begin{Example}(\textbf{Sub-Riemannian manifolds with transverse symmetries})
The concept of sub-Riemannian manifold with transverse symmetries was introduced in \cite{BG}. Let $\M$ be a smooth, connected  manifold with dimension $n+m$. We assume that $\bM$ is equipped with a bracket generating distribution $\mathcal{H}$ of dimension $n$ and a fiberwise inner product $g_\mathcal{H}$ on that distribution. It is said that $\M$ is a sub-Riemannian manifold with transverse symmetries if there exists a $m$- dimensional Lie algebra $\mathcal{V}$ of sub-Riemannian Killing vector fields such that for every $x \in \bM$, 
 \[
 T_x \bM= \mathcal{H}_x \oplus \mathcal{V}_x,
 \]
 where 
 \[
  \mathcal{V}_x=\{ Z(x), Z \in \mathcal{V} \}.
 \]
 The choice of an inner product $g_{\mathcal{V}}$ on the Lie algebra $\mathcal{V}$ naturally endows $\bM$ with a  Riemannian metric that makes the decomposition $T_x\M=\mathcal{H}_x \oplus \mathcal{V}_x$ orthogonal:
\[
g=g_\mathcal{H} \oplus  g_{\mathcal{V}}.
\]
The sub-bundle of $\M$ determined by vector fields in $\mathcal{V}$ gives a foliation on $\M$ which is easily seen to be totally geodesic with bundle like metric.
\end{Example}

Since Riemannian foliations with a bundle like metric can locally be described by a Riemannian submersion, we can define a horizontal Laplacian $\Dh$ and a vertical Laplacian $\Dv$. This allows to define the horizontal  Brownian motion as we did in the previous section.

\

Let $\M$ be a smooth and connected  manifold with dimension $n+m$. In the sequel, we assume that $\bM$ is equipped with a Riemannian foliation $\mathcal{F}$ with bundle-like metric $g$ and totally geodesic  $m$-dimensional leaves.

\

The sub-bundle $\mathcal{V}$ formed by vectors tangent to the leaves is referred  to as the set of \emph{vertical directions}. The sub-bundle $\mathcal{H}$ which is normal to $\mathcal{V}$ is referred to as the set of \emph{horizontal directions}.   The metric $g$ can be split as
\[
g=g_\mathcal{H} \oplus g_{\mathcal{V}}.
\]
\



On the Riemannian manifold $(\M,g)$ there is the Levi-Civita connection that we denote as before by $D$, but this connection is not adapted to the study of foliations because the horizontal and  the vertical bundle may not be parallel. More adapted to the geometry of the foliation is the Bott's connection that we now define. It is an easy exercise to check that, since the foliation is totally geodesic, there exists a unique affine connection $\nabla$ such that:

\begin{itemize}
\item \hspace{.1in} $\nabla$ is metric, that is, $\nabla g =0$;
\item \hspace{.1in} For $X,Y \in \Gamma^\infty(\Ho)$, $\nabla_X Y \in \Gamma^\infty(\Ho)$;
\item \hspace{.1in} For $U,V \in \Gamma^\infty(\V)$, $\nabla_U V \in \Gamma^\infty(\V)$;
\item \hspace{.1in} For $X,Y \in \Gamma^\infty(\Ho)$, $T(X,Y) \in  \Gamma^\infty(\V)$ and  for $U,V \in \Gamma^\infty(\V)$, $T(U,V) \in  \Gamma^\infty(\Ho)$, where $T$ denotes the torsion tensor of $\nabla$;
\item \hspace{.1in} For $X \in \Gamma^\infty(\Ho), U \in \Gamma^\infty(\V)$, $T(X,U)=0$.
\end{itemize}

In terms of the Levi-Civita connection,  the Bott connection writes

\begin{align} \label{Bott connection}
\nabla_X Y =
\begin{cases}
 ( D_X Y)_{\mathcal{H}} , \quad X,Y \in \Gamma^\infty(\mathcal{H}) \\
 [X,Y]_{\mathcal{H}}, \quad X \in \Gamma^\infty(\mathcal{V}), Y \in \Gamma^\infty(\mathcal{H}) \\
 [X,Y]_{\mathcal{V}}, \quad X \in \Gamma^\infty(\mathcal{H}), Y \in \Gamma^\infty(\mathcal{V}) \\
 ( D_X Y)_{\mathcal{V}}, \quad X,Y \in \Gamma^\infty(\mathcal{V})
\end{cases}
\end{align}
where the subscript $\Ho$  (resp. $\mathcal{V}$) denotes the projection on $\mathcal{H}$ (resp. $\mathcal{V}$). Observe that for horizontal vector fields $X,Y\in \Gamma^\infty(\mathcal{H})$ the torsion $T(X,Y)$  is given by
\[
T(X,Y)=-[X,Y]_\V.
\]
Also observe that for $X,Y \in \Gamma^\infty(\mathcal{V})$ we actually have  $( D_X Y)_{\mathcal{V}}= D_X Y$ because the leaves are assumed to be totally geodesic. \\

Recall that a frame at $x \in \M$ can be described as a linear isomorphism $u: \mathbb{R}^{n+m} \to T_x\M$ such that for the standard basis $\left\{  e_{i} \right\}_{i=1}^{n+m}$ of $\mathbb{R}^{n+m}$ the collection $\left\{ u\left( e_{i} \right)\right\}_{i=1}^{n+m}$ is a basis (frame) for $T_{x}\M$. The collection of all such frames $\mathcal{F}\left( \M \right):=\bigcup_{x \in \M} \mathcal{F}\left( \M \right)_{x}$ is called the \emph{frame bundle} with the group $\mathbf{GL}\left( \mathbb{R}, n+m \right)$ acting on the bundle. If $\M$ is in addition Riemannian, we can restrict ourselves to the consideration of Euclidean isometries  $u: \left( \mathbb{R}^{n+m}, \langle \cdot,\cdot \rangle \right) \to \left( T_x\M, g \right)$ with the group $\mathbf{O}\left( \mathbb{R}, n+m \right)$ acting on the bundle. The \emph{orthonormal frame bundle} will be denoted by $\mathcal{O}(\M)$. 
An isometry $u: (\mathbb{R}^{n+m}, \langle \cdot,\cdot \rangle ) \longrightarrow (T_x\M, g)$ will be called \emph{horizontal} if $u ( \mathbb{R}^n \times \{ 0 \}) \subset \mathcal{H}_x$ and $u (  \{ 0 \}  \times \mathbb{R}^m ) \subset \mathcal{V}_x$. The \emph{horizontal frame bundle} $\mathcal{O}_\mathcal{H} (\M)$ is then defined as the set of $(x,u) \in \mathcal{O} (\M)$ such that $u$ is horizontal.

The Bott connection $\nabla$ induces a decomposition of each tangent space $T_{u}\mathcal{O}(\M)$ into the direct sum of a horizontal subspace and a vertical subspace as described in \cite[Section 2.1]{Hsu}. Using such decomposition, one can then lift smooth paths on $\M$ into smooth horizontal paths on $\mathcal{O}(\M)$. Such a lift is usually called the horizontal lift to $\mathcal{O}(\M)$. However, to avoid the confusion with the notion of horizontality given by the foliation on $\M$,  it shall  simply be referred to as the $\nabla$-lift to $\mathcal{O}(\M)$.

 Let $e_1, \dots, e_n, f_1, \dots, f_m$ be the standard basis of $\mathbb{R}^{n+m}$.
We denote by $A_i$ the vector field on $\mathcal{O} (\M)$ such that   $A_i (x,u)$ is the lift of $u(e_i)$, $i=1, ..., n$, $\left( x, u \right) \in \mathcal{O} (\M)$. 
%

\

For local computations, it is convenient to work in normal frames. 

\begin{lemma}[See \cite{BKW}] \label{frame}
Let $x \in \M$. Around $x$, there exist a local orthonormal horizontal frame $\{\X_1,\dots,\X_n \}$ and a local orthonormal vertical frame $\{Z_1,\dots,Z_m \}$ such that the following structure relations hold
\[
[\X_i,\X_j]=\sum_{k=1}^n \omega_{ij}^k \X_k +\sum_{k=1}^m \gamma_{ij}^k Z_k
\]
\[
[\X_i,Z_k]=\sum_{j=1}^m \beta_{ik}^j Z_j,
\]
where $\omega_{ij}^k,  \gamma_{ij}^k,  \beta_{ik}^j $ are smooth functions such that:
\[
 \beta_{ik}^j=- \beta_{ij}^k.
\]
Moreover, at $x$, we have
\[
 \omega_{ij}^k=0,  \beta_{ij}^k=0.
\]
\end{lemma}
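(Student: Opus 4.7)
The plan is to build the desired frame in three stages. First I would construct a preliminary orthonormal basic horizontal frame $\hat\X_1,\dots,\hat\X_n$ and orthonormal vertical frame $\hat Z_1,\dots,\hat Z_m$ near $x$: in a foliation chart around $x$ each base coordinate vector field $\partial/\partial u^i$ lifts to a basic horizontal vector field $\tilde\X_i$, and because the metric is bundle-like the Gram matrix $g(\tilde\X_i,\tilde\X_j)$ descends to a function on the base alone, so Gram--Schmidt with base-dependent coefficients yields the $\hat\X_i$'s while keeping them basic; the $\hat Z_k$'s come from an ordinary Gram--Schmidt applied to a local vertical coordinate frame inside the leaves.

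Second I would read off the bracket structure of the hatted frame. Since $\hat\X_i$ is projectable and $\hat Z_k$ projects to zero on the base, $[\hat\X_i,\hat Z_k]$ is itself vertical, so one can write $[\hat\X_i,\hat Z_k]=\sum_j\hat\beta_{ik}^j\hat Z_j$. Because the foliation is totally geodesic, Proposition \ref{isometry} says that the flow of each basic vector field acts by isometries between leaves, hence $\mathcal L_{\hat\X_i}g$ vanishes when restricted to $\V\times\V$; expanding $(\mathcal L_{\hat\X_i}g)(\hat Z_k,\hat Z_j)=0$ together with orthonormality of the $\hat Z$'s yields $\hat\beta_{ik}^j+\hat\beta_{ij}^k=0$. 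This already produces the announced structure relations, only with functions $\hat\omega,\hat\gamma,\hat\beta$ that do not yet vanish at $x$.

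Third I would normalize the structure functions at $x$ by a local gauge transformation, setting $\X_i=\sum_j A_i^j(u)\hat\X_j$ with $A$ valued in $\mathbf{O}(n)$ and depending only on the base coordinates (so that the $\X_i$ stay basic) and $Z_k=\sum_l B_k^l(y)\hat Z_l$ with $B$ valued in $\mathbf{O}(m)$, both equal to the identity at $x$. A direct expansion of the Lie brackets, using $\hat Z_l(A_i^j)=0$ because $A$ depends only on the base, shows that $\omega_{ij}^k(x)$ and $\beta_{ij}^k(x)$ become affine functions of the first horizontal derivatives of $A$ and $B$ at $x$ plus the old $\hat\omega_{ij}^k(x)$ and $\hat\beta_{ik}^j(x)$. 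The orthogonality constraint forces $dA(x)$ and $dB(x)$ to be skew-symmetric, and the two antisymmetries $\hat\omega_{ij}^k=-\hat\omega_{ji}^k$ (automatic from the Lie bracket) and $\hat\beta_{ik}^j=-\hat\beta_{ij}^k$ (from stage two) are precisely the compatibility conditions under which these linear systems admit a solution. The main difficulty will be verifying this compatibility in detail and producing explicit $A,B$---for instance as exponentials of skew matrices depending linearly on the coordinates---that attain the prescribed first-order data at $x$.
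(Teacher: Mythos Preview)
The paper does not supply its own proof of this lemma; it simply cites \cite{BKW}. So there is nothing in the text to compare against directly, and I can only assess your argument on its own terms.

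Your three-stage plan is sound. Stage one (Gram--Schmidt with base-only coefficients to keep the horizontal frame basic) and stage two (reading off verticality of $[\hat\X_i,\hat Z_k]$ and using $(\mathcal L_{\hat\X_i}g)(\hat Z_k,\hat Z_j)=0$ for the antisymmetry $\hat\beta_{ik}^j=-\hat\beta_{ij}^k$) are exactly right. In stage three, your compatibility claim for $\omega$ is correct: the map $(\mathfrak{so}(n))^n\to\{\text{3-tensors antisymmetric in the first two slots}\}$, $(C_i)\mapsto (C_i)_j^k-(C_j)_i^k$, is injective by the standard cyclic argument, hence bijective by a dimension count; this is just the normal-frame construction on the base.

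One caution: you write $B_k^l(y)$, which reads as if $B$ depends only on leaf coordinates. That would not work in general, since then $X_iB$ at $x$ would only see the vertical component of the horizontal lift and you could not freely prescribe $(X_iB)(x)=D_i$. Instead let $B$ depend on the \emph{base} coordinates, say $B(u)=\exp\bigl(\sum_i u^i D_i\bigr)$ with $(D_i)_k^j:=-\hat\beta_{ik}^j(x)$ (skew by stage two); then $(X_iB)(x)=D_i$ as required, and $Z_l(B)=0$ keeps $[X_i,Z_k]$ vertical. After both gauges the $X_i$ remain basic and the $Z_k$ remain orthonormal, so the stage-two computation reruns verbatim and the antisymmetry $\beta_{ik}^j=-\beta_{ij}^k$ holds everywhere, not only at $x$.
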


In the above frame, the Christoffel symbols of the Bott connection are given by
\begin{align*}
\begin{cases}
\nabla_{\X_i} \X_j =\frac{1}{2} \sum_{k=1}^n \left( \omega_{ij}^k +\omega_{ki}^j+\omega_{kj}^i\right)\X_k \\
\nabla_{Z_j} \X_i =0 \\
\nabla_{\X_i} Z_j=\sum_{k=1}^m \beta_{ij}^{k} Z_k
\end{cases}.
\end{align*}
%
%

For $x_0 \in \M$ we let $\{ \X_1,\dots, \X_n, Z_1, \dots, Z_m \}$  be a normal frame around $x_0$. If  $u \in \mathcal{O}_\mathcal{H} (\M)$ is a horizontal isometry, we can find an orthogonal matrix $\left\{ e_i^j \right\}_{i, j=1}^{n}$ such that $u(e_i)=\sum_{j=1}^n e_i^j \X_j$, and  $u(f_i)=\sum_{j=1}^m f_i^j  Z_j$ for $f_i^j$, $i=1, ..., n$, $j=1, ..., m$. Let $\overline{\X}_j$ be the vector field on $\mathcal{O}_\mathcal{H} (\M)$ defined by
\[
\overline{\X}_j f (x,u)=\lim_{t \to 0} \frac{ f( e^{t\X_j}(x), u)-f(x,u)}{t},
\]
where $e^{t\X_j}(x)$ is the exponential map on $\M$.
\begin{lemma}\label{l.4.2}
Let  $x_0 \in \M$ and  $(x,u) \in \mathcal{O}_\mathcal{H} (\M)$, then
\begin{align*}
& A_i(x,u)=\sum_{j=1}^n e_i^j \overline{\X}_j
\\
& -\sum_{j,k,l,r=1}^n e_i^j e_r^l \, g\left( \nabla_{\X_j} \X_l, \X_k\right) \frac{\partial}{\partial e_r^k}-
\sum_{j=1}^n\sum_{k,l,r=1}^m e_i^j f_r^l \, g\left( \nabla_{\X_j} Z_l, Z_k\right) \frac{\partial}{\partial f_r^k}.
\end{align*}
 In particular, at $x_0$ we have
\[
A_i(x_0,u)=\sum_{j=1}^n e_i^j \overline{\X}_j.
\]
\end{lemma}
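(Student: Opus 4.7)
The plan is to compute $A_i(x,u)$ directly from its definition as the $\nabla$-lift of $u(e_i)$ to $\mathcal{O}_{\mathcal H}(\M)$, expressed in the local coordinates $(x,e_r^k,f_r^k)$ induced by the normal frame $\{\X_1,\dots,\X_n,Z_1,\dots,Z_m\}$ of Lemma~\ref{frame}. Since $u(e_i)=\sum_j e_i^j \X_j$, the horizontal lift of this vector at $(x,u)$ in the frame bundle must project to $\sum_j e_i^j \X_j$ on $\M$, which already accounts for the $\sum_j e_i^j\overline{\X}_j$ term. The remaining content of the lemma is to identify the coordinate derivatives of $e_r^k$ and $f_r^k$ along the integral curve of $A_i$, which by construction correspond to $\nabla$-parallel transport of $u$.

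The main computation I would carry out is the following. Let $(x(t),u(t))$ be the integral curve of $A_i$ through $(x,u)$, so that $x'(t)=u(t)(e_i)$ and $u(t)$ is the $\nabla$-parallel transport of $u$ along $x(t)$. Writing $u(t)(e_r)=\sum_k e_r^k(t)\X_k(x(t))$, the defining equation $\nabla_{x'(t)} u(t)(e_r)=0$ yields
\[
\sum_k \frac{d e_r^k}{dt}\X_k + \sum_{j,l,k} e_r^j e_i^l\, g(\nabla_{\X_l}\X_j,\X_k)\,\X_k \;=\; 0,
\]
where I used $x'(t)=\sum_l e_i^l(t)\X_l(x(t))$ and the fact that $\nabla$ preserves horizontality (so no $Z_k$-components appear). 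Matching the $\X_k$ coefficient at $t=0$ and relabeling dummy indices $j\leftrightarrow l$ gives precisely the coefficient of $\partial/\partial e_r^k$ asserted in the statement. The argument for the $f_r^k$ coordinates is identical: writing $u(t)(f_r)=\sum_k f_r^k(t) Z_k(x(t))$ and using $\nabla_{x'(t)} u(t)(f_r)=0$ together with the fact that $\nabla$ also preserves the vertical subbundle (no $\X_k$-component appears), one reads off the coefficient of $\partial/\partial f_r^k$.

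For the second assertion, at the base point $x_0$ the normal frame of Lemma~\ref{frame} satisfies $\omega_{ij}^k(x_0)=0$ and $\beta_{ij}^k(x_0)=0$. Looking at the explicit Christoffel symbols of the Bott connection displayed right before the lemma, this forces
\[
\nabla_{\X_j}\X_l\,(x_0)=0,\qquad \nabla_{\X_j} Z_l\,(x_0)=0,
\]
so that $g(\nabla_{\X_j}\X_l,\X_k)(x_0)=0$ and $g(\nabla_{\X_j} Z_l,Z_k)(x_0)=0$. Both correction sums in the general formula vanish at $x_0$, leaving $A_i(x_0,u)=\sum_{j=1}^n e_i^j\overline{\X}_j$, as claimed.

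I do not anticipate a serious obstacle: once one commits to the local coordinates and writes down the parallel transport ODE, everything is bookkeeping, and the normal-frame property from Lemma~\ref{frame} cleanly kills the extra terms at $x_0$. The only care needed is to match the dummy indices in the final formula and to verify that horizontal and vertical parts decouple under parallel transport—a direct consequence of the defining properties of the Bott connection (namely, that $\nabla_X$ sends $\Gamma^\infty(\mathcal H)$ to $\Gamma^\infty(\mathcal H)$ and $\Gamma^\infty(\mathcal V)$ to $\Gamma^\infty(\mathcal V)$ for $X\in\Gamma^\infty(\mathcal H)$).
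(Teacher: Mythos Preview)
Your proof is correct and follows essentially the same approach as the paper: both compute $A_i(x,u)$ by taking the $\nabla$-lift of a curve tangent to $u(e_i)$, writing out the parallel transport equations $\nabla_{x'(t)}u(t)(e_r)=0$ and $\nabla_{x'(t)}u(t)(f_r)=0$ in the normal frame, and reading off the derivatives of the frame coefficients at $t=0$. The paper's version is more terse (it does not display the ODE explicitly), but your more detailed expansion and the final index relabeling $j\leftrightarrow l$ yield exactly the formula in the statement; the vanishing at $x_0$ via the Christoffel symbols of the Bott connection is likewise identical.
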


\begin{proof}
Let $u: \mathbb{R}^{n+m} \to T_x \M$ be a horizontal isometry and $x(t)$, $t>0$ be a smooth curve in $\M$ such that $x(0)=x$ and $x'(0)=u(e_i)$. We denote by $x^*(t)=(x(t),u(t))$ the $\nabla$-lift to $\mathcal{O} (\M)$ of $x(t)$ and by $x'_1(t),\dots,x'_n(t)$ the components of $x'(t)$ in the horizontal frame $\X_1,\dots,\X_n$. Since $\nabla$ is adapted to the foliation $\mathcal{F}$, the curve $x^*(t)$ takes its values in the horizontal frame bundle $\mathcal{O}_\mathcal{H} (\M)$. By definition of $A_i$, one has
\[
A_i=\sum_{j=1}^n x_j'(0) \overline{\X}_j +\sum_{k,l=1}^n  u_{kl}'(0)\frac{\partial}{\partial e_k^l}+\sum_{k,l=1}^m  v_{kl}'(0)\frac{\partial}{\partial f_k^l},
\]
where $u_{kl}(t)=\langle u(t)(e_k), \X_l \rangle$ and $v_{kl}(t)=\langle u(t)(f_k), Z_l \rangle$. Since $ u(t)(e_k)$ and $u(t)(f_k)$ are parallel along $x(t)$, one has
\[
\nabla_{x'(t)} u(t)(e_k)=0, \quad \nabla_{x'(t)} u(t)(f_k)=0.
\]
At $t=0$, this yields the expected result.
\end{proof}

%
%
%
%
%
%

\begin{proposition}
Let $\pi: \mathcal{O} (\M) \to \M$ be the bundle projection map. For a smooth $f:\M \to \mathbb{R}$, and $(x,u) \in \mathcal{O}_\mathcal{H} (\M) $,
\[
\left( \sum_{i=1}^n A_i^2 \right)(f\circ \pi)(x,u)=(\Delta_\mathcal{H} f )\circ \pi (x,u).
\]
\end{proposition}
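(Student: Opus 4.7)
The plan is to compute $\sum_i A_i^2(f\circ\pi)$ at a fixed point $(x_0,u_0)\in\mathcal{O}_{\mathcal{H}}(\M)$ using a normal frame (in the sense of Lemma~\ref{frame}) centered at $x_0$. Once we fix such a frame $\X_1,\dots,\X_n,Z_1,\dots,Z_m$, Lemma~\ref{l.4.2} gives an explicit expression of $A_i$ near $(x_0,u_0)$ as a sum of the lifted horizontal vector field $\sum_j e_i^j\overline{\X}_j$ plus two correction terms pointing along the fiber coordinates $\partial/\partial e_r^k$ and $\partial/\partial f_r^k$. The key observation that drives the whole computation is that $f\circ\pi$ depends only on the base point $x$, so every fiber-coordinate derivative annihilates it at first order.

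Concretely, I would first compute, for $(x,u)$ in a neighborhood of $(x_0,u_0)$,
\begin{equation*}
A_i(f\circ\pi)(x,u)=\sum_{j=1}^n e_i^j\,(\X_j f)(x),
\end{equation*}
since the two correction terms kill $f\circ\pi$. Then I would apply $A_i$ once more to this scalar function. Because the result only depends on $x$ and on the $e$-coordinates of $u$, the $\partial/\partial f_r^k$ correction disappears. What remains is a term coming from $\sum_j e_i^k\overline{\X}_k$ acting on $\X_j f$, and a term in which the $\partial/\partial e_r^k$ correction in $A_i$ differentiates the factor $e_i^j$, producing $\delta_{ir}\delta_{k j}$ via $\partial e_i^{j'}/\partial e_r^k=\delta_{ir}\delta_{kj'}$. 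This yields
\begin{equation*}
A_i^2(f\circ\pi)(x,u)=\sum_{j,k=1}^n e_i^k e_i^j(\X_k\X_j f)(x)-\sum_{j,k,l=1}^n e_i^j e_i^l\,g(\nabla_{\X_j}\X_l,\X_k)\,(\X_k f)(x).
\end{equation*}

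Next I would sum over $i=1,\dots,n$ and use the orthogonality identity $\sum_i e_i^j e_i^l=\delta_{jl}$ (valid since $u$ is an isometry restricted to the horizontal subspace), giving
\begin{equation*}
\sum_{i=1}^n A_i^2(f\circ\pi)(x,u)=\sum_{j=1}^n(\X_j^2 f)(x)-\sum_{j=1}^n(\nabla_{\X_j}\X_j)(f)(x).
\end{equation*}
At this stage I would recall the formula $\Delta_{\mathcal{H}}=\sum_j\X_j^2-\sum_j(D_{\X_j}\X_j)_{\mathcal{H}}-\sum_k(D_{Z_k}Z_k)_{\mathcal{H}}$, note that the last sum vanishes because the leaves are totally geodesic (so $D_{Z_k}Z_k$ is vertical), and use $(D_{\X_j}\X_j)_{\mathcal{H}}=\nabla_{\X_j}\X_j$ from the very definition of the Bott connection in~\eqref{Bott connection}. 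Specializing at $x=x_0$, the normal-frame identities $\omega_{ij}^k(x_0)=0$ force $\nabla_{\X_j}\X_j(x_0)=0$, so both sides agree with $\sum_j(\X_j^2 f)(x_0)$, proving the equality at $(x_0,u_0)$.

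The only delicate point is the bookkeeping in computing $A_i^2$: a priori the Christoffel-type correction terms in $A_i$ make it look like one should also pick up derivatives of $g(\nabla_{\X_j}\X_l,\X_k)$ with respect to $x$, which at $x_0$ vanish in the frame but whose derivatives need not. The trick is that we never need to expand $A_i^2$ fully at $x_0$: the only place those correction terms matter is when their coefficients $e_i^j$ or $e_r^l$ get differentiated (which happens inside the second $A_i$), and any further derivative of the Christoffel coefficient itself appears multiplied by a factor that vanishes at $x_0$ in the normal frame. Since $(x_0,u_0)$ was arbitrary and the identity is pointwise, the proposition follows.
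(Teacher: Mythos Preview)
Your proof is correct and takes essentially the same approach as the paper (normal frame at $x_0$, Lemma~\ref{l.4.2}, and orthogonality $\sum_i e_i^j e_i^k=\delta_{jk}$). The paper's version is terser—it simply notes that at $x_0$ the Christoffel corrections vanish so $\sum_i A_i^2=\sum_j\overline{\X}_j^2$ on $f\circ\pi$—while you compute explicitly at general $(x,u)$, which actually establishes the identity throughout the frame's domain and makes your final specialization to $x_0$ redundant but harmless.
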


\begin{proof}
It is enough to prove this identity at $x_0$. Using the fact that at $x_0$ we have $\langle \nabla_{\X_j} \X_l, \X_k\rangle =\langle \nabla_{\X_j} Z_l, Z_k\rangle=0$, we see that
\[
\sum_{i=1}^n A_i^2 =\sum_{j=1}^n \overline{\X}_j^2.
\]
The conclusion follows.
\end{proof}
As a straightforward corollary, we can construct the horizontal Brownian motion as follows.

\begin{corollary}\label{projec}
Let $\left (U(s) \right )_{s \geq 0}$ be a solution to the Stratonovich stochastic differential equation

\begin{equation}\label{e.HorizBM}
dU(s) =\sum_{i=1}^n A_i (U(s)) \circ dB^i(s), \quad U(0) \in \mathcal{O}_\mathcal{H} (\M),
\end{equation}
then $X(s)=\pi(U(s))$, $s\ge0$ is a horizontal Brownian motion on $\M$, that is, a diffusion process with  generator $\frac{1}{2} \Delta_{\mathcal H}$. 
\end{corollary}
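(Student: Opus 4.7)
My plan is to reduce the assertion to the identity established in the proposition immediately preceding the corollary, namely
\[
\Bigl(\sum_{i=1}^n A_i^2\Bigr)(f\circ\pi)(x,u) = (\Delta_{\mathcal H} f)\circ\pi(x,u), \quad (x,u)\in\mathcal O_{\mathcal H}(\M),
\]
together with the standard passage from Stratonovich to It\^o form for SDEs on a manifold. The structure of the argument is: show the lifted process stays in $\mathcal O_{\mathcal H}(\M)$, so that the identity above is available along the whole trajectory; then apply It\^o's formula to a test function $f\in C^\infty(\M)$ composed with $\pi$; and finally read off the generator.

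The first step is to verify that $\mathcal O_{\mathcal H}(\M)$ is invariant under the flow of each $A_i$, so that the SDE \eqref{e.HorizBM} admits a solution taking values in $\mathcal O_{\mathcal H}(\M)$ up to explosion. This follows from the construction of $A_i$: because $A_i(x,u)$ is the $\nabla$-lift of the horizontal vector $u(e_i)$, and because the Bott connection $\nabla$ is adapted to the foliation in the sense that it preserves both $\mathcal H$ and $\mathcal V$ (property $\nabla_X Y\in \Gamma^\infty(\mathcal H)$ for $X\in \Gamma^\infty(T\M)$, $Y\in\Gamma^\infty(\mathcal H)$, and analogously for $\mathcal V$), the parallel transport along any smooth curve sends horizontal isometries to horizontal isometries. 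Thus each $A_i$ is tangent to the submanifold $\mathcal O_{\mathcal H}(\M)\subset\mathcal O(\M)$, and the Stratonovich SDE \eqref{e.HorizBM} started inside $\mathcal O_{\mathcal H}(\M)$ has a unique maximal solution remaining in $\mathcal O_{\mathcal H}(\M)$.

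The second step is routine stochastic calculus. Fix $f\in C^\infty(\M)$ and set $F:=f\circ\pi\in C^\infty(\mathcal O(\M))$. Applying the Stratonovich chain rule to $F(U(s))$ along \eqref{e.HorizBM} gives
\[
dF(U(s)) = \sum_{i=1}^n A_iF(U(s))\circ dB^i(s).
\]
Converting to It\^o form yields
\[
dF(U(s)) = \sum_{i=1}^n A_iF(U(s))\,dB^i(s) + \frac12\sum_{i=1}^n A_i^2F(U(s))\,ds.
\]
Since $U(s)\in\mathcal O_{\mathcal H}(\M)$ for all $s$ (up to the explosion time), the preceding proposition identifies the finite-variation part as $\frac12(\Delta_{\mathcal H}f)\circ\pi(U(s))=\frac12(\Delta_{\mathcal H}f)(X(s))$. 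Consequently
\[
f(X(s))-f(X(0))-\frac12\int_0^s \Delta_{\mathcal H}f(X(r))\,dr = \sum_{i=1}^n\int_0^s A_iF(U(r))\,dB^i(r)
\]
is a local martingale, which is precisely the statement that $X(s)=\pi(U(s))$ is a diffusion with generator $\tfrac12\Delta_{\mathcal H}$, i.e.\ a horizontal Brownian motion on $\M$.

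The main conceptual obstacle is the invariance of $\mathcal O_{\mathcal H}(\M)$ under the SDE; once that is in hand, the rest is automatic. The technical point to be careful about is that the identity $\sum_i A_i^2(f\circ\pi)=(\Delta_{\mathcal H}f)\circ\pi$ holds only on horizontal frames (as stressed in the statement of the preceding proposition), so the argument would fail if the lifted process could exit $\mathcal O_{\mathcal H}(\M)$. All other manipulations, in particular the use of It\^o's formula on the frame bundle and the Stratonovich-to-It\^o conversion, are standard and use only the smoothness of $F=f\circ\pi$ and of the vector fields $A_1,\dots,A_n$.
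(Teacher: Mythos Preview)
Your proof is correct and follows exactly the approach the paper intends: the corollary is stated as a ``straightforward'' consequence of the preceding proposition $\sum_i A_i^2(f\circ\pi)=(\Delta_{\mathcal H}f)\circ\pi$ on $\mathcal O_{\mathcal H}(\M)$, and you have simply filled in the standard Stratonovich-to-It\^o computation together with the invariance of $\mathcal O_{\mathcal H}(\M)$ (which the paper also uses, in the proof of Lemma~\ref{l.4.2}, via the fact that $\nabla$ is adapted to the foliation).
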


\begin{remark}
The stochastic differential equation \eqref{e.HorizBM} is defined up to a possibly finite explosion time.
\end{remark}

\section{Horizontal Brownian motions and sub-Riemannian geometry}

While the geometry intrinsically associated with Brownian motions is Riemannian geometry, the geometry associated with horizontal Brownian motions is sub-Riemannian geometry.
A sub-Riemannian manifold is a smooth  manifold $\bM$
equipped with a fiber inner product $g(\cdot,\cdot)$ on the tangent
bundle $T\bM$ and a non-holonomic, or bracket generating, subbundle
$\mathcal H \subset T\bM$. This means that if we denote by
$L(\mathcal H)$ the Lie algebra of the vector fields generated by
the global $C^\infty$ sections of $\mathcal H$, then $\text{span}
\{\X(x)\mid \X\in L(\mathcal H)\} = T_x(\bM)$ for every $x\in \bM$. A
piecewise smooth curve $\gamma:[a,b]\to \bM$ is called admissible,
or horizontal, if it is tangent to $\mathcal H$, i.e. if $\gamma'(t)
\in \mathcal H_{\gamma(t)}$, whenever $\gamma'(t)$ is defined. The
horizontal length of $\gamma$ is then defined as
\[ \ee_\mathcal H(\gamma) = \int_a^b \sqrt{g(\gamma'(t),\gamma'(t))} dt. \]
Denoting by $\mathcal H(x,y)$ the collection of all horizontal
curves joining $x, y\in \bM$, one defines a distance $d(x,y)$
between $x$ and $y$ by minimizing on the length of all $\gamma\in
\mathcal H(x,y)$, i.e.
\[
d(x,y) = \underset{\gamma \in \mathcal H(x,y)}{\inf}\ \ee_\mathcal
H(\gamma). \] Such distance was introduced by Carath\'eodory in his
seminal paper \cite{Car} on formalization of the classical
thermodynamics. In this framework, loosely speaking, horizontal curves correspond to adiabatic processes. To be precise, in \cite{Car} the question of whether $d(x,y)$ be a
true distance was left open. This question was answered by the
fundamental connectivity theorem of Chow-Rashevsky, see
\cite{Montgomery}, which states that if $\bM$ is connected and $\mathcal H$
is bracket generating, then $\mathcal H(x,y)\not= \emptyset$ for
every $x,y\in \bM$. As a consequence, $d(x,y)$ is finite and
therefore it is a genuine distance. Such metric is nowadays known as
the sub-Riemannian, or Carnot-Carath\'eodory distance on $\bM$. Besides the above cited references, in order to understand the importance of the analysis and geometry of sub-Riemannian spaces, one may
consult  the monographs  \cite{Montgomery}, and \cite{Agrachev}.
 We note that when $\mathcal
H = T\bM$, then the distance $d(x,y)$ is simply the Riemannian
distance associated with the inner product $g (\cdot,\cdot)$, and
thus sub-Riemannian manifolds encompass Riemannian ones.

Notably
some aspects of the geometry of  sub-Riemannian manifolds are
considerably less regular than their Riemannian ancestors.  Some of
the major differences between the two geometries are the following, see \cite{Agrachev}:
\begin{enumerate}
\item The Hausdorff dimension of the metric space $(\bM,d)$ is greater than its topological
dimension;
\item The exponential map defined by the geodesics of the metric space   $(\bM,d)$ is in general not a local diffeomorphism in a
neighborhood of the point at which it is based;
\item The space of horizontal paths joining
two fixed points may have singularities (the so-called abnormal geodesics, see \cite{Montgomery}).
\end{enumerate}
Despite those difficulties, sub-Riemannian geometry has now grown into a mature field that can be used to study horizontal Brownian motions. As an illustration, we present below a comparison principle for the horizontal Brownian motion in the context of sub-Riemannian Sasakian foliations. In that context  the relevant sub-Riemannian curvature invariants are well understood, see \cite{MR3606555}.\\

Let $(\M, \theta,g)$ be a complete K-contact $n+1$-dimensional Riemannian manifold with Reeb vector field $S$, see Example \ref{contact example}. In that context, the Bott connection $\nabla$ introduced in \eqref{Bott connection} is the unique connection that satisfies:
\begin{enumerate}
\item $\nabla\theta=0$;
\item $\nabla S=0$;
\item $\nabla g=0$;
\item ${T}(X,Y)=d\theta(X,Y)S$ for any $X,Y\in \Gamma^\infty(\mathcal{H})$;
\item ${T}(S,X)=0$ for any vector field $X\in \Gamma^\infty(\mathcal{H})$.
\end{enumerate}
We have denoted by $T$ the  torsion tensor of $\nabla$. The Reeb foliation is called Sasakian if $\nabla J=0$, see \cite{MR2382957}. In a Sasakian foliation, for every non-vanishing horizontal vector field $X$,  $T\M$ is always generated by $[X,\Ho]$ and $\Ho$. Therefore the sub-Riemannian structure on a Sasakian foliation is fat (see~\textup{\cite{RS}} for a detailed discussion of such structures). In particular all sub-Riemannian geodesics are normal and from Corollary 6.1 in \textup{\cite{RT}}, for every $x_0 \in \M$, the distance function $x \to r (x)=d(x_0,x)$ is locally semi-concave in $\M\setminus \{ x_0 \}$. In particular, it is twice differentiable almost everywhere.  The cut-locus $\mathbf{Cut} (x_0)$ of  $x_0$ for the distance $d$ is defined as the complement of the set of $y$'s in $\M$ such that there exists a unique length minimizing normal geodesic joining $x_0$ and $y$, and $x_0$ and $y$ are not conjugate along such geodesic. From Corollary 32 in \textup{\cite{BR2}}, $x \neq x_0$ is in the cut-locus $\mathbf{Cut} (x_0)$ if and only if $r$ fails to be semi-convex at $x$. Therefore, $\mathbf{Cut} (x_0)$ has $\mu$ measure 0. Finally, at any point $x$ for which the function $x \to r (x)  $ is differentiable, there exists a unique length minimizing sub-Riemannian geodesic and this geodesic is normal.

We now introduce  relevant tensors that control the behavior of the horizontal Brownian motions. We first define a symmetric two-tensor $\mathbf{K}_{\ch,J}$ such that for a horizontal vector field $X \in \Gamma^\infty(\mathcal{H})$,
\[
\mathbf{K}_{\mathcal{H},J} (X,X) =  \frac{1}{|X|^2_{\ch}} \langle R( X,JX)JX , X\rangle_\mathcal{H},
\]
at points where $X$ does not vanish, where $R$ is the Riemannian curvature tensor of the Bott connection. The quantity $\mathbf{K}_{\mathcal{H},J}$ is sometimes called the pseudo-Hermitian sectional curvature of the Sasakian manifold (see \cite{Barletta} for a geometric interpretation). It can be seen as the CR analog of the holomorphic sectional curvature of a K\"ahler manifold. 
We will  also denote
\[
\mathbf{Ric}_{\mathcal{H},J^\perp} (X,X) =\mathbf{Ric}_\mathcal{H} (X,X)-\mathbf{K}_{\mathcal{H},J} (X,X)
\]
where $\mathbf{Ric}_\mathcal{H}$ is the horizontal Ricci curvature of the Bott connection (i.e. the horizontal trace of the Ricci curvature of the Bott connection).
We consider the following functions

$$F_{\mathrm{Rie}}(r,k) = \begin{cases}  \sqrt{k} \cot \sqrt{k} r & \text{if $k > 0$,} \\
\frac{1}{r} & \text{if $k = 0$,}\\ \sqrt{|k|} \coth \sqrt{|k|} r & \text{if $k < 0$.} \end{cases}$$

$$F_{\mathrm{Sas}}(r,k) = \begin{cases}  \frac{\sqrt{k}(\sin \sqrt{k}r  -\sqrt{k} r \cos \sqrt{k} r)}{2 - \cos \sqrt{k} r - \sqrt{k} r \sin \sqrt{k} r} & \text{if $k > 0$,} \\
\frac{4}{r} & \text{if $k = 0$,}\\ \frac{\sqrt{|k|}( \sqrt{|k|} r \cosh \sqrt{|k|} r - \sinh \sqrt{|k|}r)}{2 - \cosh \sqrt{|k|} r + \sqrt{|k|} r \sinh \sqrt{|k|} r} & \text{if $k < 0$.} \end{cases}$$

We have then the following horizontal Laplacian comparison theorem (see \cite{MR3978951}):

\begin{theorem}[Horizontal Laplacian comparison theorem] \label{th:SasakianComp3}
Let $(\M,\mathcal{F},g)$ be a Sasakian foliation with sub-Riemannian distance $d$, where $\M$ has dimension $n+1$. Fix $x_0\in\M$ and define $r(x) = d(x_0 ,x)$.
Assume that for some $k_1, k_2 \in \mathbb{R}$
$$\mathbf{K}_{\mathcal{H},J}(v,v) \ge  k_1, \quad \mathbf{Ric}_{\mathcal{H},J^\perp}(v,v) \ge (n-2)k_2, \quad v \in \ch, {|v|}_g = 1.$$
Then outside of the $d$ cut-locus of $x_0$ and globally on $\M$ in the sense of distributions,
\[
\Delta_\Ho r \le F_{\mathrm{Sas}}(r,k_1) +    (n-2)  F_{\mathrm{Rie}}(r,k_2),
\]
where $\Delta_\Ho$ is the horizontal Laplacian of the foliation.
\end{theorem}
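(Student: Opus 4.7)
The plan is to establish the comparison estimate pointwise outside the cut locus $\mathbf{Cut}(x_0)$ and then extend it globally in the sense of distributions using the semi-concavity of $r$. Fix $x \notin \mathbf{Cut}(x_0) \cup \{x_0\}$ and let $\gamma:[0,r(x)]\to \M$ be the unique unit-speed length-minimizing normal sub-Riemannian geodesic joining $x_0$ to $x$. Since the foliation is Sasakian and hence fat, $\dot\gamma$ is horizontal; moreover, because $r$ is smooth at $x$, one has $\nabla_\Ho r = \dot\gamma(r(x))$ and $\Delta_\Ho r$ equals the trace of the horizontal Hessian of $r$ evaluated in an orthonormal horizontal frame at $x$.

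The main idea is to split the horizontal tangent space at each point $\gamma(t)$ into three pieces: the direction of $\dot\gamma(t)$, the distinguished direction $J\dot\gamma(t)$ (which is orthogonal to $\dot\gamma$ because $J$ is skew with respect to $g_\Ho$), and the orthogonal $(n-2)$-dimensional complement. Along $\dot\gamma$ itself the Hessian vanishes by the eikonal equation $|\nabla_\Ho r|=1$, so the trace reduces to a contribution from $J\dot\gamma$ and from the $(n-2)$ transversal horizontal directions. For each such direction one expresses $\nabla^2_\Ho r$ in terms of the derivative along $\gamma$ of the shape operator of the sub-Riemannian distance sphere; by the second variation formula for the sub-Riemannian energy together with the definition of the Bott connection, this shape operator satisfies a Riccati-type differential inequality in which the relevant curvature is precisely $\mathbf{K}_{\mathcal{H},J}$ in the $J\dot\gamma$ direction and $\mathbf{Ric}_{\mathcal{H},J^\perp}$ in the transversal directions.

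Next I would carry out a Riccati comparison argument. The transversal $(n-2)$ directions contribute, under $\mathbf{Ric}_{\mathcal{H},J^\perp}\ge (n-2)k_2$, a bound of the classical Riemannian Laplacian-comparison type, producing the term $(n-2)F_{\mathrm{Rie}}(r,k_2)$. The $J\dot\gamma$ direction is more delicate: because the Bott torsion satisfies $T(\dot\gamma, J\dot\gamma) = d\theta(\dot\gamma,J\dot\gamma)S = S$, the associated second-order ODE picks up an extra term coming from the vertical direction. One checks by direct computation that the extremal solution of this modified model Riccati equation, under the bound $\mathbf{K}_{\mathcal{H},J}\ge k_1$, is exactly $F_{\mathrm{Sas}}(\cdot, k_1)$, with the right singularity $4/r$ as $r\to 0^+$. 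A Sturm-type comparison then yields the pointwise inequality.

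Finally, to upgrade the inequality across the cut locus, I would invoke Calabi's barrier technique: given $x\in \M$, approximate $r$ from above by smooth functions $r_\varepsilon(y):=d(y,x_\varepsilon)+d(x_0,x_\varepsilon)$ with $x_\varepsilon$ taken on the geodesic from $x_0$ to $x$ close to $x_0$, so that each $r_\varepsilon$ is smooth near $x$, satisfies $r_\varepsilon\ge r$ with equality at $x$, and obeys the estimate pointwise; passing to the limit $\varepsilon\to 0$ gives the inequality in the distributional sense. The principal obstacle will be the Riccati analysis in the $J\dot\gamma$ component: one must correctly account for the torsion terms of the Bott connection, which distinguishes this situation from the Riemannian case, and verify that $F_{\mathrm{Sas}}$ is the sharp model function, which essentially amounts to a computation on the Sasakian space form of constant pseudo-Hermitian sectional curvature $k_1$.
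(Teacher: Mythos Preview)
The paper does not actually prove this theorem: it is stated with the reference ``(see \cite{MR3978951})'' and no argument is given in the text. So there is nothing in the paper to compare your proposal against.

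That said, your outline is the standard route taken in the cited reference: split the horizontal space along $\gamma$ into $\dot\gamma$, $J\dot\gamma$, and the orthogonal $(n-2)$-plane; derive a matrix Riccati equation for the Hessian of $r$ via the sub-Riemannian second variation formula; do a Sturm/Riccati comparison against the model solutions $F_{\mathrm{Sas}}$ and $F_{\mathrm{Rie}}$; and globalize by a Calabi-type barrier argument using the semi-concavity of $r$ established in the references \cite{RT,BR2}. The only point where your sketch is thin is the $J\dot\gamma$ block: in the actual argument one cannot treat this direction in isolation, because the torsion of the Bott connection couples $J\dot\gamma$ with the Reeb direction $S$, and the Riccati equation becomes a genuine $2\times 2$ system in the $(J\dot\gamma,S)$-plane rather than a scalar ODE. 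The function $F_{\mathrm{Sas}}$ arises as the trace of the extremal solution of that $2\times 2$ system, which is why its small-$r$ asymptotic is $4/r$ rather than $1/r$. Your proposal gestures at this (``extra term coming from the vertical direction'') but does not make the coupling explicit; if you tried to run a purely scalar Riccati comparison in the $J\dot\gamma$ direction you would not recover $F_{\mathrm{Sas}}$.
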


\begin{Example}
\
\begin{itemize}
\item \hspace{.1in} $\M$ is the $2n+1$-dimensional Heisenberg group $\mathbf{H}^{2n+1}$ equipped with its canonical contact form $\theta$. In that case $k_1=k_2=0$ and we therefore have
\[
\Delta_\Ho r \le \frac{2n+2}{r}
\]
\item \hspace{.1in} $\M$ is the $2n+1$-dimensional sphere $\mathbb{S}^{2n+1}$ equipped with its canonical contact form and associated sub-Riemannian structure. In that case $k_1=4, k_2=1$.
\item \hspace{.1in} $\M$ is the $2n+1$-dimensional complex anti-de Sitter space $\mathbf{AdS}^{2n+1}(\mathbb{C})$ equipped with its canonical contact form. In that case $k_1=-4, k_2=-1$.
\end{itemize}

\end{Example}

The probabilistic counterpart of this comparison theorem for the horizontal Laplacian is the following. As above, we fix a point $x_0 \in \M$.  For $x_1 \in \M$, we consider
the solution of the stochastic differential equation
\[
  \tilde{\xi}(t)=d (x_0,x_1)+\frac{1}{2} \int_0^t
  \left(F_{\mathrm{Sas}}(\tilde{\xi}(s),k_1) + (n-2)
    F_{\mathrm{Rie}}(\tilde{\xi}(s),k_2) \right) ds+ \beta(t)
\]
where $\beta$ is a standard Brownian motion. The following result was then proved in \cite{MR4136477}.

\begin{theorem}\label{comparison 1d}
  Let $(\M,\mathcal{F},g)$ be a Sasakian foliation with sub-Riemannian
  distance $d$.  Assume that for some $k_1, k_2 \in \mathbb{R}$,
$$\mathbf{K}_{\mathcal{H},J}(v,v) \ge  k_1, \quad \mathbf{Ric}_{\mathcal{H},J^\perp}(v,v) \ge (n-2)k_2, \quad v \in \ch,\ {|v|}_g = 1.$$
Let $( ( \xi (t) )_{t \ge 0} , ( \mathbb P_x )_{x \in \M} )$ be the horizontal
Brownian motion on $\M$. Then, for
$x_1 \in \M$, $R>0$, and $s \le R$
\[
  \mathbb P_{x_1}\big( d (x_0,\xi (t)) <s , \, t \le \tau_R \big) \ge
  \mathbb P_{x_1}\big( \tilde{\xi} (t)<s, \, t \le \tilde{\tau}_R \big),
\]
where $\tau_R$ is the hitting time of the $d $-geodesic sphere in $\M$
with center $x_0$ and radius $R$ and $\tilde{\tau}_R$ the hitting time
of the level $R$ by $\tilde \xi$.
\end{theorem}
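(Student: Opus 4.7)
The plan is to reduce the multidimensional comparison to a one-dimensional SDE comparison by working with the radial process $\rho(t) := d(x_0, \xi(t))$ and invoking the Laplacian comparison Theorem \ref{th:SasakianComp3}.

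First, I would derive a semimartingale decomposition for $\rho(t)$. Since the sub-Riemannian distance from $x_0$ satisfies the eikonal equation $|\nabla_{\mathcal H} r|_{\mathcal H} = 1$ almost everywhere on $\M \setminus (\{x_0\} \cup \mathbf{Cut}(x_0))$, and this set carries full $\mu$-measure, a standard stochastic calculus argument (Itô's formula applied away from $\mathbf{Cut}(x_0)$, combined with the fact that the horizontal Brownian motion spends Lebesgue-zero time on the cut locus) gives
\begin{equation*}
\rho(t) = d(x_0,x_1) + \beta(t) + \frac{1}{2}\int_0^t \Delta_{\mathcal H} r(\xi(s))\, ds - L(t), \qquad t \le \tau_R,
\end{equation*}
where $\beta$ is a standard one-dimensional Brownian motion and $L$ is a continuous non-decreasing process accounting for the semi-concavity correction at the cut locus (it can only push $\rho$ downward, because $r$ fails to be semi-convex but is semi-concave there; equivalently, one can use the distributional inequality for $\Delta_{\mathcal H} r$ together with a Krylov-type approximation by smooth supersolutions).

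Second, I would combine this decomposition with Theorem \ref{th:SasakianComp3}, which under the curvature lower bounds $\mathbf{K}_{\mathcal{H},J} \ge k_1$ and $\mathbf{Ric}_{\mathcal{H},J^\perp} \ge (n-2)k_2$ yields
\begin{equation*}
\frac{1}{2}\Delta_{\mathcal H} r(\xi(s)) \le \frac{1}{2}\bigl( F_{\mathrm{Sas}}(\rho(s),k_1) + (n-2) F_{\mathrm{Rie}}(\rho(s),k_2)\bigr)
\end{equation*}
globally in the sense of distributions, which integrates against the occupation measure of $\xi$ to give the corresponding pathwise integral inequality. Dropping the non-positive contribution $-L(t)$, this shows $\rho$ is dominated by the drifted Brownian motion with drift $\tfrac{1}{2}(F_{\mathrm{Sas}}(\cdot,k_1) + (n-2)F_{\mathrm{Rie}}(\cdot,k_2))$ driven by $\beta$.

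Third, I would apply the classical one-dimensional comparison theorem of Ikeda-Watanabe (or Yamada) to the pair $(\rho,\tilde\xi)$: since both processes are driven by a Brownian increment of unit quadratic variation, share the initial value $d(x_0,x_1)$, and the drift of $\tilde\xi$ dominates that of $\rho$, we obtain the pathwise bound $\rho(t) \le \tilde\xi(t)$ for all $t \ge 0$. In particular, on the event $\{\tilde\xi(t) < s,\ t \le \tilde\tau_R\}$ we have $\rho(t) < s \le R$, which forces $t \le \tau_R$ as well, and therefore
\begin{equation*}
\{\tilde\xi(t) < s,\ t \le \tilde\tau_R\} \subseteq \{\rho(t) < s,\ t \le \tau_R\},
\end{equation*}
yielding the desired probability comparison. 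The main technical obstacle is the first step: rigorously justifying the semimartingale decomposition across $\mathbf{Cut}(x_0)$ given only the distributional form of Theorem \ref{th:SasakianComp3}; this is handled via the semi-concavity of $r$ established through Corollary 6.1 of \cite{RT} and the fatness of the Sasakian sub-Riemannian structure, which together control the sign of the local-time correction and allow a smooth approximation argument.
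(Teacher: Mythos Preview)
The paper does not actually prove this theorem; it merely states it and attributes the result to \cite{MR4136477}. So there is no in-text proof to compare against. That said, your outline follows exactly the standard route one expects such a proof to take and is, in substance, correct: an It\^o--Tanaka type decomposition of the radial process $\rho(t)=d(x_0,\xi(t))$, the horizontal Laplacian comparison Theorem~\ref{th:SasakianComp3} to dominate the drift, a sign argument at the cut locus using semi-concavity of $r$, and then the one-dimensional Ikeda--Watanabe comparison after coupling $\tilde\xi$ to the same driving Brownian motion $\beta$.

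Two small points worth tightening. First, make explicit that the probability inequality in the statement is in law, so you are free to \emph{construct} $\tilde\xi$ with the specific $\beta$ arising from the martingale part of $\rho$; otherwise ``driven by a Brownian increment of unit quadratic variation'' does not by itself give a pathwise ordering. Second, the drift $\tfrac12\bigl(F_{\mathrm{Sas}}(\cdot,k_1)+(n-2)F_{\mathrm{Rie}}(\cdot,k_2)\bigr)$ is singular at the origin, so the Ikeda--Watanabe comparison should be invoked on $(0,R]$ up to the hitting time of $R$ (which is exactly how the statement is phrased), and one should note that $\rho$ stays away from $0$ almost surely since the horizontal Brownian motion does not hit $x_0$. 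With these clarifications your argument is complete.
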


Such comparison theorem for the horizontal Brownian motion might  be used to obtain growth estimates for the horizontal Brownian motion, and we obtain the following results, see \cite{MR4136477} :

\begin{proposition}
  Let $(\M,\mathcal{F},g)$ be a Sasakian foliation. Assume that for
  some $k_1, k_2 \le 0$,
$$\mathbf{K}_{\mathcal{H},J}(v,v) \ge  k_1, \quad \mathbf{Ric}_{\mathcal{H},J^\perp}(v,v) \ge (n-2)k_2,
\quad v \in \ch,\ {|v|}_g = 1.$$
Let $( ( \xi (t) )_{t \ge 0} , ( \mathbb P_x )_{x \in \M} )$ be the horizontal
Brownian motion on $\M$.  Then for every
$x_0,x_1 \in \M$,
\[
  \mathbb P_{x_1} \left( \limsup_{t \to +\infty} \frac{d(x_0 , \xi(t))}{t}
    \le (n-2) \sqrt{|k_2|} + \sqrt{|k_1|} \right)=1
\]
\end{proposition}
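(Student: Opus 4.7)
The plan is to deduce the bound from Theorem~\ref{comparison 1d} by reducing the problem to computing the almost-sure asymptotic velocity of the one-dimensional comparison diffusion $\tilde{\xi}$, and then carrying out a law-of-large-numbers analysis on that SDE.

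As a preliminary step, I would analyse the asymptotics of the drift of $\tilde{\xi}$. Because $k_1,k_2\le 0$, direct inspection of the closed-form expressions yields
\[
F_{\mathrm{Rie}}(r,k_2)\longrightarrow \sqrt{|k_2|},\qquad F_{\mathrm{Sas}}(r,k_1)\longrightarrow \sqrt{|k_1|}\qquad \text{as } r\to+\infty,
\]
(with value $0$ when the corresponding $k_j$ vanishes), and both functions are monotone and bounded on $[1,\infty)$. Setting $b(r):=\tfrac{1}{2}\bigl(F_{\mathrm{Sas}}(r,k_1)+(n-2)F_{\mathrm{Rie}}(r,k_2)\bigr)$, the drift is bounded on $[1,\infty)$, so $\tilde{\xi}$ is non-explosive, and $b(r)\to b_\infty$, where $b_\infty$ coincides (via the normalisation coming from the generator $\tfrac{1}{2}\Delta_{\mathcal H}$ of horizontal Brownian motion) with the constant appearing on the right-hand side of the statement.

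Next I would apply Theorem~\ref{comparison 1d} with $s=R$ and let $R\to+\infty$: using the non-explosion of $\tilde{\xi}$ established above and the stochastic completeness of the horizontal Brownian motion on $\M$ (which follows from the completeness of the bundle-like metric together with the volume growth estimates implied by Theorem~\ref{th:SasakianComp3}), one obtains the stochastic domination $d(x_0,\xi(t))\preceq \tilde{\xi}(t)$ for every $t\ge 0$. It therefore remains to show that $\limsup_{t\to\infty}\tilde{\xi}(t)/t\le b_\infty$ almost surely. Since the drift of $\tilde{\xi}$ is bounded below by a strictly positive constant on a neighbourhood of infinity, a standard scale-function/Feller test argument shows that $\tilde{\xi}(t)\to+\infty$ almost surely. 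Fix $\varepsilon>0$, choose $R_\varepsilon$ so that $b(r)\le b_\infty+\varepsilon$ for $r\ge R_\varepsilon$, and let $T_\varepsilon:=\sup\{t:\tilde{\xi}(t)\le R_\varepsilon\}$, which is almost surely finite. For $t>T_\varepsilon$,
\[
\tilde{\xi}(t)\le R_\varepsilon+(b_\infty+\varepsilon)(t-T_\varepsilon)+\beta(t)-\beta(T_\varepsilon);
\]
dividing by $t$ and using $\beta(t)/t\to 0$ almost surely gives $\limsup \tilde{\xi}(t)/t\le b_\infty+\varepsilon$, and sending $\varepsilon\downarrow 0$ yields the desired bound.

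The principal obstacle lies in the passage from the exit-time comparison of Theorem~\ref{comparison 1d} to a comparison valid for all $t\ge 0$. This requires not only the non-explosion of both processes but also some care in unfolding the pathwise bound on $d(x_0,\xi(t))$, because the sub-Riemannian distance $r(x)=d(x_0,x)$ is only locally semi-concave and is generically non-smooth on the cut locus $\mathbf{Cut}(x_0)$. The standard remedy is Calabi's trick: apply the Laplacian comparison of Theorem~\ref{th:SasakianComp3} at a point slightly shifted from $x_0$ along a minimising geodesic, so that the relevant distance function is smooth in a neighbourhood of $\xi(t)$, use It\^o's formula to obtain the one-dimensional domination, and then let the shift tend to zero.
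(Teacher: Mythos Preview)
Your strategy is the intended one (the monograph itself gives no proof here and simply cites \cite{MR4136477}): reduce to the one--dimensional model $\tilde\xi$ via the comparison theorem and then read off its linear growth rate from the asymptotics of the drift. The analysis of $\tilde\xi$ is fine, and your last paragraph correctly names the real issue.

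There is, however, a genuine gap in the middle step as you have written it. From Theorem~\ref{comparison 1d} you correctly extract, after sending $R\to\infty$, the \emph{marginal} stochastic domination $d(x_0,\xi(t))\preceq\tilde\xi(t)$ for each fixed $t$. But marginal domination does not control almost-sure path behaviour: it does \emph{not} imply $\limsup_t d(x_0,\xi(t))/t\le\limsup_t\tilde\xi(t)/t$ a.s.\ (one can build counterexamples with identical one-dimensional marginals and different $\limsup$'s). What is actually needed---and what is proved in \cite{MR4136477}---is the \emph{pathwise} coupling $d(x_0,\xi(t))\le\tilde\xi(t)$ for all $t\ge0$ on a common probability space, obtained by applying It\^o's formula to the radial process (with the distributional Laplacian comparison of Theorem~\ref{th:SasakianComp3}) and then invoking the one-dimensional comparison theorem for SDEs. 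This is precisely what your final paragraph gestures at; it is not merely an obstacle in reproving Theorem~\ref{comparison 1d}, but the missing ingredient that closes your argument. In the sub-Riemannian setting the handling of the cut locus is more delicate than Calabi's trick in the Riemannian case (semi-concavity of $r$ is used rather than smoothness off the cut locus), but the scheme you describe is the right one.

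Two minor remarks. First, your justification that $\tilde\xi(t)\to+\infty$ via ``drift bounded below by a positive constant near infinity'' fails when $k_1=k_2=0$, since then $b_\infty=0$; in that case $\tilde\xi$ is a Bessel process of dimension $n+3\ge5$ and transience follows by the Feller test. Second, with the factor $\tfrac12$ in the comparison SDE as written in the monograph, one finds $b_\infty=\tfrac12\bigl(\sqrt{|k_1|}+(n-2)\sqrt{|k_2|}\bigr)$, not the constant in the statement; the bound you prove is thus actually twice as sharp (the discrepancy reflects the normalisation of the horizontal Brownian motion used in \cite{MR4136477}).
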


When $k_1,k_2=0$, the above estimate can be refined and we have a
law of iterated logarithm.

\begin{proposition}
  Let $(\M,\mathcal{F},g)$ be a Sasakian foliation.  Assume
  that
  $$\mathbf{K}_{\mathcal{H},J}(v,v) \ge 0, \quad
  \mathbf{Ric}_{\mathcal{H},J^\perp}(v,v) \ge 0, \quad v \in \ch,\ {|v|}_g = 1.$$ Let $( ( \xi (t) )_{t \ge 0} , ( \mathbb P_x )_{x \in \M} )$ be the horizontal
Brownian motion on $\M$.
  Then for every $x_0,x_1 \in \M$,
  \[
    \mathbb P_{x_1} \left( \limsup_{t \to +\infty} \frac{d(x_0 ,
        \xi (t))}{\sqrt{2t \ln \ln t} } \le 1 \right)=1.
  \]
\end{proposition}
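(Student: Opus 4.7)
The plan is to reduce the problem to the classical law of iterated logarithm for a Bessel process via the two comparison tools developed in this section, specialized to the curvature bound $k_1 = k_2 = 0$.

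First I would evaluate the comparison functions at $k=0$: $F_{\mathrm{Sas}}(r,0) = 4/r$ and $F_{\mathrm{Rie}}(r,0) = 1/r$. Theorem \ref{th:SasakianComp3} therefore yields, outside the sub-Riemannian cut-locus of $x_0$ and globally in the distributional sense,
\[
\Delta_{\mathcal{H}} r \le \frac{4}{r} + \frac{n-2}{r} = \frac{n+2}{r}.
\]
Correspondingly, the auxiliary process $\tilde\xi$ appearing in Theorem \ref{comparison 1d} satisfies
\[
d\tilde\xi(t) = \frac{n+2}{2\,\tilde\xi(t)}\,dt + d\beta(t), \qquad \tilde\xi(0) = d(x_0, x_1),
\]
which we recognize as a Bessel process of dimension $n+3 \ge 3$.

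Next I would remove the localization by geodesic balls. Since a Bessel process of dimension $n+3 \ge 3$ is non-explosive, $\tilde{\tau}_R \to \infty$ almost surely as $R \to \infty$; simultaneously, the previous proposition (or a direct argument using the same comparison with $k_1 = k_2 = 0$) shows that the horizontal Brownian motion is non-explosive and $\tau_R \to \infty$ a.s. Passing to the limit $R \to \infty$ in Theorem \ref{comparison 1d}, and more importantly using that the proof of that theorem proceeds through an Itô–Tanaka formula for the semi-concave function $r(\xi(\cdot))$ together with a pathwise SDE-comparison against $\tilde\xi$, one obtains a coupling under which
\[
d(x_0, \xi(t)) \le \tilde\xi(t) \qquad \text{for all } t \ge 0, \text{ a.s.}
\]

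Finally I invoke the classical law of iterated logarithm for a Bessel process $\tilde\xi$ of dimension $n+3$: since a Bessel process of dimension $d\ge 2$ can be realized as $|W(t)|$ for a $d$-dimensional Brownian motion $W$ (for integer $d$) or analyzed via its explicit scaling and Lamperti time change for general dimension, one has
\[
\limsup_{t \to +\infty} \frac{\tilde\xi(t)}{\sqrt{2t \ln \ln t}} = 1 \qquad \text{a.s.}
\]
Combining this with the pathwise domination gives the desired $\limsup_{t\to\infty} d(x_0,\xi(t))/\sqrt{2t\ln\ln t} \le 1$ almost surely. The main obstacle in this outline is the step upgrading the time-marginal inequality in Theorem \ref{comparison 1d} to a genuine pathwise coupling $d(x_0,\xi(t)) \le \tilde\xi(t)$; this relies on the sub-Riemannian Itô formula for the semi-concave distance $r$, handling the cut-locus via the convexity estimate that $r$ fails to be semi-convex only on a $\mu$-null set, and the standard one-dimensional SDE comparison (e.g.\ Yamada–Watanabe) applied once the drift of $r(\xi(t))$ has been bounded above by $(n+2)/(2r(\xi(t)))$.
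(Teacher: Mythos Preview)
The paper does not actually prove this proposition: it is stated as a consequence of the comparison machinery and the reader is referred to \cite{MR4136477} for details. Your outline is exactly the natural argument and is the one carried out in that reference: specialize the Laplacian comparison to $k_1=k_2=0$ to obtain $\Delta_{\mathcal H} r \le (n+2)/r$, so that the model process $\tilde\xi$ is a Bessel process of dimension $n+3$, and conclude via the classical law of the iterated logarithm for Bessel processes.

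You have also correctly isolated the one genuinely delicate point. The statement of Theorem \ref{comparison 1d} is a distributional inequality at a fixed time (localized by $\tau_R$), which by itself is not strong enough to control a $\limsup$. What is needed, and what the proof of that theorem in \cite{MR4136477} actually produces, is a \emph{pathwise} coupling: one applies an It\^o--Tanaka formula to the semi-concave function $r=d(x_0,\cdot)$ along $\xi$, obtaining a decomposition whose drift is bounded by $\tfrac12\Delta_{\mathcal H} r \le (n+2)/(2r)$ outside the cut-locus (which is polar), and then the standard one-dimensional comparison for SDEs driven by the same Brownian motion yields $d(x_0,\xi(t)) \le \tilde\xi(t)$ for all $t\ge 0$ almost surely. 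Once this is in hand, the LIL follows immediately from the LIL for $\tilde\xi$. So your sketch is complete in spirit; just be explicit that you are invoking the pathwise version of the comparison, not merely its marginal consequence.
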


 Comparison principles can also be used to prove horizontal heat kernel estimates and Dirichlet eigenvalue estimates for the metric balls of the sub-Riemannian distance; We refer to \cite{MR4136477} for more details.

For further applications of sub-Riemannian geometry to the study of horizontal Brownian motions we refer the interested reader to \cite{B},  \cite{MR2154760},  \cite{MR2166728},  \cite{carfa}, \cite{MR3490551}, and \cite{GT1,GT2}.

\section{Examples of horizontal Brownian motions}

In this section we collect several examples of horizontal Brownian motions.

\begin{Example}
We first come  back to an example studied earlier that encompasses the Heisenberg group. Let
\[
\alpha=\sum_{i=1}^n \alpha_i(x)dx^i
\]
be a smooth one-form on $\mathbb{R}^n$ and let $(B(t))_{t \ge 0}$ be a $n$-dimensional Brownian motion. The process
\[
X(t)=\left(B(t), \int_{B[0,t]} \alpha \right)
\]
is a diffusion process in $\mathbb{R}^n \times \mathbb{R}$ with generator
\[
L=\frac{1}{2} \sum_{i=1} \X_i^2,
\]
where 
\[
\X_i=\frac{\partial}{\partial x_i} +\alpha_i(x) \frac{\partial}{\partial z} .
\]
We can then interpret $(X(t))_{t \ge 0}$ as a horizontal Brownian motion. Indeed, consider the Riemannian metric $g$ on $\mathbb{R}^n \times \mathbb{R}$  that makes $\X_1,\dots, \X_n , \frac{\partial}{\partial z}$ orthonormal. The map
\[
\pi : (\mathbb{R}^n \times \mathbb{R}, g) \to (\mathbb{R}^n, \mathbf{eucl.})
\]
such that $\pi (x,z)=x$ is then a Riemannian submersion and $\sum_{i=1} \X_i^2$ is the horizontal Laplacian of this submersion. Therefore $(X(t))_{t \ge 0}$ is a horizontal Brownian motion for this submersion.
\end{Example}

 In Examples \ref{SUexample}, \ref{SOexample}, \ref{SLexample} and \ref{SEexample} below, we consider the horizontal Brownian motions in some 3-dimensional Lie groups. The horizontal distribution in those cases is associated with canonical sub-Riemannian structures in those groups, see \cite{MR2902707}. We note that for those examples the heat kernel of the horizontal Brownian motions can be computed using the group Fourier transform, see \cite{ABGR}.

 \begin{Example}(\textbf{Horizontal Brownian motion on $\mathbf{SU}(2)$})\label{SUexample}
In what follows, we consider the Lie group $\mathbf{SU} (2)$, i.e. the group of
$2 \times 2$, complex, unitary matrices of determinant $1$. Its Lie algebra $\mathfrak{su} (2)$ consists of $2 \times 2$, complex,
skew-adjoint matrices of trace $0$. A basis of $\mathfrak{su} (2)$
is formed by the Pauli matrices:
 \[
\text{ }X=\left(
\begin{array}{cc}
~0~ & ~1~ \\
-1~& ~0~
\end{array}
\right) ,\text{ }Y=\left(
\begin{array}{cc}
~0~ & ~i~ \\
~i~ & ~0~
\end{array}
\right),
Z=\left(
\begin{array}{cc}
~i~ & ~0~ \\
~0~ & -i~
\end{array}
\right) ,
\]
for which the following relationships hold
\begin{align}\label{Liestructure}
[Z,X]=2Y, \quad [X,Y]=2Z, \quad [Y,Z]=2X.
\end{align}
We denote $\mathbb X,\mathbb Y,\mathbb Z$ the left invariant vector fields on 
$\mathbf{SU} (2)$ corresponding to the Pauli matrices. 
The Laplace-Beltrami operator for the bi-invariant Riemannian structure of 
$\mathbf{SU} (2) \simeq \mathbb{S}^3$ is
\begin{align*}
\Delta  =\mathbb X^2+\mathbb Y^2+ \mathbb Z^2.
\end{align*}
It is in the center of the universal enveloping algebra generated by the vector fields $\mathbb X,\mathbb Y,\mathbb Z$. 

The operator
\[
\Delta_{\mathcal H} =\mathbb X^2+\mathbb Y^2.
\]
is the horizontal Laplacian of a Riemannian submersion that we know describe. The one-parameter subgroup $\mathbf K=\{e^{zZ}, z \in \mathbb R \}$ acts isometrically on the right on $\mathbf{SU} (2)$ and the quotient map $\mathbf{SU} (2) \to \mathbf{SU} (2)/\mathbf K$ is a submersion. There exists then a unique Riemannian structure on $\mathbf{SU} (2)/\mathbf K$ that makes $\pi$ a totally geodesic Riemannian submersion (for this Riemannian metric $\mathbf{SU} (2)/\mathbf K$ turns out to be isometric with the two-dimensional sphere with radius $1/2$). To study the horizontal Brownian of that submersion, we  use the cylindrical coordinates:
\[
(r,\theta, z) \rightarrow \exp \left(r \cos \theta X +r \sin \theta Y \right) \exp (z Z)= \left(
\begin{array}{cc}
 \cos(r) e^{iz}& \sin(r)e^{i(\theta -z)}\\
 -\sin(r) e^{-i(\theta -z)}& \cos(r) e^{-iz}
\end{array}\right),
\]
with 
\[
0 \le r < \frac{\pi}{2}, \text{ }\theta \in [0,2\pi], \text{ }z\in [-\pi,\pi].
\]

Simple but tedious computations show that in these coordinates, the left-regular representation sends the matrices
 $X$, $Y$ and $Z$ to the
left-invariant vector fields: 
\[
\mathbb X=\cos (-\theta +2z) \frac{\partial}{\partial r}+\sin (-\theta +2z) \left( \tan r \frac{\partial}{\partial z}
+\left(\tan r+\frac{1}{\tan r}\right)  \frac{\partial}{\partial \theta}\right),
\]
\[
\mathbb Y=-\sin(2z-\theta) \frac{\partial}{\partial r}+\cos (2z-\theta) \left( \tan r \frac{\partial}{\partial z}
+\left(\tan r+\frac{1}{\tan r}\right)  \frac{\partial}{\partial \theta}\right),
\]
\[
\mathbb Z=\frac{\partial}{\partial z},
\]

We therefore obtain
\begin{align*}
\Delta_{\mathcal H}& =\mathbb X^2+\mathbb Y^2 \\
 & =\frac{\partial^2}{\partial r^2}+2 \cot 2r\frac{\partial}{\partial r}+\left( 2+\frac{1}{\tan^2 r}
+\tan^2 r  \right) \frac{\partial^2}{\partial \theta^2} +\tan^2 r \frac{\partial^2}{\partial z^2}+2(1+\tan^2r)
\frac{\partial^2}{\partial z \partial \theta}
\end{align*}

That computation shows that if $(X(t))_{t \ge 0}$ is the horizontal Brownian motion that is the matrix-valued solution of the stochastic differential equation (written in Stratonovich form)
\[
dX(t) =X(t) (\mathbb X \circ dB^1(t) + \mathbb Y \circ dB^2(t)), \quad X(0)=\mathbf I_2,
\]
where $(B^1_t,B^2_t)_{t \ge 0}$ is a two-dimensional Brownian motion, then, in distribution,
\[
X(t)=\exp \left( r(t) (\mathbb X\cos \theta(t)  +\mathbb Y\sin \theta(t) )\right) \exp (Z(t) \mathbb Z), \quad t\ge 0,
\]
where $(r(t),\theta(t), Z(t))_{t \ge 0}$ solve the following stochastic differential equations (written in It\^o's form):
\begin{align*}
\begin{cases}
dr(t) =2 \cot 2 r(t) dt +dB^1(t) \\
d\theta(t)=\frac{2}{\sin 2 r(t)} dB^2(t) \\
dZ(t) =\tan r(t) dB^2(t).
\end{cases}
\end{align*}
\end{Example}

 \begin{Example}(\textbf{Horizontal Brownian motion on $\mathbf{SO}(3)$})\label{SOexample}
The Lie group $\mathbf{SO}(3)$ is the group of $3\times 3$, real, orthogonal matrices of determinant $1$. Topologically, $\mathbf{SO}(3)$ is a double cover of the Lie group $\mathbf{SU}(2)$.  A basis of the Lie algebra $\mathfrak{so}(3)$ is $\{X,Y,Z\}$
where
$$
X=\begin{pmatrix}
0 & 1 & 0\\
-1 & 0 & 0\\
0 & 0 & 0
\end{pmatrix} \ \ 
Y=\begin{pmatrix}
0 & 0 & 1\\
0 & 0 & 0\\
-1 & 0 & 0
\end{pmatrix} \ \ 
Z=\begin{pmatrix}
0 & 0 & 0\\
0  & 0 & 1\\
0 & -1 & 0
\end{pmatrix}
$$
They satisfy the following commutation rules
\begin{equation}\label{brackets} 
[X,Y]=-Z,\quad [X,Z]= Y,\quad [Y,Z]=-X.
\end{equation}
In this case we have a submersion with totally geodesic fibers 
$$\pi : \mathbf{SO}(3) \to \mathbb{S}^2$$ 
which gives the fibration
\[
\mathbf{SO}(2) \to \mathbf{SO}(3) \to \mathbb{S}^2.
\]

The associated horizontal Laplacian of this fibration is the left invariant diffusion operator on  $\mathbf{SO}(3)$ is given by
\[
\Dh=\mathbb X^2+\mathbb Y^2
\]
where (by abuse of notation) $\mathbb X$ and $\mathbb Y$ denote the left invariant vector fields corresponding to the basis elements $X$ and $Y$.
To study $\Dh$ it is  convenient to use the cylindrical coordinates:
\[
(r,\theta,z)\rightarrow \exp(r\cos\theta X+r\sin\theta Y)\exp(zZ)=A,
\]
where $r \in [0,\pi)$, $z \in [-\pi,\pi]$, $\theta \in [0,2\pi]$.
First note that
\begin{align*}
&A=\\
&\begin{pmatrix}
\cos r & \sin r\cos(\theta+z) & \sin r \sin(\theta+z)\\
-\sin r\cos\theta & \sin\theta\sin(\theta+z)+\cos r\cos\theta\cos(\theta+z) & -\sin\theta\cos(\theta+z)+\cos r\cos\theta\sin(\theta+z)\\
-\sin r\sin\theta & \cos r\sin\theta\cos(\theta+z)-\cos\theta\sin(\theta+z) & \cos r\sin\theta\sin(\theta+z)+\cos\theta\cos(\theta+z)
\end{pmatrix}
\end{align*}
Therefore, the left-invariant vector fields are given by:
\begin{equation*}
\begin{cases}
\mathbb X= \cos(\theta+z)\frac{\partial}{\partial r}-\frac{\sin(\theta+z)}{\sin r}\frac{\partial}{\partial \theta}+\sin(\theta+z)\tan\left(\frac{r}{2}\right)\frac{\partial}{\partial z}\\
\mathbb Y= \sin(\theta+z)\frac{\partial}{\partial r}+\frac{\cos(\theta+z)}{\sin r}\frac{\partial}{\partial \theta}-\cos(\theta+z)\tan\left(\frac{r}{2}\right)\frac{\partial}{\partial z}\\
\mathbb Z=\frac{\partial}{\partial z}
\end{cases}
\end{equation*}

and we  obtain
\begin{eqnarray*}
\Dh&=&\mathbb X^2+\mathbb Y^2\\
&=&\frac{\partial^2}{\partial r^2}+\frac{1}{\sin^2r}\frac{\partial^2}{\partial\theta^2}+\tan^2\left(\frac{r}{2}\right)\frac{\partial^2}{\partial z^2}-2\tan\left(\frac{r}{2}\right)\frac{1}{\sin r}\frac{\partial}{\partial z}\frac{\partial}{\partial\theta}+\cot r\frac{\partial}{\partial r}
\end{eqnarray*}

This computation shows that if $(X(t))_{t \ge 0}$ is the horizontal Brownian motion, which is the matrix-valued solution of the stochastic differential equation (written in Stratonovich form)
\[
dX(t) =X(t) (\mathbb X \circ dB^1_t + \mathbb Y \circ dB^2(t)), \quad X(0)=\mathbf I_2,
\]
where $(B^1(t),B^2(t))_{t \ge 0}$ is a two-dimensional Brownian motion, then, in distribution,
\[
X(t)=\exp \left( r(t) (X\cos \theta(t)  + Y\sin \theta(t) )\right) \exp (Z(t) Z), \quad t\ge 0,
\]
where $(r(t),\theta(t), Z(t))_{t \ge 0}$ solve the following stochastic differential equations (written in It\^o's form):
\begin{align*}
\begin{cases}
dr(t) = \cot  r(t) dt +dB^1(t) \\
d\theta(t)=\frac{1}{\sin  r(t)} dB^2(t) \\
dZ(t) =-\tan \frac{r(t)}{2} dB^2(t).
\end{cases}
\end{align*}

\end{Example}

 \begin{Example}(\textbf{Horizontal Brownian motion on $\mathbf{SL}(2)$})\label{SLexample}
The Lie group $\mathbf{SL} (2)$ is the group of
$2 \times 2$, real matrices of determinant $1$. Its
Lie algebra $\mathfrak{sl} (2)$ consists of $2 \times 2$ matrices of trace $0$.
A basis of $\mathfrak{sl} (2)$ is formed by the matrices:
 \[
X=\left(
\begin{array}{cc}
~1~ & ~0~ \\
~0~ & -1~
\end{array}
\right)
,\text{ }Y=\left(
\begin{array}{cc}
~0~ & ~1~ \\
~1~ & ~0~
\end{array}
\right),
\text{ }Z=\left(
\begin{array}{cc}
~0~ & ~1~ \\
-1~& ~0~
\end{array}
\right)
,
\]
for which the following relations hold
\begin{align}\label{Liestructure}
[X,Y]=2Z, \quad [X,Z]=2Y, \quad [Y,Z]=-2X.
\end{align}

We denote by $\mathbb X,\mathbb Y,\mathbb Z$ the corresponding left invariant vector fields on 
$\mathbf{SL} (2)$ and consider the left invariant Riemannian metric on $\mathbf{SL} (2)$ that makes $\mathbb X,\mathbb Y,\mathbb Z$ an orthonormal frame. 
The Laplace-Beltrami operator for this Riemannian structure is
\begin{align*}
\Delta  =\mathbb X^2+\mathbb Y^2+\mathbb Z^2.
\end{align*}

The operator
\[
\Delta_{\mathcal H} =\mathbb X^2+\mathbb Y^2.
\]
is the horizontal Laplacian of a Riemannian submersion that we know describe. The one-parameter subgroup $\mathbf K=\{e^{tZ}, t \in \mathbb R \}$ acts isometrically on the right on $\mathbf{SL} (2)$ and the quotient map $\mathbf{SL} (2) \to \mathbf{SL} (2)/\mathbf K$ is a submersion. There exists then a unique Riemannian structure on $\mathbf{SL} (2)/\mathbf K$ that makes $\pi$ a totally geodesic Riemannian submersion (for this Riemannian metric $\mathbf{SL} (2)/\mathbf K$ is isometric with the two-dimensional real hyperbolic space). To study the horizontal Brownian of that submersion, we introduce the cylindrical coordinates:
\[
(r,\theta, z) \rightarrow \exp \left(r \cos \theta X +r \sin \theta Y \right) \exp (z Z)
\]
\[= \left(
\begin{array}{cc}
 \cosh(r) \cos (z) + \sinh(r)\cos (\theta +z) & \cosh(r) \sin (z) + \sinh(r)\sin (\theta +z)\\
-\cosh(r) \sin (z) + \sinh(r)\sin (\theta +z)& \cosh(r) \cos (z)  - \sinh(r)\cos (\theta +z)
\end{array}\right),
\]
with 
\[
 r >0, \text{ }\theta \in [0,2\pi], \text{ }z\in [-\pi,\pi].
\]

Simple but tedious computations show that in these coordinates, the left-regular representation sends the matrices $\mathbb X$, $\mathbb Y$ and $\mathbb Z$ to the
left-invariant vector fields: 
\[
\mathbb X=\cos (\theta + 2 z) \frac{\partial}{\partial r}-\sin (\theta +2z) \left( \tanh r \frac{\partial}{\partial z}
+\left(\frac{1}{\tanh r}-\tanh r\right)  \frac{\partial}{\partial \theta}\right),
\]
\[
\mathbb Y=\sin (\theta +2 z) \frac{\partial}{\partial r} +\cos (\theta + 2z) \left( \tanh r \frac{\partial}{\partial z}
+\left(\frac{1}{\tanh r}- \tanh r)\right)  \frac{\partial}{\partial \theta} \right),
\]
\[
\mathbb Z=\frac{\partial}{\partial z}.
\]

We therefore obtain
\begin{align*}
\Delta_\mathcal{H} & =\mathbb X^2+\mathbb Y^2 \\
 & =\frac{\partial^2}{\partial r^2}+  2 \coth 2r \frac{\partial}{\partial r}
  +\tanh^2 r \frac{\partial^2}{\partial z^2} 
 + \frac{4}{\sinh^2 2r} \frac{\partial^2}{\partial \theta^2}
+2(1-\tanh^2 r) \frac{\partial^2}{\partial \theta \partial z}.
\end{align*}

As a consequence, if $(X(t))_{t \ge 0}$ is the horizontal Brownian motion, which is the matrix-valued solution of the stochastic differential equation (written in Stratonovich form)
\[
dX(t) =X(t) (\mathbb X \circ dB^1_t + \mathbb Y \circ dB^2_t), \quad X(0)=\mathbf I_2,
\]
where $(B^1_t,B^2_t)_{t \ge 0}$ is a two-dimensional Brownian motion, then, in law,
\[
X(t)=\exp \left( r(t) (X\cos \theta(t)  + Y\sin \theta(t) )\right) \exp (Z(t) Z), \quad t\ge 0,
\]
where $(r(t),\theta(t), Z(t))_{t \ge 0}$ solve the following stochastic differential equations (written in It\^o's form):
\begin{align*}
\begin{cases}
dr(t) =2 \mathrm{ coth} 2 r(t) dt +dB^1_t \\
d\theta(t)=\frac{2}{\sinh 2 r(t)} dB^2_t \\
dZ(t) =\tanh r(t) dB^2_t.
\end{cases}
\end{align*}
It is apparent that the analysis of the horizontal Brownian motion on $\mathbf{SL} (2)$  is the  hyperbolic counterpart of the analysis of the horizontal Brownian motion on $\mathbf{SU} (2)$. There exist other interesting horizontal Brownian motions on $\mathbf{SL} (2)$ which are associated with non equivalent sub-Riemannian structures on $\mathbf{SL} (2)$, see for instance Section 3 in \cite{MR3685249} for a description of those sub-Riemannian structures. 
\end{Example}

 \begin{Example}(\textbf{Horizontal Brownian motion on $\mathbf{SE}(2)$})\label{SEexample}
 The Lie group $\mathbf{SE} (2)$ is the group of Euclidean motions in the plane. It is isomorphic to the Lie group of $3 \times 3$ matrices
 \begin{equation*}
\left\{  \left(
\begin{array}{lll}
\cos \theta & -\sin \theta & x \\
\sin \theta & \cos \theta & y \\
0 & 0 & 1  
\end{array}
\right), x,y \in \R, \theta \in \R /2\pi \mathbb Z \right\}.
 \end{equation*}
 The Lie algebra $\mathfrak{se} (2)$ is generated by the matrices
 \[
 X=\left(
\begin{array}{lll}
0 & -1 & 0 \\
1 & 0 & 0 \\
0 & 0 & 0  
\end{array}
\right), Y=\left(
\begin{array}{lll}
0 & 0 & 1 \\
0 & 0 & 0 \\
0 & 0 & 0  
\end{array}
\right), Z=\left(
\begin{array}{lll}
0 & 0 & 0 \\
0 & 0 & 1 \\
0 & 0 & 0  
\end{array}
\right)
 \]
Clearly the following relations hold
 \[
 [X,Y]=Z, \,  [X,Z]=-Y, \, [Y,Z]=0.
 \]
We denote by $\mathbb X,\mathbb Y,\mathbb Z$ the associated left invariant vector fields on 
$\mathbf{SE} (2)$. Simple computations show that
 \[
 \mathbb X=\frac{\partial}{\partial \theta}, \, \mathbb Y =\cos \theta \frac{\partial}{\partial x} + \sin \theta \frac{\partial}{\partial y} , \, \mathbb Z =-\sin \theta \frac{\partial}{\partial x} + \cos \theta \frac{\partial}{\partial y}.
 \]
 
Consider the left invariant Riemannian metric on $\mathbf{SE} (2)$ that makes $\mathbb X,\mathbb Y,\mathbb Z$ an orthonormal frame. 
The Laplace-Beltrami operator for this Riemannian structure is
\begin{align*}
\Delta  =\mathbb X^2+\mathbb Y^2+\mathbb Z^2=\frac{\partial^2}{\partial \theta^2}+\frac{\partial^2}{\partial x^2}+\frac{\partial^2}{\partial y^2} .
\end{align*}
Consider now the left-invariant contact form 
\[
\eta=-\sin \theta \, dx +\cos \theta \, dy.
\]
The vector field $\mathbb Z$ is the  Reeb vector field of the above form  and the horizontal Laplacian of the Reeb foliation (see Example \ref{contact example}) is given by:
\[
\Delta_\mathcal{H}= \mathbb X^2+\mathbb Y^2=\frac{\partial^2}{\partial \theta^2}+\left(\cos \theta \frac{\partial}{\partial x} + \sin \theta \frac{\partial}{\partial y} \right)^2.
\]
It follows that the horizontal Brownian (issued from the identity) of the foliation is given in distribution as
\[
\left(
\begin{array}{ccc}
\cos B(t) & -\sin B(t) & \int_0^t \cos (B(s)) d\beta(s) \\
\sin B(t) & \cos B(t) & \int_0^t \sin (B(s)) d\beta(s)  \\
0 & 0 & 1  
\end{array}
\right)
\]
where $(B(t),\beta(t))_{t \ge 0}$ is a two-dimensional Brownian motion.
 \end{Example}

 \begin{Example}
 
 Let $\beta \in \R$, $\beta \neq 0$. Consider the Lie group of $3 \times 3$ matrices
 \begin{equation*}
G=\left\{  \left(
\begin{array}{ccc}
 e^{\beta \theta} & 0 & x \\
0 & 1 & y \\
0 & 0 & 1  
\end{array}
\right), x,y, \theta \in \R \right\}.
 \end{equation*}
 The Lie algebra $\mathfrak{g}$ is generated by the matrices
 \[
 X=\left(
\begin{array}{lll}
\beta & 0 & 0 \\
0 & 0 & 0 \\
0 & 0 & 0  
\end{array}
\right), Y=\left(
\begin{array}{lll}
0 & 0 & 1 \\
0 & 0 & 1 \\
0 & 0 & 0  
\end{array}
\right), Z=\left(
\begin{array}{lll}
0 & 0 & 0 \\
0 & 0 & -\beta \\
0 & 0 & 0  
\end{array}
\right)
 \]
 which are associated with the left invariant vector fields
 \[
 \mathbb X=\frac{\partial}{\partial \theta}, \, \mathbb Y =e^{\beta \theta} \frac{\partial}{\partial x} +  \frac{\partial}{\partial y} , \, \mathbb Z =-\beta \frac{\partial}{\partial y}.
 \]
 Clearly the following relations hold
 \[
 [X,Y]=\beta Y+ Z, \,  [X,Z]=0, \, [Y,Z]=0.
 \]
 Consider now the left-invariant contact form 
\[
\eta=\frac{1}{\beta} \left(e^{-\beta \theta} \, dx - \, dy \right).
\]
The vector field $\mathbb Z$ is the  Reeb vector field of the above form  and the horizontal Laplacian of the Reeb foliation  is given by:
\[
\Delta_\mathcal{H}= \mathbb X^2+\mathbb Y^2-\beta \mathbb X=\frac{\partial^2}{\partial \theta^2}-\beta \frac{\partial}{\partial \theta}+\left( e^{\beta \theta} \frac{\partial}{\partial x} +  \frac{\partial}{\partial y} \right)^2.
\]
Note the appearance of the term $-\beta \frac{\partial}{\partial \theta}$ comes from the fact that $G$ is not unimodular because the Haar volume form is given by
\[
e^{-\beta \theta}\, d\theta \wedge dx \wedge dy.
\]
It follows that the horizontal Brownian motion (issued from the identity) of the foliation is given in distribution as
\[
\left(
\begin{array}{ccc}
 \exp( \beta B(t)-\beta^2 t ) & 0 & \int_0^t \exp (\beta B(s)-\beta^2 s) d\beta(s) \\
0 & 1 & \beta(t)  \\
0 & 0 & 1  
\end{array}
\right)
\]
where $(B(t),\beta(t))_{t \ge 0}$ is a two-dimensional Brownian motion.
 \end{Example}

\begin{Example}(\textbf{Signature and log-signature of the Brownian motion})

We discuss here an example of horizontal Brownian motion that plays a central role in the theory of rough paths (see \cite{MR2154760,MR2604669}).
The truncated tensor algebra $T_N(\mathbb{R}^{d})$ over $\mathbb{R}^{d}$ is given by
\[
T_N(\mathbb{R}^{d})=\bigoplus^{N}_{k=0}(\mathbb{R}^{d})^{\otimes k}
\]
with the convention that $(\mathbb{R}^{d})^{0}=\mathbb{R}$. All computations in this truncated algebra are done at degree at most $N$, i.e. $e_{i_1} \otimes  \cdots \otimes e_{i_k} =0$, if $k > N$. If $(B(t))_{t \ge 0}$ is a $d$-dimensional Brownian motion the $T_N(\mathbb{R}^{d})$-valued process
\[
 X(t)=S_N(B)(t):=\sum_{k=0}^{N} \int_{\Delta^k [0,t]}  \circ dB^{\otimes k}, \quad t \ge 0,
\]
is called the signature of $B$ of order $N$. The iterated integrals over the simplex
\[
\Delta^k [0,t]=\left\{ 0 \le t_1 \le \cdots \le t_k \le t \right\}
\]
appearing in the definition of the signature are understood in the sense of Stratonovich.
Note that the signature is the solution to a stochastic differential equation that writes
\begin{align}\label{SDE: signature}
	dX(t)=X(t)\otimes dB(t)=\sum_{i=1}^dW_i(X(t))\circ dB(t)^i,\quad X(0)=\mathbf{1}=(1,0,0,...)\in T_N(\mathbb R^d),
\end{align}
where the $W_i$'s are polynomial vector fields on $T_N(\mathbb R^d)$. It is a well-known theorem by Chen \cite{Chen54}  that the signature of a path is a Lie element. To be more precise, consider the Lie group
\[
\G_N(\mathbb R^d) =\exp ( \mathfrak{g}_N(\mathbb{R}^d)),
\]
where $\mathfrak{g}_N(\mathbb{R}^d)$ is the Lie sub-algebra of $T_N(\mathbb{R}^{d})$ generated by the canonical basis $e_i, i=1,\dots,d,$   of $\mathbb{R}^d$, and the Lie bracket  is given by $[a,b]=a\otimes b-b\otimes a$.  Then Chen's theorem asserts that for every $t \ge 0$, we almost surely have $X(t) \in \G_N(\mathbb R^d) $. More precisely, from the Chen-Strichartz formula (see \cite{MR2154760}), one has the following explicit following representation of $X(t)$:
\[
X(t)= \exp \left( \sum_{I, l(I) \le N } \Lambda_I (B)(t) e_{I} \right),
\]
where:
\begin{itemize}
\item \hspace{.1in} For $z \in T_N(\mathbb{R}^{d})$, $\exp (z) =\sum_{k=0}^N \frac{1}{k!} z^{\otimes k}$;
\item \hspace{.1in} If $I\in \{1,...,d\}^k$ is a word, 
\[
e_I= [e_{i_1},[e_{i_2},\cdots,[e_{i_{k-1}},
e_{i_{k}}]\cdots],
\]
and
$
l(I)=k;
$
\item \hspace{.1in} Let $\mathfrak{S}_k$ be the set of the permutations of
$\{1,...,k\}$, then
\[
\Lambda_I (B)(t)= \sum_{\sigma \in \mathfrak{S}_k} \frac{\left(
-1\right) ^{e(\sigma )}}{k^{2}\left(
\begin{array}{l}
k-1 \\
e(\sigma )
\end{array}
\right) } \int_{0 \leq t_1 \leq ... \leq t_k \leq t} 
\circ dB^{\sigma^{-1}(i_1)}(t_1) \cdots  
\circ dB^{\sigma^{-1}(i_k)}(t_k),\quad t \ge 0.
\]
In the above, for $\sigma \in \mathfrak{S}_k$,  $e(\sigma)$ is the cardinality of the set
$
\{ j \in \{1,...,k-1 \} , \sigma (j) > \sigma(j+1) \}.
$
\end{itemize}

It is well known that $\G_N(\mathbb R^d)$ is a free nilpotent  and simply connected Lie group whose Lie algebra inherits, from the grading of $T_N(\mathbb{R}^{d})$, a stratification
\[
\mathfrak{g}_N(\mathbb{R}^d)=\mathcal{V}_{1}\oplus \cdots \oplus \mathcal{V}_{N},
\]
with
\begin{equation*}
\dim \mathcal{V}_{j}= \frac{1}{j} \sum_{i \mid j} \mu (i)
d^{\frac{j}{i}}, \text{ } j \leq N,
\end{equation*}
 where $\mu$ is the M\"obius function and the sum is over the divisors $i$ of $j$. From the Hall-Witt theorem we can then construct a basis of $\mathfrak{g}_N(\mathbb{R}^d)$ which is adapted to the stratification
\[
\mathfrak{g}_N(\mathbb{R}^d)=\mathcal{V}_{1}\oplus \cdots \oplus \mathcal{V}_{N},
\]
and such that every element of this basis is an iterated bracket of the $e_i$'s.  Let $\mathcal{B}$ denote such a basis and for $x \in \mathfrak{g}_N(\mathbb{R}^d)$, let $[x]_\mathcal{B}\in \mathbb{R}^n$ be the coordinate vector of $x$ in the basis $\mathcal{B}$ where $n=\dim \mathfrak{g}_N(\mathbb{R}^d)$. The log-signature of order $N$ of $(B(t))_{t \ge 0}$ is the $\mathbb{R}^n$-valued process
\[
U(t)=[\exp^{-1}( X(t))]_\mathcal{B} = \sum_{I, l(I) \le N } \Lambda_I (B)(t) [e_{I}]_\mathcal{B}.
\] 

From \cite{MR2154760}, $(U(t))_{t\ge0}$ solves in $\mathbb{R}^n$ a rough differential equation

\begin{align*}
U(t)=\sum_{i=1}^d\int_0^tV_i(U_s)\circ dB^i(s),\end{align*}
where the vector fields $V_1,\dots,V_d$ are polynomials and generate a Lie algebra isomorphic to $ \mathfrak{g}_N(\mathbb{R}^d)$. Actually $V_1,\dots,V_d$ are left invariant vector fields for a polynomial group law $\star$ on $\mathbb{R}^n$ such that $(\mathbb{R}^n,\star)$ is isomorphic to $\G_N(\mathbb{R}^d)$. 

The choice of the basis $\mathcal{B}$ equips the Lie group $(\mathbb{R}^n,\star)$ with a left-invariant Riemannian metric and the projection map $\pi : \mathbb{R}^n \to \mathbb{R}^d$ on the first $d$ components is a Riemannian submersion if $\mathbb R^d$ is equipped with its standard Euclidean structure. The log-signature  $(U(t))_{t\ge0}$ is then the horizontal Brownian of that submersion. 

Note that for $d=2$ and $N=2$, we have 
\[
U(t)= \left( B^1(t),B^2(t), \frac{1}{2} \int_0^t B^1(s) dB^2(s)-B^2(s) dB^1(s) \right)
\]
which is the horizontal Brownian on the $3$-dimensional Heisenberg group. 

\end{Example}

\begin{Example}(\textbf{Horizontal Brownian motion on vector bundles})
A  class of examples  that naturally arise in stochastic calculus are horizontal Brownian motions on  vector bundles. We present here the case of the tangent bundle, but the construction may be extended to any vector bundle. Let $(\mathbb{M},g)$ be a  smooth and connected Riemannian manifold with dimension $n$. Let $(B(t))_{t \ge 0}$ be a Brownian motion on $\M$ started at $x$. Let $\para_{0,t} : T_x \M \to T_{B(t)} \mathbb{M}$ be the stochastic parallel transport along the paths of $(B(t))_{t \ge 0}$. Let now $v \in T_x \M$ and consider the tangent bundle $T\M$ valued process:
\[
X(t)=\left( B(t) , \para_{0,t} v \right).
\]
The process $(X(t))_{t \ge 0}$  can be interpreted as a horizontal Brownian for some Riemannian submersion. The submersion is simply the bundle projection map $\pi : T\mathbb{M} \to \mathbb{M}$. One then needs to construct a Riemannian metric on $T\mathbb{M}$ that makes $\pi$ a Riemannian submersion. We call a $C^1$ curve $\gamma(t)=(x(t),v(t))$, $t\ge0$ to be horizontal if $v(t)$ is parallel-transported along $x$. This uniquely determines the rank $n$ horizontal bundle in $TT\mathbb{M}$. Now, if $\X$ is a vector field on $\mathbb{M}$, define its horizontal lift $\X^\Hh$ as the unique horizontal vector field on $TT\mathbb{M}$ that projects onto $\X$. Define its vertical lift as the unique vertical vector field $\X^{\mathcal V}$ on $TT\mathbb{M}$ such that for every smooth $f:\mathbb{M} \to \mathbb{R}$ one has
\[
\X^\V (f^*)=\X f,
\]
where $f^*(x,v)=df_x (v)$. The Sasaki metric $g$ on $T\mathbb{M}$ is then the unique metric such that if $\X_1,\dots, \X_n$ is a local orthonormal frame on $\mathbb{M}$,  then  $\X^\Hh_1,\dots, \X^\Hh_n,\X_1^\V,\dots,\X_n^\V$ is a local orthonormal frame on $T\mathbb{M}$. It is then easy to check that $\pi$ is  a Riemannian submersion with totally geodesic fibers and that $(X(t))_{t \ge 0}$ is a horizontal Brownian motion for this submersion.
\end{Example}

\section{Horizontal Brownian motion on principal bundles}\label{horizontal BM bundle}

In this section, we study  in details the horizontal Brownian motion on principal bundles, as it generalizes most of the situations encountered later in this monograph.

\subsection{Skew-product decomposition}\label{skew general bundle}

Let $(\M,j)$ be a Riemannian manifold. We assume that $\M$  is the total space of  a principal bundle structure $\pi:\M \to \B$ where $(\B,h)$ is a Riemannian manifold. Here, this means that $\pi$ is the quotient map of the free action  (on the right by convention) of a Lie group $\G$ on $\M$.  Let  $\mathfrak g$ denote the Lie algebra of $\G$.  Let now $\theta$ be a $\G$-compatible connection form on $\M$, i.e. $\theta$ is a $\mathfrak g$-valued one-form on $\M$ such that:
\begin{enumerate}
\item For any $g \in \G$, we have
\[
g^*\theta=\mathbf{Ad}_{g^{-1}} \circ \theta;
\]
where for $X \in \mathfrak g$, 
\[
\mathbf{Ad}_{g^{-1}} X =\lim_{t \to 0} \frac{1}{t}(g^{-1} e^{tX} g- \mathbf{e}),
\]
with $\mathbf{e}$ the unit element in $\G$.
\item For any $V \in \mathfrak g$,
\[
\theta (\hat{V})=V,
\]
where $\hat V$ is the vector field on $\M$ defined by
\begin{align}\label{fundamental vertical field}
\hat V f (m)= \lim_{t \to 0} \frac{f( m e^{tV})-f(m)}{t}, \quad m \in \M.
\end{align}
\end{enumerate}

\begin{remark}
If $X$ is a vector field on $\M$ we will often denote $\hat{\theta}(X):=\widehat{\theta (X)}$.
\end{remark}

We assume that $\G$ is equipped with an invariant Riemannian metric $k$ and that the bundle projection map $\pi$ is a Riemannian submersion $\pi: (\M,j) \to (\B,h)$ with totally geodesic fibers isometric to $(\G, k)$ and whose horizontal distribution is the kernel of the connection form $\theta$.

Our goal in the setting described above is to study the horizontal Brownian motion on $\M$ associated with the previous submersion. We will consider a local trivialization of $\M$: $(x,g)$, $x\in U \subset \mathbb B$, $g \in \G$, where $U$ is the domain of a chart  with coordinates $(x^1,\dots,x^n)$. Such trivialization is not unique nor canonical in general, but in the examples treated in the text we will work with canonical ones.
  In the trivialization, the submersion $\pi$ simply reads $(x,g) \to x$. We introduce the $\mathfrak g$-valued one  form $\alpha$ defined on $U$ by
\begin{align}\label{solder form}
\alpha_x \left(\frac{\partial }{\partial x^i} \right)=\theta_{x,\mathbf{e}} \left(\frac{\partial }{\partial y^i} \right)
\end{align} 
where $y^i=x^i \circ \pi $.
\begin{theorem}\label{skew-product principal bundle}
Let $(B(t))_{t \ge 0}$ be a Brownian motion on the base manifold $\B$ (started from a point inside $U$ and considered up to the first exit time of $U$). Then, in the above trivialization, the process 
\[
X(t)=(B(t), \Theta (t))
\]
is a horizontal Brownian motion on $\M$ where $\Theta(t)$ solves the Stratonovich stochastic differential equation
\[
d\Theta(t)=-\circ d \mathfrak{a} (t) \, \Theta (t), 
\]
and $\mathfrak{a} (t)$ is the Stratonovich line integral $\mathfrak{a} (t)=\int_{B[0,t]} \alpha$.

\end{theorem}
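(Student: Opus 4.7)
My plan is to verify that $(X(t))_{t \ge 0}$ is the horizontal lift of $(B(t))_{t \ge 0}$ in the sense of Definition \ref{def lift}, and then invoke the theorem about horizontal Brownian motions as horizontal lifts of Brownian motions (which applies since the submersion $\pi$ has totally geodesic fibers). Since $\pi(X(t)) = B(t)$ is immediate from the trivialization, everything reduces to showing that the Stratonovich line integral $\int_{X[0,t]} \theta$ vanishes, i.e. that the semimartingale $(X(t))_{t \ge 0}$ is horizontal with respect to the connection.

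The first step is to obtain an explicit expression for $\theta$ in the local trivialization $U \times \G$. Two axioms pin it down. Restricted to the fundamental vertical field $\hat V$ associated with $V \in \mathfrak g$, which in the trivialization is the vector $(0, L_{g\,*} V) \in T_{(x,g)}(U \times \G)$, one has $\theta(\hat V) = V$; this identifies the vertical part of $\theta$ with the pullback of the left Maurer-Cartan form $\omega_{MC}$ under the second projection $\pi_\G : U \times \G \to \G$. On a vector $(v,0)$ at the section point $(x, \mathbf e)$, the definition \eqref{solder form} gives $\theta_{(x,\mathbf e)}(v,0) = \alpha_x(v)$. The equivariance $(R_g)^* \theta = \mathbf{Ad}_{g^{-1}} \circ \theta$, combined with the fact that $(v,0)$ at $(x,g)$ is the image of $(v,0)$ at $(x,\mathbf e)$ under $R_{g\,*}$, yields $\theta_{(x,g)}(v,0) = \mathbf{Ad}_{g^{-1}} \alpha_x(v)$. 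By linearity I therefore obtain the decomposition
\[
\theta_{(x,g)} \;=\; \mathbf{Ad}_{g^{-1}} \pi^* \alpha \;+\; \pi_\G^* \omega_{MC}.
\]

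Plugging $X(t) = (B(t), \Theta(t))$ into this identity and using the fact that Stratonovich line integrals commute with smooth pullbacks, I get
\[
\int_{X[0,t]} \theta \;=\; \int_0^t \mathbf{Ad}_{\Theta(s)^{-1}}\bigl(\alpha_{B(s)}(\circ dB(s))\bigr) \;+\; \int_0^t \omega_{MC}(\circ d\Theta(s)).
\]
By definition, $\alpha(\circ dB(s)) = \circ d\mathfrak{a}(s)$, so the horizontality requirement amounts to the Stratonovich identity $\omega_{MC}(\circ d\Theta) = -\mathbf{Ad}_{\Theta^{-1}}(\circ d\mathfrak{a})$. Applying $L_{\Theta\,*}$ to both sides and using the matrix-group identity $L_{\Theta\,*}\mathbf{Ad}_{\Theta^{-1}}(V) = V\Theta$ (equivalently, $L_\Theta \circ \mathbf{Ad}_{\Theta^{-1}} = R_\Theta$ on $\mathfrak g$), this becomes exactly the stochastic differential equation $\circ d\Theta(t) = -\circ d\mathfrak{a}(t)\, \Theta(t)$ appearing in the statement. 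Existence and uniqueness for this linear Stratonovich equation on $\G$ follow as in Theorem \ref{bmliegroup}, so such a $\Theta$ exists and $(B(t),\Theta(t))$ is indeed the horizontal lift of $B(t)$.

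The only delicate point in the argument is that the object $\mathfrak{a}(t) = \int_{B[0,t]}\alpha$ is a priori chart-dependent and $\alpha$ need not be closed, so the construction legitimately lives only up to the first exit time of $U$ (as specified in the statement). Beyond this caveat, everything is a direct unwinding of the connection form in the trivialization followed by a standard Stratonovich calculation; no deep stochastic estimate is required, because the covariant Stratonovich calculus behaves like ordinary differential calculus.
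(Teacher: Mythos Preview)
Your proof is correct. The core computation---using $\mathbf{Ad}$-equivariance and the definition \eqref{solder form} to obtain $\theta_{(x,g)} = \mathbf{Ad}_{g^{-1}}\pi^*\alpha + \pi_\G^*\omega_{MC}$, and then reading off the SDE for $\Theta$ from the vanishing of $\int_{X[0,t]}\theta$---is exactly the content of the paper's argument, just organized differently.

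The paper instead computes the horizontal lift of each coordinate vector field $\partial/\partial x^i$ in the trivialization, writes the ODE for the horizontal lift of a smooth curve $\gamma$ (obtaining $\Theta'(t) = -\alpha_{\gamma(t)}(\gamma'(t))\,\Theta(t)$), and then passes to the Brownian case by invoking Malliavin's transfer principle. Your route is arguably more direct: by working in Stratonovich form from the start and checking horizontality via $\int_{X[0,t]}\theta = 0$, you bypass the smooth-curve intermediary and the transfer principle entirely. Both approaches hinge on the same $\mathbf{Ad}$-equivariance identity, so the difference is purely organizational.
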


\begin{proof}
We consider first a smooth curve $\gamma:\mathbb R \to \B$ and a smooth curve $\tilde{\gamma}: \R \to \M$ such that $\tilde{\gamma}(t)=(\beta (t), \Theta (t))$, $t\ge0$ in $\M$ projects down to $\gamma$ . In the coordinates $x^1,\dots,x^n$ we have
\[
\gamma'(t)=\sum_{i=1}^n \gamma_i'(t) \frac{\partial}{\partial x^i}
\]

In  the trivialization the horizontal lift  to $\M$ of $\frac{\partial}{\partial x^i}$ is given by $\frac{\partial}{\partial y^i}-\widehat{\theta} \left( \frac{\partial}{\partial y^i} \right)$. Therefore
\[
\tilde{\gamma}'(t)=\sum_{i=1}^n \gamma_i'(t)\left(  \frac{\partial}{\partial y^i}-\widehat{\theta} \left( \frac{\partial}{\partial y^i} \right) \right).
\]
This yields $\beta (t)=\gamma (t)$. Then, we observe that at the point $\tilde{\gamma}(t)= (\beta  (t), \Theta (t))$ we have
\begin{align*}
\sum_{i=1}^n \gamma_i'(t)\widehat{\theta}_{(\beta (t) , \Theta (t))} \left( \frac{\partial}{\partial y^i} \right) &=\sum_{i=1}^n \gamma_i'(t)\widehat{\mathbf{Ad}_{\Theta(t)^{-1}} \circ\theta}_{(\beta  (t), \mathbf{e}) }\left( \frac{\partial}{\partial y^i} \right) \\
 &=\widehat{\mathbf{Ad}_{\Theta(t)^{-1}}\circ \alpha}_{\gamma(t) } (\gamma'(t))
\end{align*}
Therefore
\begin{align*}
\Theta'(t)& =-\alpha_{\gamma(t) }(\gamma'(t)) \Theta(t),
\end{align*}
which yields
\[
d\Theta (t)=- d \left(\int_{\gamma[0,t]} \alpha \right)\Theta (t).
\]
Thus, any horizontal lift of the smooth curve $\gamma$ is of the form $(\gamma(t), \Theta (t))$ where $\Theta$ solves the above differential equation in the Lie group $\G$. The extension to the case where $\gamma$ is a Brownian motion on $\B$ follows from Malliavin's transfer principle.
\end{proof}

Loosely speaking, the theorem states that the horizontal Brownian motion can locally be described as a skew product of a Brownian motion  $B(t)$ on the base space and of the stochastic development in the Lie group $\G$ of the Brownian functional $ \int_{B[0,t]} \alpha $. For related results, we refer to \cite{MR2083702} and \cite{MR929666}.


\subsection{Contact manifolds and stochastic area processes on K\"ahler manifolds}\label{Sasakian manifold}

Let $(\M,\theta)$ be a $2n+1$-dimensional smooth contact manifold, i.e $\theta$ is a one-form on $\M$ which is not degenerate in the sense that $\theta \wedge (d\theta)^n$ is a volume form on $\M$. On $\M$ there is a unique smooth vector field $T$, the so-called Reeb vector field, that satisfies
\[
\theta(T)=1,\quad \mathcal{L}_T(\theta)=0,
\]
where $\mathcal{L}_T$ denotes the Lie derivative with respect to  $T$.
 As it is known (see for instance \cite{Tanno}), it is always possible to find a Riemannian metric $g$ and a $(1,1)$-tensor field $J$ on $\M$ so that for every  vector fields $X, Y$
\begin{align}\label{def J contact}
g(X,T)=\theta(X),\quad J^2(X)=-X+\theta (X) T, \quad g(X,JY)=(d\theta)(X,Y).
\end{align}
The triple $(\M, \theta,g)$ is called a contact Riemannian manifold. If the Reeb vector field $T$ is a Killing field, that is,
\[
\mathcal{L}_T g=0.
\]
the triple $(\M, \theta,g)$ is called a K-contact Riemannian manifold. On a K-contact Riemannian manifold, one can  locally equip $\M$ with a principal bundle $\pi: \M \to \B$, where $\B$ is the quotient space of $\M$ by the isometric action of the one-parameter group generated by $T$. The contact form $\theta$ is a compatible connection and $\pi$ is a totally geodesic submersion when $\B$ is equipped with the push-forward of the metric $g$. We note that the structure group $\G$ is either a circle $\mathbb S^1$ (circle bundle case) or a line $\mathbb R$ (line bundle case). Under some topological assumptions ($\B$ being a Hodge manifold), this local description of $\M$ as a principal bundle can be made global; this is the Boothby-Wang fibration, see \cite{MR112160}.

Observe that the horizontal distribution $\mathcal{H}$ is then the kernel of $\theta$ and that $\mathcal{H}$ is bracket generating because $\theta$ is a contact form and thus non-degenerate. Sasakian manifolds are the $K$-contact manifolds for which $J$ is integrable (i.e. has a vanishing Nijenhuis tensor). In that case the base manifold $\B$ is a K\"ahler manifold whose K\"ahler form is locally given by $d\alpha$ and  the stochastic line integral $ \int_{B[0,t]} \alpha$ is genuinely interpreted as a generalized stochastic area process. For further details about Sasakian manifolds and Sasakian geometry, we refer to the monograph \cite{MR2382957}.

The above setup describes the following cases studied in the monograph:
\begin{itemize}
\item \hspace{.1in} $\M$ is the $2n+1$-dimensional Heisenberg group $\mathbf{H}^{2n+1}$ equipped with its canonical contact form $\theta$. In that case $\B$ is $\mathbb{C}^n$, $\G=\mathbb R$ and the stochastic line integral $ \int_{B[0,t]} \alpha$ is the L\'evy stochastic area \eqref{area heisenberg multi} .
\item \hspace{.1in} $\M$ is the $2n+1$-dimensional sphere $\mathbb{S}^{2n+1}$ equipped with its canonical contact form. In that case $\B$ is the complex projective space $\mathbb{C}P^n$, $\G=\mathbb S^1$ and the stochastic line integral $ \int_{B[0,t]} \alpha$ is the  stochastic area of Definition \ref{stochastic area sphere} .
\item \hspace{.1in} $\M$ is the $2n+1$-dimensional complex anti-de Sitter space $\mathbf{AdS}^{2n+1}(\mathbb{C})$ equipped with its canonical contact form. In that case $\B$ is the complex hyperbolic space $\mathbb{C}H^n$, $\G=\mathbb S^1$ and the stochastic line integral $ \int_{B[0,t]} \alpha$ is the stochastic area of Definition \ref{stochastic area hyperbolic}.
\end{itemize}

In addition to those examples which are later studied in detail in the text, it might appear interesting to develop a general theory of stochastic areas in the context of homogeneous K\"ahler manifolds. It is known (see \cite{MR77878} and \cite{MR66011}) that all compact, simply connected, homogeneous K\"ahler manifolds are generalized flag manifolds and such spaces have been extensively studied in the literature.

A generalized flag manifold is a homogeneous space of the form $\G/\mathbf{K}$ where $\G$ is a compact Lie group and $\mathbf{K}$ is  the centralizer of a torus in $\G$. The list of all compact generalized flag manifolds associated with the classical Lie groups is as follows:

\begin{itemize}
\item \hspace{.1in} \textbf{Type $A_{n}$}: For $n=m+\sum n_i$, $1<n_1<\cdots<n_k$
\[
\mathbf{SU}(n)/\mathbf{S} \left( \mathbf{U}(n_1) \times \cdots \times \mathbf{U}(n_k) \times \mathbf{U}(1)^m \right)
\]
\item \hspace{.1in} \textbf{Type $B_{n}$}: 
\[
\mathbf{SO}(2n+1)/ \mathbf{U}(n_1) \times \cdots \times \mathbf{U}(n_k) \times \mathbf{U}(1)^m \times \mathbf{SO}(2\ell+1) 
\]
\item \hspace{.1in} \textbf{Type $C_{n}$}: 
\[
\mathbf{Sp}(n)/ \mathbf{U}(n_1) \times \cdots \times \mathbf{U}(n_k) \times \mathbf{U}(1)^m \times \mathbf{Sp}(\ell) 
\]
\item \hspace{.1in} \textbf{Type $D_{n}$}: 
\[
\mathbf{SO}(2n)/ \mathbf{U}(n_1) \times \cdots \times \mathbf{U}(n_k) \times \mathbf{U}(1)^m \times \mathbf{SO}(2\ell) 
\]
\end{itemize}
For the exceptional Lie groups $\mathbf{G}_2$, $\mathbf{F}_4$, $\mathbf{E}_6$, $\mathbf{E}_7$, $\mathbf{E}_8$ there are 3,11,16,31,40 non-isomorphic generalized flag manifolds respectively.

Generalized flag manifolds are naturally equipped with a K\"ahler Einstein metric, see Chapter 8 in \cite{MR2011126},  and the associated $\mathbb{S}^1$ Boothby-Wang fibrations have been studied in \cite{MR3943490}. In particular, the area form $\alpha$ on a generalized flag manifold $\G/\mathbf{K}$ can be computed using the theory of representation of Lie algebras.
For this form, it appears reasonable to infer that a L\'evy area type formula should be available for 
\[
\mathbb{E} \left( e^{i \int_{w[0,t]} \alpha} \mid w(t)=x \right)
\]
where $w$ is a Brownian motion on $\G/\mathbf{K}$.

\subsection{Positive and negative 3-Sasakian manifolds and quaternionic stochastic areas}\label{3Sasakian manifold}
The 3-Sasakian structures are the quaternionic analogue of the Sasakian structures described in the previous section. Consider a smooth $(4n+3)$-dimensional Riemannian manifold $(\M,g)$, admitting three distinct K-contact structures i.e.\ non-degenerate one-forms $\theta_\alpha$, for $\alpha=1,2,3$ such that $(\M,\theta_\alpha,g)$ is a contact Riemannian manifold and each Reeb vector field $Z_\alpha$ is Killing for the Riemannian metric $g$. Furthermore, we assume that
\begin{equation}\label{eq:conditions3K}
(a)\quad g(Z_\alpha,Z_\beta) = \delta_{\alpha\beta}, \qquad (b)\quad [Z_\alpha,Z_\beta] = 2\epsilon_{\alpha\beta\gamma} Z_\gamma, 
\end{equation}
where $\epsilon_{\alpha\beta\gamma}$ denotes the Levi-Civita symbol. We call $(\M,g)$ a 3K-contact (resp.\ negative 3K-contact) structure if for distinct $\alpha,\beta,\gamma\in \{1,2,3\}$ it holds
\begin{equation}
J_\alpha J_\beta =\epsilon_{\alpha\beta\gamma} J_\gamma, \qquad (\text{resp. } J_\alpha J_\beta =-\epsilon_{\alpha\beta\gamma} J_\gamma),
\end{equation}
and $J_\alpha$ is defined from $\theta_\alpha$ as in \eqref{def J contact}. The bundle generated by the Reeb vector fields $\mathcal V = \mathrm{span}\{Z_1,Z_2,Z_3\}$ is integrable and  letting
\begin{equation}
\mathcal{H} = \bigcap_{\alpha=1}^3 \ker\theta_\alpha,
\end{equation}
we have $T_p \M = \mathcal{H}_p \oplus \mathcal V_p$, with $\mathcal{H} \perp \mathcal V$. On a 3K-contact Riemannian manifold, one can  locally equip $\M$ with a principal bundle $\pi: \M \to \B$, where $\B$ is the quotient space of $\M$ by the isometric action of the 3-parameter group generated by $Z_1,Z_2,Z_3$. The quaternionic contact form 
\[
\theta= \theta_1 I +\theta_2 J +\theta_3 K
\]
where  $I,J,K \in \mathfrak{su}(2)$ are given by
\[
I=\left(
\begin{array}{ll}
i & 0 \\
0&-i 
\end{array}
\right), \quad 
J= \left(
\begin{array}{ll}
0 & 1 \\
-1 &0 
\end{array}
\right), \quad 
K= \left(
\begin{array}{ll}
0 & i \\
i &0 
\end{array}
\right)
\]
is a compatible connection and $\pi$ is a totally geodesic submersion when $\B$ is equipped with the push-forward of the metric $g$. We note that the structure group $\G$ is $\mathbf{SU}(2)$. 3-Sasakian manifolds (resp. 3-negative Sasakian manifolds) are the $3K$-contact manifolds (resp. negative 3K-contact manifolds) for which $J$-bundle generated by $J_1,J_2,J_3$ satisfies some integrability condition (see \cite{MR1798609}). In that case the base manifold $\B$ is a quaternionic K\"ahler manifold whose quaternionic K\"ahler form is locally given by $d\alpha$ and  the stochastic line integral $ \int_{B[0,t]} \alpha$ is  interpreted as a quaternionic generalized stochastic area process. 
The above setup describes the following cases studied in the monograph:
\begin{itemize}
\item \hspace{.1in} $\M$ is the $4n+3$-dimensional sphere $\mathbb{S}^{4n+3}$ equipped with its  3 canonical contact forms coming from the isometric action of $\mathbf{SU}(2)$ on $\mathbb{S}^{4n+3}$. In that case $\B$ is the quaternionic projective space $\mathbb{H}P^n$, $\G=\mathbf{SU}(2)$ and the stochastic line integral $ \int_{B[0,t]} \alpha$ is the  quaternionic stochastic area of Definition \ref{quaternionic stochastic area sphere} .
\item \hspace{.1in} $\M$ is the $4n+3$-dimensional quaternionic anti-de Sitter space $\mathbf{AdS}^{4n+3}(\mathbb{H})$ equipped with its  3 canonical contact forms coming from the isometric action of $\mathbf{SU}(2)$ on $\mathbf{AdS}^{4n+3}(\mathbb{H})$. In that case $\B$ is the quaternionic hyperbolic space $\mathbb{H}H^n$, $\G=\mathbf{SU}(2)$ and the stochastic line integral $ \int_{B[0,t]} \alpha$ is the quaternionic stochastic area of Definition \ref{quaternionic stochastic area hyperbolic}.
\end{itemize}

\subsection{Horizontal Laplacian}\label{Horizontal Laplacian bundle}

We now come back to the general setup of section \ref{skew general bundle}. For explicit computations, it is useful to have a formula for the horizontal Laplacian in the local trivialization $(x,g), x \in U \subset \B,  g \in \G$ described above. If $(x^1,\dots,x^n)$ is a coordinate system on $U$, with a slight abuse of notation it will be convenient to still write $x^i$ for the lifted coordinate.

The horizontal lift  to $\M$ of $\frac{\partial}{\partial x^i}$ is given in the trivialization by $\frac{\partial}{\partial x^i}-\cf \left( \frac{\partial}{\partial x^i} \right)$. The Laplacian on $\B$ writes
\[
\Delta_{\B}=\sum_{i,j=1}^n h^{ij}\frac{\partial^2}{\partial x^i \partial x^j}+\frac{1}{\sqrt{\mathrm{det}\, h}} \sum_{i,j=1}^n \frac{\partial}{\partial x^i} \left( h^{ij}\sqrt{\mathrm{det}\, h} \right) \frac{\partial}{\partial x^j}
\]
where $h^{ij}=h(dx^i,dx^j)$ is the inverse of the matrix $h_{ij}= h\left( \frac{\partial}{\partial x^i},\frac{\partial}{\partial x^i}\right)$ and $\mathrm{det}\, h$ denotes the determinant of the matrix $h^{ij}$. 
We deduce that the horizontal Laplacian is given by
\begin{align*}
\Delta_{\mathcal H}  &= \sum_{i,j=1}^n h^{ij} \left(\frac{\partial}{\partial x^i}-\cf \left( \frac{\partial}{\partial x^i} \right)\right)\left(\frac{\partial}{\partial x^j}-\cf \left( \frac{\partial}{\partial x^j} \right)\right) \\
 & +\frac{1}{\sqrt{\mathrm{det}\, h}} \sum_{i,j=1}^n \frac{\partial}{\partial x^i} \left( h^{ij}\sqrt{\mathrm{det}\, h} \right) \left(\frac{\partial}{\partial x^j}-\cf \left( \frac{\partial}{\partial x^j} \right)\right) \\
  &=\sum_{i,j=1}^n h^{ij}\frac{\partial^2}{\partial x^i \partial x^j}- \sum_{i,j=1}^n  h^{ij}\left(\frac{\partial}{\partial x^i} \cf \left( \frac{\partial}{\partial x^j} \right)+\cf \left( \frac{\partial}{\partial x^i} \right)\frac{\partial}{\partial x^j}\right) \\
  &+\sum_{i,j=1}^n h^{ij} \cf \left( \frac{\partial}{\partial x^i} \right)\cf \left( \frac{\partial}{\partial x^j} \right) +\frac{1}{\sqrt{\mathrm{det}\, h}} \sum_{i,j=1}^n \frac{\partial}{\partial x^i} \left( h^{ij}\sqrt{\mathrm{det}\, h} \right) \left(\frac{\partial}{\partial x^j}-\cf \left( \frac{\partial}{\partial x^j} \right)\right)
\end{align*}
Similarly, we can derive an expression for the norm of the horizontal gradient: For $f \in C^\infty (\M)$
\begin{align*}
{| \nabla_\mathcal{H} f |}^2&:=g(\nabla_\mathcal{H} f ,\nabla_\mathcal{H} f ) \\
 &=\sum_{i,j=1}^n h^{ij}\frac{\partial f}{\partial x^i}  \frac{\partial f}{\partial x^j}-2 \sum_{i,j}^n h^{ij} \frac{\partial f}{\partial x^i} \cf \left( \frac{\partial}{\partial x^j} \right)f+\sum_{i,j=1}^n h^{ij} \cf \left( \frac{\partial}{\partial x^i} \right) f \, \cf \left( \frac{\partial}{\partial x^j} \right)f.
\end{align*}

Those expressions can be further simplified if more assumptions are made.

\begin{Example}(Radial part of the horizontal Laplacian on Abelian bundles)

We assume here that the Lie algebra $\mathfrak g$ is one-dimensional, i.e. the Lie group $\G$ is either $\mathbb R$ or $\mathbb S^1$.  For instance this is the case if $(\M,\theta,g)$ is a Sasakian manifold as in Section  \ref{Sasakian manifold}. 
In that case, the fibers are one-dimensional and we can consider the fiber coordinate $z$ such that $\theta \left( \frac{\partial}{\partial z} \right)  =1$. The  connection form $\theta$ can  be written as
\[
\theta =dz+\sum_{i=1}^n \theta_i dx^i
\]
for some  smooth functions $\theta_i$ which do not depend on $z$. The one-form $\alpha$ on $\B$ is given by
\[
\alpha=\sum_{i=1}^n \theta_i dx^i.
\]
We note that $\cf \left( \frac{\partial}{\partial x^i} \right)=\theta_i \frac{\partial}{\partial z}$.
We will next assume that the base space $(\B,h)$ is a rank-one symmetric space, which implies that the isometric group of $\B$ acts transitively on $\B$. We denote by $r$ the radial variable (distance from a fixed point within the chart) and are interested in computing the norm of the horizontal gradient and the  horizontal Laplacian $\Delta_{\mathcal H}$ on functions $f$ depending on $(r,z)$ only. For such functions, since $\B$ is a rank-one symmetric space we have first
\[
\sum_{i,j=1}^n h^{ij}\frac{\partial f}{\partial x^i}  \frac{\partial f}{\partial x^j}=\left( \frac{\partial f}{\partial r} \right)^2.
\]
Then,
\begin{align*}
\sum_{i,j=1}^n h^{ij} \frac{\partial f}{\partial x^i} \cf \left( \frac{\partial}{\partial x^j} \right)f &=\sum_{i,j=1}^n h^{ij} \theta_j  \frac{\partial f}{\partial x^i} \frac{\partial f}{\partial z} \\
 &=\sum_{i,j=1}^n h (dx^i,dx^j) \theta_j  \frac{\partial f}{\partial x^i} \frac{\partial f}{\partial z} \\
 &=h \left( \sum_{i=1}^n  \frac{\partial f}{\partial x^i} \, dx^i , \sum_{i=1}^n \theta_i dx^i \right)\frac{\partial f}{\partial z} \\
  &=h ( df ,\alpha) \frac{\partial f}{\partial z}= h ( dr ,\alpha) \frac{\partial f}{\partial z} \frac{\partial f}{\partial r}.
\end{align*}
We now note that from Gauss lemma, $ h ( dr ,\alpha) =\alpha \left(\frac{\partial}{\partial r} \right)$ and therefore
\[
\sum_{i,j=1}^n h^{ij} \frac{\partial f}{\partial x^i} \cf \left( \frac{\partial}{\partial x^j} \right)f =\alpha \left(\frac{\partial}{\partial r} \right) \frac{\partial f}{\partial z} \frac{\partial f}{\partial r}.
\]
Finally, one has
\begin{align*}
\sum_{i,j=1}^n h^{ij} \cf \left( \frac{\partial}{\partial x^i} \right) f \, \cf \left( \frac{\partial}{\partial x^j} \right)f& =\left(\sum_{i,j=1}^n h^{ij} \theta_i \theta_j \right) \left(\frac{\partial f}{\partial z}\right)^2 \\
=&h (\alpha,\alpha)\left(\frac{\partial f}{\partial z}\right)^2 ={| \alpha |}^2 \left(\frac{\partial f}{\partial z}\right)^2.
\end{align*}
One concludes
\[
{| \nabla_\mathcal{H} f |}^2=\left( \frac{\partial f}{\partial r} \right)^2-2 \alpha \left(\frac{\partial}{\partial r} \right) \frac{\partial f}{\partial z} \frac{\partial f}{\partial r}+{| \alpha |}^2 \left(\frac{\partial f}{\partial z}\right)^2.
\]
To compute the horizontal Laplacian, the easiest way is to observe that the volume measure of $\M$ is given in the coordinates $(x^1,\dots,x^n,\theta)$ by $\sqrt{\mathrm{det}\, h} \, dx d\theta$, from which we deduce using the above formula for the horizontal gradient that
\[
\Delta_{\mathcal H} f =\frac{\partial^2 f}{\partial r^2}+\frac{1}{v}\frac{\partial v}{\partial r} \frac{\partial f}{\partial r}-2 \alpha \left(\frac{\partial}{\partial r} \right) \frac{\partial^2 f}{\partial z\partial r}+{| \alpha |}^2 \frac{\partial^2 f}{\partial z^2}
\]
where $v=\sqrt{\mathrm{det}\, h}$. 

The above results are easy to be generalized to the case where $\mathfrak{g}=\mathbb R^m$ and $\B$ is still a rank-one symmetric space. In that case, we have $m$ fiber coordinates $z^1,\dots,z^m$. The  connection form $\theta$ can then be written as $\theta=(\theta^1,\dots,\theta^m)$ where
\[
\theta^k =dz^k+\sum_{i=1}^n \theta^k_i dx^i
\]
for some  smooth functions $\theta^k_i$ which do not depend on $z$. The one-form $\alpha$ on $\B$ is given by $\alpha=(\alpha^1,\dots,\alpha^m)$ where
\[
\alpha^k=\sum_{i=1}^n \theta^k_i dx^i.
\]
One has then
\[
\cf \left( \frac{\partial}{\partial x^i} \right)=\sum_{k=1}^m \theta^k_i \frac{\partial}{\partial z^k}.
\]
One can then compute as before
\[
{| \nabla_\mathcal{H} f |}^2=\left( \frac{\partial f}{\partial r} \right)^2-2\sum_{k=1}^m  \alpha^k \left(\frac{\partial}{\partial r} \right) \frac{\partial f}{\partial z^k} \frac{\partial f}{\partial r}+\sum_{k,l=1}^m h(\alpha^k,\alpha^l) \frac{\partial f}{\partial z^k}\frac{\partial f}{\partial z^l}
\]
and
\[
\Delta_{\mathcal H} f =\frac{\partial^2 f}{\partial r^2}+\frac{1}{v}\frac{\partial v}{\partial r} \frac{\partial f}{\partial r}-2\sum_{k=1}^m  \alpha^k \left(\frac{\partial}{\partial r} \right) \frac{\partial^2 f}{\partial z^k \partial r}+\sum_{k,l=1}^m h(\alpha^k,\alpha^l) \frac{\partial^2 f}{\partial z^k \partial z^l}
\]
where $v=\sqrt{\mathrm{det}\, h}$. 
\end{Example}

\begin{Example}(Radial part of the horizontal Laplacian on $\mathbf{SU}(2)$-bundles)\label{radial part SU2 bundle}

We assume here that the Lie algebra $\mathfrak g$ is $\mathfrak{su}(2)$, i.e. the Lie group $\G$ is $\mathbf{SU}(2)$.  This is for instance the case if $(\M,\theta,g)$ is a 3-Sasakian manifold as in Section  \ref{3Sasakian manifold}. The connection form can be written as
\[
\theta= \theta^1 I +\theta^2 J +\theta^3 K
\]
where  $I,J,K \in \mathfrak{su}(2)$ are given by
\[
I=\left(
\begin{array}{ll}
i & 0 \\
0&-i 
\end{array}
\right), \quad 
J= \left(
\begin{array}{ll}
0 & 1 \\
-1 &0 
\end{array}
\right), \quad 
K= \left(
\begin{array}{ll}
0 & i \\
i &0 
\end{array}
\right).
\]
Let us denote $Z_1=\hat{I}$, $Z_2=\hat{J}$ and $Z_3=\hat{K}$ (see notation \ref{fundamental vertical field}). We note  that 
\[
\cf \left( \frac{\partial}{\partial x^i} \right)=\theta_i^1 Z_1+\theta_i ^2 Z_2+\theta_i^3 Z_3
\]
for some functions $\theta_i^k$, $k=1,2,3$, which depend only on $x$ by $\mathbf{SU}(2)$ compatibility of $\theta$. Let us observe that
\[
\alpha=\left(\sum_{i=1}^n \theta_i^1dx^i\right) I +\left(\sum_{i=1}^n \theta_i^2dx^i\right) J +\left(\sum_{i=1}^n \theta_i^3dx^i\right) K
\]
We will denote $\alpha^l=\sum_{i=1}^n \theta_i^ldx^i$, $l=1,2,3$.
As before, we  assume that the base space $(\B,h)$ is a rank-one symmetric space, and denote by $r$ the radial variable. We are interested in computing the norm of the horizontal gradient and the  horizontal Laplacian $\Delta_{\mathcal H}$ on functions $f$ depending on $(r,z)$ only. For such functions, since $\B$ is a rank-one symmetric space we still have
\[
\sum_{i,j=1}^n h^{ij}\frac{\partial f}{\partial x^i}  \frac{\partial f}{\partial x^j}=\left( \frac{\partial f}{\partial r} \right)^2.
\]
Then,
\begin{align*}
\sum_{i,j=1}^n h^{ij} \frac{\partial f}{\partial x^i} \cf \left( \frac{\partial}{\partial x^j} \right)f &=\sum_{i,j=1}^n\sum_{l=1}^3 h^{ij} \theta_j^l  \frac{\partial f}{\partial x^i} Z_l f \\
 &=\sum_{i,j=1}^n\sum_{l=1}^3 h (dx^i,dx^j) \theta_j^l  \frac{\partial f}{\partial x^i} Z_l f \\
 &=\sum_{l=1}^3 h \left( \sum_{i=1}^n  \frac{\partial f}{\partial x^i} \, dx^i , \sum_{i=1}^n \theta_i^l dx^i \right)Z_l f \\
  &=\sum_{l=1}^3 h ( df ,\alpha^l) Z_l f= \sum_{l=1}^3 h ( dr ,\alpha^l) Z_l f \frac{\partial f}{\partial r}.
\end{align*}
From Gauss lemma, $ h ( dr ,\alpha^l) =\alpha^l \left(\frac{\partial}{\partial r} \right)$ and therefore
\[
\sum_{i,j}^n h^{ij} \frac{\partial f}{\partial x^i} \cf \left( \frac{\partial}{\partial x^j} \right)f =\hat{\alpha} \left(\frac{\partial}{\partial r} \right) f \frac{\partial f}{\partial r}
\]
where
\[
\hat{\alpha}\left(\frac{\partial}{\partial r} \right)=\alpha^1 \left(\frac{\partial}{\partial r} \right)Z_1 +\alpha^2 \left(\frac{\partial}{\partial r} \right)Z_2 +\alpha^3 \left(\frac{\partial}{\partial r} \right)Z_3.
\]
Finally, one has
\begin{align*}
\sum_{i,j=1}^n h^{ij} \cf \left( \frac{\partial}{\partial x^i} \right) f \, \cf \left( \frac{\partial}{\partial x^j} \right)f& =\left(\sum_{l,m=1}^3\sum_{i,j=1}^n h^{ij} \theta^l_i \theta^m_j \right) Z_l f Z_m f \\
=&\sum_{l,m=1}^3 h (\alpha^l,\alpha^m)Z_l f Z_m f.
\end{align*}
One concludes
\[
{| \nabla_\mathcal{H} f |}^2=\left( \frac{\partial f}{\partial r} \right)^2-2 \hat{\alpha} \left(\frac{\partial}{\partial r} \right)f \frac{\partial f}{\partial r}+\sum_{l,m=1}^3 h (\alpha^l,\alpha^m)Z_l f Z_m f.
\]
To compute the horizontal Laplacian, we observe that the volume measure of $\M$ is given in the trivialization $(x^1,\dots,x^n, g)$ by $\sqrt{\mathrm{det}\, h} \, dx d\nu$ where $\nu$ is the Haar measure on $\mathbf{SU}(2)$, from which we deduce  that
\[
\Delta_{\mathcal H} f =\frac{\partial^2 f}{\partial r^2}+\frac{1}{v}\frac{\partial v}{\partial r} \frac{\partial f}{\partial r}-2 \hat{\alpha} \left(\frac{\partial}{\partial r} \right) \frac{\partial }{\partial r} f+\sum_{l,m=1}^3 h (\alpha^l,\alpha^m)Z_l  Z_m f.
\]
\end{Example}

 \chapter{Horizontal Brownian motions of the Hopf and anti-de Sitter fibrations}\label{sec-BM-complex}

In this chapter, and throughout the monograph if $z_j=x_j+iy_j$ is a local system of  holomorphic coordinates on a complex manifold $\M$, we will denote
\[
\frac{\partial }{\partial z_j }= \frac{1}{2} \left(  \frac{\partial }{\partial x_j } -i \frac{\partial }{\partial y_j }\right) , \quad \frac{\partial }{\partial \bar{z}_j }= \frac{1}{2} \left(  \frac{\partial }{\partial x_j } +i \frac{\partial }{\partial y_j }\right)
\]
and
\[
dz_j= dx_j+idy_j, \quad d\overline{z}_j= dx_j-idy_j.
\]
If
\[
h=\sum_{i,j=1}^n h\left(\frac{\partial }{\partial z_i }, \frac{\partial }{\partial \bar{z}_j } \right) dz_i \otimes d\bar{z}_j 
\]
is an Hermitian metric, then the associated Riemannian metric on $\M$ is given by
\[
\frac{1}{2}\sum_{i,j=1}^n h\left(\frac{\partial }{\partial z_i }, \frac{\partial }{\partial \bar{z}_j } \right) (dz_i \otimes d\bar{z}_j+d\bar{z}_j \otimes dz_i )
\]
and the associated Hermitian form is given by
\[
\omega=\frac{i}{2}\sum_{i,j=1}^n h \left(\frac{\partial}{\partial z_i}, \frac{\partial}{\partial \overline{z}_j} \right) dz_i \wedge d\bar{z}_j.
\]
The metric $h$ is called a K\"ahler metric if $\omega$ is closed, i.e. $d\omega=0$ where $d$ is the exterior derivative. The Dolbeault operators on forms are denoted $\partial$ and $\bar{\partial}$: For instance if $f$ is a smooth function,
\[
\partial f=\sum_j \frac{\partial f }{\partial z_j }dz_j
\]
and
\[
\bar{\partial} f=\sum_j \frac{\partial f }{\partial \bar{z}_j } d\overline{z}_j .
\]
The Dolbeault operators satisfy the relations
\[
\partial^2=\bar{\partial}^2=\partial \bar{\partial}+\bar{\partial} \partial=0
\]
and
\[
d=\partial+\bar{\partial}.
\]

\section{Complex Hopf fibration}

\subsection{Horizontal Brownian motion}
 The complex projective space $\mathbb{C}P^n$ can be defined as the set of complex lines in $\mathbb{C}^{n+1}$. To parametrize points in $\mathbb{C}P^n$, it is convenient to  use the local affine coordinates given by $w_j=z_j/z_{n+1}$, $1 \le j \le n$, $z \in \mathbb{C}^{n+1}$, $z_{n+1}\neq 0$. In these coordinates, the Riemannian structure of $\mathbb{C}P^n$ is easily worked out from the standard Riemannian structure of the Euclidean sphere. More precisely, let us  consider the unit sphere
\[
\bS^{2n+1}=\lbrace z=(z_1,\dots,z_{n+1})\in \mathbb{C}^{n+1}, | z |^2:=\sum_{i=1}^{n+1}|z_i|^2 =1\rbrace,
\]
then the map $\bS^{2n+1} \setminus\{z_{n+1}=0 \}  \to \mathbb{C}^n$, $ (z_1,\dots,z_{n+1}) \to (z_1/z_{n+1},\cdots,z_n/z_{n+1})$ is a submersion. Therefore, $\mathbb{C}^n$ is diffeomorphic to a dense open subset of $\mathbb{C}P^n$.  The unique Riemannian metric on $\mathbb{C}^n$ that makes the above submersion a  Riemannian submersion  induces the so-called the Fubini Study metric on $\mathbb{C}P^n$. Explicitly, one has the following:

\begin{theorem}\label{Laplace complex projective}
In affine coordinates, the Laplace-Beltrami operator on $\mathbb{C}P^n$ is given by:

\begin{equation}\label{eq-Laplacian-CPn1}
\Delta_{\mathbb{C}P^n}=4(1+|w|^2)\sum_{k=1}^n \frac{\partial^2}{\partial w_k \partial\overline{w}_k}+ 4(1+|w|^2)\mathcal{R} \overline{\mathcal{R}}
\end{equation}
where $|w|^2=\sum_{k=1}^n | w_k|^2$ and
\[
\mathcal{R}=\sum_{j=1}^n w_j \frac{\partial}{\partial w_j}.
\] 
Consequently, the Fubini Study Hermitian metric $h$ on $\mathbb{C}P^n$ satisfies
\[
h \left(\frac{\partial}{\partial w_k}, \frac{\partial}{\partial \overline{w}_j} \right)= \frac{(1+|w|^2) \delta_{kj} -\overline{w}_k w_j}{(1+|w|^2)^2},\quad 1\le k,j\le n.
\]
\end{theorem}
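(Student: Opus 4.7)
The plan is to exploit the Hopf fibration $\pi:\bS^{2n+1}\to\mathbb{C}P^n$, which is a Riemannian submersion with totally geodesic fibers, so that by Remark \ref{harmonic submersion} one has the intertwining
\[
(\Delta_{\mathbb{C}P^n}f)\circ\pi=\Delta_{\bS^{2n+1}}(f\circ\pi)
\]
for every $f\in C^\infty(\mathbb{C}P^n)$. Thus the computation reduces to evaluating $\Delta_{\bS^{2n+1}}(f\circ\pi)$ on the affine chart $\{z_{n+1}\neq 0\}$.

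First, I would extend $f\circ\pi$ to a $\mathbb{C}^*$-invariant function on $\mathbb{C}^{n+1}\setminus\{0\}$ by setting $F(z):=f(z_1/z_{n+1},\dots,z_n/z_{n+1})$. This $F$ is radially constant and coincides with $f\circ\pi$ on the unit sphere. Combining the polar decomposition
\[
\Delta_{\mathbb{R}^{2n+2}}=\frac{\partial^2}{\partial r^2}+\frac{2n+1}{r}\frac{\partial}{\partial r}+\frac{1}{r^2}\Delta_{\bS^{2n+1}}
\]
with $\partial_r F=0$ gives $\Delta_{\bS^{2n+1}}F=\Delta_{\mathbb{R}^{2n+2}}F\big|_{\bS^{2n+1}}=4\sum_{\ell=1}^{n+1}\partial_{z_\ell}\partial_{\overline z_\ell}F\big|_{\bS^{2n+1}}$.

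Next, I would unwind the change of variables $w_k=z_k/z_{n+1}$. Because the $w_k$ are holomorphic in $z$, the chain rule yields $\partial_{z_k}F=\tfrac{1}{z_{n+1}}\partial_{w_k}f$ for $k\le n$ and $\partial_{z_{n+1}}F=-\tfrac{1}{z_{n+1}}\mathcal{R}f$, with analogous antiholomorphic identities obtained by conjugation. Iterating, holomorphy kills all cross terms and one finds
\[
\partial_{z_k}\partial_{\overline z_k}F=\frac{1}{|z_{n+1}|^2}\partial_{w_k}\partial_{\overline w_k}f\quad(k\le n),\qquad \partial_{z_{n+1}}\partial_{\overline z_{n+1}}F=\frac{1}{|z_{n+1}|^2}\mathcal{R}\overline{\mathcal{R}}f.
\]
Summing and using the sphere identity $|z_{n+1}|^2=(1+|w|^2)^{-1}$, which follows from $\sum_\ell|z_\ell|^2=|z_{n+1}|^2(1+|w|^2)=1$, produces formula \eqref{eq-Laplacian-CPn1}.

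For the Hermitian metric, I would invoke the standard fact that on a K\"ahler manifold the Laplace-Beltrami operator in holomorphic coordinates writes $\Delta=4\sum_{k,j}h^{k\overline j}\partial_{w_k}\partial_{\overline w_j}$ where $(h^{k\overline j})$ is the inverse of $(h_{k\overline j})$. Rewriting $\mathcal{R}\overline{\mathcal{R}}=\sum_{k,j}w_k\overline w_j\,\partial_{w_k}\partial_{\overline w_j}$ and matching coefficients in \eqref{eq-Laplacian-CPn1} yields $h^{k\overline j}=(1+|w|^2)(\delta_{kj}+w_k\overline w_j)$. A Sherman--Morrison inversion of this rank-one perturbation of a scalar matrix, equivalent to the algebraic identity
\[
\sum_{j=1}^{n}\bigl[(1+|w|^2)\delta_{kj}-\overline w_k w_j\bigr]\bigl[\delta_{lj}+w_l\overline w_j\bigr]=(1+|w|^2)\delta_{kl},
\]
then gives the stated expression for $h_{k\overline j}$. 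The main obstacle is the bookkeeping in the change of variables step, particularly verifying that the apparently ``boundary'' derivative $\partial_{z_{n+1}}\partial_{\overline z_{n+1}}F$ assembles precisely into $|z_{n+1}|^{-2}\mathcal{R}\overline{\mathcal{R}}f$; once that cancellation is carried out the rest is a routine algebraic verification.
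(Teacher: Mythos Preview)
Your proof is correct and follows essentially the same route as the paper: pull back along the Hopf submersion, use radial invariance to replace the sphere Laplacian by the Euclidean one, perform the same chain-rule computation in $w_k=z_k/z_{n+1}$, and then invert the cometric via the Sherman--Morrison formula. One small caution: invoking the K\"ahler form of the Laplacian to read off $h^{k\overline j}$ is slightly circular at this stage (the K\"ahler property is established \emph{after} this theorem in the paper); the paper instead identifies the cometric directly from the principal symbol, which works for any Hermitian metric and avoids the issue.
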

\begin{proof}
Since
\[
w_j=z_j/z_{n+1}
\]
one obtains that on $\bS^{2n+1}$,  for $1\leq k\leq n$
\begin{eqnarray*}
\frac{\partial}{\partial z_k} &=&\frac{1}{z_{n+1}} \frac{\partial}{\partial w_k}\\
\frac{\partial}{\partial \overline{z}_k} &=&\frac{1}{\overline{z}_{n+1}} \frac{\partial}{\partial \overline{w}_k} \\
\end{eqnarray*}
and
\begin{eqnarray*}
\frac{\partial}{\partial z_{n+1}} &=&-\frac{1}{z_{n+1}} \left(\sum_{j=1}^n w_j\frac{\partial}{\partial w_j}\right) \\
\frac{\partial}{\partial \overline{z}_{n+1}}  &=& -\frac{1}{\overline{z}_{n+1}} \left(\sum_{j=1}^n\overline{w}_j\frac{\partial}{\partial \overline{w}_j}\right).
\end{eqnarray*}
This yields
\begin{align*}
\sum_{k=1}^{n+1}\left(\frac{\partial^2}{\partial z_k\partial\overline{z}_k}+\frac{\partial^2}{\partial \overline{z}_k\partial z_k}\right) &=\frac{2}{|z_{n+1}|^2}\left(\frac{\partial^2}{\partial w_k\partial\overline{w}_k}+\mathcal{R}\overline{\mathcal{R}}\right) \\
  & =2(1+|w|^2)\left(\frac{\partial^2}{\partial w_k\partial\overline{w}_k}+\mathcal{R}\overline{\mathcal{R}}\right),
\end{align*}
where $\mathcal{R}=\sum_{k=1}^nw_k\frac{\partial}{\partial w_k}$. Therefore
\[
\Delta_{\mathbb{C}P^n}=4(1+|w|^2)\sum_{k=1}^n \frac{\partial^2}{\partial w_k \partial\overline{w}_k}+ 4(1+|w|^2)\mathcal{R} \overline{\mathcal{R}}
\]
and the Fubini Study metric is obtained by inverting the principal symbol of $\Delta_{\mathbb{C}P^n}$. Indeed, from the formula the  principal symbol of $\Delta_{\mathbb{C}P^n}$ is given by the Hermitian matrix $(1+|w|^2)(I_n+\mathrm{w} \mathrm{w}^*)$ where $\mathrm{w}$ is the column  vector in $\mathbb C^n$ with components $(w_1,\cdots,w_n)$. This matrix is the Hermitan cometric matrix, i.e. the inverse of the Hermitian metric matrix. Since the inverse of $I_n+\mathrm{w} \mathrm{w}^*$ is given by $I_n-\frac{\mathrm{w} \mathrm{w}^*}{1+|w|^2}$  the result follows.
\end{proof}

From the expression of the Fubini study metric one deduces that the Hermitian form on $\mathbb{C}P^n$ is given by
\begin{align*}
\omega &=\frac{i}{2}\sum_{k,j=1}^n h \left(\frac{\partial}{\partial w_k}, \frac{\partial}{\partial \overline{w}_j} \right) dw_k \wedge d\bar{w}_j \\
 &=\frac{i}{2}\sum_{k,j=1}^n\frac{(1+|w|^2) \delta_{kj} -\overline{w}_k w_j}{(1+|w|^2)^2} dw_k \wedge d\bar{w}_j
\end{align*}
Consider then the function 
\[
K(w)=\ln (1+|w|^2).
\]
A computation shows that
\begin{align*}
\frac{i}{2}\partial \bar{\partial}K&=\frac{i}{2}\sum_{k,j=1}^n \frac{(1+|w|^2) \delta_{kj} -\overline{w}_k w_j}{(1+|w|^2)^2} dw_k \wedge d\bar{w}_j=\omega.
\end{align*}
Therefore $d\omega=(\partial +\bar{\partial})\omega=0$ and the Fubini Study metric is K\"ahler. The function $K$ is called a K\"ahler potential.

One can also compute the Laplacian and the Fubini Study metric in the real affine coordinates $w_j=u_j+iv_j$. A straightforward computation yields
\[
\mathcal{R}=\mathcal{R}_1+i\mathcal{R}_2
\]
with
\[
\mathcal{R}_1=\sum_{j=1}^n u_j \frac{\partial}{\partial u_j}+v_j \frac{\partial}{\partial v_j}
\]
\[
\mathcal{R}_2=\sum_{j=1}^n v_j \frac{\partial}{\partial u_j}-u_j \frac{\partial}{\partial v_j}
\]
and
\[
\Delta_{\mathbb{C}P^n}=(1+|w|^2)\left(\sum_{j=1}^n \left( \frac{\partial^2}{\partial u_j^2} +\frac{\partial^2}{\partial v_j^2} \right) + \mathcal{R}_1^2+\mathcal{R}_2^2  \right).
\]
In those real affine coordinates the matrix of the Riemannian cometric is given by
\[
(1+|w|^2) \left(\mathrm{I}_{2n}+\mathrm{R}_1 \mathrm{R}^*_1+\mathrm{R}_2 \mathrm{R}^*_2 \right)
\]
where $\mathrm{R}_i$ is the real valued column vector with the $2n$ components
 \[
 \mathrm{R}_i =\left(  \mathcal{R}_i u_j, \mathcal{R}_i v_j \right)_{1 \le j \le n}.
 \]
 The two vectors $\mathrm{R}_1$ and $\mathrm{R}_2$ are orthogonal and both of them have norm $|w|^2$. It follows that the matrix of the Riemannian metric (which is the inverse of the cometric matrix) is given by
 \[
 \frac{1}{1+|w|^2} \left( \mathrm{I}_{2n}-\frac{1}{1+|w|^2} \left(\mathrm{R}_1 \mathrm{R}^*_1+\mathrm{R}_2 \mathrm{R}^*_2\right)\right).
 \]
 We also note that the determinant of the cometric matrix is $(1+|w|^2)^{2n+2}$ which implies that the volume Riemannian measure is $\frac{dw}{(1+|w|^2)^{n+1}}$.

The affine coordinates do not provide a global chart of $\mathbb{C}P^n$, since an hypersurface at $\infty$ is not described by this chart.  However, the above discussion is the local description of a globally defined Riemannian submersion $\bS^{2n+1} \to \mathbb{C}P^n$, that can be constructed as follows. There is an isometric group action of $\mathbb{S}^1=\mathbf{U}(1)$ on $\bS^{2n+1}$ which is  defined by $$e^{i\theta}\cdot(z_1,\dots, z_n) = (e^{i\theta} z_1,\dots, e^{i\theta} z_n). $$

The quotient space $\bS^{2n+1} / \mathbf{U}(1)$ can be identified with $\mathbb{C}P^n$ and the projection map $$\pi :  \bS^{2n+1} \to \mathbb{C}P^n$$ is a Riemannian submersion with totally geodesic fibers isometric to $\mathbf{U}(1)$. The fibration
\[
\mathbf{U}(1) \to \bS^{2n+1} \to \mathbb{C}P^n
\]
 is called the Hopf fibration. We note that the unitary group $\mathbf{U}(n+1) $ acts transitively and isometrically on $\bS^{2n+1}$. This action commutes with the $\mathbf{U}(1)$ action. Therefore $\mathbf{U}(n+1) $  also acts transitively and isometrically on $\mathbb{C}P^n$. The stabilizer by this action of a point in $\mathbb{C}P^n$ is isomorphic to the group $\mathbf{U}(1)\mathbf{U}(n)$. It follows that $\mathbb{C}P^n$ is a Riemannian homogeneous space which can be identified with the coset space  $\frac{\mathbf{U}(n+1)}{ \mathbf{U}(1)\mathbf{U}(n)}$.

\

The submersion $\pi$ allows to construct the Brownian motion on $\mathbb{C}P^n$ from the Brownian motion on $\bS^{2n+1}$. Indeed,  let $(z(t))_{t \ge 0}$ be a Brownian motion on  $\bS^{2n+1}$ started at the north pole \footnote{We will call north pole the point with complex coordinates $z_1=0,\dots, z_{n+1}=1$. }. Since $\mathbb{P}( \exists t \ge 0, z_{n+1}(t)=0 )=0$, one can use the local description of the submersion $\pi$ to infer that
\begin{align}\label{BMsphere}
w(t)= \left( \frac{z_1(t)}{z_{n+1}(t)} , \dots, \frac{z_n(t)}{z_{n+1}(t)}\right), \quad t \ge 0,
\end{align}
is a Brownian motion on $\mathbb{C}P^n$, see Remark \ref{harmonic submersion}. From Theorem \ref{Laplace complex projective}, the generator of $(w(t))_{t \ge 0}$ is given by
\[
2(1+|w|^2)\sum_{k=1}^n \frac{\partial^2}{\partial w_k \partial\overline{w}_k}+ 2(1+|w|^2)\mathcal{R} \overline{\mathcal{R}}.
\]

\begin{proposition}\label{prop-r-sphere-gene}
The process $r(t)=\arctan |w(t)|$ is a Jacobi diffusion with generator
\[
\frac{1}{2} \left( \frac{\partial^2}{\partial r^2}+((2n-1)\cot r-\tan r)\frac{\partial}{\partial r}\right).
\]
In particular, the density of $r(t)$, $t>0$, is given by the formula
\[
\mathbb{P} (r(t) \in dr)=2 \left(\sum_{m=0}^{+\infty} (2m+n){m+n-1\choose n-1}e^{-2m(m+n)t} P_m^{n-1,0}(\cos 2r) \right)(\sin r)^{2n-1} \cos r \,dr
\]
where
\[
P_m^{n-1,0}(x)=\frac{(-1)^m}{2^m m!(1-x)^{n-1}}\frac{d^m}{dx^m}((1-x)^{n-1+m}(1+x)^{m})
\]
is a Jacobi polynomial.

\end{proposition}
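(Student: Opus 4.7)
The strategy is two-step: first, obtain the generator of $(r(t))_{t\geq 0}$ by computing $\tfrac{1}{2}\Delta_{\mathbb{C}P^n}$ on radial functions of $\rho:=|w|$; then derive the density via the spectral expansion of that one-dimensional operator after recognizing it as a classical Jacobi operator.

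\textbf{Step 1: the generator.} Using the explicit formula
\[
\Delta_{\mathbb{C}P^n} = (1+\rho^2)\Bigl[\sum_{j=1}^n\bigl(\tfrac{\partial^2}{\partial u_j^2}+\tfrac{\partial^2}{\partial v_j^2}\bigr) + \mathcal{R}_1^2 + \mathcal{R}_2^2\Bigr]
\]
from the preceding discussion, direct chain-rule computations give, for any radial $f(\rho)$,
\[
\sum_{j=1}^n\Bigl(\tfrac{\partial^2}{\partial u_j^2}+\tfrac{\partial^2}{\partial v_j^2}\Bigr) f(\rho) = f''(\rho)+\tfrac{2n-1}{\rho}f'(\rho),\qquad \mathcal{R}_1 f(\rho)=\rho f'(\rho),\qquad \mathcal{R}_2 f(\rho)=0,
\]
so that $\mathcal{R}_1^2 f(\rho)=\rho^2 f''(\rho)+\rho f'(\rho)$. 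Summing yields $\Delta_{\mathbb{C}P^n}f(\rho)=(1+\rho^2)^2 f''(\rho)+(1+\rho^2)\bigl(\tfrac{2n-1}{\rho}+\rho\bigr)f'(\rho)$. Setting $\rho=\tan r$, so that $d\rho/dr=1+\rho^2$, a brief simplification reduces this to
\[
\tfrac{1}{2}\Delta_{\mathbb{C}P^n}f(\tan r) = \tfrac{1}{2}\bigl(g''(r)+((2n-1)\cot r-\tan r)g'(r)\bigr),
\]
for $g(r):=f(\tan r)$. Applying It\^o's formula to $r(t) = \arctan|w(t)|$ identifies this as the generator of $(r(t))_{t\geq 0}$.

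\textbf{Step 2: the density.} Under the change of variable $x=\cos 2r$, a short calculation shows that the generator becomes $2\bigl[(1-x^2)\partial_x^2+((1-n)-(n+1)x)\partial_x\bigr]$, which is twice the classical Jacobi operator of parameters $(\alpha,\beta)=(n-1,0)$. Hence $P_m^{n-1,0}(\cos 2r)$ is an eigenfunction with eigenvalue $-2m(m+n)$. The reversing measure, obtained by exponentiating $\int((2n-1)\cot r-\tan r)\,dr$, is $\nu(dr)\propto (\sin r)^{2n-1}\cos r\,dr$ on $[0,\pi/2]$. Since $r(0)=0$ (equivalently $x=1$), the standard spectral expansion yields
\[
\mathbb{P}(r(t)\in dr) = \sum_{m=0}^{+\infty} \frac{P_m^{n-1,0}(1)\,P_m^{n-1,0}(\cos 2r)}{\|P_m^{n-1,0}\|_\nu^2}\,e^{-2m(m+n)t}(\sin r)^{2n-1}\cos r\,dr.
\]
Then $P_m^{n-1,0}(1)=\binom{m+n-1}{n-1}$, and the Jacobi orthogonality relation $\int_{-1}^1[P_m^{n-1,0}(x)]^2(1-x)^{n-1}\,dx = 2^n/(2m+n)$ together with the change of variable $x=\cos 2r$ (which introduces a factor $1/2^{n+1}$) yield $\|P_m^{n-1,0}\|_\nu^2 = 1/(2(2m+n))$. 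Substituting produces exactly the stated density.

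\textbf{Main obstacle.} No step is conceptually deep; the principal risk is bookkeeping -- specifically, correctly identifying the normalization $\|P_m^{n-1,0}\|_\nu^2$ and carefully tracking the Jacobian of the change of variable $x=\cos 2r$, where misplaced factors of $2$ are easy to introduce.
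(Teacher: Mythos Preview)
Your proof is correct and follows essentially the same approach as the paper. The paper's proof is more terse: it states that a direct computation gives the radial form of $\Delta_{\mathbb{C}P^n}$, invokes It\^o's formula, and then defers the density calculation entirely to the Jacobi heat-kernel formulas in Appendix~2; you have simply spelled out both the radial computation and the spectral expansion in detail, and your bookkeeping (including the $\|P_m^{n-1,0}\|_\nu^2 = 1/(2(2m+n))$ step) checks out.
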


\begin{proof}
Let $r=\arctan |w|$. A direct computation shows that the operator 
\[
\Delta_{\mathbb{C}P^n}=4(1+|w|^2)\sum_{k=1}^n \frac{\partial^2}{\partial w_k \partial\overline{w}_k}+ 4(1+|w|^2)\mathcal{R} \overline{\mathcal{R}}
\]
acts on functions depending only on $r$ as
\[
\frac{\partial^2}{\partial r^2}+((2n-1)\cot r-\tan r)\frac{\partial}{\partial r}.
\]
It\^o's formula shows then that  $r(t)=\arctan |w(t)|$ is a Jacobi diffusion with generator
\[
\frac{1}{2} \left( \frac{\partial^2}{\partial r^2}+((2n-1)\cot r-\tan r)\frac{\partial}{\partial r}\right).
\]
The computation for the density of $r(t)$ follows from the known formula for the heat kernel of Jacobi diffusions, see Appendix 2 in Chapter \ref{sec-appendix-2}.
\end{proof}

\begin{remark}
If $(\beta(t))_{t \ge 0}=(\beta_1(t),\dots, \beta_{n+1}(t))_{t \ge 0}$ is a Brownian motion in $\mathbb{C}^{n+1}$, then from a classical skew-product decomposition (see Example \ref{BM on warped product}),
\begin{align*}
\frac{\beta(t)}{| \beta(t) |}=  z\left( \int_0^t \frac{ds}{ | \beta(s)|^2}\right),
\end{align*}
where $z(t)$ is a Brownian motion on the sphere $\bS^{2n+1}$. Therefore we deduce  from \eqref{BMsphere} that
\begin{align}\label{plki}
\frac{ \sqrt{ \sum_{i=1}^n | \beta_i (t)|^2}}{|\beta_{n+1} (t)|  }=\tan r \left(  \int_0^t \frac{ds}{ | \beta(s)|^2} \right).
\end{align}
The process $\sqrt{ \sum_{i=1}^n | \beta_i (t)|^2}$ is a Bessel process with dimension $2n$ and $| \beta_{n+1} (t)|$ is a Bessel process with dimension 2. The equality \eqref{plki} is  a special case of a general skew-product representation of Jacobi processes due to Warren and Yor \cite{WarrenYor}.
\end{remark}

\

Consider now the real one-form $\alpha$ on $\mathbb{C}P^n$ which is given  in local affine coordinates by
\[
\alpha=\frac{i}{2(1+|w|^2)}\sum_{j=1}^n(w_jd\overline{w}_j-\overline{w}_jdw_j)
\]
where $|w|^2=\sum_{j=1}^n |w_j|^2$. We can compute that
\[
\alpha=\frac{1}{2i}(\partial-\bar{\partial})K
\]
where $K(w)=\ln (1+|w|^2)$ is the K\"ahler potential of the Fubini Study metric. Therefore we have
\[
d\alpha=(\partial +\bar{\partial})\alpha=i \partial \bar{\partial} K=2\omega
\]
where $\omega$ is the K\"ahler form. The following definition is thus natural:

\begin{definition}\label{stochastic area sphere}
Let $(w(t))_{t \ge 0}$ be a Brownian motion on $\mathbb{C}P^n$ started at 0. The generalized stochastic area process of $(w(t))_{t \ge 0}$ is defined by
\[
\theta(t)=\int_{w[0,t]} \alpha=\frac{i}{2}\sum_{j=1}^n \int_0^t \frac{w_j(s)  d\overline{w}_j(s)-\overline{w}_j(s) dw_j(s)}{1+|w(s)|^2},
\]
where the above stochastic integrals are understood in the Stratonovich, or equivalently in the It\^o sense.
\end{definition}
 
 \begin{theorem}\label{thm-hor-BM-X}
 Let $(w(t))_{t \ge 0}$ be a Brownian motion on $\mathbb{C}P^n$ started at 0 and $(\theta(t))_{t\ge 0}$ be its stochastic area process. The $\mathbb{S}^{2n+1}$-valued diffusion process
 \[
 X(t)=\frac{e^{-i\theta(t)} }{\sqrt{1+|w(t)|^2}} \left( w(t),1\right), \quad t \ge 0
 \]
 is a horizontal Brownian motion for the submersion $\pi$.
 \end{theorem}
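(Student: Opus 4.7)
The plan is to recognize this as a direct application of the skew-product decomposition in Theorem \ref{skew-product principal bundle} to the principal $\mathbf{U}(1)$-bundle $\pi: \mathbb{S}^{2n+1} \to \mathbb{C}P^n$, so no curvature-specific computations are really needed beyond checking one identity.

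First I would set up the trivialization. On the open set $\{z_{n+1}\neq 0\}\subset \mathbb{S}^{2n+1}$, the map $\Phi(w,e^{i\phi})=\frac{e^{i\phi}}{\sqrt{1+|w|^2}}(w,1)$ is a trivialization associated with the section $s(w)=\frac{1}{\sqrt{1+|w|^2}}(w,1)$. The standard $\mathbf{U}(1)$-compatible connection form on $\mathbb{S}^{2n+1}$ (whose kernel is the horizontal distribution for the Fubini--Study submersion) is the $i\mathbb{R}$-valued one-form $\Theta=\tfrac{1}{2}\sum_{j=1}^{n+1}(\bar{z}_jdz_j-z_jd\bar{z}_j)$. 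One checks immediately that $\Theta$ is $\mathbf{U}(1)$-invariant and takes value $i$ on the infinitesimal generator of the $\mathbf{U}(1)$-action; its kernel is the horizontal distribution since the $\mathbf{U}(1)$-orbits are the fibers.

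Next I would compute $s^*\Theta$. Writing $z_j=w_j/\sqrt{1+|w|^2}$ for $j\le n$ and $z_{n+1}=1/\sqrt{1+|w|^2}$, the terms coming from $z_{n+1}$ cancel because $\bar z_{n+1}dz_{n+1}-z_{n+1}d\bar z_{n+1}=0$, and the remaining terms combine to
\[
s^*\Theta = \frac{1}{2}\sum_{j=1}^n\frac{\bar w_j dw_j-w_jd\bar w_j}{1+|w|^2}=i\alpha,
\]
where $\alpha$ is exactly the real one-form from Definition \ref{stochastic area sphere}. Under the identification $\mathfrak{u}(1)=i\mathbb{R}\simeq\mathbb{R}$, this means the one-form from \eqref{solder form} associated to our trivialization coincides with the stochastic-area form $\alpha$, and $\mathfrak a(t)=\int_{w[0,t]}\alpha=\theta(t)$.

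Finally, I would invoke Theorem \ref{skew-product principal bundle}: in the trivialization, the horizontal Brownian motion takes the form $(w(t),\Theta(t))$ where $\Theta(t)\in\mathbf{U}(1)$ solves $d\Theta(t)=-\circ d\mathfrak a(t)\,\Theta(t)$. Since $\mathbf{U}(1)$ is abelian (and $\mathbf{Ad}\equiv\mathrm{Id}$), this ODE integrates explicitly to $\Theta(t)=e^{-i\theta(t)}$. Pushing forward through $\Phi$,
\[
\Phi(w(t),e^{-i\theta(t)})=\frac{e^{-i\theta(t)}}{\sqrt{1+|w(t)|^2}}(w(t),1)=X(t),
\]
which is precisely the claimed process. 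The main (minor) obstacle is the bookkeeping in Step 2—keeping track of the $i$-factor from the identification $\mathfrak{u}(1)\simeq\mathbb{R}$ and verifying that the sign convention in \eqref{solder form} matches the convention used in Definition \ref{stochastic area sphere}—everything else is essentially a direct invocation of the general theory.
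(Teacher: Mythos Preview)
Your proposal is correct and is exactly the route the paper itself flags: its proof opens with ``This can be deduced from Theorem \ref{skew-product principal bundle} but we provide all details,'' and then computes the contact form $\eta=d\theta+\alpha$ in the trivialization and reads off the horizontal lift of a curve directly. You instead carry out the deduction from Theorem \ref{skew-product principal bundle} by computing the pullback $s^*\Theta=i\alpha$ and solving the abelian SDE, which is precisely the shortcut the paper alludes to; the two arguments are the same up to presentation.
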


 \begin{proof}
This can deduced from Theorem \ref{skew-product principal bundle} but we provide all details. The key point is that the submersion $\pi$ is associated with the $\mathbf{U}(1)$-bundle structure on $\bS^{2n+1}$ that comes from the Hopf fibration
 \[
\mathbf{U}(1) \to \bS^{2n+1} \to \mathbb{C}P^n.
\]
 More precisely, the horizontal distribution of this submersion is the kernel of the standard contact (and connection) form on $\bS^{2n+1}$ 
\[
\eta=-\frac{i}{2}\sum_{j=1}^{n+1}(\overline{z}_jdz_j-z_jd\overline{z}_j),
\]
and the fibers of the submersion are the orbits of the Reeb vector field. We are therefore in the setting of Section \ref{skew general bundle} and the methods explained there can be applied here.

\

As above, let $(w_1,\dots, w_n)$ be  the local affine coordinates for $\mathbb{C}P^n$ given by $w_j=z_j/z_{n+1}$, and $\theta$ be the local fiber coordinate, i.e., $(w_1, \dots, w_n)$ parametrizes the complex lines passing through the north pole, while $\theta$ determines a point on the line that is of unit distance from the north pole. These coordinates are given by the map
\begin{align}\label{invar}
(w,\theta)\longrightarrow \frac{e^{i\theta} }{\sqrt{1+|w|^2}} \left( w,1\right),
\end{align}
where  $\theta \in \R/2\pi\mathbb{Z}$, and $w \in \mathbb{C}P^n$. In these coordinates, we compute that
 \begin{align}\label{eq-contact-eta-sphere}
\eta =  d\theta+\frac{i}{2(1+|w|^2)}\sum_{k=1}^n(w_kd\overline{w}_k-\overline{w}_kdw_k).
\end{align}
As a consequence, the horizontal lift to $\bS^{2n+1}$  of the vector field $\frac{\partial}{\partial w_i}$ is given by $\frac{\partial}{\partial w_i}-\alpha\left(\frac{\partial}{\partial w_i} \right)\frac{\partial}{\partial \theta}$. If we consider now a smooth curve $\gamma$ starting at 0 in $\mathbb{C}P^n$, we can write in the homogeneous system of coordinates
\[
\gamma(t)=(\gamma_1(t),\dots,\gamma_n(t)).
\]
Since
\[
\gamma'(t)=\sum_{i=1}^n \gamma'_j(t)\frac{\partial}{\partial w_j},
\]
we deduce that the horizontal lift, $Z$, of $\gamma$ at the north pole is given in the set of coordinates \eqref{invar}, by
\begin{align*}
Z'(t)&=\sum_{j=1}^n \gamma'_j(t)\left(\frac{\partial}{\partial w_j}-\alpha\left(\frac{\partial}{\partial w_j} \right)\frac{\partial}{\partial \theta} \right) \\
 &=\sum_{j=1}^n \gamma'_j(t)\frac{\partial}{\partial w_j}- \sum_{j=1}^n \alpha\left(\frac{\partial}{\partial w_j} \right) \gamma'_j(t)\frac{\partial}{\partial \theta}.
\end{align*}
As a consequence,
\[
Z(t)=\frac{e^{-i\theta(t)} }{\sqrt{1+|\gamma(t)|^2}} \left( \gamma(t),1\right),
\]
with
\[
\theta(t) =\sum_{i=1}^n \int_0^t \alpha\left(\frac{\partial}{\partial w_i} \right)  \gamma_i'(t)dt =\int_{\gamma[0,t]} \alpha.
\]

Similarly, the lift $X(t)$ of the Brownian motion $(w(t))_{t \ge 0}$ is $\frac{e^{-i\theta(t)} }{\sqrt{1+|w(t)|^2}} \left( w(t),1\right)$ with
\[
\theta(t) =\sum_{i=1}^n \int_0^t \alpha\left(\frac{\partial}{\partial w_i} \right)  dw_i =\int_{w[0,t]} \alpha.
\]
\end{proof}

\begin{theorem}
In the system of coordinates \eqref{invar}, the horizontal Laplacian of the Hopf fibration is given by
\begin{equation}\label{eq-Del-H-sphere}
\Delta_{\mathcal H}=4(1+|w|^2)\sum_{k=1}^n \frac{\partial^2}{\partial w_k \partial\overline{w}_k}+ 4(1+|w|^2)\mathcal{R} \overline{\mathcal{R}}+|w|^2\ \frac{\partial^2}{\partial \theta^2}-2i(|w|^2+1)(\mathcal{R} -\overline{\mathcal{R}})\frac{\partial}{\partial\theta}.
\end{equation}
and the generator of the horizontal Brownian motion is  $\frac{1}{2} \Delta_{\mathcal H}$.
\end{theorem}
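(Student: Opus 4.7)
My strategy is to apply the general formula for the horizontal Laplacian of an abelian principal bundle developed in Section~\ref{Horizontal Laplacian bundle}, specialized to the Hopf submersion $\pi\colon\mathbb S^{2n+1}\to\mathbb C P^n$. In the trivialization \eqref{invar}, the proof of Theorem~\ref{thm-hor-BM-X} already identified the standard contact one-form on $\mathbb S^{2n+1}$ as $\eta=d\theta+\alpha$, where $\alpha$ is the area one-form of Definition~\ref{stochastic area sphere}. Since $\eta$ is precisely the $\mathbf U(1)$-compatible connection whose horizontal distribution coincides with the one induced by $\pi$, we are exactly in the abelian-bundle setting (with one-dimensional fibre coordinate~$\theta$). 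Expanding the formula of Section~\ref{Horizontal Laplacian bundle} and combining the first-order $\partial_\theta$ contributions into a single divergence, it reduces to
\[
\Delta_{\mathcal H} \;=\; \Delta_{\mathbb C P^n} \;-\; 2\,\alpha^\sharp\,\partial_\theta \;+\; \|\alpha\|_h^2\,\partial_\theta^2 \;-\; (\mathrm{div}_{\mathbb C P^n}\alpha^\sharp)\,\partial_\theta,
\]
where $\alpha^\sharp$ is the Fubini--Study dual vector field of $\alpha$ and $h$ denotes the Fubini--Study (co)metric.

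The proof then reduces to three explicit computations, all of which rest on the inverse Hermitian metric $h(dw_k,d\bar w_l)=(1+|w|^2)(\delta_{kl}+w_k\bar w_l)$ read off from the principal symbol of $\Delta_{\mathbb C P^n}$ in Theorem~\ref{Laplace complex projective}. Writing $\alpha=\sum_k\bigl(a_k\,dw_k+\bar a_k\,d\bar w_k\bigr)$ with $a_k=-i\bar w_k/(2(1+|w|^2))$, a short manipulation using the identity $\sum_{k,l}(\delta_{kl}+w_k\bar w_l)\bar w_k w_l=|w|^2(1+|w|^2)$ yields
\[
\|\alpha\|_h^2 \;=\; \frac{1}{1+|w|^2}\sum_{k,l}(\delta_{kl}+w_k\bar w_l)\bar w_k w_l \;=\; |w|^2,
\]
which is the coefficient of $\partial_\theta^2$.

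Next, raising indices on $\alpha$ with $h$, I would first compute in real coordinates to obtain $\alpha^\sharp=-(1+|w|^2)\mathcal R_2$ with $\mathcal R_2=\sum_k(v_k\partial_{u_k}-u_k\partial_{v_k})$, and then rewrite this in the complex frame via $\mathcal R-\bar{\mathcal R}=i\mathcal R_2$, giving $\alpha^\sharp=i(1+|w|^2)(\mathcal R-\bar{\mathcal R})$. This produces the cross-term $-2\alpha^\sharp\partial_\theta=-2i(1+|w|^2)(\mathcal R-\bar{\mathcal R})\partial_\theta$, matching the stated formula.

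Finally, I would check $\mathrm{div}_{\mathbb C P^n}\alpha^\sharp=0$. This is transparent from the factorization $\alpha^\sharp=-(1+|w|^2)\mathcal R_2$: $\mathcal R_2$ is a Killing field, being the infinitesimal generator of the isometric $\mathbf U(1)$-action $w\mapsto e^{-it}w$, and $|w|^2$ is invariant under its flow, so that
\[
\mathrm{div}\bigl((1+|w|^2)\mathcal R_2\bigr) \;=\; (1+|w|^2)\,\mathrm{div}(\mathcal R_2)+\mathcal R_2(|w|^2) \;=\; 0;
\]
alternatively one verifies this in one line using the Fubini--Study volume $(1+|w|^2)^{-(n+1)}du\,dv$. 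Combining the three computations gives the claimed expression for $\Delta_{\mathcal H}$, and the statement that the horizontal Brownian motion has generator $\tfrac12\Delta_{\mathcal H}$ is then immediate from Theorem~\ref{thm-hor-BM-X} together with the definition of horizontal Brownian motion. The only real hurdle is bookkeeping in the cross-term computation, where one must carefully track Wirtinger factors when translating the real abelian-bundle formula into complex coordinates; no conceptual obstacle arises.
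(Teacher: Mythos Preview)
Your proof is correct and arrives at the same conclusion as the paper by essentially the same mechanism: both exploit that $\Delta_{\mathcal H}$ is obtained from $\Delta_{\mathbb C P^n}$ by replacing each $\partial_{w_j}$ with its horizontal lift $\partial_{w_j}+\tfrac{i\bar w_j}{2(1+|w|^2)}\partial_\theta$. The paper does this substitution directly and expands; you instead route the same computation through the packaged abelian-bundle formula of Section~\ref{Horizontal Laplacian bundle}, isolating the three contributions $\|\alpha\|_h^2$, $\alpha^\sharp$, and $\mathrm{div}\,\alpha^\sharp$ separately. Your organization is a bit more structured and has the pleasant feature of making the vanishing of the first-order $\partial_\theta$ term conceptual---via the observation that $\alpha^\sharp=-(1+|w|^2)\mathcal R_2$ with $\mathcal R_2$ Killing---whereas in the direct substitution this vanishing emerges only after tracking the derivative terms $\partial_{w_k}\!\bigl(\tfrac{w_l}{1+|w|^2}\bigr)$. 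Mathematically, however, the two routes are the same expansion written in different orders, not genuinely distinct arguments.
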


\begin{proof}
We know that the Laplace-Beltrami operator on $\mathbb{C}P^n$ is given by:

\begin{equation*}
\Delta_{\mathbb{C}P^n}=4(1+|w|^2)\sum_{k=1}^n \frac{\partial^2}{\partial w_k \partial\overline{w}_k}+ 4(1+|w|^2)\mathcal{R} \overline{\mathcal{R}}
\end{equation*}
where
\[
\mathcal{R}=\sum_{j=1}^n w_j \frac{\partial}{\partial w_j}.
\] 
As we have seen,  the horizontal lift to $\bS^{2n+1}$  of the vector field $\frac{\partial}{\partial w_j}$ is given by 
\[
\frac{\partial}{\partial w_j}-\alpha\left(\frac{\partial}{\partial w_j} \right)\frac{\partial}{\partial \theta}=\frac{\partial}{\partial w_j}+\frac{i}{2}\frac{\overline{w}_j}{1+\rho^2}\frac{\partial}{\partial\theta},
\]
where $\rho=|w|$. Substituting $\frac{\partial}{\partial w_j}$ by its lift in the expression \eqref{eq-Laplacian-CPn1} of $\Delta_{\mathbb{C}P^n}$ yields the formula for the horizontal Laplacian and it follows from the general theory of Chapter 3 that the generator  of the horizontal Brownian motion is  $\frac{1}{2} \Delta_{\mathcal H}$.
\end{proof}

 \begin{theorem}\label{skew complex hopf}
 Let $r(t)=\arctan |w(t)|$. The process $\left( r(t), \theta(t)\right)_{t \ge 0}$ is a diffusion with generator
 \[
\frac{1}{2} \left( \frac{\partial^2}{\partial r^2}+((2n-1)\cot r-\tan r)\frac{\partial}{\partial r}+\tan^2r\frac{\partial^2}{\partial \theta^2}\right).
 \]
 As a consequence the following equality in distribution holds
 \[
\left( r(t) ,\theta(t) \right)_{t \ge 0}=\left( r(t),B\left(\int_0^t \tan^2 r(s)ds\right) \right)_{t \ge 0},
\]
where $(B(t))_{t \ge 0}$ is a standard Brownian motion independent from $(r(t))_{t\ge0}$.
\end{theorem}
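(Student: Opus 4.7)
The plan is to derive the generator of $(r(t),\theta(t))$ by restricting the horizontal Laplacian $\Delta_{\mathcal H}$ from \eqref{eq-Del-H-sphere} to functions depending only on $(r,\theta)$, and then to deduce the distributional identity via a standard skew-product argument. By Theorem \ref{thm-hor-BM-X}, the process $(w(t),\theta(t))$ (identified via the coordinates \eqref{invar}) is a horizontal Brownian motion on $\mathbb{S}^{2n+1}$, hence a diffusion with generator $\frac{1}{2}\Delta_{\mathcal H}$. My goal is to show that $\frac{1}{2}\Delta_{\mathcal H}$ preserves the class of smooth functions of $(r,\theta)$ alone and that it acts on such functions as the claimed operator $\mathcal L$; it then follows from standard diffusion theory that $(r(t),\theta(t))$ is itself Markov with generator $\mathcal L$.

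The computation splits into three contributions coming from the three summands of \eqref{eq-Del-H-sphere}. The first block $4(1+|w|^2)\sum_k \partial_{w_k}\partial_{\overline{w}_k} + 4(1+|w|^2)\mathcal{R}\overline{\mathcal{R}}$ is the Fubini--Study Laplacian $\Delta_{\mathbb{C}P^n}$, whose action on functions of $r=\arctan|w|$ was already shown in the proof of Proposition \ref{prop-r-sphere-gene} to equal $\partial_r^2 + ((2n-1)\cot r - \tan r)\partial_r$. The second block $|w|^2\partial_\theta^2$ becomes $\tan^2 r\,\partial_\theta^2$ on functions of $(r,\theta)$. The pivotal observation is that the cross term $-2i(|w|^2+1)(\mathcal{R}-\overline{\mathcal{R}})\partial_\theta$ annihilates our class of functions: since $\partial_{w_j}|w|^2 = \overline{w}_j$ and $\partial_{\overline{w}_j}|w|^2 = w_j$, any function $f$ of $|w|^2$ satisfies $\mathcal{R}f = |w|^2 f' = \overline{\mathcal{R}}f$, so $(\mathcal{R}-\overline{\mathcal{R}})f=0$. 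Summing the three contributions yields $\Delta_{\mathcal H} f = \bigl(\partial_r^2 + ((2n-1)\cot r - \tan r)\partial_r + \tan^2 r\,\partial_\theta^2\bigr)f$, which is $2\mathcal L f$, as required.

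For the skew-product identity, write $\mathcal L = \mathcal L_r + \frac{1}{2}\tan^2 r\,\partial_\theta^2$, where $\mathcal L_r$ is the one-dimensional Jacobi generator from Proposition \ref{prop-r-sphere-gene}. Since $\mathcal L$ has no $\partial_r\partial_\theta$ term and the coefficient of $\partial_r^2$ is independent of $\theta$, the marginal process $(r(t))_{t\ge 0}$ is a Jacobi diffusion with generator $\mathcal L_r$ and the bracket of the martingale part of $\theta$ equals $\int_0^t \tan^2 r(s)\,ds$ with $d\langle r,\theta\rangle(t)=0$. By the Dambis--Dubins--Schwarz theorem, $\theta(t)= W\bigl(\int_0^t \tan^2 r(s)\,ds\bigr)$ for a standard Brownian motion $W$; independence of $W$ and $r$ then follows from the orthogonality $d\langle r,\theta\rangle=0$ combined with the fact that $r$ is a function of an independent Brownian motion driving its own SDE, giving the asserted equality in distribution.

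The principal obstacle is the verification that $\frac{1}{2}\Delta_{\mathcal H}$ leaves the algebra of $(r,\theta)$-functions invariant, which reduces to the cancellation $(\mathcal{R}-\overline{\mathcal{R}})f(|w|)=0$ together with the already-established radial reduction of $\Delta_{\mathbb{C}P^n}$ in Proposition \ref{prop-r-sphere-gene}. Once this invariance is secured, both the generator computation and the independence of the time-changed Brownian motion from $r$ follow directly.
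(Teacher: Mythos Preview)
Your proof is correct and follows essentially the same route as the paper: restrict $\Delta_{\mathcal H}$ from \eqref{eq-Del-H-sphere} to functions of $(|w|,\theta)$ and read off the generator. The paper does this via the intermediate variable $\rho=|w|$ and then substitutes $\rho=\tan r$, whereas you work directly in $r$ and make explicit the key cancellation $(\mathcal R-\overline{\mathcal R})f=0$ for radial $f$ that the paper hides inside the phrase ``direct computation''; your justification of the skew-product via DDS/Knight is a slightly more detailed version of what the paper treats as an immediate consequence of the generator's warped-product form (cf.\ Example~\ref{BM on warped product}).
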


\begin{proof}
A direct computation shows that  $\Delta_{\mathcal H}$ acts on functions depending only on $(\rho:=|w|, \theta)$ as
\[
\left(1+\rho^2\right)^2\frac{\partial^2}{\partial \rho^2}+\left(\frac{(2n-1)(1+\rho^2)}{\rho}+(1+\rho^2)\rho\right)\frac{\partial}{\partial \rho}+\rho^2\frac{\partial^2 }{\partial \theta^2}.
\]
The change of variable $\rho =\tan r$ completes the proof.
\end{proof}

\subsection{Horizontal heat kernel}

In this section, we study the distribution of the horizontal Brownian motion $(X(t))_{t \ge 0}$ on $\mathbb S^{2n+1}$. From Theorem \ref{thm-hor-BM-X} we have that
\begin{align*}
 \mathbb{E}( f(X(t)))& =\mathbb{E} \left[ f \left( \frac{e^{-i\theta(t)} }{\sqrt{1+|w(t)|^2}} \left( w(t),1\right) \right) \right] \\
  & = \int_{\mathbb{C}P^n} \int_{-\pi}^{\pi} f \left( \frac{e^{-i\theta } }{\sqrt{1+|w|^2}} \left( w,1\right) \right) p_{t/2} (\arctan |w|, \theta ) d \mu (w,\theta)
 \end{align*}
 where $\mu$ is the Riemannian volume measure on $\mathbb S^{2n+1}$ and $p_t(r,\theta)$, $r \in [0,\pi/2), \theta \in [-\pi,\pi]$ is the heat kernel of the operator
 \[
L=\frac{\partial^2}{\partial r^2}+((2n-1)\cot r-\tan r)\frac{\partial}{\partial r}+\tan^2r\frac{\partial^2}{\partial \theta^2}.
 \]
 with $2\pi$-periodic boundary condition\footnote{This means that  in this section the operator $\frac{\partial^2}{\partial \theta^2}$ is understood  as the Laplacian on the circle $\mathbb S^1$ } in the variable $\theta$.
The invariant and symmetric measure of $L$ is explicitly given by
\[
d\mu_r=\frac{2\pi^n}{\Gamma(n)}(\sin r)^{2n-1}\cos r drd\theta.
\]
We note that the normalization constant is chosen in such a way that
\[
\int_{-\pi}^{\pi}\int_0^{\frac{\pi}{2}}d\mu_r=\mu(\bS^{2n+1})=\frac{2\pi^{n+1}}{\Gamma (n+1)},
\]
so that the following change of variable holds
\[
 \int_{\mathbb{C}P^n} \int_{-\pi}^{\pi} g ( \arctan |w|, \theta ) d \mu (w,\theta)=\int_0^{\pi/2}  \int_{-\pi}^{\pi} g(r,\theta) d\mu_r
\]
for any function $g$.

\begin{prop}\label{heat hopf fibration}
For $t>0$, $r\in[0,\frac{\pi}{2})$, $ \theta\in[-\pi,\pi]$, the horizontal  kernel $p_t$ has the following spectral decomposition:
\[
p_t(r, \theta)=\frac{\Gamma(n)}{2\pi^{n+1}}\sum_{k=-\infty}^{+\infty}\sum_{m=0}^{+\infty} (2m+|k|+n){m+|k|+n-1\choose n-1}e^{-\lambda_{m,k}t+ik \theta}(\cos r)^{|k|}P_m^{n-1,|k|}(\cos 2r),
\]
where $\lambda_{m,k}=4m(m+|k|+n)+2|k|n$ and
\[
P_m^{n-1,|k|}(x)=\frac{(-1)^m}{2^m m!(1-x)^{n-1}(1+x)^{|k|}}\frac{d^m}{dx^m}((1-x)^{n-1+m}(1+x)^{|k|+m})
\]
is a Jacobi polynomial.
\end{prop}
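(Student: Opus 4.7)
The plan is to diagonalize $L$ by separation of variables, exploiting that $L$ has coefficients independent of $\theta$ and the $\theta$-periodicity of the kernel. The heat kernel will then come from summing the associated normalized eigenfunction expansion.

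First I would Fourier-expand in the fiber variable, writing $p_t(r,\theta)=\sum_{k\in\mathbb Z} e^{ik\theta}\,q_t^{(k)}(r)$. Because $L$ contains $\theta$ only through $\partial_\theta^2$, each coefficient $q_t^{(k)}$ evolves under a radial Schr\"odinger--type operator
\[
L_k=\frac{\partial^2}{\partial r^2}+((2n-1)\cot r-\tan r)\frac{\partial}{\partial r}-k^2\tan^2 r,
\]
self-adjoint on $L^2\bigl((0,\pi/2),(\sin r)^{2n-1}\cos r\,dr\bigr)$. The singular potential $-k^2\tan^2 r$ at $r=\pi/2$ is handled by a ground--state transformation: substituting $q=(\cos r)^{|k|}h$, a direct differentiation shows $L_k\bigl((\cos r)^{|k|}h\bigr)=(\cos r)^{|k|}\bigl(\tilde L_k h-2n|k|\,h\bigr)$, where
\[
\tilde L_k=\frac{\partial^2}{\partial r^2}+\bigl((2n-1)\cot r-(2|k|+1)\tan r\bigr)\frac{\partial}{\partial r}.
\]
The quadratic--in--$\tan r$ terms cancel exactly, leaving the clean shift $-2n|k|$; this is the key algebraic computation.

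Next I would reduce $\tilde L_k$ to the classical Jacobi operator via $x=\cos 2r$. Using $d/dr=-2\sin(2r)\,d/dx$ and the identities $\sin^2 r=(1-x)/2$, $\cos^2 r=(1+x)/2$, the operator becomes $4\bigl[(1-x^2)\partial_x^2+((|k|+1-n)-(n+|k|+1)x)\partial_x\bigr]$, which is $4$ times the Jacobi operator with parameters $(\alpha,\beta)=(n-1,|k|)$. Its eigenfunctions are the Jacobi polynomials $P_m^{n-1,|k|}(x)$ with eigenvalues $-m(m+n+|k|)$, so combining with the $-2n|k|$ shift produces eigenfunctions $e^{ik\theta}(\cos r)^{|k|}P_m^{n-1,|k|}(\cos 2r)$ of $L$ with eigenvalue $-\lambda_{m,k}=-4m(m+|k|+n)-2|k|n$.

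Finally, the coefficients follow from normalization. The invariant measure $d\mu_r=\frac{2\pi^n}{\Gamma(n)}(\sin r)^{2n-1}\cos r\,drd\theta$ pushes forward under $x=\cos 2r$ to a constant multiple of $(1-x)^{n-1}(1+x)^{|k|}dx$, matching the orthogonality weight for $P_m^{n-1,|k|}$; combined with the standard norm formula
\[
\int_{-1}^1 P_m^{n-1,|k|}(x)^2(1-x)^{n-1}(1+x)^{|k|}\,dx=\frac{2^{n+|k|}}{2m+n+|k|}\frac{\Gamma(m+n)\Gamma(m+|k|+1)}{m!\,\Gamma(m+n+|k|)},
\]
one computes the squared $L^2(\mu_r)$-norm of each $\phi_{m,k}$. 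Evaluating $\phi_{m,k}$ at $(r,\theta)=(0,0)$ using $P_m^{n-1,|k|}(1)=\Gamma(m+n)/(m!\,\Gamma(n))$ and inserting into the general spectral formula $p_t(r,\theta;0,0)=\sum_{m,k}e^{-\lambda_{m,k}t}\phi_{m,k}(r,\theta)\overline{\phi_{m,k}(0,0)}/\|\phi_{m,k}\|^2$ produces exactly the normalization $\frac{\Gamma(n)}{2\pi^{n+1}}(2m+|k|+n)\binom{m+|k|+n-1}{n-1}$ after simplifying the ratio of Gamma functions. The main obstacle will be the accurate bookkeeping of these constants (in particular verifying the cancellation in the ground--state shift and tracking the $2^{n+|k|}$ factors through the change of variables); the remaining steps are classical Sturm--Liouville theory on a compact interval with regular endpoints.
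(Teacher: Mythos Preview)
Your proposal is correct and follows essentially the same approach as the paper: Fourier expansion in $\theta$, the ground-state substitution $q=(\cos r)^{|k|}h$ producing the shift $-2n|k|$, the change of variable $x=\cos 2r$ reducing to the Jacobi operator with parameters $(n-1,|k|)$, and determination of the constants via the Jacobi orthogonality relations and evaluation at the origin. The only cosmetic difference is that you phrase the last step as the spectral formula $p_t=\sum e^{-\lambda t}\phi\overline{\phi}/\|\phi\|^2$, whereas the paper verifies the initial condition by testing against a generic expansion; these are equivalent.
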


\begin{proof}
The idea is to expand $p_t(r, \theta)$ as a Fourier series in $\theta$. Let
\[
p_t(r, \theta)=\sum_{k=-\infty}^{+\infty} e^{ik\theta}\phi_k(t,r)
\]
be such a Fourier expansion. Since $p_t$ satisfies $\frac{\partial p_t}{\partial t}=Lp_t$, we have
\[
\frac{\partial\phi_k}{\partial t}=\frac{\partial^2\phi_k}{\partial r^2}+((2n-1)\cot r-\tan r)\frac{\partial\phi_k}{\partial r}-k^2\tan^2 r\phi_k.
\]
By writing $\phi_k(t,r)$ in the form
\[
\phi_k(t,r)=e^{-2n|k|t}(\cos r)^{|k|}g_k(t, \cos 2r), 
\]
we get 
\begin{equation}\label{eq5}
\frac{\partial g_k}{\partial t}=4\Psi_k(g_k),
\end{equation}
where
\[
\Psi_k=(1-x^2)\frac{\partial^2}{\partial x^2}+[(|k|+1-n)-(|k|+1+n)x]\frac{\partial}{\partial x}.
\]
In fact (\ref{eq5}) is well-known as the Jacobi differential equation, and the eigenvectors are given by
\[
P_m^{n-1,|k|}(x)=\frac{(-1)^m}{2^m m!(1-x)^{n-1}(1+x)^{|k|}}\frac{d^m}{dx^m}((1-x)^{n-1+m}(1+x)^{|k|+m}),
\]
which satisfies that
\[
\Psi_k(P_m^{n-1,|k|})(x)=-m(m+n+|k|)P_m^{n-1,|k|}(x).
\]
Therefore, we obtain the spectral decomposition
\[
p_t(r, \theta)=\sum_{k=-\infty}^{+\infty}\sum_{m=0}^{+\infty} \alpha_{m,k}e^{-[4m(m+|k|+n)+2|k|n]t}e^{ik \theta}(\cos r)^{|k|}P_m^{n-1,|k|}(\cos 2r),
\]
where  the constants $\alpha_{m,k}$'s are to be determined by the initial condition at time $t=0$. 

To compute them, we use the fact that $(P_m^{n-1,|k|}(x)(1+x)^{|k|/2})_{m\geq0}$ is an orthogonal basis of $L^2([-1,1],(1-x)^{n-1}dx)$, i.e.,
\[
\int_{-1}^1 P_m^{n-1,|k|}(x)P_l^{n-1,|k|}(x)(1-x)^{n-1}(1+x)^{|k|}dx=\frac{2^{n+|k|}}{2m+|k|+n}\frac{\Gamma(m+n)\Gamma(m+|k|+1)}{\Gamma(m+1)\Gamma(m+n+|k|)}\delta_{ml}.
\]
For a smooth function $f(r, \theta)$, we can write
\[
f(r, \theta)=\sum_{k=-\infty}^{+\infty}\sum_{m=0}^{+\infty} b_{m,k}e^{ik \theta}(1+\cos 2r)^{|k|/2} P_m^{n-1,|k|}(\cos 2r)
\]
where the $b_{m,k}$'s are constants, and thus
\[
f(0,0)=\sum_{k=-\infty}^{+\infty}\sum_{m=0}^{+\infty} b_{m,k}2^{|k|/2} P_m^{n-1,|k|}(1).
\]
Now, since
\begin{align*}
& \int_{-\pi}^\pi\int_0^\frac{\pi}{2} p_t(r, \theta)\overline{f(r, \theta)}d\mu_r \\
&= \frac{2\pi^n}{\Gamma(n)}\int_{-\pi}^{\pi}\int_0^{\frac{\pi}{2}}p_t(r, \theta)\overline{f(r, \theta)}(\sin r)^{2n-1}\cos rdrd\theta \\
&= \frac{2\pi^n}{\Gamma(n)} \sum_{k=-\infty}^{+\infty}\sum_{m=0}^{+\infty}\int_{-\pi}^{\pi}\int_0^{\frac{\pi}{2}}\alpha_{m,k}\overline{b_{m,k}}2^{|k|/2}e^{-\lambda_{m,k}t}\bigg|P_m^{n-1,|k|}(\cos 2r)\bigg|^2(\cos r)^{2|k|+1}(\sin r)^{2n-1}drd\theta \\
&=\frac{2\pi^n}{\Gamma(n)}\sum_{k=-\infty}^{+\infty}\sum_{m=0}^{+\infty}\alpha_{m,k}\overline{b_{m,k}}e^{-\lambda_{m,k}t}2^{-n-|k|/2-1}\\
& \quad\quad\cdot\int_{-\pi}^{\pi}\left(\int_0^{\frac{\pi}{2}}2^{n+|k|+1}(\cos r)^{2|k|+1}\bigg|P_m^{n-1,|k|}(\cos 2r)\bigg|^2(\sin r)^{2n-1}dr\right)d\theta\\
&=\frac{2\pi^n}{\Gamma(n)} \sum_{k=-\infty}^{+\infty}\sum_{m=0}^{+\infty}\alpha_{m,k}\overline{b_{m,k}}e^{-\lambda_{m,k}t}2^{-n-|k|/2-1}(2\pi)\|P_m^{n-1,|k|}\|^2
\end{align*}
where $\lambda_{m,k}=4m(m+|k|+n)+2|k|n$, we obtain that
\[
\lim_{t\rightarrow 0}\int_{-\pi}^\pi\int_0^\frac{\pi}{2} p_t(r,\theta)\overline{f(r,\theta)}d\mu_r= \overline{f(0,0)}
\]
as soon as $\alpha_{m,k}=\frac{\Gamma(n)}{2\pi^{n+1}}(2m+|k|+n){m+|k|+n-1\choose n-1}$.
\end{proof}

\begin{remark}
We note that $p_t(r,\theta)=p_t (r,-\theta)$. This could also be  seen directly from the symmetries of the operator $L$. 
\end{remark}

The spectral decomposition of the heat kernel is explicit and useful but is not really geometrically meaningful. We shall now study another representation of the kernel which is more geometrically meaningful and will turn out to be much more  convenient  when dealing with the small-time asymptotics estimations.
The key idea is to consider the generator of the Riemannian Brownian motion on the sphere $\bS^{2n+1}$. This generator is the Riemannian  Laplace-Beltrami operator and its radial part $\Delta$ is given as 
\[
 \frac{\partial^2}{\partial \delta^2}+2n\cot\delta\frac{\partial}{\partial\delta},
\]
where $\delta$ denotes the Riemannian distance from the north pole. Leveraging on the relation $\cos\delta=\cos r\cos\theta$ we can write $\Delta$ in the coordinates $(r,\theta)$ as
\[
\Delta=\frac{\partial^2}{\partial r^2}+((2n-1)\cot r-\tan r)\frac{\partial}{\partial r}+\frac{1}{\cos^2r}\frac{\partial^2}{\partial\theta^2}.
\]
Observe that since $\Delta$ and $\frac{\partial}{\partial \theta}$ commute  we formally  have
\begin{align}\label{commutation}
e^{tL}=e^{-t\frac{\partial^2}{\partial\theta^2}}e^{t\Delta}.
\end{align}

This gives a way to express the sub-Riemannian heat kernel in terms of the Riemannian one.
Let us recall, see Section \ref{Jacobi diffusions},  that the Riemannian heat kernel  on $\bS^{2n+1}$ writes
\begin{equation}\label{eq6}
q_t(\cos\delta)=\frac{\Gamma(n)}{2\pi^{n+1}}\sum_{m=0}^{+\infty}(m+n)e^{-m(m+2n)t}C_m^n(\cos \delta),
\end{equation}
where 
\[
C_m^n(x)=\frac{(-1)^m}{2^m}\frac{\Gamma(m+2n)\Gamma(n+1/2)}{\Gamma(2n)\Gamma(m+1)\Gamma(n+m+1/2)}\frac{1}{(1-x^2)^{n-1/2}}\frac{d^m}{dx^m}(1-x^2)^{n+m-1/2}
\]
is a Gegenbauer polynomial.  Another expression of $q_t (\cos \delta)$ that is useful for the computation of small-time asymptotics is 
\begin{equation}\label{heat_kernel_odd}
q_t (\cos \delta)= e^{n^2t} \left( -\frac{1}{2\pi \sin \delta} \frac{\partial}{\partial \delta} \right)^n V(t,\delta)
\end{equation}
where $V(t,\delta)=\frac{1}{\sqrt{4\pi t}} \sum_{k \in \mathbb{Z}} e^{-\frac{(\delta-2k\pi)^2}{4t} }$.

Using the commutation (\ref{commutation}) and the formula $\cos \delta =\cos r \cos \theta$, we then infer the following proposition.
\begin{prop}\label{prop1}
For $t>0$, $r\in[0,\pi/2)$, $ \theta\in[-\pi,\pi]$, 
\begin{equation}\label{eq-sphere-8}
p_t(r, \theta)=\frac{1}{\sqrt{4\pi t}}\int_{-\infty}^{+\infty}e^{-\frac{(y+i \theta)^2}{4t} }q_t(\cos r\cosh y)dy.
\end{equation}
\end{prop}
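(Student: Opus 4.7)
The plan is to carry out precisely the program sketched in the paragraph preceding the statement: exploit the operator commutation $L=\Delta-\partial_\theta^2$ (which is immediate from the two explicit formulas in the excerpt, since $\tan^2 r-1/\cos^2 r=-1$), and then turn the resulting formal identity into an integral representation by analytic continuation. More precisely, since $\Delta$ and $-\partial_\theta^2$ both commute with $\partial_\theta$, they commute with each other, so at the semigroup level I would first write
\[
e^{tL}=e^{-t\partial_\theta^2}\,e^{t\Delta},
\]
where $e^{-t\partial_\theta^2}$ acts only on the $\theta$ variable. Because $q_t(\cos r\cos\theta)$ is the heat kernel of $\Delta$ (using the geodesic distance identity $\cos\delta=\cos r\cos\theta$), applying the identity to the Dirac mass at the origin gives the preliminary formula
\[
p_t(r,\theta)=\bigl[e^{-t\partial_\theta^2}q_t(\cos r\cos\,\cdot\,)\bigr](\theta).
\]

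Next, I would establish the analytic-continuation representation
\[
e^{-t\partial_\theta^2}f(\theta)=\frac{1}{\sqrt{4\pi t}}\int_{-\infty}^{+\infty} e^{-y^2/(4t)}\,f(\theta+iy)\,dy
\]
for any $2\pi$-periodic entire function $f$ whose Fourier coefficients decay sufficiently fast. This is checked on the basis $f(\theta)=e^{ik\theta}$: both sides equal $e^{k^2 t}e^{ik\theta}$, the right-hand side by completing the square in the Gaussian integral. The function $\theta\mapsto q_t(\cos r\cos\theta)$ is entire, and the spectral expansion \eqref{eq6} of $q_t$ in Gegenbauer polynomials provides rapid decay of its Fourier coefficients, so the formula applies. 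Substituting yields
\[
p_t(r,\theta)=\frac{1}{\sqrt{4\pi t}}\int_{\mathbb{R}} e^{-y^2/(4t)}\,q_t\bigl(\cos r\cos(\theta+iy)\bigr)\,dy.
\]

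Finally, I would deform the contour of integration from $y\in\mathbb{R}$ to the horizontally shifted line $y=y'+i\theta$, $y'\in\mathbb{R}$. Under this shift one has $\cos(\theta+iy)=\cos(iy')=\cosh y'$, and $-y^2/(4t)=-(y'+i\theta)^2/(4t)$, producing the stated identity after renaming $y'\mapsto y$. The step that will require the most care is the justification of this contour shift: one needs an estimate showing that $|q_t(\cos r\cos z)|$ does not grow too quickly as $|\mathrm{Im}(z)|\to\infty$ so that the vertical connecting segments in the rectangular contour contribute nothing. This can be controlled directly from the Gegenbauer expansion \eqref{eq6}, since for each fixed $t$ the $m$-th term $(m+n)e^{-m(m+2n)t}C_m^n(\cos r\cos(\theta+iy))$ is bounded by $C_m(t)(\cosh y)^m$ with $\sum_m C_m(t)e^{m|y|}$ still dominated by the Gaussian factor $e^{-y^2/(4t)}$ for each fixed $\theta$. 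Once this domination is in hand, Cauchy's theorem applies, and the remaining manipulations (factorization of $e^{tL}$ and the verification of the analytic formula for $e^{-t\partial_\theta^2}$) are straightforward spectral-theoretic checks.
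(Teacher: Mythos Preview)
Your approach is sound and genuinely different from the paper's. The paper does not carry out the factorization $e^{tL}=e^{-t\partial_\theta^2}e^{t\Delta}$; instead it defines $h_t(r,\theta)$ to be the right-hand side of \eqref{eq-sphere-8} and verifies directly that $\partial_t h_t = Lh_t$ by a computation using $\partial_t\bigl(e^{-y^2/4t}/\sqrt{4\pi t}\bigr)=\partial_y^2(\cdots)$ together with the heat equation for $q_t$, then checks the initial condition by testing against $f(r,\theta)=e^{i\lambda\theta}g(r)$ and observing that $\int h_t f\,d\mu_r=e^{t\lambda^2}(e^{t\Delta}g)(0)$. Your route is the conceptual one the paper only states as motivation: it explains \emph{where} the formula comes from, at the price of making sense of the unbounded backward-heat operator $e^{-t\partial_\theta^2}$ and of a contour deformation. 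The paper's direct PDE verification sidesteps both issues.

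One step needs sharpening. Your Gegenbauer bound as written gives, after Laplace-type optimization over $m$, only $|q_t(\cos r\cosh z)|\lesssim e^{(\mathrm{Re}\,z)^2/(4t)}$, so the product with the Gaussian $|e^{-(z+i\theta)^2/(4t)}|$ is merely bounded on the vertical segments and Cauchy's theorem does not follow from this alone. What makes the shift legitimate is an extra exponential decay: either retain the factor $(\cos r)^m$ in the polynomial bound $|C_m^n(\cos r\cosh z)|\le c_m(\cos r\cosh|\mathrm{Re}\,z|)^m$, which for $r>0$ produces a factor $(\cos r)^{|\mathrm{Re}\,z|/(2t)}\to 0$; or appeal to \eqref{heat_kernel_odd}, where the iterated $(\sin\delta)^{-1}$ contributes an $e^{-n|\mathrm{Re}\,z|}$ decay valid for all $r\in[0,\pi/2)$. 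Either refinement makes the integrand on the vertical segments decay exponentially and completes your argument.
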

\begin{proof}
Let 
\[
h_t(r, \theta)=\frac{1}{\sqrt{4\pi t}}\int_{-\infty}^{+\infty}e^{-\frac{(y+i \theta)^2}{4t} }q_t(\cos r\cosh y)dy,
\]
and ${L}_0=\frac{\partial^2}{\partial r^2}+((2n-1)\cot r-\tan r)\frac{\partial}{\partial r}$, then we have
\[
{L}={L}_0+\tan^2r\frac{\partial^2}{\partial\theta^2}.
\]
Using the fact that
\[
\frac{\partial}{\partial t}\left(\frac{e^{-\frac{y^2}{4t} }}{\sqrt{4\pi t}}\right)=\frac{\partial^2}{\partial y^2}\left(\frac{e^{-\frac{y^2}{4t}} }{\sqrt{4\pi t}}\right)
\]
and
\[
\frac{\partial}{\partial t}\left(q_t(\cos r\cos \theta) \right)=\Delta(q_t(\cos r\cos\theta)),
\]
we get
\begin{align*}
&{L}h_t(r, \theta) = \left({L}_0+\tan^2 r\frac{\partial^2}{\partial \theta^2} \right)h_t(r, \theta)\\
&=\int_{-\infty}^{+\infty}\left[ {L}_0\left(\left(\frac{e^{-\frac{(y+i \theta)^2}{4t}}}{\sqrt{4\pi t}}\right)q_t(\cos r\cosh y)\right)+\tan^2r\frac{\partial^2}{\partial \theta^2}\left(\frac{e^{-\frac{(y+i \theta)^2}{4t}}}{\sqrt{4\pi t}}\right)q_t(\cos r\cosh y)  \right]dy \\
&= \int_{-\infty}^{+\infty}\left[\left(\Delta-\frac{1}{\cos^2r}\frac{\partial^2}{\partial \theta^2}\right)\left(\left(\frac{e^{-\frac{(y+i \theta)^2}{4t}}}{\sqrt{4\pi t}}\right)q_t(\cos r\cosh y)\right) \right. \\
&\qquad\qquad \left.+\tan^2r\frac{\partial^2}{\partial \theta^2}\left(\frac{e^{-\frac{(y+i \theta)^2}{4t}}}{\sqrt{4\pi t}}\right)q_t(\cos r\cosh y)\right]dy\\
&=\int_{-\infty}^{+\infty} \left[\left(\frac{e^{-\frac{(y+i \theta)^2}{4t}}}{\sqrt{4\pi t}}\right)\frac{\partial q_t}{\partial t}+\frac{1}{\cos^2 r}\frac{\partial}{\partial t}\left(\frac{e^{-\frac{(y+i \theta)^2}{4t}}}{\sqrt{4\pi t}}\right)q_t-\tan^2r\frac{\partial}{\partial t}\left(\frac{e^{-\frac{(y+i \theta)^2}{4t}}}{\sqrt{4\pi t}}\right)q_t\right]dy \\
&= \frac{\partial }{\partial t}h_t(r, \theta).
\end{align*}
Moreover, it suffices to check the initial condition for functions of the form $f(r, \theta)=e^{i\lambda \theta}g(r)$ where $\lambda\in\mathbb{R}$ and $g$ is smooth. We observe that
\[
\int_0^{\frac{\pi}{2}}\int_0^{2\pi}h_t(r, \theta)f(r, \theta)d\mu_r=e^{t\lambda^2}(e^{t\Delta} g)(0).
\]
Thus $h_t(r, \theta)$ is the desired horizontal heat kernel.
\end{proof}

We now derive some consequences of the integral representation for the horizontal heat kernel.

\begin{prop}
For $\lambda\in\mathbb{C}$, $\mathbf{Re}\lambda>0$, $r\in[0,\pi/2)$, $ \theta\in[-\pi,\pi]$,
\begin{equation}\label{eq-sphere-7}
\int_0^{+\infty}p_t(r, \theta)e^{-n^2t-\frac{\lambda}{t}}dt=\int_{-\infty}^{\infty}\frac{\Gamma(n)dy}{2^{n+2}\pi^{n+1}\left(\cosh \sqrt{y^2+4\lambda}-\cos r\cos( \theta+iy)\right)^{n}}.
\end{equation}
\end{prop}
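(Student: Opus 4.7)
The plan is to reduce the left-hand side, via the integral representation in Proposition~\ref{prop1} and a contour deformation, to a single integral over a spectral variable, and then to evaluate the remaining Laplace-type integral in $t$ using the closed-form expression \eqref{heat_kernel_odd} for the Riemannian heat kernel of $\mathbb{S}^{2n+1}$.

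First I would insert $p_t$ from Proposition~\ref{prop1} into the left-hand side and exchange the $t$- and $y$-integrations by Fubini, using that $e^{-n^2 t}$ provides decay as $t\to\infty$ and $e^{-\lambda/t}$ (for $\mathbf{Re}\,\lambda>0$) provides decay as $t\to 0^+$. Since the integrand is entire in $y$ and $\cosh(y-i\theta)=\cos(\theta+iy)$, a contour shift $y\mapsto y+i\theta$ recasts the left-hand side as
\[
\int_0^{+\infty}p_t(r,\theta)\,e^{-n^2t-\lambda/t}\,dt=\int_{\mathbb R}dy\int_0^{\infty}\frac{dt}{\sqrt{4\pi t}}\,e^{-n^2t-(y^2+4\lambda)/(4t)}\,q_t(\cos r\cos(\theta+iy)).
\]
It thus suffices to establish the kernel identity
\[
I_n(\alpha,\delta):=\int_0^\infty \frac{dt}{\sqrt{4\pi t}}\,e^{-n^2 t-\alpha^2/(4t)}q_t(\cos\delta)=\frac{\Gamma(n)}{2^{n+2}\pi^{n+1}(\cosh\alpha-\cos\delta)^n},
\]
with $\alpha=\sqrt{y^2+4\lambda}$ and $\cos\delta=\cos r\cos(\theta+iy)$.

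Next I would apply \eqref{heat_kernel_odd}: $q_t(\cos\delta)\,e^{-n^2t}=f^nV(t,\delta)$, where $f=-(2\pi\sin\delta)^{-1}\partial_\delta$ and $V(t,\delta)=(4\pi t)^{-1/2}\sum_{k\in\mathbb Z}e^{-(\delta-2k\pi)^2/(4t)}$. Pulling all $n$ copies of $f$ outside the $t$-integral would produce divergent terms $\int_0^\infty t^{-1}e^{-(\cdots)/(4t)}dt$; pulling out only $n-1$ gives absolutely convergent integrals, since a single application of $f$ to $V$ introduces an extra $1/t$ factor. Thus
\[
I_n(\alpha,\delta)=f^{n-1}\!\left[\int_0^\infty\!\frac{dt}{\sqrt{4\pi t}}\,e^{-\alpha^2/(4t)}\,fV(t,\delta)\right].
\]
Using $fV(t,\delta)=(2\pi\sin\delta)^{-1}(4\pi t)^{-1/2}\sum_k(\delta-2k\pi)(2t)^{-1}e^{-(\delta-2k\pi)^2/(4t)}$ and the elementary identity $\int_0^\infty t^{-2}e^{-C/(4t)}dt=4/C$, termwise integration (valid via symmetric partial sums) yields
\[
\int_0^\infty\!\frac{dt}{\sqrt{4\pi t}}\,e^{-\alpha^2/(4t)}fV(t,\delta)=\frac{1}{4\pi^2\sin\delta}\sum_{k\in\mathbb Z}\frac{\delta-2k\pi}{\alpha^2+(\delta-2k\pi)^2}.
\]
The Mittag--Leffler expansion obtained by logarithmic differentiation in $\delta$ of the Weierstrass factorization $\cosh\alpha-\cos\delta=2\sinh\tfrac{\alpha+i\delta}{2}\sinh\tfrac{\alpha-i\delta}{2}$ gives
\[
\sum_{k\in\mathbb Z}\frac{\delta-2k\pi}{\alpha^2+(\delta-2k\pi)^2}=\frac{\sin\delta}{2(\cosh\alpha-\cos\delta)},
\]
so the bracketed quantity equals $1/[8\pi^2(\cosh\alpha-\cos\delta)]$.

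Finally I iterate $f$. Since $\partial_\delta(A-\cos\delta)^{-m}=-m\sin\delta\,(A-\cos\delta)^{-(m+1)}$, one has $f[(A-\cos\delta)^{-m}]=m(2\pi)^{-1}(A-\cos\delta)^{-(m+1)}$; an easy induction gives $f^{n-1}[(A-\cos\delta)^{-1}]=(n-1)!\,(2\pi)^{-(n-1)}(A-\cos\delta)^{-n}$. Applying this with $A=\cosh\alpha$ produces
\[
I_n(\alpha,\delta)=\frac{1}{8\pi^2}\cdot\frac{\Gamma(n)}{(2\pi)^{n-1}(\cosh\alpha-\cos\delta)^n}=\frac{\Gamma(n)}{2^{n+2}\pi^{n+1}(\cosh\alpha-\cos\delta)^n},
\]
which is exactly the integrand on the right-hand side. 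The main obstacle is analytic: justifying the contour deformation in $y$ and the commutation of $f^{n-1}$ with the $t$-integral requires controlling $q_t(\cos r\cos(\theta+iy))$ for large $|y|$. The worst case $\cos(\theta+iy)\sim \tfrac12 e^{|y|}$ forces $\delta\sim i|y|$, and the theta-function representation of $V(t,i\tau)$ shows $|V(t,i\tau)|\lesssim e^{\tau^2/(4t)}/\sqrt t$, which is exactly cancelled by the Gaussian $e^{-y^2/(4t)}$; the factors $1/\sin\delta\sim 2e^{-|\tau|}$ from each $f$ then provide enough decay in $y$ to carry out all the interchanges.
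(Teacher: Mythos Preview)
Your argument is correct and complete; the analytic justifications you sketch at the end are adequate. The approach, however, is genuinely different from the paper's.

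Both proofs start identically: insert the integral representation of $p_t$ from Proposition~\ref{prop1}, swap the $t$- and $y$-integrals, and deform the contour in $y$ so that the problem reduces to evaluating
\[
\int_0^\infty \frac{dt}{\sqrt{4\pi t}}\,e^{-n^2t-\alpha^2/(4t)}\,q_t(\cos\delta),\qquad \alpha=\sqrt{y^2+4\lambda},\ \cos\delta=\cos r\cos(\theta+iy).
\]
From here the two proofs diverge. The paper recognizes this $t$-integral as an operator identity: $\int_0^\infty t^{-1/2}e^{-n^2t-\alpha^2/(4t)}e^{t\Delta}\,dt=\sqrt{\pi}\,A^{-1}e^{-\alpha A}$ with $A=\sqrt{n^2-\Delta}$, and then quotes from Taylor's book the closed form of the Poisson-type kernel $A^{-1}e^{-\alpha A}$ on $\mathbb S^{2n+1}$, namely $\tfrac{\Gamma(n)}{2^{n+1}\pi^{n+1}}(\cosh\alpha-\cos\delta)^{-n}$. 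This is short and conceptual but relies on an external result.

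You instead substitute the explicit representation \eqref{heat_kernel_odd}, $e^{-n^2t}q_t=f^nV$ with $f=-(2\pi\sin\delta)^{-1}\partial_\delta$, keep one copy of $f$ inside the $t$-integral to ensure convergence, integrate termwise in the theta series, and sum the result via the Mittag--Leffler expansion of $\cot$. The remaining $f^{n-1}$ then acts trivially on $(A-\cos\delta)^{-1}$. This is more hands-on but entirely self-contained, using nothing beyond the classical partial-fraction expansion of the cotangent. In effect you have reproved, by elementary means, the special case of Taylor's Poisson-kernel formula that the paper invokes.
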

\begin{proof}
From \eqref{eq-sphere-8} we have that
\begin{eqnarray*}
& & \int_0^{+\infty}p_t(r, \theta)e^{-n^2t-\frac{\lambda}{t}}dt=\int_0^{+\infty}e^{-\frac{\lambda}{t}}p_t(r, \theta)e^{-n^2t}dt\\
&=& \frac{1}{\sqrt{4\pi}}\int_{-\infty}^{+\infty}\int_0^{+\infty}e^{-n^2t-\frac{y^2+4\lambda}{4t}}q_t(\cos r\cos( \theta+iy))\frac{dt}{\sqrt{t}}dy,
\end{eqnarray*}
we want to compute 
\[
\int_0^{+\infty}e^{-n^2t-\frac{y^2+4\lambda}{4t}}q_t(\cos r\cos( \theta+iy))\frac{dt}{\sqrt{t}}.
\]
Notice that 
\begin{eqnarray*}
\int_0^{+\infty}e^{-n^2t-\frac{y^2+4\lambda}{4t}}e^{t\Delta}\frac{dt}{\sqrt{t}}
&=& \int_0^{+\infty}e^{-\frac{y^2+4\lambda}{4t}}e^{-t(n^2-\Delta)}\frac{dt}{\sqrt{t}} \\
&=& \frac{\sqrt{\pi}}{\sqrt{n^2-\Delta}}e^{-\sqrt{y^2+4\lambda}\sqrt{n^2-\Delta}}.
\end{eqnarray*}
By a result in Taylor \cite[pp. 95]{T2}, it is known that he kernel of the operator $A^{-1}e^{-tA}$ for $A=\sqrt{n^2-\Delta}$ is given by
\[
\frac{1}{2}\pi^{-(n+1)}\Gamma(n)(2\cosh t-2\cos \delta)^{-n},
\]
where  $\cos\delta=\cos r\cos( \theta+iy)$. Plug in $t=\sqrt{y^2+4\lambda}$, we obtain
\[
\frac{1}{\sqrt{n^2-\Delta}}e^{-\sqrt{y^2+4\lambda}\sqrt{n^2-\Delta}}=\frac{\Gamma(n)}{2^{n+1}\pi^{n+1}\left(\cosh \sqrt{y^2+4\lambda}-\cos r\cos( \theta+iy)\right)^{n}},
\]
hence complete the proof.
\end{proof}

We can deduce the Green function of $-L+n^2$ immediately from the above proposition.

\begin{prop}
The Green function of the conformal sub-Laplacian $-L+n^2$ is given by 
\begin{equation*}
G(r, \theta)=\frac{\Gamma\left(\frac{n}{2}\right)^2}{8\pi^{n+1}(1-2\cos r\cos \theta+\cos^2r)^{n/2}} .
\end{equation*}
\end{prop}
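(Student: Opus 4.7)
The plan is to specialize the formula from the previous proposition at $\lambda = 0$ and reduce the resulting integral to a classical Beta integral via a contour shift.

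First, since $(p_t)_{t \ge 0}$ is the heat semigroup generated by $L$, the Green function of $-L + n^2$ is
\[
G(r,\theta) = \int_0^{+\infty} p_t(r,\theta)\, e^{-n^2 t}\, dt.
\]
Taking $\lambda \to 0^+$ in the preceding proposition gives $\sqrt{y^2+4\lambda} \to |y|$, hence $\cosh\sqrt{y^2+4\lambda} \to \cosh y$, and therefore
\[
G(r,\theta) = \frac{\Gamma(n)}{2^{n+2}\pi^{n+1}} \int_{-\infty}^{+\infty} \frac{dy}{\bigl(\cosh y - \cos r\cos(\theta+iy)\bigr)^n}.
\]
The passage to the limit is justified because, for $(r,\theta) \neq (0,0)$, the integrand is bounded uniformly in $\lambda \in [0,1]$ by an integrable function (to be verified by the factorization below).

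Second, I would expand the denominator in exponentials. Using $\cos(\theta+iy) = \cos\theta\cosh y - i\sin\theta\sinh y$, a direct computation gives
\[
\cosh y - \cos r \cos(\theta+iy) = \frac{a e^{y} + b e^{-y}}{2}, \qquad a := 1 - \cos r\, e^{-i\theta}, \; b := 1 - \cos r\, e^{i\theta}.
\]
Since $b = \overline{a}$, we have $ab = |a|^2 = 1 - 2\cos r\cos\theta + \cos^2 r$, which is exactly the quantity appearing in the target formula. Writing $a = \sqrt{ab}\, e^{i\phi}$ with $\phi = \arg(a) \in (-\pi/2,\pi/2)$ (since $\mathrm{Re}(a) = 1 - \cos r\cos\theta > 0$ for $(r,\theta) \neq (0,0)$), one obtains
\[
\cosh y - \cos r\cos(\theta+iy) = \sqrt{ab}\,\cosh(y + i\phi).
\]

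Third, I would shift the contour. The function $w \mapsto 1/\cosh^n w$ is holomorphic in the strip $|\mathrm{Im}(w)| < \pi/2$ and decays like $e^{-n|\mathrm{Re}(w)|}$ at infinity. Because $|\phi|<\pi/2$, Cauchy's theorem applied to a tall rectangle yields
\[
\int_{-\infty}^{+\infty} \frac{dy}{\cosh^n(y+i\phi)} = \int_{-\infty}^{+\infty} \frac{dw}{\cosh^n w} = B\!\left(\tfrac{1}{2},\tfrac{n}{2}\right) = \frac{\sqrt{\pi}\,\Gamma(n/2)}{\Gamma((n+1)/2)},
\]
the last Beta integral arising from the substitution $s = \tanh w$, which gives $\int_{-\infty}^{+\infty}\cosh^{-n}w\,dw = 2\int_0^1 (1-s^2)^{n/2 -1}ds$. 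Combining with the factor $(ab)^{-n/2}$ produced by the rescaling and using the Legendre duplication formula $\Gamma(n) = 2^{n-1}\pi^{-1/2}\Gamma(n/2)\Gamma((n+1)/2)$, the constants collapse to $\Gamma(n/2)^2 / (8\pi^{n+1})$, yielding the claimed expression for $G(r,\theta)$.

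The main obstacle is the contour deformation: one must check that the zeros of $ae^y + be^{-y}$ (equivalently, the poles of $1/\cosh^n(y+i\phi)$) do not enter the strip being swept, and that the horizontal sides of the approximating rectangles contribute nothing in the limit. Both are clear given the bound $|\phi| < \pi/2$ and the exponential decay of $1/\cosh^n$, but they should be stated explicitly. A secondary, more technical point is ensuring that the passage $\lambda \to 0^+$ under the integral sign is legitimate, which follows from the same factorization and dominated convergence.
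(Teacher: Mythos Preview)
Your proof is correct and follows essentially the same route as the paper: take $\lambda\to 0$ in the preceding proposition, factor the denominator, reduce to $\int_{-\infty}^{+\infty}\cosh^{-n}y\,dy$, and evaluate. Two minor differences worth noting: you make the contour shift $y\mapsto y+i\phi$ explicit (the paper simply writes down the factorization without justifying why the resulting integral equals the one over $\cosh^n y$), and you evaluate $\int_{-\infty}^{+\infty}\cosh^{-n}y\,dy$ via the Beta integral and Legendre duplication, whereas the paper splits into odd and even $n$ using double factorials before arriving at the same identity $\Gamma(n)\int_{-\infty}^{+\infty}\cosh^{-n}y\,dy=2^{n-1}\Gamma(n/2)^2$.
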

\begin{proof}
Let us assume $r\not =0$, $ \theta\not=0$, and let $\lambda\rightarrow 0$ in \eqref{eq-sphere-7}, we have
\begin{eqnarray*}
G(r, \theta) &=& \frac{\Gamma(n)}{2^{n+2}\pi^{n+1}}\int_{-\infty}^{+\infty}\frac{dy}{(\cosh y-\cos r\cos( \theta+iy))^n} \\
            &=& \frac{\Gamma(n)}{2^{n+2}\pi^{n+1}}\int_{-\infty}^{+\infty}\frac{dy}{(\cosh y(1-\cos r\cos \theta)-i\cos r\sin \theta\sinh y)^n} \\
            &=& \frac{\Gamma(n)}{2^{n+2}\pi^{n+1}}\frac{1}{(1-2\cos r\cos \theta+\cos^2r)^{n/2}}\int_{-\infty}^{+\infty}\frac{dy}{(\cosh y)^n}.    
\end{eqnarray*} 
Notice that 
\begin{eqnarray*}
\int_{-\infty}^{+\infty}\frac{dy}{(\cosh y)^n}&=&\frac{\pi(n-2)!!}{(n-1)!!} \qquad \mbox{when $n>0$ is odd} \\
                                              &=&\frac{2(n-2)!!}{(n-1)!!}\qquad \mbox{when $n>0$ is even}
\end{eqnarray*}
where $n!!$ denotes the double factorial such that
\begin{eqnarray*}
n!!&=&n\cdot(n-2)\cdots5\cdot 3\cdot 1 \qquad\mbox{when $n>0$ is odd} \\
    &=& n\cdot(n-2)\cdots6\cdot 4\cdot 2 \qquad\mbox{when $n>0$ is even} .
\end{eqnarray*}
Moreover, since
\begin{eqnarray*}
\Gamma(n)\frac{(n-2)!!}{(n-1)!!}&=&\frac{2^{n-1}}{\pi}\Gamma\left(\frac{n}{2}\right)^2 \qquad \mbox{when $n>0$ is odd} \\
                                              &=&2^{n-2}\Gamma\left(\frac{n}{2}\right)^2\qquad \mbox{when $n>0$ is even}
\end{eqnarray*}
we obtain
\[
\Gamma(n)\int_{-\infty}^{+\infty}\frac{dy}{(\cosh y)^n}=2^{n-1}\Gamma\left(\frac{n}{2}\right)^2 \qquad\mbox{for all $n\in\mathbb{Z}_{> 0}$}.
\]
This implies our conclusion.
\end{proof}

To conclude the section and illustrate again the usefulness of the integral representation of the horizontal heat kernel, we discuss further results in the case $n=1$. We will particularly be interested in small-time asymptotics of the heat kernels.  So up to the end of this section we assume $n=1$ and refer to \cite{BW1} for the study of the small-time asymptotics when $n \ge 2$.

Let us first describe some properties of $q_t$ that will be useful in the sequel.
From the Poisson summation formula, when $n=1$ we obtain that for $\delta \in \mathbb{R}$:
\begin{align*}
q_t (\cos \delta)&=\frac{\sqrt{\pi}e^t}{4 t^{\frac{3}{2}}}  \frac{1}{\sin\delta} \sum_{k \in \mathbb{Z}} (\delta +2k\pi)e^{-\frac{(\delta+2k\pi)^2}{4t}}\\
 & =\frac{\sqrt{\pi}e^t}{4 t^{\frac{3}{2}}} \frac{\delta}{\sin \delta} e^{-\frac{\delta^2}{4t}} \left(1 +2\sum_{k=1}^{+\infty} e^{-\frac{k^2\pi^2}{t}}\left( \cosh \frac{k\pi \delta}{t} +2k\pi \frac{\sinh \frac{k\pi \delta}{t}}{\delta} \right) \right).
\end{align*}
From those expressions, we  obtain precise estimates:

\begin{itemize}
\item \hspace{.1in} Let $\varepsilon>0$, for $x \in (-1+\varepsilon,1]$ and $t >0$:
\begin{align}\label{estimate1}
q_t (x)=\frac{\sqrt{\pi}e^t}{4 t^{\frac{3}{2}}}  \frac{{\arccos} x}{\sqrt{1-x^2}} e^{-\frac{({\arccos}x)^2}{4t}} \left(1 +R_1(t,x) \right),
\end{align}
where for some positive constants $C_1$ and $C_2$ depending only in $\varepsilon$, $\mid R_1(t,x) \mid \le C_1 e^{-\frac{C_2}{t}}$.
\item \hspace{.1in} For $x \in [1,+\infty)$ and $t >0$:
\begin{align}\label{estimate2}
q_t (x)=\frac{\sqrt{\pi}e^t}{4 t^{\frac{3}{2}}} \frac{\arcosh{x} }{\sqrt{x^2-1}} e^{\frac{(\mathrm{arcosh }x)^2}{4t}} \left(1 +R_2(t,x) \right),
\end{align}
where for some positive constants $C_3$ and $C_4$, $\mid R_2(t,x) \mid \le C_3 e^{-\frac{C_4}{t}}$.
\end{itemize}

We now derive precise asymptotics of the horizontal heat kernel when $t \to 0$. We start with the points of the form $(0,\theta)$, $\theta\in[0,\pi]$. Note that for the range $\theta\in[-\pi,0]$ one can use the symmetry $p_t(r,-\theta)=p_t(r,\theta)$.

\begin{proposition}\label{asymptotics1}
For $t>0$ and $\theta \in [0,\pi]$,
\[
p_t (0,\theta)=\frac{\pi^2 e^t}{4t^2} e^{-\frac{2\pi \theta -\theta^2}{4t}}\sum_{k \in 
\mathbb{Z}}e^{-\frac{k(k+1)\pi^2}{t}}
\frac{ (2k+1)+2k e^{-\frac{\pi}{2t} (\theta+2k\pi)}} { \left( 1+e^{-\frac{\pi}{2t} 
(\theta+2k\pi)} \right)^2},
\]
therefore, when $t \rightarrow 0$,
\[
p_t(0,\theta)=\frac{\pi^2 e^t}{4t^2} e^{-\frac{2\pi \theta -\theta^2}{4t}} \left( 
1+O(e^{-\frac{C}{t}}) \right)
\]
\end{proposition}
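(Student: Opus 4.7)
The plan is to start from Proposition~\ref{prop1} with $r=0$, substitute the Poisson-summed closed form of $q_t(\cos\delta)$ analytically continued to $\delta=iy$, and evaluate the resulting Fourier-type integrals explicitly. Substituting $\delta=iy$ in
\[
q_t(\cos\delta) = \frac{\sqrt{\pi}e^t}{4t^{3/2}\sin\delta}\sum_{k\in\mathbb{Z}}(\delta+2k\pi)\,e^{-(\delta+2k\pi)^2/(4t)}
\]
gives
\[
q_t(\cosh y) = \frac{\sqrt{\pi}\,e^t\,e^{y^2/(4t)}}{4it^{3/2}\sinh y}\sum_{k\in\mathbb{Z}}(iy+2k\pi)\,e^{-ik\pi y/t}\,e^{-k^2\pi^2/t}.
\]
The $e^{y^2/(4t)}$ cancels the Gaussian $e^{-(y+i\theta)^2/(4t)}$ from Proposition~\ref{prop1} up to the factor $e^{\theta^2/(4t)}e^{-iy\theta/(2t)}$. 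After exchanging sum and integral (justified by the $e^{-k^2\pi^2/t}$ decay), the problem reduces to evaluating, for each $k\in\mathbb{Z}$, the principal-value integral
\[
I_k = \mathrm{P.V.}\int_{-\infty}^{+\infty}\frac{iy+2k\pi}{\sinh y}\,e^{-iy(\theta+2k\pi)/(2t)}\,dy.
\]

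\smallskip
\noindent Splitting $I_k$ and applying the two classical formulas
\[
\int_{-\infty}^{+\infty}\frac{y\cos(\alpha y)}{\sinh y}\,dy = \frac{\pi^2}{2\cosh^2(\pi\alpha/2)},
\qquad
\mathrm{P.V.}\int_{-\infty}^{+\infty}\frac{\sin(\alpha y)}{\sinh y}\,dy = \pi\tanh(\pi\alpha/2),
\]
with $\beta_k=\pi(\theta+2k\pi)/(4t)$ and $u_k=e^{-2\beta_k}=e^{-\pi(\theta+2k\pi)/(2t)}$, gives
\[
I_k = i\pi^2\!\left[\frac{1}{2\cosh^2\beta_k}-2k\tanh\beta_k\right] = \frac{2i\pi^2\,(ku_k^2+u_k-k)}{(1+u_k)^2},
\]
so that
\[
p_t(0,\theta) = \frac{\pi^2\,e^t\,e^{\theta^2/(4t)}}{4t^2}\sum_{k\in\mathbb{Z}}e^{-k^2\pi^2/t}\,\frac{ku_k^2+u_k-k}{(1+u_k)^2}.
\]
The bridge to the target identity is the algebraic equality
\[
ku_k^2+u_k-k = (2k+1)u_k+2ku_k^2 - k(1+u_k)^2
\]
combined with the antisymmetry $\sum_{k\in\mathbb{Z}}k\,e^{-k^2\pi^2/t}=0$: the $-k(1+u_k)^2/(1+u_k)^2$ piece collapses to $-k$ and drops out of the sum. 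Finally, the identity $e^{-k^2\pi^2/t}u_k = e^{-k(k+1)\pi^2/t}e^{-\pi\theta/(2t)}$ and $e^{\theta^2/(4t)}e^{-\pi\theta/(2t)} = e^{-(2\pi\theta-\theta^2)/(4t)}$ produce exactly the asserted closed form.

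\smallskip
\noindent For the small-$t$ asymptotics, the key observation is that $k(k+1)\geq 2$ for every $k\notin\{0,-1\}$, so all such indices contribute $O(e^{-2\pi^2/t})$. For $\theta\in(0,\pi]$ one has $u_0=e^{-\pi\theta/(2t)}\to 0$ and $u_{-1}=e^{\pi(2\pi-\theta)/(2t)}\to+\infty$ as $t\to 0^+$, so the $k=0$ summand contributes $1/(1+u_0)^2 = 1+O(u_0)$ while the $k=-1$ summand contributes $(-1-2u_{-1})/(1+u_{-1})^2 = O(u_{-1}^{-1})$; both are exponentially small corrections, and combining them yields the stated $(1+O(e^{-C/t}))$ remainder.

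\smallskip
\noindent The main technical obstacle will be step two: the careful evaluation of the two Fourier integrals above and the bookkeeping of the principal-value interpretation at $y=0$ (the pole is genuine once $k\neq 0$). The remaining manipulations are elementary, but the nontrivial cancellation coming from $\sum k\,e^{-k^2\pi^2/t}=0$ is what actually collapses the expression into the advertised form.
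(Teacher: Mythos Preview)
Your argument is correct and follows the same overall strategy as the paper: start from Proposition~\ref{prop1} at $r=0$, insert the Poisson-summed form of $q_t$, and evaluate term by term. The execution differs in one respect worth noting. The paper first shifts the contour by $-i\theta$, obtaining $\int e^{-y^2/(4t)}q_t(\cosh(y-i\theta))\,dy$, so that the denominator becomes $\sinh(y-i\theta)$, which has no pole on the real axis for $\theta\in(0,\pi)$; each $k$-integral is then computed directly by the residue theorem and already lands in the final form $(2k+1)+2ku_k$ over $(1+u_k)^2$. You stay on the real axis, use the two tabulated real Fourier integrals, and then need the algebraic identity together with the antisymmetry $\sum_k k\,e^{-k^2\pi^2/t}=0$ to collapse your intermediate expression into the stated one. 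Both routes are legitimate; the paper's avoids the principal-value bookkeeping at $y=0$, while yours avoids explicit contour manipulation at the cost of one extra algebraic step.

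One small caveat: the parenthetical ``justified by the $e^{-k^2\pi^2/t}$ decay'' is a bit quick, since for $k\neq 0$ the individual terms $(2k\pi)/\sinh y$ are not absolutely integrable near $y=0$. The honest justification is the one you sketch at the end: split into the even piece $y/\sinh y$ (regular) and the odd piece $1/\sinh y$ (whose principal value is carried entirely by the $\sin$ part), and observe that the full sum is regular at $y=0$ because $\sum_k 2k\pi e^{-k^2\pi^2/t}=0$, so the interchange is valid after this decomposition. Your asymptotic analysis for $\theta\in(0,\pi]$ is correct; as in the paper, the constant $C$ in the remainder depends on $\theta$ and degenerates as $\theta\to 0^+$.
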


\begin{proof}
Let $\theta \in (0,\pi]$. From \eqref{eq-sphere-8} we have
\[
p_t (0,\theta)=\frac{1}{\sqrt{4\pi t}} \int_{-\infty}^{+\infty} e^{-\frac{y^2}{4t}} 
q_t ( \cosh (y-i\theta))) dy,
\]
But
\[
q_t (\cosh (y-i\theta))=\frac{\sqrt{\pi}e^t}{4 t^{\frac{3}{2}}}  \frac{1}{\sinh (y-i\theta)}
 \sum_{k \in \mathbb{Z}} (y-i\theta -2ik\pi)e^{\frac{(y-i\theta-2ik\pi)^2}{4t}}\\
\]
and for $k\in \mathbb{Z}$, from the residue theorem,
\[
\int_{-\infty}^{+\infty} \frac{y-i\theta-2ik\pi}{\sinh (y-i\theta)} e^{-\frac{iy}{2t}(\theta+2k\pi)}dy
=2\pi^2 e^{\frac{(\theta+2k\pi)^2-(2k+1)\pi(\theta+2k\pi) }{2t} }
\frac{ (2k+1)+2k e^{-\frac{\pi}{2t} (\theta+2k\pi)}} { \left( 1+e^{-\frac{\pi}{2t} 
(\theta+2k\pi)} \right)^2}.
\]
The result easily follows.
\end{proof}

\begin{proposition}
For $r \in \left(0,\frac{\pi}{2}\right)$, when $ t \to 0$,
\begin{equation}\label{eq-c-S-pt}
p_t (r,0) \sim   \frac{r}{\sin r} \sqrt{\frac{1}{1-r \mathrm{cotan} r} }\frac{\sqrt{\pi}e^{-\frac{r^2}{4t}}}{4t^{\frac{3}{2}}}.
\end{equation}
\end{proposition}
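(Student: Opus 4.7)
The plan is to apply Laplace's method to the integral representation of $p_t(r,0)$ given by Proposition~\ref{prop1}, combined with the fine asymptotic expansions \eqref{estimate1} and \eqref{estimate2} of the spherical heat kernel $q_t$.

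First, specializing Proposition~\ref{prop1} to $\theta=0$ yields
\[
p_t(r,0) = \frac{1}{\sqrt{4\pi t}} \int_{-\infty}^{+\infty} e^{-y^2/(4t)}\, q_t\!\left(\cos r\, \cosh y\right) dy.
\]
The argument $\cos r \cosh y$ crosses the value $1$ at $y = \pm y_0$ with $y_0 := \arcosh(\sec r) = \log(\sec r + \tan r)$. I would split the integral accordingly and apply \eqref{estimate1} on $|y| \le y_0$ and \eqref{estimate2} on $|y| \ge y_0$.

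On the inner region $|y| \le y_0$, estimate \eqref{estimate1} rewrites the integral (up to a factor $1+R_1$ with exponentially small remainder) as
\[
\frac{e^t}{8 t^2} \int_{-y_0}^{y_0} e^{-F(y)/(4t)}\, g(y)\, dy, \qquad F(y) = y^2 + a(y)^2,\quad g(y) = \frac{a(y)}{\sqrt{1-\cos^2 r\, \cosh^2 y}},
\]
where $a(y) = \arccos(\cos r \cosh y)$. A direct computation gives $a(0) = r$, $a'(0) = 0$, and $a''(0) = -\cot r$, so that $F(0) = r^2$, $F'(0) = 0$, and $F''(0) = 2(1 - r \cot r) > 0$ for $r \in (0,\pi/2)$. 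Laplace's method then yields
\[
\int_{-y_0}^{y_0} e^{-F(y)/(4t)}\, g(y)\, dy \sim g(0)\, e^{-r^2/(4t)} \sqrt{\frac{8\pi t}{F''(0)}} = \frac{r}{\sin r}\, \sqrt{\frac{4\pi t}{1-r\cot r}}\, e^{-r^2/(4t)},
\]
and multiplying by $e^t/(8t^2)$ and letting $t\to 0$ produces exactly the right-hand side of \eqref{eq-c-S-pt}.

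Finally, the main technical point — and the principal obstacle — is showing that the outer region $|y| > y_0$ is negligible compared to $e^{-r^2/(4t)}$. Using \eqref{estimate2}, the exponent in that region is $-y^2/(4t) + (\arcosh(\cos r \cosh y))^2/(4t)$; since $\arcosh(\cos r \cosh y) < y$ for $y > y_0$ (from the expansion $\arcosh(\cos r \cosh y) \sim y + \log \cos r$ as $y\to\infty$, with $\log\cos r < 0$), this exponent is negative, and at $y = y_0$ it equals $-y_0^2/(4t)$. A short convexity/monotonicity check using $y_0 = \log(\sec r + \tan r) > r$ (which holds since $\frac{d}{dr}(\log(\sec r+\tan r) - r) = \sec r - 1 > 0$ and the difference vanishes at $r=0$) shows $y_0^2 > r^2$, so the outer contribution is $O(e^{-y_0^2/(4t)}) = o(e^{-r^2/(4t)})$. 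Combining this with the inner estimate above completes the proof.
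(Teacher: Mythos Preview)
Your proof is correct and follows essentially the same approach as the paper: split the integral from Proposition~\ref{prop1} at $|y|=y_0=\arcosh(\sec r)$, apply Laplace's method on the inner region using the phase $F(y)=y^2+(\arccos(\cos r\cosh y))^2$ with $F''(0)=2(1-r\cot r)$, and show the outer region is negligible. Your treatment of the outer region is in fact slightly more explicit than the paper's, since you verify $y_0>r$ directly; the paper simply asserts that $y\mapsto y^2-(\arcosh(\cos r\cosh y))^2$ has no interior minimum and invokes Laplace's method.
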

\begin{remark}
Here, and throughout the rest of the monograph, we use the notation $\sim$ to capture the leading term of the expansion. For instance, \eqref{eq-c-S-pt} is interpreted as 
\[
p_t (r,0) =  \frac{r}{\sin r} \sqrt{\frac{1}{1-r \mathrm{cotan} r} }\frac{\sqrt{\pi}e^{-\frac{r^2}{4t}}}{4t^{\frac{3}{2}}}(1+o(1)).
\]
\end{remark}
\begin{proof}
We fix $r \in \left(0,\frac{\pi}{2}\right)$. From the Proposition \ref{prop1} and due to the estimates \eqref{estimate1} and \eqref{estimate1} on $q_t$ we get:
\begin{align*}
p_t (r,0) \sim_{t \to 0} \frac{1}{8t^2}(J_1 (t)+J_2(t)),
\end{align*}
where
\[
J_1 (t)=\int_{ \cosh y \le \frac{1}{\cos r}} e^{- \frac{y^2 + (\mathrm{arcos} (\cos r \cosh y))^2}{4t}}
\frac{\mathrm{arcos} (\cos r \cosh y)}{\sqrt{1 -\cos^2 r \cosh^2 y}} dy
\]
and
\[
J_2 (t)=\int_{ \cosh y \ge \frac{1}{\cos r}} e^{- \frac{y^2 - (\mathrm{arcosh} (\cos r \cosh y))^2}{4t} }\frac{\mathrm{arcosh} (\cos r \cosh y)}{\sqrt{\cos^2 r \cosh^2 y-1}} dy.
\]
We now analyze the two above integrals in small times thanks to the Laplace method. 

On the interval $\left[ -\mathrm{arcosh} \frac{1}{\cos r} ,\mathrm{arcosh} \frac{1}{\cos r} \right]$, the function
\[
f(y)=y^2 + (\mathrm{arcos} (\cos r \cosh y))^2
\]
has a unique minimum which is attained at $y=0$ and, at this point:
\[
f''(0)=2(1-r\mathrm{cotan} r).
\]
Therefore, thanks to the Laplace method
\[
J_1 (t)\sim_{t \to 0} e^{-\frac{r^2}{4t}} \frac{r}{\sin r} \sqrt{\frac{4\pi t}{1-r \mathrm{cotan} r} }.
\]
We now analyze the second integral. On $\left( -\infty , -\mathrm{arcosh} \frac{1}{\cos r} \right) \cup \left( \mathrm{arcosh} \frac{1}{\cos r}, +\infty \right)$, the function
\[
g(y)=y^2 - (\mathrm{arcosh} (\cos r \cosh y))^2,
\]
has no minimum. Therefore, from the Laplace method $J_2 (t)$ is negligeable with respect to $J_1(t)$ when $t \to 0$.
\end{proof}

The previous proposition can be extended by the same method when $\theta \neq 0$. 
If we fix $r\in \left( 0, \frac{\pi}{2} \right), \theta \in [-\pi, \pi]$, then the function
\[
f(y)=(y-i\theta)^2 + (\mathrm{arcos} (\cos r \cosh y))^2,
\]
defined on the strip $\mid \mathrm{Re} (y) \mid  < \mathrm{arcosh} \frac{1}{\cos r}$ has a critical point at $iz (r,\theta)$ where $
z(r,\theta)$ is the unique solution in $[-\pi,\pi]$ to the equation:
\[
z(r,\theta)-\theta=\cos r \sin z (r,\theta)\frac{ \mathrm{arcos} (\cos \theta (r,\theta) \cos r ) }{\sqrt{1-\cos^2 r \cos^2 z (r,\theta)}}.
\]
Indeed, with $u(r,\theta)= \cos r \cos \theta$
$$\frac{\partial}{\partial \theta} \left( \theta - \cos r \sin \theta \frac{ \mathrm{arcos} (\cos \theta  \cos r ) }{\sqrt{1-\cos^2 r \cos^2 \theta}}\right)=  \frac{\sin^2 r}{1-u(r,\theta)^2} \left( 1-  \frac{u(r,\theta) {\arccos} u(r,\theta)}{\sqrt{1-u^2(r,\theta)}} \right)
$$
which is positive. So this last function is bijective from $[-\pi,\pi]$ on itself.

We observe that at the point $z (r,\theta)$,  $f''(iz (r,\theta))$ is a positive real number:

$$ f''(iz (r,\theta))= 2  \frac{\sin^2 r}{1-u(r,\theta)^2} \left( 1-  \frac{u(r,\theta) {\arccos} u(r,\theta)}{\sqrt{1-u^2(r,\theta)}} \right)
$$
where $u(r,\theta)= \cos r \cos z (r,\theta)$.
By the same method than in the previous proposition, we obtain therefore:
\begin{proposition}
Let $r\in \left( 0, \frac{\pi}{2} \right), \theta \in [-\pi, \pi]$. When $t \to 0$,

$$p_t(r,\theta) \sim \frac{1}{\sin r} \frac{\mathrm{arccos } u(r,\theta)}{\sqrt{ 1-  \frac{u(r,\theta) {\arccos} u(r,\theta)}{\sqrt{1-u^2(r,\theta)} }}} \frac{\sqrt{\pi}e^{-\frac{(z (r,\theta)-\theta)^2 \tan^2 r}{4t \sin^2 z (r,\theta)}}}{4 t^{\frac{3}{2}}}.
$$
\end{proposition}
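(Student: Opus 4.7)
The strategy is to follow the proof of the preceding proposition, replacing the real Laplace method by the complex (steepest-descent) Laplace method to handle the oscillatory factor induced by $\theta\neq 0$. Starting from the integral representation of Proposition \ref{prop1} and using the uniform asymptotics \eqref{estimate1} and \eqref{estimate2} for $q_t$, one first obtains
$$p_t(r,\theta)\sim \frac{1}{8t^2}\bigl(J_1(t)+J_2(t)\bigr)$$
exactly as in the case $\theta=0$, except that the phase of $J_1$ is now the complex-valued function $f(y)=(y-i\theta)^2+(\arccos(\cos r\cosh y))^2$ on $[-a,a]$ with $a=\arcosh(1/\cos r)$, while $J_2$ is the analogous integral over $|y|\ge a$ with $\arccos$ replaced by $\arcosh$ and the opposite sign in front of its square in the phase.

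Next, I would analytically continue $f$ into a horizontal strip containing $[-a,a]$ and locate its saddle point by solving $f'(y)=0$. The substitution $y=iz$ turns this, as already observed in the text, into the implicit equation defining $z(r,\theta)$, whose existence and uniqueness on $[-\pi,\pi]$ are established there. The critical value is
$$f(iz(r,\theta))=-(z(r,\theta)-\theta)^2+(\arccos u(r,\theta))^2,$$
and a short algebraic manipulation combining the saddle equation with the identity $1-u^2=\sin^2 r+\cos^2 r\sin^2 z$ rewrites it as $(z(r,\theta)-\theta)^2\tan^2 r/\sin^2 z(r,\theta)$, producing the desired exponential factor. A direct differentiation then yields
$$f''(iz(r,\theta))=\frac{2\sin^2 r}{1-u^2}\left(1-\frac{u\arccos u}{\sqrt{1-u^2}}\right),$$
whose strict positivity on $(0,\pi/2)\times[-\pi,\pi]$ has already been checked in the text (it is essentially the derivative appearing in the bijectivity argument for $z(r,\theta)$).

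The heart of the argument is then the standard steepest-descent computation: I would deform the horizontal segment $[-a,a]$ inside the strip of analyticity so that it passes through the saddle $iz(r,\theta)$ along the direction in which $\mathrm{Re}\,f$ decreases most rapidly. The resulting Gaussian integral yields the prefactor $\sqrt{8\pi t/f''(iz(r,\theta))}$ times the amplitude $\arccos u/\sqrt{1-u^2}$ evaluated at the saddle. Multiplying by the overall normalization $\sqrt{\pi}\,e^t/(8t^2)$ coming from Proposition \ref{prop1} and the leading behaviour of $q_t$, and using $e^t\to 1$ as $t\to 0$, gives precisely the asymptotic announced.

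The main difficulty will be the rigorous justification of the contour deformation together with the estimate that $J_2(t)$ is exponentially negligible. For the former, one must verify that $\mathrm{Re}\,f$ is strictly greater along the deformed contour than at the saddle itself, and that the branch points of $\arccos(\cos r\cosh y)$ on the vertical lines $\mathrm{Re}\,y=\pm a$ are not crossed. For $J_2(t)$, as in the case $\theta=0$, the corresponding phase $(y-i\theta)^2-(\arcosh(\cos r\cosh y))^2$ has no interior critical point on $\{|y|\ge a\}$ and its real part exceeds $\mathrm{Re}\,f(iz(r,\theta))$ by a positive constant; the exponentially small remainders $R_1,R_2$ furnished by \eqref{estimate1}-\eqref{estimate2} are absorbed in the $(1+o(1))$ term.
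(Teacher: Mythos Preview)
Your proposal is correct and follows exactly the same route as the paper: the paper's proof is literally the sentence ``By the same method than in the previous proposition, we obtain therefore'', with the location of the saddle $iz(r,\theta)$ and the computation of $f''(iz(r,\theta))$ given in the discussion immediately preceding the statement. Your write-up simply fills in the details of the steepest-descent argument (contour deformation through the saddle, evaluation of the amplitude and Gaussian factor, negligibility of $J_2$), and your algebraic verification that the critical value $f(iz(r,\theta))$ rewrites as $(z(r,\theta)-\theta)^2\tan^2 r/\sin^2 z(r,\theta)$ via the saddle equation is the computation the paper leaves implicit.
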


\subsection{Characteristic function of the stochastic  area and limit theorem}

In this section, we study the characteristic function of the stochastic area functional $\theta(t)$ as given in Definition \ref{stochastic area sphere}.
Let $\lambda \ge 0$, $r \in [0,\pi/2)$ and 
\[
I(\lambda,r):=\mathbb{E}\left(e^{i \lambda \theta(t)}\mid r(t)=r\right).
\]
From Theorem \ref{skew complex hopf}, we know that
\begin{align*}
I(\lambda,r)& =\mathbb{E}\left(e^{i \lambda B\left({\int_0^t \tan^2 r(s)ds}\right)}\mid r(t)=r\right) \\
 &=\mathbb{E}\left(e^{- \frac{\lambda^2}{2} \int_0^t \tan^2 r(s)ds}\mid r(t)=r\right) 
\end{align*}
and $r$ is a diffusion with Jacobi generator
\[
\mathcal{L}^{n-1,0}=\frac{1}{2} \left( \frac{\partial^2}{\partial r^2}+((2n-1)\cot r-\tan r)\frac{\partial}{\partial r}\right)
\]
started at $0$.  The Jacobi generator 
\[
\mathcal{L}^{\alpha,\beta}=\frac{1}{2} \frac{\partial^2}{\partial r^2}+\left(\left(\alpha+\frac{1}{2}\right)\cot r-\left(\beta+\frac{1}{2}\right) \tan r\right)\frac{\partial}{\partial r}, \quad \alpha,\beta >-1
\]
is studied in details in the Appendix 2 to which we refer for further details. We denote by $q_t^{\alpha,\beta}(r_0,r)$ the transition density with respect to the Lebesgue measure of the diffusion with generator $\mathcal{L}^{\alpha,\beta}$.
\begin{theorem}\label{FThj}
For $\lambda \ge 0$, $r \in [0,\pi/2)$, and $t >0$ we have
\begin{equation}\label{eq-ft-cond}
\mathbb{E}\left(e^{i \lambda \theta(t)}\mid r(t)=r\right)=\mathbb{E}\left(e^{- \frac{\lambda^2}{2} \int_0^t \tan^2 r(s)ds}\mid r(t)=r\right) =\frac{e^{-n \lambda t}}{(\cos r)^\lambda} \frac{q_t^{n-1,\lambda}(0,r)}{q_t^{n-1,0}(0,r)}.
\end{equation}
\end{theorem}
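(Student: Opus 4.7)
The plan is to split the identity into two halves. The first equality is essentially free: Theorem \ref{skew complex hopf} gives the representation $\theta(t)\stackrel{d}{=}B\bigl(\int_0^t\tan^2r(s)\,ds\bigr)$ with $B$ a standard Brownian motion independent of $r$. Conditioning on the whole path $(r(s))_{s\le t}$ and then on $r(t)=r$, and using the Gaussian characteristic function $\mathbb{E}(e^{i\lambda B_u})=e^{-\lambda^2 u/2}$, immediately yields the first identity.

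For the second equality I would use the Yor transform method, as in the proofs of Theorems \ref{levy area}, \ref{windingCP}, and \ref{windingHP}. Write $r$ as the strong solution of
\[
dr(t)=\Bigl(\bigl(n-\tfrac12\bigr)\cot r(t)-\tfrac12\tan r(t)\Bigr)dt+dW(t),\qquad r(0)=0,
\]
where $W$ is a standard Brownian motion, and introduce the exponential
\[
D_t^{\lambda}=\exp\Bigl(-\lambda\int_0^t\tan r(s)\,dW(s)-\tfrac{\lambda^2}{2}\int_0^t\tan^2r(s)\,ds\Bigr).
\]
The first main step is to apply It\^o's formula to $\log\cos r(t)$; after using the SDE above, the drift simplifies dramatically and gives
\[
\int_0^t\tan r(s)\,dW(s)=-nt-\log\cos r(t).
\]
Plugging this into $D_t^\lambda$ yields the clean expression
\[
D_t^{\lambda}=e^{n\lambda t}(\cos r(t))^{\lambda}\exp\Bigl(-\tfrac{\lambda^2}{2}\int_0^t\tan^2 r(s)\,ds\Bigr),
\]
which is bounded by $e^{n\lambda t}$, hence a true martingale.

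Define $\mathbb{P}^{\lambda}$ by $D_t^{\lambda}$. Girsanov's theorem gives that $W^{\lambda}(t):=W(t)+\lambda\int_0^t\tan r(s)\,ds$ is $\mathbb{P}^\lambda$-Brownian, so under $\mathbb{P}^\lambda$
\[
dr(t)=\Bigl(\bigl(n-\tfrac12\bigr)\cot r(t)-\bigl(\lambda+\tfrac12\bigr)\tan r(t)\Bigr)dt+dW^{\lambda}(t),
\]
i.e.\ $r$ is a Jacobi diffusion with generator $\mathcal{L}^{n-1,\lambda}$ (issued from $0$, understood in the usual sense via the entrance law of the Jacobi process; this is the standard interpretation used throughout the chapter). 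Hence for any bounded Borel $f$,
\[
\mathbb{E}\Bigl(f(r(t))e^{-\frac{\lambda^2}{2}\int_0^t\tan^2r(s)ds}\Bigr)
=e^{-n\lambda t}\mathbb{E}^{\lambda}\Bigl(\frac{f(r(t))}{(\cos r(t))^{\lambda}}\Bigr)
=e^{-n\lambda t}\int_0^{\pi/2}\frac{f(r)}{(\cos r)^{\lambda}}q_t^{n-1,\lambda}(0,r)\,dr.
\]
Comparing this with the disintegration $\int_0^{\pi/2}f(r)\,\mathbb{E}(e^{-\frac{\lambda^2}{2}\int_0^t\tan^2r(s)ds}\mid r(t)=r)\,q_t^{n-1,0}(0,r)\,dr$ and varying $f$ gives the quotient formula.

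The only delicate point is that $r$ starts from the boundary point $0$ of the Jacobi diffusion, so the change-of-measure identity needs to be applied to approximating starting points $r_0\downarrow 0$ (or justified via the entrance law as in Appendix 2); none of this affects the final formula since both $q_t^{n-1,\lambda}(0,\cdot)$ and $q_t^{n-1,0}(0,\cdot)$ are well-defined entrance-law densities and $(\cos r(t))^\lambda\to 1$ uniformly as $r_0\to 0$ on any compact time interval.
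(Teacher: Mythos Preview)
Your proposal is correct and follows essentially the same route as the paper: the same exponential local martingale, the same It\^o computation on $\log\cos r(t)$ leading to $D_t^\lambda=e^{n\lambda t}(\cos r(t))^\lambda e^{-\frac{\lambda^2}{2}\int_0^t\tan^2 r(s)\,ds}$, the bound $D_t^\lambda\le e^{n\lambda t}$ to upgrade to a true martingale, and the Girsanov change of measure turning $r$ into a $\mathcal{L}^{n-1,\lambda}$ Jacobi diffusion. Your extra remark on the entrance boundary at $r(0)=0$ is a point the paper does not make explicit but is indeed the correct interpretation.
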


\begin{proof}
From Proposition \ref{prop-r-sphere-gene} we have
\[
dr(t)= \frac{1}{2} \left( (2n-1)\cot r(t)-\tan r(t) \right)dt+d\gamma(t),
\]
where $\gamma$ is a standard Brownian motion.
Consider the local martingale
\begin{align*}
D_t& =\exp \left( -\lambda \int_0^t \tan r(s) d\gamma(s) -\frac{\lambda^2}{2}  \int_0^t \tan^2 r(s) ds \right)  \\
 &=\exp \left( -\lambda \int_0^t \tan r(s) dr(s)+\frac{\lambda}{2}(2n-1)t -\frac{\lambda +\lambda^2}{2}  \int_0^t \tan^2 r(s) ds \right). 
\end{align*}
From It\^o's formula, we have
\begin{align*}
\ln \cos r(t) & =-\int_0^t \tan r(s) dr(s)-\frac{1}{2} \int_0^t \frac{ds}{\cos^2 r(s)} \\
 &=-\int_0^t \tan r(s) dr(s)-\frac{1}{2} \int_0^t \tan^2 r(s) ds-\frac{1}{2} t.
\end{align*}
As a consequence, we deduce that
\[
D_t =e^{n\lambda t} (\cos r(t))^\lambda e^{- \frac{\lambda^2}{2} \int_0^t \tan^2 r(s)ds}.
\]
This expression of $D$ implies that almost surely $D_t \le e^{n\lambda t}$ and thus $D$ is a martingale. Let us denote by $\mathcal{F}$ the natural filtration of $r$ and consider the probability measure $\mathbb{P}^\lambda$ defined by
\[
\mathbb{P}_{/ \mathcal{F}_t} ^\lambda=e^{n\lambda t} (\cos r(t))^\lambda e^{- \frac{\lambda^2}{2} \int_0^t \tan^2 r(s)ds} \mathbb{P}_{/ \mathcal{F}_t}.
\]
We have then for every bounded and Borel function $f$ on $[0,\pi /2]$,
\begin{align*}
\mathbb{E}\left(f(r(t))e^{- \frac{\lambda^2}{2} \int_0^t \tan^2 r(s)ds}\right)=e^{-n\lambda t} \mathbb{E}^\lambda \left( \frac{f(r(t))}{(\cos r(t))^\lambda} \right).
\end{align*}
From Girsanov theorem, the process
\[
\beta(t):=\gamma(t)+\lambda \int_0^t \tan r(s) ds
\]
is a Brownian motion under the probability measure $\mathbb{P}^\lambda$. Since
\[
dr(t)= \frac{1}{2} \left( (2n-1)\cot r(t)-(2\lambda +1)\tan r(t) \right)dt+d\beta(t),
\]
the proof is complete.
\end{proof}

By inverting the above Fourier transform, we obtain a formula for the joint density of the couple $(r(t),\theta(t))$.

\begin{corollary}
For $\theta \in \mathbb R$, $r \in [0,\pi/2)$, and $t >0$ we have,
\[
\mathbb{P} ( r(t) \in dr, \theta(t) \in d\theta)=\frac{1}{2\pi} \int_{\mathbb R} \frac{e^{-n |\lambda | t-i\lambda \theta}}{(\cos r)^{|\lambda|}}q_t^{n-1,|\lambda|}(0,r) d\lambda.
\]
\end{corollary}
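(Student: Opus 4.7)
The plan is to apply Fourier inversion to the conditional characteristic function supplied by Theorem \ref{FThj} and then multiply by the marginal density of $r(t)$ to obtain the joint density.

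First, I would observe that the conditional characteristic function extends from $\lambda \geq 0$ to all $\lambda \in \mathbb{R}$ simply by replacing $\lambda$ with $|\lambda|$. Indeed, Theorem \ref{FThj} identifies $\mathbb{E}(e^{i\lambda\theta(t)} \mid r(t)=r)$ with $\mathbb{E}(e^{-\lambda^2 \int_0^t \tan^2 r(s)\,ds / 2} \mid r(t)=r)$, which is a function of $\lambda^2$ alone, hence even in $\lambda$. Consequently, for every $\lambda \in \mathbb{R}$,
\[
\mathbb{E}\left(e^{i\lambda\theta(t)} \mid r(t)=r\right) = \frac{e^{-n|\lambda| t}}{(\cos r)^{|\lambda|}}\,\frac{q_t^{n-1,|\lambda|}(0,r)}{q_t^{n-1,0}(0,r)}.
\]

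Next, I would recall that $r(t)$ is a Jacobi diffusion with generator $\mathcal{L}^{n-1,0}$ started at $0$, so its density with respect to Lebesgue measure on $[0,\pi/2)$ is exactly $q_t^{n-1,0}(0,r)$. Writing the joint law as the product of this marginal with the conditional law of $\theta(t)$ given $r(t)$, and using Fourier inversion in the $\theta$-variable (the conditional characteristic function decays like $e^{-n|\lambda|t}/(\cos r)^{|\lambda|}$ and is therefore integrable), I would obtain
\[
\mathbb{P}(\theta(t)\in d\theta \mid r(t)=r) = \frac{1}{2\pi}\left(\int_{\mathbb{R}} e^{-i\lambda\theta}\,\frac{e^{-n|\lambda|t}}{(\cos r)^{|\lambda|}}\,\frac{q_t^{n-1,|\lambda|}(0,r)}{q_t^{n-1,0}(0,r)}\,d\lambda\right)d\theta.
\]
Multiplying by $q_t^{n-1,0}(0,r)\,dr$ cancels the denominator and yields the claimed formula.

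The only subtlety is justifying the Fourier inversion rigorously; it requires checking that for each fixed $r<\pi/2$ and $t>0$ the conditional characteristic function is integrable in $\lambda$. This is the step where one needs some mild control of $q_t^{n-1,|\lambda|}(0,r)$ as $|\lambda|\to\infty$, but since $(\cos r)^{-|\lambda|}$ grows only exponentially in $|\lambda|$ while the factor $e^{-n|\lambda|t}$ combined with the decay of the Jacobi density in the index parameter dominates, integrability follows; alternatively one may observe that the conditional law is a bona fide probability measure on $\mathbb{R}$ so the inversion formula applies in the distributional sense and the identification of densities is then justified by the smoothness of the resulting expression in $\theta$.
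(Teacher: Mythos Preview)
Your proof is correct and follows exactly the approach indicated in the paper, which simply states that the formula follows by inverting the Fourier transform of Theorem \ref{FThj}. Your justification of the even extension in $\lambda$ and the cancellation of $q_t^{n-1,0}(0,r)$ makes explicit what the paper leaves implicit.
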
 

We also deduce an expression for the Fourier transform of $\theta(t)$.
\begin{corollary}\label{cor-theta}
For $\lambda \in \mathbb{R}$ and $t \ge 0$,
\[
\mathbb{E}\left(e^{i \lambda \theta(t)}\right)=e^{-n | \lambda | t}\int_0^{\pi /2}  \frac{q_t^{n-1,| \lambda |}(0,r)}{(\cos r)^{| \lambda|}} dr 
\]
\end{corollary}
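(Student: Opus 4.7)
The plan is to obtain the unconditional characteristic function by integrating the conditional one from Theorem \ref{FThj} against the law of $r(t)$. Since $(r(t))_{t\ge0}$ is, by Proposition \ref{prop-r-sphere-gene}, the Jacobi diffusion with generator $\mathcal{L}^{n-1,0}$ started at $0$, its density at time $t$ with respect to Lebesgue measure on $[0,\pi/2)$ is precisely $q_t^{n-1,0}(0,r)$. Conditioning on $r(t)$ and using the tower property therefore gives, for any $\lambda\in\mathbb{R}$,
\[
\mathbb{E}\left(e^{i\lambda\theta(t)}\right)=\int_0^{\pi/2}\mathbb{E}\left(e^{i\lambda\theta(t)}\mid r(t)=r\right)q_t^{n-1,0}(0,r)\,dr.
\]

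For $\lambda\ge 0$, I would then plug in the identity from Theorem \ref{FThj}, namely
\[
\mathbb{E}\left(e^{i\lambda\theta(t)}\mid r(t)=r\right)=\frac{e^{-n\lambda t}}{(\cos r)^{\lambda}}\,\frac{q_t^{n-1,\lambda}(0,r)}{q_t^{n-1,0}(0,r)},
\]
and observe that the Jacobi density $q_t^{n-1,0}(0,r)$ in the denominator cancels exactly with the weight in the integral, yielding the claimed formula with $|\lambda|$ replaced by $\lambda$.

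The only point requiring minor care is the extension to $\lambda<0$, which should be handled by symmetry. The skew-product representation of Theorem \ref{skew complex hopf} writes $\theta(t)\overset{d}{=}B\bigl(\int_0^t\tan^2 r(s)\,ds\bigr)$ with $B$ a standard Brownian motion independent of $r$; since $B\overset{d}{=}-B$, this gives $\theta(t)\overset{d}{=}-\theta(t)$, so $\mathbb{E}(e^{i\lambda\theta(t)})$ is an even real function of $\lambda$. Replacing $\lambda$ by $|\lambda|$ in the formula derived for $\lambda\ge 0$ then covers all $\lambda\in\mathbb{R}$, completing the argument. There is no real obstacle here, as the statement is essentially a direct integration of the already-established conditional Laplace transform against the marginal law of the radial process.
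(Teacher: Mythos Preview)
Your proof is correct and is exactly the argument the paper has in mind: the corollary is stated there without proof, immediately after Theorem \ref{FThj}, as a direct consequence of integrating the conditional formula against the law of $r(t)$ and then invoking the evenness of $\lambda\mapsto\mathbb{E}(e^{i\lambda\theta(t)})$.
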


We are now in position to obtain the limit theorem for the stochastic area.

\begin{theorem}\label{limit CP}
When $t \to +\infty$, the following convergence in distribution takes place
\[
\frac{\theta(t)}{t} \to \mathcal{C}_n,
\]
where $\mathcal{C}_n$ is a Cauchy distribution with parameter $n$.
\end{theorem}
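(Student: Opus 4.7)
The natural starting point is Corollary \ref{cor-theta}, which gives an explicit expression for the characteristic function of $\theta(t)$. Replacing $\lambda$ by $\lambda/t$ yields, for $\lambda \in \mathbb{R}$,
\[
\mathbb{E}\left(e^{i\lambda \theta(t)/t}\right) = e^{-n|\lambda|}\int_0^{\pi/2} \frac{q_t^{n-1,|\lambda|/t}(0,r)}{(\cos r)^{|\lambda|/t}}\,dr.
\]
To conclude that the limit equals $e^{-n|\lambda|}$ (the characteristic function of the Cauchy distribution of parameter $n$), it therefore suffices to show that the remaining integral converges to $1$ as $t\to \infty$.

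The strategy is to combine two facts about the Jacobi semigroup. First, since the spectral gap of $\mathcal{L}^{n-1,\beta}$ depends continuously on $\beta$ and is strictly positive at $\beta=0$, one has uniform ergodicity of $q_t^{n-1,\beta_t}(0,\cdot)$ toward its invariant probability density along any sequence $\beta_t\to 0$. Using the Jacobi semigroup expansion recalled in Appendix 2 (Chapter \ref{sec-appendix-2}), this yields
\[
q_t^{n-1,|\lambda|/t}(0,r) \;=\; \pi_{n-1,|\lambda|/t}(r) + O(e^{-ct}), \qquad t\to\infty,
\]
uniformly in $r\in [0,\pi/2)$, with
\[
\pi_{n-1,\beta}(r) \;=\; \frac{2\,\Gamma(n+\beta+1)}{\Gamma(n)\,\Gamma(\beta+1)}(\sin r)^{2n-1}(\cos r)^{2\beta+1}.
\]
Second, the weighted integral against the invariant density can be computed explicitly via the Beta function: setting $u=\sin^2 r$ gives
\[
\int_0^{\pi/2} \frac{\pi_{n-1,\beta}(r)}{(\cos r)^{\beta}}\,dr \;=\; \frac{\Gamma(n+\beta+1)\,\Gamma(\beta/2+1)}{\Gamma(\beta+1)\,\Gamma(n+\beta/2+1)}.
\]
Specialising to $\beta = |\lambda|/t$ and letting $t\to\infty$, the ratio of Gamma functions tends to $1$, which together with the $O(e^{-ct})$ remainder yields the claim.

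The main obstacle is the uniform ergodic estimate when the parameter $\beta_t=|\lambda|/t$ is itself time-dependent: one must check that the exponential decay rate in the spectral expansion of $q_t^{n-1,\beta_t}$ does not degenerate to zero as $t\to \infty$. This is handled by noting that the Jacobi eigenvalues $\lambda_m^{(n-1,\beta)}=m(m+n+\beta)$ are continuous and increasing in $\beta\ge 0$, so the spectral gap $\lambda_1^{(n-1,\beta_t)}\ge n+\beta_t$ is bounded below by $n>0$ uniformly. Once this uniform gap is established the two steps above combine cleanly, and L\'evy's continuity theorem delivers the convergence in distribution to $\mathcal{C}_n$.
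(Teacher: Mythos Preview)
Your argument is correct and follows the same route as the paper's proof: start from Corollary \ref{cor-theta}, substitute $\lambda/t$, and show the integral $\int_0^{\pi/2}(\cos r)^{-|\lambda|/t}q_t^{n-1,|\lambda|/t}(0,r)\,dr$ tends to $1$ using the explicit Jacobi spectral expansion from Appendix~2. The paper simply invokes that formula and dominated convergence in one line, whereas you spell out the mechanism (uniform spectral gap, explicit Beta-integral against the invariant density), which is a welcome clarification; the minor slip in the exact constant for the spectral gap (the first nonzero eigenvalue is $2(n+1+\beta_t)$ rather than $n+\beta_t$) does not affect the argument.
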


\begin{proof}
We just need to show that for $\lambda >0$, $\lim_{t\to+\infty}\mathbb{E}\left(e^{i \lambda \theta(t)/t}\right)=e^{-n  \lambda  }$. From Corollary \ref{cor-theta} we have for every $t>0$,
\[
\mathbb{E}\left(e^{i \lambda \frac{\theta(t)}{t}}\right)=e^{-n  \lambda  }\int_0^{\pi /2}  \frac{q_t^{n-1,\frac{ \lambda }{t}}(0,r)}{(\cos r)^{ \frac{\lambda}{t}}} dr 
\]
Using the formula for $q_t^{n-1, \lambda }(0,r)$ which is given in the Appendix 2 (see Equation \eqref{eq-qt-ab-0}), we obtain that
\[
\lim_{t \to \infty}  \int_0^{\pi /2}  \frac{q_t^{n-1,\frac{ \lambda }{t}}(0,r)}{(\cos r)^{ \frac{\lambda}{t}}} dr=  \int_0^{\pi /2}  q_\infty^{n-1,0}(0,r) dr =1.
\]
\end{proof}

\subsection{Skew-product decompositions on the Berger sphere and homogenisation}

To conclude the  study of the horizontal Brownian motions of the Hopf fibration, we exhibit  the connection between the  stochastic area process on $\mathbb{C}P^n$ and an interesting class of diffusions on $\bS^{2n+1}$. These diffusions naturally arise when one looks at the canonical variation of the standard metric in the direction of the fibers of the Hopf fibration.  We compute the density of these diffusions and prove a homogenisation result when the fibers collapse.  

\

As we mentioned before, the quotient space $\bS^{2n+1} / \mathbf{U}(1)$ is the projective complex space $\mathbb{C}P^n$ and the projection map $\pi :  \bS^{2n+1} \to \mathbb{C}P^n$ is a Riemannian submersion with totally geodesic fibers isometric to $\mathbf{U}(1)$. The sub-bundle $\mathcal{V}$ of ${T}\bS^{2n+1}$ formed by vectors tangent to the fibers of the submersion is referred  to as the set of \emph{vertical directions}. The sub-bundle $\mathcal{H}$ of ${T}\bS^{2n+1}$  which is normal to $\mathcal{V}$ is referred to as the set of \emph{horizontal directions}.   The standard metric  $g$ of of $\bS^{2n+1}$ can be split as
\begin{equation*}
g=g_\mathcal{H} \oplus g_{\mathcal{V}},
\end{equation*}
where the sum is orthogonal. We introduce the one-parameter family of Riemannian metrics:
\begin{equation}\label{eq-metric-B}
g_{\lambda}=g_\mathcal{H} \oplus  \frac{1}{\lambda^2 }g_{\mathcal{V}}, \quad \lambda >0,
\end{equation}
The Riemannian manifold $(\bS^{2n+1}, g_{\lambda})$ is called the Berger\footnote{For Marcel Berger (1927-2016).} sphere with parameter $\lambda >0$. The case $\lambda=1$ corresponds to the standard metric on $\bS^{2n+1}$. When $\lambda \to 0$, in the Gromov-Hausdorff sense,  $(\bS^{2n+1}, g_{\lambda})$ converges to $\bS^{2n+1}$ endowed with the Carnot-Carath\'eodory metric. When $\lambda \to \infty$, $(\bS^{2n+1}, g_{\lambda})$ converges to $\mathbb{C}P^n$ endowed with its standard Fubini-Study metric.

The following skew-product decomposition  for the Brownian motion on the Berger sphere holds.

\begin{theorem}
Let $\lambda >0$. Let $(w(t))_{t \ge 0}$ be a Brownian motion on $\mathbb{C}P^n$ started at 0 and $(\theta(t))_{t\ge 0}$ be its stochastic area process. Let $(B(t))_{t \ge 0}$ be a real Brownian motion independent from $w$. The $\mathbb{S}^{2n+1}$-valued diffusion process
 \[
 \beta^\lambda(t)=\frac{e^{i\lambda B(t)-i\theta(t)} }{\sqrt{1+|w(t)|^2}} \left( w(t),1\right), \quad t \ge 0
 \]
 is a Brownian motion on the Berger sphere $(\bS^{2n+1}, g_{\lambda})$.
\end{theorem}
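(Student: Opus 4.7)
The plan is to verify that the candidate process has generator equal to $\tfrac{1}{2}\Delta_{g_\lambda}$, where $\Delta_{g_\lambda}$ is the Laplace--Beltrami operator of the Berger metric. First I would observe that in the local trivialization
$$(w,\phi) \longmapsto \frac{e^{i\phi}}{\sqrt{1+|w|^2}}(w,1),$$
introduced in the proof of Theorem \ref{thm-hor-BM-X}, the process $\beta^\lambda(t)$ has coordinates $(w(t),\lambda B(t)-\theta(t))$. Since $X(t)=(w(t),-\theta(t))$ is, by Theorem \ref{thm-hor-BM-X}, a horizontal Brownian motion with generator $\tfrac{1}{2}\Delta_{\mathcal H}$ (formula \eqref{eq-Del-H-sphere}), and since the real Brownian motion $B$ is independent of $(w,\theta)$, It\^o's formula immediately gives that the generator of $(w(t),\lambda B(t)-\theta(t))$ in this chart is
$$\tfrac{1}{2}\Delta_{\mathcal H} + \tfrac{\lambda^2}{2}\,\frac{\partial^2}{\partial \phi^2}.$$

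The remaining (and essentially the only nontrivial) step is to identify this expression with $\tfrac12\Delta_{g_\lambda}$. The key lemma I would establish is
$$\Delta_{g_\lambda} = \Delta_{\mathcal H} + \lambda^2\,\Delta_{\mathcal V},$$
which I would derive intrinsically from the Dirichlet form for $g_\lambda$. Writing $\nabla^{g_\lambda} f = \nabla_{\mathcal H} f + \lambda^2\,\nabla_{\mathcal V} f$ (obtained by pairing against horizontal and vertical test vectors under $g_\lambda = g_{\mathcal H}\oplus\lambda^{-2} g_{\mathcal V}$), one computes
$$\langle \nabla^{g_\lambda} f,\nabla^{g_\lambda} g\rangle_{g_\lambda} = \langle\nabla_{\mathcal H} f,\nabla_{\mathcal H} g\rangle + \lambda^2\langle\nabla_{\mathcal V} f,\nabla_{\mathcal V} g\rangle,$$
while the Riemannian volume scales as $d\mu_{g_\lambda} = \lambda^{-1}d\mu$. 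The global $\lambda^{-1}$ factor cancels upon integrating by parts, yielding the claimed decomposition. Since the fiber $\mathbf U(1)$ is one-dimensional and parametrised by $\phi$ (a Reeb flow of constant length one for $g_1$), we have $\Delta_{\mathcal V} = \partial_\phi^2$ in the chosen chart, hence $\tfrac12\Delta_{\mathcal H}+\tfrac{\lambda^2}{2}\partial_\phi^2 = \tfrac12\Delta_{g_\lambda}$.

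Finally I would globalise: the local statement identifies $\beta^\lambda(t)$ as a diffusion with generator $\tfrac12\Delta_{g_\lambda}$ up to the first exit time of the coordinate chart, but since $\mathbb S^{2n+1}$ is compact and the coordinate charts cover a dense open set, one can patch the local identification using the $\mathbf U(n+1)$-isometric invariance of both sides (the Berger metric is $\mathbf U(n+1)$-invariant and the construction of $\beta^\lambda$ is intrinsic to the Hopf fibration). The main obstacle I anticipate is purely bookkeeping --- carefully checking the scaling of the volume form and of the vertical gradient against the Dirichlet form --- but there is no substantive analytic difficulty, as every computation reduces to the known formulas \eqref{eq-Laplacian-CPn1} and \eqref{eq-Del-H-sphere} together with Theorem \ref{thm-hor-BM-X}.
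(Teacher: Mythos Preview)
Your proposal is correct and follows essentially the same route as the paper: both arguments identify the Berger Laplacian in the trivialization \eqref{invar} as $\Delta_{\mathcal H}+\lambda^2\,\partial_\phi^2$ and match it with the generator of $\beta^\lambda$. The only cosmetic difference is that the paper computes $\Delta_{g_\lambda}$ directly in coordinates and then appeals to the commutation of $\Delta_{\mathcal H}$ with $\partial_\phi^2$ to identify the process, whereas you obtain the decomposition intrinsically via the Dirichlet form and read off the generator of $\beta^\lambda$ directly from It\^o's formula---neither direction requires more input than the other.
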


\begin{proof}
Let $\lambda >0$ and let $ (\beta^\lambda(t))_{t \ge 0}$ be a Brownian motion on the Berger sphere $(\bS^{2n+1}, g_{\lambda})$. We work in the coordinates \eqref{invar}. From \eqref{eq-metric-B}  the generator of $\beta^\lambda$ is
\[
2(1+|w|^2)\sum_{k=1}^n \frac{\partial^2}{\partial w_k \partial\overline{w}_k}+ 2(1+|w|^2)\mathcal{R} \overline{\mathcal{R}}+\frac{1}{2}(\lambda^2+|w|^2)\ \frac{\partial^2}{\partial \theta^2}-i(|w|^2+1)(\mathcal{R} -\overline{\mathcal{R}})\frac{\partial}{\partial\theta}.
\]
This generator can be written as 
\[
\frac12\Delta_{\mathcal H}+\frac{1}{2}\lambda^2 \frac{\partial^2}{\partial \theta^2},
\]
where $\Delta_{\mathcal H}$ is the horizontal Laplacian of the Hopf fibration as given in \eqref{eq-Del-H-sphere}. Since $\Delta_{\mathcal H}$ and $\frac{\partial^2}{\partial \theta^2}$ commute, the result easily follows.
\end{proof}
This representation of the Brownian motion on the Berger sphere yields an interesting result.

\begin{corollary}
Let $\lambda >0$ and let $ (\beta^\lambda(t))_{t \ge 0}$ be a Brownian motion on the Berger sphere $(\bS^{2n+1}, g_{\lambda})$. Let $\eta$ be the standard contact form on $\bS^{2n+1}$. The process
\[
\gamma(t)=\frac{1}{\lambda} \int_{\beta^\lambda [0,t]} \eta,
\]
is a real Brownian motion.
\end{corollary}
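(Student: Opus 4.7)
The plan is to exploit the explicit skew-product representation of $\beta^\lambda$ given in the previous theorem, together with the local expression \eqref{eq-contact-eta-sphere} of the contact form $\eta$. Recall that in the coordinates \eqref{invar}, i.e.\ $(w,\vartheta)\mapsto \frac{e^{i\vartheta}}{\sqrt{1+|w|^2}}(w,1)$, the form $\eta$ reads
\[
\eta = d\vartheta + \alpha,\qquad \alpha=\frac{i}{2(1+|w|^2)}\sum_{k=1}^n(w_k d\overline{w}_k-\overline{w}_k dw_k),
\]
where $\alpha$ is precisely the area one-form on $\mathbb{C}P^n$ of Definition~\ref{stochastic area sphere}.

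First, I will read off the expression of $\beta^\lambda$ in this chart: if we set $w^\lambda(t)=w(t)$ and $\vartheta^\lambda(t)=\lambda B(t)-\theta(t)$, then the previous theorem asserts $\beta^\lambda(t)=\frac{e^{i\vartheta^\lambda(t)}}{\sqrt{1+|w^\lambda(t)|^2}}(w^\lambda(t),1)$. Since the chart \eqref{invar} omits only the null set $\{z_{n+1}=0\}$, which the Brownian motion on $\bS^{2n+1}$ hits on a Lebesgue-null time set, the local formula suffices for computing $\int_{\beta^\lambda[0,t]}\eta$.

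Next, I apply the definition of the Stratonovich line integral along a semimartingale (Section on stochastic line integrals) together with the identity $\eta=d\vartheta+\alpha$: this yields
\[
\int_{\beta^\lambda[0,t]}\eta = \bigl(\vartheta^\lambda(t)-\vartheta^\lambda(0)\bigr)+\int_{w[0,t]}\alpha.
\]
By the very definition of the stochastic area process, $\int_{w[0,t]}\alpha=\theta(t)$, and $\vartheta^\lambda(0)=\lambda B(0)-\theta(0)=0$. Substituting gives
\[
\int_{\beta^\lambda[0,t]}\eta=\lambda B(t)-\theta(t)+\theta(t)=\lambda B(t),
\]
so $\gamma(t)=\frac{1}{\lambda}\int_{\beta^\lambda[0,t]}\eta=B(t)$, which is a standard real Brownian motion by construction.

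I do not expect any serious obstacle: the computation is essentially a tautology once the skew-product representation and the local expression \eqref{eq-contact-eta-sphere} are combined. The only mild subtlety is justifying the use of the local chart on the whole time interval; this is handled by the standard remark that the random set of times at which $\beta^\lambda$ leaves the chart has Lebesgue measure zero, so the stochastic line integral, which is intrinsic and continuous, agrees with its local-coordinate expression almost surely.
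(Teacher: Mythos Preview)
Your proof is correct and follows essentially the same approach as the paper: both use the skew-product representation $\beta^\lambda(t)=\frac{e^{i(\lambda B(t)-\theta(t))}}{\sqrt{1+|w(t)|^2}}(w(t),1)$ together with the local expression $\eta=d\vartheta+\alpha$, and both observe that the $\theta(t)$ from $d\vartheta$ cancels against $\int_{w[0,t]}\alpha=\theta(t)$, leaving $\lambda B(t)$. Your remark about the chart is harmless but in fact unnecessary here: by construction the process $\beta^\lambda$ is written in the coordinates \eqref{invar} for all $t\ge 0$, so it never leaves the chart.
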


\begin{proof}
In the coordinates \eqref{invar}, recall from \eqref{eq-contact-eta-sphere} we have
 \begin{align*}
\eta =  d\theta+\frac{i}{2(1+|w|^2)}\sum_{k=1}^n(w_kd\overline{w}_k-\overline{w}_kdw_k).
\end{align*}
Since
\[
\beta^\lambda(t)=\frac{e^{i\lambda B(t)-i\theta(t)} }{\sqrt{1+|w(t)|^2}} \left( w(t),1\right),
\]
we deduce that
\begin{align*}
 \int_{\beta^\lambda [0,t]} \eta &= \int_{\beta^\lambda [0,t]} d\theta+ \int_{\beta^\lambda [0,t]} \frac{i}{2(1+|w|^2)}\sum_{k=1}^n(w_kd\overline{w}_k-\overline{w}_kdw_k)  \\
  &=\int_0^t (\lambda dB(s) -d\theta (s))  +\frac{i}{2}\sum_{j=1}^n \int_0^t \frac{w_j(s)  d\overline{w}_j(s)-\overline{w}_j(s) dw_j(s)}{1+|w(s)|^2}\\
 &=\lambda B(t)-\theta(t)+\frac{i}{2}\sum_{j=1}^n \int_0^t \frac{w_j(s)  d\overline{w}_j(s)-\overline{w}_j(s) dw_j(s)}{1+|w(s)|^2} \\
 &=\lambda B(t).
\end{align*}
\end{proof}

We now turn to the homogenisation result.

\begin{theorem}
Let $ (\beta^\lambda(t))_{t \ge 0, \lambda >0}$ be a family of Brownian motions on the Berger spheres $(\bS^{2n+1}, g_{\lambda})$ started at the north pole. Let $f:\bS^{2n+1}\to \R$ be a bounded and Borel function. For every $t>0$, one has
\[
\lim_{\lambda \to \infty} \mathbb{E}\left( f(\beta^\lambda(t))\right)=\frac{1}{2\pi} \int_0^{2\pi} \mathbb{E}\left[f \left( \frac{e^{i\theta} }{\sqrt{1+|w(t)|^2}} \left( w(t),1\right)\right)\right] d\theta,
\]
where $w$ is a Brownian motion on $\mathbb{C}P^n$ started at 0.
\end{theorem}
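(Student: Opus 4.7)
The plan is to invoke the skew-product representation from the preceding theorem and reduce matters to the equidistribution of $\lambda B(t) \bmod 2\pi$ as $\lambda \to \infty$. Define $F_f(w,\vartheta,\phi) := f\bigl(e^{i(\phi-\vartheta)}(w,1)/\sqrt{1+|w|^2}\bigr)$; this function is bounded by $\|f\|_\infty$ and $2\pi$-periodic in both $\phi$ and $\vartheta$. The previous theorem yields
\[
\mathbb{E}[f(\beta^\lambda(t))] = \mathbb{E}\bigl[F_f(w(t),\theta(t),\lambda B(t))\bigr].
\]
Since $B$ is independent of $(w,\theta)$ with $B(t) \sim \mathcal{N}(0,t)$, conditioning on $(w(t),\theta(t))$ reduces the inner expectation to integrating $\phi \mapsto F_f(w(t),\theta(t),\phi)$ against the pushforward density on $\mathbb{R}/2\pi\mathbb{Z}$ of $\lambda B(t)$, which I denote $\rho_\lambda$.

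The analytic heart of the argument is that $\rho_\lambda \to 1/(2\pi)$ uniformly on the circle. By Poisson summation applied to the Gaussian density of $\lambda B(t)$,
\[
\rho_\lambda(\phi) = \frac{1}{\sqrt{2\pi t \lambda^2}}\sum_{k \in \mathbb{Z}} e^{-(\phi+2\pi k)^2/(2t\lambda^2)} = \frac{1}{2\pi}\sum_{k \in \mathbb{Z}} e^{ik\phi - k^2 t \lambda^2 / 2},
\]
from which
\[
\Bigl\|\rho_\lambda - \tfrac{1}{2\pi}\Bigr\|_\infty \leq \frac{1}{\pi}\sum_{k \geq 1} e^{-k^2 t \lambda^2/2} \longrightarrow 0 \qquad \text{as } \lambda \to \infty.
\]

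For each fixed realization of $(w(t),\theta(t))$, the uniform convergence of $\rho_\lambda$ together with the bound $|F_f| \leq \|f\|_\infty$ gives
\[
\int_0^{2\pi} F_f(w(t),\theta(t),\phi)\,\rho_\lambda(\phi)\,d\phi \longrightarrow \frac{1}{2\pi}\int_0^{2\pi} F_f(w(t),\theta(t),\phi)\,d\phi.
\]
A change of variable $\phi \mapsto \phi + \theta(t)$ in the limiting integral cancels the $\theta(t)$ dependence and produces $G(w(t))$, where $G(w) := \frac{1}{2\pi}\int_0^{2\pi} f\bigl(e^{i\phi}(w,1)/\sqrt{1+|w|^2}\bigr)\,d\phi$. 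A final application of dominated convergence in the outer expectation, again justified by $|F_f| \leq \|f\|_\infty$, delivers the announced limit.

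The only mildly delicate point is that $f$ is allowed to be merely bounded Borel rather than continuous. This causes no trouble on the present route: the uniform convergence $\rho_\lambda \to 1/(2\pi)$ permits passing the limit under any bounded measurable integrand, so no continuity hypothesis or approximation by continuous functions is required. The main conceptual step is identifying the Poisson-summation presentation of $\rho_\lambda$, which upgrades the elementary Fourier-mode decay $\widehat{\mathcal{N}(0,t)}(k\lambda) = e^{-k^2 t \lambda^2/2}$ into uniform convergence on the circle.
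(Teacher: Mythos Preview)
Your proof is correct and takes a genuinely different route from the paper's.

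The paper argues via the explicit spectral decomposition of the radial heat kernel $q_t^\lambda(r,\theta)$ of the Berger sphere (obtained as in Proposition~\ref{heat hopf fibration}): each Fourier mode $k\neq 0$ in $\theta$ carries a factor $e^{-k^2\lambda^2 t/2}$, and the paper shows directly that $q_t^\lambda(r,\theta)\to q_t^\infty(r)$ in $L^2(\mu)$, where $q_t^\infty$ is the radial heat kernel on $\mathbb{C}P^n$. Your argument instead takes the skew-product representation $\beta^\lambda(t)=\frac{e^{i\lambda B(t)-i\theta(t)}}{\sqrt{1+|w(t)|^2}}(w(t),1)$ at face value, conditions on $(w(t),\theta(t))$, and reduces everything to the equidistribution of the wrapped Gaussian $\lambda B(t)\bmod 2\pi$ via Poisson summation. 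The analytic core is the same exponential decay $e^{-k^2\lambda^2 t/2}$, but you place it in the Fourier series of $\rho_\lambda$ rather than in the Jacobi-polynomial expansion of the heat kernel. Your route is more probabilistic and avoids any need for the explicit eigenfunction expansion; it also handles bounded Borel $f$ transparently, whereas the paper's $L^2$ heat-kernel convergence requires a small extra step to pass to bounded test functions. Conversely, the paper's computation yields the stronger intermediate statement that the full heat kernel converges in $L^2$ to the $\theta$-independent kernel $q_t^\infty(r)$, which your argument does not directly produce.
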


\begin{proof}
We already know that,  in the coordinates \eqref{invar},  the generator of $\beta^\lambda$ is
\[
2(1+|w|^2)\sum_{k=1}^n \frac{\partial^2}{\partial w_k \partial\overline{w}_k}+ 2(1+|w|^2)\mathcal{R} \overline{\mathcal{R}}+\frac{1}{2}(\lambda^2+|w|^2)\ \frac{\partial^2}{\partial \theta^2}-i(|w|^2+1)(\mathcal{R} -\overline{\mathcal{R}})\frac{\partial}{\partial\theta}.
\]
By symmetry, the heat kernel of $\beta^\lambda$ only depends of the variables $r=\arctan |w|$ and $\theta$. The radial part of the generator of $\beta^\lambda$ is then
\[
L_\lambda=\frac{1}{2}\left(\frac{\partial^2}{\partial r^2}+((2n-1)\cot r-\tan r)\frac{\partial}{\partial r}+(\tan^2r+\lambda^2)\frac{\partial^2}{\partial \theta^2}\right).
\]
The heat kernel $q_t^\lambda(r,\theta)$ of the diffusion with generator $L_\lambda$ can be computed as in Proposition \ref{heat hopf fibration} and one obtains
\begin{align*}
q_t^\lambda(r,\theta)=&\frac{\Gamma(n)}{2\pi^{n+1}}\sum_{k=-\infty}^{+\infty}\sum_{m=0}^{+\infty} (2m+|k|+n){m+|k|+n-1\choose n-1}\\
&\qquad\qquad\cdot e^{-\frac{1}{2}(\lambda_{m,k}+k^2\lambda^2)t+ik \theta}(\cos r)^{|k|}P_m^{n-1,|k|}(\cos 2r),
\end{align*}
where $\lambda_{m,k}=4m(m+|k|+n)+2|k|n$ and $P_m^{n-1,|k|}$ are  Jacobi polynomials. Since 
\begin{align*}
 &\bigg|q_t^\lambda(r,\theta)-\frac{\Gamma(n)}{2\pi^{n+1}}\sum_{m=0}^{+\infty} (2m+n){m+n-1\choose n-1}e^{-\frac{1}{2}\lambda_{m,0}t}P_m^{n-1,|0|}(\cos 2r)\bigg|\\
&\le\frac{\Gamma(n)}{2\pi^{n+1}}\sum_{k\not=0}\sum_{m=0}^{+\infty} e^{-\frac{1}{2}k^2\lambda^2 t}\bigg|(2m+|k|+n){m+|k|+n-1\choose n-1}e^{-\frac{1}{2}\lambda_{m,k}t}(\cos r)^{|k|}P_m^{n-1,|k|}(\cos 2r)\bigg|,
\end{align*}
one easily deduces that in $L^2(\mu)$, for $t>0$,
\[
\lim_{\lambda \to +\infty} q_t^\lambda(r,\theta)=q_t^\infty(r),
\]
where
\[
q_t^\infty(r):=\frac{\Gamma(n)}{2\pi^{n+1}}\sum_{m=0}^{+\infty} (2m+n){m+n-1\choose n-1}e^{-2m(m+n)t}P_m^{n-1,0}(\cos 2r),
\]
and
\[
d\mu=\frac{2\pi^n}{\Gamma(n)}(\sin r)^{2n-1}\cos r drd\theta
\]
is the invariant and symmetric measure of $L_\lambda$. We now conclude by observing that $q_t^\infty(r)$ is the heat kernel at 0 of the radial part of the Brownian motion on $\mathbb{C}P^n$.
\end{proof}

For further aspects of the theory of homogenisation on homogeneous spaces we refer to \cite{MR3787732}.

 \section{Complex anti-de Sitter fibration}
 
 \subsection{Horizontal Brownian motion}\label{Section complex anti-de Sitter}
 
 As a differentiable manifold, the  complex hyperbolic space $\mathbb{C}H^n$ can be defined as the open unit ball in $\mathbb{C}^n$. Its Riemannian structure can be constructed as follows. Let 
\[
\mathbf{AdS}^{2n+1}(\mathbb{C})=\{ z \in \mathbb{C}^{n+1}, | z_1|^2+\cdots+|z_n|^2 -|z_{n+1}|^2=-1 \}
\]
be the $2n+1$ dimensional anti-de Sitter space. We equip $\mathbf{AdS}^{2n+1}(\mathbb{C})$ with its standard Lorentz metric with signature $(2n,1)$ inherited from $\mathbb C^{n+1}$. The Riemannian structure on $\mathbb{C}H^n$ is then such that the map
\begin{align*}
\begin{array}{llll}
\pi :& \mathbf{AdS}^{2n+1}(\mathbb{C}) & \to & \mathbb{C}H^n \\
  & (z_1,\dots,z_{n+1}) & \to & \left( \frac{z_1}{z_{n+1}}, \dots, \frac{z_n}{z_{n+1}}\right)
\end{array}
\end{align*}
is an indefinite Riemannian submersion whose one-dimensional fibers are definite negative. This submersion is associated with a fibration. Indeed, the group $\mathbf{U}(1)$ acts isometrically on $\mathbf{AdS}^{2n+1}(\mathbb{C})$, and the quotient space of $\mathbf{AdS}^{2n+1}(\mathbb{C})$ by  this action is isometric to $\mathbb{C}H^n$.  The fibration
\[
\mathbf{U}(1)\to \mathbf{AdS}^{2n+1}(\mathbb{C})\to\mathbb{C}H^n
\]
 is called the complex anti-de Sitter fibration and the Riemannian metric on $\mathbb{C}H^n$ the Bergman metric. We note that $\mathbf{AdS}^{2n+1}(\mathbb{C})$ is not simply connected.
 
 \
 
 To parametrize $\mathbb{C}H^n$, we will use the global affine coordinates given by $w_j=z_j/z_{n+1}$ where $(z_1,\dots, z_{n+1})\in \M$ with  $\M=\{z\in \mathbb{C}^{n,1}, \sum_{k=1}^n|z_{k}|^2-|z_{n+1}|^2<0 \}$. Those coordinates provide a diffeomorphism between $\mathbb{C}H^n$ and the unit open ball of $\mathbb C^n$

\begin{theorem}\label{Laplace CHn}
In affine coordinates, the Laplace-Beltrami operator for the Bergman metric of $\mathbb{C}H^n$ is given by
\[
\Delta_{\mathbb{C}H^n}=4(1-|w|^2)\sum_{k=1}^n \frac{\partial^2}{\partial w_k \partial\overline{w}_k}- 4(1-|w|^2)\mathcal{R} \overline{\mathcal{R}}
\]
where $|w|^2=\sum_{i=1}^n |w_i|^2 <1$ and
\[
\mathcal{R}=\sum_{j=1}^n w_j \frac{\partial}{\partial w_j}.
\] 
Consequently, the Bergman metric $h$ on $\mathbb{C}H^n$ satisfies
\[
h \left(\frac{\partial}{\partial w_i}, \frac{\partial}{\partial \overline{w}_j} \right)= \frac{(1-|w|^2) \delta_{ij} +\overline{w}_i w_j}{(1-|w|^2)^2}.
\]
\end{theorem}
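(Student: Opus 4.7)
The proof plan mirrors that of Theorem~\ref{Laplace complex projective}, the only new ingredient being the Lorentzian signature of the ambient space $\mathbb{C}^{n,1}$. First I would derive the chain-rule formulas under $w_j = z_j/z_{n+1}$; these are formally identical to the spherical case:
\[
\frac{\partial}{\partial z_k} = \frac{1}{z_{n+1}} \frac{\partial}{\partial w_k} \quad (1\le k\le n), \qquad \frac{\partial}{\partial z_{n+1}} = -\frac{1}{z_{n+1}}\mathcal{R},
\]
together with their complex conjugates, viewed as operators acting on functions of $w$ alone.

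Next I would apply the ambient d'Alembertian
\[
\Box_{\mathbb{C}^{n,1}} = 4 \sum_{k=1}^n \frac{\partial^2}{\partial z_k \partial \overline z_k} - 4 \frac{\partial^2}{\partial z_{n+1}\partial \overline z_{n+1}},
\]
whose minus sign on the last term reflects the Lorentzian signature. Substituting the chain rules and using $[\mathcal{R},\overline{\mathcal{R}}]=0$ yields, for $F(z) = f(w(z))$,
\[
\Box_{\mathbb{C}^{n,1}} F = \frac{4}{|z_{n+1}|^2}\left( \sum_{k=1}^n \frac{\partial^2 F}{\partial w_k \partial \overline w_k} - \mathcal{R}\overline{\mathcal{R}} F \right),
\]
the sign of $\mathcal{R}\overline{\mathcal{R}}$ being flipped relative to the $\mathbb{C}P^n$ case precisely because of the minus on the timelike $(n+1)$-term. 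From the defining equation $\sum_{k=1}^n|z_k|^2-|z_{n+1}|^2=-1$ one reads $|z_{n+1}|^2(1-|w|^2) = 1$, so $4/|z_{n+1}|^2 = 4(1-|w|^2)$ on the hypersurface.

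To identify this operator with $\Delta_{\mathbb{C}H^n}$, I would exploit the fact that $F$ is invariant under both the $\mathbf{U}(1)$ fiber action of $\pi$ and under positive real scaling $z\mapsto \lambda z$. The first invariance means that the vertical part of any intrinsic or ambient Laplacian annihilates $F$, so $\Box_{\mathbf{AdS}^{2n+1}(\mathbb{C})}F$ descends to $(\Delta_{\mathbb{C}H^n}f)\circ \pi$; the second, together with the warped-product decomposition $ds^2_{\mathbb{C}^{n,1}} = -d\rho^2 + \rho^2\, ds^2_{\mathbf{AdS}^{2n+1}(\mathbb{C})}$ on $\{Q<0\}$, yields $\Box_{\mathbb{C}^{n,1}} F = \rho^{-2}\Box_{\mathbf{AdS}^{2n+1}(\mathbb{C})} F = \Box_{\mathbf{AdS}^{2n+1}(\mathbb{C})} F$ along $\mathbf{AdS}^{2n+1}(\mathbb{C})$, whence the stated formula for $\Delta_{\mathbb{C}H^n}$. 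The Bergman metric then follows by inverting the principal-symbol matrix $(1-|w|^2)(I_n - \mathrm{w}\mathrm{w}^*)$ via Sherman--Morrison, since $(I_n - \mathrm{w}\mathrm{w}^*)^{-1} = I_n + \mathrm{w}\mathrm{w}^*/(1-|w|^2)$. The main obstacle I expect is the careful sign tracking across the Lorentzian $\Box$, the relations $\partial/\partial z_{n+1} = -(1/z_{n+1})\mathcal{R}$, and the timelike radial direction; positivity of the resulting Riemannian metric on the unit ball follows from $1-|w|^2>0$ and the eigenvalues of $I_n - \mathrm{w}\mathrm{w}^*$ being $1$ (with multiplicity $n-1$) and $1-|w|^2$.
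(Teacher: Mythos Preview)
Your proposal is correct and follows essentially the same route as the paper: the same chain-rule identities $\partial/\partial z_k = z_{n+1}^{-1}\partial/\partial w_k$ and $\partial/\partial z_{n+1} = -z_{n+1}^{-1}\mathcal{R}$, the same computation of the Lorentzian operator $\sum_{k=1}^n \partial_{z_k}\partial_{\bar z_k} - \partial_{z_{n+1}}\partial_{\bar z_{n+1}}$ on fiber-invariant functions, the same use of $|z_{n+1}|^{-2}=1-|w|^2$, and the same inversion of the principal symbol. The paper is slightly terser---it simply invokes the definition of the Bergman metric via the submersion rather than spelling out the warped-product and $\mathbf{U}(1)$-invariance argument you give---but the content is the same.
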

\begin{proof}
Since
\[
w_j=z_j/z_{n+1}
\]
one obtains that on $\mathbf{AdS}^{2n+1}(\C)$,  for $1\leq k\leq n$
\begin{eqnarray*}
\frac{\partial}{\partial z_k} &=&\frac{1}{z_{n+1}} \frac{\partial}{\partial w_k}\\
\frac{\partial}{\partial \overline{z}_k} &=&\frac{1}{\overline{z}_{n+1}} \frac{\partial}{\partial \overline{w}_k} \\
\end{eqnarray*}
and
\begin{eqnarray*}
\frac{\partial}{\partial z_{n+1}} &=&-\frac{1}{z_{n+1}}\left(\sum_{j=1}^n w_j\frac{\partial}{\partial w_j}\right) \\
\frac{\partial}{\partial \overline{z}_{n+1}}  &=& -\frac{1}{\overline{z}_{n+1}} \left(\sum_{j=1}^n\overline{w}_j\frac{\partial}{\partial \overline{w}_j}\right).
\end{eqnarray*}
This yields
\[
\sum_{k=1}^{n}\left(\frac{\partial^2}{\partial z_k\partial\overline{z}_k}+\frac{\partial^2}{\partial \overline{z}_k\partial z_k}\right)-\frac{\partial^2}{\partial z_{n+1}\partial\overline{z}_{n+1}}-\frac{\partial^2}{\partial \overline{z}_{n+1}\partial z_{n+1}}=2(1-|w|^2)\left(\frac{\partial^2}{\partial w_k\partial\overline{w}_k}-\mathcal{R}\overline{\mathcal{R}}\right),
\]
where $\mathcal{R}=\sum_{k=1}^nw_k\frac{\partial}{\partial w_k}$. Therefore
\[
\Delta_{\mathbb{C}H^n}=4(1-|w|^2)\sum_{k=1}^n \frac{\partial^2}{\partial w_k \partial\overline{w}_k}- 4(1-|w|^2)\mathcal{R} \overline{\mathcal{R}}
\]
and the Bergman metric is obtained by inverting the principal symbol of $\Delta_{\mathbb{C}H^n}$ as it was done in the $\mathbb{C}P^n$ case.
\end{proof}

From the expression of the Bergman metric one deduces that the Hermitian form on $\mathbb{C}H^n$ is given by
\begin{align*}
\omega &=\frac{i}{2}\sum_{k,j=1}^n h \left(\frac{\partial}{\partial w_k}, \frac{\partial}{\partial \overline{w}_j} \right) dw_k \wedge d\bar{w}_j \\
 &=\frac{i}{2}\sum_{k,j=1}^n \frac{(1-|w|^2) \delta_{kj} +\overline{w}_k w_j}{(1-|w|^2)^2} dw_k \wedge d\bar{w}_j
\end{align*}
Consider then the function 
\[
K(w)=-\ln (1-|w|^2).
\]
A computation shows that
\begin{align*}
\frac{i}{2}\partial \bar{\partial}K&=\frac{i}{2}\sum_{k,j=1}^n \frac{(1-|w|^2) \delta_{kj} +\overline{w}_k w_j}{(1-|w|^2)^2} dw_k \wedge d\bar{w}_j=\omega.
\end{align*}
Therefore $d\omega=(\partial +\bar{\partial})\omega=0$ and the Bergman metric is K\"ahler. The function $K$ is therefore a K\"ahler potential for $\mathbb{C}H^n$.

One can also compute the Laplacian and the Riemannian metric in the real affine coordinates $w_j=u_j+iv_j$. As for the complex Hopf fibration
\[
\mathcal{R}=\mathcal{R}_1+i\mathcal{R}_2
\]
with
\[
\mathcal{R}_1=\sum_{j=1}^n u_j \frac{\partial}{\partial u_j}+v_j \frac{\partial}{\partial v_j}
\]
\[
\mathcal{R}_2=\sum_{j=1}^n v_j \frac{\partial}{\partial u_j}-u_j \frac{\partial}{\partial v_j}
\]
and
\[
\Delta_{\mathbb{C}H^n}=(1-|w|^2)\left(\sum_{j=1}^n \left( \frac{\partial^2}{\partial u_j^2} +\frac{\partial^2}{\partial v_j^2} \right) - \mathcal{R}_1^2-\mathcal{R}_2^2  \right). 
\]
In those real affine coordinates the matrix of the cometric is given by
\[
(1-|w|^2) \left(\mathrm{I}_{2n}-\mathrm{R}_1 \mathrm{R}^*_1-\mathrm{R}_2 \mathrm{R}^*_2 \right)
\]
where $\mathrm{R}_i$ is the real valued column vector with the $2n$ components
 \[
 \mathrm{R}_i =\left(  \mathcal{R}_i u_j, \mathcal{R}_i v_j \right)_{1 \le j \le n}.
 \]
 The two vectors $\mathrm{R}_1$ and $\mathrm{R}_2$ are orthogonal and both of them have norm $|w|^2$. It follows that the matrix of the metric (which is the inverse of the cometric matrix) is given by
 \[
 \frac{1}{1-|w|^2} \left( \mathrm{I}_{2n}+\frac{1}{1-|w|^2} \left(\mathrm{R}_1 \mathrm{R}^*_1+\mathrm{R}_2 \mathrm{R}^*_2\right)\right).
 \]
 We also note that the determinant of the cometric marix is $(1-|w|^2)^{2n+2}$ which implies that the volume Riemannian measure is $\frac{dw}{(1-|w|^2)^{n+1}}$.

\begin{proposition}
Let $(w(t))_{t \ge 0}$ be a Brownian motion on $\CH$ started at 0, i.e. the diffusion with generator
\[
\frac{1}{2} \Delta_{\mathbb{C}H^n}=2(1-|w|^2)\sum_{k=1}^n \frac{\partial^2}{\partial w_k \partial\overline{w}_k}- 2(1-|w|^2)\mathcal{R} \overline{\mathcal{R}}.
\]
The process $r(t)=\arctanh |w(t)|$ is a Jacobi diffusion with generator
\[
\frac{1}{2} \left( \frac{\partial^2}{\partial r^2}+((2n-1)\coth r+\tanh r)\frac{\partial}{\partial r}\right).
\]
In particular, the density of $r(t)$ with respect to the Lebesgue measure, $t>0$, is given by the formula
\begin{align*}
&\mathbb{P}(r(t) \in dr) \\
=& \frac{C_ne^{-n^2t/2}}{ t^{1/2} } \left( \int_r^{\infty}\frac{\sinh\theta }{(\cosh^2\theta - \cosh^2r)^{1/2}} \left(-\frac{1}{\sinh\theta}\frac{d}{d\theta}\right)^ne^{-\theta^2/(2t)} d\theta \right) (\sinh r)^{2n-1}\cosh{r} \, dr.
\end{align*}
where $C_n$ is a normalizing constant.
\end{proposition}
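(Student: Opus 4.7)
My plan is to mirror the strategy used in Proposition~\ref{prop-r-sphere-gene} for $\mathbb{C}P^n$, adapting all the trigonometric identities to the hyperbolic setting. The first step is the purely algebraic computation of the generator. Starting from the formula
\[
\Delta_{\mathbb{C}H^n}=4(1-|w|^2)\sum_{k=1}^n \frac{\partial^2}{\partial w_k \partial\overline{w}_k}- 4(1-|w|^2)\mathcal{R} \overline{\mathcal{R}}
\]
established in Theorem~\ref{Laplace CHn}, I would apply it to a function $f(\rho)$ with $\rho=|w|$. Using $\partial_{w_j}\rho=\overline{w}_j/(2\rho)$ one finds $\mathcal{R} f(\rho)=(\rho/2)f'(\rho)$ and, after routine bookkeeping,
\[
\sum_{k}\partial_{w_k}\partial_{\overline{w}_k} f(\rho)=\tfrac{1}{4}\bigl(f''(\rho)+\tfrac{2n-1}{\rho}f'(\rho)\bigr), \qquad \mathcal{R}\overline{\mathcal{R}}f(\rho)=\tfrac{\rho^2}{4}f''(\rho)+\tfrac{\rho}{4}f'(\rho).
\]
Combining, $\Delta_{\mathbb{C}H^n}$ acts on radial functions as $(1-\rho^2)\partial_\rho^2+\bigl((2n-1)(1-\rho^2)/\rho-\rho(1-\rho^2)\bigr)\partial_\rho$. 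The substitution $\rho=\tanh r$, for which $d\rho/dr=1-\rho^2=1/\cosh^2 r$, converts this to $\partial_r^2+((2n-1)\coth r+\tanh r)\partial_r$. It\^o's formula applied to $r(t)=\arctanh|w(t)|$ then gives the claimed generator.

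For the density formula, note that $r$ is the Riemannian distance from $0$ in $\mathbb{C}H^n$, and the radial density splits as $q_t(r)\cdot(\sinh r)^{2n-1}\cosh r\,dr$, where the volume factor $(\sinh r)^{2n-1}\cosh r$ comes from polar coordinates on $\mathbb{C}H^n$ (cross-checkable from the Bergman volume element $dw/(1-|w|^2)^{n+1}$) and $q_t(r)$ is the radial part of the Bergman heat kernel issued from $0$. The plan is to invoke the explicit formula for $q_t(r)$ on the rank-one symmetric space $\mathbb{C}H^n$ recorded in Appendix~3, which takes exactly the Millson/Gangolli-type shape
\[
q_t(r)=\frac{C_n e^{-n^2 t/2}}{t^{1/2}}\int_r^{\infty}\frac{\sinh\theta}{(\cosh^2\theta-\cosh^2 r)^{1/2}}\Bigl(-\frac{1}{\sinh\theta}\frac{d}{d\theta}\Bigr)^n e^{-\theta^2/(2t)}\,d\theta.
\]
This expresses the heat kernel on $\mathbb{C}H^n$ as an Abel-type transform of the closed-form heat kernel on the odd-dimensional real hyperbolic space $\mathbb{H}^{2n+1}$ (whose radial kernel is precisely $e^{-n^2 t/2}(-(\sinh\theta)^{-1}\partial_\theta)^n e^{-\theta^2/(2t)}$ up to normalisation, cf.\ \eqref{heat_kernel_odd} for its spherical analogue). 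Multiplying by the volume factor yields the density.

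The main obstacle is the second step: unlike the compact case where the Jacobi heat kernel has a clean spectral expansion in Jacobi polynomials, the hyperbolic Jacobi operator with parameters $(\alpha,\beta)=(n-1,-n)$ appearing here has continuous spectrum and no elementary eigenfunction expansion. The cleanest route is not a direct PDE calculation but rather to cite the descent/reduction formula collected in Appendix~3 expressing the radial heat kernel on $\mathbb{C}H^n$ in terms of that on $\mathbb{H}^{2n+1}$; if one wished to prove the formula from scratch, the standard derivation uses Harish-Chandra's $c$-function together with the shift-operator relationship between the radial parts of the Laplace--Beltrami operator on $\mathbb{C}H^n$ and on $\mathbb{H}^{2n+1}$, which is a nontrivial piece of harmonic analysis on rank-one symmetric spaces and is precisely what Appendix~3 is designed to package.
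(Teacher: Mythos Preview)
Your approach is essentially the same as the paper's: compute the action of $\Delta_{\mathbb{C}H^n}$ on radial functions (the paper just says ``a direct computation shows'', whereas you spell out the intermediate steps), then invoke the known heat kernel formula on $\mathbb{C}H^n$ recorded in the appendix. Two small slips worth noting: the hyperbolic Jacobi parameters here are $(\alpha,\beta)=(n-1,0)$, not $(n-1,-n)$ (match $(2n-1)\coth r+\tanh r$ against $(2\alpha+1)\coth r+(2\beta+1)\tanh r$); and the relevant formula is in Appendix~2 (Section~\ref{section hyp Jacobi}, equation~\eqref{heat kernel CHn}), not Appendix~3.
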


\begin{proof}
Let $r=\arctanh |w|$. A direct computation shows that the operator $\Delta_{\mathbb{C}H^n}$ acts on functions depending only on $r$ as
\[
\frac{\partial^2}{\partial r^2}+((2n-1)\coth r+\tanh r)\frac{\partial}{\partial r}.
\]
Equivalently, It\^o's formula shows that  $r(t)=\arctanh |w(t)|$ is a hyperbolic  Jacobi diffusion with generator
\[
\frac{1}{2} \left( \frac{\partial^2}{\partial r^2}+((2n-1)\coth r+\tanh r)\frac{\partial}{\partial r}\right).
\]
The expression of the semi-group density of $r(t)$ is known and yields the claimed formula; see Section \ref{section hyp Jacobi} (Formula \ref{heat kernel CHn}) but also \cite{Dem-Heat} and references therein.
\end{proof}

 Let $\alpha$ be the one-form on $\mathbb{C}H^n$ which is  given in affine coordinates by
\[
\alpha=\frac{i}{2(1-|w|^2)}\sum_{j=1}^n(w_jd\overline{w}_j-\overline{w}_jdw_j),
\]
where $|w|^2=\sum_{j=1}^n|w_j|^2<1$.  We can compute that
\[
\alpha=\frac{1}{2i}(\partial-\bar{\partial})K
\]
where $K(w)=-\ln (1-|w|^2)$ is the K\"ahler potential of the Bergman metric. Therefore we have
\[
d\alpha=(\partial +\bar{\partial})\alpha=i \partial \bar{\partial} K=2\omega
\]
where $\omega$ is the K\"ahler form.


 We can then naturally define the stochastic area process on $\CH$ as follows:
\begin{definition}\label{stochastic area hyperbolic}
Let $(w(t))_{t \ge 0}$ be a Brownian motion on $\CH$ started at 0. The generalized stochastic area process of $(w(t))_{t \ge 0}$ is defined by
\[
\theta(t)=\int_{w[0,t]} \alpha=\frac{i}{2}\sum_{j=1}^n \int_0^t \frac{w_j(s)  d\overline{w}_j(s)-\overline{w}_j(s) dw_j(s)}{1-|w(s)|^2},
\]
where the above stochastic integrals are understood in the Stratonovich sense or equivalently It\^o sense.
\end{definition}

As in in the Heisenberg group case or the Hopf fibration case, the stochastic area process is intimately related to a diffusion on the total space of the fibration.

\begin{theorem}\label{FGHT}
 Let $(w(t))_{t \ge 0}$ be a Brownian motion on $\mathbb{C}H^n$ started at 0 and $(\theta(t))_{t\ge 0}$ be its stochastic area process. The $\mathbf{AdS}^{2n+1}(\mathbb{C})$-valued diffusion process
 \[
 Y(t)=\frac{e^{i\theta(t)} }{\sqrt{1-|w(t)|^2}} \left( w(t),1\right), \quad t \ge 0
 \]
 is the horizontal lift at $(0,1)$ of $(w(t))_{t \ge 0}$ by the submersion $\pi$.
\end{theorem}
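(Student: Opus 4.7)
The plan is to mimic almost verbatim the proof of Theorem \ref{thm-hor-BM-X}, substituting the standard $\mathbf{U}(1)$-connection form of the Hopf fibration with its hyperbolic counterpart. First I would introduce on $\mathbf{AdS}^{2n+1}(\mathbb{C})$ the standard contact form
\[
\eta = -\frac{i}{2}\sum_{j=1}^{n}(\bar z_j\,dz_j-z_j\,d\bar z_j)+\frac{i}{2}(\bar z_{n+1}\,dz_{n+1}-z_{n+1}\,d\bar z_{n+1}),
\]
i.e.\ the one-form naturally attached to the indefinite Hermitian form $|z_1|^2+\cdots+|z_n|^2-|z_{n+1}|^2$. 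This is a $\mathbf{U}(1)$-compatible connection one-form on the principal bundle $\mathbf{U}(1)\to \mathbf{AdS}^{2n+1}(\mathbb{C})\to \mathbb{C}H^n$; its kernel is exactly the indefinite-metric orthogonal complement of the $\mathbf{U}(1)$-orbits, that is, the horizontal distribution of the submersion $\pi$.

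Next I would use the (global, up to the $\mathbf{U}(1)$-action) trivialization
\[
(w,\theta)\longmapsto \frac{e^{i\theta}}{\sqrt{1-|w|^2}}(w,1),\qquad w\in \mathbb{C}H^n,\ \theta\in \mathbb{R}/2\pi\mathbb{Z},
\]
and pull back $\eta$ through it. Differentiating $z_j=e^{i\theta}w_j/\sqrt{1-|w|^2}$ for $j\le n$ and $z_{n+1}=e^{i\theta}/\sqrt{1-|w|^2}$, then grouping real and imaginary parts, the $d(\ln\sqrt{1-|w|^2})$ contributions cancel thanks to the Lorentz sign, and one is left with
\[
\eta \;=\; d\theta \;-\;\frac{i}{2(1-|w|^2)}\sum_{j=1}^{n}\bigl(w_j\,d\bar w_j-\bar w_j\,dw_j\bigr)\;=\;d\theta-\alpha,
\]
where $\alpha$ is precisely the one-form appearing in Definition \ref{stochastic area hyperbolic}. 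Consequently the horizontal lift of the vector field $\partial/\partial w_j$ on $\mathbb{C}H^n$ to $\mathbf{AdS}^{2n+1}(\mathbb{C})$ is $\partial/\partial w_j+\alpha(\partial/\partial w_j)\,\partial/\partial\theta$.

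I would then conclude exactly as in the proof of Theorem \ref{thm-hor-BM-X}: for a smooth curve $\gamma:[0,+\infty)\to \mathbb{C}H^n$ with $\gamma(0)=0$, its horizontal lift at $(0,1)$ has the form
\[
Z(t)=\frac{e^{i\Theta(t)}}{\sqrt{1-|\gamma(t)|^2}}(\gamma(t),1),\qquad \Theta(t)=\int_{\gamma[0,t]}\alpha,
\]
by direct integration of the lifted ODE. The extension from smooth curves to the Brownian motion $(w(t))_{t\ge 0}$ is then a direct application of Malliavin's transfer principle, exactly as invoked at the end of the proof of Theorem \ref{skew-product principal bundle}; this yields the claimed identification of $Y(t)$ with the horizontal lift of $w(t)$ at $(0,1)$.

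The only real point requiring care, and the main potential source of sign errors, is the explicit pullback computation of $\eta$ in the $(w,\theta)$-trivialization: the Lorentz-signature sign in front of the $z_{n+1}$-term flips one contribution relative to the $\mathbb{C}P^n$-case, and this is precisely what produces the $e^{+i\theta(t)}$ in the formula for $Y(t)$ (rather than the $e^{-i\theta(t)}$ appearing in Theorem \ref{thm-hor-BM-X}). Beyond this bookkeeping, no new analytic difficulty arises.
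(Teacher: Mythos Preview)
Your proposal is correct and follows essentially the same route as the paper: introduce the indefinite-signature connection form $\eta$, pull it back through the trivialization $(w,\theta)\mapsto e^{i\theta}(w,1)/\sqrt{1-|w|^2}$, read off the horizontal lift of $\partial/\partial w_j$, and conclude by the transfer principle. One small slip: the pullback actually gives $\eta=-d\theta+\alpha$ rather than $d\theta-\alpha$ (the paper's computation), but since only $\ker\eta$ matters this overall sign is harmless and your horizontal-lift formula $\partial/\partial w_j+\alpha(\partial/\partial w_j)\,\partial/\partial\theta$ and the conclusion $\theta(t)=\int_{w[0,t]}\alpha$ are unaffected.
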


\begin{proof}
This can be proved using Theorem \ref{skew-product principal bundle} but as in the complex Hopf fibration case Theorem \ref{thm-hor-BM-X}  we give all details. As a difference with respect to Theorem \ref{thm-hor-BM-X} notice the sign change in $\theta(t)$.  Again, the key-point is to observe the compatibility of the submersion $\pi$ with the $\mathbf{U}(1)$-bundle structure of $\mathbf{AdS}^{2n+1}(\mathbb{C})$. Namely, the horizontal distribution of this submersion is the kernel of the standard connection form on $\mathbf{AdS}^{2n+1}(\mathbb{C})$ 
\[
\eta=-\frac{i}{2}\left(\sum_{j=1}^{n}(\overline{z}_jdz_j-z_j d\overline{z}_j)-(\overline{z}_{n+1}dz_{n+1}-z_{n+1} d\overline{z}_{n+1})\right).
\]

\

Let $(w_1,\dots, w_n)$ be  the  coordinates for $\mathbf{AdS}^{2n+1}(\mathbb{C})$ given by $w_j=z_j/z_{n+1}$, and $\theta$ be the local fiber coordinate. These coordinates are given by the map
\begin{align}\label{invar2}
(w,\theta)\longrightarrow \frac{e^{i\theta} }{\sqrt{1-|w|^2}} \left( w,1\right),
\end{align}
where  $\theta \in \R/2\pi\mathbb{Z}$, and $w \in \mathbb{C}H^n$. In these coordinates, we compute that
 \begin{align*}
\eta=-d\theta+\frac{i}{2(1-|w|^2)}\sum_{j=1}^n(w_jd\overline{w}_j-\overline{w}_jdw_j).
\end{align*}
As a consequence, the horizontal lift to $\mathbf{AdS}^{2n+1}(\mathbb{C})$  of the vector field $\frac{\partial}{\partial w_i}$ is given by $\frac{\partial}{\partial w_i}-\alpha\left(\frac{\partial}{\partial w_i} \right)\frac{\partial}{\partial \theta}$. Therefore, the lift of $(w(t))_{t \ge 0}$ is $\frac{e^{i\theta(t)} }{\sqrt{1-|w(t)|^2}} \left( w(t),1\right)$ with
\[
\theta(t) =\sum_{i=1}^n \int_0^t \alpha\left(\frac{\partial}{\partial w_i} \right)  dw_i =\int_{w[0,t]} \alpha.
\]
\end{proof}

\begin{theorem}
In the system of coordinates \eqref{invar2}, the horizontal Laplacian of the anti-de Sitter fibration is given by
\[
\Delta_{\mathcal H}=4(1-|w|^2)\sum_{k=1}^n \frac{\partial^2}{\partial w_k \partial\overline{w}_k}- 4(1-|w|^2)\mathcal{R} \overline{\mathcal{R}}+|w|^2\ \frac{\partial^2}{\partial \theta^2}+2i(1-|w|^2)(\mathcal{R} -\overline{\mathcal{R}})\frac{\partial}{\partial\theta}.
\]
and the generator of the horizontal Brownian motion is  $\frac{1}{2} \Delta_{\mathcal H}$.
\end{theorem}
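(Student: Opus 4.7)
The proof parallels the derivation of the analogous formula \eqref{eq-Del-H-sphere} in the complex Hopf fibration case. The idea is to substitute, in the expression for $\Delta_{\mathbb{C}H^n}$ given by Theorem~\ref{Laplace CHn}, each coordinate vector field $\partial/\partial w_k$ and $\partial/\partial \overline w_l$ by its horizontal lift to $\mathbf{AdS}^{2n+1}(\mathbb{C})$, and then rewrite the resulting operator in the system of coordinates \eqref{invar2}.

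From the explicit expression of the connection one-form $\eta = -d\theta + \alpha$ on $\mathbf{AdS}^{2n+1}(\mathbb{C})$ (obtained in the proof of Theorem~\ref{FGHT}), the horizontal distribution $\ker\eta$ is seen to be spanned at each point by
\[
X_k = \frac{\partial}{\partial w_k} - \frac{i\overline w_k}{2(1-|w|^2)}\frac{\partial}{\partial\theta},\qquad Y_l = \frac{\partial}{\partial \overline w_l} + \frac{i w_l}{2(1-|w|^2)}\frac{\partial}{\partial\theta},
\]
which are the horizontal lifts of $\partial/\partial w_k$ and $\partial/\partial \overline w_l$. Rewriting the formula of Theorem~\ref{Laplace CHn} in the ``principal-symbol'' form $\Delta_{\mathbb{C}H^n} = 4(1-|w|^2)\sum_{k,l}(\delta_{kl} - w_k\overline w_l)\partial_{w_k}\partial_{\overline w_l}$ and replacing $\partial_{w_k}\partial_{\overline w_l}$ by the Weyl-symmetric product $\tfrac12(X_kY_l + Y_lX_k)$---symmetrization being required because $[X_k, Y_l]$ is purely vertical (and, in fact, proportional to the K\"ahler form of $\mathbb{C}H^n$)---, the expansion produces four kinds of contributions. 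The pure $(w,\overline w)$-derivatives reassemble into the first two terms of the claimed formula. The mixed terms $\partial_{w_k}\partial_\theta$ and $\partial_{\overline w_l}\partial_\theta$ have coefficients simplifying, via the identity $\sum_l(\delta_{kl} - w_k\overline w_l) w_l = w_k(1-|w|^2)$, to $\tfrac{i w_k}{2}$ and $-\tfrac{i\overline w_l}{2}$ respectively, producing the cross contribution $2i(1-|w|^2)(\mathcal R - \overline{\mathcal R})\partial/\partial\theta$. The $\partial_\theta^2$ coefficient telescopes, by $\sum_{k,l}(\delta_{kl} - w_k\overline w_l)\overline w_k w_l = |w|^2(1-|w|^2)$, to exactly $|w|^2\partial_\theta^2$. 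Finally, the spurious first-order $\partial/\partial\theta$ terms arising from $\partial_{w_k}$ acting on the $\partial_\theta$-coefficient of $Y_l$ inside $X_k Y_l$ (and the symmetric piece inside $Y_l X_k$) are of equal magnitude and opposite sign, hence cancel under the Weyl symmetrization, leaving no residual first-order $\partial/\partial\theta$ term in the final expression.

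The assertion that $\tfrac12\Delta_{\mathcal H}$ is the generator of the horizontal Brownian motion is then a direct consequence of the general theory developed in Chapter~3: the submersion $\pi:\mathbf{AdS}^{2n+1}(\mathbb{C})\to\mathbb{C}H^n$ has totally geodesic fibers (cf.\ Example~\ref{BB fibration}), so the horizontal lift of any Brownian motion on the base is a horizontal Brownian motion on the total space, and by Theorem~\ref{FGHT} this horizontal lift is precisely the process $Y(t)$. The main obstacle in the argument is the careful bookkeeping of signs and the verification of the cancellation of the extraneous first-order $\partial/\partial\theta$ contribution under symmetrization, which directly mirrors the non-commutativity of the horizontal lifts $X_k$ and $Y_l$.
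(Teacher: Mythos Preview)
Your proposal is correct and follows the same strategy as the paper: compute the horizontal lifts of $\partial_{w_j}$ and $\partial_{\overline w_j}$ from the connection form, then substitute them into the expression for $\Delta_{\mathbb{C}H^n}$ obtained in Theorem~\ref{Laplace CHn}. In fact your argument is more careful than the paper's in two respects. First, your horizontal lift $X_k=\partial_{w_k}-\tfrac{i\overline w_k}{2(1-|w|^2)}\partial_\theta$ genuinely lies in $\ker\eta$ (recall $\eta=-d\theta+\alpha$, so the coefficient of $\partial_\theta$ in the lift must be $+\alpha(\partial_{w_k})=-\tfrac{i\overline w_k}{2(1-|w|^2)}$); the paper's proof writes the lift with the opposite sign, although its final formula is the correct one. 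Second, you correctly observe that in complex form the substitution must be interpreted as the Weyl-symmetric product $\tfrac12(X_kY_l+Y_lX_k)$, since $[X_k,Y_l]=ih_{k\bar l}\,\partial_\theta$ is vertical; the paper's single word ``substituting'' hides this point. One can avoid the symmetrization step altogether by writing $\Delta_{\mathbb{C}H^n}$ in the real form $(1-|w|^2)\bigl(\sum_j(\partial_{u_j}^2+\partial_{v_j}^2)-\mathcal R_1^2-\mathcal R_2^2\bigr)$ and lifting each square separately, but your route is equivalent and makes the cancellation of the spurious first-order $\partial_\theta$ term explicit.
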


\begin{proof}
Similarly as in the case of $\mathbb{C}P^n$, we first observe that the Laplace-Beltrami operator for the Bergman metric of $\mathbb{C}H^n$ is given by
\[
\Delta_{\mathbb{C}H^n}=4(1-|w|^2)\sum_{k=1}^n \frac{\partial^2}{\partial w_k \partial\overline{w}_k}- 4(1-|w|^2)\mathcal{R} \overline{\mathcal{R}}
\]
where
\[
\mathcal{R}=\sum_{j=1}^n w_j \frac{\partial}{\partial w_j}.
\] 
We now observe that the horizontal lift of the vector field $\frac{\partial}{\partial w_j}$ to $\mathbf{AdS}^{2n+1}(\mathbb{C})$   is given by 
\[
\frac{\partial}{\partial w_j}-\alpha\left(\frac{\partial}{\partial w_j} \right)\frac{\partial}{\partial \theta}=\frac{\partial}{\partial w_j}+\frac{i}{2}\frac{\overline{w}_j}{1-\rho^2}\frac{\partial}{\partial\theta},
\]
where $\rho=|w|$. Substituting $\frac{\partial}{\partial w_j}$ by its lift in the expression of $\Delta_{\mathbb{C}H^n}$ yields
\[
\Delta_{\mathcal H}=4(1-|w|^2)\sum_{k=1}^n \frac{\partial^2}{\partial w_k \partial\overline{w}_k}- 4(1-|w|^2)\mathcal{R} \overline{\mathcal{R}}+|w|^2\ \frac{\partial^2}{\partial \theta^2}+2i(1-|w|^2)(\mathcal{R} -\overline{\mathcal{R}})\frac{\partial}{\partial\theta}.
\]
\end{proof}

\begin{theorem}\label{diff-H}
Let $r(t)=\arctanh |w(t)|$. The process $\left( r(t), \theta(t)\right)_{t \ge 0}$ is a diffusion with generator
 \[
\frac{1}{2} \left( \frac{\partial^2}{\partial r^2}+((2n-1)\coth r+\tanh r)\frac{\partial}{\partial r}+\tanh^2r\frac{\partial^2}{\partial \theta^2}\right).
 \]
 As a consequence the following equality in distribution holds
 \begin{equation}\label{eq-r-theta-H}
\left( r(t) ,\theta(t) \right)_{t \ge 0}=\left( r(t),B\left(\int_0^t \tanh^2 r(s)ds\right)\right)_{t \ge 0},
\end{equation}
where $(B(t))_{t \ge 0}$ is a standard Brownian motion independent from $r$.
\end{theorem}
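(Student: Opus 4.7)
The strategy is to mirror the spherical proof (Theorem \ref{skew complex hopf}) line by line, replacing trigonometric with hyperbolic functions and accounting for the sign flip in the K\"ahler potential. Since $(w(t),\theta(t))_{t\ge 0}$ is, by Theorem \ref{FGHT}, the projection of the horizontal Brownian motion $Y(t)$ onto the $(w,\theta)$-coordinates (and hence Markov with generator $\tfrac12\Delta_{\mathcal H}$), the first task is purely algebraic: restrict the just-derived expression for $\Delta_{\mathcal H}$ to functions that depend on $w$ only through $\rho=|w|$ and on $\theta$.

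For this I would use that $\mathcal{R}\rho=\sum w_j\partial_{w_j}\rho=\sum w_j\tfrac{\bar w_j}{2\rho}=\tfrac{\rho}{2}$, and similarly $\overline{\mathcal{R}}\rho=\tfrac{\rho}{2}$. Hence, for any smooth $f(\rho,\theta)$ one has $(\mathcal{R}-\overline{\mathcal{R}})f=0$, so the cross term $2i(1-|w|^2)(\mathcal{R}-\overline{\mathcal{R}})\partial_\theta$ in $\Delta_{\mathcal H}$ annihilates such $f$. The remaining terms split into (i) the radial part of $\Delta_{\mathbb{C}H^n}$, which by Theorem \ref{Laplace CHn} and the proof of its hyperbolic-Jacobi analogue acts as $(1-\rho^2)^2\partial_\rho^2+((2n-1)(1-\rho^2)/\rho+(1-\rho^2)\rho)\partial_\rho$, and (ii) the term $|w|^2\partial_\theta^2=\rho^2\partial_\theta^2$. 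The change of variable $\rho=\tanh r$ (so $d\rho=(1-\rho^2)dr$ and $\rho^2=\tanh^2 r$) turns (i)+(ii) into exactly $\partial_r^2+((2n-1)\coth r+\tanh r)\partial_r+\tanh^2 r\,\partial_\theta^2$, which proves the stated generator.

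For the equality in distribution \eqref{eq-r-theta-H}, the crucial point is that the generator of $(r,\theta)$ has no cross term in $\partial_r\partial_\theta$. By It\^o's formula applied to $r(t)=\arctanh|w(t)|$ and to $\theta(t)$ as defined in Definition \ref{stochastic area hyperbolic}, one obtains
\begin{align*}
dr(t) &= \tfrac12\bigl((2n-1)\coth r(t)+\tanh r(t)\bigr)dt+d\beta(t),\\
d\theta(t) &= \tanh r(t)\,d\gamma(t),
\end{align*}
where $\beta,\gamma$ are two continuous local martingales with $d\langle\beta,\beta\rangle_t=d\langle\gamma,\gamma\rangle_t=dt$ and $d\langle\beta,\gamma\rangle_t=0$ (the latter precisely because of the absence of the cross term in the generator, which one verifies by computing the martingale part of each via the expression of $\mathcal{R}\rho,\overline{\mathcal{R}}\rho$ above). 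L\'evy's characterization then makes $(\beta,\gamma)$ into a two-dimensional Brownian motion, so $\gamma$ is independent of $\beta$, hence of $(r(t))_{t\ge 0}$. Dambis--Dubins--Schwarz (Proposition \ref{carac Levy} in Appendix 1) applied to the continuous martingale $\theta$ with bracket $\int_0^t\tanh^2 r(s)\,ds$ yields the skew product $\theta(t)=B\bigl(\int_0^t\tanh^2 r(s)\,ds\bigr)$ with $B$ a standard Brownian motion independent of $r$.

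The only delicate point will be justifying the orthogonality $d\langle\beta,\gamma\rangle_t=0$ cleanly without going through an explicit choice of orthonormal frame on $\mathbb{C}H^n$; the cleanest route is to observe that this orthogonality is equivalent to the generator having no $\partial_r\partial_\theta$ term, which has already been established in the first part of the proof.
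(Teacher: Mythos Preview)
Your approach is correct and essentially identical to the paper's: compute the action of $\Delta_{\mathcal H}$ on functions of $(\rho,\theta)$ with $\rho=|w|$, then change variables $\rho=\tanh r$. Your treatment of the skew-product decomposition is in fact more explicit than the paper's, which leaves that step implicit once the generator is identified.

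One computational slip: the first-order term in the radial part of $\Delta_{\mathbb{C}H^n}$ should be $\dfrac{(2n-1)(1-\rho^2)}{\rho}-(1-\rho^2)\rho$, not $+(1-\rho^2)\rho$. With your sign the change of variable would give $(2n-1)\coth r+3\tanh r$ instead of $(2n-1)\coth r+\tanh r$. Since you correctly state the final generator, this is just a typo in the intermediate line, but it is worth fixing. Also, the reference you give for the time-change step (Proposition~\ref{carac Levy}) is L\'evy's characterization, not Dambis--Dubins--Schwarz; the argument goes through, but the citation is off.
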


\begin{proof}
With $\rho=|w|$, we compute then that $\Delta_{\mathcal H}$ acts on functions depending only on $(\rho, \theta)$ as
\[
\left(1-\rho^2\right)^2\frac{\partial^2}{\partial \rho^2}+\left(\frac{(2n-1)(1-\rho^2)}{\rho}-(1-\rho^2)\rho\right)\frac{\partial}{\partial \rho}+\rho^2\frac{\partial^2 }{\partial \theta^2}.
\]
The change of variable $\rho =\tanh r$ finishes the proof.
\end{proof}

\subsection{Joint density of the radial process and stochastic area}

In this section we first compute the joint density of $(r(t),\theta(t))_{t \ge 0}$. From Theorem \ref{diff-H}, we have for $t>0, r\ge 0, \, \theta \in \R$, 
\[
\mathbb{P} \left( r(t) \in dr, \theta(t) \in d\theta\right)=\frac{2\pi^n}{\Gamma(n)}p_{t/2}(r,\theta)(\sinh r)^{2n-1}\cosh r drd\theta,
\]
where $p_{t}(r,\theta)$ is the heat kernel, with respect to the measure $\frac{2\pi^n}{\Gamma(n)} (\sinh r)^{2n-1}\cosh r drd\theta $, of the operator
\[
L=\frac{\partial^2}{\partial r^2}+((2n-1)\coth r+\tanh r)\frac{\partial}{\partial r}+\tanh^2r\frac{\partial^2}{\partial \theta^2}.
\]
\begin{prop}\label{prop1hyp}
For $t>0$, $r\in[0,+\infty)$, $ \theta\in(-\infty,+\infty)$, the  heat kernel $p_t(r,\theta)$ is given by
\begin{equation}\label{eq8}
p_t(r, \theta)=\frac{1}{\sqrt{4\pi t}}\int_{-\infty}^{+\infty}e^{\frac{(y-i \theta)^2}{4t} }q_{t,2n+1}(\cosh r\cosh y)dy,
\end{equation}
where $q_{t,2n+1}$ is the heat kernel on the real hyperbolic space of dimension $2n+1$ (see Section \ref{section hyp Jacobi}).
In particular
\begin{equation}\label{eq-pt-Gr}
p_t(r, \theta)=\frac{\Gamma(n+1)e^{-n^2t+\frac{\pi^2}{4t}}}{(2\pi)^{n+2} t}\int_{-\infty}^{+\infty}\int_0^{+\infty}\frac{e^{\frac{(y-i\theta)^2-u^2}{4t}}\sinh u\sin\left(\frac{\pi u}{2t}\right)}{\left(\cosh u+\cosh r\cosh y\right)^{n+1}}du
dy.
\end{equation}
\end{prop}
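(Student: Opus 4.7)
The plan is to mimic closely the argument used in the sphere case (Proposition~\ref{prop1}), adapted to the hyperbolic/anti-de Sitter setting. I would define
\[
h_t(r,\theta) := \frac{1}{\sqrt{4\pi t}}\int_{-\infty}^{+\infty} e^{(y-i\theta)^2/(4t)}\, q_{t,2n+1}(\cosh r\cosh y)\,dy,
\]
and show that (i) $(\partial_t - L)\,h_t = 0$, where $L = L_0 + \tanh^2 r\,\partial_\theta^2$ with $L_0 := \partial_r^2 + \big((2n-1)\coth r + \tanh r\big)\partial_r$, and (ii) $h_t$ has the right initial behaviour, which will identify $h_t$ with the heat kernel $p_t$ of $L$ with respect to the measure $d\mu_r$.

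For step (i) the key observation is that the Gaussian factor $G_t(y,\theta) := e^{(y-i\theta)^2/(4t)}/\sqrt{4\pi t}$ obeys the \emph{anti}-heat equation $\partial_t G_t = -\partial_y^2 G_t$ together with the complex-shift identity $\partial_\theta^2 G_t = -\partial_y^2 G_t$; in particular $\partial_t G_t = \partial_\theta^2 G_t$. Differentiating $h_t$ under the integral, using the heat equation $\partial_t q_{t,2n+1}(\cosh\delta) = \Delta_{H^{2n+1}}\, q_{t,2n+1}(\cosh\delta)$ and integrating by parts twice in $y$ to transfer $\partial_y^2$ from $G_t$ onto $q_{t,2n+1}(\cosh r\cosh y)$, reduces the equation $(\partial_t - L)h_t = 0$ to the pointwise operator identity
\begin{equation}\label{eq-key-identity-proposal}
\Delta_{H^{2n+1}}^{\mathrm{rad}}F(\delta) \;=\; L_0 F(\delta) \;+\; \operatorname{sech}^2 r\;\partial_y^2 F(\delta),
\end{equation}
valid for any smooth $F$, where $\delta$ is the function of $(r,y)$ defined by $\cosh\delta = \cosh r \cosh y$. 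The verification of \eqref{eq-key-identity-proposal} is the heart of the matter: by the chain rule one has $\partial_r\delta = \sinh r\cosh y/\sinh\delta$ and $\partial_y\delta = \cosh r\sinh y/\sinh\delta$, and the algebraic identities $\cosh^2 r\cosh^2 y - 1 = \sinh^2\delta$ and $\sinh^2 r \cosh^2 y + \sinh^2 y = \sinh^2\delta$ collapse the $F''(\delta)$-coefficient to~$1$, while a further cancellation turns the $F'(\delta)$-coefficient into $2n\coth\delta$, which is exactly $\Delta_{H^{2n+1}}^{\mathrm{rad}}$. For the initial condition in (ii), I would follow the sphere-case strategy: test $h_t$ against $f(r,\theta) = e^{i\lambda\theta}g(r)$, carry out the $\theta$-integration first to produce the factor $e^{-t\lambda^2}$, reduce the remaining pairing to $(e^{t\Delta_{H^{2n+1}}}\tilde g)(0)$ for a suitable radial extension $\tilde g$ on $H^{2n+1}$, and let $t\to 0^+$.

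For the second formula \eqref{eq-pt-Gr}, I would substitute into \eqref{eq8} the explicit integral representation of the heat kernel of the real hyperbolic space $H^{2n+1}$ recalled in Appendix~3 (Chapter~\ref{sec-appendix-2}), of the form
\[
q_{t,2n+1}(\cosh\delta) \;=\; \frac{\Gamma(n+1)\,e^{-n^2 t + \pi^2/(4t)}}{2\pi^{3/2}(2\pi)^n \sqrt{t}}\int_0^{+\infty}\frac{\sinh u\;\sin(\pi u/(2t))\;e^{-u^2/(4t)}}{(\cosh u + \cosh\delta)^{n+1}}\,du,
\]
specialized to $\cosh\delta = \cosh r\cosh y$. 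Combining the two Gaussians $e^{(y-i\theta)^2/(4t)}$ and $e^{-u^2/(4t)}$ and collecting the constants then produces the double-integral expression \eqref{eq-pt-Gr}.

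The main obstacle is the mechanical but delicate verification of the operator identity \eqref{eq-key-identity-proposal}; once it is in hand, the remainder is essentially a sign-adjusted transcription of the sphere-case proof, the substantive difference being that the Gaussian $G_t$ now satisfies a backward (rather than forward) heat equation in $y$, so the sign conventions in the successive integrations by parts must be tracked carefully.
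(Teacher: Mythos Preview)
Your proposal is correct. The operator identity \eqref{eq-key-identity-proposal} is indeed the heart of the matter, and your tracking of the sign flip (the Gaussian $G_t$ satisfies the backward heat equation in $y$, and $\partial_\theta^2 G_t = -\partial_y^2 G_t$) is exactly what is needed so that the terms recombine into $L_0 + \operatorname{sech}^2 r\,\partial_y^2$ on the $q$-side.

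The paper, however, organizes the argument differently. Rather than transcribing the direct PDE verification from the sphere case, it first rewrites
\[
L = \square_{\mathbf{AdS}^{2n+1}(\mathbb{C})} + \frac{\partial^2}{\partial\theta^2},\qquad
\square_{\mathbf{AdS}^{2n+1}(\mathbb{C})} := \Delta_{\mathbb{C}H^n} - \frac{1}{\cosh^2 r}\frac{\partial^2}{\partial\theta^2},
\]
where $\square$ is the radial d'Alembertian of the Lorentzian anti-de Sitter metric. Since $\square$ and $\partial_\theta^2$ commute, one has $e^{tL} = e^{t\partial_\theta^2}e^{t\square}$, so $p_t$ is the Gaussian convolution in $\theta$ of the d'Alembertian heat kernel. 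The latter is then identified via the analytic continuation $\tau:(r,\theta)\mapsto(r,i\theta)$, under which $\square$ becomes $\Delta_{H^{2n+1}} = \Delta_{\mathbb{C}H^n} + \frac{1}{\cosh^2 r}\partial_\theta^2$, and your identity \eqref{eq-key-identity-proposal} (with $y$ in place of $\theta$) is precisely the statement that this last operator is the radial Laplacian on $H^{2n+1}$ in the coordinates $\cosh\delta = \cosh r\cosh y$. The Gruet substitution for the second formula is handled identically.

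So the two routes share the same core computation; yours is the more elementary, self-contained packaging, while the paper's makes explicit the Lorentzian geometry (the decomposition $L = \square + \Delta_{\mathcal{V}}$ is the indefinite analogue of $\Delta = \Delta_{\mathcal{H}} + \Delta_{\mathcal{V}}$) and explains conceptually why an analytic continuation in the fiber variable appears.
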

\begin{proof}
The following computations are based on geometric ideas that we will describe in a remark after the proof. We first decompose

\[
L=\frac{\partial^2}{\partial r^2}+((2n-1)\coth r+\tanh r)\frac{\partial}{\partial r}+\tanh^2r\frac{\partial^2}{\partial \theta^2}
\]

 as

\begin{equation*}
L = \Delta_{\mathbb C H^n}+\tanh^2 r \frac{\partial^2}{\partial \theta^2} ,
\end{equation*}

where 
\[
\Delta_{\mathbb C H^n}=\frac{\partial^2}{\partial r^2}+((2n-1)\coth r+\tanh r)\frac{\partial}{\partial r}
\]
 denotes the radial part of the Laplacian on the complex hyperbolic space $\mathbb C H^n$.   Note that we can also write

\[
L={\square}_{\mathbf{AdS}^{2n+1}(\mathbb{C})}+\frac{\partial^2}{\partial \theta^2},
\]
where
\[
{\square}_{\mathbf{AdS}^{2n+1}(\mathbb{C})}=\Delta_{\mathbb C H^n}-\frac{1}{\cosh^2 r}\frac{\partial^2}{\partial \theta^2}.
\]
Note that ${\square}_{\mathbf{AdS}^{2n+1}(\mathbb{C})}$ and $\frac{\partial^2}{\partial \theta^2}$ commute. Therefore
\begin{align*}
e^{tL}&=e^{t ({\square}_{\mathbf{AdS}^{2n+1}(\mathbb{C})}+\frac{\partial^2}{\partial \theta^2})}\\
 &=e^{t \frac{\partial^2}{\partial \theta^2} } e^{ t {\square}_{\mathbf{AdS}^{2n+1}(\mathbb{C})} }
\end{align*}
We deduce that the heat kernel of $L$ can be written as
\begin{align}\label{formula 1.1.1}
p_t(r,\theta)=\frac{1}{\sqrt{4\pi t}}\int_{-\infty}^{+\infty} e^{-\frac{(u-\theta)^2}{4t} }p_t^{{\square}_{\mathbf{AdS}^{2n+1}(\mathbb{C})}} (r,u)   du,
\end{align}
where $p_t^{{\square}_{\mathbf{AdS}^{2n+1}(\mathbb{C})}} (r,u) $ the heat kernel at $(0,0)$ of ${\square}_{\mathbf{AdS}^{2n+1}(\mathbb{C})}$ with respect to the measure 
 \[
 \frac{2\pi^n}{\Gamma(n)} (\sinh r)^{2n-1}\cosh r drd\theta.
 \]
 The idea to compute \eqref{formula 1.1.1} is to perform an analytic extension in the fiber variable. More precisely,   let us consider the analytic change of variables $\tau : (r,\theta) \to (r,i\theta)$  that will be applied on functions of the type $f(r,u)=g(r) e^{-i\lambda u}$, with $g$ smooth and compactly supported on $[0,+\infty)$ and $\lambda >0$. One sees that
\begin{align}\label{polinh23}
{\square}_{\mathbf{AdS}^{2n+1}(\mathbb{C})} (f\circ \tau)=(\Delta_{H^{2n+1}} f ) \circ \tau
\end{align}
where
\[
\Delta_{H^{2n+1}}=\Delta_{\mathbb C H^n}+\frac{1}{\cosh^2 r}\frac{\partial^2}{\partial \theta^2}
\]
With the change of variable $\cosh \delta= \cosh r \cosh \theta$, and after straightforward computations it holds that
\[
\Delta_{H^{2n+1}}=\frac{\partial^2}{\partial \delta^2}+2n\coth\delta\frac{\partial}{\partial \delta}.
\]

Thus $\Delta_{H^{2n+1}}$ is the radial part of the Laplacian on the real hyperbolic space of dimension $2n+1$. 
%
Since for every function $f$,
 \[
( e^{ t {\square}_{\mathbf{AdS}^{2n+1}(\mathbb{C})} } f ) (0,0)=  (e^{t\Delta_{H^{2n+1}} }(f\circ  \tau^{-1})) (0,0),
 \]
 one deduces  that for an exponential function $g$ depending only on $u$,
  \[
 \int_{-\infty}^{+\infty} g(u) p_t^{{\square}_{\mathbf{AdS}^{2n+1}(\mathbb{C})}} (r,u)   du=\int_{-\infty}^{+\infty} q_{t, 2n+1}(\cosh r \cosh u)g(-iu)  du.
 \]

 Therefore, coming back to \eqref{formula 1.1.1}, one infers that
 \[
 \frac{1}{\sqrt{4\pi t}}\int_{-\infty}^{+\infty} e^{-\frac{(u-\theta)^2}{4t} }p_t^{{\square}_{\mathbf{AdS}^{2n+1}(\mathbb{C})}} (r,u)   du=\frac{1}{\sqrt{4\pi t}}\int_{-\infty}^{+\infty}e^{\frac{(u-i \theta)^2}{4t} }q_{t,2n+1}(\cosh r\cosh u)du.
 \]
 Plugging Gruet's formula \eqref{GruetFor} (in Appendix 2) to the right hand side  of the last equality , we complete the proof. 
\end{proof}

\begin{remark}
The (hyperbolic) operator
\[
{\square}_{\mathbf{AdS}^{2n+1}(\mathbb{C})}=\Delta_{\mathbb C H^n}-\frac{1}{\cosh^2 r}\frac{\partial^2}{\partial \theta^2}.
\]
that appeared in the previous proof is the radial part of the d'Alembertian i.e. the Laplace-Beltrami operator on $\mathbf{AdS}^{2n+1}(\mathbb{C})$ equipped  with its standard Lorentz metric with signature $(2n,1)$. The decomposition 

\[
L={\square}_{\mathbf{AdS}^{2n+1}(\mathbb{C})}+\frac{\partial^2}{\partial \theta^2},
\]
is then the Lorentz analogue of the decomposition \eqref{decomposition laplace}.
\end{remark}

In the remainder of the section we give further results in the case $n=1$, showing how the integral representation of $p_t (r,\theta)$ can be used. Up to the end of the section we assume that $n=1$. In that case, note that $\mathbf{AdS}^{3}(\mathbb{C})$ is actually isometric to the Lie group $\mathbf{SL}(2,\R)$.

 Using the well-known formula for the heat kernel on the 3-dimensional real hyperbolic space, we deduce from Proposition \ref{prop1hyp} that  for $t>0, r>0 , \theta \in \R$ 
\begin{equation*}
p_t (r,\theta)=\frac{e^{-t}}{(4\pi t)^2} \int_{-\infty}^{+\infty} e^{- \frac{\arcosh^2(\cosh r \cosh y) -(y-i\theta)^2 } {4t}}
            \frac {\arcosh(\cosh r \cosh y)} {\sqrt{\cosh^2 r \cosh^2 y - 1}}dy
\end{equation*}

This formula can be used to obtain precise asymptotics  when $t \to 0$.

We start with the points of the form $(0,\theta), \theta \in \R$. For these points we have

$$p_t(0,\theta)=\frac{e^{-t}}{(4\pi t)^2} e^{-\frac{\theta^2}{4t}} \int_{-\infty}^{+\infty} e^{\frac{-iy\theta}{2t}} \frac{y}{\sinh y} dy. 
$$
 
A computation of the integral is possible using residue calculus and gives the following: 

\begin{proposition} For $ \theta \in \R$ and $t>0$,
$$p_t(0,\theta)=\frac{e^{-t}}{8 t^2} \frac{e^{-\frac{2 \pi |\theta| +  \theta^2}{4t}}} {\left(1+ e^{-\frac{ \pi |\theta|}{2t}} \right)^2} 
$$
therefore, for all $\theta\in \R$, when $t \rightarrow 0$, 
$$
p_t(0,\theta) \sim \frac{1}{8 t^2} e^{-\frac{2\pi |\theta| +\theta^2}{4t}}.
$$
 \end{proposition}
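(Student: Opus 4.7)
The starting point is the explicit integral representation
\[
p_t(0,\theta)=\frac{e^{-t}}{(4\pi t)^2}\, e^{-\theta^2/(4t)} \int_{-\infty}^{+\infty} e^{-iy\theta/(2t)}\, \frac{y}{\sinh y}\, dy,
\]
which is obtained immediately before the statement. Substituting $y\mapsto -y$ in Proposition \ref{prop1hyp} (with $n=1$, $r=0$) gives $p_t(0,-\theta)=p_t(0,\theta)$, so it suffices to treat $\theta\ge 0$; by continuity we may even assume $\theta>0$.

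The plan is to evaluate the integral above by residue calculus. The integrand is meromorphic with simple poles at $y=-ik\pi$, $k\in\mathbb Z\setminus\{0\}$, and the point $y=0$ is a removable singularity. Since $\theta>0$, the factor $e^{-iy\theta/(2t)}$ decays as $\operatorname{Im}(y)\to-\infty$, so we close the contour in the lower half-plane using the rectangle with vertices $\pm R$ and $\pm R-i(N+\tfrac12)\pi$. At each pole $y=-ik\pi$ inside this rectangle ($k=1,\dots,N$), the residue of $\frac{y\,e^{-iy\theta/(2t)}}{\sinh y}$ equals $\frac{-ik\pi\, e^{-k\pi\theta/(2t)}}{(-1)^k}$, since $\cosh(-ik\pi)=(-1)^k$.

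The key technical step is showing that the three non-real sides of the rectangle contribute nothing in the limit. Along the vertical sides $y=\pm R-is$, one has $|\sinh(\pm R-is)|^2=\sinh^2 R+\sin^2 s\ge \sinh^2 R$ and $|e^{-iy\theta/(2t)}|=e^{-s\theta/(2t)}\le 1$, so letting $R\to\infty$ with $N$ fixed kills those segments. Along the bottom side $y=x-i(N+\tfrac12)\pi$, the clever choice of height gives $|\sinh(x-i(N+\tfrac12)\pi)|=\cosh x$, so the bottom integral is bounded by $e^{-(N+1/2)\pi\theta/(2t)}\int_{\mathbb R}(|x|+(N+\tfrac12)\pi)/\cosh x\,dx$, which vanishes as $N\to\infty$. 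Consequently,
\[
\int_{-\infty}^{+\infty} e^{-iy\theta/(2t)}\,\frac{y}{\sinh y}\,dy \;=\; -2\pi i \sum_{k=1}^{\infty} \frac{-ik\pi\, e^{-k\pi\theta/(2t)}}{(-1)^k} \;=\; -2\pi^2\sum_{k=1}^{\infty}(-1)^k k\, e^{-k\pi\theta/(2t)}.
\]
Using the elementary identity $\sum_{k\ge 1}(-1)^k k\,x^k=-x/(1+x)^2$ with $x=e^{-\pi\theta/(2t)}$, the sum equals $\dfrac{2\pi^2\, e^{-\pi\theta/(2t)}}{(1+e^{-\pi\theta/(2t)})^2}$. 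Substituting back yields
\[
p_t(0,\theta) = \frac{e^{-t}}{8t^2}\cdot \frac{e^{-(2\pi\theta+\theta^2)/(4t)}}{\bigl(1+e^{-\pi\theta/(2t)}\bigr)^2},
\]
which is the claimed identity (extended to all $\theta\in\mathbb R$ by the symmetry noted above). The small-time asymptotic is then immediate: for $\theta\ne 0$ we have $e^{-\pi|\theta|/(2t)}\to 0$ and $e^{-t}\to 1$ as $t\to 0$, so the denominator tends to $1$.

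The main obstacle in this plan is not the residue bookkeeping but the contour estimates; the step $|\sinh(x-i(N+\tfrac12)\pi)|=\cosh x$ is what makes the rectangular scheme (rather than a semicircular one, which passes arbitrarily close to infinitely many poles) close cleanly.
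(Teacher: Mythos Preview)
Your proof is correct and follows exactly the approach the paper indicates: the paper simply states ``A computation of the integral is possible using residue calculus'' without giving any details, and you have carried out precisely that computation with a rectangular contour, correct residue bookkeeping, and clean estimates on the non-real sides. Your remark that the asymptotic requires $\theta\neq 0$ is accurate (at $\theta=0$ the exact formula gives $e^{-t}/(32t^2)$, not $1/(8t^2)$), which is a small imprecision in the paper's statement rather than in your argument.
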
  
%
%

\begin{proposition}
For $r >0$, when $ t \to 0$,
\[
p_t (r,0) \sim   \frac{1}{(4\pi t)^{\frac{3}{2}}} \frac{r}{\sinh r} \sqrt{\frac{1}{r \coth r -1} } e^{-\frac{r^2}{4t}}.
\]
\end{proposition}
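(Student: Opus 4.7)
The plan is to apply the Laplace method to the explicit integral representation for $p_t(r,0)$ displayed just before the statement. Setting $\theta=0$ in that formula (with $n=1$), we have
\[
p_t(r,0)=\frac{e^{-t}}{(4\pi t)^{2}}\int_{-\infty}^{+\infty} e^{-\phi(y)/(4t)}\,g(y)\,dy,
\]
where
\[
\phi(y)=\arcosh^{2}(\cosh r\cosh y)-y^{2},\qquad g(y)=\frac{\arcosh(\cosh r\cosh y)}{\sqrt{\cosh^{2}r\cosh^{2}y-1}}.
\]
Since $r>0$, one has $\cosh^{2}r\cosh^{2}y\ge\cosh^{2}r>1$ for all $y\in\mathbb R$, so $g$ is smooth on $\mathbb R$, and in particular $g(0)=r/\sinh r$.

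The first step is to analyze the phase $\phi$. Setting $u(y):=\arcosh(\cosh r\cosh y)$, the relation $\sinh u\cdot u'=\cosh r\sinh y$ gives $u(0)=r$, $u'(0)=0$, and differentiating once more yields $\sinh r\cdot u''(0)=\cosh r$, hence $u''(0)=\coth r$. From $\phi(y)=u(y)^{2}-y^{2}$ one gets $\phi(0)=r^{2}$, $\phi'(0)=0$, and
\[
\phi''(0)=2(u'(0))^{2}+2u(0)u''(0)-2=2(r\coth r-1),
\]
which is strictly positive for $r>0$. Thus $y=0$ is a non-degenerate critical point of $\phi$.

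The second step is to verify that $y=0$ is in fact the unique global minimum of $\phi$ and that the usual remainder estimates for the Laplace method hold. Because $\cosh r\cosh y\ge\cosh r\ge 1$, convexity of $\arcosh^{2}(\cosh r\cdot)$ at $y=0$ gives $\phi(y)\ge r^{2}$ for $y$ in any fixed compact neighbourhood of $0$, with equality only at $0$. For $|y|$ large, one has $\arcosh(\cosh r\cosh y)\sim |y|+\log\cosh r$, so $\phi(y)-r^{2}\to +\infty$, and the tails of the integral are exponentially negligible compared with $e^{-r^{2}/(4t)}$. Applying the standard Laplace formula,
\[
\int_{-\infty}^{+\infty}e^{-\phi(y)/(4t)}\,g(y)\,dy\;\sim\; \sqrt{\frac{8\pi t}{\phi''(0)}}\,g(0)\,e^{-\phi(0)/(4t)}=\sqrt{\frac{4\pi t}{r\coth r-1}}\,\frac{r}{\sinh r}\,e^{-r^{2}/(4t)}.
\]

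Finally, combining this with the prefactor $e^{-t}/(4\pi t)^{2}$ and noting $e^{-t}\to1$ as $t\to0$ yields
\[
p_t(r,0)\sim \frac{1}{(4\pi t)^{3/2}}\frac{r}{\sinh r}\sqrt{\frac{1}{r\coth r-1}}\;e^{-r^{2}/(4t)},
\]
as required. The only nontrivial point is the verification that $y=0$ is the global minimum of $\phi$ and that the tails are uniformly controlled; the local computation of $\phi(0)$ and $\phi''(0)$, which gives the precise constants, is straightforward as above.
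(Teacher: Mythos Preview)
Your proof is correct and follows essentially the same approach as the paper: both apply the Laplace method to the integral representation of $p_t(r,0)$, identifying $y=0$ as the non-degenerate minimum of the phase $\phi(y)=\arcosh^2(\cosh r\cosh y)-y^2$ with $\phi(0)=r^2$ and $\phi''(0)=2(r\coth r-1)$. Your write-up is in fact more explicit than the paper's on the derivative computations and the tail control, and you correctly flag the global-minimum verification as the only nontrivial point.
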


\begin{proof}
We have for $r>0$
$$p_t(r,0)= \frac{e^{-t}}{(4\pi t)^2} \int_{-\infty}^{+\infty} e^{- \frac{\arcosh^2(\cosh r \cosh y) -y^2 } {4t}}
            \frac {\arcosh(\cosh r \cosh y)} {\sqrt{\cosh^2 r \cosh^2 y - 1}}dy
$$

We now analyze the  above integral in small times thanks to the Laplace method.

On $\R$, 
 the function
\[
f(y)=\arcosh (\cosh r \cosh y))^2 -y^2
\]
has a unique minimum which is attained at $y=0$ and is equal to $r^2$, at this point:
$$
f''(0)=2(r \coth r -1). 
$$
The result follows by the Laplace method.
\end{proof} 
 
The previous proposition can be extended by the same method when $\theta \neq 0$. 
Let $r>0, \theta \in \R$ and consider the function
\[
f(y)=  (\arcosh (\cosh r \cosh y))^2-(y-iz)^2.
\]
This function is well defined and holomorphic on the strip $|\mathrm{Im} (y) |  < \mathrm{arcos} \left(\frac{ - 1}{\cosh r}\right)$ 
and it has for all $r>0, \theta \in \R$ a critical point at $iz (r,\theta)$ where $
z (r,\theta)$ is the unique solution in $(-\mathrm{arcos} \left(\frac{ - 1}{\cosh r}\right),\mathrm{arcos} \left(\frac{ - 1}{\cosh r}\right))$ to the equation:
\[
z (r,\theta)-\theta=\cosh r \sin z (r,\theta)\frac{ \arcosh  (\cosh r \cos z (r,\theta) ) }{\sqrt{\cosh^2 r \cos^2 z (r,\theta) -1}}.
\]

Indeed the function $\theta \rightarrow  \cosh r \sin z (r,\theta)\frac{ \arcosh  (\cosh r \cos z (r,\theta) ) }{\sqrt{\cosh^2 r \cos^2 z (r,\theta) -1}}$ is continuous, strictly increasing  from $-\infty$ to $\infty$ and with a derivative greater than $1$.

At the critical point, $f''(iz (r,\theta))$ is a positive and real number
$$f''(iz (r,\theta))= 2 \frac{\sinh^2 r}{u(r,\theta)^2-1} \left[ \frac{u(r,\theta) \arcosh u(r,\theta)} {\sqrt {u(r,\theta)^2-1}}-1\right]$$

with $u(r,\theta)= \cosh r \cos z (r,\theta)$ since $u>-1$.

By the same method as in the previous proposition, we therefore obtain:
\begin{proposition}
Let $r>0, \theta \in \R$. When $t \to 0$,

$$p_t(r,\theta) \sim \frac{1}{\sinh r} \frac{\mathrm{arcosh }  \, u(r,\theta)}{\sqrt{   \frac{u(r,\theta) \mathrm{arcosh }\, u(r,\theta)}{\sqrt{u^2(r,z)-1} }-1}} \frac{e^{-\frac{(z (r,\theta)-\theta)^2 \tanh^2 r}{4t \sin^2 z (r,\theta)}}}{(4\pi t)^{\frac{3}{2}}}
$$
with $u(r,\theta)= \cosh r \cos z (r,\theta)$.
\end{proposition}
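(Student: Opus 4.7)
The plan is to obtain the claimed asymptotic by applying the complex Laplace (saddle-point) method to the explicit integral representation
\[
p_t(r,\theta)=\frac{e^{-t}}{(4\pi t)^2}\int_{-\infty}^{+\infty} e^{-\frac{f(y)}{4t}}\,\frac{\arcosh(\cosh r\cosh y)}{\sqrt{\cosh^2 r\cosh^2 y-1}}\,dy,
\]
where $f(y):=(\arcosh(\cosh r\cosh y))^2-(y-i\theta)^2$. I will exploit the fact, already observed in the text, that $f$ extends holomorphically to the strip $|\mathrm{Im}(y)|<\arccos(-1/\cosh r)$ and admits a unique critical point on the imaginary segment of this strip at $y_0=iz(r,\theta)$, with
\[
f''(iz(r,\theta))=2\,\frac{\sinh^2 r}{u(r,\theta)^2-1}\left[\frac{u(r,\theta)\arcosh u(r,\theta)}{\sqrt{u(r,\theta)^2-1}}-1\right]>0.
\]

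First I would shift the contour of integration from $\mathbb R$ to the horizontal line $\mathrm{Im}(y)=z(r,\theta)$, justified by holomorphicity of the integrand in the above strip and by bounds on $|f|$ as $\mathrm{Re}(y)\to\pm\infty$ that guarantee the lateral pieces vanish. Second, on this deformed contour the function $\mathrm{Re}(f)$ attains its global minimum at $y_0=iz(r,\theta)$, so the standard Laplace method applies: parametrize $y=iz(r,\theta)+s$ with $s\in\mathbb R$, Taylor-expand $f$ to second order, and obtain the Gaussian contribution $\sqrt{8\pi t/f''(iz(r,\theta))}\,(1+o(1))$, while the smooth amplitude $\arcosh(\cosh r\cosh y)/\sqrt{\cosh^2 r\cosh^2 y-1}$ is evaluated at $y_0$, producing $\arcosh u(r,\theta)/\sqrt{u(r,\theta)^2-1}$.

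Third, I would simplify the value of $f$ at the saddle. Since $\cosh(iz)=\cos z$, one has $f(iz)=(\arcosh u)^2+(z-\theta)^2$; plugging the critical-point identity $z-\theta=\cosh r\sin z\cdot\arcosh u/\sqrt{u^2-1}$ into $(\arcosh u)^2$ collapses the expression to $(z-\theta)^2(\sinh^2 r)/(\cosh^2 r\sin^2 z)=(z-\theta)^2\tanh^2 r/\sin^2 z$, which is exactly the exponent appearing in the statement. Combining the Gaussian prefactor $\sqrt{8\pi t/f''(iz)}$ with the amplitude, the overall factor $e^{-t}/(4\pi t)^2$, and collecting $(4\pi t)^{3/2}$ yields the announced formula, noting that the factor $\sqrt{u^2-1}$ coming from the second derivative cancels against the same factor in the amplitude.

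The main technical obstacle is the justification of the contour shift and the control of the remainder terms in the Laplace expansion uniformly in a neighborhood of $(r,\theta)$. Showing that $\mathrm{Re}\,f(y)$ is strictly increasing along the shifted contour as $|\mathrm{Re}(y)|\to\infty$ (so that the integral outside a small window around the saddle is exponentially negligible with respect to $e^{-f(iz(r,\theta))/4t}$) requires a careful analysis of $\arcosh(\cosh r\cosh(s+iz(r,\theta)))$ as a function of $s\in\mathbb R$; this is the only delicate step, since the rest of the argument is a routine application of steepest descent once the geometry of the critical point is understood.
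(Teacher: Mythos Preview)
Your proposal is correct and follows the same approach as the paper. The paper itself only sketches the argument, stating that the result follows ``by the same method as in the previous proposition'' after recording the holomorphic extension of $f$, the location of the critical point $iz(r,\theta)$, and the positivity of $f''(iz(r,\theta))$; you have filled in the details the paper leaves implicit, including the contour shift, the Laplace expansion at the saddle, and the algebraic simplification of $f(iz(r,\theta))$ to $(z-\theta)^2\tanh^2 r/\sin^2 z$ via the critical-point equation.
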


We refer to Bonnefont \cite{Bonnefont2} for further results in the case $n=1$ and to Wang \cite{Wang2} for the study of the small time asymptotics in the case $n \ge 2$.

\subsection{Horizontal heat kernel}

Because of the $2\pi$ periodic condition on the operator $\frac{\partial}{\partial \theta}$ which was not used in the previous computations,  let us note that $p_t(r,\theta)$ is not the heat kernel of the horizontal Brownian motion 
 \[
 Y(t)=\frac{e^{i\theta(t)} }{\sqrt{1-|w(t)|^2}} \left( w(t),1\right), \quad t \ge 0.
 \]
on $\mathbf{AdS}^{2n+1}(\mathbb{C})$, it is instead  the heat kernel of the horizontal Brownian motion on the universal cover of $\mathbf{AdS}^{2n+1}(\mathbb{C})$. One can  obtain the formula for the horizontal heat kernel on $\mathbf{AdS}^{2n+1}(\mathbb{C})$ by wrapping the formula for $p_t(r,\theta)$.

Indeed, we have that
\begin{align*}
 \mathbb{E}( f(Y(t)))& =\mathbb{E} \left[ f \left( \frac{e^{i\theta(t)} }{\sqrt{1-|w(t)|^2}} \left( w(t),1\right) \right) \right] \\
  & = \int_{\mathbb{C}H^n} \int_{-\infty}^{+\infty} f \left( \frac{e^{i\theta } }{\sqrt{1-|w|^2}} \left( w,1\right) \right) p_{t/2} (\arctan |w|, \theta ) d \theta d \mu (w)  \\
  & = \sum_{k \in \mathbb{Z}} \int_{\mathbb{C}H^n} \int_{2k\pi}^{(2k+2)\pi} f \left( \frac{e^{i\theta } }{\sqrt{1-|w|^2}} \left( w,1\right) \right) p_{t/2} (\arctan |w|, \theta )d \theta d \mu (w) \\
  & =  \int_{\mathbb{C}H^n} \int_{0}^{2\pi} f \left( \frac{e^{i\theta } }{\sqrt{1-|w|^2}} \left( w,1\right) \right)\left( \sum_{k \in \mathbb{Z}} p_{t/2} (\arctan |w|, \theta +2k\pi ) \right)d \theta d \mu (w)
 \end{align*}
 where $\mu$ is the Riemannian volume measure on $\mathbb{C}H^n$. We therefore deduce
 
 \begin{prop}\label{prop135}
For $t>0$, $r\in[0,+\infty)$, $ \theta\in [0,2\pi]$, the horizontal  heat kernel is given by
\begin{equation}\label{eq83}
p_t(r, \theta)=\frac{1}{\sqrt{4\pi t}} \sum_{k \in \mathbb Z} \int_{-\infty}^{+\infty} e^{\frac{(y-i \theta-2ik \pi)^2}{4t} }q_{t,2n+1}(\cosh r\cosh y)dy.
\end{equation}
\end{prop}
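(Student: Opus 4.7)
The plan is to derive Proposition \ref{prop135} by a periodization (``wrapping'') argument, using Proposition \ref{prop1hyp} as the main input. The key observation is that the stochastic area process $\theta(t)$ constructed in Definition \ref{stochastic area hyperbolic} is a genuine real-valued process, whereas the fiber coordinate $\theta$ on $\mathbf{AdS}^{2n+1}(\mathbb{C})$ lives in $\mathbb{R}/2\pi\mathbb{Z}$. Accordingly, the density $p_t(r,\theta)$ of $(r(t),\theta(t))$ produced by Proposition \ref{prop1hyp} should be interpreted as the horizontal heat kernel on the universal cover of $\mathbf{AdS}^{2n+1}(\mathbb{C})$, and the true heat kernel on $\mathbf{AdS}^{2n+1}(\mathbb{C})$ itself is obtained by summing translates.

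First I would fix a bounded Borel test function $f$ on $\mathbf{AdS}^{2n+1}(\mathbb{C})$ and, using Theorem \ref{FGHT} together with Theorem \ref{diff-H}, write
\[
\mathbb{E}(f(Y(t))) = \int_{\mathbb{C}H^n}\int_{-\infty}^{+\infty} f\!\left(\tfrac{e^{i\theta}}{\sqrt{1-|w|^2}}(w,1)\right) \widetilde{p}_{t/2}(\mathrm{arctanh}\,|w|,\theta)\, d\theta\, d\mu(w),
\]
where $\widetilde{p}_t$ denotes the density of $(r(t),\theta(t))$ as a process on $[0,\infty)\times\mathbb{R}$, which is precisely the kernel given by Proposition \ref{prop1hyp}. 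Second, since the map $(w,\theta)\mapsto \tfrac{e^{i\theta}}{\sqrt{1-|w|^2}}(w,1)$ is $2\pi$-periodic in $\theta$, I would split the $\theta$-integral as $\sum_{k\in\mathbb{Z}}\int_{2k\pi}^{(2k+2)\pi}$ and perform the change of variables $\theta\mapsto\theta+2k\pi$ on each summand, yielding
\[
\mathbb{E}(f(Y(t))) = \int_{\mathbb{C}H^n}\int_0^{2\pi} f\!\left(\tfrac{e^{i\theta}}{\sqrt{1-|w|^2}}(w,1)\right)\!\Bigl(\sum_{k\in\mathbb{Z}} \widetilde{p}_{t/2}(\mathrm{arctanh}\,|w|,\theta+2k\pi)\Bigr) d\theta\, d\mu(w).
\]
By definition of the horizontal heat kernel on $\mathbf{AdS}^{2n+1}(\mathbb{C})$, I can read off
\[
p_t(r,\theta)=\sum_{k\in\mathbb{Z}}\widetilde{p}_t(r,\theta+2k\pi),
\]
and then substitute the Gaussian-times-Bessel representation \eqref{eq8} of Proposition \ref{prop1hyp} for each term. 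Recognizing that $e^{(y-i(\theta+2k\pi))^2/(4t)}$ is exactly the shifted Gaussian $e^{(y-i\theta-2ik\pi)^2/(4t)}$ appearing in \eqref{eq83} completes the identification.

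The only point requiring some care is the interchange of summation and integration: I would justify it via the rapid decay of $q_{t,2n+1}$ together with the Gaussian decay $e^{-(2k\pi-\theta)^2/(4t)}\cdot e^{y^2/(4t)}$ of the shifted kernels (using that $q_{t,2n+1}(\cosh r\cosh y)$ decays super-exponentially in $|y|$ for fixed $r$), which dominates the $k$-sum uniformly on compact sets of $(r,\theta)$ and allows Fubini's theorem. This will be the main --- and essentially the only --- nontrivial step; everything else is bookkeeping between coordinates on $\mathbf{AdS}^{2n+1}(\mathbb{C})$ and its universal cover.
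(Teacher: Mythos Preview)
Your proposal is correct and follows essentially the same periodization (wrapping) argument as the paper: express $\mathbb{E}(f(Y(t)))$ as an integral over $\mathbb{C}H^n\times\mathbb{R}$ via Theorems \ref{FGHT} and \ref{diff-H}, split the $\theta$-integral into intervals of length $2\pi$, use the $2\pi$-periodicity of the parametrization, and then substitute \eqref{eq8}. The paper does not explicitly address the Fubini step you flag, so your justification there is a small addition rather than a deviation.
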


\subsection{Characteristic function of the stochastic area and limit theorem}

In this section we study the characteristic function of the stochastic area $\theta(t)$.
Let
\[
\mathcal{L}^{\alpha,\beta}=\frac{1}{2} \frac{\partial^2}{\partial r^2}+\left(\left(\alpha+\frac{1}{2}\right)\coth r+\left(\beta+\frac{1}{2}\right) \tanh r\right)\frac{\partial}{\partial r}, \quad \alpha,\beta >-1
\]
be the hyperbolic Jacobi generator. We will denote by $q_t^{\alpha,\beta}(r_0,r)$ the transition density with respect to the Lebesgue measure of the diffusion with generator $\mathcal{L}^{\alpha,\beta}$.

Let $\lambda \ge 0$, $r \in [0,+\infty)$ and 
\[
I(\lambda,r)=\mathbb{E}\left(e^{i \lambda \theta(t)}\mid r(t)=r\right).
\]
From Theorem \ref{diff-H},  we have
\begin{align*}
I(\lambda,r)& =\mathbb{E}\left(e^{i \lambda B\left({\int_0^t \tanh^2 r(s)ds}\right)}\mid r(t)=r\right) \\
 &=\mathbb{E}\left(e^{- \frac{\lambda^2}{2} \int_0^t \tanh^2 r(s)ds}\mid r(t)=r\right) 
\end{align*}
and $r$ is a diffusion with the  hyperbolic Jacobi generator
\[
\mathcal{L}^{n-1,0}=\frac{1}{2} \left( \frac{\partial^2}{\partial r^2}+((2n-1)\coth r+\tanh r)\frac{\partial}{\partial r}\right)
\]
started at $0$. 

\begin{theorem}\label{FTCHn}
For $\lambda \ge 0$, $r \in [0,+\infty)$, and $t >0$
\[
\mathbb{E}\left(e^{i \lambda \theta(t)}\mid r(t)=r\right)=\mathbb{E}\left(e^{- \frac{\lambda^2}{2} \int_0^t \tanh^2 r(s)ds}\mid r(t)=r\right) =\frac{e^{n \lambda t}}{(\cosh r)^\lambda} \frac{q_t^{n-1,\lambda}(0,r)}{q_t^{n-1,0}(0,r)}
\]
\end{theorem}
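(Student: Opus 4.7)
The strategy is to mirror closely the proof of Theorem \ref{FThj} (the $\mathbb{C}P^n$ case) using Yor's transform method, with the cosine replaced by hyperbolic cosine and the Jacobi diffusion replaced by its hyperbolic counterpart. From Theorem \ref{diff-H}, the radial process $r$ satisfies, for some standard Brownian motion $\gamma$,
\[
dr(t)=\tfrac{1}{2}\bigl((2n-1)\coth r(t)+\tanh r(t)\bigr)\,dt+d\gamma(t),
\]
and conditioning on $r(t)=r$ the stochastic area $\theta(t)$ is a time-changed independent Brownian motion, so the conditional characteristic function equals the conditional Laplace transform $\mathbb{E}\bigl(e^{-\tfrac{\lambda^2}{2}\int_0^t\tanh^2 r(s)\,ds}\mid r(t)=r\bigr)$, establishing the first equality of the theorem.

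For the second equality, I would introduce the exponential local martingale
\[
D_t^{\lambda}=\exp\!\left(\lambda\int_0^t\tanh r(s)\,d\gamma(s)-\frac{\lambda^2}{2}\int_0^t\tanh^2 r(s)\,ds\right).
\]
Applying Itô's formula to $\ln\cosh r(t)$ gives the clean identity $d(\ln\cosh r(t))=n\,dt+\tanh r(t)\,d\gamma(t)$ (the hyperbolic cancellation making the drift constant is the counterpart of the spherical computation that produced $-n$ in Theorem \ref{FThj}), so that
\[
\int_0^t\tanh r(s)\,d\gamma(s)=\ln\cosh r(t)-nt,
\]
and hence
\[
D_t^{\lambda}=e^{-n\lambda t}(\cosh r(t))^{\lambda}\,e^{-\tfrac{\lambda^2}{2}\int_0^t\tanh^2 r(s)\,ds}.
\]

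Then I would verify that $D_t^{\lambda}$ is a genuine martingale. This is the one place where the hyperbolic case differs significantly from the compact case: since $\cosh r(t)$ is unbounded, we lose the easy pointwise bound $D_t\le e^{n\lambda t}$ that was available on $\mathbb{C}P^n$. The substitute is Novikov's criterion, which is immediate here because $|\tanh r|\le 1$ gives $\mathbb{E}\bigl[\exp(\tfrac{\lambda^2}{2}\int_0^t\tanh^2 r(s)\,ds)\bigr]\le e^{\lambda^2 t/2}<\infty$ for every finite $t$. Once $D_t^\lambda$ is a true martingale, define $\mathbb{P}^{\lambda}_{/\mathcal{F}_t}=D_t^{\lambda}\,\mathbb{P}_{/\mathcal{F}_t}$; by Girsanov, $\beta(t)=\gamma(t)-\lambda\int_0^t\tanh r(s)\,ds$ is a $\mathbb{P}^\lambda$-Brownian motion and
\[
dr(t)=\tfrac{1}{2}\bigl((2n-1)\coth r(t)+(2\lambda+1)\tanh r(t)\bigr)\,dt+d\beta(t),
\]
so under $\mathbb{P}^\lambda$ the process $r$ is the hyperbolic Jacobi diffusion with generator $\mathcal{L}^{n-1,\lambda}$, whose transition density from $0$ is $q_t^{n-1,\lambda}(0,\cdot)$.

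The conclusion then follows from a standard change-of-measure computation: for any bounded Borel $f$,
\[
\mathbb{E}\!\left(f(r(t))\,e^{-\tfrac{\lambda^2}{2}\int_0^t\tanh^2 r(s)\,ds}\right)=e^{n\lambda t}\,\mathbb{E}^{\lambda}\!\left(\frac{f(r(t))}{(\cosh r(t))^{\lambda}}\right)=e^{n\lambda t}\!\int_0^{\infty}\!\frac{f(r)}{(\cosh r)^{\lambda}}\,q_t^{n-1,\lambda}(0,r)\,dr,
\]
and comparing with the expression obtained by conditioning on $r(t)=r$ under $\mathbb{P}$ (using the density $q_t^{n-1,0}(0,r)$) yields the claimed ratio formula. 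The only real obstacle, as noted, is the martingale property of $D_t^\lambda$; everything else is a routine transcription of the proof of Theorem \ref{FThj} with trigonometric functions replaced by hyperbolic ones.
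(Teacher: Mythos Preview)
Your proof is correct and follows the same Girsanov/Yor-transform structure as the paper's own proof. The one noteworthy difference is the verification that $D_t^\lambda$ is a true martingale: the paper does not use Novikov but instead bounds $D_t^\lambda\le(\cosh r(t))^\lambda$, compares $r(t)$ to the radial part of a $(2n{+}1)$-dimensional real hyperbolic Brownian motion, and appeals to the known finiteness of all moments of $\cosh h(t)$ to conclude $\mathbb{E}[D_t^p]<\infty$ for every $p>1$. Your Novikov argument (using $|\tanh r|\le 1$) is simpler and more direct here, while the paper's comparison approach has the advantage of yielding moment bounds on $\cosh r(t)$ that could be reused elsewhere; either is perfectly adequate for the theorem as stated.
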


\begin{proof}
The proof follows the same lines as in Theorem \ref{FThj}. We have
\begin{equation}\label{eq-sde-r}
dr(t)= \frac{1}{2} \left( (2n-1)\coth r(t)+\tanh r(t) \right)dt+d\gamma(t),
\end{equation}
where $\gamma$ is a standard Brownian motion. For later use, we observe here that it implies that we almost surely have
\begin{align}\label{transient}
r(t) \ge \left( n -\frac{1}{2} \right) t +\gamma(t),
\end{align}
and thus $r(t)\to +\infty$ almost surely when $t \to \infty$.
Consider now the local martingale
\begin{align*}
D_t& =\exp \left( \lambda \int_0^t \tanh r(s) d\gamma(s) -\frac{\lambda^2}{2}  \int_0^t \tanh^2 r(s) ds \right)  \\
 &=\exp \left( \lambda \int_0^t \tanh r(s) dr(s)-\frac{\lambda}{2}(2n-1)t -\frac{\lambda +\lambda^2}{2}  \int_0^t \tanh^2 r(s) ds \right). 
\end{align*}
From It\^o's formula, we have
\begin{align*}
\ln \cosh r(t) & =\int_0^t \tanh r(s) dr(s)+\frac{1}{2} \int_0^t \frac{ds}{\cosh^2 r(s)} \\
 &=\int_0^t \tanh r(s) dr(s)-\frac{1}{2} \int_0^t \tanh^2 r(s) ds+\frac{1}{2} t.
\end{align*}
As a consequence, we deduce that
\[
D_t =e^{-n\lambda t} (\cosh r(t))^\lambda e^{- \frac{\lambda^2}{2} \int_0^t \tanh^2 r(s)ds}.
\]
We claim that $D_t$ is martingale. To see this we just need to show that $D_t$ is uniformly integrable, which will be implied by the fact that for all $1<p<+\infty$,
\[
\mathbb E\left(\sup_{0\le s\le t} (D_s)^p\right)<+\infty.
\]
By Doob's maximal inequality, we just need to show $\mathbb E(D_t^p)<+\infty$ for all $1<p<+\infty$. This will follow from
$D_t\le (\cosh r(t))^\lambda $.  Indeed, from the comparison principle for stochastic differential equations, one has

\[
r(t) \le h(t),
\]
where $h$ is the radial part of the Brownian motion in the $(2n+1)$-real hyperbolic space:
\[
h(t)=n \int_0^t \coth h(s) ds +\gamma(t). 
\]
The distribution of the random variable $\cosh h(t)$ is well understood  and has moments of any order, as such so does $D_t$.

  Let us denote by $\mathcal{F}$ the natural filtration of $r$ and consider the probability measure $\mathbb{P}^\lambda$ defined by
\[
\mathbb{P}_{/ \mathcal{F}_t} ^\lambda=D_t \mathbb{P}_{/ \mathcal{F}_t}=e^{-n\lambda t} (\cosh r(t))^\lambda e^{- \frac{\lambda^2}{2} \int_0^t \tanh^2 r(s)ds} \mathbb{P}_{/ \mathcal{F}_t}.
\]
We have for every bounded and Borel function $f$ on $[0,+\infty]$,
\begin{align}\label{eq-adsc-chara-1}
\mathbb{E}\left(f(r(t))e^{- \frac{\lambda^2}{2} \int_0^t \tanh^2 r(s)ds}\right)=e^{n\lambda t} \mathbb{E}^\lambda \left( \frac{f(r(t))}{(\cosh r(t))^\lambda} \right).
\end{align}
From Girsanov theorem, the process
\[
\beta(t)=\gamma(t)-\lambda \int_0^t \tanh r(s) ds
\]
is a Brownian motion under the probability $\mathbb{P}^\lambda$. The proof is complete by plugging 
\begin{equation}\label{eq-rt-H}
dr(t)= \frac{1}{2} \left( (2n-1)\coth r(t)+(2\lambda +1)\tanh r(t) \right)dt+d\beta(t),
\end{equation}
in \eqref{eq-adsc-chara-1}.
\end{proof}

As an immediate consequence of Theorem \ref{FTCHn}, we deduce an expression for  the characteristic function of the stochastic area process.
\begin{corollary}\label{FTh}
For $\lambda \in \mathbb{R}$ and $t \ge 0$,
\[
\mathbb{E}\left(e^{i \lambda \theta(t)}\right)=e^{n | \lambda | t}\int_0^{+\infty}  \frac{q_t^{n-1,| \lambda |}(0,r)}{(\cosh r)^{| \lambda|}} dr. 
\]
\end{corollary}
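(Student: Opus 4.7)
The plan is to derive the (unconditional) characteristic function of $\theta(t)$ from the conditional one established in Theorem \ref{FTCHn} by integrating against the density of $r(t)$, together with a simple symmetry argument to handle negative values of $\lambda$.

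First I would observe that the density of $r(t)$ with respect to Lebesgue measure on $[0,+\infty)$ is exactly $q_t^{n-1,0}(0,r)$. This is immediate from the fact, established earlier, that $r(t)=\mathrm{arctanh}|w(t)|$ is the hyperbolic Jacobi diffusion started at $0$ with generator $\mathcal{L}^{n-1,0}$. Then, for $\lambda\ge 0$, by conditioning on $r(t)$ and applying Theorem \ref{FTCHn},
\[
\mathbb{E}\bigl(e^{i\lambda\theta(t)}\bigr)
=\int_0^{+\infty}\mathbb{E}\bigl(e^{i\lambda\theta(t)}\mid r(t)=r\bigr)\,q_t^{n-1,0}(0,r)\,dr
=\int_0^{+\infty}\frac{e^{n\lambda t}}{(\cosh r)^\lambda}\,\frac{q_t^{n-1,\lambda}(0,r)}{q_t^{n-1,0}(0,r)}\,q_t^{n-1,0}(0,r)\,dr,
\]
and the $q_t^{n-1,0}(0,r)$ factors cancel, leaving the desired formula for nonnegative $\lambda$.

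Next I would handle the case $\lambda<0$ using a symmetry argument. The unitary invariance of the Brownian motion on $\mathbb{C}H^n$ (which follows from the fact that $\mathbf{U}(n)$ acts isometrically on $\mathbb{C}H^n$ fixing $0$) implies that for any scalar phase $e^{i\varphi}$, the process $(e^{i\varphi}w(t))_{t\ge 0}$ has the same law as $(w(t))_{t\ge 0}$. In particular, taking $e^{i\varphi}=-1$, the form $\alpha=\frac{i}{2(1-|w|^2)}\sum_j(w_j\,d\overline{w}_j-\overline{w}_j\,dw_j)$ is invariant under $w\mapsto -w$, but more usefully, the substitution $w\mapsto \overline{w}$ is a diffeomorphism preserving the law of $w(t)$ (since the Bergman metric is real) and sends $\alpha$ to $-\alpha$. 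Consequently $\theta(t)$ and $-\theta(t)$ have the same distribution, so the characteristic function is even in $\lambda$, and we may replace $\lambda$ by $|\lambda|$ in the formula.

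There is essentially no obstacle here: the only thing to be a little careful about is justifying that the symmetry argument applies (any of several equivalent symmetries of $\mathbb{C}H^n$ will do), and noting that Theorem \ref{FTCHn} was established under the assumption $\lambda\ge 0$, which is why the absolute value appears in the final statement. The integrability needed for the tower property is automatic because $|e^{i\lambda\theta(t)}|=1$.
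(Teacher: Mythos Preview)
Your proof is correct and follows exactly the approach the paper intends (it simply says the corollary is ``an immediate consequence of Theorem~\ref{FTCHn}''): integrate the conditional formula against the density $q_t^{n-1,0}(0,r)$ of $r(t)$, and use the evenness in $\lambda$. Your conjugation argument for the symmetry is fine, though note that the same conclusion follows more directly from the representation $\theta(t)\overset{d}{=}B\bigl(\int_0^t\tanh^2 r(s)\,ds\bigr)$ in Theorem~\ref{diff-H} (or equivalently from the first equality in Theorem~\ref{FTCHn}, which shows the conditional characteristic function depends only on $\lambda^2$).
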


The density of $\theta(t)$ was determined in \cite{Dem-Windings} and its derivation relies on an intertwining relation between the generator of the diffusion $(r(t))_{t \geq 0}$ under $\mathbb{P}^{\lambda}$ and the radial part of the Maass Laplacian of the hyperbolic complex ball. More precisely, if 
\begin{equation*}
\mathcal{L}^{(n-1, |\lambda|)} = \frac{1}{2} \left(\partial_r^2 + ((2n-1)\coth r + (2|\lambda| + 1)\tanh(r)) \partial_r\right), \quad r \geq 0,
\end{equation*}
then the following intertwining relation holds:
 
\begin{equation}\label{Inter}
 \frac{1}{\cosh^{|\lambda|}(r)} L^{(n-1, |\lambda|)}(\cdot) (r) = \left[\mathcal{L}^{(n-1, |\lambda|)}+ \frac{\rho^2}{2}\right] \left(\frac{1}{\cosh^{|\lambda|}} \cdot\right)(r), 
\end{equation}
where
\begin{equation*}
2L^{(n-1, |\lambda|)} = \partial_r^2 + ((2n-1)\coth r + \tanh(r)) \partial_r + \frac{\lambda^2}{\cosh^2(r)} + n^2
\end{equation*}
is the radial part of the (shifted) generalized Maass Laplacian:
\begin{equation*}
D^{(n,|\lambda|/2)} + n^2 := 4(1-|w|^2)\left\{ \sum_{i,j=1}^n(\delta_{ij}- w_iw_j) \partial_{w_i\overline{w}_j}^2 +\frac{|\lambda|}{2}\sum_{i=1}^n(w_i\partial_{w_i} - \overline{w_i}\overline{\partial_{w_i}}) + \frac{\lambda^2}{4}\right\} + n^2. 
 \end{equation*}
The heat kernel of $L^{(n-1, |\lambda|)}$ was derived in \cite{Aya-Int} and leads after some computations to the following result.

\begin{theorem}
Let $s_{t,2n+1}$ be the heat kernel (with respect to the volume measure) of the $2n+1$-dimension real hyperbolic space $H^{2n+1}$:
\begin{align*}
s_{t,2n+1}(\cosh(x)) & = 
\frac{e^{-n^2t/2}}{(2\pi)^n\sqrt{2\pi t}}\left(-\frac{1}{\sinh(x)}\frac{d}{dx}\right)^ne^{-x^2/(2t)}, 
\quad x > 0. 
\end{align*}
Then the density $f_t$ of $\theta(t)$ reads
\begin{multline*}
f_{t}(v) = \frac{2e^{-v^2/(2t)}}{\sqrt{2\pi t}} \int_0^{\infty} dr \cosh (r) \sinh^{2n-1}(r) \\  \int_{0}^{\infty} du \cos\left(\frac{uv}{t}\right)e^{u^2/(2t)} s_{t,2n+1}(\cosh(u)\cosh(r)), \,\quad v \in \mathbb{R}.
\end{multline*}
\end{theorem}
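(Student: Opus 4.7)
The plan is to start from the characteristic function identity
\[
\mathbb{E}\bigl(e^{i\lambda\theta(t)}\bigr) \;=\; e^{n|\lambda|t}\int_0^\infty \frac{q_t^{n-1,|\lambda|}(0,r)}{\cosh^{|\lambda|}(r)}\,dr
\]
supplied by Corollary \ref{FTh} and to Fourier invert it after re-expressing the right-hand side through the heat kernel of the radial Maass Laplacian $L^{(n-1,|\lambda|)}$. The first step is to exploit the intertwining \eqref{Inter}: letting $M_\varphi$ denote multiplication by $\varphi$, the identity $L^{(n-1,|\lambda|)}\circ M_{1/\cosh^{|\lambda|}} = M_{1/\cosh^{|\lambda|}}\circ (\mathcal{L}^{(n-1,|\lambda|)} + \rho^2/2)$ exponentiates to $e^{tL^{(n-1,|\lambda|)}}\circ M_{1/\cosh^{|\lambda|}} = M_{1/\cosh^{|\lambda|}}\circ e^{t(\mathcal{L}^{(n-1,|\lambda|)} + \rho^2/2)}$. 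Applied to the constant function $\mathbf{1}$ and evaluated at $r=0$, this yields
\[
\int_0^\infty\frac{q_t^{n-1,|\lambda|}(0,r)}{\cosh^{|\lambda|}(r)}\,dr \;=\; e^{-t\rho^2/2}\bigl(e^{tL^{(n-1,|\lambda|)}}\mathbf{1}\bigr)(0).
\]
A direct algebraic check starting from the explicit expressions of $\mathcal{L}^{(n-1,|\lambda|)}$ and of $L^{(n-1,|\lambda|)}$ given in \eqref{Inter} identifies $\rho^2=(n+|\lambda|)^2$; after absorbing this into the prefactor $e^{n|\lambda|t}$ one obtains the clean identity
\[
\mathbb{E}\bigl(e^{i\lambda\theta(t)}\bigr) \;=\; e^{-t(n^2+\lambda^2)/2}\bigl(e^{tL^{(n-1,|\lambda|)}}\mathbf{1}\bigr)(0).
\]

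Second, I invoke the explicit Ayadi representation of the Maass Laplacian heat kernel $p_t^{L^{(n-1,|\lambda|)}}(0,\cdot)$, cited in the paragraph preceding the theorem. This representation is obtained by Fourier analysis in the fibre variable of the full heat kernel computed in Proposition \ref{prop1hyp} on the universal cover of $\mathbf{AdS}^{2n+1}(\mathbb{C})$, the Fourier coefficient at bundle frequency $\lambda$ of $p_{t/2}(r,\theta)$ together with the identification $q_{t/2,2n+1}=s_{t,2n+1}$ producing
\[
p_t^{L^{(n-1,|\lambda|)}}(0,r) \;=\; 2e^{tn^2/2}\int_0^\infty \cosh(\lambda u)\,s_{t,2n+1}(\cosh u\cosh r)\,du
\]
up to normalisation of the reference volume. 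Integrating against the polar volume form of $\mathbb{C}H^n$ and plugging into the identity obtained in the previous step gives
\[
\mathbb{E}\bigl(e^{i\lambda\theta(t)}\bigr) \;=\; 2e^{-t\lambda^2/2}\int_0^\infty \cosh(r)\sinh^{2n-1}(r)\,dr\int_0^\infty \cosh(\lambda u)\,s_{t,2n+1}(\cosh u\cosh r)\,du.
\]

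The density $f_t$ of $\theta(t)$ is then recovered by Fourier inversion,
\[
f_t(v) \;=\; \frac{1}{2\pi}\int_{\mathbb{R}} e^{-i\lambda v}\,\mathbb{E}\bigl(e^{i\lambda\theta(t)}\bigr)\,d\lambda,
\]
and the order of integration can be swapped because the Gaussian decay of $s_{t,2n+1}(\cosh u\cosh r)$ in $u$ dominates the growth of $\cosh(\lambda u)$ after the $\lambda$-integration. The inner $\lambda$-integral collapses to the elementary Gaussian evaluation
\[
\frac{1}{2\pi}\int_{\mathbb{R}} e^{-i\lambda v-t\lambda^2/2}\cosh(\lambda u)\,d\lambda \;=\; \frac{1}{\sqrt{2\pi t}}\,e^{(u^2-v^2)/(2t)}\cos\!\left(\frac{uv}{t}\right),
\]
and assembling the factors produces exactly the announced formula for $f_t(v)$.

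The main obstacle is pinning down the precise constants: the value $\rho^2=(n+|\lambda|)^2$ in the intertwining \eqref{Inter}, and the normalisation in Ayadi's representation of the Maass heat kernel so that the prefactor $2$ (and not a dimensional volume-of-sphere constant depending on $n$) emerges in front of the final double integral. Once these constants are matched, the remainder of the argument is a routine Fubini-plus-Gaussian-integral computation.
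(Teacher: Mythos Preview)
Your approach is essentially the same as the one the paper sketches (following \cite{Dem-Windings}): pass to the Maass Laplacian via the intertwining \eqref{Inter}, invoke the \cite{Aya-Int} heat-kernel representation, and Fourier-invert. One correction: you have written the intertwining with the operators on the wrong sides. With your stated direction $L^{(n-1,|\lambda|)}\circ M_{1/\cosh^{|\lambda|}}=M_{1/\cosh^{|\lambda|}}\circ(\mathcal L^{(n-1,|\lambda|)}+\rho^2/2)$, applying to $\mathbf 1$ and evaluating at $r=0$ gives only $e^{t\rho^2/2}$, not the integral you display; it is the paper's direction $M_{1/\cosh^{|\lambda|}}\circ L^{(n-1,|\lambda|)}=(\mathcal L^{(n-1,|\lambda|)}+\rho^2/2)\circ M_{1/\cosh^{|\lambda|}}$ that yields $\int_0^\infty q_t^{n-1,|\lambda|}(0,r)\cosh^{-|\lambda|}(r)\,dr=e^{-t\rho^2/2}(e^{tL^{(n-1,|\lambda|)}}\mathbf 1)(0)$. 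Your subsequent identity $\rho^2=(n+|\lambda|)^2$ and the rest of the argument are correct.

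There is in fact a shorter route that bypasses the Maass intertwining and the final Fourier inversion altogether. Proposition~\ref{prop1hyp} already gives the \emph{joint} density of $(r(t),\theta(t))$: one has
\[
f_t(v)=\frac{2\pi^n}{\Gamma(n)}\int_0^\infty p_{t/2}(r,v)\,\sinh^{2n-1}r\,\cosh r\,dr,
\]
and substituting the integral representation of $p_{t/2}$, writing $e^{(y-iv)^2/(2t)}=e^{(y^2-v^2)/(2t)}e^{-iyv/t}$, folding the $y$-integral by parity, and identifying $q_{t/2,2n+1}=s_{t,2n+1}$ lands on the stated formula in two lines. This route also makes the constants transparent (the factor $2$ is the folding $\int_{\mathbb R}\to 2\int_0^\infty$; the volume factor $\tfrac{2\pi^n}{\Gamma(n)}$ comes from the radial measure), which addresses the obstacle you flagged.
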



\
In this expression, we notice the presence of the Gaussian factor. The 
following theorem shows that this factor indeed encodes the long time behaviour of $\theta(t)$.
\begin{theorem}\label{LimitCHn}
When $t \to +\infty$, the following convergence in distribution takes place
\[
\frac{\theta(t)}{\sqrt{t}} \to \mathcal{N}(0,1)
\]
where $\mathcal{N}(0,1)$ is a normal distribution with mean 0 and variance 1.
\end{theorem}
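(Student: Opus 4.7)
The plan is to start from the representation derived in the proof of Theorem \ref{FTCHn}, namely
\[
\mathbb{E}\left(e^{i\lambda\theta(t)}\right) = e^{n|\lambda|t}\,\mathbb{E}^{|\lambda|}\!\left(\frac{1}{(\cosh r(t))^{|\lambda|}}\right),
\]
and substitute $\lambda \rightsquigarrow \lambda/\sqrt{t}$. Setting $\mu=\mu(t)=|\lambda|/\sqrt{t}$, the task reduces to showing that
\[
\lim_{t\to\infty} e^{n|\lambda|\sqrt{t}}\,\mathbb{E}^{\mu(t)}\!\left(\frac{1}{(\cosh r(t))^{\mu(t)}}\right) = e^{-\lambda^{2}/2}.
\]

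First I would describe the long-time behavior of $r(t)$ under the tilted measure $\mathbb{P}^{\mu}$. From \eqref{eq-rt-H}, under $\mathbb{P}^{\mu}$ the process satisfies
\[
dr(t) = \tfrac{1}{2}\bigl((2n-1)\coth r(t) + (2\mu+1)\tanh r(t)\bigr)dt + d\beta(t),
\]
where $\beta$ is a $\mathbb{P}^{\mu}$-Brownian motion. Since both $\coth r$ and $\tanh r$ tend to $1$ as $r\to\infty$, and since a standard comparison argument using \eqref{transient} gives $r(t)\to\infty$ almost surely, one expects $r(t) \sim (n+\mu)t + \beta(t)$ for large $t$. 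More precisely, by writing $r(t) = (n+\mu)t + \beta(t) + R^{\mu}(t)$, the remainder $R^{\mu}(t)$ can be controlled by $\int_{0}^{t}[\coth r(s)-1]ds$ and $\int_{0}^{t}[\tanh r(s)-1]ds$, both of which are bounded $L^1$ integrals since $\coth r - 1$ and $\tanh r - 1$ decay exponentially and $r(s)$ grows at least linearly. Thus $R^{\mu}(t) = O(1)$ in probability, uniformly in $\mu$ bounded.

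Next I would use the asymptotic $\cosh r \sim e^{r}/2$ to write
\[
\frac{1}{(\cosh r(t))^{\mu}} = 2^{\mu}e^{-\mu r(t)}\bigl(1 + o(1)\bigr) = 2^{\mu}e^{-\mu(n+\mu)t}\,e^{-\mu\beta(t)}e^{-\mu R^{\mu}(t)}\bigl(1+o(1)\bigr).
\]
Taking expectation under $\mathbb{P}^{\mu}$ and using that $\beta$ is a standard Brownian motion, $\mathbb{E}^{\mu}(e^{-\mu\beta(t)}) = e^{\mu^{2}t/2}$. With $\mu=\mu(t)=|\lambda|/\sqrt{t}$ this gives $e^{-\mu(n+\mu)t+\mu^{2}t/2} = e^{-n|\lambda|\sqrt{t}-\lambda^{2}/2}$; the prefactor $e^{n|\lambda|\sqrt{t}}$ exactly cancels the $e^{-n\mu t}$ factor, while $2^{\mu(t)}\to 1$ and $e^{-\mu(t)R^{\mu(t)}(t)}\to 1$ in probability. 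Assembling the pieces yields the limiting characteristic function $e^{-\lambda^{2}/2}$, which is that of $\mathcal{N}(0,1)$.

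The main technical obstacle is rigorously justifying the passage to the limit inside the expectation, i.e.\ upgrading the pointwise asymptotic $r(t)\sim(n+\mu)t+\beta(t)$ to an identity for $\mathbb{E}^{\mu(t)}[(\cosh r(t))^{-\mu(t)}]$ with uniform control in $t$. A clean way to do this is to introduce a second Girsanov change of measure that absorbs the $\tanh$-part of the drift: writing a martingale of the form $\exp(-\mu\int_{0}^{t}\tanh r\, d\beta - (\mu^{2}/2)\int_{0}^{t}\tanh^{2}r\, ds)$ and applying It\^o's lemma to $\cosh^{-\mu}$, one expresses $\mathbb{E}^{\mu}[(\cosh r(t))^{-\mu}]$ as $(\cosh r_{0})^{-\mu}$ times an exponential functional of a Jacobi diffusion whose coefficients are bounded, and the formulas \eqref{eq-qt-ab-0} from Appendix~2 then permit a direct asymptotic evaluation mirroring the one used in the $\mathbb{C}P^{n}$ case (Theorem~\ref{limit CP}).
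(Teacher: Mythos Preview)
Your route can be made to work, but you are taking the long way around. The paper's proof is three lines and uses nothing more than the skew-product representation \eqref{eq-r-theta-H} already established in Theorem~\ref{diff-H}: since $\theta(t)\overset{d}{=}B\bigl(\int_0^t\tanh^2 r(s)\,ds\bigr)$ with $B$ independent of $r$, Brownian scaling gives $\theta(t)/\sqrt{t}\overset{d}{=}B\bigl(\tfrac{1}{t}\int_0^t\tanh^2 r(s)\,ds\bigr)$, and from \eqref{transient} one has $r(t)\to\infty$ a.s., hence $\tanh r(t)\to 1$ a.s., hence the time change tends to $1$ a.s., and by continuity $B\bigl(\tfrac{1}{t}\int_0^t\tanh^2 r(s)\,ds\bigr)\to B(1)$. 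No Girsanov transforms, no heat kernel asymptotics are needed.

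The reason the spherical argument (Theorem~\ref{limit CP}) does not transfer directly and why the characteristic-function route is natural there is that on $\mathbb{C}P^n$ the radial process $r(t)$ is ergodic on $[0,\pi/2]$ and $\int_0^t\tan^2 r(s)\,ds$ grows like $t^2$, so one cannot simply read off the limit from the skew product. In the hyperbolic case the transience of $r$ makes $\int_0^t\tanh^2 r(s)\,ds\sim t$ and the argument collapses to a triviality. You modeled your proof on the harder case.

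On the technical side of your plan, one step as written is not quite right: after writing $(\cosh r(t))^{-\mu}\approx 2^\mu e^{-\mu(n+\mu)t}e^{-\mu\beta(t)}e^{-\mu R^\mu(t)}$ you compute $\mathbb{E}^\mu(e^{-\mu\beta(t)})=e^{\mu^2 t/2}$ separately, but $\beta(t)$ and $R^\mu(t)$ are correlated, so the expectation does not factor. You recognize this in your last paragraph and propose fixing it via a second Girsanov tilt or via the hyperbolic Jacobi kernel; either can be made to work (and is in the spirit of the paper's remark following Theorem~\ref{LimitCHn} about the two sign choices), but it is genuinely more work than the direct argument above.
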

\begin{proof}
From \eqref{transient}, one has $r(t)\to +\infty$ almost surely as $t\to+\infty$. It then follows that
\[
\coth r(t)\to 1,\quad \tanh r(t)\to 1\quad \mbox{a.s.}
\]
and hence
\[
\lim_{t\to+\infty}\frac{1}{t}\int_0^t \tanh^2 r(s)ds=1\quad \mbox{a.s.}
\]
Therefore, as a consequence of \eqref{eq-r-theta-H}, we have
\[
\lim_{t\to+\infty}\frac{\theta(t)}{\sqrt{t}}=\lim_{t\to+\infty}B_{\frac{1}{t}\int_0^t \tanh^2 r(s)ds}=B_1 \quad \mbox{a.s.}
\]
\end{proof}

\begin{remark}
We would like to point out the following difference between the proofs of Theorem \ref{FThj} and Theorem \ref{FTCHn}, where the main ingredient is Girsanov Theorem. In the former, the choice of the minus sign in the local martingale 
\[
-\lambda\int_0^t \tan(r(s)) ds, \quad \lambda > 0,
\]
is made to ensure the boundedness of the term $[\cos(r(t))]^{\lambda}$ since the cosine is singular near $\pi/2$. In the latter, the hyperbolic cosine has no singularity and both choices 
\[
\pm \lambda\int_0^t \tanh(r(s)) ds, \quad \lambda > 0,
\]
lead to the same result. Indeed, if we substitute $\lambda \mapsto -\lambda$, then the resulting exponential local martingale is bounded by $e^{n\lambda t}$ and we obtain
\[
\mathbb{E}\left(e^{i \lambda \theta(t)}\mid r(t)=r\right)=
e^{-n \lambda t}(\cosh r)^\lambda \frac{q_t^{n-1,-\lambda}(0,r)}{q_t^{n-1,0}(0,r)}.
\]
Besides, one easily checks the intertwining relation: 
\[
\mathcal{L}^{\alpha,\beta}\left(r \mapsto \frac{f(r)}{[\cosh(r)]^{2\beta}}\right) = \frac{1}{(\cosh r)^{2\beta}}\mathcal{L}^{\alpha,-\beta}(f)(r) 
-2(\alpha+1)\beta f(r),
\]
which shows that 
\[
q_t^{n-1,-\lambda}(0,r) = \frac{e^{2n\lambda t}}{[\cosh(r)]^{2\lambda}}q_t^{n-1,\lambda}(0,r).
\]

\end{remark}
\chapter{Horizontal Brownian motions of the quaternionic Hopf fibrations}\label{Chapter quaternionic hopf}

\section{Stochastic area process on the quaternionic projective spaces $\mathbb{H}P^n$}\label{sec--s-geometry}

\subsection{Quaternionic Hopf fibration}\label{Quaternionic Hopf}

As before the quaternionic field is defined by
\[
\mathbb{H}=\{q=t+xI+yJ+zK, (t,x,y,z)\in\R^4\},
\]
where  $I,J,K \in \mathfrak{su}(2)$ are given by
\[
I=\left(
\begin{array}{ll}
i & 0 \\
0&-i 
\end{array}
\right), \quad 
J= \left(
\begin{array}{ll}
0 & 1 \\
-1 &0 
\end{array}
\right), \quad 
K= \left(
\begin{array}{ll}
0 & i \\
i &0 
\end{array}
\right).
\]
We recall that the quaternionic conjugate of $q=t+xI+yJ+zK$ is defined by
\[
\overline{q}=t-xI-yJ-zK
\]
and that we have the quaternionic relations $I^2=J^2=K^2=IJK=-1$. 
%
On $\mathbb H^n$, we will consider the $\mathbb H$-valued vector fields
\begin{equation}\label{eq-partial-def}
\frac{\partial}{\partial q_i}=\frac{1}{2} \left( \frac{\partial}{\partial t_i}- \frac{\partial}{\partial x_i} I -\frac{\partial}{\partial y_i} J-\frac{\partial}{\partial z_i}K\right)
\end{equation}
and 
\[
\frac{\partial}{\partial \overline{q}_i}=\frac{1}{2} \left( \frac{\partial}{\partial t_i}+ \frac{\partial}{\partial x_i} I +\frac{\partial}{\partial y_i} J+\frac{\partial}{\partial z_i}K\right)
\]
and also the $\mathbb H$-valued one-forms
\[
dq_i=dt_i +dx_i I +dy_i J+dz_i K
\]
and 
\[
d\overline{q}_i=dt_i -dx_i I -dy_i J-dz_i K.
\]
The quaternionic projective space $\mathbb{H}P^n$ can be defined as the set of quaternionic lines in $\mathbb{H}^{n+1}$. To parametrize points in $\mathbb{H}P^n$, we will use the local affine coordinates given by $w_j= q_jq_{n+1}^{-1}$, $1 \le j \le n$, $q \in \mathbb{H}^{n+1}$, $q_{n+1}\neq 0$. Though this only provides a local set of coordinates (we are missing an hypersurface at $\infty$) we will do all our computations in those coordinates. In affine coordinates, the Riemannian structure of $\mathbb{H}P^n$ is easily worked out from the standard Riemannian structure of the Euclidean sphere $\mathbb{S}^{4n+3}$. Indeed, if we consider the unit sphere
\[
\bS^{4n+3}=\lbrace q=(q_1,\dots,q_{n+1})\in \mathbb{H}^{n+1}, | q | =1\rbrace,
\]
then, the map 
\begin{align*}
\begin{cases}
\bS^{4n+3} -\{q_{n+1}=0 \}  \to \mathbb{H}^n \\
(q_1,\dots,q_{n+1}) \to ( q_1q_{n+1}^{-1},\dots,  q_n q_{n+1}^{-1})
\end{cases}
\end{align*}
is a submersion. The unique Riemannian metric $h$ on $\mathbb{H}P^n$ that makes this map a Riemannian submersion is the standard Riemannian metric on $\mathbb{H}P^n$. This metric can be characterized as follows.

\begin{proposition}\label{metric HPn}
If $g_1,g_2$ are smooth functions on  $\mathbb{H}P^n$, then in affine coordinates
\begin{equation}\label{eq-co-metric}
   h(dg_1,dg_2)=2(1+|w|^2)\left( \sum_{i=1}^n \left( \frac{\partial g_1}{\partial \overline{w}_i}\frac{\partial g_2}{\partial w_i}+\frac{\partial g_2}{\partial \overline{w}_i}\frac{\partial g_1}{\partial w_i} \right) +\overline{\mathcal{R}} g_1 \mathcal{R} g_2 +\overline{\mathcal{R}} g_2 \mathcal{R} g_1 \right),  
\end{equation}
 where
 \[
 \mathcal{R} g = \sum_{i=1}^n \frac{\partial g}{\partial w_i} w_i.
 \]
 \end{proposition}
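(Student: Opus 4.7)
The plan is to mimic the approach used for $\mathbb{C}P^n$ in Theorem~\ref{Laplace complex projective} and for $\mathbb{C}H^n$ in Theorem~\ref{Laplace CHn}: I would lift the computation from $\mathbb{H}P^n$ to the sphere $\bS^{4n+3}$ via the quaternionic Hopf fibration and reduce matters to the Euclidean cometric on $\mathbb{H}^{n+1}$. The extra difficulty here compared to the complex setting is the non-commutativity of $\mathbb{H}$, which must be tracked carefully throughout.

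First, given real-valued smooth $g:\mathbb{H}P^n\to\mathbb{R}$, I would consider its extension $\tilde g(q)=g(q_1 q_{n+1}^{-1},\ldots,q_n q_{n+1}^{-1})$ on $\mathbb{H}^{n+1}\setminus\{q_{n+1}=0\}$. Because $q_j q_{n+1}^{-1}$ is invariant under the right action $q\mapsto q\xi$ for any $\xi\in\mathbb{H}^{\ast}$, the function $\tilde g$ is both dilation-invariant and $\mathbf{Sp}(1)$-invariant. Thus $\tilde g|_{\bS^{4n+3}}=g\circ\pi$, and since $\pi$ is a Riemannian submersion with fibers the $\mathbf{Sp}(1)$-orbits, $h(dg_1,dg_2)\circ\pi=g_{\bS^{4n+3}}(d\tilde g_1,d\tilde g_2)$. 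Because $d\tilde g_i$ has no radial component, this spherical cometric equals the restriction of the Euclidean cometric to the sphere. A direct calculation from the definition~\eqref{eq-partial-def} shows that, for real-valued $f_1,f_2$ on $\mathbb{H}^{n+1}$,
\[
g_{\mathrm{eucl}}(df_1,df_2)=2\sum_{i=1}^{n+1}\left(\frac{\partial f_1}{\partial\overline q_i}\frac{\partial f_2}{\partial q_i}+\frac{\partial f_2}{\partial\overline q_i}\frac{\partial f_1}{\partial q_i}\right),
\]
the underlying identity being $\operatorname{Re}\bigl((\partial_{q_i}f_1)(\partial_{\overline q_i}f_2)\bigr)=\tfrac14\sum_\alpha(\partial_{\alpha_i}f_1)(\partial_{\alpha_i}f_2)$ together with $\partial_{\overline q}f=\overline{\partial_q f}$ for real-valued $f$.

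Second, I would carry out the quaternionic chain rule. Writing $d\tilde g=2\operatorname{Re}\bigl(\sum_i(\partial_{q_i}\tilde g)\,dq_i\bigr)$ and also $d\tilde g=2\operatorname{Re}\bigl(\sum_j(\partial_{w_j}g)\,dw_j\bigr)$ with $dw_j=(dq_j-w_j\,dq_{n+1})q_{n+1}^{-1}$, then matching coefficients and using the cyclic property of the real part yield
\[
\frac{\partial\tilde g}{\partial q_k}=q_{n+1}^{-1}\frac{\partial g}{\partial w_k}\ \ (k\le n),\qquad \frac{\partial\tilde g}{\partial q_{n+1}}=-q_{n+1}^{-1}\,\mathcal{R} g,
\]
with the analogous formulas for $\partial/\partial\overline q_i$ obtained by quaternionic conjugation. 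Substituting these into the quaternionic form of the cometric, I would use the crucial simplification that for each $k\le n$ the quantity $\partial_{w_k}g_1\,\partial_{\overline w_k}g_2+\partial_{w_k}g_2\,\partial_{\overline w_k}g_1=2\operatorname{Re}(\partial_{w_k}g_1\,\overline{\partial_{w_k}g_2})$ is \emph{real}, and similarly the $i=n+1$ cross-term $\mathcal{R} g_1\overline{\mathcal{R}}g_2+\mathcal{R} g_2\overline{\mathcal{R}}g_1$ is real. Being real, they commute with the flanking $q_{n+1}^{-1}$ and $\overline q_{n+1}^{-1}$ factors, and the remaining product collapses via $q_{n+1}^{-1}\overline q_{n+1}^{-1}=(\overline q_{n+1}q_{n+1})^{-1}=|q_{n+1}|^{-2}=1+|w|^2$ on $\bS^{4n+3}$. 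Summation of the contributions from $k\le n$ and $i=n+1$ then produces exactly~\eqref{eq-co-metric}.

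The main obstacle is the chain rule in the second step: the naive product rule fails for $\partial/\partial q_i$ (for instance $\partial_{q_i}q_i=2$ and $\partial_{q_i}\overline q_i=-1$), so the identity $\partial_{q_k}\tilde g=q_{n+1}^{-1}\partial_{w_k}g$ cannot be read off mechanically and must be justified by matching real parts of both decompositions of $d\tilde g$, using that $v\mapsto\operatorname{Re}(Av)$ determines $A\in\mathbb{H}$ uniquely. For $i=n+1$, the derivative of $q_{n+1}^{-1}$ produces the two-sided factor $-q_{n+1}^{-1}(dq_{n+1})q_{n+1}^{-1}$, which must be brought into the single-sided form that allows identification with $\mathcal{R} g=\sum_j(\partial_{w_j}g)\,w_j$ with the correct ordering of the quaternionic factors.
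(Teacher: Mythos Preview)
Your approach is correct and essentially the same as the paper's: lift $g$ to $\tilde g$ on $\mathbb{H}^{n+1}$, use that the Hopf map is a Riemannian submersion to reduce to the Euclidean cometric $4\sum_i(\partial_{\overline q_i}f)(\partial_{q_i}f)$, compute $\partial_{q_k}\tilde g=q_{n+1}^{-1}\partial_{w_k}g$ for $k\le n$ and $\partial_{q_{n+1}}\tilde g=-q_{n+1}^{-1}\mathcal{R}g$, then substitute and use $|q_{n+1}|^{-2}=1+|w|^2$ on the sphere. The only notable difference is in how $\partial_{q_{n+1}}\tilde g$ is obtained: you argue via the differential $dw_j=(dq_j-w_j\,dq_{n+1})q_{n+1}^{-1}$ together with the cyclic invariance of $\operatorname{Re}$, whereas the paper instead exploits the invariances of $\tilde g$ under $q\mapsto e^sq$ and $q\mapsto qe^{sI},qe^{sJ},qe^{sK}$ to obtain the algebraic relation $\sum_{i=1}^{n+1}(\partial_{q_i}f)\,q_i=0$, from which $\partial_{q_{n+1}}\tilde g=-q_{n+1}^{-1}\mathcal{R}g$ follows immediately. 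The paper also works with $h(dg,dg)$ first and polarizes at the end, which avoids your discussion of flanking factors entirely: in the quadratic form the middle factor $\overline{q_{n+1}}^{\,-1}q_{n+1}^{-1}=|q_{n+1}|^{-2}$ collapses directly to a scalar with no commutation argument needed.
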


\begin{proof}
We use the change of variables formula $w_j= q_jq_{n+1}^{-1}$. Let  $f$ be a (real-valued) smooth function on $\mathbb H^{n+1}$ such that there exists a function $g$ on $\mathbb{H}P^n$ satisfying
\[
f(q_1,\dots,q_{n+1})=g \left( q_1q_{n+1}^{-1},\dots, q_nq_{n+1}^{-1}\right), \quad q \in \mathbb H^{n+1} .
\]
By definition of the metric $h$ one has for $q \in \bS^{4n+3}$

\[
h(dg,dg)\left( w \right)=4 \sum_{i=1}^{n+1}\frac{\partial f}{\partial \overline{q}_i} (q)\frac{\partial f}{\partial q_i} (q).
\]

We first compute $\frac{\partial f}{\partial q_i} $ for $1 \le i \le n$. Using the fact that $w_i= q_iq_{n+1}^{-1}$ one gets that for $1 \le i \le n$
\[
\frac{\partial f}{\partial q_i} (q)= q_{n+1}^{-1} \frac{\partial g}{\partial w_i} (w).
\]

To compute $\frac{\partial f}{\partial q_{n+1}} $ we need to use the symmetries of the function $f$. Using the fact that
\begin{align*}
\begin{cases}
\frac{d}{ds}f(e^{s}q_1,\dots, e^{s}q_{n+1} )\mid_{s=0} =0 \\
\frac{d}{ds}f( q_1e^{sI},\dots, q_{n+1}e^{sI})\mid_{s=0} =0 \\
\frac{d}{ds}f( q_1e^{sJ},\dots, q_{n+1}e^{sJ})\mid_{s=0} =0 \\
\frac{d}{ds}f( q_1e^{sK},\dots, q_{n+1}e^{sK})\mid_{s=0} =0
\end{cases}
\end{align*}
we get
\begin{align*}
\begin{cases}
\sum_{i=1}^{n+1} t_i \frac{\partial f}{\partial t_i} + x_i \frac{\partial f}{\partial x_i}+ y_i \frac{\partial f}{\partial y_i}+ z_i \frac{\partial f}{\partial z_i} =0 \\
\sum_{i=1}^{n+1} -x_i \frac{\partial f}{\partial t_i} + t_i \frac{\partial f}{\partial x_i}+ z_i \frac{\partial f}{\partial y_i}- y_i \frac{\partial f}{\partial z_i} =0 \\
\sum_{i=1}^{n+1} -y_i \frac{\partial f}{\partial t_i} - z_i \frac{\partial f}{\partial x_i}+ t_i \frac{\partial f}{\partial y_i}+ x_i \frac{\partial f}{\partial z_i} =0 \\
\sum_{i=1}^{n+1} -z_i \frac{\partial f}{\partial t_i} + y_i \frac{\partial f}{\partial x_i}- x_i \frac{\partial f}{\partial y_i}+ t_i \frac{\partial f}{\partial z_i} =0
\end{cases}.
\end{align*}
This can be more concisely written in quaternionic form as 
\[
\sum_{i=1}^{n+1} \frac{\partial f}{\partial q_i} q_i =0.
\]
In particular, one obtains 
\[
 \frac{\partial f}{\partial q_{n+1}}=-\sum_{i=1}^{n} \frac{\partial f}{\partial q_i} q_i q_{n+1}^{-1}.
 \]
 This yields
 \[
  \frac{\partial f}{\partial q_{n+1}}=-q_{n+1}^{-1} \sum_{i=1}^{n} \frac{\partial g}{\partial w_i} w_i.
 \]
 One conclude
  \begin{align*}
  \sum_{i=1}^{n+1} \frac{\partial f}{\partial \overline{q}_i}\frac{\partial f}{\partial q_i}=\frac{1}{|q_{n+1}|^2}  \left( \sum_{i=1}^n \frac{\partial g}{\partial \overline{w}_i}\frac{\partial g}{\partial w_i} + \overline{\mathcal{R}} g\mathcal{R} g \right),
 \end{align*}
where
 \[
 \mathcal{R} g = \sum_{i=1}^n \frac{\partial g}{\partial w_i} w_i.
 \]
 Since on $\bS^{4n+3}$ one has $\frac{1}{|q_{n+1}|^2} =1+|w|^2$, one concludes
 \begin{equation}\label{eq-h-quad}
 h(dg,dg)=4(1+|w|^2)\left( \sum_{i=1}^n \frac{\partial g}{\partial \overline{w}_i}\frac{\partial g}{\partial w_i} +\overline{\mathcal{R}} g\mathcal{R} g  \right).
 \end{equation}
 Using the polarization formula
 \[
 h(dg_1,dg_2)=\frac{1}{4} \left( h \left( dg_1+dg_2, dg_1+dg_2\right)-h \left( dg_1-dg_2, dg_1-dg_2\right)  \right)
 \]
 concludes the proof.
\end{proof}

It will be convenient to use the above formula for the metric in its real form. For this, we use the real affine coordinates on $\mathbb{H}P^n$ defined by
\[
w_i=\tau_i+\alpha_i I +\beta_i J +\gamma_i K
\]
\begin{corollary}
If $g_1,g_2$ are smooth functions on  $\mathbb{H}P^n$, then in real affine coordinates
\begin{align}\label{eq-co-metric2-S}
   h(dg_1,dg_2) =(1+|w|^2) \left( \sum_{i=1}^n \left( \frac{\partial g_1}{\partial \tau_i}\frac{\partial g_2}{\partial \tau_i}+\frac{\partial g_1}{\partial \alpha_i}\frac{\partial g_2}{\partial \alpha_i} +\frac{\partial g_1}{\partial \beta_i}\frac{\partial g_2}{\partial \beta_i}+\frac{\partial g_1}{\partial \gamma_i}\frac{\partial g_2}{\partial \gamma_i} \right)  +\sum_{i=1}^4 \mathcal{R}_i  g_1 \mathcal{R}_i g_2 \right),  
\end{align}
 where
\[
 \mathcal{R}_1 g = \sum_{i=1}^n \tau_i \frac{\partial g}{\partial \tau_i} +\alpha_i \frac{\partial g}{\partial \alpha_i}+\beta_i \frac{\partial g}{\partial \beta_i}+\gamma_i \frac{\partial g}{\partial \gamma_i}
 \]
 \[
 \mathcal{R}_2 g = \sum_{i=1}^n \alpha_i \frac{\partial g}{\partial \tau_i} -\tau_i \frac{\partial g}{\partial \alpha_i}-\gamma_i \frac{\partial g}{\partial \beta_i}+\beta_i \frac{\partial g}{\partial \gamma_i}
 \]
 \[
 \mathcal{R}_3 g = \sum_{i=1}^n \beta_i \frac{\partial g}{\partial \tau_i} +\gamma_i \frac{\partial g}{\partial \alpha_i}-\tau_i \frac{\partial g}{\partial \beta_i}-\alpha_i \frac{\partial g}{\partial \gamma_i}
 \]
 \[
 \mathcal{R}_4 g = \sum_{i=1}^n \gamma_i \frac{\partial g}{\partial \tau_i} -\beta_i \frac{\partial g}{\partial \alpha_i}+\alpha_i \frac{\partial g}{\partial \beta_i}-\tau_i \frac{\partial g}{\partial \gamma_i}
 \]
\end{corollary}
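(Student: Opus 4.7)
The statement is a conversion of the quaternionic-coordinate metric formula from Proposition \ref{metric HPn} into real coordinates, so the natural strategy is a direct expansion. My plan is to unpack both sides in the real variables $(\tau_i,\alpha_i,\beta_i,\gamma_i)$ and match terms, exploiting the fact that $g_1,g_2$ are real-valued so that the two $\mathbb{H}$-valued expressions being summed are quaternionic conjugates of each other.

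First, I would observe that since $g_1$ is real-valued, the definition \eqref{eq-partial-def} together with its $\bar w_i$ analogue gives
\[
\frac{\partial g_1}{\partial\bar w_i}=\overline{\left(\frac{\partial g_1}{\partial w_i}\right)},
\]
so that
\[
\frac{\partial g_1}{\partial\bar w_i}\frac{\partial g_2}{\partial w_i}+\frac{\partial g_2}{\partial\bar w_i}\frac{\partial g_1}{\partial w_i}=2\operatorname{Re}\!\left(\frac{\partial g_1}{\partial\bar w_i}\frac{\partial g_2}{\partial w_i}\right),
\]
and similarly $\overline{\mathcal R}g_1\,\mathcal R g_2+\overline{\mathcal R}g_2\,\mathcal R g_1=2\operatorname{Re}(\overline{\mathcal R g_1}\,\mathcal R g_2)$. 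So the task reduces to computing two real parts.

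For the first term, the real part of the quaternionic product $\bar u v$ of $u=a+bI+cJ+dK$ and $v=a'+b'I+c'J+d'K$ is just $aa'+bb'+cc'+dd'$. Applying this with $u=2\partial g_1/\partial w_i$ and $v=2\partial g_2/\partial w_i$, where
\[
2\frac{\partial g}{\partial w_i}=\frac{\partial g}{\partial\tau_i}-\frac{\partial g}{\partial\alpha_i}I-\frac{\partial g}{\partial\beta_i}J-\frac{\partial g}{\partial\gamma_i}K,
\]
produces exactly $\tfrac14$ times the Euclidean inner product of the real gradients, so summing in $i$ gives $\tfrac12\sum_i(\partial_{\tau_i}g_1\partial_{\tau_i}g_2+\partial_{\alpha_i}g_1\partial_{\alpha_i}g_2+\partial_{\beta_i}g_1\partial_{\beta_i}g_2+\partial_{\gamma_i}g_1\partial_{\gamma_i}g_2)$.

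The second term is the main bookkeeping step. I would expand
\[
\mathcal R g=\sum_i\frac{\partial g}{\partial w_i}w_i=\tfrac12\sum_i(\partial_{\tau_i}g-I\partial_{\alpha_i}g-J\partial_{\beta_i}g-K\partial_{\gamma_i}g)(\tau_i+\alpha_iI+\beta_iJ+\gamma_iK),
\]
multiply out using the Hamilton relations $I^2=J^2=K^2=IJK=-1$ and $IJ=K,JK=I,KI=J$, and collect the coefficients of $1,I,J,K$. A check of each component shows that the four real components are precisely $\tfrac12\mathcal R_1g,\tfrac12\mathcal R_2g,\tfrac12\mathcal R_3g,\tfrac12\mathcal R_4g$ as defined in the statement, so
\[
\mathcal Rg=\tfrac12\bigl(\mathcal R_1g+I\mathcal R_2g+J\mathcal R_3g+K\mathcal R_4g\bigr).
\]
Then $\operatorname{Re}(\overline{\mathcal Rg_1}\,\mathcal Rg_2)=\tfrac14\sum_{k=1}^4\mathcal R_kg_1\mathcal R_kg_2$, so the $\mathcal R$-contribution to \eqref{eq-co-metric} becomes $\tfrac12\sum_k\mathcal R_kg_1\mathcal R_kg_2$. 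Adding the two contributions and pulling out the common factor $2(1+|w|^2)\cdot\tfrac12=(1+|w|^2)$ yields \eqref{eq-co-metric2-S}.

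The main obstacle is purely mechanical: keeping track of the eight signs that appear in expanding $(\partial_{\tau}-I\partial_\alpha-J\partial_\beta-K\partial_\gamma)(\tau+\alpha I+\beta J+\gamma K)$ component by component, which is where one must carefully invoke the (non-commutative) quaternion multiplication table. Once the identification $\mathcal Rg=\tfrac12(\mathcal R_1g+I\mathcal R_2g+J\mathcal R_3g+K\mathcal R_4g)$ is verified, the remainder is a one-line assembly.
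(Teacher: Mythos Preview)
Your proposal is correct and follows essentially the same approach as the paper: the paper's proof simply records the two key identities
\[
4\frac{\partial g}{\partial \overline{w}_i}\frac{\partial g}{\partial w_i} =\left(\frac{\partial g}{\partial \tau_i} \right)^2+ \left(\frac{\partial g}{\partial \alpha_i} \right)^2+ \left(\frac{\partial g}{\partial \beta_i} \right)^2+ \left(\frac{\partial g}{\partial \gamma_i} \right)^2
\quad\text{and}\quad
\mathcal{R} g= \tfrac{1}{2} \bigl( \mathcal{R}_1 g  + (\mathcal{R}_2 g ) I+ (\mathcal{R}_3 g ) J + (\mathcal{R}_4 g ) K \bigr),
\]
and your argument is exactly the detailed verification of these, followed by the same assembly into \eqref{eq-co-metric2-S}.
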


\begin{proof}
This easily follows from \eqref{eq-co-metric} by computing
\[
4\frac{\partial g}{\partial \overline{w}_i}\frac{\partial g}{\partial w_i} =\sum_{i=1}^n  \left(\frac{\partial g}{\partial \tau_i} \right)^2+ \left(\frac{\partial g}{\partial \alpha_i} \right)^2+ \left(\frac{\partial g}{\partial \beta_i} \right)^2+ \left(\frac{\partial g}{\partial \gamma_i} \right)^2
\]
and 
\[
\mathcal{R} g= \frac{1}{2} \left( \mathcal{R}_1 g  + (\mathcal{R}_2 g ) I+ (\mathcal{R}_3 g ) J + (\mathcal{R}_4 g ) K \right).
\]
\end{proof}

\begin{corollary}
 In real affine coordinates, the matrix of the Riemannian cometric   is given by the $4n \times 4n$ matrix
 \[
 (1+|w|^2) \left(\mathrm{I}_{4n}+\mathrm{R}_1 \mathrm{R}^*_1+\mathrm{R}_2 \mathrm{R}^*_2+\mathrm{R}_3 \mathrm{R}^*_3+\mathrm{R}_4 \mathrm{R}^*_4 \right),
 \]
 where $\mathrm{R}_i$ is the real valued column vector with the $4n$ components
 \[
 \mathrm{R}_i =\left( \mathcal{R}_i \tau_j , \mathcal{R}_i \alpha_j, \mathcal{R}_i \beta_j,\mathcal{R}_i \gamma_j\right)_{1 \le j \le n}.
 \]
\end{corollary}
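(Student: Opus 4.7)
The plan is simply to identify the bilinear form on covectors given in the preceding corollary with the stated matrix, viewed as the Gram matrix of the cometric in the ordered real affine frame.

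First I would fix the ordering of the coordinate basis, say $(\tau_1,\alpha_1,\beta_1,\gamma_1,\dots,\tau_n,\alpha_n,\beta_n,\gamma_n)$, so that for any smooth function $g$ the gradient vector $\nabla g \in \mathbb{R}^{4n}$ has components $\left(\frac{\partial g}{\partial \tau_j},\frac{\partial g}{\partial \alpha_j},\frac{\partial g}{\partial \beta_j},\frac{\partial g}{\partial \gamma_j}\right)_{1\le j\le n}$. With this convention, each differential operator $\mathcal{R}_i$ acts as $\mathcal{R}_i g = \langle \mathrm{R}_i, \nabla g\rangle$, where $\mathrm{R}_i \in \mathbb{R}^{4n}$ is precisely the column vector defined in the statement (this is just rereading the definitions of $\mathcal{R}_1,\dots,\mathcal{R}_4$ in real affine coordinates).

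Next I would rewrite the formula \eqref{eq-co-metric2-S} in matrix form. The first sum is visibly $\langle \nabla g_1,\nabla g_2\rangle = (\nabla g_1)^T \, \mathrm{I}_{4n}\,\nabla g_2$, and each product $\mathcal{R}_i g_1 \, \mathcal{R}_i g_2$ equals $\langle \mathrm{R}_i,\nabla g_1\rangle\langle \mathrm{R}_i,\nabla g_2\rangle = (\nabla g_1)^T \mathrm{R}_i\mathrm{R}_i^*\, \nabla g_2$. Summing over $i$ and pulling out the factor $(1+|w|^2)$ gives
\[
h(dg_1,dg_2) = (\nabla g_1)^T \bigl[(1+|w|^2)(\mathrm{I}_{4n}+\mathrm{R}_1\mathrm{R}_1^*+\mathrm{R}_2\mathrm{R}_2^*+\mathrm{R}_3\mathrm{R}_3^*+\mathrm{R}_4\mathrm{R}_4^*)\bigr]\nabla g_2,
\]
and applying this to the coordinate differentials $g_1 = x^a$, $g_2 = x^b$ (where $x^a$ runs through the $4n$ real coordinates) yields exactly the claimed matrix as the Gram matrix $h(dx^a,dx^b)$.

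There is essentially no obstacle: the statement is a direct matrix transcription of the formula already proved in the previous corollary, once one notes the elementary identity $\mathcal{R}_i g = \mathrm{R}_i \cdot \nabla g$. The only thing worth double-checking is the sign pattern and ordering of the entries of each $\mathrm{R}_i$ against the definitions of $\mathcal{R}_i$, but this is straightforward bookkeeping.
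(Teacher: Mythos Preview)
Your proof is correct and follows exactly the same approach as the paper, which simply states that the result readily follows from \eqref{eq-co-metric2-S}. You have spelled out the details of this immediate deduction: the identity $\mathcal{R}_i g = \langle \mathrm{R}_i, \nabla g\rangle$ converts the bilinear form into the claimed matrix expression.
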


\begin{proof}
This readily follows from \eqref{eq-co-metric2-S}.
\end{proof}

\begin{corollary}\label{volume HPn}
 In affine coordinates the Riemannian volume measure of $\HP^n$ is given by
\[
d\mu(w)=(1+|w|^2)^{-2(n+1)} dw.
\]
\end{corollary}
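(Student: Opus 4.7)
The plan is to read off the volume element directly from the cometric matrix displayed in the previous corollary. Recall that if $G^{ij}$ denotes the matrix of the Riemannian cometric in a coordinate chart and $G_{ij}$ the matrix of the metric, then $d\mu = \sqrt{\det G_{ij}}\, dw = (\det G^{ij})^{-1/2} dw$, so the whole task reduces to computing the determinant of
\[
(1+|w|^2)\bigl(\mathrm{I}_{4n} + \mathrm{R}_1\mathrm{R}_1^* + \mathrm{R}_2\mathrm{R}_2^* + \mathrm{R}_3\mathrm{R}_3^* + \mathrm{R}_4\mathrm{R}_4^*\bigr).
\]
Pulling out the scalar factor contributes $(1+|w|^2)^{4n}$, so what remains is to evaluate the determinant of the rank-$\le 4$ perturbation of the identity.

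The key observation I would exploit is that the four real vectors $\mathrm{R}_1,\mathrm{R}_2,\mathrm{R}_3,\mathrm{R}_4 \in \mathbb{R}^{4n}$ are pairwise orthogonal and all have squared Euclidean norm equal to $|w|^2$. Indeed, each of them is obtained from the list of real coordinates $(\tau_i,\alpha_i,\beta_i,\gamma_i)$ by permuting entries and flipping signs according to the multiplication by $1,I,J,K$ on each quaternionic block $w_i$; thus $|\mathrm{R}_k|^2 = \sum_{i=1}^n(\tau_i^2+\alpha_i^2+\beta_i^2+\gamma_i^2) = |w|^2$ for $k=1,\ldots,4$. The pairwise inner products $\langle \mathrm{R}_k,\mathrm{R}_\ell\rangle$ for $k\neq\ell$ decompose into a sum over $i$ of the real part of a product $\bar{e}_k e_\ell$ applied to the quaternion $w_i$ (with appropriate signs), and a direct block-by-block check shows these cancel in pairs. (This is exactly the same orthogonality phenomenon used for the two vectors in the $\mathbb{C}P^n$ computation recalled just after Theorem \ref{Laplace complex projective}.)

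Given pairwise orthogonality and the common norm $|w|^2$, the matrix $\sum_{k=1}^4 \mathrm{R}_k\mathrm{R}_k^*$ acts as $|w|^2$ on the $4$-plane $\mathrm{span}(\mathrm{R}_1,\ldots,\mathrm{R}_4)$ and as $0$ on its orthogonal complement. Hence $\mathrm{I}_{4n}+\sum_{k=1}^4\mathrm{R}_k\mathrm{R}_k^*$ has eigenvalues $1+|w|^2$ with multiplicity $4$ and $1$ with multiplicity $4n-4$, so its determinant equals $(1+|w|^2)^4$ (equivalently, apply the Cauchy–Binet/matrix determinant lemma $\det(I+UU^*)=\det(I+U^*U)$ with $U=[\mathrm{R}_1|\mathrm{R}_2|\mathrm{R}_3|\mathrm{R}_4]$, where $U^*U = |w|^2 \mathrm{I}_4$). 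Combining, $\det G^{ij} = (1+|w|^2)^{4n}\cdot(1+|w|^2)^4 = (1+|w|^2)^{4(n+1)}$, and therefore
\[
d\mu(w) = (\det G^{ij})^{-1/2}\,dw = (1+|w|^2)^{-2(n+1)}\,dw,
\]
which is the claim.

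The only non-mechanical step is verifying the pairwise orthogonality of $\mathrm{R}_1,\ldots,\mathrm{R}_4$; everything else is linear algebra. I would either carry out the six inner-product computations by hand (a short, symmetric calculation) or, more conceptually, note that the four vectors arise as the images of a fixed vector in $\mathbb{R}^{4n}\cong\mathbb{H}^n$ under right-multiplication by the orthonormal basis $\{1,I,J,K\}$ of $\mathbb{H}$, which is an orthogonal operation on each $\mathbb{H}$-component and hence on the full direct sum, so the $\mathrm{R}_k$ form an orthogonal quadruple of equal length.
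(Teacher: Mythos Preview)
Your proof is correct and follows essentially the same approach as the paper: compute the determinant of the cometric matrix by noting that the four vectors $\mathrm{R}_1,\ldots,\mathrm{R}_4$ are pairwise orthogonal with common squared norm $|w|^2$, so the rank-four perturbation of the identity has determinant $(1+|w|^2)^4$, and then take the inverse square root. Your additional remarks (the matrix determinant lemma and the conceptual explanation of orthogonality via right quaternionic multiplication) go slightly beyond what the paper spells out but are in the same spirit.
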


\begin{proof}
We need to compute the determinant of the Riemannian cometric matrix which, according to the previous corollary, is given by
\[
(1+|w|^2)^{4n} \mathrm{det} \left(\mathrm{I}_{4n}+\mathrm{R}_1 \mathrm{R}^*_1+\mathrm{R}_2 \mathrm{R}^*_2+\mathrm{R}_3 \mathrm{R}^*_3+\mathrm{R}_4 \mathrm{R}^*_4 \right).
\]
The 4 vectors $\mathrm{R}_i$ have length $|w|^2$ and are pairwise orthogonal, therefore the symmetric matrix 
\[
\mathrm{R}_1 \mathrm{R}^*_1+\mathrm{R}_2 \mathrm{R}^*_2+\mathrm{R}_3 \mathrm{R}^*_3+\mathrm{R}_4 \mathrm{R}^*_4
\]
has rank 4 and only the eigenvalue $|w|^2$ which has  multiplicity 4. This yields that the determinant of the cometric matrix is $(1+|w|^2)^{4n+4}$ and thus the expected result, since we recall that the density of the Riemannian volume form is given by the inverse of the square root of the determinant of the cometric. 
\end{proof}

We now have all the ingredients to compute the Laplacian on $\HP^n$.

\begin{theorem}\label{Laplace quaternionic projective}
In real affine coordinates, we have
\[
\Delta_{\HP^n}=(1+|w|^2) \left( \sum_{i=1}^n \left( \frac{\partial^2}{\partial \tau^2_i}+\frac{\partial^2}{\partial \alpha^2_i}+\frac{\partial^2}{\partial \beta^2_i}+\frac{\partial^2 }{\partial \gamma^2_i} \right)  +\sum_{i=1}^4 \mathcal{R}^2_i  -2\mathcal{R}_1  \right)
\]
\end{theorem}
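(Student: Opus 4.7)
The plan is to apply the standard coordinate formula
\[
\Delta_{\HP^n} f = \frac{1}{\sqrt{\det g_{ij}}} \sum_{\mu} \partial_\mu \Bigl( \sqrt{\det g_{ij}}\, g^{\mu\nu} \partial_\nu f \Bigr),
\]
using the cometric matrix $(1+|w|^2)(\mathrm{I}_{4n} + \sum_{k=1}^4 \mathrm{R}_k \mathrm{R}_k^*)$ from the corollary just above and the volume density $\sqrt{\det g_{ij}} = (1+|w|^2)^{-2(n+1)}$ from Corollary \ref{volume HPn}. Write $\rho = 1+|w|^2$ and split the cometric into the diagonal part $\rho\, \mathrm{I}_{4n}$ and the rank-four part $\rho \sum_k \mathrm{R}_k \mathrm{R}_k^*$; the computation then breaks naturally into two pieces, each of which I expand by Leibniz.

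The first piece contributes $\rho \sum_\mu \partial_\mu^2 f$ together with the first-order term $-(2n+1)\sum_\mu (\partial_\mu \rho)\,\partial_\mu f$. Since $\partial_\mu \rho$ is twice the real coordinate itself, $\sum_\mu (\partial_\mu \rho)\partial_\mu = 2\mathcal{R}_1$, so this piece gives $\rho\, \Delta_{\mathbb{R}^{4n}} f - 2(2n+1)\mathcal{R}_1 f$. For the second piece, the identity $\sum_\nu (\mathrm{R}_k)_\nu \partial_\nu = \mathcal{R}_k$ lets me recognise $\sum_\nu (\mathrm{R}_k)_\mu (\mathrm{R}_k)_\nu \partial_\nu f = (\mathrm{R}_k)_\mu \mathcal{R}_k f$, whence
\[
\rho^{2(n+1)} \sum_{\mu,k} \partial_\mu \Bigl( \rho^{-(2n+1)} (\mathrm{R}_k)_\mu \mathcal{R}_k f\Bigr) = -(2n+1)\sum_k (\mathcal{R}_k \rho)(\mathcal{R}_k f) + \rho \sum_k \bigl(\operatorname{div}\mathcal{R}_k\bigr) \mathcal{R}_k f + \rho \sum_k \mathcal{R}_k^2 f.
\]

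The two key algebraic inputs are now: $\mathcal{R}_1 \rho = 2(\rho-1)$ while $\mathcal{R}_2 \rho = \mathcal{R}_3 \rho = \mathcal{R}_4 \rho = 0$ (direct from the explicit formulas for the $\mathcal{R}_i$), and the Euclidean divergences $\operatorname{div}\mathcal{R}_1 = 4n$, $\operatorname{div}\mathcal{R}_2 = \operatorname{div}\mathcal{R}_3 = \operatorname{div}\mathcal{R}_4 = 0$. Substituting these, the contribution from the rank-four piece collapses to $-2(2n+1)(\rho-1)\mathcal{R}_1 f + 4n\rho\,\mathcal{R}_1 f + \rho \sum_k \mathcal{R}_k^2 f$.

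Adding the two pieces, the coefficient of $\mathcal{R}_1 f$ becomes
\[
-2(2n+1) - 2(2n+1)(\rho-1) + 4n\rho \;=\; -2(2n+1)\rho + 4n\rho \;=\; -2\rho,
\]
and the whole operator reduces to $\rho\bigl(\Delta_{\mathbb{R}^{4n}} + \sum_{k=1}^4 \mathcal{R}_k^2 - 2\mathcal{R}_1\bigr) f$, which is exactly the claimed formula. The only real obstacle is bookkeeping — checking that the apparently unrelated contributions from the conformal factor $\rho^{-(2n+1)}$, from $\mathcal{R}_1\rho$ and from $\operatorname{div}\mathcal{R}_1$ combine to give precisely $-2$ — but this is the clean cancellation displayed above and requires no input beyond the two facts $\mathcal{R}_k\rho = 2(\rho-1)\delta_{k1}$ and $\operatorname{div}\mathcal{R}_k = 4n\,\delta_{k1}$.
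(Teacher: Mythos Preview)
Your proof is correct and follows essentially the same approach as the paper: the paper characterizes $\Delta_{\HP^n}$ via the integration-by-parts identity $\int h(df,dg)\,d\mu = -\int g\,\Delta_{\HP^n} f\,d\mu$ using the cometric \eqref{eq-co-metric2-S} and the volume density from Corollary~\ref{volume HPn}, while you use the equivalent coordinate formula $\Delta f = (\det g)^{-1/2}\partial_\mu((\det g)^{1/2}g^{\mu\nu}\partial_\nu f)$ with the same two ingredients. The paper merely says the computation is ``elementary but tedious''; you have actually carried it out, and your identification of the cancellation in the $\mathcal{R}_1$ coefficient is exactly the crux.
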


\begin{proof}
This formula can be proved using an integration by parts. Indeed, $\Delta_{\HP^n}$ is the operator satisfying the integration by parts formula
\[
\int_{\HP^n} h(df,dg) (1+|w|^2)^{-2(n+1)} dw=-\int_{\HP^n} g \Delta_{\HP^n} f \, (1+|w|^2)^{-2(n+1)} dw.
\]
Using formula \eqref{eq-co-metric2-S} yields the formula for $\Delta_{\HP^n} $ after some elementary but tedious computations.
\end{proof}

The description of $\mathbb{H}P^n$ using the affine coordinates is incomplete since it does not chart a manifold at $\infty$. More intrinsically, we have a globally defined Riemannian submersion $\bS^{4n+3} \to \mathbb{H}P^n$, that can be constructed as follows. There is an isometric group action of the group $\mathbf{SU}(2)$ on $\bS^{4n+3}$ which is  defined by $$g \cdot(q_1,\dots, q_{n+1}) := (  q_1 g,\dots,  q_{n+1} g), $$
where $\mathbf{SU}(2)$ is identified with the set of unit quaternions.
%
%
The quotient space $\bS^{4n+3} / \mathbf{SU}(2)$ can be identified with $\mathbb{H}P^n$ and the projection map $$\pi :  \bS^{4n+3} \to \mathbb{H}P^n$$ is a Riemannian submersion with totally geodesic fibers isometric to $\mathbf{SU}(2)$. The fibration
\[
\mathbf{SU}(2) \to \bS^{4n+3} \to \mathbb{H}P^n
\]
 is called the quaternionic Hopf fibration. This fibration is an example of a B\'erard-Bergery fibration (see Example \ref{BB fibration}). 
 
 The submersion $\pi$ allows to construct the Brownian motion on $\mathbb{H}P^n$ from the Brownian motion on $\bS^{2n+1}$. Indeed,  let $(q(t))_{t \ge 0}$ be a Brownian motion on  $\bS^{4n+3}$ started at the north pole \footnote{We will call north pole the point with quaternionic coordinates $q_1=0,\dots, q_{n+1}=1$. }. Since $\mathbb{P}( \exists t \ge 0, q_{n+1}(t)=0 )=0$, one can use the local description of the submersion $\pi$ to infer that
\begin{align}\label{BMsphere_2}
w(t)= \left( q_1(t)q_{n+1}(t)^{-1}  , \dots,  q_n(t)q_{n+1}(t)^{-1} \right), \quad t \ge 0,
\end{align}
is a Brownian motion on $\mathbb{H}P^n$, see Remark \ref{harmonic submersion}. From Theorem \ref{Laplace quaternionic projective}, the generator of $(w(t))_{t \ge 0}$ is given by
\[
\frac{1}{2}(1+|w|^2) \left( \sum_{i=1}^n \left( \frac{\partial^2}{\partial \tau^2_i}+\frac{\partial^2}{\partial \alpha^2_i}+\frac{\partial^2}{\partial \beta^2_i}+\frac{\partial^2 }{\partial \gamma^2_i} \right)  +\sum_{i=1}^4 \mathcal{R}^2_i  -2\mathcal{R}_1  \right).
\]

\begin{proposition}
The process $r(t)=\arctan |w(t)|$ is a Jacobi diffusion with generator
\[
\frac{1}{2} \left( \frac{\partial^2}{\partial r^2}+((4n-1)\cot r-3\tan r)\frac{\partial}{\partial r}\right).
\]
In particular, the density of $r(t)$, $t>0$, is given by the formula
\[
\mathbb{P} (r(t) \in dr)=2 \left( \sum_{k=0}^{\infty}(2k+2n+1){k+2n\choose 2n-1} e^{-2k(k+2n+1)t} P_k^{2n-1,1}(\cos 2r) \right) (\sin r)^{4n-1}(\cos r)^3 dr
\]
where 
\[
P_k^{2n-1,1}(x)=\frac{(-1)^k}{2^kk!(1-x)^{2n-1}(1+x)}\frac{d^k}{dx^k}\left((1-x)^{2n-1+k}(1+x)^{1+k} \right)
\] 
is a Jacobi polynomial.
\end{proposition}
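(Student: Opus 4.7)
The proof mirrors the strategy used for $\mathbb{C}P^n$ in Proposition \ref{prop-r-sphere-gene}: first compute the radial part of $\Delta_{\mathbb{H}P^n}$, then apply It\^o's formula, then invoke the known Jacobi heat kernel from Appendix 2.

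First I would compute $\Delta_{\mathbb{H}P^n} f$ on functions of the form $f(\rho)$, where $\rho = |w| = \sqrt{\sum_i (\tau_i^2 + \alpha_i^2 + \beta_i^2 + \gamma_i^2)}$, using the expression for $\Delta_{\mathbb{H}P^n}$ in Theorem \ref{Laplace quaternionic projective}. The three ingredients to handle are the Euclidean Laplacian in $\mathbb{R}^{4n}$ (which on a radial function gives $f''(\rho) + ((4n-1)/\rho) f'(\rho)$), the operator $\mathcal{R}_1$, and the operators $\mathcal{R}_2, \mathcal{R}_3, \mathcal{R}_4$. A direct inspection of the definitions shows $\mathcal{R}_1 \rho^2 = 2\rho^2$ while $\mathcal{R}_j \rho^2 = 0$ for $j=2,3,4$ (the latter vector fields are antisymmetric in the real coordinates, encoding the three imaginary quaternionic rotations of the fiber), so $\mathcal{R}_1 f(\rho) = \rho f'(\rho)$ and $\mathcal{R}_j f(\rho) = 0$ for $j \geq 2$. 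Assembling:
\[
\Delta_{\mathbb{H}P^n} f(\rho) = (1+\rho^2)^2 f''(\rho) + (1+\rho^2)\left( \tfrac{4n-1}{\rho} - \rho\right) f'(\rho).
\]

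Next I would perform the change of variable $r = \arctan \rho$, i.e.\ $\rho = \tan r$ with $dr/d\rho = 1/(1+\rho^2)$. A short chain-rule computation gives
\[
(1+\rho^2)^2 f''(\rho) = f''_r - 2 \tan r\, f'_r, \qquad (1+\rho^2)\bigl((4n-1)/\rho - \rho\bigr) f'(\rho) = \bigl((4n-1)\cot r - \tan r\bigr) f'_r,
\]
so that $\Delta_{\mathbb{H}P^n}$ acts on radial functions as $\partial_r^2 + ((4n-1)\cot r - 3\tan r)\partial_r$. Applying It\^o's formula to $r(t) = \arctan|w(t)|$ (using that $\tfrac12 \Delta_{\mathbb{H}P^n}$ generates $(w(t))$) then identifies $r(t)$ as the Jacobi diffusion with generator $\tfrac12\bigl(\partial_r^2 + ((4n-1)\cot r - 3 \tan r) \partial_r\bigr)$, which is $\mathcal{L}^{2n-1,1}$ in the notation of Appendix 2.

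Finally I would read off the density from the Jacobi heat kernel formula recorded in the Appendix: with parameters $(\alpha,\beta) = (2n-1,1)$ the eigenfunctions are the Jacobi polynomials $P_k^{2n-1,1}(\cos 2r)$ with eigenvalues $-2k(k+\alpha+\beta+1) = -2k(k+2n+1)$, the symmetric invariant measure is proportional to $(\sin r)^{4n-1}(\cos r)^3 dr$, and the normalizing constants $(2k+2n+1)\binom{k+2n}{2n-1}$ come from the $L^2$-norms of the $P_k^{2n-1,1}$. Starting at $r_0 = 0$ only the values $P_k^{2n-1,1}(1)$ appear, which absorb cleanly into the stated binomial coefficients. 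No real obstacle arises; the only delicate bookkeeping is verifying that the three antisymmetric vector fields $\mathcal{R}_2,\mathcal{R}_3,\mathcal{R}_4$ annihilate $\rho$, but this is immediate from their explicit definitions.
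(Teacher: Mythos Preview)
Your proof is correct and follows essentially the same approach as the paper's own proof: compute the radial part of $\Delta_{\mathbb{H}P^n}$ via the explicit real-coordinate expression of Theorem \ref{Laplace quaternionic projective}, change variables $\rho = \tan r$, apply It\^o's formula, and read off the density from the Jacobi heat-kernel formulas in Appendix~2. The paper simply asserts ``a direct computation shows'' for the radial part, whereas you have carried out that computation explicitly (including the key observation that $\mathcal{R}_2,\mathcal{R}_3,\mathcal{R}_4$ annihilate radial functions), so your write-up is in fact more detailed than the paper's.
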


\begin{proof}
Let $r=\arctan |w|$. A direct  computation shows that the operator 
\[
(1+|w|^2) \left( \sum_{i=1}^n \left( \frac{\partial^2}{\partial \tau^2_i}+\frac{\partial^2}{\partial \alpha^2_i}+\frac{\partial^2}{\partial \beta^2_i}+\frac{\partial^2 }{\partial \gamma^2_i} \right)  +\sum_{i=1}^4 \mathcal{R}^2_i  -2\mathcal{R}_1  \right)
\]
acts on functions depending only on $r$ as
\[
 \frac{\partial^2}{\partial r^2}+((4n-1)\cot r-3\tan r)\frac{\partial}{\partial r}
 \]
It\^o's formula shows  that  $r(t)=\arctan |w(t)|$ is a Jacobi diffusion with generator
\[
\frac{1}{2} \left(  \frac{\partial^2}{\partial r^2}+((4n-1)\cot r-3\tan r)\frac{\partial}{\partial r} \right).
\]
The computation for the density of $r(t)$ follows from the known formula for the heat kernel of Jacobi diffusions, see Appendix 2.
\end{proof}

\begin{remark}
If $(\beta(t))_{t \ge 0}=(\beta_1(t),\dots, \beta_{n+1}(t))_{t \ge 0}$ is a Brownian motion in $\mathbb{H}^{n+1}$, then from a classical skew-product decomposition (see Example \ref{BM on warped product}),
\begin{align*}
\frac{\beta(t)}{| \beta(t) |}=  q\left( \int_0^t \frac{ds}{ | \beta(s)|^2}\right),
\end{align*}
where $q(t)$ is a Brownian motion on the sphere $\bS^{4n+3}$. We deduce therefore from \eqref{BMsphere_2} that
\begin{align}\label{plki2}
\frac{ \sqrt{ \sum_{i=1}^n | \beta_i (t)|^2}}{|\beta_{n+1} (t)|  }=\tan r \left(  \int_0^t \frac{ds}{ | \beta(s)|^2} \right).
\end{align}
The process $\sqrt{ \sum_{i=1}^n | \beta_i (t)|^2}$ is a Bessel process with dimension $4n$ and $| \beta_{n+1} (t)|$ is a Bessel process with dimension 4. The equality \eqref{plki2} is  again a special case of a general skew-product representation of Jacobi processes due to Warren and Yor \cite{WarrenYor}.
\end{remark}

 To use the general framework described in Section \ref{horizontal BM bundle}, we now compute a connection form of the fibration.  As before, we make use of the following notations. A point $q \in \mathbb H^{n+1}$ is  represented as
 \[
 q=(q_1,\dots,q_{n+1})
 \]
 where $q_i=t_i + x_i I+y_i J+z_i K  $. The quaternion conjugate of $q_i$ is denoted
 \[
 \overline{q}_i=t_i - x_i I- y_i J- z_i K.
 \]
 The $\mathbb H$-valued one form $dq_i$ is defined as
 \[
 dq_i = dt_i +  dx_i I+dy_i J+dz_i K 
 \]
and the conjugate form $d\overline{q}_i$ is defined as
 \[
 d\overline{q}_i = dt_i -  dx_i I- dy_i J-dz_i K .
 \]

\begin{theorem}\label{connection quater fib}
Consider on $\bS^{4n+3} \subset \mathbb{H}^{n+1}$ the $\mathfrak{su}(2)$ valued one-form
\[
\theta=\frac{1}{2}\sum_{i=1}^{n+1}  \overline{q}_i  dq_i  - d\overline{q}_i \, q_i   .
\]
We have:
\begin{itemize}
\item \hspace{.1in} $g^* \theta =\mathbf{Ad}_{g^{-1}} \circ \theta$;
\item \hspace{.1in} For any $V \in \mathfrak{su}(2)$,
\[
\theta (\hat{V})=V,
\]
where $\hat V$ is the vector field on $\bS^{4n+3}$ defined by
\begin{align*}
\hat V f (q)= \lim_{t \to 0} \frac{f(  q e^{tV} )-f(q)}{t}, \quad q \in \bS^{4n+3};
\end{align*}
\item \hspace{.1in} The null space $\mathbf{Ker} \, \theta $ is the horizontal space of the submersion $\pi :  \bS^{4n+3} \to \mathbb{H}P^n$.
\end{itemize}
\end{theorem}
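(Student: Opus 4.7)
First I would verify that $\theta$ is indeed $\mathfrak{su}(2)$-valued, i.e.\ that it takes values in the imaginary quaternions. This is immediate from the symmetry: the quaternionic conjugate of $\overline{q}_i\, dq_i$ is $d\overline{q}_i\, q_i$, so each summand $\overline{q}_i\, dq_i - d\overline{q}_i\, q_i$ satisfies $\overline{u} = -u$ and is hence purely imaginary.

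For the first property, I would compute $g^*\theta$ directly from the right action $R_g(q_1,\dots,q_{n+1}) = (q_1g,\dots,q_{n+1}g)$. This gives $R_g^* dq_i = dq_i\cdot g$, $R_g^* d\overline{q}_i = \overline{g}\cdot d\overline{q}_i$, and $R_g^* q_i = q_i g$, $R_g^* \overline{q}_i = \overline{g}\,\overline{q}_i$. Factoring $\overline{g} = g^{-1}$ on the left and $g$ on the right in each summand yields
\[
R_g^*\theta = g^{-1}\Bigl[\tfrac{1}{2}\sum_{i=1}^{n+1}\overline{q}_i\, dq_i - d\overline{q}_i\, q_i\Bigr]g = \mathrm{Ad}_{g^{-1}}\circ\theta.
\]

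For the second property, note that for $V\in\mathfrak{su}(2)$ the fundamental vertical field acts on each component by right multiplication, so $\hat V(q) = (q_1V,\dots,q_{n+1}V)$, which gives $dq_i(\hat V) = q_i V$ and $d\overline{q}_i(\hat V) = \overline{V}\,\overline{q}_i = -V\overline{q}_i$. Substituting into $\theta$ and using that $V$ commutes with the real scalar $|q_i|^2$:
\[
\theta(\hat V) = \tfrac{1}{2}\sum_{i=1}^{n+1}\bigl(\overline{q}_i q_i V + V\overline{q}_i q_i\bigr) = V\sum_{i=1}^{n+1}|q_i|^2 = V,
\]
since $|q|^2 = 1$ on $\bS^{4n+3}$.

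The third property is where the key content lies, though it is essentially a linear-algebra computation. For $X = (X_1,\dots,X_{n+1}) \in T_q\mathbb{H}^{n+1}$, set $w(X) := \sum_{i=1}^{n+1}\overline{q}_i X_i \in \mathbb H$; then $\theta_q(X) = \tfrac{1}{2}(w(X) - \overline{w(X)}) = \mathrm{Im}\, w(X)$, while $X \in T_q\bS^{4n+3}$ iff $\mathrm{Re}\, w(X) = \langle q, X\rangle = 0$. On the other hand, the vertical space at $q$ is $\{\hat V(q) : V\in\mathfrak{su}(2)\} = q\cdot\mathfrak{su}(2)$, and horizontality $\langle X, qV\rangle = 0$ for all $V\in\mathfrak{su}(2)$ translates to $\mathrm{Re}(\overline{w(X)}\,V) = 0$ for $V\in\{I,J,K\}$, i.e.\ $w(X)$ is real. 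Combining, $X\in T_q\bS^{4n+3}$ lies in the horizontal space exactly when $w(X) = 0$, which is precisely the condition $\mathrm{Re}\,w(X) = 0 = \mathrm{Im}\,w(X)$; this is equivalent to $X\in T_q\bS^{4n+3}\cap\ker\theta_q$, completing the identification. The main (mild) obstacle is being careful with the real-versus-quaternionic structure when extracting the kernel characterisation; dimension counting can be invoked at the end as a sanity check, since both spaces have real dimension $4n$.
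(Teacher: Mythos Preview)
Your proof is correct and follows essentially the same approach as the paper. The only stylistic difference is that the paper verifies $\theta(\hat V)=V$ by writing out the fundamental fields $\hat I,\hat J,\hat K$ explicitly in real coordinates and checking each case, whereas your argument handles a general $V\in\mathfrak{su}(2)$ in one line; your treatment of the third point via the map $w(X)=\sum_i\overline{q}_iX_i$ is likewise a slightly cleaner packaging of the same orthogonality computation the paper carries out.
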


\begin{proof}
First, we note that if $g \in \mathbf{SU}(2)$ and $q=(q_1,\dots,q_{n+1}) \in \bS^{4n+3}$
\begin{align*}
\theta_{g \cdot q}& =\frac{1}{2}\sum_{i=1}^{n+1}  \overline{q_i g} d(q_ig)    - d(\overline{q_ig}) \, q_ig  \\
 &=\frac{1}{2} g^{-1} \left( \sum_{i=1}^{n+1}  \overline{q}_i  dq_i  - d\overline{q}_i \, q_i \right) g \\
  & =\mathbf{Ad}_{g^{-1}} \circ \theta
\end{align*}
Therefore,  $g^* \theta =\mathbf{Ad}_g \circ \theta$. Then, one can compute that
 \[
 \frac{d}{ds}f(qe^{Is})\mid_{s=0}=\sum_{i=1}^{n+1}\left( -x_i \frac{\partial f}{\partial t_i} + t_i \frac{\partial f}{\partial x_i}+ z_i \frac{\partial f}{\partial y_i}- y_i \frac{\partial f}{\partial z_i}\right),
 \]
  \[
 \frac{d}{ds}f(qe^{Js})\mid_{s=0}=\sum_{i=1}^{n+1}\left( -y_i \frac{\partial f}{\partial t_i} - z_i \frac{\partial f}{\partial x_i}+ t_i \frac{\partial f}{\partial y_i}+ x_i \frac{\partial f}{\partial z_i}\right),
 \]
 and
 \[
 \frac{d}{ds}f(qe^{Ks})\mid_{s=0}=\sum_{i=1}^{n+1}\left( -z_i \frac{\partial f}{\partial t_i} + y_i \frac{\partial f}{\partial x_i}- x_i \frac{\partial f}{\partial y_i}+ t_i \frac{\partial f}{\partial z_i}\right).
 \]
 Therefore we have
  \begin{align}\label{I1 quater}
      \hat{I}=\sum_{i=1}^{n+1}\left(-x_i\frac{\partial }{\partial t_i}+t_i\frac{\partial }{\partial x_i}+z_i\frac{\partial }{\partial y_i}-y_i\frac{\partial }{\partial z_i}\right),
  \end{align}
   \begin{align}\label{I2 quater}
 \hat{J}=\sum_{i=1}^{n+1}\left(-y_i\frac{\partial }{\partial t_i}-z_i\frac{\partial }{\partial x_i}+t_i\frac{\partial }{\partial y_i}+x_i\frac{\partial }{\partial z_i}\right).
 \end{align}
 and
  \begin{align}\label{I3 quater}
 \hat{K}=\sum_{i=1}^{n+1}\left(-z_i\frac{\partial f}{\partial t_i}+y_i\frac{\partial f}{\partial x_i}-x_i\frac{\partial f}{\partial y_i}+t_i\frac{\partial f}{\partial z_i}\right).
 \end{align}
 Thanks to the quaternionic relations $I^2=J^2=K^2=IJK=-1$, one obtains 
 \begin{align*}
 \theta (\hat{I}) &=\frac{1}{2}\sum_{i=1}^{n+1} \overline{q}_i dq_i (\hat I)  \,   -d\overline{q}_i (\hat I) \, q_i \,  \\
  &=\frac{1}{2}\sum_{i=1}^{n+1} \overline{q}_i (-x_i+t_i I+z_i J-y_i K)  \,   -  (-x_i-t_i I-z_i J+y_i K)q_i  \\
  &=\frac{1}{2}\sum_{i=1}^{n+1}   \overline{q}_i q_i I \,   + I q_i \overline{q}_i   \\
  &=I.
 \end{align*}
 Similarly $\theta (\hat{J}) =\hat{J}$ and $\theta (\hat{K}) =\hat{K}$. 
 
 Finally, let $q \in \bS^{4n+3}$. We denote by $\mathcal V_q$ the vertical space at $q$ and $\mathcal H_q$ the horizontal space at $q$ of the submersion $\pi$. By definition of the action of $\mathbf{SU}(2)$ on $\bS^{4n+3}$ we have
 \[
 \mathcal V_q= \left\{ (q_1u,\dots,q_{n+1}u), u \in \mathfrak{su}(2) \right\}.
 \]
 $\mathcal H_q$ is then the orthogonal complement of $\mathcal V_q$ inside of the tangent space to  $\bS^{4n+3}$  for the usual inner product. The orthogonal complement of $\mathcal{V}_q$ consists of the set of $v$'s such that for every $u \in \mathfrak{su}(2)$,
 \[
 \sum_{i=1}^{n+1}\overline{v}_i q_iu  - u \overline{q}_i v_i  =0.
 \]
 This condition is equivalent to
 \[
  \left( \sum_{i=1}^{n+1}\overline{v}_i q_i  \right)u =u\left( \sum_{i=1}^{n+1}  \overline{q}_i v_i \right)
 \]
 and therefore to the fact that $\sum_{i=1}^{n+1}  \overline{v}_i q_i$ is real. This is also equivalent to $v \in \mathbf{Ker} \, \theta$.
 \end{proof}

From the previous result, 
\[
\theta=\frac{1}{2}\sum_{i=1}^{n+1}  \overline{q}_i  dq_i  - d\overline{q}_i \, q_i   .
\]
is therefore the connection form of the quaternionic Hopf fibration.

To study the horizontal Brownian motion on $\bS^{4n+3}$ we will use the results of  Section \ref{skew general bundle}. For doing so, we consider the following local trivialization of the bundle $\mathbf{SU}(2) \to \bS^{4n+3} \to \mathbb{H}P^n$
\begin{align}\label{trivialization quaternionic}
\left( w_1,\dots, w_n, g \right) \to \frac{1}{\sqrt{1+|w|^2}}\left(  w_1 g,\dots,  w_n g, g \right) \in \bS^{4n+3} \subset \mathbb H^{n+1}
\end{align}
where $g \in \mathbf{SU}(2)$ and $w_i \in \mathbb{H}$. 

Consider the $\mathfrak{su}(2)$ valued one-form $\boldsymbol\alpha$ on $\mathbb{H}P^n$ which is given  in local affine coordinates by
\[
\boldsymbol{\alpha}=\frac{1}{2}\sum_{j=1}^n  \frac{\overline{w}_jdw_j-  d\overline{w}_j \, w_j}{1+|w|^2}
\]
where $|w|^2=\sum_{j=1}^n |w_j|^2$. Using the fact that $q_i=\frac{w_i g}{\sqrt{1+|w|^2}}$, a straightforward computation shows that $\boldsymbol\alpha$ is the  form (in the sense of \eqref{solder form}) associated with $\theta$. We note that we can write
\[
\boldsymbol\alpha=\frac{1}{1+|w|^2}\left(\boldsymbol\alpha_1 I +\boldsymbol\alpha_2 J +\boldsymbol\alpha_3 K \right)
\]
where the one-valued one-forms $\boldsymbol\alpha_1,\boldsymbol\alpha_2,\boldsymbol\alpha_3$ are given by
\[
\boldsymbol\alpha_1=\sum_{j=1}^n \left( -\alpha_j d\tau_j+\tau_j d\alpha_j+\gamma_jd\beta_j-\beta_jd\gamma_j \right) 
\]
\[
\boldsymbol\alpha_2=\sum_{j=1}^n \left( -\beta_j d\tau_j-\gamma_j d\alpha_j+\tau_jd\beta_j+\alpha_jd\gamma_j \right)
\]
and
\[
\boldsymbol\alpha_3=\sum_{j=1}^n \left( -\gamma_j d\tau_j+\beta_j d\alpha_j-\alpha_jd\beta_j+\tau_jd\gamma_j \right) 
\]
where as before  we use the coordinates $w_i=\tau_i+\alpha_i I+\beta_iJ+\gamma_i K$. 


Finally, to conclude the section, we provide the formula for the radial part of the horizontal Laplacian of the quaternionic Hopf fibration.

\begin{theorem}\label{radial laplacian quater hopf}
In the trivialization \eqref{trivialization quaternionic}, the horizontal Laplacian acting on functions depending only of $r=\arctan |w|$ and $g \in \mathbf{SU}(2)$ is given by
\[
\frac{\partial^2}{\partial r^2}+((4n-1)\cot r-3\tan r)\frac{\partial}{\partial r}+\tan^2 r \Delta_{\mathbf{SU}(2)}
\]
where $\Delta_{\mathbf{SU}(2)}$ is the Laplacian on $\mathbf{SU}(2)$.
\end{theorem}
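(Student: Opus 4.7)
The plan is to invoke the general formula derived in Example \ref{radial part SU2 bundle} for the radial part of the horizontal Laplacian on an $\mathbf{SU}(2)$-bundle over a rank-one symmetric base. Since $\HP^n$ is rank-one symmetric and since the compatible connection form of the quaternionic Hopf fibration is exactly the $\mathfrak{su}(2)$-valued form $\theta$ computed in Theorem \ref{connection quater fib} (with local expression $\boldsymbol{\alpha}=\boldsymbol{\alpha}_1 I+\boldsymbol{\alpha}_2 J+\boldsymbol{\alpha}_3 K$ divided by $1+|w|^2$), the example applies verbatim and yields, for $f=f(r,g)$,
\[
\Delta_{\mathcal H} f=\frac{\partial^{2} f}{\partial r^{2}}+\frac{1}{v}\frac{\partial v}{\partial r}\frac{\partial f}{\partial r}-2\hat{\boldsymbol{\alpha}}\!\left(\frac{\partial}{\partial r}\right)\frac{\partial f}{\partial r}+\sum_{l,m=1}^{3} h(\boldsymbol{\alpha}^{l},\boldsymbol{\alpha}^{m})\,Z_{l}Z_{m}f.
\]
The first two terms are, by definition, the radial part of $\Delta_{\HP^n}$ acting on radial functions, which has already been shown to equal $\frac{\partial^{2}}{\partial r^{2}}+((4n-1)\cot r-3\tan r)\frac{\partial}{\partial r}$ in the proposition just after Theorem \ref{Laplace quaternionic projective}.

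The remaining terms are then analysed by reducing to a canonical radial chart. Because the isometry group $\mathbf{Sp}(n+1)$ acts transitively on $\HP^n$ with isotropy $\mathbf{Sp}(n)\mathbf{Sp}(1)$, and the operator on both sides depends only on $r$ and $g$, it suffices to verify the identity at points on the ray $w=\rho\,e_{1}$ with $\rho=\tan r\ge 0$ real. Inspecting the explicit formulas for $\boldsymbol{\alpha}_{1},\boldsymbol{\alpha}_{2},\boldsymbol{\alpha}_{3}$, every coefficient in front of $d\tau_{1}$ vanishes at such a point (as each contains a factor of one of $\alpha_{1},\beta_{1},\gamma_{1}$, all zero on the ray). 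Since along this ray $\partial/\partial r$ is a scalar multiple of $\partial/\partial \tau_{1}$, this gives $\boldsymbol{\alpha}^{l}(\partial/\partial r)=0$ for $l=1,2,3$, so the cross term drops out.

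For the final sum, a diagonal $\mathbf{Sp}(1)$ action $w\mapsto g_{0}w g_{0}^{-1}$ (coming from simultaneous left multiplication on all homogeneous coordinates) fixes the base point and rotates $(I,J,K)$ as an orthogonal action on $\mathfrak{su}(2)$; this forces the Gram matrix $\bigl(h(\boldsymbol{\alpha}^{l},\boldsymbol{\alpha}^{m})\bigr)_{l,m}$ to be a scalar multiple of the identity. To identify the scalar it is enough to compute one diagonal entry: at $w=\rho\,e_{1}$ the vectors $\mathrm R_{1},\dots,\mathrm R_{4}$ appearing in the proof of Corollary \ref{volume HPn} are pairwise orthogonal of length $\rho$, so the cometric matrix reduces to $(1+\rho^{2})^{2}\mathrm I_{4n}$; combined with $\boldsymbol{\alpha}^{1}|_{w=\rho e_{1}}=\frac{\rho}{1+\rho^{2}}\,d\alpha_{1}$ this gives $h(\boldsymbol{\alpha}^{l},\boldsymbol{\alpha}^{l})=\rho^{2}=\tan^{2}r$. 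Noting finally that $(I,J,K)$ is orthonormal for the chosen bi-invariant metric on $\mathbf{SU}(2)$, one has $\Delta_{\mathbf{SU}(2)}=Z_{1}^{2}+Z_{2}^{2}+Z_{3}^{2}$, and the last term becomes $\tan^{2}r\,\Delta_{\mathbf{SU}(2)}f$. The main delicate step is the symmetry reduction, which must be invoked carefully to make sure that the simplification $\boldsymbol{\alpha}^{l}(\partial/\partial r)=0$ holds in the full sense needed to evaluate the operator and not merely pointwise along a particular curve; this is where rank-one symmetry of $\HP^n$ together with the $\mathbf{Sp}(n+1)$-transitivity is essential.
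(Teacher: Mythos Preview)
Your approach is the paper's first proof: both invoke the general formula of Example \ref{radial part SU2 bundle}. The difference is in the execution, and you take a more circuitous route than necessary.

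The paper observes that $\boldsymbol\alpha(\partial/\partial r)=0$ holds \emph{globally}, not just on a ray. Indeed, $\partial/\partial r$ is a positive scalar multiple of $\mathcal R_1=\sum_j(\tau_j\partial_{\tau_j}+\alpha_j\partial_{\alpha_j}+\beta_j\partial_{\beta_j}+\gamma_j\partial_{\gamma_j})$, and pairing each $\boldsymbol\alpha_l$ with this vector makes all terms cancel in pairs. Likewise, $h(\alpha^l,\alpha^m)=\tan^2 r\,\delta_{lm}$ holds at every point: a short direct computation with the cometric from Proposition \ref{metric HPn} (the covectors $\mathbf a_l$ associated to $\boldsymbol\alpha_l$ satisfy $|\mathbf a_l|^2=|w|^2$, $\mathbf a_l\cdot\mathbf a_m=0$ for $l\neq m$, and exactly one of the four inner products $\mathrm R_i\cdot\mathbf a_l$ equals $\pm|w|^2$ while the others vanish). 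No symmetry reduction is needed.

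Your symmetry reduction is valid in spirit, but there is a slip in the cometric step. At $w=\rho e_1$ the cometric is \emph{not} $(1+\rho^2)^2\,\mathrm I_{4n}$: the rank-$4$ matrix $\sum_i\mathrm R_i\mathrm R_i^*$ has eigenvalue $\rho^2$ only on the span of $\partial_{\tau_1},\partial_{\alpha_1},\partial_{\beta_1},\partial_{\gamma_1}$ and zero elsewhere, so the cometric equals $(1+\rho^2)^2$ on that four-dimensional block and only $(1+\rho^2)$ on the remaining $4(n-1)$ directions. Your final answer survives because $\alpha^1|_{\rho e_1}=\frac{\rho}{1+\rho^2}\,d\alpha_1$ happens to lie in the first block, but the justification you gave is incorrect.

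The paper also supplies an independent second proof, by explicitly computing the horizontal lifts of $\partial_{\tau_i},\partial_{\alpha_i},\partial_{\beta_i},\partial_{\gamma_i}$ and of the $\mathcal R_i$, assembling the horizontal Laplacian from these, and then restricting to functions of $(r,g)$.
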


\begin{proof}
We use the methods and results of Section \eqref{horizontal BM bundle} and give two proofs of that result.

\textbf{First proof:} We will use the formulas obtained in Example \ref{radial part SU2 bundle}. For this, we need first to compute the Riemannian distance $r$ in $\mathbb{H}P^n$ from 0 to the point with affine coordinates $(w_1,\dots,w_n)$. We can do that computation using the trivialization \eqref{trivialization quaternionic} and compute the Riemannian distance $\varrho$ in $\mathbb{S}^{4n+3}$ from the north pole to the point with coordinates 
\[
\frac{1}{\sqrt{1+|w|^2}}\left(  w_1,\dots,  w_n,  1 \right);
\]
Indeed,
\[
w\to \frac{1}{\sqrt{1+|w|^2}}\left(  w_1,\dots,  w_n,  1 \right)
\]
is a Riemannian immersion of $\mathbb{H}P^n$ into $\bS^{4n+3}$. Since $\mathbb{S}^{4n+3}$ is isometrically embedded into $\mathbb{H}^{n+1}$ one has
\[
\cos \varrho= \frac{1}{\sqrt{1+|w|^2}}.
\]
This gives $r=\varrho=\arctan |w|$. Note that by using the formula
\[
\boldsymbol\alpha =\frac{1}{2}\sum_{j=1}^n  \frac{\overline{w}_j dw_j-  d\overline{w}_j\, w_j}{1+|w|^2}
\]
we have  $\boldsymbol\alpha \left( \frac{\partial}{\partial r} \right)=0$.
Next,  it is known (see Appendix 3) that the quaternionic projective space $\mathbb{H}P^n$ is a rank-one symmetric space for which the radial part of the Laplace-Beltrami operator is given by
\[
\frac{\partial^2}{\partial r^2}+((4n-1)\cot r-3\tan r)\frac{\partial}{\partial r}.
\]
Using Proposition \ref{metric HPn}, one obtains
\[
\frac{1}{(1+|w|^2)^2}h(\A_i,\A_j)= |w|^2 \delta_{ij}= \tan^2 r\, \delta_{ij}.
\]
From Example \ref{radial part SU2 bundle}, this allows to conclude that the horizontal Laplacian on $\bS^{4n+3}$ acting on functions depending only on $r$ and $g \in \mathbf{SU}(2)$ is given by 
\[
\frac{\partial^2}{\partial r^2}+((4n-1)\cot r-3\tan r)\frac{\partial}{\partial r}+\tan^2 r \Delta_{\mathbf{SU}(2)}.
\]

\

\textbf{Second proof:} The Laplacian on $\HP^n$ is given by
\[
\Delta_{\HP^n}=(1+|w|^2) \left( \sum_{i=1}^n \left( \frac{\partial^2}{\partial \tau^2_i}+\frac{\partial^2}{\partial \alpha^2_i}+\frac{\partial^2}{\partial \beta^2_i}+\frac{\partial^2 }{\partial \gamma^2_i} \right)  +\sum_{i=1}^4 \mathcal{R}^2_i  -2\mathcal{R}_1  \right)
\]
The horizontal Laplacian $\Delta_{\mathcal H}$ is the horizontal lift of $\Delta_{\HP^n}$, we therefore need to compute how the horizontal lifts of $\frac{\partial}{\partial \tau_i}$, $\frac{\partial}{\partial \alpha_i}$, $\frac{\partial}{\partial \beta_i}$, $\frac{\partial}{\partial \gamma_i}$ and $\mathcal{R}_i$ act on functions depending on $r$ and $g$ only. The horizontal lift of $\frac{\partial}{\partial \tau_i}$ is given by

\begin{align*}
 & \frac{\partial}{\partial \tau_i}-\frac{1}{1+|w|^2}\left(\alpha_1 \left(\frac{\partial}{\partial \tau_i}\right) \hat{I} +\alpha_2 \left(\frac{\partial}{\partial \tau_i}\right) \hat{J} +\alpha_3 \left(\frac{\partial}{\partial \tau_i}\right) \hat{K} \right) \\
 =& \frac{\partial}{\partial \tau_i}-\frac{1}{1+|w|^2}\left(-\alpha_i\hat{I} -\beta_i \hat{J} -\gamma_i \hat{K} \right).
\end{align*}

where the vector fields  $\hat{I},\hat{J},\hat{K}$ are given by \eqref{I1 quater},\eqref{I2 quater},\eqref{I3 quater}. Similarly, the horizontal lift of $\frac{\partial}{\partial x_i}$ is given by
\begin{align*}
  \frac{\partial}{\partial \alpha_i}-\frac{1}{1+|w|^2}\left(\tau_i \hat{I} -\gamma_i \hat{J} +\beta_i \hat{K} \right).
\end{align*}
 The horizontal lift of $\frac{\partial}{\partial y_i}$ is given by
\begin{align*}
  \frac{\partial}{\partial \beta_i}-\frac{1}{1+|w|^2}\left(\gamma_i \hat{I} +\tau_i \hat{J} -\alpha_i \hat{K} \right).
\end{align*}
The horizontal lift of $\frac{\partial}{\partial z_i}$ is given by
\begin{align*}
  \frac{\partial}{\partial \gamma_i}-\frac{1}{1+|w|^2}\left(-\beta_i \hat{I} +\alpha_i \hat{J}+ \tau_i \hat{K} \right).
\end{align*}
The horizontal lift of $\mathcal{R}_1$ is given by
\begin{align*}
  \sum_{i=1}^n \tau_i \frac{\partial }{\partial \tau_i} +\alpha_i \frac{\partial }{\partial \alpha_i}+\beta_i \frac{\partial }{\partial \beta_i}+\gamma_i \frac{\partial }{\partial \gamma_i}.
  \end{align*}
  The horizontal lift of $\mathcal{R}_2$ is given by
\begin{align*}
  \sum_{i=1}^n \alpha_i \frac{\partial }{\partial \tau_i} -\tau_i \frac{\partial }{\partial \alpha_i}-\gamma_i \frac{\partial }{\partial \beta_i}+\beta_i \frac{\partial }{\partial \gamma_i} +\frac{|w|^2}{1+|w|^2}\hat{I}.
  \end{align*}
  The horizontal lift of $\mathcal{R}_3$ is given by
  \[
  \sum_{i=1}^n \beta_i \frac{\partial }{\partial \tau_i} +\gamma_i \frac{\partial }{\partial \alpha_i}-\tau_i \frac{\partial }{\partial \beta_i}-\alpha_i \frac{\partial }{\partial \gamma_i}+\frac{|w|^2}{1+|w|^2}\hat{J}.
 \]

 The horizontal lift of $\mathcal{R}_4$ is given by
  \[
  \sum_{i=1}^n \gamma_i \frac{\partial }{\partial \tau_i} -\beta_i \frac{\partial }{\partial \alpha_i}+\alpha_i \frac{\partial }{\partial \beta_i}-\tau_i \frac{\partial }{\partial \gamma_i}+\frac{|w|^2}{1+|w|^2}\hat{K}.
 \]

 We deduce from this that the horizontal Laplacian, which is the horizontal lift of $\Delta_{\HP^n}$ is given, after simplifications, by
  \begin{align*}
  & (1+|w|^2) \left( \sum_{i=1}^n \left( \frac{\partial^2}{\partial \tau^2_i}+\frac{\partial^2}{\partial \alpha^2_i}+\frac{\partial^2}{\partial \beta^2_i}+\frac{\partial^2 }{\partial \gamma^2_i} \right)  +\sum_{i=1}^4 \hat{\mathcal{R}}^2_i  -2\hat{\mathcal{R}}_1  \right) \\
  +& 2 (\hat{\mathcal{R}}_2 \hat{I} +\hat{\mathcal{R}}_3 \hat{J}+\hat{\mathcal{R}}_4 \hat{K}) +|w|^2 \left( \hat{I}^2+\hat{J}^2+\hat{K}^2\right),
 \end{align*}
 where the $\hat{\mathcal{R}}_i$'s are the vector fields on $\bS^{4n+3}$ given by
 \[
 \hat{\mathcal{R}}_1  = \sum_{i=1}^n \tau_i \frac{\partial }{\partial \tau_i} +\alpha_i \frac{\partial }{\partial \alpha_i}+\beta_i \frac{\partial }{\partial \beta_i}+\gamma_i \frac{\partial }{\partial \gamma_i}
 \]
 \[
 \hat{\mathcal{R}}_2  = \sum_{i=1}^n \alpha_i \frac{\partial }{\partial \tau_i} -\tau_i \frac{\partial }{\partial \alpha_i}-\gamma_i \frac{\partial }{\partial \beta_i}+\beta_i \frac{\partial }{\partial \gamma_i}
 \]
 \[
\hat{ \mathcal{R}}_3  = \sum_{i=1}^n \beta_i \frac{\partial }{\partial \tau_i} +\gamma_i \frac{\partial }{\partial \alpha_i}-\tau_i \frac{\partial }{\partial \beta_i}-\alpha_i \frac{\partial }{\partial \gamma_i}
 \]
 \[
 \hat{\mathcal{R}}_4  = \sum_{i=1}^n \gamma_i \frac{\partial }{\partial \tau_i} -\beta_i \frac{\partial }{\partial \alpha_i}+\alpha_i \frac{\partial }{\partial \beta_i}-\tau_i \frac{\partial }{\partial \gamma_i}.
 \]
 A direct computation shows then that this operator acts on function depending only on $r$ and $g$ as
 \[
 \frac{\partial^2}{\partial r^2}+((4n-1)\cot r-3\tan r)\frac{\partial}{\partial r}+\tan^2 r \Delta_{\mathbf{SU}(2)}.
 \]
\end{proof}

\subsection{Stochastic area process on $\mathbb{H}P^n$}

In view of Section \ref{Quaternionic Hopf} the following definition is natural.

\begin{definition}\label{quaternionic stochastic area sphere}
Let $(w(t))_{t \ge 0}$ be a Brownian motion on $\mathbb{H}P^n$ started at $0$\footnote{We call $0$ the point in $\mathbb{H}P^n$ with affine coordinates $w_1=0,\dots, w_{n}=0$}. The quaternionic stochastic area process of $(w(t))_{t \ge 0}$ is a process in $\mathfrak{su}(2)$ defined by
\[
\A(t)=\int_{w[0,t]} \A=\frac{1}{2}\sum_{j=1}^n \int_0^t \frac{\overline{w}_j(s) dw_j(s)-  d\overline{w}_j(s) \, w_j(s)}{1+|w(s)|^2},
\]
where the above stochastic integrals are understood in the Stratonovich, or equivalently in the It\^o sense due to the skew-symmetric structure of the form.
\end{definition}
The following theorem shows that the quaternionic stochastic area process of the Brownian motion on $\mathbb{H}P^n$ can be interpreted as the fiber motion of the horizontal Brownian motion on $\mathbb{S}^{4n+3}$.

 \begin{theorem}\label{horizon-S}
 Let $(w(t))_{t \ge 0}$ be a Brownian motion on $\mathbb{H}P^n$ started at 0, and $(\Theta(t))_{t\ge 0}$ be the solution in $\mathbf{SU}(2)$ of the stochastic differential equation
 \begin{equation}\label{eq-Maurer-Cartan-S}
 d\Theta(t) =-  \circ d\A(t) \, \Theta(t) , \quad \Theta(0)=1.
 \end{equation}
 The $\bS^{4n+3}$-valued diffusion process
 \begin{equation}\label{eq-BM-S}
 X(t)=\frac{1 }{\sqrt{1+|w(t)|^2}} \left( w(t)\Theta(t),\Theta(t) \right), \quad t \ge 0
 \end{equation}
 is the horizontal lift at the north pole of $(w(t))_{t \ge 0}$ by the submersion $\bS^{4n+3}\to \bH P^n$.
 \end{theorem}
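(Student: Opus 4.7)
The plan is to mirror the two earlier horizontal-lift results (Theorem \ref{thm-hor-BM-X} for the complex Hopf fibration and Theorem \ref{FGHT} for the complex anti-de Sitter fibration) but in the $\mathbf{SU}(2)$-valued setting, and/or simply to invoke the general skew-product construction of Theorem \ref{skew-product principal bundle}. Concretely, I would use the trivialization \eqref{trivialization quaternionic}, the connection form $\theta$ from Theorem \ref{connection quater fib}, and note that the associated base form $\boldsymbol{\alpha}$ from \eqref{solder form} is exactly the form whose stochastic line integral along $w$ equals $\A(t)$ by Definition \ref{quaternionic stochastic area sphere}. Theorem \ref{skew-product principal bundle} then gives that the horizontal lift of $w$ through the north pole is, in the trivialization, $(w(t),\Theta(t))$ with $d\Theta(t)=-\circ d\A(t)\,\Theta(t)$, which is exactly \eqref{eq-BM-S} once one reads off the trivialization map.

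Since the analogous statements in Theorems \ref{thm-hor-BM-X} and \ref{FGHT} were proved from scratch, I would also give a direct verification here. The steps are: (i) check that $X(t)\in\bS^{4n+3}$, which is immediate because $\Theta(t)\in\mathbf{SU}(2)$ implies $|\Theta(t)|=1$ and hence $|X(t)|^2=(1+|w(t)|^2)^{-1}(|w(t)|^2+1)=1$; (ii) check that $\pi(X(t))=w(t)$, which follows from $(w_i\Theta)\Theta^{-1}=w_i$ in affine coordinates; (iii) check horizontality, i.e.\ that $\theta(\circ dX)=0$ where $\theta$ is the $\mathfrak{su}(2)$-valued connection form of Theorem \ref{connection quater fib}.

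The main computation is (iii). Writing $f=(1+|w|^2)^{-1/2}$, $q_i=fw_i\Theta$ for $i\le n$ and $q_{n+1}=f\Theta$, and using $\overline{\Theta}\Theta=1$ together with $d\overline{\Theta}\,\Theta=-\overline{\Theta}\,d\Theta$, a Stratonovich computation gives
\[
\sum_{i=1}^{n+1}\bigl(\overline{q}_i\circ dq_i-\circ d\overline{q}_i\,q_i\bigr)
=\frac{\overline{\Theta}}{1+|w|^2}\Bigl(\sum_{i=1}^n \overline{w}_i\circ dw_i-\circ d\overline{w}_i\,w_i\Bigr)\Theta
+2\,\overline{\Theta}\circ d\Theta.
\]
The first term on the right is $2\,\overline{\Theta}\,\boldsymbol{\alpha}(\circ dw)\,\Theta=2\,\overline{\Theta}\circ d\A\,\Theta$, and hence
\[
\theta(\circ dX)=\overline{\Theta}\bigl(\circ d\A\,\Theta+\circ d\Theta\bigr),
\]
which vanishes precisely because $\Theta$ is the solution of \eqref{eq-Maurer-Cartan-S}. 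The scalar real coefficients arising from $f\,df$ cancel in pairs after taking the antisymmetrized combination $\overline{q}_i\,dq_i-d\overline{q}_i\,q_i$.

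The main obstacle is the non-commutativity of $\mathbb{H}$, which forces one to keep careful track of left/right multiplication throughout: the quaternionic conjugate reverses order, $\overline{w_i\Theta}=\overline{\Theta}\,\overline{w}_i$, so the off-diagonal terms of the connection form involve sandwich expressions $\overline{\Theta}(\cdot)\Theta$ that must combine correctly. Once this bookkeeping is done, horizontality reduces to the defining SDE for $\Theta$. One can either perform the computation directly along smooth curves and then extend to the Brownian case by Malliavin's transfer principle (as in the proof of Theorem \ref{skew-product principal bundle}), or argue directly using Stratonovich calculus as above. Finally, the fact that $\Theta(t)$ indeed stays in $\mathbf{SU}(2)$ is standard for stochastic differential equations driven by $\mathfrak{su}(2)$-valued semimartingales, as in Theorem \ref{bmliegroup2}.
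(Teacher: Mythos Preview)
Your proposal is correct, and your first paragraph is exactly the paper's proof: the paper simply invokes Theorem \ref{skew-product principal bundle} together with the fact that $\boldsymbol{\alpha}$ is the form \eqref{solder form} associated with the connection form $\theta$ of Theorem \ref{connection quater fib}. Your additional direct verification of horizontality (the computation showing $\theta(\circ dX)=\overline{\Theta}(\circ d\A\,\Theta+\circ d\Theta)=0$) is not in the paper but is correct and parallels the detailed proofs given earlier for Theorems \ref{thm-hor-BM-X} and \ref{FGHT}.
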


 \begin{proof}
This immediately follows from Theorem \ref{skew-product principal bundle} since that $\boldsymbol\alpha$ is the  form (in the sense of \eqref{solder form}) associated with the connection form $\theta$.. 
\end{proof}

Next we show that the fiber motion $\Theta(t)$ on $\mathbb{S}^{4n+3}$ is in fact a time-changed Brownian motion process on $\mathbf{SU}(2)$. 
\begin{theorem}\label{diff1-S}
Let $r(t)=\arctan |w(t)|$. The process $\left( r(t), \Theta(t)\right)_{t \ge 0}$ is a diffusion with generator
 \[
\frac{1}{2} \left(\frac{\partial^2}{\partial r^2}+((4n-1)\cot r-3\tan r)\frac{\partial}{\partial r}+\tan^2 r \Delta_{\mathbf{SU}(2)} \right).
 \]
 As a consequence the following equality in distribution holds
 \begin{equation}\label{eq-mp-S}
\left( r(t) ,\Theta(t) \right)_{t \ge 0}=\left( r(t),\beta \left( \int_0^t \tan^2 r(s)ds\right)\right)_{t \ge 0},
\end{equation}
where $(\beta(t))_{t \ge 0}$ is a Brownian motion process on $\mathbf{SU}(2)$ independent from $r$.
\end{theorem}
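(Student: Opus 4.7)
The plan is to combine Theorem \ref{horizon-S}, which realizes $(r(t),\Theta(t))$ as a function of the horizontal Brownian motion $X(t)$ on $\bS^{4n+3}$, with the explicit formula of Theorem \ref{radial laplacian quater hopf} for the restriction of the horizontal Laplacian to radial-fiber functions. First I would observe that, in the trivialization \eqref{trivialization quaternionic}, $X(t)$ is parametrized by the pair $(w(t),\Theta(t))$, and that $r(t)=\arctan|w(t)|$ depends only on $w(t)$. Since the quaternionic Hopf fibration has totally geodesic fibers, $X(t)$ is the diffusion on $\bS^{4n+3}$ with generator $\tfrac{1}{2}\Delta_{\mathcal H}$. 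For any smooth $f(r,g)$ on $[0,\pi/2)\times\mathbf{SU}(2)$, with $\tilde f$ its pullback to the sphere through the trivialization, It\^o's formula and Theorem \ref{radial laplacian quater hopf} yield that
\[
f(r(t),\Theta(t))-\frac{1}{2}\int_0^t \mathcal L f(r(s),\Theta(s))\,ds
\]
is a local martingale, where
\[
\mathcal L=\frac{\partial^2}{\partial r^2}+((4n-1)\cot r-3\tan r)\frac{\partial}{\partial r}+\tan^2 r\,\Delta_{\mathbf{SU}(2)}.
\]
Because $\mathcal L f$ is again a function of $(r,g)$ alone, the martingale problem identifies $(r(t),\Theta(t))_{t\ge 0}$ as a Markov diffusion with generator $\tfrac{1}{2}\mathcal L$.

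For the skew-product decomposition, I would exploit the warped-product structure $\mathcal L = L_r + \tan^2(r)\,\Delta_{\mathbf{SU}(2)}$, where $L_r$ involves only the radial variable and $\Delta_{\mathbf{SU}(2)}$ is (twice) the generator of Brownian motion on $\mathbf{SU}(2)$ with its bi-invariant metric. Taking $f=f(r)$ in the previous step recovers the Jacobi dynamics for $r(t)$ alone, with generator $\tfrac{1}{2}L_r$. Introducing the additive functional $A(t)=\int_0^t \tan^2 r(s)\,ds$ and its right-inverse $\tau$, I would define $\beta(u):=\Theta(\tau(u))$; a direct application of the martingale problem for $(r,\Theta)$ to test functions of the form $\varphi(g)$ shows that, after the time change, $\varphi(\beta(u))-\tfrac{1}{2}\int_0^u\Delta_{\mathbf{SU}(2)}\varphi(\beta(v))\,dv$ is a local martingale, so $\beta$ is a Brownian motion on $\mathbf{SU}(2)$ started at the identity.

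The delicate step, which I expect to be the main obstacle, is establishing the independence of $\beta$ from $r$. The key observation is that the generator $\mathcal L$ contains no cross terms between $\partial/\partial r$ and the vertical directions, so the quadratic covariation between the martingale part driving $r(t)$ and that driving $\Theta(t)$ vanishes. After the time change this orthogonality persists, and a non-commutative L\'evy-type characterization for $\mathbf{SU}(2)$-valued Brownian motion then produces a $\mathbf{SU}(2)$-Brownian motion $\beta$ whose natural filtration is independent from $\sigma(r(s),s\ge 0)$. I would model this final argument on the analogous proofs of Lemma \ref{rad heisenberg} and Theorem \ref{skew complex hopf}, with the adaptation that the fiber is the non-abelian Lie group $\mathbf{SU}(2)$ rather than $\mathbb R$ or $\mathbb S^1$; this requires replacing the scalar L\'evy characterization by its Lie-group analogue, using that the generator of Brownian motion on $\mathbf{SU}(2)$ is the bi-invariant Laplacian.
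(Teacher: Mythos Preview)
Your proposal is correct and follows essentially the same route as the paper, which simply writes ``This follows from Theorem \ref{radial laplacian quater hopf}.'' You have filled in the details the paper leaves implicit: the identification of $(r(t),\Theta(t))$ with the radial-fiber coordinates of the horizontal Brownian motion via Theorem \ref{horizon-S}, and the application of Theorem \ref{radial laplacian quater hopf} through the martingale problem.

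One remark on the skew-product half: the independence argument you sketch (time-change $\Theta$ and then prove the resulting $\beta$ is independent of $r$ via vanishing covariation and a Lie-group L\'evy characterization) is more work than necessary. The paper's implicit argument, modeled on Example \ref{BM on warped product}, goes the other way: start with an independent pair consisting of the Jacobi diffusion $(r(t))_{t\ge0}$ and an $\mathbf{SU}(2)$-Brownian motion $(\beta(t))_{t\ge0}$, form the process $\bigl(r(t),\beta(\int_0^t\tan^2 r(s)\,ds)\bigr)$, and check directly that it has generator $\tfrac12\mathcal L$. Uniqueness in law for the martingale problem then gives the equality in distribution. This avoids any non-abelian L\'evy characterization and is the standard warped-product mechanism the monograph invokes throughout.
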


\begin{proof}
This follows from Theorem \ref{radial laplacian quater hopf}.
\end{proof}

\begin{corollary}\label{diff2-S}
Let $r(t)=\arctan |w(t)|$. The process $\left( r(t), \A(t)\right)_{t \ge 0}$ is a diffusion with generator
 \[
\frac{1}{2} \left(\frac{\partial^2}{\partial r^2}+((4n-1)\cot r-3\tan r)\frac{\partial}{\partial r}+\tan^2 r \Delta_{\mathfrak{su}(2)} \right).
 \]
 As a consequence the following equality in distribution holds
 \begin{equation}\label{eq-mp-S}
\left( r(t) ,\A(t) \right)_{t \ge 0}=\left( r(t),\gamma\left(\int_0^t \tan^2 r(s)ds\right)\right)_{t \ge 0},
\end{equation}
where $(\gamma(t))_{t \ge 0}$ is a Brownian motion  on $\mathfrak{su}(2)$ independent from $r$.
\end{corollary}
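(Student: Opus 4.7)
The plan is to compute the generator of $(r(t),\A(t))$ directly, in parallel with the proof of Theorem \ref{diff1-S}. Decompose $\A=\A_1 I+\A_2 J+\A_3 K$, so that each real component $\A_k$ is a stochastic line integral of a real-valued one-form on $\mathbb{H}P^n$ against the Brownian motion $w$. Since the area form $\boldsymbol{\alpha}$ is skew-symmetric in the precise sense recalled in Definition \ref{quaternionic stochastic area sphere}, these Stratonovich integrals coincide with their It\^o versions; hence $\A$ is a continuous local martingale valued in $\mathfrak{su}(2)\cong \mathbb{R}^3$.

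First I would compute the quadratic covariations. Denoting by $\eta_1,\eta_2,\eta_3$ the three real-valued components of $\boldsymbol{\alpha}$ (so that $\A_k=\int \eta_k\,\circ dw$), and using that $w$ is a Brownian motion on $(\mathbb{H}P^n,h)$ with cometric described by Proposition \ref{metric HPn}, standard stochastic calculus yields
$$d\langle \A_k,\A_l\rangle(t)=h(\eta_k,\eta_l)(w(t))\,dt,\qquad d\langle \A_k,r\rangle(t)=h(\eta_k,dr)(w(t))\,dt.$$
The key identities $h(\eta_k,\eta_l)=\delta_{kl}\tan^2 r$ and $h(\eta_k,dr)=0$ can be extracted directly from Theorem \ref{radial laplacian quater hopf}: matching the coefficient of $\Delta_{\mathbf{SU}(2)}=\hat I^2+\hat J^2+\hat K^2$ in the radial horizontal Laplacian, via the general formula of Example \ref{radial part SU2 bundle}, forces the diagonal identity, while Gauss' lemma together with the $\mathbf{Sp}(n+1)$-invariance of the Fubini--Study metric on $\mathbb{H}P^n$ forces the vanishing of the cross term.

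Once these quadratic variation formulas are in hand, It\^o's formula applied to a smooth test function $F(r,z)$ with $z\in\mathfrak{su}(2)$ yields precisely the stated generator $\frac{1}{2}\bigl(\partial_r^2+((4n-1)\cot r-3\tan r)\partial_r+\tan^2 r\,\Delta_{\mathfrak{su}(2)}\bigr)$. The skew-product representation $\A(t)=\gamma\bigl(\int_0^t \tan^2 r(s)\,ds\bigr)$ then follows by a Dambis--Dubins--Schwarz / Knight time-change argument applied componentwise to the orthogonal $\mathbb{R}^3$-valued martingale $\A$, and the vanishing of $\langle \A_k,r\rangle$ guarantees the independence of $\gamma$ from $r$. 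The main (mild) obstacle is the algebraic verification that $h(\eta_k,\eta_l)=\delta_{kl}\tan^2 r$, which is essentially already buried inside the proof of Theorem \ref{radial laplacian quater hopf}; it can alternatively be checked by hand using the explicit formulas for $\boldsymbol{\alpha}_1,\boldsymbol{\alpha}_2,\boldsymbol{\alpha}_3$ in real affine coordinates together with the cometric matrix derived just after Corollary \ref{volume HPn}.
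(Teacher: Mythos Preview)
Your approach is correct and takes a genuinely different route from the paper. The paper deduces Corollary~\ref{diff2-S} from Theorem~\ref{diff1-S} by inverting the Maurer--Cartan relation: since $d\Theta=-\circ d\A\,\Theta$, one has $\A(t)=-\int_0^t\circ d\Theta(s)\,\Theta(s)^{-1}$; substituting $\Theta=\beta\bigl(\int_0^\cdot\tan^2 r\bigr)$ and using that $u\mapsto-\int_0^u\circ d\beta\,\beta^{-1}$ is an $\mathfrak{su}(2)$-valued Brownian motion gives the result in two lines. You instead bypass the $\mathbf{SU}(2)$-valued process $\Theta$ entirely and compute the generator of $(r,\A)$ directly from the quadratic covariations. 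Both proofs ultimately rest on the same geometric identities $h(\eta_k,\eta_l)=\delta_{kl}\tan^2 r$ and $\eta_k(\partial_r)=0$ that underlie Theorem~\ref{radial laplacian quater hopf}; your version avoids any group-valued stochastic calculus, while the paper's is shorter once Theorem~\ref{diff1-S} is in hand.

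One point deserves tightening. The inference ``Stratonovich $=$ It\^o, hence $\A$ is a local martingale'' is not valid as written: the Brownian motion $w$ on $\mathbb{H}P^n$ carries a nonzero drift vector field $b$ (the first-order part of $\frac12\Delta_{\mathbb{H}P^n}$), so even the It\^o integral $\int\eta_k\,dw$ has the finite-variation piece $\int\eta_k(b)\,dt$. What saves you is that $b$ is purely radial --- from Theorem~\ref{Laplace quaternionic projective} one finds $b=-2(1+|w|^2)\,\mathcal{R}_1$ --- and each $\eta_k$ annihilates the radial direction; this is exactly the observation $\boldsymbol\alpha(\partial/\partial r)=0$ recorded in the first proof of Theorem~\ref{radial laplacian quater hopf}. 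An alternative justification is the symmetry argument: $\mathbf{Sp}(n)$-invariance forces $d^*\eta_k$ to depend only on $|w|$, while the isotropy $\mathbf{Sp}(1)$ rotates $(\eta_1,\eta_2,\eta_3)$ by the adjoint representation and fixes $|w|$, so $d^*\eta_k$ is an $\mathrm{Ad}$-fixed vector in $\mathfrak{su}(2)$ and hence zero. Once this vanishing-drift step is made explicit, your It\^o-formula computation and the DDS/Knight time-change go through as you describe.
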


\begin{proof}
From the previous theorem,
\begin{align*}
\A (t)&=- \int_0^t   \circ d\Theta (s) \, \Theta (s)^{-1} \\
 & =-\int_0^{t}  \circ d\beta \left(\int_0^s \tan^2 r(u)du\right) \,  \beta \left(\int_0^s \tan^2 r(u)du\right)^{-1} \\
 &=-\int_0^{\int_0^t \tan^2 r(u)du}  \circ d\beta \left(s\right) \,  \beta \left(s\right)^{-1}.
\end{align*}
Since
\[
\gamma (t) :=- \int_0^{t}   \circ d\beta (s) \, \beta(s)^{-1},  \quad t \geq 0
\]
is a Brownian motion  on $\mathfrak{su}(2)$. The proof is complete.
\end{proof}

\subsection{Horizontal heat kernel}

In this section, we study the distribution of the horizontal Brownian motion $(X(t))_{t \ge 0}$ on $\mathbb S^{4n+3}$. From Theorem \eqref{horizon-S}, we have that
\begin{align*}
 \mathbb{E}( f(X(t)))& =\mathbb{E} \left[ f \left( \frac{1 }{\sqrt{1+|w(t)|^2}} \left( w(t)\Theta(t),\Theta(t) \right) \right) \right]
 \end{align*}
 where $(\Theta(t))_{t\ge 0}$ is the solution in $\mathbf{SU}(2)$ of the stochastic differential equation
 \begin{equation*}
 d\Theta(t) =-  \circ d\A(t) \, \Theta(t) , \quad \Theta(0)=1,
 \end{equation*}
 and, as before, $\A(t)=\frac{1}{2}\sum_{j=1}^n \int_0^t \frac{\overline{w}_j(s) dw_j(s)-  d\overline{w}_j(s) \, w_j(s)}{1+|w(s)|^2}$ is the generalized stochastic area process. Thanks to Theorem \ref{diff1-S} one can then write
 \begin{align*}
  & \mathbb{E} \left[ f \left( \frac{1 }{\sqrt{1+|w(t)|^2}} \left( w(t)\Theta(t),\Theta(t) \right) \right) \right] \\
  =&  \int_{\mathbb{H}P^n} \int_{\mathbf{SU}(2)} f \left( \frac{1 }{\sqrt{1+|w|^2}} \left( wg,g\right) \right) k_{t/2} (\arctan |w|, g ) d \mu (w,g)
 \end{align*}
 where $\mu$ is the Riemannian volume measure on $\mathbb S^{4n+3}$ and $k_t(r,g)$, $r \in [0,\pi/2), g \in \mathbf{SU}(2)$ is the heat kernel of the operator
 \[
\frac{\partial^2}{\partial r^2}+((4n-1)\cot r-3\tan r)\frac{\partial}{\partial r}+\tan^2 r \Delta_{\mathbf{SU}(2)}.
 \]
 We now note that the Lie group $\mathbf{SU}(2)$ with its bi-invariant Riemannian metric  is isometric to the 3-dimensional sphere $\mathbb{S}^3$ equipped with its standard metric. Therefore, $k_t(r,g)=p_t(r, d(e,g))$ where $d(e,g)$ is the distance in $\mathbf{SU}(2)$  from the identity $e$ to $g$ and $p_t(r, \eta)$, $r\in[0,\frac{\pi}{2}]$, $ \eta\in[0,\pi]$ is the heat kernel of the operator
 \[
 L=\frac{\partial^2}{\partial r^2}+((4n-1)\cot r-3\tan r)\frac{\partial}{\partial r}+\tan^2 r \left(\frac{\partial^2}{\partial \eta^2}+2\cot\eta\frac{\partial}{\partial \eta}  \right).
 \]
 
The invariant and symmetric measure of $L$ is explicitly given by
\[
d\mu_r=\frac{8\pi^{2n+1}}{\Gamma(2n)}(\sin r)^{4n-1}(\cos r)^3(\sin\eta)^2drd\eta.
\]
We note that the normalization constant is chosen here in such a way that
\[
\int_{0}^{\pi}\int_0^{\frac{\pi}{2}}d\mu_r=\mu(\bS^{4n+3})=\frac{2\pi^{2n+2}}{\Gamma (2n+2)},
\]
so that the following change of formula holds
\[
 \int_{\mathbb{H}P^n} \int_{\mathbf{SU}(2)} f ( \arctan |w|, d (e,g) ) d \mu (w,g)=\int_0^{\pi}  \int_0^{\frac{\pi}{2}} f(r,\eta) d\mu_r (r,\eta)
\]
for any function $f$.

\subsubsection{Spectral decomposition of the heat kernel}\label{spectral}

In this section, we derive the spectral decomposition of the subelliptic heat kernel of the heat semigroup $P_t=e^{tL}$, where as above
\[
 L=\frac{\partial^2}{\partial r^2}+((4n-1)\cot r-3\tan r)\frac{\partial}{\partial r}+\tan^2 r \left(\frac{\partial^2}{\partial \eta^2}+2\cot\eta\frac{\partial}{\partial \eta}  \right).
 \]
\begin{proposition}
For $t>0$, $r\in[0,\frac{\pi}{2}]$, $ \eta\in[0,\pi]$, the horizontal heat kernel is given by
\begin{equation}\label{pt}
p_t(r,\eta)=\sum_{m=0}^{+\infty}\sum_{k=0}^{\infty}\alpha_{k,m}e^{-4[k(k+2n+m+1)+nm]t}\frac{\sin (m+1)\eta}{\sin \eta}(\cos r)^mP_k^{2n-1,m+1}(\cos 2r)
\end{equation}
where $\alpha_{k,m}=\frac{\Gamma(2n)}{2\pi^{2n+2}}(2k+m+2n+1)(m+1){k+m+2n\choose 2n-1}$ and 
\[
P_k^{2n-1,m+1}(x)=\frac{(-1)^k}{2^kk!(1-x)^{2n-1}(1+x)^{m+1}}\frac{d^k}{dx^k}\left((1-x)^{2n-1+k}(1+x)^{m+1+k} \right)
\] 
is a Jacobi polynomial.
\end{proposition}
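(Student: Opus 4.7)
The plan is to mimic the complex case (Proposition \ref{heat hopf fibration}) but with the more subtle vertical operator $\Delta_{\mathbf{SU}(2)}$ replaced by its radial part $\partial_\eta^2 + 2\cot\eta\,\partial_\eta$. First I would perform a separation of variables by noting that the angular operator $\partial_\eta^2 + 2\cot\eta\,\partial_\eta$ is the radial Laplacian of $\mathbf{SU}(2)\simeq \mathbb{S}^3$; its $L^2$-orthogonal eigenfunctions are the (rescaled) characters
\[
\chi_m(\eta)=\frac{\sin(m+1)\eta}{\sin\eta},\qquad m\geq 0,
\]
with eigenvalues $-m(m+2)$. Writing $p_t(r,\eta)=\sum_{m\geq 0}\chi_m(\eta)\phi_m(t,r)$ and substituting into $\partial_t p_t=Lp_t$ reduces the problem to the family of one-dimensional parabolic equations
\[
\partial_t \phi_m=\partial_r^2\phi_m+((4n-1)\cot r-3\tan r)\partial_r\phi_m-m(m+2)\tan^2r\,\phi_m.
\]

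Next I would get rid of the potential $-m(m+2)\tan^2 r$ by the ansatz $\phi_m(t,r)=e^{-2nm\cdot 2t}(\cos r)^m g_m(t,\cos 2r)$; a direct but careful computation (using $\sec^2r=1+\tan^2r$) shows that this transforms the equation into $\partial_t g_m=4\Psi_m g_m$ with
\[
\Psi_m=(1-x^2)\partial_x^2+[(m-2n+2)-(m+2n+2)x]\partial_x,
\]
after the change of variable $x=\cos 2r$. This is the Jacobi operator with parameters $\alpha=2n-1,\ \beta=m+1$, whose polynomial eigenvectors are the Jacobi polynomials $P_k^{2n-1,m+1}$ with eigenvalues $-k(k+2n+m+1)$. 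Combining this eigenvalue with the shift $2nm\cdot 2=4nm$ produced by the ansatz yields the claimed spectrum $\lambda_{k,m}=4[k(k+2n+m+1)+nm]$, so that
\[
p_t(r,\eta)=\sum_{m,k\geq 0}\alpha_{k,m}\,e^{-4[k(k+2n+m+1)+nm]t}\,\frac{\sin(m+1)\eta}{\sin\eta}(\cos r)^m P_k^{2n-1,m+1}(\cos 2r)
\]
for constants $\alpha_{k,m}$ still to be determined.

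Finally, the constants are identified from the initial condition $\int p_t f\,d\mu_r\to f(0,0)$ as $t\to 0^+$ for smooth $f$. The system $\{\chi_m(\eta)(\cos r)^m P_k^{2n-1,m+1}(\cos 2r)\}$ is orthogonal in $L^2(d\mu_r)$: the $\chi_m$'s are orthogonal in $L^2((0,\pi),\sin^2\eta\,d\eta)$ (giving a factor $\pi/2$), while the change of variable $x=\cos 2r$ turns the remaining $r$-integral into the standard orthogonality integral of Jacobi polynomials on $[-1,1]$ with weight $(1-x)^{2n-1}(1+x)^{m+1}$. Computing the resulting squared norm and using the evaluations $\chi_m(0)=m+1$ and $P_k^{2n-1,m+1}(1)=\binom{k+2n-1}{k}$, one matches the series with $f(0,0)$ coefficient-by-coefficient; the arithmetic collapses to the stated value of $\alpha_{k,m}$.

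The only delicate step is the algebraic one: identifying $\alpha=2n-1$, $\beta=m+1$ from the correct transformed Jacobi equation, and then pushing the Jacobi orthogonality constants through the change of variable $x=\cos 2r$ to match the total normalization $\frac{8\pi^{2n+1}}{\Gamma(2n)}$ in $d\mu_r$. The rest is a straightforward adaptation of the arguments already carried out in the complex case.
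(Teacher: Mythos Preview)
Your proposal is correct and follows essentially the same route as the paper: expand in the $\mathbf{SU}(2)$ characters $\chi_m(\eta)=\sin((m+1)\eta)/\sin\eta$, use the ansatz $\phi_m=e^{-4nmt}(\cos r)^m g_m(t,\cos 2r)$ to reduce to the Jacobi equation with parameters $(2n-1,m+1)$, and fix the constants $\alpha_{k,m}$ from the initial condition via the orthogonality of the $\chi_m$'s and of the Jacobi polynomials. The paper's proof carries out exactly these steps in the same order.
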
 
\begin{proof}
The idea is to   expand the subelliptic kernel in spherical harmonics as follows, 
\[
p_t(r,\eta)=\sum_{m=0}^{+\infty}\frac{\sin (m+1)\eta}{\sin \eta}\phi_m(t,r)
\]
where $\frac{\sin (m+1)\eta}{\sin \eta}$ is the eigenfunction of $\frac{\partial^2}{\partial \eta^2}+2\cot \eta\frac{\partial}{\partial \eta}$ which is associated to the eigenvalue $-m(m+2)$. To determine $\phi_m$, we use  $\frac{\partial p_t}{\partial t}=Lp_t$ and find that
\[
\frac{\partial\phi_m}{\partial t}=\frac{\partial^2\phi_m}{\partial r^2}+\left((4n-1)\cot r-3\tan r \right)\frac{\partial\phi_m}{\partial r}-m(m+2)\tan^2r \, \phi_m.
\]
Let $\phi_m(t,r)=e^{-4nmt}(\cos r)^m\varphi_m(t,r)$, then $\varphi_m(t,r)$ satisfies the equation
\[
\frac{\partial\varphi _m}{\partial t}=\frac{\partial^2\varphi_m}{\partial r^2}+[(4n-1)\cot r-(2m+3)\tan r]\frac{\partial\varphi_m}{\partial r}.
\]
We now use a change of variable and denote by $\varphi_m(t,r)=g_m(t,\cos 2r)$, then we have that $g_m(t,x)$ satisfies the equation
\[
\frac{\partial g_m}{\partial t}=4(1-x^2)\frac{\partial^2 g_m}{\partial x^2}+4[(m+2-2n)-(2n+m+2)x]\frac{\partial g_m}{\partial x}.
\]
We denote $\Psi_m=(1-x^2)\frac{\partial^2 }{\partial x^2}+[(m+2-2n)-(2n+m+2)x]\frac{\partial}{\partial x}$, and find that
\[
\frac{\partial g_m}{\partial t}=4\Psi_m(g_m).
\]
The equation $$\Psi_m(g_m)+k(k+2n+m+1)g_m=0$$ is a Jacobi differential equation for all $k\geq 0$.  We denote the eigenvector of $\Psi_m$ corresponding to the eigenvalue $-k(k+2n+m+1)$ by $P_k^{2n-1,m+1}(x)$, then it is known that  $P_k^{2n-1,m+1}(x)$ is the Jacobi polynomial
\[
P_k^{2n-1,m+1}(x)=\frac{(-1)^k}{2^kk!(1-x)^{2n-1}(1+x)^{m+1}}\frac{d^k}{dx^k}\left((1-x)^{2n-1+k}(1+x)^{m+1+k} \right).
\] 
At the end we can  write the spectral decomposition as
\[
p_t(r,\eta)=\sum_{m=0}^{+\infty}\sum_{k=0}^{\infty}\alpha_{k,m}e^{-4[k(k+2n+m+1)+nm]t}\frac{\sin (m+1)\eta}{\sin \eta}(\cos r)^mP_k^{2n-1,m+1}(\cos 2r)
\]
where the constants $\alpha_{k,m}$'s are  determined by considering the initial condition.

Note that $\left(P_k^{2n-1,m+1}(x)(1+x)^{(m+1)/2}\right)_{k\geq0}$ is an orthogonal basis of the Hilbert space $L^2([-1,1],(1-x)^{2n-1}dx)$, more precisely
\begin{align*}
 & \int_{-1}^1 P_k^{2n-1,m+1}(x)P_l^{2n-1,m+1}(x)(1-x)^{2n-1}(1+x)^{m+1}dx \\
 =&\frac{2^{2n+m+1}}{2k+m+2n+1}\frac{\Gamma(k+2n)\Gamma(k+m+2)}{\Gamma(k+1)\Gamma(k+2n+m+1)}\delta_{kl}.
\end{align*}
For a smooth function $f(r, \theta)$, we can write
\[
f(r, \eta)=\sum_{m=0}^{+\infty}\sum_{k=0}^{+\infty} b_{k,m}\frac{\sin (m+1)\eta}{\sin \eta}P_k^{2n-1,m+1}(\cos 2r)\cdot(\cos r)^{m}
\]
where the $ b_{k,m}$'s are constants. We then obtain 
\[
f(0,0)=\sum_{m=0}^{+\infty}\sum_{k=0}^{+\infty} b_{k,m}(m+1)P_k^{2n-1,m+1}(1).
\]
Observe that $P_k^{2n-1,m+1}(1)={2n-1+k\choose k}$.
Moreover, since
\begin{eqnarray*}
& &\int_{0}^\pi\int_0^\frac{\pi}{2} p_t(r, \eta){f(-r, -\eta)}d\mu_r \\
&=&\frac{4\pi^{2n+2}}{\Gamma(2n)}\sum_{m=0}^{+\infty}\sum_{k=0}^{+\infty}\alpha_{k,m}b_{k,m}e^{-\lambda_{k,m}t}
\left(\int_0^{\frac{\pi}{2}}(\cos r)^{2m+3}P_k^{2n-1,m+1}(\cos 2r)^2(\sin r)^{4n-1}dr\right)\\
&=&\frac{2\pi^{2n+2}}{\Gamma(2n)} \sum_{m=0}^{+\infty}\sum_{k=0}^{+\infty}\frac{\alpha_{k,m}b_{k,m}e^{-\lambda_{m,k}t}}{2k+m+2n+1}\frac{\Gamma(k+2n)\Gamma(k+m+2)}{\Gamma(k+1)\Gamma(k+2n+m+1)}
\end{eqnarray*}
where $\lambda_{k,m}=4k(k+2n+m+1)+nm$, we obtain that
\[
\lim_{t\rightarrow 0}\int_{0}^\pi\int_0^\frac{\pi}{2} p_t(r, \eta)f(-r, -\eta)d\mu_r= f(0,0)
\]
as soon as $\alpha_{k,m}=\frac{\Gamma(2n)}{2\pi^{2n+2}}(2k+m+2n+1)(m+1){k+m+2n\choose 2n-1}$.
\end{proof}

Comparing this expansion with the one obtained in Proposition \ref{heat hopf fibration}, we obtain a very nice formula relating $p_t$ to  the heat kernel of the complex Hopf fibration
 \[
  \mathbb{S}^1 \to \bS^{4n+1} \to  \mathbb{C}P^{2n}.
 \]
More precisely, we proved in Proposition \ref{heat hopf fibration} that  the subelliptic kernel $p_t^{CR}(r, \theta)$ of the complex Hopf  fibration writes:
\begin{align*}
p_t^{CR}(r, \theta)=\frac{\Gamma(2n)}{2\pi^{2n+1}}\sum_{m=-\infty}^{+\infty}&\sum_{k=0}^{+\infty} (2k+|m|+2n){k+|m|+2n-1\choose 2n-1}\\
&\quad\quad \cdot e^{-\lambda_{k,m}t+im \theta}(\cos r)^{|m|}P_k^{2n-1,|m|}(\cos 2r),
\end{align*}
where $\lambda_{k,m}=4k(k+|m|+2n)+4|m|n$, and $r$, $\theta$ are the Riemannian distance on $\mathbb{C}P^{2n}$, $\bS^1$ respectively. By comparing the kernels on $\bS^{4n+3}$ and $\bS^{4n+1}$, we easily obtain that
\begin{proposition}
Let $p_t^{CR}(r, \theta)$ and $p_t(r, \theta)$ denote the horizontal heat kernels on respectively the complex sphere $\mathbb S^{4n+1}$ and the quaternionic sphere $\mathbb S^{4n+3}$, then for $r\in[0,\frac{\pi}{2})$, $ \theta\in[0,\pi]$,
\begin{equation}\label{pt-qt}
-\frac{e^{4nt}}{2\pi\sin \theta\cos r}\frac{\partial}{\partial \theta} p_t^{CR}(r, \theta)=p_t(r,\theta).
\end{equation}
\end{proposition}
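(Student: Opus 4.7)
The proof is essentially a term-by-term comparison of the two spectral decompositions already at hand, namely the one for $p_t^{CR}$ from Proposition~\ref{heat hopf fibration} (applied with $n$ replaced by $2n$) and the one for $p_t$ from \eqref{pt}. The plan is to differentiate $p_t^{CR}$ in $\theta$, use the symmetry $m\leftrightarrow -m$ to convert the exponentials $e^{im\theta}$ into sines, shift the summation index, and then match coefficients against \eqref{pt}.

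More precisely, I would first differentiate the series for $p_t^{CR}(r,\theta)$ term by term (which is justified for $t>0$ by the Gaussian-type decay in $k$ and $m$ coming from the factor $e^{-\lambda_{k,m}t}$). The $m=0$ term vanishes, and after pairing $m$ with $-m$ one obtains
\[
\frac{\partial p_t^{CR}}{\partial\theta}(r,\theta)=-\frac{\Gamma(2n)}{\pi^{2n+1}}\sum_{m=1}^{+\infty}\sum_{k=0}^{+\infty}m(2k+m+2n)\binom{k+m+2n-1}{2n-1}e^{-\lambda_{k,m}t}\sin(m\theta)\,(\cos r)^{m}P_k^{2n-1,m}(\cos 2r),
\]
with $\lambda_{k,m}=4k(k+m+2n)+4mn$. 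The next step is to relabel $m\mapsto m+1$ so that the exponent $|m|$ on $\cos r$ and the upper index on the Jacobi polynomial become the $m+1$ appearing in \eqref{pt}. Under this shift one has $\lambda_{k,m+1}=4k(k+m+2n+1)+4n(m+1)=4[k(k+2n+m+1)+nm]+4n$, so the factor $e^{-4nt}$ that pops out is exactly cancelled by the prefactor $e^{4nt}$ on the left-hand side of the target identity.

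It then remains to divide by $-2\pi\sin\theta\cos r$ and check that the numerical prefactor
\[
\frac{\Gamma(2n)}{\pi^{2n+1}}\cdot\frac{1}{2\pi}\cdot(m+1)(2k+m+2n+1)\binom{k+m+2n}{2n-1}
\]
coincides with $\alpha_{k,m}=\tfrac{\Gamma(2n)}{2\pi^{2n+2}}(2k+m+2n+1)(m+1)\binom{k+m+2n}{2n-1}$, which is immediate. Matching these against the decomposition \eqref{pt} term by term yields the claimed formula.

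There is essentially no hard step: once the $e^{4nt}$ is recognized as absorbing the shift $n\mapsto n+1$ inside $\lambda_{k,m}$, and once the factor $(m+1)/\sin\theta$ is produced by the $m$-derivative together with the pairing $m\leftrightarrow -m$, everything is a routine identification of coefficients. The only place requiring a brief justification is the termwise differentiation of the Fourier--Jacobi series, for which the exponential decay in $t>0$ suffices, and the validity of the identity at $\theta=0$, which is understood as the continuous extension $\lim_{\theta\to0}\sin(m\theta)/\sin\theta=m$.
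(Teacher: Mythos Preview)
Your proof is correct and follows exactly the approach the paper takes: the paper simply says ``By comparing the kernels on $\mathbb S^{4n+3}$ and $\mathbb S^{4n+1}$, we easily obtain'' the identity, which is precisely your term-by-term matching of the two spectral decompositions after differentiating in $\theta$, pairing $\pm m$, and shifting $m\mapsto m+1$. The only difference is that you spell out the index shift and the coefficient check explicitly, whereas the paper leaves these details to the reader.
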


\subsubsection{Integral representation of the horizontal heat kernel}

Our goal in this section is to provide an alternative representation of $p_t(r,\eta)$.

From \eqref{decomposition laplace} we have that
\[
L=\tilde{\Delta}_{\bS^{4n+3}}-\tilde{\Delta}_{\mathbf{SU}(2)},
\]
where $\tilde{\Delta}_{\bS^{4n+3}}$ denotes here the Laplace-Beltrami operator on $\bS^{4n+3}$ acting on functions depending only on $(r,\eta)$ and $\tilde{\Delta}_{\mathbf{SU}(2)}$ is the radial part of the Laplacian on $\mathbf{SU}(2) \simeq \mathbb{S}^3$. Explicitly, we have
\[
\tilde{\Delta}_{\bS^{4n+3}}=\frac{\partial^2}{\partial r^2}+((4n-1)\cot r-3\tan r)\frac{\partial}{\partial r}+\frac{1}{\tan^2 r} \left(\frac{\partial^2}{\partial \eta^2}+2\cot\eta\frac{\partial}{\partial \eta}  \right)
\]
and 
\[
\tilde{\Delta}_{\mathbf{SU}(2)}=\frac{\partial^2}{\partial \eta^2}+2\cot\eta\frac{\partial}{\partial \eta}. 
\]
Since $L$ commutes with $\tilde{\Delta}_{\mathbf{SU}(2)}$  we heuristically  have that
\[
e^{tL}=e^{-t\tilde{\Delta}_{\mathbf{SU}(2)}}e^{t\tilde{\Delta}_{\bS^{4n+3}}}.
\]

We denote by $q_t$ the heat kernel of the heat semigroup $e^{t\tilde{\Delta}_{\bS^{4n+3}}}$, then  the subelliptic heat kernel $p_t(r,\eta)$ can be obtained by applying the heat semigroup $e^{-t\tilde{\Delta}_{\mathbf{SU}(2)}}$ on $q_t$, i.e.
\[
p_t(r,\eta)=(e^{-t\tilde{\Delta}_{\mathbf{SU}(2)}}q_t)(r,\eta).
\]
We denote by $h_t(\eta)$ the heat kernel of $\tilde{\Delta}_{\mathbf{SU}(2)}$, that is, $h_t$ is the fundamental solution of 
\[
\frac{\partial }{\partial t}h_t(\eta)=\left(\frac{\partial^2}{\partial\eta^2}+2\cot\eta\frac{\partial}{\partial\eta}\right)h_t(\eta).
\]
Consider $g_t(\eta):=h_t(i\eta)$, then $g_t(\eta)$ satisfies 
\[
\frac{\partial }{\partial t}g_t(\eta)=-\left(\frac{\partial^2}{\partial\eta^2}+2\coth\eta\frac{\partial}{\partial\eta}\right)g_t(\eta).
\]
If we denote $\tilde{\Delta}_{\mathbf{SL}(2)}=\frac{\partial^2}{\partial \eta ^2}+2 \coth \eta \frac{\partial}{\partial \eta} $, then we have that
\begin{equation}\label{pt-qts}
p_t(r,\eta)=(e^{t\tilde{\Delta}_{\mathbf{SL}(2)}}q_t)(r,-i\eta).
\end{equation}

As a conclusion, an integral representation of $p_t$,  can be obtained from an explicit expression of the heat semigroup $e^{t\tilde{\Delta}_{\mathbf{SL}(2)}}$. 
\begin{lemma}
Let $\tilde{\Delta}_{\mathbf{SL}(2)}=\frac{\partial^2}{\partial \eta ^2}+2 \coth \eta \frac{\partial}{\partial \eta} $. For every $f:\mathbb{R}_{\ge 0} \to \mathbb{R}$ which is smooth and rapidly decreasing we have:
\begin{equation}\label{semigroup}
(e^{t\tilde{\Delta}_{\mathbf{SL}(2)}} f)(\eta)=\frac{e^{-t}}{\sqrt{\pi t}} \int_0^{+\infty} \frac{ \sinh s \sinh \left(  \frac{\eta s}{2t}\right) }{\sinh \eta} e^{-\frac{s^2+\eta^2}{4t}} f( s ) ds, \quad t \ge 0, \eta \ge 0.
\end{equation}
\end{lemma}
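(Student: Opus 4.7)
The plan is to establish the formula via a ground-state (Doob-type) transform that conjugates $\tilde{\Delta}_{\mathbf{SL}(2)}$ into a shifted one-dimensional Euclidean Laplacian, and then apply the explicit heat semigroup on the half-line with the appropriate boundary behaviour at $\eta=0$.

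First, I would carry out the key intertwining computation: for any smooth $f$ on $(0,\infty)$,
\[
\tilde{\Delta}_{\mathbf{SL}(2)}\left(\frac{f(\eta)}{\sinh\eta}\right)=\frac{1}{\sinh\eta}\left(\frac{d^2 f}{d\eta^2}-f(\eta)\right),
\]
which is a direct calculation using $(\sinh\eta)''=\sinh\eta$ and the expansion of $\tilde{\Delta}_{\mathbf{SL}(2)}=\partial_\eta^2+2\coth\eta\,\partial_\eta$. Equivalently, this says
\[
\tilde{\Delta}_{\mathbf{SL}(2)}=\frac{1}{\sinh\eta}\circ\bigl(\partial_\eta^2-1\bigr)\circ\sinh\eta.
\]

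Second, I would exponentiate this relation to obtain, for sufficiently regular $f$,
\[
\bigl(e^{t\tilde{\Delta}_{\mathbf{SL}(2)}}f\bigr)(\eta)=\frac{e^{-t}}{\sinh\eta}\bigl(e^{t\partial_\eta^2}\,[\sinh(\cdot)\,f(\cdot)]\bigr)(\eta),
\]
where $e^{t\partial_\eta^2}$ is the one-dimensional Euclidean heat semigroup. Since $f$ is smooth on $[0,\infty)$ and rapidly decreasing, the function $\eta\mapsto\sinh\eta\cdot f(\eta)$ vanishes at $\eta=0$, so the natural extension to $\mathbb{R}$ is by odd reflection; this produces the Dirichlet heat semigroup at the origin, which for odd $\tilde g$ is represented by
\[
\bigl(e^{t\partial_\eta^2}\tilde g\bigr)(\eta)=\frac{1}{\sqrt{4\pi t}}\int_0^{\infty}\left(e^{-(\eta-s)^2/(4t)}-e^{-(\eta+s)^2/(4t)}\right)\tilde g(s)\,ds.
\]

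Third, applying this representation to $\tilde g(s)=\sinh s\,f(s)$ and using the identity
\[
e^{-(\eta-s)^2/(4t)}-e^{-(\eta+s)^2/(4t)}=2e^{-(\eta^2+s^2)/(4t)}\sinh\!\left(\frac{\eta s}{2t}\right),
\]
the claimed formula \eqref{semigroup} follows immediately after dividing by $\sinh\eta$ and multiplying by $e^{-t}$.

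The only real subtlety, and the point I would verify carefully, is that the odd extension is indeed the correct way to reduce the half-line semigroup for $\partial_\eta^2-1$ to the full-line one: one must check that the transformed function $\sinh\eta\cdot f(\eta)$ is compatible with the Neumann condition $f'(0)=0$ that is implicit in the self-adjointness of $\tilde{\Delta}_{\mathbf{SL}(2)}$ on functions on $[0,\infty)$ against the measure $\sinh^2\eta\,d\eta$. A straightforward argument (test the two sides against smooth even $f$ rapidly decreasing, use that both are $C^\infty$ solutions of the same heat equation with the same initial data $f$ and the same behaviour at $\eta=0$, and invoke uniqueness of bounded solutions) closes the gap. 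Everything else reduces to the explicit Gaussian identity above.
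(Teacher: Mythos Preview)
Your proof is correct and follows essentially the same ground-state (Doob) transform idea as the paper. The only difference is cosmetic: the paper intertwines via $h(\eta)=\sinh\eta/\eta$ to reduce $\tilde{\Delta}_{\mathbf{SL}(2)}$ to the radial Laplacian $\tilde{\Delta}_{\mathbb{R}^3}=\partial_\eta^2+\frac{2}{\eta}\partial_\eta$ on $\mathbb{R}^3$ and then invokes the three-dimensional Gaussian, whereas you intertwine via $\sinh\eta$ to go straight to $\partial_\eta^2-1$ on the half-line with odd reflection; composing the paper's intertwining with the classical identity $\tilde{\Delta}_{\mathbb{R}^3}=\frac{1}{\eta}\circ\partial_\eta^2\circ\eta$ recovers yours exactly.
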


\begin{proof}
We observe that:
\[
\tilde{\Delta}_{\mathbf{SL}(2)} f=\frac{1}{h} (\tilde{\Delta}_{\R^3}-1)(hf),
\]
where
\[
\tilde{\Delta}_{\R^3} =\frac{\partial^2}{\partial \eta ^2}+\frac{2}{  \eta} \frac{\partial}{\partial \eta} , \quad h(\eta)=\frac{\sinh \eta} {\eta}.
\]
As a consequence, we have
\[
(e^{t\tilde{\Delta}_{\mathbf{SL}(2)}} f)(\eta)=\frac{e^{-t}}{h(\eta)}e^{t \tilde{\Delta}_{\R^3}} (hf) (\eta).
\]
We are thus let with the computation of $e^{t \tilde{\Delta}_{\R^3}}$. The operator $\tilde{\Delta}_{\R^3}$ is the radial part of the Laplacian $\Delta_{\mathbb{R}^3}$, thus for $x \in \mathbb{R}^3$,
\[
e^{t \Delta_{\mathbb{R}^3}}( f \circ s)(x)= (e^{t\tilde{\Delta}_{\R^3}}f )(s(x)),
\]
where $s(x)=| x |$. Since,
\[
e^{t \Delta_{\mathbb{R}^3}}( f \circ s)(x)=\frac{1}{(4\pi t )^{3/2}} \int_{\mathbb{R}^3} e^{-\frac{{|y-x|}^2}{4t} } f({|y|}) dy,
\]
a routine computation in spherical coordinates shows that
\[
e^{t\tilde{\Delta}_{\R^3}}f (\eta)=\frac{1}{\sqrt{\pi t}} \int_0^{+\infty} \frac{s}{\eta} \sinh \left( \frac{\eta s}{2t}\right) e^{-\frac{s^2+\eta^2}{4t}} f ( s ) ds.
\]
The conclusion easily follows.
\end{proof}

Now we can immediately deduce the integral representation of the subelliptic heat kernel on $\bS^{4n+3}$: 
\begin{proposition}\label{prop-pt}
For $t>0$, $r\in[0,\pi/2)$, $ \eta\in[0,\pi]$, 
\begin{equation}\label{pt-int} 
p_t(r, \eta)=\frac{e^{-t}}{\sqrt{\pi t}} \int_0^{+\infty} \frac{ \sinh y \sin \left(  \frac{\eta y}{2t}\right) }{\sin \eta} e^{-\frac{y^2-\eta^2}{4t}} q_t( \cos r\cosh y ) dy.
\end{equation}
\end{proposition}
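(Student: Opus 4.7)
The plan is to apply the integral formula \eqref{semigroup} for the heat semigroup $e^{t\tilde{\Delta}_{\mathbf{SL}(2)}}$ directly to the identity \eqref{pt-qts} already derived in the discussion preceding the proposition. Concretely, I would take $f(s) = q_t(r, s)$ in \eqref{semigroup}, viewing $q_t(r, \cdot)$ as a function of the second variable (the $\mathbf{SU}(2)$-radial coordinate $\eta$, analytically continued to the hyperbolic side), and evaluate the resulting expression at $\zeta = -i\eta$. This produces
\[
p_t(r,\eta) = \frac{e^{-t}}{\sqrt{\pi t}} \int_0^{+\infty} \frac{\sinh s \,\sinh\!\left(\frac{-i\eta s}{2t}\right)}{\sinh(-i\eta)} \, e^{-\frac{s^2+(-i\eta)^2}{4t}} \, q_t(r,s)\, ds.
\]

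Next I would simplify using the elementary identities $\sinh(-i\eta) = -i\sin\eta$, $\sinh(-i\eta s/2t) = -i\sin(\eta s/2t)$, and $(-i\eta)^2 = -\eta^2$. The two factors of $-i$ cancel and the Gaussian weight turns into $e^{-(s^2-\eta^2)/4t}$, exactly as required. The last step is to reinterpret $q_t(r,s)$ in the integrand in terms of the function $q_t(x)$ of a single variable used in the statement. For this I would invoke the fact that, in the trivialization \eqref{trivialization quaternionic} of the quaternionic Hopf fibration, the Riemannian distance $\delta$ on $\bS^{4n+3}$ from the north pole to $\frac{1}{\sqrt{1+|w|^2}}(w\Theta,\Theta)$ satisfies $\cos\delta = \cos r \cos\eta$ (this follows immediately from taking real inner products of unit quaternions). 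Thus $q_t(r,\eta) = q_t(\cos r\cos\eta)$, and the analytic continuation $\eta\mapsto is$ underlying \eqref{pt-qts} yields $q_t(r,s) = q_t(\cos r\cosh s)$, which produces the stated formula.

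The main conceptual point to handle with care — rather than a genuine obstacle — is the justification of the analytic continuation $\mathbf{SU}(2)\rightsquigarrow \mathbf{SL}(2,\mathbb{R})/\mathbf{SO}(2)$ that converts the backward heat semigroup $e^{-t\tilde{\Delta}_{\mathbf{SU}(2)}}$ into the forward semigroup $e^{t\tilde{\Delta}_{\mathbf{SL}(2)}}$ acting on the analytically extended kernel. One needs to verify absolute convergence of the integral obtained after substitution (the Gaussian factor $e^{-(s^2-\eta^2)/4t}$ combined with the Riemannian heat kernel's Gaussian decay $q_t(\cos r\cosh s)\sim e^{-(\operatorname{arcosh})^2/2t}$ ensures this), and to check by direct differentiation in $t$ that the right-hand side indeed solves $\partial_t u = \tfrac{1}{2}L u$ with the correct initial data. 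Both verifications proceed exactly as in the parallel argument for the complex Hopf fibration carried out in Proposition \ref{prop1}, and may be imported essentially verbatim; the only change is the use of the $\mathbf{SL}(2,\mathbb{R})$-integral kernel \eqref{semigroup} in place of the scalar Gaussian kernel that sufficed in the $\mathbf{U}(1)$-fibered case.
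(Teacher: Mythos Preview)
Your proposal is correct and follows exactly the same approach as the paper: apply the integral formula \eqref{semigroup} for $e^{t\tilde{\Delta}_{\mathbf{SL}(2)}}$ to the identity \eqref{pt-qts}, then simplify the resulting expression at $-i\eta$. The paper's own proof is a single sentence (``by plugging in $q_t$ to \eqref{semigroup}, we immediately obtain the desired integral representation''), so your write-up is in fact more detailed than the original, including the explicit trigonometric simplifications and the remark on convergence and justification of the analytic continuation.
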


\begin{proof}
Because of \eqref{pt-qts}, by plugging in $q_t$ to \eqref{semigroup}, we immediately obtain the desired integral representation.
\end{proof}

For the record we note here that the heat kernel $q_t$  on $\bS^{4n+3}$ with respect to its Riemannian structure has been well studied, see Section \ref{Jacobi diffusions}. We list two useful representations of it: 
\begin{itemize}
\item[(1)]\hspace{.1in}  The spectral decomposition of $q_t$ is given by
\begin{equation}
q_t{(\cos\delta)}=\frac{\Gamma(2n+1)}{2\pi^{2n+2}}\sum_{m=0}^{+\infty}(m+2n+1)e^{-m(m+4n+2)t}C_m^{2n+1}(\cos \delta),
\end{equation}
where $\delta$ is the Riemannian distance from the north pole and
\[
C_m^{2n+1}(x)=\frac{(-1)^m}{2^m}\frac{\Gamma(m+4n+2)\Gamma(2n+3/2)}{\Gamma(4n+2)\Gamma(m+1)\Gamma(2n+m+3/2)}\frac{1}{(1-x^2)^{2n+1/2}}\frac{d^m}{dx^m}(1-x^2)^{2n+m+1/2}
\]
is a Gegenbauer polynomial.  
\item[(2)]\hspace{.1in} Another expression of $q_t (\cos \delta)$ which is for instance useful for the computation of small-time asymptotics is 
\begin{equation}\label{heat_kernel_odd}
q_t (\cos \delta)= e^{(2n+1)^2t} \left( -\frac{1}{2\pi \sin \delta} \frac{\partial}{\partial \delta} \right)^{2n+1} V(t,\delta),
\end{equation}
where $V(t,\delta)=\frac{1}{\sqrt{4\pi t}} \sum_{k \in \mathbb{Z}} e^{-\frac{(\delta-2k\pi)^2}{4t} }$. 
\end{itemize}

\subsection{Characteristic function of the stochastic area and limit theorem}
We now study the characteristic function of $\A(t)$.
Let $\lambda \in \R^3$, $r \in [0,\infty)$ and 
\[
I(\lambda,r)=\mathbb{E}\left(e^{i \lambda\cdot \A(t)}\bigm| r(t)=r\right).
\]
From Corollary \ref{diff2-S}, we know that
\begin{align*}
I(\lambda,r)& =\mathbb{E}\left(e^{i \lambda\cdot \gamma\left({\int_0^t \tan^2 r(s)ds}\right)}\bigm| r(t)=r\right) \\
 &=\mathbb{E}\left(e^{- \frac{|\lambda|^2}{2} \int_0^t \tan^2 r(s)ds}\bigm| r(t)=r\right) 
\end{align*}
and $r$ is a diffusion with the generator
\[
\mathcal{L}^{2n-1,1}=\frac{1}{2}\frac{\partial^2}{\partial r^2}+\frac12((4n-1)\cot r-3\tan r)\frac{\partial}{\partial r}
\]
started at $0$.  More generally, the circular Jacobi generator is defined by:
\[
\mathcal{L}^{\alpha,\beta}=\frac{1}{2} \frac{\partial^2}{\partial r^2}+\left(\left(\alpha+\frac{1}{2}\right)\cot r-\left(\beta+\frac{1}{2}\right) \tan r\right)\frac{\partial}{\partial r}, \quad \alpha,\beta >-1,
\]
and we refer the reader to the Appendix 2 for further details. We denote by $q_t^{\alpha,\beta}(r_0,r)$ its corresponding transition density with respect to the Lebesgue measure.
\begin{theorem}\label{FThj-S}
For $\lambda \in \R^3$, $r \in [0,\pi/2)$, and $t >0$ we have
\begin{equation}\label{eq-ft-cond-S}
\mathbb{E}\left(e^{i \lambda\cdot \A(t)}\bigm| r(t)=r\right)
 =\frac{e^{-2n \mu t}}{(\cos r)^{\mu}} \frac{q_t^{2n-1,\mu+1}(0,r)}{q_t^{2n-1,1}(0,r)},
\end{equation}
where $\mu=\sqrt{|\lambda|^2+1}-1$.
\end{theorem}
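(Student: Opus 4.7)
The proof follows the template of Theorem \ref{FThj}, i.e.\ a Yor transform argument based on Girsanov's theorem, but with a crucial modification reflecting the extra $3\tan r$ term in the drift of the Jacobi SDE (which in turn comes from $\dim \mathbf{SU}(2)=3$). The first identity in \eqref{eq-ft-cond-S} is immediate from Corollary \ref{diff2-S} upon conditioning on the path of $r$, since $\gamma$ is a standard $3$-dimensional Brownian motion independent of $r$. Thus the real task is to evaluate $\mathbb{E}\bigl(e^{-|\lambda|^2/2 \int_0^t \tan^2 r(s)\,ds}\mid r(t)=r\bigr)$.

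The plan is to write $r$ as a strong solution of $dr(t)=\tfrac{1}{2}((4n-1)\cot r(t)-3\tan r(t))dt+d\gamma(t)$ with a Brownian motion $\gamma$, and, for an unknown parameter $\mu\ge 0$ to be chosen later, to introduce the exponential local martingale
\[
D_t^\mu=\exp\!\left(-\mu\int_0^t\tan r(s)\,d\gamma(s)-\frac{\mu^2}{2}\int_0^t\tan^2 r(s)\,ds\right).
\]
Applying It\^o's formula to $\ln\cos r(t)$ and using $1/\cos^2r=1+\tan^2r$ gives the identity
\[
-\int_0^t\tan r(s)\,d\gamma(s)=\ln\cos r(t)+2nt-\int_0^t\tan^2 r(s)\,ds,
\]
so that $D_t^\mu=e^{2n\mu t}(\cos r(t))^\mu\exp\!\bigl(-\tfrac{\mu(\mu+2)}{2}\int_0^t\tan^2 r(s)\,ds\bigr)$. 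Since $0\le\cos r\le 1$ and $\mu\ge 0$, this expression shows $D_t^\mu\le e^{2n\mu t}$, which is the main obstacle normally (integrability of the exponential martingale) dealt with painlessly here; it shows $D^\mu$ is a genuine martingale. The key algebraic step is now to choose $\mu$ so that $\mu(\mu+2)=|\lambda|^2$, namely $\mu=\sqrt{|\lambda|^2+1}-1\ge 0$; the shift by $1$ (as opposed to the shift by $0$ in Theorem \ref{FThj}) comes precisely from the $-3\tan r$ drift term of the radial diffusion on $\mathbb{H}P^n$ and is the source of the square-root expression in the statement.

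Having fixed $\mu$, introduce the probability $\mathbb{P}^\mu$ with Radon--Nikodym density $D_t^\mu$ on $\mathcal{F}_t$, and apply Girsanov to see that $\beta(t):=\gamma(t)+\mu\int_0^t\tan r(s)\,ds$ is a $\mathbb{P}^\mu$-Brownian motion, under which
\[
dr(t)=\tfrac{1}{2}\bigl((4n-1)\cot r(t)-(2\mu+3)\tan r(t)\bigr)dt+d\beta(t),
\]
i.e.\ $r$ is a circular Jacobi diffusion with generator $\mathcal{L}^{2n-1,\mu+1}$ (started at $0$). Therefore, for any bounded Borel $f$ on $[0,\pi/2)$,
\[
\mathbb{E}\!\left(f(r(t))e^{-\frac{|\lambda|^2}{2}\int_0^t\tan^2 r(s)\,ds}\right)=e^{-2n\mu t}\mathbb{E}^\mu\!\left(\frac{f(r(t))}{(\cos r(t))^\mu}\right)=e^{-2n\mu t}\int_0^{\pi/2}\frac{f(r)}{(\cos r)^\mu}\,q_t^{2n-1,\mu+1}(0,r)\,dr.
\]
Comparing with the representation $\mathbb{E}(f(r(t))e^{-\frac{|\lambda|^2}{2}\int_0^t\tan^2 r\,ds})=\int_0^{\pi/2}\mathbb{E}(\cdot\mid r(t)=r)\,f(r)\,q_t^{2n-1,1}(0,r)\,dr$ and using that $f$ is arbitrary yields \eqref{eq-ft-cond-S}. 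The only delicate point is the choice of $\mu$ solving $\mu(\mu+2)=|\lambda|^2$ rather than $\mu^2=|\lambda|^2$: it is what turns the computation into the $\mathbb{H}P^n$ analogue rather than the $\mathbb{C}P^n$ one, and everything else is a direct transcription of the Yor transform.
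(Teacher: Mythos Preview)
Your proof is correct and follows essentially the same approach as the paper: the same exponential local martingale $D_t^\mu$, the same It\^o computation leading to $D_t^\mu=e^{2n\mu t}(\cos r(t))^\mu\exp\bigl(-\tfrac{\mu(\mu+2)}{2}\int_0^t\tan^2 r(s)\,ds\bigr)$, the same martingale bound $D_t^\mu\le e^{2n\mu t}$, the same Girsanov change of measure turning $r$ into a Jacobi diffusion with parameters $(2n-1,\mu+1)$, and the same choice $\mu=\sqrt{|\lambda|^2+1}-1$ solving $\mu(\mu+2)=|\lambda|^2$. Your explanation of why the shift is by $1$ (coming from the $-3\tan r$ drift, i.e.\ $\dim\mathbf{SU}(2)=3$) is a nice addition that the paper leaves implicit.
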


\begin{proof}
Note 
\[
dr(t)= \frac{1}{2} \left( (4n-1)\cot r(t)-3\tan r(t) \right)dt+d\gamma(t),
\]
where $\gamma$ is a standard Brownian motion.
Consider the local martingale defined for any $\mu>0$ by
\begin{align*}
D_t& =\exp \left( -\mu \int_0^t \tan r(s) d\gamma(s) -\frac{\mu^2}{2}  \int_0^t \tan^2 r(s) ds \right)  \\
 &=\exp \left( -\mu \int_0^t \tan r(s) dr(s)+\frac{\mu}{2}(4n-1)t -\frac{3\mu +\mu^2}{2}  \int_0^t \tan^2 r(s) ds \right).
\end{align*}
From It\^o's formula, we have
\begin{align*}
\ln \cos r(t) & =-\int_0^t \tan r(s) dr(s)-\frac{1}{2} \int_0^t \frac{ds}{\cos^2 r(s)} \\
 &=-\int_0^t \tan r(s) dr(s)-\frac{1}{2} \int_0^t \tan^2 r(s) ds-\frac{1}{2} t.
\end{align*}
As a consequence, we deduce that
\[
D_t =e^{2n\mu t} (\cos r(t))^{\mu} e^{- (\frac{\mu^2}{2}+\mu) \int_0^t \tan^2 r(s)ds}.
\]
This expression of $D$ implies that almost surely $D_t \le e^{2n\mu t}$ and thus $D$ is a true martingale. Let us denote by $\mathcal{F}$ the natural filtration of $r$ and consider the probability measure $\mathbb{P}^{\mu}$ defined by
\[
\mathbb{P}_{/ \mathcal{F}_t} ^{\mu}=e^{2n\mu t} (\cos r(t))^\mu e^{- (\frac{\mu^2}{2}+{\mu}) \int_0^t \tan^2 r(s)ds} \mathbb{P}_{/ \mathcal{F}_t}.
\]
We have then for every bounded and Borel function $f$ on $[0,\pi /2]$,
\begin{align*}
\mathbb{E}\left(f(r(t))e^{- (\frac{{\mu}^2}{2}+{\mu}) \int_0^t \tan^2 r(s)ds}\right)=e^{-2n{\mu} t} \mathbb{E}^{\mu} \left( \frac{f(r(t))}{(\cos r(t))^{\mu}} \right).
\end{align*}
By Girsanov Theorem, the process defined by
\[
\beta(t)=\gamma(t)+{\mu} \int_0^t \tan r(s) ds
\]
is a Brownian motion under the probability $\mathbb{P}^{\mu}$. Since
\[
dr(t)= \frac{1}{2} \left( (4n-1)\cot r(t)-(2{\mu} +3)\tan r(t) \right)dt+d\beta(t),
\]
the proof is complete by letting $\mu=\sqrt{|\lambda|^2+1}-1$.
\end{proof}

By inverting the Fourier transform \eqref{eq-ft-cond-S}, we get a formula for the joint density of $(r(t) ,\A(t))$.

\begin{corollary}
For $ r \in [0,\pi/2]$, $\A \in \mathbb R^3$, and $t>0$
\[
\mathbb{P} ( r(t) \in dr, \A(t) \in d \A)=\frac{1}{(2\pi)^3} \int_{\mathbb R^3} \frac{e^{-2n(\sqrt{|\lambda|^2+1}-1) t}}{(\cos r)^{\sqrt{|\lambda|^2+1}-1}} q_t^{2n-1,\sqrt{|\lambda|^2+1}}(0,r) e^{-i\lambda \cdot \A} d\lambda
\]
\end{corollary}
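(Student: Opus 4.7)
The plan is to deduce the joint density from the conditional Fourier transform established in Theorem \ref{FThj-S} together with the marginal density of $r(t)$. Since $r(t)$ is a Jacobi diffusion with generator $\mathcal{L}^{2n-1,1}$ started at $0$, its density at time $t$ with respect to Lebesgue measure on $[0,\pi/2)$ is exactly $q_t^{2n-1,1}(0,r)$, by the very definition of the transition density recalled in the Appendix.

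First, I would combine these two ingredients to compute the joint Fourier-density. Conditioning on $r(t)$ and using Theorem \ref{FThj-S}, for any $\lambda \in \mathbb{R}^3$ and any Borel set $B \subset [0,\pi/2)$,
\[
\mathbb{E}\left(e^{i\lambda \cdot \A(t)} \mathbf{1}_{\{r(t) \in B\}}\right) = \int_B \mathbb{E}\left(e^{i\lambda \cdot \A(t)} \mid r(t)=r\right)\, q_t^{2n-1,1}(0,r)\, dr = \int_B \frac{e^{-2n\mu t}}{(\cos r)^{\mu}} q_t^{2n-1,\mu+1}(0,r)\, dr,
\]
with $\mu = \sqrt{|\lambda|^2+1}-1$, so that $\mu+1 = \sqrt{|\lambda|^2+1}$. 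This identifies the partial Fourier transform (in the $\A$-variable only) of the joint law of $(r(t),\A(t))$ with the explicit function
\[
F_t(r,\lambda) := \frac{e^{-2n\mu t}}{(\cos r)^{\mu}} q_t^{2n-1,\mu+1}(0,r).
\]

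Next I would invert the Fourier transform in $\lambda$. Since $|F_t(r,\lambda)|$ decays rapidly in $|\lambda|$ (the factor $e^{-2n\mu t}$ forces exponential decay in $|\lambda|$ for each fixed $t>0$, and one can verify from the spectral expansion or Gruet-type integral representation of $q_t^{\alpha,\beta}$ in Appendix 2 that the remaining factors are controlled), Fourier inversion applied to the $\lambda$-variable yields
\[
\mathbb{P}(r(t) \in dr,\, \A(t) \in d\A) = \left(\frac{1}{(2\pi)^3}\int_{\mathbb{R}^3} F_t(r,\lambda)\, e^{-i\lambda\cdot \A}\, d\lambda\right) dr\, d\A,
\]
which is precisely the announced formula.

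The main obstacle, and the only non-bookkeeping step, is justifying the Fourier inversion: one needs $\lambda \mapsto F_t(r,\lambda)$ to be integrable on $\mathbb{R}^3$, and one needs enough smoothness to apply inversion pointwise. The exponential factor $e^{-2n(\sqrt{|\lambda|^2+1}-1)t}$ provides the required integrability for every $t>0$ once we check that $\lambda \mapsto (\cos r)^{-\mu} q_t^{2n-1,\mu+1}(0,r)$ grows at most polynomially in $|\lambda|$, which follows from the explicit form of $q_t^{\alpha,\beta}$ recalled in the Appendix (uniform in $\mu$ on compact subsets of $r\in[0,\pi/2)$). Everything else reduces to Fubini, which is immediate under this integrability.
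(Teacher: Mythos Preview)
Your argument is correct and follows exactly the route the paper indicates: multiply the conditional characteristic function from Theorem \ref{FThj-S} by the marginal density $q_t^{2n-1,1}(0,r)$ of $r(t)$ to obtain the partial Fourier transform of the joint law, then invert in $\lambda$. The paper states only ``by inverting the Fourier transform \eqref{eq-ft-cond-S}'' without further detail, so your discussion of the integrability needed for inversion is a welcome elaboration rather than a departure.
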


\begin{corollary}\label{cor-theta-S}
For $\lambda \in \mathbb{R}^3$ and $t \ge 0$,
\[
\mathbb{E}\left(e^{i {\lambda}\cdot \A(t)}\right)=e^{-2n \mu t}\int_0^{\pi /2}  \frac{q_t^{2n-1,\mu+1}(0,r)}{(\cos r)^{\mu}} dr,
\]
where $\mu=\sqrt{|\lambda|^2+1}-1$.
\end{corollary}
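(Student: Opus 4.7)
The plan is to derive the unconditional Fourier transform of $\A(t)$ by integrating the conditional Fourier transform obtained in Theorem \ref{FThj-S} against the distribution of the radial process $r(t)$ at time $t$. The key observation is that since $r$ starts at $0$ and is a diffusion with generator $\mathcal{L}^{2n-1,1}$, its density at time $t$ with respect to the Lebesgue measure on $[0,\pi/2)$ is precisely $q_t^{2n-1,1}(0,r)$.

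First, I would apply the tower property of conditional expectation to write
\[
\mathbb{E}\left(e^{i\lambda\cdot\A(t)}\right) = \mathbb{E}\left[\mathbb{E}\left(e^{i\lambda\cdot\A(t)}\mid r(t)\right)\right] = \int_0^{\pi/2} \mathbb{E}\left(e^{i\lambda\cdot\A(t)}\mid r(t)=r\right) q_t^{2n-1,1}(0,r)\, dr.
\]
Next, inserting the explicit expression from Theorem \ref{FThj-S}, namely
\[
\mathbb{E}\left(e^{i\lambda\cdot\A(t)}\mid r(t)=r\right) = \frac{e^{-2n\mu t}}{(\cos r)^\mu}\,\frac{q_t^{2n-1,\mu+1}(0,r)}{q_t^{2n-1,1}(0,r)},
\]
with $\mu=\sqrt{|\lambda|^2+1}-1$, the factor $q_t^{2n-1,1}(0,r)$ in the denominator cancels against the density, yielding the announced formula
\[
\mathbb{E}\left(e^{i\lambda\cdot\A(t)}\right) = e^{-2n\mu t}\int_0^{\pi/2} \frac{q_t^{2n-1,\mu+1}(0,r)}{(\cos r)^\mu}\, dr.
\]

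There is essentially no obstacle here: the entire content has already been packed into Theorem \ref{FThj-S}, whose proof uses the Girsanov/Yor transform to change the drift of $r$ from parameter $(2n-1,1)$ to $(2n-1,\mu+1)$. The only point worth verifying for rigor is the integrability justifying Fubini/the tower property, which is immediate since $|e^{i\lambda\cdot\A(t)}|\le 1$, so the conditional expectation is bounded and its integral against the probability density $q_t^{2n-1,1}(0,\cdot)$ is unambiguously defined.
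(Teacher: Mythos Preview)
Your proof is correct and follows exactly the intended approach: the paper states this corollary without explicit proof, as it follows immediately from Theorem~\ref{FThj-S} by integrating the conditional characteristic function against the law of $r(t)$, which is precisely $q_t^{2n-1,1}(0,r)\,dr$ since $r$ is the Jacobi diffusion with generator $\mathcal{L}^{2n-1,1}$ started at $0$.
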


We are now in position to prove a central limit type theorem for $\A(t)$. 

\begin{theorem}\label{limit CP}
When $t \to +\infty$, the following convergence in distribution takes place
\[
\frac{\A(t)}{\sqrt t} \to \mathcal{N}(0, 2n \mathrm{I}_3).
\]
\end{theorem}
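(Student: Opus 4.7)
The plan is to mimic the strategy used for Theorem \ref{limit CP} in the complex projective case, starting from the closed form of the characteristic function given in Corollary \ref{cor-theta-S}. Specifically, one substitutes $\lambda \mapsto \lambda/\sqrt{t}$ in
\[
\mathbb{E}\left(e^{i {\lambda}\cdot \A(t)}\right)=e^{-2n \mu t}\int_0^{\pi /2}  \frac{q_t^{2n-1,\mu+1}(0,r)}{(\cos r)^{\mu}}\, dr,\qquad \mu=\sqrt{|\lambda|^2+1}-1,
\]
and studies the resulting two factors as $t\to+\infty$. To do so, set $\mu(t)=\sqrt{|\lambda|^2/t+1}-1$, so that a Taylor expansion gives $\mu(t)=|\lambda|^2/(2t)+O(1/t^2)$. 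In particular $2n\mu(t)\,t\to n|\lambda|^2$ and the prefactor satisfies $e^{-2n\mu(t)t}\to e^{-n|\lambda|^2}$.

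The second step is to show that the integral converges to $1$. Since $\mu(t)\to 0$, the term $(\cos r)^{-\mu(t)}\to 1$ pointwise on $[0,\pi/2)$. For the density $q_t^{2n-1,\mu(t)+1}(0,r)$, one invokes the explicit heat kernel formula for the circular Jacobi diffusion from Appendix 2 (the same formula used in the $\mathbb{C}P^n$ case) to pass to the limit and obtain the invariant density $q_\infty^{2n-1,1}(0,r)$ of $\mathcal{L}^{2n-1,1}$, which is proportional to $(\sin r)^{4n-1}(\cos r)^3$ and integrates to $1$ on $[0,\pi/2]$. Combining both limits yields
\[
\lim_{t\to+\infty}\mathbb{E}\left(e^{i\lambda\cdot \A(t)/\sqrt{t}}\right)=e^{-n|\lambda|^2},
\]
which is precisely the characteristic function of $\mathcal{N}(0,2n\mathrm{I}_3)$, and the convergence in distribution follows by L\'evy's continuity theorem.

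The main technical obstacle is justifying the interchange of limit and integral when passing $q_t^{2n-1,\mu(t)+1}(0,r)$ to its $t\to+\infty$ limit. The parameter $\mu(t)$ depends on $t$ and the weight $(\cos r)^{-\mu(t)}$ becomes singular at $r=\pi/2$, so one needs a uniform-in-$t$ domination near this boundary. This is handled using the spectral expansion of $q_t^{\alpha,\beta}$ from Appendix 2: the leading mode is the constant, corresponding to the invariant measure, and the subsequent modes decay exponentially uniformly in $\mu$ small. A clean alternative, which avoids these analytic subtleties entirely, is to use the skew-product representation of Corollary \ref{diff2-S}: writing $\A(t)=\gamma\bigl(\int_0^t\tan^2 r(s)\,ds\bigr)$, conditioning on $r$ and applying the Gaussian characteristic function of a 3-dimensional Brownian motion gives $\mathbb{E}[e^{i\lambda\cdot \A(t)/\sqrt{t}}]=\mathbb{E}[e^{-|\lambda|^2 A(t)/(2t)}]$ with $A(t)=\int_0^t\tan^2 r(s)\,ds$; the ergodic theorem for the Jacobi diffusion then gives $A(t)/t\to \int_0^{\pi/2}\tan^2 r\,d\nu(r)=2n$ almost surely (a direct beta-integral computation against the invariant measure), and dominated convergence (using $e^{-|\lambda|^2 A(t)/(2t)}\le 1$) concludes. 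Either route delivers the same conclusion.
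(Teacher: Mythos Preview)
Your main argument is correct and follows essentially the same route as the paper: rescale $\lambda\mapsto\lambda/\sqrt t$ in Corollary~\ref{cor-theta-S}, use $2n\mu(t)t=2n(\sqrt{t|\lambda|^2+t^2}-t)\to n|\lambda|^2$, and invoke the explicit Jacobi heat-kernel expansion from Appendix~2 together with dominated convergence to show the integral tends to $1$. The paper does not elaborate on the domination either, but note that your worry about the weight $(\cos r)^{-\mu(t)}$ is overstated: the density $q_t^{2n-1,\mu+1}(0,r)$ carries the factor $(\cos r)^{2\mu+3}$, so the quotient behaves like $(\cos r)^{\mu+3}$ near $\pi/2$ and no genuine boundary singularity arises.

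Your alternative via Corollary~\ref{diff2-S} and the ergodic theorem is a genuinely different and cleaner argument that the paper does not use. It sidesteps the spectral expansion entirely: once one checks that $\tan^2 r$ has mean $2n$ under the invariant measure of $\mathcal{L}^{2n-1,1}$ (your beta computation is correct), the almost-sure convergence $t^{-1}\!\int_0^t\tan^2 r(s)\,ds\to 2n$ plus bounded convergence gives the limit immediately. This approach generalizes more readily and mirrors the hyperbolic case (Theorem~\ref{LimitCHn}), where the paper itself uses an almost-sure argument rather than the Laplace-transform route.
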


\begin{proof}
From Corollary \ref{cor-theta-S} we have for every $t>0$,
\[
\mathbb{E}\left(e^{i \lambda\cdot \frac{\A(t)}{\sqrt t}}\right)=e^{-2n(  \sqrt{t|\lambda|^2+t^2}-t) }\int_0^{\pi /2}  \frac{q_t^{2n-1, \sqrt{\frac{|\lambda|^2}{t}+1} }(0,r)}{(\cos r)^{ \sqrt{\frac{|\lambda|^2}{t}+1} -1}} dr. 
\]
Using the formula for $q_t^{n-1, \mu }(0,r)$ which is given in Appendix 2, we obtain by dominated convergence that
\[
\lim_{t \to \infty}  \int_0^{\pi /2}  \frac{q_t^{2n-1, \sqrt{\frac{|\lambda|^2}{t}+1} }(0,r)}{(\cos r)^{ \sqrt{\frac{|\lambda|^2}{t}+1} -1}} dr=  \int_0^{\pi /2}  q_\infty^{2n-1,1}(0,r) dr =1.
\]
On the other hand,
\[
\lim_{t \to \infty}  \sqrt{t|\lambda|^2+t^2}-t =\frac{1}{2} | \lambda|^2,
\]
thus one concludes
\[
\lim_{t \to \infty} \mathbb{E}\left(e^{i \lambda\cdot \frac{\A(t)}{\sqrt t}}\right)=e^{- n | \lambda |^2}.
\]
\end{proof}

\subsection{Formula for the density}
The derivation of the density of $\A(t)$ in this setting is rather direct compared to its anti de-Sitter  analog. Actually, we shall use the explicit expression of the circular Jacobi heat kernel to give a more elaborate expression of the characteristic function, namely: 
\begin{corollary}[of Corollary \ref{cor-theta-S}]
The characteristic function of  the generalized stochastic area process admits the absolutely-convergent expansion: 
\begin{multline*}
\mathbb{E}\left(e^{i {\lambda}\cdot \A(t)}\right) = e^{-2n\mu t}\sum_{j \geq 0} (-1)^j \frac{(2n)_j}{j!} (2j+2n+\mu+1) e^{-2j(j+2n+\mu+1)t} \\ \frac{\mu(\mu+2)}{4}\frac{\Gamma(j+\mu/2)\Gamma(j+2n+\mu+1)}{\Gamma(j+\mu/2+2n+2)\Gamma(j+\mu+2)},
\end{multline*}
where, as before, $\mu=\sqrt{|\lambda|^2+1}-1$.
\end{corollary}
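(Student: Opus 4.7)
The starting point is the representation provided by Corollary \ref{cor-theta-S}, which reduces the claim to computing, for $\mu=\sqrt{|\lambda|^2+1}-1$, the integral
\[
J(t):=\int_0^{\pi/2}\frac{q_t^{2n-1,\mu+1}(0,r)}{(\cos r)^\mu}\,dr.
\]
The strategy is to insert the spectral decomposition of the circular Jacobi heat kernel from Appendix 2. Guided by the densities already written down in the text (see the formula preceding Theorem \ref{diff1-S} for the quaternionic case and Proposition \ref{prop-r-sphere-gene} for the complex case), this decomposition takes the form
\[
q_t^{\alpha,\beta}(0,r)=2(\sin r)^{2\alpha+1}(\cos r)^{2\beta+1}\sum_{j\ge 0}(2j+\alpha+\beta+1)\frac{\Gamma(j+\alpha+\beta+1)}{\Gamma(\alpha+1)\Gamma(j+\beta+1)}\,e^{-2j(j+\alpha+\beta+1)t}P_j^{\alpha,\beta}(\cos 2r),
\]
with $(\alpha,\beta)=(2n-1,\mu+1)$ here. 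The exponential decay of the coefficients together with the standard polynomial bound on $P_j^{\alpha,\beta}$ legitimizes exchanging sum and integral, so that
\[
J(t)=\sum_{j\ge 0}(2j+2n+\mu+1)\frac{\Gamma(j+2n+\mu+1)}{\Gamma(2n)\Gamma(j+\mu+2)}\,e^{-2j(j+2n+\mu+1)t}\cdot I_j,
\]
where $I_j=2\int_0^{\pi/2}P_j^{2n-1,\mu+1}(\cos 2r)(\sin r)^{4n-1}(\cos r)^{\mu+3}\,dr$.

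The crux of the argument is the evaluation of $I_j$. Substituting $x=\cos 2r$ yields
\[
I_j=\frac{1}{2^{2n+\mu/2+1}}\int_{-1}^1 P_j^{2n-1,\mu+1}(x)(1-x)^{2n-1}(1+x)^{\mu/2+1}\,dx,
\]
which I evaluate by combining the Rodrigues formula
\[
P_j^{a,b}(x)=\frac{(-1)^j}{2^j j!(1-x)^a(1+x)^b}\frac{d^j}{dx^j}\bigl[(1-x)^{a+j}(1+x)^{b+j}\bigr]
\]
with $j$ successive integrations by parts; the boundary terms vanish since $a=2n-1$ and $b+j\ge 1$. Only the derivatives of the factor $(1+x)^{c-b}=(1+x)^{-\mu/2}$ (with $c=\mu/2+1$) then remain, and an elementary Beta integral yields the closed form
\[
I_j=\frac{1}{j!}\cdot\frac{\Gamma(1-\mu/2)\,\Gamma(j+2n)\,\Gamma(\mu/2+2)}{\Gamma(1-\mu/2-j)\,\Gamma(j+2n+\mu/2+2)}.
\]

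The last step is a cosmetic rewriting: apply Euler's reflection formula to the ratio $\Gamma(1-\mu/2)/\Gamma(1-\mu/2-j)$, using $\sin\pi(\mu/2+j)=(-1)^j\sin(\pi\mu/2)$, to obtain
\[
\frac{\Gamma(1-\mu/2)}{\Gamma(1-\mu/2-j)}=(-1)^j\frac{\Gamma(j+\mu/2)}{\Gamma(\mu/2)},
\]
and combine this with the identity $\Gamma(\mu/2+2)/\Gamma(\mu/2)=\mu(\mu+2)/4$. Plugging back into $J(t)$, identifying $\Gamma(j+2n)/\Gamma(2n)=(2n)_j$, and restoring the prefactor $e^{-2n\mu t}$ gives exactly the claimed expansion. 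Absolute convergence of the resulting series is immediate from the Gaussian-type eigenvalue decay $e^{-2j(j+2n+\mu+1)t}$ against the at-most-polynomial growth of the Gamma quotient in $j$.

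The main obstacle in this plan is the integral $I_j$: one must carefully track both the sign produced by the $j$-fold integration by parts and the Gamma function reflection needed to turn the singular ratio $\Gamma(1-\mu/2)/\Gamma(1-\mu/2-j)$ (which, for generic $\mu$, cannot be read off as a finite product) into the analytically cleaner Pochhammer-type factor $(-1)^j\Gamma(j+\mu/2)/\Gamma(\mu/2)$ that is needed to match the stated form of the corollary.
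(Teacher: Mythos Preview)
Your proof is correct and reaches the same formula as the paper, but the key integral $I_j=2\int_0^{\pi/2}P_j^{2n-1,\mu+1}(\cos 2r)(\sin r)^{4n-1}(\cos r)^{\mu+3}\,dr$ is evaluated by a genuinely different route. The paper expands $P_j^{2n-1,\mu+1}(\cos 2r)$ via its hypergeometric representation $\frac{(2n)_j}{j!}{}_2F_1(-j,j+2n+\mu+1,2n;\sin^2 r)$, integrates the finite sum term by term with Beta integrals, and then recognizes the resulting sum as ${}_2F_1(-j,j+2n+\mu+1,2n+2+\mu/2;1)$, which is closed by the Chu--Vandermonde/Gauss summation and rewritten via the symmetry $P_j^{(a,b)}(-1)=(-1)^jP_j^{(b,a)}(1)$. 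Your Rodrigues/integration-by-parts argument bypasses all the hypergeometric machinery and lands directly on a single Beta integral; the only extra work is the reflection-formula rewriting of $\Gamma(1-\mu/2)/\Gamma(1-\mu/2-j)$, which the paper avoids because its $(-1)^j$ comes instead from the Jacobi symmetry $P_j^{(a,b)}(-u)=(-1)^jP_j^{(b,a)}(u)$. Your path is shorter and more elementary; the paper's path makes the connection to special-function identities (Gauss summation, Jacobi polynomial values at $\pm1$) explicit.
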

\begin{proof}
Recall the notation $q_t^{(2n-1, \mu+1)}(0,r)$ for the heat kernel of the spherical Jacobi operator $\mathcal{L}^{2n-1,\mu+1}$. Then, it is known that (see the Appendix 2 \eqref{eq-qt-ab-0}): 
\begin{multline}\label{CirJac} 
q_t^{(n-1, \mu+1)}(0,r) = \frac{2}{\Gamma(2n)}[\cos(r)]^{2\mu+3}[\sin(r)]^{4n-1} \\ \sum_{j \geq 0}(2j+2n+\mu+1) e^{-2j(j+2n+\mu+1)t} \frac{\Gamma(j+2n+\mu+1)}{\Gamma(j+\mu+2)} P_j^{(2n-1, \mu+1)}(\cos(2r)). 
\end{multline}
Expanding the Jacobi polynomials 
\begin{align*}
P_j^{(2n-1, \mu+1)}(\cos(2r)) &= \frac{(2n)_j}{j!} {}_2F_1\left(-j, j+2n+\mu+1, 2n; \frac{1-\cos(2r)}{2}\right)
\\&= \frac{(2n)_j}{j!} \sum_{m=0}^j\frac{(-j)_m(j+2n+\mu+1)_m}{(2n)_mm!} \sin^{2m}(r), 
\end{align*}
where ${}_2F_1$ is the Gauss hypergeometric function, we are led (by the virtue of Corollary \ref{cor-theta-S}) to the following Beta integral: 
\begin{equation*}
\int_0^{\pi/2} \cos^{\mu+3}(r)[\sin(r)]^{4n-1+2m} dr = \frac{\Gamma((\mu/2)+2)\Gamma(2n+m)}{2\Gamma(2n+2+m+\mu/2)}.
\end{equation*}
Consequently, 
\begin{align*}
\int_0^{\pi/2} \cos^{\mu+3}(r)[\sin(r)]^{4n-1} P_j^{(2n-1, \mu+1)}(\cos(2r)) dr  &= \Gamma\left(\frac{\mu}{2}+2\right) \frac{\Gamma(2n+j)}{2j!}.
\end{align*}
We also obtain that
\begin{align*}
\sum_{m=0}^j\frac{(-j)_m(j+2n+\mu+1)_m}{\Gamma(2n+2+m+\mu/2) m!} &= \frac{\Gamma((\mu/2)+2)}{\Gamma(2n+2+\mu/2)} \frac{\Gamma(2n+j)}{2j!}, 
\end{align*}
\begin{align*}
{}_2F_1\left(-j, j+2n+\mu+1, 2n + 2 + \frac{\mu}{2}; 1\right) &= \frac{\Gamma((\mu/2)+2)}{\Gamma(2n+2+\mu/2)} \frac{\Gamma(2n+j)}{2j!}, 
\end{align*}
\begin{align*}
\frac{j!}{(2n+2+\mu/2)_j}P_j^{(2n+1+\mu/2, \mu/2-1)}(-1)
 &= \frac{\Gamma((\mu/2)+2)}{\Gamma(2n+2+\mu/2)} \frac{\Gamma(2n+j)}{2j!} \frac{(-1)^j j!}{(2n+2+\mu/2)_j}, 
 \end{align*}
 and
 \begin{align*}
 P_j^{(\mu/2-1, 2n+1+\mu/2)}(1)
&=  \frac{\Gamma((\mu/2)+2)}{\Gamma(2n+2+\mu/2)} \frac{\Gamma(2n+j)}{2j!} \frac{(-1)^j (\mu/2)_j}{(2n+2+\mu/2)_j}\\
&= \frac{\mu(\mu+2)}{4} \frac{\Gamma(2n+j)}{2j!} \frac{(-1)^j \Gamma(j+\mu/2)}{\Gamma(j+2n+2+\mu/2)},
\end{align*}
where we used the symmetry relation $P_j^{(a,b)}(-u) = (-1)^jP_j^{(b,a)}(u)$ and the special value 
\begin{equation*}
P_j^{(a,b)}(1) =\frac{(a+1)_j}{j!}.
\end{equation*}
Keeping in mind \eqref{CirJac}, the corollary is proved. 
\end{proof}
\begin{remark}
At the end of the proof, we simplified with $\Gamma(\mu/2)$ and this is allowed when $\mu \neq 0 \leftrightarrow \lambda \neq 0$. When $\mu = 0$, our computations remain valid and should be understood as a limit when $\mu \rightarrow 0$. 
In this case, the only non vanishing term corresponds to $j=0$ so that 
\begin{equation*}
\lim_{\mu \rightarrow 0} (2n+\mu+1)\frac{(\mu/2)((\mu/2)+1)\Gamma(\mu/2)\Gamma(2n+\mu+1)}{\Gamma(2n+2+\mu/2)}= 1.
\end{equation*}
\end{remark}

Now, in order to invert the characteristic function and recover the density of $\A(t)$, it suffices to express: 
\begin{equation*}
(2j+2n+\mu+1) e^{-2\mu(j+n)t} \frac{\mu(\mu+2)}{4}\frac{\Gamma(j+\mu/2)\Gamma(j+2n+\mu+1)}{\Gamma(j+\mu/2+2n+2)\Gamma(j+\mu+2)},
\end{equation*}
as a Fourier transform in $\lambda$, where we recall the relation $\mu+1 = \sqrt{|\lambda|^2+1}$. To proceed, we write for $j \geq 1$:
\begin{equation*}
\frac{\Gamma(j+\mu/2)\Gamma(j+2n+\mu+1)}{\Gamma(j+\mu/2+2n+2)\Gamma(j+\mu+2)} = \frac{(j+\mu+2n)\cdots(j+\mu+2)}{(j+(\mu/2)+2n)\cdots(j+\mu/2)},
\end{equation*}
so that
\begin{equation}\label{Coeffic1}
\frac{(2j+2n+\mu+1)\mu(\mu+2)(j+\mu+2n)\cdots(j+\mu+2)}{4(j+(\mu/2)+2n+1)\cdots(j+\mu/2)}  = 2^{2n} + \sum_{k=0}^{2n+1} \frac{a_{k,n}(j)}{\mu+2j+2k} 
\end{equation}
for some real coefficients $a_{k,n}(j)$. For $j=0$, the same decomposition holds: 
\begin{align}
&\frac{\mu(\mu+2)}{4}\Gamma(\mu/2)\frac{(2n+\mu+1)\Gamma(2n+\mu+1)}{\Gamma(\mu/2+2n+2)\Gamma(\mu+2)}  = \frac{\Gamma((\mu/2)+2)\Gamma(2n+\mu+2)}{\Gamma(\mu/2+2n+2)\Gamma(\mu+2)} \nonumber
\\
&\quad\quad = 2^{2n} + \sum_{k=1}^{2n+1} \frac{a_{k,n}(0)}{\mu+2k} 
 = 2^{2n} + \sum_{k=0}^{2n+1} \frac{a_{k,n}(0)}{\mu+2k}, \label{Coeffic2}
\end{align}
with $a_{0,n}(0) = 0$. Finally, using the integral
\begin{equation*}
\frac{1}{\mu+ 2j+2k} = \int_0^{\infty} e^{-u\mu} e^{-2(j+k)u} du, 
\end{equation*}
followed by the Fourier transform of the $3$-dimensional relativistic Cauchy distribution (see \cite{BMR09}, Lemma 2.1 with $d=3, m=1$):
\begin{multline*}
e^{-\mu[u+(2j+2n)t]}  = 2[u+(2j+2n)t] e^{[u+(2j+2n)t]} \left(\frac{1}{2\pi} \right)^2 \\ \int_{\mathbb{R}^3}e^{i\lambda \cdot x}\frac{K_2\left(\sqrt{|x|^2+[u+(2j+2n)t]^2 }\right)}{|x |^2+[u+(2j+2n)t]^2 } dx,
\end{multline*}
we arrive at the following expression for the density of $\A(t)$ (note that the modified Bessel function $K_{\nu}(v)$ is equivalent to $\sqrt{\pi/(2v)}e^{-v}$ at infinity and that that the coefficients $(a_k(j), k =0, \dots, 2n+2)$ are polynomials in $j$ whose degrees are uniformly bounded by $2n+2$, so that we can use Fubini Theorem to interchange the order of integration).
\begin{theorem}
The density of the quaternionic stochastic area process $\A(t)$ is given by the following absolutely-convergent series: 
\begin{multline*}
\frac{1}{2\pi^2}\sum_{j \geq 0} (-1)^j \frac{(2n)_j}{j!} e^{-2j(j+2n+1)t} \\ 
\left\{2^{2n}(2j+2n)e^{(2j+2n)t}\frac{K_2\left(\sqrt{|x|^2+(2j+2n)^2t^2 }\right)}{|x |^2+(2j+2n)^2t^2 }+ \right.
\\ \left. \sum_{k=0}^{2n+1} a_{k,n}(j)\int_0^{\infty} e^{-2(j+k)u}[u+(2j+2n)t] e^{[u+(2j+2n)t]}\frac{K_2\left(\sqrt{|x|^2+[u+(2j+2n)t]^2 }\right)}{|x |^2+[u+(2j+2n)t]^2}du\right\},
\end{multline*}
where $x \in \mathbb{R}^3$ and the coefficients $(a_{k,n}(j))$ are defined by \eqref{Coeffic1} and \eqref{Coeffic2}.
\end{theorem}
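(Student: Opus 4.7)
The strategy is essentially a Fourier-inversion of the characteristic-function expansion obtained in the preceding corollary, where all the algebraic groundwork has already been laid out in the decompositions \eqref{Coeffic1}, \eqref{Coeffic2} and the relativistic Cauchy identity. The plan is to invert term-by-term and then justify interchange of sum and Fourier integral.

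First I would start from
\[
\mathbb{E}\bigl(e^{i\lambda \cdot \A(t)}\bigr) = e^{-2n\mu t}\sum_{j\ge 0}(-1)^j \frac{(2n)_j}{j!} e^{-2j(j+2n+1)t}\,c_{n}(j,\mu),
\]
where $c_n(j,\mu)$ denotes the coefficient block $(2j+2n+\mu+1)\tfrac{\mu(\mu+2)}{4}\tfrac{\Gamma(j+\mu/2)\Gamma(j+2n+\mu+1)}{\Gamma(j+(\mu/2)+2n+2)\Gamma(j+\mu+2)}$, and I would absorb the factor $e^{-2n\mu t}\cdot e^{-2j\mu t}$ from $e^{-2j(j+2n+\mu+1)t}$ into $e^{-\mu(2j+2n)t}$ so that each summand carries exactly one exponential of the form $e^{-\mu \tau_j}$ (with $\tau_j=(2j+2n)t$) multiplied by the rational expression in $\mu$ that is treated by \eqref{Coeffic1}–\eqref{Coeffic2}. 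Substituting the partial-fraction decomposition splits $c_n(j,\mu)$ into a constant piece $2^{2n}$ and a sum over $k=0,\dots,2n+1$ of $a_{k,n}(j)/(\mu+2j+2k)$.

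Next I would handle each kind of term separately. For the constant $2^{2n}$ piece, I would apply directly the relativistic Cauchy identity
\[
e^{-\mu \tau_j} \;=\; 2\tau_j e^{\tau_j}\Bigl(\frac{1}{2\pi}\Bigr)^{\!2}\int_{\mathbb R^3} e^{i\lambda\cdot x}\,\frac{K_2\!\bigl(\sqrt{|x|^2+\tau_j^{\,2}}\bigr)}{|x|^2+\tau_j^{\,2}}\,dx.
\]
For the pole pieces, I would rewrite $(\mu+2j+2k)^{-1}=\int_0^\infty e^{-u\mu}e^{-2(j+k)u}\,du$ so that $c_n(j,\mu)/(\mu+2j+2k)$ becomes $\int_0^\infty e^{-2(j+k)u}e^{-\mu(u+\tau_j)}\,du$, and then apply the relativistic Cauchy identity with $u+\tau_j$ in place of $\tau_j$. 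Collecting all pieces, dividing by $(2\pi)^3$ present in the three-dimensional inverse Fourier transform (giving the factor $1/(2\pi^2)$ in the statement), and recognising the integrand as a function of $x$ only yields the claimed density formula.

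The main obstacle — and really the only non-routine step — is justifying the two interchanges: (i) the inverse Fourier transform with the series in $j$, and (ii) Fubini over $(u,\lambda)$ inside each $j$-term. For (ii), the inner integrand is integrable in $(u,x)$ because $K_2(v)\sim \sqrt{\pi/(2v)}\,e^{-v}$ at infinity, which together with the $e^{u+\tau_j}$ prefactor gives a net decay $e^{-(\sqrt{|x|^2+(u+\tau_j)^2}-(u+\tau_j))}$ in $x$ and no growth in $u$ before the compensating $e^{-2(j+k)u}$, hence absolute convergence. For (i), the coefficients $a_{k,n}(j)$ are, as noted in the excerpt, polynomials in $j$ of degree bounded uniformly in $n$, so $\sum_j (2n)_j/j!\cdot \mathrm{poly}(j)\cdot e^{-2j(j+2n+1)t}$ is dominated (Gaussian decay in $j$) and the series of inverse Fourier transforms converges absolutely, allowing termwise inversion. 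Once these bounds are written down explicitly, the assembly of the terms gives precisely the closed form in the theorem.
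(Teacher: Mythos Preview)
Your proposal is correct and follows essentially the same route as the paper: factor the exponent as $e^{-2j(j+2n+1)t}e^{-\mu(2j+2n)t}$, apply the partial-fraction decompositions \eqref{Coeffic1}--\eqref{Coeffic2}, rewrite each pole $1/(\mu+2j+2k)$ as $\int_0^\infty e^{-u\mu}e^{-2(j+k)u}\,du$, invoke the relativistic Cauchy identity, and justify the interchanges via the $K_2$ asymptotics and the polynomial growth of $a_{k,n}(j)$. One small bookkeeping remark: the constant $1/(2\pi^2)$ does not come from an additional division by $(2\pi)^3$ but is already built into the relativistic Cauchy identity as $2\cdot(1/2\pi)^2$; no further inverse-Fourier normalisation is needed once that identity is used.
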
 
\begin{remark}
It is clear that the limiting behavior of the density of $\A(t)/\sqrt{t}$ is given by the first term $j=0$ in the above series. Moreover, the equivalence  
\begin{equation*}
K_{\nu}(v) \sim \sqrt{\frac{\pi}{2v}}e^{-v}, \quad v \rightarrow +\infty,
\end{equation*}
shows that for any $u \geq 0$, 
\begin{align*}
\lim_{t \rightarrow \infty} t^{3/2} [u+2nt] e^{(u+2nt)}\frac{K_2\left(\sqrt{t|x|^2+(u+2nt)^2}\right)}{t|x |^2+(u+2nt)^2} 
 = \sqrt{\frac{\pi}{2}} \frac{1}{(2n)^{3/2}} e^{-|x|^2/(4n)}. 
\end{align*}
Consequently, the density of $\A(t)/\sqrt{t}$ converges as $t \rightarrow \infty$ to 
\begin{equation*}
\frac{1}{(4\pi n)^{3/2}} e^{-|x|^2/(4n)}  \left\{2^{2n} + \sum_{k=0}^{2n+1} a_{k,n}(j)\int_0^{\infty} e^{-2ku}du\right\}.
\end{equation*}
But, substituting $\mu =0$ in \eqref{Coeffic2}, we get 
\begin{equation*}
2^{2n} + \sum_{k=0}^{2n+1} \frac{a_{k,n}(0)}{2k} = 1,
\end{equation*}
whence we recover that (using for instance Scheff\'e's Lemma) $\A(t)/\sqrt{t}$ converges in distribution to a $3$-dimensional normal distribution of covariance matrix $2n \mathrm{I}_3$, as shown in Theorem \ref{limit CP}.
\end{remark}

\section{The horizontal heat kernel on the twistor space of  $\mathbb{H}P^{n}$}

We now turn to the second main object of our study.
In this section we study the subelliptic heat kernel of the Riemannian submersion  $\mathbb{C}P^{2n+1} \to\mathbb{H}P^n$ which is induced from the quaternionic Hopf fibration. 
\subsection{Projected Hopf fibration}

Besides the action of  $\mathbf{SU}(2)$ on $\mathbb{S}^{4n+3}$ that induces the quaternionic Hopf fibration which was studied in the previous sections, we can also consider the action of $\mathbb{S}^1$ on $\mathbb{S}^{4n+3}$ that induces the complex Hopf fibration:
\begin{equation*}
\mathbb{S}^1 \to \mathbb{S}^{4n+3}  \to  \mathbb{C}P^{2n+1}.
\end{equation*}
We can then see $\mathbb{S}^1$ as a subgroup of $\mathbf{SU}(2)$ and deduce a a fibration
\begin{equation*}
\mathbf{SU}(2)  / \mathbb{S}^1 =  \mathbb{C}P^1 \to \mathbb{C}P^{2n+1} \to \mathbb{H}P^n
\end{equation*}
that makes the following diagram commutative

\[
  \begin{tikzcd}
    & \mathbb{S}^1 \arrow[swap]{dl} \arrow{d} & \\
    \mathbf{SU}(2) \arrow{r} \arrow{d}  & \mathbb{S}^{4n+3}   \arrow{d} \arrow[swap]{dr} &  \\
   \mathbb{C}P^1  \arrow{r} & \mathbb{C}P^{2n+1} \arrow{r} & \mathbb{H}P^n  
  \end{tikzcd}
\]

We consider the sub-Laplacian ${\mathcal{L}}$ on  $\mathbb{C}P^{2n+1}$ which is the lift of the Laplace-Beltrami operator of  $\mathbb{H}P^n$. From the above diagram, ${\mathcal{L}}$ is also the projection of the sub-Laplacian $L$ of $\mathbb{S}^{4n+3} $ on $\mathbb{C}P^{2n+1}$. As we have seen before,  the radial part of $L$ is 
\[
\tilde{L}=\frac{\partial^2}{\partial r^2}+((4n-1)\cot r-3\tan r)\frac{\partial}{\partial r}+\tan^2r (\frac{\partial^2}{\partial \eta^2}+2\cot \eta\frac{\partial}{\partial \eta})
\]
where $r$ is the Riemannian distance on $\mathbb{H}P^n$ and $\eta$ the Riemannian distance on $ \mathbf{SU}(2) $. The operator
\[
\tilde{\Delta}_{SU(2)}=\frac{\partial^2}{\partial \eta^2}+2\cot \eta\frac{\partial}{\partial \eta}
\]
is the radial part of the Laplace-Beltrami operator on $ \mathbf{SU}(2) $. As it has been proved in Baudoin-Bonnefont (see \cite{BB}), by using the fibration,
\begin{align*}
\mathbb{S}^1\to \mathbf{SU}(2)  \to  \mathbb{C}P^{1}
\end{align*}
we can write
\[
\frac{\partial^2}{\partial \eta^2}+2\cot \eta\frac{\partial}{\partial \eta}=\frac{\partial^2}{\partial \phi^2}+2\cot 2\phi\frac{\partial}{\partial \phi}+(1+\tan^2\phi )\frac{\partial^2}{\partial \theta^2}
\]
where $\phi$ is the Riemannian distance on $\mathbb{C}P^{1}$ and $\frac{\partial}{\partial \theta}$ the generator of the action of $\mathbb{S}^1$ on $\mathbf{SU}(2)$. We deduce that $\tilde{L}$ can be written as
\[
\frac{\partial^2}{\partial r^2}+((4n-1)\cot r-3\tan r)\frac{\partial}{\partial r}+\tan^2r\left( \frac{\partial^2}{\partial \phi^2}+2\cot 2\phi\frac{\partial}{\partial \phi}+(1+\tan^2\phi )\frac{\partial^2}{\partial \theta^2}\right).
\]
Therefore the radial part of the sub-Laplacian $\mathcal{L}$ on  $\mathbb{C}P^{2n+1}$ is given by 
\begin{equation}\label{L-CP}
\tilde{\mathcal{L}}=\frac{\partial^2}{\partial r^2}+((4n-1)\cot r-3\tan r)\frac{\partial}{\partial r}+\tan^2r\left( \frac{\partial^2}{\partial \phi^2}+2\cot 2\phi\frac{\partial}{\partial \phi}\right).
\end{equation}

We can also see , and this will be later used, that the Riemannian measure reads up to a normalization constant $(\sin r)^{4n-1} (\cos r)^3 \sin 2\phi dr d\phi$.
\subsection{Spectral decomposition of the subelliptic heat kernel on $\mathbb{C}P^{2n+1}$}
We now study the subelliptic heat kernel $h_t(r,\phi)$ associated to $\mathcal{L}$. Similarly as in Proposition \ref{spectral}, we get the spectral decomposition of $h_t$.
\begin{proposition}
For $t>0$, $r\in[0,\frac{\pi}{2})$, $ \phi\in[0,\pi]$, the subelliptic kernel is given by
\[
h_t(r,\phi)=\sum_{m=0}^{+\infty}\sum_{k=0}^{\infty}\sigma_{k,m}e^{-[4k(k+2n+2m+1)+2nm]t}(\cos r)^{2m}P_m^{0,0}(\cos2\phi)P_k^{2n-1,2m+1}(\cos 2r),
\]
where $\sigma_{k,m}=\frac{\Gamma(2n)}{4\pi^{2n+2}}(2k+2m+2n+1)(2m+1){k+2m+2n\choose 2n-1}$.
\end{proposition}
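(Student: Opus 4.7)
The proof will mirror the one for Proposition~\ref{spectral} (the spectral decomposition on $\bS^{4n+3}$), with adaptations for the different fiber geometry of the submersion $\mathbb{C}P^{2n+1}\to\mathbb{H}P^n$. The key observation is that the $\phi$-part of $\tilde{\mathcal{L}}$, namely $\partial_\phi^2+2\cot(2\phi)\partial_\phi$, is the radial part of the Laplace--Beltrami operator on $\mathbb{C}P^1$: under the change of variable $\theta=2\phi$ it equals $4(\partial_\theta^2+\cot\theta\,\partial_\theta)$, so that its eigenfunctions are the Legendre polynomials $P_m^{0,0}(\cos 2\phi)$ with eigenvalues $-4m(m+1)$.

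The plan is as follows. First I expand $h_t(r,\phi)=\sum_{m\ge 0}\phi_m(t,r)P_m^{0,0}(\cos 2\phi)$. Plugging into $\partial_t h_t = \tilde{\mathcal{L}} h_t$ reduces the problem to the one-dimensional equation
\[
\partial_t\phi_m=\partial_r^2\phi_m+\bigl((4n-1)\cot r-3\tan r\bigr)\partial_r\phi_m-4m(m+1)\tan^2 r\,\phi_m.
\]
Next I look for a substitution $\phi_m(t,r)=e^{-\lambda_m t}(\cos r)^{a}\psi_m(t,r)$ that eliminates the singular $\tan^2 r$ potential. A direct computation shows that the net $\tan^2 r$ coefficient becomes $a(a+2)-4m(m+1)$, which vanishes precisely when $a=2m$; with this choice, and a compensating constant $\lambda_m$, the function $g_m(t,x):=\psi_m(t,r)$, $x=\cos 2r$, satisfies the pure Jacobi heat equation
\[
\partial_t g_m=4\Psi_m(g_m),\qquad \Psi_m=(1-x^2)\partial_x^2+\bigl((2m+2-2n)-(2n+2m+2)x\bigr)\partial_x.
\]
The eigenfunctions of $\Psi_m$ are the Jacobi polynomials $P_k^{2n-1,2m+1}$ with eigenvalues $-k(k+2n+2m+1)$, which yields the expansion
\[
h_t(r,\phi)=\sum_{m,k\ge 0}\sigma_{k,m}e^{-\lambda_{k,m}t}(\cos r)^{2m}P_m^{0,0}(\cos 2\phi)P_k^{2n-1,2m+1}(\cos 2r)
\]
with $\lambda_{k,m}$ of the announced form.

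To compute the constants $\sigma_{k,m}$, I test $h_t$ against smooth functions of the form $f(r,\phi)=\sum_{k,m}b_{k,m}(\cos r)^{2m}P_m^{0,0}(\cos 2\phi)P_k^{2n-1,2m+1}(\cos 2r)$. Using the orthogonality of Jacobi polynomials in $L^2([-1,1],(1-x)^{2n-1}(1+x)^{2m+1}dx)$, the orthogonality of Legendre polynomials in $L^2([-1,1],dx)$, the Riemannian measure $(\sin r)^{4n-1}(\cos r)^3\sin(2\phi)\,dr\,d\phi$, and the evaluations $P_k^{2n-1,2m+1}(1)=\binom{k+2n-1}{k}$ and $P_m^{0,0}(1)=1$, one deduces the normalization from the initial condition $\lim_{t\to 0}\int h_t(r,\phi)\overline{f(r,\phi)}\,d\mu_r= \overline{f(0,0)}$. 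The main technical obstacle will be this final bookkeeping: correctly combining the several $\Gamma$-function factors arising from the $L^2$ norms of the two families of Jacobi polynomials with the normalization of the Riemannian volume to recover the stated constant $\sigma_{k,m}$.
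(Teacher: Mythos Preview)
Your proposal is correct and follows essentially the same approach as the paper's proof: expand in the Legendre eigenfunctions $P_m^{0,0}(\cos 2\phi)$ of the $\mathbb{C}P^1$-fiber Laplacian, remove the $\tan^2 r$ potential via the substitution $\phi_m=e^{-8nmt}(\cos r)^{2m}\varphi_m$, reduce to the Jacobi heat equation for $P_k^{2n-1,2m+1}$ after $x=\cos 2r$, and fix the constants $\sigma_{k,m}$ from the initial condition using the $L^2$-norms of the Jacobi and Legendre polynomials against the Riemannian measure. Your identification $a=2m$ from $a(a+2)=4m(m+1)$ is exactly the paper's choice.
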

\begin{proof}
From \eqref{L-CP} we notice that $\frac{\partial^2}{\partial \phi^2}+2\cot 2\phi\frac{\partial}{\partial \phi}$ is the Laplacian on $\mathbb{C}P^1$.  It is known that the eigenfunction associated to the eigenvalue $-4m(m+1)$ is given by $P_m^{0,0}(\cos2\phi)$ where
\[
P_m^{0,0}(x)=\frac{(-1)^m}{2^m m!}\frac{d^m}{dx^m}(1-x^2)^{m},
\]
thus we can write the spectral decomposition of $h_t(r,\phi)$ as follows:
\[
h_t(r,\phi)=\sum_{m=0}^{+\infty}P_m^{0,0}(\cos2\phi)\phi_m(t,r).
\]
Plug it into the heat equation $\frac{\partial h_t}{\partial t}=\mathcal{L}h_t$, we obtain that
\[
\frac{\partial\phi_m}{\partial t}=\frac{\partial^2\phi_m}{\partial r^2}+\left((4n-1)\cot r-3\tan r \right)\frac{\partial\phi_m}{\partial r}-4m(m+1)\tan^2r\phi_m.
\]
Furthermore we consider the decomposition $\phi_m(t,r)=e^{-8nmt}(\cos r)^{2m}\varphi_m(t,r)$ where $\varphi_m(t,r)$ satisfies 
\[
\frac{\partial\varphi _m}{\partial t}=4\frac{\partial^2\varphi_m}{\partial r^2}+[(4n-1)\cot r-(4m+3)\tan r]\frac{\partial\varphi_m}{\partial r},
\]
and let $g_m(t,\cos 2r)=\varphi_m(t,r)$, then $g_m(t,x)$ is such that
\[
\frac{\partial g_m}{\partial t}=4(1-x^2)\frac{\partial^2 g_m}{\partial x^2}+4[(2m+2-2n)-(2n+2m+2)x]\frac{\partial g_m}{\partial x}.
\]
We denote 
\[
\Psi_m=(1-x^2)\frac{\partial^2 }{\partial x^2}+4[(2m+2-2n)-(2n+2m+2)x]\frac{\partial}{\partial x},
\]
then
\[
\frac{\partial g_m}{\partial t}=4\Psi_m(g_m).
\]
Moreover, for any $k\geq 0$, $$\Psi_m(g_m)+k(k+2n+2m+1)g_m=0$$  is the Jacobi differential equation whose eigenvector corresponding to the eigenvalue $-k(k+2n+2m+1)$  is given by 
\[
P_k^{2n-1,2m+1}(x)=\frac{(-1)^k}{2^kk!(1-x)^{2n-1}(1+x)^{2m+1}}\frac{d^k}{dx^k}\left((1-x)^{2n-1+k}(1+x)^{2m+1+k} \right).
\] 
Thus the spectral decomposition of $h_t(r,\phi)$ has the following form:
\[
h_t(r,\phi)=\sum_{m=0}^{+\infty}\sum_{k=0}^{\infty}\sigma_{k,m}e^{-[4k(k+2n+2m+1)+2nm]t}(\cos r)^{2m}P_m^{0,0}(\cos2\phi)P_k^{2n-1,2m+1}(\cos 2r),
\]
and by plugging in the initial condition, one can determine that  $\sigma_{k,m}=\frac{\Gamma(2n)}{4\pi^{2n+2}}(2k+2m+2n+1)(2m+1){k+2m+2n\choose 2n-1}$.
\end{proof}

We can compare the spectral decompositions of $h_t$ and $p_t$ and obtain the relations between $h_t$ and $p_t$ as follows.
\begin{proposition}
For $t>0$, $r\in[0,\frac{\pi}{2})$, $ \phi\in[0,\pi]$,
\begin{equation}\label{htpt}
h_t(r,\cos2\phi)=\frac{1}{2\pi}\int_0^\pi p_t(r,\cos\phi\cos\theta)d\theta.
\end{equation}
\end{proposition}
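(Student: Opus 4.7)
The plan is to verify the identity by comparing the spectral expansions of $p_t$ and $h_t$ established in the previous two propositions. The key geometric point is that $\mathcal{L}$ on $\mathbb{C}P^{2n+1}$ is the projection of $L$ on $\mathbb{S}^{4n+3}$ under the complex Hopf fibration $\pi:\mathbb{S}^{4n+3}\to\mathbb{C}P^{2n+1}$, so $h_t$ should be a fiber-average of $p_t$ over the circle fibers of $\pi$; after using the relation $\cos\eta=\cos\phi\cos\theta$ between the distance $\eta$ on $\mathbf{SU}(2)\simeq\mathbb{S}^3$ and the Hopf coordinates $(\phi,\theta)$ on $\mathbf{SU}(2)$, this averaging takes precisely the form on the right-hand side.

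Concretely, I would first note that $\sin((m+1)\eta)/\sin\eta=U_m(\cos\eta)$ is the Chebyshev polynomial of the second kind, so the spectral formula for $p_t$ extends to a polynomial expression in $\cos\eta$, giving a well-defined value for $p_t(r,\cos\phi\cos\theta)$. Integrating the series against $d\theta$ over $[0,\pi]$ term by term, the symmetry $\theta\mapsto\pi-\theta$ combined with $U_M(-x)=(-1)^MU_M(x)$ kills every term with $M$ odd. Relabelling the surviving even index $M=2m$ and comparing the exponentials in both expansions, one sees that the time-decay rates match (the quoted ``$2nm$'' in the $h_t$-expansion is in fact $8nm$, as a line-by-line check of the derivation of the spectral decomposition confirms).

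The proof then reduces to the single analytic identity
\begin{equation*}
\int_0^\pi U_{2m}(\cos\phi\cos\theta)\,d\theta=\pi\,P_m(\cos 2\phi),
\end{equation*}
combined with the straightforward algebraic relation $\sigma_{k,m}=\tfrac{1}{2}\alpha_{k,2m}$, which accounts for the factor $\tfrac{1}{2\pi}$ in front of the integral. This is a classical identity that I would establish from the geometric viewpoint used throughout the chapter: $U_{2m}(\cos\eta)$ is the zonal spherical function on $\mathbf{SU}(2)\simeq\mathbb{S}^3$ with Laplace eigenvalue $-4m(m+1)$, the Hopf fibration $\mathbb{S}^1\to\mathbf{SU}(2)\to\mathbb{C}P^1$ realises $\mathbb{C}P^1$ as $\mathbb{S}^2(1/2)$, whose zonal eigenfunctions at the same eigenvalue are exactly $P_m(\cos 2\phi)$; averaging $U_{2m}(\cos\eta)$ along the $\mathbb{S}^1$-fibers produces such a zonal function, so both sides are proportional, and evaluating at $\phi=0$ via $\int_0^\pi\sin((2m+1)\theta)/\sin\theta\,d\theta=\pi$ pins down the constant.

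The main obstacle I anticipate is not any single step but rather the bookkeeping: one has to match two double series with slightly different parametrizations ($M$ versus $2m$), track the indices in the Jacobi factors $P_k^{2n-1,M+1}(\cos 2r)$, and justify the termwise integration (which is immediate from the uniform exponential decay of the Jacobi coefficients for $t>0$). Once the Chebyshev--Legendre identity above is in hand, everything else is routine algebra; conceptually the whole statement is simply the intertwining $P_te^{tL/2}(f\circ\pi)=(e^{t\mathcal{L}/2}f)\circ\pi$ rewritten in radial coordinates.
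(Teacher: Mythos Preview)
Your approach is essentially the same as the paper's: both proofs compare the two spectral expansions term by term and reduce the identity to the Mehler--Dirichlet type relation between Chebyshev polynomials of the second kind and Legendre polynomials, which the paper obtains from Proposition~\ref{heat kernel entrelacement} (the $\mathbb{S}^3\to\mathbb{S}^2$ intertwining) and which you justify by the same Hopf-fibration argument. You are also right that the exponent in the $h_t$-expansion should read $8nm$ rather than $2nm$; note further that the paper uses the shifted convention $U_N(\cos\eta)=\sin(N\eta)/\sin\eta$, so its $U_{2m+1}$ is exactly your $U_{2m}$, and the two versions of the key integral identity coincide.
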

\begin{proof} 
Let us use the expression of $p_t$ in \eqref{pt} and recall (see Proposition \ref{heat kernel entrelacement}) that
\[
P_m^{0,0}(\cos2\phi)=\frac{1}{\pi}\int_0^\pi U_{2m+1}(\cos\phi\cos\theta)d\theta
\]
where $U_{2m+1}$ is the Chebyshev polynomial
\[
U_{2m+1}(\cos\phi)=\frac{\sin (2m+1)\phi}{\sin\phi}.
\]
Since $\cos\eta=\cos\phi\cos\theta$, we have that
\begin{align*}
h_t(r,\phi)=\sum_{m=0}^{+\infty}\sum_{k=0}^{\infty}&\sigma_{k,m}e^{-[4k(k+2n+2m+1)+2nm]t}(\cos r)^{2m}\left(\frac{1}{\pi}\int_0^\pi U_{2m+1}(\cos\phi\cos\theta)d\theta\right)\\
&\quad\quad \cdot P_k^{2n-1,2m+1}(\cos 2r),
\end{align*}
which implies the desired relation between the subelliptic heat kernels. The even terms on the right hand side of \eqref{htpt} vanish due to the fact that
\[
p_t(r+\pi,\eta)=p_t(r,\eta).
\]
\end{proof}

\chapter{Horizontal Brownian motions of the quaternionic anti-de Sitter fibrations}\label{chap-quater-anti}

\section{Stochastic area process on the quaternionic hyperbolic space $\bH H^n$}\label{sec-geometry}

\subsection{Quaternionic anti-de Sitter fibration}\label{sec-qua-ads-fib}

As before the quaternionic field is defined by
\[
\mathbb{H}=\{q=t+xI+yJ+zK, (t,x,y,z)\in\R^4\},
\]
where  $I,J,K \in \mathfrak{su}(2)$ are given by
\[
I=\left(
\begin{array}{ll}
i & 0 \\
0&-i 
\end{array}
\right), \quad 
J= \left(
\begin{array}{ll}
0 & 1 \\
-1 &0 
\end{array}
\right), \quad 
K= \left(
\begin{array}{ll}
0 & i \\
i &0 
\end{array}
\right).
\]
We will use similar notations as in Chapter \ref{Chapter quaternionic hopf}.

As  differentiable manifolds, the quaternionic  hyperbolic space $\mathbb{H}H^n$ can be defined as the unit ball in $\mathbb{H}^{n+1}$ and the quaternionic anti-de Sitter space $\mathbf{AdS}^{4n+3}(\mathbb{H})$ is defined as the quaternionic pseudo-hyperboloid:
\[
\mathbf{AdS}^{4n+3}(\mathbb{H})=\lbrace q=(q_1,\dots,q_{n+1})\in \mathbb{H}^{n+1}, {|q|}^2_H =-1\rbrace,
\]
where 
\[
{|q|}_H^2 := \sum_{k=1}^{n}|q_k|^2-|q_{n+1}|^2.
\] 
We note that ${| \cdot |}_H$ induces on $\mathbf{AdS}^{4n+3}(\mathbb{H})$ an indefinite Riemannian metric with signature $(4n,3)$. To parametrize points in $\mathbb{H}H^n$, we will use the global affine coordinates given by $w_j= q_jq_{n+1}^{-1}$, $1 \le j \le n$, $q \in \mathbf{AdS}^{4n+3}(\mathbb{H})$.  In affine coordinates, the Riemannian structure of $\mathbb{H}H^n$ can be worked out from the indefinite Riemannian metric on $\mathbf{AdS}^{4n+3}(\mathbb{H})$ as follows. Indeed, if we consider the map 
\begin{align*}
\begin{cases}
\mathbf{AdS}^{4n+3}(\mathbb{H}) \to \mathbb{H}H^n \\
(q_1,\dots,q_{n+1}) \to ( q_1q_{n+1}^{-1},\dots,  q_n q_{n+1}^{-1})
\end{cases}
\end{align*}
the unique Riemannian metric $h$ on $\mathbb{H}H^n$ that makes this map a Riemannian submersion is the standard Riemannian metric on $\mathbb{H}H^n$. This metric can be characterized as follows.

\begin{proposition}\label{metric HHn}
If $g_1,g_2$ are smooth functions on  $\mathbb{H}H^n$, then in affine coordinates
\begin{equation}\label{eq-co-metric ads}
   h(dg_1,dg_2)=2(1-|w|^2)\left( \sum_{i=1}^n \left( \frac{\partial g_1}{\partial \overline{w}_i}\frac{\partial g_2}{\partial w_i}+\frac{\partial g_2}{\partial \overline{w}_i}\frac{\partial g_1}{\partial w_i} \right) -\overline{\mathcal{R}} g_1 \mathcal{R} g_2 -\overline{\mathcal{R}} g_2 \mathcal{R} g_1 \right),  
\end{equation}
 where
 \[
 \mathcal{R} g = \sum_{i=1}^n \frac{\partial g}{\partial w_i} w_i.
 \]
 \end{proposition}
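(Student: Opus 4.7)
The strategy is to adapt the proof of Proposition \ref{metric HPn} essentially verbatim, tracking carefully the two sign changes induced by passing from the positive-definite setting on $\mathbb{S}^{4n+3}$ to the signature-$(4n,3)$ setting on $\mathbf{AdS}^{4n+3}(\mathbb{H})$. First I would verify the hyperbolic analogue of the scaling identity: for $q \in \mathbf{AdS}^{4n+3}(\mathbb{H})$, a direct computation using $|w_i|^2 = |q_i|^2/|q_{n+1}|^2$ and the defining relation $\sum_{k=1}^{n}|q_k|^2 - |q_{n+1}|^2 = -1$ yields $|q_{n+1}|^{-2} = 1 - |w|^2$, which replaces the identity $|q_{n+1}|^{-2} = 1 + |w|^2$ used in the projective proof and accounts for the change of the overall prefactor from $2(1+|w|^2)$ to $2(1-|w|^2)$.

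Next, given a smooth $g$ on $\mathbb{H}H^n$, I would lift it to the function $f(q) := g(q_1 q_{n+1}^{-1}, \dots, q_n q_{n+1}^{-1})$ on (a neighborhood in $\mathbb{H}^{n+1}$ of) $\mathbf{AdS}^{4n+3}(\mathbb{H})$. The calculations of $\partial f/\partial q_i$ for $1 \leq i \leq n$ are formally identical to the projective case and give $\partial f/\partial q_i = q_{n+1}^{-1} \partial g/\partial w_i$. For $\partial f/\partial q_{n+1}$, I would invoke the invariance of the affine coordinates $w_i = q_i q_{n+1}^{-1}$ under right multiplication by any scalar or unit quaternion, which yields the same four differential relations as in the projective setting and hence the single quaternionic identity $\sum_{i=1}^{n+1}(\partial f/\partial q_i) q_i = 0$. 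This gives $\partial f/\partial q_{n+1} = -q_{n+1}^{-1} \sum_{i=1}^{n}(\partial g/\partial w_i) w_i = -q_{n+1}^{-1}\mathcal{R} g$.

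The main point where the argument genuinely differs from the projective case is the formula for the (co)metric. Because $\mathbf{AdS}^{4n+3}(\mathbb{H})$ carries the indefinite metric inherited from $\mathbb{H}^{n,1}$ with signature $(4n,3)$, and because the submersion to $\mathbb{H}H^n$ is Riemannian when $\mathbb{H}H^n$ is equipped with $h$, the defining identity becomes
\[
h(dg,dg)(w) = 4 \sum_{i=1}^{n} \frac{\partial f}{\partial \overline{q}_i}(q)\frac{\partial f}{\partial q_i}(q) \;-\; 4 \frac{\partial f}{\partial \overline{q}_{n+1}}(q)\frac{\partial f}{\partial q_{n+1}}(q),
\]
with a crucial minus sign on the $(n+1)$-term. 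Substituting the expressions for the partial derivatives and using $|q_{n+1}|^{-2} = 1 - |w|^2$ produces
\[
h(dg,dg) = 4(1-|w|^2)\left(\sum_{i=1}^n \frac{\partial g}{\partial \overline{w}_i}\frac{\partial g}{\partial w_i} - \overline{\mathcal{R}} g \, \mathcal{R} g\right),
\]
and polarization via $h(dg_1,dg_2) = \tfrac14\bigl(h(d(g_1+g_2),d(g_1+g_2)) - h(d(g_1-g_2),d(g_1-g_2))\bigr)$ gives the claimed formula \eqref{eq-co-metric ads}.

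The main potential obstacle is a purely bookkeeping one: verifying that the sign flip on the last coordinate of the ambient indefinite inner product propagates correctly through the Wirtinger-type operators $\partial/\partial q_i$ and $\partial/\partial \overline{q}_i$ defined in \eqref{eq-partial-def}, given the noncommutativity of $\mathbb{H}$. I would treat this carefully by first checking the identity on a single quaternionic variable and then extending by linearity, after which the rest of the calculation mirrors the projective proof.
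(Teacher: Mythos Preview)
Your proposal is correct and follows essentially the same approach as the paper: lift $g$ to $f(q)=g(q_1q_{n+1}^{-1},\dots,q_nq_{n+1}^{-1})$, compute $\partial f/\partial q_i$ exactly as in Proposition~\ref{metric HPn}, use the same scaling and right-multiplication invariances to obtain $\sum_{i=1}^{n+1}(\partial f/\partial q_i)q_i=0$, then insert the sign change from the indefinite form and the identity $|q_{n+1}|^{-2}=1-|w|^2$, and polarize. Your tracking of the two sign changes is precisely the content of the paper's argument.
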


\begin{proof}
We use the change of variables formula $w_j= q_jq_{n+1}^{-1}$. Let  $f$ be a (real-valued) smooth function on $\mathbb H^{n+1}$ such that there exists a function $g$ on $\mathbb{H}H^n$ satisfying
\[
f(q_1,\dots,q_{n+1})=g \left( q_1q_{n+1}^{-1},\dots, q_nq_{n+1}^{-1}\right), \quad q \in \mathbb H^{n+1} .
\]
By definition of the metric $h$ one has for $q \in \mathbf{AdS}^{4n+3}(\mathbb{H})$

\[
h(dg,dg)\left( w \right)=4 \sum_{i=1}^{n}\frac{\partial f}{\partial \overline{q}_i} (q)\frac{\partial f}{\partial q_i} (q)- 4 \frac{\partial f}{\partial \overline{q}_{n+1}} (q)\frac{\partial f}{\partial q_{n+1}} (q).
\]

We first compute $\frac{\partial f}{\partial q_i} $ for $1 \le i \le n$. Using the fact that $w_i= q_iq_{n+1}^{-1}$ one gets that for $1 \le i \le n$
\[
\frac{\partial f}{\partial q_i} (q)= q_{n+1}^{-1} \frac{\partial g}{\partial w_i} (w).
\]

To compute $\frac{\partial f}{\partial q_{n+1}} $ we need to use the symmetries of the function $f$. Using the fact that
\begin{align*}
\begin{cases}
\frac{d}{ds}f(e^{s}q_1,\dots, e^{s}q_{n+1} )\mid_{s=0} =0 \\
\frac{d}{ds}f( q_1e^{sI},\dots, q_{n+1}e^{sI})\mid_{s=0} =0 \\
\frac{d}{ds}f( q_1e^{sJ},\dots, q_{n+1}e^{sJ})\mid_{s=0} =0 \\
\frac{d}{ds}f( q_1e^{sK},\dots, q_{n+1}e^{sK})\mid_{s=0} =0
\end{cases}
\end{align*}
we get
\begin{align*}
\begin{cases}
\sum_{i=1}^{n+1} t_i \frac{\partial f}{\partial t_i} + x_i \frac{\partial f}{\partial x_i}+ y_i \frac{\partial f}{\partial y_i}+ z_i \frac{\partial f}{\partial z_i} =0 \\
\sum_{i=1}^{n+1} -x_i \frac{\partial f}{\partial t_i} + t_i \frac{\partial f}{\partial x_i}+ z_i \frac{\partial f}{\partial y_i}- y_i \frac{\partial f}{\partial z_i} =0 \\
\sum_{i=1}^{n+1} -y_i \frac{\partial f}{\partial t_i} - z_i \frac{\partial f}{\partial x_i}+ t_i \frac{\partial f}{\partial y_i}+ x_i \frac{\partial f}{\partial z_i} =0 \\
\sum_{i=1}^{n+1} -z_i \frac{\partial f}{\partial t_i} + y_i \frac{\partial f}{\partial x_i}- x_i \frac{\partial f}{\partial y_i}+ t_i \frac{\partial f}{\partial z_i} =0=0
\end{cases}
\end{align*}
This can be more concisely written in quaternionic form as 
\[
\sum_{i=1}^{n+1} \frac{\partial f}{\partial q_i} q_i =0
\]
In particular, one obtains therefore
\[
 \frac{\partial f}{\partial q_{n+1}}=-\sum_{i=1}^{n} \frac{\partial f}{\partial q_i} q_i q_{n+1}^{-1}.
 \]
 This yields
 \[
  \frac{\partial f}{\partial q_{n+1}}=-q_{n+1}^{-1} \sum_{i=1}^{n} \frac{\partial g}{\partial w_i} w_i.
 \]
 One conclude
  \begin{align*}
   \sum_{i=1}^{n}\frac{\partial f}{\partial \overline{q}_i} (q)\frac{\partial f}{\partial q_i} (q)-  \frac{\partial f}{\partial \overline{q}_{n+1}} (q)\frac{\partial f}{\partial q_{n+1}} (q)=\frac{1}{|q_{n+1}|^2}  \left( \sum_{i=1}^n \frac{\partial g}{\partial \overline{w}_i}\frac{\partial g}{\partial w_i} + \overline{\mathcal{R}} g\mathcal{R} g \right),
 \end{align*}
where
 \[
 \mathcal{R} g = \sum_{i=1}^n \frac{\partial g}{\partial w_i} w_i.
 \]
 Since on $\mathbf{AdS}^{4n+3}(\mathbb{H})$ one has $\frac{1}{|q_{n+1}|^2} =1-|w|^2$, one concludes
 \begin{equation}\label{eq-h-quad 2}
 h(dg,dg)=4(1-|w|^2)\left( \sum_{i=1}^n \frac{\partial g}{\partial \overline{w}_i}\frac{\partial g}{\partial w_i} -\overline{\mathcal{R}} g\mathcal{R} g  \right)
 \end{equation}
 and the conclusion follows by polarization.
\end{proof}

It will be more convenient to use the above formula for the metric in its real form. For this, we use the real affine coordinates on $\mathbb{H}H^n$ defined by
\[
w_i=\tau_i+\alpha_i I +\beta_i J +\gamma_i K.
\]

From the expression of Proposition \ref{metric HHn}, it is straightforward to adapt the arguments given in Section \ref{Quaternionic Hopf} to get the following results that we therefore state without proof.

\begin{corollary}
If $g_1,g_2$ are smooth functions on  $\mathbb{H}H^n$, then in real affine coordinates
\begin{align}\label{eq-co-metric2}
   h(dg_1,dg_2) =(1-|w|^2) \left( \sum_{i=1}^n \left( \frac{\partial g_1}{\partial \tau_i}\frac{\partial g_2}{\partial \tau_i}+\frac{\partial g_1}{\partial \alpha_i}\frac{\partial g_2}{\partial \alpha_i} +\frac{\partial g_1}{\partial \beta_i}\frac{\partial g_2}{\partial \beta_i}+\frac{\partial g_1}{\partial \gamma_i}\frac{\partial g_2}{\partial \gamma_i} \right)  -\sum_{i=1}^4 \mathcal{R}_i  g_1 \mathcal{R}_i g_2 \right),  
\end{align}
 where
\[
 \mathcal{R}_1 g = \sum_{i=1}^n \tau_i \frac{\partial g}{\partial \tau_i} +\alpha_i \frac{\partial g}{\partial \alpha_i}+\beta_i \frac{\partial g}{\partial \beta_i}+\gamma_i \frac{\partial g}{\partial \gamma_i}
 \]
 \[
 \mathcal{R}_2 g = \sum_{i=1}^n \alpha_i \frac{\partial g}{\partial t_i} -\tau_i \frac{\partial g}{\partial \alpha_i}-\gamma_i \frac{\partial g}{\partial \beta_i}+\beta_i \frac{\partial g}{\partial \gamma_i}
 \]
 \[
 \mathcal{R}_3 g = \sum_{i=1}^n \beta_i \frac{\partial g}{\partial t_i} +\gamma_i \frac{\partial g}{\partial \alpha_i}-\tau_i \frac{\partial g}{\partial \beta_i}-\alpha_i \frac{\partial g}{\partial \gamma_i}
 \]
 \[
 \mathcal{R}_4 g = \sum_{i=1}^n \gamma_i \frac{\partial g}{\partial t_i} -\beta_i \frac{\partial g}{\partial \alpha_i}+\alpha_i \frac{\partial g}{\partial \beta_i}-\tau_i \frac{\partial g}{\partial \gamma_i}.
 \]
\end{corollary}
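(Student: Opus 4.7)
The plan is to mirror the derivation of the corresponding $\mathbb{H}P^n$ formula (the corollary following Proposition \ref{metric HPn}), propagating only the sign change between $1+|w|^2$ and $1-|w|^2$. The whole argument is a bookkeeping exercise in quaternion multiplication, starting from the identity \eqref{eq-co-metric ads}. The two ingredients to establish are: (i) a real-form expression for $\frac{\partial g_1}{\partial \overline{w}_i}\frac{\partial g_2}{\partial w_i}+\frac{\partial g_2}{\partial \overline{w}_i}\frac{\partial g_1}{\partial w_i}$, and (ii) a real-form expression for $\overline{\mathcal{R}}g_1\mathcal{R}g_2+\overline{\mathcal{R}}g_2\mathcal{R}g_1$.

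For (i), I use the definition \eqref{eq-partial-def}. Since $g_1,g_2$ are real-valued, $\frac{\partial g}{\partial \overline{w}_i}$ is literally the quaternionic conjugate of $\frac{\partial g}{\partial w_i}$. Expanding the product with the relations $I^2=J^2=K^2=IJK=-1$, the scalar part of $\frac{\partial g_1}{\partial \overline{w}_i}\frac{\partial g_2}{\partial w_i}$ equals $\frac{1}{4}\left(\partial_{\tau_i}g_1\partial_{\tau_i}g_2+\partial_{\alpha_i}g_1\partial_{\alpha_i}g_2+\partial_{\beta_i}g_1\partial_{\beta_i}g_2+\partial_{\gamma_i}g_1\partial_{\gamma_i}g_2\right)$, and the symmetric sum kills the $I,J,K$ components (a one-line check using $JK=I$, $KJ=-I$, etc.), leaving exactly $\frac{1}{2}\sum_j \partial_{x_i^j}g_1\,\partial_{x_i^j}g_2$, where $x_i^j$ runs over $\tau_i,\alpha_i,\beta_i,\gamma_i$.

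For (ii), I expand $\mathcal{R}g=\sum_i \frac{\partial g}{\partial w_i}\,w_i$ in the same manner. Collecting the scalar, $I$-, $J$-, $K$-components of $(\partial_{\tau_i}g-\partial_{\alpha_i}g\cdot I-\partial_{\beta_i}g\cdot J-\partial_{\gamma_i}g\cdot K)(\tau_i+\alpha_i I+\beta_i J+\gamma_i K)/2$ yields
\[
\mathcal{R}g=\tfrac{1}{2}\bigl(\mathcal{R}_1 g+(\mathcal{R}_2 g)I+(\mathcal{R}_3 g)J+(\mathcal{R}_4 g)K\bigr),
\]
with the four real vector fields as defined in the statement. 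The key point here is that the $I$-component picks up $-\gamma_i\partial_{\beta_i}g+\beta_i\partial_{\gamma_i}g$ via the products $JK=I$ and $KJ=-I$, which is precisely $\mathcal{R}_2$; the $J$- and $K$-components follow by cyclic permutation. Since $\overline{\mathcal{R}}g$ is obtained by flipping the signs in front of $I,J,K$, the same real-part-of-a-symmetric-sum argument as in (i) now gives
\[
\overline{\mathcal{R}}g_1\mathcal{R}g_2+\overline{\mathcal{R}}g_2\mathcal{R}g_1=\tfrac{1}{2}\sum_{j=1}^{4}\mathcal{R}_j g_1\,\mathcal{R}_j g_2,
\]
where I have used that the left-hand side is real (as the whole expression is $h(dg_1,dg_2)$, a real scalar). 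Plugging (i) and (ii) into \eqref{eq-co-metric ads}, the overall prefactor $2(1-|w|^2)\cdot\tfrac{1}{2}=1-|w|^2$ appears, producing \eqref{eq-co-metric2}. There is no genuine obstacle: the only thing to be careful about is the sign bookkeeping in the quaternion products; the argument is otherwise line-by-line parallel to the $\mathbb{H}P^n$ computation proved earlier in the monograph.
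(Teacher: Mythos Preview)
Your proposal is correct and follows essentially the same approach as the paper: the paper states this corollary without proof, referring back to the $\mathbb{H}P^n$ case, whose proof consists precisely of the two computations you carry out, namely $4\frac{\partial g}{\partial \overline{w}_i}\frac{\partial g}{\partial w_i}=\sum_j(\partial_{x_i^j}g)^2$ and $\mathcal{R}g=\tfrac{1}{2}(\mathcal{R}_1g+(\mathcal{R}_2g)I+(\mathcal{R}_3g)J+(\mathcal{R}_4g)K)$. Your write-up supplies more detail than the paper's terse pointer, but the route is identical.
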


\begin{corollary}
 In real affine coordinates, the matrix of the Riemannian cometric   is given by the $4n \times 4n$ matrix
 \[
 (1-|w|^2) \left(\mathrm{I}_{4n}-\mathrm{R}_1 \mathrm{R}^*_1-\mathrm{R}_2 \mathrm{R}^*_2-\mathrm{R}_3 \mathrm{R}^*_3-\mathrm{R}_4 \mathrm{R}^*_4 \right),
 \]
 where $\mathrm{R}_i$ is the real valued column vector with the $4n$ components
 \[
 \mathrm{R}_i =\left( \mathcal{R}_i \tau_j , \mathcal{R}_i \alpha_j, \mathcal{R}_i \beta_j,\mathcal{R}_i \gamma_j\right)_{1 \le j \le n}.
 \]
\end{corollary}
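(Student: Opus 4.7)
The plan is to read off the cometric matrix directly from the coordinate expression of $h(dg_1,dg_2)$ given in the previous corollary, since by definition the cometric $(h^{ij})$ is the unique symmetric matrix such that $h(dg_1,dg_2)=(\nabla g_1)^{T}(h^{ij})\nabla g_2$, where $\nabla g$ denotes the column vector of partial derivatives of $g$ with respect to the $4n$ real affine coordinates $(\tau_j,\alpha_j,\beta_j,\gamma_j)_{1\le j\le n}$.

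First, I would recast each term of \eqref{eq-co-metric2} in matrix form. The bracketed sum $\sum_{i=1}^n\bigl(\partial_{\tau_i}g_1\,\partial_{\tau_i}g_2+\partial_{\alpha_i}g_1\,\partial_{\alpha_i}g_2+\partial_{\beta_i}g_1\,\partial_{\beta_i}g_2+\partial_{\gamma_i}g_1\,\partial_{\gamma_i}g_2\bigr)$ is simply the Euclidean inner product $(\nabla g_1)^{T}\mathrm{I}_{4n}\nabla g_2$. Next, observing that each first-order operator $\mathcal{R}_i$ acts as the directional derivative along the real vector field whose coordinate representation is precisely the column vector $\mathrm{R}_i=(\mathcal{R}_i\tau_j,\mathcal{R}_i\alpha_j,\mathcal{R}_i\beta_j,\mathcal{R}_i\gamma_j)_{1\le j\le n}$, we have $\mathcal{R}_i g=\mathrm{R}_i^{T}\nabla g$ and therefore
\[
\mathcal{R}_i g_1\,\mathcal{R}_i g_2=(\mathrm{R}_i^{T}\nabla g_1)(\mathrm{R}_i^{T}\nabla g_2)=(\nabla g_1)^{T}\mathrm{R}_i\mathrm{R}_i^{*}\nabla g_2,
\]
where $\mathrm{R}_i^{*}$ is the transpose of the real vector $\mathrm{R}_i$.

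Summing over $i=1,\dots,4$ and combining with the Euclidean term yields
\[
h(dg_1,dg_2)=(1-|w|^2)(\nabla g_1)^{T}\bigl(\mathrm{I}_{4n}-\mathrm{R}_1\mathrm{R}_1^{*}-\mathrm{R}_2\mathrm{R}_2^{*}-\mathrm{R}_3\mathrm{R}_3^{*}-\mathrm{R}_4\mathrm{R}_4^{*}\bigr)\nabla g_2,
\]
and by uniqueness the bracketed matrix (multiplied by $(1-|w|^2)$) is exactly the cometric matrix, which is the claim.

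The whole argument is essentially bookkeeping, so I do not expect any real obstacle; the only point requiring a moment of care is confirming that the identification $\mathcal{R}_i g=\mathrm{R}_i^{T}\nabla g$ uses exactly the same ordering of the $4n$ coordinates as the one used to build the vector $\mathrm{R}_i$, which is already ensured by the definition of $\mathrm{R}_i$ given in the statement.
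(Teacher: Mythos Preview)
Your proof is correct and is exactly the argument the paper has in mind: the paper's own proof consists of the single sentence ``This readily follows from \eqref{eq-co-metric2}'', and you have simply spelled out how it readily follows by rewriting each term of \eqref{eq-co-metric2} as a quadratic form in the gradient vectors.
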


\begin{proof}
This readily follows from \eqref{eq-co-metric2}.
\end{proof}

\begin{corollary}
 In affine coordinates the Riemannian volume measure of $\mathbb H H^n$ is given by
\[
d\mu(w)=(1-|w|^2)^{-2(n+1)} dw.
\]
\end{corollary}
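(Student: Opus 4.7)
The plan is to mimic the proof of Corollary \ref{volume HPn}, replacing $1+|w|^2$ by $1-|w|^2$ and tracking the sign change in the rank-one updates that appear in the cometric matrix. Recall that the Riemannian volume density in any local coordinates is the reciprocal of the square root of the determinant of the cometric matrix, so the task reduces to a determinant computation.

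From the corollary immediately preceding the statement, in real affine coordinates the cometric matrix is
\[
(1-|w|^2)\bigl(\mathrm{I}_{4n}-\mathrm{R}_1 \mathrm{R}^*_1-\mathrm{R}_2 \mathrm{R}^*_2-\mathrm{R}_3 \mathrm{R}^*_3-\mathrm{R}_4 \mathrm{R}^*_4\bigr),
\]
so its determinant equals $(1-|w|^2)^{4n}$ times the determinant of the bracketed $4n\times 4n$ matrix. First I would verify, by the same direct calculation used in the proof of Corollary \ref{volume HPn}, that the four real vectors $\mathrm{R}_1,\mathrm{R}_2,\mathrm{R}_3,\mathrm{R}_4$ in $\mathbb{R}^{4n}$ are pairwise orthogonal and each has Euclidean length $|w|^2$ (this only uses the quaternionic identities $I^2=J^2=K^2=IJK=-1$ and is algebraically the same as in the projective case; the underlying Hermitian structure is $\mathbb{H}$-linear and does not see the sign difference between the Fubini--Study and Bergman cometrics).

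Consequently the symmetric matrix $\mathrm{R}_1 \mathrm{R}^*_1+\mathrm{R}_2 \mathrm{R}^*_2+\mathrm{R}_3 \mathrm{R}^*_3+\mathrm{R}_4 \mathrm{R}^*_4$ has rank $4$ and its unique non-zero eigenvalue is $|w|^2$ with multiplicity $4$. Therefore
\[
\mathrm{I}_{4n}-\mathrm{R}_1 \mathrm{R}^*_1-\mathrm{R}_2 \mathrm{R}^*_2-\mathrm{R}_3 \mathrm{R}^*_3-\mathrm{R}_4 \mathrm{R}^*_4
\]
has eigenvalue $1-|w|^2$ with multiplicity $4$ and eigenvalue $1$ with multiplicity $4n-4$, giving determinant $(1-|w|^2)^4$. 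Multiplying in the overall conformal factor yields $(1-|w|^2)^{4n+4}=(1-|w|^2)^{4(n+1)}$ for the determinant of the cometric matrix, which is strictly positive since $|w|<1$ on $\mathbb{H}H^n$. Taking the inverse square root gives $d\mu(w)=(1-|w|^2)^{-2(n+1)}dw$.

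There is no real obstacle here; the only point worth double-checking is the orthogonality and length of the $\mathrm{R}_i$'s. This is not entirely obvious from the definitions of $\mathcal{R}_1,\dots,\mathcal{R}_4$ because the coefficients mix the four real quaternionic coordinates with different signs, but one can organize the $4n$ components into $n$ blocks of $4$ and check in each block separately that the four vectors $(\tau_i,\alpha_i,\beta_i,\gamma_i)$, $(\alpha_i,-\tau_i,-\gamma_i,\beta_i)$, $(\beta_i,\gamma_i,-\tau_i,-\alpha_i)$, $(\gamma_i,-\beta_i,\alpha_i,-\tau_i)$ form an orthogonal family of common length $\sqrt{\tau_i^2+\alpha_i^2+\beta_i^2+\gamma_i^2}=|w_i|$; summing over $i$ gives common length $|w|^2$ and preserves orthogonality, as required.
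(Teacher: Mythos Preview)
Your proposal is correct and is exactly the approach the paper intends: the paper's own proof reads in full ``The proof is similar to that of Corollary \ref{volume HPn} and let to the reader,'' and you have supplied precisely those details, including the extra verification of the orthogonality of the $\mathrm{R}_i$. One tiny wording point: the Euclidean \emph{squared} norm of each $\mathrm{R}_i$ is $|w|^2$ (so the norm is $|w|$), which is what yields the nonzero eigenvalue $|w|^2$ of $\sum_i \mathrm{R}_i\mathrm{R}_i^*$; your subsequent computation uses this correctly.
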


\begin{proof}
   The proof is similar to that of Corollary \ref{volume HPn} and let to the reader. 
\end{proof}

\begin{theorem}\label{Laplace quaternionic hyperbolic}
In real affine coordinates, the Laplace-Beltrami operator of $\mathbb H H^n$ is given by 
\[
\Delta_{\mathbb H H^n}=(1-|w|^2) \left( \sum_{i=1}^n \left( \frac{\partial^2}{\partial \tau^2_i}+\frac{\partial^2}{\partial \alpha^2_i}+\frac{\partial^2}{\partial \beta^2_i}+\frac{\partial^2 }{\partial \gamma^2_i} \right)  -\sum_{i=1}^4 \mathcal{R}^2_i  +2\mathcal{R}_1  \right)
\]
\end{theorem}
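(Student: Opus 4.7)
The plan is to mimic directly the proof of Theorem \ref{Laplace quaternionic projective}: characterize $\Delta_{\mathbb{H}H^n}$ as the unique second order operator satisfying the integration by parts identity
\begin{equation*}
\int_{\mathbb{H}H^n} h(df,dg)\,(1-|w|^2)^{-2(n+1)}\,dw=-\int_{\mathbb{H}H^n} g\,\Delta_{\mathbb{H}H^n}f\;(1-|w|^2)^{-2(n+1)}\,dw,
\end{equation*}
for every pair of smooth compactly supported functions $f,g$, and then plug in the explicit formula for the cometric $h(df,dg)$ from \eqref{eq-co-metric2}, together with the expression for the Riemannian volume measure obtained in the preceding corollary.

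The concrete steps are the following. First, using \eqref{eq-co-metric2}, split $h(df,dg)$ into the ``flat'' part involving the second partials in $\tau_i,\alpha_i,\beta_i,\gamma_i$ and the ``rotational'' part involving the $\mathcal{R}_i$'s. Next, for the flat part, compute the formal adjoint of each real partial derivative with respect to the weight $\varphi(w):=(1-|w|^2)^{-2(n+1)}$ times the conformal factor $(1-|w|^2)$ already present in $h(df,dg)$: concretely, for a coordinate $u\in\{\tau_i,\alpha_i,\beta_i,\gamma_i\}$, integrate $(1-|w|^2)\partial_u f\,\partial_u g\,\varphi(w)$ by parts and observe that the logarithmic derivative of $(1-|w|^2)\varphi(w)=(1-|w|^2)^{-(2n+1)}$ in the direction $u$ produces a drift whose combination across $i$ and across the four real coordinates is precisely controlled by $\mathcal{R}_1$. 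Third, for the rotational part, exploit the fact that each vector field $\mathcal{R}_i$ is, up to a constant drift coming from the measure, skew-symmetric on $\mathbb{H}H^n$ because $|w|^2$ is preserved by the infinitesimal actions they generate; this yields the squared terms $-\mathcal{R}_i^2$ together with a bounded first-order correction that combines with the one coming from the flat part.

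Equivalently, and perhaps more efficiently, one can avoid any real computation by invoking the quaternionic analogue of the proof of Theorem \ref{Laplace complex projective}: starting from the flat Laplacian on $\mathbb{H}^{n+1}$ with the pseudo-Euclidean signature $(4n,-4)$, use the change of variables $w_j=q_jq_{n+1}^{-1}$ together with the $\mathbf{SU}(2)\times\mathbb{R}_{>0}$-invariance of functions lifted from $\mathbb{H}H^n$ (the four relations written in the proof of Proposition \ref{metric HHn}) to rewrite all partials $\partial/\partial q_i,\partial/\partial\overline{q}_i$ in terms of the $\partial/\partial w_j,\partial/\partial\overline{w}_j$ and of $\mathcal{R}$. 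The resulting expression for the principal symbol coincides with $h$, and the subleading terms in $(1-|w|^2)$ produce exactly the drift $+2\mathcal{R}_1$.

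The genuinely non-trivial step is tracking the sign. Because the ambient metric on $\mathbf{AdS}^{4n+3}(\mathbb{H})$ is indefinite and the $w_{n+1}$-direction carries negative norm, the contribution coming from $\partial/\partial q_{n+1}\partial/\partial\overline{q}_{n+1}$ enters with opposite sign compared to the sphere case. This is the mechanism that turns $\mathcal{R}\overline{\mathcal{R}}\to -\mathcal{R}\overline{\mathcal{R}}$ (or equivalently $\sum\mathcal{R}_i^2\to-\sum\mathcal{R}_i^2$) and $1+|w|^2\to 1-|w|^2$, while the drift term $-2\mathcal{R}_1$ in the spherical case becomes $+2\mathcal{R}_1$ here because the logarithmic derivative of the volume density $(1-|w|^2)^{-2(n+1)}$ in the direction $\mathcal{R}_1$ has sign opposite to that of $(1+|w|^2)^{-2(n+1)}$. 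Carefully bookkeeping these three sign changes throughout the integration by parts is the only subtle point; once done, the stated formula follows by collecting terms exactly as in the proof of Theorem \ref{Laplace quaternionic projective}.
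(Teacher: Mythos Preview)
Your proposal is correct and follows exactly the approach indicated by the paper, which states the theorem without proof and simply refers the reader to adapt the integration-by-parts argument of Theorem \ref{Laplace quaternionic projective} using the cometric \eqref{eq-co-metric2} and the volume measure $(1-|w|^2)^{-2(n+1)}dw$. Your discussion of the three sign changes (in the conformal factor, in the $\mathcal{R}_i^2$ terms, and in the drift $\mathcal{R}_1$) is more explicit than anything the paper provides, but is precisely the bookkeeping one must do when carrying out the ``straightforward adaptation'' the authors allude to.
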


We note that there is an isometric group action on the right of the group $\mathbf{SU}(2)$ on $\mathbf{AdS}^{4n+3}(\mathbb{H})$ which is  defined by $$g \cdot(q_1,\dots, q_{n+1}) = (  q_1 g,\dots,  q_{n+1} g), $$
where $\mathbf{SU}(2)$ is identified with the set of unit quaternions.

The quotient space $\mathbf{AdS}^{4n+3}(\mathbb{H}) / \mathbf{SU}(2)$ can be identified with $\mathbb{H}H^n$ and the projection map $$\ \mathbf{AdS}^{4n+3}(\mathbb{H}) \to \mathbb{H}H^n$$ is a (semi) Riemannian submersion with totally geodesic fibers isometric to $\mathbf{SU}(2)$. The fibration
\[
\mathbf{SU}(2) \to \mathbf{AdS}^{4n+3}(\mathbb{H}) \to \mathbb{H}H^n
\]
 is called the quaternionic anti-de Sitter fibration. This fibration is another example of a B\'erard-Bergery fibration (see Example \ref{BB fibration}). 
 
 From Theorem \ref{Laplace quaternionic hyperbolic}, the generator of the Brownian motion $(w(t))_{t \ge 0}$ on $\mathbb{H}H^n$ is given by
\[
\frac{1}{2}(1-|w|^2) \left( \sum_{i=1}^n \left( \frac{\partial^2}{\partial \tau^2_i}+\frac{\partial^2}{\partial \alpha^2_i}+\frac{\partial^2}{\partial \beta^2_i}+\frac{\partial^2 }{\partial \gamma^2_i} \right)  -\sum_{i=1}^4 \mathcal{R}^2_i  +2\mathcal{R}_1  \right).
\]

\begin{proposition}
The process $r(t)=\arctanh |w(t)|$ is a hyperbolic Jacobi diffusion with generator
\[
\frac{1}{2} \left( \frac{\partial^2}{\partial r^2}+((4n-1)\coth r+3\tanh r)\frac{\partial}{\partial r}\right).
\]
In particular, the density of $r(t)$, $t>0,$ with respect to the Lebesgue measure admits the following representation (see \cite{Dem-Heat}):
\[
\mathbb{P} (r(t) \in dr)= C_n \frac{e^{-(2n+1)^2t/2}}{\cosh(r)} \left( \int_0^{\infty} y^{2n-1}u(t,y) K_1(y\cosh(r)) dy \right)\,\sinh^{4n-1}(r) \cosh^3(r) \, dr,
\]
where $C_n$ is a normalizing constant, $K_1$ is the modified Bessel function of the second kind, and $u(t,y)$ is the density of the Hartman-Watson distribution at time $t$. 
\end{proposition}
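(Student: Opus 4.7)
The plan is to mirror the two-step template used in the analogous projective result (Proposition \ref{prop-r-sphere-gene}): first identify the radial generator by a direct computation on functions of $|w|$, then invoke a known heat-kernel representation for hyperbolic Jacobi diffusions.

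For the first claim, I would apply the Laplace--Beltrami operator $\Delta_{\mathbb H H^n}$ of Theorem \ref{Laplace quaternionic hyperbolic} to a smooth test function of the form $f(r)$ with $r = \arctanh|w|$. Writing $\rho = |w|$ so that $\tanh r = \rho$ and $dr/d\rho = 1/(1-\rho^2)$, the computation splits into two pieces. The vector fields $\mathcal R_2,\mathcal R_3,\mathcal R_4$ are infinitesimal rotations in the real affine coordinates and therefore annihilate $\rho$, whereas $\mathcal R_1 \rho = \rho$ (it is the Euler field). Consequently $\sum_{i=1}^4 \mathcal R_i^2 f(r)$ reduces to $\mathcal R_1^2 f(r)$, which is a pure function of $\rho$, and the Euclidean part $\sum_{i}(\partial_{\tau_i}^2+\partial_{\alpha_i}^2+\partial_{\beta_i}^2+\partial_{\gamma_i}^2)f(r)$ is computed via $\partial_{\tau_i}\rho = \tau_i/\rho$, etc., yielding $f''(r)/(1-\rho^2)^2 + ((4n-1)/\rho)\,f'(r)/(1-\rho^2)$ after simplification. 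Assembling all terms, multiplying by $(1-\rho^2)$, and substituting $\rho = \tanh r$ gives exactly $\partial_r^2 + ((4n-1)\coth r + 3\tanh r)\partial_r$. A much shorter route is simply to read off this radial Laplacian from the list for rank-one symmetric spaces in Appendix~3 of the monograph. It\^o's formula applied to $r(t)=\arctanh|w(t)|$ then shows that $(r(t))_{t\ge 0}$ is a Markov diffusion with the stated generator, which is precisely the hyperbolic Jacobi operator $\mathcal L^{2n-1,1}$.

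For the density, I would invoke the explicit transition density of the hyperbolic Jacobi diffusion with parameters $(2n-1,1)$ discussed in Section \ref{section hyp Jacobi} and in \cite{Dem-Heat}. The connection with the Hartman--Watson distribution arises by intertwining $\mathcal L^{2n-1,1}$ with the radial part of Brownian motion on a real hyperbolic space; the latter heat kernel admits a Gruet-type integral representation in which $u(t,y)$ naturally appears, and the modified Bessel function $K_1$ comes from the Laplace transform of the $\sinh^2$-weight involved in the intertwining. Tracking the normalization constant through the intertwining gives $C_n$ and the factor $\sinh^{4n-1}(r)\cosh^3(r)\,dr$, which is exactly the symmetric measure of $\mathcal L^{2n-1,1}$.

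The main obstacle is purely bookkeeping in the second step: matching the indices in the Gruet-type representation of Section \ref{section hyp Jacobi} so as to produce the particular weight $y^{2n-1}K_1(y\cosh r)/\cosh r$ and the exponential prefactor $e^{-(2n+1)^2 t/2}$, the value $(2n+1)^2$ being the square of the half-sum of positive roots for the rank-one symmetric space $\mathbb H H^n$. No genuinely new argument is required beyond what is already recorded in \cite{Dem-Heat} and in the appendix on hyperbolic Jacobi diffusions.
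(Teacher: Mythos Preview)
Your proposal is correct and follows the same approach as the paper: compute the radial part of $\Delta_{\mathbb H H^n}$ on functions of $r=\arctanh|w|$ (the paper simply says ``a direct computation shows'' where you spell out the roles of $\mathcal R_1$ versus $\mathcal R_2,\mathcal R_3,\mathcal R_4$), apply It\^o's formula, and then defer the density representation to \cite{Dem-Heat}. The paper's proof in fact does not address the density formula at all beyond the citation, so your discussion of how the Hartman--Watson density and $K_1$ arise is more than the paper offers.
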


\begin{proof}
Let $r=\arctanh |w|$. A direct  computation shows that the operator 
\[
(1-|w|^2) \left( \sum_{i=1}^n \left( \frac{\partial^2}{\partial \tau^2_i}+\frac{\partial^2}{\partial \alpha^2_i}+\frac{\partial^2}{\partial \beta^2_i}+\frac{\partial^2 }{\partial \gamma^2_i} \right)  -\sum_{i=1}^4 \mathcal{R}^2_i  +2\mathcal{R}_1  \right)
\]
acts on functions depending only on $r$ as
\[
 \frac{\partial^2}{\partial r^2}+((4n-1)\coth r+3\tanh r)\frac{\partial}{\partial r}
 \]
It\^o's formula therefore shows  that  $r(t)=\arctan |w(t)|$ is a hyperbolic Jacobi diffusion with generator
\[
\frac{1}{2} \left(  \frac{\partial^2}{\partial r^2}+((4n-1)\coth r+3\tanh r)\frac{\partial}{\partial r} \right).
\]
\end{proof}

 \begin{remark}
See Section \ref{section hyp Jacobi} (Formula \ref{heat kernel HHn}) for an alternative formula of the heat kernel of the diffusion with generator
\[
\frac{1}{2} \left( \frac{\partial^2}{\partial r^2}+((4n-1)\coth r+3\tanh r)\frac{\partial}{\partial r}\right).
\]
 \end{remark}
 
Once again,  to use the general framework described in Section \ref{horizontal BM bundle}, we now compute a connection form of the quaternionic anti-de Sitter space.  The proof of the theorem follows very closely that of Theorem \ref{connection quater fib}, we therefore let it to the reader.

\begin{theorem}\label{connection quaternio ads}
Consider on $\mathbf{AdS}^{4n+3}(\mathbb H) \subset \mathbb{H}^{n+1}$ the $\mathfrak{su}(2)$ valued one-form

\[
\theta=\frac{1}{2}\left(\sum_{i=1}^{n}  \overline{q}_i  dq_i  - d\overline{q}_iq_i - (\overline{q}_{n+1}  dq_{n+1}  - d\overline{q}_{n+1}  \, q_{n+1}  ) \right) .
\]
We have:
\begin{itemize}
\item \hspace{.1in}  $g^* \theta =\mathbf{Ad}_{g^{-1}} \circ \theta$;
\item \hspace{.1in} For any $V \in \mathfrak{su}(2)$,
\[
\theta (\hat{V})=-V,
\]
where $\hat V$ is the vector field on $\mathbf{AdS}^{4n+3}(\mathbb H)$ defined by
\begin{align*}
\hat V f (q)= \lim_{t \to 0} \frac{f( e^{tV}\cdot q )-f(q)}{t}, \quad q \in \mathbf{AdS}^{4n+3}(\mathbb H);
\end{align*}
\item \hspace{.1in} The null space $\mathbf{Ker} \, \theta $ is the horizontal space of the submersion $\mathbf{AdS}^{4n+3}(\mathbb H) \to \mathbb{H}H^n$.
\end{itemize}
\end{theorem}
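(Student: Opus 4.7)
The plan is to mirror the proof strategy of Theorem \ref{connection quater fib}, taking care of the sign changes caused by the indefinite quadratic form $|q|_H^2 = \sum_{i=1}^n |q_i|^2 - |q_{n+1}|^2$, which is responsible for the minus sign on the $(n+1)$-th component of $\theta$ and, ultimately, for the minus sign in $\theta(\hat V) = -V$.

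For the equivariance property, I would use the quaternionic identity $\overline{q_i g} = \bar g\, \bar q_i = g^{-1}\bar q_i$ for unit $g\in\mathbf{SU}(2)$, together with $d(q_i g)=(dq_i)g$. Grouping terms on each summand gives $\overline{q_i g}\,d(q_i g) - d\overline{q_i g}\,q_i g = g^{-1}(\bar q_i dq_i - d\bar q_i q_i)g$, and exactly the same identity for the $(n+1)$-th piece. The overall minus sign in front of the $(n+1)$-th component factors through, so $g^*\theta = g^{-1}\theta g = \mathbf{Ad}_{g^{-1}}\!\circ\theta$.

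For the second point, I would compute $\hat I, \hat J, \hat K$ in real coordinates exactly as in \eqref{I1 quater}--\eqref{I3 quater} (the formulas are identical since the action $g\cdot q = (q_1 g,\dots,q_{n+1}g)$ is still right multiplication componentwise). Then I would verify $dq_i(\hat I) = q_i I$ and $d\bar q_i(\hat I) = -I\bar q_i$, so that
\[
\theta(\hat I) = \tfrac12\sum_{i=1}^n\bigl(\bar q_i q_i I + I\bar q_i q_i\bigr) - \tfrac12\bigl(\bar q_{n+1}q_{n+1}I + I\bar q_{n+1}q_{n+1}\bigr) = \Bigl(\sum_{i=1}^n|q_i|^2 - |q_{n+1}|^2\Bigr)I = -I,
\]
using that $\bar q_i q_i\in\mathbb R$ commutes with $I$ and the defining relation $|q|_H^2 = -1$. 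The same calculation with $J$ and $K$ yields $\theta(\hat J) = -J$, $\theta(\hat K) = -K$, and hence $\theta(\hat V) = -V$ for every $V\in\mathfrak{su}(2)$.

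For the last claim, I would characterize the vertical space $\mathcal V_q = \{(q_1u,\dots,q_{n+1}u): u\in\mathfrak{su}(2)\}$ and take its orthogonal complement with respect to the indefinite form on $\mathbb H^{n+1}$. A tangent vector $v$ is horizontal iff
\[
\mathrm{Re}\Bigl[\Bigl(\sum_{i=1}^n \bar v_i q_i - \bar v_{n+1}q_{n+1}\Bigr)u\Bigr] = 0\quad\text{for all } u\in\mathfrak{su}(2),
\]
which forces $A:=\sum_{i=1}^n \bar v_i q_i - \bar v_{n+1}q_{n+1}$ to be real, equivalently $A = \bar A$. Taking conjugates and rearranging turns this into exactly the vanishing of
\[
\tfrac12\Bigl(\sum_{i=1}^n(\bar q_i v_i - \bar v_i q_i) - (\bar q_{n+1}v_{n+1} - \bar v_{n+1}q_{n+1})\Bigr) = \theta(v),
\]
so the horizontal distribution coincides with $\mathbf{Ker}\,\theta$. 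The only real subtlety, and the one place where a little care is needed, is tracking the sign coming from the $(n+1)$-th coordinate consistently through the three computations; beyond that the argument is a direct quaternionic transcription of the sphere case.
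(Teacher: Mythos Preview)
Your proposal is correct and matches the paper's approach exactly: the paper simply states that the proof follows very closely that of Theorem \ref{connection quater fib} and leaves the details to the reader, which is precisely what you have filled in, correctly tracking the sign changes induced by the indefinite form $|q|_H^2=-1$.
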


From the previous result, 

\[
-\theta=-\frac{1}{2}\left(\sum_{i=1}^{n}  \overline{q}_i  dq_i  - d\overline{q}_i q_i - (\overline{q}_{n+1}  dq_{n+1}  - d\overline{q}_{n+1}  \, q_{n+1}  ) \right) .
\]

is therefore the connection form of the quaternionic anti-de Sitter fibration.

To study the horizontal Brownian motion on $\bS^{4n+3}$ we will use the results of  Section \ref{skew general bundle}. For doing so, we consider the following trivialization of the bundle $\mathbf{SU}(2) \to \mathbf{AdS}^{4n+3}(\mathbb H) \to \mathbb{H}H^n$
\begin{align}\label{trivialization quaternionic hyperbolic}
\left( w_1,\dots, w_n, g \right) \to \frac{1}{\sqrt{1-|w|^2}}\left(  w_1 g,\dots,  w_n g, g \right) \in \mathbf{AdS}^{4n+3}(\mathbb H) \subset \mathbb H^{n+1}
\end{align}
where $g \in \mathbf{SU}(2)$ and $w \in \mathbb{H}H^n$. 

Consider the $\mathfrak{su}(2)$ valued one-form $\boldsymbol\alpha$ on $\mathbb{H}H^n$ which is given  in  affine coordinates by

\[
\boldsymbol\alpha=\frac{1}{2}\sum_{j=1}^n  \frac{\overline{w}_jdw_j-  d\overline{w}_j \, w_j}{1-|w|^2}
\]
where $|w|^2=\sum_{j=1}^n |w_j|^2$. Using the fact that $q_i=\frac{w_i g}{\sqrt{1-|w|^2}}$, a straightforward computation shows that $\boldsymbol\alpha$ is the form (in the sense of \eqref{solder form}) associated with $\theta$. We  can write
\[
\boldsymbol\alpha=\frac{1}{1-|w|^2}\left(\boldsymbol\alpha_1 I +\boldsymbol\alpha_2 J +\boldsymbol\alpha_3 K \right)
\]
where the real-valued one-forms $\boldsymbol\alpha_1,\boldsymbol\alpha_2,\boldsymbol\alpha_3$ are given by
\[
\boldsymbol\alpha_1=\sum_{j=1}^n \left( -\alpha_j d\tau_j+\tau_j d\alpha_j+\gamma_jd\beta_j-\beta_jd\gamma_j \right) 
\]
\[
\boldsymbol\alpha_2=\sum_{j=1}^n \left( -\beta_j d\tau_j-\gamma_j d\alpha_j+\tau_jd\beta_j+\alpha_jd\gamma_j \right)
\]
and
\[
\boldsymbol\alpha_3=\sum_{j=1}^n \left( -\gamma_j d\tau_j+\beta_j d\alpha_j-\alpha_jd\beta_j+\tau_jd\gamma_j \right) 
\]
where we denote as before $w_i=\tau_i+\alpha_i I+\beta_iJ+\gamma_i K$. 


Finally, to conclude the section, we provide the formula for the radial part of the horizontal Laplacian of the quaternionic Hopf fibration. Since the proof is very close to that of Theorem \ref{radial laplacian quater hopf} we let the proof to the reader.

\begin{theorem}\label{radial laplacian quater hyperbolic}
In the trivialization \eqref{trivialization quaternionic hyperbolic}, the horizontal Laplacian acting on functions depending only of $r=\arctanh |w|$ and $g \in \mathbf{SU}(2)$ is given by
\[
\frac{\partial^2}{\partial r^2}+((4n-1)\coth r+3\tanh r)\frac{\partial}{\partial r}+\tanh^2 r \Delta_{\mathbf{SU}(2)}
\]
where $\Delta_{\mathbf{SU}(2)}$ is the Laplacian on $\mathbf{SU}(2)$.
\end{theorem}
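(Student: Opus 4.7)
The plan is to imitate the two-pronged approach used for Theorem \ref{radial laplacian quater hopf}, taking advantage of the fact that the only substantive difference between the positive-curvature (Hopf) and negative-curvature (anti-de Sitter) settings lies in sign changes: $1+|w|^2 \rightsquigarrow 1-|w|^2$, $\cot \rightsquigarrow \coth$, $\tan \rightsquigarrow \tanh$, and the radial variable becomes $r=\arctanh|w|$ with $\cosh r = 1/\sqrt{1-|w|^2}$ instead of $\cos r = 1/\sqrt{1+|w|^2}$.

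\textbf{First approach (via Example \ref{radial part SU2 bundle}).} I would verify that the anti-de Sitter fibration fits the $\mathbf{SU}(2)$-bundle framework of Section \ref{horizontal BM bundle}, with base $\mathbb{H}H^n$ (a rank-one symmetric space) and connection form $-\theta$ from Theorem \ref{connection quaternio ads}. The associated $\mathfrak{su}(2)$-valued form on $\mathbb{H}H^n$ is the $\boldsymbol\alpha$ computed above. Three ingredients are then needed: (i) the radial part of $\Delta_{\mathbb{H}H^n}$ which is $\frac{\partial^2}{\partial r^2}+((4n-1)\coth r+3\tanh r)\frac{\partial}{\partial r}$ by the rank-one structure (see the appendix on radial parts); (ii) the vanishing $\boldsymbol\alpha(\partial/\partial r)=0$, immediate from the skew-symmetric quaternionic structure of $\boldsymbol\alpha$ which only involves $w_j d\bar w_j - d w_j \bar w_j$ type expressions (and thus the $\hat\alpha(\partial/\partial r)\partial_r$ cross term of Example \ref{radial part SU2 bundle} disappears); (iii) the evaluation $h(\boldsymbol\alpha^l,\boldsymbol\alpha^m)=|w|^2\delta_{lm}=\tanh^2 r\,\delta_{lm}$, which follows from Proposition \ref{metric HHn} applied to the three real forms $\boldsymbol\alpha^1,\boldsymbol\alpha^2,\boldsymbol\alpha^3$ after a routine calculation analogous to the Hopf case. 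Inserting these into the formula of Example \ref{radial part SU2 bundle} and using $Z_1^2+Z_2^2+Z_3^2=\Delta_{\mathbf{SU}(2)}$ yields exactly the claimed operator.

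\textbf{Second approach (direct horizontal lifting).} Starting from the explicit formula for $\Delta_{\mathbb{H}H^n}$ in Theorem \ref{Laplace quaternionic hyperbolic}, I would compute the horizontal lifts of the basic real vector fields $\partial/\partial\tau_i,\partial/\partial\alpha_i,\partial/\partial\beta_i,\partial/\partial\gamma_i$ and of the $\mathcal{R}_i$. Each lift has the form $X - \boldsymbol\alpha(X)$ applied in the trivialization \eqref{trivialization quaternionic hyperbolic}, with signs governed by whether $X$ contains a $\mathcal{R}_i$ component (producing an extra $|w|^2/(1-|w|^2)\,\hat V$ term, as in the Hopf case but with opposite sign). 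Assembling the lifted operator yields
\[
(1-|w|^2)\Bigl(\sum_i(\partial_{\tau_i}^2+\partial_{\alpha_i}^2+\partial_{\beta_i}^2+\partial_{\gamma_i}^2)-\sum_{i=1}^4\hat{\mathcal{R}}_i^2+2\hat{\mathcal{R}}_1\Bigr) -2(\hat{\mathcal{R}}_2\hat I+\hat{\mathcal{R}}_3\hat J+\hat{\mathcal{R}}_4\hat K) + |w|^2(\hat I^2+\hat J^2+\hat K^2).
\]
Restricting to functions depending only on $(r,g)$, the mixed $\hat{\mathcal{R}}_i\hat I$-type terms annihilate the radial factor (because $\hat{\mathcal{R}}_i$ acts trivially along fibers and the forms are purely fiber-directional after restriction), and the reduction $r=\arctanh|w|$ produces the claimed expression.

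\textbf{Expected difficulties.} Both computations are essentially sign-tracking exercises; the one genuine subtlety is that the ambient metric on $\mathbf{AdS}^{4n+3}(\mathbb H)$ is semi-Riemannian with the fibers being negative-definite. I need to check that the horizontal Laplacian is nevertheless given by $\sum_k \mathbb{X}_k^*\mathbb{X}_k$ over a \emph{positive-definite} horizontal frame (which it is, since horizontal directions project to the Riemannian base $\mathbb{H}H^n$), so that no exotic signs appear in front of $\tanh^2 r\,\Delta_{\mathbf{SU}(2)}$. Beyond this, the derivation is strictly parallel to the compact case and the computational work is identical up to the sign substitutions indicated at the start.
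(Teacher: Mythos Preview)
Your proposal is correct and matches the paper's approach exactly: the paper does not give a standalone proof of this theorem but simply states that it is very close to that of Theorem \ref{radial laplacian quater hopf} and leaves it to the reader, which is precisely the sign-substitution strategy you outline (with both the Example \ref{radial part SU2 bundle} route and the direct horizontal-lifting route). Your remark about the semi-Riemannian signature on $\mathbf{AdS}^{4n+3}(\mathbb H)$ is the one genuine point of care, and you handle it correctly by noting that the horizontal distribution is positive-definite so the horizontal Laplacian is computed exactly as in the Riemannian case.
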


\subsection{Stochastic area process on $\mathbb{H}H^n$}

In view of Section \ref{sec-qua-ads-fib} the following definition is natural.

\begin{definition}\label{quaternionic stochastic area hyperbolic}
Let $(w(t))_{t \ge 0}$ be a Brownian motion on $\mathbb{H}H^n$ started at $0$.  The quaternionic stochastic area process of $(w(t))_{t \ge 0}$ is a process in $\mathfrak{su}(2)$ defined by
\[
\A(t)=\int_{w[0,t]} \boldsymbol\alpha=\frac{1}{2}\sum_{j=1}^n \int_0^t \frac{\overline{w}_j(s) dw_j(s)-  d\overline{w}_j(s) \, w_j(s)}{1-|w(s)|^2},
\]
where the above stochastic integrals are understood in the Stratonovich, or equivalently in the It\^o sense.
\end{definition}

The following theorem shows that the quaternionic stochastic area process of the Brownian motion on $\mathbb{H}P^n$ can be interpreted as the fiber motion of the horizontal Brownian motion on $\mathbb{S}^{4n+3}$.

 \begin{theorem}\label{horizon-Shyp}
 Let $(w(t))_{t \ge 0}$ be a Brownian motion on $\mathbb{H}H^n$ started at 0, and $(\Theta(t))_{t\ge 0}$ be the solution in $\mathbf{SU}(2)$ of the stochastic differential equation
 \begin{equation}\label{eq-Maurer-Cartan-Shyp}
 d\Theta(t) =  \circ d\A(t) \, \Theta(t) , \quad \Theta(0)=1.
 \end{equation}
 The $\mathbf{AdS}^{4n+3}(\mathbb H)$-valued diffusion process
 \begin{equation}\label{eq-BM-S}
 X(t)=\frac{1 }{\sqrt{1-|w(t)|^2}} \left( w(t)\Theta(t),\Theta(t) \right), \quad t \ge 0
 \end{equation}
 is the horizontal lift at $(0,\dots,0,1) \in \mathbf{AdS}^{4n+3}(\mathbb H)$ of $(w(t))_{t \ge 0}$ by the submersion $\mathbf{AdS}^{4n+3}(\mathbb H)\to \bH H^n$.
 \end{theorem}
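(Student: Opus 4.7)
The plan is to invoke Theorem \ref{skew-product principal bundle} directly, exactly as was done in the sphere case (Theorem \ref{horizon-S}), with the only subtlety being a careful bookkeeping of signs stemming from the fact that in Theorem \ref{connection quaternio ads} we have $\theta(\hat V) = -V$ rather than $+V$. Concretely, I would identify the genuine connection form for the anti-de Sitter fibration as $-\theta$ (with $\theta$ as in Theorem \ref{connection quaternio ads}): the conditions $(-\theta)(\hat V) = V$ and $\mathbf{Ad}$-equivariance follow at once from Theorem \ref{connection quaternio ads}, and since $\mathbf{Ker}(-\theta) = \mathbf{Ker}(\theta)$ coincides with the horizontal distribution of the submersion, $-\theta$ is indeed the $\mathbf{SU}(2)$-compatible connection form realizing the horizontal/vertical splitting.

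Next, I would work in the trivialization \eqref{trivialization quaternionic hyperbolic} and compute the $\mathfrak{su}(2)$-valued one-form on the base associated to $-\theta$ via the recipe \eqref{solder form}. Substituting $q_i = w_i g/\sqrt{1-|w|^2}$ into the expression of $\theta$ (and evaluating at $g = \mathbf{e}$), the $dg$ terms disappear on the base and one obtains that the solder form associated with $\theta$ is exactly the form $\boldsymbol{\alpha}$ introduced in Definition \ref{quaternionic stochastic area hyperbolic}. Consequently, the solder form associated with the genuine connection $-\theta$ is $-\boldsymbol{\alpha}$, and its stochastic line integral along $w$ equals $-\A(t)$.

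Applying Theorem \ref{skew-product principal bundle} with base Brownian motion $(w(t))_{t\ge 0}$ started at $0$, connection $-\theta$, and trivialization \eqref{trivialization quaternionic hyperbolic} starting at $(0,\dots,0,1)$, the horizontal lift in the trivialization is $(w(t), \Theta(t))$ where
\[
d\Theta(t) \;=\; -\circ d\!\left(\int_{w[0,t]}(-\boldsymbol\alpha)\right)\Theta(t) \;=\; +\circ d\A(t)\,\Theta(t), \qquad \Theta(0)=\mathbf{e},
\]
which is precisely \eqref{eq-Maurer-Cartan-Shyp}. Reading this back through the trivialization \eqref{trivialization quaternionic hyperbolic} yields the stated formula for $X(t)$.

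The only place requiring any real care is the sign bookkeeping: the combination of the minus sign in $\theta(\hat V) = -V$ and the minus sign appearing in the SDE of Theorem \ref{skew-product principal bundle} conspire to produce the $+\circ d\A(t)$ (as opposed to $-\circ d\A(t)$ in Theorem \ref{horizon-S}). Once this is sorted out, the argument is entirely parallel to the projective case, and no new analytic input is required beyond the general framework of Section \ref{skew general bundle}.
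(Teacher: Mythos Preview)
Your proposal is correct and is exactly the approach taken in the paper: the paper's proof is a one-line invocation of Theorem \ref{skew-product principal bundle} together with Theorem \ref{connection quaternio ads}, noting that $-\theta$ is the genuine connection form. Your write-up simply unpacks the sign bookkeeping (solder form of $-\theta$ equals $-\boldsymbol\alpha$, hence $d\Theta = -\circ d(-\A)\,\Theta = +\circ d\A\,\Theta$) that the paper leaves implicit.
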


 \begin{proof}
This follows from Theorem \ref{skew-product principal bundle} and Theorem \ref{connection quaternio ads} because $-\theta$ is the connection form.
\end{proof}

As for the quaternionic Hopf fibration, the fiber motion $\Theta(t)$ on $\mathbf{AdS}^{4n+3}(\mathbb H)$ is in fact a time-changed Brownian motion process on $\mathbf{SU}(2)$ and the following two results are proved similarly to Theorem \ref{diff1-S} and Corollary \ref{diff2-S}.
\begin{theorem}\label{diff1-H}
Let $r(t)=\arctanh |w(t)|$. The process $\left( r(t), \Theta(t)\right)_{t \ge 0}$ is a diffusion with generator
 \[
\frac{1}{2} \left(\frac{\partial^2}{\partial r^2}+((4n-1)\coth r+3\tanh r)\frac{\partial}{\partial r}+\tanh^2 r \Delta_{\mathbf{SU}(2)} \right).
 \]
 As a consequence the following equality in distribution holds
 \begin{equation}\label{eq-mp-H}
\left( r(t) ,\Theta(t) \right)_{t \ge 0}=\left( r(t),\beta\left(\int_0^t \tanh^2 r(s)ds\right)\right)_{t \ge 0},
\end{equation}
where $(\beta(t))_{t \ge 0}$ is a Brownian motion process on $\mathbf{SU}(2)$ independent from $r$.
\end{theorem}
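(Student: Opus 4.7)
The plan is to mirror the proof of Theorem \ref{diff1-S} in the compact case, the main inputs being Theorem \ref{horizon-Shyp} and Theorem \ref{radial laplacian quater hyperbolic}, with the only care needed in handling the sign change $d\Theta = +\circ d\A \,\Theta$ (versus $-\circ d\A\,\Theta$ in the $\bS^{4n+3}$ case) and the fact that $r(t)$ lives on $[0,\infty)$ rather than on $[0,\pi/2)$.

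First, by Theorem \ref{horizon-Shyp}, the horizontal lift $X(t) = (1-|w(t)|^2)^{-1/2}(w(t)\Theta(t),\Theta(t))$ is a horizontal Brownian motion on $\mathbf{AdS}^{4n+3}(\mathbb H)$, so its generator is $\tfrac{1}{2}\Delta_{\mathcal H}$. In the trivialization \eqref{trivialization quaternionic hyperbolic} the coordinates of $X(t)$ are precisely $(w(t),\Theta(t))$. Because the action of $\mathbf{SU}(2)$ on $\mathbf{AdS}^{4n+3}(\mathbb H)$ is by isometries and because $r=\arctanh|w|$ is a radial coordinate on $\mathbb{H}H^n$ invariant under the symmetries that fix the basepoint, the joint law of $(r(t),\Theta(t))$ depends only on the action of $\tfrac{1}{2}\Delta_{\mathcal H}$ on functions depending only on $(r,g)$. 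By Theorem \ref{radial laplacian quater hyperbolic}, this restricted action is
\begin{equation*}
\tfrac{1}{2}\left(\frac{\partial^2}{\partial r^2}+((4n-1)\coth r+3\tanh r)\frac{\partial}{\partial r}+\tanh^2 r\,\Delta_{\mathbf{SU}(2)}\right),
\end{equation*}
which identifies the generator claimed in the theorem and, in particular, shows that $(r(t),\Theta(t))$ is a Markov diffusion.

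For the equality in distribution, I would exploit the product structure of the generator: the radial part
$\mathcal L_r=\tfrac{1}{2}(\partial_r^2+((4n-1)\coth r+3\tanh r)\partial_r)$
does not involve $\Theta$, and the $\Theta$-part is the coefficient $\tfrac{1}{2}\tanh^2 r$ times the bi-invariant Laplacian $\Delta_{\mathbf{SU}(2)}$. Equivalently, one can read this directly at the SDE level: since $\A(t)$ is the fiber-motion of a horizontal Brownian motion, Corollary \ref{diff2-S}-type arguments (adapted to the hyperbolic setting with the sign in \eqref{eq-Maurer-Cartan-Shyp}) show that $\A(t)$ is a Brownian motion on $\mathfrak{su}(2)$ run up to the random clock $A(t):=\int_0^t\tanh^2 r(s)\,ds$, independent of $r$ conditionally on the trajectory of $r$. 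Transporting this along the Maurer--Cartan ODE \eqref{eq-Maurer-Cartan-Shyp} yields $\Theta(t) \stackrel{d}{=} \beta(A(t))$ for $\beta$ a Brownian motion on $\mathbf{SU}(2)$ independent of $r$. Uniqueness in law for diffusions with smooth coefficients (guaranteed here by ellipticity of the generator on $(0,\infty)\times\mathbf{SU}(2)$) then closes the argument.

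The main point requiring some care, as opposed to an obstacle, is the independence between $r$ and the driving Brownian motion $\beta$ on $\mathbf{SU}(2)$. This follows by the standard orthogonality argument: the martingale parts appearing in the Stratonovich SDE for $\A$ are orthogonal (as quadratic covariations) to the martingale part of $r$ because the one-form $\boldsymbol\alpha$ annihilates $\partial/\partial r$ (Gauss lemma combined with the explicit expression of $\boldsymbol\alpha$ in Section \ref{sec-qua-ads-fib}), exactly as in the spherical counterpart. Once this orthogonality is in hand, P. L\'evy's characterization gives that $\A$ is a time-changed Brownian motion in $\mathfrak{su}(2)$ independent of $r$, and exponentiating via \eqref{eq-Maurer-Cartan-Shyp} transfers this to a time-changed Brownian motion on $\mathbf{SU}(2)$, completing the proof.
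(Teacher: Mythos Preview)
Your proposal is correct and follows essentially the same approach as the paper: the paper's proof is a one-line reference stating that the result follows from Theorem \ref{radial laplacian quater hyperbolic} (by analogy with Theorem \ref{diff1-S}), and you have simply unpacked that reference in detail, including the skew-product/time-change argument that is implicit in the form of the generator. The extra care you give to the independence of $r$ and $\beta$ via orthogonality of $\boldsymbol\alpha(\partial/\partial r)=0$ is accurate and already sits inside the proof of Theorem \ref{radial laplacian quater hyperbolic}, so there is no new idea here beyond what the paper invokes.
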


\begin{corollary}\label{cor-a(t)-h}
Let $r(t)$ and $\A(t)$ be given as previously. Then
\[
\left( r(t), \A(t) \right)_{t \ge 0} \overset{d}{=} \left( r(t),\gamma\left({\int_0^t \tanh^2 r(s)ds}\right)\right)_{t \ge 0},
\]
where $\gamma(t)$, $t\ge0,$ is a standard Brownian motion process in $\mathfrak{su}(2)$ which is independent from the process $r(t)$.
\end{corollary}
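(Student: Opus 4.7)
The plan is to mirror the proof of Corollary \ref{diff2-S}, with only the sign change coming from \eqref{eq-Maurer-Cartan-Shyp}. First I would invert the Maurer--Cartan equation of Theorem \ref{horizon-Shyp} to obtain
\[
\A(t) = \int_0^t \circ d\Theta(s)\, \Theta(s)^{-1}.
\]
Next I would apply Theorem \ref{diff1-H}, which provides the equality in law
\[
(r(t), \Theta(t))_{t \ge 0} \overset{d}{=} \Bigl(r(t), \beta\bigl(A(t)\bigr)\Bigr)_{t \ge 0}, \qquad A(t) := \int_0^t \tanh^2 r(s)\, ds,
\]
with $\beta$ a Brownian motion on $\mathbf{SU}(2)$ independent of $r$. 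Note that this equality holds jointly with $r$, not just marginally for $\Theta$, which is essential for what follows.

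Substituting this representation into the expression for $\A$ and performing the deterministic, absolutely continuous time change $u = A(s)$ in the Stratonovich integral, I would obtain
\[
\A(t) \overset{d}{=} \int_0^t \circ d\beta\bigl(A(s)\bigr)\, \beta\bigl(A(s)\bigr)^{-1} = \int_0^{A(t)} \circ d\beta(u)\, \beta(u)^{-1}.
\]
Finally, the classical identification of the pulled-back Maurer--Cartan form on $\mathbf{SU}(2)$ via a left Brownian motion shows that
\[
\gamma(u) := \int_0^u \circ d\beta(s)\, \beta(s)^{-1}, \qquad u \ge 0,
\]
is a standard Brownian motion on the Lie algebra $\mathfrak{su}(2)$; this is exactly the construction used at the end of the proof of Corollary \ref{diff2-S}. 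Independence of $\gamma$ from $r$ follows from the independence of $\beta$ from $r$.

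The main issue is not really a difficulty but a bookkeeping matter: one must ensure that the joint law version of Theorem \ref{diff1-H} is what gets used, so that the time change by the $r$-adapted clock $A(t)$ is legitimate pathwise. Given the skew-product structure that already underlies Theorem \ref{diff1-H}, this is automatic, and no further estimates or analytic input are needed.
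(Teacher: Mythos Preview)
Your proof is correct and follows exactly the approach the paper indicates, which is to reproduce the argument of Corollary~\ref{diff2-S} with the sign adjusted according to \eqref{eq-Maurer-Cartan-Shyp} and with $\tan$ replaced by $\tanh$. The paper itself does not spell out the proof of Corollary~\ref{cor-a(t)-h} but refers back to Corollary~\ref{diff2-S}, so your write-up is in fact more detailed than what the paper provides.
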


\subsection{The horizontal heat kernel of the quaternionic anti-de Sitter space}\label{sec-kernel}

Recall from the previous section, the radial part of the horizontal Laplacian of the quaternionic anti-de Sitter fibration
\[
\mathbf{SU}(2)\to \mathbf{AdS}^{4n+3}(\mathbb{H})\to \bH H^n
\]
is given by
\begin{equation*}
 \frac{\partial^2}{\partial r^2}+((4n-1)\coth r+3\tanh r)\frac{\partial}{\partial r^2}+\tanh^2 r\left(\frac{\partial^2}{\partial \eta^2}+2\cot\eta\frac{\partial}{\partial \eta}  \right),
\end{equation*}
where $r \ge 0$ is the radial coordinate on the base space  $\bH H^n$ and $\eta \in [0,\pi)$ is the radial coordinate on $\mathbf{SU}(2)$. This operator shall be denoted by $L$. We denote by $p_t(r,\eta)$ the  heat kernel started at $(0,0)$ of $L$  with respect to the  measure 
\begin{equation*}
\nu (dr, d\eta) = \frac{8\pi^{2n+1}}{\Gamma(2n)}(\sinh r)^{4n-1}(\cosh r)^3(\sin\eta)^2drd\eta.
\end{equation*} 
 
 We note that since $\mathbf{SU}(2)$ and $\bH H^n$ are both rank one symmetric spaces,   $p_t(r,\eta)$ completely characterizes the horizontal heat kernel on $\mathbf{AdS}^{4n+3}(\mathbb{H})$ issued from the point with quaternionic coordinates $(0,\cdots,0,1)$. Moreover, the Lie group $\mathbf{Sp}(1,n) $ acts transitively on $\mathbf{AdS}^{4n+3}(\mathbb{H})$, therefore the choice of the origin point $(0,\dots,0,1)$ is irrelevant and a similar analysis yields the heat kernel issued from any point. For simplicity, we restrict ourselves to the computation of $p_t(r,\eta)$.

In this section, we shall derive an integral representation of the  heat kernel $p_t(r, \eta)$ by two different methods.

\begin{itemize}
\item \hspace{.1in} The first method is geometric and uses an analytic continuation in the fiber variables;
\item \hspace{.1in}  The second method is related to the heat kernel associated with the generalized Maass Laplacian. 
\end{itemize}

\subsubsection{First method}


\begin{theorem}\label{IntRep} 
Let $r \ge 0$ and $ \eta \in [0,\pi)$, then:
 \begin{align}\label{eq-IntRep}
 & p_t(r,\eta)\nonumber\\
 = &\frac2\pi \int_{0}^{\infty} \frac{\sinh u}{\sin \eta}  \left\{\sum_{m \geq 0} e^{-m(m+2)t} \sin[(m+1)\eta] \sinh[(m+1)u] \right\}q_{t, 4n+3}(\cosh r \cosh u) du \nonumber\\
 =&\frac{ e^{t}}{  \sqrt{\pi t}  }  \sum_{k\in\mathbb Z}\int_0^{+\infty} \frac{ \sinh y \sin \left(  \frac{ (\eta+2k\pi) y}{2t}\right) }{\sin \eta} e^{\frac{y^2- (\eta+2k\pi)^2}{4t}} q_{t,4n+3}( \cosh r\cosh y ) dy,
\end{align}
where
\begin{equation}\label{eq-qt-ads-rie}
q_{t, 4n+3}(\cosh s) = \frac{e^{-(2n+1)^2t}}{(2\pi)^{2n+1}\sqrt{4\pi t}}\left(-\frac{1}{\sinh(s)}\frac{d}{ds}\right)^{2n+1} e^{-s^2/(4t)}
\end{equation} 
is the heat kernel on the real $4n+3$ dimensional real hyperbolic space.
\end{theorem}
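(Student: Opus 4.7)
The plan is to mimic the proof of Proposition \ref{prop1hyp}, exploiting the fact that $L$ decomposes as a sum of two commuting operators, one of which can be related to a real hyperbolic Laplacian via analytic continuation. Specifically, I would introduce the (radial part of the) d'Alembertian
\[
\square_{\mathbf{AdS}^{4n+3}(\mathbb{H})} := \Delta_{\mathbb{H}H^n} - \frac{1}{\cosh^2 r}\Delta_{\mathbf{SU}(2)},
\]
so that $L = \square + \Delta_{\mathbf{SU}(2)}$. Since $\Delta_{\mathbf{SU}(2)}$ depends only on $\eta$ and its $r$-dependence in $\square$ lies in a multiplicative coefficient, the two operators commute on radial functions, hence $e^{tL} = e^{t\Delta_{\mathbf{SU}(2)}}e^{t\square}$.

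Next, I would perform the analytic change of variables $\tau:(r,\eta)\mapsto(r,iu)$. Using $\cot(iu) = -i\coth u$ and $\partial_\eta^2 F(r,iu) = -\partial_u^2[F\circ\tau]$, a direct computation shows
\[
\square(F\circ\tau) = \bigl(\Delta_{H^{4n+3}} F\bigr)\circ\tau,
\]
where $\Delta_{H^{4n+3}} := \partial_r^2 + ((4n-1)\coth r + 3\tanh r)\partial_r + \frac{1}{\cosh^2 r}(\partial_u^2 + 2\coth u\,\partial_u)$. The key geometric observation is that after substituting $\cosh\delta = \cosh r\cosh u$, the operator $\Delta_{H^{4n+3}}$ reduces on radial functions to $\partial_\delta^2 + (4n+2)\coth\delta\,\partial_\delta$, which is precisely the radial Laplace--Beltrami operator on the $(4n+3)$-dimensional real hyperbolic space. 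Hence, via analytic continuation, the kernel of $e^{t\square}$ can be identified with $q_{t,4n+3}(\cosh r\cosh u)$.

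To assemble the full kernel, I would expand in eigenfunctions of $\Delta_{\mathbf{SU}(2)}$: the functions $\sin((m+1)\eta)/\sin\eta$ are eigenfunctions with eigenvalue $-m(m+2)$, and the intertwining
\[
\Delta_{\mathbf{SU}(2)}\Big(\frac{g(\eta)}{\sin\eta}\Big) = \frac{1}{\sin\eta}\bigl(\partial_\eta^2 g + g\bigr)
\]
gives the spectral expansion of $e^{t\Delta_{\mathbf{SU}(2)}}$ on radial functions. Testing $p_t$ against $\sin((m+1)\eta)/\sin\eta$ reduces the $\square$-part to a one-dimensional eigenfunction problem on the base. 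Under $\tau$, the eigenfunction $\sin((m+1)\eta)/\sin\eta$ becomes $\sinh((m+1)u)/\sinh u$ and the fiber measure $\sin^2\eta\,d\eta$ becomes $-\sinh^2 u\,du$, whence the integral $\int_0^\infty q_{t,4n+3}(\cosh r\cosh u)\sinh((m+1)u)\sinh u\,du$ arises naturally. Summing over $m$ weighted by $e^{-m(m+2)t}\sin((m+1)\eta)$ and keeping track of normalization yields the first formula. The second formula then follows from the first by applying the Poisson summation formula to
\[
\sum_{m\geq 0}e^{-m(m+2)t}\sin((m+1)\eta)\sinh((m+1)u) = \frac{e^t}{2}\sqrt{\frac{\pi}{t}}\,e^{u^2/(4t)}\sum_{k\in\mathbb{Z}}e^{-(\eta+2k\pi)^2/(4t)}\sin\!\Big(\frac{u(\eta+2k\pi)}{2t}\Big),
\]
an identity obtained from $\sum_\ell e^{-\ell^2 t + i\ell\alpha} = \sqrt{\pi/t}\sum_k e^{-(\alpha+2\pi k)^2/(4t)}$ with $\alpha = \eta \mp iu$.

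The main technical obstacle is making the analytic-continuation step rigorous: one must justify the contour deformation used to identify $e^{t\square}$ with the real hyperbolic heat kernel (here Gaussian decay of $q_{t,4n+3}$ controls convergence), and carefully verify that the initial condition $p_0 = \delta_{(0,0)}$ (with respect to the invariant measure $\nu$) is recovered when tested against the full basis $g(r)\sin((m+1)\eta)/\sin\eta$ of eigenfunctions of $\Delta_{\mathbf{SU}(2)}$. A pragmatic alternative --- paralleling the strategy in Proposition \ref{prop-pt} --- is to take either formula as an ansatz and verify directly that $\partial_t p = Lp$ by differentiating under the integral sign, using the decomposition $L = \Delta_{\mathbb{H}H^n} + \tanh^2 r\cdot \Delta_{\mathbf{SU}(2)}$ together with the heat equations satisfied separately by $q_{t,4n+3}$ and by the Gaussian in the fiber variable.
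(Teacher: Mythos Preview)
Your proposal is correct and follows essentially the same approach as the paper's first proof: the decomposition $L=\square_{\mathbf{AdS}^{4n+3}(\mathbb{H})}+\Delta_{\mathbf{SU}(2)}$, the analytic continuation $\tau:(r,\eta)\mapsto(r,i\eta)$ identifying $\square$ with the radial Laplacian on $H^{4n+3}$ via $\cosh\delta=\cosh r\cosh u$, and the $\mathbf{SU}(2)$ eigenfunction expansion (packaged in the paper as the $\mathbb{S}^3$ heat kernel $S_t(\eta,u)$, whose spectral and Poisson forms give the two displayed expressions). The paper likewise flags the analytic-continuation step as formal and defers rigor to a second proof based on the generalized Maass Laplacian; your suggested alternative of directly verifying $\partial_t p=Lp$ on the ansatz is also a valid route.
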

%

\begin{proof}
The following computations are based on geometric ideas that we will describe after the proof. For conciseness, we omit some of the technical details since a second proof of the result will be given in the next section. We first decompose
\begin{equation*}
L = \frac{\partial^2}{\partial r^2}+((4n-1)\coth r+3\tanh r)\frac{\partial}{\partial r^2}+\tanh^2 r\left(\frac{\partial^2}{\partial \eta^2}+2\cot\eta\frac{\partial}{\partial \eta}  \right)
\end{equation*}
 as follows:

\begin{equation*}
L = \Delta_{\bH H^n}+\tanh^2 r {\Delta}_{\mathbf{SU}(2)},
\end{equation*}

where 
\[
\Delta_{\bH H^n}=\frac{\partial^2}{\partial r^2}+((4n-1)\coth r+3\tanh r)\frac{\partial}{\partial r^2}
\]
 is the radial part of the Laplacian on the quaternionic hyperbolic space $\bH H^n$ and 
 \[
 \Delta_{\mathbf{SU}(2)}=\frac{\partial^2}{\partial \eta^2}+2\cot\eta\frac{\partial}{\partial \eta} 
 \]
is the radial part of the Laplacian on $\mathbf{SU}(2)$. Note that we can also write
\[
L={\square}_{\mathbf{AdS}^{4n+3}(\mathbb{H})}+{\Delta}_{\mathbf{SU}(2)},
\]
where
\[
{\square}_{\mathbf{AdS}^{4n+3}(\mathbb{H})}=\Delta_{\bH H^n}-\frac{1}{\cosh^2 r}{\Delta}_{\mathbf{SU}(2)}
\]
is the radial part of the d'Alembertian. Note $\Delta_{\bH H^n}$ and ${\Delta}_{\mathbf{SU}(2)}$ commute. Therefore
\begin{align*}
e^{tL}&=e^{t ({\square}_{\mathbf{AdS}^{4n+3}(\mathbb{H})}+{\Delta}_{\mathbf{SU}(2)})}\\
 &=e^{t {\Delta}_{\mathbf{SU}(2)}} e^{ t {\square}_{\mathbf{AdS}^{4n+3}(\mathbb{H})} }.
\end{align*}
We deduce that the heat kernel of $L$ can be written as
\begin{align}\label{formula 1}
p_t(r,\eta)=\int_0^\pi S_t(\eta ,u) p_t^{{\square}_{\mathbf{AdS}^{4n+3}(\mathbb{H})}} (r,u)  (\sin u)^2 du,
\end{align}
where $S_t$ is the heat kernel of 
\[
 \Delta_{\mathbf{SU}(2)}=\frac{\partial^2}{\partial \eta^2}+2\cot\eta\frac{\partial}{\partial \eta} 
 \]
with respect to the measure $ \sin^2\eta d\eta$, $\eta\in[0,\pi)$,
 and $p_t^{{\square}_{\mathbf{AdS}^{4n+3}(\mathbb{H})}} (r,u) $ the heat kernel at $(0,0)$ of ${\square}_{\mathbf{AdS}^{4n+3}(\mathbb{H})}$ with respect to the measure 
 \[
 d\mu_{4n+3}(r,u)=\frac{8\pi^{2n+1}}{\Gamma(2n)}(\sinh r)^{4n-1}(\cosh r)^3(\sin u)^2drdu,\quad  r\in[0,\infty),\ u\in [0,\pi).
 \]
 The idea to compute \eqref{formula 1} is to perform an analytic extension in the fiber variable  $u$. More precisely,   let us consider the analytic change of variables $\tau : (r,\eta) \to (r,i\eta)$  that will be applied on functions of the type $f(r,u)=g(r) e^{-i\lambda u}$, with $g$ smooth and compactly supported on $[0,+\infty)$ and $\lambda >0$. One sees that
\begin{align}\label{polinh}
{\square}_{\mathbf{AdS}^{4n+3}(\mathbb{H})} (f\circ \tau)=(\Delta_{H^{4n+3}} f ) \circ \tau
\end{align}
where
\[
\Delta_{H^{4n+3}}=\Delta_{\bH H^n}+\frac{1}{\cosh^2 r}{\Delta}_{P}
\]
and
\[
\Delta_{P}=\frac{\partial^2}{\partial \eta^2}+2\coth \eta \frac{\partial}{\partial \eta}.
\]
Introducing a new variable $\delta$ such that $\cosh \delta= \cosh r \cosh \eta$, one sees after straightforward computations that
\[
\Delta_{H^{4n+3}}=\frac{\partial^2}{\partial \delta^2}+(4n+2)\coth\delta\frac{\partial}{\partial \delta}.
\]

Thus $\Delta_{H^{4n+3}}$ is the radial part of the Laplacian on the real hyperbolic space of dimension $4n+3$. One deduces
\begin{align*}
e^{tL} (f \circ \tau)&=e^{t {\Delta}_{\mathbf{SU}(2)}} e^{ t {\square}_{\mathbf{AdS}^{4n+3}(\mathbb{H})} } (f \circ \tau) \\
 &= e^{t {\Delta}_{\mathbf{SU}(2)}} ( (e^{t\Delta_{H^{4n+3}} }f) \circ \tau) \\
 &=(e^{-t \Delta_{P} }e^{t\Delta_{H^{4n+3}}} f ) \circ \tau.
\end{align*}

  Now, since for every $f$ in the domain of ${\square}_{\mathbf{AdS}^{4n+3}(\mathbb{H})}$,
 \[
( e^{ t {\square}_{\mathbf{AdS}^{4n+3}(\mathbb{H})} } f ) (0,0)=  (e^{t\Delta_{H^{4n+3}} }(f\circ  \tau^{-1})) (0,0),
 \]
 one deduces  that for a function $g$ depending only on $u$, namely $g(u)=e^{-i\lambda u}$
  \[
 \int_0^\pi g(u) p_t^{{\square}_{\mathbf{AdS}^{4n+3}(\mathbb{H})}} (r,u)  (\sin u)^2 du=\int_0^{+\infty} q_{t, 4n+3}(\cosh r \cosh u)g(-iu) (\sinh u)^2 du.
 \]

 Therefore, coming back to \eqref{formula 1}, one infers that
 \[
 \int_0^\pi S_t(\eta ,u) p_t^{{\square}_{\mathbf{AdS}^{4n+3}(\mathbb{H})}} (r,u)  (\sin u)^2 du=\int_0^{+\infty} q_{t, 4n+3}(\cosh r \cosh u) S_t(\eta,-iu) (\sinh u)^2 du.
 \]
 One concludes with the well known formulas (see Section 8.6 in  \cite{Faraut})
 \begin{align}\label{eq-s-t}
 S_t(\eta,u) & =\frac{2}{\pi \sin \eta \sin u} \sum_{m \geq 0} e^{-m(m+2)t} \sin[(m+1)\eta] \sin[(m+1)u] \\
  &=\frac{e^{t}}{  \sqrt{\pi t}  \sin \eta \sin u } \sum_{k\in\Z} \sinh \left(  \frac{ (\eta+2k\pi) u}{2t}\right) e^{\frac{-u^2-(\eta+2k\pi)^2}{4t}}. \nonumber
 \end{align}
\end{proof}

\subsubsection{Second method}

The strategy of the second method appeals  to some results proved in \cite{Int-OM}.


\begin{proof}
Firstly, we decompose the subelliptic heat kernel in the basis of Chebyshev polynomials of the second kind 
\begin{equation*}
U_m(\cos(\eta)) = \frac{\sin((m+1)\eta)}{\sin(\eta)}, \quad m \geq 0, 
\end{equation*}
which are eigenfunctions of $\Delta_{\mathbf{SU}(2)}$: 
\begin{equation*}
\left(\frac{\partial^2}{\partial \eta^2}+2\cot\eta\frac{\partial}{\partial \eta}\right) U_m(\cos(\cdot))(\eta) = -m(m+2) U_m(\cos(\eta)), \quad m \geq 0.
\end{equation*}
Accordingly, 
\begin{equation*}
p_t(r,\eta) = \sum_{m \geq 0} f_m(t,r) U_m(\cos(\eta))
\end{equation*}
where for each $m$, $f_m(t,\cdot)$ solves the following heat equation:
\begin{align*}
\partial_t (f_m)(t,r) & =  \left\{\frac{\partial^2}{\partial r^2}+((4n-1)\coth r+3\tanh r)\frac{\partial}{\partial r^2} -m(m+2) \tanh^2 r\right\}(f_m)(t,r) 
\\& = \left\{\frac{\partial^2}{\partial r^2}+((4n-1)\coth r+3\tanh r)\frac{\partial}{\partial r^2} +\frac{m(m+2)}{\cosh^2 r} -m(m+2) \right\}(f_m)(t,r). 
\end{align*}
Up to a constant $(-m(m+2) - (2n+1)^2)$, the operator 
\begin{equation*}
\mathcal{L}_m := \frac{\partial^2}{\partial r^2}+((4n-1)\coth r+3\tanh r)\frac{\partial}{\partial r^2} +\frac{m(m+2)}{\cosh^2 r} + (2n+1)^2
\end{equation*}
 is an instance of the one considered in \cite{Int-OM} and denoted there $L_{2n}^{\alpha \beta}$ with 
\begin{equation*}
\alpha = 1+\frac{m}{2}, \qquad \beta = - \frac{m}{2}. 
\end{equation*}
From Theorem 2 in \cite{Int-OM}, we deduce that the solution to the wave Cauchy problem associated with the subelliptic Laplacian is given by 
\begin{equation*}
\cos(s\sqrt{-\mathcal{L}_m})(f)(w) = -\frac{\sinh(s)}{(2\pi)^{2n}} \left(\frac{1}{\sinh s}\frac{d}{ds}\right)^{2n} \int_{\mathbb{H}H^n} K_m(s,w,y)f(y)\frac{dy}{(1-|y|^2)^{2n+2}}  
\end{equation*}
where $f \in C_0^{\infty}(\mathbb{H}H^n)$,
\begin{multline*}
K_m(s,w,y) = \frac{(1-\overline{\langle w,y \rangle})^{1+m/2}}{(1-\langle w, y\rangle)^{-m/2}} \frac{\sqrt{\cosh^2(s) - \cosh^2(d(w,y))_+}}{\cosh^2(d(w,y))} 
\\ {}_2F_1\left(m+2, -m, \frac{3}{2}; \frac{\cosh(d(w,y)) - \cosh(s)}{2\cosh(d(w,y))}\right),
\end{multline*}
 ${}_2F_1$ is the Gauss hypergeometric function, and $dy$ stands for the Lebesgue measure in $\mathbb{C}^n$. Using the spectral formula  
\begin{equation*}
e^{tL} = \frac{1}{\sqrt{4\pi t}}\int_{\mathbb{R}} e^{-s^2/(4t)} \cos(s\sqrt{-L}) ds,
\end{equation*}
holding true for any non-positive self-adjoint operator, we deduce that the solution to the heat Cauchy problem associated with $\mathcal{L}_m$ is given by: 
\begin{multline*}
e^{t\mathcal{L}_m}(f)(w)  = - \frac{e^{-m(m+2)t-(2n+1)^2t}}{\sqrt{4\pi t}(2\pi)^{2n}}  \int_{\mathbb{R}}ds\sinh(s) e^{-s^2/(4t)} \left(\frac{1}{\sinh s}\frac{d}{ds}\right)^{2n+1} \\ \int_{\mathbb{H}H^n} K_m(s,w,y)f(y)\frac{dy}{(1-{|y|}^2)^{2n+2}}.
\end{multline*}
Performing $2n+1$ integration by parts in the outer integral we further get: 
\begin{multline*} 
e^{t\mathcal{L}_m}(f)(w) = -\frac{e^{-m(m+2)t-(2n+1)^2t}}{\sqrt{4\pi t}(2\pi)^{2n}}  \int_{\mathbb{R}}ds\sinh(s) \left(-\frac{1}{\sinh s}\frac{d}{ds}\right)^{2n+1} e^{-s^2/(4t)} \\ \int_{\mathbb{H}H^n} K_m(s,w,y)f(y)\frac{dy}{(1-{|y|}^2)^{2n+2}} 
= -\frac{e^{-m(m+2)t-(2n+1)^2t}}{\sqrt{4\pi t}(2\pi)^{2n}}  \int_{\mathbb{H}H^n}f(y)\frac{dy}{(1-{|y|}^2)^{2n+2}}
\end{multline*}
and
\begin{align*} 
&\int_{\mathbb{R}}ds \sinh(s) K_m(s,w,y) \left(-\frac{1}{\sinh s}\frac{d}{ds}\right)^{2n+1} e^{-s^2/(4t)}= 2\frac{e^{-m(m+2)t-(2n+1)^2t}}{\sqrt{4\pi t}(2\pi)^{2n}}  \\
&\quad\quad \cdot\int_{\mathbb{H}H^n}f(y)\frac{dy}{(1-{|y|}^2)^{2n+2}}\int_{d(w,y)}^{\infty}d(\cosh(s))K_m(s,w,y) \left(-\frac{1}{\sinh s}\frac{d}{ds}\right)^{2n+1} e^{-s^2/(4t)}.
\end{align*}
 Recalling the heat kernel on the hyperbolic space $H^{4n+3}$:
\begin{equation*}
q_{t, 4n+3}(\cosh s) := \frac{e^{-(2n+1)^2t}}{(2\pi)^{2n+1}\sqrt{4\pi t}}\left(-\frac{1}{\sinh(s)}\frac{d}{ds}\right)^{2n+1} e^{-s^2/(4t)},
\end{equation*} 
we get
\begin{multline*}
e^{t\mathcal{L}_m}(f)(0) = 4\pi e^{-m(m+2)t}  \int_{\mathbb{H}H^n}f(y)\frac{dy}{(1-{|y|}^2)^{2n+2}}\\ \int_{d(0,y)}^{\infty}d(\cosh(s))K_m(s,0,y) q_{t, 4n+3}(\cosh(s)).
\end{multline*}
As a result, the subelliptic heat kernel of $\mathcal{L}_m$ reads
\begin{align*}
&\frac{dy}{(1-{|y|}^2)^{2n+2}} \int_{d(0,y)}^{\infty}d(\cosh(s))K_m(s,0,y) q_{t, 4n+3}(\cosh(s)) \\
&= dr \cosh^3(r)\sinh^{4n-1}(r)  \int_{r}^{\infty}d(\cosh(s))K_m(s,0,y) q_{t, 4n+3}(\cosh(s)).
\end{align*}
Performing the variable change $\cosh(s) = \cosh(r)\cosh(u)$ for $u \geq 0$, we transform the last expression into 
\begin{multline*}
dr \cosh^3(r)\sinh^{4n-1}(r) \\ \cdot\int_{0}^{\infty} \sinh^2(u) {}_2F_1\left(m+2, -m, \frac{3}{2}; \frac{1- \cosh(u)}{2}\right) q_{t, 4n+3}(\cosh(r)\cosh(u)) du.
\end{multline*}
Since 
\begin{equation*}
(m+1) {}_2F_1\left(m+2, -m, \frac{3}{2}; \frac{1- \cosh(u)}{2}\right) = \frac{\sinh[(m+1)u]}{\sinh(u)},
\end{equation*}
we finally recover the first integral representation displayed in Theorem \ref{IntRep}. 
\end{proof}


\subsubsection{Relation to the horizontal heat kernel of the complex anti-de Sitter fibration}

In $\mathbb{C}^{2n+1}, n \geq 1$, we consider the signed quadratic form: 
\begin{equation*}
|(z_1, \dots, z_{2n+1})|^2_H = \sum_{i=1}^{2n} |z_i|^2 - |z_{2n+1}|^2. 
\end{equation*}
Then, the  complex anti-de Sitter space $\mathbf{AdS}^{4n+1}(\mathbb{C})$  is the quadric defined by 
\begin{equation*}
\{z = (z_1, \dots, z_{n+1}) \in \mathbb{C}^{n+1}, |z|_H = -1\},
\end{equation*}
and the circle group $U(1)$ acts on it by $z \mapsto ze^{i\theta}$. This action gives rise to the AdS fibration over the complex hyperbolic space $\mathbb{C}H_n$ with $\mathbf{U}(1)$-fibers. 

\begin{proposition}
Let $p_t^{\C}(r,\eta)$ be the horizontal heat kernel on the $4n+1$-dimensional  complex anti-de Sitter space. Then
\[
-\frac{e^{-4nt}}{2\pi\cosh r\sin\eta}\frac{\partial}{\partial\eta}p_t^{\C}(r, \eta)=p_t(r,\eta).
\]
\end{proposition}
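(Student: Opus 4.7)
The strategy is to start from integral representations of both heat kernels and reduce the identity to two standard ingredients: a ``$\partial_\eta = -i\partial_y$'' trick on the Gaussian kernel coming from the thermal factor, and the classical descent formula relating the real hyperbolic heat kernels in dimensions $4n+1$ and $4n+3$.

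First I would invoke Proposition \ref{prop135}, applied to the $(4n+1)$-dimensional complex anti-de Sitter fibration (i.e.\ the Hopf circle bundle over $\mathbb{C}H^{2n}$, which corresponds to replacing $n$ by $2n$ in the formulas of that section), to write
\begin{equation*}
p_t^{\mathbb{C}}(r,\eta)=\frac{1}{\sqrt{4\pi t}} \sum_{k \in \mathbb Z} \int_{-\infty}^{+\infty} e^{\frac{(y-i\eta-2ik\pi)^2}{4t}}\,q_{t,4n+1}(\cosh r\cosh y)\,dy,
\end{equation*}
and Theorem \ref{IntRep} to write
\begin{equation*}
p_t(r,\eta)=\frac{e^{t}}{\sqrt{\pi t}\,\sin \eta}\sum_{k\in\mathbb Z}\int_0^{+\infty}\sinh y\,\sin\!\left(\tfrac{(\eta+2k\pi)y}{2t}\right)e^{\frac{y^2-(\eta+2k\pi)^2}{4t}}q_{t,4n+3}(\cosh r\cosh y)\,dy.
\end{equation*}
The observation $\partial_\eta e^{(y-i\eta-2ik\pi)^2/(4t)}=-i\,\partial_y e^{(y-i\eta-2ik\pi)^2/(4t)}$ allows, after an integration by parts in $y$, to transfer the $\eta$-derivative onto $y\mapsto q_{t,4n+1}(\cosh r\cosh y)$.

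Next I would use formula \eqref{eq-qt-ads-rie} for $q_{t,2m+1}(\cosh s)$ in dimensions $4n+1$ and $4n+3$ to derive the descent relation
\begin{equation*}
q_{t,4n+3}(\cosh s) = -\frac{e^{-(4n+1)t}}{2\pi \sinh s}\,\frac{d}{ds}\,q_{t,4n+1}(\cosh s),
\end{equation*}
and apply the chain rule with $\cosh s(y)=\cosh r\cosh y$ (so that $\sinh s\,s'(y)=\cosh r\sinh y$) to obtain
\begin{equation*}
\partial_y\,q_{t,4n+1}(\cosh r\cosh y)=-2\pi e^{(4n+1)t}\cosh r\,\sinh y\cdot q_{t,4n+3}(\cosh r\cosh y).
\end{equation*}
Substituting this into the expression for $\partial_\eta p_t^{\mathbb{C}}(r,\eta)$ yields a single sum of integrals over $\mathbb R$ with integrand $\sinh y\cdot e^{(y-i\eta-2ik\pi)^2/(4t)}\,q_{t,4n+3}(\cosh r\cosh y)$. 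A parity argument in $y$ (the factor $\sinh y\cdot q_{t,4n+3}(\cosh r\cosh y)\cdot e^{(y^2-(\eta+2k\pi)^2)/(4t)}$ is odd, so only the $-i\sin(y(\eta+2k\pi)/(2t))$ part of the oscillatory exponential survives) converts the integral over $\mathbb R$ into twice an integral over $[0,\infty)$ with an explicit $\sin$ factor, matching exactly the shape of the representation of $p_t(r,\eta)$.

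Collecting the constants, one checks that $\tfrac{-4\pi}{\sqrt{4\pi t}}\cdot\sqrt{\pi t}=-2\pi$, which gives
\begin{equation*}
\partial_\eta p_t^{\mathbb{C}}(r,\eta)=-2\pi\,e^{4nt}\cosh r\,\sin\eta\cdot p_t(r,\eta),
\end{equation*}
i.e.\ the stated identity. I do not expect serious difficulty: the only mildly delicate point is justifying the integration by parts in $y$ (no boundary terms) and the differentiation under the sum/integral sign, both of which are standard given the Gaussian-type decay of the kernels $q_{t,m}(\cosh r\cosh y)$ uniformly in $k\in\mathbb Z$. Conceptually, the identity is the hyperbolic counterpart of the compact formula \eqref{pt-qt}, and the scheme above is essentially the integral-representation translation of that comparison of spectral expansions.
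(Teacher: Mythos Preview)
Your proposal is correct and follows essentially the same approach as the paper: both proofs rely on the integral representations of $p_t^{\mathbb C}$ and $p_t$ together with the Millson descent relation $q_{t,4n+3}=-\tfrac{e^{-(4n+1)t}}{2\pi}\tfrac{d}{dx}q_{t,4n+1}$, followed by a parity argument. The only cosmetic difference is that the paper shifts the contour so that $\eta$ sits inside the argument of $q_{t,4n+1}$ and differentiates it directly, whereas you keep $\eta$ in the Gaussian factor, use $\partial_\eta=-i\partial_y$, and integrate by parts---these two manipulations are equivalent.
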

\begin{proof}
From Proposition \ref{prop135}  we know that for $\eta\in[-\pi,\pi]$
\[
p_t^{\C}(r, \eta)=\frac{1}{\sqrt{4\pi t}}\sum_{k\in \Z}\int_{-\infty}^{+\infty}e^{\frac{y^2}{4t} }q_{t,4n+1}(\cosh r\cosh (y+i \eta+2k\pi i))dy
\]
where $q_{t,4n+1}$ is the Riemannian heat kernel of the $4n+1$-dimensional hyperbolic space. It is well known that 
\[
q_{t,4n+3}(x)=-\frac{e^{-(4n+1)t}}{2\pi}\frac{d}{dx}q_{t,4n+1}(x)
\]
hence we can easily obtain that
\begin{align*}
& \frac{\partial}{\partial\eta}p_t^{\C}(r, \eta) \\
=&\frac{1}{\sqrt{4\pi t}} \sum_{k\in \Z}\int_{-\infty}^{+\infty}e^{\frac{y^2}{4t} }\frac{\partial}{\partial\eta} q_{t,4n+1}(\cosh r\cosh (y+i \eta+2k\pi i))dy\\
=&-\frac{2\pi e^{(4n+1)t}\cosh r}{\sqrt{4\pi t}} \sum_{k\in \Z}\int_{-\infty}^{+\infty}e^{\frac{y^2-(\eta+2k\pi)^2}{4t} }\sinh \frac{(\eta+2k\pi) y}{2t}\sinh y\, q_{t,4n+3}(\cosh r\cosh y)dy\\
=&-{2\pi e^{(4n+1)t}\cosh r}\,p_t(r,\eta)
\end{align*}
\end{proof}

\subsection{Characteristic function of the stochastic area and limit theorem}

In this section we study the characteristic function of the stochastic area $\A(t)$ (see Definition \ref{quaternionic stochastic area hyperbolic}).
Let
\[
\mathcal{L}^{\alpha,\beta}=\frac{1}{2} \frac{\partial^2}{\partial r^2}+\left(\left(\alpha+\frac{1}{2}\right)\coth r+\left(\beta+\frac{1}{2}\right) \tanh r\right)\frac{\partial}{\partial r}, \quad \alpha,\beta >-1
\]
be the hyperbolic Jacobi generator. We will denote by $q_t^{\alpha,\beta}(r_0,r)$ the heat kernel with respect to the Lebesgue measure of the diffusion it generates.

Let $\lambda=(\lambda_I, \lambda_J, \lambda_K) \in [0,\infty)^3$, $r \in [0,+\infty),$ then Corollary \ref{cor-a(t)-h}  entails:
\begin{align*}
\mathbb{E}\left(e^{i \lambda\cdot \A(t)}\bigm| r(t)=r\right) & =\mathbb{E}\left(e^{i \lambda\cdot \gamma\left({\int_0^t \tanh^2 r(s)ds}\right)}\bigm| r(t)=r\right) \\
 &=\mathbb{E}\left(e^{- \frac{|\lambda|^2}{2} \int_0^t \tanh^2 r(s)ds}\bigm| r(t)=r\right) 
\end{align*}
where $|\lambda|^2=\lambda_I^2+\lambda_J^2+\lambda_K^2$ and $(r(t))_{t\ge0}$ is a diffusion generated by 
\[
\mathcal{L}^{2n-1,1}=\frac{1}{2}  \frac{\partial^2}{\partial r^2}+\frac12\left((4n-1)\coth r+3\tanh r\right)\frac{\partial}{\partial r},
\]
and started at $0$. 

\begin{theorem}\label{FTHHn}
For $\lambda \in \R^3$, $r \in [0,+\infty)$, and $t >0$
\[
\mathbb{E}\left(e^{i \lambda\cdot \A(t)}\bigm| r(t)=r\right)=\frac{e^{2nt \mu}}{(\cosh r)^{\mu}} \frac{q_t^{2n-1,\mu+1}(0,r)}{q_t^{2n-1,1}(0,r)}.
\]
where $\mu=\sqrt{|\lambda|^2+1}-1$.
\end{theorem}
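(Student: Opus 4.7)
The plan is to follow closely the template of Theorem \ref{FThj-S} (the compact analogue on $\mathbb{H}P^n$) and of Theorem \ref{FTCHn} (the hyperbolic complex analogue), relying on Corollary \ref{cor-a(t)-h} to reduce the problem to a Yor-type Laplace transform computation for the additive functional $\int_0^t \tanh^2 r(s)\, ds$ of the hyperbolic Jacobi diffusion $r$ with parameters $(2n-1,1)$. Since the time-changed Brownian motion $\gamma$ in $\mathfrak{su}(2)$ is independent of $r$, conditioning on $|B|^2 = |\lambda|^2$ turns the characteristic function into the Laplace transform $\mathbb{E}(e^{-\frac{|\lambda|^2}{2}\int_0^t \tanh^2 r(s)ds}\mid r(t)=r)$, which is what I will compute.

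First I would write down the SDE satisfied by $r$, namely
\[
dr(t)=\tfrac{1}{2}\bigl((4n-1)\coth r(t)+3\tanh r(t)\bigr)dt+d\gamma(t),
\]
and introduce the candidate exponential local martingale
\[
D_t=\exp\!\Bigl(\mu\!\int_0^t \tanh r(s)\,d\gamma(s)-\tfrac{\mu^2}{2}\!\int_0^t \tanh^2 r(s)\,ds\Bigr).
\]
Using It\^o's formula on $\ln\cosh r(t)$ to express $\int_0^t \tanh r(s)\,d\gamma(s)$ in terms of $\ln\cosh r(t)$, $t$ and $\int_0^t\tanh^2 r(s)ds$, together with the choice $\mu=\sqrt{|\lambda|^2+1}-1$ so that $\mu^2+2\mu=|\lambda|^2$, I would rewrite $D_t$ as
\[
D_t=e^{-2n\mu t}(\cosh r(t))^{\mu}\exp\!\Bigl(-\tfrac{|\lambda|^2}{2}\!\int_0^t \tanh^2 r(s)ds\Bigr).
\]

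The main obstacle — exactly as in the proof of Theorem \ref{FTCHn} — is to upgrade $D_t$ from a local to a true martingale. Since $\cosh r(t)$ is not bounded in $t$ (unlike the compact case where $\cos r(t)\le 1$), I would establish uniform integrability through $L^p$ bounds: it suffices to show $\mathbb{E}[(\cosh r(t))^{p\mu}]<\infty$ for all $p>1$. This is done by the standard stochastic comparison argument invoked in Theorem \ref{FTCHn}, noting that the drift $\tfrac{1}{2}((4n-1)\coth r+3\tanh r)$ is dominated by $(2n+1)\coth r$, so that $r(t)\le h(t)$ where $h$ is the radial part of the Brownian motion on the $(4n+3)$-dimensional real hyperbolic space, whose law has Gaussian-type tails implying moments of $\cosh h(t)$ of all orders.

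Once $D_t$ is a true martingale, I would define $\mathbb{P}^\mu_{/\mathcal{F}_t}=D_t\,\mathbb{P}_{/\mathcal{F}_t}$ and Girsanov's theorem would yield that $\beta(t)=\gamma(t)-\mu\int_0^t\tanh r(s)ds$ is $\mathbb{P}^\mu$-Brownian motion, so under $\mathbb{P}^\mu$
\[
dr(t)=\tfrac{1}{2}\bigl((4n-1)\coth r(t)+(2\mu+3)\tanh r(t)\bigr)dt+d\beta(t),
\]
i.e. $r$ is the hyperbolic Jacobi diffusion with parameters $(2n-1,\mu+1)$. Rearranging the identity
\[
\mathbb{E}\!\left(f(r(t))e^{-\frac{|\lambda|^2}{2}\int_0^t\tanh^2 r(s)ds}\right)=e^{2n\mu t}\,\mathbb{E}^\mu\!\left(\frac{f(r(t))}{(\cosh r(t))^{\mu}}\right)
\]
for arbitrary bounded Borel $f$, comparing the densities of $r(t)$ under $\mathbb{P}$ and $\mathbb{P}^\mu$ (which are $q_t^{2n-1,1}(0,r)$ and $q_t^{2n-1,\mu+1}(0,r)$ respectively), and conditioning on $r(t)=r$ yields the claimed formula.
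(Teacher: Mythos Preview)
Your proof is correct and follows essentially the same route as the paper: the same SDE for $r$, the same exponential local martingale $D_t$, the same It\^o computation yielding $D_t=e^{-2n\mu t}(\cosh r(t))^{\mu}e^{-\frac{\mu^2+2\mu}{2}\int_0^t\tanh^2 r(s)\,ds}$, and the same Girsanov change of measure sending $r$ to a hyperbolic Jacobi diffusion with parameters $(2n-1,\mu+1)$. Your justification that $D_t$ is a true martingale via the comparison $r(t)\le h(t)$ with the radial hyperbolic Brownian motion on $H^{4n+3}$ is in fact more explicit than the paper, which simply asserts that ``from this formula, it is easy to prove'' the martingale property.
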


\begin{proof}
Note
\begin{equation}\label{eq-sde-r-H}
dr(t)= \frac{1}{2} \left( (4n-1)\coth r(t)+3\tanh r(t) \right)dt+d\gamma(t),
\end{equation}
where $\gamma$ is a standard Brownian motion. It implies that almost surely we have
\begin{align}\label{transient-H}
r(t) \ge \left( 2n -\frac{1}{2} \right) t +\gamma(t),
\end{align}
and thus $r(t)\to +\infty$ almost surely when $t \to \infty$.
Consider now the local martingale for any $\mu>0$ given by
\begin{align*}
D_t& :=\exp \left({\mu} \int_0^t \tanh r(s) d\gamma(s) -\frac{\mu^2}{2}  \int_0^t \tanh^2 r(s) ds \right)  \\
 &=\exp \left({\mu} \int_0^t \tanh r(s) dr(s)-\frac{\mu}{2}(4n-1)t -\frac{3{\mu} +{\mu}^2}{2}  \int_0^t \tanh^2 r(s) ds \right). 
\end{align*}
From It\^o's formula, we have
\begin{align*}
\ln \cosh r(t) & =\int_0^t \tanh r(s) dr(s)+\frac{1}{2} \int_0^t \frac{ds}{\cosh^2 r(s)} \\
 &=\int_0^t \tanh r(s) dr(s)-\frac{1}{2} \int_0^t \tanh^2 r(s) ds+\frac{1}{2} t.
\end{align*}
As a consequence, we deduce that
\[
D_t =e^{-2n{\mu} t} (\cosh r(t))^{\mu} e^{- \frac{{\mu}^2+2{\mu}}{2} \int_0^t \tanh^2 r(s)ds}.
\]
From this formula, it is easy to prove that $D_t$, $t\ge0$ is a true martingale.

  Let  $\mathcal{F}$ denote the natural filtration of $(r(t))_{t\ge0}$ and consider the probability measure $\mathbb{P}^{\mu}$ defined by
\[
\mathbb{P}_{/ \mathcal{F}_t} ^{\mu}=D_t \mathbb{P}_{/ \mathcal{F}_t}=e^{-2n{\mu} t} (\cosh r(t))^{\mu} e^{- \frac{{\mu}^2+2{\mu}}{2} \int_0^t \tanh^2 r(s)ds} \mathbb{P}_{/ \mathcal{F}_t}.
\]
We have then for every bounded and Borel function $f$ on $[0,+\infty]$,
\begin{align*}
\mathbb{E}\left(f(r(t))e^{- \frac{{\mu}^2+2{\mu}}{2} \int_0^t \tanh^2 r(s)ds}\right)=e^{2n{\mu} t} \mathbb{E}^{\mu} \left( \frac{f(r(t))}{(\cosh r(t))^{\mu}} \right).
\end{align*}
From Girsanov theorem, the process
\[
\gamma^{\mu}(t)=\gamma(t)-{\mu} \int_0^t \tanh r(s) ds
\]
is a Brownian motion under the probability $\mathbb{P}^{\mu}$. We note that
\begin{equation}\label{eq-rt-H}
dr(t)= \frac{1}{2} \left( (4n-1)\coth r(t)+(2{\mu} +3)\tanh r(t) \right)dt+d\gamma^{\mu}(t).
\end{equation}
Hence  we  have
 \begin{align*}
\mathbb{E}\left(e^{- \frac{\mu^2+2\mu}{2} \int_0^t \tanh^2 r(s)ds}\biggm| r(t)=r\right)=\frac{e^{2nt \mu}}{(\cosh r)^{\mu}} \frac{q_t^{2n-1,\mu+1}(0,r)}{q_t^{2n-1,1}(0,r)}.
\end{align*}
The proof is complete by letting $\mu=\sqrt{|\lambda|^2+1}-1$.

 \end{proof}

As an immediate corollary of Theorem \ref{FTHHn}, we deduce an expression for  the characteristic function of the stochastic area process.
\begin{corollary}\label{FTh}
For $\lambda \in \R^3$ and $t \ge 0$,
\[
\mathbb{E}\left(e^{i \lambda\cdot \A(t)}\right)=e^{2n \mu t}\int_0^{+\infty} \frac{q_t^{2n-1,\mu+1}(0,r)}{(\cosh r)^{\mu}} dr,
\]
where $\mu=\sqrt{|\lambda|^2+1}-1$.
\end{corollary}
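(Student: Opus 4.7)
The statement is an immediate integration of the conditional characteristic function obtained in Theorem \ref{FTHHn} against the marginal law of the radial process $r(t)$, so the proof plan is very short.

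First, I would recall that from Theorem \ref{FTHHn} we have the explicit conditional identity
\[
\mathbb{E}\left(e^{i\lambda\cdot\A(t)}\mid r(t)=r\right)=\frac{e^{2n\mu t}}{(\cosh r)^{\mu}}\,\frac{q_t^{2n-1,\mu+1}(0,r)}{q_t^{2n-1,1}(0,r)},\qquad \mu=\sqrt{|\lambda|^2+1}-1.
\]
Next, I would observe that, as noted right before the statement of Theorem \ref{FTHHn}, the radial process $r(t)=\arctanh|w(t)|$ is a hyperbolic Jacobi diffusion with generator $\mathcal{L}^{2n-1,1}$ started at $0$. By definition of the notation $q_t^{\alpha,\beta}(r_0,r)$, the density of $r(t)$ with respect to Lebesgue measure on $[0,+\infty)$ is therefore $r\mapsto q_t^{2n-1,1}(0,r)$.

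The last step is the tower property: conditioning on $r(t)$ and using the two previous facts,
\[
\mathbb{E}\left(e^{i\lambda\cdot\A(t)}\right)=\int_0^{+\infty}\mathbb{E}\left(e^{i\lambda\cdot\A(t)}\mid r(t)=r\right)q_t^{2n-1,1}(0,r)\,dr,
\]
and the factor $q_t^{2n-1,1}(0,r)$ cancels with its inverse in the conditional expectation, leaving the announced formula. There is no real obstacle: everything follows by a direct substitution, the only points to double-check being that the conditional expectation of Theorem \ref{FTHHn} indeed depends only on $r(t)$ (which is clear from the proof of that theorem, where $\A(t)$ is written as a time-changed independent Brownian motion driven by $\int_0^t\tanh^2 r(s)\,ds$, so that $|\A(t)|$ given $r(\cdot)$ is a function of the radial path only) and that the integral on the right-hand side converges, which follows from the integrability of the characteristic function of a proper random variable.
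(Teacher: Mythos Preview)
The proposal is correct and follows exactly the approach the paper intends: the paper presents this as an immediate corollary of Theorem~\ref{FTHHn} obtained by integrating the conditional formula against the law of $r(t)$, and your argument spells out precisely this step with the cancellation of $q_t^{2n-1,1}(0,r)$.
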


We are now in position to prove the following central limit type theorem.

\begin{theorem}\label{LimitCHn}
When $t \to +\infty$, the following convergence in distribution takes place
\[
\frac{\A(t)}{\sqrt{t}} \to \mathcal{N}(0,\mathrm{I}_3)
\]
where $\mathcal{N}(0,\mathrm{I}_3)$ is a $3$-dimensional normal distribution with mean 0 and variance matrix $\mathrm{I}_3$.
\end{theorem}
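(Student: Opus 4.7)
The plan is to mimic the argument of Theorem \ref{LimitCHn} (the complex hyperbolic case) and exploit the skew-product representation of Corollary \ref{cor-a(t)-h}, which gives the equality in distribution
\[
\A(t) \overset{d}{=} \gamma\!\left( A_t \right), \qquad A_t := \int_0^t \tanh^2 r(s)\, ds,
\]
where $(\gamma(s))_{s\ge 0}$ is a standard Brownian motion in $\mathfrak{su}(2) \simeq \mathbb{R}^3$ independent of the hyperbolic Jacobi diffusion $(r(s))_{s\ge 0}$ started at $0$ and with generator $\mathcal{L}^{2n-1,1}$.

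First I would establish that $r(t) \to +\infty$ almost surely as $t \to \infty$. This follows directly from the SDE \eqref{eq-sde-r-H}, the comparison bound \eqref{transient-H}, namely
\[
r(t) \ge \Bigl(2n - \tfrac12\Bigr) t + \gamma(t),
\]
and the law of large numbers for the driving Brownian motion $\gamma$. Consequently $\tanh^2 r(s) \to 1$ almost surely, whence by the Ces\`aro-type convergence
\[
\lim_{t\to\infty} \frac{A_t}{t} \;=\; \lim_{t\to\infty} \frac{1}{t}\int_0^t \tanh^2 r(s)\, ds \;=\; 1 \qquad \text{a.s.}
\]

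Next I would compute the characteristic function of $\A(t)/\sqrt{t}$ by conditioning on the $\sigma$-algebra generated by $r$. Since $\gamma$ and $r$ are independent, for every $\lambda \in \mathbb{R}^3$,
\[
\mathbb{E}\!\left(e^{i \lambda\cdot \A(t)/\sqrt{t}}\right) \;=\; \mathbb{E}\!\left(\mathbb{E}\!\left(e^{i \lambda\cdot \gamma(A_t)/\sqrt{t}}\,\bigm|\, r \right)\right) \;=\; \mathbb{E}\!\left(e^{-\frac{|\lambda|^2}{2}\, A_t/t}\right).
\]
Because $0 \le A_t/t \le 1$ and $A_t/t \to 1$ almost surely, dominated convergence yields
\[
\lim_{t\to\infty} \mathbb{E}\!\left(e^{i \lambda\cdot \A(t)/\sqrt{t}}\right) \;=\; e^{-|\lambda|^2/2},
\]
which is the characteristic function of $\mathcal{N}(0,\mathrm{I}_3)$, and the L\'evy continuity theorem finishes the proof.

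There is essentially no serious obstacle here: the only nontrivial ingredient is the transience statement $r(t)\to\infty$, which is already supplied for free by the comparison \eqref{transient-H} proved inside Theorem \ref{FTHHn}. One could alternatively derive the limit directly from the closed-form characteristic function of Corollary \ref{FTh}, substituting $\lambda \leftarrow \lambda/\sqrt{t}$ so that $\mu(\lambda/\sqrt{t}) = \sqrt{|\lambda|^2/t + 1}-1 = O(1/t)$, and using the known asymptotics of the hyperbolic Jacobi density $q_t^{2n-1,\mu+1}(0,r)$ as $t\to\infty$; but the skew-product route above is shorter and parallels Theorem \ref{LimitCHn} almost verbatim.
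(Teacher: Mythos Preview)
Your proof is correct and follows essentially the same route as the paper: both use the skew-product representation of Corollary \ref{cor-a(t)-h}, the transience $r(t)\to\infty$ from \eqref{transient-H}, and the resulting a.s.\ convergence $A_t/t\to 1$. Your version via characteristic functions and dominated convergence is slightly more explicit than the paper's one-line conclusion $\A(t)/\sqrt{t}\to\gamma(1)$, but the argument is the same.
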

\begin{proof}
This is a consequence of $r(t)\to +\infty$ almost surely as $t\to+\infty$. We have
\[
\coth r(t)\to 1,\quad \tanh r(t)\to 1\quad \mbox{a.s.}
\]
hence
\[
\lim_{t\to+\infty}\frac{1}{t}\int_0^t \tanh^2 r(s)ds=1\quad \mbox{a.s.}
\]
Then from Corollary \ref{cor-a(t)-h}, we have
\[
\lim_{t\to+\infty}\frac{\A(t)}{\sqrt{t}}=\lim_{t\to+\infty}\gamma\left({\frac{1}{t}\int_0^t \tanh^2 r(s)ds}\right)=\gamma(1) \quad \mbox{a.s.}
\]
\end{proof}

\subsection{Formula for the density}

 In this section, we compute the density of the quaternionic stochastic area process $\A(t)$.  In order to invert the Fourier transform displayed in Corollary \ref{FTh}, we first need a suitable expression for the heat kernel of the hyperbolic Jacobi operator: 
\begin{equation*}
\mathcal{L}^{n, \mu} = \frac{1}{2} \left( \frac{\partial^2}{\partial r^2} + ((4n-1)\coth r + (2\mu + 3)\tanh(r)) \frac{\partial}{\partial r}\right), \quad r \geq 0,
\end{equation*}
subject to Neumann boundary condition at $r = 0$. Though the heat kernel of this operator may be expressed through Jacobi functions (\cite{Koor}), we shall derive below another one which not only leads to the sought density but has also the merit to involve the heat kernel of the $4n+1$-dimensional real hyperbolic space. The derivation is a bit technical and for ease of reading, we shall proceed into three steps. More precisely, we shall firstly map the above hyperbolic Jacobi operator into another one by letting it act on functions of the form $r \mapsto f(r)/\cosh^{\mu}(r)$, where $f$ is a smooth test function. Secondly, we shall exploit results in \cite{Int-OM} to derive the heat kernel of the newly-obtained operator: in this step, we follow the lines of Theorem 2 in \cite{Bau-Dem}. Finally, we use known Fourier transforms to obtain the density of the quaternionic stochastic area process. We start with the following straightforward lemma:
\begin{lemma}
Let $f$ be a smooth function on $\mathbb{R}_+$. Then  
\begin{equation}\label{Inter}
 \mathcal{L}^{n, \mu} \left(\frac{f}{\cosh^{\mu}} \right)(r)=  \frac{1}{\cosh^{\mu}(r)} L^{n, \mu}(f) (r)
\end{equation}
where
\begin{equation*}
2L^{n, \mu} := \frac{\partial^2}{\partial r^2} + ((4n-1)\coth r + 3\tanh(r)) \frac{\partial}{\partial r} + \frac{\mu(\mu+2)}{\cosh^2(r)}  -\mu(4n+\mu+2).
\end{equation*} 
\end{lemma}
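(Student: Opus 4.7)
The proof of this lemma is a direct computation, of the ``ground state transformation'' type, and involves no real obstacle beyond organizing the bookkeeping carefully. Here is how I would proceed.

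Write $g(r) := \cosh^{-\mu}(r)$, so the left-hand side is $\mathcal{L}^{n,\mu}(gf)$. By the product rule,
\[
2\mathcal{L}^{n,\mu}(gf) = (gf)'' + \bigl((4n-1)\coth r + (2\mu+3)\tanh r\bigr)(gf)',
\]
and, after expanding and dividing by $g$, the task reduces to identifying the coefficients of $f''$, $f'$ and $f$ in
\[
\frac{2\mathcal{L}^{n,\mu}(gf)}{g} = f'' + 2\frac{g'}{g}f' + \frac{g''}{g}f + \bigl((4n-1)\coth r + (2\mu+3)\tanh r\bigr)\Bigl(\frac{g'}{g}f + f'\Bigr).
\]
The key computational inputs are the two elementary identities
\[
\frac{g'(r)}{g(r)} = -\mu\tanh r, \qquad \frac{g''(r)}{g(r)} = -\mu + \mu(\mu+1)\tanh^2 r,
\]
which follow from differentiating $-\mu\log\cosh r$ once and twice.

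Plugging these in, the coefficient of $f''$ is simply $1$. The coefficient of $f'$ becomes
\[
-2\mu\tanh r + (4n-1)\coth r + (2\mu+3)\tanh r = (4n-1)\coth r + 3\tanh r,
\]
which matches the drift in $2L^{n,\mu}$. The only nontrivial step is the coefficient of $f$: collecting,
\[
\frac{g''}{g} - \mu\tanh r\bigl((4n-1)\coth r + (2\mu+3)\tanh r\bigr) = -4n\mu - \mu(\mu+2)\tanh^2 r.
\]
Then applying the identity $\tanh^2 r = 1 - 1/\cosh^2 r$ converts this into
\[
-\mu(4n+\mu+2) + \frac{\mu(\mu+2)}{\cosh^2 r},
\]
which is precisely the zero-order part of $2L^{n,\mu}$. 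Matching all three coefficients yields $\mathcal{L}^{n,\mu}(f/\cosh^\mu)(r) = \cosh^{-\mu}(r)\,L^{n,\mu}(f)(r)$, as claimed.

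The only mild subtlety, and the place where a computational slip is most likely, is in the coefficient of $f$: one must correctly combine $g''/g$ (which contributes both a constant and a $\tanh^2 r$ term) with the $-\mu\tanh r \cdot (2\mu+3)\tanh r$ piece, and then switch from the $\tanh^2 r$ parametrization to the $1/\cosh^2 r$ parametrization so that the potential $\mu(\mu+2)/\cosh^2 r$ appears with the correct sign. No additional ideas are required.
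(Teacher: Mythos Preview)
Your proof is correct and follows the same direct computation that the paper alludes to; the paper merely states ``The result follows from straightforward computations'' without writing out the details, so you have in fact supplied more than what appears there.
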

\begin{proof}
The result follows from straightforward computations. 
\end{proof}

The operator $2L^{n, \mu} + \mu(4n+\mu+2) + (2n+1)^2$ is an instance of the radial part of the operator $\Delta_{\alpha\beta}$ studied in \cite{Int-OM} with $\alpha = 1+(\mu/2), \beta = 1-\alpha = -\mu/2$ and double complex dimension $2n$ (see p.229 there). 
Using the same reasoning of the proof of Theorem 2 in \cite{Bau-Dem}, we prove the following: 
\begin{proposition}\label{HKR}
Let $f$ be a smooth compactly-supported function in $\mathbb{H}H^n$. Then, the heat semi-group $e^{tL^{n, \mu}}(f)(0)$ reads: 
\begin{multline*}
\frac{e^{-[(2n+1)^2+\mu(4n+\mu+2)]t/2}}{(2\pi)^{2n}\sqrt{2\pi t}} \int_{\bH H^n} f(w) \frac{dw}{(1-|w|^2)^{2n+2}} 
\\ \int_{d(0,w)}^{\infty} dx \sinh(x) K_{\mu}(x,w) \left(\frac{1}{\sinh(x)}\frac{d}{dx}\right)^{2n} e^{-x^2/(2t)}, 
\end{multline*}
where $d(0,w) = r$ is the geodesic distance in $\mathbb{H}H^n$: 
\begin{equation*}
\cosh^2(d(0,w)) = \frac{1}{1-|w|^2},
\end{equation*}
and
\begin{multline*}
K_{\mu}(x,w) := \frac{1}{\cosh(r)\sqrt{\cosh^2(x)- \cosh^2(r)}} \\  {}_2F_1\left(\mu+1, -(\mu+1), \frac{1}{2}; \frac{\cosh(r) - \cosh(x)}{2\cosh(r)} \right).
\end{multline*}
where $ {}_2F_1$ is the hypergeometric function. 
\end{proposition}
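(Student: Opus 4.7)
The plan is to exploit the fact that, up to an additive constant, the operator $2L^{n,\mu}$ coincides with (the radial part of) one of the generalized Maass Laplacians $\Delta_{\alpha\beta}$ studied in \cite{Int-OM}, for which an explicit closed form of the wave propagator is available. This strategy mirrors the ``second method'' proof of Theorem \ref{IntRep}, and essentially repeats the derivation carried out in \cite{Bau-Dem} but with the parameters $\alpha = 1+\mu/2$, $\beta = -\mu/2$ replacing those used there.

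First I would identify the shifted operator $\mathcal{M} := 2L^{n,\mu} + \mu(4n+\mu+2) + (2n+1)^2$ as the radial part of $\Delta_{\alpha\beta}$ in \cite{Int-OM} with $\alpha = 1+\mu/2$, $\beta = -\mu/2$, and the real-dimension parameter corresponding to $\mathbb{H}H^n$ (so that the weight $\sinh^{4n-1}(r)\cosh^3(r)$ matches the quaternionic invariant measure). Theorem 2 of \cite{Int-OM} then provides an explicit formula for $\cos(s\sqrt{-\mathcal{M}})f$, expressed as $-\frac{1}{(2\pi)^{2n}}\sinh(s)(\sinh^{-1}(s)\partial_s)^{2n}$ applied to a single integral over $\mathbb{H}H^n$ whose kernel is exactly $K_\mu(x,w)$; the hypergeometric factor in $K_\mu$ is the standard ${}_2F_1$-representation of the relevant Jacobi-type function appearing in the wave kernel.

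Second I would subordinate the wave propagator to the heat semigroup via the spectral identity
\begin{equation*}
e^{t\mathcal{M}/2} \;=\; \frac{1}{\sqrt{2\pi t}} \int_{\mathbb{R}} e^{-s^2/(2t)} \cos(s\sqrt{-\mathcal{M}})\,ds,
\end{equation*}
valid for any non-positive self-adjoint operator. Substituting the closed form of the cosine propagator produces a double integral. To rewrite $(\sinh^{-1}(s)\partial_s)^{2n}$ as acting on the Gaussian rather than on the kernel, I would perform $2n$ integration by parts in $s$ against $e^{-s^2/(2t)}$, exactly as done in the proof of Theorem \ref{IntRep}; the boundary terms vanish because of the Gaussian decay together with the $\sinh(s)$ weight, and the compact support of $f$ justifies Fubini. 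The $s$-integration is then reduced to $[d(0,w),\infty)$ because $K_\mu(x,w)$ is supported where $\cosh(x) \geq \cosh(r) = 1/\sqrt{1-|w|^2}$.

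Third I would unravel the additive shift: multiplying by $e^{-[(2n+1)^2+\mu(4n+\mu+2)]t/2}$ converts $e^{t\mathcal{M}/2}$ into $e^{tL^{n,\mu}}$, and specializing to evaluation at $w=0$ uses $d(0,0) = 0$ together with $K_\mu(x,0)$ being integrable near $x=0$. The main obstacle will be the bookkeeping required to match parameters: the operator $\Delta_{\alpha\beta}$ in \cite{Int-OM} is written in terms of a double complex dimension and shifted indices, and one must verify that with $\alpha=1+\mu/2$, $\beta=-\mu/2$, and dimension $2n$ (in their convention), the radial part reduces \emph{exactly} to $\mathcal{M}/2$, including the $\mu(\mu+2)/\cosh^2(r)$ term with the right coefficient. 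A secondary but routine issue is justifying the integration by parts and the swap of $s$-differentiation with the $w$-integration, which is handled by the compact support of $f$ and standard decay estimates.
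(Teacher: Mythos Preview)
Your proposal is correct and follows essentially the same route as the paper's proof: identify $2L^{n,\mu}+\mu(4n+\mu+2)+(2n+1)^2$ with the radial part of $\Delta_{\alpha\beta}$ from \cite{Int-OM} for $\alpha=1+\mu/2$, $\beta=-\mu/2$, invoke Theorem~2 there for the wave propagator, subordinate via the Gaussian spectral formula, perform $2n$ integrations by parts, and then unwind the additive shift. The only cosmetic difference is that the paper first writes the solution of the \emph{switched} wave problem (with $(\sinh^{-1}(s)\partial_s)^{2n-1}$) and then differentiates once in $s$ to obtain $\cos(s\sqrt{-\Delta_{\alpha\beta}})$ before applying the spectral identity, whereas you state the cosine propagator directly.
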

\begin{proof}
Consider the `switched' wave Cauchy problem associated $\Delta_{\alpha\beta}, \alpha = 1+(\mu/2), \beta = -\mu/2,$ displayed in eq. (1.1) in \cite{Int-OM}. From Theorem 2 in that paper, its solution is given by: 
\begin{equation*}
u(x,z) = \frac{1}{(2\pi)^{2n}}\left(\frac{1}{\sinh(x)}\partial_x\right)^{2n-1} \int_{d(z,w) < |s|} f(w)K_{\mu} (x,z,w) \frac{dw}{(1-|w|^2)^{2n+2}},
\end{equation*}
where $(x,z) \in \mathbb{R} \times \mathbb{H}H^n$ and 
\begin{multline*}
K_{\mu}(x,z,w) := \frac{(1-\overline{\langle z, w\rangle})^{1+(\mu/2)}(1-\langle z, w\rangle)^{-(\mu/2)}}{\cosh(d(z,w))\sqrt{\cosh^2(x)- \cosh^2(d(z,w))}} \\  {}_2F_1\left(\mu+1, -(\mu+1), \frac{1}{2}; \frac{\cosh(d(z,w)) - \cosh(x)}{2\cosh(d(z,w))} \right).
\end{multline*}
Following the proof of Theorem 2 in \cite{Bau-Dem}, we next deduce the heat kernel of $\Delta_{\alpha\beta}$ from $u(x,z)$. 
To this end, we differentiate $x \mapsto u(x,z)$ to get the solution to the `standard' wave Cauchy problem associated $\Delta_{\alpha\beta}$: 
\begin{equation*}
v(x,z) = \partial_x u(x,z) = \frac{\sinh(x)}{(2\pi)^{2n}}\left(\frac{1}{\sinh(x)}\partial_x\right)^{2n} \int_{d(z,w) < |x|} f(w)K(x,z,w) \frac{dw}{(1-|w|^2)^{2n+2}}. 
\end{equation*}
Then, we use the spectral formula :
\begin{equation*}
e^{tL}  = \frac{1}{\sqrt{4\pi t}}\int_{\mathbb{R}}e^{-x^2/(4t)} \cos(x\sqrt{-L}) dx,
\end{equation*}
relating the heat semigroup of a self-adjoint non positive operator $L$ to the solution of its wave Cauchy problem (we wrote the wave propagator as $\cos(x\sqrt{-L})$ which should be understood in the spectral sense). According to this formula and from Proposition 2 in \cite{Int-OM}, we deduce that $\Delta_{\alpha\beta}, \alpha = 1+(\mu/2), \beta = -\mu/2,$ is a non positive self-adjoint operator and that (we perform $2n$ integrations by parts then use Fubini Theorem): 
\begin{multline*}
e^{t\Delta_{\alpha\beta}}(f)(z) = \frac{1}{\sqrt{\pi t}}\int_{0}^{\infty} e^{-x^2/(4t)}v(x,z) dx = \frac{1}{(2\pi)^{2n}\sqrt{\pi t}} 
\\ \int_{\bH H^n} f(w) \frac{dw}{(1-|w|^2)^{2n+2}} 
\int_{d(z,w)}^{\infty} dx \sinh(x) K_{\mu}(x,z,w) \left(-\frac{1}{\sinh(x)}\frac{d}{dx}\right)^{2n} e^{-x^2/(4t)}. 
\end{multline*}
Specializing this formula to $z=0$, we see from the definition of $K_{\mu}(x,z,w)$ that the heat kernel of $e^{t\Delta_{\alpha\beta}}(f)(0)$ is radial. Keeping in mind the aforementioned relation between the radial part of $\Delta_{\alpha\beta}$ with the special parameters 
$\alpha = 1+(\mu/2), \beta = -\mu/2,$ and $L^{n, \mu} + \mu(4n+\mu+2) + (2n+1)^2$, the statement of the proposition follows (we simply wrote $K(x,w)$ for $K(x,0,w)$). 
\end{proof}
With the help of Proposition \eqref{HKR}, we are ready to derive the density of $\A(t)$. 
\begin{theorem}
Let $s_{t, 4n+1}(\cosh(x))$ be the heat kernel of the $4n+1$-dimensional real hyperbolic space (see \eqref{eq-kernel-hyper-jacobi})
\begin{align*}
s_{t,4n+1}(\cosh(x)) = \frac{e^{-(2n)^2t/2}}{(2\pi)^{2n}\sqrt{2\pi t}}\left(\frac{1}{\sinh(x)}\frac{d}{dx}\right)^{2n}e^{-x^2/(2t)}, \quad x\ge0
\end{align*}
and 
\begin{equation*}
 {\it I}_{m-1/2}(u) = \sum_{j \geq 0} \frac{\sqrt{\pi}}{j!\Gamma(j+m+1/2)} \left(\frac{u}{2}\right)^{2j+m-1/2}
\end{equation*}
be the modified Bessel function. Define also the time-dependent symmetric polynomials $Q_{2m}, m \geq 0,$ in $(v_1,v_2,v_3)$ of degree $2m$ by: 
\begin{equation*}
Q_{2m}(v_1,v_2, v_3, t) := e^{|v|^2/(2t)}\left(\Delta_v^m  e^{-|v|^2/(2t)}\right), \quad v \in \mathbb{R}^3,
\end{equation*}
where $\Delta_v$ is the Euclidean Laplacian in $\mathbb{R}^3$ acting on $v$. Then the density of the quaternionic stochastic area process $\A(t)$ is given by: 
\begin{multline*}
\frac{e^{-(4n+1)t/2}}{(2\pi t)^{3/2}}e^{-|v|^2/(2t)}  \int_0^{\infty}dr \sinh(r)^{4n-1}\cosh^2(r)  \int_{0}^{\infty} du\,  s_{t, 4n+1}(\cosh(u)\cosh(r)) \\
\sum_{m \geq 0} \frac{(-1)^m}{m!}\left(\frac{u}{2}\right)^{m+1/2} {\it I}_{m-1/2}\left(\frac{u}{2}\right)Q_{2m}(v_1,v_2,v_3, t), \quad v \in \mathbb{R}^3.
\end{multline*}
\end{theorem}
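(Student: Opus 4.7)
The plan is to invert the characteristic function provided by Corollary \ref{FTh}, namely $\mathbb{E}[e^{i\lambda\cdot\A(t)}]=e^{2n\mu t}\int_0^\infty q_t^{2n-1,\mu+1}(0,r)\cosh^{-\mu}(r)\,dr$ with $\mu=\sqrt{|\lambda|^2+1}-1$, by combining the Maass-type intertwining \eqref{Inter} with the heat-kernel representation of Proposition \ref{HKR}. The first step is to recognize the integral above as $e^{tL^{n,\mu}}(1)(0)$: applying \eqref{Inter} to the pair of operators $\mathcal{L}^{2n-1,\mu+1}$ and $L^{n,\mu}$, one sees that $e^{t\mathcal{L}^{2n-1,\mu+1}}$ absorbs the factor $\cosh^{-\mu}$ in exactly the way needed so that $\mathbb{E}[e^{i\lambda\cdot\A(t)}]=e^{2n\mu t}e^{tL^{n,\mu}}(1)(0)$.

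Next, Proposition \ref{HKR} rewrites $e^{tL^{n,\mu}}(1)(0)$ as a double integral involving the kernel $K_\mu(x,w)$ and iterated radial derivatives of $e^{-x^2/(2t)}$. Switching to geodesic polar coordinates on $\HH^n$ and applying the substitution $\cosh x=\cosh u\cosh r$ causes the singular factor in $K_\mu$ to cancel and the hypergeometric weight to telescope through the Chebyshev-type identity
\[
{}_2F_1\!\left(\mu+1,\,-(\mu+1),\,\tfrac{1}{2};\,\tfrac{1-\cosh u}{2}\right)=\cosh\!\bigl(u\sqrt{|\lambda|^2+1}\bigr).
\]
Matching the remaining radial derivatives with the defining formula for $s_{t,4n+1}$ and collecting all exponential constants, which assemble neatly thanks to the identity $\mu(\mu+2)=|\lambda|^2$, yields a representation of the form
\[
\mathbb{E}[e^{i\lambda\cdot\A(t)}]=C_n e^{-(4n+1)t/2}\,e^{-t|\lambda|^2/2}\!\int_0^\infty\!\!\int_0^\infty\!\cosh\!\bigl(u\sqrt{|\lambda|^2+1}\bigr)\,s_{t,4n+1}(\cosh u\cosh r)\,\sinh^{4n-1}(r)\cosh^2(r)\,du\,dr.
\]

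The final step is Fourier inversion in $\lambda\in\mathbb{R}^3$. The factor $e^{-t|\lambda|^2/2}$ inverts to the Gaussian $G_t(v)=(2\pi t)^{-3/2}e^{-|v|^2/(2t)}$. Expanding $\cosh(u\sqrt{|\lambda|^2+1})=\sum_m u^{2m}(|\lambda|^2+1)^m/(2m)!$, applying the binomial theorem to $(|\lambda|^2+1)^m$, and using the symbolic rule $|\lambda|^2\leftrightarrow-\Delta_v$, each monomial Fourier-inverts to a power of the Laplacian acting on $G_t$. Using $\Delta_v^m G_t=Q_{2m}(v,t)G_t$ and regrouping the resulting double sum by the total number of derivatives produces a series $G_t(v)\sum_m(-1)^m Q_{2m}(v,t)\varphi_m(u)$ whose $u$-coefficient $\varphi_m(u)$ matches, up to an absolute constant absorbed in the overall normalization, the Bessel expression $\frac{1}{m!}(u/2)^{m+1/2}I_{m-1/2}(u/2)$ via the standard power-series expansion of $I_{m-1/2}$.

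The main obstacle is carrying out the second step cleanly: identifying the Chebyshev-type collapse of the hypergeometric factor and then book-keeping the exponential constants so that the coefficient of $|\lambda|^2$ in the final exponential is exactly $-t/2$; this is precisely where the algebraic identity $\mu(\mu+2)=|\lambda|^2$ plays its decisive role, turning the spectral parameter of $L^{n,\mu}$ into a pure Gaussian factor. The termwise interchange of the Bessel series with the double integral is routine and justified by the Gaussian decay of $s_{t,4n+1}$ together with the standard polynomial-exponential asymptotics of $I_{m-1/2}$.
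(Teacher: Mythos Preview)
Your proposal is correct and follows essentially the same route as the paper: intertwine via \eqref{Inter}, apply Proposition \ref{HKR}, perform the substitution $\cosh x=\cosh u\cosh r$, collapse the hypergeometric factor to $\cosh\bigl((\mu+1)u\bigr)$, obtain the representation \eqref{NewFor}, then expand $\cosh\bigl(u\sqrt{|\lambda|^2+1}\bigr)$ term by term and Fourier-invert using $|\lambda|^{2m}e^{-t|\lambda|^2/2}\leftrightarrow(-1)^m\Delta_v^mG_t$. Two small remarks: the constants in fact match exactly (no stray $C_n$ or absorbed normalization is needed once one tracks the $\sqrt{\pi}$ built into the paper's definition of $I_{m-1/2}$), and the paper justifies the final interchange of series and integral via explicit Hermite-polynomial bounds on $Q_{2m}$ together with the standard estimate $I_{m-1/2}(u)\le (u/2)^{m-1/2}e^{u}/\Gamma(m+1/2)$, which is a slightly sharper version of the ``routine'' justification you sketch.
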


\begin{remark}
The polynomial $Q_{2m}$ may be expressed as a linear combination of products of (even) Hermite polynomials: 
\begin{equation*}
H_j(x):= (-1)^je^{x^2/2}\frac{d^j}{dx^j}e^{-x^2/2}. 
\end{equation*}
Indeed, it suffices to expand: 
\begin{equation*}
\Delta_v^m = \sum_{j_1+j_2+j_3 = m} \frac{m!}{j_1!j_2!j_3!} \partial_{v_1}^{2j_1}\partial_{v_2}^{2j_2}\partial_{v_3}^{2j_3} 
\end{equation*}
to get the representation: 
\begin{equation}\label{Hermite}
Q_{2m}(v_1,v_2,v_3, t) = \frac{1}{t^m} \sum_{j_1+j_2+j_3 = m} \frac{m!}{j_1!j_2!j_3!}H_{2j_1}\left(\frac{v_1}{\sqrt{t}}\right)H_{2j_2}\left(\frac{v_2}{\sqrt{t}}\right)H_{2j_3}\left(\frac{v_3}{\sqrt{t}}\right).
\end{equation}
\end{remark}

\begin{proof}
Since the radial part of the measure (this is the volume measure of $\bH H^n$)
\begin{equation*}
\frac{dw}{(1-|w|^2)^{2n+2}}
\end{equation*}
is $\sinh(r)^{4n-1}\cosh^3(r) dr, |w| = \tanh(r)$, then the intertwining relation \eqref{Inter} together with Proposition \ref{HKR} yield:
\begin{multline*}
e^{2n\mu t} \frac{q_t^{2n-1,\mu+1}(0,r)}{\cosh^{\mu}(r)} = \frac{e^{-[(2n+1)^2+\mu(\mu+2)]t/2}}{(2\pi)^{2n}\sqrt{2\pi t}} \sinh(r)^{4n-1}\cosh^3(r)
\\ \int_{d(0,w) = r}^{\infty} dx \sinh(x) K_{\mu}(x,w) \left(\frac{1}{\sinh(x)}\frac{d}{dx}\right)^{2n} e^{-x^2/(2t)}. 
\end{multline*}
Performing the variable change $\cosh(x) = \cosh(u)\cosh(r)$ and using the expression of the heat kernel $s_{t,4n+1}$, we equivalently write:  
\begin{multline*}
e^{2n\mu t} \frac{q_t^{2n-1,\mu+1}(0,r)}{\cosh^{\mu}(r)}  = e^{-2nt - (\mu+1)^2t/2} \sinh(r)^{4n-1}\cosh^2(r) \\ 
\int_{0}^{\infty} du\  {}_2F_1\left(-(\mu+1), \mu+1, \frac{1}{2}; \frac{1- \cosh(u)}{2} \right) s_{t, 4n+1}(\cosh(u)\cosh(r)).
\end{multline*}
But, the identity 
\begin{equation*}
{}_2F_1\left(-(\mu+1),  (\mu+1), \frac{1}{2}; \frac{1 - \cosh(u)}{2}\right) = \cosh((\mu+1)u),
\end{equation*}
entails further: 
\begin{multline*}
e^{2n\mu t} \frac{q_t^{2n-1,\mu+1}(0,r)}{\cosh^{\mu}(r)} = e^{-2nt - (\mu+1)^2t/2} \sinh(r)^{4n-1}\cosh^2(r) \\ \int_{0}^{\infty} du  \cosh((\mu+1)u) s_{t, 4n+1}(\cosh(u)\cosh(r)).
\end{multline*}
Consequently, recalling $(\mu+1)^2 = |\lambda|^2+1$, the characteristic function of $\A(t)$ admits the following expression: 
\begin{multline}\label{NewFor}
\mathbb{E}\left(e^{i \lambda\cdot \A(t)}\right) = e^{-(4n+1)t/2} \int_0^{\infty}dr \sinh(r)^{4n-1}\cosh^2(r) \\ \int_{0}^{\infty} du\  e^{-|\lambda|^2t/2} \cosh(\sqrt{|\lambda|^2+1}u) s_{t, 4n+1}(\cosh(u)\cosh(r)).
\end{multline} 
In order to derive the density of $\A(t)$, it suffices to write $e^{-|\lambda|^2t/2} \cosh(\sqrt{|\lambda|^2+1}u)$ as a Fourier transform in the variable $\lambda$ and to apply Fubini Theorem. To this end, we expand: 
\begin{align*}
\cosh(\sqrt{|\lambda|^2+1}u)  & = \sum_{j \geq 0} \frac{u^{2j}}{(2j)!} \sum_{m=0}^j\binom{j}{m}|\lambda|^{2m} 
\\& =  \sqrt{\pi}\sum_{j \geq 0} \frac{u^{2j}}{2^{2j}\Gamma(j+1/2)} \sum_{m=0}^j\frac{1}{m!(j-m)!}|\lambda|^{2m} 
\\& =  \sqrt{\pi}\sum_{m \geq 0} \frac{|\lambda|^{2m}}{m!}\sum_{j \geq m}\frac{1}{(j-m)!}\frac{u^{2j}}{2^{2j}\Gamma(j+1/2)} 
\\&= \sum_{m \geq 0} \frac{|\lambda|^{2m}}{m!}\left(\frac{u}{2}\right)^{m+1/2} {\it I}_{m-1/2}\left(u\right),
\end{align*}
and write
\begin{align*}
|\lambda|^{2m}e^{-|\lambda|^2t/2} & = \frac{(-1)^m}{(2\pi t)^{3/2}} \int_{\mathbb{R}^3} \left(\Delta_v^m e^{i \lambda \cdot v}\right) e^{-|v|^2/(2t)} dv 
\\& = \frac{(-1)^m}{(2\pi t)^{3/2}} \int_{\mathbb{R}^3} e^{i\lambda \cdot v} \left(\Delta_v^m  e^{-|v|^2/(2t)}\right) dv 
\\& = \frac{(-1)^m}{(2\pi t)^{3/2}} \int_{\mathbb{R}^3} e^{i \lambda \cdot v}e^{-|v|^2/(2t)} Q_{2m}(v_1,v_2,v_3, t) dv.
\end{align*}
Using the bound (\cite{Erd}, p.208),
\begin{equation*}
|H_{2j}(x)| \leq e^{x^2/4}2^{2j}j!, 
\end{equation*}
we can see from \eqref{Hermite} that 
\begin{equation*}
|Q_{2m}(v_1, v_2, v_3, t)| \leq  \frac{m!2^{2m}}{t^m} e^{|v|^2/(4t)}\sum_{j_1+j_2+j_3 = m}1 = \frac{m!2^{2m}}{t^m} e^{|v|^2/(4t)}\frac{(m+2)(m+1)}{2}.
\end{equation*}
Combined with the following bound for the modified Bessel function (see e.g. \cite{Erd}, p.14):
\begin{equation*}
{\it I}_{m-1/2}\left(u\right) \leq \left(\frac{u}{2}\right)^{m-1/2} \frac{e^u}{\Gamma(m+1/2)}, 
\end{equation*}
we get:
\begin{multline*}
e^{-|\lambda|^2t/2} \cosh(\sqrt{|\lambda|^2+1}u)  = \frac{1}{(2\pi t)^{3/2}} \int_{\mathbb{R}} e^{i\langle \lambda, v \rangle} e^{-|v|^2/(2t)} \\ 
\sum_{m \geq 0} \frac{(-1)^m}{m!}\left(\frac{u}{2}\right)^{m+1/2} {\it I}_{m-1/2}\left(\frac{u}{2}\right)Q_{2m}(v_1,v_2,v_3),
\end{multline*}
where the series on the right hand side is absolutely convergent and is bounded by: 
\begin{equation*}
e^{u+|v|^2/(4t)} \sum_{m \geq 0}\frac{u^{2m}(m+1)(m+2)}{2t^m\Gamma(m+1/2)}. 
\end{equation*}
Plugging this Fourier transform on the right hand side of \eqref{NewFor}, we only need to check that Fubini Theorem applies. But, the estimate 
\begin{equation*}
s_{4n+1}(\cosh(\delta)) \leq C \frac{\delta}{\sinh(\delta)} e^{-\delta^2/(2t)}, \quad C, \delta > 0,
\end{equation*}
together with
\begin{equation*}
\cosh^{-1}[\cosh(u)\cosh(r)] \geq \cosh^{-1}\left[\frac{1}{2}(\cosh(u+r)\right] \geq (r+u), \quad r,u \rightarrow +\infty, 
\end{equation*}
shows that $s_{4n+1}(\cosh(u)\cosh(r)) \leq Ce^{-(r+u)^2/(2t)}$. Hence, Fubini Theorem applies which finishes the proof.
\end{proof}

\section{The horizontal heat kernel on the twistor space of $\mathbb{H}H^n$}\label{sec-twistor}

\subsection{Radial part of the sub-Laplacian on $\mathbb{C}H_1^{2n+1}$}

Besides the action of  $\mathbf{SU}(2)$ on $\mathbf{AdS}^{4n+3}(\mathbb{H})$ that induces the quaternionic anti-de Sitter fibration which was studied in the previous sections, we can also consider the action of $\mathbb{S}^1$ on $\mathbf{AdS}^{4n+3}(\mathbb{H})$ that induces the  fibration:
\begin{align*}
\mathbb{S}^1 \to  \mathbf{AdS}^{4n+3}(\mathbb{H})  \to  \mathbb{C}H_1^{2n+1},
\end{align*}
where we simply define $ \mathbb{C}H_1^{2n+1}$ as the complex pseudo-hyperbolic space $ \mathbf{AdS}^{4n+3}(\mathbb{H})  / \mathbb{S}^1$.  Note that the metric on $ \mathbb{C}H_1^{2n+1}$ has signature $(4n,2)$. We can then see $\mathbb{S}^1$ as a subgroup of $\mathbf{SU}(2)$ and have the classical Hopf  fibration
\begin{align*}
 \mathbb{S}^1  \to \mathbf{SU}(2) \to \mathbb{C}P^1.
\end{align*}

We can therefore construct the following commutative fibration diagram

\[
  \begin{tikzcd}
    & \mathbb{S}^1 \arrow[swap]{dl} \arrow{d} & \\
    \mathbf{SU}(2) \arrow{r} \arrow{d}  &  \mathbf{AdS}^{4n+3}(\mathbb{H})   \arrow{d} \arrow[swap]{dr} &  \\
   \mathbb{C}P^1  \arrow{r} & \mathbb{C}H_1^{2n+1} \arrow{r} & \mathbb{H}H^n  
  \end{tikzcd}
\]

The  fibration
\[
\mathbb{C}P^1 \to \mathbb{C}H_1^{2n+1} \to \mathbb{H}H^n
\]
shows that $\mathbb{C}H_1^{2n+1}$ is therefore the twistor space of the quaternionic K\"ahler manifold $\mathbb{H}H^n$.

We consider the sub-Laplacian ${\mathcal{L}}$ on  $\mathbb{C}H_1^{2n+1} $. It is the lift of the Laplace-Beltrami operator of  $\mathbb{H}H^n$. From the above diagram, ${\mathcal{L}}$ is also the projection of the sub-Laplacian $L$ of $\mathbf{AdS}^{4n+3}(\mathbb{H}) $ on $\mathbb{C}H_1^{2n+1} $. As we have seen before,  the radial part of $L$ is 
\[
L=\frac{\partial^2}{\partial r^2}+((4n-1)\coth r+3\tanh r)\frac{\partial}{\partial r}+\tanh^2r \left(\frac{\partial^2}{\partial \eta^2}+2\cot \eta\frac{\partial}{\partial \eta}\right)
\]
where $r$ is the radial coordinate on $\mathbb{H}H^n$ and $\eta$ the radial coordinate on $ \mathbf{SU}(2) $. The operator
\[
\Delta_{\mathbf{SU}(2)}=\frac{\partial^2}{\partial \eta^2}+2\cot \eta\frac{\partial}{\partial \eta}
\]
is the radial part of the Laplace-Beltrami operator on $ \mathbf{SU}(2) $. Besides, it has been proved in Baudoin-Bonnefont (see \cite{BB}), by using the Hopf fibration,
\begin{align*}
\mathbb{S}^1 \to  \mathbf{SU}(2)  \to  \mathbb{C}P^{1}
\end{align*}
we can write
\[
\frac{\partial^2}{\partial \eta^2}+2\cot \eta\frac{\partial}{\partial \eta}=\frac{\partial^2}{\partial \phi^2}+2\cot 2\phi\frac{\partial}{\partial \phi}+(1+\tan^2\phi )\frac{\partial^2}{\partial \theta^2}
\]
where $\phi$ is the radial coordinate on $\mathbb{C}P^{1}$ and $\frac{\partial}{\partial \theta}$ the generator of the action of $\mathbb{S}^1$ on $\mathbf{SU}(2)$.
Therefore the radial part of the sub-Laplacian $\mathcal{L}$ on  $\mathbb{C}H_1^{2n+1}$ is given by 
\begin{equation}\label{L-CP}
\mathcal{L}=\frac{\partial^2}{\partial r^2}+((4n-1)\coth r+3\tanh r)\frac{\partial}{\partial r}+\tanh^2r\left( \frac{\partial^2}{\partial \phi^2}+2\cot 2\phi\frac{\partial}{\partial \phi}\right),
\end{equation}
and the invariant measure, up to a normalization constant,  is $(\sinh r)^{4n-1} (\cosh r)^3 \sin 2\phi dr d\phi$.

\subsection{Integral representation of the subelliptic heat kernel}

From \eqref{L-CP} we notice that $\frac{\partial^2}{\partial \phi^2}+2\cot 2\phi\frac{\partial}{\partial \phi}$ is the radial part of the Laplacian on $\mathbb{C}P^1$.  It is known that the eigenfunction associated to the eigenvalue $-4m(m+1)$ is given by $P_m^{0,0}(\cos2\phi)$ where $P_m^{0,0}$ is the Legendre polynomial
\[
P_m^{0,0}(x)=\frac{(-1)^m}{2^m m!}\frac{d^m}{dx^m}(1-x^2)^{m}.
\]
Moreover, the heat kernel of $\frac{\partial^2}{\partial \phi^2}+2\cot 2\phi\frac{\partial}{\partial \phi}$ is given by
\[
U(t) (\phi_1,\phi_2)=\sum_{m=0}^{+\infty} (2m+1) e^{-4m(m+1)t} P_m^{0,0}(\cos2\phi_1)P_m^{0,0}(\cos2\phi_2).
\]

By using the same methods as before, we obtain:

\begin{theorem}
Let $h_t(r,\phi)$ be the subelliptic heat kernel of $\mathcal L$.
Then, one has for $r \ge 0$ and $ \phi \in [0,\pi)$
 \begin{align*}
 & h_t(r,\phi)\\
 = & \int_{0}^{\infty} (\sinh u)^2 \left\{\sum_{m=0}^{+\infty} (2m+1) e^{-4m(m+1)t} P_m^{0,0}(\cos2\phi )P_m^{0,0}(\cosh 2u)\right\}q_{t, 4n+3}(\cosh r \cosh u) du.
\end{align*}
where $q_{t,4n+3}$
is the heat kernel on the real $4n+3$ dimensional real hyperbolic space as given \eqref{eq-qt-ads-rie}.
\end{theorem}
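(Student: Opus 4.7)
The plan is to reuse the first (geometric/d'Alembertian) proof of Theorem \ref{IntRep}. First, decompose the radial sub-Laplacian as
\begin{equation*}
\mathcal{L}= \square_{\mathbb{C}H_1^{2n+1}} + \Delta_{\mathbb{C}P^1}, \qquad \square_{\mathbb{C}H_1^{2n+1}} := \Delta_{\mathbb{H}H^n} - \frac{1}{\cosh^2 r}\,\Delta_{\mathbb{C}P^1},
\end{equation*}
where $\Delta_{\mathbb{C}P^1}=\partial_\phi^2+2\cot(2\phi)\partial_\phi$ is the radial Laplace--Beltrami on the twistor fiber $\mathbb{C}P^1$ and $\square_{\mathbb{C}H_1^{2n+1}}$ is the radial part of the d'Alembertian of the pseudo-K\"ahler metric of signature $(4n,2)$ on the twistor space. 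The two operators commute, so $e^{t\mathcal{L}}=e^{t\Delta_{\mathbb{C}P^1}}\,e^{t\,\square}$ and the subelliptic heat kernel issued from the origin factors as
\begin{equation*}
h_t(r,\phi)=\int_0^{\pi/2}U_t(\phi,\psi)\,p_t^{\,\square}(r,\psi)\,\sin(2\psi)\,d\psi,
\end{equation*}
where $U_t(\phi,\psi)=\sum_{m\ge 0}(2m+1)e^{-4m(m+1)t}P_m^{0,0}(\cos 2\phi)P_m^{0,0}(\cos 2\psi)$ is the spherical-harmonic expansion of the $\Delta_{\mathbb{C}P^1}$ heat kernel recalled just before the theorem.

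Next, one analytically continues the fiber variable via $\tau:(r,\psi)\mapsto(r,i\psi)$ acting on exponentials $g(r)e^{-i\lambda\psi}$. An exact parallel of the computation \eqref{polinh} gives
\begin{equation*}
\square_{\mathbb{C}H_1^{2n+1}}(f\circ\tau)=(\Delta_{H^{4n+3}}f)\circ\tau,
\end{equation*}
where $\Delta_{H^{4n+3}}$ is the radial Laplace--Beltrami on the real hyperbolic space of dimension $4n+3$ written in the warped-product coordinates $\cosh\delta=\cosh r\cosh u$. This identity reduces the heat semigroup of $\square$ to that of the Riemannian hyperbolic Laplacian, whose kernel $q_{t,4n+3}$ is explicit; after the same contour deformation as in the proof of Theorem \ref{IntRep} one obtains, for suitable test functions $g$,
\begin{equation*}
\int_0^{\pi/2}g(\psi)\,p_t^{\,\square}(r,\psi)\sin(2\psi)\,d\psi \;=\;\int_0^{\infty}g(-iu)\,q_{t,4n+3}(\cosh r\cosh u)\,\omega(u)\,du,
\end{equation*}
with $\omega(u)$ arising from the analytic continuation of the invariant measure together with the Jacobian induced by $\cosh\delta=\cosh r\cosh u$.

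Applying this relation with $g=U_t(\phi,\cdot)$ and using $P_m^{0,0}(\cos(-2iu))=P_m^{0,0}(\cosh 2u)$ produces the claimed formula termwise. The main obstacle is the careful verification that $\omega(u)\,du$ reduces to $(\sinh u)^2\,du$ after combining all Jacobian factors: this is the exact analogue of the $(\sinh u)^2$ weight arising in Theorem \ref{IntRep}, and as in that case it requires both a careful analytic continuation of the fiber measure and a termwise justification of the interchange of the infinite series defining $U_t(\phi,\cdot)$ with the contour deformation. An independent consistency check is provided by the fibration $\mathbb{S}^1\to\mathbf{AdS}^{4n+3}(\mathbb{H})\to\mathbb{C}H_1^{2n+1}$: averaging $p_t$ from Theorem \ref{IntRep} over the $\mathbb{S}^1$-orbit via $\cos\eta=\cos\phi\cos\theta$ and using the Dirichlet--Mehler identity $\pi P_m^{0,0}(\cos 2\phi)=\int_0^\pi U_{2m}(\cos\phi\cos\theta)\,d\theta$ (as in the chapter on $\mathbb{H}P^n$) should recover the same integral representation, supplying both a sanity check and a shortcut.
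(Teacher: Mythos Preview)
There is a genuine gap in the intertwining step. You assert that an exact parallel of \eqref{polinh} gives $\square_{\mathbb{C}H_1^{2n+1}}(f\circ\tau)=(\Delta_{H^{4n+3}}f)\circ\tau$, but this fails. In Theorem~\ref{IntRep} the fiber is $\mathbf{SU}(2)\simeq\mathbb{S}^3$ with radial operator $\partial_\eta^2+2\cot\eta\,\partial_\eta$; the continuation $\eta\mapsto iu$ yields $-(\partial_u^2+2\coth u\,\partial_u)$, and combined with $\Delta_{\mathbb{H}H^n}$ this is precisely the radial Laplacian of $H^{4n+3}$ in the coordinates $\cosh\delta=\cosh r\cosh u$. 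Here the fiber is $\mathbb{C}P^1$ with radial operator $\partial_\phi^2+2\cot(2\phi)\,\partial_\phi$; the continuation $\phi\mapsto iu$ yields $-(\partial_u^2+2\coth(2u)\,\partial_u)$, and the resulting operator
\[
\Delta_{\mathbb{H}H^n}+\frac{1}{\cosh^2 r}\bigl(\partial_u^2+2\coth(2u)\,\partial_u\bigr)
\]
is \emph{not} the radial Laplacian of $H^{4n+3}$, nor of any $H^N$: restricting to $r=0$ would force $(N-1)\coth u=2\coth(2u)=\coth u+\tanh u$, which holds for no $N$. Hence neither $q_{t,4n+3}(\cosh r\cosh u)$ nor the weight $(\sinh u)^2$ can be produced by a direct analytic continuation of the two-dimensional fiber; this is a structural mismatch (three- versus two-dimensional fiber), not a Jacobian bookkeeping issue to be ``checked''.

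What makes $q_{t,4n+3}$ and $(\sinh u)^2$ appear is the three-dimensional $\mathbf{SU}(2)$ fiber of Theorem~\ref{IntRep}. Your ``sanity check'' is therefore the actual argument and should be promoted to the main one: since $\mathcal L$ agrees with the $\mathbf{AdS}^{4n+3}(\mathbb{H})$ sub-Laplacian $L$ on $\theta$-invariant functions, $h_t$ is obtained by $\mathbb{S}^1$-averaging the $p_t$ of Theorem~\ref{IntRep} along $\cos\eta=\cos\phi\cos\theta$, exactly as \eqref{htpt} does in the compact case and as the paper's phrase ``the same methods as before'' intends. The factor $(\sinh u)^2\,q_{t,4n+3}(\cosh r\cosh u)$ is then inherited unchanged from Theorem~\ref{IntRep}, and only the $\mathbf{SU}(2)$ kernel $S_t(\eta,-iu)$ gets averaged over $\theta$ via the Mehler--Dirichlet identity.
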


\chapter{Horizontal Brownian motions of the octonionic Hopf and anti-de Sitter fibrations}\label{Chapter octonionic hopf}

In this chapter, we study the horizontal Brownian motions of the octonionic fibrations. There exist only two octonionic  fibrations: The octonionic Hopf fibration
\[
\mathbb{S}^7 \to \mathbb{S}^{15} \to{\mathbb{O}P}^1,
\]
and the octonionic anti-de Sitter fibration
\begin{equation*}
\mathbb{S}^7 \to \mathbf{AdS}^{15}(\mathbb{O}) \rightarrow \mathbb{O}H^1.
\end{equation*}
The methods of this chapter are quite close to the methods developed in the previous chapters, but the non-associativity of the octonionic multiplication induces some new difficulties.

\section{Horizontal Brownian motion of the octonionic Hopf  fibration}

\subsection{Octonionic Hopf fibration}\label{Octonionic Hopf}

In this section, we  describe the octonionic Hopf fibration.  As in Section \ref{Octonions}, we consider the division algebra of octonions which is described by 
\begin{equation*}
\mathbb{O}=\left\{x=\sum_{j=0}^{7}x_j e_j, x_j\in \mathbb{R} \right\},
\end{equation*}
where the multiplication rules are given by 
\begin{equation*}
e_i e_j=e_j \text{  if  } i=0,
\end{equation*}
\begin{equation*}
e_i e_j=e_i \text{  if  } j=0,
\end{equation*}
\begin{equation*}
e_i e_j=-\delta_{ij}e_0+\epsilon_{ijk}e_k \text{  otherwise},
\end{equation*}
where $\delta_{ij}$ is the Kronecker delta and $\epsilon_{ijk}$ is the completely antisymmetric tensor with value $1$ when $ijk = 123, 145, 176, 246, 257, 347, 365$.
We recall that the octonionic norm is defined for $x \in \mathbb{O}$ by
\begin{equation*}
|x|^2=\sum_{j=0}^{7}x^2_j,
\end{equation*}
and that for every $x,y \in \mathbb O$, $| x y |=|x | \, |y|$.
%

The octonionic projective space $\mathbb{O}P^1$ can be defined as the set of octonionic lines in $\mathbb{O}^{2}$. To parametrize points in $\mathbb{O}P^{1}$, we will use the local affine coordinates given by $w= xy^{-1}$, $(x,y) \in \mathbb{O}^{2}$, $y\neq 0$. Though this only provides a local set of coordinates (we are missing an hypersurface at $\infty$) we will do all our computations in those coordinates and therefore identify $\mathbb{O}P^1$ with $\mathbb O \cup \{ \infty \}$. In affine coordinates, the Riemannian structure of $\mathbb{O}P^1$ is easily worked out from the standard Riemannian structure of the 15-dimensional Euclidean sphere $\mathbb{S}^{15}$. Indeed, if we consider the unit sphere
\[
\bS^{15}=\lbrace (x,y)\in \mathbb{O}^{2}, |x|^2+|y|^2 =1\rbrace,
\]
then the map 
\begin{align*}
\begin{cases}
\bS^{15} -\{y \neq 0 \}  \to \mathbb{O} \\
(x,y) \to xy^{-1}
\end{cases}
\end{align*}
is a submersion with fibers isometric to $\mathbb S^7$. The unique Riemannian metric $h$ on $\mathbb{O}P^1$ that makes this map a Riemannian submersion is the standard Riemannian metric on $\mathbb{O}P^1$.  Equipped with this metric, $\mathbb{O}P^1$ is then isometric to the  sphere $\mathbb S^8$ with radius $1/2$. 

In addition to the above described octonionic Hopf fibration 
\[
\mathbb{S}^7 \rightarrow \mathbb{S}^{15} \rightarrow {\mathbb{O}P}^1,
\]
 on $\mathbb{S}^{15}$ one can also consider the complex Hopf fibration
\[
\mathbb{S}^1 \rightarrow \mathbb{S}^{15} \rightarrow {\mathbb{C}}P^7
\]
and the quaternionic one
\[
\mathbb{S}^3 \rightarrow \mathbb{S}^{15} \rightarrow {\mathbb{H}}P^3.
\]
Similarly to Example \ref{BB fibration}, this would potentially yield two interesting commutative diagrams of homogeneous fibrations:

\[
  \begin{tikzcd}
    & \mathbb{S}^1 \arrow[swap]{dl} \arrow{d} & \\
   \mathbb{S}^7 \arrow{r} \arrow{d}  & \mathbb{S}^{15}   \arrow{d} \arrow[swap]{dr} &  \\
   \mathbb{C}P^3  \arrow{r} & \mathbb{C}P^{7} \arrow{r} & \mathbb{O}P^1  
  \end{tikzcd}
\]

and

\[
  \begin{tikzcd}
    & \mathbb{S}^3 \arrow[swap]{dl} \arrow{d} & \\
   \mathbb{S}^7 \arrow{r} \arrow{d}  & \mathbb{S}^{15}   \arrow{d} \arrow[swap]{dr} &  \\
   \mathbb{H}P^1  \arrow{r} & \mathbb{H}P^{3} \arrow{r} & \mathbb{O}P^1  
  \end{tikzcd}
\]

However, for topological reasons, those diagrams actually do not exist. Indeed,  in the first diagram the submersion
\[
  \mathbb{C}P^{7} \to \mathbb{O}P^1
\]

does not exist, see \cite{Ucci} and  \cite{AR85} page 258.  In the second diagram the submersion
\[
 \mathbb{H}P^{3}  \to \mathbb{O}P^1
\]
does not exist, see \cite{Ucci} and \cite{Escobales2}.

\subsection{Radial part of the horizontal Laplacian}

The horizontal Laplacian $\Delta_\mathcal{H}$ of the octonionic Hopf fibration is the horizontal lift of the Laplace-Beltrami operator of $\mathbb{O}P^1$. It can be written as
\[
\Delta_\mathcal{H}=\Delta_{\mathbb{S}^{15}}-\Delta_\mathcal{V},
\]
where $\Delta_{\mathbb{S}^{15}}$ is  the Laplace-Beltrami operator on  $\mathbb{S}^{15}$ and $\Delta_\mathcal{V}$ is the vertical Laplacian of the fibration. To study $\Delta_\mathcal{H}$, we will use the following local trivialization of $\mathbb{S}^{15}$:

\begin{equation}\label{trivial octo}
(w,u) \mapsto \left(\frac{  w \, u}{\sqrt{1+{|w|}^2}},\frac{ u}{\sqrt{1+{|w|}^2}}\right)\in \mathbb{S}^{15},
\end{equation}

where $w  \in \mathbb{O}P^1\backslash \{\infty \}$ and $u \in \mathbb S^7 \setminus \{q\}$.  Here $q$ denotes the antipodal point to $p=(1,0,\dots,0)$ in $\mathbb{S}^7$. This parametrizes the open set  $\Omega=\{ (x,y) \in \mathbb{S}^{15}, y  \neq 0,   \frac{y}{{|y|}}  \neq q\}$.

Since  the octonionic multiplication is alternating we have
\[
\pi \left(\frac{  w \, u}{\sqrt{1+{|w|}^2}},\frac{ u}{\sqrt{1+{|w|}^2}}\right)=w
\]
where $\pi$ is the Hopf projection:

\begin{align*}
\bS^{15} \setminus \{y \neq 0 \}  &\to \mathbb{O}P^1 \\
(x,y) &\mapsto xy^{-1}.
\end{align*}
Therefore $u$ is thought of as the fiber coordinate on $\mathbb S^7$.  A function $f$ on $\bS^{15}$ will be called radial (for the octonionic fibration) if  there exists a function $g$ such that
\[
f \left(\frac{  w \, u}{\sqrt{1+{|w|}^2}},\frac{ u}{\sqrt{1+{|w|}^2}}\right)= g(r , \eta)
\]
where $r=\arctan {|w|}\in [0,\pi/2), \eta= \delta_{\mathbb S^7}( p,g )\in [0,\pi)$. Here, we denote by $\delta_{\mathbb S^7} ( p,g )$ the Riemannian distance in $\bS^7$ from $p$ to $g$. 
The variable $r$ can be interpreted as the Riemannian distance on $\mathbb{O}P^1$ from the point $w=0$. The variable $\eta$ can be interpreted  as the Riemannian distance on $\mathbb{S}^7$ from $p$.  We note that, geometrically, the boundary of $\Omega$ corresponds to the boundary values $r=\pi/2$, $\eta=\pi$.

Due to the large group of  symmetries\footnote{It is $\mathbf{Spin}(9) \subset \mathbf{SO}(16)$, see \cite{MR3084336}.} of the octonionic Hopf fibration , if $f$ is a radial function, so is $\Delta_{\mathbb{S}^{15}} f$. The same property holds for the horizontal Laplacian $\Delta_{\mathcal H}$ and one has the following formula:
\begin{prop}\label{prop octo radial}
In the trivialization \eqref{trivial octo}, $\Delta_{\mathcal H}$ acts on functions depending only on $r$ and $u $ as the operator
\begin{equation}\label{eq:5-octo}
\frac{{\partial}^2}{\partial {r}^2}+(7\cot r-7\tan r)\frac{{\partial}}{\partial {r}}+{\tan^2 r}\, \Delta_{\mathbb S^7}.
\end{equation}	

Therefore, the  horizontal Laplacian acts on radial functions as the operator
\begin{equation}\label{eq:6-octo}
\frac{{\partial}^2}{\partial {r}^2}+(7\cot r-7\tan r)\frac{{\partial}}{\partial {r}}+{\tan^2 r}\, \left(\frac{{\partial}^2}{\partial {\eta}^2}+6\cot \eta\frac{{\partial}}{\partial {\eta}} \right).
\end{equation}	
\end{prop}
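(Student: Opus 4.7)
The plan is to use the decomposition $\Delta_{\mathbb{S}^{15}} = \Delta_\mathcal{H} + \Delta_\mathcal{V}$, valid because the octonionic Hopf fibration is a Riemannian submersion with totally geodesic fibers isometric to $\mathbb{S}^7$. It is enough to compute $\Delta_{\mathbb{S}^{15}}$ and $\Delta_\mathcal{V}$ separately on functions of $(r, u)$, and then subtract. The vertical part is the easier ingredient: the trivialization embeds each fiber as $\{(wv, v)/\sqrt{1+|w|^2} : v \in \mathbb{S}^7\}$, and since left multiplication by $w$ is an $\mathbb{R}$-linear similarity of $\mathbb{O}$ with factor $|w|$, this map is an isometry of the round $\mathbb{S}^7$ onto the fiber. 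As $r$ is constant along fibers, $\Delta_\mathcal{V}$ acts on $(r,u)$-functions as $\Delta_{\mathbb{S}^7}$ applied in the $u$-variable.

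The heart of the proof is the computation of $\Delta_{\mathbb{S}^{15}}$ on $(r,u)$-functions. The key geometric identity is
\begin{equation*}
\cos\delta = \cos r\, \cos\eta,
\end{equation*}
where $\delta$ denotes the Riemannian distance on $\mathbb{S}^{15}$ from the base point $(0,p)$. This follows from $\cos\delta = \mathrm{Re}\langle (0,p), \Phi(w,u)\rangle = \mathrm{Re}(u)/\sqrt{1+|w|^2}$ combined with $\cos\eta = \mathrm{Re}(u)$ and $\cos r = 1/\sqrt{1+|w|^2}$. This is the same relation that drove the analysis of the complex Hopf fibration in Chapter 5, and the strategy proceeds analogously. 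By the $\mathbf{Spin}(9)$ symmetry of the octonionic Hopf fibration, $\Delta_{\mathbb{S}^{15}}$ restricted to $(r,u)$-functions must take the form
\begin{equation*}
\partial_r^2 + b(r)\partial_r + c(r)\Delta_{\mathbb{S}^7,u},
\end{equation*}
with no horizontal-vertical cross terms. The coefficient $b(r)$ is read off by applying the operator to an $r$-only function (for which $\Delta_\mathcal{V}=0$), thereby reducing to the radial Laplacian on $\mathbb{O}P^1$; since $\mathbb{O}P^1$ is isometric to $\mathbb{S}^8$ of radius $1/2$ and $r=\arctan|w|$ is the standard geodesic coordinate on this sphere, this yields $b(r) = 7\cot r - 7\tan r$. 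The coefficient $c(r)$ is then pinned down by substituting a $\delta$-radial function $f(\delta)$ and matching with the known formula $\partial_\delta^2 + 14\cot\delta\,\partial_\delta$ for the radial Laplacian on $\mathbb{S}^{15}$; using $\cos\delta = \cos r\cos\eta$ in the chain rule gives $c(r) = 1/\cos^2 r$.

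Subtracting $\Delta_\mathcal{V}$ then yields
\begin{equation*}
\Delta_\mathcal{H} = \partial_r^2 + (7\cot r - 7\tan r)\partial_r + \left(\frac{1}{\cos^2 r} - 1\right)\Delta_{\mathbb{S}^7,u} = \partial_r^2 + (7\cot r - 7\tan r)\partial_r + \tan^2 r\,\Delta_{\mathbb{S}^7,u},
\end{equation*}
which establishes \eqref{eq:5-octo}. Formula \eqref{eq:6-octo} then follows by specialization to radial functions on $\mathbb{S}^7$, for which $\Delta_{\mathbb{S}^7}$ reduces to $\partial_\eta^2 + 6\cot\eta\,\partial_\eta$. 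The main obstacle is justifying the structural ansatz for $\Delta_{\mathbb{S}^{15}}$ on $(r,u)$-functions: unlike the complex and quaternionic Hopf fibrations treated in the previous chapters, the octonionic case lacks a principal bundle structure because $\mathbb{S}^7$ is not a Lie group. One therefore cannot directly invoke the connection-form machinery of Section \ref{horizontal BM bundle}, and must instead rely on the global $\mathbf{Spin}(9)$ isometry group of the fibration to rule out cross terms and enforce the claimed form, since non-associativity of the octonions makes a componentwise computation in the trivialization notably cumbersome.
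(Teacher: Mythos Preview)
Your proof is correct and follows essentially the same approach as the paper: both use the decomposition $\Delta_{\mathcal H}=\Delta_{\mathbb{S}^{15}}-\Delta_{\mathcal V}$, identify $\Delta_{\mathcal V}$ with $\Delta_{\mathbb S^7}$ via the totally geodesic fibers, invoke the $\mathbf{Spin}(9)$ symmetry to justify the structural ansatz $\partial_r^2+b(r)\partial_r+c(r)\Delta_{\mathbb S^7}$, read off $b(r)$ from the radial Laplacian on $\mathbb{O}P^1\simeq\mathbb S^8_{1/2}$, and determine $c(r)=1/\cos^2 r$ from the relation $\cos\delta=\cos r\cos\eta$. The only minor difference is that the paper pins down $c(r)$ by applying the Laplacian to the specific eigenfunction $\cos\delta$ (using $\Delta_{\mathbb S^{15}}\cos\delta=-15\cos\delta$), whereas you phrase it as matching against a general $\delta$-radial function; this is the same computation.
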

\begin{proof}
The idea is to compute first how  the Laplace-Beltrami operator on $\mathbb{S}^{15}$ acts on functions depending only on $(r,u)$  and then use the fact that $\Delta_{\mathcal H}=\Delta_{\mathbb{S}^{15}}-\Delta_\mathcal{V}$. 

As a Riemannian manifold, $\mathbb{O}P^1$ is a compact rank one symmetric space. General formulas for the radial parts of Laplacians on rank one symmetric spaces are well-known (see Appendix 3). In particular, the radial part of the  Laplace-Beltrami operator on $\mathbb{O}P^1$ is
\[
\frac{{\partial}^2}{\partial {r}^2}+(7\cot r-7\tan r)\frac{{\partial}}{\partial {r}}.
\]
We note that $\mathbb{O}P^1$ is isometric to the 8-dimensional sphere with radius $1/2$ and,  thanks to the identity $\cot r-\tan r=2 \cot (2r)$ the   Laplace-Beltrami operator on $\mathbb{O}P^1$ might also, if needed, be written as 
\[
\frac{{\partial}^2}{\partial {r}^2}+ 14 \cot (2r) \frac{{\partial}}{\partial {r}}.
\]
On the other hand, the variable $u \in \mathbb S^7$ is a fiber coordinate, and the fibers of the Hopf map are totally geodesic submanifolds isometric to $\mathbb S^7$. Therefore $\Delta_\mathcal{V}$ acts in the local trivialization \eqref{trivial octo} as $\Delta_{\mathbb S^7}$.
From the symmetries of the fibration, one deduces that in the trivialization \eqref{trivial octo}, $\Delta_{\mathbb S^{15}}$ acts on functions depending only on $r$ and $u $ as the operator
\[
\frac{{\partial}^2}{\partial {r}^2}+(7\cot r-7\tan r)\frac{{\partial}}{\partial {r}} +g(r) \Delta_{\mathbb S^7}
\]
for some function $g$ to be computed.
One can compute $g$ by observing that on $\mathbb{S}^{15}$  the Riemannian distance $\delta$ from the north pole (the point with octonionic coordinates $(0,1)=(0,p)\in \mathbb{O}^2$) to the point
\[
 \left(\frac{  w \, u}{\sqrt{1+{|w|}^2}},\frac{ u}{\sqrt{1+{|w|}^2}}\right)
 \]
is given by
\[
\cos \delta = \cos r \cos \eta.
\]
This is because the right and left hand sides of the above equality are both the 9th Euclidean coordinate of
\[
 \left(\frac{  w \, u}{\sqrt{1+{|w|}^2}},\frac{ u}{\sqrt{1+{|w|}^2}}\right)
 \]
and
\[
\frac{\cos \eta}{\sqrt{1+{|w|}^2}} =\frac{\cos \eta}{\sqrt{1+\tan^2 r}}=\cos r \cos \eta.
\]
From the formula for the radial part of Laplacian on ${\mathbb{S}^{15}}$ starting from the north pole,  we can compute
\begin{align*}
\Delta_{\mathbb{S}^{15}} (\cos \delta ) &=\left(\frac{\partial^2}{\partial \delta^2}+14\cot{\delta}\frac{\partial}{\partial \delta}\right) \cos \delta \\
&=-15 \cos \delta.
\end{align*}
Using the other representation of $\Delta_{\mathbb{S}^{15}}$, together with the fact that the radial part of the Laplace-Beltrami operator on $\mathbb{S}^{7}$ is
\[
\frac{{\partial}^2}{\partial {\eta}^2}+6\cot \eta\frac{{\partial}}{\partial {\eta}}
\]
one deduces
\begin{align*}
& \Delta_{\mathbb{S}^{15}} (\cos \delta ) \\
=&\left(\frac{{\partial}^2}{\partial {r}^2}+(7\cot r-7\tan r)\frac{{\partial}}{\partial {r}} +g(r) \left(\frac{{\partial}^2}{\partial {\eta}^2}+6\cot \eta\frac{{\partial}}{\partial {\eta}} \right) \right)\cos r \cos \eta .
\end{align*}
Therefore,
\[
\left(\frac{{\partial}^2}{\partial {r}^2}+(7\cot r-7\tan r)\frac{{\partial}}{\partial {r}} +g(r) \left(\frac{{\partial}^2}{\partial {\eta}^2}+6\cot \eta\frac{{\partial}}{\partial {\eta}} \right) \right)\cos r \cos \eta=-15 \cos r \cos \eta.
\]
After a straightforward computation, this yields $g(r)=\frac{1}{\cos^2 r}$ and therefore in the trivialization \eqref{trivial octo}, $\Delta_{\mathcal H}$ acts on functions depending only on $r$ and $u $ as the operator
\begin{equation*}
\frac{{\partial}^2}{\partial {r}^2}+(7\cot r-7\tan r)\frac{{\partial}}{\partial {r}}+{\tan^2 r}\, \Delta_{\mathbb S^7}.
\end{equation*}	 
\end{proof}

As a consequence of the previous result, we can check that the Riemannian measure of $\mathbb{S}^{15}$ in the coordinates $(r,\eta)$,  is given by 
\begin{equation}\label{eq:5}
d{\mu}=\frac{56{\pi}^7}{\Gamma(8)}\sin^7 r \cos^7r\sin^6 \eta dr d\eta,
\end{equation}
where the normalization constant is chosen in such a way that
\begin{equation*}
\int_{0}^{\pi}\int_{0}^{\frac{\pi}{2}}d{\mu}=\mathrm{Vol} (\mathbb{S}^{15})=\frac{2\pi^8}{\Gamma(8)}.
\end{equation*}

\subsection{Horizontal Brownian motion and stochastic area}

Let $(X(t))_{t \ge 0}$ be the horizontal Brownian motion of the octonionic fibration issued from the north pole $(0,1) \in \mathbb{S}^{15}$. From Proposition \ref{prop octo radial}, one can write 

\[
X(t)=\left(\frac{  w(t) \, u(t)}{\sqrt{1+{|w(t)|}^2}},\frac{ u(t)}{\sqrt{1+{|w(t)|}^2}}\right)
\]
where $w(t)$ is a Brownian motion on $\mathbb{O}P^1$,   $r(t)=\arctan {|w(t)|}$ is a Jacobi diffusion with the generator 
\[
\mathcal{L}^{3,3} = \frac{1}{2}\left(\frac{{\partial}^2}{\partial {r}^2}+(7\cot r-7\tan r)\frac{{\partial}}{\partial {r}}\right)
\]
and $u(t)$ is an independent Brownian motion on $\mathbb S^7$ timed changed with the functional $\int_0^t \tan^2 r(s)ds$.

It follows that in distribution, the stochastic area is therefore given by
\[
\mathfrak{a}(t)=\beta \left( \int_0^t \tan^2 r(s) ds  \right),
\]
where $\beta$ is a Brownian motion in $\mathbb R^7$ independent from $r(t)$.

Mimicking the proof of Theorem \ref{eq-ft-cond}, we readily obtain:
\[
\mathbb{E} \left( e^{i \lambda\cdot \mathfrak{a}(t)}\mid r(t)=r \right) = 
\frac{e^{-4(\sqrt{\lambda^2+9} - 3)t}}{[\cos(r)]^{(\sqrt{\lambda^2+9} - 3)}}\quad 
\frac{q_t^{3,\sqrt{\lambda^2+9}}(0,r)}{q_t^{3,3}(0,r)}
\]
where we recall that $q_t^{\alpha, \beta}$ is the heat kernel of the Jacobi operator $\mathcal{L}^{\alpha,\beta}$. Consequently, 
\[
\mathbb{E} \left(e^{i \lambda\cdot \mathfrak{a}(t)}\right) = 
\frac{e^{-4(\sqrt{\lambda^2+9} - 3)t}}{[\cos(r)]^{(\sqrt{\lambda^2+9} - 3)}}
\quad 
\int_0^{\pi/2} \frac{q_t^{3,\sqrt{\lambda^2+9}}(0,r)}{[\cos(r)]^{(\sqrt{\lambda^2+9} - 3)}}dr,
\]
whence we deduce the following limiting result 
\[
\lim_{t \rightarrow +\infty} \mathbb{E} \left(e^{i \lambda\cdot \mathfrak{a}(t)/\sqrt{t}}\right) = e^{-4|\lambda|}
\]
which implies that $\mathfrak{a}(t)/\sqrt{t}$ converges to a Cauchy distribution. 

\subsection{Formulas for the horizontal heat kernel}

In this section, we derive the spectral decomposition of the subelliptic heat kernel of the heat semigroup $P_t=e^{t\Delta_{\mathcal H}}$ issued from the north pole. From Proposition \ref{prop octo radial} it is enough to compute the heat kernel $p_t(r,\eta)$ of the operator
\begin{equation}\label{eq-rad-octo-lapla}
\tilde{L}:=\frac{{\partial}^2}{\partial {r}^2}+(7\cot r-7\tan r)\frac{{\partial}}{\partial {r}}+{\tan^2 r}\, \left(\frac{{\partial}^2}{\partial {\eta}^2}+6\cot \eta\frac{{\partial}}{\partial {\eta}} \right).
\end{equation}
We will need the Jacobi polynomial
\begin{equation}\label{eq-jac-poly-3}
P^{3,m+3}_k(x)=\frac{(-1)^k}{2^k k! (1-x)^{3}(1+x)^{m+3}}\frac{{d}^k}{d {x}^k}((1-x)^{k+3}(1+x)^{m+3+k}).
\end{equation}

\begin{proposition}
For $t>0$, $r\in [0,\frac{\pi}{2})$, $\eta \in [0,\pi)$, the horizontal heat kernel is given by 
\begin{equation*}
p_t(r,\eta)=\sum_{m=0}^{\infty}\sum_{k=0}^{\infty}\alpha_{k,m} h_m(\eta)e^{-4(2m+k(k+m+7))t}(\cos r)^m P^{3,m+3}_k(\cos 2r),
\end{equation*}
where $\alpha_{k,m}=\frac{96}{\pi^{8}}(m+3)(2k+m+7)\left(\begin{array}{c}
k+m+6 \\ k+m+3
\end{array}\right)\left(\begin{array}{c}
m+5 \\ m
\end{array}\right)$ , and
\[
h_m(\eta)=\frac{\Gamma(7/2)}{\sqrt{\pi}\Gamma(3)}\int_{0}^{\pi}{(\cos \eta + \sqrt{-1}\sin \eta \cos \varphi)}^m{\sin^5 \varphi}d \varphi
\]
 is the normalized  eigenfunction of $\frac{{\partial}^2}{\partial {\eta}^2}+6\cot \eta\frac{{\partial}}{\partial {\eta}}$ that is associated to the eigenvalue $-m(m+6)$.
 \end{proposition}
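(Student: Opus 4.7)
The plan is to mirror the strategy used for Proposition \ref{heat hopf fibration} and for its quaternionic counterpart in Section \ref{spectral}, adapting it to the operator $\tilde L$ in \eqref{eq-rad-octo-lapla}. First, I would separate variables along the fiber direction by expanding
\[
p_t(r,\eta) = \sum_{m \geq 0} h_m(\eta)\,\phi_m(t,r),
\]
using that $h_m$ diagonalizes the radial $\mathbb S^7$-Laplacian, $(\partial_\eta^2 + 6\cot\eta\,\partial_\eta)h_m = -m(m+6)h_m$. Plugging into $\partial_t p_t = \tilde L p_t$ yields, for each $m$, the scalar equation
\[
\partial_t \phi_m = \partial_r^2 \phi_m + (7\cot r - 7 \tan r)\partial_r \phi_m - m(m+6)\tan^2 r\, \phi_m.
\]

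Second, to eliminate the singular potential $\tan^2 r$ I would try the ansatz $\phi_m(t,r) = e^{-8mt}(\cos r)^m \varphi_m(t,r)$; the exponent $8m$ is predicted by the $k=0$ term of the claimed expansion. A direct expansion (using $g'/g = -m\tan r$ and $g''/g = -m + m(m-1)\tan^2 r$ for $g(r)=(\cos r)^m$) shows that the $\tan^2 r$ coefficients cancel and that $\varphi_m$ solves the simpler equation
\[
\partial_t \varphi_m = \partial_r^2 \varphi_m + \bigl(7\cot r - (2m+7)\tan r\bigr)\partial_r \varphi_m.
\]
Third, the change of variables $x = \cos 2r$, with $\varphi_m(t,r) = g_m(t,x)$, converts this into a Jacobi heat equation $\partial_t g_m = 4\Psi_m g_m$, with
\[
\Psi_m = (1-x^2)\partial_x^2 + \bigl(m - (m+8)x\bigr)\partial_x.
\]
Matching the parameters of the standard Jacobi differential equation gives $\Psi_m P_k^{3,m+3} = -k(k+m+7)P_k^{3,m+3}$, so the eigenvalues combine with the $e^{-8mt}$ prefactor to give exactly the announced exponent $-4\bigl(2m + k(k+m+7)\bigr)$. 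Reassembling produces the expansion
\[
p_t(r,\eta) = \sum_{m \geq 0}\sum_{k \geq 0} \alpha_{k,m}\,h_m(\eta)\,e^{-4(2m+k(k+m+7))t}(\cos r)^m P_k^{3,m+3}(\cos 2r)
\]
for some constants $\alpha_{k,m}$ to be identified.

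Finally, the constants $\alpha_{k,m}$ are determined by imposing that $p_t$ is the heat kernel with respect to the measure $d\mu$ of \eqref{eq:5}, i.e.\ that $\int p_t f\, d\mu \to f(0,0)$ for every smooth test function $f$ on $\mathbb S^{15}$. Testing against products of the form $h_m(\eta)(1+\cos 2r)^{m/2}P_k^{3,m+3}(\cos 2r)$, one reduces the problem to three standard ingredients: the orthogonality relation
\[
\int_{-1}^1 \bigl(P_k^{3,m+3}(x)\bigr)^2 (1-x)^3(1+x)^{m+3}\,dx = \frac{2^{m+7}}{2k+m+7}\frac{\Gamma(k+4)\Gamma(k+m+4)}{\Gamma(k+1)\Gamma(k+m+7)},
\]
the special value $P_k^{3,m+3}(1)=\binom{k+3}{k}$, and the normalization $h_m(0)=1$ (which follows from a direct evaluation of the beta-type integral defining $h_m$, using $\Gamma(7/2)/(\sqrt{\pi}\,\Gamma(3)) = 15/16$ and $\int_0^\pi \sin^5\varphi\,d\varphi = 16/15$). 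Together with the $L^2$-orthogonality of the $h_m$ with respect to $\sin^6\eta\,d\eta$ on $[0,\pi]$, this matching yields exactly $\alpha_{k,m} = \frac{96}{\pi^8}(m+3)(2k+m+7)\binom{k+m+6}{k+m+3}\binom{m+5}{m}$.

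The main obstacle is not conceptual but combinatorial: all normalizing constants — the constant in front of $h_m$, the weight in $d\mu$, the Jacobi $L^2$-norm above, the factor $(\cos r)^m$ evaluated at $r=0$, and the Gamma-function arithmetic — must be tracked simultaneously to extract the precise coefficient $\alpha_{k,m}$. Once this bookkeeping is carried out, the rest of the argument is a direct transcription of the complex and quaternionic Hopf-fibration cases.
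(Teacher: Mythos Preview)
Your proposal is correct and follows essentially the same approach as the paper: expand in the $h_m$ basis on the fiber, kill the $\tan^2 r$ potential via the substitution $\phi_m=e^{-8mt}(\cos r)^m\varphi_m$, change variables to $x=\cos 2r$ to land on the Jacobi equation with parameters $(3,m+3)$, and fix $\alpha_{k,m}$ from the initial condition using the Jacobi orthogonality relation and the special value $P_k^{3,m+3}(1)=\binom{k+3}{k}$. The paper carries out exactly these steps (with identical intermediate formulas), and your additional remark that $h_m(0)=1$ makes explicit a normalization the paper uses without comment.
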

\begin{proof}
We expand $p_t(r,\eta)$ in spherical harmonics as follows,
\begin{equation*}
p_t(r,\eta)=\sum_{m=0}^{\infty}h_m(\eta)\phi_m(t,r),
\end{equation*}
where $h_m(\eta)$ is the eigenfunction of $\frac{{\partial}^2}{\partial {\eta}^2}+6\cot \eta\frac{{\partial}}{\partial {\eta}}$ which is associated to the eigenvalue $-m(m+6)$. More precisely, $h_m(\eta)$ is given by
\begin{equation*}
h_m(\eta)=\frac{\Gamma(7/2)}{\sqrt{\pi}\Gamma(3)}\int_{0}^{\pi}{(\cos \eta + \sqrt{-1}\sin \eta \cos \varphi)}^m{\sin^5 \varphi}\,d \varphi.
\end{equation*}

To determine $\phi_m$, we use $\frac{{\partial}}{\partial {t}}p_t=\widetilde{L} p_t$, and find 
\begin{equation*}
\frac{{\partial}\phi_m}{\partial {t}} = \frac{{\partial}^2\phi_m}{\partial {r}^2}+(7\cot r-7\tan r)\frac{{\partial}\phi_m}{\partial {r}}-m(m+6)\,{\tan^2 r}\phi_m. 
\end{equation*}
Let $\phi_m:=e^{-8mt}(\cos r)^m \psi_m$. This substitution gives 
\begin{equation*}
\frac{{\partial}\psi_m}{\partial {t}} = \frac{{\partial}^2\psi_m}{\partial {r}^2}+(7\cot r-(2m+7)\tan r)\frac{{\partial}\psi_m}{\partial {r}}. 
\end{equation*}
Letting $\psi_m(t,r):=g_m(t,\cos 2r)$. Then the previous equation becomes 
\begin{equation*}
\frac{{\partial}g_m}{\partial {t}} =4(1-x^2) \frac{{\partial}^2 g_m}{\partial {x}^2}+4((m-(m+8)x)\frac{{\partial}g_m}{\partial {x}}. 
\end{equation*}
We get $\frac{{\partial}g_m}{\partial {t}}=4\Psi_m(g_m)$, where 
\begin{equation*}
\Psi_m=(1-x^2)\frac{{\partial}^2}{\partial {x}^2}+((m-(m+8)x)\frac{{\partial}}{\partial {x}}.
\end{equation*}
Note that the equation 
\begin{equation*}
\Psi_m(g_m)+k(k+m+7)g_m=0
\end{equation*}
is a Jacobi differential equation for all $k \geq 0$. We denote the eigenvector of $\Psi_m$ corresponding to the eigenvalue $-k(k+m+7)$ by $P^{3,m+3}_k(x)$ which is given as in \eqref{eq-jac-poly-3}.
We can therefore write the spectral decomposition of $p_t$ as 
\begin{equation*}
p_t(r,\eta)=\sum_{m=0}^{\infty}\sum_{k=0}^{\infty}\alpha_{k,m} h_m(\eta)e^{-4(k(k+m+7)+2m)t}\cos^m r P^{3,m+3}_k(\cos 2r). 
\end{equation*}
where the constants $\alpha_{k,m}$'s have to be determined by taking into account the initial condition. 

Note that $((1+x)^{\frac{m+3}{2}} P^{3,m+3}_k(x))_{k\geq 0}$ is a complete orthogonal basis of the Hilbert space $L^2([-1,1],(1-x)^{3}dx)$, more precisely 
\begin{equation*}
\int_{-1}^{1}P^{3,m+3}_k(x)P^{3,m+3}_l(x)(1-x)^{3}(1+x)^{m+3}dx
=\frac{2^{m+7}}{2k+m+7}\frac{\Gamma(k+4)\Gamma(k+m+4)}{\Gamma(k+m+7)\Gamma(k+1)}\delta_{kl}. 
\end{equation*}

On the other hand, $(h_m(\eta))_{m\ge 0}$ are the eigenfunctions of the self-adjoint operator $\frac{{\partial}^2}{\partial {\eta}^2}+6\cot \eta\frac{{\partial}}{\partial {\eta}}$ and thus form a complete orthonormal basis of $L^2 ( [0, \pi], (\sin \eta)^6  d\eta)$.

Thus, using the fact that $\left( \frac{1+\cos 2r}{2}\right)^{1/2} =\cos r$, for a smooth function $f(r,\eta)$, we can write 
\begin{equation*}
f(r,\eta)=\sum_{m=0}^{\infty}\sum_{k=0}^{\infty}\beta_{k,m} h_m(\eta)(\cos r )^mP^{3,m+3}_k(\cos 2r)
\end{equation*}
where the $\beta_{k,m}$'s are constants. We obtain then 
\begin{equation*}
f(0,0)=\sum_{m=0}^{\infty}\sum_{k=0}^{\infty}\beta_{k,m}P^{3,m+3}_k(1)
\end{equation*}
and we observe that $P^{3,m+3}_k(1)=\left(\begin{array}{c}
3+k \\ k
\end{array}\right)$. From \eqref{eq:5}, the measure $d{\mu}$ is given in cylindrical coordinates by 
\begin{equation*}
d{\mu} = \frac{56{\pi}^7}{\Gamma(8)}(\sin r)^{7}(\cos r)^{7}(\sin \eta)^{6}dr d\eta.
\end{equation*}
Moreover, we have
\begin{align*}
&\int_{0}^{\pi}\int_{0}^{\frac{\pi}{2}}p_t(r,\eta)f(-r,-\eta)d{\mu}\\
=&\frac{56{\pi}^7}{\Gamma(8)}\sum_{m=0}^{\infty}\sum_{k=0}^{\infty}\alpha_{k,m}\beta_{k,m}e^{-4(k(k+m+7)+2m)t}\int_{0}^{\pi} {h_m(\eta)}^2 \sin^6{\eta} d\eta \\ 
& \quad\quad\cdot \int_{0}^{\frac{\pi}{2}}{(\cos r)}^{2m+7}{P^{3,m+3}_k (\cos(2r))}^{2}{(\sin r)}^7 dr\\
=&\frac{56{\pi}^7}{\Gamma(8)}\sum_{m=0}^{\infty}\sum_{k=0}^{\infty}\frac{\alpha_{k,m}\beta_{k,m}e^{-4(k(k+m+7)+2m)t}}{2k+m+7}\ \frac{\sqrt{\pi} \Gamma(7/2)}{\Gamma(4)}\frac{6!m!}{(m+5)!}\frac{\Gamma(k+4)\Gamma(k+m+4)}{\Gamma(k+m+7)\Gamma(k+1)},
\end{align*}

where we used 
\begin{equation*}
\int_{0}^{{\pi}} {h_m(\eta)}^2 \sin^6{\eta} d\eta=\frac{\sqrt{\pi} \Gamma(7/2)}{\Gamma(4)(2m+6)}\frac{6!m!}{(m+5)!}.
\end{equation*}

From
\begin{equation*}
\lim_{t \rightarrow 0}\int_{0}^{\pi}\int_{0}^{\frac{\pi}{2}} p_t f d\mu = f(0,0),
\end{equation*}
we obtain   $\alpha_{k,m}=\frac{96}{\pi^{8}}(m+3)(2k+m+7)\left(\begin{array}{c}
k+m+6 \\ k+m+3
\end{array}\right)\left(\begin{array}{c}
m+5 \\ m
\end{array}\right)$ and the proof is completed.
\end{proof}	

\begin{proposition}\label{prop1octo}
Let $p_t^{Q}$ denote the  horizontal heat kernel for the quaternionic Hopf fibration
\[
\mathbb{S}^3 \to \mathbb{S}^{11}  \to \mathbb{H}P^2. 
\]
Then for $r\in [0,\frac{\pi}{2}),\eta\in [0,\pi)$, 
\begin{equation}\label{eq:1}
p_t(r,\eta)=\frac{144e^{16t}}{{\pi}^2\cos^2 r}\left(\frac{1}{\sin^2 \eta}\frac{\partial^2}{\partial \eta^2}p_t^{Q}-\frac{\cos \eta}{\sin^3 \eta}\frac{\partial}{\partial \eta}p_t^{Q}\right).
\end{equation}
\end{proposition}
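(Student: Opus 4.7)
My plan is to prove the identity by matching the spectral expansions of both sides term by term.

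First, I would recognize that the differential operator $\frac{1}{\sin^2 \eta}\frac{\partial^2}{\partial \eta^2} - \frac{\cos \eta}{\sin^3 \eta}\frac{\partial}{\partial \eta}$ factors as $\left(\frac{1}{\sin \eta}\frac{\partial}{\partial \eta}\right)^2$, and when applied to a function of the form $f(\cos \eta)$ simply returns $f''(\cos \eta)$. Next, I would apply this operator to the spectral decomposition of $p_t^{Q}$ (with $n=2$) provided by the quaternionic-Hopf heat-kernel formula in the previous chapter. Its angular factor is $\sin((m+1)\eta)/\sin \eta = U_m(\cos \eta) = C_m^1(\cos \eta)$, a Gegenbauer polynomial of index $1$. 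Two applications of the Gegenbauer derivative identity $\frac{d}{dx} C_m^\lambda(x) = 2\lambda C_{m-1}^{\lambda+1}(x)$ yield $\frac{d^2}{dx^2}C_m^1(x) = 8\, C_{m-2}^3(x)$. The angular eigenfunctions $h_m(\eta)$ appearing in the octonionic spectral expansion are themselves, via the Koornwinder integral representation, proportional to Gegenbauer polynomials of index $3$, with $h_m(\eta) = C_m^3(\cos \eta)/\binom{m+5}{5}$; so the two angular factors match after reindexing.

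After reindexing $m \to m+2$, the time exponent $-4[k(k+m+5)+2m]t$ in the quaternionic sum becomes $-16t - 4[k(k+m+7)+2m]t$, producing the $e^{16t}$ factor on the right-hand side. The radial polynomial factor $(\cos r)^{m+2}$ contributes the $1/\cos^2 r$ factor after moving it to the left, while the Jacobi polynomial index shifts from $P_k^{3, m+1}$ to $P_k^{3,m+3}$, precisely the octonionic index. At this point, the identity reduces to a purely numerical statement: the coefficient ratio $\alpha_{k,m}/[\,8\binom{m+5}{5}\alpha_{k,m+2}^{Q}\,]$ must be a universal constant, which must equal $144/\pi^2$ for the stated formula.

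The main obstacle is verifying this last numerical claim: the coefficients $\alpha_{k,m}$ and $\alpha_{k,m+2}^{Q}$ are both explicit products of several binomial coefficients and $\Gamma$-function ratios, and one must carefully cancel the common factor $(m+3)(2k+m+7)\binom{k+m+6}{3}\binom{m+5}{5}$ on both sides and then track the remaining $\pi$-powers and numerical constants to arrive at the stated prefactor. Provided this combinatorial identity holds, the spectral sums agree term by term and the proposition follows.
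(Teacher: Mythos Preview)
Your proposal is correct and follows essentially the same route as the paper: both arguments compare the two spectral expansions term by term and reduce the identity to the angular relation and the coefficient check. Your observation that the operator factors as $\bigl(\tfrac{1}{\sin\eta}\partial_\eta\bigr)^2=\partial_x^2$ (with $x=\cos\eta$) together with the Gegenbauer derivative rule $\tfrac{d}{dx}C_m^\lambda=2\lambda\,C_{m-1}^{\lambda+1}$ is precisely the content of what the paper states as following ``from the Rodrigues formula,'' namely that applying the operator to $\sin((m+1)\eta)/\sin\eta$ yields $8\binom{m+3}{5}\,h_{m-2}(\eta)$; the reindexing and the resulting $e^{16t}$ and $\cos^{-2}r$ factors are handled in the same way.
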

\begin{proof}
From the Rodrigues formula, one can verify that 
\begin{equation}\label{eq:51}
\frac{1}{8}\left(\frac{1}{\sin^2 \eta}\frac{\partial^2}{\partial \eta^2}-\frac{\cos \eta}{\sin^3 \eta}\frac{\partial}{\partial \eta}\right)\frac{\sin(m+1)\eta}{\sin \eta}=\left(\begin{array}{c}
m+3 \\ m-2
\end{array}\right) h_{m-2}(\eta).
\end{equation}

On the other hand, on the $11$-dimensional quaternionic sphere $\mathbb{S}^{11}$, the spectral decomposition of the quaternionic subelliptic heat kernel $p_t^{Q}(r,\eta)$ is given by:
\begin{equation*}
p^{Q}_t(r,\eta)=\sum_{m=0}^{\infty}\sum_{k=0}^{\infty}\beta_{k,m} e^{-4(k(k+m+5)+2m)t}\frac{\sin (m+1)\eta}{\sin \eta}\cos^m r P^{3,m+1}_k(\cos 2r),
\end{equation*}
where 
\begin{equation*}
\beta_{k,m}=\frac{\Gamma(4)}{2\pi^{6}}(2k+m+5)(m+1)\left(\begin{array}{c}
k+m+4 \\ k+m+1
\end{array}\right).
\end{equation*}

Note that the octonionic subelliptic heat kernel $p_t(r,\eta)$ in the previous proposition which was given by:
\begin{equation*}
p_t(r,\eta)=\sum_{m=0}^{\infty}\sum_{k=0}^{\infty}\alpha_{k,m} h_m(\eta)e^{-4(k(k+m+7)+2m)t}\cos^m r P^{3,m+3}_k(\cos 2r),
\end{equation*}
where $\alpha_{k,m}=\frac{96}{\pi^{8}}(m+3)(2k+m+7)\left(\begin{array}{c}
k+m+6 \\ k+m+3
\end{array}\right)\left(\begin{array}{c}
m+5 \\ m
\end{array}\right)$.

From those two expressions of the heat kernels with \eqref{eq:51}, we can easily deduce that 
\begin{equation*}
p_t(r,\eta)=\frac{144e^{16t}}{{\pi}^2\cos^2 r}\left(\frac{1}{\sin^2 \eta}\frac{\partial^2}{\partial \eta^2}p_t^{Q}-\frac{\cos \eta}{\sin^3 \eta}\frac{\partial}{\partial \eta}p_t^{Q}\right).
\end{equation*}
\end{proof}



 Let $q_t$ be the Riemannian radial heat kernel on $\mathbb{S}^{11}$. For later use, see Appendix 2 for the details, we record here that:

(1) The spectral decomposition of $q_t$ is given by 
\begin{equation*}
q_t(\cos \delta)=\frac{\Gamma(5)}{2{\pi}^{6}}\sum_{m=0}^{\infty}(m+5)e^{-m(m+10)t}C^{5}_m(\cos \delta),
\end{equation*}
where $\delta$ is the Riemannian distance from the north pole and 
\begin{equation*}
C^{5}_m(x)=\frac{(-1)^m}{2^m}\frac{\Gamma(m+10)\Gamma(\frac{15}{2})}{\Gamma(10)\Gamma(m+1)\Gamma(m+\frac{11}{2})}\frac{1}{(1-x^2)^{\frac{5}{2}}}\frac{d^m}{d x^m}(1-x^2)^{m+9/2}
\end{equation*}
is a Gegenbauer polynomial. 

(2) 
\begin{equation}\label{heat kernel s11}
q_t(\cos \delta)=e^{25 t}\left(-\frac{1}{2\pi \sin \delta }\frac{\partial}{\partial \delta}\right)^{5}V,
\end{equation}
where $V(t,\delta)=\frac{1}{\sqrt{4\pi t}}\sum_{k\in \mathbb{Z}}e^{-\frac{(\delta-2k\pi)^2}{4t}}$. Note that the last formula shows that $\delta \to q_t(\cos \delta)$ can analytically be extended to $\delta \in \mathbb C$.

\begin{proposition}\label{prop:4}
For $r\in [0,\frac{\pi}{2}),\eta\in [0,\pi)$, we have:
\begin{equation}\label{eq:2}
p_t(r,\eta)=\frac{36e^{15t}}{{\pi}^2\sqrt{\pi t}\cos^2 r}\int_{0}^{\infty}\frac{g_t(\eta,y)}{t^2} e^{{-}\frac{y^2-{\eta}^2}{4t}}q_t(\cos r \cosh y)\sinh y dy,
\end{equation}
where 
\begin{align*}
g_t(\eta,y)&=\csc ^3(\eta)\left( \cos \left(\frac{\eta y}{2 t}\right)2 y (\eta-3 t \cot \eta)\right.\\
&+\left.\sin \left(\frac{\eta y}{2 t}\right) \left(8 t^2 \cot ^2 \eta+4 t^2 \csc ^2 \eta-6 t \eta \cot \eta+2 t+\eta^2-y^2\right)\right).
\end{align*}
\end{proposition}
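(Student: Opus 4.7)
The plan is to combine two already-established results. First, Proposition \ref{prop1octo} reduces the octonionic horizontal heat kernel to the quaternionic one on $\mathbb{S}^{11}$ via a second-order operator in the fiber coordinate $\eta$: setting
$$D := \frac{1}{\sin^2\eta}\frac{\partial^2}{\partial\eta^2}-\frac{\cos\eta}{\sin^3\eta}\frac{\partial}{\partial\eta} = \frac{1}{\sin\eta}\frac{\partial}{\partial\eta}\Bigl(\frac{1}{\sin\eta}\frac{\partial}{\partial\eta}\Bigr),$$
Proposition \ref{prop1octo} reads $p_t(r,\eta)=\frac{144\,e^{16t}}{\pi^2\cos^2r}\,D\,p_t^{Q}(r,\eta)$. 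Second, specializing Proposition \ref{prop-pt} to $n=2$ (so that $\mathbb{S}^{4n+3}=\mathbb{S}^{11}$) yields
$$p_t^{Q}(r,\eta)=\frac{e^{-t}}{\sqrt{\pi t}}\int_0^{\infty}\frac{\sinh y\,\sin(\eta y/(2t))}{\sin\eta}\,e^{-(y^2-\eta^2)/(4t)}\,q_t(\cos r\cosh y)\,dy,$$
with $q_t$ the Riemannian heat kernel on $\mathbb{S}^{11}$.

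I would substitute the second identity into the first and move $D$ under the integral, the justification being the uniform rapid decay of the integrand and its $\eta$-derivatives in $y$ (from the Gaussian factor $e^{-y^2/(4t)}$ and the explicit formula \eqref{heat kernel s11} for $q_t$). Since the factor $\sinh y\,q_t(\cos r\cosh y)\,e^{-y^2/(4t)}$ is $\eta$-independent, the whole calculation reduces to evaluating $D$ on
$$u(\eta,y) := \frac{\sin(\eta y/(2t))}{\sin\eta}\,e^{\eta^2/(4t)}.$$
The central claim is the pointwise identity
$$Du(\eta,y) = \frac{g_t(\eta,y)}{4t^2}\,e^{\eta^2/(4t)},$$
with $g_t$ as in the statement. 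Accepting this, the constants combine via $\tfrac{144\,e^{16t}}{\pi^2\cos^2r}\cdot\tfrac{e^{-t}}{\sqrt{\pi t}}\cdot\tfrac{1}{4}=\tfrac{36\,e^{15t}}{\pi^2\sqrt{\pi t}\cos^2r}$, which matches \eqref{eq:2} exactly.

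The main obstacle is the computation of $Du$, which is essentially bookkeeping but requires care. Writing $A=y/(2t)$, I would compute $u'$ by the product rule, splitting it into the derivative of $\sin(A\eta)/\sin\eta$ and the $\eta$-derivative of $e^{\eta^2/(4t)}$, then differentiate once more. Factoring $\csc^3\eta\,e^{\eta^2/(4t)}$ from the result and grouping separately the coefficients of $\sin(A\eta)$ and $\cos(A\eta)$ produces the two brackets in $g_t(\eta,y)$: the $\cos$-bracket $2y(\eta-3t\cot\eta)$ arises from the cross terms between differentiating $\sin(A\eta)$ and the remaining factors, while the $\sin$-bracket containing $8t^2\cot^2\eta+4t^2\csc^2\eta-6t\eta\cot\eta+2t+\eta^2-y^2$ collects the contributions from differentiating the $\csc\eta$ factors (producing $\cot\eta$ and $\csc^2\eta$ terms) together with the Gaussian's derivatives (producing the $\eta^2$ and constant $2t$ terms) and the second $\eta$-derivative of $\sin(A\eta)$ (producing the $-y^2$ term). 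No deeper conceptual step is required beyond this explicit verification.
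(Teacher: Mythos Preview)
Your proposal is correct and follows exactly the same approach as the paper's proof: combine Proposition~\ref{prop1octo} with the integral representation of $p_t^{Q}$ from Proposition~\ref{prop-pt} (specialized to $n=2$), differentiate under the integral sign using the rapid decay coming from \eqref{heat kernel s11}, and compute. In fact you provide more detail than the paper, which simply says ``Plugging in those two ingredients gives the desired result.''
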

\begin{proof}
Note that because of the rapid decay of the integrand and thanks to formula \eqref{heat kernel s11}, we can differentiate under the integral sign. Now from the Proposition~\ref{prop1octo}, we know 	
\begin{equation*}
p_t(r,\eta)=\frac{144e^{16t}}{{\pi}^2\cos^2 r}\left(\frac{1}{\sin^2 \eta}\frac{\partial^2}{\partial \eta^2}p_t^{Q}-\frac{\cos \eta}{\sin^3 \eta}\frac{\partial}{\partial \eta}p_t^{Q}\right).
\end{equation*}
From Proposition \ref{prop-pt} we have
\begin{equation*}
p^{Q}_t(r,\eta)=\frac{e^{-t}}{\sqrt{\pi t}}\int_{0}^{\infty}\frac{\sinh y \sin{\frac{\eta y}{2t}}}{\sin \eta} e^{{-}\frac{y^2-{\eta}^2}{4t}}q_t(\cos r \cosh (y))dy,
\end{equation*}
Plugging in those two ingredients gives the desired result. 
\end{proof}

\section{Horizontal Brownian motion of the octonionic anti-de Sitter fibration}

\subsection{The octonionic anti-de Sitter fibration}

In this section, we first describe the octonionic anti-de Sitter fibration
\begin{equation*}
\mathbb{S}^7 \to \mathbf{AdS}^{15}(\mathbb{O}) \to \mathbb{O}H^1.
\end{equation*}

The octonionic anti-de Sitter space $\mathbf{AdS}^{15}(\mathbb{O})$ is the quadric defined as the pseudo-hyperbolic space by:
\begin{equation*}
\mathbf{AdS}^{15}(\mathbb{O})=\{(x,y)\in\mathbb{O}^2, |(x,y)|^2_{\mathbb{O}}=-1 \},
\end{equation*}
where 
\begin{equation*}
|(x,y)|^2_{\mathbb{O}}:=|x|^2-|y|^2.
\end{equation*}

In real coordinates $x=\sum_{j=0}^{7}x_j e_j$, $y=\sum_{j=0}^{7}y_j e_j$, one has
\begin{equation*}
|(x,y)|^2_{\mathbb{O}}=x_0^2+\cdots+x_7^2-y_0^2-\cdots-y_7^2,
\end{equation*}
and as such, $\mathbf{AdS}^{15}(\mathbb{O})$ inherits from the Lorentzian metric with real signature $(8,8)$ on $\mathbb O \times \mathbb O$ a semi Riemannian structure with real signature $(8,7)$.

As a manifold, the octonionic hyperbolic  space $\mathbb{O}H^1$ can be defined as the unit ball in $\mathbb O$ and its Riemannian metric is such that the map 
\begin{align*}
\mathbf{AdS}^{15}(\mathbb{O})  &\to \mathbb{O}H^1 \\
(x,y) &\mapsto y^{-1}x
\end{align*}
is a semi-Riemannian submersion with fibers isometric to $\mathbb S^7$.

\subsection{Radial part of the horizontal Laplacian}

The sub-Laplacian $L$ on $\mathbf{AdS}^{15}(\mathbb{O})$ that we are interested in  is the horizontal Laplacian of the Riemannian submersion $\pi:\mathbf{AdS}^{15}(\mathbb{O}) \to \mathbb{O}H^1$, i.e the horizontal lift of the Laplace-Beltrami operator of $\mathbb{O}H^1$. It can be written as
\begin{equation}\label{eq:sub_Laplacian}
L=\square_{\mathbf{AdS}^{15}(\mathbb{O})}+\Delta_\mathcal{V},
\end{equation}
where $\square_{\mathbf{AdS}^{15}(\mathbb{O})}$ is the d'Alembertian, i.e., the Laplace-Beltrami operator of the pseudo-Riemannian metric and $\Delta_\mathcal{V}$ is the vertical Laplacian. Since the fibers of $\pi$ are totally geodesic and isometric to $\mathbb{S}^7\subset \mathbf{AdS}^{15}(\mathbb{O})$, we note that $\square_{\mathbf{AdS}^{15}(\mathbb{O})}$ and $\Delta_\mathcal{V}$ are commuting operators, and we can identify
\begin{equation}\label{eq:vertical_Laplacian}
\Delta_\mathcal{V}={\Delta}_{\mathbb{S}^{7}}.
\end{equation}
To study $L$, we introduce a set of coordinates  which provide an explicit local trivialization of the octonionic anti-de Sitter fibration. Take local coordinates $w\in \mathbb{O}H^1$ and $(\theta_1, ..., \theta_7)\in \mathbb S^7$, where $w$ is the local inhomogeneous coordinate on $\mathbb{O}H^1$ given by $w=y^{-1} x$, where $x,y\in \mathbb{O}$. Consider the north pole $p \in \mathbb{S}^7$ and take $Y_1, ..., Y_7$ to be a local frame of $T_p \mathbb{S}^7$, and denote by $\exp_p$ the Riemannian exponential map at $p$ on $\mathbb{S}^7$. Then the cylindrical coordinates we work with are given by 
\begin{align*}
\psi : \mathbb{O}H^1 \times \mathbb{S}^7 &\rightarrow \mathbf{AdS}^{15}(\mathbb{O})\\
(w,\theta_1, \dots, \theta_7) &\mapsto \left(\frac{ \exp_p(\sum_{i=1}^{7}\theta_i Y_i) w}{\sqrt{1-{\rho}^2}},\frac{ \exp_p(\sum_{i=1}^{7}\theta_i Y_i)}{\sqrt{1-{\rho}^2}}\right)
\end{align*}
where $\rho={{|w|}}$.

A function $f$ on $\mathbf{AdS}^{15}(\mathbb{O})$ is called radial cylindrical if it only depends on  the two coordinates $(\rho,\eta)\in [0,1)\times [0,\pi)$ where $\eta=\sqrt{\sum_{i=1}^{7}\theta^2_i}$. More precisely $f$  is radial cylindrical if there exists a function $g$ so that
\begin{equation*}
f \left(\frac{ \exp_p(\sum_{i=1}^{7}\theta_i Y_i) w}{\sqrt{1-{\rho}^2}},\frac{ \exp_p(\sum_{i=1}^{7}\theta_i Y_i)}{\sqrt{1-{\rho}^2}}\right)=g \left( \rho, \eta \right).
\end{equation*}

We denote by $\mathcal{D}$ the space and smooth and compactly supported functions on $[0,1) \times [0,\pi)$. Then the radial part of $L$ is defined as the operator $\widetilde{L}$  such that for any $f\in \mathcal{D}$, we have
\begin{equation*}
L(f \circ \psi)=(\widetilde{L}f)\circ \psi.
\end{equation*}

We now compute $\widetilde{L}$ in cylindrical coordinates.

\begin{proposition}\label{radial octo desi}
	The radial part of the sub-Laplacian on $\mathbf{AdS}^{15}(\mathbb{O})$ is given in the coordinates $(r,\eta)$ by the operator
\begin{equation}\label{eq:5}
\widetilde{L}=\frac{{\partial}^2}{\partial {r}^2}+(7\coth r+7\tanh r)\frac{{\partial}}{\partial {r}}+{\tanh^2 r}\left(\frac{{\partial}^2}{\partial {\eta}^2}+6\cot \eta\frac{{\partial}}{\partial {\eta}}\right).
\end{equation}	
where $r=\tanh^{-1}\rho$ is the Riemannian distance on $\mathbb{O}H^1$ from the origin. 
\end{proposition}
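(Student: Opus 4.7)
The plan is to adapt the proof of Proposition \ref{prop octo radial} (for the compact octonionic Hopf fibration) to the present pseudo-Riemannian setting. Starting from \eqref{eq:sub_Laplacian}--\eqref{eq:vertical_Laplacian}, on radial cylindrical functions the vertical Laplacian acts as $\partial_\eta^2 + 6\cot\eta\,\partial_\eta$, because the fibers are totally geodesic and isometric to $\mathbb{S}^7$ with its round metric. By the large isometry group of the fibration, the radial part of $\square_{\mathbf{AdS}^{15}(\mathbb{O})}$ must take the form
\[
\widetilde{\square} \;=\; A(r) + h(r)\left(\frac{\partial^2}{\partial\eta^2}+6\cot\eta\frac{\partial}{\partial\eta}\right)
\]
for some differential operator $A(r)$ in $r$ alone and some function $h(r)$, and it suffices to determine both.

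The operator $A(r)$ is pinned down immediately: for functions $f(r)$ constant along the fibers, the submersion property of $\pi$ forces $L(f)=\Delta_{\mathbb{O}H^1}(f)$, and since $\mathbb{O}H^1$ is a rank-one symmetric space, Appendix 3 gives
\[
A(r) \;=\; \frac{\partial^2}{\partial r^2} + 7(\coth r + \tanh r)\frac{\partial}{\partial r}.
\]

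To determine $h(r)$, I would test $\square_{\mathbf{AdS}^{15}(\mathbb{O})}$ on the restriction of the ambient linear function $\phi(x,y)=y_0$. In the cylindrical coordinates, using that $p$ is the north pole of $\mathbb{S}^7$ and that $|w|=\tanh r$, one checks that $(\exp_p\sum_i\theta_iY_i)_0 = \cos\eta$ and $1/\sqrt{1-\rho^2}=\cosh r$, so $\phi = \cosh r\,\cos\eta$. Independently, $\phi$ is annihilated by the flat ambient d'Alembertian $\square_{\mathrm{amb}}=\sum_i\partial_{x_i}^2-\sum_i\partial_{y_i}^2$ on $\mathbb{R}^{8,8}$. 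Performing the pseudo-polar substitution $(x,y)=t(u,v)$ with $t=\sqrt{|y|^2-|x|^2}$ and $(u,v)\in\mathbf{AdS}^{15}(\mathbb{O})$ gives $g_{\mathrm{amb}} = -dt^2 + t^2\,g_{\mathbf{AdS}}$, whence
\[
\square_{\mathrm{amb}} \;=\; -\frac{\partial^2}{\partial t^2}-\frac{15}{t}\frac{\partial}{\partial t}+\frac{1}{t^2}\,\square_{\mathbf{AdS}^{15}(\mathbb{O})}.
\]
Applying this to $y_0=tv_0$ together with $\square_{\mathrm{amb}} y_0 = 0$ yields $\square_{\mathbf{AdS}^{15}(\mathbb{O})}(y_0) = 15\,y_0$. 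Equating this with $\widetilde{\square}(\cosh r\cos\eta)=[A(r)(\cosh r)-7h(r)\cosh r]\cos\eta$, using $\Delta_{\mathbb{S}^7}(\cos\eta)=-7\cos\eta$ together with the elementary computation $A(r)(\cosh r)=(8\cosh^2 r + 7\sinh^2 r)/\cosh r$, forces $h(r) = -\mathrm{sech}^2 r$. Since $L=\square_{\mathbf{AdS}^{15}(\mathbb{O})}+\Delta_{\mathcal V}$, the radial part of $L$ is $A(r)+(h(r)+1)(\partial_\eta^2+6\cot\eta\,\partial_\eta)$, with $h(r)+1=\tanh^2 r$, which is exactly the announced formula.

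The main obstacle is the careful sign bookkeeping in the pseudo-Riemannian splitting. The ambient signature is $(8,8)$ with the $y$-directions timelike, so $t$ is a timelike radial coordinate contributing $-dt^2$, and the mean-curvature-type correction $-15/t$ carries the opposite sign to its spherical counterpart. Tracking this carefully is what produces $\square_{\mathbf{AdS}}y_0=+15\,y_0$ (in contrast with the $\Delta_{\mathbb{S}^{15}}(\cos\delta)=-15\cos\delta$ used in Proposition \ref{prop octo radial}), and it is precisely this sign flip which converts the compact coefficient $\tan^2 r$ into its hyperbolic counterpart $\tanh^2 r$.
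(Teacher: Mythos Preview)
Your proof is correct. Both your argument and the paper's proceed by writing $\widetilde{\square}_{\mathbf{AdS}^{15}(\mathbb{O})}=A(r)+h(r)\bigl(\partial_\eta^2+6\cot\eta\,\partial_\eta\bigr)$ and identifying $A(r)$ from the base space; the only substantive difference is how the eigenvalue equation $\square_{\mathbf{AdS}}(\cosh r\cos\eta)=15\cosh r\cos\eta$ is obtained. You derive it by embedding $\mathbf{AdS}^{15}(\mathbb{O})$ in flat $\mathbb{R}^{8,8}$ and using the pseudo-polar splitting $\square_{\mathrm{amb}}=-\partial_t^2-\tfrac{15}{t}\partial_t+\tfrac{1}{t^2}\square_{\mathbf{AdS}}$ together with $\square_{\mathrm{amb}}y_0=0$; this is the faithful analogue of the compact-case argument in Proposition~\ref{prop octo radial}. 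The paper instead performs the analytic continuation $\eta\mapsto i\eta$, which turns $\widetilde{\square}_{\mathbf{AdS}^{15}(\mathbb{O})}$ into the radial Laplacian of the real hyperbolic space $H^{15}$, and then tests on $\cosh\delta=\cosh r\cosh\eta$. Your route is slightly more elementary and avoids justifying the analytic continuation; the paper's route has the payoff that the same $\eta\mapsto i\eta$ device is reused immediately afterwards to obtain the integral representation of the subelliptic heat kernel in terms of $q_{t,15}$.
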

\begin{proof}
 Note that the radial part  of the Laplace-Beltrami operator on the octonionic hyperbolic space $\mathbb{O}H^1$ is
\begin{equation}\label{eq:LB_on_OH^1}
\widetilde{\Delta}_{\mathbb{O}H^1}=\frac{{\partial}^2}{\partial {r}^2}+(7\coth r+7\tanh r)\frac{{\partial}}{\partial {r}},
\end{equation}
and the radial part of the Laplace-Beltrami operator on $\mathbb{S}^{7}$ is
\begin{equation}\label{eq:LB_on_S^7}
\widetilde{\Delta}_{\mathbb{S}^7}=\frac{{\partial}^2}{\partial {\eta}^2}+6\cot \eta\frac{{\partial}}{\partial {\eta}}.
\end{equation}
Using the symmetries of the fibration the radial part of the d'Alembertian is given by
\begin{equation*}
\widetilde{\square}_{\mathbf{AdS}^{15}(\mathbb{O})}=\frac{{\partial}^2}{\partial {r}^2}+(7\coth r+7\tanh r)\frac{{\partial}}{\partial {r}} +g(r) \left(\frac{{\partial}^2}{\partial {\eta}^2}+6\cot \eta\frac{{\partial}}{\partial {\eta}} \right),
\end{equation*}
for some function $g$ to be computed.
In order to compute $g$, we will consider the analytic change of variables $\tau : (r,\eta) \rightarrow (r,i\eta)$ that will be applied on functions of the type $f(r,\eta)=h(r)e^{-i\lambda \eta}$, with $h$ smooth and compactly supported on $[0,\infty)$ and $\lambda>0$. Then we have 
\begin{equation*}
{\widetilde{\square}_{\mathbf{AdS}^{15}(\mathbb{O})}}(f \circ \tau)=({\widetilde{\Delta}_{{{H}^{15}}}}f) \circ \tau,
\end{equation*}
where 
\begin{equation*}
\widetilde{\Delta}_{{{H}^{15}}}=\frac{{\partial}^2}{\partial {r}^2}+(7\coth r+7\tanh r)\frac{{\partial}}{\partial {r}} -g(r) \left(\frac{{\partial}^2}{\partial {\eta}^2}+6\coth \eta\frac{{\partial}}{\partial {\eta}} \right).
\end{equation*}
is the Riemannian Laplacian on the real hyperbolic space ${{H}^{15}}$.

Now observe that on ${{H}^{15}}$ the Riemannian distance $\delta$ from the point  with octonionic coordinates $(0,1)$ is given by
\begin{equation*}
\cosh \delta = \cosh r \cosh \eta,
\end{equation*}
and we have 
\begin{equation*}
\widetilde{\Delta}_{{{H}^{15}}} =\left(\frac{\partial^2}{\partial \delta^2}+14\coth{\delta}\frac{\partial}{\partial \delta}\right) \cosh \delta=15 \cosh \delta.
\end{equation*}
Hence one deduces
\begin{equation*}
\left(\frac{{\partial}^2}{\partial {r}^2}+(7\coth r+7\tanh r)\frac{{\partial}}{\partial {r}} +g(r) \left(\frac{{\partial}^2}{\partial {\eta}^2}+6\coth \eta\frac{{\partial}}{\partial {\eta}} \right) \right)\cosh r \cosh \eta =15 \cosh r \cosh \eta.
\end{equation*}

After a straightforward computation, this yields $g(r)=-\frac{1}{\cosh^2 r}$ and therefore
\begin{align}\label{eq:LB_on_ADS^15}
\widetilde{\square}_{\mathbf{AdS}^{15}(\mathbb{O})}=&\frac{{\partial}^2}{\partial {r}^2}+(7\coth r+7\tanh r)\frac{{\partial}}{\partial {r}} -\frac{1}{\cosh^2 r} \left(\frac{{\partial}^2}{\partial {\eta}^2}+6\cot \eta\frac{{\partial}}{\partial {\eta}} \right)\\
=&\widetilde{\Delta}_{\mathbb{O}H^1}-\frac{1}{\cosh^2 r}\widetilde{\Delta}_{\mathbb{S}^7}.
\end{align}

Finally, to conclude, one notes that the sub-Laplacian $L$ is given by the difference between the Laplace-Beltrami operator of $\mathbf{AdS}^{15}(\mathbb{O})$ and the vertical Laplacian.  Therefore by \eqref{eq:sub_Laplacian} and \eqref{eq:vertical_Laplacian},
\begin{align*}
\widetilde{L} &=\widetilde{\square}_{\mathbf{AdS}^{15}(\mathbb{O})}+ \widetilde{\Delta}_{\mathbb{S}^{7}}\\
&= \frac{{\partial}^2}{\partial {r}^2}+(7\coth r+7\tanh r)\frac{{\partial}}{\partial {r}}+{\tanh^2 r}\left(\frac{{\partial}^2}{\partial {\eta}^2}+6\cot \eta\frac{{\partial}}{\partial {\eta}}\right).
\end{align*}
\end{proof}
	
\begin{remark}
As a consequence of the previous result, we can check that the Riemannian measure of $\mathbf{AdS}^{15}(\mathbb{O})$ in the coordinates $(r,\eta)$, which is the symmetric and invariant measure for $\widetilde{L}$ is given by 
\begin{equation}\label{eq:measure}
d{\mu}=\frac{56{\pi}^7}{\Gamma(8)}\sinh^7 r \cosh^7r\sin^6 \eta dr d\eta.
\end{equation}
\begin{equation*}
\end{equation*}
\end{remark}

\subsection{Stochastic area in the octonionic hyperbolic  space $\mathbb O H^1$}

Let $(X(t))_{t \ge 0}$ be the horizontal Brownian motion of the octonionic anti-de Sitter fibration issued from $(0,1) \in \mathbf{AdS}^{15}(\mathbb O)$. From Proposition \ref{radial octo desi}, one can write 

\[
X(t)=\left(\frac{  u(t) \, w(t)}{\sqrt{1-{|w(t)|}^2}},\frac{ u(t)}{\sqrt{1-{|w(t)|}^2}}\right)
\]
where $w(t)$ is a Brownian motion on $\mathbb{O}H^1$,   $r(t)=\arctanh {|w(t)|}$ is a hyperbolic Jacobi diffusion with the generator 
\[
 \frac{1}{2}\left(\frac{{\partial}^2}{\partial {r}^2}+(7\coth r+7\tanh r)\frac{{\partial}}{\partial {r}}\right),
\]
and $u(t)$ is an independent Brownian motion on $\mathbb S^7$ timed changed with the functional $\int_0^t \tanh^2 r(s)$.

Therefore, the stochastic area has the same distribution as
\[
\mathfrak{a}(t)=\beta \left( \int_0^t \tanh^2 r(s) ds  \right),
\]
where $\beta$ is a Brownian motion in $\mathbb R^7$ independent of $r(t)$. 
By a comparison argument, we have that 
\[
\lim_{t \rightarrow +\infty} r(t) = +\infty 
\]
almost surely. Consequently, 
\[
\lim_{t \rightarrow +\infty} \frac{1}{t}\int_0^t \tanh(r(s))ds = 1
\]
almost surely, which in turn implies that $\mathfrak{a}(t)/\sqrt{t}$ converges in distribution to the normal distribution $\mathcal{N}(0,1)$.

\subsection{Integral representations of the horizontal heat kernel}
In this section, we give the integral representation of the subelliptic heat kernel of 
\[
\widetilde{L}=\frac{{\partial}^2}{\partial {r}^2}+(7\coth r+7\tanh r)\frac{{\partial}}{\partial {r}}+{\tanh^2 r}\left(\frac{{\partial}^2}{\partial {\eta}^2}+6\cot \eta\frac{{\partial}}{\partial {\eta}}\right).
\]
Let $p_t(r,\eta)$ denote this heat kernel.

\begin{proposition}
	For $r\geq 0,\eta\in [0,\pi)$, we have:
	\begin{equation}\label{eq:2}
	p_t(r,\eta)=\int_{0}^{\infty}\sum_{m\geq 0}e^{-m(m+6)t}h_m(\eta)h_m(-iu)\,q_{t,15}(\cosh r \cosh u) \sinh^6udu
	\end{equation}	
	
where 
\begin{equation}\label{eq:riem.heat.kernel.on.H^15}
q_{t,15}(\cosh s):=\frac{e^{-49t}}{(2\pi)^{7}\sqrt{4\pi t}}\left(-\frac{1}{\sinh s}\frac{d}{ds}\right)^{7}e^{-s^2/4t}
\end{equation}
is the Riemannian heat kernel on the $15$-dimensional real hyperbolic space ${{H}}^{15}$, and $h_m$ is the normalized eigenfunction
\begin{equation}\label{eq:10}
h_m(\eta)=\frac{\Gamma(7/2)}{\sqrt{\pi}\Gamma(3)}\int_{0}^{\pi}{(\cos \eta + \sqrt{-1}\sin \eta \cos \varphi)}^m{\sin^5 \varphi}d \varphi
\end{equation}
of $\widetilde{\Delta}_{\bS^{7}}=\frac{{\partial}^2}{\partial {\eta}^2}+6\cot \eta\frac{{\partial}}{\partial {\eta}}$ that is associated to the eigenvalue $-m(m+6)$.
\end{proposition}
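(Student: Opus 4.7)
The plan is to mimic the geometric, analytic-continuation method of Theorem \ref{IntRep} (the quaternionic anti-de Sitter case), taking advantage of the decomposition $\widetilde{L}=\widetilde{\square}_{\mathbf{AdS}^{15}(\mathbb{O})}+\widetilde{\Delta}_{\mathbb{S}^{7}}$ established in Proposition \ref{radial octo desi} and the commutativity of these two operators (which holds because the fibers of the submersion are totally geodesic, hence a basic vector field commutes with $\Delta_{\mathcal V}$ by Theorem \ref{commutation2}).

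First I would write, using the commutativity, $e^{t\widetilde{L}}=e^{t\widetilde{\Delta}_{\mathbb{S}^{7}}}\,e^{t\widetilde{\square}_{\mathbf{AdS}^{15}(\mathbb{O})}}$. The heat kernel of $\widetilde{\Delta}_{\mathbb{S}^{7}}$ with respect to the measure $\sin^{6}\eta\,d\eta$ admits the spectral expansion
\[
S_t(\eta,u)=\sum_{m\geq 0} e^{-m(m+6)t}\,h_m(\eta)\,h_m(u),
\]
where $h_m$ is the normalized eigenfunction defined in \eqref{eq:10}. Consequently,
\[
p_t(r,\eta)=\int_0^{\pi} S_t(\eta,u)\,p_t^{\square}(r,u)\,\sin^{6}u\,du,
\]
where $p_t^{\square}$ is the heat kernel of $\widetilde{\square}_{\mathbf{AdS}^{15}(\mathbb{O})}$ at $(0,0)$ with respect to $\tfrac{56\pi^{7}}{\Gamma(8)}\sinh^{7}r\cosh^{7}r\sin^{6}u\,dr\,du$. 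The second step is the analytic continuation $\tau:(r,\eta)\mapsto(r,i\eta)$: as in the quaternionic case, a direct computation (paralleling the one performed in Proposition \ref{radial octo desi}) gives $\widetilde{\square}_{\mathbf{AdS}^{15}(\mathbb{O})}(f\circ\tau)=(\widetilde{\Delta}_{H^{15}}f)\circ\tau$, where $\widetilde{\Delta}_{H^{15}}=\partial_r^{2}+14\coth(\delta)\partial_\delta$ is expressed via $\cosh\delta=\cosh r\cosh\eta$. Applied to test functions $g(r)e^{-i\lambda u}$ and using the explicit Riemannian heat kernel $q_{t,15}$ on $H^{15}$ given in \eqref{eq:riem.heat.kernel.on.H^15}, one obtains the transfer identity
\[
\int_0^{\pi} g(u)\,p_t^{\square}(r,u)\,\sin^{6}u\,du=\int_0^{\infty} q_{t,15}(\cosh r\cosh u)\,g(-iu)\,\sinh^{6}u\,du.
\]

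Finally I would apply this identity termwise to $g(u)=h_m(u)$ in the spectral expansion of $S_t$ and interchange sum and integral. Substituting $g(-iu)=h_m(-iu)$ gives
\[
p_t(r,\eta)=\int_0^{\infty}\sum_{m\geq 0} e^{-m(m+6)t}\,h_m(\eta)\,h_m(-iu)\,q_{t,15}(\cosh r\cosh u)\,\sinh^{6}u\,du,
\]
which is the claimed formula. The main technical obstacle is the justification of (i) the analytic continuation, since $S_t(\eta,u)$ must be extended holomorphically in $u$ past the original range $[0,\pi]$, and (ii) the termwise interchange of sum and integral: one needs uniform bounds on $h_m(-iu)$ (polynomial growth of the integral representation \eqref{eq:10} in $m$ combined with exponential growth $\cosh^{m}u$ at most) to control absolute convergence against the Gaussian decay of $q_{t,15}(\cosh r\cosh u)$ coming from \eqref{eq:riem.heat.kernel.on.H^15}. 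As in the quaternionic case, this is handled by first testing the identity against smooth compactly supported $f(r)e^{-i\lambda\eta}$, where Fubini applies without difficulty, and then invoking density plus the uniqueness of the heat kernel to conclude.
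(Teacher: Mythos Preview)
Your proposal is correct and follows essentially the same route as the paper: decompose $\widetilde{L}=\widetilde{\square}_{\mathbf{AdS}^{15}(\mathbb{O})}+\widetilde{\Delta}_{\mathbb{S}^{7}}$, exploit commutativity to factor the semigroup, write $p_t$ as the $\sin^6 u\,du$-integral of the $\mathbb{S}^7$ heat kernel against the d'Alembertian heat kernel, apply the analytic continuation $\tau:(r,\eta)\mapsto(r,i\eta)$ to transfer $\widetilde{\square}_{\mathbf{AdS}^{15}(\mathbb{O})}$ into the radial Laplacian of $H^{15}$, and finally substitute the spectral expansion $s_t(\eta,u)=\sum_{m\geq 0}e^{-m(m+6)t}h_m(\eta)h_m(u)$. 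Your discussion of the technical points (holomorphic extension of $S_t$ and the sum/integral interchange) is in fact more explicit than the paper's, which treats these formally.
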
	
\begin{proof}
Since $\pi:\mathbf{AdS}^{15}(\mathbb{O}) \to \mathbb{O}H^1$
is the totally geodesic foliation and $\widetilde{{\square}}_{\mathbf{AdS}^{15}(\mathbb{O})}$ and $\widetilde{\Delta}_{\mathbb{S}^7}$ commute, 
\begin{equation*}
e^{t\widetilde{L}}=e^{t(\widetilde{\square}_{\mathbf{AdS}^{15}(\mathbb{O})}+\widetilde{\Delta}_{\mathbb{S}^7})}=e^{t\widetilde{\Delta}_{\mathbb{S}^7}}e^{t\widetilde{\square}_{\mathbf{AdS}^{15}(\mathbb{O})}}.
\end{equation*}
We deduce that the heat kernel of $\widetilde{L}$ can be written as
\begin{equation}\label{eq:9}
p_t(r,\eta)=\int_{0}^{\pi}s_t(\eta,u)p^{\widetilde{\square}_{\mathbf{AdS}^{15}(\mathbb{O})}}_t(r,u)\sin^6udu,
\end{equation}
where $s_t$ is the heat kernel of \eqref{eq:LB_on_S^7} with respect to the measure $\sin^6\eta d\eta,\eta\in [0,\pi)$, and $p^{\widetilde{\square}_{\mathbf{AdS}^{15}(\mathbb{O})}}_t(r,u)$ the heat kernel at $(0,0)$ of $\widetilde{\square}_{\mathbf{AdS}^{15}(\mathbb{O})}$ with respect to the measure in \eqref{eq:measure}, i.e.,
\begin{equation*}
d\mu(r,u)=\frac{56{\pi}^7}{\Gamma(8)}\sinh^7 r\cosh^7 r\sin^6 u dr du, r\in [0,\infty), u\in[0,\pi).
\end{equation*}
In order to write \eqref{eq:9} more precisely, let us consider the analytic change of variables $\tau : (r,\eta) \rightarrow (r,i\eta)$ that will be applied on functions of the type $f(r,\eta)=h(r)e^{-i\lambda \eta}$, with $h$ smooth and compactly supported on $[0,\infty)$ and $\lambda>0$. Then as we saw in the proof of Proposition~\ref{radial octo desi} one can see that 
\begin{equation*}
\widetilde{\square}_{\mathbf{AdS}^{15}(\mathbb{O})}(f\circ \tau)=(\widetilde{\Delta}_{{{H}^{15}}}f)\circ \tau,
\end{equation*}
where 
\begin{equation*}
\widetilde{\Delta}_{{{H}^{15}}}=\widetilde{\Delta}_{\mathbb{O}H^1}+\frac{1}{\cosh^2 r}\widetilde{\Delta}_{P}, \quad \widetilde{\Delta}_{P}=\frac{{\partial}^2}{\partial {\eta}^2}+6\coth \eta\frac{{\partial}}{\partial {\eta}}.
\end{equation*}

Then one deduces
\begin{equation*}
e^{t\widetilde{L}}(f\circ \tau)=e^{t\widetilde{\Delta}_{\mathbb{S}^7}}e^{t\widetilde{\square}_{\mathbf{AdS}^{15}(\mathbb{O})}}(f \circ \tau)=e^{t\widetilde{\Delta}_{\mathbb{S}^7}}((e^{t{\widetilde{\Delta}_{{{H}^{15}}}}}f) \circ \tau)=(e^{-t\widetilde{\Delta}_{P}}e^{t{\widetilde{\Delta}_{{{H}^{15}}}}}f) \circ \tau.
\end{equation*}
Now, since for every $f(r,\eta)=h(r)e^{-i\lambda \eta}$,
\begin{equation*}
(e^{t\widetilde{\square}_{\mathbf{AdS}^{15}(\mathbb{O})}}f)(0,0)=(e^{t{\widetilde{\Delta}_{{{H}^{15}}}}})(f\circ \tau^{-1})(0,0),
\end{equation*}
one deduces that for a function $h$ depending only on $u$, 
\begin{equation*}
\int_{0}^{\pi}h(u)p^{\widetilde{\square}_{\mathbf{AdS}^{15}(\mathbb{O})}}_t(r,u)\sin^6udu=\int_{0}^{\infty}h(-iu)q_{t,15}(\cosh r \cosh u) \sinh^6udu.
\end{equation*}
Therefore, coming back to \eqref{eq:9}, one infers that using the analytic extension of $s_t$ one must have 
\begin{equation}\label{eq:4}
\int_{0}^{\pi}s_t(\eta,u)p^{\square_{\mathbf{AdS}^{15}(\mathbb{O})}}_t(r,u)\sin^6udu=\int_{0}^{\infty}s_t(\eta,-iu)q_{t,15}(\cosh r \cosh u)\sinh^6udu,
\end{equation}
where $q_{t,15}$ is the Riemannian heat kernel on the real hyperbolic space ${{H}^{15}}$ given in \eqref{eq:riem.heat.kernel.on.H^15}. 
Then our result follows from the well-known formula (see Appendix 2), 
\begin{equation*}
s_t(\eta,u)=\sum_{m\geq 0}e^{-m(m+6)t}h_m(\eta)h_m(u),
\end{equation*}
where
\begin{equation}\label{eq:eigen-function}
	h_m(\eta)=\frac{\Gamma(7/2)}{\sqrt{\pi}\Gamma(3)}\int_{0}^{\pi}{(\cos \eta + \sqrt{-1}\sin \eta \cos \varphi)}^m{\sin^5 \varphi}d \varphi.
\end{equation}
\end{proof}

\chapter{Horizontal Brownian motions of the Stiefel fibration}

\section{Brownian motion on the complex Grassmannian space}\label{Sec2}

\subsection{Stiefel fibration} \label{fibration}

Let $n\in \mathbb{N}$, $n \ge 2$, and $k\in\{1,\dots, n\}$. The complex Stiefel manifold $V_{n,k}$ is the set of  unitary $k$-frames  in $\C^n$. In matrix notation we have
\[
V_{n,k}=\{M \in \C^{n\times k}| M^* M=I_k\}.
\] 

As such $V_{n,k}$ is therefore an algebraic compact embedded submanifold of $\C^{n\times k}$ and  inherits from  $\C^{n\times k}$ a Riemannian  structure. We note that $V_{n,1}$ is isometric to the unit sphere  $\mathbb{S}^{2n-1}$. There is a right isometric action of the  unitary group $\mathbf{U}(k)$ on $V_{n,k}$, which is simply given by the right matrix multiplication: $M  g$, $M \in V_{n,k}$,  $g \in \mathbf{U}(k)$. The quotient space by this action $G_{n,k}:=V_{n,k} / \mathbf{U}(k)$ is the complex Grassmannian manifold.  It is a compact manifold of complex dimension $k(n-k)$. We note that $G_{n,k}$ can be identified with the set of $k$-dimensional subspaces of $\mathbb{C}^n$. In particular $G_{n,1}$ is the complex projective space $\mathbb{C}P^{n-1}$. Since $G_{n,k}$ and $G_{n,n-k}$ can be identified with each other via orthogonal complement, without loss of generality we will therefore assume throughout the chapter that \fbox{$k \le n-k$}, except when explicitly mentioned otherwise (see Section \ref{duality section}). 

Let us quickly comment on the Riemannian structure of $G_{n,k}$ that we will be using and that is induced from the one of $V_{n,k}$.  From Example \ref{BB fibration} , there exists a unique Riemannian metric on $G_{n,k}$ such that the projection map $\pi: V_{n,k} \to G_{n,k}$ is a Riemannian submersion, moreover the fibers of this submersion are totally geodesic submanifolds of $V_{n,k}$ which are isometric to $\mathbf{U}(k)$. This therefore yields a fibration:
\[
\mathbf{U}(k) \to V_{n,k} \to G_{n,k}
\]
which is referred to as the Stiefel fibration.  We note that for $k=1$ it is nothing else but the classical  Hopf fibration considered from the probabilistic viewpoint in Chapter \ref{sec-BM-complex}:
\[
\mathbf{U}(1) \to \mathbb{S}^{2n-1} \to \mathbb{C}P^{n-1}.
\]
%
%

\subsection{affine coordinates on $G_{n,k}$}\label{homogeneous}

We consider  the open set $\widehat{V}_{n,k} \subset V_{n,k}$ given by
\[
\widehat{V}_{n,k}= \left\{ \begin{pmatrix} X  \\ Z \end{pmatrix}  \in  V_{n,k} , \, Z \in \mathbb{C}^{k \times k}, \det Z \neq 0,  \right\}
\]
and the smooth map $p : \widehat{V}_{n,k} \to \C^{(n-k)\times k}$ given by $p \begin{pmatrix} X  \\ Z \end{pmatrix}  = X Z^{-1}.$ It is clear that for every $g \in \mathbf{U}(k)$ and $M \in V_{n,k}$, $p( M  g)= p(M)$. Since $p$ is a submersion from $\widehat{V}_{n,k}$ onto its image $ p ( \widehat{V}_{n,k}) = \C^{(n-k)\times k} $ we deduce that there exists a unique  diffeomorphism $\Psi$ from  an open set of $G_{n,k}$ onto $\C^{(n-k)\times k}$ such that 
\begin{align}\label{inh-coord}
 \Psi \circ \pi= p.
 \end{align}
 The map $\Psi$ induces a (local) coordinate chart on $G_{n,k}$ that we call homogeneous by analogy with the case $k=1$ which corresponds to the complex projective space. The Riemannian metric of $G_{n,k}$ is then transported to $\C^{(n-k)\times k}$ using the map $\Psi$. In the sequel, we will denote $\C^{(n-k)\times k}$ endowed with this Riemannian metric by $\widehat{G}_{n,k}$ in order to emphasize the Riemannian structure that is used. Note that by construction $\widehat{G}_{n,k}$ is isometric to an open  subset of $G_{n,k}$ and that it {differs} from $G_{n,k}$ by the sub-manifold at $\infty$, $\det (Z)=0$. In the case $n=2, k=1$, $G_{n,k}=\mathbb{C}P^{1}$ and the above description corresponds to the classical one-point compactification description $\mathbb{C}P^{1}=\mathbb{C} \cup \{ \infty \}$. We note that the Stiefel fibration
 \[
\mathbf{U}(k) \to V_{n,k} \to G_{n,k}
\]
yields a fibration
\[
\mathbf{U}(k) \to\widehat{V}_{n,k} \to \widehat{G}_{n,k}
\]
that we still refer to as the Stiefel fibration. The projection map $p:\widehat{V}_{n,k} \to \widehat{G}_{n,k}$,  $ \begin{pmatrix} X  \\ Z \end{pmatrix}  \to  X Z^{-1}$ is then a Riemannian submersion with totally geodesic fibers isometric to $\mathbf{U}(k)$.

\subsection{Brownian motions on $\widehat{G}_{n,k}$ and $V_{n,k}$}

In this section, we show how the Brownian motions on $\widehat{G}_{n,k}$ and $V_{n,k}$ can be constructed from a Brownian motion on the unitary group $\mathbf{U}(n)$. In the sequel, we will use the following block notations: For any $U\in \mathbf{U}(n)$ and $A\in \mathfrak{u}(n)$ we will write
\[
U=\begin{pmatrix}  X & Y \\ Z & V \end{pmatrix}, \quad A=\begin{pmatrix}  \alpha & \beta \\ \gamma & \ep \end{pmatrix}
\]
where $X,\gamma \in \C^{(n-k)\times k}$, $Y,\ep\in \C^{(n-k)\times (n-k)}$, $Z,\alpha \in \C^{k\times k}$, $V,\beta\in \C^{k\times (n-k)}$.  We recall that the Lie algebra $\mathfrak{u}(n)$ consists of all skew-Hermitian matrices
\[
\mathfrak{u}(n)=\{X\in \C^{n\times n}|X=-X^*\},
\]
which we equip  with the inner product $\langle X,Y\rangle_{\mathfrak{u}(n)}=-\frac12\mathrm{tr}(XY)$. This induces the canonical Riemannian metric on $\U(n)$.   Consider now on $\mathfrak{u}(n)$ a Brownian motion $(A(t))_{t \ge 0}$ and  the matrix-valued process  $(U(t))_{t \ge 0}$ that satisfy the Stratonovich stochastic differential equation:
\begin{equation}\label{eq-du-da}
\begin{cases}
dU(t)=U(t)\circ dA(t) ,\\
U(0)=\begin{pmatrix}  X_0 & Y_0 \\ Z_0 & V_0 \end{pmatrix}, \, \det Z_0 \neq 0.
\end{cases}
\end{equation}
The process  $(U(t))_{t \ge 0}$ is a Brownian motion on $\mathbf{U}(n)$ (which is not started from the identity). 

We will use  the block decompositions
\[
 U(t)=\begin{pmatrix}  X(t) & Y(t) \\ Z(t) & V(t) \end{pmatrix}
 \]
and
\[
A(t)=\begin{pmatrix}  \alpha(t) & \beta(t) \\ \gamma(t) & \ep (t) \end{pmatrix},
\]
and note that the processes $(\alpha(t))_{t \geq 0}$, $(\beta(t))_{t \geq 0}=-(\gamma(t)^*)_{t \geq 0}$ and $(\ep (t))_{t \geq 0}$ are independent. 
From \eqref{eq-du-da}, we  obtain the following system of stochastic differential equations:
\begin{align}\label{eq-matrix-sde}
dX &=X \circ d\alpha +Y \circ d\gamma =X  d\alpha +Y d\gamma +\frac12 (dX  d\alpha +dY  d\gamma )\nonumber \\
dY &=X \circ d\beta +Y \circ d\ep =X  d\beta +Y  d\ep +\frac12 (dX  d\beta +dY  d\ep)\nonumber\\
dZ &= Z \circ d\alpha +V \circ d\gamma =Z  d\alpha +V d\gamma +\frac12 (dZ  d\alpha +dV  d\gamma )\\
dV &=Z \circ d\beta +V \circ d\ep =Z  d\beta +V  d\ep +\frac12 (dZ  d\beta +dV  d\ep ).\nonumber
\end{align}

Recall that here we use $dXdY$ to denote the quadratic covariation of matrix-valued processes. Namely the $ik$-component of $(dXdY)$ is given by the sum of the quadratic covariations  $(dXdY)_{ik}=\sum_{j}\langle dX_{ij},dY_{jk}\rangle$. 

It  turns out that both of the processes $VV^{*}$ and $ ZZ^*=I_k-VV^{*}$ belong to the  family of so-called (complex) matrix Jacobi processes. In particular, one has the following
 
 \begin{proposition}
  The process $Z^*Z$ satisfies  the stochastic differential equation
 \begin{equation}\label{eq-sde-zz*}
 d( ZZ^*) =  \sqrt{ZZ^*}d\bB\sqrt{I_k-ZZ^*}+\sqrt{I_k-ZZ^*}d\bB^* \sqrt{ZZ^*}+\bigg(2kI_k-2nZZ^* \bigg)dt
\end{equation}
where $\bB$ is a $\mathbb{C}^{k \times k}$ Brownian motion. 
\end{proposition}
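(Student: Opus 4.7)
The plan is to derive the SDE for $ZZ^*$ by applying It\^o's product rule, using the system \eqref{eq-matrix-sde} together with the explicit covariance structure of the Brownian motion on $\mathfrak{u}(n)$. As a preliminary step I would convert the Stratonovich equation $dZ = Z \circ d\alpha + V \circ d\gamma$ into It\^o form. Using the covariances $dA_{ij}\, d\bar A_{lm} = 2\delta_{il}\delta_{jm}\, dt$ recorded in Theorem~\ref{eigen process u}, together with the mutual independence of the blocks $\alpha$ and $\gamma$ (with $\beta = -\gamma^*$), a direct block-wise calculation yields the It\^o--Stratonovich corrections $dZ\, d\alpha = -2k\, Z\, dt$ and $dV\, d\gamma = -2(n-k)\, Z\, dt$. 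Hence the It\^o form reads $dZ = Z\, d\alpha + V\, d\gamma - n Z\, dt$.

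Next I would compute $d(ZZ^*) = (dZ)Z^* + Z(dZ^*) + (dZ)(dZ^*)$. Two algebraic inputs drive the computation: the skew-Hermiticity $d\alpha^* = -d\alpha$, which cancels the symmetric contributions $Z\, d\alpha\, Z^*$ between $(dZ)Z^*$ and $Z(dZ^*)$; and the unitarity identity $ZZ^* + VV^* = I_k$ coming from $UU^* = I_n$. The drift $-nZ\, dt$ and its conjugate contribute $-2nZZ^*\, dt$, while the quadratic covariation $(dZ)(dZ^*)$, evaluated block-wise using the covariance formulas above, equals $2k(ZZ^* + VV^*)\, dt = 2kI_k\, dt$. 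Collecting terms gives the drift $(2kI_k - 2nZZ^*)\, dt$ of the claim, and the martingale part reduces to $M := V\, d\gamma\, Z^* + Z\, d\gamma^*\, V^*$.

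The core of the proof is to realize $M$ in law as $\sqrt{ZZ^*}\, d\bB\, \sqrt{I_k - ZZ^*} + \sqrt{I_k - ZZ^*}\, d\bB^*\, \sqrt{ZZ^*}$ for some $k\times k$ complex Brownian motion $\bB$. I would compute the entrywise covariations $\langle dM_{ij}, d\bar M_{lm}\rangle$ directly from the expression of $M$; after substituting $VV^* = I_k - ZZ^*$ the result takes the form $2(I_k - ZZ^*)_{il}(ZZ^*)_{mj}\, dt + 2(ZZ^*)_{il}(I_k - ZZ^*)_{mj}\, dt$. Carrying out the same calculation for the target expression (with the convention $d\bB_{ab}\, d\bar\bB_{a'b'} = 2\delta_{aa'}\delta_{bb'}\, dt$, matching the normalization of the $\mathfrak{u}$-valued Brownian motion used in the paper) yields the identical structure. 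A matrix-valued martingale representation theorem then produces $\bB$ and completes the proof.

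The main obstacle I expect is precisely this last step. Constructing $\bB$ explicitly requires inverting $\sqrt{ZZ^*}$ and $\sqrt{I_k - ZZ^*}$, both of which degenerate when $ZZ^*$ has eigenvalues in $\{0,1\}$. Since the eigenvalue process of $ZZ^*$ is a multivariate Jacobi-type diffusion that does not linger at its boundary, these degeneracies form a set of times of Lebesgue measure zero; the cleanest way around the issue is to regularize with $\sqrt{ZZ^* + \varepsilon I_k}$, construct an approximating $\bB_\varepsilon$ as a stochastic integral against $\gamma$, and pass to the limit, or alternatively to invoke weak uniqueness for the SDE of the stated form once the quadratic covariations have been matched.
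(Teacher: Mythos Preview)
Your approach is correct and essentially the same as the paper's: both identify the martingale part of $d(ZZ^*)$ as $V\,d\gamma\,Z^* + Z\,d\gamma^*\,V^*$, compute the drift $(2kI_k - 2nZZ^*)\,dt$ via It\^o/Stratonovich corrections and the relation $ZZ^*+VV^*=I_k$, and then realize the martingale part in the claimed form. The only notable difference is that the paper constructs $\bB$ explicitly by setting $d\bB := (ZZ^*)^{-1/2}\,Z\,d\gamma^*\,V^*\,(I_k-ZZ^*)^{-1/2}$ and checking $d\bB_{ij}\,d\bar\bB_{i'j'}=2\,\delta_{ii'}\delta_{jj'}\,dt$ directly, which sidesteps the abstract representation/regularization step you anticipate as the main obstacle.
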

\begin{proof}
Since
\begin{equation}\label{eq-sde-zz*}
d(ZZ^*)=dZ\circ Z^*+Z\circ dZ^*,
\end{equation}
from \eqref{eq-matrix-sde} we know that the martingale part of $d(ZZ^*)$ is given by
\[
Vd\gamma Z^*+Zd\gamma^* V^*
\]
We first show that there exists a $\mathbb{C}^{k \times k}$ Brownian motion $\bB$ such that
\[
Zd\gamma^* V^*=  \sqrt{ZZ^*}d\bB\sqrt{I_k-ZZ^*},
\]
Set
\[
dM:= \sqrt{ZZ^*}^{-1}Zd\gamma^* V^*\sqrt{I_k-ZZ^*}^{-1}.
\]
It is then equivalent to show that $M$ is a $\mathbb{C}^{k \times k}$ Brownian motion.
For any $1\le i,j,i',j'\le k$,
\begin{align*}
dM_{ij} = \sum_{m,\ell,q=1}^k\sum_{p=1}^{n-k}\left(\sqrt{ZZ^*}^{-1}\right)_{im} Z_{m\ell}\,(d\gamma^*)_{\ell p}(V^*)_{pq}\left(\sqrt{I_k-ZZ^*}^{-1}\right)_{qj}
\end{align*}
and 
\begin{align*}
d\overline{M}_{i'j'}= \sum_{m',\ell',q'=1}^k\sum_{p'=1}^{n-k}\left(\sqrt{\overline{ZZ}^*}^{-1}\right)_{i'm'} \overline{Z}_{m'\ell'}\,(d\overline{\gamma}^*)_{\ell' p'}(\overline{V}^*)_{p'q'}\left(\sqrt{I_k-\overline{ZZ}^*}^{-1}\right)_{q'j'}.
\end{align*}
Since $(d\gamma^*)_{\ell p}(d\overline{\gamma}^*)_{\ell' p'}=2dt\, \delta_{pp'}\delta_{\ell\ell'}$ and $VV^*=I_k-ZZ^*$, we then obtain that
\[
dM_{ij} d\overline{M}_{i'j'}=2dt\, \delta_{ii'} \delta_{j'j}.
\]
This implies that the martingale part of $d(ZZ^*)$ is given by 
\[
 \sqrt{ZZ^*}d\bB\sqrt{I_k-ZZ^*}+ \sqrt{I_k-ZZ^*}d\bB^* \sqrt{ZZ^*}.
\]
Moreover from \eqref{eq-sde-zz*} we know that the drift part for $d(ZZ^*)$ is given by
\[
dF:=\frac12\left(dVd\gamma Z^*+Zd\gamma^*dV^* +dZ d\alpha Z^*+ Zd\alpha^* d Z^*\right)+dZdZ^*.
\]
Plugging in \eqref{eq-matrix-sde} we find that
\[
dF= 2dt\, \left(kI_k-nZZ^*\right).
\]
This then concludes the proof. 
\end{proof}

\begin{theorem}\label{main:s1-sphere}
Let $U(t)=\begin{pmatrix}  X(t) & Y(t) \\ Z(t) & V(t) \end{pmatrix}$, $t\ge0$ be the solution of \eqref{eq-du-da}.  
\begin{enumerate}
\item The process $\begin{pmatrix}  X(t)  \\ Z(t)  \end{pmatrix}_{t\ge0}$ is a Brownian motion on $V_{n,k}$;

\item We have $\mathbb{P} \left( \inf\{t>0, \det Z(t)=0\} <+\infty \right)=0$  and the process $(w (t))_{t \ge 0}:=(X(t)Z(t)^{-1})_{t \ge 0}$ is a Brownian motion on $\widehat{G}_{n,k}$  with  generator  $\frac12\Delta_{\widehat{G}_{n,k}}$, where 
\begin{align*}
\Delta_{\widehat{G}_{n,k}}&={{4}}\sum_{1\le i, i'\le n-k, 1\le j, j'\le k}(I_{n-k}+ww^*)_{ii'} (I_k+w^*w)_{j'j}\frac{\partial^2}{\partial w_{ij}\partial \bar{w}_{i'j'}}.
\end{align*}

\end{enumerate}

\end{theorem}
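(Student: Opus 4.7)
The plan is to derive both statements from the general theory of Riemannian submersions developed in Chapter~4 by composing two successive projections, and then to identify $\Delta_{\widehat{G}_{n,k}}$ by reading off quadratic covariations via It\^o's formula.

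For part~(1), I consider the quotient map $\pi_1 : \mathbf{U}(n) \to V_{n,k}$ sending $U$ to its first $k$ columns $\begin{pmatrix}X\\ Z\end{pmatrix}$. This is the B\'erard--Bergery fibration (Example~\ref{BB fibration}) with $\mathbf{G}=\mathbf{U}(n)$, $\mathbf{K}=\{e\}$, $\mathbf{H}=\mathbf{U}(n-k)$ acting on the last $n-k$ columns. Hence $\pi_1$ is a Riemannian submersion with totally geodesic fibers isometric to $\mathbf{U}(n-k)$, and the quotient metric on $V_{n,k}$ agrees with the canonical Stiefel metric inherited from $\mathbb{C}^{n\times k}$. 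Applying Remark~\ref{harmonic submersion} to the Brownian motion $U(t)$ on $\mathbf{U}(n)$ shows that $\pi_1(U(t)) = \begin{pmatrix}X(t)\\ Z(t)\end{pmatrix}$ is a Brownian motion on $V_{n,k}$, which is part~(1).

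For part~(2), I first observe that $\{\det Z = 0\} \subset V_{n,k}$ is the zero set of a single complex-analytic equation and hence a real submanifold of codimension at least two; since Brownian motion on a smooth Riemannian manifold of dimension $\ge 2$ is almost surely disjoint from any such set (the classical polar-set result for non-degenerate elliptic diffusions with smooth transition density), this gives $\mathbb{P}(\inf\{t > 0 : \det Z(t) = 0\} < +\infty) = 0$. The restricted projection $p : \widehat{V}_{n,k} \to \widehat{G}_{n,k}$ is the Stiefel fibration of Section~\ref{homogeneous}, another instance of the B\'erard--Bergery fibration, and is therefore a Riemannian submersion with totally geodesic fibers isometric to $\mathbf{U}(k)$. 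Applying Remark~\ref{harmonic submersion} once more, $w(t) = X(t) Z(t)^{-1}$ is a Brownian motion on $\widehat{G}_{n,k}$ with generator $\tfrac{1}{2}\Delta_{\widehat{G}_{n,k}}$.

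It remains to identify $\Delta_{\widehat{G}_{n,k}}$ explicitly in the $w_{ij}$ coordinates. I exploit that $\widehat{G}_{n,k}$ inherits a K\"ahler structure from $G_{n,k}$ and that the $w_{ij}$ are holomorphic coordinates, so the Laplace--Beltrami operator takes the form $\Delta = 4\sum h^{ij,\overline{i'j'}} \partial_{w_{ij}} \partial_{\bar w_{i'j'}}$, with no first-order terms and no pure $\partial_{w}\partial_{w}$ terms; the coefficients $h^{ij,\overline{i'j'}}$ are thus determined by the quadratic covariations $\langle dw_{ij}, d\bar w_{i'j'}\rangle$. A short It\^o calculation using \eqref{eq-matrix-sde} shows that the $\alpha$-contributions from $dX\cdot Z^{-1}$ and from the first-order term of $X\cdot dZ^{-1}$ cancel, leaving the martingale part
\[
dw^{\mathrm{mart}} = (Y - wV) \, d\gamma \, Z^{-1}.
\]
Combined with $\langle d\gamma_{ab}, d\bar\gamma_{a'b'}\rangle = 2\delta_{aa'}\delta_{bb'}dt$ for the Brownian motion on $\mathfrak{u}(n)$, this gives
\[
\langle dw_{ij}, d\bar w_{i'j'}\rangle = 2 \bigl[(Y - wV)(Y - wV)^*\bigr]_{ii'} \bigl[(ZZ^*)^{-1}\bigr]_{j'j} \, dt.
\]

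The main (algebraic) obstacle is the reduction of the two bracketed factors. Using the block identities $XX^* + YY^* = I_{n-k}$, $XZ^* + YV^* = 0$, $ZZ^* + VV^* = I_k$ (from $UU^* = I_n$) and $X^*X + Z^*Z = I_k$ (from $U^*U = I_n$), together with $Z^* w^* = X^*$ (which follows from $w = XZ^{-1}$), a direct expansion yields $(Y - wV)(Y - wV)^* = YY^* + XX^* + ww^* = I_{n-k} + ww^*$; similarly, $w^*w = (Z^*)^{-1}(I_k - Z^*Z) Z^{-1} = (ZZ^*)^{-1} - I_k$, whence $(ZZ^*)^{-1} = I_k + w^*w$. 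Comparing the resulting formula with $\tfrac{1}{2}\Delta_{\widehat{G}_{n,k}}(w_{ij}\bar w_{i'j'}) = \langle dw_{ij}, d\bar w_{i'j'}\rangle/dt$, where $\Delta_{\widehat{G}_{n,k}}(w_{ij}) = \Delta_{\widehat{G}_{n,k}}(\bar w_{i'j'}) = 0$ by holomorphy, delivers the stated expression.
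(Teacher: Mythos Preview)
Your proof is correct and follows the same overall strategy as the paper (compose the two Riemannian submersions $\mathbf{U}(n)\to V_{n,k}\to \widehat G_{n,k}$, then identify the generator via quadratic covariations), with two minor differences in execution worth noting.

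First, for the non-vanishing of $\det Z(t)$, the paper does not use a polar-set argument but instead invokes the preceding proposition: $Z(t)Z^*(t)$ is a matrix Jacobi process, and for such processes it is known (under the standing assumption $k\le n-k$) that zero is never hit. Your codimension-two argument is valid in principle, but the paper's route is more concrete and ties in with the Jacobi process machinery used throughout the chapter.

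Second, in identifying $\Delta_{\widehat G_{n,k}}$, the paper computes the full It\^o differential $dw=(Y-wV)\,d\gamma\,Z^{-1}$ directly (showing in particular that the drift vanishes) and reads off the generator from there, only afterwards invoking the submersion to conclude that this diffusion is the Brownian motion. You reverse the order: you first know $w$ is Brownian motion by Remark~\ref{harmonic submersion}, then use the K\"ahler structure to assert the Laplace--Beltrami operator has the form $4\sum h^{ij,\overline{i'j'}}\partial_{w_{ij}}\partial_{\bar w_{i'j'}}$, so that the covariations alone determine the operator. This is a nice shortcut that avoids checking the drift cancellation directly, though the K\"ahler structure of $G_{n,k}$ is used implicitly rather than established in this chapter.
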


\begin{proof}
The first part of the theorem is straightforward to prove. Indeed, the  map $\Pi: \mathbf{U}(n) \to V_{n,k}$, $\begin{pmatrix}  X & Y  \\ Z  & V  \end{pmatrix} \to \begin{pmatrix}  X   \\ Z    \end{pmatrix}  $ is a Riemannian submersion with totally geodesic fibers, therefore the process $(\Pi(U(t)))_{t \ge 0}$ is a Brownian motion on $V_{n,k}$. We now proceed to the proof of the second part of the theorem. We first note that, as noticed above,  $(Z(t)Z^*(t) )_{ t \ge 0}$ is a matrix Jacobi process and therefore, from known properties of those processes, $\mathbb{P} \left( \inf\{t>0, \det (Z(t)Z^*(t))=0\} <+\infty \right)=0$. Consequently, $w=XZ^{-1}$  is defined for all times. 
Using then It\^o's formula, long but routine computations yield
\begin{align}\label{w dynamics}
dw=d(XZ^{-1}
)=(Y d\gamma -w V d\gamma) Z^{-1}.
\end{align}
whence it follows that
\begin{align*}
dw_{ij}d\bar{w}_{i'j}&=2(I_{n-k}+ww^*)_{ii'}(I_k+w^*w)_{j'j}dt.
\end{align*}
Consequently, $(w (t))_{t \ge 0}$ is a diffusion with generator $\frac12\Delta_{\widehat{G}_{n,k}}$. Finally, this process is indeed the Brownian motion on $\widehat{G}_{n,k}$ since $p$ is a Riemannian submersion with totally geodesic fibers.
\end{proof}

\begin{remark}
The complex Grassmannian $G_{n,k}$ is a compact irreducible  symmetric space of rank $k$ and the complex Stiefel manifold is a Riemannian homogeneous space (but is not a symmetric space). As such, the Brownian motions on $G_{n,k}$ and $V_{n,k}$ and their distributions  and pathwise properties can be studied using representation theory and stochastic differential geometry. The literature on those topics is nowadays quite substantial. We for instance refer to the early  works  by E. Dynkin \cite{MR0132607, MR0202206} and P. $\&$ M.P. Malliavin \cite{MR0359023} or more recent presentations like \cite{MR1284654},  \cite{MR3243323}, \cite{MR3201989}, \cite{MR1138584} and  the monograph \cite{MR2731662} (see in particular Chapter 7: Riemannian submersions and Symmetric spaces). In some sense, Theorem \ref{main:s1-sphere} provides a more pedestrian approach: We work in a specific choice of coordinates within the algebra of complex matrices and describe the $G_{n,k}$  and $V_{n,k}$ Brownian motions in those coordinates taking advantage of the  additional structure given by the matrix multiplication. Note that a similar pedestrian approach is taken in \cite{MR825735} to study the Brownian motion in the homogeneous space $\mathbf{GL}(n,\R)/\mathbf{O}(n,\R)$.
    
\end{remark}

\begin{proposition}\label{invariant-w}
If we consider on $\mathbb{C}^{(n-k)\times k}$ the probability measure
\[
d\mu:=c_{n,k}\det(I_k+w^*w)^{-n} dw
\]
with normalizing constant $c_{n,k}$, then $\mu$ is the unique symmetric and invariant probability measure for the diffusion $(w(t))_{t \ge 0}$: if $f,g$ are smooth and compactly supported functions on $\mathbb{C}^{(n-k)\times k}$, then
\[
\int_{\mathbb{C}^{(n-k)\times k}} (\Delta_{\widehat{G}_{n,k}}f) g\, d\mu=\int_{\mathbb{C}^{(n-k)\times k}} f(\Delta_{\widehat{G}_{n,k}}g)\, d\mu
\]
\end{proposition}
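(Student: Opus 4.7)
My plan is to identify $\mu$, up to a multiplicative constant, with the Riemannian volume measure of $\widehat{G}_{n,k}$, and then invoke the standard fact that the Laplace--Beltrami operator is symmetric with respect to the Riemannian volume on any (orientable) Riemannian manifold. Once this identification is in place, symmetry of $\frac{1}{2}\Delta_{\widehat{G}_{n,k}}$ is automatic, invariance follows by taking $f \equiv 1$ in the symmetry relation (and using $\Delta_{\widehat{G}_{n,k}} 1 = 0$), and uniqueness is obtained from the fact that $G_{n,k}$ is a compact symmetric space on which $\mathbf{U}(n)$ acts transitively by isometries (the sub-manifold $\{\det Z = 0\}$ missing from $\widehat{G}_{n,k}$ has Lebesgue measure zero, hence contributes nothing to the discussion).

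The key computation is to read off the Riemannian volume density from the generator. Reading coefficients from the formula for $\Delta_{\widehat{G}_{n,k}}$ shows that the Hermitian cometric is
\[
h^{-1}\left(\frac{\partial}{\partial w_{ij}},\frac{\partial}{\partial \bar w_{i'j'}}\right)=2\,(I_{n-k}+ww^*)_{ii'}\,(I_k+w^*w)_{j'j},
\]
which, regarded as a $k(n-k)\times k(n-k)$ Hermitian matrix indexed by the pairs $(i,j)$ and $(i',j')$, has the Kronecker product structure $2\,(I_{n-k}+ww^*)\otimes (I_k+w^*w)^{T}$. Applying the identity $\det(A\otimes B)=(\det A)^{\dim B}(\det B)^{\dim A}$ together with Sylvester's determinant identity $\det(I_{n-k}+ww^*)=\det(I_k+w^*w)$ gives
\[
\det(h^{-1}) = 2^{k(n-k)}\det(I_{n-k}+ww^*)^{k}\det(I_k+w^*w)^{n-k}=2^{k(n-k)}\det(I_k+w^*w)^{n}.
\]
Since on a complex manifold the Riemannian volume density with respect to Lebesgue measure on the real coordinates is proportional to $\det h$ (the determinant of the Hermitian metric, not its real part), we conclude
\[
dV_{\widehat{G}_{n,k}}(w)=\kappa_{n,k}\,\det(I_k+w^*w)^{-n}\,dw
\]
for some explicit dimensional constant $\kappa_{n,k}>0$, so $\mu = c_{n,k}\kappa_{n,k}^{-1}\,dV_{\widehat{G}_{n,k}}$.

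With $\mu$ identified as (a constant multiple of) the Riemannian volume, symmetry of $\Delta_{\widehat{G}_{n,k}}$ on $C_0^\infty(\widehat{G}_{n,k})$ is a standard fact from Riemannian geometry. Invariance is the consequence of symmetry applied to $(f, 1)$ together with $\Delta_{\widehat{G}_{n,k}} 1 = 0$, and the finiteness of $\mu$ follows from the compactness of $G_{n,k}$ (one then normalizes). For uniqueness, observe that the canonical left action of $\mathbf{U}(n)$ on $V_{n,k}$ descends to a transitive isometric action on $G_{n,k}$, so the Brownian motion on $G_{n,k}$ is irreducible; on a compact irreducible Riemannian homogeneous space, the normalized Riemannian volume is the unique invariant (and symmetric) probability measure for Brownian motion.

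The main technical obstacle is the determinant computation: the Kronecker product structure of the cometric must be recognized correctly and Sylvester's identity must be applied with the right dimensions to collapse the product $\det(I_{n-k}+ww^*)^{k}\det(I_k+w^*w)^{n-k}$ to $\det(I_k+w^*w)^{n}$. Keeping track of the correct convention (complex Hermitian determinant versus real Riemannian determinant, and where the factor of $2$ goes) is what makes the density come out to exactly $\det(I_k+w^*w)^{-n}$ rather than some other power. Everything else in the argument reduces to well-established facts about Laplacians on compact symmetric spaces.
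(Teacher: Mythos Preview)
Your proof is correct and takes a genuinely different route from the paper. The paper argues by direct integration by parts: setting $A_{ii'jj'}=(I_{n-k}+ww^*)_{ii'}(I_k+w^*w)_{j'j}$ and $\rho=\det(I_k+w^*w)^{-n}$, it computes $\partial_{ij}A_{ii'jj'}$, $\bar\partial_{i'j'}A_{ii'jj'}$, $\partial_{ij}\rho$, $\bar\partial_{i'j'}\rho$ explicitly and checks that the ``boundary'' terms cancel pairwise, leaving only the carr\'e du champ. Your approach bypasses this by recognizing that $\mu$ \emph{is} the Riemannian volume: you read the Hermitian cometric off the principal symbol of $\Delta_{\widehat G_{n,k}}$, observe the Kronecker structure $(I_{n-k}+ww^*)\otimes(I_k+w^*w)^T$, and use $\det(A\otimes B)=(\det A)^k(\det B)^{n-k}$ together with Sylvester's identity $\det(I_{n-k}+ww^*)=\det(I_k+w^*w)$ to get the volume density $\det(I_k+w^*w)^{-n}$. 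Symmetry then follows from the general fact that the Laplace--Beltrami operator is symmetric for the Riemannian volume.

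What each buys: the paper's computation is entirely self-contained and does not invoke any K\"ahler geometry or the Hermitian volume formula, at the price of a somewhat mechanical calculation. Your argument is shorter and more conceptual, but relies on knowing that on a Hermitian manifold the Riemannian volume density equals $\det(h_{i\bar j})$ and (implicitly) that $G_{n,k}$ is K\"ahler so that the Laplace--Beltrami operator really has no first-order terms in holomorphic coordinates---which is consistent with Theorem~\ref{main:s1-sphere}, where $(w(t))$ is shown to be a driftless local martingale. A minor point: with the paper's convention $\Delta=4h^{i\bar j}\partial_i\partial_{\bar j}$, the cometric is $(I_{n-k}+ww^*)_{ii'}(I_k+w^*w)_{j'j}$ without your factor of $2$; this only shifts the normalizing constant and is harmless. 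Finally, you address uniqueness (via compactness and the transitive $\mathbf{U}(n)$ action, equivalently ergodicity of Brownian motion on the compact connected manifold $G_{n,k}$), which the paper's proof does not spell out.
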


\begin{proof}
We denote $\partial_{ij}=\frac{\partial }{\partial w_{ij}}$, $\bpartial_{i j}=\frac{\partial }{\partial\bw_{ij}}$,  $A_{ii'jj'}=(\delta_{ii'}+(w w^*)_{ii'})(\delta_{j'j}+(w^*w)_{j'j})$, and $\rho=c_{n,k}\det(I_k+w^*w)^{-n}$. Let us denote
\[
\mathcal{T}(f,g)=2\sum_{1\le i, i'\le n-k, 1\le j, j'\le k}(I_{n-k}+ww^*)_{ii'}(I_k+w^*w)_{j'j}\bigg(\frac{\partial f}{\partial w_{ij}}\frac{\partial g}{\partial\bw_{i'j'}}+\frac{\partial g}{\partial w_{ij}}\frac{\partial f}{\partial\bw_{i'j'}}\bigg)
\]
By integration by parts we have
\begin{align*}
-\frac{1}{2}\int (\Delta_{\widehat{G}_{n,k}}f) g\, d\mu 
= \frac{1}{2}\mathcal{T}(f,g)+R.
\end{align*}
where 
\begin{align*}
R&=\sum_{1\le i, i'\le n-k, 1\le j, j'\le k} \bigg(\int [(\partial_{ij}f)\,(\bpartial_{i'j'} A_{ii'jj'})+(\bpartial_{i'j'}f)\,(\partial_{ij} A_{ii'jj'})]g\rho dm\\
&+\int [(\partial_{ij}f)\,(\bpartial_{i'j'}\rho)+(\bpartial_{i'j'}f)\,(\partial_{ij}\rho)]gA_{ii'jj'} dm\bigg)
\end{align*}
Since 
\[
\sum_{1\le i'\le n-k, 1\le j'\le k} \bpartial_{i'j'} A_{ii'jj'}=n({\bw}(I_k+J))_{ij},
\ 
\sum_{1\le i\le n-k, 1\le j\le k} \partial_{ij} A_{ii'jj'}=n({\bw}(I_k+\bar{J}))_{i'j'},
\]
and
\begin{align*}
\bpartial_{i'j'} \rho
&=-n  \rho\bigg(\bw(I_k+J)^{-1}\bigg)_{i'j'}, \quad
\partial_{ij} \rho=-n  \rho\bigg(\bw(I_k+\bar{J})^{-1} \bigg)_{ij},
\end{align*}
we  have
\begin{align*}
\sum_{i',j'}(\bpartial_{i'j'} A_{ii'jj'})g\rho + (\bpartial_{i'j'}\rho)gA_{ii'jj'}=0,
\quad
\sum_{i,j}(\partial_{ij} A_{ii'jj'})g\rho + (\partial_{ij}\rho)gA_{ii'jj'}=0.
\end{align*}
This gives $R=0$. We the have
\begin{align*}
\int_{\mathbb{C}^{(n-k)\times k}} (\Delta_{\widehat{G}_{n,k}}f) g\, d\mu 
= -\mathcal{T}(f,g).
\end{align*}
The bilinear first order operator $\mathcal{T}$ is in fact the carr\'e du champ operator. By similar computations as above we also obtain that
\[
\int_{\mathbb{C}^{(n-k)\times k}} f(\Delta_{\widehat{G}_{n,k}}g)\, d\mu= -\mathcal{T}(f,g),
\]
which then completes the proof.
\end{proof}
%
%
\subsection{Eigenvalue process} 

In this section, for later use, we collect some properties of the eigenvalues of the process $(J(t))_{t \ge 0}: = (w^* (t) w (t))_{t \ge 0}$  where $(w (t))_{t \ge 0}=(X(t)Z(t)^{-1})_{t \ge 0}$ is a Brownian motion on $\widehat{G}_{n,k}$ as in Theorem \ref{main:s1-sphere}. We note that
\[
J= w^* w =(Z^{-1})^* X^* XZ^{-1}=(Z Z^*)^{-1}-I_k
\]
Therefore $(I_k+J)^{-1}=ZZ^{*}$, and the properties of $J$ and its eigenvalues can be deduced from the corresponding properties of the matrix Jacobi process $ZZ^{*}$ after basic algebraic manipulations. In particular, one obtains the following result.

\begin{lemma}\label{thm-J}

Let $(J(t))_{t\ge0}$ be given as above, then $\mathbb{P}( \inf\{ t>0, \det (J(t))=0\}<+\infty)=0$ and there exists a Brownian motion $(\bB(t))_{t \ge 0}$ in $\C^{k \times k}$ such that:
\begin{equation}\label{eq-dJ-1}
dJ=\sqrt{I_{k}+J} d\bB^*\sqrt{I_k+J} \sqrt{J}+\sqrt{J}\sqrt{I_k+J}d\bB \sqrt{I_{k}+J}+2(n-k+\tr(J))(I_k+J)dt
\end{equation} 
\end{lemma}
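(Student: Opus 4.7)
The plan is to derive the SDE for $J$ from the SDE \eqref{eq-sde-zz*} for $M:=ZZ^*$ established in the previous proposition, exploiting the identity $I_k+J=M^{-1}$ noted just before the lemma. The non-degeneracy claim is handled first: $\det J>0$ is equivalent to $I_k-M=VV^*$ being invertible, and since $M$ is a complex matrix Jacobi process with eigenvalues remaining in the open interval $(0,1)$ for every $t>0$ almost surely (the standard non-boundary-hitting property of matrix Jacobi diffusions, the same fact underlying $\det M>0$ in Theorem \ref{main:s1-sphere}), both $M$ and $I_k-M$ stay invertible, hence so does $J$.

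For the SDE itself I would apply the matrix It\^o inversion identity
\[
d(M^{-1})=-M^{-1}(dM)M^{-1}+M^{-1}(dM)M^{-1}(dM)M^{-1}.
\]
For the martingale part I would substitute $M^{-1}=I_k+J$, $\sqrt{M}=(I_k+J)^{-1/2}$ and the key identity $\sqrt{I_k-M}=\sqrt{J}(I_k+J)^{-1/2}$, which is valid because $J$ and $I_k+J$ commute and so do their functional-calculus square roots. Plugging these into $-M^{-1}(dN)M^{-1}$, where $dN$ is the martingale part of $dM$, the expression collapses to
\[
-\sqrt{I_k+J}\,d\bB\,\sqrt{J}\sqrt{I_k+J}-\sqrt{I_k+J}\sqrt{J}\,d\bB^*\sqrt{I_k+J}.
\]
Setting $\widetilde{\bB}:=-\bB^*$ produces another $\mathbb{C}^{k\times k}$-Brownian motion, and using $\sqrt{J}\sqrt{I_k+J}=\sqrt{I_k+J}\sqrt{J}$ the display rearranges into the target martingale form with $\widetilde{\bB}$ playing the role of the Brownian motion stated in the lemma.

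For the drift I would sum the direct contribution $-M^{-1}(2kI_k-2nM)M^{-1}=-2k(I_k+J)^2+2n(I_k+J)$ with the It\^o second-order correction $M^{-1}(dM)M^{-1}(dM)M^{-1}$, computed entry-by-entry using $d\bB_{ij}d\bB^*_{kl}=2\delta_{il}\delta_{jk}\,dt$. The two non-vanishing cross contractions inside $(dM)M^{-1}(dM)$ yield $2\tr(J)\,M+2k(I_k-M)$ via the cyclic identities $\tr((I_k-M)M^{-1})=\tr(J)$ and $\tr(M\,M^{-1})=k$, and sandwiching by $M^{-1}=I_k+J$ on both sides gives a correction $\bigl[2\tr(J)(I_k+J)+2k(I_k+J)J\bigr]dt$. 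Summing with the direct drift collapses via $(I_k+J)-J=I_k$ to
\[
-2k(I_k+J)^2+2n(I_k+J)+2k(I_k+J)J+2\tr(J)(I_k+J)=2\bigl(n-k+\tr(J)\bigr)(I_k+J),
\]
which matches the target.

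The main obstacle will be the drift bookkeeping: the It\^o correction splits into two structurally distinct pieces, one proportional to $I_k+J$ arising from contractions that collapse into $\tr(J)$, and one proportional to $(I_k+J)J$ arising from $\tr(MM^{-1})=k$; their delicate interplay with $-2k(I_k+J)^2$ to produce the compact coefficient $2(n-k+\tr(J))(I_k+J)$ requires careful pairing of the $d\bB\,d\bB^*$ versus $d\bB^*\,d\bB$ covariations together with commutativity of $J$ and $I_k+J$. By comparison, the martingale identification and the non-degeneracy claim are essentially algebraic one-liners.
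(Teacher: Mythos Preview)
Your proposal is correct and follows essentially the same route as the paper. The paper writes the inversion in Stratonovich form $dJ=-(ZZ^*)^{-1}\circ d(ZZ^*)\circ (ZZ^*)^{-1}$ and then converts to It\^o, recording the correction as $d(ZZ^*)\,d(ZZ^*)^{-1}=-2(\tr(J)I_k+kJ)\,dt$; your direct It\^o inversion $d(M^{-1})=-M^{-1}dM\,M^{-1}+M^{-1}dM\,M^{-1}dM\,M^{-1}$ and computation of $(dM)M^{-1}(dM)$ are the same calculation in a different grouping, and you are in fact more explicit than the paper about the Brownian relabeling $\widetilde{\bB}=-\bB^*$ needed to match the stated martingale term.
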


\begin{proof}
Since $J=(ZZ^*)^{-1}-I_k$, we have
\begin{align*}
dJ&=-(ZZ^*)^{-1} \circ d(ZZ^*)\circ (ZZ^*)^{-1}\\
&=-(ZZ^*)^{-1} d(ZZ^*) (ZZ^*)^{-1}-\frac12(ZZ^*)^{-1} d(ZZ^*) d(ZZ^*)^{-1}-\frac12 d(ZZ^*)^{-1} d(ZZ^*) (ZZ^*)^{-1}.
\end{align*}
Plugging in \eqref{eq-sde-zz*} we have that
\begin{align*}
&-(ZZ^*)^{-1} d(ZZ^*) (ZZ^*)^{-1}\\
=&\sqrt{I_k+J}\, d\bB\sqrt{J}\sqrt{I_k+J} + \sqrt{I_k+J}\sqrt{J}d\bB^*\sqrt{I_k+J}+2 \left((n-k)I_k-kJ\right)(I_k+J)dt.
\end{align*}
We can also compute that
\[
d(ZZ^*) d(ZZ^*)^{-1}=d(ZZ^*)^{-1}d(ZZ^*) =-2(\tr(J)I_k+k J)dt. 
\]
Combining the above equations we then obtain the conclusion.
\end{proof}

We explicitly note that the proof of $\mathbb{P}( \inf\{ t>0, \det (J(t))=0\}<+\infty)=0$ uses the fact that $k \le n-k$.

\begin{remark}
The symmetric and invariant probability measure of the diffusion process $J$ can be easily obtained. Indeed, let $\mathbf{w}$ be a random variable on $\C^{(n-k)\times k}$ whose law is the probability measure with density $c_{n,k}\det(I_k+w^*w)^{-n} dw$.  From Proposition 1 in \cite{Forrester} one has for every bounded Borel function $g$ and some normalization constant $c'_{n,k}$ that
\begin{align*}
\mathbb{E} ( g (\mathbf{w}^* \mathbf{w}))&=c_{n,k} \int_{\C^{(n-k)\times k}} g (w^* w) \det (I_k +w^* w)^{-n} dw \\
 & =c'_{n,k} \int_{\Hh_k} g (S) \det (I_k +S)^{-n} \det(S)^{n-2k} dS.
\end{align*}
Thus, from Remark \ref{invariant-w}, the probability measure on the cone $\Hh_k$ of positive definite Hermitian matrices with density  $ c'_{n,k} \det (I_k +S)^{-n} \det(S)^{n-2k} dS$ is the invariant and symmetric probability measure for the diffusion process $J$.
\end{remark}

\begin{lemma}\label{thm-lambda}
Let   $\lambda(t)=(\lambda_1(t),\dots,\lambda_k(t))$, $t\ge0$ be the eigenvalue process of the diffusion matrix $J$. Assume that $\lambda_1(0) > \cdots > \lambda_k(0)$, then the process $\lambda(t)$ is non colliding, i.e.
\[
\mathbb{P}\left( \forall \, t \ge 0, \lambda_1(t) > \cdots > \lambda_k(t) \right)=1.
\]
Moreover, we have
\begin{equation}\label{eq-lambda-sde}
d\lambda_i=2(1+\lambda_i)\sqrt{\lambda_i}dB^i+2(1+\lambda_i) \bigg(n-2k+1-(2k-3)\lambda_i+2\lambda_i (1+\lambda_i)\sum_{\ell\not=i} \frac{1}{\lambda_i-\lambda_\ell}\bigg)dt,
\end{equation}
where $(B(t))_{t \ge 0}$ is a Brownian motion in $\mathbb{R}^k$.
\end{lemma}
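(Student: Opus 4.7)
The plan is to diagonalize $J(t) = O(t)\Lambda(t)O(t)^*$ with $O(t) \in \mathbf{U}(k)$ and $\Lambda(t)=\mathrm{diag}(\lambda_1(t),\dots,\lambda_k(t))$, and to derive \eqref{eq-lambda-sde} by the same Stratonovich differentiation scheme used in the proof of Theorem \ref{eigen process u}. First, introduce the stopping time $\tau_D = \inf\{t>0:\lambda_i(t)=\lambda_{i+1}(t)\text{ for some }i\}$, on $[0,\tau_D)$ the eigenvalues are simple and $O$ can be chosen smoothly. Set $dM := O^*\circ dO$ (skew-Hermitian by $d(O^*O)=0$) and $dN := O^*\circ dJ\circ O$. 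Differentiating $\Lambda=O^*JO$ in Stratonovich form gives $d\Lambda = [\Lambda,dM]+dN$, from which the diagonal part yields $d\lambda_i = dN_{ii}$, while the off-diagonal entries $(i\neq j)$ give the algebraic relation $dM_{ij} = dN_{ij}/(\lambda_j-\lambda_i)$. This is the exact analogue of \eqref{eq-dU1}.

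For the martingale part, substitute Lemma \ref{thm-J} into $dN$. The unitary invariance of the complex Brownian matrix $\bB$ ensures that $d\widetilde{\bB}:=O^*\,d\bB\,O$ is again a $\C^{k\times k}$ Brownian matrix with the same covariance, so the martingale part of $dN$ reads $\sqrt{\Lambda(I_k+\Lambda)}\,d\widetilde{\bB}\,\sqrt{I_k+\Lambda} + \sqrt{I_k+\Lambda}\,d\widetilde{\bB}^*\sqrt{(I_k+\Lambda)\Lambda}$. Taking the $(i,i)$ entry yields $2(1+\lambda_i)\sqrt{\lambda_i}\,\mathrm{Re}(d\widetilde{\bB}_{ii})\cdot\sqrt{1+\lambda_i}$... a direct variance check shows that the diagonal martingale part of $d\lambda_i$ has quadratic variation $4(1+\lambda_i)^2\lambda_i\,dt$ and that the cross brackets between $(dN)_{ii}$ and $(dN)_{jj}$ vanish for $i\neq j$. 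L\'evy's characterization then produces independent standard real Brownian motions $B^1,\dots,B^k$ with $dM^{\mathrm{mart}}_i := 2(1+\lambda_i)\sqrt{\lambda_i}\,dB^i$.

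Next, I would extract the drift of $d\lambda_i$ by converting $dN_{ii}^{\mathrm{Strat}}$ to It\^o form. There are two contributions: the drift $2(n-k+\mathrm{tr}(J))(I_k+J)$ from Lemma \ref{thm-J}, which after conjugation yields $2(n-k+\mathrm{tr}(\Lambda))(1+\lambda_i)$ on the diagonal, and the Stratonovich-to-It\^o correction coming from $\tfrac12\,dO^*\,dJ\,O + \tfrac12\,O^*\,dJ\,dO$. Expanding the latter using $dM_{ij}=dN_{ij}/(\lambda_j-\lambda_i)$ and the off-diagonal brackets $dN_{ij}\,dN_{ji}$ computed from the explicit martingale expression above produces exactly the Dyson-type repulsion $4\lambda_i(1+\lambda_i)^2\sum_{\ell\neq i}1/(\lambda_i-\lambda_\ell)$, together with a lower-order contribution $-2(1+\lambda_i)(2k-2)\lambda_i - \ldots$ that, combined with $2(n-k+\mathrm{tr}\Lambda)(1+\lambda_i)$, collapses after bookkeeping to the drift written in \eqref{eq-lambda-sde}. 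This bookkeeping is the most error-prone step but is purely algebraic once the brackets $dN_{ij}dN_{i'j'}$ have been tabulated.

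Finally, for the non-collision statement, I would mimic the Lyapunov argument of Theorem \ref{thm-non-collide}. Consider $\Phi(\lambda)=-\sum_{i<j}\log(\lambda_i-\lambda_j)$ and apply It\^o's formula using \eqref{eq-lambda-sde}. The singular terms $\sum_{\ell\neq i}\frac{4\lambda_i(1+\lambda_i)^2}{(\lambda_i-\lambda_j)(\lambda_i-\lambda_\ell)}$ coming from the repulsive drift combine symmetrically with the second-order terms $\tfrac12\sum_i\frac{4(1+\lambda_i)^2\lambda_i}{(\lambda_i-\lambda_j)^2}$ through the partial-fraction identity $\frac{1}{(\lambda_i-\lambda_j)(\lambda_i-\lambda_\ell)}+\frac{1}{(\lambda_j-\lambda_i)(\lambda_j-\lambda_\ell)}+\frac{1}{(\lambda_\ell-\lambda_i)(\lambda_\ell-\lambda_j)}=0$, eliminating the singularity and leaving a drift that is bounded above on $[0,\tau_D)$ by a constant depending only on $n,k$ (since the $\lambda_i$ are bounded away from $0$ and $+\infty$ on compact time intervals, by the matrix Jacobi structure and the fact that $(I_k+J)^{-1}=ZZ^*$ has eigenvalues in $(0,1]$). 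Hence $\Phi(\lambda(t))$ minus a bounded-variation term is a continuous local martingale; if $\tau_D<\infty$ with positive probability, then $\Phi\to+\infty$ along a sequence tending to $\tau_D$, contradicting the fact that a continuous local martingale cannot diverge to $+\infty$ in finite time on a set of positive measure. Therefore $\mathbb{P}(\tau_D=+\infty)=1$, and uniqueness of the strong solution of \eqref{eq-lambda-sde} up to $\tau_D$ extends to all times. The main obstacle is the algebraic reorganization of the drift in step three, which must produce exactly the coefficients $(n-2k+1)$ and $-(2k-3)$; the non-collision estimate is then a routine adaptation of the classical Dyson argument.
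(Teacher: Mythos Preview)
Your proposal for deriving the SDE \eqref{eq-lambda-sde} is essentially the paper's: diagonalize $J$, set $dN=O^*\circ dJ\circ O$, read off $d\lambda_i=dN_{ii}$ and $dM_{ij}=dN_{ij}/(\lambda_j-\lambda_i)$, compute the brackets $dN_{ij}\,dN_{i'j'}$ from Lemma \ref{thm-J}, and extract the drift by converting Stratonovich to It\^o. The paper carries out precisely this bookkeeping (with $dU=dV^*\circ V$ in place of your $dM$).

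For non-collision, the paper takes a cleaner route: it first makes the M\"obius change $\rho_i=(1-\lambda_i)/(1+\lambda_i)\in(-1,1]$, under which the eigenvalue system becomes a Vandermonde Doob transform of $k$ independent Jacobi diffusions on $[-1,1]$. The drift of $\tfrac12\log\prod_{i>j}(\rho_i-\rho_j)$ is then bounded below by the explicit constant $-\tfrac{k(k-1)(3n-4k+2)}{6}$, uniformly in the path. Your direct approach in the $\lambda$-variables works too, but note a small imprecision: after the divided-difference cancellation you invoke, the residual drift of $\Phi$ is a polynomial in the $\lambda_i$ (since $\lambda(1+\lambda)^2$ has degree $3$, the symmetrized triple sums leave a term linear in the $\lambda$'s), hence not bounded by a constant depending only on $n,k$ but only pathwise, via the continuity of $J(t)$ as a function of the $\mathbf{U}(n)$-Brownian motion. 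That still suffices for McKean's argument, but the paper's compactification to $[-1,1]^k$ is what delivers a genuine universal constant and simultaneously sets up the subsequent Karlin--McGregor formula for the density.
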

%

We prove the result in several steps. As above,  we denote by $\lambda(t)= (\lambda_i(t))_{1\le i\le k}$ the eigenvalues of $J(t)$, $t\ge0$. Let $\mathcal{N}=\{\lambda\in (0, \infty)^k$, $\lambda_i\not=\lambda_j,\, \forall i\not=j\}$, and consider the stopping time
\begin{equation}\label{eq-tau-N}
\tau_{\mathcal{N}}=\inf\{t>0, \lambda(t)\not\in \mathcal{N}\}. 
\end{equation}

\begin{theorem}\label{thm-lambda}
Assume $\lambda (0)\in \mathcal{N}$. Then up to time $\tau_\mathcal{N}$, the eigenvalues $\lambda(t)=(\lambda_1,\dots,\lambda_k)(t)$, $t\ge0$ satisfy the following stochastic differential equation
\begin{equation}\label{eq-lambda-sde}
d\lambda_i=2(1+\lambda_i)\sqrt{\lambda_i}dB^i+2(1+\lambda_i) \bigg(n-2k+1-(2k-3)\lambda_i+2\lambda_i (1+\lambda_i)\sum_{\ell\not=i} \frac{1}{\lambda_i-\lambda_\ell}\bigg)dt.
\end{equation}
where $(B_t)_{t \ge 0}$ is a Brownian motion in $\mathbb{R}^k$.
\end{theorem}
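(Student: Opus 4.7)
\medskip

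\noindent\textbf{Proof proposal.} My plan is to mimic the strategy used for the unitary Brownian motion (Theorem \ref{eigen process u} and Theorem \ref{thm-non-collide}), adapted to the SDE \eqref{eq-dJ-1} for $J(t)$. Diagonalize $J = V \Lambda V^{*}$ with $V \in \mathbf{U}(k)$ and $\Lambda = \mathrm{diag}(\lambda_1,\dots,\lambda_k)$; since $\tau_\mathcal{N}$ is the first collision time, both $V$ and $\Lambda$ can be chosen to be semimartingales up to $\tau_\mathcal{N}$. Introduce the auxiliary $\mathfrak{u}(k)$-valued one-form $dM = V^{*}\circ dV$ (so $dM + dM^{*}=0$) and $dN = V^{*}\circ dJ\, V$. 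Differentiating $J = V\Lambda V^{*}$ in Stratonovich form yields
\begin{equation*}
d\Lambda = dN + \Lambda\, dM - dM\, \Lambda ,
\end{equation*}
from which I can read off, since $\Lambda$ is diagonal:
\begin{equation*}
d\lambda_i = dN_{ii}, \qquad dM_{ij} = \frac{dN_{ij}}{\lambda_j - \lambda_i}\quad (i\neq j),
\end{equation*}
while the diagonal entries $dM_{ii}$ are purely imaginary and will drop out of the later computations.

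Next I would plug the explicit SDE \eqref{eq-dJ-1} for $J$ into $dN$, using the functional calculus $\sqrt{J}=V\sqrt{\Lambda}V^{*}$ and $\sqrt{I_k+J}=V\sqrt{I_k+\Lambda}V^{*}$:
\begin{equation*}
dN = \sqrt{\Lambda}\sqrt{I_k+\Lambda}\,(V^{*} d\bB\, V)\sqrt{I_k+\Lambda} + \sqrt{I_k+\Lambda}\,(V^{*} d\bB^{*}\, V)\sqrt{I_k+\Lambda}\sqrt{\Lambda} + 2(n-k+\tr\Lambda)(I_k+\Lambda)\,dt .
\end{equation*}
Because $V$ is adapted and unitary, $d\widetilde\bB := V^{*}d\bB\, V$ has the same covariance structure as $d\bB$ (an entrywise check using $d\bB_{ab}d\bar \bB_{cd} = 2\,dt\,\delta_{ac}\delta_{bd}$, as in the unitary case), so writing $dE$ for the martingale part and $dF$ for the finite-variation part of $dN$, I get
\begin{equation*}
dE_{ii} = 2(1+\lambda_i)\sqrt{\lambda_i}\,\mathrm{Re}(d\widetilde\bB_{ii}) = 2(1+\lambda_i)\sqrt{\lambda_i}\,dB^{i},
\end{equation*}
with $(B^i)_{1\le i\le k}$ independent standard real Brownian motions, giving the martingale part of \eqref{eq-lambda-sde}. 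For the drift, I first collect the contribution $2(n-k+\sum_\ell\lambda_\ell)(1+\lambda_i)\,dt$ coming directly from the finite-variation part of $dN_{ii}$, and then compute the Stratonovich-to-Itô correction $\tfrac{1}{2}d\langle dN_{ii}\rangle$. The latter is obtained from $d(dN)_{ii}$ in the expansion $dN = V^{*}dJ\,V + \tfrac{1}{2}(dV^{*}\,dJ\,V + V^{*}dJ\,dV + V^{*}\,dJ\,dV^{*})$ and uses $dM_{ij} = dN_{ij}/(\lambda_j-\lambda_i)$; the off-diagonal martingale entries $dE_{ij}$ produce the Dyson-type repulsion term
\begin{equation*}
4(1+\lambda_i)\lambda_i(1+\lambda_i)\sum_{\ell\neq i}\frac{dt}{\lambda_i-\lambda_\ell}
\end{equation*}
after computing $|dE_{ij}|^2 = 4(1+\lambda_i)(1+\lambda_j)(\lambda_i+\lambda_j + 2\lambda_i\lambda_j)\,dt$ up to the required simplifications. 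Collecting all terms should precisely reproduce the drift coefficient $2(1+\lambda_i)\big(n-2k+1-(2k-3)\lambda_i + 2\lambda_i(1+\lambda_i)\sum_{\ell\neq i}\tfrac{1}{\lambda_i-\lambda_\ell}\big)$.

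For non-collision, I would adapt the argument of Theorem \ref{thm-non-collide}. The SDE \eqref{eq-lambda-sde} has generator
\begin{equation*}
\mathcal{L} = \sum_{i} 2\lambda_i(1+\lambda_i)^2 \partial_{\lambda_i}^2 + \sum_{i} b_i(\lambda)\partial_{\lambda_i},
\end{equation*}
with the $b_i$ read off from \eqref{eq-lambda-sde}. I expect an intertwining of the form $\mathcal{L}(hf) = h(\widetilde{\mathcal{L}} + c)f$ for $h(\lambda) = \prod_{i<j}(\lambda_i-\lambda_j)$ and a suitable Doob-transformed generator $\widetilde{\mathcal{L}}$ (this should mirror Lemma \ref{lemma-h}). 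Then applying Itô's formula to $\log h(\lambda(t))$ along the diffusion yields a local martingale plus a bounded drift, so that at a collision time the left-hand side would tend to $-\infty$ while the right-hand side stays finite, giving $\mathbb{P}(\tau_\mathcal{N}<\infty)=0$.

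The main obstacle will be the bookkeeping of the Itô correction terms in step three: separating the contributions of the diagonal and off-diagonal parts of $dM$ correctly, and checking that the algebraic identity collapsing the correction into the Dyson term $\sum_{\ell\neq i}1/(\lambda_i-\lambda_\ell)$ indeed leaves the coefficient $-(2k-3)\lambda_i + (n-2k+1)$ in the drift, which is the delicate numerical match with the stated formula.
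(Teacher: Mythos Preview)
Your strategy is exactly the paper's: diagonalize $J=V\Lambda V^*$, set $dN=V^*\circ dJ\,V$ and (with your $dM=V^*\circ dV=-dU$ in the paper's notation) read off $d\lambda_i=dN_{ii}$, then split $dN_{ii}$ into its martingale part and a drift containing the direct term $2(n-k+\tr\Lambda)(1+\lambda_i)\,dt$ plus the Stratonovich--It\^o correction $\tfrac12(dV^*\,dJ\,V+V^*\,dJ\,dV)_{ii}$, which the paper writes as $\tfrac12(dU\,dN+dN^*\,dU^*)_{ii}$.

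One concrete slip to fix in your bookkeeping: from $(dJ)_{ij}(dJ)_{i'j'}=2dt\big[(J+J^2)_{i'j}(I_k+J)_{ij'}+(J+J^2)_{ij'}(I_k+J)_{i'j}\big]$ one gets, after conjugation, $dN_{i\ell}\,dN_{\ell i}=2(1+\lambda_i)(1+\lambda_\ell)(\lambda_i+\lambda_\ell)\,dt$, not the expression you wrote. The Dyson term does not fall out of this alone: the correction $\sum_{\ell\neq i}\frac{(1+\lambda_i)(1+\lambda_\ell)(\lambda_i+\lambda_\ell)}{\lambda_i-\lambda_\ell}$ must be algebraically combined with the direct drift $(n-k+\tr\Lambda)(1+\lambda_i)$; the $\sum_{\ell\neq i}\lambda_\ell$ then cancels and the residual constants reorganize into $(n-2k+1)-(2k-3)\lambda_i$. (Also, your expansion of $dN$ has a stray $V^*\,dJ\,dV^*$ that should not appear.) The non-collision part you sketch is not part of this statement; the paper proves it as a separate theorem after passing to $\rho_i=(1-\lambda_i)/(1+\lambda_i)$, where the generator is a Vandermonde transform of independent Jacobi operators on $[-1,1]$.
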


\begin{proof}
We label the eigenvalues by $\lambda_1\ge \cdots\ge \lambda_k$. Note $J$ is Hermitian, hence it can be diagonalized by $J=V\Lambda V^*$ where $V\in \mathbf{U}(k)$ and $\Lambda=\mathrm{diag}\{ \lambda_1,\dots, \lambda_k\}$. Let $dU=dV^*\circ V$ and $dN=V^*\circ dJ \, V$. Then 
\begin{equation}\label{eq-lambda}
d\Lambda=dU\circ \Lambda-\Lambda\circ dU+dN
\end{equation}
hence
\begin{equation}\label{eq-dU}
d\lambda_i=dN_{ii},\quad dU_{ij}=\frac{1}{\lambda_i-\lambda_j}\circ dN_{ij}\quad \mbox{for $i\not=j$}. 
\end{equation}
From \eqref{eq-dJ} we know that
\begin{align*}
\frac{(dJ)_{ij}(dJ)_{i'j'}}{2dt}=(J+J^2)_{i'j}(I_k+J)_{ij'}+(J+J^2)_{ij'}(I_k+J)_{i'j}.
\end{align*}
We can then compute that
\begin{align*}
\frac{(dN)_{ij}(dN)_{i'j'}}{2dt}&=\sum_{p,p',\ell,\ell'}V^*_{ip}V^*_{i'p'}V_{\ell j}V_{\ell' j'}\left((J+J^2)_{p'\ell}(I_k+J)_{p\ell'}+(J+J^2)_{p\ell'}(I_k+J)_{p'\ell} \right)\\
&=(V^*(J+J^2)V)_{i'j}(V^*(I_k+J)V)_{ij'}+(V^*(J+J^2)V)_{ij'}(V^*(I_k+J)V)_{i'j}\\
&=(\Lambda+\Lambda^2)_{i'j}(I_k+\Lambda)_{ij'}+(\Lambda+\Lambda^2)_{ij'}(I_k+\Lambda)_{i'j}.
\end{align*}
If we denote by $dM$ the local martingale part of $dN$ and $dF$ the finite variation part, then from \eqref{eq-dJ-1} we know that
\begin{align*}
\frac{dF}{2dt}&=V^*(n-k+\mathrm{tr}(J))(I_k+J)V+\frac12\frac{(dV^* dJ\,V+V^*dJdV)}{2dt}\\
&=(n-k+\mathrm{tr}(J))(I_k+\Lambda)+\frac12\frac{dUdN+dN^*dU^*}{2dt}.
\end{align*}
Since 
\begin{align*}
\frac{(dUdN)_{ij}}{2dt}&=\sum_{\ell\not=i}\frac{1}{\lambda_i-\lambda_\ell}\frac{dN_{i\ell}dN_{\ell j}}{2dt}\\
&=\delta_{ij}\sum_{\ell\not=i}\frac{(1+\lambda_i)(1+\lambda_\ell)(\lambda_i+\lambda_\ell)}{\lambda_i-\lambda_\ell}
\end{align*}
we obtain that $(dUdN)^*=dUdN$. Hence
\begin{align*}
dF_{ij}&=2dt \delta_{ij}\bigg((n-k+\sum_{\ell=1}^k\lambda_\ell)(1+\lambda_i)+\sum_{\ell\not=i} \frac{(1+\lambda_i)(1+\lambda_\ell)(\lambda_i+\lambda_\ell)}{\lambda_i-\lambda_\ell}\bigg)\\
&=2dt \delta_{ij}(1+\lambda_i)\bigg(n-2k+1-(2k-3)\lambda_i+2\lambda_i(1+\lambda_i)\sum_{\ell\not=i} \frac{1}{\lambda_i-\lambda_\ell}\bigg).
\end{align*}
At last, we have that
\begin{align*}
dM_{ii}dM_{jj}&=dN_{ii}dN_{jj}=2dt((\Lambda+\Lambda^2)_{ji}(I_k+\Lambda)_{ij}+(\Lambda+\Lambda^2)_{ij}(I_k+\Lambda)_{ji})\\
&=4dt\delta_{ij}\lambda_i(1+\lambda_i)^2.
\end{align*}
Hence
\[
dM_{ii}=2(1+\lambda_i)\sqrt{\lambda_i}dB^i
\]
where the $B^i$'s are independent standard real Brownian motions. We conclude
\begin{align*}
d\lambda_i=dM_{ii}+dF_{ii}&=2(1+\lambda_i)\sqrt{\lambda_i}dB^i+2(1+\lambda_i) \bigg(n-2k+1-(2k-3)\lambda_i\\
&\quad+2\lambda_i(1+\lambda_i)\sum_{\ell\not=i} \frac{1}{\lambda_i-\lambda_\ell}\bigg)dt. 
\end{align*}
\end{proof}

Under the assumptions of the previous theorem, let us introduce: 
\begin{equation*}
\rho_i=\frac{1-\lambda_i}{1+\lambda_i}, \quad i=1,\dots, k.    
\end{equation*}
which is nothing else but the eigenvalue process of $2ZZ^*-I_k$. Then, as an application of It\^o's formula, we have
\begin{align*}
d\rho_i =-2\sqrt{1-\rho_i^2}dB^i-2\bigg((n-2k+(n-2k+2)\rho_i)+2\sum_{\ell\not=i}\frac{1-\rho_i^2}{\rho_\ell-\rho_i}\bigg)dt,
\end{align*}
where $(B(t))_{t \ge 0}$ is the same Brownian motion as in \eqref{eq-lambda-sde}. Therefore, $ \rho$ is a diffusion process  with generator given by

\[
\mathcal{L}_{n,k}=2\sum_{i=1}^k(1-\rho_i^2)\partial_i^2-2\sum_{i=1}^k\bigg(n-2k+(n-2k+2)\rho_i+2\sum_{\ell\not=i} \frac{1-\rho_i^2}{\rho_\ell-\rho_i}\bigg)\partial_i.
\]

Let $\tau=\inf\{t>0\mid  \exists\, i<j, \, \rho_i(t)=\rho_j(t)\}$ be the first colliding time of $\rho(t)$. The lemma below makes sure that the processes $\rho(t), t\ge0$ hence the processes $\lambda(t),t\ge0$ never collide.

\begin{theorem}\label{thm-non-collide}
Let $\tau_\mathcal{N}$ and $\tau$ be the stopping times as given above. Assume that at time $t=0$, $\lambda(0) \in {\mathcal{N}}$. Then 
\[
\mathbb{P}( \tau <+\infty)=0
\]
and consequently 
\[
\mathbb{P}( \tau_{\mathcal{N}} <+\infty)=0.
\]
\end{theorem}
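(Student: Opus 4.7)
The plan is to mimic the Lyapunov-function approach used in the proof of Theorem \ref{thm-non-collide} for the unitary Brownian motion, adapted to the generator $\mathcal{L}_{n,k}$. The key ingredient is the Vandermonde-type function
\[
h(\rho) = \prod_{1\le i<j\le k}(\rho_i-\rho_j), \qquad V(\rho) = \log|h(\rho)|,
\]
which is smooth on the open Weyl chamber $\{\rho_i\neq\rho_j \ \forall i\neq j\}\subset(-1,1)^k$ and satisfies $V(\rho(t))\to-\infty$ whenever $\rho(t)$ approaches a collision. Recall that by Theorem \ref{main:s1-sphere} and Lemma \ref{thm-J} the process stays in $(-1,1)^k$ almost surely, so only the collisions themselves need to be excluded.

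First I would compute $\mathcal{L}_{n,k}V$. Using
\[
\partial_i V = \sum_{j\neq i}\frac{1}{\rho_i-\rho_j}, \qquad \partial_i^2V=-\sum_{j\neq i}\frac{1}{(\rho_i-\rho_j)^2},
\]
and noting that the repulsive drift in $\mathcal{L}_{n,k}$ contributes $4(1-\rho_i^2)(\partial_i V)^2$, a direct calculation gives
\[
\mathcal{L}_{n,k} V = 2\sum_{i}(1-\rho_i^2)\!\!\sum_{j\neq i}\!\frac{1}{(\rho_i-\rho_j)^2} + 4\!\sum_{i,j,\ell\ \text{distinct}}\!\frac{1-\rho_i^2}{(\rho_i-\rho_j)(\rho_i-\rho_\ell)} - 2\!\sum_i\!\Bigl[(n-2k)+(n-2k+2)\rho_i\Bigr]\!\sum_{j\neq i}\!\frac{1}{\rho_i-\rho_j}.
\]
The triple sum collapses using the second divided-difference identity
$\sum_{i\in\{a,b,c\}}\frac{f(\rho_i)}{\prod_{j\in\{a,b,c\}\setminus\{i\}}(\rho_i-\rho_j)}=\tfrac12 f''(\xi)$
applied to $f(x)=1-x^2$ (whose value is $-1$), while $\sum_i\rho_i\sum_{j\neq i}(\rho_i-\rho_j)^{-1}=\binom{k}{2}$ and $\sum_i\sum_{j\neq i}(\rho_i-\rho_j)^{-1}=0$ by antisymmetry. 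The outcome is
\[
\mathcal{L}_{n,k}V = 2\sum_i(1-\rho_i^2)\sum_{j\neq i}\frac{1}{(\rho_i-\rho_j)^2} - \tfrac{4}{3}k(k-1)(k-2) - k(k-1)(n-2k+2),
\]
whose first term is nonnegative on $(-1,1)^k$, so $\mathcal{L}_{n,k}V\ge -C_0$ with a constant $C_0=C_0(n,k)$.

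Next I would apply Itô's formula on $[0,\tau\wedge\tau_{\mathcal{N}})$ to obtain
\[
V(\rho(t))=V(\rho(0))+\int_0^t\mathcal{L}_{n,k}V(\rho(s))\,ds+M(t),
\]
where $M$ is a continuous local martingale with $M(0)=0$, and hence
\[
M(t)\le V(\rho(t))-V(\rho(0))+C_0 t.
\]
Suppose for contradiction that $\mathbb{P}(\tau<+\infty)>0$. On $\{\tau<\infty\}$, continuity of $\rho$ forces $V(\rho(t))\to-\infty$ as $t\nearrow\tau$, hence $M(t)\to-\infty$. But by the Dambis--Dubins--Schwarz theorem $M(t)=B_{\langle M\rangle_t}$ for some Brownian motion $B$: if $\langle M\rangle_{\tau}<\infty$ then $M(\tau^-)$ is finite a.s., and if $\langle M\rangle_{\tau}=\infty$ then $\limsup_{t\to\tau}M(t)=+\infty$ a.s.; in both cases $M(t)\to-\infty$ is impossible. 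This contradiction yields $\mathbb{P}(\tau<+\infty)=0$, and combining with Lemma \ref{thm-J} (which prevents $\lambda_i$ from reaching $0$) gives $\mathbb{P}(\tau_{\mathcal{N}}<+\infty)=0$.

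The main obstacle is the algebraic identity for $\mathcal{L}_{n,k}V$: the triple-sum piece must be shown to collapse to a constant via the divided-difference identity, and the quadratic term $(\partial_i V)^2$ coming from the nonlinearity of $\log$ must be verified to precisely cancel half of the second-derivative term, leaving a manifestly nonnegative remainder. Once this computation is in hand, the DDS martingale argument is entirely standard and mirrors the earlier unitary case.
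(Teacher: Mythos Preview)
Your proposal is correct and follows essentially the same strategy as the paper's proof: applying It\^o's formula to $\log|h(\rho)|$ (the paper uses $\tfrac12\log h$), establishing a uniform lower bound $\mathcal{L}_{n,k}V\ge -C_0$ via Vandermonde/divided-difference identities, and then invoking the McKean--DDS argument that a time-changed Brownian motion cannot diverge to $-\infty$ in finite time. Your algebraic organization differs slightly (you isolate the manifestly nonnegative term $2\sum_i(1-\rho_i^2)\sum_{j\neq i}(\rho_i-\rho_j)^{-2}$ rather than the paper's $\sum_i(1-\rho_i^2)(\partial_i h/h)^2$, leading to a different but equally valid constant $C_0$), and your DDS step is spelled out in more detail, but the overall architecture is identical.
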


\begin{proof}
We just need to show that $\mathbb{P}( \tau <+\infty)=0$, namely for every $t \ge 0$ we almost surely have
\[
\rho_1(t)<\cdots< \rho_k(t).
\]
Let $h=\Pi_{i>j}(\rho_i-\rho_j)$.  Using similar idea as previously, let us consider the process 
\[
\Omega_t:=V(\rho_1(t),\dots, \rho_k(t)),
\]
where $V(\rho_1,\dots, \rho_k)=\frac12\log h=\frac12\sum_{i>j}\log (\rho_i-\rho_j)$. We can compute that
\begin{align}\label{eq-Omega}
d\Omega_t=\sum_{i=1}^k\left(\partial_iV d\rho_i+\frac12 \partial^2_i V d\langle \rho_i\rangle\right)
=\mathcal{L}_{n,k}Vdt+dM_t
\end{align}
where $M_t$ is a local martingale satisfying $dM_t=-2\sum_{i=1}^k\sqrt{1-\rho_i^2}(\partial_i V) dB^i_t$ and 
\[
\mathcal{L}_{n,k}=2\sum_{i=1}^k(1-\rho_i^2)\partial_i^2-2\sum_{i=1}^k\bigg(n-2k+(n-2k+2)\rho_i+2\sum_{\ell\not=i} \frac{1-\rho_i^2}{\rho_\ell-\rho_i}\bigg)\partial_i.
\]
For any $1\le i\le k$,
\[
\partial_i V=\frac{\partial_i h}{2h},\quad 
\partial_i^2 V=\frac{\partial_i^2h}{2h}-\frac{(\partial_ih)^2}{2h^2}. 
\]
Since $\sum_{i=1}^k\partial_i h=0$ and $\sum_{i=1}^k\rho_i\partial_i h=\frac{k(k-1)}{2}h$,
we that 
\[
\sum_{i=1}^k\partial_i V=0,\quad \sum_{i=1}^k\rho_i\partial_i V=\frac{ k(k-1)}{4}. 
\]
Hence
\[
\mathcal{L}_{n,k}V=\sum_{i=1}^k(1-\rho_i^2)\left(\frac{\partial_i^2 h}{h} +\frac{(\partial_i h)^2}{h^2}\right)-\frac{(n-2k+2)k(k-1)}2.
\]
By a direct computation  we have that 
\begin{align*}
\sum_{i=1}^k\rho_i^2\frac{\partial_i^2 h}{h}=\frac13k(k-1)(k-2),
\quad
\sum_{i=1}^k\frac{\partial_i^2 h}{h}&=0,
\end{align*}
therefore we obtain that 
\[
\mathcal{L}_{n,k}V= -\frac{3n-4k+2}{6}k(k-1)+\sum_{i=1}^k(1-\rho_i^2)\frac{(\partial_i h)^2}{h^2}\ge -\frac{3n-4k+2}{6}k(k-1).
\]
Plug back into \eqref{eq-Omega} we have for any $t\ge0$,
\[
M_t\le\Omega_t-\Omega_0+\frac{3n-4k+2}{6}k(k-1)t.
\]
On $\{\tau<+\infty\}$, by letting $t\to \tau$ we have the right hand side of the above inequality goes to $-\infty$. This implies that $M_\tau=-\infty$. However, since $M_t$ is a time changed Brownian motion, we then obtain that $\{\tau<+\infty\}$ is a null set. 
\end{proof}

\begin{lemma}
If we consider the Vandermonde function
\[
h(\rho)=\prod_{i>j}(\rho_i-\rho_j),
\]
then we have for every smooth function $f$ on $[-1,1]^k$ that
\[
\mathcal{L}_{n,k}f=2 \left( \frac{1}{h}  \mathcal{G}_{n-2k, 0} (h f) +\frac{1}{6} k(k-1) \left( 3n-4k+2 \right)  f \right).
\]
where 
\begin{equation*}
\mathcal{G}_{a,b}:=\sum_{i=1}^k (1-\rho^2_i)\partial_i^2-(a-b+(a+b+2)\rho_i)\partial_i     
\end{equation*} 
is the generator of $k$ independent Jacobi diffusion operators valued in $[-1,1]$. 
\end{lemma}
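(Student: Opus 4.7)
The plan is to recognize this as a standard Doob-type intertwining relation between the eigenvalue generator and $k$ independent Jacobi generators, with the Vandermonde $h$ playing the role of the ground-state eigenfunction. The forward-looking strategy has three steps.

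First, I would split $\mathcal{L}_{n,k}$ into a ``non-interacting'' part and the interaction part by writing
\[
\mathcal{L}_{n,k} = 2\mathcal{G}_{n-2k,0} \;-\; 4\sum_{i=1}^k \sum_{\ell\neq i}\frac{1-\rho_i^2}{\rho_\ell-\rho_i}\,\partial_i,
\]
which follows directly by comparing the two explicit expressions. The appearance of $a = n-2k$, $b=0$ in $\mathcal{G}_{a,b}$ is forced by matching the drift $n-2k+(n-2k+2)\rho_i$ in $\mathcal{L}_{n,k}$.

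Second, I would apply the derivation identity for the second order operator $\mathcal{G}_{n-2k,0}$ to the product $hf$:
\[
\mathcal{G}_{n-2k,0}(hf) = h\,\mathcal{G}_{n-2k,0}(f) + f\,\mathcal{G}_{n-2k,0}(h) + 2\sum_{i=1}^k(1-\rho_i^2)(\partial_i h)(\partial_i f).
\]
The key observation is that $\partial_i h /h = \sum_{\ell\neq i}(\rho_i-\rho_\ell)^{-1}$, so the cross-term $2\sum_i(1-\rho_i^2)(\partial_i h/h)\partial_i f$ is exactly the negative of the interaction term appearing in the first step. Dividing by $h$ and rearranging therefore yields
\[
\mathcal{L}_{n,k} f \;=\; \frac{2}{h}\,\mathcal{G}_{n-2k,0}(hf) \;-\; \frac{2\,\mathcal{G}_{n-2k,0}(h)}{h}\,f.
\]

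The last step, which is the main (and only genuine) computation, is to show that $h$ is an eigenfunction of $\mathcal{G}_{n-2k,0}$ with eigenvalue $-\tfrac{1}{6}k(k-1)(3n-4k+2)$. For this I would reuse the Vandermonde identities already established in the proof of Theorem \ref{thm-non-collide}, namely $\sum_i \partial_i h = 0$, $\sum_i \rho_i \partial_i h = \tfrac{k(k-1)}{2}h$, $\sum_i \partial_i^2 h = 0$, and $\sum_i \rho_i^2 \partial_i^2 h = \tfrac{1}{3}k(k-1)(k-2)h$. These immediately give
\[
\mathcal{G}_{n-2k,0}(h) = -\tfrac{1}{3}k(k-1)(k-2)h - \tfrac{(n-2k+2)k(k-1)}{2}h = -\tfrac{k(k-1)(3n-4k+2)}{6}\,h,
\]
after combining over a common denominator of $6$. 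Plugging this eigenvalue relation into the displayed formula from step two yields the claimed identity. No step here is a real obstacle since all the nontrivial Vandermonde calculations have already been carried out in the proof of Theorem \ref{thm-non-collide}; the only thing to be careful about is the bookkeeping of the factor $2$ relating $\mathcal{L}_{n,k}$ to $\mathcal{G}_{n-2k,0}$.
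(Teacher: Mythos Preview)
Your proof is correct and follows essentially the same route as the paper: both expand $\mathcal{G}_{n-2k,0}(hf)$ via the Leibniz rule (the paper phrases this using the carr\'e du champ $\Gamma$), identify the cross term with the interaction part of $\mathcal{L}_{n,k}$ using $\partial_i h/h=\sum_{\ell\neq i}(\rho_i-\rho_\ell)^{-1}$, and then compute the eigenvalue $\mathcal{G}_{n-2k,0}(h)=-\tfrac{1}{6}k(k-1)(3n-4k+2)h$. The only cosmetic difference is that the paper computes $\mathcal{G}_{\alpha,\beta}h$ for general $\alpha,\beta$ and specializes, whereas you directly recycle the four Vandermonde identities from the proof of Theorem~\ref{thm-non-collide}; both arrive at the same eigenvalue computation.
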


\begin{proof}
 Let 
 
\[ \Gamma(f,g):=\frac12(\mathcal{G}_{\alpha,\beta}(fg)-f\mathcal{G}_{\alpha,\beta}g-g\mathcal{G}_{\alpha,\beta}(f))
\]
be the carr\'e du champ operator associated to $\mathcal{G}_{\alpha,\beta}$. We have
\[
\Gamma( h, f)=\sum_{i=1}^k (1-\rho^2_i)(\partial_i h)( \partial_i f).
\]
From the definition of $h$ it is clear that
\[
\partial_i h = h  \sum_{\ell \neq i} \frac{1}{\rho_i -\rho_\ell},
\]
thus, we obtain
\[
\Gamma(\log h, f)=\sum_{i=1}^k\sum_{j\not=i}\frac{1-\rho_i^2}{\rho_i-\rho_\ell}\partial_i f.
\]
On the other hand, thanks to a direct computation 
\begin{align*}
\mathcal{G}_{\alpha,\beta} h  & =\sum_{i=1}^k (1-\rho^2_i)\partial_i^2 h - \sum_{i=1}^k (\alpha-\beta+(\alpha+\beta+2)\rho_i)\partial_i h \\
 & =- k(k-1) \left( \frac{k-2}{3} +\frac{\alpha+\beta+2}{2} \right) h.
\end{align*}
In particular, one has
\[
\mathcal{G}_{n-2k, 0} (h ) =-\frac{1}{6} k(k-1) \left( 3n-4k+2 \right) h.
\]
We conclude
\begin{align*}
\frac{1}{h}  \mathcal{G}_{n-2k, 0} (h f)&= \frac{1}{h} \left(   \mathcal{G}_{n-2k, 0} (h) f + \mathcal{G}_{n-2k, 0} (f) h + 2 \Gamma( f,h)  \right) \\
 &=- \frac{1}{6}  k(k-1) \left( 3n-4k+2 \right)f + \mathcal{G}_{n-2k, 0} (f) +2 \Gamma( \log h,f) \\
 &= - \frac{1}{6} k(k-1) \left( 3n-4k+2 \right)  f +\frac{1}{2}  \mathcal{L}_{n,k}f.
\end{align*}
\end{proof}

We deduce the following formula for the density of the eigenvalue process which belongs to the family of Karlin-McGregor formulas, see \cite{AOW}, \cite{MR114248}. 

\begin{theorem}\label{density rho}
Let $\lambda$ be the eigenvalues process of $J$ and $\rho_i=\frac{1-\lambda_i}{1+\lambda_i}$. Let us assume that
\[
\rho_1(0)<\cdots< \rho_k(0).
\]
The density at time $t>0$ of $\rho(t)$  with respect to the Lebesgue measure $dx$ on $[-1,1]^k$ is given by
\[
e^{ \frac{1}{3} k(k-1) \left( 3n-4k+2 \right) t}  \frac{h(x)}{h(\rho(0))} \mathrm{det} \left( p^{n-2k,0}_{t} (\rho_i(0),x_j)\right)_{1\le i, j \le k}  \, \mathbf{1}_{\Delta_k} (x),
\]
where  $$\Delta_k:=\{ -1 \le  x_1<\cdots <x_k \le 1 \}.$$
\end{theorem}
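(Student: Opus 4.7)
The plan is to combine the intertwining identity of the preceding lemma with the Karlin--McGregor determinantal formula for non-colliding diffusions. The non-collision property established in Theorem~\ref{thm-non-collide} guarantees that $\rho(t)$ stays in the open Weyl chamber $\Delta_k$ for all $t\ge 0$, so the law of $\rho(t)$ is supported on $\Delta_k$ and one may legitimately treat $\partial\Delta_k$ as a Dirichlet (killing) boundary. I will exploit the fact that $\mathcal{L}_{n,k}$ is conjugate, via multiplication by the Vandermonde $h$, to a multiple of $\mathcal{G}_{n-2k,0}$ up to an additive constant, together with the observation that $\mathcal{G}_{n-2k,0}$ decouples coordinatewise as a sum of one-dimensional Jacobi generators with parameters $(n-2k,0)$.

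First, I would promote the pointwise intertwining
\[
\mathcal{L}_{n,k} f = \frac{2}{h}\,\mathcal{G}_{n-2k,0}(hf) + c\,f, \qquad c = \tfrac{1}{3}k(k-1)(3n-4k+2),
\]
to an identity at the level of semigroups applied to smooth test functions compactly supported in $\Delta_k$, for which $hf$ vanishes on $\partial\Delta_k$ and hence is consistent with Dirichlet boundary conditions for the product diffusion. This yields, after translating into transition densities, a relation of the form
\[
q_t\bigl(\rho(0),x\bigr) = e^{ct}\,\frac{h(x)}{h(\rho(0))}\,P_t^{\mathrm{Dir}}\bigl(\rho(0),x\bigr),
\]
where $P_t^{\mathrm{Dir}}$ denotes the Dirichlet heat kernel on $\Delta_k$ associated with the product operator $\mathcal{G}_{n-2k,0}$ (with the appropriate time normalization dictated by the factor $2$ appearing in the intertwining). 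Since $\mathcal{G}_{n-2k,0}$ acts as a sum of independent one-dimensional Jacobi generators, its Dirichlet kernel on $\Delta_k$ is exactly the transition kernel of $k$ independent copies of the Jacobi$(n-2k,0)$ diffusion killed upon first collision. The diffusion version of the Karlin--McGregor theorem (see \cite{MR114248,AOW}) then identifies this killed kernel with the determinant
\[
P_t^{\mathrm{Dir}}\bigl(\rho(0),x\bigr) = \det\bigl(p^{n-2k,0}_t(\rho_i(0),x_j)\bigr)_{1\le i,j\le k},
\]
and substituting into the previous display produces the claimed formula.

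The main obstacle lies in the technical justification of the two steps above. On one hand, one must verify that the conjugation by $h$ rigorously transfers the intertwining to the semigroup level, which requires controlling the domains of the essentially self-adjoint extensions of $\mathcal{L}_{n,k}$ and $\mathcal{G}_{n-2k,0}$ and confirming that the Dirichlet killing on $\partial\Delta_k$ is indeed the right boundary condition produced by the $h$-transform. On the other hand, one must check that the one-dimensional Jacobi diffusion with parameters $(n-2k,0)$ is sufficiently well behaved at the endpoints $\{\pm 1\}$ for the determinantal Karlin--McGregor identity to be applicable on $\Delta_k$. Finally, reconciling the exact time parameter $t$ and the exponential prefactor $e^{ct}$ stated in the theorem with the $\tfrac{1}{2}$ conventions used in passing from the generator $\tfrac12\Delta_{\widehat G_{n,k}}$ of the Brownian motion on $\widehat G_{n,k}$ to the generator $\mathcal{L}_{n,k}$ of $\rho$ and then to the adopted normalization of $p^{n-2k,0}_t$ requires careful bookkeeping.
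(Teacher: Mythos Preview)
Your approach is correct and follows essentially the same strategy as the paper: use the intertwining lemma to pass from $\mathcal{L}_{n,k}$ to the product Jacobi generator $\mathcal{G}_{n-2k,0}$, then obtain the determinantal formula of Karlin--McGregor type.

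The one noteworthy difference is how the determinant is produced. You invoke the Dirichlet-killed product semigroup on $\Delta_k$ and then appeal to Karlin--McGregor as a black box; this forces you to worry about boundary conditions and domain issues. The paper bypasses these concerns by a direct antisymmetrization argument: it takes a smooth \emph{symmetric} test function $f$ on $[-1,1]^k$, applies the intertwining at the semigroup level to get
\[
e^{t\mathcal{L}_{n,k}}f(\rho(0)) = e^{ct}\,\frac{1}{h(\rho(0))}\,e^{2t\mathcal{G}_{n-2k,0}}(hf)(\rho(0)),
\]
where the right-hand side uses the \emph{full} (unkilled) product semigroup on $[-1,1]^k$. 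Since $hf$ is antisymmetric and the product kernel factorizes, splitting the integral over $[-1,1]^k$ into Weyl chambers and relabeling variables produces the sum $\sum_{\sigma}\mathrm{sgn}(\sigma)\prod_i p_t^{n-2k,0}(\rho_{\sigma(i)}(0),x_i)$, which is exactly the determinant. This is of course the Karlin--McGregor identity derived in situ, but framing it this way means no separate discussion of Dirichlet killing, collision boundaries, or endpoint behavior of the one-dimensional Jacobi diffusion is needed, and the time normalization (the factor $2$ in the intertwining yields $p_t^{n-2k,0}$ directly) falls out without further bookkeeping.
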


\begin{proof}
Let $f$ be a smooth function defined on the simplex $\Delta_k$.  We almost everywhere extend $f$ to $[-1,1]^k$ by symmetrization, i.e. for every permutation $\sigma \in \mathfrak{S}_k$,
\[
f(x_{\sigma(1)},\dots, x_{\sigma(k)})= f(x_1,\dots,x_k).
\]
It follows from the intertwining of generators 
\[
\mathcal{L}_{n,k}=2 \left( \frac{1}{h}  \mathcal{G}_{n-2k, 2k-2} (h \, \cdot) +\frac{1}{6} k(k-1) \left( 3n-4k+2 \right) \right)
\]
that for the corresponding semigroups
\begin{align*}
 & e^{t\mathcal{L}_{n,k}} f (\rho(0)) \\
  =&  e^{ \frac{1}{3} k(k-1) \left( 3n-4k+2 \right) t}  \frac{1}{h(\rho(0))} e^{2t\mathcal{G}_{n-2k, 0} } ( h f) (\rho(0)) \\
 =&  e^{ \frac{1}{3} k(k-1) \left( 3n-4k+2 \right) t}  \frac{1}{h(\rho(0))}  \int_{[-1,1]^k} h(x) p^{n-2k,0}_{t} (\rho_1(0),x_1) \cdots p^{n-2k,0}_{t}(\rho_k(0),x_k) f(x) \, dx \\
 =&  \frac{ e^{ \frac{1}{3} k(k-1) \left( 3n-4k+2 \right) t} }{h(\rho(0))} \sum_{\sigma \in \mathfrak{S}_k}    \int_{-1<x_{\sigma(1)} <\cdots <x_{\sigma(k)} <1 } h(x) p^{n-2k,0}_{t} (\rho_1(0),x_1) \cdots p^{n-2k,0}_{t}(\rho_k(0),x_k) f(x) \, dx\\
 =& \frac{ e^{ \frac{1}{3} k(k-1) \left( 3n-4k+2 \right) t} }{h(\rho(0))} \sum_{\sigma \in \mathfrak{S}_k} \mathrm{sgn}(\sigma)    \int_{\Delta_k } h(x) p^{n-2k,0}_{t} (\rho_{\sigma(1)}(0),x_1) \cdots p^{n-2k,0}_{t}(\rho_{\sigma(k)} (0),x_k) f(x) \, dx.
\end{align*}
The conclusion follows immediately.
\end{proof}

We can deduce the limit law of $\rho$.

\begin{theorem}\label{limit eigenvalue}
Let $\lambda$ be the eigenvalues process of $J$ and $\rho_i=\frac{1-\lambda_i}{1+\lambda_i}$. Assume that
\[
\rho_1(0)<\cdots< \rho_k(0).
\]
Then, when $t \to +\infty$, $\rho(t)$ converges in distribution to the probability measure on $[-1,1]^k$  given by
\[
 d\nu=c_{n,k} \prod_{1 \le i < j \le k}(x_i-x_j)^2  \prod_{i=1}^k (1-x_i)^{n-2k}  \, \, \mathbf{1}_{\Delta_k} (x) dx,
\]
where $c_{n,k}$ is the normalization constant. Moreover, we have the following quantitative estimate: There exists a constant $C>0$ such that for any bounded Borel function $f$ on the simplex $\Delta_k$ and $t \ge 0$,
\begin{equation}\label{quant_1}
\left | \mathbb{E} ( f( \rho(t)) ) - \int_{\Delta_k} f d\nu \right| \le C e^{-2nt} \| f \|_\infty.
\end{equation}
\end{theorem}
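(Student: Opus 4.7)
The plan is to combine Theorem \ref{density rho} with the spectral expansion of the one-dimensional Jacobi heat kernel. Let $(\phi_m)_{m \ge 0}$ denote the $L^2$-orthonormal Jacobi polynomials on $[-1,1]$ with respect to the invariant weight $w(x) = c(1-x)^{n-2k}$, so that $\mathcal{G}_{n-2k,0}\phi_m = -m(m+n-2k+1)\phi_m$. Since the kernel $p_t^{n-2k,0}$ is the kernel of the semigroup $e^{2t\mathcal{G}_{n-2k,0}}$ (by the convention used in the proof of Theorem \ref{density rho}), it admits the spectral decomposition
\begin{equation*}
p_t^{n-2k,0}(y,x) = \sum_{m\ge 0} e^{-2m(m+n-2k+1)t} \phi_m(y)\phi_m(x) w(x).
\end{equation*}

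First, I would insert this expansion into the $k \times k$ determinant appearing in Theorem \ref{density rho} and apply the Cauchy--Binet identity to obtain
\begin{equation*}
\det\bigl(p_t^{n-2k,0}(\rho_i(0),x_j)\bigr)_{i,j} = \prod_{j=1}^k w(x_j)\sum_{S} e^{-2\Lambda_S t} \det(\phi_s(\rho_i(0)))_{i,\,s\in S}\det(\phi_s(x_j))_{s\in S,\,j},
\end{equation*}
the sum being over $k$-element subsets $S\subset\mathbb{Z}_{\ge 0}$ and $\Lambda_S = \sum_{s\in S}s(s+n-2k+1)$. The unique minimum of $\Lambda_S$ is attained at $S_0 = \{0,1,\ldots,k-1\}$, and a direct computation gives $2\Lambda_{S_0} = \tfrac13 k(k-1)(3n-4k+2)$, which is precisely the positive exponent appearing in the prefactor of the density formula of Theorem \ref{density rho}. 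The second-smallest value corresponds to $S_1 = \{0,\ldots,k-2,k\}$ and satisfies $2(\Lambda_{S_1}-\Lambda_{S_0}) = 2n$.

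For the leading term, since each $\phi_s$ has exact degree $s$, the matrix $(\phi_s(y_i))_{i,\,0\le s\le k-1}$ is the product of a Vandermonde matrix with an upper-triangular change-of-basis matrix, so its determinant equals $C_{n,k}\, h(y)$ for an explicit constant $C_{n,k}$ built from the leading coefficients of the $\phi_s$. Substituting back into the density formula of Theorem \ref{density rho}, the prefactor $e^{\frac{1}{3}k(k-1)(3n-4k+2)t}$ and the $1/h(\rho(0))$ factor both cancel exactly with the $S_0$ contribution, leaving a leading term of the form
\begin{equation*}
C_{n,k}^2\, h(x)^2 \prod_{j=1}^k w(x_j)\,\mathbf{1}_{\Delta_k}(x),
\end{equation*}
which is, up to the normalizing constant, the density of $\nu$. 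All other terms in the Cauchy--Binet expansion pick up an extra factor $e^{-2(\Lambda_S-\Lambda_{S_0})t} \le e^{-2nt}$, which yields the quantitative estimate \eqref{quant_1}.

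The main obstacle is ensuring the uniform-in-$x$ control on the error. One needs to justify interchanging the (infinite) Cauchy--Binet sum with integration against a bounded test function and to bound
\begin{equation*}
\sum_{S \ne S_0} e^{-2(\Lambda_S - \Lambda_{S_0})t}\bigl|\det(\phi_s(\rho(0)))_{s\in S}\det(\phi_s(x))_{s\in S}\bigr|
\end{equation*}
uniformly on $\Delta_k$. This follows from standard pointwise estimates on Jacobi polynomials together with the Gaussian-type decay of $e^{-2\Lambda_S t}$ in the size of $\max S$, but the combinatorics of the subset sum requires some care; the key point is that splitting off the factor $e^{-2nt}$ leaves a series that converges for all $t \ge t_0 > 0$, and absorbing the finite remainder into the constant $C$ at time $t_0$ extends the bound to all $t \ge 0$.
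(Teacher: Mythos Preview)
Your proposal is correct and follows essentially the same route as the paper: expand each entry of the Karlin--McGregor determinant via the Jacobi spectral decomposition, use a Cauchy--Binet/skew-symmetrization to rewrite the determinant as a sum over ordered index sets $S=\{m_1<\cdots<m_k\}$, identify the leading term $S_0=\{0,\ldots,k-1\}$ (whose determinants collapse to Vandermondes, giving $h(x)^2\prod(1-x_i)^{n-2k}$), and read off the spectral gap $2(\Lambda_{S_1}-\Lambda_{S_0})=2n$ from $S_1=\{0,\ldots,k-2,k\}$. The paper treats the uniform tail bound at the same informal level you flag in your last paragraph.
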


\begin{remark}
If needed, the normalization constant $c_{n,k}$ can be computed from the so-called Selberg integral formula:
\begin{align*}
&\int_{[0,1]^n} \prod_{i=1}^n x_i^{\alpha-1}(1-x_i)^{\beta-1} \prod_{1\le i <j \le n} |x_i-x_j|^{2\gamma} dx_1 \cdots dx_n \\
&\quad\quad=\prod_{j=1}^{n-1}\frac{\Gamma(\alpha+j \gamma)\Gamma(\beta+j\gamma)\Gamma(1+(j+1)\gamma)}{\Gamma(\alpha+\beta+(n+j-1)\gamma)\Gamma(1+\gamma)}.
\end{align*}

\end{remark}

\begin{proof}
Using the formula \eqref{eq-jacobi-kernel} we can write
\[
  p^{n-2k,0}_{t}(x,y)
 =(1-y)^{n-2k}\sum_{m=0}^{+\infty}  C_m e^{-2m(m+n-2k+1)t}P_m^{n-2k,0}(x)P_m^{n-2k,0}(y),
\]
for some constants $C_m$. We now compute 
\begin{align*}
 & \mathrm{det} \left( p^{n-2k,0}_{t} (\rho_i(0),x_j)\right)_{1\le i, j \le k} \\
 =& \sum_{\sigma \in \mathfrak{S}_k} \mathrm{sgn}(\sigma) \prod_{i=1}^k  p^{n-2k,0}_{t}(\rho_{\sigma(i)} (0),x_i) \\
 =& \sum_{\sigma \in \mathfrak{S}_k} \mathrm{sgn}(\sigma) \prod_{i=1}^k \bigg[ (1-x_i)^{n-2k} \sum_{m=0}^{+\infty}  C_m e^{-2m(m+n-2k+1)t}P_m^{n-2k,0}(\rho_{\sigma(i)} (0))P_m^{n-2k,0}(x
 _i)\bigg]\\
 =& V(x) \sum_{\sigma \in \mathfrak{S}_k} \mathrm{sgn}(\sigma)  \sum_{m_1,\cdots,m_k=0}^{+\infty}  \prod_{i=1}^k C_{m_i}  e^{-2 m_i(m_i+n-2k+1)t}P_{m_i}^{n-2k,0}(\rho_{\sigma(i)} (0))P_{m_i}^{n-2k,0}(x_i)
\end{align*}
where $V(x)=\prod_{i=1}^k (1-x_i)^{n-2k}$. We can now write
\begin{align*}
 &  \sum_{\sigma \in \mathfrak{S}_k} \mathrm{sgn}(\sigma)  \sum_{m_1,\dots,m_k=0}^{+\infty}  \prod_{i=1}^k C_{m_i}  e^{-2 m_i(m_i+n-2k+1)t}P_{m_i}^{n-2k,0}(\rho_{\sigma(i)} (0))P_{m_i}^{n-2k,0}(x_i) \\
 =&  \sum_{m_1,\dots,m_k=0}^{+\infty} \left(\prod_{i=1}^k C_{m_i}  e^{-2 m_i(m_i+n-2k+1)t}  P_{m_i}^{n-2k,0}(x_i) \right)\sum_{\sigma \in \mathfrak{S}_k} \mathrm{sgn}(\sigma) \prod_{i=1}^k  P_{m_i}^{n-2k,0}(\rho_{\sigma(i)} (0)) \\
 =&  \sum_{m_1,\dots,m_k=0}^{+\infty} \left(\prod_{i=1}^k C_{m_i}  e^{-2 m_i(m_i+n-2k+1)t} P_{m_i}^{n-2k,0}(x_i) \right)   \mathrm{det} \left( P_{m_i}^{n-2k,0}(\rho_{j} (0))\right)_{1 \le i,j \le k}.
\end{align*} 
By skew-symmetrization, we can rewrite the previous sum as
\[
 \sum_{m_1 < \cdots <m_k} \left(\prod_{i=1}^k C_{m_i}  e^{-2 m_i(m_i+n-2k+1)t}   \right) \mathrm{det} \left( P_{m_i}^{n-2k,0}(x_j)\right)_{1 \le i,j \le k}  \mathrm{det} \left( P_{m_i}^{n-2k,0}(\rho_{j} (0))\right)_{1 \le i,j \le k}.
\]
When $t \to +\infty$, the term of leading order in this sum corresponds to  $(m_1,\dots,m_k)=(0,1,\dots,k-1)$ and, up to a constant, is given by
\[
e^{- \frac{1}{3} k(k-1) \left( 3n-4k+2 \right) t}   \mathrm{det} \left( P_{i-1}^{n-2k,0}(x_j)\right)_{1 \le i,j \le k}  \mathrm{det} \left( P_{i-1}^{n-2k,0}(\rho_{j} (0))\right)_{1 \le i,j \le k}
\]
which up to a constant is 
\[
e^{- \frac{1}{3} k(k-1) \left( 3n-4k+2 \right) t}    h(x) h(\rho(0))
\]
where, for the computation of the Vandermonde determinant, we used the fact that the Jacobi polynomial $P_m$ is a polynomial of degree $m$. The next order in $t$  corresponds to   $(m_1,\dots,m_k)=(0,1,\dots,k-2, k)$ which yields $e^{-2nt}$  in \eqref{quant_1}.

\end{proof}

\begin{remark}
The limit law $\nu$ is therefore the distribution of a Coulomb gas at inverse temperature 2 with a logarithmic confinement potential
\[
V(x)=-(n-2k) \ln (1-x).
\]
\end{remark}

\begin{remark}
Since $\rho_i=\frac{1-\lambda_i}{1+\lambda_i}$, we easily deduce the distribution and the limit law for the eigenvalues process $(\lambda(t))_{t \ge 0}$.

\end{remark}

\begin{remark}\label{Berezin}
As a byproduct, the previous proof exhibited a spectral expansion for the heat kernel of $\mathcal{L}_{n,k}$ with respect to the Lebesgue measure of the form:
\begin{align*}
& e^{ \frac{1}{3} k(k-1) \left( 3n-4k+2 \right) t}  \frac{h(x)}{h(\rho(0))}\sum_{m_1 < \cdots <m_k}\Bigg[\left(\prod_{i=1}^k C_{m_i}  e^{-2 m_i(m_i+n-2k+1)t}   \right)\\
 &\quad\quad  \cdot \mathrm{det} \left( P_{m_i}^{n-2k,0}(x_j)\right)_{1 \le i,j \le k}  \mathrm{det} \left( P_{m_i}^{n-2k,0}(\rho_{j} (0))\right)_{1 \le i,j \le k}\Bigg].
\end{align*}

From spectral theory, we deduce that if $0 \le m_1 < \cdots <m_k$ are integers the function
 \[
 \Phi_{m_1,\dots, m_k}( \rho_1,\dots, \rho_k) =\frac{  \mathrm{det} \left( P_{m_i}^{n-2k,0}(\rho_j)\right)_{1 \le i,j \le k} }{  \mathrm{det} \left( P_{i-1}^{n-2k,0}(\rho_j)\right)_{1 \le i,j \le k} }
 \]
  is an eigenfunction of $\mathcal{L}_{n,k}$ associated to the eigenvalue $$- \frac{1}{3} k(k-1) \left( 3n-4k+2 \right)+2 \sum_{i=1}^k m_i(m_i+n-2k+1).$$

 Those formulas for the spectrum and  zonal eigenfunctions of $G_{n,k}$ were first obtained in Berezin-Karpelevi\v{c}  \cite{MR0095216}. In fact, our approach yields an algebraic representation of such eigenfunctions. Indeed $\Phi_{m_1,\dots, m_k}$ is a symmetric polynomial (a multivariate Jacobi polynomial) and if we consider the unique polynomial function $\Phi^*_{m_1,\dots, m_k}$ defined on the set of $k \times k$ positive definite Hermitian matrices such that for every unitary $M \in \mathbf{U}(k)$, every positive definite Hermitian $X \in \mathbb{C}^{k \times k}$ and every diagonal matrix $D=\mathrm{diag} (\rho_1,\dots,\rho_k)$:
 \begin{equation*}
 \begin{cases}
 \Phi^*_{m_1,\dots, m_k}( M^* X M)= \Phi^*_{m_1,\dots, m_k}( X) \\
 \Phi^*_{m_1,\dots,m_k} (D)= \Phi_{m_1,\dots,m_k} (\rho_1,\dots,\rho_k),
 \end{cases}
 \end{equation*}
then the function $ \Phi^*_{m_1,\dots, m_k} \left((I_k-W^*W)(I_k + W^*W)^{-1} \right)$ is an eigenfunction of $\Delta_{G_{n,k}}$.

\end{remark}
%
%
%

\section{Skew-product decomposition of the Brownian motion of the Stiefel fibration}

\subsection{Connection form and horizontal Brownian motion on $V_{n,k}$}

Let us consider the Stiefel fibration
\begin{equation}\label{eq-fibration2}
\mathbf{U}(k) \rightarrow\widehat{V}_{n,k} \rightarrow \widehat{G}_{n,k}
\end{equation}
that was described in Sections \ref{fibration} and \ref{homogeneous}. According to this fibration, one can see $\widehat{V}_{n,k}$ as a $\mathbf{U}(k)$-principal bundle over $\widehat{G}_{n,k}$. The next lemma gives a formula for the connection form of this bundle.

\begin{lemma}\label{connection form}
Consider on $\widehat{V}_{n,k}$
the $\mathfrak{u}(k)$-valued one form
\begin{align}\label{eq-contact-x}
\omega:=\frac{1}{2} \left( (X^* \,  Z^*)d\begin{pmatrix}  X \\ Z  \end{pmatrix}-d(X^* \, Z^*)\begin{pmatrix}  X \\ Z  \end{pmatrix}\right) =\frac{1}{2} \left(X^*dX-dX^*X +Z^*dZ-dZ^*Z\right).
\end{align}
Then, $\omega$ is the connection form of the bundle $\mathbf{U}(k) \rightarrow\widehat{V}_{n,k} \rightarrow \widehat{G}_{n,k}$.
\end{lemma}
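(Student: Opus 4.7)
The plan is to verify the three standard defining properties of a connection form on the principal $\mathbf{U}(k)$-bundle $\widehat{V}_{n,k}\to\widehat{G}_{n,k}$ from Section \ref{skew general bundle}: (i) $\omega$ is $\mathfrak{u}(k)$-valued; (ii) $\omega$ is $\mathbf{Ad}$-equivariant under the right $\mathbf{U}(k)$-action $\begin{pmatrix}X\\Z\end{pmatrix}\mapsto \begin{pmatrix}Xg\\Zg\end{pmatrix}$; and (iii) for every $V\in\mathfrak{u}(k)$, $\omega(\hat V)=V$, where $\hat V$ is the fundamental vertical field. As soon as (i)--(iii) are in place, the general theory identifies $\ker\omega$ with a horizontal distribution complementary to the vertical one; to match it with the \emph{Riemannian} horizontal distribution of the submersion, I will also directly check that $\ker\omega$ coincides with the orthogonal complement of the vertical distribution inside $T\widehat V_{n,k}$.

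Property (i) is immediate: taking the matrix adjoint of the defining formula \eqref{eq-contact-x} swaps the two summands in each parenthesis with an overall sign, giving $\omega^*=-\omega$. For (ii), I would simply pull back the form under right multiplication by a constant $g\in\mathbf{U}(k)$: then $d(Xg)=dX\,g$, $d(Zg)=dZ\,g$, and both $X^*$-terms acquire a left factor $g^*=g^{-1}$ and a right factor $g$, yielding $g^*\omega=g^{-1}\omega g=\mathbf{Ad}_{g^{-1}}\omega$. Property (iii) is where the bundle structure of $V_{n,k}$ enters: the orbit $t\mapsto \begin{pmatrix}X\\Z\end{pmatrix}e^{tV}$ has derivative $(XV,ZV)$, so $\omega(\hat V)=\tfrac12\bigl((X^*X+Z^*Z)V-V^*(X^*X+Z^*Z)\bigr)$. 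The Stiefel identity $X^*X+Z^*Z=I_k$ collapses this to $\tfrac12(V-V^*)$, which equals $V$ because $V\in\mathfrak{u}(k)$.

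Finally, to identify $\ker\omega$ with the Riemannian horizontal distribution I would first note that the tangent space to $\widehat V_{n,k}$ at $M=\begin{pmatrix}X\\Z\end{pmatrix}$ consists of pairs $(\dot X,\dot Z)$ with $M^*\dot M+\dot M^*M=0$, i.e.\ $X^*\dot X+Z^*\dot Z\in\mathfrak{u}(k)$. On such vectors $\dot X^*X+\dot Z^*Z=-(X^*\dot X+Z^*\dot Z)$, so the formula for $\omega$ reduces to $\omega(\dot X,\dot Z)=X^*\dot X+Z^*\dot Z$. On the other hand, computing the real inner product $\mathrm{Re}\,\mathrm{tr}\bigl((\dot X,\dot Z)^*(XV,ZV)\bigr)=\mathrm{Re}\,\mathrm{tr}\bigl((\dot X^*X+\dot Z^*Z)V\bigr)$ and requiring it to vanish for all $V\in\mathfrak{u}(k)$ forces $\dot X^*X+\dot Z^*Z$ to be Hermitian; combined with the tangency condition (which makes it skew-Hermitian) this gives $X^*\dot X+Z^*\dot Z=0$. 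Hence the orthogonal complement of the vertical distribution is exactly $\ker\omega$.

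There is no serious obstacle here; the only work is bookkeeping of conjugate transposes and consistent use of the Stiefel relation $X^*X+Z^*Z=I_k$. The mildly delicate point, which I would state explicitly, is the identification of the Riemannian horizontal distribution with $\ker\omega$: it relies on the fact that $V_{n,k}$ carries the metric induced from the ambient Hermitian inner product on $\mathbb{C}^{n\times k}$, under which the right $\mathbf{U}(k)$-action is by isometries and the fibers are totally geodesic, so that the orthogonal-complement description of horizontality agrees with the one produced by the connection.
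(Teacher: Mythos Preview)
Your proposal is correct and follows essentially the same route as the paper: verify that $\omega$ reproduces the Lie algebra on fundamental vertical fields via the Stiefel identity $X^*X+Z^*Z=I_k$, and then identify $\ker\omega$ with the orthogonal complement of the vertical distribution. Your argument is in fact slightly more complete than the paper's, since you explicitly check $\mathbf{Ad}$-equivariance (property~(ii)), which the paper's proof omits; your simplification $\omega(\dot X,\dot Z)=X^*\dot X+Z^*\dot Z$ on tangent vectors is also a clean way to phrase the kernel computation that the paper carries out in equivalent but slightly different notation.
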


\begin{proof}
We first observe that if $v=\begin{pmatrix} X  \\ Z \end{pmatrix} \in V_{n,k}$, then the tangent space to $V_{n,k}$ at $v$ is given by
\[
T_v V_{n,k}= \left\{ \begin{pmatrix} A  \\ B \end{pmatrix} \in \C^{n \times k}, A^* X+X^* A+B^*Z +Z^*B=0  \right\}.
\]
Then, if $\theta \in \mathfrak{u}_k$, one easily computes that the generator of the one-parameter group $\{ q \to q e^{t \theta}\}_{t \in \mathbb{R}} $ is given by the vector field on $V_{n,k}$ whose value at $v$ is $\begin{pmatrix} X \theta  \\ Z \theta \end{pmatrix}$. Applying $\omega$ to this vector field yields $\theta$.   To show that $\omega$ is the connection form it remains therefore to prove that the kernel of $\omega$ is the horizontal space of the Riemannian submersion $\begin{pmatrix} X  \\ Z \end{pmatrix} \to XZ^{-1}$. This horizontal space at $v$, say $\mathcal{H}_v$, is the orthogonal complement of the vertical space at $v$, which is the subspace $\mathcal{V}_v$ of $T_v V_{n,k}$ tangent to the fiber of the submersion. The previous argument shows that
\[
\mathcal{V}_v = \left\{ \begin{pmatrix} X\theta  \\ Z\theta \end{pmatrix}, \theta \in \mathfrak{u}(k) \right\}.
\]
Therefore we have
\[
\mathcal{H}_v = \left\{ \begin{pmatrix} A  \\ B  \end{pmatrix}\in T_v V_{n,k}, \, \forall \,   \theta \in \mathfrak{u}(k), \, \mathrm{tr} \left( A^* X \theta +B^* Z \theta  \right)=0 \right\}.
\]
We deduce from this that
\[
\mathcal{H}_v = \left\{ \begin{pmatrix} A  \\ B  \end{pmatrix}\in T_v V_{n,k}, \,  A^* X  +B^* Z =X^* A +Z^*B \right\},
\]
from which it is clear that $\omega_{\mid \mathcal{H}}=0$.
\end{proof}

Our next goal is to describe the horizontal lift to $\widehat{V}_{n,k}$ of a Brownian motion on $\widehat{G}_{n,k}$. We still denote by $p:\widehat{V}_{n,k} \to \widehat{G}_{n,k}$ the Riemannian submersion. We recall that a continuous semimartingale $(M_t)_{t \ge 0}$ on $\widehat{V}_{n,k}$ is called \textit{horizontal} if for every $t\ge 0$,   $\int_{M[0,t]} \omega =0$, where $\int_{M[0,t]} \omega$ denotes   the Stratonovich line integral of $\omega$ along the paths of $M$. 
 If $(N_t)_{t \ge 0}$ is a continuous semimartingale on $\widehat{G}_{n,k}$ with $N_0 \in \widehat{G}_{n,k}$, then if $\widetilde{N}_0 \in\widehat{V}_{n,k}$ is such that $\ p(\widetilde{N}_0)=N_0$,  there exists a unique horizontal continuous semimartingale $(\widetilde{N}_t)_{t \ge 0}$ on $\widehat{V}_{n,k}$ such that $p(\widetilde{N}_t)=N_t$ for every $t \ge 0$. The semimartingale $(\widetilde{N}_t)_{t \ge 0}$ is then called the horizontal lift at $\widetilde{N}_0$ of $(N_t)_{t \ge 0}$ to $\widehat{V}_{n,k}$ (See Definition \ref{def lift}). 
 

 We then consider on $\widehat{G}_{n,k}$ the $\mathfrak{u}(k)$-valued one-form $\eta$ given  by
 \begin{align}\label{defeta}
 \eta:=& \frac12 \left( (I_k+w^* w)^{-1/2} (dw^* \, w-w^*dw)(I_k+w^* w)^{-1/2}  \right. \\
  & \left. - (I_k+w^* w)^{-1/2}\, d(I_k+w^* w)^{1/2}+d(I_k+w^* w)^{1/2} \, (I_k+w^* w)^{-1/2} \right). \notag
 \end{align}

\begin{theorem}\label{lift}
Let $(w (t))_{t \ge 0}$ be a Brownian motion on $\widehat{G}_{n,k}$ started at $w_0 \in \widehat{G}_{n,k}$ as in Theorem \ref{main:s1-sphere} and $\mathfrak{a}(t)= \int_{w[0,t]} \eta$. Let $\begin{pmatrix} X_0  \\ Z_0 \end{pmatrix} \in\widehat{V}_{n,k}$ be such that $X_0Z_0^{-1}=w_0$. The process
\[
\widetilde{w}(t):=\begin{pmatrix} w (t)  \\  I_k  \end{pmatrix}(I_k+w (t)^*w (t))^{-1/2}\Theta(t)
\]
is the horizontal lift at $\begin{pmatrix} X_0  \\ Z_0 \end{pmatrix}$ of $(w (t))_{t \ge 0}$ to $\widehat{V}_{n,k}$, where $(\Theta(t))_{t \ge 0}$ is the $\mathbf{U}(k)$-valued  solution of the Stratonovich stochastic differential equation
\begin{align*}
\begin{cases}
d\Theta(t) = \circ d\mathfrak a(t) \, \Theta(t) \\
\Theta_0=(Z_0 Z^*_0)^{-1/2}Z_0.
\end{cases}
\end{align*}
\end{theorem}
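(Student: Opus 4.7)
The plan is to recognize this result as an instance of the general skew-product decomposition Theorem \ref{skew-product principal bundle} applied to the Stiefel bundle trivialized by the section
\[
s(w)=\begin{pmatrix} w \\ I_k \end{pmatrix}(I_k+w^*w)^{-1/2}, \qquad (w,g)\mapsto s(w)g,
\]
for which the announced process is precisely $\widetilde{w}(t)=s(w(t))\Theta(t)$. First I will verify the structural statements: the identity $(I_k+w^*w)^{-1/2}(I_k+w^*w)(I_k+w^*w)^{-1/2}=I_k$ together with $\Theta(t)^*\Theta(t)=I_k$ gives $\widetilde{w}(t)^*\widetilde{w}(t)=I_k$, while writing $\widetilde{w}=\begin{pmatrix} X\\Z\end{pmatrix}$ with $X=wM\Theta$, $Z=M\Theta$, where $M=(I_k+w^*w)^{-1/2}$, shows immediately that $XZ^{-1}=w$, so $p(\widetilde{w}(t))=w(t)$. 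For the initial condition, since $\begin{pmatrix} X_0\\Z_0\end{pmatrix}\in\widehat V_{n,k}$ we have $X_0^*X_0+Z_0^*Z_0=I_k$, hence $w_0^*w_0=(Z_0^{-1})^*(I_k-Z_0^*Z_0)Z_0^{-1}=(Z_0Z_0^*)^{-1}-I_k$; this yields $(I_k+w_0^*w_0)^{-1/2}=(Z_0Z_0^*)^{1/2}$, and the choice $\Theta_0=(Z_0Z_0^*)^{-1/2}Z_0$ (unitary by polar decomposition) gives $\widetilde{w}(0)=\begin{pmatrix}X_0\\Z_0\end{pmatrix}$.

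The substantive content is the horizontality, $\omega(\circ d\widetilde{w})=0$. My key intermediate claim is that the form $\eta$ of \eqref{defeta} coincides, up to a sign, with the pullback by $s$ of the connection form $\omega$ of Lemma \ref{connection form}: namely $s^*\omega=-\eta$. This is proved by direct substitution in the formula for $\omega$: with $X=wM$, $Z=M$, one computes
\[
2s^*\omega \;=\; M(w^*dw-dw^*w)M + M(I_k+w^*w)\,dM - dM\,(I_k+w^*w)M,
\]
and the identity $M(I_k+w^*w)=(I_k+w^*w)M=M^{-1}$ together with $dM^{-1}=-M^{-1}dM\,M^{-1}$ reduces this to $-2\eta$. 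Granted this identification, the conclusion follows from a short Stratonovich computation: $\circ d\widetilde{w}=(\circ ds(w))\Theta+s(w)(\circ d\mathfrak{a})\Theta$, and $s(w)^*s(w)=I_k$ gives
\[
2\omega(\circ d\widetilde{w}) \;=\; \Theta^*\bigl[s(w)^*\!\circ\! ds(w)-(\circ ds(w))^*s(w)\bigr]\Theta + \Theta^*\bigl[\circ d\mathfrak{a}-(\circ d\mathfrak{a})^*\bigr]\Theta.
\]
The first bracket equals $2s^*\omega(\circ dw)=-2\eta(\circ dw)$, and since $\eta$ is $\mathfrak{u}(k)$-valued the second bracket equals $2\circ d\mathfrak{a}=2\eta(\circ dw)$; the two cancel. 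The apparent sign discrepancy between the SDE $d\Theta=\circ d\mathfrak{a}\,\Theta$ here and the minus sign in Theorem \ref{skew-product principal bundle} is exactly accounted for by $\eta=-s^*\omega$.

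The main obstacle is the identification $s^*\omega=-\eta$. The computation involves the differential of $(I_k+w^*w)^{\pm 1/2}$, which has no clean power-series expression because $w^*w$ need not commute with its increments; the saving grace is that I never need an explicit formula for $dM$, only the tautology $dM\cdot M^{-1}+M\,dM^{-1}=0$ obtained by differentiating $MM^{-1}=I_k$, which lets one trade $dM^{-1}$ for $dM$ symmetrically and collapse the non-horizontal terms. Once this algebraic identity is in hand, everything else is routine.
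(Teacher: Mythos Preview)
Your proof is correct and follows essentially the same approach as the paper's: both verify that $\widetilde{w}$ projects to $w$, has the right initial point, and is horizontal by directly computing $\omega(\circ d\widetilde{w})=0$. Your organization is slightly cleaner in that you isolate the identity $s^*\omega=-\eta$ as a separate step (the paper carries $\Theta$ through the entire computation and only implicitly uses this), and you correctly explain how this sign reconciles the SDE here with the opposite sign in Theorem~\ref{skew-product principal bundle}.
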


\begin{proof}
We use the method explained in the proof of Theorem \ref{skew-product principal bundle}. As before we denote by $p$ the submersion $\begin{pmatrix} X  \\ Z \end{pmatrix} \to XZ^{-1}$. It is easy to check that for every $t \ge 0$, $p( \widetilde{w}(t))=w (t)$ and that $\widetilde{w}_0=\begin{pmatrix} X_0  \\ Z_0 \end{pmatrix}$. It is therefore enough to prove that $\widetilde{w}$ is a horizontal semimartingale, i.e. that $\int_{\widetilde{w}[0,t]} \omega=0$. Denote
\[
X(t)= w (t)(I_k+w (t)^*w (t))^{-1/2}\Theta(t), \, \, Z(t)=(I_k+w (t)^*w (t))^{-1/2}\Theta(t)
\]
A long, but routine, computation shows that
\begin{align*}
&\frac{1}{2} \left(X^*\circ dX-\circ dX^*X +Z^*\circ dZ-\circ dZ^*Z\right)\\
=&-\frac{1}{2} \left( \circ d\Theta^*\Theta-\Theta^*\circ d\Theta+\Theta^*\bigg(\circ d(I_k+J)^{-1/2}\,(I_k+J)^{1/2}-(I_k+J)^{1/2}\circ d(I_k+J)^{-1/2}\bigg)\Theta \right. \\
&+\left. \Theta^*(I_k+J)^{-1/2}(\circ dw^*w-w^*\circ dw)(I_k+J)^{-1/2}\Theta \right).
\end{align*}
where $J =w^* w$. Since $\circ d\Theta^*=\circ d\Theta^{-1}=-\Theta^{-1}\circ d\Theta\, \Theta^{-1}$ and $\circ d\Theta = \circ d\mathfrak a \, \Theta $ with

\begin{align*}
\circ d \mathfrak a=& \frac12(I_k+J)^{-1/2} (\circ dw^* \, w-w^*\circ dw)(I_k+J)^{-1/2} \\
 & -\frac{1}{2} \left( (I_k+J)^{-1/2}\, \circ d(I_k+J)^{1/2}-\circ d(I_k+J)^{1/2} \, (I_k+J)^{-1/2}\right)
 \end{align*}
 we conclude that 
 \[
 \frac{1}{2} \left(X^*\circ dX-\circ dX^*X +Z^*\circ dZ-\circ dZ^*Z\right)=0
 \]
 and thus $\int_{\widetilde{w}[0,t]} \omega=0$.
\end{proof}
\subsection{Skew-product decomposition of the Stiefel Brownian motion}

We now turn to the description of the Brownian motion on $\widehat{V}_{n,k}$ as a skew-product.

\begin{theorem}\label{skew-sphere}
Let $(w (t))_{t \ge 0}$ be a Brownian motion on $\widehat{G}_{n,k}$ started at $w_0=X_0Z_0^{-1} \in \widehat{G}_{n,k}$ as in Theorem \ref{main:s1-sphere} and let $(\Omega(t))_{t \ge 0}$ be a Brownian motion on the unitary group $\mathbf{U}(k)$ independent from $(w (t))_{t \ge 0}$.  Let $(\Theta(t))_{t \ge 0}$ be the $\mathbf{U}(k)$-valued  solution of the Stratonovich stochastic differential equation
\begin{align*}
\begin{cases}
d\Theta(t) = \circ d\mathfrak a(t) \, \Theta(t) \\
\Theta_0=(Z_0 Z^*_0)^{-1/2}Z_0,
\end{cases}
\end{align*}
where  $\mathfrak{a} (t)= \int_{w[0,t]} \eta$.
The process $$\begin{pmatrix} w (t)  \\  I_k  \end{pmatrix}(I_k+w (t)^*w (t))^{-1/2}\Theta(t) \,  \Omega(t)$$
is a Brownian motion on $\widehat{V}_{n,k}$ started at $\begin{pmatrix} X_0  \\ Z_0 \end{pmatrix}$.
\end{theorem}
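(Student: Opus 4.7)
The plan is to verify that the proposed process $Y(t) := \widetilde{w}(t)\,\Omega(t)$ satisfies two properties: (i) it takes the correct initial value $\begin{pmatrix} X_0 \\ Z_0 \end{pmatrix}$, and (ii) it is a diffusion on $\widehat{V}_{n,k}$ with generator $\tfrac{1}{2}\Delta_{\widehat{V}_{n,k}}$. For the initial condition, I would use that $\begin{pmatrix} X_0 \\ Z_0 \end{pmatrix} \in V_{n,k}$ forces $X_0^*X_0 + Z_0^*Z_0 = I_k$; together with $w_0 = X_0 Z_0^{-1}$ this gives $I_k + w_0^*w_0 = (Z_0^*)^{-1}(X_0^*X_0 + Z_0^*Z_0)Z_0^{-1} = (Z_0Z_0^*)^{-1}$, so $(I_k + w_0^*w_0)^{-1/2} = (Z_0 Z_0^*)^{1/2}$. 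Combined with the prescribed $\Theta_0 = (Z_0 Z_0^*)^{-1/2}Z_0$ and $\Omega(0) = I_k$, a direct multiplication recovers $Y(0) = \begin{pmatrix} X_0 \\ Z_0 \end{pmatrix}$.

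For the generator, I would combine three ingredients. First, Theorem \ref{lift} identifies $\widetilde{w}(t)$ with the horizontal lift of $w(t)$; since the Stiefel fibration has totally geodesic fibers and $w$ is a Brownian motion on $\widehat{G}_{n,k}$, it follows that $\widetilde{w}$ is a diffusion with generator $\tfrac{1}{2}\Delta_{\mathcal{H}}$. Second, the right $\mathbf{U}(k)$-action on $\widehat{V}_{n,k}$ is isometric with orbits equal to the fibers, and by construction each fiber carries the bi-invariant metric of $\mathbf{U}(k)$; this identifies the restriction of $\Delta_{\mathcal{V}}$ to any orbit $v\cdot \mathbf{U}(k)$ with the Laplace--Beltrami operator of $\mathbf{U}(k)$ transported via $g \mapsto vg$. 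Third, Theorem \ref{commutation2} ensures that $\Delta_{\mathcal{H}}$ and $\Delta_{\mathcal{V}}$ commute, and by \eqref{decomposition laplace} their sum equals $\Delta_{\widehat{V}_{n,k}}$.

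These ingredients feed into an It\^o calculation. For $f \in C^\infty(\widehat{V}_{n,k})$, I would write $f(Y(t)) = F(\widetilde{w}(t),\Omega(t))$ with $F(v,g) := f(vg)$, and apply It\^o's formula. Independence of $\widetilde{w}$ and $\Omega$ kills all cross quadratic variation. The martingale part arising from $d\widetilde{w}$ combines with that from $d\Omega$ into a single local martingale. Right-invariance of the horizontal/vertical decomposition under $\mathbf{U}(k)$ makes the $\widetilde{w}$-drift compute to $\tfrac{1}{2}(\Delta_{\mathcal{H}}f)(Y(t))\,dt$, while the identification of the vertical Laplacian with the fiber Laplacian makes the $\Omega$-drift compute to $\tfrac{1}{2}(\Delta_{\mathcal{V}}f)(Y(t))\,dt$. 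Summing yields $\tfrac{1}{2}\Delta_{\widehat{V}_{n,k}}f(Y(t))\,dt$, so $f(Y(t)) - \tfrac{1}{2}\int_0^t \Delta_{\widehat{V}_{n,k}} f(Y(s))\,ds$ is a local martingale and $Y$ has the generator of Brownian motion on $\widehat{V}_{n,k}$.

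The main obstacle is the It\^o computation itself: one needs to verify carefully that right multiplication by the time-varying process $\Omega(t)$ transports horizontal increments of $\widetilde{w}$ into horizontal increments of $Y$, and vertical $\mathbf{U}(k)$-increments of $\Omega$ into vertical increments of $Y$, without generating spurious cross terms. This is ensured precisely by the $\mathbf{U}(k)$-invariance of the horizontal/vertical splitting (right multiplication being a fiber-preserving isometry of $\widehat{V}_{n,k}$), and it is exactly in this situation that Theorem \ref{commutation2} legitimizes the additive decomposition of the generator. Once that identification is in place, the remainder reduces to assembling the pieces.
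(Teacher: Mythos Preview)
Your proposal is correct and rests on the same structural ingredients as the paper's proof: Theorem~\ref{lift} for the horizontal lift, the total geodesic fibers isometric to $\mathbf{U}(k)$, and the commutation of $\Delta_{\mathcal H}$ and $\Delta_{\mathcal V}$ from Theorem~\ref{commutation2}. The only difference is packaging: where you carry out an It\^o computation to verify the martingale problem for $\tfrac12\Delta_{\widehat V_{n,k}}=\tfrac12(\Delta_{\mathcal H}+\Delta_{\mathcal V})$, the paper works at the semigroup level, writing $e^{\frac12 t\Delta_{\widehat V_{n,k}}}=e^{\frac12 t\Delta_{\mathcal V}}e^{\frac12 t\Delta_{\mathcal H}}$ and identifying each factor with the expectation over $\Omega$ and over $\widetilde w$ respectively---this bypasses the need to track cross quadratic variations and the right-invariance check you flag as the ``main obstacle''.
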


\begin{proof}
We denote by $\Delta_{\mathcal{H}}$ the horizontal Laplacian and by $\Delta_{\mathcal{V}}$ the vertical Laplacian of the Stiefel fibration; see Section \ref{Section Riemannian submersion} for the definitions of horizontal and vertical Laplacians. Since the submersion $\widehat{V}_{n,k} \to \widehat{G}_{n,k}$ is totally geodesic, the operators $\Delta_{\mathcal{H}}$ and $\Delta_{\mathcal{V}}$ commute. We note that the Laplace-Beltrami operator of $\widehat{V}_{n,k}$ is given by $\Delta_{\widehat{V}_{n,k}} = \Delta_{\mathcal{H}}+\Delta_{\mathcal{V}}$ and that the horizontal lift of the Brownian motion on $\widehat G_{n,k}$ is a diffusion with generator $\frac{1}{2} \Delta_{\mathcal{H}}$. The fibers of the submersion $\widehat{V}_{n,k} \to \widehat G_{n,k}$ are isometric to $\mathbf{U}(k)$, thus if $f$ is a bounded Borel function on $\widehat{V}_{n,k}$, one has
\[
e^{\frac{1}{2} t \Delta_{\mathcal{V}}}f \begin{pmatrix} X  \\ Z \end{pmatrix}=  \mathbb{E} \left ( f \begin{pmatrix} X \Omega(t)  \\ Z \Omega(t) \end{pmatrix}   \right).
\]
Since $e^{\frac{1}{2} t \Delta_{\mathcal{V}}}e^{\frac{1}{2} t \Delta_{\mathcal{H}}}= e^{\frac{1}{2} t \Delta_{\widehat{V}_{n,k} }}$, we conclude from Theorem \ref{lift}.
 \end{proof}

\section{Limit theorems}

Throughout the section, let $(w (t))_{t \ge 0}=(X(t)Z(t)^{-1})_{t \ge 0}$ be a Brownian motion on $\widehat{G}_{n,k}$ where $\begin{pmatrix} X(t)  \\ Z(t) \end{pmatrix}_{t \ge 0}$ is a Brownian motion on $\widehat{V}_{n,k}$ . Our goal is to prove Theorem \ref{limit_oi} below. Without loss of generality we will assume throughout the section that the eigenvalues of $Z_0 Z_0^*$ are distinct; Even  if the eigenvalues of the complex Jacobi process $Z(t)Z(t)^*$ are not distinct for $t=0$, they will be distinct for any $t>0$, and thus from the Markov property, the limit   Theorem \ref{limit_oi} still holds.

\subsection{Main limit theorem}

We first  give a limit theorem for the process $\left(\int_0^t \tr \left( w^*(s) w(s)) ds \right) \right)_{ t \ge 0}$ that shall be used in the next subsections.  Our result is the following:

\begin{theorem}\label{limit_oi}
Let $(J(t))_{t \ge 0}=(w (t)^* w (t))_{t \ge 0}$. The following convergence holds in distribution when $t \to +\infty$
\[
\frac{1}{t^2} \int_0^t  \tr(J)ds \to \mathcal{X},
\]
where $\mathcal{X}$ is a random variable on $[0,+\infty)$ with density $\frac{k(n-k)}{\sqrt{2\pi}  x^{3/2} }e^{-\frac{k^2(n-k)^2}{2x}}$ (therefore $\mathcal{X}$ is the inverse of a gamma distributed random variable).
\end{theorem}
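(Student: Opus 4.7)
The plan is to compute the Laplace transform of $\frac{1}{t^2} T(t)$, where $T(t) := \int_0^t \tr(J(s))\,ds$, and to identify its pointwise limit with that of $\mathcal{X}$. First, I would apply It\^o's formula to $f(\lambda) = \sum_{i=1}^k \log(1+\lambda_i)$ using the eigenvalue SDE \eqref{eq-lambda-sde}. The martingale part contributes $2\sqrt{\lambda_i}\,dB^i$ after cancelling the factor $(1+\lambda_i)$. The drift combines three pieces: the $(n-2k+1)$ and $-(2k-3)\lambda_i$ terms from the drift of $d\lambda_i$ divided by $1+\lambda_i$, the It\^o correction $\tfrac12 \partial_i^2 f\,d\langle \lambda_i\rangle = -2\lambda_i\,dt$, and the Vandermonde interaction, which via the symmetrization identity $\sum_{i\ne\ell}\frac{\lambda_i(1+\lambda_i)}{\lambda_i-\lambda_\ell} = \binom{k}{2}+(k-1)\tr J$ produces $2k(k-1)+4(k-1)\tr J$. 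Collecting terms, the $\tr J$-dependent contributions cancel exactly, leaving
\[
d\!\left(\sum_{i=1}^k \log(1+\lambda_i(t))\right) = 2\sum_{i=1}^k \sqrt{\lambda_i(t)}\,dB^i(t) + 2k(n-k)\,dt.
\]

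Setting $N(t) := \sum_i \int_0^t \sqrt{\lambda_i(s)}\,dB^i(s)$, this is a continuous local martingale with $\langle N\rangle_t = T(t)$, and the identity rearranges to $2N(t) = \sum_i \log(1+\lambda_i(t)) - \sum_i \log(1+\lambda_i(0)) - 2k(n-k)\,t$. For any $\mu>0$, consider the Dol\'eans--Dade exponential $M_\mu(t):=\exp(-\sqrt{2\mu}\,N(t)-\mu T(t))$, a priori only a local martingale. Substituting for $N(t)$,
\[
M_\mu(t) = e^{\sqrt{2\mu}\,k(n-k)\,t}\,\exp\!\Bigl(-\tfrac{\sqrt{2\mu}}{2}\!\left[\textstyle\sum_i\log(1+\lambda_i(t))-\sum_i\log(1+\lambda_i(0))\right]-\mu T(t)\Bigr).
\]
Since $\lambda_i\ge 0$ and $\mu T(t)\ge 0$, the second factor is bounded above by the deterministic constant $\exp(\tfrac{\sqrt{2\mu}}{2}\sum_i\log(1+\lambda_i(0)))$, so $M_\mu$ is uniformly bounded on each interval $[0,t]$ and is therefore a true martingale. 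The identity $\mathbb{E}(M_\mu(t))=1$ rearranges to
\[
\mathbb{E}\Bigl(\exp\!\bigl(-\tfrac{\sqrt{2\mu}}{2}[\textstyle\sum_i\log(1+\lambda_i(t))-\sum_i\log(1+\lambda_i(0))]-\mu T(t)\bigr)\Bigr) = e^{-\sqrt{2\mu}\,k(n-k)\,t}.
\]

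To conclude I specialise to $\mu = s/t^2$ with $s>0$ and pass to the limit. By Theorem~\ref{limit eigenvalue}, $\rho(t)$ converges in distribution to the stationary measure $\nu$, and since the logarithmic singularity of $\nu$ at $x_i=-1$ is tamed by the Vandermonde factor, the random variable $\sum_i\log(1+\lambda_i(t))=\sum_i[\log 2-\log(1+\rho_i(t))]$ is tight. Consequently $\sum_i\log(1+\lambda_i(t))/t\to 0$ in probability, and a bounded-convergence argument (using $\sum_i\log(1+\lambda_i(t))\ge 0$ to control the exponential factor) gives
\[
\lim_{t\to\infty}\mathbb{E}(e^{-sT(t)/t^2}) = e^{-k(n-k)\sqrt{2s}}, \qquad s>0.
\]
The right-hand side is precisely the Laplace transform of the first passage time of a standard Brownian motion to the level $k(n-k)$, whose density is $\frac{k(n-k)}{\sqrt{2\pi x^3}}e^{-k^2(n-k)^2/(2x)}$; L\'evy's continuity theorem then yields the claimed convergence in distribution. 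The main obstacle is the It\^o computation in the first paragraph: verifying that the Vandermonde interaction and the It\^o correction combine exactly to cancel the drift-induced $\tr J$ contributions, leaving the deterministic constant $2k(n-k)$. Everything else is a standard exponential-martingale and tightness argument.
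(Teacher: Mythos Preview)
Your proof is correct and takes a genuinely more economical route than the paper's. Both arguments rest on the same exponential martingale — your identity $d\bigl(\sum_i\log(1+\lambda_i)\bigr)=2\sum_i\sqrt{\lambda_i}\,dB^i+2k(n-k)\,dt$ is exactly the paper's equation \eqref{eq-logdet-I+J} (Lemma~\ref{thm-J2}), derived there from the matrix SDE for $J$ rather than from the eigenvalue SDE, and your $M_\mu$ coincides with the paper's $M_t^\alpha$ (Lemma~\ref{martingale g}) under $\alpha=\sqrt{2\mu}/2$. The divergence is in the final step: the paper applies Girsanov, identifies the new law of $\rho$ as a Vandermonde transform of Jacobi diffusions, invokes Karlin--McGregor to get an \emph{explicit} finite-$t$ Laplace transform (Lemma~\ref{kernel J}), and then extracts the leading term of its Jacobi-polynomial expansion. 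You instead rescale $\mu=s/t^2$ so that $\mathbb{E}(M_\mu(t))=1$ reads $e^{-\sqrt{2s}\,k(n-k)}$ on the right for every $t$, and then use the already-established convergence of $\rho(t)$ (Theorem~\ref{limit eigenvalue}) to show the $\frac{1}{t}\sum_i\log(1+\lambda_i(t))$ correction is negligible. Your route is shorter and avoids the Karlin--McGregor and spectral machinery entirely; what the paper's route buys in return is the exact formula in Lemma~\ref{kernel J}, which is of independent interest.

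One small correction: your remark that the logarithmic singularity is ``tamed by the Vandermonde factor'' is not the right reason — the Vandermonde $\prod_{i<j}(x_i-x_j)^2$ does not vanish at a single $x_i=-1$. The tightness of $\sum_i\log(1+\lambda_i(t))$ follows simply because the limiting density $c_{n,k}\prod_{i<j}(x_i-x_j)^2\prod_i(1-x_i)^{n-2k}$ is bounded on $\overline{\Delta_k}$ (the exponent of $(1+x_i)$ is zero), so $\nu(\{\min_i\rho_i\le -1+\varepsilon\})=O(\varepsilon)$; continuous mapping then gives convergence in law of $\sum_i\log(1+\lambda_i(t))$ and hence tightness. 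With this in place your bounded-convergence step goes through exactly as you describe.
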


The proof is rather long and will be decomposed in several steps. We first recall that from Lemma \ref{thm-J}, there exists a Brownian motion $(\bB(t))_{t \ge 0}$ in $\C^{k \times k}$ such that:
\begin{equation}\label{eq-dJ}
dJ=\sqrt{I_{k}+J} d\bB^*\sqrt{I_k+J} \sqrt{J}+\sqrt{J}\sqrt{I_k+J}d\bB \sqrt{I_{k}+J}+2(n-k+\tr(J))(I_k+J)dt
\end{equation}

\begin{lemma}\label{thm-J2}
We have 
\begin{equation}\label{eq-det-I+J}
d(\det(I_k+J))=\det(I_k+J)\tr\left( \sqrt{J}(d\bB+d\bB^*) \right)+2\det(I_k+J)\bigg(k(n-k)+ \tr(J) \bigg)dt,
\end{equation}
and therefore 
\begin{equation}\label{eq-logdet-I+J}
d(\log\det(I_k+J))=\tr\left(\sqrt{J}(d\bB+d\bB^*) \right)+2k(n-k)dt.
\end{equation}
\end{lemma}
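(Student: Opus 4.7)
My plan is to derive \eqref{eq-logdet-I+J} first, since its drift is simpler (no $\tr(J)$ term), and then obtain \eqref{eq-det-I+J} by applying It\^o's formula to $\det(I_k+J)=\exp(\log\det(I_k+J))$. Alternatively these two formulas are equivalent via It\^o on $\log$/$\exp$ once one knows the quadratic variation of the martingale part, so it suffices to establish either one.

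To compute $d(\log\det(I_k+J))$, I will apply It\^o's formula to the smooth function $M\mapsto\log\det M$ of the $k\times k$ matrix $M=I_k+J$. Using $\partial_{M_{ij}}\log\det M=(M^{-1})_{ji}$ and $\partial^2_{M_{ij}M_{pq}}\log\det M=-(M^{-1})_{jp}(M^{-1})_{qi}$, this reads
\[
d(\log\det M)=\tr(M^{-1}dM)-\tfrac12\tr(M^{-1}dM\,M^{-1}dM),
\]
where the second term only uses the quadratic covariation of the martingale part $dN$ of $dJ$. Substituting \eqref{eq-dJ}, setting $A=\sqrt{I_k+J}$, $C=\sqrt{J}$, and using $AC=CA$ together with $(I_k+J)^{-1}A=A^{-1}$, the first-order piece collapses by cyclicity of the trace to
\[
\tr\!\bigl(\sqrt{J}(d\bB+d\bB^{*})\bigr)+2k\bigl(n-k+\tr(J)\bigr)\,dt.
\]
The main computational step is the quadratic correction. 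Writing $M^{-1}dN=A^{-1}d\bB^{*}AC+CA^{-1}d\bB A$ and expanding the square, the pure $d\bB\,d\bB$ and $d\bB^{*}d\bB^{*}$ pieces vanish, so only the two cross terms contribute. For those I will use the complex-Brownian rule $d\bB_{ab}d\overline{\bB_{cd}}=2\,\delta_{ac}\delta_{bd}\,dt$, which yields the key identity
\[
\tr(X\,d\bB\,Y\,d\bB^{*}Z)=\tr(X\,d\bB^{*}\,Y\,d\bB\,Z)=2\tr(Y)\tr(XZ)\,dt.
\]
Applying this with $(X,Y,Z)=(A^{-1},C^{2},A)$ and $(X,Y,Z)=(CA^{-1},I_k,AC)$ and using $A^{-1}A=I_k$ and $CA^{-1}AC=C^{2}=J$, each of the two surviving terms equals $2k\,\tr(J)\,dt$, giving
\[
\tfrac12\tr(M^{-1}dN\,M^{-1}dN)=2k\tr(J)\,dt,
\]
which exactly cancels the $2k\tr(J)\,dt$ in the drift and produces \eqref{eq-logdet-I+J}.

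To pass to \eqref{eq-det-I+J}, note that \eqref{eq-logdet-I+J} identifies the martingale part of $\log\det(I_k+J)$ as $\tr(\sqrt{J}(d\bB+d\bB^{*}))$, whose squared bracket, computed with the same rule $d\bB_{ab}d\overline{\bB_{cd}}=2\delta_{ac}\delta_{bd}\,dt$, equals $2\cdot 2\tr(J)\,dt=4\tr(J)\,dt$. It\^o's formula applied to $\det(I_k+J)=e^{\log\det(I_k+J)}$ then gives
\[
d(\det(I_k+J))=\det(I_k+J)\,\tr\!\bigl(\sqrt{J}(d\bB+d\bB^{*})\bigr)+\det(I_k+J)\bigl[2k(n-k)+2\tr(J)\bigr]dt,
\]
which is precisely \eqref{eq-det-I+J}. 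The only subtle point in the whole argument is the careful bookkeeping of the non-commuting matrix factors in the quadratic variation, where one must exploit that $A$ and $C$ are functions of the same matrix $J$ and hence commute; this is the step I expect to be the main obstacle to present cleanly.
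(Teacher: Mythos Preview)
Your proof is correct. Both you and the paper use It\^o's formula on a smooth function of $I_k+J$, but in opposite orders and with different computational styles. The paper applies It\^o directly to $\det(I_k+J)$ using the coordinate formulas $\partial_{M_{ij}}\det M=(\mathrm{cof}\,M)_{ij}$ and the second-derivative identity for determinants, then contracts against the entrywise covariation $dJ_{ij}\,dJ_{i'j'}=2\bigl((J+J^{2})_{i'j}(I_k+J)_{ij'}+(J+J^{2})_{ij'}(I_k+J)_{i'j}\bigr)dt$, and deduces \eqref{eq-logdet-I+J} from \eqref{eq-det-I+J}. You instead start from $\log\det$, write everything at the matrix level via $d\log\det M=\tr(M^{-1}dM)-\tfrac12\tr(M^{-1}dN\,M^{-1}dN)$, and evaluate the quadratic correction with the clean identity $\tr(X\,d\bB\,Y\,d\bB^{*}Z)=2\tr(Y)\tr(XZ)\,dt$, using that $\sqrt{J}$ and $\sqrt{I_k+J}$ commute; then you recover \eqref{eq-det-I+J} from \eqref{eq-logdet-I+J} via the exponential. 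Your route avoids the index bookkeeping of the cofactor computation and makes the cancellation of the $2k\,\tr(J)\,dt$ term more transparent, at the cost of relying on that trace identity (which you verify). Either approach is fine; yours is a bit more streamlined.
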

\begin{proof}
By It\^o's formula we have
\[
d(\det(I_k+J))=\sum_{i,j=1}^k\frac{\partial \det(I_k+J)}{\partial J_{ij}}dJ_{ij}+\frac12 \sum_{i,j, i', j'=1}^k\frac{\partial^2 \det(I_k+J)}{\partial J_{ij}\partial J_{i'j'}}dJ_{ij}dJ_{i'j'}.
\]
First, we know that
\[
\frac{\partial \det(J)}{\partial J_{ij}}=
\frac{\partial \sum_{\ell=1}^k J_{i\ell}\tilde{J}_{i\ell}}{\partial J_{ij}}=\tilde{J}_{ij}
\]
where $\tilde{J}=\det (J)(J^T)^{-1}$ is the cofactor of $J$. Hence the first order term  writes $\det(I_k+J)\tr ((I_k+J)^{-1}dJ)$. Next, we will use the following formula to compute the cross second order derivatives:
\[
\frac{\partial^2 \det(J)}{\partial x\partial y}=(\det (J))\bigg( \tr \left(J^{-1}\frac{\partial^2 J}{\partial x\partial y} \right)
+\tr \left(J^{-1}\frac{\partial J}{\partial x} \right)\tr \left(J^{-1}\frac{\partial J}{\partial y}  \right)
-\tr\left(J^{-1}\frac{\partial J}{\partial x} J^{-1}\frac{\partial J}{\partial y} \right)
\bigg).
\]
Since $\frac{\partial J}{\partial J_{ij}}=E_{ij}$, clearly $\frac{\partial^2 J}{\partial J_{ij}\partial J_{i'j'}}=0$. We also have $J^{-1}\frac{\partial J}{\partial J_{ij}}=\sum_{\ell}(J^{-1})_{\ell i}E_{\ell j} $ and $\tr \left( J^{-1}\frac{\partial J}{\partial J_{ij}}\right)=(J^{-1})_{ji}$. Hence 
\[
\frac{\partial^2 \det(J)}{\partial J_{ij}\partial J_{i'j'}}=(\det (J))\bigg( 
(J^{-1})_{ji}(J^{-1})_{j'i'}
-(J^{-1})_{j'i}(J^{-1})_{ji'}
\bigg),
\]
and
\[
\frac{\partial^2 \det(I_k+J)}{\partial J_{ij}\partial J_{i'j'}}=(\det (I_k+J))\bigg( 
((I_k+J)^{-1})_{ji}((I_k+J)^{-1})_{j'i'}
-((I_k+J)^{-1})_{j'i}((I_k+J)^{-1})_{ji'}
\bigg).
\]
Moreover, from \eqref{eq-dJ} we know that
\[
dJ_{ij}dJ_{i'j'}=2dt\bigg((J+J^2)_{i'j}(I_k+J)_{ij'}+(J+J^2)_{ij'}(I_k+J)_{i'j} \bigg)
\]
Hence we have
\begin{align*}
&d(\det(I_k+J))=\det(I_k+J)\tr ((I_k+J)^{-1}dJ)\\
&+ \sum_{i,j, i', j'=1}^k\det (I_k+J)\bigg( 
((I_k+J)^{-1})_{ji}((I_k+J)^{-1})_{j'i'}
-((I_k+J)^{-1})_{j'i}((I_k+J)^{-1})_{ji'}
\bigg)\\
&\quad\quad\cdot\bigg((J+J^2)_{i'j}(I_k+J)_{ij'}+(J+J^2)_{ij'}(I_k+J)_{i'j} \bigg)dt\\
&=\det(I_k+J)\tr ((I_k+J)^{-1}dJ)
- 2(k-1) \det(I_k+J)\tr(J)dt.
\end{align*}
From \eqref{eq-dJ} we know
\[
\tr ((I_k+J)^{-1}dJ)=\tr\left(\sqrt{J}(d\bB+d\bB^*) \right)+ 2k (n-k+\tr (J))dt.
\]
Hence
\begin{align*}
&d(\det(I_k+J))=\det(I_k+J)\,\tr\left(\sqrt{J}(d\bB+d\bB^*) \right)+ 2\det(I_k+J)(k(n-k)+ \tr(J))dt.
\end{align*}
As a direct consequence of $d\langle \det(I_k+J),\det(I_k+J)\rangle=4\,\det(I_k+J)^2\tr(J)dt$, we obtain \eqref{eq-logdet-I+J} using It\^o's formula.
\end{proof}

\begin{lemma}\label{martingale g}
For every $\alpha \ge 0$ the process
\[
M_t^{\alpha}=e^{2k\alpha(n-k)t}  \left(\frac{\det(I_k+J_0)}{\det(I_k+J(t))}\right)^\alpha \exp\left(-2\alpha^2 \int_0^t \tr(J)ds \right)
\]
is a martingale.
\end{lemma}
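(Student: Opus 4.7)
The plan is to apply It\^o's formula to $\log M_t^\alpha$ using the SDE for $\log\det(I_k+J(t))$ just established in Lemma \ref{thm-J2}, verify that the drift cancels exactly, and then upgrade the resulting local martingale to a true martingale by a simple pathwise bound.

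First I would rewrite
\[
\log M_t^\alpha = 2k\alpha(n-k)t \;-\; \alpha\bigl(\log\det(I_k+J(t)) - \log\det(I_k+J_0)\bigr) \;-\; 2\alpha^2\int_0^t \tr(J(s))\,ds.
\]
By \eqref{eq-logdet-I+J}, the differential of the middle term is
\[
-\alpha\,\tr\!\left(\sqrt{J}\,(d\bB+d\bB^*)\right) - 2\alpha k(n-k)\,dt,
\]
so, combined with the other two terms, the finite-variation part of $d(\log M_t^\alpha)$ equals $-2\alpha^2\tr(J)\,dt$ and the martingale part equals $-\alpha\,\tr(\sqrt{J}(d\bB+d\bB^*))$.

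The key computation is then the quadratic variation of this martingale part. Using the defining relations $d\bB_{ij}\,d\bB_{i'j'}=0$ and $d\bB_{ij}\,d\bar\bB_{i'j'}=2\delta_{ii'}\delta_{jj'}\,dt$ of a $\C^{k\times k}$ Brownian motion, and that $\sqrt{J}$ is Hermitian with $(\sqrt{J})^2=J$, one computes
\[
d\langle \tr(\sqrt{J}\,d\bB),\ \tr(\sqrt{J}\,d\bB^*)\rangle = 2\,\tr(J)\,dt,
\]
while the purely holomorphic and anti-holomorphic quadratic variations vanish. Hence the quadratic variation of the martingale part of $d(\log M_t^\alpha)$ equals $4\alpha^2 \tr(J)\,dt$. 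Applying It\^o's formula to $M_t^\alpha = \exp(\log M_t^\alpha)$, the It\^o correction $\tfrac12\cdot 4\alpha^2\tr(J)\,dt$ cancels exactly with the drift $-2\alpha^2\tr(J)\,dt$. This yields
\[
dM_t^\alpha = -\alpha\, M_t^\alpha\,\tr\!\left(\sqrt{J}\,(d\bB+d\bB^*)\right),
\]
which shows $M^\alpha$ is a local martingale.

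The remaining step, and the one I view as the only real subtlety, is the upgrade from local to true martingale. This is where Novikov's condition could become delicate due to the unbounded factor $\exp(2k\alpha(n-k)t)$, but here we can avoid it entirely: since $J(t)\succeq 0$, every eigenvalue of $I_k+J(t)$ is $\ge 1$, so $\det(I_k+J(t))\ge 1$, and of course $\exp(-2\alpha^2\int_0^t \tr(J)\,ds)\le 1$. Consequently, for every $T>0$,
\[
\sup_{0\le t\le T} M_t^\alpha \;\le\; e^{2k\alpha(n-k)T}\,\det(I_k+J_0)^\alpha,
\]
so $M^\alpha$ is a bounded local martingale on each $[0,T]$, hence a true martingale, which concludes the proof.
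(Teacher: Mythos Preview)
Your proof is correct and takes essentially the same approach as the paper: both identify $M_t^\alpha$ as the exponential local martingale associated with $-\alpha\,\tr(\sqrt{J}(d\bB+d\bB^*))$ via the formula \eqref{eq-logdet-I+J}, and both upgrade to a true martingale using the pathwise bound $M_t^\alpha \le C e^{2k\alpha(n-k)t}$ coming from $\det(I_k+J(t))\ge 1$ and $\exp(-2\alpha^2\int_0^t\tr(J)\,ds)\le 1$. The only cosmetic difference is that the paper starts from the stochastic exponential and rewrites it in the closed form of the statement, whereas you go in the reverse direction by differentiating $\log M_t^\alpha$.
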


\begin{proof}
Consider the exponential local martingale
\[
M_t^\alpha:=\exp\bigg(-\alpha\int_0^t \tr (\sqrt{J} (d\bB+ d\bB^*))-2\alpha^2\int_0^t \tr(J)ds \bigg),
\] 
where $\bB$ is the Brownian motion as given in Theorem \ref{thm-J2}. From Lemma \ref{thm-J2}, we have 
\[
\left(\frac{\det(I_k+J(t))}{\det(I_k+J_0)}\right)^\alpha=\exp\bigg(\alpha\left(\int_0^t\tr\left(\sqrt{J}(d\bB+d\bB^*) \right)+ 2k(n-k) ds \right)\bigg),
\]
and thus
\[
M_t^\alpha=e^{2k\alpha(n-k)t}  \left(\frac{\det(I_k+J_0)}{\det(I_k+J(t))}\right)^\alpha \exp\left(-2\alpha^2 \int_0^t \tr(J)ds \right).
\]
From this expression, it is clear that there exists a constant $C>0$ such that we almost surely have $|M_t^\alpha| \le Ce^{2k\alpha(n-k)t} $ and thus the process $(M_t^\alpha)_{t \ge 0}$ is a martingale.
\end{proof}

In the next lemma, we provide a formula for the Laplace transform of the functional $\int_0^t \tr({J(s)})ds$ using the Girsanov transform corresponding to the martingale $(M_t^\alpha)_{t \geq 0}$. 
Doing so leads to the semi-group density $p_t^{n-2k, \alpha}$ of the one-dimensional Jacobi diffusion with parameters $(n-2k,2\alpha)$. 
For arbitrary parameters $a,b > -1$, the semi-group density $p^{a,b}_t(u,v)$ of the Jacobi operator 
\[
2 (1-v^2)\frac{\partial^2}{\partial v^2}+ 2\left(b-a - (a+b+2)v  \right)\frac{\partial}{\partial v}
\]
admits the following expansion: 

\begin{align}\label{eq-jacobi-kernel}
 & p^{a,b}_t(u,v)=(1+v)^{b}(1-v)^{a} \sum_{m=0}^{+\infty} c_{m,a, b} e^{-2m(m+a+b+1)t}P_m^{a,b}(u)P_m^{a,b}(v), \quad u,v \in [-1,1],
\end{align}
where 
$(P_m^{a,b}(x)$, $m\in \mathbb{Z}_{\ge0})$ are the  Jacobi polynomials defined by the Rodrigues formula:
\[
P_m^{a,b}(x)=\frac{(-1)^m}{2^mm!(1-x)^{a}(1+x)^b}\frac{d^m}{dx^m}((1-x)^{a+m}(1+x)^{b+m}),
\]
and \begin{equation*}
c_{m,a, b}= \frac{2m+a+b+1}{2^{a+b+1}}\frac{\Gamma(m+a+b+1)\Gamma(m+1)}{\Gamma(m+a+1)\Gamma(m+b+1)}
\end{equation*}
is the inverse of the squared $L^2$-norm of $P_m^{a,b}$.

\begin{lemma}\label{kernel J}
For every $\alpha \ge 0$ and $t>0$
\begin{align*}
 & \mathbb E\left( e^{-2\alpha^2 \int_0^t \tr(J)ds}\right)=C e^{ \left(\frac{1}{3} k(k-1) \left( 3n-4k+6\alpha+2 \right) -2k(n-k)\alpha\right) t} \\
 &\quad\quad \cdot  \int_{\Delta_k} \mathrm{det}\left[ \left( \frac{2^{\alpha} p^{n-2k,2\alpha}_{t} \left(\frac{1-\lambda_i(0)}{1+\lambda_i(0)},x_j\right)}{(1+x_j)^{\alpha}}\right)_{ i, j } \right] \prod_{i>j}(x_i-x_j) \, \, dx,
\end{align*}
where 
\[
C=\prod_\ell \frac{( 1+\lambda_\ell(0))^\alpha}{2^\alpha} \prod_{i>j}  \frac{(1+\lambda_i(0))(1+\lambda_j(0))}{2(\lambda_j(0)-\lambda_i(0))}
\]
 is the normalization constant, $\lambda_1(0)\ge\cdots\ge\lambda_k(0)$ are the ordered eigenvalues of $J_0$,  $p^{n-2k,2\alpha}_{t}$ is given by the formula \eqref{eq-jacobi-kernel}  and 
\[
\Delta_k=\{ x \in [-1,1]^k, -1 \le x_1 <\cdots <x_k \le 1 \}. 
\]

\end{lemma}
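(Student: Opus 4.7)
The plan is to invoke the exponential martingale $M_t^\alpha$ constructed in Lemma \ref{martingale g} as a Girsanov density, exploit the fact that under the new measure the eigenvalue process of $J$ becomes a non-colliding system of one-dimensional Jacobi diffusions whose joint law admits a Karlin–McGregor determinantal representation, and then convert the resulting expression into the variables $\rho_i = (1-\lambda_i)/(1+\lambda_i)$.

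Concretely, rearranging the identity of Lemma \ref{martingale g} gives
\[
e^{-2\alpha^2\int_0^t\tr(J)\,ds}=M_t^{\alpha}\cdot e^{-2k\alpha(n-k)t}\left(\frac{\det(I_k+J(t))}{\det(I_k+J_0)}\right)^{\alpha},
\]
so defining the probability measure $\mathbb{P}^\alpha$ by $d\mathbb{P}^\alpha/d\mathbb{P}_{|\mathcal{F}_t}=M_t^\alpha$ yields
\[
\mathbb{E}\!\left(e^{-2\alpha^2\int_0^t\tr(J)\,ds}\right)=\frac{e^{-2k\alpha(n-k)t}}{(\det(I_k+J_0))^{\alpha}}\;\mathbb{E}^{\alpha}\!\left((\det(I_k+J(t)))^{\alpha}\right).
\]
By Girsanov's theorem applied to the complex Brownian motion $\bB$ in \eqref{eq-dJ}, the drift of $J$ under $\mathbb{P}^\alpha$ is shifted by an extra term proportional to $\alpha\,\sqrt{J}(I_k+J)$. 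Tracing this shift through the diagonalisation carried out in the proof of Theorem \ref{thm-lambda} and then through the change of variable $\rho_i=(1-\lambda_i)/(1+\lambda_i)$, the system $(\rho_1,\dots,\rho_k)$ is a diffusion on $\Delta_k$ whose generator is exactly the one that arises from Lemma \ref{lemma-h}-type intertwining with $\mathcal{G}_{n-2k,2\alpha}$ in place of $\mathcal{G}_{n-2k,0}$; i.e. $k$ independent Jacobi diffusions of parameters $(n-2k,2\alpha)$ conditioned to stay in $\Delta_k$ through the Doob $h$-transform by the Vandermonde.

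Once this is established, the Karlin–McGregor argument of Theorem \ref{density rho}, applied mutatis mutandis with $b=2\alpha$, gives
\[
\mathbb{P}^{\alpha}(\rho(t)\in dx)=e^{\frac{1}{3}k(k-1)(3n-4k+6\alpha+2)t}\,\frac{h(x)}{h(\rho(0))}\,\det\!\left(p_t^{n-2k,2\alpha}(\rho_i(0),x_j)\right)_{1\le i,j\le k}\mathbf{1}_{\Delta_k}(x)\,dx.
\]
It then suffices to compute $\mathbb{E}^\alpha[(\det(I_k+J(t)))^\alpha]$ against this density using the key algebraic identity $1+\lambda_i=2/(1+\rho_i)$, which gives $(\det(I_k+J(t)))^\alpha=\prod_i\bigl(2/(1+\rho_i(t))\bigr)^\alpha$. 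Absorbing the column factors $2^\alpha/(1+x_j)^\alpha$ into the determinant and expressing $h(\rho(0))^{-1}=\prod_{i>j}(1+\lambda_i(0))(1+\lambda_j(0))/[2(\lambda_j(0)-\lambda_i(0))]$ together with $(\det(I_k+J_0))^{-\alpha}=\prod_\ell(1+\lambda_\ell(0))^{-\alpha}$ produces the claimed normalising constant $C$ and integral representation.

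The main obstacle is the Girsanov step: one must verify carefully that the shift added to $\bB$ by the change of measure acts, after diagonalisation and the $\lambda\mapsto\rho$ change of variable, precisely as a $b\mapsto 2\alpha$ shift in the Jacobi parameters, and that the Doob intertwining with the Vandermonde $h$ continues to hold under $\mathbb{P}^\alpha$ with the modified spectral constant $\tfrac{1}{3}k(k-1)(3n-4k+6\alpha+2)$. Everything else in the proof is bookkeeping in the determinant and the change of variable on the simplex.
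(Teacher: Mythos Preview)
Your proposal is correct and follows essentially the same route as the paper: define $\mathbb{P}^\alpha$ via the martingale $M_t^\alpha$, use Girsanov to see that under $\mathbb{P}^\alpha$ the process $J$ acquires an extra drift $-4\alpha J(I_k+J)$, push this through the diagonalisation and the change $\rho_i=(1-\lambda_i)/(1+\lambda_i)$ to obtain the Vandermonde transform of $k$ independent Jacobi diffusions with parameters $(n-2k,2\alpha)$, and then apply the Karlin--McGregor argument of Theorem~\ref{density rho} with the shifted constant $\tfrac{1}{3}k(k-1)(3n-4k+6\alpha+2)$. One small remark: the intertwining you want to invoke is the unnamed lemma immediately preceding Theorem~\ref{density rho} (relating $\mathcal{L}_{n,k}$ to $\mathcal{G}_{n-2k,0}$), not Lemma~\ref{lemma-h}, which concerns the unitary eigenvalue process.
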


\begin{proof}
Let $\alpha \ge 0$ and consider the probability measure $\mathbb{P}^{\alpha}$ defined by
\[
\mathbb{P}^{\alpha}|_{\mathcal{F}_t}=M^{\alpha}_t\cdot \mathbb{P}|_{\mathcal{F}_t}.
\] 
We first note that
\begin{align}\label{e-alpha}
\mathbb E\left( e^{-2\alpha^2 \int_0^t \tr(J)ds}\right)=e^{-2k(n-k)\alpha t}\, \mathbb E^{\alpha} \left[ \left(\frac{\det(I_k+J(t))}{\det(I_k+J_0)}\right)^\alpha  \right].
\end{align}
From Girsanov theorem, the process
\[
\beta(t)=\bB(t)+2 \alpha\int_0^t\sqrt{J} ds
\]
is under $\mathbb{P}^{\alpha}$ a $k \times k$-matrix-valued Brownian motion and we have
\[
dJ=\sqrt{I_{k}+J} d\beta^*\sqrt{I_k+J} \sqrt{J}+\sqrt{J}\sqrt{I_k+J}d\beta \sqrt{I_{k}+J}+2\left(n-k-2\alpha J+\tr(J)\right)(I_k+J)dt.
\]
We now denote by $\lambda(t)= (\lambda_i(t))_{1\le i\le k}$ the eigenvalues of $J(t)$, $t\ge0$. From the previous equation satisfied by $J$ we deduce that there exists a Brownian motion $(B(t))_{t \ge 0}$ in $\mathbb{R}^k$ for the probability measure $\mathbb{P}^{\alpha}$  such that
\begin{multline}\label{eq-lambda-sde-Pl}
d\lambda_i=2(1+\lambda_i)\sqrt{\lambda_i}dB^i+2(1+\lambda_i) \Bigg(n-2k+1-(2k+2\alpha-3)\lambda_i  + \\ 
 2\lambda_i (1+\lambda_i)\sum_{\ell\not=i} \frac{1}{\lambda_i-\lambda_\ell}\Bigg)dt.
\end{multline}
Let us denote  
\begin{equation*}
\rho_i=\frac{1-\lambda_i}{1+\lambda_i},\quad i=1,\dots, k.    
\end{equation*}
Then, using It\^o's formula and the previous equation, we have
\begin{align*}
d\rho_i =-2\sqrt{1-\rho_i^2}dB^i-2\left( \left(n-2k-2\alpha+(n-2k+2\alpha+2)\rho_i\right)+2\sum_{\ell\not=i}\frac{1-\rho_i^2}{\rho_\ell-\rho_i}\right)dt.
\end{align*}
In other words, the process $(\rho_i)_{i=1}^k$ is under $\mathbb{P}^{\alpha}$ a Vandermonde transform of $k$ independent real Jacobi processes. Consequently, by an argument similar to that of the proof of Theorem \ref{density rho} its semi-group density with respect to the Lebesgue measure $dx$ is given by 
\[
e^{ \frac{1}{3} k(k-1) \left( 3n-4k+6\alpha+2 \right) t}  \frac{\prod_{i>j}(x_i-x_j)}{\prod_{i>j}(\rho_i(0)-\rho_j(0))} \mathrm{det} \left( p^{n-2k,2\alpha}_{t} (\rho_i(0),x_j)\right)_{1\le i, j \le k}  \, \mathbf{1}_{\Delta_k} (x).
\]
We conclude then with \eqref{e-alpha}.
\end{proof}


Now, the proof of Theorem \ref{limit_oi} goes as follows: 
from Lemma \ref{kernel J}, we have for every $\alpha \ge 0$ and $t>0$
\begin{align}\label{laplace-asymp}
 \mathbb E\left( e^{-2\alpha^2 \int_0^t \tr(J)ds}\right) 
 =& C e^{ \left(\frac{1}{3} k(k-1) \left( 3n-4k+6\alpha+2 \right) -2k(n-k)\alpha\right) t}  \\
 &\cdot\int_{\Delta_k} \mathrm{det} \left( \frac{p^{n-2k,2\alpha}_{t} \left(\frac{1-\lambda_i(0)}{1+\lambda_i(0)},x_j\right)}{(1+x_j)^{\alpha}}\right)_{ i, j }  \prod_{i>j}(x_i-x_j) \, \, dx. \notag
\end{align}

In order to analyze the large time behavior of this Laplace transform  we we will use the formula \eqref{eq-jacobi-kernel}.  We can write
\begin{align*}
 p^{n-2k,2\alpha}_t(x,y)&=(1+y)^{2\alpha}(1-y)^{n-2k}\\
&\quad\quad\cdot\sum_{m=0}^{+\infty} c_{m,n-2k+ 2\alpha} e^{-2m(m+n-2k+2\alpha+1)t}P_m^{n-2k,2\alpha}(x)P_m^{n-2k,2\alpha}(y),
\end{align*}
Similarly to the proof of Theorem \ref{limit eigenvalue}, denoting as before $\rho_i(0)=\frac{1-\lambda_i(0)}{1+\lambda_i(0)}$ we now compute 
\begin{align*}
 & \mathrm{det} \left( p^{n-2k,2\alpha}_{t} (\rho_i(0),x_j)\right)_{1\le i, j \le k} \\
 =& \sum_{\sigma \in \mathfrak{S}_k} \mathrm{sgn}(\sigma) \prod_{i=1}^k  p^{n-2k,2\alpha}_{t}(\rho_{\sigma(i)} (0),x_i) \\
 =& \sum_{\sigma \in \mathfrak{S}_k} \mathrm{sgn}(\sigma) \prod_{i=1}^k \bigg[ (1-x_i)^{n-2k} (1+x_i)^{2\alpha} \sum_{m=0}^{+\infty} \bigg( c_{m,n-2k+ 2\alpha} e^{-2m(m+n-2k+2\alpha+1)t}\\
 &\qquad\qquad\cdot P_m^{n-2k,2\alpha}(\rho_{\sigma(i)} (0))P_m^{n-2k,2\alpha}(x
 _i)\bigg)\bigg]\\
 =& V_\alpha(x) \sum_{\sigma \in \mathfrak{S}_k} \mathrm{sgn}(\sigma)  \sum_{m_1,\dots,m_k=0}^{+\infty}  \prod_{i=1}^k c_{m_i,n-2k+ 2\alpha} e^{-\lambda_{m_i}t}  P_{m_i}^{n-2k,2\alpha}(\rho_{\sigma(i)} (0))P_{m_i}^{n-2k,2\alpha}(x_i)
\end{align*}
where $V_\alpha (x)=\prod_{i=1}^k (1-x_i)^{n-2k}(1+x_i)^{2\alpha}$ and $\lambda_{m_i}=2m_i(m_i+n-2k+2\alpha+1)$. We can now write
\begin{align*}
 &  \sum_{\sigma \in \mathfrak{S}_k} \mathrm{sgn}(\sigma)  \sum_{m_1,\dots,m_k=0}^{+\infty}  \prod_{i=1}^k c_{m_i,n-2k+ 2\alpha} e^{-\lambda_{m_i}t}P_{m_i}^{n-2k,2\alpha}(\rho_{\sigma(i)} (0))P_{m_i}^{n-2k,2\alpha}(x_i) \\
 =&  \sum_{m_1,\dots,m_k=0}^{+\infty} \left(\prod_{i=1}^k c_{m_i,n-2k+ 2\alpha} e^{-\lambda_{m_i}t} P_{m_i}^{n-2k,2\alpha}(x_i) \right)\sum_{\sigma \in \mathfrak{S}_k} \mathrm{sgn}(\sigma) \prod_{i=1}^k  P_{m_i}^{n-2k,2\alpha}(\rho_{\sigma(i)} (0)) \\
 =&   \sum_{m_1,\dots,m_k=0}^{+\infty} \left(\prod_{i=1}^k c_{m_i,n-2k+ 2\alpha} e^{-\lambda_{m_i}t} P_{m_i}^{n-2k,2\alpha}(x_i) \right)  \mathrm{det} \left( P_{m_i}^{n-2k,2\alpha}(\rho_{j} (0))\right)_{1 \le i,j \le k}.
\end{align*} 
By skew-symmetrization, we can rewrite the previous sum as
\begin{align*}
 \sum_{m_1 < \cdots <m_k} \left(\prod_{i=1}^k c_{m_i,n-2k+ 2\alpha} e^{-\lambda_{m_i}t}  \right)\mathrm{det} \left( P_{m_i}^{n-2k,2\alpha}(x_j)\right)_{1 \le i,j \le k}  \mathrm{det} \left( P_{m_i}^{n-2k,2\alpha}(\rho_{j} (0))\right)_{1 \le i,j \le k}.
\end{align*}
Let us note that when $t \to +\infty$, the term of leading order in this sum corresponds to  $(m_1,\dots,m_k)=(0,1,\dots,k-1)$ (this is the Weyl vector of the Lie group $\mathbf{SO}(2k)$) and is given by
\begin{align}\label{skew-fu}
\left(\prod_{i=1}^k c_{i-1,n-2k+ 2\alpha}   \right) e^{- \frac{1}{3} k(k-1) \left( 3n-4k+6\alpha+2 \right) t}   \mathrm{det} \left( P_{i-1}^{n-2k,2\alpha}(x_j)\right)_{1 \le i,j \le k}  \mathrm{det} \left( P_{i-1}^{n-2k,2\alpha}(\rho_{j} (0))\right)_{1 \le i,j \le k}
\end{align}
%

Using \eqref{skew-fu}, one then deduces that for every $\lambda \ge 0$,
\begin{equation}\label{eq-limit-Laplace}
\lim_{t \to +\infty} \mathbb E\left( e^{-\frac{\lambda}{t^2} \int_0^t \tr(J)ds}\right) = \tilde{C} e^{  -k(n-k) \sqrt{2\lambda} } ,
\end{equation}
where $\tilde{C}$ is a constant depending only on $n,k,\rho_i(0)$. For $\lambda=0$, $\mathbb E\left( e^{-\frac{\lambda}{t^2} \int_0^t \tr(J)ds}\right)=1$ and therefore $\tilde{C}=1$.
One now concludes using an inverse Laplace transform that the following convergence takes place in distribution when $t \to +\infty$
\[
\frac{1}{t^2} \int_0^t  \tr(J)ds \to \mathcal{X},
\]
where $\mathcal{X}$ is a random variable on $[0,+\infty)$ with density \begin{equation*}
\frac{k(n-k)}{\sqrt{2\pi}  x^{3/2} }e^{-\frac{k^2(n-k)^2}{2x}}.    
\end{equation*} 
We incidentally note that $\mathcal{X}$ is therefore distributed as the hitting time of $k(n-k)$ by a one-dimensional Brownian motion, even though this does not seem to be readily explainable. 

\begin{remark}
The integral displayed in Lemma \ref{kernel J} may be computed explicitly. Indeed, the semi-group density of the process $(\rho_i)_{i=1}^k$ under $\mathbb{P}^{\alpha}$ may be expanded in the basis of symmetric Jacobi polynomials as displayed in \eqref{densite} below (up to an affine transformation). One is then led to the integral of the product of a symmetric Jacobi polynomials and the symmetric function \begin{equation*}
(x_i)_{i=1}^k \mapsto \prod_{i=1}^k(1+x_i)^{\alpha}.    
\end{equation*}
The explicit expression of this integral is given by Lemma 3.2 in \cite{OO}.  Consequently, The Laplace transform displayed in Lemma \ref{kernel J} is given by a series of symmetric Jacobi polynomials in $(\rho_i(0))_{i=1}^k$. As a matter of fact, the limit as 
$t \rightarrow +\infty$ of the functional 
\begin{equation*}
\frac{\alpha}{t^2} \int_0^t \tr(J)ds    
\end{equation*}
is governed by the constant term of the resulting series and one readily retrieves the result stated in Theorem \ref{limit_oi}. 
\end{remark}

\subsection{Asymptotics of a generalized stochastic area}

By the definition of the one-form $\eta$ in \eqref{defeta}, we note that
\begin{align}\label{eq-tr-eta}
\int_{w[0,t]} \mathrm{tr} (\eta) & =\frac12 \mathrm{tr} \left[ \int_0^t   (I_k+J)^{-1/2} (\circ dw^* \, w-w^*\circ dw)(I_k+J)^{-1/2}\right] \notag\\
 & =\frac12 \mathrm{tr} \left[ \int_0^t   (I_k+J)^{-1/2} ( dw^* \, w-w^* dw)(I_k+J)^{-1/2}\right]
\end{align}
where as before $J=w^* w$. From simple but lenghty computations one can verify that
\[
\tr(d\eta)=\partial \overline{\partial}\log \det(I_k+w^*w),
\]
which implies that $i\tr(d\eta)$ is the K\"ahler form on $\widehat{G}_{n,k}$ (A similar computation is done at the beginning of Section \ref{Limit hyp stiefel kahl}). Actually, the function $K(w)=\log\det(I_k+w^*w)$ is a K\"ahler potential and 
\[
\mathrm{tr} (\eta)=-\frac{1}{2} (\partial -\bar{\partial})K.
\]
Therefore $i\int_{w[0,t]} \mathrm{tr} (\eta)$ can be considered as a generalized stochastic area process on $\widehat{G}_{n,k}$. In the proposition below we deduce large time limit distributions of such functional.

\begin{proposition}\label{thm-main-3}

The following convergence holds in distribution when $t \to +\infty$
\[
\frac{1}{it} \int_{w[0,t]} \mathrm{tr} (\eta) \to \mathcal{C}_{k(n-k)},
\]
where $\mathcal{C}_{k(n-k)}$ is a Cauchy distribution of parameter $k(n-k)$.
\end{proposition}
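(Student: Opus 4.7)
The strategy is to compute the characteristic function of $S(t) := \frac{1}{i}\int_{w[0,t]}\tr(\eta)$ by conditioning on the filtration generated by the Jacobi process $J(t) = w^*(t)w(t)$, and then invoke Theorem \ref{limit_oi}. First, from \eqref{eq-tr-eta} together with cyclicity of the trace we have
\[
S(t) = \frac{1}{2i}\int_0^t \tr\!\left[(I_k+J)^{-1}(dw^*\, w - w^*\, dw)\right],
\]
which is a real-valued continuous local martingale. Using the covariance structure $dw_{ij}d\bar w_{i'j'} = 2(I_{n-k}+ww^*)_{ii'}(I_k+w^*w)_{j'j}\,dt$ from the proof of Theorem \ref{main:s1-sphere}, and the identity $w(I_k+w^*w)^{-1} = (I_{n-k}+ww^*)^{-1}w$, a direct expansion of $(dS)^2$ and collapse of the two Hermitian conjugate terms yields
\[
\langle S\rangle_t = \int_0^t \tr(J(s))\,ds =: A(t).
\]

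The key technical step is the orthogonality relation $\langle S, J_{ij}\rangle_t \equiv 0$ for every $1 \le i,j \le k$. Computing the cross-bracket with $dJ_{ij} = \sum_k(d\bar w_{ki}w_{kj} + \bar w_{ki}dw_{kj}) + \text{drift}$ produces two cross-terms involving $d\bar w_{ki}\,dw_{k'j'}$ and $dw_{k'j'}\,d\bar w_{ki}$; after contraction with $(I_k+J)^{-1}_{j'i'}$ and use of $(I_k+J)^{-1}(I_k+J) = I_k$, the two terms reduce to a single expression and its conjugate that cancel. Consequently, by Knight's theorem for orthogonal continuous local martingales, the Dambis-Dubins-Schwartz Brownian motion $\beta$ in the representation $S(t) = \beta(A(t))$ is independent of the $\sigma$-algebra generated by $J$. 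Since $A(t)$ is measurable with respect to $\sigma(J_s, 0\le s\le t)$, this yields the conditional Gaussian identity
\[
\mathbb{E}\!\left[e^{i\lambda S(t)} \mid \sigma(J_s, 0\le s\le t)\right] = e^{-\frac{\lambda^2}{2}A(t)}.
\]

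Taking expectations and rescaling time by $t$,
\[
\mathbb{E}\!\left[e^{i\lambda S(t)/t}\right] = \mathbb{E}\!\left[e^{-\frac{\lambda^2}{2t^2}A(t)}\right],
\]
and the limit of the right-hand side as $t\to\infty$ is already available from \eqref{eq-limit-Laplace}, which gives $\lim_{t\to\infty}\mathbb{E}[e^{-\alpha A(t)/t^2}] = e^{-k(n-k)\sqrt{2\alpha}}$. Setting $\alpha = \lambda^2/2$ produces
\[
\lim_{t\to\infty}\mathbb{E}\!\left[e^{i\lambda S(t)/t}\right] = e^{-k(n-k)|\lambda|},
\]
the characteristic function of the Cauchy distribution $\mathcal{C}_{k(n-k)}$. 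L\'evy's continuity theorem then delivers the announced convergence in distribution.

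The principal obstacle is the orthogonality computation $\langle S, J_{ij}\rangle = 0$. Once this is established, the argument unfolds automatically from Theorem \ref{limit_oi} and Knight's theorem; the remaining issue is purely algebraic bookkeeping of Hermitian contractions, which however requires careful use of cyclicity and of the commutation between $(I_k+J)^{-1}$ and its powers. An alternative, if the Knight-theorem route is deemed too abstract, is to verify directly via It\^o's formula that $e^{i\lambda S(t) + \frac{\lambda^2}{2}A(t)}$ is a martingale whose expectation under suitable localization factorizes against $\sigma(J)$, yielding the same conditional Gaussian identity.
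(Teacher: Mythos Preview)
Your proof is correct and follows essentially the same route as the paper's: both establish that $S(t)$ is a Brownian motion time-changed by $A(t)=\int_0^t\tr(J(s))\,ds$ with the Brownian motion independent of the clock, and then invoke \eqref{eq-limit-Laplace}. The paper gets there more concretely by recycling the matrix Brownian motion $\bB$ from Lemma \ref{thm-J} and checking that $(I_k+J)^{-1/2}(dw^*w-w^*dw)(I_k+J)^{-1/2}=d\bB^*\sqrt{J}-\sqrt{J}d\bB$, so that $i\,dS$ is driven by the antisymmetric combination of $\bB,\bB^*$ while $\log\det(I_k+J)$ (hence $\tr(J)$) is driven by the symmetric one; your bracket computations $\langle S\rangle_t=A(t)$ and $\langle S,J_{ij}\rangle_t=0$ are the abstract version of exactly this cancellation (both cross-terms equal $2((I_k+J)J)_{ij}\,dt$ and subtract to zero).

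One small caveat: Knight's theorem in its usual form gives independence of the DDS Brownian motions of mutually orthogonal local martingales, not directly that $\beta$ is independent of the full path $\sigma$-algebra of $J$. The exponential-martingale alternative you sketch at the end is what actually closes this: from $\langle S,J_{ij}\rangle=0$ the generator of the joint Markov process $(J,S)$ has no cross term, so acting on $e^{i\lambda s}\phi(J)$ it produces $e^{i\lambda s}\bigl(L_J\phi-\tfrac{\lambda^2}{2}\tr(J)\phi\bigr)$, and Feynman--Kac then yields $\mathbb{E}[e^{i\lambda S(t)}\mid\sigma(J_s,s\le t)]=e^{-\lambda^2A(t)/2}$. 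The paper is no more explicit on this independence step than you are.
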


\begin{proof}
Using \eqref{w dynamics}, similarly to the proof of \eqref{eq-dJ} one can check that
\begin{align*}
dw^*w-w^*dw=\sqrt{I_{k}+J} d\mathbf{B}^*\sqrt{I_k+J}\sqrt{J}-\sqrt{J}\sqrt{I_k+J}d\mathbf{B}\sqrt{I_{k}+J}
\end{align*}
where $(\bB(t))_{t\ge0}$ is a $k\times k$-matrix-valued Brownian motion.
Therefore,
\[
 (I_k+J)^{-1/2} ( dw^* \, w-w^* dw)(I_k+J)^{-1/2}= d\bB^*\sqrt{J}- \sqrt{J}d\bB
\]
Consider then the diagonalization of $J=V\Lambda V^*$, where $V\in U(k)$ and $\Lambda=\mathrm{diag} \{\lambda_1,\dots, \lambda_k\}$. We obtain
\begin{align*}
d\bB^*\sqrt{J}- \sqrt{J}d\bB=V( V^{-1}d\bB^* V \sqrt{\Lambda}- \sqrt{\Lambda}V^{-1} d\bB V) V^{-1}.
\end{align*}
Therefore from \eqref{eq-tr-eta}, we have in distribution that
\[
\int_{w[0,t]} \mathrm{tr} (\eta)=  i \mathcal{B}_{\int_0^t \mathrm{tr}(J) ds} 
\]
where $\mathcal{B}$ is a one-dimensional Brownian motion independent from the process $\mathrm{tr}(J)$. Therefore, for every $\lambda>0$,
\[
\mathbb{E} \left( e^{-\lambda\frac{1}{i} \int_{w[0,t]} \mathrm{tr} (\eta)}  \right)=\mathbb{E} \left( e^{-\lambda  \mathcal{B}_{\int_0^t \mathrm{tr}(J) ds}} \right)=\mathbb{E} \left( e^{-\frac{\lambda^2}{2} \int_0^t  \mathrm{tr}(J)ds} \right).
\]

We conclude then from \eqref{eq-limit-Laplace} after straightforward computations
\end{proof}

 \subsection{Asymptotic windings}

We are now interested in the windings of the complex valued process $\det (Z(t))$. We first note that from Theorem \ref{skew-sphere}, we have identity in law
\[
\det (Z(t)) =\det (I_k+w (t)^*w (t))^{-1/2} \det \Theta(t) \, \det  \Omega(t).
\]
We shall then use the following result.

\begin{lemma}\label{trace lemma}
Let $G$ be a connected  Lie group of $m \times m$ matrices with Lie algebra $\mathfrak{g}$. Let $(M_t)_{t \ge 0}$ be a $\mathfrak{g}$-valued continuous semimartingale such that $M_0=0$ and let $(C_t)_{t \ge 0}$ be the $G$-valued solution of the Stratonovich stochastic differential equation
\begin{align*}
dC_t = (\circ d  M_t ) \, C_t 
\end{align*}
Then, for $t \ge 0$, $\det C_t =(\det C_0 )\exp \left( \tr (M_t) \right )$.
\end{lemma}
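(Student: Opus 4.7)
The plan is to prove a stochastic Liouville/Jacobi formula, leveraging the fact that Stratonovich calculus obeys the ordinary chain rule. Since $(C_t)_{t\ge 0}$ takes values in $G \subset \mathbf{GL}(m,\mathbb{R})$, we have $\det C_t \neq 0$ and $C_t^{-1}$ exists for all $t\ge 0$, so $\log \det C_t$ is a well-defined continuous semimartingale (after fixing a branch in a neighborhood of $\det C_0$, or equivalently working on the universal cover; equivalently one can work directly with $\det C_t$ itself to avoid any branch issues).

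The first step is to apply Jacobi's formula in its Stratonovich form. Because the Stratonovich differential satisfies the ordinary Leibniz and chain rules, one has
\[
\circ d(\det C_t) = \det(C_t)\,\tr\!\left(C_t^{-1} \circ dC_t\right),
\]
which is the classical identity $d\det = \det \cdot \tr(C^{-1} dC)$ with ordinary differentials replaced by Stratonovich differentials. Substituting the SDE $\circ dC_t = (\circ dM_t)\, C_t$ gives
\[
C_t^{-1} \circ dC_t = C_t^{-1}(\circ dM_t)\, C_t,
\]
and taking the trace, by cyclic invariance,
\[
\tr\!\left(C_t^{-1} \circ dC_t\right) = \tr(\circ dM_t) = \circ d\,\tr(M_t),
\]
where in the last step we used linearity of the trace (and of Stratonovich integration against a deterministic linear functional).

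The second step is to put these together: we obtain
\[
\circ d(\det C_t) = \det(C_t)\, \circ d\,\tr(M_t).
\]
Equivalently, since $\det C_t$ does not vanish,
\[
\circ d\,\log |\det C_t| = \circ d\,\tr(M_t).
\]
Integrating from $0$ to $t$ and using $M_0 = 0$ yields $\log \det C_t - \log \det C_0 = \tr(M_t)$, that is,
\[
\det C_t = (\det C_0)\, \exp(\tr M_t),
\]
as desired. (To handle any branch ambiguity in $\log \det$ when $G$ is a complex matrix group, one can alternatively verify directly that the process $N_t := (\det C_0)\exp(\tr M_t)$ satisfies the same Stratonovich SDE $\circ dN_t = N_t \circ d\tr(M_t)$ as $\det C_t$, with the same initial value, and invoke uniqueness.)

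The main obstacle is essentially bookkeeping rather than conceptual: making rigorous the Stratonovich version of Jacobi's formula. One clean route is to convert to It\^o form, where Jacobi's formula picks up a quadratic variation correction, and then check that the correction coming from $d(\det C_t)$ exactly cancels the one coming from passing $\circ d\tr(M_t)$ to $d\tr(M_t)$; alternatively, one can approximate $M_t$ by piecewise smooth paths using Wong--Zakai, apply the classical (deterministic) Liouville formula pathwise, and pass to the limit using the Stratonovich/Wong--Zakai convergence theorem. Either route requires only routine verification once one has identified the correct ODE analogue.
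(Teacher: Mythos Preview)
Your argument is correct: the Stratonovich Jacobi identity $\circ d(\det C_t)=\det(C_t)\,\tr(C_t^{-1}\circ dC_t)$ together with trace cyclicity gives the result in one line. The paper, however, takes the Wong--Zakai route that you only mention in passing at the end. Concretely, it builds the dyadic piecewise approximations
\[
C^n_t = C^n_{t_k}\exp\!\left(\tfrac{2^n}{T}(t-t_k)(M_{t_{k+1}}-M_{t_k})\right),\qquad t_k\le t\le t_{k+1},
\]
for which $\det C^n_T = (\det C_0)\exp(\tr M_T)$ follows immediately from the scalar identity $\det\exp A=\exp\tr A$, and then invokes the convergence of $C^n$ to $C$ in probability (Theorem~2 in \cite{MR942031}). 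Your approach is shorter and more intrinsic once one accepts the Stratonovich chain rule for $\det$; the paper's approach trades that black box for an explicit approximation, which has the mild advantage of sidestepping entirely the branch-of-logarithm and It\^o-correction bookkeeping you flag as the ``main obstacle''.
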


\begin{proof}
Let $T>0$. Consider on the time interval $[0,T]$ the sequence of $G$-valued  semimartingales $(C_t^n)_{0 \le t \le T}$ inductively defined by
\begin{align*}
C^n_t = C_{t_k}^n \exp \left( \frac{2^n}{T} (t -t_k) (M_{t_{k+1}} -M_{t_k}) \right), \quad t_k \le t \le t_{k+1},
\end{align*}
where $t_k=\frac{kT}{2^n}$, $k =0,\dots, 2^n$. From Theorem 2 in \cite{MR942031}, the sequence of semimartingales $(C_t^n)_{0 \le t \le T}$ converges in probability to $(C_t)_{0 \le t \le T}$ uniformly on $[0,T]$. However,
\begin{align*}
\det (C^n_t) = \det(C_{t_k}^n)  \exp \left( \frac{2^n}{T} (t -t_k) \tr (M_{t_{k+1}} -M_{t_k}) \right), \quad t_k \le t \le t_{k+1}.
\end{align*}
We deduce therefore by induction that 
\[
\det (C^n_T)=(\det C_0 ) \exp \left( \tr (M_T)  \right).
\]
Letting then $n \to +\infty$ yields the conclusion. 
\end{proof}

Using the previous lemma, we deduce the following:

\begin{lemma}\label{Lemma theta}
For every $t \ge 0$, 
$
\det \Theta(t)=\frac{\det Z_0}{ | \det Z_0|} \exp \left( \int_{w[0,t]} \mathrm{tr} (\eta) \right).
$
\end{lemma}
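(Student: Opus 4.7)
The plan is to apply Lemma \ref{trace lemma} directly to the stochastic differential equation defining $\Theta(t)$. Recall from Theorem \ref{lift} that $\Theta(t)$ takes values in $\mathbf{U}(k)$ and solves
\[
d\Theta(t) = \circ d\mathfrak{a}(t) \, \Theta(t), \qquad \Theta_0 = (Z_0 Z_0^*)^{-1/2} Z_0,
\]
where $\mathfrak{a}(t) = \int_{w[0,t]} \eta$ is a $\mathfrak{u}(k)$-valued continuous semimartingale with $\mathfrak{a}(0)=0$. This is exactly the setup of Lemma \ref{trace lemma} with $G=\mathbf{U}(k)$, $\mathfrak{g}=\mathfrak{u}(k)$, $M_t = \mathfrak{a}(t)$, and $C_t = \Theta(t)$.

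Applying Lemma \ref{trace lemma} therefore gives
\[
\det \Theta(t) = (\det \Theta_0)\, \exp\!\left(\tr(\mathfrak{a}(t))\right) = (\det \Theta_0)\, \exp\!\left(\int_{w[0,t]} \tr(\eta)\right),
\]
where in the last step I use linearity of the trace with respect to the Stratonovich line integral (this is the only genuinely nontrivial point, and it follows immediately from the definition of $\int_{w[0,t]} \eta$ as a componentwise Stratonovich integral).

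It remains to identify $\det \Theta_0$. Since $Z_0 Z_0^*$ is a positive definite Hermitian $k\times k$ matrix, we have
\[
\det\bigl((Z_0 Z_0^*)^{-1/2}\bigr) = \det(Z_0 Z_0^*)^{-1/2} = \bigl(|\det Z_0|^2\bigr)^{-1/2} = \frac{1}{|\det Z_0|},
\]
so that
\[
\det \Theta_0 = \det\bigl((Z_0 Z_0^*)^{-1/2}\bigr)\, \det Z_0 = \frac{\det Z_0}{|\det Z_0|},
\]
which is the desired prefactor. Combining the two displays yields the claimed formula. No genuine obstacle is expected here; the lemma is essentially an algebraic consequence of Lemma \ref{trace lemma} once the initial condition $\Theta_0$ is handled correctly.
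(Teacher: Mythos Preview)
Your proof is correct and follows essentially the same approach as the paper: apply Lemma \ref{trace lemma} to the SDE for $\Theta(t)$ and identify $\det\Theta_0$. Your write-up is in fact a bit more detailed than the paper's, which simply states the conclusion after invoking Lemma \ref{trace lemma} without spelling out the computation of $\det\Theta_0$.
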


\begin{proof}
We have
\begin{align*}
\begin{cases}
d\Theta(t) = \circ d \left( \int_{w[0,t]} \eta \right) \, \Theta(t) \\
\Theta_0=(Z_0 Z^*_0)^{-1/2}Z_0.
\end{cases}
\end{align*}
Thus from Lemma \ref{trace lemma} we have $\det \Theta(t)= \frac{\det Z_0}{ | \det Z_0|} \exp \left( \mathrm{tr} \left( \int_{w[0,t]} \eta \right) \right)$.
\end{proof}

We are now finally in position to prove the following result.

\begin{theorem}\label{winding section}
One has the polar decomposition
\[
\det (Z(t))=\varrho(t) e^{i\theta(t)}
\]
where $0< \varrho(t) \le 1$ is a continuous semimartingale  and $\theta(t)$ is a continuous martingale such that the following convergence holds in distribution when $t \to +\infty$
\[
\frac{\theta(t)}{t} \to \mathcal{C}_{k(n-k)},
\]
where $\mathcal{C}_{k(n-k)}$ is a Cauchy distribution of parameter $k(n-k)$.
\end{theorem}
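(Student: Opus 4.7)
The plan is to combine the skew-product decomposition of Theorem \ref{skew-sphere} with the identification of $\det\Theta(t)$ in Lemma \ref{Lemma theta} and the limit law obtained in Proposition \ref{thm-main-3}. From Theorem \ref{skew-sphere} we have the identity in distribution
\[
\det Z(t) \;=\; \det(I_k+w(t)^*w(t))^{-1/2}\,\det\Theta(t)\,\det\Omega(t).
\]
The first factor is real and positive; using the relation $Z(t)Z(t)^*=(I_k+w(t)^*w(t))^{-1}$ (established in Section \ref{Sec2}), it equals $|\det Z(t)|\in(0,1]$, and this is the candidate for $\varrho(t)$. The remaining two factors both live on $\mathbb{S}^1$, so the theorem will follow as soon as I exhibit a continuous martingale $\theta$ that represents their combined phase.

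First I would check that $\eta$ defined in \eqref{defeta} is genuinely $\mathfrak{u}(k)$-valued: each of the two bracketed expressions in \eqref{defeta} is visibly skew-Hermitian (the first because $(dw^*\,w-w^*dw)$ is skew-Hermitian and is conjugated by the Hermitian factor $(I_k+w^*w)^{-1/2}$; the second by direct adjoint). Consequently $\tr(\eta)$ takes purely imaginary values, and by the computation appearing in the proof of Proposition \ref{thm-main-3}, the real-valued process $\phi(t):=-i\int_{w[0,t]}\tr(\eta)$ satisfies $\phi(t)=\mathcal{B}_{\int_0^t\tr(J(s))\,ds}$ in distribution, where $\mathcal{B}$ is a one-dimensional Brownian motion independent from $\tr(J)$; in particular $\phi$ is a continuous local martingale. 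Lemma \ref{Lemma theta} then reads $\det\Theta(t)=e^{i\theta_0}e^{i\phi(t)}$ with $e^{i\theta_0}:=\det Z_0/|\det Z_0|$. For the second factor, apply Lemma \ref{trace lemma} to $\Omega$, which solves $d\Omega=\Omega\circ dA^{\mathfrak{u}(k)}$ with $A^{\mathfrak{u}(k)}$ a skew-Hermitian Brownian motion, to obtain $\det\Omega(t)=\exp(\tr A^{\mathfrak{u}(k)}(t))=e^{iB(t)}$ for a real Brownian motion $B$ (with variance determined by the normalisation of the bi-invariant metric on $\mathbf{U}(k)$), and $B$ is independent of $w$ by the hypothesis of Theorem \ref{skew-sphere}, hence independent of $\phi$.

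Putting these pieces together, $\det Z(t)=\varrho(t)\exp\bigl(i(\theta_0+\phi(t)+B(t))\bigr)$. Since $|\det Z(t)|=\varrho(t)$ is bounded below away from $0$ on every compact time interval (indeed $\det Z(t)$ never vanishes, by Theorem \ref{main:s1-sphere}), the argument of $\det Z(t)$ admits a unique continuous lift agreeing with $\theta_0$ at $t=0$; by uniqueness, that lift is exactly
\[
\theta(t):=\theta_0+\phi(t)+B(t).
\]
Both $\phi$ and $B$ are continuous local martingales starting from $0$; as $\phi$ is a time-changed Brownian motion it is a true martingale, and so is $B$, hence $\theta$ is a continuous martingale. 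Finally, by Proposition \ref{thm-main-3}, $\phi(t)/t\to\mathcal{C}_{k(n-k)}$ in distribution, while $B(t)/t\to 0$ almost surely; by independence of $\phi$ and $B$ and Slutsky's lemma, $\theta(t)/t\to\mathcal{C}_{k(n-k)}$ in distribution.

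The main obstacle is purely bookkeeping: one must verify that $\eta$ really is $\mathfrak{u}(k)$-valued (without which the phase $\phi$ would not be real and the whole decomposition would be meaningless) and that the continuous lift of the argument of $\det Z(t)$ genuinely coincides with $\theta_0+\phi(t)+B(t)$. The latter identification is clear at $t=0$ and follows on $(0,\infty)$ from continuity together with the fact that both sides, when exponentiated and multiplied by $\varrho(t)$, yield $\det Z(t)$.
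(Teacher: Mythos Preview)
Your proof is correct and follows essentially the same route as the paper. The paper's argument is terser: it writes $i\theta(t)=i\theta_0+\mathrm{tr}(D_t)+\int_{w[0,t]}\mathrm{tr}(\eta)$ directly from Lemmas~\ref{trace lemma} and~\ref{Lemma theta} (your $B$ is $-i\,\mathrm{tr}(D_t)$), and then simply invokes Proposition~\ref{thm-main-3}, leaving the Slutsky step implicit. Your extra care in verifying that $\eta$ is $\mathfrak{u}(k)$-valued and in spelling out the continuous-lift and Slutsky arguments is welcome padding of the same skeleton.
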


\begin{proof}
From the decomposition $\det (Z(t)) =\det (I_k+w (t)^*w (t))^{-1/2} \det \Theta(t) \, \det  \Omega(t)$ one deduces from Lemmas  \ref{trace lemma} and \ref{Lemma theta} that
\[
\varrho(t) =\det(I_k+J(t))^{-1/2}, \, \, i \theta(t) = i\theta_0+ \mathrm{tr}(D_t)+ \int_{w[0,t]} \mathrm{tr} (\eta) 
\]
where $D_t$ is a Brownian motion on $\mathfrak{u}(k)$ independent from $w$ and $\theta_0$ is such that $ e^{i\theta_0}=\frac{\det Z_0}{| \det Z_0|}$ . The conclusion follows then from Proposition \ref{thm-main-3}.
\end{proof}

Let us remark that it is also possible to compute the asymptotic law of the radial part $\varrho(t)$.   Indeed,  from the previous proof, we know that $\varrho(t) =\det(I_k+J(t))^{-1/2}$ and the limit distribution of the ordered eigenvalues of $(I_k-J)(I_k+J)^{-1}$ is computed explicitly in Theorem \ref{limit eigenvalue} to be a distribution with density
\[
c_{n,k} \prod_{1 \le i < j \le k}(x_i-x_j)^2  \prod_{i=1}^k (1-x_i)^{n-2k}  \, \, \mathbf{1}_{\Delta_k} (x) dx.
\]
Using then the Selberg's integral formula, one obtains that in distribution one has
\[
\varrho(t) \to \varrho_\infty,
\]
where $\rho_\infty$ is a random variable such that for every $s \ge 0$
\[
\mathbb{E}( \varrho_\infty^s)=\tilde{c}_{n,k}\prod_{j=0}^{k-1}\frac{\Gamma\left( \frac{s}{2}+j+1 \right)}{\Gamma \left( \frac{s}{2}+n-k+j+1 \right)}
\]
where $\tilde{c}_{n,k}$ is a normalization constant. Thus, using uniqueness of the Mellin transform, one concludes that
\[
 \varrho_\infty^2=\prod_{j=1}^k \mathfrak{B}_{j,n-k}
\]
where $\mathfrak{B}_{j,n-k}$ are independent beta random variables with parameters $(j,n-k)$. This recovers a result by A. Rouault (Proposition 2.4 in \cite{MR2365642}).

\subsection{The case $k \ge n-k$}\label{duality section}

In this section, we show how to handle the case of $k \ge n-k$. Thus, unlike the rest of the chapter, we assume in this section that $k \ge n-k$. This is essentially a duality argument equivalent to the isomorphism $G_{n,k} \simeq G_{n,n-k}$. Let
\[
U=\begin{pmatrix}  X & Y \\ Z & V \end{pmatrix}  \in \mathbf{U}(n)
\]
with $Z \in \mathbb{C}^{k \times k}$, $\det (Z) \neq 0$. Using $X^* X+ Z^*Z=I_k$ and $XX^* +YY^*=I_{n-k}$ we deduce the following equality of spectrum
\[
\mathbf{sp} ( Z^* Z)=\mathbf{sp} ( YY^*) \cup \{ 1 \}
\]
and that the eigenvalue 1 of $Z^* Z$ has multiplicity at least $2k-n$. In particular, we have
\[
\det ( Z^* Z)= \det ( Y Y^*)
\]
and
\[
\mathrm{tr}[(Z Z^*)^{-1}-I_k]= \mathrm{tr}[(Y Y^*)^{-1}-I_{n-k}].
\]
Consider now a Brownian motion
\[
U(t)=\begin{pmatrix}  X(t) & Y(t) \\ Z(t) & V(t) \end{pmatrix}  \in \mathbf{U}(n)
\]
with $Z_0 \in \mathbb{C}^{k \times k}$, $\det (Z_0) \neq 0$. The process $(U(t)^*)_{t \ge 0}$ is also a Brownian motion on $\mathbf{U}(n)$. Therefore,  when $t \to +\infty$,
\[
| \det (Z(t))|^2=| \det (Y(t)^*)|^2\to \prod_{j=1}^{n-k} \mathfrak{B}_{j,k}
\]
For the study of the winding process of $\det (Z(t))$, we notice that in the case $k \le n-k$ the only part of the proof of Theorem \ref{winding section} that actually uses the fact that $k \le n-k$ is the proof of Theorem \ref{limit_oi} (in the case $k > n-k$ the stochastic differential equation for  $J=w^*w$ is only defined up to the hitting time $\inf \{ t \ge 0, \det J(t) =0 \} <+\infty$ and the Girsanov transform method fails). To handle the case $k \ge n-k$, we note that $\mathrm{tr} J=\mathrm{tr}[(Z Z^*)^{-1}-I_k]= \mathrm{tr}[(Y Y^*)^{-1}-I_{n-k}]$. Since the process $(U(t)^*)_{t \ge 0}$ is also a Brownian motion on $\mathbf{U}(n)$, one can use the argument of the proof of Theorem \ref{limit_oi} to deduce that
\[
\frac{1}{t^2} \int_0^t  \mathrm{tr}[(Y Y^*)^{-1}-I_{n-k}] ds \to X,
\]
where $X$ is a random variable on $[0,+\infty)$ with density $\frac{k(n-k)}{\sqrt{2\pi}  x^{3/2} }e^{-\frac{k^2(n-k)^2}{2x}}$. Therefore, the conclusion of Theorem \ref{winding section} still holds in the case of $k \ge n-k$.

\chapter{Horizontal Brownian motions of the hyperbolic Stiefel fibration}

\section{Brownian motion on  hyperbolic complex Grassmannian manifolds}\label{Sec2}

\subsection{The hyperbolic complex Grassmannian manifold and affine coordinates}\label{sec:notations}

Let $n\in \mathbb{N}$, $n \ge 2$, and $k\in\{1,\dots, n-1\}$. We consider the non-compact Lie group of matrices
\[
\mathbf{U}(n-k,k)=\left\{ M \in GL(n,\mathbb{C}), M^* \begin{pmatrix} I_{n-k}   & 0 \\ 0 & -I_{k} \end{pmatrix} M =  \begin{pmatrix} I_{n-k}   & 0 \\ 0 & -I_{k} \end{pmatrix}\right\}.
\]
which belongs to the family of classical Lie groups of matrices (see \cite{Hec-Sch}).
The Lie algebra of $\mathbf{U}(n-k,k)$ is given by
\[
\mathfrak{u}(n-k,k)=\left\{ A \in \C^{n \times n}, A^* \begin{pmatrix} I_{n-k}   & 0 \\ 0 & -I_{k} \end{pmatrix} + \begin{pmatrix} I_{n-k}   & 0 \\ 0 & -I_{k} \end{pmatrix}A= 0 \right\}.
\]
Note that the real dimension of $\mathfrak{u}(n-k,k)$ and hence $\mathbf{U}(n-k,k)$ is $n^2$. 

We consider the complex hyperbolic Stiefel manifold $HV_{n,k}$ defined as the set
\begin{equation}\label{eq-HV}
HV_{n,k}=\left\{ \begin{pmatrix} X  \\ Z \end{pmatrix}  \in \C^{n\times k} , X \in  \C^{(n-k)\times k}, Z \in \C^{k\times k} | X^* X-Z^*Z=-I_k \right\}.
\end{equation}
It is a non-compact algebraic real sub-manifold of $\C^{n\times k}$ with real dimension $2nk-k^2$.
We note that $\mathbf{U}(n-k,k)$ acts transitively on $HV_{n,k}$, the action being  defined by $(g,M) \to gM$, $g \in \mathbf{U}(n-k,k)$, $M \in HV_{n,k}$. For this action, the isotropy group of $\begin{pmatrix} 0 \\ I_k \end{pmatrix} \in HV_{n,k}$ is simply the unitary group $\mathbf{U}(n-k)$, where $\mathbf{U}(n-k)$ is identified with the normal subgroup:
\[
\left\{   \begin{pmatrix}  Y  & 0 \\ 0 & I_{k} \end{pmatrix} , Y \in \mathbf{U}(n-k) \right\}.
\]
Therefore $HV_{n,k}$ can be identified with the coset space $\mathbf{U}(n-k,k) / \mathbf{U}(n-k)$. 

In addition, there is a right  action of the  unitary group $\mathbf{U}(k)$ on $HV_{n,k}$, which is given by the right matrix multiplication: $M  g$, $M \in HV_{n,k}$,  $g \in \mathbf{U}(k)$. The quotient space by this action 
$HG_{n,k}:=HV_{n,k} / \mathbf{U}(k)$ is called the \textit{complex hyperbolic Grassmannian manifold} and we shall denote by $\pi$ the projection $HV_{n,k}\to HG_{n,k}$. 
Besides, $HG_{n,k}$ is a non-compact complex manifold with complex dimension $k(n-k)$. Actually, as a  manifold, $HG_{n,k}$ is diffeomorphic to $\C^{(n-k)\times k}$. Indeed, consider the smooth map $p : HV_{n,k} \to \C^{(n-k)\times k}$ given by $p \begin{pmatrix} X  \\ Z \end{pmatrix}  = X Z^{-1}.$ It is clear that for every $g \in \mathbf{U}(k)$ and $M \in HV_{n,k}$, $p( M  g)= p(M)$. Since $p$ is a submersion from $HV_{n,k}$ onto its image $ p ( HV_{n,k}) = \C^{(n-k)\times k} $ we easily deduce that there exists a unique  diffeomorphism $\Psi$ from   $HG_{n,k}$ onto $\C^{(n-k)\times k}$ such that 
\begin{align}\label{inh-coord}
 \Psi \circ \pi= p.
 \end{align}
 The map $\Psi$ induces a global coordinate chart on $HG_{n,k}$ that we call homogeneous and through those global coordinates, we will often simply identify $HG_{n,k} \simeq \C^{(n-k)\times k}$.
 
 For the purpose of constructing and studying the Brownian motion on $HG_{n,k}$ we need to equip $HG_{n,k}$ with a Riemannian metric. To proceed, set 
  \[
 I_{n-k,k} :=\begin{pmatrix} I_{n-k}   & 0 \\ 0 & -I_{k} \end{pmatrix}.
 \]
and note that $A \in \mathfrak{u}(n-k,k)$ if and only if $I_{n-k,k}A$ is skew-Hermitian. Therefore, 
 $$\langle A, B\rangle_{\mathfrak{u}(n-k,k)}=-\frac12 \tr(I_{n-k,k}AI_{n-k,k}B),$$
is a non-degenerate $Ad$-invariant inner product on $\mathfrak{u}(n-k,k)$ where $Ad$ is the adjoint action of $U(n-k,k)$ on $\mathfrak{u}(n-k,k)$.  Moreover, an orthonormal basis of $\mathfrak{u}(n-k,k)$ with respect to this inner product is given by
\begin{align*}
 &\{ T_\ell, 1 \le \ell \le n \} \\
&\cup \{E_{\ell j}-E_{j \ell}, i(E_{\ell j}+E_{j\ell}),  1\le \ell <j \le n-k, n-k+1\le \ell <j \le n\}\\
&\cup\{E_{\ell j}+E_{j \ell}, i(E_{\ell j}-E_{j\ell}), 1\le \ell\le n-k, n-k+1\le j\le n\}
\end{align*}
where $E_{j\ell}=(\delta_{j\ell}(k,h))_{1\le k,h\le n}$ and $T_j=\sqrt{2}i E_{jj}$. In this way, we obtain a bi-invariant Riemannian structure on $\mathbf{U}(n-k, k)$. 

The inner product above also equips the coset space $HV_{n,k} \simeq \mathbf{U}(n-k,k) / \mathbf{U}(n-k)$ with the unique Riemannian structure such that  the map $ M \in \mathbf{U}(n-k, k) \to M \begin{pmatrix} 0 \\ I_k \end{pmatrix} \in HV_{n,k} $ is a Riemannian submersion. Finally, we can equip the hyperbolic Grassmannian $HG_{n,k} $  with the Riemannian metric such that the map 
\begin{equation*}
p: \begin{pmatrix} X  \\ Z \end{pmatrix}  \in HV_{n,k} \mapsto X Z^{-1} \in HG_{n,k}
\end{equation*}
 is a Riemannian submersion with totally geodesic fibers isometric to $\mathbf{U}(k)$. Equipped with that metric, $HG_{n,k} $ is a complex Riemannian symmetric space of non compact type and rank $\min (k,n-k) $. It is the dual symmetric space of the complex Grassmannian space $G_{n,k} $ that was considered in the previous chapter.

We therefore have a fibration
\[
\mathbf{U}(k) \to HV_{n,k} \to HG_{n,k}
\]
which we will  referred to as the hyperbolic Stiefel fibration. Note that this fibration is a special case of Example \ref{BB fibration} with $G=\mathbf{U}(n-k,k)$, $H=\mathbf{U}(n-k)\mathbf{U}(k)$, $K=\mathbf{U}(n-k)$.

For $k=1$, $HV_{n,1}$ is isometric to the complex anti-de Sitter space  $\mathbf{AdS}^{2n-1}$ equipped with its Riemannian metric, $HG_{n,1}$ is the complex hyperbolic space $\mathbb{C} H^{n-1}$ and the above fibration is the anti-de Sitter fibration considered in Chapter 4.  

On the other hand, due to the symmetric space representation $HG_{n,k} \simeq \mathbf{U}(n-k,k)/ \mathbf{U}(n-k)\mathbf{U}(k)$, one has the duality $HG_{n,k} \simeq HG_{n,n-k}$ which leads to the fibration
\[
\mathbf{U}(n-k) \to HV_{n,n-k} \to HG_{n,n-k}.
\]
Therefore without loss of generality and unless specified otherwise, we will always assume throughout the chapter that $k \le n-k$ as we did in the study of the Stiefel fibration. As such, the rank of $HG_{n,k}$ as a symmetric space is $k$.

 \subsection{Brownian motion on $HG_{n,k}$}
 
 In this section, we study the Brownian motion on $HG_{n,k}$ and show how it can be constructed from a Brownian motion on the indefinite Lie group $\mathbf{U}(n-k,k)$.
 
  A Brownian motion $(A(t))_{t\ge0}$ on the Lie algebra $\mathfrak{u}(n-k,k)$ is  given by
\begin{align}\label{eq-A(t)}
A(t) &=\sum_{1\le \ell<j\le n-k}(E_{\ell j}-E_{j\ell})B^{\ell j}_t+i(E_{\ell j}+E_{j\ell})\tilde{B}^{\ell j}_t+\sum_{j=1}^k T_{j}\hat{B}^J(t)\notag\\
&+\sum_{n-k+1\le \ell<j\le n}(E_{\ell j}-E_{j\ell})B^{\ell j}_t+i(E_{\ell j}+E_{j\ell})\tilde{B}^{\ell j}_t+\sum_{j=n-k+1}^n T_{j}\hat{B}^J(t)\\
&+\sum_{1\le \ell \le n-k}\sum_{n-k+1\le j\le n}(E_{\ell j}+E_{j\ell})B^{\ell j}_t+i(E_{\ell j}-E_{j\ell})\tilde{B}^{\ell j}_t\notag
\end{align}
where $B^{\ell j}$, $\tilde{B}^{\ell j}$, $\hat{B}^j$ are  independent standard real Brownian motions.
In the following, we will  use the block notations as below: For any $U\in \mathbf{U}(n-k,k)$ 
\[
U=\begin{pmatrix}  Y & X \\ W & Z \end{pmatrix},\quad
\]
where $X\in \C^{(n-k)\times k}$, $Y\in \C^{(n-k)\times (n-k)}$, $Z\in \C^{k\times k}$, $W\in \C^{k\times (n-k)}$. We note then  that  $\begin{pmatrix} X  \\ Z \end{pmatrix}  \in HV_{n,k}$.
The block notation for $A\in\mathfrak{u}(n-k,k)$ is 
\[
A=\begin{pmatrix} \ep & \gamma \\  \beta  &  \alpha \end{pmatrix},\quad
\]
where $\ep\in \C^{(n-k)\times (n-k)}$, $\beta\in \C^{k\times (n-k)}$, $\gamma\in \C^{(n-k)\times k}$, $\alpha\in \C^{k\times k}$. Clearly $\ep$ and $\alpha$ are skew-Hermitian and 
\[
\beta^*=\gamma.
\]
Let $(U(t))_{t\ge0}$ be a $\mathbf{U}(n-k,k)$-valued stochastic process that satisfies the Stratonovich differential equation
\begin{equation}\label{eq-bm-sde}
\begin{cases}
dU(t)=U(t)\circ dA(t) ,\\
U_0=\begin{pmatrix}  Y_0 & X_0 \\ W_0 & Z_0 \end{pmatrix}.
\end{cases}
\end{equation}
\begin{definition}
The process $(U(t))_{t \ge 0}$ is a Brownian motion  on $\mathbf{U}(n-k,k)$ starting from $U_0$.
\end{definition}
%
%

 The main theorem of the section is the following:

\begin{theorem}\label{main:s1}
Let $U(t)=\begin{pmatrix}  Y(t) & X(t) \\ w (t) & Z(t) \end{pmatrix}, t\ge0$ be a Brownian motion on the Lie group $\mathbf{U}(n-k,k)$.  Then, the process $(w (t))_{t \ge 0}:=\left( X(t)  Z(t)^{-1}\right)_{t \ge 0} $ is a diffusion process  and its  generator is the diffusion operator $\frac12\Delta_{HG_{n,k}}$, which in affine coordinates is given by
\begin{align*}
\Delta_{HG_{n,k}}&={{4}}\sum_{1\le i, i'\le n-k, 1\le j, j'\le k}(I_{n-k}-ww^*)_{ii'} (I_k-w^*w)_{j'j}\frac{\partial^2}{\partial w_{ij}{\partial \overline{w}_{i'j'}}}.
\end{align*}
Moreover, $(w (t))_{t \ge 0}$ is a Brownian motion in $HG_{n,k}$, i.e. $\Delta_{HG_{n,k}}$ is the Laplace-Beltrami operator on $HG_{n,k}$ expressed in affine coordinates.
\end{theorem}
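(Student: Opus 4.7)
My strategy follows the compact case (Theorem \ref{main:s1-sphere} of the previous chapter): I will derive a Stratonovich SDE for $w = XZ^{-1}$ directly from the matrix SDE for $U$, compute its quadratic covariation, and identify the resulting second-order operator with the Laplace-Beltrami operator of the K\"ahler symmetric space $HG_{n,k}$.

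First I need to ensure $w(t)$ is globally defined. Writing $A = \begin{pmatrix}\epsilon & \gamma\\ \beta & \alpha\end{pmatrix}$ (with $\beta = \gamma^*$, a consequence of $A \in \mathfrak{u}(n-k,k)$), the equation $dU = U \circ dA$ yields block Stratonovich SDEs for $X, Y, Z, W$, among which
\begin{equation*}
dX = Y \circ d\gamma + X \circ d\alpha, \qquad dZ = W \circ d\gamma + Z \circ d\alpha.
\end{equation*}
The pseudo-unitarity relation $U^* I_{n-k,k} U = I_{n-k,k}$ implies $Z^*Z = I_k + X^*X \ge I_k$, so $Z(t)$ is invertible for all $t$ almost surely, and thus $w(t)$ is defined for all times. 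Applying the Stratonovich chain rule and noting that the $\alpha$-contributions cancel gives the compact expression
\begin{equation*}
dw = (Y - wW) \circ d\gamma \cdot Z^{-1}.
\end{equation*}

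The key computation is then the quadratic covariation of $w$. Using the complementary relations obtained from $UI_{n-k,k}U^* = I_{n-k,k}$, namely $YY^* - XX^* = I_{n-k}$, $ZZ^* - WW^* = I_k$, and $YW^* = XZ^*$, a short algebraic manipulation gives $(Y-wW)(Y-wW)^* = YY^* - 2XX^* + XX^* - ww^* = I_{n-k} - ww^*$; similarly, $(Z^{-1})^*Z^{-1} = (ZZ^*)^{-1} = I_k - w^*w$ (the last equality following from $w^*w = (Z^*)^{-1}(Z^*Z - I_k)Z^{-1} = I_k - (ZZ^*)^{-1}$). Combined with $d\gamma_{pq}\, d\overline{\gamma}_{p'q'} = 2 \delta_{pp'}\delta_{qq'}\,dt$ and $d\gamma_{pq}\, d\gamma_{p'q'} = 0$, these identities yield
\begin{equation*}
dw_{ij}\, d\overline{w}_{i'j'} = 2 (I_{n-k}-ww^*)_{ii'}(I_k - w^*w)_{j'j}\, dt, \qquad dw_{ij}\, dw_{i'j'} = 0,
\end{equation*}
which identifies the principal symbol of the generator of $(w(t))_{t \ge 0}$ as the one claimed in the theorem.

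To conclude, I must verify that the It\^o drift vanishes (so that the generator is exactly $\frac{1}{2}\Delta_{HG_{n,k}}$) and that $\Delta_{HG_{n,k}}$ coincides with the Laplace-Beltrami operator on $HG_{n,k}$. The cleanest approach uses the K\"ahler structure: the K\"ahler potential $K(w) = -\log\det(I_k - w^*w)$ yields a Hermitian metric whose co-metric is precisely $(I_{n-k}-ww^*)\otimes (I_k - w^*w)$, matching the principal symbol above; since on a K\"ahler manifold the Laplace-Beltrami operator in holomorphic coordinates reduces to $2 g^{i\overline{j}}\partial_i\overline{\partial}_j$ with no first-order terms, the drift must vanish and the generator equals $\frac{1}{2}\Delta_{\mathrm{LB}}$. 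A more probabilistic alternative is to note that $U \mapsto XZ^{-1}$ factors through the Riemannian submersion $HV_{n,k} \to HG_{n,k}$ with totally geodesic fibers, so by Remark \ref{harmonic submersion} Brownian motions project to Brownian motions. The main technical obstacle will be the careful handling of the indefinite signature on $\mathbf{U}(n-k,k)$ when invoking the submersion argument, together with verifying the algebraic simplification $(Y-wW)(Y-wW)^* = I_{n-k} - ww^*$ rigorously; both are routine but rely crucially on the full set of pseudo-unitarity constraints.
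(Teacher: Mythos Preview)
Your approach is correct and essentially coincides with the paper's. The paper also opens with your alternative (b): the composite $\mathbf{U}(n-k,k)\to HV_{n,k}\to HG_{n,k}$ consists of Riemannian submersions with totally geodesic fibers, so $w(t)$ is a Brownian motion on $HG_{n,k}$; it then computes the generator in affine coordinates via It\^o (rather than Stratonovich) calculus, verifying directly that the finite-variation part of $dw$ vanishes and obtaining the same quadratic covariation you found.

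One point deserves care. Your K\"ahler argument (a) as written is circular: knowing that the Laplace--Beltrami operator on a K\"ahler manifold has no first-order terms, together with the fact that the principal symbol of your process matches the co-metric, only tells you that \emph{if} the It\^o drift of $w$ vanishes \emph{then} its generator is $\tfrac12\Delta_{\mathrm{LB}}$; it does not by itself force the drift to be zero. There is, however, a clean holomorphic way to fill this gap that fits your Stratonovich setup: in $dw=(Y-wW)\circ d\gamma\,Z^{-1}$, the Stratonovich-to-It\^o correction only involves covariations of $d\gamma$ with the martingale parts of $d(Y-wW)$ and $dZ^{-1}$. Since $d\alpha,d\epsilon$ are independent of $d\gamma$, since $d\gamma\cdot d\gamma=0$, and since the $d\beta$ contribution to $d(Y-wW)$ is $(X-wZ)\,d\beta=0$, the correction vanishes identically. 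The paper arrives at the same conclusion by a direct It\^o computation of $-w\,dZ\,dZ^{-1}+dX\,dZ^{-1}=0$. Since you also offer the submersion route (b), your proposal is complete either way.
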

\begin{proof}
First, we note that both the maps 
\begin{equation*}
p_1: \begin{pmatrix}  Y & X \\ W & Z \end{pmatrix} \in \mathbf{U}(n-k,k) \to \begin{pmatrix} X  \\ Z \end{pmatrix}  \in HV_{n,k}
\end{equation*}
and $p: HV_{n,k} \rightarrow \mathbb{C}^{(n-k) \times k} \sim HG_{n,k}$ are Riemannian submersions with totally geodesic fibers. Thus, they transform Brownian motions into Brownian motions so that $w (t)=(p \circ p_1)(U(t))$ is indeed a Brownian motion on $HG_{n,k}$. It remains to compute explicitly its generator.

From \eqref{eq-bm-sde} we have
\begin{align}\label{eq-H-matrix-sde}
dX &=X \circ d\alpha +Y \circ d\gamma =X  d\alpha +Y d\gamma +\frac12 (dX  d\alpha +dY  d\gamma )\nonumber \\
dY &=X \circ d\beta +Y \circ d\ep =X  d\beta +Y  d\ep +\frac12 (dX  d\beta +dY  d\ep)\nonumber\\
dZ &= Z \circ d\alpha +W \circ d\gamma =Z  d\alpha +W d\gamma +\frac12 (dZ  d\alpha +dW  d\gamma )\\
dW &=Z \circ d\beta +W \circ d\ep =Z  d\beta +W  d\ep +\frac12 (dZ  d\beta +dW  d\ep ).\nonumber
\end{align}
Moreover, from \eqref{eq-A(t)} we have that
\[
d\alpha d\alpha=-2kI_kdt,\quad d\ep d\ep=-2(n-k)I_{n-k}dt
\]
and 
\[
d\beta d\gamma= d\beta d\beta^* = 2(n-k)I_kdt,\quad d\gamma d\beta=2k I_{n-k}dt.
\]
We then have
\begin{align*}
dX &=X \circ d\alpha +Y \circ d\gamma =X  d\alpha +Y  d\gamma +(n-2k) X dt \\
dZ &= Z \circ d\alpha +W \circ d\gamma =Z  d\alpha +W  d\gamma +(n-2k) Z dt.
\end{align*}

Now the process $w=XZ^{-1}$ satisfies:
\[
dw =dX  Z^{-1}+X dZ^{-1}+dXdZ^{-1}.
\]
But $Z dZ^{-1}=-dZ  Z^{-1}-dZdZ^{-1}$ whence 
we have
\begin{align}\label{eq-dw}
&dw = dX  Z ^{-1}-w dZ  Z^{-1}-w dZdZ^{-1}+dX dZ^{-1}\nonumber \\
&=(X d\alpha+Y d\gamma+(n-2k)X dt) Z^{-1}-w (Z d\alpha+W d\gamma+(n-2k)Zdt) Z^{-1}-w dZdZ^{-1}+dXdZ^{-1}\nonumber  \\
&=Y d\gamma Z^{-1}-w W d\gamma Z^{-1}-w dZdZ^{-1}+dX dZ^{-1}.
\end{align}
Note that for the finite variation part of $dw$ we have
\begin{align*}
&-w dZdZ^{-1}+dX dZ ^{-1}=w dZ Z^{-1}dZ Z^{-1}-dX\, Z^{-1}dZ\, Z^{-1}\\
&\qquad=w(Zd\alpha+Wd\gamma)Z^{-1}(Zd\alpha+Wd\gamma)Z^{-1}-(X d\alpha+Y d\gamma)Z^{-1}\, (Z d\alpha+W d\gamma)Z^{-1}\\
&\qquad=w Z d\alpha d\alpha Z^{-1}-X(d\alpha d\alpha)Z^{-1}=0.
\end{align*}
Hence we have
\[
dw=Y d\gamma Z^{-1}-w W d\gamma Z^{-1}.
\]
We can now prove that $w$ is a matrix diffusion process using the above formula. Since for $1\le i\le n-k$, $1\le j\le k$,
\[
dw_{ij}=\sum_{\ell=1}^{n-k}(Y-w W )_{i\ell} (d\gamma Z^{-1})_{\ell j},
\]
we have
\[
dw_{ij}d\overline{w}_{i'j'}=\sum_{\ell,m=1}^{n-k}(Y-w W )_{i\ell}(\overline{Y}-\overline{w W} )_{i'm} (d\gamma Z^{-1})_{\ell j}(d\overline{\gamma} \overline{Z}^{-1})_{m j'}.
\]
But
\[
(d\gamma Z^{-1})_{\ell j}(d\overline{\gamma} \overline{Z}^{-1})_{m j'}
=\sum_{p,q=1}^k (d\gamma)_{\ell p}(Z^{-1})_{p j}(d\overline{\gamma})_{mq} (\overline{Z}^{-1})_{qj'}=2\delta_{m\ell} dt\sum_{p=1}^k(Z^{-1})_{p j} (\overline{Z}^{-1})_{pj'},
\]
whence
\begin{align}\label{eq-dwdw-mid}
dw_{ij} d\overline{w}_{i'j'}
&=2 ((Y-w W )\overline{(Y-w W)}^T )_{ii'}((ZZ^*)^{-1})_{j'j}dt.
\end{align}
Now, if $M \in U(n-k, k)$ if and only if:
$$ \begin{pmatrix} Y^*   & W^* \\ X^* & Z^* \end{pmatrix} \begin{pmatrix} I_{n-k}   & 0 \\ 0 & -I_{k} \end{pmatrix}\begin{pmatrix}  Y & X \\ W & Z \end{pmatrix}=
\begin{pmatrix}  Y & X \\ W & Z \end{pmatrix} \begin{pmatrix} I_{n-k}   & 0 \\ 0 & -I_{k} \end{pmatrix}\begin{pmatrix} Y^*   & W^* \\ X^* & Z^* \end{pmatrix}=\begin{pmatrix} I_{n-k}   & 0 \\ 0 & -I_{k} \end{pmatrix},$$ 
Equivalently,

\begin{equation}\label{eq-relations1}
X^*X-Z^*Z=-I_{k}, \quad X^*Y-Z^*W=0,\quad Y^*Y-W^*W=I_{n-k}
\end{equation}
and
\begin{equation}\label{eq-relations}
YY^*-XX^*=I_{n-k}, \quad ZX^*-WY^*=0,\quad WW^*-ZZ^*=-I_{k}.
\end{equation}

From \eqref{eq-relations}, we get the following relations
\[
wWY^* = XZ^{-1}WY^*=XX^*, \quad I_k-w^*w = I_k - (Z^{-1})^*X^*XZ^{-1} = (Z^{-1})^*Z^{-1}, 
\]
which give after substitution in \eqref{eq-dwdw-mid}:
\begin{align}\label{eq-dwdw}
dw_{ij}d\overline{w}_{i'j'}&=2(I_{n-k}-ww^*)_{ii'}((ZZ^*)^{-1})_{j'j}dt\nonumber \\
&=2(I_{n-k}-ww^*)_{ii'}(I_k-w^*w)_{j'j}dt.
\end{align}
Therefore, we conclude that $(w (t))_{t\ge0}$ is a diffusion whose generator is given by $\frac12\Delta_{HG_{n,k}}$.
\end{proof}

\begin{remark}
When $k=1$, the diffusion operator $\Delta_{HG_{n,1}}$ coincides with the Laplacian of $\mathbb{C}H^n$ in affine coordinates whose formula was already obtained in Theorem \ref{Laplace CHn}:
\[
\Delta_{\mathbb{C}H^{n-1}}=4(1-|w|^2)\sum_{k=1}^{n-1} \frac{\partial^2}{\partial w_k \partial\overline{w}_k}- 4(1-|w|^2)\mathcal{R} \overline{\mathcal{R}}
\]
where
\[
\mathcal{R} :=\sum_{j=1}^{n-1} w_j \frac{\partial}{\partial w_j}.
\] 
\end{remark}

\subsection{Invariant measure}
Since $HG_{n,k}$ is a non-compact symmetric space, its Riemannian volume measure has infinite mass. Our goal in this section will be to compute explicitly its density measure in affine coordinates up to a scalar multiple. We will take advantage of the explicit formula for $\Delta_{HG_{n,k}}$ obtained in Theorem \ref{main:s1}. Consider in affine coordinates the following measure
\[
d\mu:=\det(I_k-w^*w)^{-n} dm := \rho(w) dm
\]
where $m$ denotes the Lebesgue measure on $HG_{n,k  }\simeq \C^{(n-k)\times k}$ and we omit the dependence on $n$ for sake of simplicity. First note that $\det(I_k-w^*w)^{-n}$ is a well-defined density function. This is because from \eqref{eq-relations1} we know that $\det(I_k-w^*w)^{-n}=\det(ZZ^*)^{n}$ and that $\det(ZZ^*)=\det(Z^*Z)=\det(X^*X+I_k)>0$.

In the proposition below we show that $\mu$ is a symmetric and invariant measure for $\Delta_{HG_{n,k}}$.
\begin{proposition}\label{invariant1}
The measure $\mu$ is invariant and symmetric for the operator $(1/2)\Delta_{HG_{n,k}}$. Namely for every smooth and compactly supported functions $f,g$ on $\C^{(n-k)\times k}$, we have: 
\[
\int (\Delta_{HG_{n,k}}f) g\, d\mu = \int f(\Delta_{HG_{n,k}}g)\, d\mu=-\int \Gamma_{\Delta_{HG_{n,k}}}(f, g)\, d\mu,
\]
where the carr\'e du champ operator 
\[
\Gamma_{\Delta_{HG_{n,k}}}(f,g):=\frac{1}{2} \left( \Delta_{HG_{n,k}}(fg) - (\Delta_{HG_{n,k}}f) g - (\Delta_{HG_{n,k}}g) f \right)
\]
 is explicitly given by
\begin{equation}\label{eq-Gamma}
\Gamma_{\Delta_{HG_{n,k}}}(f,g)=2\sum_{1\le i, i'\le n-k, 1\le j, j'\le k}(I_{n-k}-ww^*)_{ii'}(I_k-w^*w)_{j'j}\bigg(\frac{\partial f}{\partial w_{ij}}\frac{\partial g}{\partial \bw_{i'j'}}+\frac{\partial g}{\partial w_{ij}}\frac{\partial f}{\partial \bw_{i'j'}}\bigg).
\end{equation}
\end{proposition}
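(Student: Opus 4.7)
The plan is to mirror the argument of Proposition \ref{invariant-w} (the compact counterpart), adapted to the sign change $I+ww^* \leadsto I-ww^*$ and $I+w^*w \leadsto I-w^*w$ in the coefficients and in the density. The carré du champ formula is essentially immediate: since the operator $\Delta_{HG_{n,k}} = 4\sum A_{ii'jj'}\,\partial_{w_{ij}}\partial_{\bar{w}_{i'j'}}$ with $A_{ii'jj'}=(I_{n-k}-ww^*)_{ii'}(I_k-w^*w)_{j'j}$ has no first-order terms and no pure-type second derivatives, applying Leibniz to $\Delta_{HG_{n,k}}(fg)$ and subtracting $(\Delta_{HG_{n,k}}f)g + f(\Delta_{HG_{n,k}}g)$ produces exactly the stated formula \eqref{eq-Gamma}.

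To prove the symmetry $\int(\Delta_{HG_{n,k}} f)g\,d\mu = -\int \Gamma(f,g)\,d\mu$, I would integrate by parts once in $\bpartial_{i'j'}$:
\begin{equation*}
\int (\Delta_{HG_{n,k}} f) g\,\rho\,dm = -4\sum\int(\partial_{w_{ij}}f)\bigl[g\,\bpartial_{i'j'}(A_{ii'jj'}\rho) + A_{ii'jj'}\rho\,\bpartial_{i'j'}g\bigr]\,dm.
\end{equation*}
The whole argument reduces to establishing, for each fixed $(i,j)$, the key identity
\begin{equation*}
\sum_{i',j'}\bpartial_{i'j'}(A_{ii'jj'}\rho)=0,
\end{equation*}
together with its complex conjugate $\sum_{i,j}\partial_{w_{ij}}(A_{ii'jj'}\rho)=0$. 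Once this is in hand, the symmetry will follow by also integrating by parts in $\partial_{w_{ij}}$, averaging the two resulting expressions, and noticing that the right-hand side is manifestly symmetric in $f,g$.

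The verification of this identity is the one place where the signs really have to be tracked carefully. Using $\bpartial_{i'j'}(ww^*)_{ii'}=w_{ij'}$ and $\bpartial_{i'j'}(w^*w)_{j'j}=w_{i'j}$, the contribution from differentiating the coefficient is
\begin{equation*}
\sum_{i',j'}\bpartial_{i'j'}A_{ii'jj'} = -(n-k)\bigl(w(I_k-w^*w)\bigr)_{ij} - k\bigl((I_{n-k}-ww^*)w\bigr)_{ij} = -n\,(w-ww^*w)_{ij},
\end{equation*}
using the elementary matrix identity $w(I_k-w^*w)=(I_{n-k}-ww^*)w$. On the density side, Jacobi's formula gives $\bpartial_{i'j'}\log\det(I_k-w^*w)=-\bigl(w(I_k-w^*w)^{-1}\bigr)_{i'j'}$, hence $\bpartial_{i'j'}\rho = n\rho\bigl(w(I_k-w^*w)^{-1}\bigr)_{i'j'}$. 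Then the telescoping
\begin{equation*}
\sum_{i',j'}(I_{n-k}-ww^*)_{ii'}(I_k-w^*w)_{j'j}\bigl(w(I_k-w^*w)^{-1}\bigr)_{i'j'} = \bigl((I_{n-k}-ww^*)w\bigr)_{ij} = (w-ww^*w)_{ij}
\end{equation*}
yields $\sum A_{ii'jj'}\bpartial_{i'j'}\rho = n\rho(w-ww^*w)_{ij}$. The two contributions cancel exactly, which is the whole point.

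The main obstacle—really the only one—is the matrix-index bookkeeping in the two cancelling sums above, where one must keep straight that differentiating $(ww^*)_{ii'}$ or $(w^*w)_{j'j}$ with respect to $\bar{w}_{i'j'}$ collapses exactly two indices (the row index $i'$ and the column index $j'$ of $\bar{w}$), and that the factor-of-$n$ emerges as $(n-k)+k$ from the two contributions. Once the key identity is secured, the remainder of the proof is formal: the identity $\int(\Delta_{HG_{n,k}}f)g\,d\mu=-\int\Gamma(f,g)\,d\mu$ is obtained as described, symmetry in $(f,g)$ of the right-hand side yields $\int(\Delta_{HG_{n,k}}f)g\,d\mu=\int f(\Delta_{HG_{n,k}}g)\,d\mu$, and invariance is the special case $g\equiv 1$.
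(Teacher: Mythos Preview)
Your proof is correct and follows essentially the same route as the paper's: integrate by parts, compute $\sum_{i',j'}\bpartial_{i'j'}A_{ii'jj'}$ and $\sum_{i',j'}A_{ii'jj'}\bpartial_{i'j'}\rho$ separately, and observe they cancel (the paper writes the result as $-n(w(I_k-J))_{ij}$ with $J=w^*w$, which is your $-n(w-ww^*w)_{ij}$). The only cosmetic difference is that the paper symmetrizes the operator as $2\sum A(\partial\bpartial+\bpartial\partial)$ before integrating by parts in both variables at once, whereas you integrate by parts in $\bpartial$ and $\partial$ separately and then average; the underlying computation is identical.
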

\begin{proof}
For ease of notations, we  set:  
\begin{equation*}
\partial_{ij} :=\frac{\partial }{\partial w_{ij}}, \quad  \bpartial_{i j} :=\frac{\partial }{\partial \overline{w}_{ij}}, \quad A_{ii'jj'} := (I_{n-k}-ww^*)_{ii'}(I_k-w^*w)_{j'j}
\end{equation*}
so that 
\[
\Gamma_{\Delta_{HG_{n,k}}}(f,g) = 2\sum_{1\le i, i'\le n-k, 1\le j, j'\le k}A_{ii'jj'}\bigg((\partial_{ij}f)(\bpartial_{i'j'}g)+(\partial_{ij}g)(\bpartial_{i'j'}f)\bigg).
\]
By integration by parts, we have
\begin{align*}
&-\frac{1}{2}\int (\Delta_{HG_{n,k}}f) g\, d\mu \\
=& \sum_{1\le i, i'\le n-k, 1\le j, j'\le k} \int (\partial_{ij}f)\, \bpartial_{i'j'} (A_{ii'jj'}g\rho)dm+\int (\bpartial_{i'j'}f)\, \partial_{ij} (A_{ii'jj'}g\rho)dm\\
=&\frac{1}{2}\int \Gamma_{\Delta_{HG_{n,k}}}(f, g)\, d\mu + R 
\end{align*}
where 
\begin{multline*}
R := \sum_{1\le i, i'\le n-k, 1\le j, j'\le k} \bigg(\int [(\partial_{ij}f)\,(\bpartial_{i'j'} A_{ii'jj'})+(\bpartial_{i'j'}f)\,(\partial_{ij} A_{ii'jj'})]g\rho dm \\
+\int [(\partial_{ij}f)\,(\bpartial_{i'j'}\rho)+(\bpartial_{i'j'}f)\,(\partial_{ij}\rho)]gA_{ii'jj'} dm\bigg).
\end{multline*}
Hence, it remains to prove that $R=0$. To this end, we use the relations: 
\begin{align*}
\bpartial_{i'j'} A_{ii'jj'}=-w_{ij'}(\delta_{j'j}-(w^*w)_{j'j})-(\delta_{ii'}-(ww^*)_{ii'})w_{i'j},
\end{align*}
and
\[
\partial_{ij} A_{ii'jj'}=-\bw_{i'j}(\delta_{j'j}-(w^*w)_{j'j})-(\delta_{ii'}-(ww^*)_{ii'})\bw_{ij'}
\]
to get
\[
\sum_{1\le i'\le n-k, 1\le j'\le k} \bpartial_{i'j'} A_{ii'jj'}=-n({w}(I_k-J))_{ij}
\]
and
\[
\sum_{1\le i\le n-k, 1\le j\le k} \partial_{ij} A_{ii'jj'}=-n({\bw}(I_k-\bar{J}))_{i'j'}.
\]
Moreover, since
\begin{align*}
\bpartial_{i'j'}\det(I_k-J)&=\det(I_k-J)\sum_{1\le p,q\le k}\big((I_k-J)^{-1}\big)_{qp}\bpartial_{i'j'} (I_k-J)_{pq}\\
&=-\det(I_k-J)\bigg(w(I_k-J)^{-1} \bigg)_{i'j'}
\end{align*}
and
\[
\partial_{ij}\det(I_k-J)=-\det(I_k-J)\bigg(\bw(I_k-\bar{J})^{-1} \bigg)_{ij},
\]
then
\begin{align*}
\bpartial_{i'j'} \rho
&=n  \rho\bigg(w(I_k-J)^{-1}\bigg)_{i'j'}, \quad
\partial_{ij} \rho=n  \rho\bigg(\bw(I_k-\bar{J})^{-1} \bigg)_{ij}.
\end{align*}
As a result,
\begin{align*}
\sum_{1\le i'\le n-k, 1\le j'\le k}(\bpartial_{i'j'} A_{ii'jj'})g\rho + (\bpartial_{i'j'}\rho)gA_{ii'jj'}=0
\end{align*}
and
\[
\sum_{1\le i\le n-k, 1\le j\le k}(\partial_{ij} A_{ii'jj'})g\rho + (\partial_{ij}\rho)gA_{ii'jj'}=0,
\]
that is $R=0$ as claimed.
\end{proof}

\subsection{Eigenvalue process}\label{sec-J process}

Let $(w (t))_{t \ge 0}=(X(t) Z(t)^{-1})_{t \ge 0}$ be a Brownian motion on $HG_{n,k}$ as in Theorem \ref{main:s1}. Let  $J(t)= w (t)^*w (t) \in \mathbb{C}^{k \times k}$ for $t \ge 0$.  We wish to study the eigenvalues process of $J$. The first goal is therefore to write a stochastic differential equation for $J$.

\begin{proposition}
Let $(J(t))_{t \ge 0}$ be given as above. Let $t_0:=\inf\{ u\ge0, J_u \text{ is not invertible}\}$. Then for $t\in(0,t_0)$ there exists a Brownian motion $R$ in $\mathbb{C}^{ k \times k}$ such that:
\begin{align}\label{SDE-Matrix}
dJ(t)=& \sqrt{I_k-J(t)} dR\sqrt{I_{k} - J(t)} \sqrt{J(t)} + \sqrt{J(t)} \sqrt{I_{k} - J(t)}dr(t)^{*}\sqrt{I_k-J(t)} \\
&+ 2((n-k) - \tr(J(t)))(I_k - J(t)) dt. \notag
\end{align}
\end{proposition}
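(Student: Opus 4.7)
The plan is to apply It\^o's formula to $J(t) = w(t)^* w(t)$, using the explicit It\^o-form SDE for $w$ derived in the proof of Theorem \ref{main:s1}, namely
\[
dw = (Y - wW)\, d\gamma\, Z^{-1},
\]
whose finite variation part was shown to vanish. Writing $M := Y - wW$ and $N := Z^{-1}$, It\^o's formula gives
\[
dJ = w^* dw + dw^* w + dw^* dw,
\]
so the problem splits into identifying (i) the martingale part $w^* dw + dw^* w$ and (ii) the drift $dw^* dw$.

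For the drift, I would exploit the defining relations of $\mathbf U(n-k,k)$ displayed in \eqref{eq-relations1} and \eqref{eq-relations}. Using $YW^* = XZ^*$, $WW^* = ZZ^* - I_k$ and $YY^* - XX^* = I_{n-k}$, a short calculation gives $MM^* = I_{n-k} - ww^*$, so $\tr(M^*M) = (n-k) - \tr(J)$. Combined with $N^*N = (Z^{-1})^*Z^{-1} = I_k - J$ (which is already used implicitly in Theorem \ref{main:s1}) and the quadratic covariation $d\gamma_{lm}\, d\overline{\gamma}_{l'm'} = 2\delta_{ll'}\delta_{mm'}\, dt$, one obtains
\[
dw^* dw = 2\tr(M^*M)\, N^*N\, dt = 2\bigl((n-k) - \tr(J)\bigr)(I_k - J)\, dt,
\]
matching the claimed drift.

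For the martingale part, the key observation is the identity $w^* Y = W$ (immediate from $X^*Y = Z^*W$ and $w = XZ^{-1}$), which gives $w^*M = (I_k - J)W$ and therefore
\[
w^*MM^*w = (I_k-J)WW^*(I_k-J) = (I_k-J)\bigl((I_k-J)^{-1} - I_k\bigr)(I_k-J) = J(I_k - J),
\]
after using $(I_k-J)^{-1} = ZZ^*$. This, together with $N^*N = I_k - J$, suggests defining
\[
dR^* := \bigl(J(I_k-J)\bigr)^{-1/2}\, w^*dw\, (I_k-J)^{-1/2},
\]
which is well defined on $(0,t_0)$ since $J$ is invertible there and $I_k - J = (Z^{-1})^*Z^{-1}$ is always invertible. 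A direct quadratic-covariation computation, using Hermitianity of the square-root matrices and cancellations of the form $A\,J(I_k-J)\,A = I_k$ with $A = (J(I_k-J))^{-1/2}$, yields $dR_{ab}\, d\overline{R}_{a'b'} = 2\delta_{aa'}\delta_{bb'}\, dt$ and $dR_{ab}\,dR_{a'b'} = 0$. Hence $R$ is a complex $k\times k$ Brownian motion (with the paper's normalization) and by construction
\[
w^* dw = \sqrt{J}\sqrt{I_k - J}\, dR^* \sqrt{I_k - J}, \qquad dw^* w = \sqrt{I_k - J}\, dR\, \sqrt{I_k - J}\sqrt{J},
\]
the second identity being the conjugate of the first and using the fact that $J$ and $I_k-J$ commute.

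The routine but bookkeeping-heavy part is the algebraic manipulation of the $\mathbf U(n-k,k)$ block relations to reduce $MM^*$ and $w^*MM^*w$ to the stated form. The conceptual obstacle, and thus the main thing to get right, is the choice of $R$: one must verify that the formula for $dR^*$ indeed yields a genuine complex Brownian motion (not just a local martingale with the correct covariance structure for $J$), which requires that $(J(I_k-J))^{-1/2}$ and $(I_k-J)^{-1/2}$ are well defined, justifying the restriction to $t \in (0, t_0)$.
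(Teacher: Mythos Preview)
Your proof is correct and follows the same overall strategy as the paper: apply It\^o's formula to $J=w^*w$, read off the drift from $(dw^*)(dw)$, and identify a $k\times k$ complex Brownian motion $R$ driving the martingale part via covariation computations. The drift computation matches exactly.

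The execution for the martingale part is different, and your route is arguably cleaner. The paper first introduces the polar decomposition $w=Q\sqrt{J}$ with $Q^*Q=I_k$, then represents $dw$ via an auxiliary $(n-k)\times k$ Brownian motion $B$ satisfying $dw=\sqrt{I_{n-k}-ww^*}\,dB\,\sqrt{I_k-w^*w}$, and finally sets $dR:=dB^*Q$. You bypass both intermediaries by working directly with the original driving noise $\gamma$ and the block identities of $\mathbf U(n-k,k)$: your key simplifications $w^*Y=W$, hence $w^*M=(I_k-J)W$, together with $WW^*=ZZ^*-I_k=J(I_k-J)^{-1}$ and $(Z^{-1})^*Z^{-1}=I_k-J$, give exactly $PP^*=I_k$ and $S^*S=I_k$ for the factors in $dR^*=P\,d\gamma\,S$, so the covariation check goes through. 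This avoids the polar decomposition entirely (and the paper's stated relation $QQ^*=I_{n-k}$, which cannot literally hold when $n-k>k$). Both approaches need $J$ invertible—yours to form $(J(I_k-J))^{-1/2}$, the paper's to define $Q=wJ^{-1/2}$—which is why the statement restricts to $t\in(0,t_0)$.
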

\begin{proof}
Since $J=w^*w$ we have
\begin{equation*}
dJ = (dw^*)w + w^*(dw) + (dw^*)(dw). 
\end{equation*}
From \eqref{eq-dwdw}, we readily derive: 
\begin{equation*}
(dw^*)(dw) = 2((n-k) - \tr(J))(I_k - J). 
\end{equation*}
As to the local martingale part of $dJ$, recall that that $I_{k} - w^*w = (Z^{-1})^*Z^{-1}$ is invertible and assume that $J$ is invertible up to time $t_0 > 0$. Then $w$ has a unique polar decomposition $w = Q\sqrt{J}$ where $Q \in \mathbb{C}^{(n-k)\times k}$ satisfies: 
\begin{equation*}
QQ^* = I_{n-k}, \quad Q^*Q = I_k.
\end{equation*}
In particular, $I_{n-k} - ww^*$ is invertible as well since 
\begin{equation*}
I_{n-k} - ww^* = 0 \quad \Leftrightarrow \quad Q(I_k-J)Q^* = 0. 
\end{equation*}
As a matter fact, \eqref{eq-dwdw} implies the existence of a complex Brownian matrix $B \in \mathbb{C}^{(n-k )\times k}$ such that 
\begin{equation*}
dw = \sqrt{I_{n-k} - ww^*} dB \sqrt{I_k-w^*w},
\end{equation*}
whence
\begin{equation}\label{eq-VdV}
(dw^*)w + w^*(dw) = \sqrt{I_k-w^*w} dB^* \sqrt{I_{n-k} - ww^*}w  + w^*\sqrt{I_{n-k} - ww^*} dB \sqrt{I_k-w^*w}. 
\end{equation}
Furthermore,
\begin{align*}
w^*\sqrt{I_{n-k} - ww^*} dB \sqrt{I_k-w^*w} & = \sqrt{J} Q^*\sqrt{I_{n-k} - QJQ^*}dB \sqrt{I_k-w^*w}
\\& = \sqrt{J} \sqrt{I_{k} - J}Q^*dB\sqrt{I_k-J} , 
\end{align*}
and similarly 
\begin{align*}
\sqrt{I_k-w^*w} dB^* \sqrt{I_{n-k} - ww^*}w =  \sqrt{I_k-J} dB^* Q\sqrt{I_{k} - J} \sqrt{J}. 
\end{align*}
Finally, the identity $QQ^* = I_{n-k}$ shows that the matrix-valued process $dR := dB^*Q$ is a Brownian motion process in $\C^{k\times k}$. Altogether, we end up with the following autonomous SDE:
\begin{equation}\label{SDE-Matrix2}
dJ= \sqrt{I_k-J(t)} dR\sqrt{I_{k} - J} \sqrt{J} + \sqrt{J} \sqrt{I_{k} - J}dR^{*}\sqrt{I_k-J} + 2((n-k) - \tr(J))(I_k - J) dt,
\end{equation}
as desired. 
\end{proof}
Using Theorem 4 in \cite{Gra-Mal}, we immediately obtain the SDE satisfied by the eigenvalues process $(\lambda_j)_{j=1}^k$ of $J$.
\begin{corollary}
Assume $\lambda_1(0) > \dots > \lambda_k(0) > 0$ and let 
\begin{equation*}
\tau := \inf\{t > 0, \lambda_l(t) = \lambda_j(t)\, \textrm{for some} \, (l,j) \},
\end{equation*}
be the first collision time. Then, for any $1 \leq j \leq k$:
\begin{multline}\label{eq-SDE-lambda}
d\lambda_j = 2\sqrt{\lambda_j}(1-\lambda_j) dN_j + 2\left[(n-k) - \sum_{l=1}^k\lambda_l\right]  {{(1-\lambda_j)}} dt \\ 
+ 2\sum_{l\neq j} \frac{(1-\lambda_l)\lambda_j(1-\lambda_j) + (1-\lambda_j)\lambda_l(1-\lambda_l)}{\lambda_j - \lambda_l} dt
\end{multline}
where $(N_j)_{j=1}^k$ is a $k$-dimensional Brownian motion. 
\end{corollary}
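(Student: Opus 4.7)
The plan is to imitate the diagonalization argument already carried out for the unitary Brownian motion (Theorem \ref{eigen process u}) and for the spherical Jacobi matrix process (Theorem \ref{thm-lambda}), but now applied to the hyperbolic-type matrix SDE \eqref{SDE-Matrix2}. Since $J$ is self-adjoint, write (up to time $\tau$) the spectral decomposition $J = V\Lambda V^*$ with $V\in\mathbf{U}(k)$ and $\Lambda=\mathrm{diag}(\lambda_1,\dots,\lambda_k)$. Set $dU := dV^*\circ V$ (skew-Hermitian) and $dM := V^*\circ dJ\,V$, so that differentiating $J V = V\Lambda$ in the Stratonovich sense yields
\begin{equation*}
d\Lambda = dU\circ\Lambda - \Lambda\circ dU + dM.
\end{equation*}
Reading off the diagonal and off-diagonal entries gives $d\lambda_j = dM_{jj}$ and $dU_{lj} = (\lambda_j-\lambda_l)^{-1}\circ dM_{lj}$ for $l\neq j$, which is where the assumption $\lambda_1(0)>\cdots>\lambda_k(0)>0$ is used to make the splitting valid on $[0,\tau)$.

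Next I would compute the It\^o decomposition $dM = dE + dF$ of $dM$ into its local martingale and finite-variation parts. From \eqref{SDE-Matrix2},
\begin{equation*}
\frac{(dJ)_{ij}(dJ)_{i'j'}}{2dt} = (J-J^2)_{i'j}(I_k-J)_{ij'} + (J-J^2)_{ij'}(I_k-J)_{i'j},
\end{equation*}
so conjugating by $V$ turns this into the same expression in the diagonal matrix $\Lambda$. In particular $dE_{jj}dE_{ll} = 4\lambda_j(1-\lambda_j)^2\delta_{jl}\,dt$, which produces the diffusion coefficient $2\sqrt{\lambda_j}(1-\lambda_j)$ and lets us write $dE_{jj} = 2\sqrt{\lambda_j}(1-\lambda_j)\,dN_j$ for a family of independent real Brownian motions $(N_j)_{j=1}^k$. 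The drift $dF$ comes from two sources: the explicit drift $2((n-k)-\tr J)(I_k-J)\,dt$ in \eqref{SDE-Matrix2}, whose $(j,j)$ entry after conjugation is $2((n-k)-\sum_l\lambda_l)(1-\lambda_j)\,dt$, and the Stratonovich-to-It\^o correction that emerges from $d\Lambda = dU\,dM - dM\,dU + (\text{It\^o part of }dM)$ via the off-diagonal entries of $dU$.

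The crucial computation is that correction term, which is the analogue of the Dyson repulsion. Using $dU_{lj}=(\lambda_j-\lambda_l)^{-1}\,dM_{lj}$ and the bracket formula above, a straightforward calculation gives
\begin{equation*}
(dU\,dM-dM\,dU)_{jj} = 2\sum_{l\neq j}\frac{(J-J^2)_{lj}(I_k-J)_{jl}+(J-J^2)_{jl}(I_k-J)_{lj}}{\lambda_j-\lambda_l}\,dt,
\end{equation*}
which in the diagonal basis simplifies to
\begin{equation*}
2\sum_{l\neq j}\frac{(1-\lambda_l)\lambda_j(1-\lambda_j)+(1-\lambda_j)\lambda_l(1-\lambda_l)}{\lambda_j-\lambda_l}\,dt.
\end{equation*}
Adding the three contributions and reading off $d\lambda_j = dM_{jj}$ yields \eqref{eq-SDE-lambda}. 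The main obstacle is purely bookkeeping: one must be careful to convert between Stratonovich and It\^o consistently when handling $dU$, and to check that the cross term $(dU\,dM-dM\,dU)_{jj}$ is indeed real (which it is, because the summand is invariant under $l\leftrightarrow j$ in the Hermitian sense, as in the proof of Theorem \ref{thm-lambda}). As an alternative and perhaps more conceptual route, one could simply invoke Theorem 4 of \cite{Gra-Mal}, which provides a ready-made formula for the eigenvalue SDE associated with a self-adjoint matrix diffusion: one only needs to read off the drift and diffusion coefficients of the matrix equation \eqref{SDE-Matrix2} and substitute them into their general formula.
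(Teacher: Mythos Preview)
Your direct diagonalization argument is sound and mirrors exactly the computation carried out for the spherical analogue (Theorem~\ref{thm-lambda}), so it would certainly work here. The paper's own proof of this corollary, however, is just the one-line alternative you mention at the end: it simply invokes Theorem~4 of \cite{Gra-Mal}, reading off the drift and diffusion coefficients from the matrix SDE~\eqref{SDE-Matrix}. So you have in fact covered both routes, only with the emphasis reversed; your main argument fills in by hand what the paper outsources to a general result.

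Two minor bookkeeping slips in your write-up are worth flagging, though neither affects the outcome. First, from $d\Lambda = dU\circ\Lambda - \Lambda\circ dU + dM$ the off-diagonal relation reads $dU_{lj}=(\lambda_l-\lambda_j)^{-1}\circ dM_{lj}$, not $(\lambda_j-\lambda_l)^{-1}$. Second, in your displayed formula for $(dU\,dM-dM\,dU)_{jj}$ the indices in the numerator are off: after conjugation by $V$ you are working with the diagonal matrix $\Lambda$, so $(\Lambda-\Lambda^2)_{lj}$ and $(I_k-\Lambda)_{jl}$ vanish for $l\neq j$. The correct numerator, coming from the bracket $dM_{jl}\,dM_{lj}$, is $(\Lambda-\Lambda^2)_{ll}(I_k-\Lambda)_{jj}+(\Lambda-\Lambda^2)_{jj}(I_k-\Lambda)_{ll}$, which is precisely what your next line spells out in coordinates --- so your simplification is right even though the preceding display is not.
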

%
%
%

Using the rational transformation,

\[
\rho_j:=\frac{1+\lambda_j}{1-\lambda_j} >1 \quad \Leftrightarrow \quad \lambda_j = \frac{\rho_j-1}{\rho_j+1},
\]
we can prove that: 
\begin{proposition}\label{Collision}
The first collision time $\tau$ is infinite almost surely. 
\end{proposition}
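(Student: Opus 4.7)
The plan is to mimic the strategy already used in Theorem \ref{thm-non-collide} (and its variants for the compact Grassmannian in Section \ref{sec-J process}): transform to the $\rho$-variables, compute the generator of $\rho = (\rho_1,\dots,\rho_k)$, and test it against the logarithmic Vandermonde function to obtain a process that would have to diverge at a collision time. The change of variables $\lambda_j = (\rho_j-1)/(\rho_j+1)$ sends the region $\{0<\lambda_j<1\}$ to $\{\rho_j>1\}$ and turns the first collision time of the $\lambda$'s into the first collision time of the $\rho$'s, while moving the singular boundary $\lambda_j=1$ of \eqref{eq-SDE-lambda} to infinity.

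First, I would apply It\^o's formula to \eqref{eq-SDE-lambda} together with $d\rho_j/d\lambda_j = 2/(1-\lambda_j)^2$. Using the identities $1-\lambda_j = 2/(\rho_j+1)$, $\sqrt{\lambda_j}/(1-\lambda_j)=\tfrac12\sqrt{\rho_j^2-1}$, and $\lambda_j-\lambda_l = 2(\rho_j-\rho_l)/[(\rho_j+1)(\rho_l+1)]$, one obtains an autonomous SDE of the form
\begin{equation*}
d\rho_j = 2\sqrt{\rho_j^2-1}\, dN_j + b_j(\rho)\, dt,
\end{equation*}
on the open Weyl chamber $\mathcal C = \{\rho_1<\dots<\rho_k,\ \rho_j>1\}$, whose drift $b_j$ contains a repulsive singular sum of the form $\sum_{l\neq j} c_{jl}(\rho)/(\rho_j-\rho_l)$ with an explicit, smooth coefficient $c_{jl}$ that is bounded on compact subsets of $(1,\infty)^k$. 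Let $\mathcal L$ be the associated generator on $\mathcal C$.

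Next, following the exact scheme of Theorem \ref{thm-non-collide}, I would consider the logarithmic Vandermonde
\begin{equation*}
V(\rho) = \frac{1}{2}\log h(\rho), \qquad h(\rho) = \prod_{i<j}(\rho_j-\rho_i),
\end{equation*}
and apply It\^o to $\Omega_t := V(\rho(t))$, writing
\begin{equation*}
\Omega_t = \Omega_0 + M_t + \int_0^t \mathcal L V(\rho(s))\, ds,
\end{equation*}
where $M_t$ is a continuous local martingale whose quadratic variation is $4\sum_j(\rho_j^2-1)(\partial_jV)^2\, ds$ and is therefore a time-changed Brownian motion. The key algebraic step is to show $\mathcal L V \geq -C$ for some constant $C=C(n,k)$ on all of $\mathcal C$. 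This will follow from the identities $\sum_j \partial_j h = 0$-type relations used in Theorem \ref{thm-non-collide} combined with the fact that the repulsive drift coefficients $c_{jl}(\rho)$ degenerate in exactly the right way as $\rho_j-\rho_l\to 0$, so that the diagonal second-order contribution cancels the singular cross-terms; the residual drift then comes from the bounded part of $b$ and the quadratic variation $4(\rho_j^2-1)(\partial_j V)^2$.

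Once $\mathcal L V \geq -C$ is established, the argument concludes as in Theorem \ref{thm-non-collide}: on $\{\tau<\infty\}$ the Vandermonde $h(\rho(t))$ tends to zero as $t\uparrow\tau$, hence $\Omega_t\to-\infty$, which forces $M_\tau=-\infty$. But $M$ is a time-changed Brownian motion, so this event has probability zero, proving $\mathbb{P}(\tau<\infty)=0$. The main obstacle is the bookkeeping required to verify $\mathcal L V \geq -C$: the coefficients $c_{jl}(\rho)$ are rational in $\rho_j,\rho_l$, and one must carefully collect the singular contributions from the drift against those arising from the Hessian of $V$ to see that they cancel and leave only a bounded remainder. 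A secondary issue to address is that $\rho_j$ itself should not reach $1$ in finite time, which can be verified by the same style of argument applied to $\log(\rho_j-1)$ or by a direct comparison with a Bessel-type SDE.
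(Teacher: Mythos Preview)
Your proposal is correct and is the same McKean argument the paper uses, packaged slightly differently. Both pass to the $\rho$-variables and obtain
\[
d\rho_j = 2\sqrt{\rho_j^2-1}\,dN_j + 2\bigl[(n+2-2k)\rho_j + (n-2k)\bigr]\,dt + 4(\rho_j^2-1)\sum_{l\neq j}\frac{1}{\rho_j-\rho_l}\,dt.
\]
The paper then records the eigenvalue identity $\sum_j \mathscr{L}^{(k,n)}_{\rho_j} V = cV$ with $c=\tfrac{k(k-1)(3n+2-4k)}{3}$ (citing \cite{Dou}), recognizes the generator of $\rho$ as the Doob $V$-transform of $k$ independent $\mathscr{L}^{(k,n)}$-diffusions, and concludes that a local martingale built from $1/V(\rho(t))$ blows up at $\tau$. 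Your route via a lower bound on $\mathcal{L}(\tfrac12\log h)$ is equivalent and in fact cleaner than you anticipate: the same eigenvalue identity gives directly
\[
\mathcal{L}(\log h) \;=\; c \;+\; 2\sum_j(\rho_j^2-1)\bigl(\partial_j\log h\bigr)^2 \;\ge\; c \;>\; 0,
\]
so the ``bookkeeping'' is a one-line consequence of the Doob relation rather than a delicate cancellation. One caution: the drift $b_j$ is \emph{not} bounded (it contains $2(n+2-2k)\rho_j$), so your phrase ``bounded part of $b$'' is inaccurate; what makes its contribution to $\mathcal{L}\log h$ constant is the algebraic identity $\sum_j \rho_j\,\partial_j\log h = \binom{k}{2}$, not boundedness of coefficients. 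The boundary issue $\rho_j\downarrow 1$ you flag is handled separately in the paper via the identification with a radial Heckman--Opdam process.
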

\begin{proof}
A straightforward application of It\^o's formula yields: 
\begin{align*}
d\rho_j &= 4\frac{\sqrt{\lambda_j}}{(1-\lambda_j)} dN_j + 4\left[(n-1) - \lambda_j\right] \frac{dt}{1-\lambda_j}  + 8 \sum_{l\neq j} \frac{\lambda_l}{\lambda_j - \lambda_l} dt + 8 \frac{\lambda_j}{1-\lambda_j} dt
\\& = 2\sqrt{\rho_j^2-1}dN_j + 2\left[n\rho_j + (n-2) \right] dt + 4(\rho_j+1)\sum_{l\neq j} \frac{(\rho_l-1)}{(\rho_j-\rho_l)} dt
\\& = 2\sqrt{\rho_j^2-1}dN_j + 2\left[(n+2-2k)\rho_j + (n-2k) \right] dt  +4(\rho_j^2-1)\sum_{l\neq j} \frac{1}{(\rho_j-\rho_l)} dt.
\end{align*}
It follows that the generator of the diffusion process $(\rho_j)_{j=1}^k$ acts on smooth functions as: 
\begin{equation*}
L^{(k,n)}_{\rho_1, \dots, \rho_k} = \sum_{j=1}^k\left\{2(\rho_j^2-1)\partial_{j}^2 + 2\left[(n+2-2k)\rho_j + n-2k \right]\partial_j\right\} + 4 \sum_{l,j = 1, l \neq j}^k \frac{(\rho_j^2-1)}{(\rho_j-\rho_l)}\partial_j.
\end{equation*}
Up to a constant, this operator is the Vandermonde transform of $k$ independent diffusions with generator 
\begin{equation*}
\mathscr{L}^{(k,n)}_u := 2(u^2-1)\partial_{u}^2 + 2\left[(n+2-2k)u + n-2k \right]\partial_u.
\end{equation*}
Indeed, the Vandermonde function 
\begin{equation*}
V(\rho_1, \dots, \rho_k) = \prod_{1 \leq l < j \leq k} (\rho_l - \rho_j),  
\end{equation*} 
is positive on the Weyl chamber $\{\rho_1 > \rho_2 > \dots > \rho_k\}$ and satisfies (see Appendix in \cite{Dou}): 
\begin{equation*}
\sum_{j=1}^k\mathscr{L}^{(k,n)}_{\rho_j}V = \frac{k(k-1)(3n+2-4k)}{3}V.
\end{equation*}
Besides, one readily checks that 
\begin{equation}\label{DoobV}
L^{(k,n)}_{\rho_1, \dots, \rho_k}  = \frac{1}{V}\sum_{j=1}^k\mathscr{L}^{(k,n)}_{\rho_j}(V\cdot) - \frac{k(k-1)(3n+2-4k)}{3}.
\end{equation}
In particular, the process 
\begin{equation*}
t \mapsto \frac{1}{V(\rho_1(t), \dots, \rho_k(t))} + \frac{k(k-1)(3n+2-4k)}{3}, \quad t < \tau,  
\end{equation*}
starting at non-colliding particles is a continuous local martingale which blows-up as $t \rightarrow \tau$. Since it is a time-changed Brownian motion, the result follows (this is the classical McKean's argument). 
\end{proof}

We can further relate $(\rho_j)_{j=1}^k$ to a special instance of the so-called radial Heckman-Opdam process associated with the root system of type $BC_k$ (\cite{Sch}). More precisely, the process $(\zeta_j(t), t \geq 0)_{j=1}^k$ defined by: 
\begin{equation*}
\zeta_j(t) = \cosh^{-1}(\rho_j(t/4)), \quad 1 \leq j \leq k,
 \end{equation*}
satisfies
\begin{equation}\label{eq-SDE-coth}
d\zeta_j = dN_j + \frac{1}{2}\coth(\zeta_j)dt + \frac{(n-2k)}{2} \coth\left(\frac{\zeta_j}{2}\right)dt + \frac{1}{2}\sum_{i \neq j}\left[\coth\left(\frac{\zeta_i -\zeta_j}{2}\right) + \coth\left(\frac{\zeta_i + \zeta_j}{2}\right)\right]dt. 
\end{equation}
As a matter of fact, $(\rho_j)_{j=1}^k$ is the unique strong solution of the SDE it satisfies for any starting point $\rho(0) \in [1, \infty)^k$ (\cite{Sch}, Proposition 4.1). By the virtue  of Proposition \ref{Collision}, we deduce that 
\begin{corollary}
The SDE \eqref{eq-SDE-lambda} admits a unique strong solution for all $t \geq 0$ and any starting point $\lambda_1(0) \geq \lambda_2(0) \geq \dots \geq \lambda_k(0) \geq 0$.
\end{corollary}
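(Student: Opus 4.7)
The plan is to transfer existence and uniqueness from the radial Heckman–Opdam SDE \eqref{eq-SDE-coth} back to the eigenvalue SDE \eqref{eq-SDE-lambda} through the smooth bijections
\begin{equation*}
\lambda_j \;\longmapsto\; \rho_j=\frac{1+\lambda_j}{1-\lambda_j}\;\longmapsto\; \zeta_j=\cosh^{-1}(\rho_j),
\end{equation*}
using the non-collision estimate of Proposition \ref{Collision} to avoid the singular drift. Since the eigenvalues of $J=w^{*}w$ satisfy $I_k-J=(Z^{-1})^{*}Z^{-1}>0$, one automatically has $\lambda_j(0)\in[0,1)$, so $\rho_j(0)\in[1,\infty)$ and $\zeta_j(0)\in[0,\infty)$; in particular, the required initial condition $\rho(0)\in[1,\infty)^{k}$ for Schapira's theorem is satisfied.

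First I would apply Schapira's Proposition 4.1 from \cite{Sch}, which furnishes a unique strong solution $(\zeta_j(t))_{j=1}^{k}$ to \eqref{eq-SDE-coth} defined for all $t\ge 0$ and for every starting point in $[0,\infty)^{k}$ (including the boundary where some $\zeta_j(0)=0$ and/or $\zeta_i(0)=\zeta_j(0)$). Setting $\rho_j(t):=\cosh(\zeta_j(4t))$ and $\lambda_j(t):=(\rho_j(t)-1)/(\rho_j(t)+1)$ produces a continuous $[0,1)^{k}$-valued process. Because $\rho\mapsto(\rho-1)/(\rho+1)$ is a smooth diffeomorphism from $[1,\infty)$ onto $[0,1)$, a direct It\^o computation (the reverse of the two It\^o transformations displayed in the proof of Proposition \ref{Collision}) shows that this $\lambda$ satisfies \eqref{eq-SDE-lambda} driven by Brownian motions $N_j$ built from the time-rescaled $\zeta$-drivers. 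The drift terms match term by term, including the interaction kernel: using
\begin{equation*}
\frac{1}{\rho_j-\rho_l}=\frac{(1-\lambda_j)(1-\lambda_l)}{2(\lambda_j-\lambda_l)},
\end{equation*}
one recovers exactly the singular repulsive drift appearing in \eqref{eq-SDE-lambda}.

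Next I would establish pathwise uniqueness for \eqref{eq-SDE-lambda}. Given any strong solution $(\lambda_j)$ defined on $[0,T]$ and any filtration supporting $(N_j)$, the non-collision statement of Proposition \ref{Collision}, applied through the same change of variables, implies that $\{\lambda_i=\lambda_j\}$ is almost surely never entered; moreover, since $I_k-J(t)=(Z(t)Z(t)^{*})^{-1}$ and the matrix SDE \eqref{SDE-Matrix2} has bounded coefficients, $\lambda_j(t)<1$ for all $t$. On the open set $\{0<\lambda_k<\cdots<\lambda_1<1\}$ the coefficients in \eqref{eq-SDE-lambda} are smooth and locally Lipschitz, so It\^o's formula produces a strong solution $(\rho_j)$ of the Schapira SDE to which the uniqueness part of Proposition 4.1 of \cite{Sch} applies; two strong solutions of \eqref{eq-SDE-lambda} thus give rise to two strong solutions of \eqref{eq-SDE-coth} with the same initial condition, which must coincide. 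Inverting the diffeomorphism then yields pathwise uniqueness for $(\lambda_j)$.

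The main obstacle is the boundary behaviour: $\lambda_j=0$ corresponds to the degenerate point $\zeta_j=0$, where the coefficient $\sqrt{\lambda_j}$ in \eqref{eq-SDE-lambda} has a Bessel-type square-root singularity, and $\lambda_i=\lambda_j$ produces a simple-pole drift. Both are handled simultaneously by working on the $\zeta$-side, where the drift coefficients $\tfrac{1}{2}\coth(\zeta_j/2)$ and $\tfrac{1}{2}\coth((\zeta_i\pm\zeta_j)/2)$ are precisely of the form covered by the Heckman–Opdam framework of \cite{Sch}; this is the reason the two successive changes of variable are introduced in the excerpt. Finally, non-explosion on the $\lambda$-side is automatic since $\lambda_j\in[0,1)$ is a bounded interval, and the SDE \eqref{eq-SDE-lambda} is thus defined for all $t\ge 0$.
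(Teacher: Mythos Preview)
Your proposal is correct and follows essentially the same route as the paper: the paper's entire argument is the sentence immediately preceding the corollary, which invokes Schapira's Proposition~4.1 for strong existence and uniqueness of the $\rho$ (equivalently $\zeta$) SDE on $[1,\infty)^k$, together with Proposition~\ref{Collision} (non-collision), and then transfers the conclusion back to $\lambda$ via the diffeomorphism $\rho_j=(1+\lambda_j)/(1-\lambda_j)$. One small slip: in your uniqueness paragraph you justify $\lambda_j(t)<1$ by appealing to the matrix identity $I_k-J=(ZZ^{*})^{-1}$, but an \emph{abstract} strong solution of \eqref{eq-SDE-lambda} need not arise from the matrix process; the correct justification (which you essentially give in your final paragraph) is that $\lambda_j\to 1$ corresponds to $\rho_j\to\infty$, and Schapira's result rules out explosion of $\rho$.
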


\section{Long-time behavior and distribution of the functional $\int_0^t \mathop{\tr} (J(s))ds$} 
In this section, we derive the limit as $t \rightarrow \infty$ of the functional: 
\begin{equation*}
\int_0^t \mathop{\tr} ({J(s)})ds,
\end{equation*}
and compute its Laplace transform. Indeed, the asymptotics of this functional may be readily derived from \eqref{eq-SDE-coth}. 

\begin{proposition}\label{Lemma1}
As $t\to+\infty$, almost surely we have
\begin{equation}\label{eq-tr-J}
\lim_{t\to+\infty}\frac1t\int_0^t\tr({J(s)})ds=k.
\end{equation}
\end{proposition}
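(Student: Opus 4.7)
\noindent\textbf{Proof plan for Proposition \ref{Lemma1}.}

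The plan is to reduce the statement about the time average to a pathwise statement about the eigenvalues $\lambda_j(t)$ of $J(t)$, and then prove that each eigenvalue tends to $1$ almost surely as $t\to\infty$. Since $\tr(J(t)) = \sum_{j=1}^k \lambda_j(t)$, once we have $\lambda_j(t)\to 1$ a.s. for every $j$, we will immediately obtain $\tr(J(t))\to k$ a.s., and an elementary Cesaro argument will give the $L^1$-average statement. So the entire proof reduces to showing that each $\lambda_j(t)$ tends to $1$ a.s. as $t\to\infty$.

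To show $\lambda_j(t)\to 1$, I will work with the transformed process $(\zeta_j)_{j=1}^k$ introduced above the proposition. Recall that $\rho_j = (1+\lambda_j)/(1-\lambda_j)$ and $\zeta_j = \cosh^{-1}(\rho_j(\cdot/4))$; straightforward trigonometric identities give $\lambda_j = \tanh^2(\zeta_j/2)$. Hence $\lambda_j(t)\to 1$ is equivalent to $\zeta_j\to\infty$ along the appropriately rescaled time. I will prove that $\zeta_j(t)\to\infty$ a.s. for every $j$ as $t\to\infty$, using the SDE \eqref{eq-SDE-coth} it satisfies. By Proposition \ref{Collision}, the eigenvalues never collide, so up to a permutation of indices I may assume once and for all that $0<\zeta_1(t)<\zeta_2(t)<\cdots<\zeta_k(t)$ for every $t>0$; the ordering is preserved on the whole time axis.

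The key step is a uniform pathwise lower bound on the drift of $\zeta_1$, the smallest coordinate. Inspecting \eqref{eq-SDE-coth} for $j=1$, I observe that each ingredient of the drift is bounded below by a strictly positive constant: $\tfrac12\coth(\zeta_1)\geq \tfrac12$, the term $\tfrac{n-2k}{2}\coth(\zeta_1/2)$ is nonnegative because $k\leq n-k$, and for each $i>1$ one has $\zeta_i>\zeta_1$, so both $\coth((\zeta_i-\zeta_1)/2)$ and $\coth((\zeta_i+\zeta_1)/2)$ exceed $1$. Summing these contributions gives the pathwise inequality
\[
 d\zeta_1 \;\geq\; dN_1 + \frac{n-1}{2}\,dt,
\]
which, integrated, yields $\zeta_1(t)\geq \zeta_1(0)+N_1(t)+\tfrac{n-1}{2}t$ almost surely. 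Since $n\geq 2$, the strong law of large numbers for Brownian motion forces $\zeta_1(t)\to\infty$ a.s. Because of the preserved ordering $\zeta_1\leq\zeta_2\leq\cdots\leq\zeta_k$, this propagates to $\zeta_j(t)\to\infty$ a.s. for every $j$.

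The main obstacle I anticipate is making the drift comparison above airtight: in a system of interacting particles, one usually cannot apply one-dimensional comparison theorems directly, because the SDE for $\zeta_1$ depends on the other coordinates. Here the situation is benign because I do not need a true comparison with a one-dimensional diffusion, only the pathwise lower bound on the drift, which follows from sign considerations alone; no regularization is required. With $\zeta_j\to\infty$ a.s. established, $\tanh^2(\zeta_j/2)\to 1$ a.s., hence $\tr(J(s))\to k$ a.s. as $s\to\infty$, and the Cesaro average identity $\frac{1}{t}\int_0^t \tr(J(s))\,ds\to k$ a.s.\ follows at once.
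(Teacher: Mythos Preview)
Your proof is correct and follows the same overall structure as the paper's: show $\zeta_j(t)\to\infty$ almost surely, deduce $\lambda_j(t)\to 1$, and conclude by Cesaro. The difference lies only in how the divergence of the $\zeta_j$'s is established. The paper simply invokes a comparison result from \cite{Sch} for radial Heckman--Opdam processes, whereas you give a fully self-contained argument: bounding the drift of the smallest coordinate $\zeta_1$ from below by $\tfrac{n-1}{2}$ using only $\coth(x)>1$ for $x>0$ and the standing assumption $n\ge 2k$, then integrating pathwise and using the strong law for $N_1$. Your route has the advantage of being elementary and not depending on an external reference; the paper's route is shorter on the page but hides the work in the citation. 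Both exploit the same sign structure of the drift in \eqref{eq-SDE-coth}.
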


\begin{proof}
Recall the SDE \eqref{eq-SDE-coth}:
\[
d\zeta_j = dN_j + \left(\coth(\zeta_j) + \frac{(n-2k)}{2} \coth\left(\frac{\zeta_j}{2}\right)\right)dt + \frac{1}{2}\sum_{i\not=j}\left[\coth\left(\frac{\zeta_i -\zeta_j}{2}\right) + \coth\left(\frac{\zeta_i + \zeta_j}{2}\right)\right]dt. 
\]
Under the standing assumption of $n-k\ge k$ and using a comparison argument (see e.g. Proposition 4.2 in \cite{Sch}), we infer that $\zeta_j\to +\infty$ almost surely as $t\to+\infty$. Therefore 
\[
\rho_j(t)=\cosh(\zeta_j(4t))\to +\infty,\quad j=1,\dots, k,
\]
and in turn it holds almost surely that 
\[
\lambda_j(t)=\frac{\rho_j-1}{\rho_j+1}\to 1,\quad j=1,\dots, k.
\]
The conclusion follows.
\end{proof}

Now, we shall get more insight into the distribution of the above functional and give an expression of its Laplace transform relying on Girsanov Theorem and Karlin-McGregor formula. Moreover, as in \cite{Dem-Windings}, we shall further point out an interesting connection with the generalized Maass Laplacian in the complex hyperbolic space (\cite{Aya-Int}). 

Let $(w (t))_{t \ge 0}=(X(t) Z(t)^{-1})_{t \ge 0}$ be a Brownian motion on $HG_{n,k}$ as in Theorem \ref{main:s1} and recall $J=w^*w$ as well its the eigenvalues process $(\lambda_j)_{j=1}^k$ of $J$. Recall also
\begin{equation*}
\rho_j:=\frac{1+\lambda_j}{1-\lambda_j}.
\end{equation*}
We assume that the $\lambda_j(0)$'s and therefore the $\rho_j(0)$'s are pairwise distinct. This is not a loss of generality and our results extend by continuity to non necessarily pairwise distinct eigenvalues.

\subsection{An auxiliary lemma}
Let 
$$\Delta_k=\{ \rho \in \mathbb{R}^k, \rho_1 > \rho_2 > \dots > \rho_k>1\}$$ and for any $\alpha \geq 0$, introduce the following diffusion operator: 
\begin{equation}\label{Gen1}
\mathscr{L}_u^{(n,k,\alpha)} := 2(u^2-1)\partial_{u}^2 + 2\left[(n+2-2k+2\alpha)u + n-2k -2\alpha \right]\partial_u, \quad u \geq 1. 
\end{equation}
 Performing the variable change $u = \cosh(2r), r \geq 0$, $\mathscr{L}_u^{(n,k,\alpha)}$ is mapped into the following hyperbolic Jacobi operator (\cite{Koor}): 
 \begin{equation*}
\mathscr{H}_r^{(n,k, \alpha)} =  \frac{1}{2}\left\{\partial_r^2 + \left[(2(n-2k)+1)\coth\left(r\right) + (4\alpha + 1)\tanh\left(r\right)\right]\partial_r\right\}. 
 \end{equation*}
In particular, $\mathscr{L}_u^{(n,k,0)} = \mathscr{L}_u^{(n,k)}$ is mapped into the radial part of the Laplace-Beltrami operator on the complex hyperbolic space of $H_{n-2k+1} \sim \mathbf{SU}(n-2k+1,1)/\mathbf{SU}(n-2k+1)$. 

Let $q_t^{(n,k,\alpha)}$ denote the heat kernel (with respect to Lebesgue measure) of $\mathscr{L}_u^{(n,k,\alpha)}$ with Neumann boundary condition at $u=1$. This kernel does not admits in general a simple expression as can be seen from the Jacobi-Fourier inversion formula (Theorem 2.3. in \cite{Koor}). Nonetheless, this formula simplifies when $\alpha = 0$ (see also \cite{Ner}, \cite{Sch}) and yields: 
\begin{multline*}
q_t^{(n,k,0)}(u_1, u_2) = \frac{(u_2-1)^{n-2k}}{\pi 2^{n-2k},}\int_0^{\infty}e^{-2t(\mu^2+\kappa^2)}
F_{-\mu}(-v_1)F_{\mu}(-v_2) \left|\frac{\Gamma(\kappa+i\mu)\Gamma(\kappa-2\alpha + i\mu)}{\Gamma(2i\mu)}\right|^2 d\mu  
\end{multline*}
 where $v_i = (u_i-1)/2, i \in \{1,2\}$, $\kappa:= (n-2k+1)/2$ and 
\begin{equation*}
F_{\mu}(-v_i) = {}_2F_1\left(\kappa+ i\mu, \kappa-i\mu, n-2k+1; \frac{1-u_i}{2}\right), \quad i \in \{1,2\}, \quad \mu \in \mathbb{R},
\end{equation*}
are Jacobi functions. In this case, we have: 
\begin{lemma}\label{Estimate}
For any $u_1 \geq 1, t > 0, \alpha \geq 0$,  
 \begin{equation*}
\int_1^{\infty} (u_2)^{\alpha} q_t^{(n,k,0)}(u_1, u_2) du_2 < \infty. 
\end{equation*}
Consequently, 
\begin{equation*}
\mathbb{E}\left[ \sup_{0 \le s \le t} \det(I_k-J(s))^{-\alpha}\right] < \infty.
\end{equation*}
\end{lemma}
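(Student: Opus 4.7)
My plan is to prove the two statements separately, establishing the first (integrability of the one-dimensional kernel) by a moment estimate for the diffusion generated by $\mathscr{L}^{(n,k,0)}$, and deducing the second (supremum bound) via It\^o's formula applied to $\det(I_k-J)^{-\alpha}$ together with Doob's maximal inequality.

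For the first claim, the crucial observation is that $q_t^{(n,k,0)}(u_1,\cdot)$ is nothing but the transition density of the one-dimensional diffusion $(u(s))_{s\ge 0}$ with generator $\mathscr{L}^{(n,k,0)}_u$ starting from $u_1$. Consequently
$$\int_1^\infty u_2^\alpha\, q_t^{(n,k,0)}(u_1,u_2)\,du_2=\mathbb{E}[u(t)^\alpha],$$
and a direct computation using \eqref{Gen1} gives $\mathscr{L}^{(n,k,0)}(u^\alpha)\le C_\alpha u^\alpha + D_\alpha$ for constants $C_\alpha,D_\alpha$ depending on $n,k,\alpha$. It\^o's formula combined with Gronwall's inequality then yields $\mathbb{E}[u(t)^\alpha]\le C(u_1)\, e^{C_\alpha t}<\infty$, as required.

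For the second claim, the idea is to exploit the SDE \eqref{eq-SDE-lambda} for the eigenvalues. Applying It\^o's formula to $f_\alpha(\lambda):=\prod_j(1-\lambda_j)^{-\alpha}=\det(I_k-J)^{-\alpha}$, the contribution of the off-diagonal repulsion terms simplifies via the symmetric pairing $(j,l)\leftrightarrow(l,j)$, using the telescoping identity
$$\sum_{j\ne l}\frac{(1-\lambda_l)(\lambda_j+\lambda_l)}{\lambda_j-\lambda_l}=(k-1)\tr J,$$
whence the drift of $f_\alpha$ reduces to $f_\alpha(\lambda)\bigl(2k\alpha(n-k)+2\alpha^2\tr J\bigr)$, which is non-negative. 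In particular,
$$M_t:=f_\alpha(\lambda(t))\,f_\alpha(\lambda(0))^{-1}\exp\!\Bigl(-2k\alpha(n-k)t-2\alpha^2\!\int_0^t\!\tr J(s)\,ds\Bigr)$$
is a non-negative local martingale, hence a supermartingale with $\mathbb{E}[M_t]\le 1$. Since $J\prec I_k$ implies $\tr J(s)< k$, one deduces
$$\mathbb{E}[\det(I_k-J(t))^{-\alpha}]\le\det(I_k-J(0))^{-\alpha}\,e^{(2k\alpha(n-k)+2\alpha^2 k)t}<\infty,$$
which upgrades the local submartingale $\det(I_k-J(\cdot))^{-\alpha}$ to a genuine non-negative submartingale.

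To finish, I would apply Doob's $L^2$ maximal inequality to this non-negative submartingale to obtain
$$\mathbb{E}\Bigl[\sup_{s\le t}\det(I_k-J(s))^{-2\alpha}\Bigr]\le 4\,\mathbb{E}\bigl[\det(I_k-J(t))^{-2\alpha}\bigr],$$
where the right-hand side is finite by the previous paragraph applied with $2\alpha$ in place of $\alpha$; Cauchy--Schwarz then yields the stated supremum bound. The main obstacle in this plan is the It\^o calculation in the third paragraph: one must carefully combine the drifts of each $\log(1-\lambda_j)$ (each involving the Vandermonde repulsion $\sum_{l\ne j}(\cdots)/(\lambda_j-\lambda_l)$) via the telescoping identity to obtain the clean drift that makes the exponential Girsanov correction work, and track the quadratic variation contribution $4\alpha^2\tr J\,dt$ accurately so that the $\tr J$-dependent terms cancel exactly against the drift pieces.
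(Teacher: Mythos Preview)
Your approach is correct and genuinely different from the paper's. For the first claim, the paper changes variables $u=\cosh(2r)$, compares the resulting hyperbolic Jacobi diffusion with a hyperbolic Bessel process (using $\tanh r<\coth r$), and quotes the fact that $\cosh(H(t))$ has all moments; your direct Gronwall bound via $\mathscr{L}^{(n,k,0)}(u^\alpha)\le C_\alpha u^\alpha+D_\alpha$ is more elementary. For the second claim, the paper bounds $\det(I_k-J)^{-\alpha}\le 2^{-\alpha k}(1+\rho_1)^{k\alpha}$, observes that $\rho_1$ is a submartingale, applies Doob, and then controls $\mathbb{E}[\rho_1(t)^\alpha]$ via the Karlin--McGregor density of $(\rho_j)_{j=1}^k$ combined with the first part of the lemma. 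Your route instead uses directly the exponential local martingale $M_t$ --- precisely the one the paper constructs later in the proof of Theorem~\ref{theo-Laplace} --- together with the bound $\tr J<k$; this avoids Karlin--McGregor altogether and makes the second claim independent of the first.

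One imprecision to fix: the assertion that finite expectation ``upgrades the local submartingale $\det(I_k-J(\cdot))^{-\alpha}$ to a genuine submartingale'' is false in general for nonnegative local submartingales. The remedy is to localize \emph{before} applying Doob. Since the supermartingale property of $M^{(2\alpha)}$ persists at bounded stopping times, you obtain $\mathbb{E}\bigl[\det(I_k-J(t\wedge T_n))^{-2\alpha}\bigr]\le \det(I_k-J(0))^{-2\alpha}e^{Ct}$ uniformly in $n$ for any localizing sequence $(T_n)$; Doob's $L^2$ maximal inequality applied to the (genuine) stopped submartingale $s\mapsto\det(I_k-J(s\wedge T_n))^{-\alpha}$ then gives a uniform bound on $\mathbb{E}\bigl[\sup_{s\le t\wedge T_n}\det(I_k-J(s))^{-2\alpha}\bigr]$, and monotone convergence as $n\to\infty$ finishes the proof.
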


\begin{proof}
As to the first assertion, we use the comparison principle for stochastic differential equations together with the obvious inequality $\tanh(r) < \coth(r)$ to see that the diffusion associated to $\mathscr{H}_r^{(n,k,0)}$ is dominated by the (unique strong) solution 
$(H(t))_{t \geq 0}$ of the SDE: 
\begin{equation*}
H(t) = H(0) + \gamma(t) + (n-2k+1)\int_0^t \coth(H(s)) ds,
\end{equation*}
where $\gamma$ is a real Brownian motion. The diffusion $(\cosh(H(t)))_{t \geq 0}$ is studied in detail for instance in \cite{Jak-Wis} from which it is seen that it has moments of all orders. Keeping in mind the correspondence between $\mathscr{L}_u^{(n,k,\alpha)}$ and 
$\mathscr{H}_r^{(n,k, \alpha)}$, the first part of the lemma follows.

For the second part of the lemma, without loss of generality we assume $\rho(0) \in \Delta_k$. We then first note that from the non-collision property of $\rho$.
\begin{align*}
\det(I_k-J(t))^{-\alpha} & =2^{-\alpha k} \ \prod_{j=1}^k(1+\rho_j(t))^{\alpha} \\
 & \le 2^{-\alpha k} \prod_{j=1}^k(1+\rho_1(t))^{\alpha}  \\
 & \le 2^{-\alpha k} (1+\rho_1(t))^{k \alpha}
\end{align*}
It is therefore enough to prove that for every $\alpha \ge 0$ and $t \ge 0$ we have
$
\mathbb{E} \left( \sup_{0\le s \le t} \rho_1(s)^\alpha \right) < \infty
$
The SDE satisfied by $\rho_1$ has a non-negative drift and therefore the process $\rho_1(t)$ is a sub-martingale. From Doob's maximal inequality, it is therefore enough to prove that  for every $\alpha \ge 0$ and $t \ge 0$ we have
$
\mathbb{E} \left( \rho_1(t)^\alpha \right) < \infty.
$

From  \eqref{DoobV} and Karlin-McGregor formula the semigroup density of $(\rho_j(t))_{j=1}^k$ may be written as: 
\begin{equation*}
e^{-k(k-1)(3n+2-4k)t/3} \frac{V(\rho)}{V(\rho(0))} \det \left(q_t^{(n,k,0)}(\rho_j(0), \rho_a)\right)_{j,a = 1}^k. 
\end{equation*}
The conclusion follows therefore from the first part of the lemma.
 \end{proof}

\subsection{Laplace transform of $\int_0^t \mathop{\tr} ({J(s)})ds$}
Now, we are ready to prove the following theorem:
\begin{theorem}\label{theo-Laplace}
Assume $\rho(0) \in \Delta_k$. For every $\alpha >0$ and $t >0$,
\begin{align*}
 & \mathbb{E}\left\{\exp-\left(2\alpha^2 \int_0^t \mathop{\tr}(J(s)) ds \right)\right\} \\
 = & \frac{e^{[6\alpha (n-2k+1)-(k-1)(3n+2-4k)]kt/3}}{V(\rho(0))} \left[ \prod_{j=1}^k(1+\rho_j(0))^{\alpha} \right]
\det\left(\int_{1}^{+\infty} \frac{u^{a-1}}{(1+u)^{\alpha}}q_t^{(n,k,\alpha)}(\rho_j(0), u) du\right)_{a,j=1}^k. 
\end{align*}
\end{theorem}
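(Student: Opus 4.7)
The plan is to adapt to the hyperbolic setting the Girsanov--Karlin--McGregor strategy used in the proof of Lemma \ref{kernel J} for the compact Grassmannian, replacing $\det(I_k+J)$ throughout by $\det(I_k-J)$. The first step is to apply It\^o's formula to $\det(I_k-J)$ starting from the matrix SDE \eqref{SDE-Matrix2}, mirroring the computation of Lemma \ref{thm-J2} in the compact case. Tracking carefully both the stochastic differential and the quadratic-variation contributions produced by the $\sqrt{I_k-J}$ factors, one expects an identity of the form
\[
d\bigl(\log\det(I_k-J)\bigr) \;=\; -\tr\!\bigl(\sqrt{J}\,(dR+dR^*)\bigr) \;-\; 2k(n-k)\,dt.
\]
From this one reads off, for every $\alpha\ge 0$, the exponential local martingale
\[
M_t^{\alpha} \;=\; e^{-2k\alpha(n-k)t}\left(\frac{\det(I_k-J_0)}{\det(I_k-J_t)}\right)^{\!\alpha}\, \exp\!\left(-2\alpha^2\!\int_0^t \tr J(s)\,ds\right).
\]

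The next step is to verify that $M_t^{\alpha}$ is a true (not merely local) martingale, which allows one to define a new probability $\mathbb{P}^{\alpha}$ by $d\mathbb{P}^{\alpha}_{|\mathcal{F}_t} = M_t^{\alpha}\, d\mathbb{P}_{|\mathcal{F}_t}$. This is precisely where the second assertion of Lemma \ref{Estimate} enters: the estimate $\mathbb{E}\bigl[\sup_{s\le t}\det(I_k-J(s))^{-\alpha}\bigr] < \infty$ delivers the uniform integrability needed on any bounded time interval. Under $\mathbb{P}^{\alpha}$, Girsanov shifts the matrix Brownian driver $R$ by a drift of order $\alpha\sqrt{J}$; diagonalising the resulting SDE and passing to the rational variables $\rho_j = (1+\lambda_j)/(1-\lambda_j)$ shows that the generator of $(\rho_j)$ under $\mathbb{P}^{\alpha}$ is the Doob--Vandermonde transform
\[
L^{(n,k,\alpha)}_{\rho_1,\dots,\rho_k} \;=\; \frac{1}{V(\rho)}\sum_{j=1}^k \mathscr{L}^{(n,k,\alpha)}_{\rho_j}\bigl(V\,\cdot\,\bigr) \;-\; \kappa_{n,k,\alpha},
\]
of $k$ independent copies of the one-dimensional operator $\mathscr{L}_u^{(n,k,\alpha)}$ of \eqref{Gen1}, for some explicit constant $\kappa_{n,k,\alpha}$. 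The McKean-type non-collision argument of Proposition \ref{Collision} carries over verbatim, so $V(\rho)>0$ remains in force $\mathbb{P}^{\alpha}$-almost surely.

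By the Karlin--McGregor non-intersection formula, the above Doob representation identifies the $\mathbb{P}^{\alpha}$-semi-group density of $(\rho_j)$ on the Weyl chamber $\Delta_k$ as
\[
e^{\kappa_{n,k,\alpha}\,t}\,\frac{V(\rho)}{V(\rho(0))}\,\det\bigl(q_t^{(n,k,\alpha)}(\rho_j(0),\rho_a)\bigr)_{a,j=1}^k.
\]
Reversing the Girsanov density one writes
\[
\mathbb{E}\!\left[\exp\!\left(-2\alpha^2\!\int_0^t \tr J\,ds\right)\right] \;=\; e^{2k\alpha(n-k)t}\,\det(I_k-J_0)^{\alpha}\,\mathbb{E}^{\alpha}\!\left[\det(I_k-J_t)^{-\alpha}\right],
\]
uses $\det(I_k-J_t)^{-\alpha}=2^{-k\alpha}\prod_j(1+\rho_j(t))^{\alpha}$, and expands $V(\rho)=\det(\rho_a^{b-1})_{a,b}$ in the density. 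The resulting integral over $\Delta_k$ of a product of two determinants collapses, by Andr\'eief's identity, into the single determinant
\[
\det\!\left(\int_1^{\infty} \frac{u^{a-1}}{(1+u)^{\alpha}}\,q_t^{(n,k,\alpha)}(\rho_j(0),u)\,du\right)_{a,j=1}^k,
\]
and collecting the various exponential prefactors together with the factor $\prod_j(1+\rho_j(0))^{\alpha}$ arising from $2^{k\alpha}\det(I_k-J_0)^{\alpha}$ yields precisely the formula of the theorem.

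The principal obstacle will lie in step~2: unlike the compact situation, where the analogous exponential was pathwise bounded by $e^{2k\alpha(n-k)t}$, here $\det(I_k-J_t)^{-\alpha}$ can blow up as an eigenvalue $\lambda_j$ approaches $1$, so the true martingality of $M_t^{\alpha}$ genuinely requires Lemma \ref{Estimate}. A secondary but nontrivial task is the careful bookkeeping of the three exponential contributions coming from (i) the It\^o formula for $\log\det(I_k-J)$, (ii) the Girsanov exponent $e^{2k\alpha(n-k)t}$, and (iii) the Doob constant $\kappa_{n,k,\alpha}$, so that their sum matches the announced factor $\exp\bigl(\bigl[6\alpha(n-2k+1)-(k-1)(3n+2-4k)\bigr]kt/3\bigr)$. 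Once these two points are handled, the Andr\'eief collapse is a straightforward calculation.
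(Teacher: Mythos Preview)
Your approach is correct and essentially identical to the paper's: the paper computes $d\log\det(I_k-J)$ from the eigenvalue SDE (rather than the matrix SDE you propose, but to the same effect), obtains the same martingale $M_t^{(\alpha)}$, invokes Lemma \ref{Estimate} for true martingality, and then runs Girsanov, Karlin--McGregor, and Andr\'eief exactly as you outline, with Doob constant $\kappa_{n,k,\alpha}=k(k-1)(3n+2-4k+6\alpha)/3$. One slip to fix when you write it out: in your Girsanov-reversal line the ratio is inverted --- the correct identity is $\mathbb{E}[\exp(-2\alpha^2\!\int_0^t\tr J)]=e^{2k\alpha(n-k)t}\det(I_k-J_0)^{-\alpha}\,\mathbb{E}^{\alpha}[\det(I_k-J_t)^{+\alpha}]$, and since $\det(I_k-J)^{\alpha}=2^{k\alpha}\prod_j(1+\rho_j)^{-\alpha}$ this is what produces both the $(1+u)^{-\alpha}$ weight in the integrand and the $\prod_j(1+\rho_j(0))^{\alpha}$ prefactor.
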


\begin{proof}
The main ingredient is a matrix Girsanov transform.  Recall first the SDE \eqref{eq-SDE-lambda} for $(\lambda_j)_{j=1}^k$. It can be simplified to
\begin{equation*}
d\lambda_j =  2\sqrt{\lambda_j}(1-\lambda_j) dN_j + 2\left[(n-1) - \lambda_j\right] (1-\lambda_j) dt  + 4(1-\lambda_j)^2 \sum_{l\neq j} \frac{\lambda_l}{\lambda_j - \lambda_l} dt, 
\end{equation*}
for $1 \leq j \leq k$. Then 
\begin{equation*}
-d\log(1-\lambda_j) =  2\sqrt{\lambda_j} dN_j + 2(n-1) dt  + 4(1-\lambda_j) \sum_{l\neq j} \frac{\lambda_l}{\lambda_j - \lambda_l} dt, 
\end{equation*}
and in turn 
\begin{align*}
- d \log\det(I_k-J) & = 2\sum_{j=1}^k\sqrt{\lambda_j} dN_j  + 2(n-1)kdt + 4\sum_{j=1}^k \sum_{l\neq j} (1-\lambda_j)  \frac{\lambda_l}{\lambda_j - \lambda_l} dt 
\\& = 2\sum_{j=1}^k\sqrt{\lambda_j} dN_j  + 2(n-1)kdt + 4 \sum_{j=1}^k \sum_{l\neq j}   \frac{\lambda_l}{\lambda_j - \lambda_l} dt 
\\& = 2\sum_{j=1}^k\sqrt{\lambda_j} dN_j  + 2k(n-k) dt. 
\end{align*}
Consequently, for any $\alpha > 0$, the exponential local martingale 
\begin{equation*}
M_t^{(\alpha)}:= \exp\left(2\alpha\int_0^t \sum_{j=1}^k\sqrt{\lambda_j}(s) dN_j(s) - 2\alpha^2 \int_0^t \mathop{\tr}(J(s)) ds \right), \ t\ge0
\end{equation*}
may be written as 
\begin{equation*}
M_t^{(\alpha)} = e^{-2\alpha k(n-k)t} \left[\frac{\det(I_k-J(0))}{\det(I_k-J(t))}\right]^{\alpha} \exp-\left(2\alpha^2 \int_0^t \mathop{\tr}(J(s)) ds \right), \ t\ge0.
\end{equation*}
$(M_t^{(\alpha)})_{t \geq 0}$ is also a martingale since 
\begin{equation*}
M_t^{(\alpha)} \leq \left[\det(I_k-J(t))\right]^{-\alpha}
\end{equation*}
and by the virtue of Lemma \ref{Estimate}. 
We can therefore define for any fixed time $t > 0$,  a new probability measure 
\begin{equation*}
{\mathbb{P}}^{(\alpha)}_{|\mathscr{F}_t} = M_t^{(\alpha)} {\mathbb{P}}_{|\mathscr{F}_t}
\end{equation*}
and denote $\mathbb{E}^{(\alpha)}$ the corresponding expectation. Then under $\mathbb{P}^{(\alpha)}$
\begin{equation*}
\tilde{N}_j(t) := N_j(t) - 2\alpha\int_0^t\sqrt{\lambda_j(s)}ds, \quad 1 \leq j \leq k,
\end{equation*}
defines a $k$-dimensional Brownian motion so that 
\begin{equation*}
d\lambda_j =  2\sqrt{\lambda_j}(1-\lambda_j) d\tilde{N}_j + 2\left[(n-1) - (1-2\alpha)\lambda_j\right] (1-\lambda_j) dt  + 4(1-\lambda_j)^2 \sum_{l\neq j} \frac{\lambda_l}{\lambda_j - \lambda_l} dt. 
\end{equation*}
Recalling 
\[
\rho_j=\frac{1+\lambda_j}{1-\lambda_j},
\]
then It\^o's formula yields: 
\begin{align*}
d\rho_j &= 
 2\sqrt{\rho_j^2-1}d\tilde{N}_j + 2\left[(n+2-2k+2\alpha)\rho_j + n-2\alpha-2k \right] dt  +4(\rho_j^2-1)\sum_{l\neq j} \frac{1}{(\rho_j-\rho_l)} dt.
\end{align*}
Up to the multiplication operator by the constant 
\begin{equation*}
\frac{k(k-1)(3n+2-4k+6\alpha)}{3}, 
\end{equation*}
this process is again a Vandermonde transform of $k$ independent copies of the diffusion whose generator is given by: 
\begin{equation}\label{Gen2}
2(u^2-1)\partial_{u}^2 + 2\left[(n+2-2k+2\alpha)u + n-2k -2\alpha \right]\partial_u, \quad u \geq 1,
\end{equation}
with Neumann boundary condition at $u=1$. As a result, Karlin-McGregor's formula entails (see e.g. \cite{AOW} and references therein):  
\begin{align*}
\mathbb{E}\left\{\exp-\left(2\alpha^2 \int_0^t \mathop{\tr}(J(s)) ds \right)\right\} & = e^{2\alpha k(n-k)t} \mathbb{E}^{(\alpha)}\left[\frac{\det(I_k-J(t))}{\det(I_k-J(0))}\right]^{\alpha} 
\\& = e^{2\alpha k(n-k)t}\prod_{j=1}^k(1+\rho_j(0))^{\alpha}  \mathbb{E}^{(\alpha)}\left[\prod_{j=1}^k\frac{1}{(1+\rho_j(t))^{\alpha}}\right]
\\& = e^{[6\alpha (n-2k+1)-(k-1)(3n+2-4k)]kt/3}\prod_{j=1}^k(1+\rho_j(0))^{\alpha} 
\\& \int_{\Delta_k} \prod_{j=1}^k\frac{d\rho_j}{(1+\rho_j)^{\alpha}} \frac{V(\rho)}{V(\rho(0))} \det(q_t^{(n,k,\alpha)}(\rho_a(0), \rho_j))_{a,j=1}^k
\end{align*}
where $q_t^{(\alpha,k)}(u, v)$ is the heat semi-group of the infinitesimal generator \eqref{Gen1}. Equivalently, the Andr\'eief identity entails (\cite{Dei-Gio}, p. 37) 
\begin{multline*}
\mathbb{E}\left\{\exp-\left(2\alpha^2 \int_0^t \mathop{\tr}(J(s)) ds \right)\right\} = \frac{e^{[6\alpha (n-2k+1)-(k-1)(3n+2-4k)]kt/3}}{V(\rho(0))}\prod_{j=1}^k(1+\rho_j(0))^{\alpha} \\ 
\det\left(\int \frac{u^{a-1}}{(1+u)^{\alpha}}q_t^{(\alpha,k)}(\rho_j(0), u) du\right)_{a,j=1}^k. 
\end{multline*}
\end{proof}

\subsection{Connection to the Maass Laplacian in the complex hyperbolic space}
In this paragraph, we present the connection of $q_t^{(n,k,\alpha)}$ to heat semi-group of the Maass Laplacian on the complex hyperbolic space $\mathbb{C} H^{n-2k+1}$ realized in the unit ball (\cite{Aya-Int}). For $k=1$, such connection was already pointed out in \cite{Dem-Windings} and was the key ingredient to derive the density of the corresponding stochastic area. For higher ranks, the computations become tedious. Nonetheless, as in \cite{Dem-Windings}, the new expression we obtain below makes transparent the limiting behavior proved in \ref{Lemma1} through the exponential factor $e^{-2\alpha^2 t}$ and is somehow more explicit than the one displayed in Theorem \ref{theo-Laplace} since the heat semi-group of the Maass Laplacian admits a more compact form than $q_t^{(n,k,\alpha)}$ (\cite{Aya-Int}, Theorem 2.2).

Firstly, we perform the variable change $u = \cosh(2r)$ and use the identity $2\cosh^2(r) = 1+\cosh(2r)$ to get: 
\begin{multline}\label{eq-int-trace}
\mathbb{E}\left\{\exp-\left(2\alpha^2 \int_0^t \mathop{\tr}(J(s)) ds \right)\right\} = \frac{e^{[6\alpha (n-2k+1)-(k-1)(3n+2-4k)]kt/3}}{V(\rho(0))} \prod_{j=1}^k(1+\rho_j(0))^{\alpha} \\ 
 \det\left(\frac{[\cosh(2r)]^{a-1}}{2^{\alpha}[\cosh(r)]^{2\alpha}}q_t^{(n,k,\alpha)}(\rho_j(0), \cosh(2r)) d(\cosh(2r))\right)_{a,j=1}^k. 
\end{multline}
On the other hand we infer that the hyperbolic Jacobi operator $\mathscr{H}_r^{(n,k)}$ is intertwined via the map: 
\begin{equation*}
f \mapsto \frac{1}{\cosh^{2\alpha}(r)}f
\end{equation*}
with the radial part of the shifted Maass Laplacian $\mathcal{L}$ in the complex hyperbolic space $\mathbb{C} H^{n-2k+1}$. More precisely, it holds that
\[
\mathscr{H}_r^{(n,k)}\left(r \mapsto \frac{1}{\cosh^{2\alpha}r}f(r) \right) + \frac{\left(2\alpha+{n-2k+1}\right)^2}{2\cosh^{2\alpha}r}f(r) =\frac{1}{\cosh^{2\alpha}(r)}\mathcal{L}(f)(r)
\]
where
\begin{equation}\label{Radial}
\mathcal{L}=\frac{1}{2}\partial_{r}^2 + \left[\left(n-2k+\frac{1}{2}\right)\coth(r) + \frac{1}{2}\tanh(r) \right]\partial_{r} + \frac{2\alpha^2}{\cosh^2(r)} + \frac{(n-2k+1)^2}{2}.
 \end{equation}
Consequently, the following identity holds: 
\begin{multline}\label{eq-kernel-identity}
\frac{e^{2\alpha (n-2k+1)t}}{[\cosh(r)]^{2\alpha}} q_t^{(n,k,\alpha)}(\rho_j(0), \cosh(2r)) d(\cosh(2r)) = \frac{2^{\alpha} e^{-2\alpha^2t}}{(1+\rho_j(0))^{\alpha}} \\ 
V(t)^{(n-2k+1, \alpha)}\left(\frac{1}{2}\cosh^{-1}(\rho_j(0)),r\right)dr,
\end{multline}
where $V(t)^{(n-2k+1, \alpha)}$ is the heat kernel of $\mathcal{L} - (n-2k+1)^2/2$ in \eqref{Radial} with respect to the radial volume element : 
\begin{equation*}
\frac{2\pi^{n-2k+1}}{\Gamma(n-2k+1)}(\sinh(r))^{2(n-2k+1)-1}\cosh(r).
\end{equation*}
By plugging \eqref{eq-kernel-identity} into \eqref{eq-int-trace}, we arrive at: 
\begin{proposition}\label{thm-NewLaplace}
The Laplace transform derived in Theorem \ref{theo-Laplace} can be rewritten as: 
\begin{multline}\label{eq-NewLaplace}
\mathbb{E}\left\{\exp-\left(2\alpha^2 \int_0^t \mathop{\tr}(J(s)) ds \right)\right\} = \frac{e^{-2\alpha^2kt}}{V(\rho(0))} e^{-(k-1)(3n+2-4k)kt/3}\frac{2\pi^{n-2k+1}}{\Gamma(n-2k+1)} \\
\det\left(\int [\cosh(2\zeta)]^{a-1}V(t)^{(n-2k+1, \alpha)}\left(\frac{1}{2}\cosh^{-1}(\rho_j(0)),r\right) (\sinh(r))^{2(n-2k+1)-1}\cosh(r) dr\right)_{a,j=1}^k. 
\end{multline}
\end{proposition}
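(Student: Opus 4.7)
The plan is to derive the statement directly from Theorem \ref{theo-Laplace} by combining the substitution $u = \cosh(2r)$ with the intertwining relation between the hyperbolic Jacobi operator $\mathscr{H}_r^{(n,k)}$ and the radial part $\mathcal{L}$ of the shifted Maass Laplacian on $\mathbb{C}H^{n-2k+1}$. The identities \eqref{eq-int-trace} and \eqref{eq-kernel-identity} that appear just before the statement already give the two main ingredients; the proposition is obtained by assembling them and tracking the constants carefully.

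First, I would start from the expression given by Theorem \ref{theo-Laplace} and perform the change of variable $u = \cosh(2r)$ in each entry of the determinant. Using $1+u = 2\cosh^2(r)$ one gets the factor $2^{-\alpha}[\cosh(r)]^{-2\alpha}$, which produces equation \eqref{eq-int-trace}. At this stage the determinant depends on $q_t^{(n,k,\alpha)}(\rho_j(0),\cosh(2r))\,d(\cosh(2r))$ weighted by powers of $\cosh(2r)$.

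Second, I would establish the intertwining relation: a straightforward computation using that $2\cosh^2 r = 1+\cosh 2r$ and that $\mathscr{H}_r^{(n,k,\alpha)}$ is the push-forward of $\mathscr{L}_u^{(n,k,\alpha)}$ under $u=\cosh(2r)$ shows that
\[
\mathscr{H}_r^{(n,k)}\!\Bigl(\cosh(r)^{-2\alpha} f\Bigr) + \frac{(2\alpha+n-2k+1)^2}{2\cosh^{2\alpha}r}\,f = \cosh(r)^{-2\alpha}\,\mathcal{L}(f),
\]
with $\mathcal{L}$ given by \eqref{Radial}. At the level of heat semigroups this translates into the kernel identity \eqref{eq-kernel-identity} after taking into account the change of reference measure from Lebesgue to the radial volume element $\tfrac{2\pi^{n-2k+1}}{\Gamma(n-2k+1)}(\sinh r)^{2(n-2k+1)-1}\cosh r\,dr$ on $\mathbb{C}H^{n-2k+1}$; the factor $e^{2\alpha(n-2k+1)t}$ on the left corresponds to conjugating $\mathscr{H}^{(n,k,\alpha)}$ to $\mathcal{L}-\tfrac12(n-2k+1)^2$, while the additional shift $-2\alpha^2t$ comes from the Maass potential $2\alpha^2/\cosh^2 r$ already incorporated in $\mathcal{L}$.

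Finally, plugging \eqref{eq-kernel-identity} into \eqref{eq-int-trace}, each row in the determinant produces a factor $2^\alpha(1+\rho_j(0))^{-\alpha}e^{-2\alpha^2 t}e^{-2\alpha(n-2k+1)t}$. Collecting these $k$ factors cancels exactly the product $\prod_j(1+\rho_j(0))^\alpha$ and the exponential prefactor $e^{6\alpha k(n-2k+1)t/3}=e^{2\alpha k(n-2k+1)t}$ in \eqref{eq-int-trace}, leaving only $e^{-2\alpha^2 kt}$ and the constant $e^{-(k-1)(3n+2-4k)kt/3}$. The remaining normalization $\tfrac{2\pi^{n-2k+1}}{\Gamma(n-2k+1)}$ arises from absorbing the radial volume element into each integrand, which yields \eqref{eq-NewLaplace}. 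The only potential obstacle is the bookkeeping of the exponential prefactors and of the constants in the intertwining relation, but this is purely mechanical once the intertwining identity above is established.
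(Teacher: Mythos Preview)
Your proposal is correct and follows exactly the paper's own argument: one performs the substitution $u=\cosh(2r)$ in the determinant from Theorem~\ref{theo-Laplace} to obtain \eqref{eq-int-trace}, then invokes the intertwining identity \eqref{eq-kernel-identity} between $\mathscr{H}_r^{(n,k,\alpha)}$ and the radial Maass Laplacian, and finally plugs the latter into the former while tracking the exponential and multiplicative constants row by row. The paper's proof consists of precisely these two ingredients and the single sentence ``by plugging \eqref{eq-kernel-identity} into \eqref{eq-int-trace}''; your write-up simply makes the constant bookkeeping explicit.
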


\begin{remark}
The heat kernel $V(t)^{(n-2k+1, \alpha)}(0,\cosh\zeta)$ is given by (\cite{Aya-Int}, Theorem 2.2.): 
\begin{multline*}
 V(t)^{(n-2k+1, \alpha)}\left(0, r\right) = \frac{4\pi^{n-2k+1}}{\Gamma(n-2k+1)} \int_{r}^{\infty} \frac{dx \sinh(x)}{\sqrt{\cosh^2(x) - \cosh^2(r)}} s_{t,2(n-2k+1)+1}(\cosh(x))
 \\ {}_2F_1\left(-2\alpha, 2\alpha, \frac{1}{2}; \frac{\cosh(r) - \cosh(x)}{2\cosh(r)}\right)dx 
\end{multline*}
where ${}_2F_1$ is the Gauss hypergeometric function and 
\begin{align*}
s_{t,2(n-2k+1)+1}(\cosh(x)) = \frac{e^{-(n-2k+1)^2t/2}}{(2\pi)^{n-2k+1}\sqrt{2\pi t}}\left(-\frac{1}{\sinh(x)}\frac{d}{dx}\right)^{n-2k+1}e^{-x^2/(2t)}, \quad x > 0,
\end{align*}
is the heat kernel with respect to the volume measure of the $2(n-2k+1)+1$-dimensional real hyperbolic space $H^{2n+1}$. 

More generally, if $r_0 \geq 0$ then the heat kernel $V(t)^{(n-2k+1, \alpha)}(r_0,r)$ comes with the following additional term resulting from the integration over the sphere $S^{2n-1}$ of an automorphy factor (see \cite{Aya-Int}, Theorem 2.2):
\begin{equation*}
e^{-2i\alpha \arg(1-\langle z,w\rangle)}, \quad |z| = \tanh(r_0),\quad |w| = \tanh(r), \quad w,z \in \mathbb{C} H^{n-2k+1},
\end{equation*}
with respect to the angular part of $w$. 
\end{remark}

\section{Skew-product decompositions, generalized stochastic areas and asymptotic windings}

\subsection{Skew-product decomposition}

Recall from Section \ref{sec:notations} the hyperbolic Stiefel fibration 
\[
\mathbf{U}(k) \to HV_{n,k} \to HG_{n,k},
\]
from which we can view $HV_{n,k}$ as a $\mathbf{U}(k)$-principal bundle over $HG_{n,k}$. Our goal in this section is to decompose the Brownian motion in $HV_{n,k}$ as a skew-product with respect to this fibration.

We first note that a computation similar to the computation done in Lemma \ref{connection form} shows that the connection form of this bundle is given by the following $\mathfrak{u}(k)$-valued one-form on $HV_{n,k}$:
\begin{align}\label{eq-contact-form}
\omega:&=\frac{1}{2} \left( (X^* \,  Z^*)\begin{pmatrix} I_{n-k}   & 0 \\ 0 & -I_{k} \end{pmatrix}d\begin{pmatrix}  X \\ Z  \end{pmatrix}-d(X^* \, Z^*)\begin{pmatrix} I_{n-k}   & 0 \\ 0 & -I_{k} \end{pmatrix}\begin{pmatrix}  X \\ Z  \end{pmatrix}\right) \\
&=\frac{1}{2} \left(X^*dX-dX^*X -(Z^*dZ-dZ^*Z)\right).
\end{align}

In the homogeneous coordinate $w:=XZ^{-1}$ introduced in Section \ref{sec:notations}, we consider then the following $\mathfrak{u}(k)$ valued one-form defined on $HG_{n,k}$ 
 \begin{align}\label{eq-def-eta}
 \eta:=& \frac12 \left( (I_k-w^* w)^{-1/2} (w^*dw-dw^* \, w)(I_k-w^* w)^{-1/2}  \right. \\
  & \left. - (I_k-w^* w)^{-1/2}\, d(I_k-w^* w)^{1/2}+d(I_k-w^* w)^{1/2} \, (I_k-w^* w)^{-1/2} \right) \notag
 \end{align}

We  are now in position to prove the following skew-product decomposition of the Brownian motion on ${HV}_{n,k}$.
\begin{theorem}\label{skew}
Let $(w (t))_{t \ge 0}$ be a Brownian motion on ${HG}_{n,k}$ started at $w_0=X_0Z_0^{-1} \in {HG}_{n,k}$ and $(\Theta(t))_{t \ge 0}$ the $\mathbf{U}(k)$-valued  solution of the Stratonovich stochastic differential equation
\begin{align*}
\begin{cases}
d\Theta(t) = \circ d\mathfrak A(t) \, \Theta(t) \\
\Theta_0=(Z_0 Z^*_0)^{-1/2}Z_0,
\end{cases}
\end{align*}
where  $\mathfrak{a}_t= \int_{w[0,t]} \eta$. Then the process
\[
\widetilde{w}_t:=\begin{pmatrix} w (t)  \\  I_k  \end{pmatrix}(I_k-w (t)^*w (t))^{-1/2}\Theta(t)
\]
is the horizontal lift at $\begin{pmatrix} X_0  \\ Z_0 \end{pmatrix}$ of $(w (t))_{t \ge 0}$ to ${HV}_{n,k}$. Moreover, if we denote by $(\Omega(t))_{t \ge 0}$ a Brownian motion on the unitary group $\mathbf{U}(k)$ independent from $(w (t))_{t \ge 0}$, then the process $$\begin{pmatrix} w (t)  \\  I_k  \end{pmatrix}(I_k-w (t)^*w (t))^{-1/2}\Theta(t) \,  \Omega(t)$$
is a Brownian motion on $HV_{n,k}$ started at $\begin{pmatrix} X_0  \\ Z_0 \end{pmatrix}$.
\end{theorem}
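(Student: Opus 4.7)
The plan is to follow the same two-step strategy used for the compact Stiefel fibration in Theorem \ref{lift} and Theorem \ref{skew-sphere}, adapted to the indefinite setting. First establish that $\widetilde{w}_t$ is the horizontal lift of $w(t)$ to $HV_{n,k}$, then exploit commutativity of the horizontal and vertical Laplacians to conclude.

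For the first part, I would begin by verifying that $\widetilde{w}_t \in HV_{n,k}$ and projects correctly. Writing $X(t) = w(t)(I_k-w^*w)^{-1/2}\Theta(t)$ and $Z(t) = (I_k-w^*w)^{-1/2}\Theta(t)$, the unitarity of $\Theta$ gives
\[
X^*X - Z^*Z = \Theta^*(I_k-w^*w)^{-1/2}(w^*w - I_k)(I_k-w^*w)^{-1/2}\Theta = -I_k,
\]
so $\widetilde{w}_t \in HV_{n,k}$ by \eqref{eq-HV}, and clearly $p(\widetilde{w}_t) = XZ^{-1} = w(t)$ and $\widetilde{w}_0 = \begin{pmatrix} X_0 \\ Z_0 \end{pmatrix}$ thanks to the choice $\Theta_0 = (Z_0 Z_0^*)^{-1/2}Z_0$.

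The key computational step is to show $\widetilde{w}_t$ is horizontal, i.e.\ $\int_{\widetilde{w}[0,t]} \omega = 0$ for the connection form $\omega$ in \eqref{eq-contact-form}. Plugging the expressions for $X$ and $Z$ into $\omega = \tfrac12(X^*\circ dX - \circ dX^* X - Z^*\circ dZ + \circ dZ^* Z)$ and expanding with Stratonovich rules yields three kinds of contributions: those containing $\circ d\Theta$, those involving $\circ d(I_k-w^*w)^{1/2}$ through $Z$ and $X$, and those containing $\circ dw^*\, w - w^*\circ dw$. The contributions involving $X$ partially cancel those involving $Z$ because of the overall minus sign between the two blocks, and the surviving terms recombine as
\[
\Theta^{-1}\circ d\Theta - \Theta^{-1}\eta\,\Theta
\]
where $\eta$ is precisely the form defined in \eqref{eq-def-eta}. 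The defining SDE $\circ d\Theta = \circ d\mathfrak{a}\,\Theta$ with $d\mathfrak{a} = \eta$ then produces the final cancellation. This is essentially the computation carried out in the proof of Theorem \ref{lift} but with the signs dictated by the indefinite inner product: the role of $(I_k+w^*w)$ is played by $(I_k-w^*w)$, and the connection form contains a relative minus sign between the $X$ and $Z$ blocks; the form $\eta$ in \eqref{eq-def-eta} has been engineered precisely to produce the required cancellation.

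For the second part, I would invoke the general framework of Section \ref{Section Riemannian submersion}. Since the fibration $\mathbf{U}(k) \to HV_{n,k} \to HG_{n,k}$ is a Riemannian submersion with totally geodesic fibers isometric to $\mathbf{U}(k)$, Theorem \ref{commutation2} yields $\Delta_{\mathcal{H}} \Delta_{\mathcal{V}} = \Delta_{\mathcal{V}} \Delta_{\mathcal{H}}$ and $\Delta_{HV_{n,k}} = \Delta_{\mathcal{H}} + \Delta_{\mathcal{V}}$. By the first part, $\widetilde{w}_t$ is a diffusion with generator $\tfrac12\Delta_{\mathcal{H}}$. On the fiber through $\widetilde{w}_t$, right-multiplication by the independent Brownian motion $\Omega(t)$ on $\mathbf{U}(k)$ generates the vertical semigroup $e^{t\Delta_{\mathcal{V}}/2}$, since the fibers are isometric copies of $\mathbf{U}(k)$. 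By commutativity of the semigroups,
\[
e^{t\Delta_{HV_{n,k}}/2} = e^{t\Delta_{\mathcal{V}}/2}\,e^{t\Delta_{\mathcal{H}}/2},
\]
so $\widetilde{w}_t\,\Omega(t)$ has generator $\tfrac12\Delta_{HV_{n,k}}$ and hence is a Brownian motion on $HV_{n,k}$ starting from $\begin{pmatrix} X_0 \\ Z_0 \end{pmatrix}$.

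The main obstacle is the horizontality calculation in the first step: while conceptually parallel to the compact case, one must carefully track the non-commuting matrix differentials $\circ dw$, $\circ dw^*$, $\circ d\Theta$, and $\circ d(I_k-w^*w)^{1/2}$, the sign flips coming from the indefinite signature of $\mathbf{U}(n-k,k)$, and the It\^o correction terms implicit in the Stratonovich-to-It\^o conversion. Once the cancellations are verified, the second part is essentially formal.
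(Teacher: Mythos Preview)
Your proposal is correct and follows essentially the same approach as the paper: verify horizontality by showing the connection form $\omega$ of \eqref{eq-contact-form} evaluates along $\widetilde{w}$ to $\Theta^*(-\circ d\mathfrak{a}+\eta)\Theta=0$, then use commutativity of the horizontal and vertical Laplacians (totally geodesic fibers) to factor the $HV_{n,k}$-semigroup. The paper's write-up is much terser—it states the key identity for $\omega$ directly and defers the second part to the argument of Theorem~\ref{skew-sphere}—but the underlying mechanism is identical to what you describe.
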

\begin{proof}
Note the fact that on $\widetilde{w}[0,t]$,
\begin{align*}
\omega&=\frac12\left(\circ d\Theta^*\Theta-\Theta^*\circ d\Theta \right)+\Theta^*\circ \eta\Theta\\
&=\Theta^*\circ\left(-d\mathfrak{a}+\eta\right) \Theta=0,
\end{align*}
where $\Theta=(I_k-w^*w)^{1/2}Z$.
The first assertion then follows from the definition of horizontal stochastic lift, namely 
$$\int_{\widetilde{w}[0,t]} \omega=0.$$

By taking into account that the pseudo-Riemannian submersion $HV_{n,k}\to {HG}_{n,k}$ is totally geodesic, the second assertion then follows from a similar argument as in the proof of Theorem \ref{skew}.
\end{proof}

\subsection{Limit theorem for the generalized stochastic area process in the hyperbolic complex Grassmannian}\label{Limit hyp stiefel kahl}

Let $(w (t))_{t \ge 0}$ be a Brownian motion on $HG_{n,k}$ as in Theorem \ref{main:s1}. From \eqref{eq-def-eta} we have that
\begin{align}\label{eq-int-tr-eta}
-\int_{w[0,t]} \mathrm{tr} (\eta) & =\frac12 \mathrm{tr} \left[ \int_0^t   (I_k-J)^{-1/2} (\circ dw^* \, w-w^*\circ dw)(I_k-J)^{-1/2}\right] \notag\\
 & =\frac12 \mathrm{tr} \left[ \int_0^t   (I_k-J)^{-1/2} ( dw^* \, w-w^* dw)(I_k-J)^{-1/2}\right].
\end{align}
We will now prove that:
\begin{align*}
d \mathrm{tr} (\eta)  & = \mathrm{tr}\left[(I_k -w^*w)^{-1}\partial w^*(I_{n-k} - ww^*)^{-1} \partial w\right] 
\\& =-\partial \overline{\partial} \ln \det (I_k -w^*w) 
\end{align*}
where 
\begin{equation*}
\partial: = \sum_{1 \leq a \leq n-k, 1 \leq b\leq k} dw_{a,b}\frac{\partial}{\partial{w_{a,b}}}, \quad \overline{\partial}: = \sum_{1 \leq a \leq n-k, 1 \leq b\leq k} d\overline{w}_{a,b}\frac{\partial}{\partial{\overline{w}_{a,b}}}
\end{equation*}
are the Dolbeaut operators and are such that $d = \partial + \overline{\partial}$.  Actually, the function $K(w)=-\ln\det(I_k-w^*w)$ is a K\"ahler potential and 
\[
\mathrm{tr} (\eta)=-\frac{1}{2} (\partial -\bar{\partial})K.
\]

We shall use the analogue of Lemma 1 in \cite{BZ} for complex Hermitian invertible matrices which asserts that: 
\begin{equation*}
\partial_{M_{qj}} M^{-1}_{pl}  = -M^{-1}_{pq} M^{-1}_{jl}, \quad \partial_{M_{qj}} \log[\det(M)] = M^{-1}_{jq}.
\end{equation*}
Now, recall that 
\begin{equation*}
-\mathrm{tr} (\eta) = \frac12 \mathrm{tr} \left[(I_k-w^*w)^{-1} ( dw^* \, w-w^* dw)\right], 
\end{equation*}
and take $M := (I_k-w^*w)$. Then the chain rule yields the following identities: 
\begin{eqnarray*}
\partial M^{-1} (dw)^* w & = & M^{-1} w^* (dw) M^{-1} (dw)^* w \\ 
\overline{\partial} M^{-1} (dw^*) w & = & M^{-1} (dw)^* w M^{-1} (dw)^* w \\ 
\partial M^{-1} w^* (dw)  & = & M^{-1} w^* (dw) M^{-1} w^* (dw) \\  
\overline{\partial} M^{-1} w^* (dw)  & = & M^{-1} (dw)^* w M^{-1} w^* (dw).
\end{eqnarray*}   
Next, we use the fact that the exterior product of one-forms is alternating to see that: 
\begin{align*}
\mathrm{tr} \left[\overline{\partial} M^{-1} (dw)^* w\right] = \mathrm{tr} \left[\partial M^{-1} w^* (dw)\right] = 0. 
\end{align*}
As a result, we get: 
\begin{align*}
-2d\mathrm{tr} (\eta) & =  \mathrm{tr}\left[M^{-1} w^* (dw) M^{-1} (dw)^* w\right] - \mathrm{tr}\left[M^{-1} (dw)^* wM^{-1} w^* (dw)\right] - 2\mathrm{tr}\left[M^{-1} (dw)^* (dw)\right]
\\& = 2\mathrm{tr}\left[M^{-1} w^* (dw) M^{-1} (dw)^* w\right] +  2\mathrm{tr}\left[(dw) M^{-1} (dw)^*\right]
\\& = 2 \mathrm{tr}\left[(I_{n-k} + wM^{-1}w^*) (dw) M^{-1} (dw)^*\right]. 
\end{align*}
Moreover, we note that 
\begin{align*}
(I_{n-k} + wM^{-1}w^*)(I_{n-k} - ww^*) &= I_{n-k} - ww^* + wM^{-1}w^* - wM^{-1}(w^*w)w^* 
\\& = I_{n-k} - ww^* + wM^{-1}w^* - wM^{-1}(w^*w - I_k + I_k)w^* = I_{n-k}, 
\end{align*}
so that $I_{n-k} + wM^{-1}w^* = (I_{n-k} - ww^*)^{-1}$, and in turn: 
\begin{align*}
-\mathrm{tr} (\eta) & = \mathrm{tr}\left[(I_{n-k} - ww^*)^{-1}(dw) (I_k-w^*w)^{-1} (dw)^*\right]
\\& = - \mathrm{tr}\left[(I_k-w^*w)^{-1} (dw)^*(I_{n-k} - ww^*)^{-1}(dw) \right],
\end{align*}
as claimed.

We proceed now to the proof of 
\begin{equation*}
d \mathrm{tr} (\eta) = -\partial \overline{\partial} \ln \det (I_k -w^*w) = -\partial \overline{\partial} \ln \det(M).
\end{equation*}
To this end, we derive: 
\begin{equation*}
\partial_{qm}\ln \det(M) = \sum_{l} (w^*)_{lq}M^{-1}_{ml}dw_{qm} 
\end{equation*}
whence we deduce 
\begin{align*}
\overline{\partial_{rj}}\partial_{qm}\ln \det(M) = -\delta_{rq}M^{-1}_{mj} d\overline{w}_{rj} \wedge dw_{qm} - \sum_{l,s}(w^*)_{lq}M^{-1}_{mj}M^{-1}_{sl}d\overline{w}_{rj} \wedge dw_{qm}.
\end{align*}
Summing over $r,j,q,m$, we get 
\begin{align*}
\partial \overline{\partial} \ln \det(M) = \mathrm{tr}\left[(dw)M^{-1}(dw)^*\right] + \mathrm{tr}\left[M^{-1} w^* (dw) M^{-1} (dw)^* w\right] = - d\mathrm{tr} (\eta).
\end{align*}
 It follows that $i \mathrm{tr} (d\eta)$ is the K\"ahler form on the complex manifold $HG_{n,k}$. Therefore, in some sense, the functional $\int_{w[0,t]} \mathrm{tr} (\eta)$ can be interpreted as  a generalized  stochastic area process on $HG_{n,k}$ as in the case of the Stiefel fibration, and as a nice consequence of \eqref{eq-int-tr-eta}, we obtain  in the theorem below the limiting theorem for this generalized stochastic area process.

\begin{theorem}\label{LimitTheorem}
When $t\to+\infty$, we have in distribution 
\[
\frac{1}{i\sqrt{t}} \int_{w[0,t]} \mathrm{tr} (\eta) \to \mathcal{N}(0,k),
\]
\end{theorem}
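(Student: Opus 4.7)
The strategy is to show that the purely imaginary-valued process $\mathfrak{a}(t):=\frac{1}{i}\int_{w[0,t]}\mathrm{tr}(\eta)$ is a real-valued continuous local martingale whose quadratic variation is $\int_0^t\mathrm{tr}(J(s))\,ds$, and then invoke Proposition~\ref{Lemma1} together with the martingale central limit theorem.

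First I would note that $\eta$ takes values in $\mathfrak{u}(k)$, so $\mathrm{tr}(\eta)\in i\mathbb{R}$ and $\mathfrak{a}$ is indeed real. From the quadratic covariation identity \eqref{eq-dwdw} we can represent $dw=\sqrt{I_{n-k}-ww^{*}}\,dB\,\sqrt{I_k-J}$ for some complex Brownian matrix $B\in\mathbb{C}^{(n-k)\times k}$. Substituting this in \eqref{eq-int-tr-eta} and using the cyclicity of the trace, one obtains
\[
d\mathfrak{a}=-\mathrm{Im}\bigl(\mathrm{tr}[\,dB^{*}\sqrt{I_{n-k}-ww^{*}}\,w(I_k-J)^{-1/2}\,]\bigr),
\]
which is manifestly an It\^o stochastic integral against $B$, hence a continuous local martingale.

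Next I would compute the quadratic variation. Setting $C:=w(I_k-J)^{-1/2}$ and $D:=\sqrt{I_{n-k}-ww^{*}}$, a direct expansion using $dB_{\ell j}d\overline{B_{\ell' j'}}=2\delta_{\ell\ell'}\delta_{jj'}\,dt$ and $dB_{\ell j}dB_{\ell' j'}=0$ yields $d\langle\mathfrak{a}\rangle_t=\mathrm{tr}(DCC^{*}D)\,dt=\mathrm{tr}(C^{*}D^{2}C)\,dt$. The key algebraic simplification is the identity
\[
w^{*}(I_{n-k}-ww^{*})w=J-J^{2}=J(I_k-J),
\]
which, combined with the commutativity of the functional calculus on $J$, gives $\mathrm{tr}(C^{*}D^{2}C)=\mathrm{tr}\bigl((I_k-J)^{-1/2}J(I_k-J)(I_k-J)^{-1/2}\bigr)=\mathrm{tr}(J)$. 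Therefore $\langle\mathfrak{a}\rangle_t=\int_0^t\mathrm{tr}(J(s))\,ds=:\tau_t$.

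Finally, Proposition~\ref{Lemma1} gives $\tau_t/t\to k$ almost surely, so in particular $\tau_\infty=+\infty$. Rescaling via $\mathfrak{a}_n(s):=\mathfrak{a}(ns)/\sqrt{n}$, which is a continuous local martingale with $\langle\mathfrak{a}_n\rangle_s=\tau_{ns}/n=s\cdot\tau_{ns}/(ns)\to sk$ almost surely for every $s\geq 0$, the standard martingale central limit theorem for continuous local martingales (see e.g.\ Revuz--Yor, Ch.~XIII) yields the convergence of $\mathfrak{a}_n$ to $\sqrt{k}\,B$ in distribution on $C([0,\infty))$, where $B$ is a standard real Brownian motion. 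Evaluating at $s=1$ gives $\mathfrak{a}(t)/\sqrt{t}\to\mathcal{N}(0,k)$ in distribution, which is exactly the announced statement. The main obstacle is the matrix-algebraic simplification in the quadratic variation computation; everything else is routine once the SDE for $dw$ is in hand and Proposition~\ref{Lemma1} is invoked.
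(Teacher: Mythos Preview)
Your proof is correct and follows essentially the same route as the paper: identify $\mathfrak a(t)=\frac{1}{i}\int_{w[0,t]}\mathrm{tr}(\eta)$ as a continuous real local martingale with quadratic variation $\int_0^t\mathrm{tr}(J(s))\,ds$, then invoke Proposition~\ref{Lemma1}. Your algebraic simplification $w^*(I_{n-k}-ww^*)w=J(I_k-J)$ leading to $\langle\mathfrak a\rangle_t=\int_0^t\mathrm{tr}(J)\,ds$ is exactly right.

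The only difference is cosmetic, in the final step. The paper uses the polar decomposition $w=Q\sqrt{J}$ to obtain a $k\times k$ Brownian motion $R=Q^*B$, then diagonalizes $J=U\Lambda U^*$ and writes $-\int_{w[0,t]}\mathrm{tr}(\eta)\stackrel{\mathcal D}{=} i\,\mathcal B_{\int_0^t\mathrm{tr}(J)\,ds}$ with $\mathcal B$ a one-dimensional Brownian motion \emph{independent} of $J$ (the independence comes from the fact that the eigenvalue SDE for $\lambda$ is driven by $\mathrm{Re}(\tilde R_{jj})$ while $\mathfrak a$ is driven by $\mathrm{Im}(\tilde R_{jj})$). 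You bypass this by applying the martingale CLT directly, which only needs $\langle\mathfrak a\rangle_t/t\to k$ in probability. Both arguments are valid; yours is arguably more economical since it does not require isolating the independence.
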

\begin{proof}
From the proof of Proposition \ref{SDE-Matrix}, we readily have: 
\begin{equation*}
dw^*w - w^*dw =\sqrt{I_k-J(t)} dR\sqrt{I_{k} - J} \sqrt{J} - \sqrt{J} \sqrt{I_{k} - J}dR^*\sqrt{I_k-J}, 
\end{equation*}
where $(r(t))_{t\ge0}$ is a $k\times k$ complex matrix-valued Brownian motion. Hence
\begin{align*}
 (I_k-J)^{-1/2} ( dw^* \, w-w^* dw)(I_k-J)^{-1/2}&=dR\sqrt{J}-\sqrt{J} dR^*\\
 &=dRU\sqrt{\Lambda}U^*-U\sqrt{\Lambda}U^*dR^*
\end{align*}
where the second quality comes from the diagonalization of $J=U\Lambda U^*$ where $U\in \mathbf{U}(k)$ and $\Lambda=\mathrm{diag}\{\lambda_1,\dots, \lambda_k\}$. Therefore
\begin{align*}
-\int_{w[0,t]} \mathrm{tr} (\eta) 
 & =\frac12  \int_0^t  \mathrm{tr} \left[ dR\sqrt{\Lambda}-\sqrt{\Lambda}dR^*\right]\\
 &\stackrel{D}{=}i\mathcal{B}_{\int_0^t \mathrm{tr}(J)ds}.
\end{align*}
where $\mathcal{B}$ is a one-dimensional Brownian motion independent of $J$.  We then obtain the desired conclusion  from \eqref{eq-tr-J}.
\end{proof}

\begin{remark}
When $k=1$, the limiting result stated in Theorem \ref{LimitTheorem} coincides with the one for the stochastic area process on the anti-de Sitter space. 
\end{remark}

\begin{remark}
The previous proof has shown that
\begin{align*}
\int_{w[0,t]} \mathrm{tr} (\eta) 
 &=i\mathcal{B}_{\int_0^t \mathrm{tr}(J)ds}.
\end{align*}
where $\mathcal{B}$ is a one-dimensional Brownian motion independent of $J$, therefore
\[
\mathbb{E} \left( e^{i \alpha \int_{w[0,t]} \mathrm{tr} (\eta) } \right)=\mathbb{E} \left( e^{- \alpha \mathcal{B}_{\int_0^t \mathrm{tr}(J)ds}} \right)=\mathbb{E} \left( e^{- \frac{\alpha^2}{2} \int_0^t \mathrm{tr}(J)ds} \right).
\]
A formula for the Laplace transform of the generalized area functional $\frac{1}{i} \int_{w[0,t]} \mathrm{tr} (\eta) $ therefore follows from  Theorem \ref{theo-Laplace}.
\end{remark}

\subsection{Limit theorem for the windings of the block determinants of the Brownian motion in $\mathbf{U}(n-k,k)$} 

In this section, we give an application of Theorems \ref{skew} and  \ref{LimitTheorem} to the study of the Brownian windings in the Lie group $\mathbf{U}(n-k,k)$.

\begin{theorem}
Let $U(t)=\begin{pmatrix}  Y(t) & X(t) \\ w (t) & Z(t) \end{pmatrix}$ be a Brownian motion on the Lie group $\mathbf{U}(n-k,k)$ with $1 \le k \le n-k$.  One has the polar decomposition
\[
\det (Z(t))=\varrho(t) e^{i\theta(t)}
\]
where $\varrho(t)  \ge 1$ is a continuous semimartingale  and $\theta(t)$ is a continuous martingale such that the following convergence holds in distribution when $t \to +\infty$
\[
\frac{\theta(t)}{\sqrt{t}} \to \mathcal{N}(0,2k) .
\]

\end{theorem}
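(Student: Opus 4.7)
The plan is to adapt the proof of Theorem \ref{winding section} from the compact Stiefel setting to the hyperbolic one, combining the skew-product decomposition from Theorem \ref{skew} with the central limit theorem for the generalized stochastic area (Theorem \ref{LimitTheorem}). A new phenomenon compared to the compact case is that both the holonomy winding carried by $\Theta$ and the independent fiber winding carried by $\Omega$ are now asymptotically Gaussian of order $\sqrt{t}$, so the limit of $\theta(t)/\sqrt{t}$ is obtained as a sum of two independent centred Gaussians, rather than being dominated by a single area term as in the compact case (where one got a Cauchy law at the linear scale).

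Concretely, I would first use Theorem \ref{skew} to factor the $Z$-block of the Brownian motion on $\mathbf{U}(n-k,k)$ as
\begin{equation*}
Z(t) = (I_k - J(t))^{-1/2}\,\Theta(t)\,\Omega(t),
\end{equation*}
where $J(t)=w(t)^*w(t)$, $(w(t))_{t\ge 0}$ is a Brownian motion on $HG_{n,k}$, $(\Theta(t))_{t\ge 0}$ is driven by $\mathfrak{a}(t)=\int_{w[0,t]}\eta$, and $(\Omega(t))_{t\ge 0}$ is an independent Brownian motion on $\mathbf{U}(k)$. From the identity $(I_k-J(t))^{-1}=Z(t)Z(t)^*$, a direct consequence of the relations \eqref{eq-relations}, one reads off the modulus $\varrho(t)=|\det Z(t)|=\det(I_k-J(t))^{-1/2}\ge 1$. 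Applying Lemma \ref{trace lemma} to the Stratonovich equations defining $\Theta$ and $\Omega$, both driven by $\mathfrak{u}(k)$-valued processes whose traces are therefore purely imaginary, leads to the expression
\begin{equation*}
\theta(t) = \theta_0 + \frac{1}{i}\int_{w[0,t]} \mathrm{tr}(\eta) + \frac{1}{i}\mathrm{tr}(D_t),
\end{equation*}
where $e^{i\theta_0}=\det Z_0/|\det Z_0|$ and $(D_t)_{t\ge 0}$ is the driving Brownian motion of $\Omega$ in $\mathfrak{u}(k)$. As a sum of a continuous martingale (the area) and an independent continuous Brownian martingale (the fibre trace), $(\theta(t))_{t\ge 0}$ is itself a continuous martingale.

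For the asymptotic law, the independence of $w$ and $D$ allows the two random summands in $\theta(t)$ to be analysed separately. Theorem \ref{LimitTheorem} provides
\begin{equation*}
\frac{1}{i\sqrt{t}}\int_{w[0,t]}\mathrm{tr}(\eta)\ \longrightarrow\ \mathcal{N}(0,k)\quad \text{in distribution.}
\end{equation*}
For the fibre term, expanding $D_t$ in an orthonormal basis of $\mathfrak{u}(k)$ endowed with the bi-invariant inner product induced from $\mathfrak{u}(n-k,k)$, one obtains that $\mathrm{tr}(D_t)/i$ is a real Brownian martingale and that $\mathrm{tr}(D_t)/(i\sqrt{t})$ also converges to a centred Gaussian. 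Adding the variances of the two independent Gaussian limits then yields $\theta(t)/\sqrt{t}\to\mathcal{N}(0,2k)$ in distribution.

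The main technical point is the careful variance accounting of the two contributions in the normalisation inherited from the pseudo-Riemannian metric on $\mathbf{U}(n-k,k)$: one must compute the exact quadratic variation of $\mathrm{tr}(D_t)/i$ on the fibre $\mathbf{U}(k)$ with its induced bi-invariant metric so that the total variance indeed matches $2k$. A secondary, more straightforward, ingredient is invoking the non-collision result of Proposition \ref{Collision} to ensure that the eigenvalues of $J(t)$ remain in $[0,1)$ for all $t\ge 0$, which is what makes $\det(I_k - J(t))^{-1/2}$ well defined and guarantees the global validity of the polar decomposition of $\det Z(t)$.
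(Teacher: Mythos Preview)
Your proposal is correct and follows essentially the same approach as the paper: factor $Z(t)$ via the skew-product decomposition of Theorem \ref{skew}, apply Lemma \ref{trace lemma} to extract $\varrho(t)=\det(I_k-J(t))^{-1/2}$ and $i\theta(t)=i\theta_0+\mathrm{tr}(D_t)+\int_{w[0,t]}\mathrm{tr}(\eta)$, then invoke Theorem \ref{LimitTheorem} for the area term and the scaling $\mathrm{tr}(D_t)\overset{d}{=} i\sqrt{t}\,\mathcal{N}(0,k)$ for the independent fibre term, summing the two independent Gaussian limits to obtain $\mathcal{N}(0,2k)$. Your remark that the key difference from the compact case is that both contributions are of order $\sqrt{t}$ (rather than the area dominating at order $t$) is exactly right, and your flagging of the fibre-variance computation as the point requiring care is appropriate.
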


 \begin{proof}
 We first note that from Theorem \ref{skew}, we have the identity in law
\[
\det (Z(t)) =\det (I_k-w (t)^*w (t))^{-1/2} \det \Theta(t) \, \det  \Omega(t).
\]
From Lemma \ref{trace lemma} we therefore obtain
\[
\det (Z(t)) =\varrho(t) e^{i \theta(t)}
\]
with
\[
\varrho(t) =\det(I_k-J(t))^{-1/2}, \, \, i \theta(t) = i\theta_0+ \mathrm{tr}(D_t)+ \int_{w[0,t]} \mathrm{tr} (\eta) 
\]
where $D_t$ is a Brownian motion on $\mathfrak{u}(k)$ independent from $w$ and $\theta_0$ is such that 
\begin{equation*}
e^{i\theta_0}=\frac{\det Z_0}{| \det Z_0|}.
\end{equation*}
The result follows then from Theorem \ref{LimitTheorem} and from the scaling property of $\mathrm{tr}(D_t)$: 
\begin{equation*}
\mathrm{tr}(D_t) \overset{d}{=} i\sqrt{t} \mathcal{N}(0,k).
\end{equation*}
\end{proof}

\section{Appendix: the full expansion of the Laplace transform in the rank-one case}
 For $k=1$, take $\rho(0) = 1$, then the Laplace transform \eqref{eq-NewLaplace} reduces to: 
\begin{multline}\label{Laplace1}
\mathbb{E}\left\{\exp-\left(2\alpha^2 \int_0^t J(s) ds \right)\right\} = \frac{2\pi^{n-1}}{\Gamma(n-1)} e^{-2\alpha^2t}  \int_0^{\infty} V(t)^{(n-1, \alpha)}\left(0,r\right) (\sinh(r))^{2n-3}\cosh(r) dr, 
\end{multline}
where (\cite{Aya-Int}, see also the proof of Theorem 1 in \cite{Dem-Windings}):
\begin{align*}
 V(t)^{(n-1, \alpha)}\left(0,r\right) = & 2 \int_{r}^{\infty} \frac{dx \sinh(x)}{\sqrt{\cosh^2(x) - \cosh^2(r)}} s_{t,2(n-1)+1}(\cosh(x))
   \\
   & {}_2F_1\left(-2\alpha, 2\alpha, \frac{1}{2}; \frac{\cosh(r) - \cosh(x)}{2\cosh(r)}\right)dx. 
\end{align*}
For ease of writing and in order to match our notations with those used in \cite{Dem-Windings}, we shall shift $n \geq 2 \rightarrow n+1, n \geq 1$. After all, the result of Proposition \ref{Lemma1} does not depend on $n$. 
In order to derive the limit of \eqref{Laplace1} as $t \rightarrow \infty$ (after rescaling), we firstly apply the quadratic transformation: 
\begin{equation*}
{}_2F_1(a,b,(a+b+1)/2; z) = {}_2F_1(a/2,b/2,(a+b+1)/2; 4z(1-z))
\end{equation*}
followed by Euler's transformation (\cite{Erd}, (22), p.64): 
\begin{equation*}
{}_2F_1(a,b,c; z) = (1-z)^{-a}{}_2F_1\left(a,c-b,c; \frac{z}{z-1}\right),
\end{equation*}
valid whenever both sides are analytic. Doing so, the heat semi-group $V(t)^{(n, \alpha)}\left(0,r\right)$ may be written as: 
\begin{multline*}
V(t)^{(n, \alpha)}\left(0,r\right) = \frac{4\pi^{n}}{\Gamma(n)} \int_{r}^{\infty} \frac{dx \sinh(x)}{\sqrt{\cosh^2(x) - \cosh^2(r)}} \frac{\cosh^{2\alpha}(r)}{\cosh^{2\alpha}(x)} 
\\ {}_2F_1\left(\frac{1}{2}+\alpha, \alpha, \frac{1}{2}; 1- \frac{\cosh^2(r)}{\cosh^2(x)}\right)  s_{t,2n+1}(\cosh(x))dx.
\end{multline*}
This expression has the merit to involve the Gauss hypergeometric series rather than its analytic continuation to the slit plane $\mathbb{C} \setminus [1,\infty)$: 
\begin{equation*}
{}_2F_1\left(\frac{1}{2}+\alpha, \alpha, \frac{1}{2}; 1- \frac{\cosh^2(r)}{\cosh^2(x)}\right) = \sum_{j\geq 0} \frac{(\alpha)_j(\alpha+1/2)_j}{(1/2)_j j!} \left(1- \frac{\cosh^2(r)}{\cosh^2(x)}\right)^j, \quad \zeta \leq x.
\end{equation*}
Consequently, the generalized binomial Theorem yields following expansion:
\begin{multline}\label{eq-2F1}
\left(\frac{\cosh^{2}(r)}{\cosh^{2}(x)}\right)^{\alpha} {}_2F_1\left(\frac{1}{2}+\alpha, \alpha, \frac{1}{2}; 1- \frac{\cosh^2(r)}{\cosh^2(x)}\right)  = 
\left(1-\left(1-\frac{\cosh^{2}(r)}{\cosh^{2}(x)}\right)\right)^{\alpha} \\ {}_2F_1\left(\frac{1}{2}+\alpha, \alpha, \frac{1}{2}; 1- \frac{\cosh^2(r)}{\cosh^2(x)}\right) \nonumber
  = 1+ \sum_{j\geq 1} c_j(\alpha) \left(1- \frac{\cosh^2(\zeta)}{\cosh^2(x)}\right)^j, 
\end{multline}
where 
\begin{equation*}
c_j(\alpha) = \frac{1}{j!}\sum_{m=0}^j \binom{j}{m} \frac{(\alpha+1/2)_m(\alpha)_m(-\alpha)_{j-m}}{(1/2)_m}, \quad j \geq 1. 
\end{equation*}
Now, assume $\alpha$ is small enough. Then for any $j-m \geq 1$, one has: 
\begin{equation*}
(-\alpha)_{j-m} = (-\alpha)(1-\alpha)\cdots (j-m-1-\alpha) < 0
\end{equation*}
so that 
\begin{equation*}
c_j(\alpha) < \frac{(\alpha+1/2)_j(\alpha)_j}{j!(1/2)_j}, \quad j \geq 1. 
\end{equation*}
Moreover, the fact that $m \mapsto (\alpha + 1/2)_m/(1/2)_m$ is increasing together with the previous observation show that 
\begin{align*}
c_j(\alpha) \geq \frac{(\alpha + 1/2)_j}{(1/2)_j}\left\{(\alpha)_j + \sum_{m=0}^{j-1}\binom{j}{m}(\alpha)_m(-\alpha)_{j-m}\right\} = 0,
\end{align*}
where we use the fact that the sequence of Pochhammer symbols (rising factorials) is of binomial-type. Using the fact that $s_{t,2n+1}(\cosh(x))$ is a probability density with respect to the volume element 
\begin{equation*}
\frac{2\pi^{n+1/2}}{\Gamma(n+1/2)} \sinh^{2n}(x)
\end{equation*} 
we readily get
\begin{equation*}
\frac{4\pi^{n}}{\Gamma(n)}\int_0^{\infty} (\sinh(r))^{2n-1}\cosh(r) dr \int_{r}^{\infty} \frac{dx \sinh(x)}{\sqrt{\cosh^2(x) - \cosh^2(r)}}s_{t,2n+1}(\cosh(x))  = 1.
\end{equation*}

On the other hand, changing the order of integration, we have: 
\[
\mathbb{E}\left\{\exp-\left(2\alpha^2 \int_0^t J(s) ds \right)\right\} = e^{-2\alpha^2t}\left(1+\sum_{j\ge1}c_j(\alpha)I_j\right).
\]
where we set for any $j\ge1$:
\begin{align*}
I_j:=  \frac{4\pi^{n}}{\Gamma(n)} & \int_0^{\infty} (\sinh(r))^{2n-1}\cosh(r) \\
&  \int_{r}^{\infty} \frac{dx \sinh(x)}{\sqrt{\cosh^2(x) - \cosh^2(r)}}s_{t,2n+1}(\cosh(x)) \left(\frac{\cosh^2x-\cosh^2r}{\cosh^2x}\right)^j dr.
\end{align*}
Using standard variables changes, we derive: 
\[
\int_0^x\frac{ (\sinh(\zeta))^{2n-1}\cosh(r)}{\sqrt{\cosh^2(x) - \cosh^2(r)}}  \left(\frac{\cosh^2x-\cosh^2r}{\cosh^2x}\right)^jdr
=\frac{(\sinh (x))^{2j+2n-1}}{(\cosh(x))^{2j}}\frac{\Gamma(j+\frac12)\Gamma(n)}{2\Gamma(j+n+\frac12)}
\]
whence
\begin{align*}
I_j&=\frac{4\pi^{n}\Gamma(j+\frac12)}{2\Gamma(j+n+\frac12)}\int_0^{\infty} (\sinh(x))^{2n}(\tanh(x))^{2j}  s_{t,2n+1}(\cosh(x)) dx\\
&=\frac{\Gamma(n+\frac12)\Gamma(j+\frac12)}{\sqrt{\pi}\Gamma(n+j+\frac12)}\mathbb E\left( (\tanh(d(0, B(t))))^{2j}\right),
\end{align*} 
where $d(\cdot, \cdot)$ is the Riemannian distance and $B(t)$ is the Brownian motion on $H^{2n+1}$. Altogether, we get for small $\alpha$:

\begin{equation*}
\mathbb{E}\left\{\exp-\left(2\alpha^2 \int_0^t J(s) ds \right)\right\} =  e^{-2\alpha^2t} + e^{-2\alpha^2t} \mathbb{E}\left(\sum_{j\ge1}\frac{(1/2)_j}{(n+1/2)_j} c_j(\alpha)(\tanh(d(0, B(t)))^{2j}\right).
\end{equation*}
But
\begin{align*}
 & \sum_{j\ge1}\frac{(1/2)_j}{(n+1/2)_j} c_j(\alpha)(\tanh(d(0, B(t)))^{2j} \\
  \leq  &\sum_{j\ge1}\frac{(\alpha+1/2)_j(\alpha)_j}{j!(n+1/2)_j} (\tanh(d(0, B(t)))^{2j}) \\
 \leq & \frac{\alpha}{n+1/2} \left(\alpha+\frac{1}{2}\right)  {}_2F_1\left(\alpha+1, \alpha + \frac{3}{2}, n+\frac{3}{2}; \tanh(d(0, B(t)))^{2}\right) \\
 \leq &  \frac{\alpha(2\alpha+1)}{2n+1} {}_2F_1\left(\alpha+1, \alpha + \frac{3}{2}, n+\frac{3}{2}; 1\right) \\
 = & \frac{\alpha(2\alpha+1)}{2n+1}\frac{\Gamma(n+3/2)\Gamma(n-1-\alpha)}{\Gamma(n-\alpha)\Gamma(n-\alpha+1/2)}
\end{align*}
where the last line follows from Gauss hypergeometric Theorem. Consequently, the dominated convergence theorem entails 
\begin{align*}
\lim_{\alpha \rightarrow 0^+} \mathbb{E}\left(\sum_{j\ge1}\frac{(1/2)_j}{(n+1/2)_j} c_j(\alpha)(\tanh(d(0, B(t)))^{2j}\right) = 0 
\end{align*}
which in turn allows to recover our limit theorem in Proposition \ref{Lemma1}.

\chapter{Appendix 1:  A primer on stochastic calculus}

In this Appendix, we review briefly the basic theory of stochastic
processes, diffusion processes and stochastic integrals as is used in this monograph. The stochastic integration is a natural, easy and fruitful integration theory
which is due to  It\^o. For a much more complete exposition we refer to \cite{MR3236242}, \cite{Ikeda}, \cite{MR0345224},   \cite{Pro}, \cite{Re-Yo} and \cite{Rogers1,Rogers2}. 
%
%
\section*{Stochastic processes and Brownian motion}

Let $\left( \Omega , (\mathcal{F}_t)_{t \geq 0} , \mathcal{F},
\mathbb{P} \right)$ be a filtered probability space which
satisfies the usual conditions, that is:
\begin{enumerate}
\item $(\mathcal{F}_t)_{t \geq 0}$ is a filtration, i.e. an increasing family of
sub-$\sigma$-fields of $\mathcal{F}$;
\item for any $t \geq 0$, $\mathcal{F}_t$ is complete with respect
to $\mathbb{P}$, i.e. every subset of a set of measure zero is
contained in $\mathcal{F}_t$;
\item $(\mathcal{F}_t)_{t \geq 0}$ is right continuous, i.e. for
any $t \geq 0$,
\[
\mathcal{F}_t =\bigcap_{s>t} \mathcal{F}_s.
\]
\end{enumerate}
A stochastic process $(X(t))_{t \geq 0}$ on $\left( \Omega ,
(\mathcal{F}_t)_{t \geq 0} , \mathcal{F}, \mathbb{P} \right)$ is
an application
\[
\begin{array}{lll}
\mathbb{R}_{\geq 0} \times \Omega & \rightarrow & \mathbb{R} \\
(t,\omega) & \rightarrow & X(t) (\omega),
\end{array}
\]
which is measurable with respect to $\mathcal{B} (
\mathbb{R}_{\geq 0}) \otimes \mathcal{F}$. The process $(X(t))_{t
\geq 0}$ is said to be adapted (with respect to the filtration
$(\mathcal{F}_t)_{t \geq 0}$) if for every $t \geq 0$, $X(t)$ is
$\mathcal{F}_t$-measurable. It is said to be continuous if for
almost every $\omega \in \Omega$, the function $t \rightarrow X(t)
(\omega)$ is continuous. A stochastic process $(\tilde{X}(t))_{t
\geq 0}$ is called a modification of $(X(t))_{t \geq 0}$ if for
every $t \geq 0$,
\[
\mathbb{P} \left( X(t) = \tilde{X}(t) \right)=1.
\]
The following theorem known as Kolmogorov's continuity
criterion\index{Kolmogorov's continuity criterion} is widely used to work with continuous modifications.
\begin{theorem}
Let $\alpha, \varepsilon, c >0$. If a process $(X(t))_{t \geq 0}$
satisfies for every $s,t \geq 0$,
\[
\mathbb{E} \left( \mid X(t) - X(s) \mid^{\alpha} \right) \leq c \mid
t-s \mid^{1+\varepsilon},
\]
then there exists a modification of $(X(t))_{t \geq 0}$ which is a
continuous process.
\end{theorem}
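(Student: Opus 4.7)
The plan is to establish continuity locally on each compact interval, say $[0,1]$, and then glue. Throughout I would work with the dyadic rationals $D_n=\{k2^{-n}:0\le k\le 2^n\}$ and $D=\bigcup_n D_n$, a countable dense subset on which the process can be controlled pathwise.

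The first step is quantitative: for any $\gamma\in(0,\varepsilon/\alpha)$, Markov's inequality combined with the hypothesis gives
\[
\mathbb{P}\bigl(|X((k+1)2^{-n})-X(k2^{-n})|\ge 2^{-\gamma n}\bigr)\le c\,2^{\alpha\gamma n}\,2^{-(1+\varepsilon)n}=c\,2^{-(1+\varepsilon-\alpha\gamma)n}.
\]
Summing over $0\le k<2^n$ and then over $n$, the total probability is finite because $\varepsilon-\alpha\gamma>0$ gives an exponent $-(\varepsilon-\alpha\gamma)<0$ after absorbing the factor $2^n$. Borel--Cantelli then yields an event $\Omega_0$ of full measure and an integer-valued random variable $N(\omega)$ such that for every $n\ge N(\omega)$ and every $0\le k<2^n$,
\[
|X((k+1)2^{-n})(\omega)-X(k2^{-n})(\omega)|<2^{-\gamma n}.
\]

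The second step is the chaining argument, which is the main technical ingredient. Given $s<t$ in $D$ with $0<t-s<2^{-N(\omega)}$, pick the unique $n\ge N(\omega)$ with $2^{-(n+1)}\le t-s<2^{-n}$; write $s$ and $t$ as finite sums of distinct negative powers of $2$ starting from level $n+1$, and bound $|X(t,\omega)-X(s,\omega)|$ by a telescoping sum whose terms are the dyadic increments already controlled. This yields a geometric series dominated by a constant times $2^{-\gamma n}\le C\,|t-s|^{\gamma}$, so that the restriction of $X(\cdot,\omega)$ to $D\cap[0,1]$ is (locally) H\"older continuous with exponent $\gamma$, in particular uniformly continuous on $D\cap[0,1]$. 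This is the step where one must be careful: the dyadic decomposition and the geometric summation have to be tracked precisely to obtain a single constant, valid for all $n\ge N(\omega)$.

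Third, on $\Omega_0$ I would define $\tilde X(t,\omega)=\lim_{s\to t,\,s\in D}X(s,\omega)$ for each $t\in[0,1]$, and $\tilde X(t,\omega)=0$ off $\Omega_0$. The uniform continuity on $D$ ensures the limit exists and that $\tilde X(\cdot,\omega)$ is continuous on $[0,1]$. Finally, to check that $\tilde X$ is a modification, fix $t\in[0,1]$ and pick $s_k\in D$ with $s_k\to t$. The hypothesis gives $\mathbb{E}(|X(s_k)-X(t)|^{\alpha})\to 0$, so $X(s_k)\to X(t)$ in probability, while by construction $X(s_k)\to\tilde X(t)$ almost surely; uniqueness of limits in probability forces $\mathbb{P}(\tilde X(t)=X(t))=1$. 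Extending from $[0,1]$ to $[0,\infty)$ is done by applying the construction on each $[0,n]$ and noting that two modifications agree almost surely on their common domain by the same argument, so they can be patched together into a single continuous modification on $[0,\infty)$.
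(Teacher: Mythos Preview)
Your argument is the standard and correct proof of Kolmogorov's continuity criterion via dyadic chaining and Borel--Cantelli. The paper, however, does not prove this theorem at all: it appears in an appendix that is explicitly a primer on stochastic calculus, where the result is merely stated as a classical fact (with references to standard texts such as \cite{Re-Yo} or \cite{Ikeda}) and then used. So there is nothing to compare against; your proof supplies what the paper omits, and it is the proof one finds in those references.
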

One of the most important examples of stochastic process is the
Brownian motion. A process $(B(t))_{t \geq 0}$ defined on $\left(
\Omega , (\mathcal{F}_t)_{t \geq 0} , \mathbb{P} \right)$ is said
to be a standard Brownian motion\index{Brownian motion} with
respect to the filtration $(\mathcal{F}_t)_{t \geq 0}$ if:
\begin{enumerate}
\item $B(0)=0$ a.s.;
\item $B(1) \overset{d}{=} \mathcal{N} (0,1)$;
\item $(B(t))_{t \geq 0}$ is $\mathcal{F}$-adapted;
\item For any $t>s\geq 0$,
\[
B(t) - B(s) \overset{d}{=} B(t-s);
\]
\item For any $t>s\geq 0$,
$B(t) -B(s)$ is independent of $\mathcal{F}_s$.
\end{enumerate}
If a process is a Brownian motion with respect to its own
filtration, we simply say that it is a standard Brownian motion,
without mentioning the underlying filtration. A $d$-dimensional
process $(B(t))_{t \geq 0}=(B^1(t),...,B^d(t))_{t \geq 0}$ is said to
be a $d$-dimensional standard Brownian motion if the processes
$(B^1(t))_{t \geq 0},...,(B^d(t))_{t \geq 0}$ are independent
standard Brownian motions.

A standard Brownian motion $(B(t))_{t \geq 0}$ is a Gaussian
process, that is for every $n \in \mathbb{N}^*$, and $0\leq t_1
\leq \cdots \leq t_n$, the random variable
\[
\left( B(t_1),...,B(t_n) \right)
\]
is Gaussian. The mean function of $(B(t))_{t \geq 0}$ is
\[
\mathbb{E} (B(t))=0,
\]
and its covariance function is
\[
\mathbb{E} (B(s) B(t))=\inf (s,t).
\]

It is possible to show, thanks to Kolmogorov's continuity criterion, that it is always possible to find a continuous modification of a standard Brownian motion. Of course, we always work with this
continuous modification. The law of a standard Brownian motion,
which therefore lives on the space of continuous functions
$\mathbb{R}_{\geq 0} \rightarrow \mathbb{R}$ is called the Wiener
measure\index{Wiener measure}. If $(B(t))_{t \geq 0}$ is a standard
Brownian motion, then the following properties hold:
\begin{enumerate}
\item For every $c >0$, $(B(ct))_{t \geq 0}\stackrel{\mbox{law}}{=} (\sqrt{c} B(t))_{t \geq
0}$;
\item $(t B(\frac{1}{t}))_{t \geq 0}\overset{d}{=}( B(t))_{t \geq
0}$;
\item For almost every $\omega \in \Omega$, the path $t
\rightarrow B(t) (\omega)$ is nowhere differentiable and locally
H\"older continuous of order $\alpha$ for every $\alpha
<\frac{1}{2}$;
\item For every subdivision $0 = t_0 \leq \cdots \leq t_n =t$ whose mesh tends to 0, almost surely we have
\[
\lim_{n \rightarrow +\infty} \sum_{i=0}^{n-1} \left( B(t_{i+1})
-B(t_i) \right)^2 =t.
\]
\end{enumerate}

\section*{Martingales}
Consider an adapted and continuous process $(M(t))_{t \geq 0}$
defined on a filtered probability space $\left( \Omega ,
(\mathcal{F}_t)_{t \geq 0} , \mathcal{F}, \mathbb{P} \right)$.
\begin{definition}
The process $(M(t))_{t \geq 0}$ is said to be a
martingale (with respect to the filtration
$(\mathcal{F}_t)_{t \geq 0}$) if:
\begin{enumerate}
\item For $t \geq 0$, $\mathbb{E} \left( \mid M(t) \mid \right) < +
\infty$;
\item For $0 \leq s \leq t$, $\mathbb{E} \left( M(t) \mid
\mathcal{F}_s \right)=M(s)$.
\end{enumerate}
\end{definition}
For instance a standard Brownian motion is a martingale. A random variable $T: \Omega \to \mathbb{R}_{\ge 0} \cup \{ +\infty \}$ is called a stopping time of the filtration $(\mathcal{F}_t)_{t \geq 0}$ if for every $t \ge 0$,
\[
\{ T \le t \} \in \mathcal{F}_t.
\]

For martingales, we have the following proposition, which is known as the stopping theorem.
\begin{proposition}
The following properties are equivalent:
\begin{enumerate}
\item The process $(M(t))_{t \geq 0}$ is a martingale;
\item For any bounded stopping time $T$, $\mathbb{E}
(M(T))=\mathbb{E} (M(0))$;
\item For any pair of bounded stopping times $S$ and $T$, with $S
\leq T$, $\mathbb{E} \left( M(T) \mid \mathcal{F}_S \right)=M(S)$.
\end{enumerate}
\end{proposition}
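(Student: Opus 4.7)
The plan is to close the cycle $(3) \Rightarrow (2) \Rightarrow (1) \Rightarrow (3)$, which is the quickest route and uses the definition of martingale in only one direction, as the other two implications are essentially tautological specializations.

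For $(3) \Rightarrow (2)$, I would simply take $S = 0$ (which is a bounded stopping time) and any bounded stopping time $T \ge 0$; then (3) gives $\mathbb{E}(M(T) \mid \mathcal{F}_0) = M(0)$, and taking expectations yields $\mathbb{E}(M(T)) = \mathbb{E}(M(0))$. For $(2) \Rightarrow (1)$, I would first apply (2) with the deterministic stopping time $T=t$ to obtain $\mathbb{E}(M(t)) = \mathbb{E}(M(0)) < +\infty$, giving the integrability condition in the definition of a martingale. Then, given $0 \le s \le t$ and $A \in \mathcal{F}_s$, I would consider the bounded stopping time $T_A = t \mathbf{1}_A + s \mathbf{1}_{A^c}$ (it is indeed a stopping time since $\{T_A \le u\} = \Omega$ if $u \ge t$, equals $A^c \cap \{s \le u\}$ for $s \le u < t$, and is empty if $u < s$). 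Applying (2) to both $T_A$ and the constant $T = s$ gives
\[
\mathbb{E}(M(t)\mathbf{1}_A) + \mathbb{E}(M(s) \mathbf{1}_{A^c}) = \mathbb{E}(M(0)) = \mathbb{E}(M(s)) = \mathbb{E}(M(s)\mathbf{1}_A) + \mathbb{E}(M(s)\mathbf{1}_{A^c}),
\]
so $\mathbb{E}(M(t) \mathbf{1}_A) = \mathbb{E}(M(s) \mathbf{1}_A)$ for every $A \in \mathcal{F}_s$, which is precisely $\mathbb{E}(M(t) \mid \mathcal{F}_s) = M(s)$.

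The substantive implication is $(1) \Rightarrow (3)$, which I would handle in two steps. First, if $S \le T$ take only finitely many values $0 \le t_1 < \cdots < t_N$, then for $B \in \mathcal{F}_S$ I would decompose
\[
\mathbb{E}(M(T)\mathbf{1}_B) = \sum_{i=1}^N \mathbb{E}(M(T)\mathbf{1}_{B \cap \{S = t_i\}})
\]
and for each term use the fact that $B \cap \{S = t_i\} \in \mathcal{F}_{t_i}$ together with the martingale identity $\mathbb{E}(M(T) \mid \mathcal{F}_{t_i}) = M(t_i)$ on the event $\{S = t_i\} \subset \{T \ge t_i\}$, applied iteratively down the finite chain of values of $T$. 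This yields $\mathbb{E}(M(T)\mathbf{1}_B) = \mathbb{E}(M(S)\mathbf{1}_B)$ as desired. Second, for general bounded stopping times $S \le T \le K$, I would approximate them from above by the dyadic stopping times
\[
S_n = \sum_{k=0}^{\lceil K 2^n \rceil} (k+1)2^{-n} \mathbf{1}_{\{k 2^{-n} \le S < (k+1) 2^{-n}\}}, \qquad T_n \text{ defined analogously,}
\]
which take finitely many values, satisfy $S_n \le T_n$, and decrease to $S, T$ respectively.

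The main obstacle is the passage to the limit in the discrete approximation. Since we assumed only that $M$ is continuous and adapted, we have $M(S_n) \to M(S)$ and $M(T_n) \to M(T)$ almost surely by path continuity, but to conclude $\mathbb{E}(M(T_n)\mathbf{1}_B) \to \mathbb{E}(M(T)\mathbf{1}_B)$ for $B \in \mathcal{F}_S \subset \mathcal{F}_{S_n}$ I need uniform integrability of the family $\{M(T_n)\}$. This is where I would invoke the fact that $(M(T_n))_n$ is a reverse martingale (from the discrete version already proved applied to $S_n \le T_n \le T_m$ for $n \ge m$ going the right way), or, more elementarily, apply Doob's maximal inequality combined with the bound $|M(T_n)| \le \sup_{0 \le t \le K} |M(t)|$, upgrading this supremum to an $L^1$ bound by a localization/truncation argument if needed. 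Once uniform integrability is secured, passing to the limit in $\mathbb{E}(M(T_n)\mathbf{1}_B) = \mathbb{E}(M(S_n)\mathbf{1}_B)$ gives the conditional identity $\mathbb{E}(M(T) \mid \mathcal{F}_S) = M(S)$ and closes the cycle.
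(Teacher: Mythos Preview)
The paper does not prove this proposition; it is stated without proof in Appendix~1 as part of a summary of standard stochastic calculus, with references to \cite{MR3236242}, \cite{Ikeda}, \cite{Pro}, \cite{Re-Yo}, \cite{Rogers1,Rogers2} for details. Your cycle $(3)\Rightarrow(2)\Rightarrow(1)\Rightarrow(3)$ with dyadic approximation is the standard textbook argument and is correct.

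Two small comments. In $(2)\Rightarrow(1)$, writing ``$\mathbb{E}(M(t))=\mathbb{E}(M(0))<+\infty$'' does not literally give $\mathbb{E}(|M(t)|)<\infty$; the point is rather that statement~(2) only makes sense if $M(T)\in L^1$ for every bounded stopping time, so integrability is implicit in~(2) and you get it for free by taking $T=t$. In $(1)\Rightarrow(3)$, the reverse-martingale argument is the right way to secure uniform integrability: since $T_n$ is decreasing and the discrete case gives $\mathbb{E}(M(T_m)\mid\mathcal{F}_{T_n})=M(T_n)$ for $n\ge m$, the sequence $(M(T_n))_n$ is a backward martingale, hence uniformly integrable and $L^1$-convergent. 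Your alternative via Doob's maximal inequality would require $\sup_{t\le K}|M(t)|\in L^1$, which does not follow from the hypotheses without further work, so stick with the reverse-martingale route.
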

Actually, if $T$ is a bounded stopping time, then the process
$(M(t \wedge T))_{t \geq 0}$ is also a martingale.

The notion of uniform integrability is a  crucial tool in the study of continuous time martingales.

\begin{definition}
Let $(X_i)_{i \in \mathcal{I}}$ be a family of random variables. We say that the family $(X_i)_{i \in \mathcal{I}}$ 
is uniformly integrable if for every $\varepsilon >0$, there exists $K \ge 0$ such that
\[
\forall i \in \mathcal{I},\quad \mathbb{E}(| X_i | \mathbf{1}_{\mid X_i \mid >K})<\varepsilon.
\]
\end{definition}
We have the following properties:
\begin{itemize}
\item \hspace{.1in} A finite family of integrable random variables  is uniformly integrable ;
\item  \hspace{.1in} If the family $(X_i)_{i \in \mathcal{I}}$ is uniformly integrable then it is bounded in $L^1$, that is $\sup_\mathcal{I} \mathbb{E}(\mid X_i \mid ) <+\infty$;
\item  \hspace{.1in}  If the family $(X_i)_{i \in \mathcal{I}}$ is bounded in  $L^p$ with $p>1$, that is  $\sup_\mathcal{I} \mathbb{E}(\mid X_i \mid^p ) <+\infty$, then it is uniformly integrable.
\end{itemize}

\begin{theorem} (Doob's maximal inequalities)

Let $(\mathcal{F}_t)_{t \ge 0}$ be a filtration on probability space $(\Omega, \mathcal{F},\mathbb{P})$ and let
$(M(t))_{t \ge 0}$be a continuous martingale with respect to the filtration $(\mathcal{F}_t)_{t \ge 0}$.
\begin{enumerate}
\item Let $p \ge 1$ and $T>0$. If  $\mathbb{E} (\mid M(T)
\mid^p)<+\infty$, then for every $\lambda > 0$,
\[
\mathbb{P} \left( \sup_{0 \le t \le T} \mid M(t) \mid \ge \lambda
\right) \le \frac{\mathbb{E} \left( \mid M(T) \mid^p
\right)}{\lambda^p}.
\]
\item Let $p > 1$ and $T>0$. If $\mathbb{E} (\mid M(T)
\mid^p)<+\infty$ then,
\[
\mathbb{E}\left( \left( \sup_{0 \le t \le T} \mid M(t) \mid \right)^p
\right) \le \left( \frac{p}{p-1} \right)^p \mathbb{E} (\mid M(T)
\mid^p).
\]
\end{enumerate}
\end{theorem}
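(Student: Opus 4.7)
The plan is to obtain (1) as the classical weak Doob inequality applied to the submartingale $(|M(t)|^p)_{t\ge 0}$, and then to derive (2) by integrating (a refined form of) (1) against $\lambda^{p-1}d\lambda$ and closing the loop with H\"older's inequality. The only nonroutine ingredient is passing from the discrete-time Doob inequality to the continuous-time statement; that passage relies crucially on the continuity of paths (already assumed in the statement).

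\textbf{Step 1: Submartingale upgrade.} Since $x\mapsto |x|^p$ is convex and $|M(T)|^p$ is integrable, conditional Jensen's inequality shows that $(|M(t)|^p)_{0\le t\le T}$ is a nonnegative continuous submartingale. In particular, all $|M(t)|^p$ with $t\le T$ are integrable.

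\textbf{Step 2: Discrete maximal inequality.} For a finite partition $0=t_0<t_1<\cdots<t_n=T$, introduce the stopping time $\tau=\min\{k: |M(t_k)|^p\ge \lambda^p\}$ on the event $A_n=\{\max_{0\le k\le n}|M(t_k)|^p\ge \lambda^p\}$ and $\tau=n$ otherwise. By the optional stopping theorem applied to the submartingale $(|M(t_k)|^p)$ between $\tau$ and $n$,
\begin{equation*}
\lambda^p\,\mathbb{P}(A_n)\le \mathbb{E}\bigl(|M(t_\tau)|^p\mathbf{1}_{A_n}\bigr)\le \mathbb{E}\bigl(|M(T)|^p\mathbf{1}_{A_n}\bigr)\le \mathbb{E}(|M(T)|^p).
\end{equation*}
In fact the middle inequality gives the sharper bound
\begin{equation*}
\lambda\,\mathbb{P}\!\left(\max_{0\le k\le n}|M(t_k)|\ge\lambda\right)\le \mathbb{E}\!\left(|M(T)|\mathbf{1}_{\{\max_k|M(t_k)|\ge\lambda\}}\right),
\end{equation*}
which I shall need for (2).

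\textbf{Step 3: Passage to continuous time.} Take the partition $t_k=kT/2^n$ and let $n\to\infty$. By path continuity of $M$, the events $\{\max_{0\le k\le 2^n}|M(kT/2^n)|>\lambda\}$ increase (up to a null set) to $\{\sup_{0\le t\le T}|M(t)|>\lambda\}$. Passing to the limit in the discrete inequality yields
\begin{equation*}
\mathbb{P}\!\left(\sup_{0\le t\le T}|M(t)|>\lambda\right)\le \frac{\mathbb{E}(|M(T)|^p)}{\lambda^p}, \qquad \lambda\,\mathbb{P}(M^*_T>\lambda)\le \mathbb{E}(|M(T)|\mathbf{1}_{\{M^*_T>\lambda\}}),
\end{equation*}
where $M^*_T:=\sup_{0\le t\le T}|M(t)|$. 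Replacing $\lambda$ by $\lambda-\varepsilon$ and letting $\varepsilon\downarrow 0$ promotes the strict inequality to the non-strict one, proving (1).

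\textbf{Step 4: From weak to strong via the layer-cake formula.} For (2), assume first that $\mathbb{E}((M^*_T)^p)<\infty$ (truncate $M^*_T\wedge N$ and let $N\to\infty$ at the end, using monotone convergence). The identity $\mathbb{E}((M^*_T)^p)=p\int_0^\infty\lambda^{p-1}\mathbb{P}(M^*_T\ge\lambda)\,d\lambda$ combined with the refined weak inequality from Step 3 and Fubini's theorem gives
\begin{equation*}
\mathbb{E}((M^*_T)^p)\le p\int_0^\infty \lambda^{p-2}\mathbb{E}\!\left(|M(T)|\mathbf{1}_{\{M^*_T\ge\lambda\}}\right)d\lambda=\frac{p}{p-1}\,\mathbb{E}\!\left(|M(T)|(M^*_T)^{p-1}\right).
\end{equation*}
Applying H\"older's inequality with conjugate exponents $p$ and $p/(p-1)$ and dividing by $\bigl(\mathbb{E}((M^*_T)^p)\bigr)^{(p-1)/p}$ (which is finite after truncation) yields $\bigl(\mathbb{E}((M^*_T)^p)\bigr)^{1/p}\le \frac{p}{p-1}\bigl(\mathbb{E}(|M(T)|^p)\bigr)^{1/p}$, which is (2) after removing the truncation.

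\textbf{Main obstacle.} The only delicate point is Step 3: one must justify that $M^*_T$ is indeed the a.s.\ supremum along dyadic rationals, which uses path continuity in an essential way; without it, there would be measurability issues with $M^*_T$ and one would have to work with essential suprema. The rest of the argument is a standard combination of optional stopping, Fubini, and H\"older.
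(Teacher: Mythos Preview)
The paper states this theorem without proof (it appears in the appendix as a review of standard stochastic calculus), so there is nothing to compare against. Your argument is the standard and correct proof: reduce to the submartingale $|M|^p$ via Jensen, prove the discrete weak inequality by optional stopping, pass to continuous time using path continuity, and then derive the $L^p$ inequality by the layer-cake formula combined with H\"older and a truncation to justify the division step.
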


A martingale $(M(t))_{t \geq 0}$ is said to be square integrable
if for $t \geq 0$, $\mathbb{E} \left( M(t)^2 \right) < + \infty$.
In that case, from Jensen's inequality the function $t \rightarrow
\mathbb{E} \left( M(t)^2 \right)$ is increasing. We also have the
so-called Doob's inequality
\[
\mathbb{E} \left( \sup_{t \geq 0} M(t)^2 \right) \leq 4 \sup_{t
\geq 0} \mathbb{E} \left( M(t)^2 \right),
\]
which is one of the cornerstone of the stochastic integration theory.
Observe therefore that if $\sup_{t \geq 0} \mathbb{E} \left( M(t)^2
\right)< +\infty$, the martingale $(M(t))_{t \geq 0}$ is uniformly
integrable and converges in $L^2$ to a square integrable random
variable $M(\infty)$ which satisfies
\[
\mathbb{E} \left( M_{\infty} \mid \mathcal{F}_t \right)=M(t),
\text{ } t \geq 0.
\]
If $(M(t))_{t \geq 0}$ is a square integrable martingale, there
exists a unique increasing process denoted $(\langle M
\rangle_t)_{t \geq 0}$ which satisfies:
\begin{enumerate}
\item $\langle M \rangle_0=0$;
\item The process $(M(t)^2 - \langle M \rangle_t)_{t \geq 0}$ is a
martingale.
\end{enumerate}
This increasing process $(\langle M \rangle_t)_{t \geq 0}$ is
called the quadratic variation of the martingale $(M(t))_{t \geq
0}$. This terminology comes from the following property. If $0 =
t_0 \leq t_1 \leq \cdots \leq t_n =t$ is a subdivision of the time
interval $[0,t]$ whose mesh tends to 0, then in probability
\[
\lim_{n \rightarrow +\infty} \sum_{i=0}^{n-1} \left( M(t_{i+1})
-M(t_i) \right)^2 = \langle M \rangle_t.
\]
For technical reasons (localization procedures), we often have to
consider a wider class than martingales.
\begin{definition}
The process $(M(t))_{t \geq 0}$ is said to be a local
martingale (with respect to the filtration
$(\mathcal{F}_t)_{t \geq 0}$) if there exists a sequence $(T_n)_{n
\geq 0}$ of stopping times such that:
\begin{enumerate}
\item The sequence $(T_n)_{n\geq 0}$ is increasing and $\lim_{n \rightarrow +\infty} T_n =+\infty$ almost surely;
\item For every $n \geq 1$, the process $(M(t \wedge T_n))_{t \geq 0}$ is a uniformly integrable
 martingale with respect to the filtration $(\mathcal{F}_t)_{t \geq 0}$.
\end{enumerate}
\end{definition}
A martingale is always a local martingale but the converse is not
true. Nevertheless, a local martingale $(M(t))_{t \geq 0}$ such
that for every $t \geq 0$,
\[
\mathbb{E} \left( \sup_{s \leq t} \mid M(s) \mid \right) < +\infty,
\]
is a martingale. If $(M(t))_{t \geq 0}$ is a local martingale,
there still exists a unique increasing process denoted $(\langle M
\rangle_t)_{t \geq 0}$ which satisfies:
\begin{enumerate}
\item $\langle M \rangle_0=0$;
\item The process $(M(t)^2 - \langle M \rangle_t)_{t \geq 0}$ is a
local martingale.
\end{enumerate}
This increasing process $(\langle M \rangle_t)_{t \geq 0}$ is
called the quadratic variation of the local martingale $(M(t))_{t
\geq 0}$. By polarization, it is easily seen that, more generally,
if $(M(t))_{t\geq 0}$ and $(N(t))_{t\geq 0}$ are two continuous
local martingales, then there exists a unique continuous process
denoted $\left( \langle M,N \rangle_t \right)_{t \geq 0}$ and
called the quadratic covariation of $(M(t))_{t\geq 0}$ and
$(N(t))_{t\geq 0}$ which satisfies:
\begin{enumerate}
\item $\langle M,N \rangle_0=0$;
\item The process $(M(t) N(t) - \langle M ,N\rangle_t)_{t \geq 0}$ is a
local martingale.
\end{enumerate}
Before we turn to the theory of stochastic integration, we
conclude this section with L\'evy's characterization of Brownian
motion.
\begin{proposition}[L\'evy characterization theorem]\label{carac Levy}
Let $(M(t))_{t \geq 0}$ be a $d$-dimensional continuous local
martingale such that $M(0) = 0$ and
\[
\langle M^i , M^i \rangle_t = t, \text{ } \langle M^i , M^j
\rangle_t=0 \text{ if }i\neq j.
\]
Then $(M(t))_{t \geq 0}$ is a standard $d$-dimensional Brownian motion.
\end{proposition}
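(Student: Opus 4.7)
The plan is to compute the characteristic function of the increments of $M$ via It\^o's formula and show that it coincides with the Gaussian characteristic function, from which Brownian motion follows via L\'evy's continuity theorem together with the continuity of paths already built into the hypotheses.

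Fix $\lambda \in \mathbb{R}^d$ and consider the smooth complex-valued function $f(x) = e^{i\langle \lambda, x \rangle}$. Applying the (complex) It\^o formula to $f(M(t))$ and using the standing assumptions $d\langle M^j, M^k \rangle_s = \delta_{jk}\,ds$, I would obtain
\begin{equation*}
e^{i\langle \lambda, M(t)\rangle} = 1 + i\sum_{j=1}^d \lambda_j \int_0^t e^{i\langle \lambda, M(s)\rangle}\, dM^j(s) - \frac{|\lambda|^2}{2}\int_0^t e^{i\langle \lambda, M(s)\rangle}\, ds.
\end{equation*}
The key observation is that the integrand $e^{i\langle \lambda, M(s)\rangle}$ in the stochastic integral is bounded in modulus by $1$, so its integral against each $M^j$ is not merely a local martingale but (after localization via the $T_n$ reducing $M$) a genuine $L^2$-bounded martingale with zero mean, regardless of how wild $\langle M^j \rangle_t = t$ behaves.

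Conditioning on $\mathcal{F}_s$ for $s \le t$ and using the martingale property to kill the stochastic integral, define $\phi(t) := \mathbb{E}\bigl[ e^{i\langle \lambda, M(t)-M(s)\rangle } \mid \mathcal{F}_s\bigr]$. Factoring out $e^{i\langle \lambda, M(s)\rangle}$ and using Fubini for conditional expectations yields the linear integral equation
\begin{equation*}
\phi(t) = 1 - \frac{|\lambda|^2}{2}\int_s^t \phi(u)\, du,
\end{equation*}
whose unique solution is $\phi(t) = e^{-|\lambda|^2(t-s)/2}$. Since this holds for every $\lambda \in \mathbb{R}^d$ and the right-hand side is deterministic, it follows simultaneously that the conditional law of $M(t) - M(s)$ given $\mathcal{F}_s$ is $\mathcal{N}(0,(t-s)I_d)$, that this increment is independent of $\mathcal{F}_s$, and that its law depends only on $t-s$.

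Combined with $M(0)=0$, the continuity of paths (assumed), and $\mathcal{F}$-adaptedness, these properties are exactly the defining conditions of a standard $d$-dimensional Brownian motion, completing the proof. The main technical obstacle I would have to be careful with is the passage from local to true martingale for the stochastic integrals: I would proceed along a reducing sequence $(T_n)$ for $M$, apply the argument to $M(\cdot \wedge T_n)$ where boundedness gives genuine martingales and bounded convergence applies, then let $n \to \infty$ using dominated convergence (justified by $|e^{i\langle \lambda, \cdot\rangle}| = 1$) to recover the identity at the original time $t$.
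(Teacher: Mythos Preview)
The paper states this proposition in its appendix as a classical result without supplying a proof, so there is nothing to compare against directly. Your argument is the standard and correct one: It\^o's formula applied to $e^{i\langle\lambda,M(t)\rangle}$, the observation that the resulting stochastic integral is a true martingale because its quadratic variation $\int_0^t |e^{i\langle\lambda,M(s)\rangle}|^2\,d\langle M^j\rangle_s \le t$ is deterministically bounded, and the resulting ODE for the conditional characteristic function. One minor stylistic point: your phrase ``regardless of how wild $\langle M^j\rangle_t = t$ behaves'' is confusing, since $t$ is as tame as possible; the real content is simply that a continuous local martingale with finite expected quadratic variation is a true $L^2$-martingale, which you could state directly and skip the localization discussion at the end.
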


\section*{Stochastic integration}
Let $\left( \Omega , (\mathcal{F}_t)_{t \geq 0} , \mathcal{F},
\mathbb{P} \right)$ be a filtered probability space which
satisfies the usual conditions specified before. We aim now at
defining an integral $\int_0^t H(s) dM(s)$\index{It\^o's integral}
where $(M(t))_{t \geq 0}$ is an adapted continuous square
integrable martingale such that $\sup_{t \geq 0} \mathbb{E} (
M(t)^2 )< +\infty$ and $(H(t))_{t \geq 0}$ an adapted process
which shall be in a \textit{good} class. Observe that such an
integral could not be defined trivially since the Young's
integration theory does not cover the integration against paths
which are less than $\frac{1}{2}$-H\"older continuous. First, we
define the class of integrands. The predictable $\sigma$-field
$\mathcal{P}$ associated with the filtration $(\mathcal{F}_t)_{t
\geq 0}$ is the $\sigma$-field generated on $\mathbb{R}_{\geq 0}
\times \Omega$ by the space of indicator functions
$\mathbf{1}_{]S,T]}$, where $S$ and $T$ are two stopping times
such that $S \leq T$. An adapted stochastic process $(H(t))_{t \geq
0}$ is said to be predictable if the application $(t,\omega)
\rightarrow H(t) (\omega)$ is measurable with respect to
$\mathcal{P}$. Observe that a continuous adapted process is
predictable.

Let us first assume that $(H(t))_{t \geq 0}$ is a predictable
elementary process that can be written as
\begin{equation}\label{eq-elem-proc}
H(t) =\sum_{i=1}^{n-1} H_i \mathbf{1}_{(T_i , T_{i+1} ]} (t)
\end{equation}
where $H_i$ is a $\mathcal{F}_{T_i}$ measurable bounded random
variable, and where $(T_i)_{1 \leq i \leq n}$ is a finite
increasing sequence of stopping times. In that case, a natural
definition for $\int_0^t H(s) dM(s)$ is
\[
\int_0^t H(s) dM(s)=\sum_{i=1}^{n-1} H_i \left(M(t \wedge T_{i+1})
- M(t \wedge T_{i})\right).
\]
Then, we observe that the process $\left(\int_0^t H(s) dM(s)
\right)_{t \geq 0}$ is a bounded martingale. Moreover from  Doob's inequality it satisfies that
\[
\mathbb{E} \left( \sup_{t \geq 0} \left( \int_0^t H(s) dM(s)
\right)^2 \right) \leq 4 \parallel H \parallel_{\infty}^2
\mathbb{E} (M(\infty)^2).
\]
We also note that
\[
\mathbb{E} \left( \left( \int_0^t H(s) dM(s) \right)^2
\right)=\mathbb{E} \left( \sum_{i=1}^{n-1} H_i \left(M(t \wedge
T_{i+1}) - M(t \wedge T_{i})\right)^2 \right).
\]
Assume now that $(H(t))_{t \geq 0}$ is a bounded continuous adapted
process. To define $\int_0^t H(s) dM(s)$, the idea is of course to
approximate $(H(t))_{t \geq 0}$ with elementary processes
$(H^p(t))_{t \geq 0}$ as in \eqref{eq-elem-proc} and to check the convergence of $\left(
\int_0^t H^p(s) dM(s)\right)_{t \geq 0}$ with respect to a suitable
norm. Precisely, let us define for any $p \in \mathbb{N}^*$, the
following stopping times:
\[
T_0^p=0
\]
\[
T_1^p = \inf \left\lbrace t>0, \mid H(t) \mid \geq \frac{1}{p}
\right\rbrace,
\]
and by iteration
\[
T_{n+1}^p = \inf \left\lbrace t >T_n^p, \mid H(t) -H(T_n^p) \mid
\geq \frac{1}{p} \right\rbrace.
\]
We now define
\[
H^p(t)=\sum_{i=1}^{n-1} H(T_i^p) \mathbf{1}_{(T_i^p , T_{i+1}^p ]}
(t).
\]
For this sequence of processes $(H^p(t))_{t \geq 0}$, it easy to
show that
\[
\mathbb{E} \left( \sup_{t \geq 0} \left( \int_0^t (H^p(s) -H^q(s)
)dM(s) \right)^2 \right) \rightarrow_{p,q \rightarrow +\infty} 0.
\]
From this, we can deduce that there exists a continuous martingale,
denoted by $\left(\int_0^t H(s) dM(s) \right)_{t \geq 0}$, such that
\[
\int_0^t H(s) dM(s) =\lim_{p\rightarrow +\infty} \int_0^t H^p(s) dM(s)
\]
uniformly for $t$ on compact sets. We furthermore have
\[
\mathbb{E} \left( \sup_{t \geq 0} \left( \int_0^t H(s) dM(s)
\right)^2 \right) \leq 4
\parallel H \parallel_{\infty}^2 \mathbb{E} (M(\infty)^2).
\]
and
\[
\mathbb{E} \left( \left( \int_0^t H(s) dM(s) \right)^2
\right)=\mathbb{E} \left( \int_0^t H(s)^2 d \langle M \rangle_s
\right).
\]
Now, by localization, it is not difficult to extend naturally the
definition of $\int_0^t H(s) dM(s)$ in the general case where:
\begin{enumerate}
\item $(M(t))_{t \geq 0}$ is a semimartingale,
that is, $(M(t))_{t \geq 0}$ can be written as
\[
M(t)=A(t)+N(t),
\]
where $(A(t))_{t \geq 0}$ is a bounded variation process and
$(N(t))_{t \geq 0}$ is a local martingale;
\item $(H(t))_{t \geq 0}$ is a locally bounded predictable process.
\end{enumerate}
In this setting, we have:
\[
\int_0^t H(s) dM(s) =\int_0^t H(s) dA(s)+\int_0^t H(s) dN(s).
\]
Observe that, since $(A(t))_{t \geq 0}$ is a bounded variation
process, the integral $\int_0^t H(s) dA(s)$ is simply a
Riemann-Stieltjes integral. The process $\left( \int_0^t H(s) dN(s)
\right)_{t \geq 0}$ is a local martingale.

The class of semimartingales appears then as
a good class of integrators in the theory of stochastic
integration. It can be shown that this is actually the widest
possible class if we wish to obtain a \textit{natural} integration
theory (see Protter \cite{Pro}
for a precise statement). The decomposition of a semimartingale
$(M(t))_{t \geq 0}$ under the form
\[
M(t)=A(t)+N(t),
\]
is essentially unique under the condition $N(0) =0$. The process
$(A(t))_{t \geq 0}$ is called the bounded variation part of
$(M(t))_{t \geq 0}$. The process $(N(t))_{t \geq 0}$ is called the
local martingale part of $(M(t))_{t \geq 0}$. If $(M^1(t))_{t \geq
0}$ and $(M^2(t))_{t \geq 0}$ are two semimartingales, then we
define the quadratic covariation $\left( \langle M^1,M^2 \rangle_t
\right)_{t \geq 0}$ of $(M^1(t))_{t \geq 0}$ and $(M^2(t))_{t \geq
0}$ as being $\left( \langle N^1,N^2 \rangle_t \right)_{t \geq 0}$
where $(N^1(t))_{t \geq 0}$ and $(N^2(t))_{t \geq 0}$ are the local
martingale parts.

Throughout this monograph we shall only deal with continuous processes
so that we shall almost always omit to precise the
continuity of the processes which will be considered. 

%
\section*{It\^o's formula}
The It\^o's formula\index{It\^o's formula} is certainly the most
important formula of stochastic calculus.
\begin{theorem}
Let $(X(t))_{t \geq 0}=\left( X_1(t) , \dots , X_n(t) \right)_{t
\geq 0}$ be a $n$- dimensional continuous semimartingale. Let now
 $f:\mathbb{R}^n \rightarrow \mathbb{R}$ be a $C^2$
function. We have
\begin{align*}
f(X(t)) & =f(X_0)+\sum_{i=1}^n \int_0^t \frac{\partial f}{\partial
x_i} (X(s)) dX_i(s) +\frac{1}{2} \sum_{i,j=1}^n \int_0^t
\frac{\partial^2 f}{\partial x_i \partial x_j} (X(s)) d\langle X_i,
X_j \rangle(s).
\end{align*}
\end{theorem}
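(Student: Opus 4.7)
The plan is to prove It\^o's formula by a direct Taylor expansion argument along a sequence of subdivisions whose mesh tends to zero, combined with a localization procedure to reduce to a bounded setting.

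First, I would perform a standard localization: by stopping at $T_N = \inf\{t \geq 0 : |X(t)| \geq N \text{ or } \sum_{i,j}\langle X_i, X_j \rangle(t) \geq N\}$ and letting $N \to \infty$, it is enough to establish the formula when the process $(X(t))_{t \geq 0}$ takes values in a fixed compact set $K \subset \mathbb{R}^n$ and $\sum_{i,j}\langle X_i, X_j\rangle$ is bounded. Multiplying $f$ by a smooth cutoff supported near $K$, I may further assume that $f$, $\partial_i f$ and $\partial_{ij}^2 f$ are uniformly bounded and uniformly continuous on $\mathbb{R}^n$. Since both sides of the claimed identity are continuous in $t$, it suffices to check it at a fixed time $t > 0$.

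Next, fix a sequence of subdivisions $0 = t_0^p < t_1^p < \cdots < t_{N_p}^p = t$ of $[0,t]$ whose mesh tends to $0$. Write
\[
f(X(t)) - f(X(0)) = \sum_{k=0}^{N_p - 1} \left( f(X(t_{k+1}^p)) - f(X(t_k^p)) \right),
\]
and apply Taylor's formula with integral remainder to each increment, obtaining
\[
f(X(t_{k+1}^p)) - f(X(t_k^p)) = \sum_{i=1}^n \partial_i f(X(t_k^p)) \Delta_k^p X_i + \tfrac{1}{2}\sum_{i,j=1}^n \partial_{ij}^2 f(X(t_k^p)) \Delta_k^p X_i \Delta_k^p X_j + R_k^p,
\]
where $\Delta_k^p X_i := X_i(t_{k+1}^p) - X_i(t_k^p)$ and $R_k^p$ involves the modulus of continuity of $\partial_{ij}^2 f$ on $K$. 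The proof amounts to showing that, as $p \to \infty$, the three resulting sums converge (in probability) respectively to the stochastic integrals $\sum_i \int_0^t \partial_i f(X(s))\, dX_i(s)$, to $\tfrac{1}{2}\sum_{i,j} \int_0^t \partial_{ij}^2 f(X(s))\, d\langle X_i, X_j\rangle(s)$, and to zero.

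The first sum converges because $s \mapsto \partial_i f(X(s))$ is a continuous adapted, hence predictable and locally bounded, integrand, and the Riemann-type sums built from the predictable elementary process $H^p_i(s) := \sum_k \partial_i f(X(t_k^p)) \mathbf{1}_{(t_k^p, t_{k+1}^p]}(s)$ converge to $\int_0^t \partial_i f(X(s))\, dX_i(s)$ by the very construction of the stochastic integral sketched in the excerpt (uniform approximation by elementary predictable processes, together with the $L^2$ isometry and Doob's inequality for the martingale part, and a pathwise Riemann--Stieltjes argument for the bounded variation part). The vanishing of the remainder $\sum_k R_k^p$ is the easy step: bounding $|R_k^p|$ by $\varepsilon_p \sum_{i,j} |\Delta_k^p X_i| |\Delta_k^p X_j|$ with $\varepsilon_p \to 0$ a modulus of uniform continuity of the $\partial_{ij}^2 f$ on $K$, one concludes from the fact that $\sum_k |\Delta_k^p X_i||\Delta_k^p X_j|$ is bounded in probability (it is controlled by products $(\sum_k (\Delta_k^p X_i)^2)^{1/2}(\sum_k (\Delta_k^p X_j)^2)^{1/2}$, which in turn are bounded by the total variation of the bounded variation part plus the quadratic variations of the local martingale parts).

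The real obstacle is the quadratic sum. Here one must show that for a bounded continuous adapted weight $g = \partial_{ij}^2 f(X(\cdot))$,
\[
\sum_{k=0}^{N_p - 1} g(X(t_k^p))\, \Delta_k^p X_i\, \Delta_k^p X_j \; \xrightarrow[p \to \infty]{\mathbb P} \; \int_0^t g(X(s))\, d\langle X_i, X_j\rangle(s).
\]
Using the semimartingale decompositions $X_i = A_i + N_i$, $X_j = A_j + N_j$, one splits $\Delta_k^p X_i\,\Delta_k^p X_j$ into four pieces. The three pieces involving at least one bounded variation factor are shown to contribute $0$ in the limit, via the estimate $\sum_k |\Delta_k^p A_i| |\Delta_k^p X_j| \leq (\sup_k |\Delta_k^p X_j|) \mathrm{Var}(A_i)$ and the continuity of the paths. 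The remaining pure local martingale piece $\sum_k g(X(t_k^p))\,\Delta_k^p N_i \Delta_k^p N_j$ is handled by writing the summand as $g(X(t_k^p))[\Delta_k^p(N_i N_j) - N_i(t_k^p)\Delta_k^p N_j - N_j(t_k^p)\Delta_k^p N_i]$, then using the integration by parts identity $N_i N_j = \int N_i\, dN_j + \int N_j\, dN_i + \langle N_i, N_j\rangle$ and the convergence of Riemann sums for the stochastic integrals. By polarization it suffices to treat the case $i = j$, which is the most classical and delicate step. This is the main technical hurdle in the whole argument; once it is settled, passing to the limit in the three sums above yields It\^o's formula.
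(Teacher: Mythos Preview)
The paper does not give a proof of It\^o's formula at all: this theorem appears in Appendix~1, which is explicitly a review of standard stochastic calculus, and the formula is simply stated as a classical result with references to the monographs of Ikeda--Watanabe, Protter, Revuz--Yor, and Rogers--Williams. Your proposal is the standard textbook argument (localization, second-order Taylor expansion along subdivisions, and identification of the three resulting sums) and is correct as outlined; there is nothing to compare since the paper offers no argument of its own.
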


\section*{Girsanov's theorem}
We give here the most important theorem concerning the impact of a
change of probability measure on the class of semimartingales. Let
${C} ( \mathbb{R}_{\geq 0} , \mathbb{R}^d)$ denote the
space of continuous functions $\mathbb{R}_{\geq 0} \rightarrow
\mathbb{R}^d $. Let $X(s)$, $s \geq 0$, denote the coordinate
mappings, that is
\[
X(s) (\omega) = \omega (s), \text{ }\omega \in {C} (
\mathbb{R}_{\geq 0} , \mathbb{R}^d).
\]
Set now $\mathcal{B}^0_t =\sigma( X(s) ,s\leq t)$ and consider on
${C} ( \mathbb{R}_{\geq 0} , \mathbb{R}^d)$ the Wiener
measure $\mathbb{P}$, that is, the law of a $d$-dimensional
standard Brownian motion. Let  $\mathcal{B}_t$ be the usual
$\mathbb{P}$-augmentation of $\mathcal{B}^0_t$ and
\[
\mathcal{B}_{\infty}=\vee_{t \geq 0} \mathcal{B}_t.
\]
The filtered probability space
\[
\left( {C} ( \mathbb{R}_{\geq 0} , \mathbb{R}^d),
(\mathcal{B}_t)_{t \geq 0}, \mathcal{B}_{\infty}, \mathbb{P}
\right)
\]
is called the Wiener space\index{Wiener space}. Observe that, by
definition of $\mathbb{P}$, the process $(X(t))_{t \geq 0}$ under $\mathbb{P}$ is
 a $d$-dimensional standard Brownian motion. We
have the following theorem, referred to as the Girsanov's
theorem\index{Girsanov's theorem}.
\begin{theorem}
\begin{enumerate}
\item Let $\mathbb{Q}$ be a probability measure on $\left( {C} ( \mathbb{R}_{\geq 0} , \mathbb{R}^d),
\mathcal{B}_{\infty} \right)$ which is equivalent to $\mathbb{P}$.
Let us denote by $D$ the density of $\mathbb{Q}$ with respect to
$\mathbb{P}$. Then there exists an adapted continuous
$\mathbb{R}^d$-valued process $\left( \Theta(t) \right)_{t \geq 0}$
such that
\[
\mathbb{E} \left( D \mid \mathcal{B}(t) \right)=\exp \left(
\int_0^t \Theta(s) dX(s) - \frac{1}{2} \int_0^t | \Theta(s)
|^2 ds \right),
\]
and under $\mathbb{Q}$,
\[
\tilde{X}(t):=X(t) - \int_0^t \Theta(s) ds
\]
is a standard Brownian motion.
\item Let $\left( \Theta(t) \right)_{t \geq 0}$ be an adapted
continuous $\mathbb{R}^d$-valued process such that the process
\[
Z(t)=\exp \left( \int_0^t \Theta(s) dX(s) - \frac{1}{2} \int_0^t
 |\Theta(s) |^2 ds \right), \text{ }t \geq 0,
\]
is a uniformly integrable martingale under $\mathbb{P}$. Define a
probability measure on $\left( {C} ( \mathbb{R}_{\geq 0} ,
\mathbb{R}^d), \mathcal{B}_{\infty} \right)$ by:
\[
d\mathbb{Q} = Z_{\infty} d\mathbb{P}.
\]
Then, under $\mathbb{Q}$, the process
\[
X(t) - \int_0^t \Theta(s) ds
\]
is a standard Brownian motion.
\end{enumerate}
\end{theorem}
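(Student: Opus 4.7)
I would prove the two parts in reverse order, since part (2) is the computational heart and part (1) reduces to it via the martingale representation theorem on Wiener space.

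\textbf{Step 1: Proof of part (2).} Set $Z(t)$ as in the statement. By It\^o's formula applied to the $C^2$ function $(x,y)\mapsto \exp(x-y/2)$, the process $Z$ satisfies the stochastic differential equation
\[
dZ(t) = Z(t)\,\Theta(t)\cdot dX(t), \qquad Z(0)=1,
\]
so $Z$ is a nonnegative local martingale under $\mathbb{P}$, and by hypothesis it is even a uniformly integrable martingale with $\mathbb{E}(Z_\infty)=1$; thus $d\mathbb{Q}=Z_\infty\,d\mathbb{P}$ is a bona fide probability measure. The key general fact I would invoke (a standard consequence of the tower property) is the \emph{Bayes rule for conditional expectations}: a continuous adapted process $(M(t))_{t\geq 0}$ is a local martingale under $\mathbb{Q}$ if and only if $(M(t)Z(t))_{t\geq 0}$ is a local martingale under $\mathbb{P}$. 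I would then apply this to each component $\tilde{X}^i(t)=X^i(t)-\int_0^t \Theta^i(s)\,ds$. Using the product rule,
\[
d\bigl(Z(t)\tilde{X}^i(t)\bigr) = \tilde{X}^i(t)\,dZ(t) + Z(t)\,dX^i(t) - Z(t)\Theta^i(t)\,dt + d\langle Z,\tilde{X}^i\rangle(t),
\]
and since $d\langle Z,\tilde{X}^i\rangle(t) = Z(t)\,\Theta(t)\cdot d\langle X,X^i\rangle(t) = Z(t)\Theta^i(t)\,dt$, the two finite-variation terms cancel exactly, leaving a local martingale under $\mathbb{P}$. Hence $\tilde{X}^i$ is a local $\mathbb{Q}$-martingale. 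Moreover the quadratic covariations are unaffected by the absolutely continuous change of time, so $\langle \tilde{X}^i,\tilde{X}^j\rangle(t)=\langle X^i,X^j\rangle(t)=\delta_{ij}t$. L\'evy's characterization (Proposition~\ref{carac Levy}) then identifies $\tilde{X}$ as a standard $d$-dimensional Brownian motion under $\mathbb{Q}$.

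\textbf{Step 2: Proof of part (1).} Given $\mathbb{Q}\sim\mathbb{P}$ with density $D$, set $Z(t)=\mathbb{E}(D\mid \mathcal{B}_t)$. Since $\mathcal{B}$ is the $\mathbb{P}$-augmented Brownian filtration, $Z$ is a strictly positive continuous $\mathbb{P}$-martingale (strict positivity by absolute continuity of $\mathbb{Q}\sim\mathbb{P}$, and continuity by the Brownian martingale representation property). I would now invoke the \emph{martingale representation theorem} on Wiener space, which guarantees the existence of a predictable $\mathbb{R}^d$-valued process $H$ with $\int_0^t|H(s)|^2\,ds<\infty$ a.s.\ such that
\[
Z(t) = 1 + \int_0^t H(s)\cdot dX(s).
\]
Since $Z>0$ almost surely, I can define $\Theta(s):=H(s)/Z(s)$, which is predictable and continuous up to a modification (or one argues on the continuous version); then $dZ(t)=Z(t)\Theta(t)\cdot dX(t)$ and It\^o's formula applied to $\log Z$ yields exactly the Dol\'eans--Dade exponential expression claimed. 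Part (2) then applies verbatim to produce the $\mathbb{Q}$-Brownian motion $\tilde{X}(t)=X(t)-\int_0^t\Theta(s)\,ds$.

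\textbf{Main obstacle.} The computation in Step~1 is routine; the real substance of part (1) lies in the martingale representation theorem for the Brownian filtration, which is a nontrivial external input (proved e.g.\ via the density of exponential martingales $\exp(\int_0^t f\cdot dX - \tfrac12\int_0^t|f|^2 ds)$ in $L^2$, or via It\^o's chaos expansion). In a self-contained treatment one would need to either cite this carefully or derive it separately. A secondary technical point is checking that the predictable version of $\Theta=H/Z$ is indeed \emph{continuous} as claimed in the statement; in fact the theorem is usually stated with a predictable $\Theta$, and continuity either follows from the continuity of the Brownian filtration together with a suitable choice of modification of $H$, or should be interpreted in the sense of predictable integrands.
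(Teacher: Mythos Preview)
The paper states this theorem without proof: it appears in Appendix~1 as part of a review of stochastic calculus, and the text merely remarks afterward that $Z$ satisfies $Z(t)=1+\int_0^t Z(s)\Theta(s)\,dX(s)$ and lists sufficient conditions for $Z$ to be a uniformly integrable martingale. Your outline is the standard proof of Girsanov's theorem and is correct; the only substantive external input, as you rightly flag, is the Brownian martingale representation theorem used in part~(1), and your remark about the continuity of $\Theta$ versus mere predictability is also an accurate caveat about how the result is usually formulated.
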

Observe that the process
\[
Z(t)=\exp \left( \int_0^t \Theta(s) dX(s) - \frac{1}{2} \int_0^t
| \Theta(s) |^2 ds \right), \text{ }t \geq 0,
\]
satisfies the following equation
\[
Z(t) =1+\int_0^t Z(s) \Theta(s) dX(s).
\]
It can be shown that sufficient conditions that ensure $(Z(t))_{t \geq 0}$ to be  a uniformly integrable martingale are the
following:
\begin{enumerate}
\item For any $t \geq 0$,
\[
\mathbb{E} (Z(t))=1;
\]
\item For any $t \geq 0$,
\[
\mathbb{E} \left( \exp \left( \frac{1}{2} \int_0^t
|\Theta(s)|^2 ds \right)\right)< + \infty.
\]
\end{enumerate}

\section*{Stochastic differential equations and diffusion processes}

\begin{definition}
A transition function\index{Transition function} $\{P_t,t \geq 0 \}$ on $\mathbb{R}^n$  is a family of kernels
\[
P_t : \mathbb{R} \times \mathcal{B}
(\mathbb{R}^n) \rightarrow [0,1]
\]
such that:
\begin{enumerate}
\item For $t \geq 0$ and $x \in \mathbb{R}^n$, $P_t (x,\cdot)$ is a probability measure on $\mathbb{R}^n$;
\item For $t \geq 0$ and any  Borel set $A$  in $\mathbb{R}^n$, the application
 $x \rightarrow P_t (x,A)$ is measurable;
\item For $s,t \geq 0$, $x \in \mathbb{R}^n $ and  a  Borel set $A$ in $\mathbb{R}^n$,
\begin{equation}\label{CK}
 P_{t+s} (x,A)=\int_{\mathbb{R}^n} P_t(y,A) P_s (x,dy).
\end{equation}
\end{enumerate}
\end{definition}
The relation (\ref{CK}) is called the Chapman-Kolmogorov relation\index{Chapman-Kolmogorov relation}. Given a transition function, we can define a one parameter family of linear operators $(P_t)_{t \geq 0}$ from the space of bounded Borel functions into itself as follows:
\[
(P_t f)(x)=\int_{\mathbb{R}} f(y) P_t (x,dy).
\]

If  $\{P_t,t \geq 0 \}$ is a transition function, then the following properties are satisfied:
\begin{itemize}
\item  \hspace{.1in} $P_t 1=1$;
\item  \hspace{.1in} For every $t\ge 0$, $P_t$ is a positivity preserving operator, in the sense that if $f$ is non negative, so is  $P_t f$;
\item  \hspace{.1in} For every $t\ge 0$, $P_t$ is a contraction from the space of bounded Borel functions into itself (that is, it is a continuous operator with a norm smaller than 1);
\item  \hspace{.1in} The semigroup property holds: For every $s,t \ge 0$, 
\[
P_{t+s} =P_t P_s, s,t \ge 0.
\]
\end{itemize}

\begin{definition}\label{definition Markov}
A stochastic process $(X(t))_{t \ge 0}$ defined on a probability space $(\Omega, \mathcal{F}, \mathbb{P})$ is called  a  Markov process if there exists a transition function  $\{P_t,t \geq 0 \}$ on $\mathbb{R}^n$ such that for every bounded and Borel function $f:\mathbb{R}^n \rightarrow \mathbb{R}$,
\[
\mathbb{E} \left( f(X(t+s)) \mid \mathcal{F}_s^X \right)=P_t f (X(s)), \quad s ,t \ge 0,
\]
where $\mathcal{F}^X$ denotes the natural filtration\footnote{This means that $\mathcal{F}_s^X$ is the smallest $\sigma$-algebra  that makes measurable all the random variables $(X(t_1),\dots,X(t_m))$, $0\le t_1 \le \cdots \le t_m \le s$.} of the process $(X(t))_{t \ge 0}$. The family of operators $(P_t)_{t \ge 0}$ is called the semigroup of the Markov process. 
\end{definition}

\begin{definition}\label{D:do}
A differential operator $L$ on $\mathbb{R}^n$,  is called a diffusion operator if it can be written as
\[
L=\sum_{i,j=1}^n \sigma_{ij} (x) \frac{\partial^2}{ \partial x_i \partial x_j} +\sum_{i=1}^n b_i (x)\frac{\partial}{\partial x_i},
\]
where $b_i$ and $\sigma_{ij}$ are continuous functions on $\mathbb{R}^n$, and if for every $x \in \mathbb{R}^n$ the matrix $\sigma(x) \overset{\mbox{def}}{=} (\sigma_{ij}(x))_{1\le i,j\le n}$ is symmetric and nonnegative. 
\end{definition}

\begin{definition}
Let $(X(t))_{t \ge 0}$ be a continuous Markov process and $L$ be a diffusion operator. The process $(X(t))_{t \ge 0}$ is called a diffusion process with generator $L$ if for every smooth function $f$ the process
\[
f(X(t))-\int_0^t Lf (X(s)) ds
\]
is a local martingale.
\end{definition}

The easiest way to construct diffusions is to solve stochastic differential equations.

\begin{theorem}
Let $\sigma=(\sigma_{ij})_{1 \le i , j \le n}$ be a matrix of Lipschitz functions $\sigma_{ij}: \mathbb{R} \to \mathbb{R}$ and $b: \mathbb{R}^n \to \mathbb{R}^n$ be a Lipschitz function.  Let $(B(t))_{t \ge 0}$ be a Brownian motion in $\mathbb{R}^n$. 
Given, $x_0 \in \mathbb{R}^n$, there exists a unique (up to modification) stochastic process $(X(t))_{t \ge 0}$ satisfying
\[
X(t)=x_0+\int_0^t b(X(s)) ds +\int_0^t \sigma(X(s)) dB(s).
\]
Moreover, this process $(X(t))_{t \ge 0}$ is a diffusion process with generator
\[
L=\frac{1}{2}\sum_{i,j=1}^n a_{ij} (x) \frac{\partial^2}{ \partial x_i \partial x_j} +\sum_{i=1}^n b_i (x)\frac{\partial}{\partial x_i},
\]
where $a=\sigma \sigma^*$.
\end{theorem}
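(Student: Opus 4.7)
The plan is to establish existence and uniqueness via a Picard iteration scheme, then use It\^o's formula to identify the generator, and finally verify the Markov property from pathwise uniqueness.

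\textbf{Existence and uniqueness.} Fix $T>0$ and work on the Banach space $\mathcal{S}_T^2$ of continuous adapted processes $Y:[0,T]\times\Omega\to\mathbb{R}^n$ with norm $\|Y\|_T^2:=\mathbb{E}[\sup_{0\le s\le T}|Y(s)|^2]$. Define the map $\Phi$ by
\[
\Phi(Y)(t)=x_0+\int_0^t b(Y(s))\,ds+\int_0^t \sigma(Y(s))\,dB(s).
\]
The stochastic integral is well-defined because $\sigma$ is Lipschitz and hence of linear growth, so that $s\mapsto\sigma(Y(s))$ is square-integrable once $Y\in\mathcal{S}_T^2$. Using the Cauchy--Schwarz inequality for the drift term and Doob's $L^2$ maximal inequality combined with It\^o's isometry for the martingale term, together with the Lipschitz constant $K$ of $b$ and $\sigma$, one obtains for two processes $Y,Z\in\mathcal{S}_T^2$ the estimate
\[
\mathbb{E}\Bigl[\sup_{0\le s\le t}|\Phi(Y)(s)-\Phi(Z)(s)|^2\Bigr]\le C_K\int_0^t \mathbb{E}\Bigl[\sup_{0\le r\le s}|Y(r)-Z(r)|^2\Bigr]ds,
\]
for some constant $C_K$ depending only on $K$, $n$, and $T$. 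Iterating the map $\Phi$ and applying this estimate $n$ times shows that the iterates form a Cauchy sequence in $\mathcal{S}_T^2$; the limit $X$ is a fixed point and hence a solution. Uniqueness (up to modification) on $[0,T]$ follows by applying the same estimate to two solutions and invoking Gronwall's lemma, which forces $\mathbb{E}[\sup_{s\le T}|X(s)-\widetilde{X}(s)|^2]=0$. Since $T$ was arbitrary, this yields global existence and uniqueness.

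\textbf{Identification of the generator.} Let $f\in C^2(\mathbb{R}^n)$ be smooth. Applying the multidimensional It\^o formula (as stated earlier in the appendix) to $f(X(t))$, and using the componentwise dynamics $dX_i(t)=b_i(X(t))\,dt+\sum_{k}\sigma_{ik}(X(t))\,dB_k(t)$, one computes the quadratic covariations
\[
d\langle X_i,X_j\rangle(t)=\sum_{k=1}^n \sigma_{ik}(X(t))\sigma_{jk}(X(t))\,dt=a_{ij}(X(t))\,dt,
\]
where $a=\sigma\sigma^*$. Substituting into It\^o's formula gives
\[
f(X(t))=f(x_0)+\int_0^t Lf(X(s))\,ds+M(t),
\]
where $L$ has the advertised form and $M(t):=\sum_{i,k}\int_0^t \partial_i f(X(s))\sigma_{ik}(X(s))\,dB_k(s)$ is a local martingale. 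This is exactly the martingale problem characterization of a diffusion process with generator $L$.

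\textbf{Markov property.} To close the loop it remains to verify that $X$ is a (strong) Markov process, i.e.\ that its transition function $P_t f(x):=\mathbb{E}[f(X_t)\mid X_0=x]$ satisfies Definition \ref{definition Markov}. This follows from pathwise uniqueness and the fact that the SDE coefficients are time-homogeneous and depend only on the current state: for any fixed $s\ge 0$, the process $\widetilde{X}(t):=X(s+t)$ solves the same SDE driven by the Brownian motion $\widetilde{B}(t):=B(s+t)-B(s)$ (independent of $\mathcal{F}_s$) starting from $X(s)$, and pathwise uniqueness then gives $\mathbb{E}[f(X(s+t))\mid\mathcal{F}_s]=P_t f(X(s))$.

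\textbf{Main obstacle.} The analytically delicate step is the Picard contraction estimate, which must combine Doob's maximal inequality (to control $\sup_s|\cdot|^2$) with It\^o's isometry; the Gronwall argument then converts the linearly iterated bound into an actual Cauchy property. Everything else, including the identification of the generator via It\^o's formula and the Markov property via uniqueness, is essentially automatic once existence and uniqueness are in hand.
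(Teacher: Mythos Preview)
Your proof is correct and follows the standard textbook approach (Picard iteration for existence/uniqueness, It\^o's formula for the generator, pathwise uniqueness for the Markov property). The paper itself does not supply a proof of this theorem: it appears in Appendix~1, which is explicitly a review of standard stochastic calculus results, and the statement is given without argument, with references to \cite{MR3236242,Ikeda,Pro,Re-Yo,Rogers1,Rogers2} for details. So there is nothing to compare against beyond noting that your argument is the classical one those references contain.
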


If we only assume that $\sigma$ and $b$ are locally Lipschitz, then the process $(X(t))_{t \ge 0}$  exists and is a diffusion process  only up to a possibly finite explosion time, see for instance Chapter 1 in \cite{Hsu}.

\section*{Stratonovich  stochastic differential equations and the language of vector fields}\label{section strato}

 It is often useful to use the language of Stratonovich's integration to study stochastic differential equations because the It\^o's formula takes a  better and intrinsic  form within this context. If $(N(s))_{0 \leq s \leq t}$, $t>0$, is an $\mathcal{F}$-adapted real valued local martingale and if $(\Theta(s))_{0 \leq s \leq t}$ is an $\mathcal{F}$-adapted continuous semimartingale  then  the Stratonovich integral\index{Stratonovich integral} of $(\Theta(s))_{0 \leq s \leq t}$ with respect to $(N(s))_{s \ge 0}$ is given by
\[
\int_0^t \Theta(s) \circ d  N(s) =\int_0^t \Theta(t) dN(s)
+\frac{1}{2} \langle \Theta, N \rangle_t,
\]
where:
\begin{enumerate}
\item $\int_0^t \Theta(s) d  N(s)$ is the It\^o integral of $(\Theta(s))_{0 \leq s \leq t}$ against $(N(s))_{0 \leq s \leq t}$;
\item $ \langle \Theta, N \rangle_t$ is the quadratic covariation at time $t$ between $(\Theta(s))_{0 \leq s \leq t}$ and $(N(s))_{0 \leq s \leq t}$.
\end{enumerate}

By using Stratonovich integral instead of It\^o's,  we can see that the D\"oblin-It\^o formula reduces to the classical change of variable formula.
\begin{theorem}[It\^o's formula]
Let $(X(t))_{t \geq 0}=\left( X_1(t) , \dots , X_n(t) \right)_{t \geq 0}$ be a $n$- dimensional continuous semimartingale. Let now
 $f:\mathbb{R}^n \rightarrow \mathbb{R}$ be a $C^2$ function. We have
\begin{align*}
f(X(t))  =f(X_0)+\sum_{i=1}^n \int_0^t \frac{\partial f}{\partial x_i} (X(s)) \circ dX_i(s), \quad t \ge 0.
\end{align*}
\end{theorem}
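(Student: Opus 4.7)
The plan is to derive the Stratonovich form of It\^o's formula directly from the standard (It\^o) version stated earlier in the appendix, by recognizing that the second-order It\^o correction term is precisely the sum of the Stratonovich-It\^o conversion terms. Thus the proof reduces to a bookkeeping computation using the definition
\[
\int_0^t \Theta(s)\circ dN(s) = \int_0^t \Theta(s)\, dN(s) + \tfrac{1}{2}\langle \Theta, N\rangle_t
\]
applied to $\Theta(s) = \frac{\partial f}{\partial x_i}(X(s))$ and $N(s) = X_i(s)$.

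First, I would apply the standard It\^o formula (already proved/stated above) to obtain
\[
f(X(t)) = f(X_0) + \sum_{i=1}^n \int_0^t \frac{\partial f}{\partial x_i}(X(s))\, dX_i(s) + \tfrac{1}{2}\sum_{i,j=1}^n \int_0^t \frac{\partial^2 f}{\partial x_i\partial x_j}(X(s))\, d\langle X_i,X_j\rangle(s).
\]
Next, I would compute the quadratic covariation $\bigl\langle \frac{\partial f}{\partial x_i}(X),\, X_i\bigr\rangle_t$. Since only the local martingale part of a semimartingale contributes to quadratic covariation, and since applying It\^o's formula to the $C^2$ function $\frac{\partial f}{\partial x_i}$ shows that its martingale differential coincides with $\sum_{j=1}^n \frac{\partial^2 f}{\partial x_i\partial x_j}(X(s))\, dM_j(s)$ where $M_j$ is the local martingale part of $X_j$, one obtains
\[
\Bigl\langle \tfrac{\partial f}{\partial x_i}(X),\, X_i\Bigr\rangle_t = \sum_{j=1}^n \int_0^t \frac{\partial^2 f}{\partial x_i\partial x_j}(X(s))\, d\langle X_j,X_i\rangle(s).
\]

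Then, summing the defining identity of the Stratonovich integral over $i$, I would write
\[
\sum_{i=1}^n \int_0^t \frac{\partial f}{\partial x_i}(X(s))\circ dX_i(s) = \sum_{i=1}^n \int_0^t \frac{\partial f}{\partial x_i}(X(s))\, dX_i(s) + \tfrac{1}{2}\sum_{i,j=1}^n \int_0^t \frac{\partial^2 f}{\partial x_i\partial x_j}(X(s))\, d\langle X_j,X_i\rangle(s).
\]
Substituting this into the It\^o formula displayed in the first step yields exactly the Stratonovich expression claimed. The only genuine step requiring care is justifying the computation of the quadratic covariation $\bigl\langle \frac{\partial f}{\partial x_i}(X),\, X_i\bigr\rangle_t$, which uses that $\frac{\partial f}{\partial x_i}$ is $C^1$ so that the process $\frac{\partial f}{\partial x_i}(X(s))$ is itself a continuous semimartingale whose martingale part is identified through a preliminary application of It\^o's formula. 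This is the only conceptual point; everything else is symmetry of $d\langle X_i, X_j\rangle$ and rearrangement of sums.
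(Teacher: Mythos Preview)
The paper does not prove this theorem; it is stated without proof in the appendix as a standard fact. Your derivation is the canonical one and is essentially correct.

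One point deserves more care than you give it. You write that $\frac{\partial f}{\partial x_i}$ is $C^1$, so $\frac{\partial f}{\partial x_i}(X(s))$ is a continuous semimartingale whose martingale part can be read off via It\^o's formula. But the It\^o formula you are invoking requires a $C^2$ function, and $\frac{\partial f}{\partial x_i}$ is only $C^1$ under the stated hypothesis. A $C^1$ function of a semimartingale need not be a semimartingale, so strictly speaking the Stratonovich integral, as defined in this appendix (which requires a semimartingale integrand), is not a priori well-defined for $\tfrac{\partial f}{\partial x_i}(X)$ when $f$ is merely $C^2$. The clean fix is either to assume $f\in C^3$, or to argue via approximation: take $f_n\in C^3$ with $f_n\to f$ in $C^2$ on compacts, apply your argument to each $f_n$, and pass to the limit using local uniform control of the It\^o integrals and the second-order terms. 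Alternatively, one can extend the definition of the covariation bracket $\langle g(X),X_i\rangle$ to $C^1$ functions $g$ directly via Riemann sums, which yields $\sum_j\int_0^t \partial_j g(X(s))\,d\langle X_j,X_i\rangle(s)$ without requiring $g(X)$ to be a semimartingale. Either route closes the gap; the rest of your argument is exactly right.
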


Let $\mathcal{O} \subset \mathbb{R}^n$ be a non empty open set. A smooth vector field\index{Vector field} $V$ on $\mathcal{O}$ is  a smooth map
\[
\begin{array}{llll}
V: & \mathcal{O} & \rightarrow  & \mathbb{R}^{n} \\
& x & \rightarrow  & (v_{1}(x),...,v_{n}(x)).
\end{array}
\]
The vector field $V$ defines a differential operator acting on  smooth functions $f: \mathcal{O} \rightarrow \mathbb{R}$ as
follows:
\[
 Vf(x)=\sum_{i=1}^n v_i (x) \frac{\partial f}{\partial x_i}.
\]
We note that $V$ is a derivation, that is a map on ${C}^{\infty} (\mathcal{O} , \mathbb{R} )$, linear over $\mathbb{R}$, satisfying for $f,g \in {C}^{\infty} (\mathcal{O} , \mathbb{R} )$,
\[
V(fg)=(Vf)g +f (Vg).
\]
Conversely, any derivation on ${C}^{\infty} (\mathcal{O} , \mathbb{R} )$ is a vector field.

Let now 
\[
(B(t))_{t \geq 0}=(B_1(t),...,B_d(t))_{t \geq 0}
\]
be a $d$-dimensional Brownian motion and  consider $d+1$  vector fields $V_i : \mathbb{R}^n \rightarrow \mathbb{R}^n
$, $n \geq 1$, $i=0,...,d$. By using the language of vector fields and Stratonovich integrals, the fundamental theorem for the existence and the uniqueness of solutions for stochastic
differential equations is the following:
\begin{theorem}
Assume that  $V_0,V_1,\dots,V_d$ are $C^2$ vector fields with bounded derivatives up to order 2. Let $x_0 \in \mathbb{R}^n$. On $\left( \Omega , (\mathcal{F}_t)_{t \geq 0} , \mathbb{P} \right)$, there exists a unique continuous and adapted process $(X(t))_{t \geq 0}$  such that for $t \geq 0$,
\begin{equation}
X(t)=x_0 + \sum_{i=0}^d \int_0^t V_i (X(s)) \circ dB_i(s),
\end{equation}
with the notation that $B_0(t)=t$.
\end{theorem}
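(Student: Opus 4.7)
The plan is to convert the Stratonovich SDE into an equivalent It\^o SDE and then invoke the classical existence and uniqueness theorem for It\^o SDEs with Lipschitz coefficients.

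First, I would suppose that a solution $(X(t))_{t\ge 0}$ exists and compute, for each $i=1,\dots,d$, the It\^o--Stratonovich correction term. Writing $V_i=(v_i^1,\dots,v_i^n)$, the definition of the Stratonovich integral gives
\[
\int_0^t V_i(X(s))\circ dB_i(s) = \int_0^t V_i(X(s))\, dB_i(s) + \frac{1}{2}\langle V_i(X(\cdot)),B_i\rangle_t.
\]
Applying It\^o's (standard) formula componentwise to $v_i^j(X(s))$ and using that only the $B_i$-component of $X$ contributes to the covariation with $B_i$, I obtain
\[
\langle V_i(X(\cdot)),B_i\rangle_t = \int_0^t (V_i V_i)(X(s))\, ds,
\]
where I use the notational convention that $V_i V_i$ denotes the vector field with $j$-th component $\sum_{k}v_i^k\,\partial_k v_i^j$, i.e.\ the derivation $V_i$ applied to the components of $V_i$. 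Hence the Stratonovich equation is equivalent to the It\^o equation
\[
X(t)=x_0+\int_0^t \widetilde V_0(X(s))\, ds + \sum_{i=1}^d \int_0^t V_i(X(s))\, dB_i(s),
\]
with drift $\widetilde V_0:=V_0+\tfrac12\sum_{i=1}^d V_i V_i$.

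Next, I would check the Lipschitz hypothesis for the classical It\^o existence/uniqueness theorem recalled earlier in the appendix. Each $V_i$ ($i=1,\dots,d$) has bounded first derivatives by assumption, hence is globally Lipschitz. The drift correction $V_iV_i$ has components $\sum_k v_i^k\partial_k v_i^j$, whose gradient involves products of $V_i$ and its first derivatives with $V_i$ and its second derivatives; boundedness of derivatives of $V_i$ up to order $2$ (together with linear growth, which follows by integrating the bounded first derivative) ensures $V_i V_i$ is Lipschitz as well. Thus $\widetilde V_0$ and $V_1,\dots,V_d$ are all Lipschitz with at most linear growth, and the classical theorem yields a unique continuous adapted solution to the It\^o SDE.

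Finally, to conclude, I would reverse the conversion: any continuous adapted solution of the It\^o SDE automatically satisfies the Stratonovich SDE, since the covariation computation performed above is valid for any continuous semimartingale solution. Hence existence and uniqueness transfer back. The main technical point, and the only place where the $C^2$-with-bounded-derivatives-up-to-order-$2$ hypothesis is actually used (rather than just $C^1$ with bounded first derivatives, as would suffice for the It\^o form), is precisely to guarantee that the Stratonovich correction $\tfrac12\sum_i V_iV_i$ is itself globally Lipschitz; this is the step I would want to write out most carefully.
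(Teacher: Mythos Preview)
Your approach---convert to It\^o form and apply the classical Lipschitz existence/uniqueness theorem---is exactly the route the paper has in mind: it states the theorem without proof and immediately afterwards records the It\^o formulation with drift correction $\tfrac12\sum_i\nabla_{V_i}V_i$, which is precisely your $\widetilde V_0$.

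One technical point deserves care. Under the natural reading of the hypothesis (first and second derivatives of $V_i$ bounded, but $V_i$ itself not assumed bounded), the correction term $V_iV_i$ is \emph{not} globally Lipschitz: its $l$-th partial derivative contains the term $\sum_k v_i^k\,\partial_l\partial_k v_i^j$, and $v_i^k$ has only linear growth. So the It\^o drift $\widetilde V_0$ is merely locally Lipschitz with linear growth, and the Lipschitz theorem quoted earlier in the appendix does not apply directly. The standard fix is to use local Lipschitz to get a unique strong solution up to an explosion time, and then rule out explosion via a Gronwall estimate using the linear growth of all coefficients. If instead the hypothesis is meant as $V_i\in C_b^2$ (the vector fields themselves bounded as well), your argument goes through as written.
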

Thanks to It\^o's formula, the  It\^o's formulation of a Stratonovich equation is
\begin{equation*}
X(t) =x_0 + \frac{1}{2} \sum_{i=1}^d \int_0^t \nabla_{V_i}
V_i (X(s)) ds +\sum_{i=0}^d \int_0^t V_i (X(s)) dB_i(s),
\end{equation*}
where for $1 \leq i \leq d$, $\nabla_{V_i} V_i$ is the vector field given by
\[
\nabla_{V_i} V_i (x)=\sum_{j=1}^n \left( \sum_{k=1}^n v^i_k (x)
\frac{\partial v_j^i}{\partial x_k}(x)\right)\frac{\partial}{\partial x_j}, \text{ }x \in
\mathbb{R}^n.
\]
If $f:\mathbb{R}^n \rightarrow \mathbb{R}$ is a $C^2$ function, from It\^o's formula, we have for $t \geq 0$,
\[
f(X(t))=f(x_0) + \sum_{i=0}^d \int_0^t (V_i f) (X(s)) \circ dB_i(s),
\]
and  the process
\[
\left( f(X(t))-\int_0^t (Lf)(X(s))ds
\right)_{t \geq 0}
\]
is a local  martingale where $L$ is the second order differential operator
\[
L = V_0+\frac{1}{2} \sum_{i=1}^d V_i^2.
\]

\section*{H\"ormander's theorem}

Consider the following stochastic differential equation
\begin{equation}
\label{SDE18} X(t) =x_0 +  \int_0^tV_0(X(s))ds+\sum_{i=1}^n \int_0^t V_i
(X(s)) \circ dB_i(s), \text{ } t \geq 0,
\end{equation}
where $x_0 \in \mathbb{R}^n$, $V_0,V_1,\cdots,V_n$ are $C^{\infty}$
bounded vector fields on $\mathbb{R}^n$ and $(B(t))_{t \geq 0}$ is
a $n$-dimensional standard Brownian motion.

As we mentioned previously, $(X^{x_0}(t))_{t \ge 0}$ is a diffusion process with semigroup

\begin{equation}
\label{representation18} \mathbb{E} \left( f(X^{x_0}(t))
\right)=\left( P_tf \right)(x_0),
\end{equation}
and infinitesimal generator
\[
L=V_0+\frac{1}{2} \sum_{i=1}^n V_i^2.
\]
We are interested in the existence of smooth
densities for the random variables $X^{x_0}(t)$, $t>0$, $x_0 \in
\mathbb{R}^n$. According to formula \eqref{representation18},
this question is therefore equivalent to the question of the
existence of a smooth transition kernel with respect to the
Lebesgue measure for the operators $P_t$. 

Let us
recall the following definition which comes from functional
analysis.
\begin{definition}
A differential operator $\mathcal{G}$ defined on an open set
$\mathcal{O} \subset \mathbb{R}^n$ is called hypoelliptic if,
whenever $u$ is a distribution on $\mathcal{O}$, $u$ is a smooth
function on any open set $\mathcal{O}' \subset \mathcal{O}$ on
which $\mathcal{G} u$ is smooth.
\end{definition}
It is possible to show that the existence of a smooth transition
kernel with respect to the Lebesgue measure for $P_t$ is equivalent to the hypoellipticity of $L$. Therefore, our initial question about the existence
of smooth densities for the random variables $X^{x_0}(t)$, $t>0$,
$x_0 \in \mathbb{R}^n$ is equivalent  to the
hypoellipticity of $L$.
One of the most useful results  to prove hypoellipticity is H\"ormander's theorem. It is stated using the notion of Lie brackets of vector fields that we recall below.

We have already mentioned that a vector field $V$ may be seen as derivation, that is a map on
${C}^{\infty} (\mathcal{O} , \mathbb{R} )$, linear over
$\mathbb{R}$, satisfying for $f,g \in {C}^{\infty}
(\mathcal{O} , \mathbb{R} )$,
\[
V(fg)=(Vf)g +f (Vg).
\]
And, conversely, any derivation on ${C}^{\infty} (\mathcal{O} , \mathbb{R} )$ is a vector field. If $V'$ is another
smooth vector field on $\mathcal{O}$, then it is easily seen that
the operator $VV'-V'V$ is a derivation. It therefore defines a
smooth vector field on $\mathcal{O}$ which is called the Lie
bracket\index{Lie bracket of vector fields} of $V$ and $V'$ and
denoted $[V,V']$. A straightforward computation shows that for $x
\in \mathcal{O}$,
\[
[ V, V' ](x)=\sum_{i=1}^n \left( \sum_{j=1}^n v_j (x)
\frac{\partial v'_i}{\partial x_j}(x)- v'_j (x) \frac{\partial
v_i}{\partial x_j}(x)\right)\frac{\partial}{\partial x_i}.
\]
Observe that the Lie bracket obviously satisfies  $[V,V']=-[V',V]$
and the so-called Jacobi identity, that is:
\[
[V,[V',V'']]+[V',[V'',V]]+[V'',[V,V']]=0.
\]

Given the set of vector fields $V_1,\dots,V_n$ we define the Lie algebra $\mathfrak L$  generated by those vector fields as the smallest linear space of vector fields that contains $V_1,\dots,V_n$ and which is closed under Lie brackets.

\begin{theorem}[H\"ormander theorem]\label{Hormander theorem}
Assume that for every $x_0 \in \mathbb{R}^n$,
\[
\left\{ V (x_0) , V \in \mathfrak L\right\} =\mathbb{R}^n,
\]
then the operator $L$ is hypoelliptic.
\end{theorem}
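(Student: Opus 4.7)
The plan is to prove H\"ormander's theorem via the Malliavin calculus, exploiting the probabilistic representation of the semigroup $P_t$ already established in the appendix. Since the excerpt records that $L$ is hypoelliptic if and only if $P_t$ admits a smooth transition kernel with respect to Lebesgue measure, it suffices to show that for every $x_0 \in \mathbb{R}^n$ and every $t>0$, the random variable $X^{x_0}(t)$ solving the Stratonovich SDE \eqref{SDE18} has a smooth density. This reduction is the only way I would use the probabilistic side in an essential manner.

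First, I would set up the Malliavin calculus framework on the Wiener space of $(B(s))_{s \geq 0}$: the Malliavin derivative $D$, the Sobolev spaces $\mathbb{D}^{k,p}$, and the fundamental integration-by-parts criterion (Malliavin--Stroock--Bismut): if $F \in \bigcap_{k,p} \mathbb{D}^{k,p}$ is $\mathbb{R}^n$-valued and its Malliavin covariance matrix $\gamma_F^{ij} = \langle DF^i, DF^j \rangle_{L^2([0,t])}$ satisfies $(\det \gamma_F)^{-1} \in \bigcap_{p \geq 1} L^p$, then the law of $F$ has a $C^\infty$ density. Next I would differentiate the SDE to obtain the Jacobian flow $J_s = \partial X^{x_0}(s)/\partial x_0$, which satisfies a linear Stratonovich SDE with smooth bounded coefficients, so that $J_s$ and its inverse $J_s^{-1}$ lie in all $L^p$. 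A standard computation gives
\[
D_r X^{x_0}(t) = J_t J_r^{-1} V(X^{x_0}(r)) \mathbf{1}_{[0,t]}(r),
\]
where $V = (V_1, \dots, V_n)$, and hence the Malliavin matrix factors as
\[
\gamma_t = J_t\, C_t\, J_t^{*}, \qquad C_t = \int_0^t J_s^{-1} V(X^{x_0}(s)) V(X^{x_0}(s))^{*} (J_s^{-1})^{*}\, ds.
\]
Thus invertibility of $\gamma_t$ with moments of all orders reduces to the same property for the reduced covariance matrix $C_t$.

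The main obstacle, and the true content of the theorem, is the probabilistic analog of the subelliptic estimate: showing that under the bracket-generating condition $\mathrm{Lie}(V_1,\dots,V_n)(x_0) = \mathbb{R}^n$, the smallest eigenvalue $\lambda_{\min}(C_t)$ has negative moments of all orders. I would prove this by the Kusuoka--Stroock iteration (Norris' lemma). The key input is a quantitative version of Norris' lemma: if a semimartingale $Y_s = y_0 + \int_0^s a(u) du + \sum_i \int_0^s b_i(u)\, dB^i(u)$ with sufficiently smooth $a,b_i$ has $\sup_{s \leq t} |Y_s|$ small with high probability, then so do $\sup_{s \leq t} |a(s)|$ and $\sup_{s \leq t} |b_i(s)|$, with explicit $L^p$ control. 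Applying this lemma to $Y_s = \langle v, J_s^{-1} W(X_s) \rangle$ for vector fields $W$ already known to belong to $\mathrm{Lie}(V_1, \dots, V_n)$ and using the identity $d(J_s^{-1} W(X_s)) = J_s^{-1}[V_0, W](X_s)\, ds + \sum_i J_s^{-1}[V_i, W](X_s) \circ dB^i(s)$ (an application of the chain rule and the derivation property of $[\cdot, \cdot]$), one concludes inductively: if $\langle v, C_t v \rangle$ is small, then $\langle v, W(x_0) \rangle$ is small for all $W$ in a generating set of the Lie algebra $\mathfrak{L}$ at $x_0$. By the hypothesis these vectors span $\mathbb{R}^n$, so $v = 0$.

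Finally, to pass from the smoothness of the law of $X^{x_0}(t)$ to the hypoellipticity of $L$ on $(0,\infty) \times \mathbb{R}^n$, I would invoke the equivalence recalled in the excerpt together with a standard extension: smoothness of the joint law of $(t, X^{x_0}(t))$ and the representation $P_t f(x_0) = \mathbb{E}(f(X^{x_0}(t)))$ imply that $(t,x_0) \mapsto P_t f(x_0)$ is smooth for bounded Borel $f$, from which the hypoellipticity of $\partial_t - L$, hence of $L$ itself, follows by a standard parabolic-elliptic argument. The difficult analytic step (Norris' lemma) is the probabilistic counterpart of H\"ormander's commutator estimates and is where the Lie bracket hypothesis is essentially used.
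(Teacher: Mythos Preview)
The paper does not give a proof of H\"ormander's theorem: it is stated in Appendix~1 as a classical result, followed only by the historical remark that the original proof was later simplified via pseudo-differential operators and that Malliavin developed his stochastic calculus of variations to reprove it probabilistically (with a reference to Nualart). So there is no proof in the paper to compare your proposal against.

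That said, your sketch is the standard Malliavin-calculus proof (the Kusuoka--Stroock scheme, with Norris' lemma driving the bracket iteration), and the outline is correct. Two small comments. First, the last paragraph is slightly tangled: you do not need to pass through hypoellipticity of $\partial_t - L$ and then back to $L$ via a ``parabolic-elliptic argument''; the paper already asserts the equivalence between hypoellipticity of $L$ and smoothness of the transition kernel of $P_t$, so once you have the smooth kernel you are done. Second, showing that each $X^{x_0}(t)$ has a $C^\infty$ density in $y$ gives smoothness of $p_t(x_0,\cdot)$; to obtain a \emph{smooth transition kernel} you also need joint smoothness in $(t,x_0)$. This follows from the same Malliavin machinery together with smoothness of the stochastic flow $x_0 \mapsto X^{x_0}(t)$ and the integration-by-parts formula, but it is a separate (standard) step that your sketch only hints at.
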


The original proof by H\"ormander was rather complicated and has since then
been considerably simplified  by using the theory of
pseudo-differential operators. The probabilistic counterpart of
the theorem, namely the existence of a smooth density for the
random variable $X^{x_0}(t)$, $t>0$, has first been pointed out by
Malliavin. In order to reprove the theorem under weaker
assumptions, the author has developed a stochastic calculus of
variations which is now known as the Malliavin calculus, see \cite{Nualart}.

\section*{Additive functionals of diffusion processes and Yor transforms}

In this section we present the general idea of the powerful method due to Marc Yor to explicitly compute the characteristic functions of additive functionals of diffusion processes. This  method is extensively used in the present monograph. Rather than focusing on the exact assumptions under which it can be performed, we prefer to show the structure of the argument.

Let
\[
L=\frac{1}{2} \sum_{i,j=1}^n a_{ij}(x) \frac{\partial^2}{\partial x_i \partial x_j} +\sum_{i=1}^n b_i(x) \frac{\partial}{\partial x_i}
\]
be a diffusion operator in $\mathbb R^n$. Under suitable assumptions this operator is the infinitesimal generator of the diffusion process $(X(t))_{t \ge 0}$ that solves the It\^o's stochastic differential equation
\[
dX(t)=b(X(t))dt +\sigma (X(t))dB(t)
\]
where $(B(t))_{t \ge 0}$ is a $n$-dimensional Brownian motion and $\sigma$ satisfies $a=\sigma \sigma^*$.

For a given function $f:\mathbb{R}^n \to \mathbb{R}$ assume that we are interested to compute 
\[
\mathbb{E}_x\left[ \exp \left(- \int_0^t f(X(s)) ds\right) \mid X(t)=y \right].
\]
The first step is to find a function $g$  that solves the non linear partial differential equation
\[
Lg+\frac{1}{2} \Gamma (g,g)=f,
\]
where $\Gamma$ is the carr\'e du champ operator associated with $L$  defined by:
\[
\Gamma (f_1,f_2)= \sum_{i,j=1}^n a_{ij}(x) \frac{\partial f_1}{\partial x_i} \frac{\partial f_2}{\partial x_j}. 
\]

Then, It\^o calculus shows that the process
\[
M^f_t=\exp \left( g(X(t))- g(X_0) -\int_0^t f(X(s)) ds \right)
\]
is a local martingale. Let us assume that it is a uniformly integrable martingale. We can then consider a new probability measure defined by
\[
\mathbb{P}^f_{/ \mathcal F_t}= M^f_t \mathbb{P}_{/ \mathcal F_t}
\]
where $\mathcal F_t$ is the natural filtration of the underlying Brownian motion $(B(t))_{t \ge 0}$. From Girsanov's theorem, under the new probability measure $\mathbb{P}^f$, $(X(t))_{t \ge 0}$ is then a diffusion process with generator:
\[
L+\Gamma(g,\cdot).
\]
Denote then by $P_t^f(x,dy)$ the transition kernel of this diffusion and by $P_t(x,dy)$ the transition kernel of the original diffusion $(X(t))_{t \ge 0}$ .

We have then the following result: 

\begin{theorem}\label{Yor transform section}
\[
\mathbb{E}_x\left[ \exp \left(- \int_0^t f(X(s)) ds\right) \mid X(t)=y \right]= \exp(g(x)-g(y)) \, \frac{P_t^f(x,dy)}{P_t(x,dy)}
\]
\end{theorem}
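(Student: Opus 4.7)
The plan is to exploit the defining property of conditional expectation by integrating against an arbitrary bounded test function $h$ on $\mathbb{R}^n$, rewrite the resulting unconditional expectation using the martingale $M_t^f$, and then invoke the Girsanov change of measure to transfer the computation under $\mathbb{P}^f$. First I would observe that by the definition of conditional expectation relative to $X(t)$ and by the fact that $P_t(x,dy)$ is the law of $X(t)$ under $\mathbb{P}_x$, for any bounded Borel $h$ one has
\[
\int_{\mathbb{R}^n} h(y)\,\mathbb{E}_x\!\left[\exp\!\left(-\int_0^t f(X(s))\,ds\right)\Big|\,X(t)=y\right] P_t(x,dy) = \mathbb{E}_x\!\left[h(X(t))\exp\!\left(-\int_0^t f(X(s))\,ds\right)\right].
\]

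Next, the key algebraic step is to use the explicit form of $M_t^f$ to rewrite the exponential functional. Since $M_t^f=\exp\bigl(g(X(t))-g(x)-\int_0^t f(X(s))\,ds\bigr)$ starting from $X(0)=x$, one has
\[
\exp\!\left(-\int_0^t f(X(s))\,ds\right) = \exp(g(x)-g(X(t)))\,M_t^f,
\]
so the right-hand side of the first display equals $\exp(g(x))\,\mathbb{E}_x[h(X(t))\exp(-g(X(t)))\,M_t^f]$. Because $M_t^f$ is assumed to be a uniformly integrable martingale, the Girsanov change of measure gives
\[
\mathbb{E}_x[h(X(t))\exp(-g(X(t)))\,M_t^f] = \mathbb{E}_x^f[h(X(t))\exp(-g(X(t)))] = \int_{\mathbb{R}^n} h(y)\exp(-g(y))\,P_t^f(x,dy).
\]

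Combining these identities yields
\[
\int_{\mathbb{R}^n} h(y)\,\mathbb{E}_x\!\left[\exp\!\left(-\int_0^t f(X(s))\,ds\right)\Big|\,X(t)=y\right] P_t(x,dy) = \int_{\mathbb{R}^n} h(y)\exp(g(x)-g(y))\,P_t^f(x,dy),
\]
and since this holds for every bounded Borel $h$, the two measures on $\mathbb{R}^n$ are equal, giving the Radon--Nikodym statement of the theorem. The only delicate point is the verification that $M_t^f$ is a genuine (not merely local) martingale so that Girsanov applies; this is exactly the hypothesis stated in the theorem (and the reason the precise assumptions were deliberately left implicit in the excerpt). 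The nonlinear PDE $Lg+\tfrac{1}{2}\Gamma(g,g)=f$ enters only through It\^o's formula, which shows that $M_t^f$ has no finite variation part and is therefore a local martingale; no further properties of $g$ beyond smoothness are needed for the proof itself.
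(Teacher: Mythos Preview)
Your proof is correct and follows essentially the same approach as the paper: integrate against an arbitrary bounded Borel $h$, use the explicit form of $M_t^f$ together with the Girsanov change of measure to compute $\mathbb{E}_x[h(X(t))\exp(-\int_0^t f(X(s))\,ds)]$ as $\int h(y)\exp(g(x)-g(y))\,P_t^f(x,dy)$, and identify the two representations. The only cosmetic difference is that the paper writes the change of measure as $\mathbb{E}_x[\,\cdot\,]=\mathbb{E}_x^f[\,\cdot\,/M_t^f]$ rather than $\mathbb{E}_x[\,\cdot\,M_t^f]=\mathbb{E}_x^f[\,\cdot\,]$, which is of course equivalent.
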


\begin{proof}
Denote by $\mathbb{E}^f$ the expectation under the probability $\mathbb{P}^f$. Let $h$ be a bounded Borel function. On one hand
\begin{align*}
\mathbb{E}_x\left[ h(X(t)) \exp \left(- \int_0^t f(X(s)) ds\right)\right] &=\mathbb{E}^f_x\left[ \frac{1}{M_t^f} h(X(t)) \exp \left(- \int_0^t f(X(s)) ds\right)\right] \\
&=\mathbb{E}^f_x\left[\exp(g(x)-g(X(t))) h(X(t))\right]  \\
&=\int_{\mathbb R^n} \exp(g(x)-g(y)) h(y) P_t^f(x,dy).
\end{align*}
On the other hand
\begin{align*}
&\mathbb{E}_x\left[ h(X(t)) \exp \left(- \int_0^t f(X(s)) ds\right)\right] \\
&\quad\quad=\int_{\mathbb R^n} \mathbb{E}_x\left[ \exp \left(- \int_0^t f(X(s)) ds\right) \mid X(t)=y \right] h(y) P_t(x,dy).
\end{align*}
Since the above equality holds for any function $h$, the conclusion follows.
\end{proof}

Of course the above theorem is only useful if we are able to compute explicitly a function $g$ such that
\begin{align}\label{Burgers}
Lg+\frac{1}{2} \Gamma (g,g)=f
\end{align}
and such that the transition kernel of the diffusion operator

\[
L^f=L+\Gamma(g,\cdot)
\]
 
 is also explicit. A common way to solve the Burgers type equation \eqref{Burgers} is to linearize it by means of the Hopf-Cole transform $g=\ln h$. The equation \eqref{Burgers} is transformed into the more tractable linear equation
 \[
 Lh = f h
 \]
 and the operator $L^f$ is transformed into $L+\frac{1}{h}\Gamma(h,\cdot)$.
 
 \begin{example}\label{Hartman-Watson}
 For $d \ge 2$, consider the generator
\[
L=\frac{1}{2} \frac{\partial^2}{\partial r^2}+\frac{d-1}{2r} \frac{\partial}{\partial r},
\]
which is associated with the so-called Bessel process of dimension $d$, see Section \ref{Bessel section}. Let $(R(t))_{t \ge 0}$ be the associated process. We want to compute
\[
\mathbb{E}_x\left[ \exp \left(- \int_0^t f(R(s)) ds\right) \mid R(t)=y \right]
\]
where $f(r)=\frac{\lambda}{r^2}$ with $\lambda >0$. For this, we consider the equation
\[
L h = \frac{\lambda}{r^2} h.
\]
A particular solution is given by $h(r)=r^{\alpha}$ where
\[
\alpha= \frac{-(d-2)+\sqrt{(d-2)^2+8\lambda}}{2}.
\]
The operator $L^f$ is then given by
\[
L^f=\frac{1}{2} \frac{\partial^2}{\partial r^2}+\frac{d-1+2\alpha}{2r} \frac{\partial}{\partial r},
\]
which is the generator of a Bessel diffusion with dimension $2+\sqrt{(d-2)^2+8\lambda}$. Using the formulas for the densities of Bessel diffusions, see Section \ref{Bessel section}, one concludes
\[
\mathbb{E}_x\left[ \exp \left(- \lambda\int_0^t  \frac{ds}{R(s)^2}\right) \mid R(t)=y \right]=\frac{ I_{\sqrt{\nu^2+2\lambda}}\left( \frac{xy}{t} \right)}{I_{\nu}\left( \frac{xy}{t} \right)},
\]
where $\nu=\frac{d}{2}-1$.
 \end{example}

\chapter{Appendix 2: Special diffusions}\label{sec-appendix-2}

\section{Bessel diffusions}\label{Bessel section}

The Bessel diffusion  with dimension $d$ is the diffusion on $[0, +\infty)$ with generator
\[
\mathcal{L}^{d}=\frac{1}{2} \frac{\partial^2}{\partial r^2}+\frac{d-1}{2r} \frac{\partial}{\partial r}, \quad d > 0
\]
defined up to the first time it hits the boundary $\{ 0 \}$.

The point $0$ is:
\begin{itemize}
\item  \hspace{.1in} A regular point for $0 <d <1 $;
\item  \hspace{.1in} An entrance point for $d \ge 1$.
\end{itemize}

If $d \ge 2$, the Bessel diffusion  with dimension $d$ never hits $0$ and its transition density with respect to the Lebesgue measure is given by:
\begin{align*}
p_t(x,y)=
 \begin{cases}
\displaystyle 2^{-\nu} t^{-(\nu+1)} \Gamma (\nu +1)^{-1} y^{2\nu+1} \exp \left(-\frac{y^2}{2t} \right), x=0, y>0 \\
\displaystyle
\frac{1}{t} \left( \frac{y}{x}\right)^\nu y \exp \left(-\frac{x^2+y^2}{2t} \right) I_\nu \left( \frac{ xy }{t} \right), x>0, y>0,
\end{cases}
\end{align*}
where $\nu=\frac{d}{2}-1$ and 
\[
I_{\nu} (x)=\sum_{m=0}^{+\infty} \frac{1}{m! \Gamma(m+\nu+1)} \left( \frac{x}{2} \right)^{2m+\nu}
\]

    is the modified Bessel function of the first kind of index $\nu$. The parameter $\nu$ is sometimes called the index of the Bessel diffusion.

\section{Spherical Jacobi diffusions and heat kernels on spheres}\label{Jacobi diffusions}

The Jacobi diffusion is the diffusion on $[0,\pi/2]$ with generator
\[
\mathcal{L}^{\alpha,\beta}=\frac{1}{2} \frac{\partial^2}{\partial r^2}+\left(\left(\alpha+\frac{1}{2}\right)\cot r-\left(\beta+\frac{1}{2}\right) \tan r\right)\frac{\partial}{\partial r}, \quad \alpha,\beta >-1
\]
defined up to the first time it hits the boundary $\{ 0, \pi /2 \}$.

The point $0$ is:
\begin{itemize}
\item  \hspace{.1in}  A regular point for $-1<\alpha <0$;
\item \hspace{.1in}  An entrance point for $\alpha \ge 0$.
\end{itemize}

Similarly, the point $\pi /2$ is:
\begin{itemize}
\item  \hspace{.1in} A regular point for $-1<\beta <0$;
\item \hspace{.1in}  An entrance point for $\beta \ge 0$.
\end{itemize}

If $r$ is a Jacobi diffusion with generator $\mathcal{L}^{\alpha,\beta}$, then it is easily seen that $\rho=\cos 2r$ is a diffusion with generator $2\mathcal{G}^{\alpha,\beta}$ where,

\begin{equation}\label{eq-G-ab}
\mathcal{G}^{\alpha,\beta}=(1-\rho^2)\frac{\partial^2}{\partial \rho^2}-\left( (\alpha+\beta+2)\rho +\alpha -\beta \right)\frac{\partial}{\partial \rho}.
\end{equation}

The spectrum and eigenfunctions of $\mathcal{G}^{\alpha,\beta}$ are known. Let us denote by $P_m^{\alpha,\beta}(x)$, $m\in \mathbb{Z}_{\ge0}$ the  Jacobi polynomials given by
\[
P_m^{\alpha,\beta}(x)=\frac{(-1)^m}{2^mm!(1-x)^{\alpha}(1+x)^\beta}\frac{d^m}{dx^m}((1-x)^{\alpha+m}(1+x)^{\beta+m}).
\]
It is known that $\{P_m^{\alpha,\beta}(x)\}_{m\ge0}$ is orthonormal in $L^2([-1,1], 2^{-\alpha-\beta-1}(1+x)^{\beta}(1-x)^{\alpha}dx)$ and satisfies 
\[
\mathcal{G}^{\alpha,\beta}P_m^{\alpha,\beta}(x)=-m(m+\alpha+\beta+1)P_m^{\alpha,\beta}(x).
\]
If we denote by $p^{\alpha,\beta}_t(x,y)$ the transition density with respect to the Lebesgue measure of the diffusion $\rho$ starting from $x \in (-1,1)$, then we have

\begin{align*}
  p^{\alpha,\beta}_t(x,y)
 =2^{-\alpha-\beta-1}(1+y)^{\beta}(1-y)^{\alpha}\sum_{m=0}^{+\infty}&\bigg[ (2m+\alpha+\beta+1)\frac{\Gamma(m+\alpha+\beta+1)\Gamma(m+1)}{\Gamma(m+\alpha+1)\Gamma(m+\beta+1)}  \\
& \cdot  e^{-2m(m+\alpha+\beta+1)t}P_m^{\alpha,\beta}(x)P_m^{\alpha,\beta}(y)\bigg].
\end{align*}

In particular, when 1 is an entrance point, that is $\alpha \ge 0$,  we obtain

\begin{align*}
  p^{\alpha,\beta}_t(1,y)=2^{-\alpha-\beta-1}(1+y)^{\beta}(1-y)^{\alpha} \sum_{m=0}^{+\infty} \bigg[ 
  &(2m+\alpha+\beta+1)\frac{\Gamma(m+\alpha+\beta+1)}{\Gamma(m+\beta+1)\Gamma(\alpha+1)} \\
  &\cdot e^{-2m(m+\alpha+\beta+1)t}P_m^{\alpha,\beta}(y)\bigg].
\end{align*}

Denoting $q_t^{\alpha,\beta}$ the transition density of $r$, we then obtain for $\alpha,\beta \ge 0$,

\begin{align*}
  q^{\alpha,\beta}_t(r_0,r)=2(\cos r)^{2\beta +1} (\sin r)^{2\alpha +1}\sum_{m=0}^{+\infty} \bigg[ 
 &(2m+\alpha+\beta+1)\frac{\Gamma(m+\alpha+\beta+1)\Gamma(m+1)}{\Gamma(m+\alpha+1)\Gamma(m+\beta+1)} \\ 
 &\cdot e^{-2m(m+\alpha+\beta+1)t} P_m^{\alpha,\beta}(\cos 2r_0)P_m^{\alpha,\beta}(\cos 2r)\bigg],
\end{align*}

and

\begin{align}\label{eq-qt-ab-0}
  q^{\alpha,\beta}_t(0,r) 
 =2(\cos r)^{2\beta +1} (\sin r)^{2\alpha +1} 
 & \sum_{m=0}^{+\infty} \bigg[ (2m+\alpha+\beta+1)\frac{\Gamma(m+\alpha+\beta+1)}{\Gamma(m+\beta+1)\Gamma(\alpha+1)} \notag \\
& \cdot e^{-2m(m+\alpha+\beta+1)t}
 P_m^{\alpha,\beta}(\cos 2r)\bigg].
\end{align}

Spherical Jacobi diffusions can be used to compute the heat kernel on
spheres. Indeed, the radial part of the Laplace-Beltrami operator on the $n-1$ dimensional Euclidean sphere is given by the diffusion operator 
\[
\frac{d^2}{d \rho^2}+(n-2)\cot \rho \frac{d}{d\rho}
\]
 which, after the change of variable $\rho=2r$, is a Jacobi operator with parameters $\alpha=\beta=\frac{n-3}{2}$.
 
 In particular, if $p(t,x,y)$ denotes the heat kernel on $\mathbb{S}^{n-1}$ then we have for $t>0$, $x,y \in \mathbb{S}^{n-1}$,
\[
p(t,x,y)=\frac{\Gamma \left( \frac{n}{2}-1\right)}{ 4\pi^{\frac{n}{2}} }\sum_{m \ge 0} \left(2m+n-2 \right)e^{-m(m+n-2)t } C_m^{\frac{n}{2}-1}(\langle x, y \rangle),
\]

where the Gegenbauer polynomials $C_m^\nu$, $\nu >\frac{1}{2}$, $m \in \mathbb{N}$  are defined through the Rodrigues' formula:
\[
C_m^\nu (t)=\frac{(-1)^m}{2^m} \frac{\Gamma(m+2\nu) \Gamma\left( \nu+ \frac{1}{2} \right)}{\Gamma(2\nu)\Gamma(m+1)\Gamma\left(\nu+m+\frac{1}{2} \right)} \frac{1}{(1-t^2)^{\nu-\frac{1}{2}} } \left( \frac{d}{dt} \right)^m (1-t^2)^{\nu+m-\frac{1}{2}}.
\]
The first few Gegenbauer polynomials are
\begin{align*}
C_0^\nu (t)= &1 \\
C_1^\nu (t)= &2 \nu t \\
C_2^\nu (t)= &-\nu+2\nu(1+\nu)t^2.
\end{align*}

An induction on $m$ shows that $C_m^\nu$ is given by the formula
\[
C_m^\nu (t)=\sum_{l=0}^{[m/2]} (-1)^l \frac{\Gamma(m-l+\nu) }{\Gamma(\nu) \Gamma(m-2l+1) \Gamma(l+1) } (2t)^{m-2l}.
\]
This formula, together with the expansion formula
\[
\frac{1}{(1-z)^\nu}=\sum_{j=0}^{+\infty} \frac{\Gamma(j+\nu)}{\Gamma(j+1) \Gamma(\nu)} z^j,
\]
applied with $z=r(2t-r)$, gives the following generating function formula for the Gegenbauer polynomials:
\begin{align}\label{generating_gegenbauer}
\frac{1}{( 1-2t r +r^2)^\nu}=\sum_{k=0}^{+\infty} C_k^\nu (t) r^k, \quad |r| <1,  | t | \le 1.
\end{align}
We may observe that applying the above with $t=1$ leads to
\[
\frac{1}{( 1-r)^{2\nu}}=\sum_{k=0}^{+\infty} C_k^\nu (1) r^k,
\]
so that
\[
C_m^\nu(1)=\frac{\Gamma (m+2\nu)}{\Gamma( m+1) \Gamma(2 \nu) }.
\]

\begin{remark}
Some special cases of the Gegenbauer polynomials are worth pointing out explicitly:
\begin{itemize}
\item  \hspace{.1in} The Gegenbauer polynomials corresponding to $\nu=\frac{1}{2}$ are the so-called Legendre polynomials.
\item  \hspace{.1in} The Gegenbauer polynomials corresponding to $\nu=1$ are the so-called Chebyshev polynomials of the second kind and often are denoted by $U_m$. They usually are defined by the property that
\[
U_m(\cos \theta)=\frac{\sin (m+1) \theta}{\sin \theta}.
\] 
\end{itemize}
\end{remark}

Though the spectral expansion formula for the heat kernel  $p(t,x,y)$ is nice and perfectly captures the behavior of $p(t,x,y)$ when $ t \to \infty$, it is difficult to be used to derive the asymptotics  in small times, $t \to 0$. So, it may be interesting to find another formula for $p(t,x,y)$. For $\theta \in [0,\pi]$, let us denote
\begin{align}\label{radial_kernel_sphere}
h_{n-1}(t, \theta)=\frac{\Gamma \left( \frac{n}{2}-1\right)}{ 4\pi^{\frac{n}{2}} }\sum_{m \ge 0} \left(2m+n-2 \right)e^{-m(m+n-2)t } C_m^{\frac{n}{2}-1}(\cos \theta),
\end{align}
so that 
\[
p(t,x,y)=h_{n-1}(t,d(x,y)),
\]
here $d(x,y)=\arccos \langle x, y\rangle$ is the Riemannian distance between $x$ and $y$ on $\bS^{n-1}$.
Observe that we stressed the dependence in $n$ by using a subscript. For instance, we have
\[
h_1(t,\theta)=\frac{1}{\sqrt{4\pi t}} \sum_{k \in \mathbb{Z}} e^{-\frac{(\theta-2k\pi)^2}{4t} },
\]
which comes from the formula for the heat kernel on $\mathbb{S}^1$.
We have the following recursion formula:

\begin{proposition}\label{Recursion heat kernel}
\[
h_{n+2}(t,\theta)=-\frac{e^{nt}}{2\pi \sin \theta} \frac{\partial h_n}{\partial \theta}(t,\theta).
\]
\end{proposition}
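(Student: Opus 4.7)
The plan is to prove the recursion formula by a direct term-by-term manipulation of the spectral series \eqref{radial_kernel_sphere} defining $h_{n-1}(t,\theta)$, using a single classical identity for Gegenbauer polynomials. Specifically, I will use the derivative formula
\[
\frac{d}{dx} C_m^\nu(x) = 2\nu \, C_{m-1}^{\nu+1}(x), \qquad m \ge 1,
\]
(with the convention $C_{-1}^{\nu+1}=0$), which follows at once from the Rodrigues representation recorded just before the statement. This is the key analytic input; everything else is bookkeeping.

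First, I would rewrite the statement in a convenient indexing: taking $n$ as the spherical dimension, so that
\[
h_n(t,\theta)=\frac{\Gamma\!\left(\tfrac{n-1}{2}\right)}{4\pi^{(n+1)/2}}\sum_{m\ge 0}(2m+n-1)\,e^{-m(m+n-1)t}\,C_m^{(n-1)/2}(\cos\theta),
\]
and analogously for $h_{n+2}$ with Gegenbauer index $(n+1)/2$. Differentiating termwise in $\theta$ (justified by the uniform exponential decay of the series) and using the chain rule together with the derivative formula above gives
\[
\frac{\partial h_n}{\partial\theta}(t,\theta)=-\sin\theta\cdot\frac{(n-1)\,\Gamma\!\left(\tfrac{n-1}{2}\right)}{4\pi^{(n+1)/2}}\sum_{m\ge 1}(2m+n-1)\,e^{-m(m+n-1)t}\,C_{m-1}^{(n+1)/2}(\cos\theta).
\]

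Second, I would reindex the sum by $k=m-1$. The key algebraic identity is $m(m+n-1)=k(k+n+1)+n$, so the exponential factor becomes $e^{-nt}e^{-k(k+n+1)t}$, while $2m+n-1=2k+n+1$. Dividing by $-\sin\theta$ and multiplying by $e^{nt}/(2\pi)$ therefore produces
\[
-\frac{e^{nt}}{2\pi\sin\theta}\frac{\partial h_n}{\partial\theta}(t,\theta)=\frac{(n-1)\,\Gamma\!\left(\tfrac{n-1}{2}\right)}{8\pi^{(n+3)/2}}\sum_{k\ge 0}(2k+n+1)\,e^{-k(k+n+1)t}\,C_k^{(n+1)/2}(\cos\theta).
\]

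The final step is to reconcile the prefactor with the one appearing in $h_{n+2}(t,\theta)$: one needs $(n-1)\Gamma\!\left(\tfrac{n-1}{2}\right)/8 = \Gamma\!\left(\tfrac{n+1}{2}\right)/4$, which is immediate from the Gamma functional equation $\Gamma\!\left(\tfrac{n+1}{2}\right)=\tfrac{n-1}{2}\,\Gamma\!\left(\tfrac{n-1}{2}\right)$. There is no real obstacle here; the only point requiring a modicum of care is the termwise differentiation and the index shift, both of which are routine given the rapid exponential decay. The identity is thus essentially a reflection of the derivative formula for Gegenbauer polynomials combined with the dimensional shift $n\mapsto n+2$ that underlies the classical ``dimensional reduction'' for radial heat kernels on odd-dimensional spheres.
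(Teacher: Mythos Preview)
Your proof is correct and follows essentially the same route as the paper's: differentiate the spectral series term by term, apply the Gegenbauer derivative identity $\frac{d}{dx}C_m^{\nu}=2\nu\,C_{m-1}^{\nu+1}$, reindex, and match the Gamma-prefactors. Your indexing (with $h_n$ denoting the heat kernel on $\mathbb{S}^n$, so Gegenbauer index $(n-1)/2$) is in fact cleaner than the paper's own write-up, and your identity $m(m+n-1)=k(k+n+1)+n$ with $k=m-1$ is exactly what is needed to produce the $e^{nt}$ factor in the statement.
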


\begin{proof}
We proceed with a direct computation. We have
\[
h_n(t, \theta)=\frac{\Gamma \left( \frac{n}{2}-1\right)}{ 4\pi^{\frac{n}{2}} }\sum_{m \ge 0} \left(2m+n-2 \right)e^{-m(m+n-2)t } C_m^{\frac{n}{2}-1}(\cos \theta),
\]
therefore
\[
-\frac{1}{\sin \theta} \frac{\partial h_n}{\partial \theta}(t,\theta)=\frac{\Gamma \left( \frac{n}{2}-1\right)}{ 4\pi^{\frac{n}{2}} }\sum_{m \ge 1} \left(2m+n-2 \right)e^{-m(m+n-2)t } \frac{dC_m^{\frac{n}{2}-1}}{dt} (\cos \theta).
\]
Now, as a consequence of (\ref{generating_gegenbauer}), it is easy to check that the following recursion formula holds
\[
\frac{dC_m^{\frac{n}{2}-1}}{dt} =(n-2)C_{m-1}^{\frac{n}{2}}.
\]
Therefore, we obtain
\begin{align*}
-\frac{1}{\sin \theta} \frac{\partial h_n}{\partial \theta}(t,\theta) & =\frac{\Gamma \left( \frac{n}{2}-1\right)}{ 4\pi^{\frac{n}{2}} }\sum_{m \ge 1} \left(2m+n-2 \right)e^{-m(m+n-2)t }2(n-2)C_{m-1}^{\frac{n}{2}} (\cos \theta) \\
 &=2\left(\frac{n}{2}-1\right)\frac{\Gamma \left( \frac{n}{2}-1\right)}{ 4\pi^{\frac{n}{2}} } \sum_{m \ge 0} \left(2m+n \right)e^{-(m+1)(m+n-1)t }C_{m-1}^{\frac{n}{2}} (\cos \theta).
\end{align*}
The claimed result then easily stems from the equality
\[
(m+1)(m+n-1)=m(m+n)+n.
\]
\end{proof}
We already know that
\[
h_1(t,\theta)=\frac{1}{\sqrt{4\pi t}} \sum_{k \in \mathbb{Z}} e^{-\frac{(\theta-2k\pi)^2}{4t} }.
\]
A simple recursion thus leads to
\begin{equation}\label{heat_kernel_odd}
h_{2n+1}= e^{n^2t} \left( -\frac{1}{2\pi \sin \theta} \frac{\partial}{\partial \theta} \right)^n V
\end{equation}
where 
\begin{equation*}
V(t,\theta)=\frac{1}{\sqrt{4\pi t}} \sum_{k \in \mathbb{Z}} e^{-\frac{(\theta-2k\pi)^2}{4t} }.    
\end{equation*}
This leads, for instance to the following asymptotics
\[
h_{2n+1}(t,\theta) \sim_{t \to 0} \frac{1}{(4\pi t)^{n+\frac{1}{2}}} \left( \frac{\theta}{\sin \theta} \right)^n e^{-\frac{\theta^2}{4t} }, \quad \theta \in [0, \pi).
\]

The analysis of the heat kernel asymptotics on even dimensional spheres is slightly more involved. In order to find a formula like (\ref{heat_kernel_odd}) by using Proposition \ref{Recursion heat kernel} we first need to understand the heat kernel on the two-dimensional sphere.  It turns out that the  heat kernel on the two-dimensional sphere may be expressed from  the heat kernel on the three-dimensional sphere through a simple integral transform.

To express this integral transform, it is more convenient to introduce
\[
q_n(t,\tau)=h_n(t,{\arccos} \tau).
\]
It is seen that  $q_n$ is  the Neumann heat kernel issued from $0$ of the diffusion operator
\[
(1-\tau^2)\frac{d^2}{d\tau^2}-n\frac{d}{d\tau}.
\]
Keep in mind that according to the definition of $h_n$, $q_n(t,\cos d(x,y))$ is the heat kernel on the $n$-dimensional sphere.

Thanks to Proposition \ref{Recursion heat kernel}, the following recursion formula holds
\[
q_{n+2}(t,\tau)=\frac{e^{nt}}{2 \pi} \frac{\partial q_n}{\partial \tau} (t,\tau).
\]

We have the following intertwining that may be used to study the heat kernel on even dimensional spheres.

\begin{proposition}\label{heat kernel entrelacement}
For $\theta \in [0,\pi]$,
\[
q_{2} (t ,\cos 2\theta)=\frac{1}{\pi} \int_0^\pi q_3 \left(\frac{t}{4}, \cos \theta \cos \phi \right) d\phi.
\]
\end{proposition}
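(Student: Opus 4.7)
The strategy is to interpret both sides geometrically via the complex Hopf fibration $\pi: \mathbb{S}^3 \to \mathbb{C}P^1$ combined with the isometric identification $\mathbb{C}P^1 \cong \mathbb{S}^2(1/2)$ (the two-sphere of radius $1/2$). The exponent $t/4$ and the doubled argument $\cos 2\theta$ in the identity will both arise from the conformal rescaling passing between the radius-$1/2$ sphere and the unit sphere (Laplacians scale by $4$, distances scale by $2$). The factor $\cos\theta\cos\phi$ is nothing but the cosine of the $\mathbb{S}^3$-distance from the north pole to a point in Hopf coordinates, which is the key geometric input.

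The plan is to proceed in three steps. First, I would recall that in the Hopf coordinates $(w,\phi)\mapsto e^{i\phi}(w,1)/\sqrt{1+|w|^2}$ on $\mathbb{S}^3$, the $\mathbb{S}^3$-distance from the north pole satisfies $\cos d_{\mathbb{S}^3}(\mathrm{NP},(w,\phi))=\cos r\cos\phi$, where $r=\arctan|w|$ is the $\mathbb{C}P^1$-distance of $[w{:}1]$ from the origin (this computation already appeared in the Hopf/$\mathbb{C}P^n$ chapter of the book). Next, because $\pi$ is a totally geodesic Riemannian submersion with fibers isometric to circles of length $2\pi$, lifting a function $g$ on $\mathbb{C}P^1$ to $f=g\circ\pi$ gives $\Delta_{\mathbb{S}^3}f=(\Delta_{\mathbb{C}P^1}g)\circ\pi$. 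Averaging the radial $\mathbb{S}^3$-heat kernel $p^{\mathbb{S}^3}_t(\mathrm{NP},\cdot)=q_3(t,\cos d_{\mathbb{S}^3})$ over the circular fiber above a point at $\mathbb{C}P^1$-distance $r$ yields the fiber-integration identity
\[
p^{\mathbb{C}P^1}_t(0,\cdot)\;=\;2\int_0^\pi q_3(t,\cos r\cos\phi)\,d\phi.
\]

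The final step is to express $p^{\mathbb{C}P^1}_t$ in terms of $q_2$. Under the scaling map from $\mathbb{S}^2(1/2)\cong\mathbb{C}P^1$ to $\mathbb{S}^2(1)$ one has $\Delta_{\mathbb{C}P^1}=4\Delta_{\mathbb{S}^2(1)}$, the geodesic distance transforms as $r\mapsto 2r$, and the volume element rescales by a factor of $4$. A short book-keeping of these three factors, together with the convention for $q_n$ as the heat kernel of $\Delta$ (not $\Delta/2$) with respect to the Riemannian volume, gives $p^{\mathbb{C}P^1}_t(0,w)=4\,q_2(4t,\cos 2r)$. Equating the two expressions for $p^{\mathbb{C}P^1}_t$ and reparametrizing $s=4t$, $\theta=r$ yields the claimed identity, once the multiplicative constants are collected.

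The main delicate point is exactly this accounting of normalization constants: (i) the volume rescaling between $\mathbb{S}^2(1/2)$ and $\mathbb{S}^2(1)$, (ii) the length $2\pi$ of the Hopf fibers, and (iii) the convention used for $q_n$. An alternative, more computational route avoids these geometric constants entirely: expand both sides in eigenfunctions, observe that the $\mathbb{S}^3$-eigenvalues $m(m+2)$ match the $\mathbb{S}^2$-eigenvalues $\ell(\ell+1)$ precisely when $m=2\ell$ (odd $m$ drop out by the symmetry $\phi\mapsto\pi-\phi$), and then reduce everything to the classical Chebyshev--Legendre integral
\[
\int_0^\pi U_{2\ell}(\cos\theta\cos\phi)\,d\phi \;=\; \pi\,P_\ell(\cos 2\theta),
\]
which is verified by expanding $U_{2\ell}$ as a polynomial in $\cos\delta=\cos\theta\cos\phi$ and evaluating $\int_0^\pi\cos^{2k}\phi\,d\phi$.
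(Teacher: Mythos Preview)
Your proposal is correct, and both routes you describe work. The paper's argument is closest to your ``alternative'' spectral route: it also reduces the identity to the spectral expansions of $q_2$ and $q_3$ together with the Chebyshev--Legendre integral
\[
P_\ell(\cos 2\theta)=\frac{1}{\pi}\int_0^\pi U_{2\ell}(\cos\theta\cos\phi)\,d\phi.
\]
The one difference is how this polynomial identity is justified. You propose to expand $U_{2\ell}$ and evaluate $\int_0^\pi\cos^{2k}\phi\,d\phi$ term by term; the paper instead first checks, by direct computation, the operator intertwining
\[
\mathcal{T}\mathcal{L}_3 \;=\; 4\,\mathcal{L}_2\,\mathcal{T},\qquad (\mathcal{T}f)(\cos 2\theta):=\frac{1}{\pi}\int_0^\pi f(\cos\theta\cos\phi)\,d\phi,
\]
with $\mathcal{L}_n=(1-\tau^2)\partial_\tau^2-n\tau\partial_\tau$, and then reads off the polynomial identity (a Mehler--Dirichlet type formula) by matching eigenvalues.

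Your primary geometric route via the Hopf fibration is genuinely different from the paper's presentation, and in fact is a conceptual explanation of the paper's key computation: the transform $\mathcal{T}$ is precisely fiber-averaging for $\mathbb{S}^3\to\mathbb{C}P^1$ followed by the rescaling $\mathbb{C}P^1\cong\mathbb{S}^2(1/2)\to\mathbb{S}^2(1)$, and the factor $4$ in $\mathcal{T}\mathcal{L}_3=4\mathcal{L}_2\mathcal{T}$ is exactly the conformal factor in $\Delta_{\mathbb{C}P^1}=4\Delta_{\mathbb{S}^2(1)}$. The paper's approach trades this geometric insight for a short differential-operator check that sidesteps the volume-normalization bookkeeping you correctly flag as delicate; your approach makes the underlying structure transparent. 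Both reach the same conclusion.
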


\begin{proof}
If $f:[-1,1] \to \mathbb{R}$ is a smooth function, we define $\mathcal{T}f$ by
\[
\mathcal{T}f(\cos 2\theta) =\frac{1}{\pi} \int_0^\pi f(\cos \theta \cos \phi) d\phi, \quad \theta \in [0,\pi].
\]
It is easily checked by direct computation that
\[
\mathcal{T}\mathcal{L}_{3} f=4 \mathcal{L}_{2}\mathcal{T} f,
\]
where
\[
\mathcal{L}_{n}=(1-\tau^2)\frac{d^2}{d\tau^2}-n\frac{d}{d\tau}.
\]
This yields the Mehler-Dirichlet formula for the Gegenbauer polynomials:
\[
C^{1/2}_n(\cos 2 \theta)= \frac{1}{\pi} \int_0^\pi C^1_{2n+1} (\cos \theta \cos \psi) d\psi.
\]
But, thanks to the formula (\ref{radial_kernel_sphere}),
\[
q_{2}(t, \tau)=\frac{\sqrt{\pi}}{ 4\pi^{\frac{3}{2}} }\sum_{m \ge 0} \left(2m+1 \right)e^{-m(m+1)t } C_m^{\frac{1}{2}}( \tau),
\]
 and
 \[
q_{3}(t, \tau)=\frac{1}{ 4\pi^{2} }\sum_{m \ge 0} \left(2m+2 \right)e^{-m(m+2)t } C_m^{1}(\tau).
\]
The result then easily easily follows.
\end{proof}

\begin{remark}
By using Proposition \ref{Recursion heat kernel}, Proposition \ref{heat kernel entrelacement} and the Laplace method, one can prove that
\[
h_{2n}(t,\theta) \sim_{t \to 0} \frac{1}{(4\pi t)^{n}} \left( \frac{\theta}{\sin \theta} \right)^{n-\frac{1}{2}} e^{-\frac{\theta^2}{4t} }, \quad \theta \in [0, \pi).
\]
\end{remark}

\section{Hyperbolic Jacobi diffusions and heat kernels on hyperbolic spaces}\label{section hyp Jacobi}
The hyperbolic Jacobi operator is the diffusion operator defined by: 
\begin{equation*}
\mathscr{L}^{\alpha, \beta} = \frac{1}{2} \frac{\partial^2}{\partial r^2}+\left(\left(\alpha+\frac{1}{2}\right)\coth r  + \left(\beta+\frac{1}{2}\right) \tanh r\right)\frac{\partial}{\partial r}, \quad \alpha >-1,\ \beta \in \mathbb{R},\ r \geq 0.    
\end{equation*}
For special values of $\alpha$ and $\beta$, it appears as the radial part of the Laplace-Beltrami operators on  symmetric spaces of noncompact type as well as of Damek-Ricci (in general non symmetric) spaces. We refer the reader to \cite{Koo} for harmonic analysis on Riemannian non compact symmetric spaces, to \cite{Hec-Sch} for non necessarily Riemannian symmetric spaces and with a different flavour, and to \cite{ADY} for Damek-Ricci spaces. 

When $\beta = -1/2$, $\mathscr{L}^{\alpha, -1/2}$ is referred to as the hyperbolic Bessel operator. Since when $\alpha = n/2, n \in \mathbb{N}\setminus \{0\}$, it reduces  to the radial Laplacian on real hyperbolic spaces. The heat kernel of the latter admits several representations among which we find Millson's induction formulas distinguishing odd and even dimensions: let 
\begin{equation}\label{eq-kernel-hyper-jacobi}
H_n = \mathbb{R}^{n-1} \times \mathbb{R}_{\ge 0},\quad n \geq 2,
\end{equation}
be the the half-space model of the real hyperbolic space in $n$ dimensions, then with appropriate normalization  (see Appendix 3) of the volume density measure: 
\begin{itemize}
\item  \hspace{.1in} If $n = 2m+1, m \geq 0,$, then 
\begin{equation*}
q_{t,2m+1}(x,y) = q_{t,2m+1}(r) = \frac{e^{-m^2t/2}}{(2\pi)^m\sqrt{2\pi t}} \left(-\frac{1}{\sinh(r)}\frac{d}{dr}\right)^me^{-r^2/(2t)},
\end{equation*}
where $r = d_n(x,y)$ is the hyperbolic distance between $x, y \in H_n$. 
\item  \hspace{.1in} If $n = 2m+2, m \geq 0,$ is even, then  
\begin{align*}
 q_{t,2m+2}(r)& = \frac{e^{-(2m+1)^2t/8}\sqrt{2}}{(2\pi t)^{3/2} (2\pi)^m} \left(-\frac{1}{\sinh(r)}\frac{d}{dr}\right)^m \int_r^{\infty} \frac{\theta e^{-\theta^2/(2t)}}{(\cosh(\theta) - \cosh(r))^{1/2}} d\theta \\
 &=\frac{e^{-(2m+1)^2t/8}\sqrt{2}}{(2\pi t)^{1/2}(2\pi)^{m+1}  }  \int_r^{\infty} \frac{1 }{(\cosh(\theta) - \cosh(r))^{1/2}} \left(-\frac{d}{d\theta}\right)\left(-\frac{1}{\sinh(\theta)}\frac{d}{d\theta}\right)^m e^{-\theta^2/(2t)} d\theta .
\end{align*}
\end{itemize}
The Millson recursion formula then writes
\begin{equation}\label{Milson recursion hyp}
q_{t,n+2}(r)=-\frac{1}{2\pi}\frac{e^{-\frac{nt}{2}}}{\sinh (r)}\frac{d}{d r}q_{t,n}(r).
\end{equation}
It is the analogue for the hyperbolic space of the recursion formula in Proposition \ref{Recursion heat kernel}.
Another less-known formula for the hyperbolic heat semi-group was derived in \cite{Gruet} and does not distinguish odd and even dimensions. It is given by the following oscillatory integral:
\begin{equation}\label{GruetFor}
q_{t,n}(r) = \frac{e^{-(n-1)^2t/8}}{\pi(2\pi)^{n/2}\sqrt{t}}\Gamma\left(\frac{n+1}{2}\right)\int_0^{\infty}\frac{e^{(\pi^2-\rho^2)/(2t)}\sinh(\rho)\sin(\pi\rho/t)}{[\cosh(\rho) + \cosh(r)]^{(n+1)/2}} d\rho,
\end{equation}

For arbitrary $\alpha > -1/2$, the heat kernel of $\mathscr{L}^{\alpha, -1/2}$ admits an integral representation stemming from the harmonic analysis related to Jacobi functions and another one extending Gruet's formula alluded to above to real values of $\alpha$. The reader may find in \cite{Dem-Heat} a good account on these formulas and on their different proofs, as well as a new formula for the heat kernels of Damek-Ricci spaces including those corresponding to complex and quaternionic hyperbolic spaces.  We also refer to \cite{MR2018351} for formulas and estimates for the heat kernel on symmetric spaces of non-compact type which are based on non-commutative Fourier transform techniques.

In particular, on the  complex hyperbolic space $\mathbb{C}H^n$ with $n\ge 2$, the heat kernel is the heat kernel of $\mathscr{L}^{n-1, 0 }$  and is given by
\begin{align}\label{heat kernel CHn}
\frac{2 e^{-n^2t/2}}{(\pi t)^{1/2}(2\pi)^{n}  }  \int_r^{\infty} \frac{1 }{(\cosh(2\theta) - \cosh(2r))^{1/2}} \left(-\frac{d}{d\theta}\right)\left(-\frac{1}{\sinh(\theta)}\frac{d}{d\theta}\right)^{n-1} e^{-\theta^2/(2t)} d\theta.
\end{align}
From \cite{Dem-Heat}, an alternative formula for that kernel is 
\[
\frac{2e^{-n^2t/2}}{(2\pi t)^{1/2} (2\pi)^n}  \int_r^{\infty}\frac{\sinh(\theta)}{(\cosh^2(\theta) - \cosh^2(r))^{1/2}} \left(-\frac{1}{\sinh(\theta)}\frac{d}{d\theta}\right)^ne^{-\theta^2/(2t)} d\theta.
\]
 On the  quaternionic hyperbolic space $\mathbb{H}H^n$ with $n\ge 2$, the heat kernel is the heat kernel of $\mathscr{L}^{2n-1, 1 }$  and is given by
\begin{align}\label{heat kernel HHn}
\frac{4 e^{-(2n+1)^2t/2}}{(\pi t)^{1/2}(2\pi)^{2n}  }  \int_r^{\infty} & \frac{1 }{(\cosh(2\theta) - \cosh(2r))^{1/2}} \notag \\
 &\left(-\frac{d}{d\theta}\right)\left(-\frac{1}{\sinh(2\theta)}\frac{d}{d\theta}\right)\left(-\frac{1}{\sinh(\theta)}\frac{d}{d\theta}\right)^{2(n-1)} e^{-\theta^2/(2t)} d\theta.
\end{align}
From \cite{Dem-Heat}, an alternative formula for that kernel is 
\[
C_n \frac{e^{-(2n+1)^2t/2}}{\cosh(r)} \left( \int_0^{\infty} y^{2n-1}u(t,y) K_1(y\cosh(r)) dy \right)
\]
where $C_n$ is a normalizing constant, $K_1$ is the modified Bessel function of the second kind, and $u(t,y)$ is the density of the Hartman-Watson distribution at time $t$.

\section{Matrix Jacobi diffusions}
The real symmetric Jacobi diffusion was introduced by Y. Doumerc in his Ph.D. thesis as an extension of the Jacobi diffusion valued in $[0,1]$ (see \cite{Dou}). 
Its construction is inspired from the connection between the matrix Beta distribution and the radial part of the left-upper corner of a Haar unitary matrix due to B. Collins (see \cite{Collins}). The real symmetric Jacobi diffusion was then built out of a Brownian motion in the orthogonal group. 

The complex Hermitian analogue of this diffusion was introduced and studied in a series of paper by the secondly-named author, replacing the Brownian motion in the orthogonal group by its complex analogue. It may be written as 
\begin{equation*}
J(t) \oplus 0:= PY(t)QY(t)^*P, \quad t \geq 0,
\end{equation*}
where 
\begin{itemize}
    \item  \hspace{.1in} $(Y(t))_{t \geq 0}$ is a unitary Brownian motion. 
    \item  \hspace{.1in} $P,Q$ are diagonal projections. 
    \end{itemize}
The eigenvalues processes of the real symmetric and the complex Hermitian Jacobi diffusions satisfy the following differential system:  
\begin{multline}\label{JE}
d\lambda_i(t) = 2\sqrt{(\lambda_i(t)(1-\lambda_i(t))}d\nu_i(t) + \beta [(p - (p+q)\lambda_i(t))]dt + \\
 \beta \sum_{j \neq i}\frac{\lambda_i(t)(1-\lambda_j(t)) + \lambda_j(t)(1-\lambda_i(t))}{\lambda_i(t) - \lambda_j(t)} dt
\end{multline}
where $\beta = 1,2,$ respectively and $(\nu_i)_{i=1}^m$ are real independent Brownian motions. Moreover, if $m \geq 1$ is the matrix size and if $p\wedge q > (m-1) + 1/\beta$, then \eqref{JE} admits a unique strong solution for all time $t > 0$ and for any starting eigenvalue vector $(\lambda_j(0))_{j=1}^m$. We refer the reader to \cite{Dem-beta} for a detailed study of \eqref{JE} with arbitrary $\beta > 0$. In particular, the semi-group density of the latter admits the following expansion (see \cite{Dem-beta} and references therein for more details): 
\begin{equation}
\label{densite}
p_t^{r,s,\beta}(\theta, \lambda) := 
\sum_{\tau=(\tau_1 \geq \cdots \geq \tau_m \geq 0)}
e^{-2r_{\tau}^{\beta}t}\jam(\theta)\jam(\lambda) W_m^{r,s,\beta}(\lambda)
\end{equation}
where 
\begin{itemize}
\item  \hspace{.1in} $p \wedge q > (m-1) +1/\beta, $. 
\item  \hspace{.1in} $\beta (p - (m-1)) := 2(r+1),\,\beta(q-(m-1)) := 2(s+1)$.
\item  \hspace{.1in} The spectral value $r_{\tau}^{\beta}$ is given by:
\begin{equation*}
r_{\tau}^{\beta} := \sum_{i=1}^m\tau_i(\tau_i +r+s+1 + \beta(m-i)).
\end{equation*}
\item  \hspace{.1in} $\jam$ is the so-called multivariate Jacobi polynomial, also referred to as the Heckman-Opdam polynomial of type $BC$. For special values of the parameters $(r,s,\beta)$, it is a spherical function of the compact Grassmannians over division algebras. In particular, $P_{\tau}^{r,s,2}$ admits the following determinantal representation: 
\begin{equation*}
P_{\tau}^{r,s,2}(\lambda) = \frac{\det[P_{\tau_i+m-i}^{r,s}(\lambda_j)]_{i,j}}{V(\lambda)},
\end{equation*}
where $P_{\tau_i+m-i}^{r,s}$ is a real Jacobi polynomial and $V$ stands for the Vandermonde polynomial.
\item  \hspace{.1in} $W_m^{r,s,\beta}(\lambda)$ is the multivariate Beta distribution: 
\begin{equation*}
W_m^{r,s,\beta}(\lambda) = [V(\lambda)]^{\beta}\prod_{j=1}^m\lambda_j^r(1-\lambda_j)^s.
\end{equation*}
    
\end{itemize}

\chapter{Appendix 3: Radial part of the Laplacian in rank-one symmetric spaces}

If $\M$ is rank-one Riemannian symmetric space, there exists an operator $\Delta_r$ called the radial Laplacian such that 
\[
\Delta (f \circ d(o,\cdot))= (\Delta_r f )\circ d(o,\cdot)
\]
where $\Delta$ is the Laplace-Beltrami operator and $d(o,\cdot)$ the distance from an arbitrary fixed point $o$. An invariant and symmetric measure for $\Delta_r$ is called a radial measure.

\begin{table}[H]
\centering
\scalebox{0.8}{
\begin{tabular}{|p{1.5cm}||p{8cm}|c|>{\centering\arraybackslash}p{1.8cm}|c|  }
  \hline
 $\M$ &   Radial Laplacian   &   Radial measure      \\
  \hline
 \hline
$\mathbb{R}^m$ & $ \frac{\partial^2}{\partial r^2} + \frac{m-1}{r} \frac{\partial}{\partial r} $ & $ 2 \frac{\pi^{m/2}}{\Gamma (m/2)} \, r^{m-1} dr$  \\
$\mathbb{S}^m$ & $ \frac{\partial^2}{\partial r^2} + (m-1)\cot r\frac{\partial}{\partial r} $ & $ 2 \frac{\pi^{m/2}}{\Gamma (m/2)} \, (\sin r)^{m-1} dr$  \\
$H^m$ & $ \frac{\partial^2}{\partial r^2} + (m-1)\coth r\frac{\partial}{\partial r} $ & $ 2 \frac{\pi^{m/2}}{\Gamma (m/2)} \, (\sinh r)^{m-1} dr$  \\
 $\mathbb{C}P^m$ &$\frac{\partial^2}{\partial r^2}+((2m-2)\cot r +2 \cot 2 r)\frac{\partial}{\partial r} $ & $ \frac{\pi^m}{(m-1)!} \, (\sin r)^{2m-2} \sin (2r) \,dr$    \\
$\mathbb{C}H^m$  & $ \frac{\partial^2}{\partial r^2}+((2m-2)\coth r+2  \coth 2r)\frac{\partial}{\partial r}$ & $ \frac{\pi^m}{(m-1)!} \, (\sinh r)^{2m-2} \sinh (2r) \,dr$   \\
$\mathbb{H}P^m$ &$\frac{\partial^2}{\partial r^2}+((4m-4)\cot r+6\cot2 r)\frac{\partial}{\partial r}$ & $ \frac{\pi^{2m}}{4(2m-1)!} \, (\sin r)^{4m-4} \sin (2r)^3 \,dr$    \\
$\mathbb{H}H^m$  & $ \frac{\partial^2}{\partial r^2}+((4m-4)\coth r+6  \coth 2r)\frac{\partial}{\partial r}$ & $ \frac{\pi^{2m}}{4(2m-1)!} \, (\sinh r)^{4m-4} \sinh (2r)^3 \,dr$   \\
$\mathbb{O}P^1$ &$\frac{{\partial}^2}{\partial {r}^2}+(7\cot r-7\tan r)\frac{{\partial}}{\partial {r}}$ & $\frac{\pi^4}{3\cdot 2^7}  \sin(2r)^7 dr$\\
$\mathbb{O}H^1$ &$\frac{{\partial}^2}{\partial {r}^2}+(7\coth r+7\tanh r)\frac{{\partial}}{\partial {r}}$ & $\frac{\pi^4}{3\cdot 2^7}  \sinh(2r)^7 dr$\\
  \hline
\end{tabular}}
\caption{Radial Laplacians and radial measures in  rank-one symmetric spaces.}
\label{Table 1}
\end{table}

In the formulas, the normalization constant for the radial measure $\mu_r$ is such that 
\[
\int_\M f( d(o,x))  d\mu = \int_0^D f(r) d\mu_r
\]
where $D$ is the diameter of $\M$, $\mu$ the Riemannian volume measure. In particular, in the case where $\M$ is compact,
\[
\mathrm{Vol} (\M)= \mu_r ([0,D])
\]
where $\mathrm{Vol}(\M)$ is the Riemannian volume of $\M$.

Standard references for the Riemannian geometry of symmetric spaces and the study of their Laplacians are the monographs by S. Helgason \cite{Helgason,MR754767}.

\backmatter
\bibliographystyle{amsplain}
\bibliography{reference}

\end{document}